\documentclass[11pt, francais]{smfbook}
\usepackage{amsfonts, amssymb, amsmath, latexsym, enumerate, epsfig, color, mathrsfs, fancyhdr, supertabular, stmaryrd, smfthm, yhmath, mathtools, pifont, chngcntr, pinlabel, ifthen}
\usepackage[all]{xy}
\usepackage{tikz-cd}
\usepackage[french,refpage]{nomencl}

\usepackage[frenchb,english]{babel}

\usepackage[T1]{fontenc}
\usepackage[utf8]{inputenc}

\usepackage{hyperref}
\makenomenclature

\makeatletter \@removefromreset{figure}{chapter}\makeatother

\DeclareFontFamily{U}{mathc}{}
\DeclareFontShape{U}{mathc}{m}{it}%
{<->s*[1.03] mathc10}{}
\DeclareMathAlphabet{\mathcal}{U}{mathc}{m}{it}

\DeclareMathOperator{\sHom}{\mathcal{H\mkern-3mu om}}
\DeclareMathOperator{\sKer}{\mathcal{K\mkern-3mu er}}

\newcommand{\N}{\ensuremath{\mathbf{N}}}
\newcommand{\Z}{\ensuremath{\mathbf{Z}}}
\newcommand{\Q}{\ensuremath{\mathbf{Q}}}
\newcommand{\R}{\ensuremath{\mathbf{R}}}
\newcommand{\C}{\ensuremath{\mathbf{C}}}

\newcommand{\A}{\ensuremath{\mathbf{A}}}
\newcommand{\bA}{\ensuremath{\mathbf{A}}}
\renewcommand{\P}{\ensuremath{\mathbf{P}}}

\newcommand{\E}[2]{\ensuremath{\mathbf{A}^{#1,\mathrm{an}}_{#2}}}
\newcommand{\EP}[2]{\ensuremath{\mathbf{P}^{#1,\mathrm{an}}_{#2}}}
\newcommand{\Aunk}{\ensuremath{\mathbf{A}^{1,\mathrm{an}}_{k}}}

\newcommand{\AunA}{\ensuremath{\mathbf{A}^{1,\mathrm{an}}_{\mathcal{A}}}}
\newcommand{\Aunb}{\ensuremath{\mathbf{A}^{1,\mathrm{an}}_{\mathcal{H}(b)}}}

\newcommand\ti[1]{\ensuremath{\tilde{#1}}}
\newcommand{\wti}[1]{\ensuremath{\widetilde{#1}}}
\newcommand{\oD}{\overline{D}}
\newcommand{\oC}{\overline{C}}
\newcommand{\too}{\longrightarrow}
\newcommand{\simtoo}{\overset{\sim}{\longrightarrow}}
\newcommand{\mapstoo}{\longmapsto}

\newcommand{\ho}[1]{\mathbin{\hat{\otimes}_{#1}}}
\newcommand{\hosp}[1]{\mathbin{\hat{\otimes}^\mathrm{sp}_{#1}}}

\DeclareMathOperator{\Frac}{Frac}
\DeclareMathOperator{\Spec}{Spec}
\DeclareMathOperator{\Ob}{Ob}
\DeclareMathOperator{\Gal}{Gal}
\newcommand{\Ker}{\textrm{Ker}}
\renewcommand{\Im}{\textrm{Im}}
\DeclareMathOperator{\Hom}{Hom}

\newcommand{\Id}{\mathrm{Id}}
\newcommand{\id}{\mathrm{id}}

\newcommand{\res}{\mathrm{r\acute{e}s}}
\newcommand{\Res}{\mathrm{R\acute{e}s}}
\newcommand{\um}{\mathrm{um}}
\newcommand{\arc}{\mathrm{arc}}
\newcommand{\triv}{\mathrm{triv}}
\renewcommand{\sp}{\mathrm{sp}}
\renewcommand{\div}{\mathrm{div}}
\DeclareMathOperator*{\colim}{colim}

\newcommand{\hyb}{\mathrm{hyb}}

\newcommand{\tors}{\mathrm{tors}}
\newcommand{\ev}{\mathrm{ev}}
\newcommand{\simto}{\xrightarrow[]{\sim}}

\newcommand{\Ens}{\mathrm{Ens}}

\newcommand{\loc}{\mathrm{loc}}
\DeclareMathOperator{\dimr}{dim_{r}}

\newcommand\h{\textrm{h}}
\newcommand\urc{\!^\urcorner}

\newcommand{\m}{\ensuremath{\mathfrak{m}}}

\newcommand{\p}{\ensuremath{\mathfrak{p}}}
\newcommand{\q}{\ensuremath{\mathfrak{q}}}

\newcommand{\Es}{\mathscr{E}}

\newcommand{\Hs}{\mathscr{H}}

\newcommand{\sH}{\mathscr{H}}

\newcommand{\Ac}{\mathcal{A}}
\newcommand{\Bc}{\mathcal{B}}
\newcommand{\Mc}{\mathcal{M}}
\newcommand{\Oc}{\mathcal{O}}
\newcommand{\Pc}{\mathcal{P}}

\newcommand{\cA}{\mathcal{A}}
\newcommand{\cB}{\mathcal{B}}
\newcommand{\cC}{\mathcal{C}}

\newcommand{\cE}{\mathcal{E}}
\newcommand{\cF}{\mathcal{F}}
\newcommand{\cG}{\mathcal{G}}
\newcommand{\cH}{\mathcal{H}}
\newcommand{\cI}{\mathcal{I}}
\newcommand{\cJ}{\mathcal{J}}
\newcommand{\cK}{\mathcal{K}}
\newcommand{\cM}{\mathcal{M}}
\newcommand{\cN}{\mathcal{N}}
\newcommand{\cO}{\mathcal{O}}
\newcommand{\cR}{\mathcal{R}}
\newcommand{\cT}{\mathcal{T}}
\newcommand{\cU}{\mathcal{U}}
\newcommand{\cV}{\mathcal{V}}
\newcommand{\cW}{\mathcal{W}}
\newcommand{\cX}{\mathcal{X}}
\newcommand{\cY}{\mathcal{Y}}
\newcommand{\cZ}{\mathcal{Z}}

\newcommand{\hatcA}{\mathcal{A}\kern-0.6em\hat{\phantom{\mathcal{A}}}}
\newcommand{\cAAn}{\cA\!-\!\An}

\newcommand{\cn}[2]{\ensuremath{\llbracket{#1},{#2}\rrbracket}}

\DeclarePairedDelimiter{\intoo}{]}{[}
\DeclarePairedDelimiter{\intof}{]}{]}
\DeclarePairedDelimiter{\intfo}{[}{[}
\DeclarePairedDelimiter{\intff}{[}{]}

\newcommand{\la}{\ensuremath{\langle}}
\newcommand{\ra}{\ensuremath{\rangle}}

\newcommand{\eps}{\ensuremath{\varepsilon}}

\newcommand{\ba}{\ensuremath{{\boldsymbol{a}}}}

\newcommand{\bi}{\ensuremath{{\boldsymbol{i}}}}

\newcommand{\br}{\ensuremath{{\boldsymbol{r}}}}
\newcommand{\bs}{\ensuremath{{\boldsymbol{s}}}}
\newcommand{\bt}{\ensuremath{{\boldsymbol{t}}}}
\newcommand{\bu}{\ensuremath{{\boldsymbol{u}}}}

\newcommand{\bT}{\ensuremath{{\boldsymbol{T}}}}

\newcommand{\disp}{\displaystyle}

\newcommand\wc{{\mkern 2mu\cdot\mkern 2mu}}
\newcommand\va{|\wc|}
\newcommand\nm{\|\wc\|}
\newcommand\cf{\textit{cf}.}
\newcommand\an{\textrm{an}}
\newcommand\An{\textrm{An}}

\newcommand{\fonction}[5]{\begin{array}{cccc}
#1 \colon &#2&\too&#3\\
&#4&\mapstoo&#5
\end{array}}
\newcommand{\fonctionsp}[4]{\begin{array}{ccc}
#1&\too&#2\\
#3&\mapstoo&#4
\end{array}}

\newcommand{\mor}[1]{(#1,#1^\sharp)}

\def \spectriv(#1,#2,#3,#4,#5,#6){
\foreach \x in {0,...,#6}
\draw[line width=.001pt] ({#1},{#2}) -- ({#1+#5*cos((\x)/(#6)*(360/#4)+#3)},{#2+#5*sin((\x)/(#6)*(360/#4)+#3)}) ;}

\def \spectrivcentre(#1,#2,#3,#4,#5,#6){
\foreach \x in {-#6,...,#6}
\draw[line width=.001pt] ({#1},{#2}) -- ({#1+#5*cos((\x)/(#6)*(360/#4)+#3)},{#2+#5*sin((\x)/(#6)*(360/#4)+#3)}) ;}

\def \spectrivcentreenplus(#1,#2,#3,#4,#5,#6,#7,#8,#9){
\foreach \y in {-#5,...,#5}
\spectrivcentre(#1+#6*cos((\y)/(#5)*(360/#4)+#3),#2+#6*sin((\y)/(#5)*(360/#4)+#3),(\y)/(#5)*(360/#4)+#3,#7,#8,#9) 
;}

\makeindex
\makenomenclature

\includeonly{
introduction,
Rappels,
def_cat_analytique,
Normes,
cat_analytique,
fini,
espace,
topo,
Stein,
}

\begin{document}
\setlength{\baselineskip}{0.55cm}	

\theoremstyle{definition} 
\newenvironment{nota}{\begin{enonce}[definition]{Notation}}{\end{enonce}}

\title{Espaces de Berkovich globaux\\ cat\'egorie, topologie, cohomologie}
\author{Thibaud Lemanissier}
\address{Lyc\'ee Pierre de Coubertin, Meaux}
\email{\href{mailto:thibaud.lemanissier@ac-creteil.fr}{thibaud.lemanissier@ac-creteil.fr}}

\author{J\'er\^ome Poineau}
\address{Normandie Univ., UNICAEN, CNRS, Laboratoire de math\'ematiques Nicolas Oresme, 14000 Caen, France}
\email{\href{mailto:jerome.poineau@unicaen.fr}{jerome.poineau@unicaen.fr}}
\urladdr{\url{https://poineau.users.lmno.cnrs.fr/}}

\date{\today}

\maketitle

\pagestyle{empty}

\tableofcontents

\pagestyle{headings}

%

\pagenumbering{arabic}


\chapter*{Introduction}

\`A la fin des ann\'ees 1980, V.~Berkovich a propos\'e, dans l'ouvrage~\cite{Ber1}, une nouvelle approche de la g\'eom\'etrie analytique sur un corps $p$-adique, ou plus g\'en\'eralement ultram\'etrique. Elle a rapidement connu un grand succ\`es et trouv\'e des applications dans des domaines vari\'es des math\'ematiques tels que le programme de Langlands, la dynamique complexe, ou encore la g\'eom\'etrie diophantienne.

L'une des sp\'ecificit\'es des espaces de Berkovich r\'eside dans les excellentes propri\'et\'es topologiques dont ils jouissent, et qu'ils partagent avec les espaces analytiques complexes~: compacit\'e locale, connexit\'e par arcs locale, contractibilit\'e locale, etc. Ces propri\'et\'es sont d'autant plus remarquables que les corps ultram\'etriques sur lesquels ces espaces sont d\'efinis, ne sont, en g\'en\'eral, ni localement compacts, ni localement connexes. L'explication r\'eside dans le fait que les espaces de Berkovich ne contiennent pas uniquement les points \og classiques \fg{} (ceux du corps de base), mais de nombreux autres, que l'on peut, par exemple, utiliser pour tracer des chemins entre les premiers. 

\begin{figure}[!h]
\centering
\begin{tikzpicture}
\draw[line width=.001pt] (0,0) -- (0,5.9) ;

\spectrivcentre(0,5.9,295,8,1.5,7) ;

\spectrivcentre(0,4,290,4,.8,12) ;

\spectrivcentre(0,3,270,11,1.3,10) ;

\spectrivcentre(0,1.5,270,6,1.3,14) ;

\def \z{350} ;
\def \l{13} ;
\spectrivcentre(0,4,\z,\l,6,2) ;
\spectrivcentreenplus(0,4,\z,\l,2,1,40,1.2,2) ;
\spectrivcentreenplus(0,4,\z,\l,2,2.5,25,1.2,3) ;
\spectrivcentreenplus(0,4,\z,\l,2,4,18,1.2,4) ;
\spectrivcentreenplus(0,4,\z,\l,2,5.5,14,1.2,5) ;

\def \a{220} ;
\def \b{18} ;
\spectrivcentre(0,4,\a,\b,6,2) ;
\spectrivcentreenplus(0,4,\a,\b,2,1,40,1.2,2) ;
\spectrivcentreenplus(0,4,\a,\b,2,2.5,25,1.2,3) ;
\spectrivcentreenplus(0,4,\a,\b,2,4,18,1.2,4) ;
\spectrivcentreenplus(0,4,\a,\b,2,5.5,14,1.2,5) ;

\spectrivcentre(0,0,270,8,5,3) ;
\spectrivcentre(0,0,270,5,1.2,16) ;
\spectrivcentreenplus(0,0,270,8,3,1.5,25,1.2,3) ;
\spectrivcentreenplus(0,0,270,8,3,3,18,1.2,4) ;
\spectrivcentreenplus(0,0,270,8,3,4.5,14,1.2,5) ;
\end{tikzpicture}
\caption{Une droite de Berkovich traditionnelle.}\label{fig:A1Cp}
\end{figure}

Par d\'efinition, les points des espaces de Berkovich sont associ\'es \`a des semi-normes multiplicatives sur des anneaux bien choisis. Par exemple, tout point classique donne lieu \`a une semi-norme sur un anneau de polyn\^omes, obtenue en composant l'application d'\'evaluation en ce point avec la valeur absolue sur le corps de base, mais bien d'autres types de semi-normes existent. 


Un autre aspect particuli\`erement int\'eressant des espaces de Berkovich est la possibilit\'e de les d\'efinir en prenant comme base, non seulement un corps ultram\'etrique complet, mais, de fa\c{c}on bien plus g\'en\'erale, un anneau de Banach quelconque. On peut, par exemple, choisir comme base le corps des nombres complexes~$\C$ muni de la valeur absolue usuelle. Les espaces de Berkovich qui en r\'esultent sont alors les espaces analytiques complexes classiques. On peut \'egalement partir de l'anneau des entiers relatifs~$\Z$ muni de la valeur absolue usuelle. Comme la description en termes de semi-normes le laisse deviner, les espaces que l'on d\'efinit ainsi se composent \`a la fois de parties $p$-adiques, pour tout nombre premier~$p$, et de parties archim\'ediennes, proches des espaces analytiques complexes. Plus pr\'ecis\'ement, tout espace de Berkovich sur~$\Z$ se projette naturellement sur un espace not\'e~$\cM(\Z)$, le spectre de~$\Z$ au sens de Berkovich, repr\'esent\'e sur la figure~\ref{fig:MZintro}. La fibre au-dessus d'un point $p$-adique de ce spectre est un espace de Berkovich $p$-adique. La fibre au-dessus d'un point archim\'edien est presque un espace analytique complexe (en r\'ealit\'e, un espace analytique complexe quotient\'e par l'action de la conjugaison). De tels espaces pr\'esentent un int\'er\^et du point de vue arithm\'etique.

\begin{figure}[!h]
\centering
\begin{tikzpicture}
\foreach \x [count=\xi] in {-2,-1,...,17}
\draw (0,0) -- ({10*cos(\x*pi/10 r)/\xi},{10*sin(\x*pi/10 r)/\xi}) ;
\foreach \x [count=\xi] in {-2,-1,...,17}
\fill ({10*cos(\x*pi/10 r)/\xi},{10*sin(\x*pi/10 r)/\xi}) circle ({0.07/(sqrt(\xi)}) ;
\draw ({2.5*cos(-pi/5 r)},{2.5*sin(-pi/5 r)}) node[above right, rotate = -pi/5 r]{branche archim\'edienne} ;
\draw ({2.5*cos(-pi/10 r)},{2.5*sin(-pi/10 r)-.05}) node[above right, rotate = - pi/10 r]{branche 2-adique} ;
\draw (2.5,0) node[above right]{branche 3-adique} ;
\end{tikzpicture}
\caption{Le spectre de~$\Z$ au sens de Berkovich.}\label{fig:MZintro}
\end{figure}

Mentionnons un autre exemple d'anneau de Banach int\'eressant~: le corps~$\C^\hyb$, d\'efini comme le corps des nombres complexes~$\C$ muni de la norme dite hybride, \'egale au maximum entre la valeur absolue usuelle et la valeur absolue triviale. Les espaces de Berkovich sur~$\C^\hyb$ se pr\'esentent comme des familles d'espaces analytiques complexes (au-dessus du corps~$\C$ muni de la valeur absolue usuelle~$\va_{\infty}$ ou d'une de ses puissances $\va_{\infty}^\eps$ avec $\eps \in \intof{0,1}$) qui semblent d\'eg\'en\'erer sur un espace de Berkovich ultram\'etrique (au-dessus du corps~$\C$ muni de la valeur absolue triviale~$\va_{0}$), \cf~figure~\ref{fig:A1hybintro}. Ces espaces ont \'et\'e introduits par V.~Berkovich pour \'etudier la structure de Hodge mixte limite d'une famille d'espaces complexes dans~\cite{BerW0}. Ils ont, depuis, \'et\'e utilis\'es pour d'autres types de probl\`emes~: comportement asymptotique de formes volumes (\cf~\cite{BJ}), de mesures d'\'equillibre d'endomorphismes (\cf~\cite{FavreEndomorphisms}), de produits de matrices al\'eatoires dans $\textrm{SL}_{2}(\C)$ (\cf~\cite{DujardinFavreSL2C}), ou encore bornes uniformes de type Manin-Mumford en genre~2 (\cf~\cite{DKY}).

\begin{figure}[!h]
\centering
\begin{tikzpicture}
    \draw[thick] (-7,-3) -- (3,-3);
    \node at (-7,-3) {$|$};
    \node at (3,-3) {$|$};
    \node at (3,-3.5) {$\va_{\infty}$};
    \node at (-7,-3.5) {$\va_{0}$};
    \node at (-2,-3.5) {$\va_{\infty}^\eps$};
    \node at (-2,-3) {$|$};
    
    \draw (-2,0) circle (1.5cm);
    \draw[fill=white] (-2,1.5) circle (0.08cm);
    \node at (-2,1.75) {$\infty$};
    \draw[fill=black] (-2,-1.5) circle (0.08cm);
    \node at (-2,-1.75) {$0$};
    \draw (-.5,0) arc (315:225:2.13cm and 1.3cm);
    \draw[densely dotted] (-.5,0) arc (45:135:2.13cm and 1.3cm);
    
    \draw (3,0) circle (1.5cm);
    \draw[fill=white] (3,1.5) circle (0.08cm);
    \node at (3,1.75) {$\infty$};
    \draw[fill=black] (3,-1.5) circle (0.08cm);
    \node at (3,-1.75) {$0$};
    \draw (4.5,0) arc (315:225:2.13cm and 1.3cm);
    \draw[densely dotted] (4.5,0) arc (45:135:2.13cm and 1.3cm);
    
    \draw[thick] (-7,-1.5) -- (-7,1.5);
    \draw[fill=white] (-7,1.5) circle (0.08cm);
    \node at (-7,1.75) {$\infty$};
    \draw[fill=black] (-7,-1.5) circle (0.08cm);
    \node at (-7,-1.75) {$0$};
    \draw[thick] (-8.5,0) -- (-5.5,0);
    \draw[fill=black] (-8.5,0) circle (0.04cm);
    \draw[fill=black] (-5.5,0) circle (0.04cm);
    \draw[thick] (-8.06066,0.43934-1.5) -- (-5.93934,2.56066-1.5);
    \draw[fill=black] (-8.06066,0.43934-1.5) circle (0.04cm);
    \draw[fill=black] (-5.93934,2.56066-1.5) circle (0.04cm);
    \draw[thick] (-8.06066,1.06066) -- (-5.93934,-1.06066);
    \draw[fill=black] (-8.06066,1.06066) circle (0.04cm);
    \draw[fill=black] (-5.93934,-1.06066) circle (0.04cm);
    \draw (-5.61418,0.574025) -- (-8.38582,-0.574025);
    \draw[fill=black] (-5.61418,0.574025) circle (0.03cm);
    \draw[fill=black] (-8.38582,-0.574025) circle (0.03cm);
    \draw (-6.42597,1.38582) -- (-7.54403,-1.38582);
    \draw[fill=black] (-6.42597,1.38582) circle (0.03cm);
    \draw[fill=black] (-7.54403,-1.38582) circle (0.03cm);
    \draw (-7.57403,1.38582) -- (-6.42597,-1.38582);
    \draw[fill=black] (-7.57403,1.38582) circle (0.03cm);
    \draw[fill=black] (-6.42597,-1.38582) circle (0.03cm);
    \draw (-8.38582,0.574025) -- (-5.61418,-0.574025);
    \draw[fill=black] (-8.38582,0.574025) circle (0.03cm);
    \draw[fill=black] (-5.61418,-0.574025) circle (0.03cm);    
\end{tikzpicture}
\caption{La droite projective analytique sur $\C^\hyb$.}\label{fig:A1hybintro}
\end{figure}

\section{Objectifs du manuscrit}

Bien qu'elle ait \'et\'e formellement introduite d\`es le premier chapitre de~\cite{Ber1}, la th\'eorie des espaces de Berkovich sur un anneau de Banach n'a gu\`ere \'et\'e developp\'ee. Le second auteur leur a consacr\'e deux textes. Le premier, \cite{A1Z}, propose une \'etude d\'etaill\'ee de la droite de Berkovich sur~$\Z$ selon diff\'erents aspects (alg\'ebriques, topologiques, cohomologiques, etc.). Le second, \cite{EtudeLocale}, traite d'espaces arbitraires sur une certaine classe d'anneaux de Banach (contenant~$\Z$ et~$\C^\hyb$), mais uniquement d'un point de vue alg\'ebrique local (noeth\'erianit\'e des anneaux locaux, coh\'erence du faisceau structural, etc.).

L'objectif du pr\'esent manuscrit est de combler quelques lacunes de la th\'eorie des espaces de Berkovich sur un anneau de Banach~$\cA$ 
et d'\'etudier en profondeur certains aspects qui ont, jusqu'ici, \'et\'e laiss\'es de c\^ot\'e, en d\'epit de leur importance. Au cours du texte, nous devrons imposer \`a l'anneau de Banach~$\cA$ diverses propri\'et\'es techniques, variant de chapitre en chapitre, en fonction des besoins. 
Pour simplifier, nous d\'esignons par l'expression \og espace de Berkovich global \fg{} un espace de Berkovich sur un anneau de Banach satisfaisant les propri\'et\'es requises. Le lecteur sourcilleux peut choisir de la remplacer par \og espace de Berkovich sur~$\Z$ \fg{} ou \og espace de Berkovich sur~$\C^\hyb$\fg, la th\'eorie \'etant con\c cue de fa\c con \`a toujours s'appliquer au moins dans ces deux contextes.

Notre \'etude s'organise selon trois axes principaux.

\medbreak

\noindent\textbf{Axe \ding{172}~: D\'efinition de la cat\'egorie des espaces de Berkovich sur un anneau de Banach}\nopagebreak

V.~Berkovich a d\'efini les espaces de Berkovich sur un anneau de Banach d\`es les premi\`eres pages de son ouvrage fondateur~\cite{Ber1}, mais n'a pas introduit de notion de morphisme dans ce contexte. Le premier objectif de ce texte est de combler ce manque. Une fois cette t\^ache effectu\'ee, on dispose de la cat\'egorie des espaces de Berkovich sur un anneau de Banach et il devient possible de consid\'erer et d\'efinir dans un langage ad\'equat diff\'erentes op\'erations d'usage courant~: produits fibr\'es, extension des scalaires (de~$\Z$ \`a un anneau d'entiers de corps de nombres, par exemple), analytification de sch\'emas, etc.

Cette \'etude fait l'objet des chapitres~\ref{def_cat} (d\'efinitions et propri\'et\'es g\'en\'erales dans le cadre des espaces de Berkovich sur un anneau de Banach), \ref{analyse}~(compl\'ements techniques sur des normes d'anneaux de Banach) et~\ref{catan} (propri\'et\'es plus fines dans le cadre des espaces de Berkovich sur une bonne classe d'anneaux de Banach, contenant notamment~$\Z$ et~$\C^{\hyb}$).

Nous attirons l'attention du lecteur sur le fait que, lorsque l'anneau de Banach considéré est un corps valu\'e ultram\'etrique complet~$k$, la d\'efinition propos\'ee ici ne permet pas de retrouver tous les espaces $k$-analytiques d\'efinis par V.~Berkovich dans~\cite{Ber1,Ber2}, mais seulement les espaces sans bords.

\medbreak

\noindent\textbf{Axe \ding{173}~: \'Etude de la topologie des espaces de Berkovich globaux}\nopagebreak

Les propri\'et\'es topologiques des espaces de Berkovich forment l'un des points saillants de la th\'eorie et rec\`elent nombre d'informations subtiles. Dans~\cite{FirstSteps}, V.~Berkovich explique s'\^etre rendu compte d\`es les premiers instants que la topologie de l'analytifi\'ee d'une courbe elliptique sur~$\C_{p}$ (qui est contractile ou homotope \`a un cercle) permettait de retrouver son type de r\'eduction (bonne ou mauvaise, respectivement). Citons \'egalement le travail~\cite{beth} d'A.~Thuillier, qui identifie le type d'homotopie du complexe d'intersection d'un diviseur \`a croisements normaux compactifiant une vari\'et\'e sur un corps parfait \`a celui d'un espace de Berkovich canoniquement associ\'e \`a la vari\'et\'e (montrant ainsi que ce type d'homotopie ne d\'epend pas du diviseur choisi). 

En d\'epit de cet int\'er\^et \'evident, les consid\'erations topologiques sont presque totalement absentes de l'\'etude des espaces de Berkovich sur un anneau de Banach. Seuls quelques r\'esultats existent, limit\'es, pour l'essentiel, au cas de la droite affine analytique sur~$\Z$ (connexit\'e par arcs locale dans~\cite{A1Z}, contractibilit\'e globale dans~\cite{LS}).

Dans ce texte, nous d\'eveloppons les outils n\'ecessaires \`a une \'etude topologique syst\'ematique des espaces de Berkovich globaux 
et obtenons de premiers r\'esultats g\'en\'eraux concrets, telle la connexit\'e par arcs locale. 

Cette \'etude fait l'objet des chapitres~\ref{chap:fini} (pr\'eliminaires sur les morphismes finis), \ref{chap:structurelocale} (structure locale des espaces) et~\ref{chap:topo} (connexit\'e par arcs locale et dimension topologique).

\medbreak

\noindent\textbf{Axe~\ding{174}~: \'Etude de la cohomologie des espaces de Berkovich globaux}\nopagebreak

Comme dans le domaine de la topologie, les aspects cohomologiques des espaces de Berkovich sur un anneau de Banach n'ont \'et\'e abord\'es que dans le cadre de la droite affine analytique sur~$\Z$, dans~\cite{A1Z}. Pourtant, les applications de la g\'eom\'etrie analytique, tant complexe que $p$-adique, interviennent souvent par le biais de calculs de dimension d'espaces de cohomologie ou de r\'esultats d'annulation. 

Nous initions ici l'\'etude de la cohomologie coh\'erente des espaces de Berkovich globaux en dimension sup\'erieure. La premi\`ere \'etape, indispensable \`a tout calcul, est de disposer d'une vaste classe d'espaces dont les groupes de cohomologie coh\'erente sup\'erieurs soient nuls. On peut penser \`a ces espaces comme \`a des analogues 
des espaces affino\"ides en g\'eom\'etrie analytique rigide ou des espaces de Stein en g\'eom\'etrie analytique complexe.

Cette \'etude fait l'objet du chapitre~\ref{chap:Stein} (et utilise de fa\c{c}on essentielle les r\'esultats techniques du chapitre~\ref{analyse}).

\section{Organisation du texte}

Exposons maintenant plus en d\'etails, chapitre par chapitre, le contenu de ce texte. Nous mettons en valeur des r\'esultats qui nous semblent importants, tout en en laissant quelques autres de c\^ot\'e. L'introduction de chacun des chapitres contient une description exhaustive de son contenu.

Le but principal de ce texte est de d\'evelopper la th\'eorie des espaces de Berkovich d\'efinis sur~$\Z$ ou d'autres anneaux de Banach aux propri\'et\'es similaires~: anneaux d'entiers de corps de nombres, corps hybrides comme~$\C^\hyb$, anneaux de valuation discr\`ete, etc. 
Nous ne sommes pas parvenus \`a isoler un ensemble de conditions naturelles \`a imposer \`a un anneau de Banach pour qu'il entre dans le cadre de notre th\'eorie. Les diff\'erents chapitres de ce texte contiennent donc diff\'erentes d\'efinitions (anneau de base, anneau de base g\'eom\'etrique, anneau de Dedekind analytique, etc.), adapt\'es \`a nos besoins sp\'ecifiques. Ce choix pr\'esente l'avantage de mettre en lumi\`ere les propri\'et\'es utilis\'ees pour d\'emontrer chacun des r\'esultats, et nous esp\'erons \'egalement qu'il permettra de prendre en compte plus facilement les d\'eveloppements ult\'erieurs de la th\'eorie (qui pourraient faire intervenir d'autres anneaux de Banach, dont l'id\'ee nous \'echappe \`a l'heure o\`u nous \'ecrivons). 

Insistons sur le fait que tous les r\'esultats que nous d\'emontrons sont valables pour des espaces de Berkovich d\'efinis sur un anneau de Banach qui est l'un de ceux cit\'es en exemple plus haut. Seule l'ultime section~\ref{sec:noetherianite}, sp\'ecifique aux anneaux d'entiers de corps de nombres, fait exception \`a cette r\`egle. Comme pr\'ec\'edemment, nous utiliserons l'expression \og espace de Berkovich global \fg{} pour d\'esigner un espace de Berkovich sur un anneau de Banach satisfaisant les propri\'et\'es requises.

\medbreak

\noindent\textbf{Chapitre~\ref{chap:rappels}~: Pr\'eliminaires et rappels}\nopagebreak

Soit $\cA$ un anneau de Banach. Dans ce chapitre, nous commen\c{c}ons par rappeler en d\'etail la construction des espaces $\cA$-analytiques, en suivant~\cite{Ber1}. Nous rappelons ensuite les principaux r\'esultats obtenus par le second auteur dans ses travaux~\cite{A1Z, EtudeLocale}. Nous en profitons pour d\'emontrer quelques r\'esultats techniques (par exemple sur les bases de voisinages des points, \cf~propositions~\ref{prop:basevoisdim1rigide} et~\ref{prop:basevoisdim1}) et introduire quelques outils dont nous nous servirons dans la suite du texte.

Un r\'esultat majeur est le th\'eor\`eme de division de Weierstra\ss{} (\cf~\cite[th\'eor\`eme~8.3]{EtudeLocale}),
dont nous rappelons maintenant l'\'enonc\'e. Consid\'erons la droite affine analytique $X := \E{1}{\cA}$ (avec coordonn\'ee~$T$) et la projection $\pi \colon X \to \cM(\cA)$.  
Soient~$b$ un point de~$\cM(\cA)$ et~$x$ un point de~$\pi^{-1}(b)$. Nous supposerons que~$x$ est rigide dans la fibre $\pi^{-1}(b) \simeq \E{1}{\cH(b)}$, o\`u $\cH(b)$ d\'esigne le corps r\'esiduel compl\'et\'e du point~$b$. Cela signifie qu'il existe un polyn\^ome irr\'eductible $\mu_{x} \in \cH(b)[T]$ qui s'annule exactement en ce point.

\begin{enonce*}{Th\'eor\`eme \ref{weierstrass}}\index{Theoreme de Weierstrass@Th\'eor\`eme de Weierstra\ss!division}
Pla\c{c}ons-nous dans le cadre d\'ecrit ci-dessus. Soit~$G$ un \'el\'ement de l'anneau local~$\cO_{X,x}$. Supposons que son image dans l'anneau de valuation discr\`ete~$\cO_{\pi^{-1}(b),x}$ n'est pas nulle et notons~$n$ sa valuation.

Alors, pour tout~$F\in\cO_{X,x}$, il existe un unique couple~$(Q,R)\in \cO_{X,x}^2$ tel que 
\begin{enumerate}[i)]
\item $F=QG+R$ ;
\item $R\in\cO_{\cM(\cA),b}[T]$ est un polyn\^ome de degr\'e strictement inf\'erieur \`a~$n\deg(\mu_{x})$.
\end{enumerate}
\end{enonce*}

Dans un souci de pr\'ecision, signalons que l'\'enonc\'e du th\'eor\`eme requiert des hypoth\`eses sur le point~$b$ (par exemple de supposer que $b$ est d\'ecent, \cf~d\'efinition~\ref{def:decent}), qui sont toujours satisfaites si l'anneau~$\cA$ est~$\Z$ ou~$\C^\hyb$, par exemple. 

Dans~\cite{EtudeLocale}, on tire plusieurs cons\'equences de ce th\'eor\`eme dans le cadre des espaces de Berkovich globaux~: noeth\'erianit\'e et excellence des anneaux locaux~$\cO_{x}$, coh\'erence du faisceau strucural, etc. Elles sont rappel\'ees \`a la fin du chapitre.

\medbreak

\noindent\textbf{Chapitre~\ref{def_cat}~: Cat\'egorie des espaces analytiques~: d\'efinitions}\nopagebreak

Soit $\cA$ un anneau de Banach. L'objet principal de ce chapitre est de d\'efinir la notion de morphisme entre deux espaces $\cA$-analytiques. Rappelons que la d\'efinition d'espace analytique sur~$\cA$, comme la d\'efinition d'espace analytique complexe, s'effectue en plusieurs \'etapes~: on commence par les espaces affines analytiques, on consid\`ere ensuite les mod\`eles locaux, qui sont des ferm\'es analytiques d'ouverts des pr\'ec\'edents, puis on recolle ces derniers.\footnote{Si $\cA=k$ est un corps valu\'e ultram\'etrique complet, cette d\'efinition ne permet pas de retrouver tous les espaces $k$-analytiques d\'efinis par V.~Berkovich dans~\cite{Ber1, Ber2}, mais seulement les espaces sans bords.} La d\'efinition de morphisme proc\`ede de la m\^eme logique.

Soient $U$ et $V$ des ouverts d'espaces affines analytiques sur~$\cA$ et $\varphi \colon U \to V$ un morphisme d'espaces annel\'es. Pour tout compact~$U'$ de~$U$ et tout compact~$V'$ de~$V$ contenant~$\varphi(U')$, $\varphi^\sharp$ induit un morphisme 
\[\varphi_{U',V'}^\sharp \colon \cO_{V}(V') \too \cO_{U}(U')\] 
entre espaces norm\'es (munis des normes uniformes sur~$V'$ et~$U'$ respectivement). Nous dirons que $\varphi\colon U \to V$ est un \emph{morphisme analytique} de~$U$ dans~$V$ lorsque tous les morphismes $\varphi_{U',V'}^\sharp$ pr\'ec\'edents sont contractants (\cf~d\'efinition~\ref{def:morphismefermeouvert}).

Soient~$X$ et~$Y$ des ferm\'es analytiques d'ouverts d'espaces affines analytiques sur~$\cA$. Un \emph{morphisme analytique} $\varphi \colon X \to Y$ est un morphisme d'espaces localement annel\'es qui se rel\`eve localement en un morphisme analytique d'ouverts d'espaces affines analytiques au sens pr\'ec\'edent (\cf~d\'efinition~\ref{def:morphismefermeouvert}).

La d\'efinition g\'en\'erale s'obtient par recollement, ce que nous formulerons pr\'ecis\'ement en termes d'atlas (\cf~d\'efinition~\ref{def:morphismegeneral}). Nous obtenons ainsi la \emph{cat\'egorie des espaces $\cA$-analytiques}. Nous la notons~$\cAAn$.

Nous d\'efinissons \'egalement une seconde cat\'egorie, la \emph{cat\'egorie des espaces analytiques au-dessus de~$\cA$}, not\'ee~$\An_{\cA}$. Ses objets sont des couples form\'es d'un morphisme born\'e d'anneaux de Banach $\cA \to \cB$ et d'un espace $\cB$-analytique (\cf~d\'efinition~\ref{def:espaceaudessusdeA}). Ses morphismes sont d\'efinis de fa\c{c}on analogue \`a ceux de~$\cAAn$ (\cf~d\'efinitions~\ref{defi:morphismeaudessusdefouvertaffine}, \ref{defi:morphismeaudessusdeffermeouvert} et~\ref{defi:morphismeaudessusdef}). Dans la suite du texte, cette cat\'egorie nous sera utile pour consid\'erer des changements d'anneau de base.

Insistons sur le fait que, dans ce chapitre, il n'est besoin d'aucune hypoth\`ese sur l'anneau de Banach~$\cA$. Nous n'en imposerons que plus tard, lorsque nous \'etudierons la cat\'egorie~$\cAAn$.

\medbreak

\noindent\textbf{Chapitre~\ref{analyse}~: Quelques r\'esultats topologiques sur les anneaux de fonctions analytiques}\nopagebreak

Ce chapitre est probablement le plus technique du manuscrit. Il d\'ebute par des consid\'erations sur certains anneaux de Banach qui se pr\'esentent comme des quotients et des r\'esultats de comparaison de normes (norme r\'esiduelle, norme uniforme, etc.) sur ceux-ci. La cons\'equence principale en est une \emph{version norm\'ee du th\'eor\`eme de division de Weierstra\ss{}} rappel\'e plus haut. Le diviseur~$G$ \'etant fix\'e, elle permet d'obtenir un contr\^ole de la norme du quotient~$Q$ et du reste~$R$ en fonction de la norme du dividende~$F$ (\cf~th\'eor\`eme~\ref{weierstrassam})

Ce th\'eor\`eme est utilis\'e, \`a la fin du chapitre, pour d\'emontrer un r\'esultat de fermeture des id\'eaux du faisceau structural. Au chapitre~\ref{chap:Stein}, nous le g\'en\'eraliserons en combinant la premi\`ere version avec la th\'eorie des espaces de Stein. Nous pr\'esentons ici directement la forme forte, dont l'\'enonc\'e est plus accessible. Elle est valable sous certaines hypoth\`eses sur l'anneau de Banach~$\cA$. Nous ne les pr\'ecisons pas ici, et nous contentons d'indiquer qu'elles sont satisfaites dans le cas de~$\Z$ ou~$\C^\hyb$. 



\begin{enonce*}{Corollaire~\ref{cor:limiteOU}}\index{Ideal@Id\'eal!ferme@ferm\'e}
Soient~$x$ un point de~$\E{n}{\cA}$ et $I$~un id\'eal de~$\cO_{x}$. Soient~$U$ un voisinage de~$x$ dans~$\E{n}{\cA}$ et $(g_n)_{n\in\N}$ une suite d'\'el\'ements de~$\cO(U)$ qui converge vers un \'el\'ement~$g$ de~$\cO(U)$. Si, pour tout $n\in \N$, l'image de~$g_{n}$ dans~$\cO_{x}$ appartient \`a~$I$, alors l'image de~$g$ dans~$\cO_{x}$ appartient \`a~$I$. 
\end{enonce*}



\medbreak

\noindent\textbf{Chapitre~\ref{catan}~: Cat\'egorie des espaces analytiques~: propri\'et\'es}\nopagebreak

Ce chapitre prolonge l'\'etude de la cat\'egorie des espaces $\cA$-analytiques initi\'ee au chapitre~\ref{def_cat}. En imposant des conditions suppl\'ementaires sur l'anneau~$\cA$, autrement dit en se restreignant \`a une certaine classe d'espaces analytiques globaux, on peut d\'emontrer plusieurs propri\'et\'es attendues.

Notre premier r\'esultat est l'analogue d'un r\'esultat classique, d'apparence anodine, mais fondamental. Il affirme l'existence d'une bijection naturelle entre les fonctions globales sur un espace et les morphismes de cet espace vers la droite affine analytique. 

\begin{enonce*}{Proposition~\ref{morphsec}}\index{Morphisme analytique!vers un espace affine}
Soit~$X$ un espace $\cA$-analytique. L'application 
\[\fonctionsp{\Hom_{\cAAn}(X,\E{1}{\cA})}{\Gamma(X,\cO_{X})}{\varphi}{\varphi^\sharp(T)},\]
o\`u $T$ est la coordonn\'ee sur~$\E{1}{\cA}$, est bijective.
\end{enonce*}

En termes vagues, la surjectivit\'e signifie qu'une fonction~$F$ sur~$X$ \'etant donn\'ee (correspondant \`a $\varphi^\sharp(T)$), les s\'eries convergentes en~$F$ font sens sur~$X$ (la s\'erie $\sum a_{i} F^i$ correspondant \`a $\varphi^\sharp(\sum a_{i} T^i)$). L'injectivit\'e signifie que ces s\'eries sont des fonctions sur~$X$ bien d\'efinies. L'espace~$X$ \'etant localement un ferm\'e analytique (d\'efini par un faisceau d'id\'eaux coh\'erent) d'un ouvert d'un espace affine analytique, on con\c{c}oit que le th\'eor\`eme de fermeture des id\'eaux \'enonc\'e plus haut (\cf~corollaire~\ref{limite}) joue un r\^ole capital dans sa d\'emonstration.

Ce r\'esultat, et sa g\'en\'eralisation \`a un nombre fini de fonctions globales, permet de montrer que plusieurs foncteurs naturels sont repr\'esentables et ainsi de d\'efinir, de fa\c{c}on ad\'equate, l'analytifi\'e d'un sch\'ema localement de type fini sur~$\cA$ (\cf~th\'eor\`eme~\ref{thm:analytification}), l'extension des scalaires d'un espace analytique sur~$\cA$ \`a un anneau d'entiers de corps de nombres (\cf~th\'eor\`eme~\ref{thm:extensionB}), ou encore les produits fibr\'es d'espaces analytiques sur~$\cA$ (\cf~th\'eor\`eme~\ref{produit_fibr\'e}).

\medbreak

\noindent\textbf{Chapitre~\ref{chap:fini}~: \'Etude des morphismes finis}\nopagebreak

Dans ce chapitre, on d\'efinit et \'etudie les morphismes finis d'espaces analytiques. Il s'agit d'une notion essentielle, puisque c'est par le biais des morphismes finis que nous pourrons passer des propri\'et\'es des espaces affines analytiques \`a celles des espaces analytiques g\'en\'eraux.

Nous d\'emontrons les analogues de plusieurs r\'esultats classiques sur les morphismes finis. En voici un embl\'ematique.

\begin{enonce*}{Th\'eor\`eme \ref{thm:fini}}
Soient~$\varphi \colon X\to Y$ un morphisme fini d'espaces analytiques globaux et~$\cF$ un faisceau coh\'erent sur~$X$. Alors, le faisceau~$\varphi_*\cF$ est coh\'erent.
\end{enonce*}

Ce th\'eor\`eme est l'un des ingr\'edients essentiels de la preuve du Nullstellensatz, que nous d\'emontrons sous la forme suivante.

\begin{enonce*}{Corollaire \ref{cor:IVJ}}
Soit $X$ un espace analytique global. Soit~$\cJ$ un faisceau d'id\'eaux coh\'erent sur~$X$. Notons $V(\cJ)$ le lieu des z\'eros de~$\cJ$ et $\cI(V(\cJ))$ le faisceau d'id\'eaux des fonctions s'annulant en tout point de~$V(\cJ)$. Alors, on a
\[\cI(V(\cJ)) = \sqrt{\cJ}.\]
\end{enonce*}

\medbreak

\noindent\textbf{Chapitre~\ref{chap:structurelocale}~: Structure locale des espaces analytiques}\nopagebreak

Dans ce chapitre, nous \'etudions la structure locale des espaces analytiques. Une version du lemme de normalisation de Noether assure que tout espace int\`egre se pr\'esente localement comme un rev\^etement fini (ramifi\'e) d'un espace affine.

\begin{enonce*}{Th\'eor\`eme~\ref{proj}}
Soit~$X$ un espace analytique global. Notons $\pi \colon X \to \cM(\cA)$ le morphisme structural. Soit~$x$ un point de~$X$ en lequel~$\cO_{X,x}$ est int\`egre. Alors, il existe un voisinage ouvert~$U$ de~$x$ dans~$X$, un entier $n\in \N$, un ouvert~$V$ de l'espace affine analytique~$\E{n}{\cA}$ ou~$\E{n}{\cH(\pi(x))}$ et un morphisme fini ouvert $\varphi \colon U\to V$.
\end{enonce*}

D'autre part, la proposition~\ref{decomp} assure que tout espace analytique global peut s'\'ecrire, localement au voisinage d'un point, comme union de ferm\'es analytiques int\`egres en ce point. La combinaison de ces deux r\'esultats fournit donc un r\'esultat de structure locale pr\'ecis pour n'importe quel espace.

\smallbreak

Dans la suite du chapitre, nous utilisons cette description locale afin de comparer les propri\'et\'es d'un sch\'ema \`a celle de son analytifi\'e. Le r\'esultat le plus important est le suivant.

\begin{enonce*}{Th\'eor\`eme \ref{platitude_analytification}}
Soit~$\cX$ un sch\'ema localement de type fini sur~$\cA$. Notons~$\cX^\an$ son analytifi\'e. Alors, le morphisme canonique $\rho_{\cX} \colon \cX^\an \to \cX$  est plat.
\end{enonce*}

L'\'enonc\'e analogue en g\'eom\'etrie analytique complexe ou ultram\'etrique est fondamental. Il intervient notamment dans la preuve des th\'eor\`emes GAGA (\cf~\cite{GAGA}). 

\medbreak

\noindent\textbf{Chapitre~\ref{chap:topo}~: Propri\'et\'es topologiques des espaces analytiques}\nopagebreak

Dans ce chapitre, nous \'etudions finalement la topologie des espaces analytiques. Le r\'esultat principal est le suivant.

\begin{enonce*}{Th\'eor\`eme \ref{th:cpageneral}}
Tout espace analytique global est localement connexe par arcs.
\end{enonce*}

Comme on peut s'y attendre, nous d\'emontrons tout d'abord le r\'esultat pour les espaces affines analytiques. Nous en d\'eduisons le cas g\'en\'eral gr\^ace aux r\'esultats de structure locale du chapitre~\ref{chap:structurelocale}. Cette seconde \'etape n'est pas imm\'ediate car un rev\^etement fini d'un espace connexe par arcs ne le reste pas n\'ecessairement (\cf~remarque~\ref{rem:revetementdegre2}). Cette difficult\'e nous conduit \`a introduire une nouvelle notion de topologie g\'en\'erale, appel\'ee \emph{\'elasticit\'e} (\cf~d\'efinition~\ref{def:elastique}), raffinant la connexit\'e par arcs.

\medbreak

\noindent\textbf{Chapitre~\ref{chap:Stein}~: Espaces de Stein}\nopagebreak

Dans le dernier chapitre de cet ouvrage, nous initions la th\'eorie des espaces de Stein dans le cadre des espaces de Berkovich globaux. 
Notre r\'esultat principal concerne les polydisques ferm\'es sur un anneau de Banach~$\cA$ ad\'equat, par exemple~$\Z$ ou~$\C^\hyb$.

\begin{enonce*}{Corollaires \ref{cor:thA} et~\ref{cor:thB}}
Soient $r_{1},\dotsc,r_{n} \in \R_{>0}$ et~$\cF$ un faisceau coh\'erent sur le polydisque relatif $\oD := \overline{D}(r_{1},\dotsc,r_{n})$ sur~$\cA$. Alors,
\begin{enumerate}[i)]
\item pour tout entier $q\ge 1$, on a $H^q(\oD,\cF) = 0$~;
\item le faisceau~$\cF$ est engendr\'e par l'ensemble de ses sections globales~$\cF(\oD)$.
\end{enumerate}
\end{enonce*}

Nous pouvons ensuite jeter les bases d'une th\'eorie des espaces affino\"ides surconvergents (\cf~d\'efinition~\ref{def:affinoide}) poss\'edant des propri\'et\'es similaires \`a celle de la th\'eorie ultram\'etrique classique. On peut y d\'efinir des domaines rationnels, de Laurent, de Weierstra\ss{} (\cf~d\'efinition~\ref{def:domaines}) et du r\'esultat pr\'ec\'edent d\'ecoule un analogue du th\'eor\`eme d'acyclicit\'e de Tate et du th\'eor\`eme de Kiehl (\cf~th\'eor\`eme~\ref{th:affinoideAB}). En particulier, tout point d'un espace de Berkovich global poss\`ede une base de voisinages form\'e d'affino\"ides surconvergents, en particulier d'espaces dont la cohomologie coh\'erente sup\'erieure s'annule. 

Dans la section~\ref{sec:Bouvert}, nous adaptons ces r\'esultats (sans la propri\'et\'e de g\'en\'eration globale du faisceau) dans un cadre ouvert.

\smallbreak

Nous terminons avec une application des r\'esultats d'annulation cohomologique \`a la noeth\'erianit\'e d'anneaux de \emph{s\'eries arithm\'etiques convergentes}. Soit $n\in \N$. Pour $r_{1},\dotsc,r_{n} \in \intoo{0,1}$, notons 
\[\Z\llbracket T_{1},\dotsc,T_{n} \rrbracket_{> (r_{1},\dotsc,r_{n})}\] 
le sous-anneau de $\Z\llbracket T_{1},\dotsc,T_{n} \rrbracket$ form\'e des s\'eries qui convergent au voisinage du polydisque complexe $\oD(r_{1},\dotsc,r_{n})$. D.~Harbater a montr\'e dans~\cite{HarbaterConvergent} que ces anneaux sont noeth\'eriens lorsque $n=1$. Nous tirons parti des r\'esultats cohomologiques obtenus dans ce chapitre pour g\'en\'eraliser ce r\'esultat.

\begin{enonce*}{Corollaire \ref{cor:noetherienconcret}}
Pour tout $n\in \N$ et tous $r_{1},\dotsc,r_{n} \in \intoo{0,1}$, l'anneau $\Z\llbracket T_{1},\dotsc,T_{n} \rrbracket_{> (r_{1},\dotsc,r_{n})}$ 
est noeth\'erien.
\end{enonce*}

Nous obtenons en r\'ealit\'e une version raffin\'ee de ce r\'esultat qui autorise des coefficients dans $\Z[1/N]$, pour $N\in \N_{\ge 1}$, et fait intervenir des rayons de convergence aux places $p$-adiques, pour $p$ divisant~$N$. En outre, l'anneau~$\Z$ peut \^etre remplac\'e par n'importe quel anneau d'entiers de corps de nombres.

\section{Applications}

Que le lecteur ne s'y trompe pas, nous proposons ici un texte de fondement sur la th\'eorie des espaces de Berkovich globaux, au contenu parfois aride et technique. En d\'emontrant les r\'esultats annonc\'es, nous franchissons une \'etape ingrate, mais n\'ecessaire, pour \'etablir la th\'eorie sur des bases solides et la rendre utilisable, en l'\'etat, dans divers domaines des math\'ematiques.


Comme indiqu\'e plus haut, nous proposons, \`a la fin de ce texte, une premi\`ere application de la th\'eorie \`a des questions de noeth\'erianit\'e d'anneaux de s\'eries arithm\'etiques convergentes en plusieurs variables (\cf~corollaire~ \ref{cor:noetherienconcret}). Il s'agit d'un r\'esultat original, dont l'\'enonc\'e est purement alg\'ebrique, et que nous d\'emontrons ici par des m\'ethodes \'elabor\'ees, reposant de fa\c con essentielle sur la th\'eorie des espaces de Stein analytiques globaux d\'evelopp\'ee au chapitre~\ref{chap:Stein}.

La th\'eorie des espaces de Berkovich globaux se pr\^ete \`a bien d'autres applications dans des domaines vari\'es. Nous en d\'etaillons deux, que nous jugeons particuli\`erement embl\'ematiques.

\medbreak

\noindent\textbf{Dynamique analytique sur~$\Z$}\nopagebreak

La th\'eorie des syst\`emes dynamiques est bien \'etablie dans le cas complexe et, depuis une vingtaine d'ann\'ees, on assiste \`a l'apparition d'un analogue $p$-adique, sous la plume de nombreux auteurs. Nous renvoyons aux textes de survol \cite{CantatChambertLoirDynamique} et \cite{DucrosBourbaki} pour des pr\'ecisions et des r\'ef\'erences.

La question d'une th\'eorie globale unifiant ces th\'eories locales se pose d\'esormais et, dans l'article~\cite{DynamiqueI}, le second auteur en d\'eveloppe les premiers \'el\'ements. Les r\'esultats pr\'esent\'es dans ce manuscrit y jouent un r\^ole fondamental, en particulier ceux concernant les morphismes finis et plats (\cf~chapitre~\ref{chap:fini}), n\'ecessaires pour d\'efinir une notion de degr\'e, et la connexit\'e par arcs locale des espaces (\cf~chapitre~\ref{chap:topo}). Ils permettent de d\'emontrer que l'on dispose, dans le cadre des espaces de Berkovich globaux, d'une bonne th\'eorie des mesures de Radon, o\`u l'on peut, par exemple, d\'efinir les images directe et r\'eciproque par un morphisme fini et plat. 

Le r\'esultat principal de~\cite{DynamiqueI} \'enonce une propri\'et\'e de continuit\'e pour certaines mesures de Radon associ\'ees \`a des syst\`emes dynamiques. Son originalit\'e r\'eside dans le fait que, dans le cadre des espaces de Berkovich globaux, les mesures en question peuvent \^etre support\'ees par des espaces de nature vari\'ee, complexe ou $p$-adique. 

Rappelons que, pour tout corps valu\'e complet~$k$ et toute fraction rationnelle non constante $\psi \in k(T)$, on peut d\'efinir une mesure de Radon~$\mu_{\psi}$ sur l'espace de Berkovich~$\EP{1}{k}$ (qui n'est autre que~$\P^1(\C)$ si $k=\C$), dite mesure d'\'equilibre, essentiellement caract\'eris\'ee par la propri\'et\'e $\psi^* \mu_{\psi} = \deg(\psi) \mu_{\psi}$. 

\begin{enonce*}{Th\'eor\`eme}[\protect{\cite[th\'eor\`eme~A]{DynamiqueI}}]
Soit $Y$ un espace $\cA$-analytique global. Notons $X := Y \times_{\cA} \EP{1}{\cA}$ la droite projective relative au-dessus de~$Y$. Soit~$\varphi$ un endomorphisme polaris\'e de~$X$ au-dessus de~$Y$. 

Pour tout point~$y$ de~$Y$, l'endomorphisme~$\varphi$ induit un endomorphisme~$\varphi_{y}$ de la fibre $X_{y} \simeq \EP{1}{\cH(y)}$. Notons $\mu_{\varphi_{y}}$ la mesure d'\'equilibre associ\'ee, identifi\'ee \`a son image sur~$X$. 

Alors la famille de mesures $(\mu_{\varphi_{y}})_{y\in Y}$ sur~$X$ est continue.
\end{enonce*}

Ce r\'esultat g\'en\'eralise un th\'eor\`eme d\'emontr\'e par C.~Favre dans~\cite{FavreEndomorphisms}, qui traite du cas particulier o\`u la base~$Y$ est un disque hybride (mais autorise une dimension relative arbitraire).


%
%
%

\medbreak

Dans~\cite{DynamiqueII}, le second auteur explore des cons\'equences arithm\'etiques du th\'eor\`eme de continuit\'e pr\'ec\'edent, en \'etudiant des points de torsion de courbes elliptiques, selon une strat\'egie emprunt\'ee \`a~\cite{DKY}. Le syst\`eme dynamique permettant d'\'etablir le lien est celui associ\'e \`a un morphisme de Latt\`es provenant de la multiplication par~2 sur une courbe elliptique, c'est-\`a-dire un morphisme~$L$ faisant commuter le diagramme 
\[\begin{tikzcd}
E \ar[r, "{[}2{]}"] \ar[d, "\pi"]& E \ar[d, "\pi"]\\
\P^1 \ar[r, "L"] & \P^1
\end{tikzcd},\]
o\`u $E$ est une courbe elliptique et $\pi$ le quotient par $\{\pm1\}$. Le r\'esultat principal de l'article~\cite{DynamiqueII} est le suivant. Il r\'esout par l'affirmative une conjecture due \`a F.~Bogomolov, H.~Fu et Yu.~Tschinkel. Pour toute courbe elliptique~$E$, on note $E[\infty]$ (resp. $E[2]$) l'ensemble de ses points de torsion (resp. de 2-torsion).

\begin{enonce*}{Th\'eor\`eme}[\protect{\cite[th\'eor\`eme~A]{DynamiqueII}}]
Il existe $M \in \R_{>0}$ telle que, pour toutes courbes elliptiques~$E_{a}$ et~$E_{b}$ sur~$\bar\Q$ et tous rev\^etements doubles $\pi_{a} \colon E_{a}\to \P^1_{\bar\Q}$ et $\pi_{b} \colon E_{b}\to \P^1_{\bar\Q}$ tels que $\pi_{a}(E_{a}[2]) \ne \pi_{b}(E_{b}[2])$, on ait
\[ \sharp \big( \pi_{a}(E_{a}[\infty]) \cap \pi_{b}(E_{b}[\infty])\big) \le M.  \]
\end{enonce*}

L'apport des espaces de Berkovich sur~$\Z$ dans la preuve du th\'eor\`eme est crucial en ce qu'il permet, par des r\'esultats de continuit\'e tel que celui de~\cite{DynamiqueI}, de ramener l'\'etude du probl\`eme global \`a une \'etude, plus simple, au-dessus du point central de~$\cM(\Z)$. 

\medbreak

\noindent\textbf{Compactifications ultram\'etriques d'espaces complexes}\nopagebreak

Pour terminer cette introduction, pr\'esentons une autre application de la th\'eorie des espaces de Berkovich globaux, cette fois-ci, dans le cas de l'anneau de base~$\C^\hyb$. Elle concerne des questions de compactification de vari\'et\'es analytiques complexes. Il s'agit d'un travail du second auteur en cours de r\'edaction (\cf~\cite{CompactificationsOberwolfach} pour une note introductive).

Soit~$X$ une vari\'et\'e alg\'ebrique complexe, c'est-\`a-dire un sch\'ema s\'epar\'e de type fini sur~$\C$, et soit~$X^\h$ son analytification au sens de la g\'eom\'etrie analytique complexe. Il existe souvent de nombreuses fa\c cons de compactifier~$X^\h$ et certaines propri\'et\'es int\'eressantes de~$X$, telles que la lissit\'e, peuvent dispara\^itre au cours du processus. Les r\'esultats que nous exposons ici permettent de rem\'edier \`a ces d\'efauts, pour peu que l'on s'autorise \`a quitter le monde des vari\'et\'es complexes pour celui des espaces hybrides. 

L'ingr\'edient principal de la construction est l'espace de Berkovich~$X^{\infty}$ introduit par A.~Thuillier dans~\cite{beth}. 
C'est une partie ferm\'ee de l'analytification \`a la Berkovich~$X^{\an}_{\C^0}$ de~$X$ sur~$\C^{0}$, le corps~$\C$ muni de la valeur absolue triviale. A.~Thuillier l'a utilis\'ee de fa\c con spectaculaire pour \'etudier le type d'homotopie du complexe dual associ\'e \`a une compactification \`a croisements normaux de~$X$ (et notamment d\'emontrer qu'il ne d\'epend pas de la compactification choisie).

En utilisant l'analytification~$X^\an_{\C^\hyb}$ de~$X$ sur~$\C^\hyb$, l'espace~$X^\infty$, puis une proc\'edure de normalisation, inspir\'ee par le travail~\cite{TXZ} de L.~Fantini sur les liens ultram\'etriques de singularit\'es complexes, on construit un espace localement annel\'e~$X\urc$ dans lequel s'envoie naturellement~$X^\h$. L'espace~$X\urc$ poss\`ede localement une structure (non canonique) d'espace de Berkovich sur~$\C^\hyb$. 

\begin{enonce*}{Th\'eor\`eme}
Pour toute vari\'et\'e alg\'ebrique complexe~$X$, l'espace $X\urc$ est compact et l'application naturelle
\[X^\h \longrightarrow X\urc\] 
est une immersion ouverte. Le compl\'ementaire de son image s'identifie \`a un quotient de~$X^\infty$.

L'application $X \mapstoo X\urc$ est fonctorielle vis-\`a-vis des morphismes propres.
\end{enonce*}

Remarquons que, $X\urc$ \'etant construit \`a partir d'une analytification de~$X$, il en h\'erite des propri\'et\'es~: lissit\'e\footnote{Nous renvoyons au travail \cite{BergerEtale} de D.~Berger pour la d\'efinition de morphisme lisse dans le cadre des espaces de Berkovich globaux.}, interpr\'etation modulaire s'il en existe, etc.

\medbreak

Les fonctions et faisceaux sur~$X$ et sur $X\urc$ sont tr\`es proches, une propri\'et\'e qui peut \^etre rendue pr\'ecise sous la forme d'un th\'eor\`eme GAGA. Pour l'\'enoncer, indiquons qu'il suit de la construction qu'\`a tout faisceau coh\'erent~$\cF$ sur~$X$, on peut fonctoriellement associer un faisceau coh\'erent sur~$X\urc$, que nous noterons~$\cF\urc$. 

\begin{enonce*}{Th\'eor\`eme}
Soit~$X$ une vari\'et\'e alg\'ebrique complexe. Le foncteur 
\[\cF \in \mathrm{Coh}(X) \longmapsto \cF\urc \in \mathrm{Coh}(X\urc)\]
est une \'equivalence de cat\'egories.

Pour tout faisceau coh\'erent~$\cF$ sur~$X$ et tout $q\in \N$, on a un isomorphisme naturel
\[H^q(X,\cF) \stackrel{\sim}{\longrightarrow} H^q(X\urc,\cF\urc).\]
\end{enonce*}

La th\'eorie des espaces de Berkovich globaux d\'evelopp\'ee dans ce manuscrit constitue un pr\'eliminaire indispensable \`a cette th\'eorie de la compactification. Deux ingr\'edients y jouent un r\^ole pr\'epond\'erant~: le foncteur d'analytification (\cf~chapitre~\ref{catan}), qui intervient dans la construction m\^eme des espaces~$X\urc$, et les r\'esultats d'annulation cohomologiques (\cf~chapitre~\ref{chap:Stein}), sur lesquels repose la preuve du th\'eor\`eme GAGA.

\section{Pr\'erequis}\label{sec:prerequis}

Ce texte est r\'edig\'e de fa\c con \`a pouvoir servir d'introduction \`a la th\'eorie des espaces de Berkovich globaux. Celle-ci trouve sa source dans le livre fondateur de Berkovich~\cite{Ber1} et les textes du second auteur~\cite{A1Z,EtudeLocale}. Nous nous appuyons de fa\c con essentielle sur ces r\'ef\'erences, dont nous utilisons parfois certains r\'esultats sans les red\'emontrer, mais prenons soin de red\'efinir toutes les notions inh\'erentes \`a la th\'eorie des espaces de Berkovich globaux, afin que le manuscrit puisse \^etre lu sans revenir aux textes sus-cit\'es. Le chapitre~\ref{chap:rappels} propose \'egalement un rappel des r\'esultats principaux de~\cite{A1Z,EtudeLocale}.

Les espaces de Berkovich globaux m\^elant espaces de Berkovich classiques (sur un corps valu\'e ultram\'etrique) et espaces analytiques complexes, il est utile que le lecteur soit familier avec les rudiments de ces deux th\'eories. Aucune connaissance pouss\'ee n'est n\'ecessaire, le manuscrit traitant, pour l'essentiel, des propri\'et\'es de base des espaces globaux.



Pour les aspects analytiques complexes, nous renvoyons aux ouvrages classiques~\cite{Gu-Ro,Gr-Re2}, ainsi qu'\`a~\cite{Gr-Re} pour la th\'eorie des espaces de Stein. Les m\'ethodes utilis\'ees dans ce texte empruntent largement \`a ces r\'ef\'erences, et le lecteur consultera avec profit les preuves qui y figurent, dans un cadre techniquement plus ais\'e. 

Pour les aspects analytiques ultram\'etriques, nous renvoyons \`a l'ouvrage de Berkovich~\cite{Ber1}. Pr\'ecisons que n'intervient dans ce manuscrit qu'une version simplifi\'ee des espaces de Berkovich, d\'efinie de fa\c con analogue aux espaces analytiques complexes, et non la version plus sophistiqu\'ee, h\'erit\'ee de la g\'eom\'etrie analytique rigide de John Tate (\cf~\cite{TateRAS}), bas\'ee sur la th\'eorie des alg\`ebres affino\"ides. Une connaissance approfondie de la th\'eorie g\'en\'erale des espaces de Berkovich sur un corps valu\'e n'est donc pas indispensable \`a la lecture de ce texte. En revanche, il est important de comprendre pr\'ecis\'ement la droite de Berkovich sur un corps valu\'e, et particuli\`erement sa topologie, y compris lorsque le corps de base n'est pas alg\'ebriquement clos. Les textes introductifs \cite{TemkinIntroduction,PoineauTurchettiVIASMI} peuvent ici apporter leur aide.


Attirons, une nouvelle fois, l'attention du lecteur sur le fait que, lorsqu'on choisit pour un anneau de Banach de base un corps valu\'e ultram\'etrique, les espaces de Berkovich globaux consid\'er\'es dans ce texte ne permettent pas de retrouver tous les espaces de Berkovich classiques, mais seulement ceux sans bord.

\section{Remerciements}

Le premier auteur tient tout particuli\`erement \`a remercier Antoine Ducros et Fr\'ed\'eric Paugam pour de nombreuses discussions stimulantes. Plusieurs r\'esultats de ce texte ont pu voir le jour durant sa th\`ese gr\^ace \`a leurs conseils et leurs remarques. Le second auteur remercie Dorian Berger pour ses nombreux commentaires. Les deux auteurs souhaitent \'egalement exprimer leur profonde gratitude \`a un rapporteur anonyme, qui a formul\'e de nombreuses suggestions pertinentes et contribu\'e de fa\c con significative \`a l'am\'elioration du manuscrit.

Une partie de ce texte a \'et\'e r\'edig\'ee pendant un s\'ejour du second auteur \`a Francfort. Il remercie l'universit\'e Goethe et ses membres pour les excellentes conditions d'accueil dont il a b\'en\'efici\'e, ainsi que la Deutsche Forschungsgemeinschaft (TRR 326 Geometry and Arithmetic of Uniformized Structures, project number 444845124) pour son soutien financier. 

Les auteurs ont b\'en\'efici\'e du soutien de l'ANR (projet ANR JCJC \og GLOBES\fg{} ANR-12-JS01-0007) et de l'ERC (projet ERC Starting Grant \og TOSSIBERG \fg{} 637027). 

\chapter[Pr\'eliminaires et rappels]{Pr\'eliminaires et rappels}\label{chap:rappels}

Dans ce chapitre, nous pr\'esentons les bases de la g\'eom\'etrie analytique sur un anneau de Banach telle que d\'evelopp\'ee par V.~Berkovich dans le premier chapitre de son ouvrage fondateur~\cite{Ber1}. Nous rappelons \'egalement quelques r\'esultats locaux d\'emontr\'es par le second auteur dans~\cite{A1Z} et~\cite{EtudeLocale}. 

Dans la section~\ref{sec:spectreanalytique}, nous introduisons la d\'efinition de spectre analytique d'un anneau de Banach. Nous en donnons plusieurs exemples fondamentaux, qui nous guideront dans toute la suite de ce texte~: cas des corps valu\'es, de l'anneau des entiers relatifs~$\Z$, des anneaux d'entiers de corps de nombres, des corps hybrides, des anneaux de valuation discr\`ete et des anneaux de Dedekind.  Nous introduisons ensuite, dans la section~\ref{sec:espaceaffineanalytique}, la d\'efinition d'espace affine analytique sur un anneau de Banach. Il s'agit d'un espace localement annel\'e. Les sections de son faisceau structural sont des fonctions que nous qualifierons d'analytiques.

Le reste du chapitre est constitu\'e, pour la majeure partie, de rappels sur les travaux men\'es par le second auteur dans~\cite{A1Z} et~\cite{EtudeLocale}. Nous commen\c{c}ons par des d\'efinitions et r\'esultats techniques utiles. Dans la section~\ref{sec:spconvexe}, nous introduisons les parties spectralement convexes. Il s'agit de parties qui peuvent \^etre naturellement identifi\'ees \`a des spectres analytiques d'anneaux de Banach est nous les utiliserons constamment dans la suite. La section~\ref{sec:dcl} est consacr\'ee aux disques, couronnes et domaines polynomiaux. Nous y rappelons des r\'esultats sur les fonctions globales sur les disques et les couronnes qui, comme on s'y attend, peuvent s'exprimer \`a l'aide de s\'eries convergentes. La section~\ref{description_locale} contient une description pr\'ecise de bases de voisinages des points de la droite affine analytique sur un anneau de Banach. 

La section finale~\ref{sec:resultatslocaux} est plus pouss\'ee. Nous y pr\'esentons quelques outils techniques essentiels pour la suite de notre \'etude, tels les th\'eor\`emes de division et de pr\'eparation de Weierstra\ss. Ils sont valables pour des anneaux de Banach qualifi\'es de basiques, dont nous rappelons la d\'efinition. Indiquons que tous les exemples mentionn\'es plus haut sont basiques. Afin d'obtenir des r\'esultats locaux pr\'ecis, nous distinguons diff\'erents types de points~: points rigides, rigides \'epais ou localement transcendants. Nous concluons en rappelant les r\'esultats principaux de~\cite{EtudeLocale}~:  noeth\'erianit\'e et r\'egularit\'e des anneaux locaux, prolongement analytique, coh\'erence du faisceau structural.

\section{Spectre analytique d'un anneau de Banach}\label{sec:spectreanalytique}

\begin{defi}\index{Norme|textbf}
Une \emph{norme} sur un anneau~$\cA$ est une application 
\[ \nm_{\cA} \colon \cA \too \R_{\ge 0}\]
v\'erifiant les propri\'et\'es suivantes~:
\begin{enumerate}[i)]
\item $\forall a,b\in \cA$, $\|a+b\|_{\cA} \le \|a\|_{\cA} + \|b\|_{\cA}$ ;
\item $\forall a\in \cA$, $\|-a\|_{\cA} = \|a\|_{\cA}$ ;
\item $\forall a \in \cA$, $\|a\|_{\cA} = 0 \Longleftrightarrow a=0$.
\end{enumerate}
\end{defi}

\begin{defi}\index{Norme!sous-multiplicative|textbf}
Une norme~$\nm_{\cA}$ sur un anneau~$\cA$ est dite \emph{sous-multiplicative} si, pour tout ensemble fini~$I$ et toute famille $(a_{i})_{i\in I}$ d'\'el\'ements de~$\cA$, on a
\[ \big\| \prod_{i\in I} a_{i} \big\|_{\cA} \le \prod_{i\in I} \|a_{i}\|_{\cA}.\]
\end{defi}

\begin{rema}
Soit~$\cA$ un anneau. Si $\cA = 0$, l'unique norme d\'efinie sur~$\cA$ est la norme nulle et elle est sous-multiplicative.

Supposons que~$\cA \ne 0$. Soit~$\nm_{\cA}$ une norme sous-multiplicative sur~$\cA$. En appliquant la formule de la d\'efinition avec $I = \emptyset$, on obtient $\|1\|_{\cA} \le 1$. En l'appliquant avec $I = \{0,1\}$ et $a_{0} = a_{1} = 1$, on obtient $\|1\|_{\cA} \le \|1\|_{\cA}^2$, d'o\`u l'on tire $\|1\|_{\cA} \ge 1$, puis $\|1\|_{\cA} = 1$. 
\end{rema}

\begin{defi}\index{Anneau!de Banach|textbf}
Un \emph{anneau de Banach} est un couple $(\cA,\nm_{\cA})$ o\`u $\cA$ est un anneau et $\nm_{\cA}$ une norme sous-multiplicative sur~$\cA$ pour laquelle il est complet.

Un \emph{morphisme born\'e} entre anneaux de Banach $(\cA,\nm_{\cA})$ et $(\cA',\nm_{\cA'})$ est une application $\tau \colon \cA \to \cA'$ telle qu'il existe une constante $C \in \R_{\ge 0}$ v\'erifiant la propri\'et\'e suivante~:
\[ \forall a \in \cA,\ \|\tau(a)\|_{\cA'} \le C\, \|a\|_{\cA}.\]
Un \emph{morphisme contractant} est un morphisme born\'e pour lequel on peut choisir la constante $C=1$.
\end{defi}

Soit~$(\cA,\|.\|_\cA)$ un anneau de Banach. 

\begin{defi}[\protect{\cite[\S 1.2]{Ber1}}]\label{def:MA}%
\index{Spectre analytique|textbf}%
\nomenclature[Fa]{$\cM(\cA)$}{spectre analytique de~$\cA$} 
Le \emph{spectre analytique} de~$(\cA,\|.\|_\cA)$ est l'ensemble~$\cM(\cA,\nm_{\cA})$ des semi-normes multiplicatives born\'ees sur~$\cA$, c'est-\`a-dire l'ensemble des applications
\[\va \colon \cA \too \R_{\ge 0}\]
v\'erifiant les propri\'et\'es suivantes~:
\begin{enumerate}[i)]
\item $|0| = 0$ et $|1| = 1$;
\item $\forall a,b \in \cA,\ |a+b| \le |a| + |b|$ ;
\item $\forall a,b \in \cA,\ |ab| = |a| \, |b|$ ;
\item $\forall a\in \cA,\ |a| \le \|a\|_{\cA}$.
\end{enumerate}
On \'ecrira souvent abusivement $\cM(\cA)$ au lieu de $\cM(\cA,\nm_{\cA})$. 

On munit~$\cM(\cA,\nm_{\cA})$ de la topologie de la convergence simple, c'est-\`a-dire la topologie la plus grossi\`ere telle que, pour tout $a\in \cA$, l'application
\[\fonction{a}{\cM(\cA)}{\R}{|.|}{|a|}\]
soit continue.
\end{defi}

\begin{rema}
On peut remplacer la condition iv) de la d\'efinition~\ref{def:MA} par la condition apparemment plus faible 
\[\textrm{iv')}\ \exists M \in \R_{\ge 0}, \, \forall a\in \cA,\ |a| \le M\, \|a\|_{\cA}.\]
En effet, pour $a\in \cA$ et $n\in \N^\ast$, on a alors $|a|^n = |a^n| \le M\, \|a^n\|_{\cA} \le \|a\|_{\cA}^n$, d'o\`u $|a| \le M^{1/n} \, \|a\|$, puis $|a| \le \|a\|_{\cA}$ en passant \`a la limite quand $n$ tend vers l'infini.
\end{rema}

\begin{lemm}\label{lem:M(tau)topo}
Soit $\tau \colon \cA \to \cA'$ un morphisme born\'e d'anneaux de Banach. Toute semi-norme multiplicative born\'ee sur~$\cA'$ induit, par composition avec~$\tau$, une semi-norme multiplicative born\'ee sur~$\cA$. 

L'application 
\[\cM(\tau) \colon \cM(\cA') \too \cM(\cA)\] 
ainsi d\'efinie est continue.

On obtient ainsi un foncteur contravariant $\cM$ de la cat\'egorie des anneaux de Banach muni des morphismes born\'es vers celle des espaces topologiques.
\qed
\end{lemm}


\begin{theo}[\protect{\cite[theorem~1.2.1]{Ber1}}]\label{th:MAnonvide}
Si $\cA \ne 0$, alors $\cM(\cA)$ est compact et non vide.
\qed
\end{theo}

On pense g\'en\'eralement aux \'el\'ements de~$\cM(\cA)$ comme \`a des points d'un espace et on les note~$x$, $y$, etc. On note alors~$\va_{x}$, $\va_{y}$, etc. les semi-normes sur~$\cA$ associ\'ees.%
\nomenclature[Fb]{$\va_{x}$}{semi-norme sur~$\cA$ associ\'ee \`a point~$x$ de~$\cM(\cA)$}

\begin{defi}\index{Corps!residuel complete@r\'esiduel compl\'et\'e|textbf}
Soit $x\in \cM(\cA)$. L'ensemble
\[\Ker(x) := \{a \in \cA \,\colon |a|_{x} =0\}\]
est un id\'eal premier de~$\cA$.%
\nomenclature[Fc]{$\Ker(x)$}{noyau de~$\va_{x}$}

La semi-norme~$\va_{x}$ induit une valeur absolue sur l'anneau int\`egre $\cA/\Ker(x)$ et son corps de fractions $\Frac(\cA/\Ker(x))$. On appelle \emph{corps r\'esiduel compl\'et\'e} de~$x$ le compl\'et\'e 
\[ \cH(x) := \widehat{\Frac(\cA/\Ker(x))}.\]%
\nomenclature[Fd]{$\cH(x)$}{corps r\'esiduel compl\'et\'e de~$x$}
On notera simplement~$\va$ la valeur absolute induite sur~$\cH(x)$, cela ne pr\^etant pas \`a confusion.
\end{defi}

Pour tout $x\in \cM(\cA)$, on a, par construction, un morphisme contractant naturel \index{Evaluation@\'Evaluation|textbf}
\[ \ev_{x} \colon \cA \too \cH(x).\]%
\nomenclature[Fe]{$\ev_{x}$}{morphisme d'\'evaluation en~$x$}
Pour $a \in \cA$, on pose
\[ a(x) := \ev_{x}(a) \in \cH(x).\]%
\nomenclature[Ff]{$a(x)$}{\'evaluation en~$x$ d'un \'el\'ement~$a$ de~$\cA$}
On a alors
\[ |a(x)| = |a|_{x}.\]

\begin{lemm}\label{lem:M(tau)xx'}\nomenclature[Faa]{$\cM(\tau)$}{application $\cM(\cA') \to \cM(\cA)$ induite par un morphisme d'anneaux de Banach $\tau \colon \cA \to \cA'$}\nomenclature[Fda]{$\tau_{x,x'}$}{application $\cH(x) \to \cH(x')$ induite par un morphisme~$\tau$ d'anneaux de Banach}
Soit $\tau \colon \cA \to \cA'$ un morphisme born\'e d'anneaux de Banach. Soit $x' \in \cM(\cA')$ et posons $x := \cM(\tau)(x')$. Pour tout $a\in \cA$, on a
\[ |\tau(a)(x')| = |a(x)|.\]

Le  morphisme $\cA \to \cH(x')$, obtenu en composant par~$\tau$ le morphisme d'\'evaluation $\ev_{x'} \colon \cA' \to \cH(x')$, se factorise par un morphisme de corps isom\'etrique
\[\tau_{x',x} \colon \cH(x) \too \cH(x').\]
\qed
\end{lemm}

\begin{defi}\label{def:seminormesp}\index{Norme!spectrale|textbf}\index{Norme!uniforme|textbf}\index{Semi-norme|see{Norme}}\index{Anneau!de Banach!uniforme|textbf}
On d\'efinit la \emph{semi-norme spectrale}~$\nm_{\cA,\sp}$ sur~$\cA$ associ\'ee \`a~$\nm_{\cA}$ par 
\[ \forall a\in \cA,\ \|a\|_{\cA,\sp} := \inf_{n \ge 1} (\|a^n\|_{\cA}^{1/n}).\]
On dit que~$\nm_{\cA}$ ou $(\cA,\nm_{\cA})$, voire $\cA$ par abus, est \emph{uniforme} si $\nm_{\cA} = \nm_{\cA,\sp}$.
\end{defi}

\begin{lemm}[\protect{\cite[theorem~1.3.1]{Ber1}}]\label{lem:spuni}
Pour tout $a\in \cA$, on a
\[ \|a\|_{\cA,\sp} = \max_{x\in \cM(\cA)} (|a(x)|).\]
\qed
\end{lemm}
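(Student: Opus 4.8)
The plan is to establish the two inequalities separately; one is elementary and the other needs a small Banach‑ring construction. Put $\rho:=\|a\|_{\cA,\sp}=\inf_{n\ge1}\|a^n\|_\cA^{1/n}$, and note that if $\cA=0$ the statement is trivial, so assume $\cA\ne0$. For the inequality $\max_{x}|a(x)|\le\rho$: given $x\in\cM(\cA)$ and $n\ge1$, multiplicativity of $|\cdot|_x$ gives $|a(x)|^n=|a^n(x)|=|a^n|_x$, and $|a^n|_x\le\|a^n\|_\cA$ by definition of $\cM(\cA)$, hence $|a(x)|\le\|a^n\|_\cA^{1/n}$; taking the infimum over $n$ yields $|a(x)|\le\rho$. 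Since $x\mapsto|a(x)|=|a|_x$ is continuous on $\cM(\cA)$ by definition of the topology and $\cM(\cA)$ is compact (Theorem~\ref{th:MAnonvide}), the supremum over $x$ is attained and is $\le\rho$. It therefore remains to produce a point $x_0\in\cM(\cA)$ with $|a(x_0)|\ge\rho$; this, combined with the first part, yields equality, and since $\lim_n\|a^n\|_\cA^{1/n}=\inf_n\|a^n\|_\cA^{1/n}=\rho$ (submultiplicativity), this is exactly the asserted formula.

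If $\rho=0$ the sought point is given by the first part, so assume $\rho>0$. Consider the Banach ring $\cA_\rho:=\cA\langle\rho S\rangle$ of power series $\sum_{n\ge0}c_nS^n$ with $c_n\in\cA$ and $\|\sum_{n\ge0}c_nS^n\|_\rho:=\sum_{n\ge0}\|c_n\|_\cA\,\rho^{-n}<\infty$; it is immediate that $\|\cdot\|_\rho$ is a submultiplicative norm for which $\cA_\rho$ is complete, and that $\|S\|_\rho=\rho^{-1}$. Let $J\subseteq\cA_\rho$ be the closure of the ideal generated by $1-aS$ and set $\cB_\rho:=\cA_\rho/J$, a Banach ring for the residue norm. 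Granting for the moment that $\cB_\rho\ne0$, Theorem~\ref{th:MAnonvide} provides a point $\chi\in\cM(\cB_\rho)$; composing the seminorm $|\cdot|_\chi$ with the bounded ring morphism $\cA\to\cA_\rho\to\cB_\rho$ gives a bounded multiplicative seminorm on $\cA$, i.e.\ a point $x_0\in\cM(\cA)$. In $\cB_\rho$ one has $\overline{a}\,\overline{S}=\overline{1}$ (because $1-aS\in J$), so $|a(x_0)|\,|\overline{S}|_\chi=1$, while $|\overline{S}|_\chi\le\|\overline{S}\|_{\cB_\rho}\le\|S\|_\rho=\rho^{-1}$; hence $|a(x_0)|\ge\rho$, as wanted.

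The crux is the claim $\cB_\rho\ne0$, i.e.\ $1\notin J$, and it is here that the definition of $\rho$ as an infimum is used. Suppose $1\in J$: there are $h_j=\sum_{n\ge0}b_{j,n}S^n\in\cA_\rho$ (with $b_{j,n}\in\cA$) such that $\delta_j:=1-(1-aS)h_j\to0$ in $\cA_\rho$; write $\delta_j=\sum_{k\ge0}\delta_{j,k}S^k$. Comparing coefficients gives $b_{j,0}=1-\delta_{j,0}$ and $b_{j,n}=a\,b_{j,n-1}-\delta_{j,n}$ for $n\ge1$, whence by induction $a^n=b_{j,n}+\sum_{k=0}^{n}a^{n-k}\delta_{j,k}$ for all $n\ge0$. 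Applying $\|\cdot\|_\cA$ (submultiplicative) and multiplying by $\rho^{-n}$, the nonnegative reals $\alpha_n:=\|a^n\|_\cA\,\rho^{-n}$ satisfy $\alpha_n\le\|b_{j,n}\|_\cA\rho^{-n}+\sum_{k=0}^{n}\alpha_{n-k}\,\|\delta_{j,k}\|_\cA\rho^{-k}$. Summing over $0\le n\le M$ and interchanging the order of summation in the double sum gives, for every $M$,
\[
\sum_{n=0}^{M}\alpha_n\ \le\ \|h_j\|_\rho+\Big(\sum_{k\ge0}\|\delta_{j,k}\|_\cA\rho^{-k}\Big)\sum_{n=0}^{M}\alpha_n\ =\ \|h_j\|_\rho+\|\delta_j\|_\rho\sum_{n=0}^{M}\alpha_n .
\]
Choosing $j$ with $\|\delta_j\|_\rho<1$ yields $\sum_{n=0}^{M}\alpha_n\le\|h_j\|_\rho/(1-\|\delta_j\|_\rho)$ uniformly in $M$, hence $\sum_{n\ge0}\|a^n\|_\cA\,\rho^{-n}<\infty$. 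But $\|a^n\|_\cA^{1/n}\ge\rho$ for every $n$ since $\rho$ is the infimum, so $\|a^n\|_\cA\,\rho^{-n}=(\|a^n\|_\cA^{1/n}/\rho)^n\ge1$ and the series diverges — a contradiction. Thus $\cB_\rho\ne0$, which completes the argument.

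I expect this last step to be the main obstacle: the point of introducing $\cA\langle\rho S\rangle$ and then killing $1-aS$ is precisely to convert a purely ``norm‑theoretic'' fact — the divergence of $\sum_n\|a^n\|_\cA\rho^{-n}$, which holds automatically because $\|a^n\|_\cA^{1/n}\ge\rho$ — into the non‑triviality of a Banach ring, whose spectrum (nonempty by Theorem~\ref{th:MAnonvide}) then supplies a point where $|a|$ is at least $\rho$. Everything else — checking that $\cA\langle\rho S\rangle$ and $\cB_\rho$ are Banach rings, that the relevant morphisms are bounded, and the bookkeeping with power‑series coefficients — is routine.
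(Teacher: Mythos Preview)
The paper does not give its own proof of this lemma: it is stated with a citation to \cite[theorem~1.3.1]{Ber1} and closed by \qed. Your argument is correct and is essentially a self-contained reconstruction of Berkovich's proof. The standard version in \cite{Ber1} is phrased contrapositively --- assuming $\max_x|a(x)|<\rho$, one shows via the invertibility criterion (\cite[corollary~1.2.4]{Ber1}) that $1-aT$ is invertible in a suitable disc algebra, forcing $\sum_n\|a^n\|\rho^{-n}<\infty$ --- whereas you construct the point~$x_0$ directly by exhibiting a nonzero quotient $\cA\langle\rho S\rangle/\overline{(1-aS)}$; your coefficient computation is exactly what underlies that invertibility criterion, so the two routes are equivalent.
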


\begin{rema}\label{rem:completionuniforme}
Notons $\hatcA$ le s\'epar\'e compl\'et\'e de~$\cA$ pour la semi-norme spectrale~$\nm_{\cA,\sp}$. Le morphisme de compl\'etion $\cA \to \hatcA$ est contractant. L'application associ\'ee entre les spectres
\[\cM(\hatcA) \too \cM(\cA)\] 
est un hom\'eomorphisme qui induit des isomorphismes isom\'etriques entre les corps r\'esiduels compl\'et\'es.
\end{rema}

Nous allons maintenant dresser une liste d'exemples \`a laquelle nous nous r\'ef\'ererons constamment par la suite. Ce sont eux qui motivent le d\'eveloppement de la th\'eorie. Introduisons, au pr\'ealable, quelques notations.

\begin{nota}\index{Valeur absolue triviale|textbf}%
\nomenclature[Ba]{$\va_{\infty}$}{Valeur absolue usuelle sur $\Z$, $\Q$, $\R$, $\C$, etc.}%
\nomenclature[Bb]{$\va_{p}$}{Valeur absolue $p$-adique normalis\'ee sur $\Z$, $\Q$, $\Q_{p}$, $\C_{p}$, etc.}%
\nomenclature[Bc]{$\va_{0}$}{Valeur absolue triviale sur un anneau}
On note~$\va_{\infty}$ la valeur absolue usuelle sur~$\C$. On note identiquement sa restriction aux sous-anneaux de~$\C$~: $\R$, $\Q$, $\Z$, etc.

Pour tout nombre premier~$p$, on note~$\va_{p}$ la valeur absolue $p$-adique sur~$\Q_{p}$ normalis\'ee par la condition $|p|_{p} = p^{-1}$.  On note identiquement sa restriction aux sous-anneaux de~$\Q_{p}$.

Pour tout anneau~$A$, on note~$\va_{0}$ la \emph{valeur absolue triviale} sur~$A$~: 
\[\fonction{\va_{0}}{\Z}{\R_{\ge 0}}{n}{\begin{cases} 0 \textrm{ si } n=0~;\\ 1 \textrm{ sinon.}\end{cases}}\]
\end{nota}

Ajoutons un lemme technique utile.

\begin{lemm}\label{lem:critcomvalabs}
Soient $k$ un corps et $\va$ et $\va'$ deux valeurs absolues sur~$k$. Supposons que~$\va'$ n'est pas triviale et que
\[\forall a\in k,\ |a|' > 1 \implies |a| \geq |a|'\]
Alors il existe $\eps\in\intof{0,1}$ tel que $\va'=\va^\eps$.
\end{lemm}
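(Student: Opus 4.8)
The plan is to reduce everything to the classical comparison theorem for absolute values (Ostrowski / the standard fact that two absolute values inducing the same topology are powers of each other), and the key is to show that the hypothesis forces $\va$ and $\va'$ to be \emph{equivalent}, i.e.\ to induce the same topology on $k$. First I would record the elementary consequence of the hypothesis obtained by passing to inverses: if $|a|' < 1$ with $a \neq 0$, then $|a^{-1}|' > 1$, so $|a^{-1}| \geq |a^{-1}|'$, hence $|a| \leq |a|'$. Combining this with the hypothesis itself, one sees that $|a|' > 1 \Leftrightarrow |a| > |a|' \geq 1$ is not quite symmetric, so I would be careful: what we get cleanly is $|a|' \geq 1 \implies |a| \geq 1$ (the case $|a|'=1$ needs a separate small argument, see below) and $|a|' \leq 1 \implies |a| \leq |a|'$, in particular $|a|' < 1 \implies |a| < 1$.

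The heart of the matter is the implication $|a|' \leq 1 \implies |a| \leq 1$, from which equivalence of the two absolute values follows by the classical criterion. The non-trivial point is the borderline case $|a|' = 1$: a priori $|a|$ could be large. Here I would use that $\va'$ is non-trivial, so there exists $c \in k$ with $|c|' > 1$. Suppose, for contradiction, that some $a \in k$ has $|a|' = 1$ but $|a| > 1$. Then for every $n \geq 1$ the element $a^n c$ satisfies $|a^n c|' = |c|' > 1$, so by hypothesis $|a^n c| \geq |c|'$, i.e.\ $|a|^n |c| \geq |c|'$ — which is automatically true and gives nothing. So instead I would push in the other direction: consider $a^{-n} c$, which also has absolute value $|c|' > 1$ under $\va'$, hence $|a|^{-n}|c| \geq |c|'$ for all $n \geq 1$; letting $n \to \infty$ forces $|a| \leq 1$ (since $|a| > 1$ would make the left side tend to $0$), a contradiction. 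Hence $|a|' = 1 \implies |a| \leq 1$. Together with $|a|' < 1 \implies |a| < 1$ this yields $|a|' \leq 1 \implies |a| \leq 1$, and symmetrically (using the inverse trick once more) $|a|' \geq 1 \implies |a| \geq 1$, so $\va$ and $\va'$ have the same unit ball boundary: $|a|' = 1 \Leftrightarrow |a| = 1$.

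From $\{|a|' \le 1\} \subseteq \{|a| \le 1\}$ the classical theorem on comparison of absolute values (e.g.\ \cite[Chapter~1]{BGR} or Bourbaki) gives that $\va'$ and $\va$ are equivalent and, more precisely, that there is a real $\eps > 0$ with $\va' = \va^\eps$. It remains to check $\eps \in \intof{0,1}$. Pick again $c$ with $|c|' > 1$; then $|c| > 1$ as well (by the equivalence), and the hypothesis gives $|c| \geq |c|' = |c|^\eps$, whence $|c|^{1-\eps} \geq 1$, i.e.\ $\eps \leq 1$ since $|c| > 1$. This shows $\eps \in \intoo{0,1} \cup \{1\} = \intof{0,1}$, completing the proof.

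The step I expect to be the main obstacle is the borderline case $|a|'=1$: one must rule out that $\va$ blows up on the $\va'$-units, and the clean way to do it is the limiting argument with $a^{-n}c$ above — it is short but it is the only place where non-triviality of $\va'$ is genuinely used, and it is easy to get the direction of the inequality wrong. Everything else is either formal manipulation of the hypothesis under inversion, or an appeal to the classical equivalence theorem.
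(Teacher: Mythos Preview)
Your proof is correct. The route differs from the paper's in one structural respect: you reduce to the classical comparison theorem for absolute values (two non-trivial absolute values with $|a|_1<1\Rightarrow|a|_2<1$ are positive powers of each other), whereas the paper reproves that theorem in situ. Concretely, the paper fixes one element~$a$ with $|a|'>1$, defines~$\eps$ by $|a|'=|a|^\eps$, and then for any $b\in k^\times$ uses the density of~$\Q$ in~$\R$ to squeeze $|b|'$ between rational powers of~$|a|'$ and deduce $|b|'=|b|^\eps$ directly. Your approach is conceptually the same but packages the density argument into an external citation.

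One remark: the borderline case $|a|'=1$ that you flag as the main obstacle is actually not needed. Your Step~1 alone (passing to inverses) already gives $|a|'<1\Rightarrow |a|\le|a|'<1$, and this strict implication, together with the non-triviality of both absolute values (which you correctly observe follows from the hypothesis), is exactly the input to the classical comparison theorem in its usual form. So the argument with $a^{-n}c$ is a pleasant exercise but can be dropped entirely; the proof then becomes one line shorter than the paper's.
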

\begin{proof}
Puisque~$\va'$ n'est pas triviale, il existe $a \in k$ tel que $|a|'>1$. Par hypoth\`ese, on a $|a| \ge |a|'$, donc il existe $\eps\in\intof{0,1}$ tel que $|a'| = |a|^\eps$.

Soit $b\in k^*$. Soient $n\in\Z$ et $m\in\N^\ast$ tels que $|b|' > (|a|')^{n/m}$. On a alors $|b^m/a^n|' >1$, donc $|b^m/a^n| >1$, puis $|b| > |a|^{n/m}$ et finalement $|b|^\eps > (|a|')^{n/m}$. En utilisant la densit\'e de~$\Q$ dans~$\R$, on en d\'eduit que $|b|^\eps \ge |b|'$. En appliquant ce r\'esultat \`a l'inverse de~$b$, on obtient $|b|^\eps \le |b|'$, d'o\`u $|b|^\eps = |b|'$.
\end{proof}

\index{Spectre analytique!exemples|(}\index{Anneau!de Banach!uniforme|(}

\begin{exem}\label{ex:corpsvalue}\index{Corps!valu\'e|textbf}
Soit~$k$ un corps et~$\va$ une valeur absolue (archim\'edienne ou non) pour laquelle il est complet. Alors $(k,\va)$ est un anneau de Banach uniforme et son spectre analytique~$\cM(k)$ est r\'eduit \`a un point, correspondant \`a~$\va$.
\end{exem}

\begin{exem}\label{ex:Z}\index{Anneau!des entiers relatifs $\Z$|textbf}
L'anneau des entiers relatifs~$\Z$ muni de la valeur absolue usuelle~$\va_{\infty}$ est un anneau de Banach uniforme. La description de son spectre analytique d\'ecoule du th\'eor\`eme d'Ostrowski. Nous l'explicitons ci-dessous.

Le spectre~$\cM(\Z)$ rev\^et la forme d'une \'etoile~: il poss\'ede un point central~$a_{0}$, qui correspond \`a la valeur absolue triviale~$\va_{0}$, et une infinit\'e de branches, index\'ees par les nombres premiers et la place infinie.%
\nomenclature[Ga]{$a_{0}$}{point de~$\cM(\Z)$ associ\'e \`a la valeur absolue triviale~$\va_{0}$}

Soit~$p$ un nombre premier. D\'efinissons la semi-norme
\[\fonction{\va_{p}^{+\infty}}{\Z}{\R_{\ge 0}}{n}{\begin{cases} 0 \textrm{ si } n=0 \mod p~;\\ 1 \textrm{ sinon.}\end{cases}}\]
Pour tout $\eps\in\intof{0,+\infty}$, notons~$a_{p}^\eps$ le point de~$\cM(\Z)$ associ\'e \`a~$\va_{p}^\eps$. Posons $a_{p}^0:= a_{0}$. L'application $\eps\in\intof{0,+\infty}\mapsto a_p^\eps$ r\'ealise alors un hom\'eomorphisme sur la branche associ\'ee \`a~$p$.%
\nomenclature[Gb]{$a_{p}^\eps$}{point de~$\cM(\Z)$ associ\'e \`a la valeur absolue~$\va_{p}^\eps$}

Pour tout $\eps\in\intof{0,1}$, notons~$a_{\infty}^\eps$ le point de~$\cM(\Z)$ associ\'e \`a~$\va_{\infty}^\eps$. Posons $a_{\infty}^0:= a_{0}$. L'application $\eps\in\intff{0,1}\mapsto a_{\infty}^\eps$ r\'ealise alors un hom\'eomorphisme sur la branche associ\'ee \`a la place infinie.%
\nomenclature[Gc]{$a_{\infty}^\eps$}{point de~$\cM(\Z)$ associ\'e \`a la valeur absolue~$\va_{\infty}^\eps$}

Indiquons \'egalement que tout voisinage de~$a_{0}$ contient enti\`erement toutes les branches \`a l'exception d'un nombre fini.

La figure~\ref{fig:MZ} contient une repr\'esentation d'un plongement (non canonique) de~$\cM(\Z)$ dans~$\R^2$ respectant la topologie.

\begin{figure}[!h]
\centering
\begin{tikzpicture}
\foreach \x [count=\xi] in {-2,-1,...,17}
\draw (0,0) -- ({10*cos(\x*pi/10 r)/\xi},{10*sin(\x*pi/10 r)/\xi}) ;
\foreach \x [count=\xi] in {-2,-1,...,17}
\fill ({10*cos(\x*pi/10 r)/\xi},{10*sin(\x*pi/10 r)/\xi}) circle ({0.07/(sqrt(\xi)}) ;

\draw ({10.5*cos(-pi/5 r)},{10.5*sin(-pi/5 r)}) node{$a_{\infty}$} ;
\fill ({5.5*cos(-pi/5 r)},{5.5*sin(-pi/5 r)}) circle (0.07) ;
\draw ({5.5*cos(-pi/5 r)},{5.5*sin(-pi/5 r)-.1}) node[below]{$a_{\infty}^\eps$} ;

\draw ({11*cos(-pi/10 r)/2+.1},{11*sin(-pi/10 r)/2}) node{$a_2^{+\infty}$} ;
\fill ({2.75*cos(-pi/10 r)},{2.75*sin(-pi/10 r)}) circle ({0.07/(sqrt(2)}) ;
\draw ({2.75*cos(-pi/10 r)},{2.75*sin(-pi/10 r)-.05}) node[below]{$a_2^\eps$} ;

\draw ({12*cos(pi/5 r)/5+.1},{12*sin(pi/5 r)/5+.1}) node{$a_p^{+\infty}$} ;
\end{tikzpicture}
\caption{Le spectre de Berkovich $\cM(\Z)$.}\label{fig:MZ}
\end{figure}

\medbreak

Les parties compactes et connexes de~$\cM(\Z)$ peuvent \'egalement s'exprimer comme des spectres. Plus pr\'ecis\'ement, soit~$V$ une partie compacte et connexe de~$\cM(\Z)$. Notons $S_{V}$ l'ensemble des \'el\'ements de~$\Z$ qui ne s'annulent pas sur~$V$. Posons $\cK(V) := S_{V}^{-1} \, \Z$ et notons~$\cB(V)$ le s\'epar\'e compl\'et\'e de~$\cK(V)$ pour la semi-norme uniforme~$\nm_{V}$ sur~$V$. (Nous retrouverons ces d\'efinitions plus tard dans un cadre plus g\'en\'eral, \cf~d\'efinition~\ref{def:fracrat} et notation~\ref{nota:BV}). Le spectre $\cM(\cB(V))$ s'identifie alors naturellement \`a~$V$, d'apr\`es \cite[proposition~3.1.16]{A1Z} (ce qui signifie que le compact~$V$ est spectralement convexe au sens de la d\'efinition~\ref{def:spconvexe}). \index{Partie!spectralement convexe}

Pour proposer un exemple plus concret, consid\'erons la partie~$N$ de~$\cM(\Z)$ qui est l'enveloppe convexe des valeurs absolues normalis\'ees, autrement dit
\begin{align*}
N &= \Big\{ x\in \cM(\Z) : \forall p\in \Pc,\ |p(x)| \ge \frac1p\Big\}\\
& = \intff{a_{0},a_{\infty}} \cup \bigcup_{p \in \Pc} \intff{a_{0},a_{p}},
\end{align*}
o\`u $\Pc$ d\'esigne l'ensemble des nombres premiers. On a alors $S_{N} = \Z \setminus \{0\}$, $\cK(N) =  \Q$, $\nm_{N} = \max \{ \va_{\infty}, \va_{p} : p\in \Pc\}$ et $\cB(N) = \Q$. 

L'ensemble~$N$ est repr\'esent\'e \`a la figure~\ref{fig:compactMZ}. Il nous semble m\'eriter consid\'eration et poss\`eder des propri\'et\'es int\'eressantes. On peut, par exemple, remarquer que la s\'erie exponentielle est d\'efinie et convergente dans un disque relatif de rayon strictement positif au-dessus de~$N$.

\begin{figure}[h!]
\centering
\begin{tikzpicture}
\draw (0,0) -- ({10*cos(-2*pi/10 r)},{10*sin(-2*pi/10 r)}) ;
\foreach \x [count=\xi] in {-1,0,...,17}
\draw (0,0) -- ({5.5*cos(\x*pi/10 r)/(\xi+1)},{5.5*sin(\x*pi/10 r)/(\xi+1)}) ;
\foreach \x [count=\xi] in {-1,0,...,17}
\draw [dotted] ({5.5*cos(\x*pi/10 r)/(\xi+1)},{5.5*sin(\x*pi/10 r)/(\xi+1)}) -- ({10*cos(\x*pi/10 r)/(\xi+1)},{10*sin(\x*pi/10 r)/(\xi+1)}) ;

\foreach \x [count=\xi] in {-1,0,...,17}
\fill ({5.5*cos(\x*pi/10 r)/(\xi+1)},{5.5*sin(\x*pi/10 r)/(\xi+1)}) circle ({0.07/(sqrt(\xi)}) ;

\draw ({10.5*cos(-pi/5 r)-.35},{10.5*sin(-pi/5 r)+.05}) node{$a_{\infty}$} ;
\fill ({10*cos(-pi/5 r)},{10*sin(-pi/5 r)}) circle (0.07) ;

\fill ({2.75*cos(-pi/10 r)},{2.75*sin(-pi/10 r)}) circle ({0.07/(sqrt(2)}) ;
\draw ({2.75*cos(-pi/10 r)},{2.75*sin(-pi/10 r)}) node[below]{$a_2$} ;

\draw ({6*cos(pi/7 r)/5},{6*sin(pi/7 r)/5}) node{\tiny $a_p$} ;
\end{tikzpicture}
\caption{Le compact~$N$ de $\cM(\Z)$.}\label{fig:compactMZ}
\end{figure}
\end{exem}

\begin{exem}\label{ex:cdn}\index{Anneau!des entiers d'un corps de nombres|textbf}
Soit~$K$ un corps de nombres. Notons~$\Sigma_{\infty}$ l'ensemble des places infinies de~$K$, vues comme classes de conjugaison de plongements complexes. Alors, l'anneau des entiers~$A$ de~$K$ muni de la norme
\[\nm_{A} := \max_{\sigma \in \Sigma_{\infty}} (|\sigma(\wc)|_{\infty}) \]
est un anneau de Banach uniforme. Les descriptions et r\'esultats de l'exemple~\ref{ex:Z} valent encore dans ce cadre, \textit{mutatis mutandis}.

En ce qui concerne le spectre~$\cM(A)$, par exemple, il poss\`ede un point central~$a_{0}$, correspondant \`a la valeur absolue triviale, et une infinit\'e de branches qui en jaillissent~: une pour chaque id\'eal maximal~$\m$ de~$A$ (dont les points seront not\'es~$a_{\m}^\eps$ avec $\eps \in [0,+\infty]$) et une pour chaque place infinie~$\sigma$ de~$K$ (dont les points seront not\'es~$a_{\sigma}^\eps$ avec $\eps \in [0,1]$). Pour plus de pr\'ecisions, nous renvoyons le lecteur \`a~\cite[\S 3.1]{A1Z}.%
\nomenclature[Ha]{$a_{0}$}{point de $\cM(A)$ associ\'e \`a la valeur absolue triviale~$\va_{0}$}%
\nomenclature[Hb]{$a_{\m}^\eps$}{point de $\cM(A)$ associ\'e \`a une valeur absolue $\m$-adique, o\`u $\m$ est un id\'eal maximal de~$A$}%
\nomenclature[Hc]{$a_{\sigma}^\eps$}{point de $\cM(A)$ associ\'e \`a une valeur absolue $\sigma$-adique, o\`u $\sigma$ est une place de~$K$}
\end{exem}

\begin{exem}\label{ex:corpshybride}
\index{Corps!hybride|textbf}%
\nomenclature[Bd]{$k^\hyb$}{corps $k$ muni de la norme hybride}
Soit~$k$ un corps muni d'une valeur absolue non triviale~$\va$ (pour laquelle il n'est pas n\'ecessairement complet). 
Posons $\nm := \max(\va,\va_{0})$. Alors $(k,\nm)$ est un anneau de Banach uniforme. Nous l'appellerons \emph{corps hybride} associ\'e \`a~$k$ et le noterons~$k^\hyb$.

L'application 
\[\eps\in[0,1]\mapstoo \va^\eps\in\cM(k^\hyb)\] 
est un hom\'eomorphisme. 

Le seul point qui n'est pas imm\'ediat est la surjectivit\'e. Soit~$x$ un point de~$\cM(k^\hyb)$ associ\'e \`a une valeur absolue~$\va_{x}$ non triviale. Par d\'efinition du spectre, pour tout $a\in k$, on a $|a|_{x} \le |a|$. Le lemme~\ref{lem:critcomvalabs} assure alors que~$x$ appartient bien \`a l'image de l'application ci-dessus.

Cet exemple appara\^it notamment lorsque l'on souhaite \'etudier des d\'eg\'en\'erescences de familles de vari\'et\'es sur~$k$. Il a \'et\'e introduit par V.~Berkovich dans~\cite{BerW0} pour $k=\C$.
\end{exem}

\begin{exem}\label{ex:avd}\index{Anneau!de valuation discr\`ete|textbf}
Soit $R$ un anneau de valuation discr\`ete complet et~$v$ la valuation associ\'ee. Notons~$\m$ l'id\'eal maximal de~$R$. Soit $r \in \intoo{0,1}$ et posons $\va := r^{v(\wc)}$. Alors $(R,\va)$ est un anneau de Banach uniforme.

D\'efinissons la semi-norme
\[\fonction{\va^{+\infty}}{R}{\R_{\ge 0}}{a}{\begin{cases} 0 \textrm{ si } a \in \m~;\\ 1 \textrm{ sinon.}\end{cases}}\]
L'application 
\[\eps\in[1,+\infty]\mapstoo\va^\eps\in\cM(R)\] 
est alors un hom\'eomorphisme. 

Le seul point qui n'est pas imm\'ediat est la surjectivit\'e. Soit $x\in\cM(R)$. Le noyau~$\Ker(x)$ de la semi-norme~$\va_{x}$ associ\'ee \`a~$x$ est un id\'eal premier de~$R$. Il est donc \'egal soit \`a~$(0)$, soit \`a~$\m$.

Supposons que $\Ker(x) = (0)$. Alors la semi-norme~$\va_{x}$ est une valeur absolue et elle s'\'etend par multiplicativit\'e en une valeur absolue sur~$\Frac(R)$. Soit $a\in \Frac(R)$ tel que $|f| >1$. Alors, on a $f^{-1} \in R$, donc $|f^{-1}|_{x} \le |f^{-1}|$, d'o\`u $|f|_{x} \ge |f|$. Le lemme~\ref{lem:critcomvalabs} assure alors qu'il existe~$\eta \in \intof{0,1}$ tel que $\va = \va_{x}^{\eta}$.

Supposons que $\Ker(x) = \m$.  Alors $R/\Ker(x)$ est un corps valu\'e dont la valeur absolue est born\'ee par~1. Cette derni\`ere est donc triviale. On en d\'eduit que $\va_{x} = \va^{+\infty}$. Ceci conclut la preuve.
\end{exem}

\begin{exem}\label{ex:Dedekind}\index{Anneau!de Dedekind trivialement valu\'e|textbf}
Soit $R$ un anneau de Dedekind. Munissons-le de la valeur absolue triviale~$\va_{0}$. Alors $(R,\va_{0})$ est un anneau de Banach uniforme. Nous le noterons~$R^\triv$.%
\nomenclature[Bca]{$R^\triv$}{Anneau $R$ muni de la valeur absolue triviale}

Soit~$\p$ un id\'eal premier non nul de~$R$. Notons~$v_\p$ la valuation associ\'ee \`a cet id\'eal. Pour tout $r \in \intoo{0,1}$, posons $\va_{\p,r} := r^{v_\p(\wc)}$. D\'efinissons \'egalement la semi-norme
\[\fonction{\va_{\p,0}}{R}{\R_{\ge 0}}{a}{\begin{cases} 0 \textrm{ si } a \in \p~;\\ 1 \textrm{ sinon.}\end{cases}}\]
On a une application injective $\lambda_{\p} \colon r \in {[}{0,1}{[} \mapsto \va_{\p,r}\in\cM(R^{\triv})$. 

Soit $s \in \intoo{0,1}$. Posons $V_{\p,s} := \{ x \in \cM(R^\triv) : \forall a \in \p, |a(x)| \le s\}$. Notons~$R_{\p,s}$ le compl\'et\'e de~$R_{\p}$ par rapport \`a la valeur absolue~$\va_{\p,s}$. Le morphisme born\'e naturel $R\to R_{\p,s}$ induit un hom\'eomorphisme $\mu_{\p,s} \colon \cM(R_{\p,s}) \to V_{\p,s}$. Seule la surjectivit\'e requiert une preuve. Elle repose sur le fait que toute semi-norme associ\'ee \`a un point de~$V_{\p,s}$ est born\'ee par $\va_{\p,s}$ et s'\'etend donc \`a~$R_{\p,s}$. En utilisant l'exemple~\ref{ex:avd}, on en d\'eduit que l'application~$\lambda_{\p}$ induit un hom\'eomorphisme entre~$[0,s]$ et~$V_{\p,s}$. 

Posons $U_{\p} :=  \{ x \in \cM(R^\triv) : \forall a \in \p, |a(x)| < 1\}$. Puisque l'id\'eal~$\p$ est de type fini, on a $U_{\p} = \bigcup_{s \in  \intoo{0,1}} V_{\p,s}$. Le r\'esultat pr\'ec\'edent montre que~$\lambda_{\p}$ induit un hom\'eomorphisme entre $\intfo{0,1}$ et~$U_{\p}$.

Notons $\lambda \colon \bigsqcup_{\p \ne (0)} \intfo{0,1}\to \cM(R^\triv)$ l'application induite par les~$\lambda_{\p}$. C'est un hom\'eomorphisme sur son image. 
Pour tout point~$x$ de~$\cM(R^\triv)$, $\{a \in R : |a(x)| < 1\}$ est un id\'eal premier de~$R$, non nul si $x$ ne correspond pas \`a~$\va_{0}$. Par cons\'equent, l'image de~$\lambda$ est \'egale \`a $\cM(R^\triv) \setminus \{\va_{0}\}$. Puisque $\cM(R^\triv)$ est compact, il s'ensuit que $\cM(R^\triv)$ est hom\'eomorphe au compactifi\'e d'Alexandrov de~$\bigsqcup_{\p \ne (0)} \intfo{0,1}$, le point~$\va_{0}$ correspondant au point \`a l'infini.

Dans le cas o\`u $R=\Z$, on peut identifier le spectre de $\Z^{\triv}$ \`a une partie du spectre de $\Z$, comme repr\'esent\'e sur la figure~\ref{fig:MZtriv}. Cette identification reste valable dans le cas o\`u~$R$ est un anneau d'entiers de corps de nombres.

\begin{figure}[!h]
\centering
\begin{tikzpicture}
\draw [dotted] (0,0) -- ({10*cos(-2*pi/10 r)},{10*sin(-2*pi/10 r)}) ;
\foreach \x [count=\xi] in {-1,0,...,17}
\draw (0,0) --  ({10*cos(\x*pi/10 r)/(\xi+1)},{10*sin(\x*pi/10 r)/(\xi+1)}) ;
\foreach \x [count=\xi] in {-1,0,...,17}
\fill ({10*cos(\x*pi/10 r)/(\xi+1)},{10*sin(\x*pi/10 r)/(\xi+1)}) circle ({0.07/(sqrt(\xi)}) ;

\draw ({10.5*cos(-pi/5 r)},{10.5*sin(-pi/5 r)}) node{$a_{\infty}$} ;
\draw ({11*cos(-pi/10 r)/2},{11*sin(-pi/10 r)/2}) node{$a_2^{+\infty}$} ;
\draw ({12*cos(pi/5 r)/5+.1},{12*sin(pi/5 r)/5}) node{$a_p^{+\infty}$} ;
\end{tikzpicture}
\caption{Le spectre de Berkovich $\cM(\Z^\triv)$.}\label{fig:MZtriv}
\end{figure}
\end{exem}
\index{Spectre analytique!exemples|)}
\index{Anneau!de Banach!uniforme|(}

Nous allons maintenant formuler des d\'efinitions abstraites inspir\'ees par les caract\'eristiques topologiques des exemples pr\'ec\'edents.

\begin{nota}\label{not:Ix}%
\nomenclature[Fg]{$x^\eps$}{point associ\'e \`a la semi-norme~$\va_{x}^\eps$ sur~$\cA$}%
\nomenclature[Fh]{$I_{x}$}{ensemble des $\eps \in \R_{>0}$ pour lesquels $x^\eps$ est d\'efini}
Soit $x\in \cM(\cA)$. On note 
\[ I_{x} := \{ \eps \in \R_{>0} : \va_{x}^\eps \in \cM(\cA)\},\]
c'est-\`a-dire l'ensemble des nombres r\'eels $\eps>0$ tels que l'application $\va_{x}^\eps \colon \cA \to \R_{\ge 0}$ soit une semi-norme multiplicative born\'ee sur~$\cA$. L'ensemble~$I_{x}$ est un intervalle de~$\R_{>0}$.

Pour $\eps \in I_{x}$, on note~$x^\eps$ le point de~$\cM(\cA)$ associ\'e \`a~$\va_{x}^\eps$.
\end{nota}

\begin{defi}\label{def:lineaire}\index{Partie!lineaire@lin\'eaire|textbf}\index{Spectre analytique!lineaire@lin\'eaire|textbf}
Une partie~$V$ de~$\cM(\cA)$ est dite \emph{lin\'eaire} s'il existe une partie~$V^{\partial}$ de~$V$, un point~$x$ de~$V$ et un intervalle $I \subset I_{x}$ non r\'eduit \`a un point satisfaisant les propri\'et\'es suivantes~: 
\begin{enumerate}[i)]
\item $V^{\partial}$ est contenue dans le bord de~$V$ et $\sharp V^{\partial}\le 2$~; 
\item l'application 
\[\begin{array}{ccc}
I & \too & V\setminus V^{\partial}\\
\eps & \mapstoo & x^\eps
\end{array}\]
est un hom\'eomorphisme.
\end{enumerate}

Le spectre~$\cM(\cA)$ est dit \emph{lin\'eaire} s'il est une partie lin\'eaire de lui-m\^eme.
\end{defi}

\begin{exem}\index{Corps!hybride}\index{Anneau!des entiers relatifs $\Z$}\index{Anneau!des entiers d'un corps de nombres}
Les spectres des corps hybrides (\cf~exemple~\ref{ex:corpshybride}) et des anneaux de valuation discr\`ete (\cf~exemple~\ref{ex:avd}) sont lin\'eaires.
\end{exem}

\begin{defi}\label{def:ostrowski}\index{Partie!d'Ostrowski|textbf}\index{Spectre analytique!d'Ostrowski|textbf}
Une partie~$V$ de~$\cM(\cA)$ est dite \emph{d'Ostrowski} s'il existe un point $a_{0}$ de~$V$, un ensemble d\'enombrable non vide (\'eventuellement fini)~$\Sigma$  et une famille $(V_{\sigma})_{\sigma\in \Sigma}$ de parties disjointes de~$V\setminus \{a_{0}\}$ satisfaisant les propri\'et\'es suivantes~: 
\begin{enumerate}[i)]
\item $V = \bigcup_{\sigma \in \Sigma} V_{\sigma} \cup \{a_{0}\}$~;
\item pour tout $\sigma\in \Sigma$, $V_{\sigma}$ et $V_{\sigma} \cup \{a_{0}\}$ sont des parties lin\'eaires de~$\cM(\cA)$~;
\item l'ensemble des parties de la forme
\[\bigcup_{\sigma \in \Sigma'} V'_{\sigma} \cup \bigcup_{\sigma \in \Sigma\setminus\Sigma'} V_{\sigma},\]
o\`u $\Sigma'$ est un sous-ensemble fini de~$\Sigma$ et, pour tout $\sigma \in \Sigma'$, $V'_{\sigma}$ est un voisinage de~$a_{0}$ dans~$V_{\sigma} \cup \{a_{0}\}$, est une base de voisinages de~$a_{0}$ dans~$V$.
\end{enumerate}

Le spectre~$\cM(\cA)$ est dit \emph{d'Ostrowski} s'il est une partie d'Ostrowski de lui-m\^eme.
\end{defi}

\begin{exem}\index{Anneau!des entiers relatifs $\Z$}\index{Anneau!des entiers d'un corps de nombres}\index{Anneau!de Dedekind trivialement valu\'e}
Le spectre de~$\Z$ et les spectres des anneaux d'entiers de corps de nombres (\cf~exemple~\ref{ex:Z}) sont d'Ostrowski, ainsi que les spectres des anneaux de Dedekind trivialement valu\'es (\cf~exemple~\ref{ex:Dedekind}).
\end{exem}

\section{Espace affine analytique sur un anneau de Banach}\label{sec:espaceaffineanalytique}

On peut adapter la d\'efinition de spectre analytique pour d\'efinir des espaces affines analytiques. Soit~$(\cA,\|.\|_\cA)$ un anneau de Banach et soit~$n\in\N$. 

\begin{defi}[\protect{\cite[definition~1.5.1]{Ber1}}]\index{Espace analytique!affine|see{Espace affine analytique}}\index{Espace affine analytique|textbf}
L'\emph{espace affine analytique} de dimension~$n$ sur~$(\cA,\|.\|_\cA)$ est l'ensemble $\E{n}{\cA}$ des semi-normes multiplicatives sur $\cA[T_{1},\dotsc,T_{n}]$ born\'ees sur~$\cA$, c'est-\`a-dire l'ensemble des applications
\[\va \colon \cA[T_{1},\dotsc,T_{n}] \too \R_{\ge 0}\]
v\'erifiant les propri\'et\'es suivantes~:
\begin{enumerate}[i)]
\item $|0| = 0$ et $|1| = 1$;
\item $\forall P,Q \in \cA[T_{1},\dotsc,T_{n}],\ |P+Q| \le |P| + |Q|$ ;
\item $\forall P,Q \in \cA[T_{1},\dotsc,T_{n}],\ |PQ| = |P| \, |Q|$ ;
\item $\forall a\in \cA,\ |a| \le \|a\|_{\cA}$.
\end{enumerate}%
\nomenclature[Ia]{$\E{n}{\cA}$}{espace affine analytique de dimension~$n$ sur~$\cA$}
\nomenclature[J]{$\E{1}{\cA}$}{droite affine analytique sur un anneau de Banach~$\cA$}

On munit $\E{n}{\cA}$ de la topologie de la convergence simple.
\end{defi}

\begin{rema}
L'espace $\E{0}{\cA}$ n'est autre que $\cM(\cA)$. 
\end{rema}

Comme dans le cas de~$\cM(\cA)$, pour tout point~$x$ de~$\E{n}{\cA}$, on note $\va_{x}$ la semi-norme sur $\cA[T_{1},\dotsc,T_{n}]$ associ\'ee. On d\'efinit \'egalement un corps r\'esiduel compl\'et\'e~$\cH(x)$ et un morphisme d'\'evaluation
\[ \ev_{x} \colon \cA[T_{1},\dotsc,T_{n}] \too \cH(x).\]
\index{Corps!residuel complete@r\'esiduel compl\'et\'e|textbf}\index{Evaluation@\'Evaluation|textbf}
Pour $f \in  \cA[T_{1},\dotsc,T_{n}]$, on pose 
\[ f(x) := \ev_{x}(f) \in \cH(x).\]%
\nomenclature[Ib]{$\va_{x}$}{semi-norme sur~$\cA[T_{1},\dotsc,T_{n}]$ associ\'ee \`a point~$x$ de~$\E{n}{\cA}$}
\nomenclature[Iba]{$\cH(x)$}{corps r\'esiduel compl\'et\'e de~$x$}%
\nomenclature[Ieva]{$\ev_{x}$}{morphisme d'\'evaluation en~$x$ sur $\cA[T_{1},\dotsc,T_{n}]$}%
\nomenclature[Ifxa]{$f(x)$}{\'evaluation en~$x$ d'un \'el\'ement~$f$ de~$\cA[T_{1},\dotsc,T_{n}]$}

\begin{rema}\label{rem:M(tau)Antopo}
Soit $\tau \colon \cA \to \cA'$ un morphisme born\'e d'anneaux de Banach. Les constructions des lemmes~\ref{lem:M(tau)topo} et~\ref{lem:M(tau)xx'} se g\'en\'eralisent imm\'ediatement au cadre des espaces affines analytiques.

On d\'efinit ainsi une application continue
\[ \E{n}{\tau} \colon \E{n}{\cA'} \too \E{n}{\cA}\]
et, pour tout $x'\in \E{n}{\cA'}$, un morphisme de corps isom\'etrique
\[\tau_{x',x} \colon \cH(x) \too \cH(x'),\]
o\`u $x := \E{n}{\tau}(x')$.
\nomenclature[Iaa]{$\E{n}{\tau}$}{application $\E{n}{\cA'} \too \E{n}{\cA}$ induite par un morphisme d'anneaux de Banach $\tau \colon \cA \to \cA'$}\nomenclature[Ibb]{$\tau_{x,x'}$}{application $\cH(x) \to \cH(x')$ induite par un morphisme~$\tau$ d'anneaux de Banach}

Lorsque $\tau$ est le morphisme de compl\'etion $\cA \to \hatcA$ (\cf~remarque~\ref{rem:completionuniforme}), l'application associ\'ee 
\[\E{n}{\cA\kern-0.45em\hat{\phantom{\cA}}} \too \E{n}{\vphantom{\hat\cA}\cA}\]
est un hom\'eomorphisme qui induit des isomorphismes isom\'etriques entre les corps r\'esiduels compl\'et\'es.
\end{rema}

\begin{defi}\index{Partie!ultrametrique@ultram\'etrique|textbf}\index{Partie!archimedienne@archim\'edienne|textbf}\index{Point!ultrametrique@ultram\'etrique|textbf}\index{Point!archimedien@archim\'edien|textbf}
Soit $x\in \E{n}{\cA}$. On dit que le point $x$ est \emph{ultram\'etrique} si $\va_{x}$ est ultram\'etrique~:
\[ \forall a,b \in \cA,\ |a+b|_{x} \le \max(|a|_{x},|b|_{x}).\]
Dans le cas contraire, on dit que le point $x$ est \emph{archim\'edien}. 

Soit~$V$ une partie de~$\E{n}{\cA}$. On appelle \emph{partie ultram\'etrique} de~$V$ l'ensemble
\[V_{\um} := \{x \in V : x \textrm{ est ultram\'etrique}\}.\]
On appelle \emph{partie archim\'edienne} de~$V$ l'ensemble
\[V_{\arc} := \{x \in V : x \textrm{ est archim\'edienne}\}.\]
\end{defi}
\nomenclature[Ik]{$V_{\um}$}{partie ultram\'etrique d'une partie $V$ de $\E{n}{\cA}$}
\nomenclature[Il]{$V_{\arc}$}{partie archim\'edienne de $V$}

\begin{rema}\label{rem:umfermee}
On a $V = V_{\um} \sqcup V_{\arc}$. 

Un r\'esultat classique assure qu'un point $x \in \E{n}{\cA}$ est ultram\'etrique si, et seulement si,
\[ \forall n\in \Z,\ |n|_{x} \le 1.\]
Il s'ensuit que $V_{\um}$ et~$V_{\arc}$ sont respectivement ferm\'ee et ouverte dans~$V$. 
\end{rema}

Les possibilit\'es pour les corps r\'esiduels compl\'et\'es en les points archim\'ediens sont tr\`es limit\'ees. Il s'agit l\`a encore d'un r\'esultat classique (\cf~\cite[VI, \S 6, \no 6, th\'eor\`eme~2]{BourbakiAC57} par exemple). Rappelons que l'on note~$\va_{\infty}$ la valeur absolue usuelle sur~$\R$ ou~$\C$.

\begin{theo}\label{th:vaarchimedienne}\index{Corps!residuel complete@r\'esiduel compl\'et\'e!archimedien@archim\'edien}
Soit $(k,\va)$ un corps valu\'e archim\'edien complet. Alors, il existe un corps $K$ \'egal \`a~$\R$ ou~$\C$, un isomorphisme $j \colon k \simto K$ et un nombre r\'eel $\eps \in \intof{0,1}$ tel que 
\[ \forall a \in k,\ |a| = |j(a)|_{\infty}^\eps.\]
\end{theo}

\begin{nota}\label{not:epsilon} 
\nomenclature[Ig]{$\eps(x)$}{pour $x$ archim\'edien, \'el\'ement de~$\intof{0,1}$ tel que $\va_{x} = \va_{\infty}^{\eps(x)}$}
Pour tout $x \in (\E{n}{\cA})_{\arc}$, on note $\eps(x)$ l'unique \'el\'ement de~$\intof{0,1}$ tel que la valeur absolue canonique sur~$\cH(x)$ s'identifie \`a~$\va_{\infty}^{\eps(x)}$ \textit{via} l'isomorphisme entre~$\cH(x)$ et~$\R$ ou~$\C$ fourni par le th\'eor\`eme~\ref{th:vaarchimedienne}.
\end{nota}

\begin{lemm}\label{lem:epscontinue}
L'application $\eps \colon (\E{n}{\cA})_{\arc} \too \intof{0,1}$ est continue.
\end{lemm}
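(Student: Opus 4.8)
The plan is to express $\eps(x)$ explicitly as a continuous function of a single coordinate of the point~$x$, namely~$|2(x)|$, and then to invoke the very definition of the topology of~$\E{n}{\cA}$.

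First I would recall, from Remark~\ref{rem:umfermee}, that $(\E{n}{\cA})_{\arc} = \{x \in \E{n}{\cA} : |2(x)| > 1\}$, so that this is indeed an open subset and it suffices to prove continuity there. Fix $x \in (\E{n}{\cA})_{\arc}$. The evaluation morphism $\ev_{x} \colon \cA[T_{1},\dotsc,T_{n}] \to \cH(x)$ is a ring homomorphism, hence it sends the constant $2 = 1+1$ to the element~$2$ of the prime subfield~$\Q$ of the characteristic-zero field~$\cH(x)$; through the isomorphism $\cH(x) \simeq \R$ or~$\C$ provided by Theorem~\ref{th:vaarchimedienne}, this element is simply the rational number~$2$. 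Since, by Notation~\ref{not:epsilon}, the canonical absolute value on~$\cH(x)$ is identified with $\va_{\infty}^{\eps(x)}$, we obtain
\[ |2(x)| = |2|_{\infty}^{\eps(x)} = 2^{\eps(x)}, \qquad \text{i.e.} \qquad \eps(x) = \frac{\ln |2(x)|}{\ln 2}. \]

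It then remains to observe that the map $x \mapsto |2(x)|$ is continuous on~$\E{n}{\cA}$: this is exactly what the definition of the topology of simple convergence requires, applied to the element $2 \in \cA[T_{1},\dotsc,T_{n}]$. On the open set $(\E{n}{\cA})_{\arc}$ this map takes values in $\intoo{1,+\infty}$, on which $t \mapsto \ln(t)/\ln 2$ is continuous, so $\eps$ is a composition of continuous maps, hence continuous. There is no serious obstacle in this argument; the only point worth a word is the identification $\ev_{x}(2) = 2$ together with the compatibility of the isomorphism of Theorem~\ref{th:vaarchimedienne} with the inclusion of~$\Q$, which is automatic since any field homomorphism fixes the prime subfield. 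One could equally well use any integer $m \ge 2$ in place of~$2$, obtaining $\eps(x) = \log_{m} |m(x)|$.
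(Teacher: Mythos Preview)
Your proof is correct and follows exactly the same approach as the paper: express $\eps(x) = \log|2(x)|/\log 2$ and use that $x \mapsto |2(x)|$ is continuous by definition of the topology of simple convergence. The paper's proof is just a one-line version of what you wrote.
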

\begin{proof}
Pour tout $x \in (\E{n}{\cA})_{\arc}$, on a $\eps(x) = \log(|2(x)|)/\log(2)$. Or, l'application~$|2|$ est continue, par d\'efinition de la topologie. Le r\'esultat s'ensuit.
\end{proof}

%

\medbreak

Munissons maintenant $\E{n}{\cA}$ d'un faisceau structural.

\begin{nota}\index{Norme!uniforme} 
\nomenclature[Im]{$\nm_{V}$}{semi-norme uniforme sur une partie compacte~$V$ de~$\E{n}{\cA}$}
Pour toute partie compacte~$V$ de~$\E{n}{\cA}$, on note~$\nm_{V}$ la \emph{semi-norme uniforme} sur~$V$, c'est-\`a-dire la semi-norme d\'efinie par
\[\forall P \in \cA[T_{1},\dotsc,T_{n}],\ \|P\|_{V} = \max_{x\in V}(|P(x)|).\]
\end{nota}

\begin{defi}\label{def:fracrat}\index{Fraction rationnelle sans p\^oles|textbf}
\nomenclature[In]{$\cK(V)$}{fractions rationnelles sans p\^oles sur~$V$} 
Pour toute partie compacte~$V$ de~$\E{n}{\cA}$, posons 
\[ S_{V} := \{P \in \cA[T_{1},\dotsc,T_{n}] : \forall x \in V, |P(x)| \ne 0\}.\]
Les \'el\'ements de l'anneau 
\[ \cK(V) := S_{V}^{-1} \cA[T_{1},\dotsc,T_{n}]\]
sont appel\'ees \emph{fractions rationnelles sans p\^oles sur~$V$}. 
\end{defi}

La semi-norme~$\nm_{V}$ sur $\cA[T_{1},\dotsc,T_{n}]$ s'\'etend en une semi-norme sur~$\cK(V)$. 
Pour tout $x\in V$, l'application d'\'evaluation $\ev_{x} \colon \cA[T_{1},\dotsc,T_{n}] \to \cH(x)$ s'\'etend \`a~$\cK(V)$. Pour $f\in \cK(V)$, on pose $f(x) := \ev_{x}(f) \in \cH(x)$.\index{Evaluation@\'Evaluation|textbf}%
\nomenclature[Ievb]{}{morphisme d'\'evaluation en~$x$ sur $\cK(V)$}%
\nomenclature[Ifxb]{}{\'evaluation en $x$ d'un \'el\'ement $f$ de $\cK(V)$}

\begin{defi}[\protect{\cite[definition~1.5.1]{Ber1}}]\index{Espace affine analytique!faisceau structural|textbf}
Le \emph{faisceau structural} sur~$\E{n}{\cA}$ est le faisceau d'anneaux qui \`a tout ouvert~$U$ de~$\E{n}{\cA}$ associe l'ensemble des applications
\[ f \colon U \too \bigsqcup_{x\in U} \cH(x)\]
v\'erifiant les propri\'et\'es suivantes~:
\begin{enumerate}[i)]
\item pour tout $x\in U$, on a $f(x) \in \cH(x)$ ;
\item $f$ est localement limite uniforme de fractions rationnelles sans p\^oles, au sens o\`u, pour tout $x \in U$, il existe un voisinage compact~$V$ de~$x$ dans~$U$ et une suite $(R_{i})_{i \ge 0}$ d'\'el\'ements de~$\cK(V)$ qui converge vers~$f_{|V}$ pour~$\nm_{V}$.
\end{enumerate}
Le caract\`ere local de la d\'efinition assure que l'on obtient bien ainsi un faisceau. Nous le noterons $\cO_{\E{n}{\cA}}$ ou simplement~$\cO$.%
\nomenclature[Io]{$\cO_{\E{n}{\cA}}$}{faisceau structural sur $\E{n}{\cA}$}%
\nomenclature[Ifxc]{}{\'evaluation en~$x$ d'une section~$f$ de~$\cO_{\E{n}{\cA}}$}
\end{defi}


\begin{lemm}\label{lem:M(tau)Anfoncteur}
Soit $\tau \colon \cA \to \cA'$ un morphisme born\'e d'anneaux de Banach. 

Soit $U$ un ouvert de~$\cM(\cA)$ et soit $f \colon U \to \bigsqcup_{x\in U} \cH(x)$ une application. Posons $U' :=  (\E{n}{\tau})^{-1}(U)$ et notons 
\[(\E{n}{\tau})^\ast f \colon (\E{n}{\tau})^{-1}(U) \too \bigsqcup_{x'\in U} \cH(x')\] 
l'application obtenue en composant~$f$ par $\E{n}{\tau}$ et les $\tau_{x',x}$ (\cf~remarque~\ref{rem:M(tau)Antopo}). Si $f$ appartient \`a~$\cO(U)$, alors $(\E{n}{\tau})^\ast f$ appartient \`a~$\cO(U')$.

On obtient ainsi un foncteur contravariant $\E{n}{\bullet}$ de la cat\'egorie des anneaux de Banach muni des morphismes born\'es vers celle des espaces localement annel\'es.
\qed
\end{lemm}

\begin{rema}\label{rem:completionisomorphisme}
Consid\'erons le cas particulier du lemme~\ref{lem:M(tau)Anfoncteur} o\`u le morphisme~$\tau$ est le morphisme de compl\'etion $\cA \to \hatcA$. D'apr\`es la remarque~\ref{rem:M(tau)Antopo}, l'application induite entre les espaces affines analytiques pr\'eserve les espaces topologiques et les corps r\'esiduels compl\'et\'es. On en d\'eduit que le morphisme d'espaces localement annel\'es induit
\[\E{n}{\cA\kern-0.45em\hat{\phantom{\cA}}} \too \E{n}{\vphantom{\hat\cA}\cA}\]
est un isomorphisme.
\end{rema}

\medbreak

Rappelons que les espaces affines analytiques sur les corps valu\'es archim\'ediens complets sont tr\`es proches des espaces analytiques complexes usuels (\cf~\cite[\S 1.5.4]{Ber1}).

\begin{lemm}\label{lem:AnC}\index{Espace affine analytique!sur un corps archim\'edien}
Munissons~$\C$ et~$\R$ d'une puissance de la valeur absolue usuelle. On a alors des isomorphismes d'espaces localement annel\'es naturels
\[ \E{n}{\C} \simeq \C^n \textrm{ et } \E{n}{\R} \simeq \C^n/\Gal(\C/\R).\]
\qed
\end{lemm}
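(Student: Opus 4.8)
The plan is to establish the bijection on underlying sets using Ostrowski's theorem, then to note that the whole construction is insensitive to the exponent of the absolute value, and finally to invoke~\cite[\S 1.5.4]{Ber1} for the identification of the structure sheaves. So, let $\eps\in\intof{0,1}$ be such that the base ring is $(\C,\va_{\infty}^{\eps})$ (resp. $(\R,\va_{\infty}^{\eps})$), and let $x\in\E{n}{\C}$. Its completed residue field $\cH(x)$ is a complete valued field whose absolute value extends $\va_{\infty}^{\eps}$ on~$\C$, hence is archimedean; by Theorem~\ref{th:vaarchimedienne} it is isomorphic to $\R$ or~$\C$ with a power of $\va_{\infty}$, and since it contains the algebraically closed field $(\C,\va_{\infty}^{\eps})$ it must equal $(\C,\va_{\infty}^{\eps})$. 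The chain $\C\hookrightarrow\Frac(\C[T_{1},\dotsc,T_{n}]/\Ker(\ev_{x}))\hookrightarrow\cH(x)=\C$ then forces $\Frac(\C[T_{1},\dotsc,T_{n}]/\Ker(\ev_{x}))=\C$ and $\Ker(\ev_{x})$ to be maximal, so by the Nullstellensatz there is a unique $a=(a_{1},\dotsc,a_{n})\in\C^n$ with $\va_{x}(P)=|P(a)|_{\infty}^{\eps}$ for all~$P$. Conversely every such evaluation seminorm is a point of $\E{n}{\C}$, so $x\mapsto a$ is a bijection $\E{n}{\C}\simeq\C^n$. For~$\R$ the same reasoning gives $\cH(x)\in\{(\R,\va_{\infty}^{\eps}),(\C,\va_{\infty}^{\eps})\}$, so $\ev_{x}$ realizes $\R[T_{1},\dotsc,T_{n}]/\Ker(\ev_{x})$ as a subfield of~$\C$ containing~$\R$; one reads off a point of~$\C^n$ well defined up to complex conjugation, whence a bijection $\E{n}{\R}\simeq\C^n/\Gal(\C/\R)$, visibly induced by the isometric inclusion $\R\hookrightarrow\C$, the $\Gal(\C/\R)$-action on $\E{n}{\C}=\C^n$ being the obvious one.

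Next I would check that these bijections are homeomorphisms. The topology of simple convergence on $\E{n}{\C}$ is generated by the maps $x\mapsto|P(x)|$ for $P\in\C[T_{1},\dotsc,T_{n}]$; under the identification above, taking $P=T_{i}-a$ shows that $z\mapsto|z_{i}-a|_{\infty}^{\eps}$ is continuous, so every standard open polydisc is open, whereas conversely each $z\mapsto|P(z)|_{\infty}^{\eps}$ is continuous for the usual topology of~$\C^n$; hence the two topologies coincide, and likewise for the quotient on the~$\R$ side.

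I would then record that nothing above, and in particular the structure sheaf, depends on~$\eps$: replacing $\va_{\infty}$ by $\va_{\infty}^{\eps}$ changes neither the set of rational functions without poles on a given compact~$V$, nor the uniform seminorm up to taking the $\eps$-th power, and $\nm_{V}$ and its $\eps$-th power define the same Cauchy sequences; hence the rings~$\cB(V)$, and therefore the structure sheaves $\cO_{\E{n}{\C}}$ and $\cO_{\E{n}{\R}}$, are independent of $\eps\in\intof{0,1}$. We may thus assume $\eps=1$.

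Finally, with $\eps=1$, the identification of structure sheaves is exactly~\cite[\S 1.5.4]{Ber1}: on~$\C^n$ the sheaf $\cO_{\E{n}{\C}}$ coincides with the sheaf of holomorphic functions, the point being that on a small polydisc a holomorphic function is the uniform limit of its Taylor polynomials, while any local uniform limit of pole-free rational functions is holomorphic; and $\E{n}{\R}$ is then $\C^n/\Gal(\C/\R)$ equipped with the sheaf of $\Gal(\C/\R)$-invariant holomorphic functions, which is the analytic structure attached to $\R[T_{1},\dotsc,T_{n}]=\C[T_{1},\dotsc,T_{n}]^{\Gal(\C/\R)}$. The only genuinely non-formal point — and the one I expect to be the main, though mild, obstacle — is that the continuous bijection $\E{n}{\C}\to\E{n}{\R}$ is a quotient map identifying these structure sheaves; this is handled by the fact that the finite-group quotient $\C^n\to\C^n/\Gal(\C/\R)$ is open and closed, so that the sheaf-theoretic comparison reduces to the equality of invariant subrings just stated.
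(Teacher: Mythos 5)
Your argument is correct and is essentially the standard one: the paper itself gives no proof of this lemma, merely the pointer to~\cite[\S 1.5.4]{Ber1} in the preceding sentence, and what you write out (Gelfand--Mazur/Ostrowski to identify $\cH(x)$ and hence the points via evaluation seminorms, comparison of the two topologies, insensitivity of $\cB(V)$ to the exponent $\eps$, and identification of the structure sheaf with holomorphic, resp.\ Galois-invariant holomorphic, functions) is precisely the content of that reference. No gaps to report.
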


\begin{rema}\label{rem:gammaR}%
\nomenclature[Is]{$\gamma_{\R}$}{pour $U$ ouvert de la partie archim\'edienne de~$\E{n}{\cA}$, morphisme canonique $\R \to \cO(U)$}
Soit~$U$ un ouvert de~$\E{n}{\cA}$ contenu dans la partie archim\'edienne. On a alors un morphisme injectif canonique
\[\gamma_{\R} \colon \R \too \cO(U).\]
En effet, le morphisme $\Z \to \cA$ induit un morphisme $\Z \to \cO(U)$ et on v\'erifie que ce dernier s'\'etend \`a~$\Q$ puis \`a~$\R$.
\end{rema}

\begin{lemm}\label{lem:evaluationcontinueaffine}
Pour tout ouvert~$U$ de~$\E{n}{\cA}$ et tout \'el\'ement~$f$ de~$\cO(U)$, l'application 
\[\fonction{|f|}{U}{\R_{\ge 0}}{x}{|f(x)|}\]
est continue.
\end{lemm}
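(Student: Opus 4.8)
La continuit\'e \'etant une propri\'et\'e locale, le plan est de fixer un point~$x_{0}$ de~$U$ et d'\'etablir la continuit\'e de~$|f|$ sur un voisinage de~$x_{0}$. On commencerait par appliquer la reformulation de la condition~ii) de la d\'efinition du faisceau structural~: il existe un voisinage compact~$V$ de~$x_{0}$ dans~$U$ et une suite $(R_{i})_{i\ge 0}$ d'\'el\'ements de~$\cK(V)$ convergeant vers~$f_{|V}$ pour la semi-norme uniforme~$\nm_{V}$, c'est-\`a-dire telle que $\sup_{x\in V}|R_{i}(x) - f(x)|$ tende vers~$0$.

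La premi\`ere \'etape consisterait \`a v\'erifier que, pour chaque~$i$, l'application $x\mapsto |R_{i}(x)|$ est continue sur~$V$. En \'ecrivant $R_{i} = P_{i}/Q_{i}$ avec $P_{i}, Q_{i}\in \cA[T_{1},\dotsc,T_{n}]$ et $Q_{i}\in S_{V}$, on a $|R_{i}| = |P_{i}|/|Q_{i}|$ sur~$V$~; les fonctions~$|P_{i}|$ et~$|Q_{i}|$ sont continues sur~$\E{n}{\cA}$ par d\'efinition m\^eme de la topologie de la convergence simple, et~$|Q_{i}|$, continue et ne s'annulant pas sur le compact~$V$, y est minor\'ee par une constante strictement positive~; le quotient est donc continu sur~$V$.

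La deuxi\`eme \'etape serait de montrer que $(|R_{i}|)_{i}$ converge uniform\'ement vers~$|f|$ sur~$V$, ce qui r\'esulte de l'in\'egalit\'e triangulaire invers\'ee dans chaque corps valu\'e~$\cH(x)$~: pour tout $x\in V$, on a $\big|\,|R_{i}(x)| - |f(x)|\,\big| \le |R_{i}(x) - f(x)| \le \sup_{y\in V}|R_{i}(y) - f(y)|$, quantit\'e qui ne d\'epend pas de~$x$ et tend vers~$0$. On conclurait alors par le fait classique qu'une limite uniforme de fonctions continues \`a valeurs dans~$\R$ est continue~: la restriction de~$|f|$ \`a~$V$ est continue, donc~$|f|$ est continue au voisinage de~$x_{0}$~; comme~$x_{0}$ est arbitraire, $|f|$ est continue sur~$U$.

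Aucune de ces \'etapes ne pr\'esente de v\'eritable difficult\'e~; le point demandant un minimum de soin, qui demeure tr\`es \'el\'ementaire, est le contr\^ole du d\'enominateur~$|Q_{i}|$ sur le compact~$V$ dans la premi\`ere \'etape (c'est l\`a qu'intervient la compacit\'e de~$V$).
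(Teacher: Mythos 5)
Votre preuve est correcte et suit exactement la m\^eme d\'emarche que celle du texte~: continuit\'e pour les polyn\^omes par d\'efinition de la topologie, puis pour les fractions rationnelles sans p\^oles, puis passage \`a la limite uniforme via l'in\'egalit\'e triangulaire invers\'ee. Vous explicitez simplement les d\'etails (minoration du d\'enominateur sur le compact, convergence uniforme de $|R_i|$ vers $|f|$) que le texte laisse au lecteur.
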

\begin{proof}
Si~$f$ est un polyn\^ome, le r\'esultat d\'ecoule de la d\'efinition de la topologie. On en d\'eduit le r\'esultat pour les fractions rationnelles sans p\^oles, puis pour des fonctions arbitraires, la continuit\'e \'etant pr\'eserv\'ee par limite uniforme.
\end{proof}

Le r\'esultat suivant d\'ecoule directement des d\'efinitions.

\begin{lemm}\index{Corps!residuel@r\'esiduel|textbf}%
\nomenclature[Ioa]{$\cO_{x}$}{anneau local en un point~$x$ de~$\cE{n}{\cA}$}%
\nomenclature[Iob]{$\m_{x}$}{id\'eal maximal de $\cO_{x}$}%
\nomenclature[Ioc]{$\kappa(x)$}{$ = \cO_{x}/\m_{x}$, corps r\'esiduel de $x$}%
Pour tout point~$x$ de~$\E{n}{\cA}$, le germe~$\cO_{x}$ est un anneau local. Son id\'eal maximal~$\m_{x}$ est constitu\'e des fonctions qui s'annulent au point~$x$. Le corps $\kappa(x) := \cO_{x}/\m_{x}$ est un sous-corps dense de~$\cH(x)$. 
\qed
\end{lemm}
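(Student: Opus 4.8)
The plan is to base everything on the evaluation morphism at~$x$. First I would check that a germ $f\in\cO_{x}$, represented by a section over a neighbourhood~$U$ of~$x$, has a value $f(x)\in\cH(x)$ independent of the chosen representative (since $x$ lies in every such~$U$), and that $f\mapsto f(x)$ defines a ring morphism $\ev_{x}\colon\cO_{x}\to\cH(x)$ extending the evaluation morphism on $\cA[T_{1},\dotsc,T_{n}]$. Put $\m_{x}:=\ker(\ev_{x})$; by construction this is exactly the ideal of germs of functions vanishing at~$x$, and it is a proper ideal because $1(x)=1\neq 0$.

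The heart of the proof is to show that every $f\in\cO_{x}\setminus\m_{x}$ is a unit in~$\cO_{x}$; this at once gives that $\cO_{x}$ is local with maximal ideal~$\m_{x}$. So suppose $|f(x)|>0$. Using the continuity of $y\mapsto|f(y)|$ (Lemma~\ref{lem:evaluationcontinueaffine}) together with condition~ii) of the definition of~$\cO$, I would pick a compact neighbourhood~$V$ of~$x$ on which $|f|\geq c$ for some real $c>0$ and on which a sequence $(R_{i})$ of elements of $\cK(V)$ converges uniformly to~$f_{|V}$. For~$i$ large one has $\|R_{i}-f\|_{V}<c/2$, hence $|R_{i}|\geq c/2>0$ on~$V$; writing $R_{i}=P_{i}/Q_{i}$ with $Q_{i}\in S_{V}$ (Definition~\ref{def:fracrat}), the latter inequality forces $P_{i}$ to be nonvanishing on~$V$ as well, so $P_{i}\in S_{V}$ and $1/R_{i}=Q_{i}/P_{i}\in\cK(V)$. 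From $1/R_{i}-1/f=(f-R_{i})/(R_{i}f)$ and $|R_{i}f|\geq c^{2}/2$ on~$V$ one gets $\|1/R_{i}-1/f\|_{V}\to 0$, so the everywhere-defined function $y\mapsto 1/f(y)$ on~$V$ is a uniform limit of rational functions without poles, hence belongs to $\cO(V)$, and its germ inverts~$f$ in~$\cO_{x}$. This invertibility step is the only non-formal point, and it is precisely where the concrete description $\cK(V)=S_{V}^{-1}\cA[T_{1},\dotsc,T_{n}]$ (and the continuity of $|f|$) is needed.

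It remains to study $\kappa(x)=\cO_{x}/\m_{x}$. Since $\m_{x}=\ker(\ev_{x})$, the morphism~$\ev_{x}$ induces an isomorphism of $\kappa(x)$ onto its image in~$\cH(x)$, so we may view $\kappa(x)$ as a subfield of~$\cH(x)$. For density I would note that every $P\in\cA[T_{1},\dotsc,T_{n}]$ defines a global section of~$\cO$, whence $\ev_{x}(P)=P(x)\in\kappa(x)$; and if $Q(x)\neq 0$, then by continuity of $|Q|$ the polynomial~$Q$ is nonvanishing on a compact neighbourhood of~$x$, so $1/Q$ is, near~$x$, a rational function without poles, its germ lies in~$\cO_{x}$, and $\ev_{x}(1/Q)=Q(x)^{-1}\in\kappa(x)$. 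Hence $\kappa(x)$ contains all quotients $P(x)/Q(x)$ with $Q(x)\neq 0$; since $\Ker(x)=\ker(\ev_{x})$, these are exactly the elements of $\Frac\bigl(\cA[T_{1},\dotsc,T_{n}]/\Ker(x)\bigr)$. As $\cH(x)$ is by definition the completion of that fraction field, the subfield~$\kappa(x)$ is dense in~$\cH(x)$, which completes the argument.
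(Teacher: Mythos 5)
Your proof is correct. The paper states this lemma without proof (it is the standard fact from \cite[\S 1.5]{Ber1}), and your argument is exactly the expected one: the only non-formal point is the invertibility of a germ with $|f(x)|>0$, which you handle correctly by bounding $|f|$ below on a compact neighbourhood~$V$, inverting the approximating fractions $R_{i}=P_{i}/Q_{i}$ (noting $P_{i}\in S_{V}$), and observing that the resulting uniform limit $1/f$ restricts to every compact sub-neighbourhood of~$\mathring V$ and hence defines a section of~$\cO$ near~$x$; the density of $\kappa(x)$ then follows, as you say, because it contains the image of $\Frac\bigl(\cA[T_{1},\dotsc,T_{n}]/\Ker(x)\bigr)$, of which $\cH(x)$ is by definition the completion.
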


\begin{defi}\label{def:projection}\index{Projection|textbf}%
\nomenclature[Iaproj1]{$\pi_{n}$}{projection $\pi_{n} \colon \E{n}{\cA}\to\cM(\cA)$}%
\nomenclature[Iaproj2]{$\pi_{n,m}$}{projection $\pi_{n,m} \colon \E{n}{\cA}\to \E{m}{\cA}$ sur les $m$~premi\`eres coordonn\'ees}%
On appelle \emph{projection sur la base} le morphisme d'espaces localement annel\'es
\[\pi_{n} \colon \E{n}{\cA}\too\cM(\cA)\]
induit par le morphisme $\cA \to \cA[T_{1},\dotsc,T_{n}]$.

Plus g\'en\'eralement, pour tout $m\in \cn{0}{n}$, on appelle \emph{projection sur les $m$~premi\`eres coordonn\'ees} le morphisme d'espaces localement annel\'es
\[\pi_{n,m} \colon \E{n}{\cA}\to \E{m}{\cA}\]
induit par le morphisme $\cA[T_{1},\dotsc,T_{m}] \to \cA[T_{1},\dotsc,T_{n}]$.
\end{defi}

On verra plus loin que les projections sont des morphismes analytiques (\cf~exemple~\ref{ex:structural}).

\medbreak

Le r\'esultat suivant s'obtient sans peine \`a partir des d\'efinitions.

\begin{lemm}\index{Projection!fibre d'une}
Pour tout $m\in \cn{0}{n}$ et tout $y \in \E{m}{\cA}$, on a un hom\'eomorphisme naturel
\[ \E{n-m}{\cH(y)} \simtoo \pi_{n,m}^{-1}(y). \]
\qed
\end{lemm}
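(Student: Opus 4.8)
\emph{Outline.} The plan is to write down the natural bijection explicitly and then check continuity in both directions by hand. Recall that $\pi_{n,m}^{-1}(y)$ consists of the multiplicative seminorms on $\cA[T_{1},\dotsc,T_{n}]$, bounded on $\cA$, whose restriction to $\cA[T_{1},\dotsc,T_{m}]$ equals $\va_{y}$, while $\E{n-m}{\cH(y)}$ consists of the multiplicative seminorms on $\cH(y)[T_{m+1},\dotsc,T_{n}]$ bounded on $\cH(y)$. Writing $\cA[T_{1},\dotsc,T_{n}] = \cA[T_{1},\dotsc,T_{m}][T_{m+1},\dotsc,T_{n}]$, I would let $\ev_{y}\colon\cA[T_{1},\dotsc,T_{m}]\to\cH(y)$ act coefficientwise to obtain a ring morphism $\psi_{y}\colon\cA[T_{1},\dotsc,T_{n}]\to\cH(y)[T_{m+1},\dotsc,T_{n}]$ fixing $T_{m+1},\dotsc,T_{n}$, and define
\[\Phi\colon\mu\in\E{n-m}{\cH(y)}\mapstoo\mu\circ\psi_{y}.\]

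\emph{$\Phi$ lands in the fibre.} First I would note that a nonzero bounded multiplicative seminorm $\mu$ on the valued field $\cH(y)$ coincides with its absolute value: indeed $\mu(c)\le|c|$ and $\mu(c)\mu(c^{-1})=\mu(1)=1$ force $\mu(c)=|c|$. Hence for $P\in\cA[T_{1},\dotsc,T_{m}]$ one gets $\Phi(\mu)(P)=\mu(P(y))=|P(y)|=|P|_{y}$, so $\Phi(\mu)$ restricts to $\va_{y}$ on $\cA[T_{1},\dotsc,T_{m}]$; boundedness on $\cA$ is the special case $P=a\in\cA$. Thus $\Phi(\mu)\in\pi_{n,m}^{-1}(y)$, and $\Phi$ is continuous for the topologies of pointwise convergence since, for each fixed $P$, the map $\mu\mapsto|P|_{\Phi(\mu)}=\mu(\psi_{y}(P))$ is evaluation at the fixed element $\psi_{y}(P)$ of $\cH(y)[T_{m+1},\dotsc,T_{n}]$.

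\emph{Constructing the inverse.} Given $x\in\pi_{n,m}^{-1}(y)$, I would observe that $\ev_{x}$ restricted to $\cA[T_{1},\dotsc,T_{m}]$ has kernel $\{P:|P|_{x}=0\}=\Ker(y)$ and is isometric (because $|P(x)|=|P|_{x}=|P|_{y}$), so it extends to an isometric embedding $\iota_{x}\colon\cH(y)\hookrightarrow\cH(x)$ with $\iota_{x}(P(y))=P(x)$. Let $\theta_{x}\colon\cH(y)[T_{m+1},\dotsc,T_{n}]\to\cH(x)$ send $c$ to $\iota_{x}(c)$ and $T_{j}$ to $T_{j}(x)$, and set $\Psi(x):=|\wc|_{\cH(x)}\circ\theta_{x}$, a multiplicative seminorm restricting to the absolute value on $\cH(y)$, hence a point of $\E{n-m}{\cH(y)}$. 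Comparing on the generators $a\in\cA$, $T_{i}$ ($i\le m$) and $T_{j}$ ($j>m$) gives $\theta_{x}\circ\psi_{y}=\ev_{x}$, so $\Phi(\Psi(x))(P)=|\theta_{x}(\psi_{y}(P))|=|P|_{x}$, i.e. $\Phi\circ\Psi=\id$.

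\emph{$\Psi\circ\Phi=\id$, continuity of $\Psi$, and the main obstacle.} For the reverse composite I would fix $\mu$, put $x=\Phi(\mu)$, and identify $\cH(x)$ with $\cH(\mu)$: since $\mu$ restricts to the absolute value on $\cH(y)$ and $\Frac(\ev_{y}(\cA[T_{1},\dotsc,T_{m}]))$ is, by construction of $\cH(y)$, dense in it, the fraction field of the image of $\psi_{y}$ is $\mu$-dense, so the completed residual field of $x$ is canonically isometric to $\cH(\mu)$, compatibly with the maps from $\cH(y)$ and with $T_{j}\mapsto T_{j}(x)$; under this identification $\theta_{x}=\ev_{\mu}$, whence $\Psi(x)=\mu$. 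This density and completion identification is the one point that is not pure bookkeeping, and is where I expect the only real work to be. For the continuity of $\Psi$ on the fibre it suffices to show $x\mapsto\Psi(x)(g)$ is continuous for each $g\in\cH(y)[T_{m+1},\dotsc,T_{n}]$: this holds for $g=\psi_{y}(P)/R(y)$ with $R\in\cA[T_{1},\dotsc,T_{m}]$, $R(y)\ne0$, since then $\Psi(x)(g)=|P|_{x}/|R|_{y}$ and $|R|_{y}$ is constant on the fibre, and the general case follows by approximating the coefficients of $g$ by elements of $\Frac(\ev_{y}(\cA[T_{1},\dotsc,T_{m}]))$, the approximation being uniform on a neighbourhood of a given point because $x\mapsto|T_{j}(x)|$ is locally bounded. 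All of this works identically in the archimedean and ultrametric cases, only the triangle inequality and multiplicativity being used, and naturality in $y$ is immediate from the construction.
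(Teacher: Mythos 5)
Your proof is correct. The paper states this lemma without proof (it is given with an immediate \qed, as a standard fact going back to Berkovich), and your explicit construction — $\Phi(\mu)=\mu\circ\psi_{y}$, the inverse via the isometric extension $\iota_{x}\colon\cH(y)\hookrightarrow\cH(x)$, and the density argument identifying $\cH(\Phi(\mu))$ with $\cH(\mu)$ — is exactly the standard argument one would write out; the one point you flag as "real work" (density of $\Frac(\ev_{y}(\cA[T_{1},\dotsc,T_{m}]))$ in $\cH(y)$ and its propagation to the completed residue fields) is indeed the crux, and your treatment of it, as well as of the continuity of $\Psi$ via coefficientwise approximation with the locally bounded weights $|T_{j}(x)|$, is sound.
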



Terminons par un r\'esultat topologique qui nous sera utile dans la suite. Un cas particulier (dans le cas o\`u $\cA$ est un anneau d'entiers de corps de nombres) figure dans~\cite[proposition~3.4.1]{A1Z} et la d\'emonstration s'\'etend sans peine au cas g\'en\'eral.

\begin{nota}%
\nomenclature[Ih]{$x^\eps$}{point associ\'e \`a la semi-norme $\va_{x}^\eps$}%
Soit $b\in \cM(\cA)$. Pour tout $x\in \pi_{n}^{-1}(b)$ et tout $\eps \in I_{b}$, l'application~$\va_{x}^\eps$ est une semi-norme multiplicative sur $\cA[T_{1},\dotsc,T_{n}]$ born\'ee sur~$\cA$. On note~$x^\eps$ le point de~$\E{n}{\cA}$ associ\'e.
\end{nota}

\begin{lemm}\label{lem:flot}
Soit $b\in \cM(\cA)$. L'application 
\[{\renewcommand{\arraystretch}{1.3}\begin{array}{ccc}
\pi_{n}^{-1}(b) \times I_{b} & \too & \E{n}{\cA}\\
(x,\eps) & \mapstoo & x^\eps
\end{array}}\]
r\'ealise un hom\'eomorphisme sur son image.
\qed
\end{lemm}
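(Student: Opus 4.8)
Le plan est de v\'erifier successivement deux faits : l'application $\Phi\colon(x,\eps)\mapsto x^\eps$ de $\pi_{n}^{-1}(b)\times I_{b}$ dans $\E{n}{\cA}$ est continue, et elle poss\`ede une r\'eciproque continue sur son image — ce second point impliquant en particulier l'injectivit\'e de~$\Phi$. Que~$\Phi$ soit bien d\'efinie n'est autre que la notation introduite juste avant son \'enonc\'e, il n'y a donc rien \`a v\'erifier de ce c\^ot\'e. Pour la continuit\'e, comme $\E{n}{\cA}$ est muni de la topologie de la convergence simple, il suffit de voir que $(x,\eps)\mapsto |f(x^\eps)| = |f(x)|^\eps$ est continue pour tout $f\in\cA[T_{1},\dotsc,T_{n}]$ : cela r\'esulte de la composition de l'application continue $x\mapsto |f(x)|$ sur $\pi_{n}^{-1}(b)$ (restriction de la fonction continue~$|f|$ sur~$\E{n}{\cA}$, \cf~lemme~\ref{lem:evaluationcontinueaffine}) avec l'application continue $(t,\eps)\mapsto t^\eps$ de $\R_{\ge 0}\times\R_{>0}$ dans~$\R_{\ge 0}$.

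Le point central de la preuve est la construction d'une r\'eciproque continue~$\Psi$ de~$\Phi$ sur son image. Soit $z$ un \'el\'ement de l'image de~$\Phi$, disons $z = x^\eps$. On retrouve d'abord l'exposant~$\eps$ : la restriction de~$\va_{z}$ \`a~$\cA$ vaut~$\va_{b}^\eps$, donc en choisissant $a\in\cA$ tel que $|a|_{b}\notin\{0,1\}$, on a $\eps = \log|a(z)|/\log|a|_{b}$. Connaissant~$\eps$, on retrouve alors le point~$x$ par la formule $|f(x)| = |f(z)|^{1/\eps}$ valable pour tout~$f$. On v\'erifie que l'application $\Psi\colon z\mapsto(x,\eps)$ ainsi d\'efinie satisfait $\Psi\circ\Phi = \id$ — d'o\`u l'injectivit\'e de~$\Phi$ et le fait que~$\Psi$ en soit la r\'eciproque sur l'image — puis que~$\Psi$ est continue, gr\^ace \`a la continuit\'e des applications d'\'evaluation $z\mapsto|a(z)|$ et $z\mapsto|f(z)|$ (lemme~\ref{lem:evaluationcontinueaffine}) et de $(s,\eta)\mapsto s^{1/\eta}$ sur $\R_{\ge 0}\times\R_{>0}$.

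L'obstacle principal est la r\'ecup\'eration de l'exposant~$\eps$ dans cette derni\`ere \'etape : elle repose sur l'existence d'un \'el\'ement $a\in\cA$ tel que $|a|_{b}\notin\{0,1\}$, c'est-\`a-dire sur le fait que~$\va_{b}$ n'est pas triviale — c'est la situation dans laquelle l'\'enonc\'e est appliqu\'e, et la seule o\`u l'exposant~$\eps$ se lit de fa\c{c}on continue sur~$z$ (si~$\va_{b}$ est triviale, $\va_{b}^\eps$ ne d\'epend pas de~$\eps$). Ce point acquis, il ne reste rien de profond : il faut seulement s'assurer que le couple reconstruit $(x,\eps) = \Psi(z)$ appartient bien \`a $\pi_{n}^{-1}(b)\times I_{b}$, ce qui est automatique puisque~$z$ a \'et\'e pris dans l'image de~$\Phi$. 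Comme l'indique le texte, l'argument n'est qu'une adaptation sans difficult\'e de \cite[proposition~3.4.1]{A1Z} du cas $(n,\cA) = (1,\Z)$ au cas g\'en\'eral.
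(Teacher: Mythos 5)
Votre preuve est correcte et c'est, pour l'essentiel, l'argument attendu~: le texte ne donne d'ailleurs aucune d\'emonstration pour ce lemme (il renvoie \`a \cite[proposition~3.4.1]{A1Z} en affirmant que la preuve s'\'etend), de sorte qu'il n'y a pas de comparaison fine \`a faire. Votre sch\'ema \og continuit\'e test\'ee sur les polyn\^omes, puis inverse explicite \fg{} est le bon~: la continuit\'e de $(x,\eps)\mapsto x^\eps$ se ram\`ene bien \`a celle de $(t,\eps)\mapsto t^\eps$ sur $\R_{\ge 0}\times\R_{>0}$ via le lemme~\ref{lem:evaluationcontinueaffine}, la lecture de~$\eps$ sur $|a(z)|$ pour un $a\in\cA$ avec $|a|_{b}\notin\{0,1\}$, puis celle de~$x$ via $|f(x)|=|f(z)|^{1/\eps}$, sont continues et redonnent le couple de d\'epart, ce qui fournit \`a la fois l'injectivit\'e et la continuit\'e de la r\'eciproque sur l'image.

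Le seul point \`a pr\'eciser concerne le cas o\`u $\va_{b}$ est triviale, que vous \'ecartez en invoquant \og la situation dans laquelle l'\'enonc\'e est appliqu\'e \fg{}. En r\'ealit\'e, cette restriction n'est pas une limite de votre m\'ethode mais une hypoth\`ese n\'ecessaire \`a l'\'enonc\'e lui-m\^eme, tel qu'il est imprim\'e~: si $\va_{b}$ est triviale, on a $I_{b}=\R_{>0}$ et, d\'ej\`a pour $n=1$ (par exemple $\cA=\Z$ et $b=a_{0}$), le point de Gauss~$\eta_{r}$ de rayon $r\notin\{0,1\}$ de la fibre $\E{1}{\cH(b)}$ v\'erifie $\eta_{r}^{\,\eps}=\eta_{r^{\eps}}$, de sorte que $(\eta_{r},\eps)$ et $(\eta_{r^{\eps}},1)$ ont la m\^eme image~: l'application n'est pas injective. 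La non-trivialit\'e de~$\va_{b}$ est donc implicitement suppos\'ee (et elle est bien satisfaite dans les utilisations du lemme, o\`u $b$ param\`etre une partie lin\'eaire, \cf~d\'efinition~\ref{def:lineaire} et th\'eor\`eme~\ref{th:dimensionetoile}). Votre r\'eserve est donc math\'ematiquement justifi\'ee~; il serait simplement plus net de la formuler comme une hypoth\`ese manquante de l'\'enonc\'e, \'etay\'ee par un contre-exemple du type ci-dessus, plut\^ot que comme une simple remarque sur l'usage qui en est fait.
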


\section{Parties rationnelles et spectralement convexes}\label{sec:spconvexe}

Soit~$\cA$ un anneau de Banach et soit $n\in \N$.

\begin{defi}\index{Partie!rationnelle|textbf}\index{Partie!pro-rationnelle|textbf}
On dit qu’une partie compacte~$V$ de~$\E{n}{\cA}$ est \emph{rationnelle} s’il existe $P_{1},\dotsc,P_{p},Q \in \cA[T_{1},\dotsc,T_{n}]$ ne s’annulant pas simultanément sur $\E{n}{\cA}$ et $r_{1},\dotsc,r_{p} \in \R_{>0}$ tels que 
\[ V = \bigcap_{1\le i\le p}  \{x\in \E{n}{\cA}: |P_i(x)|\le  r_i \, |Q(x)|\} .\]

On dit qu’une partie compacte~$V$ de~$\E{n}{\cA}$ est \emph{pro-rationnelle} si elle est intersection de parties compactes rationnelles.
\end{defi}

\begin{exem}\label{ex:pointprorationnel}
Tout point de~$\E{n}{\cA}$ est une partie pro-rationnelle. En effet, pour tout $x \in \E{n}{\cA}$, on a
\[ \{x\} = \bigcap_{P \in \cA[T_{1},\dotsc,T_{n}]} \{y \in \E{n}{\cA} : |P(y)| = |P(x)|\}.\]
\end{exem}

Un calcul classique (\cf~\cite[proposition~7.2.3/7]{BGR}) montre qu'une intersection finie de parties compactes rationnelles est encore compacte rationnelle. Il suit alors de la d\'efinition de la topologie de~$\E{n}{\cA}$ que tout point poss\`ede une base de voisinages form\'ee de parties compactes rationnelles.

\begin{nota}\label{nota:BV}
\nomenclature[Ina]{$\cB(V)$}{s\'epar\'e compl\'et\'e de~$\cK(V)$ pour $\nm_{V}$}
Pour toute partie compacte non vide~$V$ de~$\E{n}{\cA}$, on note~$\cB(V)$ le s\'epar\'e compl\'et\'e de~$\cK(V)$ pour la semi-norme uniforme~$\nm_{V}$ sur~$V$.

C'est un anneau de Banach uniforme.
\end{nota}

\begin{exem}\label{ex:Bpoint}
Pour tout point~$x$ de~$\E{n}{\cA}$, on a $\cB(\{x\}) = \cH(x)$. 
\end{exem}

\begin{lemm}\label{lem:factorisationBV}
Soit~$V$ une partie compacte  de~$\E{n}{\cA}$. Soit~$\cB$ une $\cA$-alg\`ebre de Banach uniforme et soit $f \colon \cA[T_1,\ldots,T_n]\to\cB$ un morphisme de~$\cA$-alg\`ebres tel que l'image du morphisme induit $\varphi \colon \cM(\cB)\to\E{n}{\cA}$ soit contenue dans~$V$.
Alors, il existe un unique morphisme de $\cA$-alg\`ebres born\'e $\cB(V) \to \cB$ qui fait commuter le diagramme
\[\begin{tikzcd}
\cA[T_{1},\dotsc,T_{n}] \arrow[d] \arrow[r, "f"] & \cB. \\
\cB(V) \arrow[ru]
\end{tikzcd}\]
\end{lemm}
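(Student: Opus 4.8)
The plan is to use the construction of $\cB(V)$ as the separated completion of $\cK(V) = S_{V}^{-1}\cA[T_{1},\dotsc,T_{n}]$ for the uniform semi-norm $\nm_{V}$: since the canonical image of $\cK(V)$ is dense in $\cB(V)$ and $\cB$ is complete and separated, it will be enough to produce a contracting $\cA$-algebra morphism $\cK(V) \to \cB$ extending $f$, after which existence and uniqueness of $\cB(V)\to\cB$ follow formally from density.

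First I would construct the extension of $f$ to $\cK(V)$. The morphism $f$ induces a map $\phi \colon \cM(\cB) \to \E{n}{\cA}$ (compose each $y \in \cM(\cB)$ with $f$), whose image lies in $V$ by hypothesis. If $P \in S_{V}$, i.e. $P$ does not vanish on $V$, then $|f(P)(y)| = |P(\phi(y))| \neq 0$ for every $y \in \cM(\cB)$, so $f(P)$ vanishes nowhere on $\cM(\cB)$. I would then invoke (or record a short proof of) the general fact that, in any Banach ring, an element is invertible exactly when it does not vanish on the spectrum: if $a$ is not invertible, then $\overline{a\cB}$ is a proper closed ideal of $\cB$ --- otherwise $\|1-ab\|_{\cB}<1$ for some $b$ and a geometric series makes $ab$, hence $a$, invertible --- and any point of $\cM(\cB/\overline{a\cB})$, nonempty by Theorem~\ref{th:MAnonvide}, pulls back to a point of $\cM(\cB)$ at which $a$ vanishes. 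Granting this, every $P \in S_{V}$ maps to a unit of $\cB$, and the universal property of the localization yields a unique $\cA$-algebra morphism $\tilde{f} \colon \cK(V) \to \cB$ prolonging $f$.

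Next I would check that $\tilde{f}$ is contracting from $(\cK(V),\nm_{V})$ to $\cB$. Since $\cB$ is uniform, Lemma~\ref{lem:spuni} gives $\|\tilde{f}(r)\|_{\cB} = \max_{y\in\cM(\cB)}|\tilde{f}(r)(y)|$ for $r\in\cK(V)$; writing $r = P/Q$ with $Q\in S_{V}$ and using that evaluation at $y$ is a ring homomorphism, one gets $\tilde{f}(r)(y) = r(\phi(y))$, hence $|\tilde{f}(r)(y)| \le \|r\|_{V}$ because $\phi(y)\in V$. Thus $\tilde{f}$ factors through the separated completion, producing a bounded (in fact contracting) $\cA$-algebra morphism $\cB(V)\to\cB$; it makes the triangle commute because the composite $\cA[T_{1},\dotsc,T_{n}]\to\cK(V)\to\cB(V)\to\cB$ is $\tilde{f}$ restricted to $\cA[T_{1},\dotsc,T_{n}]$, that is, $f$.

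Finally, for uniqueness: any bounded $\cA$-algebra morphism $\cB(V)\to\cB$ lying over $f$ agrees with $\tilde{f}$ on the image of $\cA[T_{1},\dotsc,T_{n}]$, hence --- each element of $S_{V}$ being already invertible in $\cB(V)$ --- on the whole dense image of $\cK(V)$, hence on $\cB(V)$ since $\cB$ is separated. The only step that is not pure bookkeeping with the universal properties of localization and completion is the invertibility criterion used to cross the localization at $S_{V}$; I expect the need to argue that $f(P)$ is genuinely a unit of $\cB$, and not merely nonvanishing on $\cM(\cB)$, to be the single real point of the proof.
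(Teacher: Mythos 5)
Your proposal is correct and follows essentially the same route as the paper's proof: invertibility of $f(P)$ for $P\in S_{V}$ (which the paper simply cites from Berkovich rather than reproving via the closed-ideal argument and Theorem~\ref{th:MAnonvide}), extension to $\cK(V)$ by the universal property of localization, the contraction estimate via uniformity of $\cB$ and evaluation at points of $\cM(\cB)$, and passage to the separated completion with uniqueness by density.
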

\begin{proof}

Soit $P \in S_{V}$. Pour tout $y\in \cM(\cB)$, on a $|f(P)(y)| = |P(\varphi(y))| \ne 0$ car $\varphi(y) \in V$. On en d\'eduit que $f(P)$ est inversible dans~$\cB$, d'apr\`es \cite[theorem~1.2.1]{Ber1}. Par cons\'equent, $f$ s'\'etend en un morphisme
\[f' \colon \cK(V) = S_{V}^{-1} \cA[T_{1},\dotsc,T_{n}] \too \cB,\]
et ce de fa\c{c}on n\'ecessairement unique.

Soient $P \in \cA[T_{1},\dotsc,T_{n}]$ et $Q\in S_{V}$. Pour tout $y\in \cM(\cB)$, on a
\[ \left|f'\left(\frac P Q\right)(y)\right| = \frac{|f(P)(y)|}{|f(Q)(y)|} = \frac{|P(\varphi(y))|}{|Q(\varphi(y))|} =\left|\frac P Q(\varphi(y))\right|. \]
Puisque~$\cB$ est uniforme et que l'image de~$\varphi$ est contenue dans~$V$, on en d\'eduit que $\|f'(P/Q)\| \le \|P/Q\|_{V}$.
Par cons\'equent, le morphisme~$f'$ se prolonge en un morphisme born\'e $\cB(V) \to \cB$, et ce de fa\c{c}on unique.
\end{proof}

Remarquons que, pour toute partie compacte~$V$ de~$\E{n}{\cA}$, le morphisme naturel $\cA[T_{1},\dotsc,T_{n}] \to \cB(V)$ induit un morphisme d'espaces annel\'es $\varphi_{V} \colon \cM(\cB(V))\to \E{n}{\cA}$.

\begin{nota}\index{Interieur@Int\'erieur|textbf}
\nomenclature[D]{$\mathring V$}{int\'erieur d'une partie $V$ d'un espace topologique}
Soient $T$ un espace topologique et $V$ une partie de~$T$. On note~$\mathring V$ l'int\'erieur de~$V$ dans~$T$.
\end{nota}

\begin{theo}[\protect{\cite[th\'eor\`eme~1.2.11]{A1Z}}]\label{thm:rationnel}
Soit~$V$ une partie compacte pro-rationnelle de~$\E{n}{\cA}$. Alors le morphisme $\varphi_{V}$ induit 
\begin{enumerate}[i)]
\item un hom\'eomorphisme $\cM(\cB(V)) \simto V$ ;
\item un isomorphisme d'espaces annel\'es $\varphi_{V}^{-1}(\mathring V) \simto \mathring V$.
\end{enumerate} 
\qed
\end{theo}

\begin{defi}\label{def:spconvexe}\index{Partie!spectralement convexe|textbf}
On dit qu'une partie compacte~$V$ de~$\E{n}{\cA}$ est \emph{spectralement convexe} si elle satisfait les conclusions du th\'eor\`eme~\ref{thm:rationnel}.
\end{defi}

\begin{prop}[\protect{\cite[remarque~1.2.13]{A1Z}}]\label{crit_spectral}
Une partie compacte~$V$ de~$\E{n}{\cA}$ est spectralement convexe si, et seulement si, l'image de~$\varphi_{V}$ est contenue dans~$V$.
\qed
\end{prop}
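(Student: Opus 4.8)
The ``only if'' implication is a tautology: if $V$ is spectrally convex then, by Definition~\ref{def:spconvexe} together with part~i) of Theorem~\ref{thm:rationnel}, the morphism $\varphi_{V}$ induces a homeomorphism $\cM(\cB(V)) \simto V$, so its image is exactly~$V$, hence contained in~$V$. All the content lies in the converse, so from now on I assume that the image of $\varphi_{V}$ is contained in~$V$ and aim to recover the two conclusions of Theorem~\ref{thm:rationnel}.

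The plan is first to isolate two properties of $\varphi_{V}$ that hold for an \emph{arbitrary} compact~$V$, and only afterwards to use the hypothesis. First, $\varphi_{V}$ is injective: a point $z \in \cM(\cB(V))$ is a bounded, hence continuous, multiplicative semi-norm on~$\cB(V)$, so it is determined by its restriction to the image of $\cK(V) = S_{V}^{-1}\cA[T_{1},\dotsc,T_{n}]$, which is dense in~$\cB(V)$; since every element of $S_{V}$ is invertible in $\cK(V)$, hence in~$\cB(V)$, that restriction is in turn determined by its restriction to $\cA[T_{1},\dotsc,T_{n}]$, i.e.\ by the point $\varphi_{V}(z)$. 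Second, $V$ is contained in the image of $\varphi_{V}$: given $y \in V$, the semi-norm $\va_{y}$ on $\cA[T_{1},\dotsc,T_{n}]$ extends multiplicatively to $\cK(V)$ (because $|P(y)| \neq 0$ for all $P \in S_{V}$), and one checks there the bound $|P/Q|_{y} \le \|P/Q\|_{V}$ using only that $y \in V$; it therefore extends by continuity to a bounded multiplicative semi-norm on~$\cB(V)$, whose image under $\varphi_{V}$ is~$y$.

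Granting the hypothesis, these two facts give that the image of $\varphi_{V}$ equals~$V$ and that $\varphi_{V}\colon \cM(\cB(V)) \to V$ is a continuous bijection; since $\cM(\cB(V))$ is compact (Theorem~\ref{th:MAnonvide}, the case $\cB(V) = 0$ being trivial) and $V$ is Hausdorff, $\varphi_{V}$ induces a homeomorphism $\cM(\cB(V)) \simto V$, which is conclusion~i). For conclusion~ii) I would argue directly from the definition of the two structure sheaves, using~i). As $\varphi_{V}$ is now a homeomorphism onto~$V$, it suffices to prove that $\varphi_{V}^{\sharp}$ is an isomorphism over~$\mathring V$. Injectivity is easy: the maps $\cH(\varphi_{V}(z)) \to \cH(z)$ are isometric embeddings and $\varphi_{V}$ is surjective, so a section of $\cO_{\E{n}{\cA}}$ over $\mathring V$ that pulls back to~$0$ already vanishes at every point, hence is~$0$. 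For surjectivity, the key observation is that any $b \in \cB(V)$, being a uniform limit on~$V$ of rational fractions without poles in~$\cK(V)$, restricts to a section of $\cO_{\E{n}{\cA}}$ over~$\mathring V$; consequently, locally on~$\mathring V$, every element of the localisations $S^{-1}\cB(V)$ occurring in the definition of a germ of $\cO_{\cM(\cB(V))}$ over $\varphi_{V}^{-1}(\mathring V)$ comes from a section of $\cO_{\E{n}{\cA}}$, and one concludes by passing to uniform limits (the isometry of the residue-field maps ensures the limits still converge). Alternatively, one may cover $\mathring V$ by the interiors of pro-rational compacts $W \subseteq \mathring V$, apply part~ii) of Theorem~\ref{thm:rationnel} to each such~$W$, and transport the resulting isomorphism along the bounded morphism $\cB(V) \to \cB(W)$.

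I expect the only genuinely delicate point to be conceptual: recognising that $V$ is \emph{always} contained in the image of $\varphi_{V}$, so that the hypothesis really amounts to the equality of the image with~$V$, and that once the homeomorphism~i) is in hand, conclusion~ii) is essentially forced by the description of $\cO_{\E{n}{\cA}}$ as locally uniform limits of rational fractions without poles. The one subtlety to treat carefully in~ii) is that an element of~$\cB(V)$ nonvanishing on a compact \emph{neighbourhood} in $\cM(\cB(V))$ need not be invertible in $\cB(V)$ — it can be inverted only locally — which is exactly why the comparison of structure sheaves has to be carried out locally over~$\mathring V$ rather than globally.
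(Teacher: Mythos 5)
Your proof is correct and is, in substance, the intended one: the paper gives no argument of its own here (the proposition is quoted from \cite[remarque~1.2.13]{A1Z}), and that remark rests precisely on the observation you make — injectivity of $\varphi_{V}$ and the inclusion of $V$ in the image of $\varphi_{V}$ hold for \emph{every} compact~$V$, so the hypothesis turns $\varphi_{V}$ into a continuous bijection from the compact $\cM(\cB(V))$ onto the Hausdorff space~$V$, giving conclusion~i), pro-rationality in Theorem~\ref{thm:rationnel} being used only to secure that image containment. Conclusion~ii) is only sketched in your write-up, but the ingredients you isolate (elements of $\cB(V)$ induce sections of $\cO_{\E{n}{\cA}}$ over~$\mathring V$, non-vanishing sections can be inverted only locally, and the isometric embeddings $\cH(\varphi_{V}(z))\hookrightarrow\cH(z)$ transport uniform limits in both directions) are exactly the ones needed to compare the two structure sheaves over~$\mathring V$.
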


\begin{coro}
L'ensemble des parties compactes spectralement convexes de~$\E{n}{\cA}$ est stable par intersection.
\qed
\end{coro}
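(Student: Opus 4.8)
The plan is to reduce everything to the characterization of spectral convexity provided by Proposition~\ref{crit_spectral}: a compact part $V$ of $\E{n}{\cA}$ is spectrally convex if and only if the image of $\varphi_{V} \colon \cM(\cB(V)) \to \E{n}{\cA}$ is contained in~$V$. So let $(V_{i})_{i\in I}$ be a (nonempty) family of spectrally convex compact parts of $\E{n}{\cA}$ and set $W := \bigcap_{i\in I} V_{i}$. The space $\E{n}{\cA}$ is Hausdorff (if $|\wc|_{x}\ne|\wc|_{y}$, some polynomial~$P$ satisfies $|P(x)|<c<|P(y)|$ for some real~$c$, and the sets $\{|P|<c\}$ and $\{|P|>c\}$ separate~$x$ and~$y$), so each~$V_{i}$ is closed and $W$ is a closed subset of any fixed~$V_{i_{0}}$, hence compact. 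It remains to check that the image of~$\varphi_{W}$ lies in~$W$, i.e.\ in~$V_{i}$ for every~$i$.

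Fix $i\in I$. Since $W\subseteq V_{i}$, one has $S_{V_{i}}\subseteq S_{W}$, so there is a natural localization morphism $\cK(V_{i})\to\cK(W)$; composing it with the completion morphism $\cK(W)\to\cB(W)$ yields a morphism $\cK(V_{i})\to\cB(W)$ which is contracting for $\nm_{V_{i}}$ on the source, because $\|g\|_{W}=\max_{x\in W}|g(x)|\le\max_{x\in V_{i}}|g(x)|=\|g\|_{V_{i}}$ for every $g\in\cK(V_{i})$. It therefore extends uniquely to a bounded morphism of $\cA$-algebras $\cB(V_{i})\to\cB(W)$, compatible with the natural morphisms out of $\cA[T_{1},\dotsc,T_{n}]$. (One cannot invoke Lemma~\ref{lem:factorisationBV} directly here, since applying it would require knowing a priori that the image of~$\varphi_{W}$ is contained in~$V_{i}$, which is exactly what we are trying to establish; hence the morphism is built by hand.)

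This morphism $\cB(V_{i})\to\cB(W)$ induces a continuous map $\psi_{i}\colon\cM(\cB(W))\to\cM(\cB(V_{i}))$, and by functoriality of~$\cM(\wc)$ — both $\varphi_{W}$ and $\varphi_{V_{i}}\circ\psi_{i}$ are induced by one and the same morphism $\cA[T_{1},\dotsc,T_{n}]\to\cB(W)$ — we obtain $\varphi_{W}=\varphi_{V_{i}}\circ\psi_{i}$. Consequently the image of~$\varphi_{W}$ is contained in the image of~$\varphi_{V_{i}}$, which by spectral convexity of~$V_{i}$ and Proposition~\ref{crit_spectral} is contained in~$V_{i}$. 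As this holds for every $i\in I$, the image of~$\varphi_{W}$ is contained in $\bigcap_{i\in I}V_{i}=W$, and Proposition~\ref{crit_spectral} then shows that~$W$ is spectrally convex.

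The argument is short and essentially formal once Proposition~\ref{crit_spectral} is available; the only point that requires a little care — and the main (minor) obstacle — is the construction of the transition morphism $\cB(V_{i})\to\cB(W)$ from the inclusion of localizations together with the monotonicity of uniform norms, and the realization that the seemingly applicable factorization lemma (Lemma~\ref{lem:factorisationBV}) would be circular at this point.
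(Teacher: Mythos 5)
Your argument is correct, and it coincides with what the paper intends: the corollary is stated without proof as an immediate consequence of Proposition~\ref{crit_spectral}, and your reduction (compactness of the intersection, the contracting morphism $\cB(V_{i})\to\cB(W)$ built from $S_{V_{i}}\subseteq S_{W}$ and the monotonicity of uniform seminorms, then functoriality giving $\varphi_{W}=\varphi_{V_{i}}\circ\psi_{i}$) is exactly that elaboration. Your remark that Lemma~\ref{lem:factorisationBV} cannot be invoked directly without circularity is well taken, and the by-hand construction handles it properly (note also that the case of an empty intersection causes no trouble, since then $\cB(W)=0$ and the criterion is vacuously satisfied).
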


\`A l'aide du lemme~\ref{lem:factorisationBV}, nous en d\'eduisons une autre caract\'erisation des parties spectralement convexes. 

\begin{prop}\label{crit_spectral_pu}
Soit~$V$ une partie compacte  de~$\E{n}{\cA}$. Les propositions suivantes sont \'equivalentes~:
\begin{enumerate}[i)]
\item $V$ est spectralement convexe~;
\item pour toute $\cA$-alg\`ebre de Banach uniforme~$\cB$ et tout morphisme de~$\cA$-alg\`ebres $f \colon \cA[T_1,\ldots,T_n]\to\cB$, l'image du morphisme induit $\cM(\cB)\to\E{n}{\cA}$ est contenue dans~$V$ si, et seulement si, il existe un unique morphisme de $\cA$-alg\`ebres born\'e $\cB(V) \to \cB$ qui fait commuter le diagramme
\[\begin{tikzcd}
\cA[T_{1},\dotsc,T_{n}] \arrow[d] \arrow[r, "f"] & \cB. \\
\cB(V) \arrow[ru]
\end{tikzcd}\]
\end{enumerate}
\qed
\end{prop}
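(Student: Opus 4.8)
The plan is to derive both implications formally from Lemma~\ref{lem:factorisationBV} and the characterization of spectral convexity in Proposition~\ref{crit_spectral}, exploiting the fact that $\cB(V)$ is itself a uniform Banach $\cA$-algebra and that the natural morphism $\iota\colon\cA[T_{1},\dotsc,T_{n}]\to\cB(V)$ induces exactly the morphism $\varphi_{V}\colon\cM(\cB(V))\to\E{n}{\cA}$.

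For (i)~$\Rightarrow$~(ii), I would fix a uniform Banach $\cA$-algebra $\cB$ and a morphism of $\cA$-algebras $f\colon\cA[T_{1},\dotsc,T_{n}]\to\cB$, and denote by $\psi\colon\cM(\cB)\to\E{n}{\cA}$ the induced morphism. The direction ``$\psi(\cM(\cB))\subseteq V$ implies the existence of a unique bounded $\cA$-algebra morphism $\cB(V)\to\cB$ factoring $f$'' is precisely the content of Lemma~\ref{lem:factorisationBV}, and uses nothing about $V$ beyond compactness. For the converse direction, given a bounded $\cA$-algebra morphism $g\colon\cB(V)\to\cB$ with $g\circ\iota=f$, functoriality of $\cM(\wc)$ yields a continuous map $\cM(g)\colon\cM(\cB)\to\cM(\cB(V))$, and a routine check on seminorms gives $\psi=\varphi_{V}\circ\cM(g)$; hence $\psi(\cM(\cB))\subseteq\varphi_{V}(\cM(\cB(V)))$, which is contained in $V$ because $V$ is spectrally convex (Proposition~\ref{crit_spectral}). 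This establishes the equivalence in~(ii) for every such pair $(\cB,f)$.

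For (ii)~$\Rightarrow$~(i), I would simply apply the hypothesis to the test object $\cB:=\cB(V)$ (a uniform Banach $\cA$-algebra by Notation~\ref{nota:BV}) and $f:=\iota$. The identity of $\cB(V)$ is a bounded $\cA$-algebra morphism making the triangle commute, and it is the only one: any competitor fixes $\cA[T_{1},\dotsc,T_{n}]$ and, being a ring morphism, therefore fixes the inverses in $\cB(V)$ of the elements of $S_{V}$; it thus agrees with the identity on the dense subring $\cK(V)$, hence everywhere by continuity. So the right-hand condition of~(ii) holds for this pair, and the left-hand one follows: the image of the morphism induced by $\iota$, namely $\varphi_{V}$, is contained in $V$. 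By Proposition~\ref{crit_spectral}, this means exactly that $V$ is spectrally convex.

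No genuine difficulty arises; the one point I would be careful about is the identification $\psi=\varphi_{V}\circ\cM(g)$ used in~(i)~$\Rightarrow$~(ii). One checks that the restriction of $f$ to $\cA$ is bounded (which holds since $\cB$ is a Banach $\cA$-algebra), that pulling back a bounded multiplicative seminorm of $\cB$ along the bounded morphism $g$ lands in $\cM(\cB(V))$, and that precomposing the result with $\iota$ recovers the seminorm of $\cA[T_{1},\dotsc,T_{n}]$ obtained by pulling back along $f=g\circ\iota$. Everything else is formal bookkeeping, and one should keep in mind that $\cK(V)$ is dense in $\cB(V)$ (whereas the polynomial ring itself need not be) for the uniqueness argument in~(ii)~$\Rightarrow$~(i).
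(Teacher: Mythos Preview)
Your proof is correct and is precisely the argument the paper has in mind: the text simply states that the proposition follows from Lemma~\ref{lem:factorisationBV} (together with Proposition~\ref{crit_spectral}), and you have spelled out exactly that derivation. Your care with the density of $\cK(V)$ in $\cB(V)$ for the uniqueness step in (ii)~$\Rightarrow$~(i) is appropriate and matches the mechanism already used in the proof of Lemma~\ref{lem:factorisationBV}.
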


\begin{rema}\label{rem:affvsspconvexe}
Nous ne disposons pas de notion de domaine affino\"ide dans notre cadre, mais celle de partie spectralement convexe pourra parfois jouer ce r\^ole. La propri\'et\'e universelle~ii) qui figure dans la proposition~\ref{crit_spectral_pu} est proche de celle qui d\'efinit les domaines affino\"ides (\cf~\cite[definition~2.2.1]{Ber1}), avec cependant la diff\'erence notable que l'on consid\`ere uniquement des morphismes \`a valeurs dans des alg\`ebres de Banach uniformes. Notons que, lorsque l'anneau de Banach~$\cA$ est un corps valu\'e, de nombreuses parties spectralement convexes (ferm\'es de Zariski, points, etc.) ne sont pas n\'ecessairement des domaines affino\"ides.
\end{rema}

\begin{rema}\label{spectral}\label{spectralement_conv}\index{Projection!fibre d'une}
Soit $m\in \cn{0}{n}$ et consid\'erons le morphisme de projection sur les $m$~premi\`eres coordonn\'ees $\pi_{n,m} \colon \E{n}{\cA} \to \E{m}{\cA}$  (\cf~d\'efinition~\ref{def:projection}). Soit~$V$ un ensemble compact spectralement convexe de~$\E{m}{\cA}$. D'apr\`es \cite[proposition~1.2.15]{A1Z}, on a un hom\'eomorphisme naturel
\[\E{n-m}{\cB(V)} \simtoo \pi_{n,m}^{-1}(V)\]
et m\^eme un isomorphisme d'espaces annel\'es au-dessus de~$\mathring V$.

Ce r\'esultat se r\'ev\`ele tr\`es utile dans les d\'emonstrations par r\'ecurrence sur la dimension. Par exemple, si l'on souhaite \'etudier une propri\'et\'e locale en un point~$x$ de~$\E{n}{\cA}$, il permet de se ramener au cas d'un point $x'$ de $\E{1}{\cB}$, o\`u $\cB$ est un anneau de Banach de la forme~$\cB(V)$ pour un voisinage compact spectralement convexe de~$\pi_{n,n-1}(x)$ dans $\E{n-1}{\cA}$. 
\end{rema}

\section{Disques, couronnes et domaines polynomiaux}\label{sec:dcl}

Soit~$\cA$ un anneau de Banach. Posons $B :=\cM(\cA)$ et $X := \AunA$, avec coordonn\'ee~$T$. Notons $\pi \colon X \to B$ la projection sur la base.

\begin{nota}\index{Disque|textbf}\index{Couronne|textbf}\index{Domaine polynomial|textbf}%
\nomenclature[Jb]{$D_{V}(P,t)$}{domaine polynomial ouvert relatif au-dessus de $V$}%
\nomenclature[Jba]{$\overline{D}_{V}(P,t)$}{domaine polynomial ferm\'e relatif au-dessus de $V$}%
\nomenclature[Jc]{$C_{V}(P,s,t)$}{domaine polynomial ouvert relatif au-dessus de $V$}%
\nomenclature[Jd]{$\overline{C}_{V}(P,s,t)$}{domaine polynomial ferm\'e relatif au-dessus de $V$}%
Soit~$V$ une partie de~$B$. Soient $P\in \cA[T]$ et $s,t\in \R$. On pose
\begin{align*}
D_{V}(P,t) &:= \{x\in X : \pi(x)\in V, |P(x)| < t\}~;\\
\overline{D}_{V}(P,t) &:= \{x\in X : \pi(x)\in V, |P(x)| \le t\}~;\\
C_{V}(P,s,t) &:= \{x\in X : \pi(x)\in V, s < |P(x)| < t\}~;\\
\overline{C}_{V}(P,s,t) &:= \{x\in X : \pi(x)\in V, s \le |P(x)| \le t\}.
\end{align*}
Une partie de la forme $D_{V}(P,t)$ ou $C_{V}(P,s,t)$ sera appel\'ee \emph{domaine polynomial ouvert relatif au-dessus de~$V$}. Une partie de la forme $\overline D_{V}(P,t)$ ou $\overline C_{V}(P,s,t)$ sera appel\'ee \emph{domaine polynomial ferm\'e relatif au-dessus de~$V$}. 

Lorsque $P = T$, on s'autorise \`a supprimer le~$P$ de la notation et \`a \'ecrire simplement $D_{V}(t)$, $\oD_{V}(t)$, $C_{V}(s,t)$ ou $\overline{C}_{V}(s,t)$. On parlera de \emph{disque relatif} dans les deux premiers cas et de \emph{couronne relative} dans les deux derniers.
\nomenclature[Je]{$D_{V}(t)$}{disque ouvert relatif au-dessus de $V$}
\nomenclature[Jf]{$\overline{D}_{V}(t)$}{disque ferm\'e relatif au-dessus de $V$}
\nomenclature[Jg]{$C_{V}(s,t)$}{couronne ouverte relative au-dessus de $V$}
\nomenclature[Jh]{$\overline{C}_{V}(s,t)$}{couronne ferm\'ee relative au-dessus de $V$}
\end{nota}

Dans tout le texte, nous nous autoriserons \`a parler de sections d'un faisceau sur une partie qui n'est pas n\'ecessairement ouverte.

\begin{nota}\label{nota:surconvergent}\index{Faisceau!surconvergent}
\nomenclature[Ira]{$\cO(V)$}{fonctions surconvergentes sur un compact~$V$ de~$\E{n}{\cA}$}
\nomenclature[Ea]{$\cF(V)$}{sections d'un faisceau coh\'erent~$\cF$ sur une partie~$V$}
Soient~$T$ un espace topologique et~$\cF$ un faisceau d'ensembles sur~$T$. Consid\'erons l'espace \'etale associ\'e $(\widetilde{\cF},p)$, o\`u~$\widetilde{\cF}$ est un espace topologique et $p \colon \widetilde{\cF} \to T$ un hom\'eomorphisme local. Pour toute partie~$V$ de~$T$, on note $\cF(V)$ l'ensemble des sections continues de l'application~$p$ au-dessus de~$V$.  
\end{nota}

Lorsque~$V$ est ouvert, on retrouve l'ensemble~$\cF(V)$ usuel, \cf~\cite[II, section~1.2]{GodementTATF}. En g\'en\'eral, $\cF(V)$ s'interpr\`ete comme un ensemble de sections surconvergentes. Pr\'ecis\'ement, d'apr\`es \cite[II, corollaire~1 du th\'eor\`eme~3.3.1]{GodementTATF}, si $V$ poss\`ede une base de voisinages paracompacts, alors l'application
\[\colim_{U \supset V} \cF(U) \too \cF(V),\] 
o\`u $U$ parcourt l'ensemble des voisinages ouverts de~$V$ dans~$T$, est bijective. L'hypoth\`ese sur~$V$ est, par exemple, satisfaite lorsque $T$ est localement compact (par exemple un espace analytique) et $V$ compacte. Hormis le cas ouvert, c'est le cas que nous rencontrerons le plus souvent.

\begin{exem}\label{ex:seriessurconvergentes}
Supposons que $(\cA,\nm)$ est un corps valu\'e complet $(k,\va)$. Pour tout $t\in \R_{\ge 0}$, $\cO(\overline{D}_{\cM(k)}(t))$ est l'anneau des s\'eries \`a coefficients dans~$k$ de rayon de convergence strictement sup\'erieur \`a~$t$.
\end{exem}

\subsection{Alg\`ebres de disques et de couronnes}
\index{Disque!algebre@alg\`ebre d'un|(}
\index{Couronne!algebre@alg\`ebre d'une|(}

\begin{nota}\label{nota:ATt}
Soit~$(\cC,\nm)$ un anneau muni d'une norme sous-multiplicative.

Soit $t \in \R_{>0}$. On note $\cC\la |T| \le t\ra$ l'alg\`ebre constitu\'ee des s\'eries de la forme
\[\sum_{n\in \N} a_{n}\, T^n \in \cC[\![T]\!]\]
telles que la s\'erie $\sum_{n\in \N} \|a_{n}\|\, t^n$ converge et on la munit de la norme d\'efinie par 
\[\left\| \sum_{n\in \N} a_{n}\, T^n\right\|_{t} := \sum_{n\in \N} \|a_{n}\|\, t^n.\]
 Si~$\cC$ est complet, $\cC\la |T| \le t\ra$ est compl\`ete.

Soient $s,t \in \R_{>0}$ tels que $s \le t$. On note $\cC\la s \le |T| \le t\ra$ l'alg\`ebre constitu\'ee des s\'eries de la forme
\[\sum_{n\in \Z} a_{n}\, T^n \in \cC[\![T,T^{-1}]\!]\]
telles que la famille $(\|a_{n}\|\, \max(s^n,t^n))_{n\in \Z}$ soit sommable et on la munit de la norme d\'efinie par 
\[\left\| \sum_{n\in \Z} a_{n}\, T^n\right\|_{s,t} := \sum_{n\in \Z} \|a_{n}\|\, \max(s^n,t^n).\]
 Si~$\cC$ est complet, $\cC\la s\le |T| \le t\ra$ est compl\`ete.

Par commodit\'e, on pose $\cC\la 0 \le |T| \le t\ra := \cC\la |T| \le t\ra$ et $\nm_{0,t} := \nm_{t}$.%
\nomenclature[Ca]{$\cA\la \vert T\vert \le t \ra$}{s\'eries \`a coefficients dans~$\cA$ normalement convergentes sur $\oD(t)$}%
\nomenclature[Bhaa]{$\nm_{t}$}{norme 1 sur $\cA\la \vert T\vert \le t\ra$}%
\nomenclature[Cb]{$\cA\la s \le \vert T\vert \le t\ra$}{s\'eries de Laurent \`a coefficients dans~$\cA$ normalement convergentes sur $\oC(s,t)$}%
\nomenclature[Bhb]{$\nm_{s,t}$}{norme 1 sur $\cA\la s\le  \vert T\vert \le t\ra$}%
\end{nota}

\begin{prop}[\protect{\cite[proposition~2.1.1]{A1Z}}]\label{prop:spectreseries}\index{Norme!spectrale}\index{Norme!uniforme}\index{Norme!comparaison}
Supposons que~$\cA$ est uniforme. Pour $s,t \in \R_{\ge 0}$ tels que $0= s <t$ ou $0<s\le t$, on a un isomorphisme canonique
\[\cM(\cA\la s \le |T| \le t\ra) \simtoo \overline{C}(s,t).\]
En particulier, la norme spectrale de~$\nm_{s,t}$ s'identifie avec la norme uniforme sur $\overline{C}(s,t)$. 
\qed
\end{prop}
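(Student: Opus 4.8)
The plan is to describe the canonical map explicitly, check that it is a continuous bijection onto $\overline{C}(s,t)$, promote it to a homeomorphism by a compactness argument, and then read off the statement about norms from Lemma~\ref{lem:spuni}.

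Write $\cC := \cA\la s \le |T| \le t\ra$. If $\cA = 0$ then $\cC = 0$ and both members are empty, so assume $\cA \neq 0$. Restricting a bounded multiplicative seminorm on~$\cC$ along the canonical morphism $\cA[T] \to \cC$ produces a point of $\E{1}{\cA}$ (the boundedness condition on~$\cA$ is inherited because a constant series~$a$ has $\|a\|_{s,t} = \|a\|_{\cA}$), whence a map $j \colon \cM(\cC) \to \E{1}{\cA}$. It is continuous since, for each $P \in \cA[T]$, the function $x \mapsto |P(j(x))|$ is the restriction to $\cM(\cC)$ of the continuous map $x \mapsto |P|_{x}$. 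Moreover, for $x \in \cM(\cC)$ the bound $|f|_{x} \le \|f\|_{s,t}$ (valid on all of~$\cC$) gives $|T|_{x} \le t$ and, when $s > 0$, also $|T^{-1}|_{x} \le \max(s^{-1},t^{-1}) = s^{-1}$, hence $|T|_{x} = |T^{-1}|_{x}^{-1} \ge s$. So $j$ takes values in $\overline{C}(s,t)$.

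I would then prove that $j \colon \cM(\cC) \to \overline{C}(s,t)$ is a bijection. For surjectivity, fix $y \in \overline{C}(s,t)$ with associated multiplicative seminorm $\va_{y}$ on $\cA[T]$, extended to $\cA[T,T^{-1}]$ by $|T^{-1}|_{y} := |T|_{y}^{-1}$ when $s > 0$ (legitimate as $|T|_{y} \ge s > 0$). For a polynomial, resp. Laurent polynomial, $f = \sum_{n} a_{n} T^{n}$ one has
\[ |f|_{y} \le \sum_{n} |a_{n}|_{y}\, |T|_{y}^{\,n} \le \sum_{n} \|a_{n}\|_{\cA}\, \max(s^{n},t^{n}) = \|f\|_{s,t}, \]
using $|a_{n}|_{y} \le \|a_{n}\|_{\cA}$ and $s \le |T|_{y} \le t$. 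Since $\cA[T]$, resp. $\cA[T,T^{-1}]$, is dense in~$\cC$, the seminorm $\va_{y}$ extends uniquely to a bounded seminorm on~$\cC$, which remains multiplicative because multiplication and the seminorm are continuous; this is a point of $\cM(\cC)$ sent by~$j$ to~$y$. For injectivity, two points of $\cM(\cC)$ with the same image under~$j$ induce the same seminorm on the dense subring $\cA[T]$, resp. $\cA[T,T^{-1}]$; being bounded, these seminorms are continuous on~$\cC$, hence equal.

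Finally, $\cM(\cC)$ is compact by Theorem~\ref{th:MAnonvide}, while $\overline{C}(s,t)$ is Hausdorff as a subspace of $\E{1}{\cA}$ (which carries the topology of pointwise convergence of seminorms, and is therefore Hausdorff); a continuous bijection from a compact space to a Hausdorff space is a homeomorphism, so~$j$ is the asserted isomorphism. For the last sentence, Lemma~\ref{lem:spuni} applied to~$\cC$ yields $\|f\|_{s,t,\sp} = \max_{x \in \cM(\cC)} |f(x)|$ for all $f \in \cC$, which via~$j$ is exactly the uniform seminorm of~$f$ on~$\overline{C}(s,t)$. I expect the only delicate point to be the surjectivity estimate above, together with the verification that multiplicativity survives passage to the completion; the remaining steps are formal.
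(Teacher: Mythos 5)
Your proof is correct, and it follows the standard (and expected) route: the paper itself gives no argument here, deferring to \cite[proposition~2.1.1]{A1Z}, whose proof proceeds exactly as you do — restriction along $\cA[T]\to\cA\la s\le|T|\le t\ra$ lands in $\overline{C}(s,t)$, the inverse is built by extending a seminorm from the dense subring of (Laurent) polynomials using the bound $|f|_y\le\|f\|_{s,t}$, and compactness of the spectrum upgrades the continuous bijection to a homeomorphism, after which Lemma~\ref{lem:spuni} gives the statement on norms. One small observation: your argument never actually invokes the uniformity of~$\cA$ (the key inequality only uses $|a_n|_y\le\|a_n\|_{\cA}$, which holds for any point of $\E{1}{\cA}$), so you have in fact proved the statement under a weaker hypothesis — which is harmless.
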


Pour \'eviter d'avoir \`a distinguer dans chaque \'enonc\'e les disques et les couronnes, introduisons une notation.

\begin{nota}%
\nomenclature[Bha]{$s\prec t$}{relation entre $s,t \in \R_{\ge0}$ satisfaite si $s=0$ ou $s <t$}
Pour $s,t \in \R_{\ge0}$, on note
\[s \prec t \textrm{ si } s =0 \textrm{ ou } s< t.\]
\end{nota}

Rappelons que nous avons d\'efini un anneau~$\cB(V)$ \`a la notation~\ref{nota:BV}.

\begin{prop}[\protect{\cite[lemme~2.1.2 et proposition~2.1.3]{A1Z}}]\label{prop:restrictionserie}\index{Norme!comparaison}
\index{Algebre@Alg\`ebre!d'un disque|see{Disque}}\index{Algebre@Alg\`ebre!d'une couronne|see{Couronne}}\index{Algebre@Alg\`ebre!d'un domaine polynomial|see{Domaine polynomial}}
Supposons que~$\cA$ est uniforme. 
Soient $s,u \in \R_{\ge 0}$ et $t,v \in \R_{>0}$ tels que $s\prec u \le v < t$. Soit $f = \sum_{n\in \Z} a_{n}\, T^n \in \cA\la s \le |T| \le t\ra$. Alors, on a
\[\forall n\in \Z,\ \|a_{n}\| \max(u^n,v^n) \le \|f\|_{\overline{C}(s,t)}\]
et
\[ \|f\|_{u,v} \le \left( \frac{s}{u-s} + \frac{t}{t-v}\right) \|f\|_{\overline{C}(s,t)},\]
avec la convention que $s/(u-s) = 0$ lorsque $s=0$.

En particulier, on a un morphisme injectif naturel
\[\cB(\overline{C}(s,t)) \too \cA\la u\le |T| \le v\ra.\]
\end{prop}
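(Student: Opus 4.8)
The plan is to obtain the whole statement from the first inequality: the second follows from it by a geometric-series estimate, and the injective morphism is then produced by hand.

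For the first inequality, fix~$n$ (one may assume $a_{n}\neq 0$); the first step is to reduce to the case where~$\cA$ is a complete valued field. Since~$\cA$ is uniform, lemme~\ref{lem:spuni} gives $\|a_{n}\|_{\cA}=\|a_{n}\|_{\cA,\sp}=\max_{b\in\cM(\cA)}|a_{n}(b)|$, and this maximum is attained at some~$b$ by compactness of~$\cM(\cA)$ (théorème~\ref{th:MAnonvide}) and continuity of~$|a_{n}|$. Applying~$\ev_{b}$ to the coefficients of~$f$ produces $f_{b}\in\cH(b)\la s\le|T|\le t\ra$ with $\|a_{n}(b)\|_{\cH(b)}=\|a_{n}\|_{\cA}$ and with uniform norm on the closed crown of $\E{1}{\cH(b)}$ bounded by~$\|f\|_{\overline{C}(s,t)}$ (that crown, being the fiber $\pi^{-1}(b)\cap\overline{C}(s,t)$, embeds into $\overline{C}(s,t)$). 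So it suffices to treat $\cA=k$ a complete valued field. If~$k$ is ultrametric, then for $r\in\{u,v\}$ one has $s\le r\le t$, so the Gauss point~$\eta_{r}$ of radius~$r$ lies in $\overline{C}(s,t)$; multiplicativity of~$\eta_{r}$ then yields $|a_{n}|\,r^{n}\le\max_{m}|a_{m}|\,r^{m}=|f(\eta_{r})|\le\|f\|_{\overline{C}(s,t)}$. If~$k$ is archimedean, théorème~\ref{th:vaarchimedienne} lets one assume $k=\C$ with $\va=\va_{\infty}^{\eps}$; then~$f$ is an ordinary Laurent series in~$z$ converging on $\{s^{1/\eps}\le|z|_{\infty}\le t^{1/\eps}\}$, and Cauchy's integral formula for its coefficients, evaluated on the circles $|z|_{\infty}=u^{1/\eps}$ and $|z|_{\infty}=v^{1/\eps}$ and raised to the power~$\eps$, gives $\|a_{n}\|_{k}\max(u^{n},v^{n})\le\|f\|_{\overline{C}(s,t)}$. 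I expect this archimedean case to be the conceptual core: no Gauss point is available, so one genuinely needs the integral formula, after translating between~$\va_{\infty}^{\eps}$ and~$\va_{\infty}$.

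For the second inequality, writing $\|f\|_{u,v}=\sum_{n\ge 0}\|a_{n}\|\,v^{n}+\sum_{n<0}\|a_{n}\|\,u^{n}$ (using $u\le v$), one applies the first inequality with both radii equal to some $\tau\in\intoo{v,t}$, getting $\|a_{n}\|\le\tau^{-n}\,\|f\|_{\overline{C}(s,t)}$ for $n\ge 0$, hence $\sum_{n\ge 0}\|a_{n}\|\,v^{n}\le\frac{\tau}{\tau-v}\,\|f\|_{\overline{C}(s,t)}$; letting $\tau\to t$ yields the factor $\frac{t}{t-v}$. When $s=0$ the second sum is empty; when $s>0$, applying the first inequality with both radii equal to $\sigma\in\intoo{s,u}$ gives $\sum_{n<0}\|a_{n}\|\,u^{n}\le\frac{\sigma}{u-\sigma}\,\|f\|_{\overline{C}(s,t)}$, and letting $\sigma\to s$ yields the factor $\frac{s}{u-s}$. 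Adding the two bounds gives the claim.

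For the injective morphism, I would fix auxiliary radii with $s\prec u_{1}<u$ and $v<v_{1}<t$ (taking $u_{1}=0$ in the degenerate case $u=0$). Every $P\in S_{\overline{C}(s,t)}$ is non-vanishing on $\overline{C}(u_{1},v_{1})=\cM(\cA\la u_{1}\le|T|\le v_{1}\ra)$ (by~\cite[proposition~2.1.1]{A1Z}), hence invertible in the Banach ring $\cA\la u_{1}\le|T|\le v_{1}\ra$ by~\cite[theorem~1.2.1]{Ber1}; so $\cA[T]\to\cA\la u_{1}\le|T|\le v_{1}\ra$ extends uniquely to $\cK(\overline{C}(s,t))\to\cA\la u_{1}\le|T|\le v_{1}\ra$, and composing with the contractive restriction $\cA\la u_{1}\le|T|\le v_{1}\ra\to\cA\la u\le|T|\le v\ra$ (contractive because $\max(u^{n},v^{n})\le\max(u_{1}^{n},v_{1}^{n})$ for all~$n$) gives a natural map $\cK(\overline{C}(s,t))\to\cA\la u\le|T|\le v\ra$. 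Applying the second inequality to the image of $g\in\cK(\overline{C}(s,t))$ in $\cA\la u_{1}\le|T|\le v_{1}\ra$, together with $\overline{C}(u_{1},v_{1})\subset\overline{C}(s,t)$, shows this map is bounded for the uniform norm of~$\overline{C}(s,t)$ on the source, hence extends to $\cB(\overline{C}(s,t))\to\cA\la u\le|T|\le v\ra$. For injectivity I would argue fiberwise: an element of the kernel, viewed as a function on~$\overline{C}(s,t)$ via théorème~\ref{thm:rationnel}, vanishes on $\overline{C}(u,v)$, so its restriction to each fiber $\pi^{-1}(b)$ is a function on a closed crown of $\E{1}{\cH(b)}$ vanishing on a sub-crown with non-empty interior, hence identically zero by analytic continuation over the valued field~$\cH(b)$ (Laurent expansion if~$\cH(b)$ is ultrametric, isolated zeros if archimedean); therefore $\|g\|_{\overline{C}(s,t)}=0$ and $g=0$. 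The bookkeeping with the auxiliary radii — needed so that $\overline{C}(u,v)$ sits strictly inside a crown on which the elements of $\cK(\overline{C}(s,t))$ genuinely expand as convergent Laurent series — together with this fiberwise identity argument, is the main technical obstacle in this last part.
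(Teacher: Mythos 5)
Your proof is correct; note that the paper itself gives no argument for this statement (it is recalled from \cite[lemme~2.1.2 et proposition~2.1.3]{A1Z}), and your route is the expected one: the fiberwise reduction to a complete valued field, with the Gauss-point computation in the ultrametric case and Cauchy's formula in the archimedean case, is exactly the mechanism the authors themselves invoke elsewhere for this coefficient bound (see the proof of th\'eor\`eme~\ref{fermeture}, which cites \cite[lemme~2.1.2]{A1Z} for precisely this). The geometric-series derivation of the second inequality, and the localization-plus-completion construction of the morphism with injectivity checked fiber by fiber (Gauss norm over ultrametric fibers, identity theorem over archimedean ones), are likewise the standard arguments, so apart from the routine bookkeeping you acknowledge there is nothing to fix.
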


Rappelons un r\'esultat permettant de d\'ecrire les fonctions au voisinage des disques.

\begin{prop}[\protect{\cite[corollaire~2.8]{EtudeLocale}}]\label{prop:disqueglobal}
Soit $V$ une partie compacte de~$B$. Soit $t \in \R_{\ge 0}$. Alors, le morphisme de $\cA$-alg\`ebres naturel
\[\colim_{W\supset V,v > t} \cO(W)\la |T|\le v\ra \too \cO(\overline{D}_{V}(t)),\]
o\`u~$W$ parcourt l'ensemble des voisinages compacts de~$V$ dans~$B$, est un isomorphisme.
\qed
\end{prop}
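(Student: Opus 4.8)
The plan is to build the inverse of this map by Taylor-expanding a section of~$\cO$ defined near the relative disk into a normally convergent power series, the norm control being supplied by Proposition~\ref{prop:restrictionserie}. The map itself sends $\sum_{n} a_{n} T^{n}$ (with the $a_{n}$ defined on some neighbourhood of~$W$ and $\sum_{n} \|a_{n}\|_{W}\, v^{n} < \infty$) to $x \mapsto \sum_{n} a_{n}(x)\, T(x)^{n}$, which converges uniformly on the open set $D_{W}(v) \supseteq \overline{D}_{V}(t)$ and hence defines an element of $\cO(\overline{D}_{V}(t))$, so only surjectivity and injectivity are at stake; moreover one may assume $V$ closed, since $V$ and~$\overline V$ have the same open neighbourhoods in~$B$. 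I would start with a topological reduction: as every point of $B = \cM(\cA)$ has a basis of rational compact neighbourhoods, $\overline{D}_{V}(t)$ is the intersection of the filtered family of \emph{compact} sets $\overline{D}_{W}(v)$, where $W$ runs over the pro-rational compact neighbourhoods of~$V$ and $v > t$; hence any open~$\Omega$ of~$\AunA$ containing $\overline{D}_{V}(t)$ already contains some $\overline{D}_{W}(v)$ (the filtered family of compacts $\overline{D}_{W}(v) \setminus \Omega$ has empty intersection, so one of them is empty). Therefore a surconvergent function~$f$ on $\overline{D}_{V}(t)$ is a section of~$\cO$ on a neighbourhood of some $\overline{D}_{W_{0}}(v_{0})$ with $W_{0} \supseteq V$ a pro-rational compact neighbourhood and $v_{0} > t$, and it is enough to expand the restriction of~$f$ to a slightly smaller disk $\overline{D}_{W}(v)$, for $W \Subset W_{0}$ (i.e.\ with $W$ in the interior of~$W_{0}$) a neighbourhood of~$V$ and $t < v < v_{0}$, as an element of $\cO(W)\la |T| \le v\ra$.

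The core is this Taylor expansion. First I would reduce to~$\cA$ uniform: picking an intermediate pro-rational compact $W \Subset W_{1} \Subset W_{0}$ and replacing~$\cA$ by the uniform Banach ring $\cB(W_{1})$ — whose spectrum is~$W_{1}$ by Theorem~\ref{thm:rationnel} — one may assume $\cA$ uniform and $B = W_{1}$. Fix $v < v' < v_{0}$. The relative closed disk $\overline{D}_{B}(v') = \overline{C}(0,v')$ is a rational compact subset of~$\AunA$ (cut out by $|T| \le v'$), hence spectrally convex by Theorem~\ref{thm:rationnel}; since~$f$ is a section of~$\cO$ on a neighbourhood of it, $f$ defines an element of $\cB(\overline{C}(0,v'))$. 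Proposition~\ref{prop:restrictionserie}, applied with $s = u = 0$ and with its outer radius equal to~$v'$, then yields the injection $\cB(\overline{C}(0,v')) \hookrightarrow \cA\la |T| \le v\ra$ together with the estimates $\|a_{n}\|\, v^{n} \le \|f\|_{\overline{C}(0,v')}$ and $\sum_{n} \|a_{n}\|\, v^{n} \le \frac{v'}{v'-v}\,\|f\|_{\overline{C}(0,v')} < \infty$; since that algebra contains no term of negative degree, $f = \sum_{n \ge 0} a_{n} T^{n}$ with $a_{n} \in \cA = \cB(W_{1})$. Restricting the coefficients along the natural map $\cB(W_{1}) \to \cO(W)$ (available because $W \Subset W_{1}$, using part~ii) of Theorem~\ref{thm:rationnel}) gives $f \in \cO(W)\la |T| \le v\ra$ mapping back to~$f$, which proves surjectivity.

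For injectivity, if $g = \sum_{n} a_{n} T^{n}$ with $a_{n} \in \cO(W)$ maps to~$0$ in $\cO(\overline{D}_{V}(t))$, then~$g$ vanishes on an open neighbourhood of $\overline{D}_{V}(t)$, hence — by the topological reduction above — on some $\overline{D}_{W'}(v')$ with $W' \supseteq V$ a pro-rational compact neighbourhood and $v' > t$, which we may take inside $\overline{D}_{W}(v)$; at each point~$b$ of~$W'$ the power series $\sum_{n} a_{n}(b)\, T^{n}$ over the valued field~$\cH(b)$ then vanishes on a disk, so all $a_{n}(b)$ vanish, and since $\cB(W')$ is uniform this forces $a_{n}|_{W'} = 0$; thus~$g$ is already zero in $\cO(W')\la |T| \le v'\ra$, hence in the colimit. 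The main obstacle is the Taylor expansion step: promoting a function known only to be, locally, a uniform limit of pole-free rational functions near a relative disk to a globally, normally convergent power series with uniform control on a slightly smaller disk — precisely the content of Proposition~\ref{prop:restrictionserie} — the preliminary reductions (to~$V$ closed, to a pro-rational base, to uniform~$\cA$) serving only to bring the general statement within its range.
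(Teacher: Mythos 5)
The paper gives no proof of this proposition (it is quoted from \cite[corollaire~2.8]{EtudeLocale} with a \og\qedsymbol\fg), so your attempt has to stand on its own; and it has a genuine gap at its core. The decisive step is the claim that, because $\overline{D}_{B}(v')$ is a rational, hence spectrally convex, compact and $f$ is a section of~$\cO$ on a neighbourhood of it, $f$ \og defines an element of $\cB(\overline{C}(0,v'))$\fg. Nothing available gives this: Theorem~\ref{thm:rationnel} only provides a homeomorphism $\cM(\cB(K))\simeq K$ and an isomorphism of ringed spaces over the \emph{interior} of~$K$; it produces no map $\cO(K)\to\cB(K)$. By definition, a section of the structure sheaf near~$K$ is only \emph{locally} a uniform limit of pole-free rational functions, whereas $\cB(K)$ (Notation~\ref{nota:BV}) consists of functions \emph{globally} uniformly approximable on~$K$ by such fractions; passing from local to global approximation on the whole closed relative disk is precisely the content of the statement being proved. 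The paper is careful about exactly this distinction: Lemma~\ref{lem:isoBVcompacts} works with $\overline{\cO(W)}$ rather than $\cB(W)$, and in the proof of the couronne analogue following Proposition~\ref{prop:disqueglobal}, membership in $\cB(W)$ is extracted from the definition of the sheaf only for a \emph{small} compact neighbourhood of a single Shilov point, the globalization resting on \cite[proposition~2.4]{EtudeLocale} (Weierstrass-division-type input), not on spectral convexity. Once you grant $f\in\cB(\overline{D}_{B}(v'))$, Proposition~\ref{prop:restrictionserie} indeed finishes surjectivity; but that granted step is the whole difficulty, so the argument is circular.

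The preliminary reductions also need repair. The assertion that $V$ and $\overline{V}$ have the same open neighbourhoods is false (the open branch $\{a_{p}^{\eps}:\eps>0\}$ of $\cM(\Z)$ is an open neighbourhood of itself not containing $a_{0}\in\overline{V}$), and your filtered-compactness argument only works for $V$ closed: every compact neighbourhood of~$V$ contains~$\overline{V}$, so the intersection of the family $\overline{D}_{W}(v)$ is $\overline{D}_{\overline{V}}(t)$, not $\overline{D}_{V}(t)$. This is not cosmetic: for the open $p$-adic branch and $t=1$, the series $\sum_{n}p^{n^{2}}T^{n}$ defines an element of $\cO(\overline{D}_{V}(1))$ whose fibrewise coefficients have absolute value~$1$ at~$a_{0}$, which lies in every compact neighbourhood~$W$ of~$V$, so it lies in no $\cO(W)\la |T|\le v\ra$ with $v>1$; the statement must be read with $V$ closed (equivalently compact), as in all its uses in the paper. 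Finally, the replacement of~$\cA$ by~$\cB(W_{1})$ requires a \emph{pro-rational} compact neighbourhood of the compact set~$V$; the paper only guarantees bases of rational compact neighbourhoods of points, and finite unions of rational compacts are not pro-rational, so this reduction is also unjustified as stated (and even then the transported~$f$ is only a section over the preimage of $\mathring{W}_{1}$, not near the full disk over $\cM(\cB(W_{1}))$). The injectivity half of your argument is essentially sound for $V$ compact.
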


On peut obtenir un analogue de ce r\'esultat pour les couronnes contenues dans la partie ultram\'etrique.

\begin{prop}\label{prop:bumCst}
Soit $b \in B_{\um}$. Soient $s,t \in \R_{> 0}$ tels que $s\le t$. Alors, le morphisme de $\cA$-alg\`ebres naturel
\[\colim_{V\ni b, u < s \le t< v} \cB(V)\la u\le |T|\le v\ra \too \cO(\overline{C}_{b}(s,t)),\]
o\`u~$V$ parcourt l'ensemble des voisinages compacts de~$b$ dans~$B$, est un isomorphisme.
\end{prop}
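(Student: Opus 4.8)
Le plan est de d\'eduire cet \'enonc\'e du r\'esultat analogue pour les disques ferm\'es (proposition~\ref{prop:disqueglobal}) par un argument de type Mittag-Leffler, moyennant deux r\'eductions. Tout d'abord, $\overline{C}_{b}(s,t)$ est l'intersection de la famille filtrante d\'ecroissante des parties compactes $\overline{C}_{V}(u,v)$, o\`u~$V$ parcourt les voisinages compacts rationnels de~$b$ dans~$B$ (qui forment une base de voisinages et sont spectralement convexes d'apr\`es le th\'eor\`eme~\ref{thm:rationnel}) et o\`u $u \prec s \le t < v$~; chacune de ces parties est de plus un voisinage de $\overline{C}_{b}(s,t)$ dans~$X$, puisque $b \in \mathring V$ et que $[s,t]$ est contenu dans l'intervalle $\intoo{u,v}$. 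Il s'ensuit que tout voisinage ouvert de $\overline{C}_{b}(s,t)$ contient l'une de ces parties, donc $\cO(\overline{C}_{b}(s,t)) = \colim_{V,u,v}\cO(\overline{C}_{V}(u,v))$~; les deux membres de l'isomorphisme cherch\'e sont alors des colimites index\'ees par le m\^eme ensemble ordonn\'e, et il suffit de les comparer terme \`a terme. Ensuite, pour~$V$ fix\'e, on pose $\cC := \cB(V)$, anneau de Banach uniforme~; d'apr\`es la remarque~\ref{spectralement_conv}, $\pi^{-1}(V)$ s'identifie \`a~$\E{1}{\cC}$ (compatiblement aux faisceaux structuraux au-dessus de~$\mathring V$), $\overline{C}_{V}(u,v)$ devient la couronne ferm\'ee $\overline{C}(u,v) \subseteq \E{1}{\cC}$, et~$b$ devient un point ultram\'etrique de~$\cM(\cC)$. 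On est ainsi ramen\'e \`a d\'ecrire $\cO_{\E{1}{\cC}}(\overline{C}(u,v))$ au moyen de s\'eries de Laurent \`a coefficients dans~$\cC$ (dans la colimite o\`u l'on remplace~$\cC$ par~$\cB(W)$ pour~$W$ voisinage de~$b$ et o\`u l'on fait tendre $u \nearrow s$, $v \searrow t$), puis \`a recoller les deux colimites embo\^it\'ees.

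Venons-en au point central. Soit $g \in \cO_{\E{1}{\cC}}(\overline{C}(u,v))$ (on peut supposer $u > 0$, le cas $u = 0$ relevant directement de la proposition~\ref{prop:disqueglobal}). La restriction de~$g$ \`a la fibre ultram\'etrique $\pi^{-1}(b) \simeq \E{1}{\cH(b)}$ appartient \`a $\cO_{\E{1}{\cH(b)}}(\overline{C}(u,v))$, o\`u $\overline{C}(u,v)$ est cette fois une v\'eritable couronne ferm\'ee de Berkovich sur le corps ultram\'etrique~$\cH(b)$~; c'est donc une s\'erie de Laurent $\sum_{n} c_{n}T^{n}$, qui se d\'ecompose canoniquement en une partie \og disque\fg{} $\sum_{n\ge 0}c_{n}T^{n}$, d\'efinie sur~$\overline{D}_{b}(v)$, et une partie \og p\^oles\fg{} $\sum_{n<0}c_{n}T^{n}$, qui, dans la coordonn\'ee $S = 1/T$, est une s\'erie enti\`ere sans terme constant convergeant pour $|S| \le 1/u$. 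En appliquant la proposition~\ref{prop:disqueglobal} \`a chacun de ces deux disques ferm\'es relatifs, on prolongerait la partie \og disque\fg{} en une s\'erie $\sum_{n\ge 0}\alpha_{n}T^{n}$ \`a coefficients dans un~$\cB(W)$ (quitte \`a restreindre~$W$), et de m\^eme la partie \og p\^oles\fg{}, d'o\`u une s\'erie de Laurent $\Phi = \sum_{n}a_{n}T^{n}$ sur~$\cB(W)$, normalement convergente sur~$[u',v']$ pour des $u',v'$ v\'erifiant $u \prec u' \le v' < v$, et dont la restriction \`a~$\pi^{-1}(b)$ redonne celle de~$g$. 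Le point d\'elicat --- le c{\oe}ur de la preuve --- sera de montrer que $g = \Phi$ au voisinage de $\overline{C}_{b}(s,t)$, autrement dit que~$g$ ne porte aucune information au-del\`a de sa restriction \`a la fibre au-dessus de~$b$. C'est ici qu'intervient de fa\c{c}on essentielle l'hypoth\`ese $b \in B_{\um}$~: au-dessus des points archim\'ediens de~$W$ (qui s'accumulent en~$b$ dans des exemples comme le point central de~$\cM(\Z)$), les fibres de la couronne sont des demi-couronnes simplement connexes, sur lesquelles vivent des fonctions analytiques qui ne sont pas des s\'eries de Laurent (une d\'etermination de~$\log T$, par exemple, d\'epourvue de sens $p$-adique), mais qui ne se prolongent pas \`a travers la fibre centrale ultram\'etrique~; c'est cette rigidit\'e qui contraint~$g$ \`a \^etre de la forme voulue.

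L'injectivit\'e de l'application de l'\'enonc\'e est la partie facile. Si une s\'erie de Laurent $\sum a_{n}T^{n}$, avec $a_{n}\in\cB(V)$, normalement convergente sur~$[u,v]$, induit la fonction nulle sur~$\overline{C}_{b}(s,t)$, alors, en l'\'evaluant aux points \og de Gauss\fg{} de rayon $\rho\in[s,t]$ de la fibre ultram\'etrique, on obtient $\max_{n}(|a_{n}(b)|\,\rho^{n}) = 0$, d'o\`u $a_{n}(b) = 0$~; le m\^eme argument en tout point de~$V$ (via le principe des z\'eros isol\'es pour les s\'eries de Laurent sur les fibres archim\'ediennes) donne $\|a_{n}\|_{V} = 0$, donc $a_{n}=0$ puisque~$\cB(V)$ est s\'epar\'e. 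L'obstacle principal demeure donc la surjectivit\'e expos\'ee ci-dessus~: propager, de la fibre centrale ultram\'etrique jusqu'\`a un voisinage complet de $\overline{C}_{b}(s,t)$, la forme \og s\'erie de Laurent\fg{} obtenue sur cette fibre, ce qui requiert de contr\^oler les fonctions analytiques sur les fibres archim\'ediennes de la couronne et constitue pr\'ecis\'ement le point o\`u l'hypoth\`ese $b\in B_{\um}$ ne saurait \^etre omise~; il faudra aussi s'assurer que la d\'ecomposition de Mittag-Leffler est assez naturelle pour passer aux colimites, de sorte que les parties \og disque\fg{} et \og p\^oles\fg{} de~$g$ soient d\'efinies au-dessus d'un m\^eme voisinage~$V$.
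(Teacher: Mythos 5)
Votre r\'edaction laisse un trou r\'eel, que vous signalez d'ailleurs vous-m\^eme~: toute la surjectivit\'e repose sur l'\'etape \og montrer que $g=\Phi$\fg{}, que vous laissez en suspens, et le m\'ecanisme propos\'e pour construire~$\Phi$ ne fonctionne pas tel quel. La restriction de~$g$ \`a $\pi^{-1}(b)$ est une s\'erie de Laurent \`a coefficients dans~$\cH(b)$~; ses parties \og disque\fg{} et \og p\^oles\fg{} sont des sections de $\cO_{\E{1}{\cH(b)}}$ sur des disques \emph{de la fibre}, et non des sections surconvergentes de $\cO_{\E{1}{\cA}}$ sur les disques relatifs $\overline{D}_{b}(v)$~: la proposition~\ref{prop:disqueglobal} ne s'applique qu'\`a ces derni\`eres. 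Rien ne permet de relever les coefficients $c_{n}\in\cH(b)$ en des \'el\'ements de~$\cB(W)$ (ils n'ont aucune raison d'appartenir \`a l'image de~$\cO_{b}$), et affirmer que \og $g$ ne porte pas plus d'information que sa restriction \`a la fibre\fg{} est, \`a peu de choses pr\`es, l'\'enonc\'e \`a d\'emontrer. Votre heuristique sur les fibres archim\'ediennes est en outre erron\'ee~: la fibre d'une couronne au-dessus d'un point archim\'edien est une couronne complexe (ou son quotient par la conjugaison), sur laquelle toute fonction analytique est bel et bien une s\'erie de Laurent ($\log T$ n'y est pas univaloque)~; l'hypoth\`ese $b\in B_{\um}$ ne sert donc pas \`a exclure des fonctions exotiques sur les fibres voisines. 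Enfin, votre argument d'injectivit\'e est bancal~: la nullit\'e de l'image dans $\cO(\overline{C}_{b}(s,t))$ signifie la nullit\'e sur un voisinage de la couronne fibre, ce qui donne $a_{n}(b')=0$ pour~$b'$ proche de~$b$ seulement~; la conclusion correcte est que l'\'el\'ement s'annule dans la colimite apr\`es restriction, et non que $a_{n}=0$ dans~$\cB(V)$.

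La preuve du texte comble pr\'ecis\'ement ce trou par une autre voie, et c'est l\`a que l'ultram\'etricit\'e de~$b$ intervient r\'eellement. Partant de~$f$ d\'efinie sur $\overline{C}_{U}(s_{1},t_{1})$, on consid\`ere l'unique point~$z$ du bord de Shilov de $\overline{C}_{b}(t,t)$~: tout voisinage compact de ce point \emph{unique} contient une couronne relative enti\`ere $\overline{C}_{V}(t',t')$ avec $t<t'<t_{1}$, ce qui n'a pas d'analogue au-dessus d'un point archim\'edien. Par d\'efinition du faisceau structural, $f$ co\"incide au voisinage de~$z$ avec un \'el\'ement $g\in\cB(W)$, que \cite[proposition~2.4]{EtudeLocale} d\'eveloppe sur $\overline{C}_{V}(t',t')$ en une s\'erie $\sum_{n\in\Z}a_{n}T^{n}$ \`a coefficients $a_{n}\in\cB(V)$~: ce sont ces coefficients, d\'ej\`a analytiques en la variable de base, qui remplacent vos rel\`evements hypoth\'etiques. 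Le prolongement analytique fibre \`a fibre (valable aussi bien sur les fibres archim\'ediennes qu'ultram\'etriques, puisque dans les deux cas les sections sur une couronne ferm\'ee sont des s\'eries de Laurent) identifie alors le d\'eveloppement de~$f$ sur chaque fibre \`a $\sum_{n}a_{n}(b')T^{n}$, puis la proposition~\ref{prop:restrictionserie}, appliqu\'ee fibre \`a fibre et pass\'ee au sup sur~$V$, fournit la majoration uniforme de $\|a_{n}\|_{V}\max(s_{2}^{n},t_{2}^{n})$ par un multiple de $\|f\|_{\overline{C}_{V}(s_{1},t_{1})}$ qui assure la convergence normale sur $[s_{3},t_{3}]$ pour $s_{2}<s_{3}<s\le t<t_{3}<t_{2}$. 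C'est cette cha\^ine (point de Shilov, repr\'esentant dans~$\cB(W)$, d\'eveloppement \`a coefficients dans~$\cB(V)$, identification fibre \`a fibre, majoration uniforme) qu'il vous faudrait reconstituer pour que votre strat\'egie aboutisse.
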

\begin{proof}
L'injectivit\'e \'etant claire, il suffit de d\'emontrer la surjectivit\'e. Soit $f \in \cO(\overline{C}_{b}(s,t))$. On peut supposer qu'il existe un voisinage ouvert~$U$ de~$b$ dans~$B$ et des nombres r\'eels $s_{1},t_{1}\in \R_{>0}$ v\'erifiant $s_{1}< s \le t < t_{1}$ tels que $f\in \cO(\overline{C}_{U}(s_{1},t_{1}))$.

Notons~$z$ l'unique point du bord de Shilov de la couronne $\overline{C}_{b}(t,t)$. Par d\'efinition du faisceau structural, il existe un voisinage compact~$W$ de~$z$ dans~$\overline{C}_{U}(s_{1},t_{1})$ et un \'el\'ement~$g$ de~$\cB(W)$ co\"incidant avec~$f$ sur~$W$.
L'int\'erieur de~$W$ contient une couronne de la forme $\overline{C}_{V}(u,u)$, o\`u~$V$ est un voisinage compact spectralement convexe de~$b$ dans~$U$ et $u$ un nombre r\'eel v\'erifiant $t< u <t_{1}$. D'apr\`es \cite[proposition~2.4]{EtudeLocale}, il existe un \'el\'ement~$h$ de $\cB(V)\la |T| = u\ra$ qui co\"incide avec~$g$ sur $\overline{C}_{V}(u,u)$. On peut \'ecrire
\[h = \sum_{n\in\Z} a_{n}\, T^n \in \cB(V)[\![ T,T^{-1} ]\!],\]
o\`u la famille $(\|a_{n}\|_{V}\, {u}^n)_{n\in \Z}$ est sommable. 

Soit~$b' \in V$ et raisonnons dans l'espace $\pi^{-1}(b') \simeq \E{1}{\cH(b')}$. Puisque la restriction de~$f$ appartient \`a $\cO(\overline{C}_{b'}(s_{1},t_{1}))$, elle est d\'eveloppable en s\'erie de Laurent sur~$\cH(b')$ et son d\'eveloppement n'est autre que
\[h(b') = \sum_{n\in\Z} a_{n}(b')\, T^n \in \Hs(b')[\![ T , T^{-1}]\!].\]
Soient $s_{2},t_{2} \in \R_{>0}$ tels que $s_{1}< s_{2}< s \le t < t_{2} < t_{1}$. La proposition~\ref{prop:restrictionserie} assure alors que, pour tout $n\in \Z$, on a 
\[|a_{n}(b')| \max(s_{2}^n,t_{2}^n)\le \left( \frac{s_{1}}{s_{2}-s_{1}} + \frac{t_{1}}{t_{1}-t_{2}} \right) \|f\|_{\overline{C}_{b'}(s_{1},t_{1})},\]
et donc 
\[\|a_{n}\|_{V} \max(s_{2}^n,t_{2}^n)\le \left( \frac{s_{1}}{s_{2}-s_{1}} + \frac{t_{1}}{t_{1}-t_{2}} \right) \|f\|_{\overline{C}_{V}(s_{1},t_{1})}.\]
On en d\'eduit que~$f$ appartient \`a l'image de $\cB(V)\la s_{3}\le |T|\le t_{3}\ra$ pour tous $s_{3},t_{3} \in \R_{>0} $ tels que $s_{2}< s_{3}< s \le t < t_{3} < t_{2}$.
\end{proof}

\begin{rema}
Un r\'esultat du m\^eme type vaut certainement encore dans le cas archim\'edien. Par exemple, pour d\'emontrer que tout \'el\'ement de $\cO(\overline{C}_{b}(s,t))$ poss\`ede un d\'eveloppement \`a valeurs dans~$\cO_{b}$, une strat\'egie naturelle consisterait \`a exprimer chaque coefficient du d\'eveloppement \`a l'aide de la fonction~$f$, autrement dit \`a \'ecrire une formule des r\'esidus en chaque fibre, et \`a montrer que le r\'esultat est une fonction analytique du param\`etre sur la base. Nous n'avons pas souhait\'e pousser plus loin ces consid\'erations, difficile \`a mettre en \oe uvre dans le cas d'une base $\cM(\cA)$ g\'en\'erale, mais reviendrons sur ce r\'esultat de d\'eveloppement plus tard, lorsque nous aurons la th\'eorie des espaces de Stein \`a notre disposition, \cf~proposition~\ref{prop:bCstStein}.

\end{rema}

En appliquant la proposition~\ref{prop:bumCst} pr\'ec\'edente fibre \`a fibre et en recollant, on obtient le r\'esultat suivant.

\begin{coro}\label{cor:couronneglobale}
Soit~$V$ une partie compacte de~$B_{\um}$. Soient $s,t\in\R_{> 0}$ tels que $s\le t$. Alors le morphisme de $\cA$-alg\`ebres naturel
\[\colim_{W\supset V,u < s \le t < v} \cO(W)\la u\le |T|\le v\ra \too \cO(\overline{C}_{V}(s,t)),\]
o\`u~$W$ parcourt l'ensemble des voisinages compacts de~$V$ dans~$B$, est un isomorphisme.
\end{coro}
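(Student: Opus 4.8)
Le plan est de reprendre, fibre par fibre, l'argument de la démonstration de la proposition qui précède, puis de recoller les développements locaux obtenus à l'aide de l'unicité du développement de Laurent dans chaque fibre et des estimées uniformes de la proposition~\ref{prop:restrictionserie}.

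L'injectivité est quasi immédiate : si $g = \sum_{n\in\Z} a_{n}\, T^n$ est un élément de $\cO(W)\la u \le |T|\le v\ra$ qui s'annule au voisinage de $\overline{C}_{V}(s,t)$, alors, pour tout $b\in V$, sa restriction à la fibre $\pi^{-1}(b)\simeq \E{1}{\cH(b)}$ s'annule au voisinage de $\overline{C}_{b}(s,t)$, donc tous les coefficients $a_{n}(b)$ sont nuls ; les fonctions $a_{n}$ s'annulent ainsi sur $V$, et, quitte à restreindre $W$, on voit que $g$ est nul dans la colimite. Pour la surjectivité, on part de $f\in\cO(\overline{C}_{V}(s,t))$. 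Pour chaque $b\in V$, l'ensemble $\overline{C}_{b}(s,t)$ étant compact, $f$ est définie sur un voisinage de la forme $\overline{C}_{U_{b}}(s_{1,b},t_{1,b})$, avec $U_{b}$ voisinage ouvert de $b$ dans $B$ et $s_{1,b} < s \le t < t_{1,b}$ ; la proposition précédente fournit alors un voisinage compact spectralement convexe $V_{b}$ de $b$ dans $U_{b}$, des réels $u_{b}\prec s \le t < v_{b}$ et un développement $h_{b} = \sum_{n\in\Z} a_{n}^{(b)}\, T^n$, avec $a_{n}^{(b)}\in\cB(V_{b})$, qui coïncide avec $f$ sur $\overline{C}_{V_{b}}(u_{b},v_{b})$.

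Le point à traiter est le recollement. Par le théorème~\ref{thm:rationnel}, chaque $a_{n}^{(b)}$ définit une section de $\cO$ sur $\mathring{V}_{b}$ ; sur $\mathring{V}_{b}\cap \mathring{V}_{b'}$, les séries $h_{b}$ et $h_{b'}$ représentant toutes deux $f$ fibre à fibre, l'unicité du développement de Laurent (principe du prolongement analytique) donne $a_{n}^{(b)} = a_{n}^{(b')}$, et les $a_{n}^{(b)}$ se recollent donc en des fonctions analytiques $a_{n}$ sur $\bigcup_{b\in V}\mathring{V}_{b}$. On choisit ensuite un voisinage compact $W$ de $V$ dans $B$ (il en existe, $\cM(\cA)$ étant compact) contenu dans cette réunion et recouvert par un nombre fini des $\mathring{V}_{b_{i}}$, puis on pose $u := \max_{i} u_{b_{i}} \prec s$, $v := \min_{i} v_{b_{i}} > t$, $s_{1} := \max_{i} s_{1,b_{i}}$ et $t_{1} := \min_{i} t_{1,b_{i}}$ ; on vérifie sans peine les inégalités $s_{1} < u \le v < t_{1}$. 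La série $\sum_{n} a_{n}\, T^n$ définit alors un élément de $\cO(W)\la u \le |T|\le v\ra$ : en appliquant la proposition~\ref{prop:restrictionserie} fibre par fibre sur la couronne $\overline{C}_{W}(s_{1},t_{1})$, on obtient, pour des rayons $u' \in \intoo{s_{1},u}$ et $v'\in\intoo{v,t_{1}}$, l'estimée uniforme $\|a_{n}\|_{W}\, \max(u'^{\,n},v'^{\,n}) \le \|f\|_{\overline{C}_{W}(s_{1},t_{1})}$, d'où la sommabilité de la famille $(\|a_{n}\|_{W}\, \max(u^{n},v^{n}))_{n}$ par comparaison géométrique ; et cet élément coïncide avec $f$ au voisinage de $\overline{C}_{V}(s,t)$, puisque c'est le cas dans chaque fibre au-dessus de $V$.

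La difficulté n'est pas conceptuelle mais réside dans la gestion simultanée des différents rayons $u\prec s \le t < v$ et des voisinages, et surtout dans l'obtention d'estimées \emph{uniformes} en la base permettant de recoller les séries locales en une seule série convergente : c'est là qu'intervient de façon essentielle la proposition~\ref{prop:restrictionserie} (exactement comme dans la démonstration de la proposition précédente). Ce point est particulièrement net lorsque $V$ est compact — en particulier dans le cas $V = B_{\um}$, le plus utile en pratique, $B_{\um}$ étant fermé dans le compact $\cM(\cA)$ — ; le cas d'un $V$ quelconque demande un peu de soin supplémentaire pour l'extraction du recouvrement fini, mais repose sur les mêmes ingrédients. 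On remarquera enfin que la même stratégie, appliquée avec la proposition~\ref{prop:disqueglobal} au lieu de la proposition précédente, redonne le cas des disques.
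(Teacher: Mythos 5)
Your proof follows exactly the route the paper intends -- the paper's own proof is the single sentence ``en appliquant la proposition pr\'ec\'edente fibre \`a fibre et en recollant'' -- namely: local expansions supplied by the preceding proposition, gluing of the coefficients via uniqueness of the fibrewise Laurent expansion, then compactness to extract uniform radii and Proposition~\ref{prop:restrictionserie} to get the convergence estimate on~$W$; this is correct. Only note, for injectivity, that what you actually need (and what your own tube argument immediately gives, by running the fibrewise coefficient computation at every point of the tubes $\overline{C}_{V_{b}}(s_{b},t_{b})\subset U$ and not only at points of~$V$) is the vanishing of all the~$a_{n}$ on a whole neighbourhood of~$V$, since vanishing ``sur~$V$'' alone would not justify the step ``quitte \`a restreindre~$W$'' -- and it is also at this point, as for surjectivity, that compactness of~$V$ (or of a compact set carrying the situation) is genuinely used.
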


Dans le cadre ultram\'etrique, on peut \'egalement obtenir des r\'esultats sur les compl\'et\'es des anneaux de sections globales sur les couronnes. Introduisons au pr\'ealable quelques d\'efinitions sp\'ecifiques.

\begin{nota}%
\nomenclature[Cc]{$\cA\{ \vert  T\vert  \le t\}$}{pour $\cA$ ultram\'etrique, s\'eries \`a coefficients dans~$\cA$ convergentes sur $\oD(t)$}
\nomenclature[Bhc]{$\nm_{t}$}{norme $\infty$ sur $\cA\{ \vert T\vert  \le t\}$}
\nomenclature[Cd]{$\cA\{ s \le \vert T\vert  \le t\}$}{pour $\cA$ ultram\'etrique, s\'eries de Laurent \`a coefficients dans~$\cA$ convergentes sur $\oC(s,t)$}
\nomenclature[Bhd]{$\nm_{s,t}$}{norme $\infty$ sur $\cA\{ s\le  \vert T\vert  \le t\}$}
 Supposons que la norme sur~$\cA$ est ultram\'etrique.
Soit $t \in \R_{>0}$. On note $\cA\{ |T| \le t\}$ l'alg\`ebre constitu\'ee des s\'eries de la forme
\[\sum_{n\in \N} a_{n}\, T^n \in \cA[\![T]\!]\]
telles que $\lim_{n \to +\infty} \|a_{n}\|\, t^n = 0$.
Elle est compl\`ete pour la norme d\'efinie par 
\[\left\| \sum_{n\in \N} a_{n}\, T^n\right\|_{t} := \max_{n\in \N} \|a_{n}\|\, t^n.\]

Soient $s,t \in \R_{>0}$ tels que $s \le t$. On note $\cA\{ s \le |T| \le t\}$ l'alg\`ebre constitu\'ee des s\'eries de la forme
\[\sum_{n\in \Z} a_{n}\, T^n \in \cA[\![T,T^{-1}]\!]\]
telles que $\lim_{n \to +\infty} \|a_{n}\|\, t^n = \lim_{n \to -\infty} \|a_{n}\|\, s^n = 0$.
Elle est compl\`ete pour la norme d\'efinie par 
\[\left\| \sum_{n\in \Z} a_{n}\, T^n\right\|_{s,t} := \max_{n\in \Z} \|a_{n}\|\, \max(s^n,t^n).\]
Par commodit\'e, on pose $\cA\{ 0 \le |T| \le t\} := \cA\{ |T| \le t\}$ et $\nm_{0,t} := \nm_{t}$.
\end{nota}

\begin{nota}
\nomenclature[Irb]{$\overline{\cO(V)}$}{s\'epar\'e compl\'et\'e de~$\cO(V)$ pour $\nm_{V}$}
Pour toute partie compacte~$W$ de~$B$ ou de~$X$, on note $\overline{\cO(W)}$ le s\'epar\'e compl\'et\'e de~$\cO(W)$ pour la semi-norme uniforme~$\nm_{W}$ sur~$W$.
\end{nota}

\begin{lemm}\label{lem:couronneum}
Soit~$V$ une partie compacte de~$B_{\um}$. Soient $s,t\in\R_{\ge 0}$ tels que $0=s<t$ ou $0<s\le t$. Le morphisme de $\cA$-alg\`ebres naturel 
\[\cO(V)[T] \too \cO(\overline{C}_{V}(s,t))\]
induit un isomorphisme isom\'etrique
\[\overline{\cO(V)}\{s\le |T| \le t\} \simtoo \overline{\cO(\overline{C}_{V}(s,t))}.\]
\end{lemm}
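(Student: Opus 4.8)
The claim amounts to showing that the natural morphism
\[\Phi\colon \overline{\cO(V)}\{s\le |T| \le t\} \too \overline{\cO(\overline{C}_{V}(s,t))},\qquad \textstyle\sum_{n} a_{n} T^{n}\mapstoo \sum_{n} (\pi^*a_{n})\, T^{n},\]
induced by $\cO(V)[T]\to \cO(\overline{C}_{V}(s,t))$, is an isometric isomorphism. (When $s>0$ the coordinate $T$ is invertible on $\overline{C}_{V}(s,t)$, so this morphism extends along $\cO(V)[T]\hookrightarrow \cO(V)[T,T^{-1}]$, which supplies the negative powers appearing in the source.) Note first that, since $V\subset B_{\um}$, the seminorm $\nm_{V}$ on $\cO(V)$ is ultrametric, so $\overline{\cO(V)}$ is an ultrametric Banach ring and the left-hand side makes sense. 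The plan is: (1) show $\Phi$ is isometric via a fibrewise Gauss-norm computation; (2) show $\Phi$ is surjective using Corollary~\ref{cor:couronneglobale}; (3) conclude by a density argument.

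\textbf{Step (1): isometry.} Fix $b\in V$ and recall $\pi^{-1}(b)\simeq \E{1}{\cH(b)}$; any $a\in\cO(V)$ restricts over this fibre to the constant $a(b)\in\cH(b)$, so $|a(x)|=|a(b)|$ for all $x$ over $b$, and $\|T^n\|_{\overline{C}_{b}(s,t)}=\max(s^n,t^n)$. Over the complete ultrametric field $\cH(b)$ one has, for any Laurent series $g=\sum_{n} c_{n}T^{n}$ converging on $\overline{C}_{b}(s,t)$,
\[\|g\|_{\overline{C}_{b}(s,t)} = \max_{n} |c_{n}|\,\max(s^n,t^n),\]
the inequality $\le$ being the ultrametric triangle inequality together with $|T(x)|\in[s,t]$, and $\ge$ being obtained by evaluating at the two points over $b$ carrying the Gauss seminorms of radii $s$ and $t$ (this is \cite[proposition~2.1.1]{A1Z}). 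Taking the supremum over $b\in V$, and using that $\sup_{b}\max_{n}$ and $\max_{n}\sup_{b}$ agree (both equal $\sup_{b,n}$, the inner maxima being attained thanks to the decay condition), we obtain for $\sum_{n} a_{n}T^{n}$ with coefficients in $\cO(V)$
\[\Big\|\sum_{n} a_{n}T^{n}\Big\|_{\overline{C}_{V}(s,t)} = \sup_{b\in V}\max_{n} |a_{n}(b)|\,\max(s^n,t^n) = \max_{n} \|a_{n}\|_{V}\,\max(s^n,t^n).\]
Since $\cO(V)$ is dense in $\overline{\cO(V)}$ and evaluations $\overline{\cO(V)}\to\cH(b)$ are contracting, the same identity passes to coefficients in $\overline{\cO(V)}$; in particular $\Phi$ is a well-defined isometry.

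\textbf{Step (2): surjectivity.} Let $f\in\cO(\overline{C}_{V}(s,t))$. By Corollary~\ref{cor:couronneglobale} there exist a compact neighbourhood $W$ of $V$ in $B$ and reals $u\prec s\le t<v$ — chosen with $u>0$ when $s>0$ — such that $f$ is the image of an element $\sum_{n} a_{n}T^{n}\in\cO(W)\la u\le |T|\le v\ra$, so $a_{n}\in\cO(W)$ and $\sum_{n}\|a_{n}\|_{W}\max(u^n,v^n)<\infty$. Restricting the $a_{n}$ to $V$ gives $\|a_{n}\|_{V}\le\|a_{n}\|_{W}$ and $\max(s^n,t^n)\le\max(u^n,v^n)$ (for $n\ge 0$ since $t<v$; for $n<0$ since $0<u<s$; the case $s=0$ has no negative terms), whence $\|a_{n}\|_{V}\max(s^n,t^n)\to 0$ and $\sum_{n} a_{n}T^{n}\in\overline{\cO(V)}\{s\le|T|\le t\}$. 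Its image under $\Phi$ is the $\nm_{\overline{C}_{V}(s,t)}$-limit of the partial sums $\sum_{|n|\le N} a_{n}T^{n}$; but $\overline{C}_{V}(s,t)\subset\overline{C}_{W}(u,v)$ and the $\la u\le|T|\le v\ra$-norm dominates $\nm_{\overline{C}_{V}(s,t)}$, so these partial sums also converge to $f$ for $\nm_{\overline{C}_{V}(s,t)}$. Hence $f$ lies in the image of $\Phi$.

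\textbf{Step (3): conclusion.} Being isometric, $\Phi$ is injective with closed image; by step (2) its image contains $\cO(\overline{C}_{V}(s,t))$, which is dense in $\overline{\cO(\overline{C}_{V}(s,t))}$; therefore $\Phi$ is surjective, i.e.\ an isometric isomorphism. The one genuinely delicate point is the fibrewise norm identity over $\cH(b)$ in step~(1), which relies essentially on the ultrametric hypothesis on $V$ (no such clean formula holds over archimedean fibres, where a residue-formula argument would be needed instead); once Corollary~\ref{cor:couronneglobale} is available, step~(2) is routine bookkeeping with the radii.
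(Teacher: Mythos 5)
Your proof is correct and follows essentially the same route as the paper's: expand an arbitrary $f\in\cO(\overline{C}_{V}(s,t))$ via Corollary~\ref{cor:couronneglobale}, identify its uniform norm fibrewise using ultrametricity of the points of~$V$ (the Gauss-norm formula), and observe that this norm is exactly the one on $\overline{\cO(V)}\{s\le|T|\le t\}$, whence the identification of completions. Your steps (1)–(3) merely spell out in more detail the density/completeness bookkeeping that the paper compresses into its final sentence, and you additionally cover the case $s=0$, which the paper leaves to the reader.
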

\begin{proof}
On se contentera de traiter le cas o\`u $s>0$. Soit $f \in \cO(\overline{C}_{V}(s,t))$. D'apr\`es le corollaire~\ref{cor:couronneglobale}, $f$~peut s'\'ecrire sous la forme d'une s\'erie $\sum_{n\in\Z} a_{n}\,T^n$ satisfaisant les conditions de convergence ad\'equates. 

La norme uniforme de~$f$ sur $\overline{C}_{V}(s,t)$ est la borne sup\'erieure des normes uniformes de~$f$ sur les fibres au-dessus des points de~$V$. Puisque ces points sont ultram\'etriques, on en d\'eduit que
\[\|f\|_{\overline{C}_{V}(s,t)} = \sup_{b\in V} \bigl(\max_{n\in \Z} \bigl(|a_{n}(b)|\, \max(s^n,t^n)\bigr)\bigr).\]
Puisque~$V$ est compacte, $\overline{C}_{V}(s,t)$ l'est aussi, donc la borne sup\'erieure pr\'ec\'edente est un maximum~:
\begin{align*} 
\|f\|_{\overline{C}_{V}(s,t)} &= \max_{b\in V} \bigl(\max_{n\in \Z} \bigl(|a_{n}(b)|\, \max(s^n,t^n)\bigr)\bigr)\\
&= \max_{n\in \Z} \bigl(\|a_{n}\|_{V}\, \max(s^n,t^n)\bigr).
\end{align*}
Cette derni\`ere norme n'est autre que la norme sur $\overline{\cO(V)} \{ s\le |T|\le t\}$, par d\'efinition. L'expression du compl\'et\'e de $\cO(\overline{C}_{V}(s,t))$ s'en d\'eduit.
\end{proof}
\index{Disque!algebre@alg\`ebre d'un|)}
\index{Couronne!algebre@alg\`ebre d'une|)}

\subsection{Normes sur les domaines polynomiaux}

Dans cette section, nous d\'emontrons un r\'esultat technique sur la norme des fonctions sur les domaines polynomiaux.

\begin{lemm}\label{lem:minorationetaar}
Soit $(k,\va)$ un corps valu\'e ultram\'etrique complet. 
Soient $\alpha\in k$ et $r \in \R_{>0}$. Pour tout $Q = \sum_{j=0}^d b_{j} T^j \in k[T]$, on a
\[\max_{0\le j\le d} (|b_{j}|) \le \max\left(\frac{|\alpha|}{r}, \frac1 r, 1\right)^d \,  \|Q\|_{\overline{D}(\alpha,r)} .\]
\end{lemm}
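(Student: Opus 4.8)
The plan is to reduce everything to the Gauss-norm description of $|Q(\eta_{\alpha,r})|$ by expanding $Q$ in powers of $T-\alpha$. Concretely, I would first write
\[Q(T) = \sum_{i=0}^d c_{i}\, (T-\alpha)^i,\]
the unique expansion of $Q$ at~$\alpha$ obtained by substituting $T = S+\alpha$. By definition of the point $\eta_{\alpha,r}$ (valid because $k$ is complete) one has $|Q(\eta_{\alpha,r})| = \max_{0\le i\le d} |c_{i}|\, r^i$; in particular $|c_{i}| \le r^{-i}\, |Q(\eta_{\alpha,r})|$ for every index~$i$.

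Next I would come back from the $c_{i}$ to the original coefficients $b_{j}$. Expanding $(T-\alpha)^i$ by the binomial formula gives
\[b_{j} = \sum_{i=j}^{d} \binom{i}{j}\, (-\alpha)^{i-j}\, c_{i}.\]
Since the absolute value on~$k$ is ultrametric, the image of~$\Z$ in~$k$ lies in the closed unit ball, so $|\binom{i}{j}| \le 1$ for all $i,j$, and the ultrametric inequality yields
\[|b_{j}| \le \max_{j\le i\le d} |\alpha|^{i-j}\, |c_{i}| \le |Q(\eta_{\alpha,r})|\, \max_{j\le i\le d} |\alpha|^{i-j}\, r^{-i}.\]

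Finally I would estimate this last maximum. Setting $M := \max(|\alpha|/r,\ 1/r,\ 1)$ and writing $|\alpha|^{i-j}\, r^{-i} = r^{-j}\, (|\alpha|/r)^{i-j}$, the inequalities $r^{-1}\le M$, $|\alpha|/r\le M$ and $1\le M$ give $r^{-j}\le M^{j}$ and $(|\alpha|/r)^{i-j}\le M^{i-j}$ whenever $0\le i-j\le d-j$, hence $|\alpha|^{i-j}\, r^{-i}\le M^{j}M^{d-j}=M^{d}$. Thus $|b_{j}|\le M^{d}\,|Q(\eta_{\alpha,r})|$ for every~$j$, and taking the maximum over $j\in\{0,\dots,d\}$ yields the claim. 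There is no genuine obstacle in this argument: it is a direct manipulation of the ultrametric inequality once one has the Gauss-norm formula for $|Q(\eta_{\alpha,r})|$. The only two steps deserving a word are that formula itself (the very definition of the seminorm attached to $\eta_{\alpha,r}$) and the bound $|\binom{i}{j}|\le 1$, which is precisely where ultrametricity is used.
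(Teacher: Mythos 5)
Your proof is correct, and it takes a genuinely different route from the paper's. The paper first proves the estimate for a single linear factor $T-b$, then reduces the general case to that one: it passes to a complete algebraically closed extension of~$k$, factors $Q = b_{d}\prod_{i}(T-\beta_{i})$, bounds $\max_{j}|b_{j}|$ by $|b_{d}|\prod_{i}\max(|\beta_{i}|,1)$ via the coefficient/root relations and the ultrametric inequality, and finally uses the multiplicativity of the seminorm at $\eta_{\alpha,r}$ to recover $|Q(\eta_{\alpha,r})|$ from the linear factors. You instead stay over~$k$ itself: you expand $Q$ at~$\alpha$, read off $|Q(\eta_{\alpha,r})| = \max_{i}|c_{i}|\,r^{i}$ directly from the definition of the seminorm, and return to the coefficients $b_{j}$ by the binomial formula, using only $|\binom{i}{j}|\le 1$ and the elementary estimate $|\alpha|^{i-j}r^{-i}\le M^{d}$ with $M=\max(|\alpha|/r,1/r,1)$. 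Your argument is thus more elementary — no extension of scalars, no factorization, no appeal to multiplicativity beyond the defining formula — whereas the paper's factorization argument is the one that generalizes most naturally to the setting of Proposition~\ref{prop:minorationDPsk}, where control of the roots of~$P$ (via Newton polygons) is precisely what is needed next. All steps of your proof check out, including the identity $b_{j}=\sum_{i\ge j}\binom{i}{j}(-\alpha)^{i-j}c_{i}$ and the bounds $r^{-j}\le M^{j}$, $(|\alpha|/r)^{i-j}\le M^{d-j}$ which rely on $M\ge 1$.
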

\begin{proof}
Traitons tout d'abord le cas d'un polyn\^ome de la forme $T-b$ avec $b\in k$. On a
\begin{align*}
\max(|b|, 1) &\le \max (|\alpha|, |b-\alpha|, 1)\\
&  \le\max\left(\frac{|\alpha|}{r}, \frac1 r, 1\right) \cdot  \max(|b-\alpha|,r)\\
&  \le  \max\left(\frac{|\alpha|}{r}, \frac1 r, 1\right) \cdot \|T-b\|_{\overline{D}(\alpha,r)}.
\end{align*}
Posons $C :=  \max(\frac{|\alpha|}{r}, \frac1 r, 1)$.

Soit $Q = \sum_{j=0}^d b_{j} T^j \in k[T]$. Pour d\'emontrer l'in\'egalit\'e souhait\'ee, on peut remplacer~$k$ par une extension valu\'ee compl\`ete et donc supposer que~$k$ est alg\'ebriquement clos. On peut \'egalement supposer que $b_{d}\ne 0$ (car $C\ge 1$). Il existe alors $\beta_{1},\dotsc,\beta_{d} \in k$ tels que 
\[Q = b_{d} \prod_{i=1}^d (T-\beta_{i}).\]
Il d\'ecoule alors des relations coefficients/racines et de l'in\'egalit\'e triangulaire ultram\'etrique que l'on a
\[ \max_{0\le j\le d} (|b_{j}|) \le |b_{d}|\,  \max_{I \subset \cn{1}{d}} \big( \prod_{i\in I} |\beta_{i}| \big) \le |b_{d}|\, \prod_{i=1}^d \max(|\beta_{i}|, 1).\]
En utilisant l'in\'egalit\'e d\'emontr\'ee au d\'ebut de la preuve, on obtient
\[ \max_{0\le j\le d} (|b_{j}|) \le |b_{d}| \,C^d \prod_{i=1}^d \|T-\beta_{i}\|_{\overline{D}(\alpha,r)} \le C^d \, \|Q\|_{\overline{D}(\alpha,r)}.\]
\end{proof}

\begin{prop}\label{prop:minorationDPsk}\index{Domaine polynomial!norme sur un|(}\index{Norme!sur un domaine polynomial|see{Domaine polynomial}}
Soit $(k,\va)$ un corps valu\'e ultram\'etrique complet. Soient $p\in \N^\ast$ et $P = \sum_{i=0}^p a_{i} T^i \in k[T]$ un polyn\^ome de degr\'e~$p$ diff\'erent de~$T^p$. Soit $s \in \R_{>0}$. Posons 
\[\sigma := \max_{0\le i\le p-1} \left(\big| \frac{a_{i}}{a_{p}}\big|^{\frac{1}{p-i}}\right) >0 \] 
et $\rho := \min ( \sigma , s \,\sigma^{1-p})$. Pour tout $Q = \sum_{j=0}^d b_{j} T^j \in k[T]$, nous avons 
\[\max_{0\le j\le d} (|b_{j}|) \le \max\left(\frac{\sigma}{\rho}, \frac1 \rho\right)^d \,  \|Q\|_{\overline{D}(P;s)}.\]
\end{prop}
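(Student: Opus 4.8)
The plan is to base-change to an algebraically closed field, to exhibit a single point of $\overline{D}(P;s)$ sitting at a controlled radius around a root of~$P$, and then to apply Lemma~\ref{lem:minorationetaar} at that point.

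First I would replace $k$ by the completion $\widehat{\overline{k}}$ of an algebraic closure. The quantities $\sigma$, $\rho$, $d$ and $\max_{0\le j\le d}|b_{j}|$ are insensitive to this operation, the projection $\E{1}{\widehat{\overline{k}}}\to\E{1}{k}$ is surjective and identifies the preimage of $\overline{D}(P;s)$ with the analogous domain over $\widehat{\overline{k}}$, and the uniform norm $\|Q\|_{\overline{D}(P;s)}$ is unchanged; so one may assume $k=\widehat{\overline{k}}$. Then $P$ has a root $\beta\in k$, which I fix; a standard argument with the relation $a_{p}\beta^{p}=-\sum_{i<p}a_{i}\beta^{i}$ gives $|\beta|\le\sigma$ (trivially if $\beta=0$).

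The heart of the matter is the claim that $\eta_{\beta,\rho}\in\overline{D}(P;s)$, i.e. that $|P(\eta_{\beta,\rho})|\le s$. Expand $P$ in powers of $T-\beta$, say $P(T)=\sum_{l=1}^{p}c_{l}(T-\beta)^{l}$ with $c_{l}=\sum_{i=l}^{p}\binom{i}{l}a_{i}\beta^{\,i-l}$; there is no constant term because $P(\beta)=0$. The value at the Gauss point of the disk of centre~$\beta$ and radius~$\rho$ is $|P(\eta_{\beta,\rho})|=\max_{1\le l\le p}|c_{l}|\rho^{l}$. Using $|\binom{i}{l}|\le1$, then $|\beta|\le\sigma$, then $|a_{i}|\le|a_{p}|\sigma^{p-i}$ (which is exactly the definition of~$\sigma$), one gets $|c_{l}|\le|a_{p}|\sigma^{p-l}$ for each~$l$. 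Since $\rho\le\sigma$ the sequence $\sigma^{p-l}\rho^{l}$ is non-increasing in~$l$, so its maximum over $1\le l\le p$ equals $\sigma^{p-1}\rho$, and using $\rho\le s\sigma^{1-p}$ we conclude
\[ |P(\eta_{\beta,\rho})|\ \le\ |a_{p}|\,\sigma^{p-1}\rho\ \le\ |a_{p}|\,s\ \le\ s, \]
which proves the claim.

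It then remains to invoke Lemma~\ref{lem:minorationetaar} with $\alpha=\beta$ and $r=\rho$, which yields $\max_{0\le j\le d}|b_{j}|\le\max\bigl(|\beta|/\rho,\,1/\rho,\,1\bigr)^{d}\,|Q(\eta_{\beta,\rho})|$. Since $|\beta|\le\sigma$ and $\rho\le\sigma$ one has $\max(|\beta|/\rho,1/\rho,1)\le\max(\sigma/\rho,1/\rho)$, and since $\eta_{\beta,\rho}\in\overline{D}(P;s)$ one has $|Q(\eta_{\beta,\rho})|\le\|Q\|_{\overline{D}(P;s)}$; putting the two together gives the announced inequality. I expect the only genuinely delicate point to be the membership $\eta_{\beta,\rho}\in\overline{D}(P;s)$: one must keep exactly one of the $p$ factors of $|P(\eta_{\beta,\rho})|$ equal to~$\rho$ and exploit the precise form $\rho\le s\sigma^{1-p}$, since the cruder bound obtained by replacing every factor by~$\sigma$ is useless here — it would only place $\eta_{\beta,\rho}$ in a much larger polynomial domain — and this is exactly the regime that matters when $\overline{D}(P;s)$ has very small connected components.
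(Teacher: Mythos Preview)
Your approach is the paper's: pass to an algebraically closed field, exhibit a Gauss point $\eta_{\alpha,\rho}$ (with $\alpha$ a root of~$P$) inside $\overline{D}(P;s)$, and apply Lemma~\ref{lem:minorationetaar}. The only cosmetic difference is that the paper takes~$\alpha$ to be the root of \emph{largest} absolute value (so $|\alpha|=\sigma$ exactly, by the Newton polygon) and bounds $|P(\eta_{\alpha,\rho})|$ directly from the factorisation $P=\prod_i(T-\alpha_i)$, whereas you take an arbitrary root~$\beta$ with $|\beta|\le\sigma$ and argue via the Taylor expansion at~$\beta$; both routes yield $|P(\eta_{\cdot,\rho})|\le|a_p|\,\sigma^{p-1}\rho$.

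There is one small slip: the step $|a_p|\,s\le s$ is unjustified, since nothing in the hypotheses forces $|a_p|\le1$. The paper avoids this by first reducing to the case where~$P$ is unitary (dividing through by~$a_p$); you should do the same.
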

\begin{proof}
On peut supposer que~$k$ est alg\'ebriquement clos. On peut \'egalement supposer que~$P$ est unitaire. Il existe alors $\alpha_{1},\dotsc,\alpha_{p} \in k$ tels que 
\[P = \prod_{i=1}^p (T-\alpha_{i}).\]
Quitte \`a r\'eordonner les~$\alpha_{i}$, on peut supposer que $|\alpha_{p}| = \max_{1\le i\le p} (|\alpha_{i}|)$. D'apr\`es \cite[proposition~3.1.2/1]{BGR} (ou la th\'eorie des polygones de Newton), on a 
\[|\alpha_{p}| = \max_{0\le i\le p-1} (|a_{i}|^{1/(p-i)}) = \sigma > 0.\] 
Pour tout $r \in (0,|\alpha_{p}|]$ et tout $x\in \overline{D}(\alpha_{p},r)$, on a
\[ |P(x)| =  \prod_{i=1}^{p} |(T-\alpha_{i})(x)| \le r \, \prod_{i=1}^{p-1} \max(|\alpha_{i} - \alpha_{p}|, r) \le r |\alpha_{p}|^{p-1}. \]
En particulier, pour $\rho = \min ( |\alpha_{p}| , s |\alpha_{p}|^{1-p})$, on a $\overline{D}(\alpha_{p},\rho) \subset \overline{D}(P;s)$ et donc $\|Q\|_{\overline{D}(\alpha_{p},\rho)} \le  \|Q\|_{\overline{D}(P;s)}$.

D'apr\`es le lemme~\ref{lem:minorationetaar}, on a
\[ \max_{0\le j\le d} (|b_{j}|) \le \max\left(\frac{|\alpha_{p}|}{\rho}, \frac1 {\rho}, 1\right)^d \,  \|Q\|_{\overline{D}(\alpha_{p},\rho)}\]
et le r\'esultat s'ensuit.
\end{proof}

\begin{coro}\label{coro:minorationDPsA}
Supposons que~$\cA$ est uniforme.
Soient $p\in \N^\ast$ et $P \in \cA[T]$
tel que, pour tout $b\in \cM(\cA)$, $P(b) \in \cH(b)[T]$ soit un polyn\^ome de degr\'e~$p$ diff\'erent de~$T^p$. Soit $s \in \R_{>0}$. Alors, il existe $K_{P} \in \R_{>0}$ tel que, pour tout $Q = \sum_{j=0}^d b_{j} T^j \in \cA[T]$, on ait
\[\max_{0\le j\le d} (\|b_{j}\|) \le K_{P}^d \,  \|Q\|_{\overline{D}(P;s)}.\]
\qed
\end{coro}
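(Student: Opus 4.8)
The plan is to reduce the statement to a fibre-by-fibre estimate and then assemble the fibre-wise constants using the compactness of~$\cM(\cA)$. First I would normalise: since $a_{p}$ does not vanish on~$\cM(\cA)$ it is invertible in~$\cA$ (by \cite[theorem~1.2.1]{Ber1}), and as $\overline{D}(P/a_{p};s/\|a_{p}\|_{\cA})\subset\overline{D}(P;s)$ it suffices to treat the monic polynomial $P/a_{p}$ with the radius $s/\|a_{p}\|_{\cA}$, so we may assume~$P$ monic. Because~$\cA$ is uniform, Lemma~\ref{lem:spuni} gives $\|b_{j}\|_{\cA}=\max_{b\in\cM(\cA)}|b_{j}(b)|$ for each~$j$; and $\overline{D}(P;s)$ is compact and is the disjoint union of its fibres $\overline{D}_{b}(P,s)\simeq\overline{D}(P(b);s)\subset\E{1}{\cH(b)}$ (each nonempty since $\deg P(b)=p\ge1$), whence $\|Q\|_{\overline{D}(P;s)}=\max_{b}\|Q(b)\|_{\overline{D}(P(b);s)}$. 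It is therefore enough to produce $K_{P}\ge1$ such that for every $b\in\cM(\cA)$ and every $R=\sum_{j}c_{j}T^{j}\in\cH(b)[T]$ one has $\max_{j}|c_{j}|\le K_{P}^{\deg R}\,\|R\|_{\overline{D}(P(b);s)}$; applying this with $R=Q(b)$ (of degree $\le d$) and taking the maximum over~$b$ then concludes, since $K_{P}\ge1$.

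On the ultrametric fibres this is exactly Proposition~\ref{prop:minorationDPsk}, applied to the complete valued field~$\cH(b)$ and the monic polynomial $P(b)$, which by hypothesis has degree~$p$ and is $\ne T^{p}$: it provides the constant $\max(\sigma_{b}/\rho_{b},1/\rho_{b})$ with $\sigma_{b}=\max_{0\le i\le p-1}|a_{i}(b)|^{1/(p-i)}$ and $\rho_{b}=\min(\sigma_{b},s\,\sigma_{b}^{1-p})$. Now $b\mapsto\sigma_{b}$ is continuous on~$\cM(\cA)$ (each $|a_{i}|$ is, by definition of the topology) and everywhere strictly positive (that is precisely what $P(b)\ne T^{p}$ says, $P$ being monic); since $\cM(\cA)$ is compact, $\sigma_{b}$---hence also $\rho_{b}$---is squeezed between two strictly positive constants, and we get a finite constant $K_{1}$ bounding $\max(\sigma_{b}/\rho_{b},1/\rho_{b})$ over all ultrametric~$b$.

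The archimedean fibres are the delicate point. There $\cH(b)\simeq\R$ or~$\C$ carries $\va_{\infty}^{\eps(b)}$ with $\eps(b)\in\intof{0,1}$ (Theorem~\ref{th:vaarchimedienne}), and raising the target inequality to the power $1/\eps(b)$ turns it into a classical statement about $P(b)\in\C[T]$ and the lemniscate $L_{b}=\{z\in\C:|P(b)(z)|_{\infty}\le s^{1/\eps(b)}\}$, namely $\max_{j}|c_{j}|_{\infty}\le\Lambda_{b}^{\deg R}\|R\|_{L_{b},\infty}$. Picking a root~$\alpha$ of~$P(b)$ of maximal modulus $\mu_{b}$---which satisfies $\mu_{b}\asymp\sigma_{b}^{1/\eps(b)}$ with constants depending only on~$p$, by Lagrange's bounds on polynomial roots---one checks that $L_{b}$ contains the Euclidean disc $\overline{D}(\alpha,r_{b})$ with $r_{b}=\min(\mu_{b},\,3^{1-p}\mu_{b}^{1-p}\,s^{1/\eps(b)})$, and Cauchy's estimates on that disc (followed by re-expanding the development from~$\alpha$ to~$0$) give the bound with $\Lambda_{b}=C\max(1,\mu_{b})\max(1,1/r_{b})$, $C$ absolute. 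The crucial observation is that, although $\Lambda_{b}$ blows up as $\eps(b)\to0$ when $s<1$, the quantity that actually enters the estimate is $\Lambda_{b}^{\eps(b)}$; and $\mu_{b}^{\eps(b)}\asymp\sigma_{b}$ while $r_{b}^{-\eps(b)}=\max(\mu_{b}^{-\eps(b)},\,3^{(p-1)\eps(b)}\mu_{b}^{(p-1)\eps(b)}s^{-1})$, so $\Lambda_{b}^{\eps(b)}$ involves only~$\sigma_{b}$, $s$ and~$\|a_{p}\|_{\cA}$. These being bounded over the compact~$\cM(\cA)$ (as in the previous paragraph), we obtain a finite constant $K_{2}$ bounding $\Lambda_{b}^{\eps(b)}$ over all archimedean~$b$.

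Setting $K_{P}:=\max(K_{1},K_{2},1)$ then yields the claim after the reductions of the first paragraph: for each~$b$ the two previous paragraphs give $\max_{j}|b_{j}(b)|\le K_{P}^{d}\,\|Q(b)\|_{\overline{D}(P(b);s)}$ (using $\deg Q(b)\le d$ and $K_{P}\ge1$), and one takes the maximum over~$b$. I expect the genuine difficulty to lie exactly in the uniformity in~$b$ on the archimedean side, i.e.\ in recognising that one must bound $\Lambda_{b}^{\eps(b)}$ rather than the naive fibre-wise constant~$\Lambda_{b}$; the ultrametric estimate and the compactness bookkeeping are routine once Proposition~\ref{prop:minorationDPsk} is in hand.
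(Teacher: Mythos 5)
Your proposal is correct and follows what is clearly the intended route: the paper gives no argument at all for this corollary (it is stated with a \og\ \qedsymbol\ \fg{} right after la proposition~\ref{prop:minorationDPsk}), the implicit proof being precisely the fibre-by-fibre application of that proposition combined with the uniformity of~$\cA$ (so that $\|b_j\|=\max_b|b_j(b)|$) and the compactness of~$\cM(\cA)$ to make the constants $\sigma_b,\rho_b$ uniform. Your real added value is the archimedean part, which the proposition does not cover and which the paper leaves entirely unaddressed: the reduction of the fibre inequality to a statement over~$\C$ by raising to the power $1/\eps(b)$, the inclusion $\overline{D}(\alpha,r_b)\subset\{|P(b)|_\infty\le s^{1/\eps(b)}\}$ with $r_b=\min(\mu_b,3^{1-p}\mu_b^{1-p}s^{1/\eps(b)})$, the Cauchy estimates, and above all the observation that only $\Lambda_b^{\eps(b)}$ must be bounded, which via $\mu_b^{\eps(b)}\asymp\sigma_b$ reduces everything to quantities continuous on the compact $\cM(\cA)$ — all of this checks out. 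One caveat: your normalisation \og on peut supposer $P$ unitaire \fg{} does not preserve the hypothesis, since the corollary only excludes $P(b)=T^p$ and not $P(b)=c\,T^p$ with $c\ne1$, a case in which $\sigma_b=0$ and both your disc argument and la proposition~\ref{prop:minorationDPsk} degenerate; this is an imprecision already present in the paper (the proposition asserts $\sigma>0$ under the same hypothesis, which fails for $P=2T^p$), and the intended reading is evidently \og $P(b)$ non proportionnel \`a $T^p$ \fg{}; in the excluded case the fibre is a disc and la remarque~\ref{rem:Tn} gives the estimate, uniformly in~$b$ because $|a_p|$ is bounded on~$\cM(\cA)$, so the statement survives, but you should flag that your reduction silently strengthens the hypothesis in this way.
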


\begin{rema}\label{rem:Tn}
Si~$P=T^p$, on a $\overline{D}(T^p;s) = \overline{D}(s^{1/p})$ et le r\'esultat reste valable d'apr\`es la proposition~\ref{prop:restrictionserie}.
\end{rema}

\index{Domaine polynomial!norme sur un|)}

\section{Bases de voisinages}
\label{description_locale}

Soit~$\cA$ un anneau de Banach. Posons $B :=\cM(\cA)$ et $X := \AunA$, avec coordonn\'ee~$T$. Notons $\pi \colon X \to B$ la projection sur la base. Dans cette section, nous donnons une description pr\'ecise de bases de voisinages des points de~$X$.

\medbreak

Nous nous baserons sur la classification de Berkovich des points de la droite analytique sur un corps valu\'e ultram\'etrique complet alg\'ebriquement clos, \cf~\cite[section 1.4.4]{Ber1}, que nous rappelons ici

\begin{nota}
Soit $(\ell,\va)$ un corps valu\'e ultram\'etrique.

Posons $\ell^\circ := \{a\in \ell : |a|\le 1\}$, $\ell^{\circ\circ} := \{a\in \ell : |a|< 1\}$ et $\tilde\ell := \ell^\circ/\ell^{\circ\circ}$. 

Posons $|\ell^\ast| := \{|a| : a \in \ell^\ast\}$ 
et notons $|\ell^\ast|^\Q$ sa cl\^oture divisible. Elle est naturellement munie d'une structure de $\Q$-espace vectoriel. Si $\ell$ est alg\'ebriquement clos, on a $|\ell^\ast|^\Q = |\ell^\ast|$.
\end{nota}

\begin{defi}\label{def:typealgclos}
Soit $(k,\va)$ un corps valu\'e ultram\'etrique complet alg\'ebriquement clos. Soit $x\in \E{1}{k}$. Notons $s(x)$ le degr\'e de transcendance de $\widetilde{\cH(x)}$ sur $\tilde k$ et $t(x)$ la dimension du $\Q$-espace vectoriel $|\cH(x)^\ast|^\Q/|k^\ast|$. 
Le point~$x$ est dit
\begin{itemize}
\item de \emph{type~1} si $\cH(x) = k$ ;
\item de \emph{type~2} si $s(x)=1$ ;
\item de \emph{type~3} si $t(x) = 1$ ;
\item de \emph{type~4} sinon.
\end{itemize}
\end{defi}


La d\'efinition de type d'un point s'\'etend \`a un corps valu\'e ultram\'etrique complet quelconque par extension des scalaires.

\begin{defi}\label{def:typenonalgclos}
Soit $(k,\va)$ un corps valu\'e ultram\'etrique complet. Notons $\hat{\bar{k}}$ le compl\'et\'e d'une cl\^oture alg\'ebrique de~$k$. Consid\'erons l'application $\pi_{\hat{\bar{k}}/k} \colon \E{1}{\hat{\bar{k}}} \to \E{1}{k}$ induite par l'inclusion $k \subset \hat{\bar{k}}$. Elle est surjective.

Soit $x\in \E{1}{k}$ et soit $y \in \pi_{\hat{\bar{k}}/k}^{-1}(x)$. Pour $i \in \cn{1}{4}$, le point~$x$ est dit de \emph{type~$i$} si le point~$y$ est de type~$i$. Cela ne d\'epend pas des choix.
\end{defi}

\begin{rema}\label{rem:typepointtopologie}
Lorsque $(k,\va)$ est un corps valu\'e ultram\'etrique complet, la droite analytique $\E{1}{k}$ poss\`ede une structure d'arbre r\'eel. Les diff\'erentes sortes de points de l'arbre correspondent \`a diff\'erents types de points au sens de Berkovich. Pr\'ecis\'ement, les feuilles sont les points de type~1 et~4 et les points de branchement sont les points de type~2. (Une infinit\'e de branches en partent.) Les points de type~3 sont ceux qui restent~: exactement deux branches en partent.

Nous utiliserons d\'esormais sans plus de commentaires les propri\'et\'es classiques des points de la droite analytique sur un corps valu\'e ultram\'etrique complet.
\end{rema}

Ajoutons une d\'efinition g\'en\'erale.

\begin{defi}\label{def:rigidedroite}\index{Point!rigide|textbf}\index{Point!rigide!polynome minimal@polyn\^ome minimal|textbf}\index{Point!rigide!degre@degr\'e|textbf}
\nomenclature[Jq]{$\mu_{x}$}{polyn\^ome minimal d'un point rigide~$x$ de~$\E{1}{k}$ (dans $k[T]$)}
\nomenclature[Jqa]{$\deg(x)$}{degr\'e de $\mu_{x}$}
Soit $(k,\va)$ un corps valu\'e complet. On dit qu'un point~$x$ de~$\E{1}{k}$ est \emph{rigide} si l'extension $\cH(x)/k$ est finie. 

Dans ce cas, il existe un unique polyn\^ome irr\'eductible unitaire \`a coefficients dans~$k$ qui s'annule en~$x$. On l'appelle \emph{polyn\^ome minimal} de~$x$ et on le note~$\mu_{x}$. On appelle \emph{degr\'e} de~$x$ son degr\'e et on le note~$\deg(x)$.
\end{defi}

Commen\c{c}ons par rappeler des r\'esultats connus sur les bases de voisinages des points de la droite analytique sur un corps valu\'e ultram\'etrique complet.

\begin{lemm}\label{lem:bv23}\index{Voisinage!d'un point de type~2 ou~3}
Soit $(k,\va)$ un corps valu\'e ultram\'etrique complet. Soit~$x$ un point de type~2 ou~3 de~$\Aunk$. Notons~$\cE_{x}$ l'ensemble des composantes connexes relativement compactes de $\Aunk\setminus\{x\}$.
Pour tout \'el\'ement~$C$ de~$\cE_{x}$, choisissons un point rigide~$x_{C}$ dans~$C$. 
Pour toute partie finie~$F$ de~$\Es_{x}$, posons $P_{F} :=  \prod_{C\in F} \mu_{x_C}$.

Alors, les ensembles de la forme 
\[\overline{C}(P_{F},|P_{F}(x)| -\eps, |P_{F}(x)| +\eps),\]
o\`u~$F$ est un sous-ensemble fini de~$\cE_{x}$ et~$\eps$ un nombre r\'eel strictement positif, 
sont une base de voisinages connexes de~$x$ dans~$\Aunk$.
\end{lemm}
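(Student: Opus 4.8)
Supposons d'abord~$k$ algébriquement clos, le cas général s'en déduisant par descente le long de la projection propre $\E{1}{\widehat{\overline{k}}} \to \Aunk$ (ou s'en déduisant directement, les arguments ci-dessous s'adaptant sans peine). Le plan est alors de décrire explicitement la fonction $|P_{F}|$ le long des branches issues de~$x$. On sait que~$x$ est le point générique d'un disque fermé $\overline{D}(a,\rho) = \{y : |(T-a)(y)| \le \rho\}$, que les composantes connexes relativement compactes de $\Aunk \setminus \{x\}$ sont les disques ouverts $C_{b} := \{y : |(T-b)(y)| < \rho\}$, pour $b \in \overline{D}(a,\rho)$ --- donc toutes contenues dans le compact $\overline{D}(a,\rho)$ ---, qu'il existe en outre une unique composante non relativement compacte, $C_{\infty} = \{y : |(T-a)(y)| > \rho\}$, et que l'on peut prendre $x_{C_{b}} = b$, de sorte que $P_{F} = \prod_{C \in F}(T - x_{C})$. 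Comme~$x$ n'est pas un point rigide, on a $P_{F}(x) \ne 0$~; posons $\pi_{0} := |P_{F}(x)| = \rho^{\sharp F}$ et, pour $\eps > 0$, $V_{F,\eps} := \overline{C}(P_{F}, \pi_{0}-\eps, \pi_{0}+\eps)$. Si $F = \emptyset$, alors $V_{F,\eps} = \Aunk$ (qui est connexe)~; on suppose donc $\sharp F \ge 1$.

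Un calcul ultramétrique élémentaire, du type de celui du lemme~\ref{lem:minorationetaar}, montre que la fonction $|P_{F}|$ est constante, égale à~$\pi_{0}$, sur~$\overline{C}$ pour tout $C \in \cE_{x} \setminus F$ (aucune racine de~$P_{F}$ n'appartenant alors à~$\overline{C}$)~; qu'elle vaut $\rho^{\sharp F - 1}\, |(T-x_{C})(\wc)| \le \pi_{0}$ sur~$\overline{C}$ pour $C \in F$~; et qu'elle vaut $|(T-a)(\wc)|^{\sharp F} \ge \pi_{0}$ sur~$\overline{C_{\infty}}$, la valeur~$\pi_{0}$ n'étant atteinte --- dans les deux derniers cas --- qu'au point~$x$. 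Il en résulte que
\[ V_{F,\eps} \;=\; \bigcup_{C \in \cE_{x} \setminus F} \overline{C} \ \cup\ \bigcup_{C \in F} \bigl( V_{F,\eps} \cap \overline{C} \bigr) \ \cup\ \bigl( V_{F,\eps} \cap \overline{C_{\infty}} \bigr), \]
réunion de parties connexes contenant toutes le point~$x$ --- la connexité de $V_{F,\eps} \cap \overline{C}$ et de $V_{F,\eps} \cap \overline{C_{\infty}}$ résultant de la monotonie de $|P_{F}|$ le long des branches ---, et où chacun des ensembles $V_{F,\eps} \cap \overline{C}$ ($C \in F$) et $V_{F,\eps} \cap \overline{C_{\infty}}$ décroît vers $\{x\}$ lorsque~$\eps$ tend vers~$0$. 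Ainsi $V_{F,\eps}$ est connexe~; et c'est un voisinage de~$x$ puisque $|P_{F}|$ est continue (par définition de la topologie de~$\Aunk$) et que $\pi_{0}-\eps < \pi_{0} < \pi_{0}+\eps$.

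Il reste à montrer que ces parties forment une base de voisinages de~$x$. Soit~$U$ un voisinage ouvert de~$x$, que l'on peut supposer distinct de~$\Aunk$ (sinon prendre $F = \emptyset$). Le point crucial est que $F_{0} := \{C \in \cE_{x} : \overline{C} \not\subset U\}$ est fini. Sinon, choisissons $p_{C} \in \overline{C} \setminus U$ pour une infinité de~$C$~; comme $x \in U$, on a $p_{C} \ne x$, donc $p_{C} \in C \subset \overline{D}(a,\rho)$. Cet ensemble infini de points du compact $\overline{D}(a,\rho)$ possède un point d'accumulation~$p$, qui appartient au fermé $\Aunk \setminus U$~; en particulier $p \ne x$, donc~$p$ appartient à une unique composante connexe~$C^{\ast}$ de $\Aunk \setminus \{x\}$, laquelle est ouverte et contient donc une infinité des~$p_{C}$, ce qui force une infinité des composantes correspondantes à coïncider avec~$C^{\ast}$ --- absurde. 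On pose alors $F := F_{0}$, en lui adjoignant au besoin un élément de~$\cE_{x}$ pour garantir $\sharp F \ge 1$. Pour $C \in \cE_{x} \setminus F$, on a $\overline{C} \subset U$. Et pour $C \in F$ (resp. pour~$C_{\infty}$), l'ensemble $V_{F,\eps} \cap \overline{C}$ (resp. $V_{F,\eps} \cap \overline{C_{\infty}}$) est contenu dans le compact $\overline{C}$ (resp. dans le compact $\{ y \in \overline{C_{\infty}} : |P_{F}(y)| \le \pi_{0}+1\}$) et décroît vers $\{x\} \subset U$ lorsque~$\eps \to 0$~; par compacité, il y est donc contenu dès que~$\eps$ est assez petit. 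En choisissant un tel~$\eps$ (valable simultanément pour les~$C \in F$ et pour~$C_{\infty}$, ce qui est possible car ils sont en nombre fini), on obtient $V_{F,\eps} \subset U$.

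L'obstacle principal me paraît être la finitude de~$F_{0}$ : c'est elle qui fait passer de \og famille de voisinages connexes \fg{} à \og base de voisinages connexes \fg{}, et elle repose sur deux faits de structure de la droite analytique rappelés plus haut --- la compacité des disques fermés et le fait que toutes les composantes relativement compactes de $\Aunk \setminus \{x\}$ sont contenues dans un même disque fermé. Le reste se réduit à de la vérification, et la descente au cas algébriquement clos est routinière.
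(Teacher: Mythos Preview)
Your proof is correct and follows essentially the same approach as the paper. The paper's proof is a two-line sketch invoking precisely the two ingredients you develop in detail: the fact that any neighborhood of~$x$ contains all but finitely many connected components of $\Aunk\setminus\{x\}$ (your finiteness of~$F_0$, established via compactness of~$\overline{D}(a,\rho)$), and the explicit variation of~$|P_F|$ along the branches issuing from~$x$.
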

\begin{proof}
Le r\'esultat d\'ecoule du fait que tout voisinage de~$x$ contient enti\`erement toutes les composantes connexes de $\Aunk \setminus \{x\}$ \`a l'exception \'eventuelle d'un nombre fini d'entre elles et d'arguments bas\'es sur le graphe de variation de~$|P_{F}|$.
\end{proof}

\begin{lemm}\label{lem:bv14}\index{Voisinage!d'un point de type~1 ou~4}\index{Base de voisinages|see{Voisinage}}
Soit $(k,\va)$ un corps valu\'e ultram\'etrique complet. Tout point~$x$ de type~1 ou~4 de~$\Aunk$ poss\`ede un syst\`eme fondamental de voisinages ouverts connexes de la forme
\[\overline{D}(P,|P(x)| + \eps),\]
o\`u $P\in k[T]$ est un polyn\^ome irr\'eductible et~$\eps$ un nombre r\'eel strictement positif. 
\end{lemm}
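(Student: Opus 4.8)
The plan is to treat the two cases $x$ of type~1 and $x$ of type~4 together; what they have in common, and what makes the argument of Lemma~\ref{lem:bv23} inapplicable, is that $\Aunk\setminus\{x\}$ is connected, so that instead of annulus‑shaped neighbourhoods one must produce ``disk‑shaped'' ones. The main technical input I would isolate first is the following claim: \emph{for every irreducible $P\in k[T]$ and every $t>0$, the set $\oD(P,t)=\{y\in\Aunk:|P(y)|\le t\}$ is connected.} Once this is available, the rest is a soft compactness argument.

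I would prove the claim by regarding~$P$ as the finite morphism $\psi\colon\Aunk\to\E{1}{k}$ induced by $S\mapsto P(T)$: being finite it is closed, and being flat (because $k[S]\to k[T]$ is free) it is open. Now $\oD(P,t)=\psi^{-1}(\oD(t))$, where $\oD(t)\subset\E{1}{k}$ is the closed disk of radius~$t$ centred at~$0$; the Berkovich line being locally connected, each connected component~$C$ of $\oD(P,t)$ is open and closed in it. The set $\psi(C)\cap D(t)$ (with $D(t)$ the corresponding open disk) is then open in~$D(t)$, because $\psi$ is open and $C\cap\psi^{-1}(D(t))$ is open in $\Aunk$, and closed in~$D(t)$, because $\psi$ is closed; as $D(t)$ is connected it is either empty or all of~$D(t)$, and the first possibility is ruled out by the fact that $\log|P|$ has no strict interior extremum away from the zeros of~$P$ (so that $C$ cannot lie entirely in $\{|P|=t\}$). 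Hence $0\in\psi(C)$, and since $(P)$ is maximal in $k[T]$ and the valuation of~$k$ extends uniquely to the finite extension $k[T]/(P)$, the fibre $\psi^{-1}(0)=\{y:P\in\Ker(y)\}$ is the single rigid point with minimal polynomial~$P$. Every component of $\oD(P,t)$ thus contains this point, so $\oD(P,t)$ is connected. (Alternatively, base change to $\widehat{\overline{k}}$, where the components of $\oD(P,t)$ are disks that the Galois group permutes transitively because~$P$ is irreducible.)

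For $x$ of type~1, take $P:=\mu_{x}$ the minimal polynomial (\cf~d\'efinition~\ref{def:rigidedroite}); then $|P(x)|=0$, so the sets in the statement are the $\oD(\mu_{x},\eps)$ for $\eps>0$. Each contains the open set $\{|\mu_{x}|<\eps\}\ni x$, hence is a neighbourhood of~$x$; each is connected by the claim; and each is compact, since a routine ultrametric estimate on $\mu_{x}=T^{d}+\cdots$ gives $\oD(\mu_{x},\eps)\subset\oD(R)$ for $R$ large and $\oD(\mu_{x},\eps)$ is closed therein, $\oD(R)$ being compact by Th\'eor\`eme~\ref{th:MAnonvide}. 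Since $\bigcap_{\eps>0}\oD(\mu_{x},\eps)=\{y:\mu_{x}(y)=0\}=\{x\}$, for any open $U\ni x$ the decreasing family of compacts $(\oD(\mu_{x},\eps)\setminus U)_{\eps>0}$ has empty intersection, so one of them is empty, i.e.\ $\oD(\mu_{x},\eps)\subset U$; thus these sets form a fundamental system of neighbourhoods of~$x$.

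For $x$ of type~4 the only extra ingredient is the classification of points of the analytic line: such an $x$ is the decreasing intersection of a sequence of connected affinoid disks, which up to a cofinal subfamily can be written $\oD(P_{n},t_{n})$ with $P_{n}\in k[T]$ irreducible, $t_{n}>|P_{n}(x)|$ and $\bigcap_{n}\oD(P_{n},t_{n})=\{x\}$. Then for $0<\eps\le t_{n}-|P_{n}(x)|$ the set $\oD(P_{n},|P_{n}(x)|+\eps)$ is a connected (by the claim) neighbourhood of~$x$ contained in $\oD(P_{n},t_{n})$, so since the latter already form a fundamental system of neighbourhoods, so do the former. The hardest point, and the one I would spend the most effort on, is the connectedness claim in the non‑algebraically‑closed case --- it genuinely relies on the finite‑morphism / Galois‑descent machinery for the Berkovich line --- together with, in the type~4 case, extracting the approximating data $(P_{n},t_{n})$ cleanly from the classification.
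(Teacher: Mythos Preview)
Your proposal is correct. The paper's proof is shorter and more uniform: for both types it passes at once to~$\widehat{\overline{k}}$, lifts~$x$ to a point~$x'$ of the same type, uses that over an algebraically closed field $\{x'\}$ is an intersection of closed disks $\overline{D}(\alpha,r)$, approximates~$\alpha$ by an element of~$\overline{k}$, and projects the disk down to some $\overline{D}(P,s_1)\subset U$ with~$P$ the minimal polynomial of~$\alpha$; connectedness is then quoted from the separately stated Lemma~\ref{lem:DQsconnexe}, whose proof is a one-line maximum-principle argument (every component of $\overline{D}(P,t)$ contains a zero of~$P$, and an irreducible~$P$ has a unique zero in~$\Aunk$). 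Your treatment of type~1 is more elementary --- no passage to the algebraic closure is needed since~$\mu_x$ is already at hand --- but your type~4 argument invokes as a black box the fact that~$x$ is a nested intersection of sets $\overline{D}(P_n,t_n)$ with $P_n\in k[T]$ irreducible, and over non-algebraically-closed~$k$ the cleanest justification of this is precisely the base-change-and-project step the paper writes out explicitly. Your finite-morphism proof of connectedness works but is heavier than needed (and the sentence invoking local connectedness of~$\Aunk$ to get openness of the components of $\overline{D}(P,t)$ is not quite the right reason --- rather, $\psi^{-1}(\overline{D}(t))$ has finitely many components because~$\psi$ is finite over the connected~$\overline{D}(t)$); the Galois-descent alternative you sketch is closer in spirit to how Lemma~\ref{lem:DQsconnexe} is ultimately justified.
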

\begin{proof}
Soit~$U$ un voisinage de~$x$ dans~$\Aunk$.
Soit~$\bar k$ une cl\^oture alg\'ebrique de~$k$ et notons~$\hat{\bar{k}}$ son compl\'et\'e. Notons $\pi \colon \E{1}{\hat{\bar{k}}} \to \E{1}{k}$ le morphisme de projection. Soit $x' \in \pi^{-1}(x)$. Le point~$x'$ est un point de type~1 ou~4 de~$\E{1}{\hat{\bar{k}}}$. Puisque~$\hat{\bar{k}}$ est alg\'ebriquement clos, le singleton~$\{x\}$ est l'intersection d'une famille de disques ferm\'es de rayons strictement positifs. On en d\'eduit qu'il existe $\alpha \in \hat{\bar{k}}$ et $r_{1}>0$ tel que $\overline{D}(\alpha,r_{1})$ contienne~$x'$ et soit contenu dans~$\pi^{-1}(U)$. Puisque~$\bar k$ est dense dans~$\hat{\bar{k}}$, on peut supposer que $\alpha\in \bar k$. Notons $P \in k[T]$ son polyn\^ome minimal. Il existe alors $s_{1}>0$ tel que $\pi(\overline{D}(\alpha,r_{1})) = \overline{D}(P,s_{1})$ et on a 
\[ x \in  \overline{D}(P,s_{1}) \subset U.\]
Le r\'esultat s'en d\'eduit, en remarquant que les ensembles de la forme~$\overline{D}(P,s_{1})$ sont toujours connexes (\cf~lemme~\ref{lem:DQsconnexe}).
\end{proof}

Pour passer \`a des bases plus g\'en\'erales que des corps valu\'es, nous aurons besoin de quelques r\'esultats techniques.

\begin{nota}%
\nomenclature[Bia]{$\nm_{\infty}$}{norme $\infty$ sur $\cA[S]$}
Soit $(\cC,\nm)$ un anneau de Banach. On note~$\nm_{\infty}$ la norme sur l'anneau des polyn\^omes~$\cC[T]$ d\'efinie par
\[ \| P(T)\|_{\infty} = \max_{0\le i\le d} (\|a_{i}\|)\]
pour tout $P(T) = \sum_{i=0}^d a_{i}\, T^i \in \cC[T]$.

Lorsque $(\cC,\nm)$ est de la forme~$(\cB(V),\nm_{V})$ (\cf~notation~\ref{nota:BV}), on note~$\nm_{V,\infty}$ la norme correspondante sur $\cB(V)[T]$.
\end{nota}

\begin{lemm}\label{lem:borneT}
Soit $(k,\va)$ un corps valu\'e complet. Soit $P \in k[T]$ unitaire de degr\'e $d\ge 1$. Alors, pour tout $x\in \Aunk$, on a 
\[ |T(x)| \le |P(x)| + d\, \|P\|_{\infty}.\]
\end{lemm}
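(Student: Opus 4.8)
The plan is to bound $u := |T(x)|$ directly from the monic relation defining $P$. Write $P = T^d + \sum_{i=0}^{d-1} a_{i} T^i$ with $a_{i} \in k$, so that $\|P\|_{\infty} = \max(1, |a_{d-1}|, \dotsc, |a_{0}|) \ge 1$ (the norm on $k$ being just $\va$). Since $x$ is a bounded multiplicative seminorm on $k[T]$ and $k$ is a field, for every $a \in k^{*}$ one has $|a|_{x}\,|a^{-1}|_{x} = 1$ with $|a|_{x} \le |a|$ and $|a^{-1}|_{x} \le |a^{-1}|$, forcing $|a|_{x} = |a|$; hence $\va_{x}$ restricts to $\va$ on $k$. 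Evaluating the identity $T(x)^d = P(x) - \sum_{i=0}^{d-1} a_{i}\, T(x)^i$ in $\cH(x)$ and applying the triangle inequality (valid both in the archimedean and the ultrametric case) therefore yields
\[ u^d \le |P(x)| + \|P\|_{\infty} \sum_{i=0}^{d-1} u^i. \]

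First I would dispose of the case $u \le 1$: since $d \ge 1$ and $\|P\|_{\infty} \ge 1$, we get $|P(x)| + d\,\|P\|_{\infty} \ge d \ge 1 \ge u$, so the claimed inequality holds trivially. In the remaining case $u > 1$, each term satisfies $u^i \le u^{d-1}$ for $0 \le i \le d-1$, whence $\sum_{i=0}^{d-1} u^i \le d\, u^{d-1}$ and the displayed inequality becomes $u^d \le |P(x)| + d\,\|P\|_{\infty}\, u^{d-1}$. Dividing through by $u^{d-1} \ge 1$ gives
\[ u \le \frac{|P(x)|}{u^{d-1}} + d\,\|P\|_{\infty} \le |P(x)| + d\,\|P\|_{\infty}, \]
which is exactly the assertion.

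I do not expect any genuine obstacle here: the statement is elementary and the only point deserving a word is the identification of $\va_{x}$ with $\va$ on $k$, so that the coefficients $a_{i}$ contribute their true absolute values in the triangle inequality, and the harmless reduction to the two cases $u \le 1$ and $u > 1$ to absorb the geometric sum. Note that the hypothesis $d \ge 1$ is used to ensure $d\,\|P\|_{\infty} \ge 1$ in the first case.
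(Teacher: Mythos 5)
Your proof is correct and rests on the same estimate as the paper's: bounding $\sum_{i=0}^{d-1} a_i T(x)^i$ by $d\,\|P\|_{\infty}\max(1,|T(x)|)^{d-1}$ in the monic relation, the paper merely phrasing it as a proof by contradiction (which automatically places it in your case $|T(x)|>1$) where you argue directly with a small case split. The preliminary observation that $\va_x$ restricts to $\va$ on $k$ is fine and is taken for granted in the paper.
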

\begin{proof}
Soit $x\in \Aunk$. \'Ecrivons $P(T) = \sum_{i=0}^d a_{i}\, T^i \in k[T]$. Supposons, par l'absurde, que l'on ait $|T(x)| > |P(x)| + d\, \|P\|_{\infty}$. 
Puisque $P$~est unitaire, on a, en particulier, $|T(x)| \ge 1$, d'o\`u 
\[\big|\sum_{i=0}^{d-1} a_{i}(x) \, T(x)^i\big| \le d\, \|P\|_{\infty} \, |T(x)|^{d-1}.\] 
On en d\'eduit que 
\begin{align*} 
|P(x)| &= \big| T(x)^d + \sum_{i=0}^{d-1} a_{i}(x) \, T(x)^i \big|\\
& \ge |T(x)|^d - d\, \|P\|_{\infty} \, |T(x)|^{d-1}\\
& \ge |T(x)|^{d-1} \, (|T(x)| -   d\, \|P\|_{\infty})\\
& > |P(x)|,
\end{align*}
et l'on aboutit ainsi \`a la contradiction souhait\'ee.
\end{proof}

\begin{lemm}\label{lem:DQsconnexe}\index{Domaine polynomial!connexe}
Soit $(k,\va)$ un corps valu\'e complet. Soit~$Q_{0} \in k[T]$ un polyn\^ome de degr\'e $d\ge 1$ qui est une puissance d'un polyn\^ome irr\'eductible. Soit $r > 0$. 
Alors, il existe $\eps>0$ tel que, pour tout $Q \in k[T]$ unitaire de degr\'e~$d$ tel que $\|Q-Q_{0}\|_{\infty} \le \eps$ et tout $s \ge r$, $\overline{D}(Q,s)$ soit connexe.
\end{lemm}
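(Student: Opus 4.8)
The plan is to deduce the connectedness of $\overline{D}(Q,s)$ from two facts: that every connected component of a polynomial domain $\{x\in\E{1}{k}:|P(x)|\le t\}$ contains a zero of~$P$, and a quantitative control — via Lemma~\ref{lem:borneT} — of where the zeros of~$Q$ sit and of the size of~$Q$ on a fixed small domain.

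First I would record the following fact, valid over any complete valued field: for $P\in k[T]$ of degree $\ge 1$ and any $t>0$, every connected component~$C$ of the compact set $\{x\in\E{1}{k}:|P(x)|\le t\}$ contains a point~$x$ with $P(x)=0$. When~$k$ is algebraically closed this follows from the maximum modulus principle on the Berkovich line: if~$C$ avoided the zeros of~$P$, then $1/P$ would be analytic on~$C$, so the maximum of $|1/P|$ on~$C$ would be attained on its boundary, which lies in $\{|P|=t\}$; hence $|1/P|\le 1/t$ and so $|P|\equiv t$ on~$C$, which is impossible for $\deg P\ge 1$. The general case follows by base change along the continuous surjection $\pi\colon\E{1}{\hat{\bar{k}}}\to\E{1}{k}$: it maps each component upstairs into a single component downstairs, so for a component~$C$ downstairs the set $\pi^{-1}(C)$ contains a whole component upstairs, which by the algebraically closed case contains a zero of~$P$, whose image is a zero of~$P$ in~$C$. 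As a consequence, \emph{if~$P$ is irreducible over~$k$, then $\overline{D}(P,t)$ is connected for every $t>0$}: all roots of~$P$ in~$\bar k$ are conjugate under $\mathrm{Aut}(\hat{\bar{k}}/k)$, hence — the absolute value being Galois invariant — they define one and the same point of~$\E{1}{k}$, so every component of $\overline{D}(P,t)$ contains that common point and there can be only one.

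Now write $Q_{0}=\mu^{m}$ with $\mu\in k[T]$ monic irreducible, $e:=\deg\mu$, $me=d$ (one may assume $Q_{0}$ monic), and set $\rho_{0}:=(r/2)^{1/m}$, so that $D_{0}:=\overline{D}(\mu,\rho_{0})=\overline{D}(Q_{0},r/2)$ is connected by the previous step. I would then fix $\eps\in\intof{0,1}$ small enough — depending only on $r,d,m,\|\mu\|_{\infty},\|Q_{0}\|_{\infty}$ — so that, for every monic $Q\in k[T]$ of degree~$d$ with $\|Q-Q_{0}\|_{\infty}\le\eps$:
\begin{enumerate}[i)]
\item every zero~$b$ of~$Q$ lies in~$D_{0}$: indeed $|T(b)|\le d\,\|Q\|_{\infty}$ by Lemma~\ref{lem:borneT}, hence $|Q_{0}(b)|=|(Q_{0}-Q)(b)|\le (d+1)\max(1,d\|Q\|_{\infty})^{d}\,\eps\le r/2=\rho_{0}^{m}$ for~$\eps$ small, i.e.\ $|\mu(b)|\le\rho_{0}$;
\item $D_{0}\subseteq\overline{D}(Q,s)$ for every $s\ge r$: if $|\mu(x)|\le\rho_{0}$ then $|T(x)|\le\rho_{0}+e\|\mu\|_{\infty}$ by Lemma~\ref{lem:borneT}, hence $|(Q-Q_{0})(x)|\le (d+1)\max(1,\rho_{0}+e\|\mu\|_{\infty})^{d}\,\eps\le\rho_{0}^{m}$ for~$\eps$ small, so $|Q(x)|\le|\mu(x)|^{m}+|(Q-Q_{0})(x)|\le\rho_{0}^{m}+\rho_{0}^{m}=r\le s$.
\end{enumerate}
(Only the ordinary triangle inequality is used, so the archimedean and ultrametric cases are handled at once; here one uses $\eps\le 1$ to bound $\|Q\|_{\infty}\le\|Q_{0}\|_{\infty}+1$ and thus make the two constants above depend only on the listed data.)

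Finally I would conclude: fix $s\ge r$ and~$Q$ as above. By the general fact, every connected component of $\overline{D}(Q,s)$ contains a zero of~$Q$, which by~i) lies in~$D_{0}$; and~$D_{0}$ is connected and, by~ii), contained in $\overline{D}(Q,s)$, hence entirely inside a single component~$C_{0}$. Therefore every component meets~$C_{0}$, so $\overline{D}(Q,s)=C_{0}$ is connected. The $\eps$-estimates are routine applications of Lemma~\ref{lem:borneT}; the delicate point — and the only place where the hypothesis that the base of~$Q_{0}$ is \emph{irreducible} is used — is the maximum-modulus argument together with its descent along $\E{1}{\hat{\bar{k}}}\to\E{1}{k}$, which is what makes~$D_{0}$ connected.
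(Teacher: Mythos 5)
Your proposal is correct and follows essentially the same route as the paper: the maximum principle guarantees that every connected component of $\overline{D}(Q,s)$ contains a zero of~$Q$, the zero locus of~$Q_{0}$ is a single point of $\E{1}{k}$ so the domains $\overline{D}(Q_{0},\cdot)$ are connected, and for $Q$ close to~$Q_{0}$ the zeros of~$Q$ lie in a connected $Q_{0}$-domain contained in $\overline{D}(Q,r)\subseteq\overline{D}(Q,s)$. The only real difference is that where the paper appeals to continuity of roots you make the two inclusions quantitative via Lemma~\ref{lem:borneT}, which is perfectly fine; just be aware that in your aside proving the maximum-principle fact, the final step ``$|P|\equiv t$ on~$C$ is impossible'' needs a little more care in the ultrametric case (where $|P|$ can be locally constant on sets with nonempty interior, so one must use that~$C$ is a whole component of the sublevel set, e.g.\ by moving inside $\{|P|=t\}$ down into a closed disc containing a root) --- but this is exactly the standard fact that the paper itself cites without proof.
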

\begin{proof}
Soit $Q\in k[T]$. Le principe du maximum assure que toute composante connexe de $\overline{D}(Q,s)$ contient un z\'ero de~$Q$. Par cons\'equent, pour montrer que $\overline{D}(Q,s)$ est connexe, il suffit de montrer que l'ensemble des z\'eros de~$Q$ est contenu dans une partie connexe de~$\overline{D}(Q,s)$. 

Par hypoth\`ese, le polyn\^ome~$Q_{0}$ poss\`ede un seul z\'ero (\'eventuellement multiple) dans~$\Aunk$. Le raisonnement pr\'ec\'edent assure que, pour tout $t>0$, $\overline{D}(Q_{0},t)$ est connexe.

Soit $t \in \intoo{0,r}$. Par continuit\'e des racines, il existe $\eps>0$ tel que, pour tout $Q \in k[T]$ unitaire de degr\'e~$d$ tel que $\|Q-Q_{0}\|_{\infty} \le \eps$, tous les z\'eros de~$Q$ dans~$\Aunk$ soient contenus dans $\overline{D}(Q_{0},t)$. Quitte \`a diminuer~$\eps$, on peut, en outre, supposer que $\overline{D}(Q,r) \supset \overline{D}(Q_{0},t)$. La connexit\'e de $\overline{D}(Q,r)$, et de $\overline{D}(Q,s)$ pour tout $s\ge r$, s'ensuit.  
\end{proof}

Venons-en maintenant aux descriptions annonc\'ees des bases de voisinages des points de $X = \E{1}{\cA}$. Rappelons que, si $b$ est un point de $B = \cM(\cA)$, la fibre $X_{b} := \pi^{-1}(b)$ s'identifie \`a~$\E{1}{\cH(b)}$. 

\begin{prop}\label{prop:basevoisdim1rigide}\index{Voisinage!d'un point rigide}
Soient $b\in B$ et $x$ un point rigide de $X_{b}$. Notons $\mu_{x} \in \cH(b)[T]$ le polyn\^ome minimal de~$x$. Soit $e \in \N^\ast$. Soit $U$ un voisinage de~$x$ dans~$X$ et soient $\eta, \sigma\in \R_{>0}$.
Alors, il existe un voisinage spectralement convexe~$V_{0}$ de~$b$ dans~$B$, un polyn\^ome~$P_{0}$ \`a coefficients dans~$\cB(V_{0})$, unitaire, de m\^eme degr\'e que~$\mu_{x}^e$, tel que $\|P_{0}(b) - \mu_{x}^e\|_{b,\infty} \le \eta$ (o\`u $P_{0}(b)$ d\'esigne l'image de~$P_{0}$ dans $\cH(b)[T]$) et des nombres r\'eels $s_{1},s_{2},\eps \in \R_{> 0}$ avec $s_{1} < s_{2} \le \sigma$ qui satisfont les propri\'et\'es suivantes: pour tout voisinage compact~$V$ de~$b$ dans~$V_{0}$, pour tout $P \in \cB(V)[T]$ unitaire de m\^eme degr\'e que~$P_{0}$ tel que $\|P - P_{0}\|_{V,\infty} \le \eps$ et pour tout $s \in [s_{1},s_{2}]$, on a
\[x \in \overline{D}_V(P,s) \subset U\]
et $\overline{D}_b(P,s)$ est connexe.

En outre, on peut imposer l'une des conditions suppl\'ementaires suivantes~:\footnote{Nous ne pr\'etendons pas qu'il soit possible de les imposer simultan\'ement.}
\begin{enumerate}[i)]
\item $P_{0}(b)$ est s\'eparable, si $\mu_{x}$ est s\'eparable et $e=1$ ;
\item $P_{0}(b)$ est s\'eparable, si $\cH(b)$ n'est pas trivialement valu\'e ;
\item $P_{0}$ est un relev\'e fix\'e de $\mu_{x}^e$ \`a~$\cO_{B,b}[T]$, si $\mu_{x}^e \in \kappa(b)[T]$.\end{enumerate} 
\end{prop}
\begin{proof} 
On peut supposer que $U$ est ouvert. 
Notons~$d$ le degr\'e de~$\mu_{x}^e$. Posons $M := \|\mu_{x}^e\|_{b,\infty}$ et $R := 1+d(M+1)$.

Le sous-ensemble de la fibre~$X_{b}$ d\'efini par l'\'equation $\mu_{x}^e = 0$ est r\'eduit au point~$x$. L'ensemble $\overline{D}_{b}(0,R) \setminus U$ est donc un compact sur lequel~$|\mu_{x}^e|$ est born\'ee inf\'erieurement par une constante $m_{0}>0$. 
Soit $m \in \intoo{0,\min(m_{0},\sigma,1)}$. On d\'eduit du lemme~\ref{lem:borneT} que l'on a $\overline{D}_{b}(\mu_{x}^e,m) \subset U$.

Puisque~$\kappa(b)$ est dense dans~$\cH(b)$, il existe un polyn\^ome~$P_{0}$ unitaire de degr\'e~$d$ \`a coefficients dans~$\cO_{b}$ tel que l'on ait 
\[ \|P_{0}(b) - \mu_{x}^e\|_{b,\infty}  < \min \Big( \frac 1 2, \frac m 5 \, \big(\sum_{i=0}^{d-1} R^i \big)^{-1},\eta \Big).\]
On a alors $\|P_{0}\|_{b,\infty} \le M+1/2$ donc, d'apr\`es le lemme~\ref{lem:borneT}, $\overline{D}_{b}(P_{0},1) \subset \overline{D}_{b}(0,R)$. On en d\'eduit alors que $\overline{D}_{b}(P_{0}, 4m/5) \subset \overline{D}_{b}(\mu_{x}^e,m) \subset U$. 
En outre, $\overline{D}_{b}(P_{0},m/5)$ contient~$x$.

Il existe un voisinage compact spectralement convexe~$W$ de~$b$ dans~$B$ tel que $P_{0}$ soit \`a coefficients dans~$\cB(W)$ et satisfasse $\|P_{0}\|_{W,\infty} \le M+3/4$. L'ensemble $\overline{D}_{W}(P_{0}, 4m/5) \setminus U$ est compact et sa projection sur~$B$ est un compact~$L$ qui ne contient pas le point~$b$. Soit~$V_{0}$ un voisinage compact spectralement convexe de~$b$ dans~$B \setminus L$. Par construction, on a $\overline{D}_{V_{0}}(P_{0}, 4m/5) \subset U$.

Soit $\eps \in \intoo{0,1/4}$ tel que $\eps \sum_{i=0}^{d-1} R^i < m/5$. Soient~$V$ une partie compacte de~$V_{0}$ et $P \in \cB(V)[T]$ un polyn\^ome unitaire de degr\'e~$d$ tel que $\|P - P_{0}\|_{V,\infty} \le \eps$. D'apr\`es le lemme~\ref{lem:borneT}, on a alors $\overline{D}_{V}(P,1) \subset \overline{D}_{V}(0,R)$. Posons $s_{1} := 2m/5$ et $s_{2} := 3m/5$. Pour tout $s \in [s_{1},s_{2}]$, on a alors $\overline{D}_{V}(P, s) \subset \overline{D}_{V}(P_{0},4m/5) \subset U$ et $\overline{D}_{V}(P, s) \supset \overline{D}_{V}(P_{0}, m/5) \ni x$. La premi\`ere partie du r\'esultat s'ensuit. Le lemme~\ref{lem:DQsconnexe} (utilis\'e avec $k = \cH(b)$ et $r=s_{1}$) permet d'assurer la connexit\'e de $\overline{D}_{b}(P, s)$, quitte \`a diminuer~$\eps$.

\medbreak

Int\'eressons-nous maintenant aux conditions suppl\'ementaires que nous pouvons imposer.

$i$) Si $\mu_{x}$ est s\'eparable et $e=1$, tout polyn\^ome suffisamment proche de $\mu_{x}^e = \mu_{x}$ est encore s\'eparable, par continuit\'e du discriminant. Le r\'esultat s'ensuit.

$ii$) Si $\cH(b)$ est de valuation non triviale, tout polyn\^ome \`a coefficients dans~$\cH(b)$ peut \^etre approch\'e autant qu'on le souhaite par un polyn\^ome s\'eparable \`a coefficients dans~$\cH(b)$, et m\^eme~$\kappa(b)$. Le r\'esultat s'ensuit.

$ii$) Si $\mu_{x}^e \in \kappa(b)[T]$, pour tout relev\'e $P_{0}$ de~$\mu_{x}^e$ dans $\cO_{B,b}[T]$, on a $\| P_{0}(b) - \mu_{x}^e\|_{b,\infty} = 0$. Ce choix de $P_{0}$ est donc permis.
%
 \end{proof}

\begin{prop}\label{prop:basevoisdim1}\index{Voisinage}
Soient $b\in B$, $x$ un point non rigide de~$X_{b}$ et $U$ un voisinage de~$x$ dans~$X$. 
Alors, il existe un voisinage spectralement convexe~$V_{0}$ de~$b$ dans~$B$, un polyn\^ome~$P_{0}$ unitaire non constant \`a coefficients dans~$\cB(V_{0})$ et des nombres r\'eels $r_{1},r_{2},s_{1},s_{2},\eps \in \R_{> 0}$ avec $r_{2} < r_{1} < s_{1} < s_{2}$ v\'erifiant les propri\'et\'es suivantes~: pour tout voisinage compact~$V$ de~$b$ dans~$V_{0}$, pour tout $P \in \cB(V_{0})[T]$ tel que $\|P - P_{0}\|_{V_{0},\infty} \le \eps$, pour tous $r,s \in \R_{> 0}$ tels que $r_{2} \le r \le  r_{1}$ et $s_{1} \le s\le s_{2}$, on a
\[x \in \overline{C}_V(P,r,s) \subset U\]
et $\overline{C}_b(P,r,s)$ est connexe.

En outre, si~$\cH(b)$ est de valuation non triviale ou de caract\'eristique nulle, on peut imposer que~$P_{0}(b)$ soit s\'eparable.
\end{prop}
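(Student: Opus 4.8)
The plan is to mimic the proof of Proposition~\ref{prop:basevoisdim1rigide}, with closed disks replaced by closed annuli. First a reduction: since~$x$ is non rigid the extension $\cH(x)/\cH(b)$ is infinite, and by Theorem~\ref{th:vaarchimedienne} no such point exists over an archimedean complete valued field, so $b\in B_{\um}$ and $k:=\cH(b)$ is a complete ultrametric field; thus~$x$ is a point of type~$2$, $3$ or~$4$ of $X_{b}\simeq\Aunk$ (not of type~$1$, which is rigid), and we may assume~$U$ open. \emph{Step~1 (fibrewise annular neighbourhood).} I would produce a monic non-constant $Q_{0}\in k[T]$ and reals $0<\rho_{1}<|Q_{0}(x)|<\rho_{2}$ with $\overline{C}_{b}(Q_{0},\rho_{1},\rho_{2})$ a connected neighbourhood of~$x$ in~$X_{b}$ contained in~$U$. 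If~$x$ is of type~$2$ or~$3$, Lemma~\ref{lem:bv23} directly furnishes such a neighbourhood, of the form $\overline{C}(P_{F},|P_{F}(x)|-\eps,|P_{F}(x)|+\eps)$ with $P_{F}=\prod_{C\in F}\mu_{x_{C}}$; take $Q_{0}=P_{F}$ and $\rho_{1},\rho_{2}=|P_{F}(x)|\mp\eps$. If~$x$ is of type~$4$, Lemma~\ref{lem:bv14} gives an irreducible $Q\in k[T]$ and $c>|Q(x)|$ with $\overline{D}_{b}(Q,c)\subset U$; writing~$x$ as the intersection of a strictly decreasing sequence of disks $\overline{D}(\alpha_{n},r_{n})$ with $\alpha_{n}\in k$, the (finitely many) roots of~$Q$ lie outside $\overline{D}(\alpha_{n},r_{n})$ for~$n$ large, so $|Q|$ equals $|Q(x)|$ on such a disk, which is a neighbourhood of~$x$; for $\rho_{1}<|Q(x)|<\rho_{2}\le c$ small, $\overline{C}_{b}(Q,\rho_{1},\rho_{2})$ is then a connected neighbourhood of~$x$ contained in~$U$ (a disk, connected by Lemma~\ref{lem:DQsconnexe}, minus disjoint open sub-disks around the roots of~$Q$), and we set $Q_{0}=Q$. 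Finally, if~$k$ is nontrivially valued, each irreducible factor of~$Q_{0}$ may be replaced by a sufficiently close separable polynomial whose roots remain in the same component (resp.\ stay away from~$x$), so $Q_{0}$ can be taken separable.

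\emph{Step~2 (thickening over the base).} Using density of $\kappa(b)$ in $\cH(b)$, lift $Q_{0}$ to a monic $P_{0}\in\cO_{B,b}[T]$ of the same degree with $\|P_{0}(b)-Q_{0}\|_{b,\infty}$ as small as wanted, and spread it out to a compact spectrally convex neighbourhood~$W$ of~$b$ with control on $\|P_{0}\|_{W,\infty}$. Fix reals $\rho_{1}<s_{0}<s_{0}'<\rho_{2}$. Lemma~\ref{lem:borneT} applied fibrewise (here~$P_{0}$ is monic) shows $\overline{C}_{W}(P_{0},s_{0},s_{0}')$ lies in a compact relative disk $\overline{D}_{W}(0,R)$; hence $\overline{C}_{W}(P_{0},s_{0},s_{0}')\cap(X\setminus U)$ is compact and its projection $L$ to~$B$ is compact. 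Choosing $\|P_{0}(b)-Q_{0}\|_{b,\infty}$ small makes $|P_{0}(b)|$ uniformly close to $|Q_{0}|$ on $\overline{D}_{b}(0,R)$, so $\overline{C}_{b}(P_{0},s_{0},s_{0}')\subset\overline{C}_{b}(Q_{0},\rho_{1},\rho_{2})\subset U$, whence $b\notin L$; take $V_{0}$ a compact spectrally convex neighbourhood of~$b$ inside $W\setminus L$, so that $\overline{C}_{V_{0}}(P_{0},s_{0},s_{0}')\subset U$.

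\emph{Step~3 (perturbation and conclusion).} Fix $s_{0}<r_{2}<r_{1}<|Q_{0}(x)|<s_{1}<s_{2}<s_{0}'$ and a small $\eps\in\R_{>0}$. Let $V\in\cV_{b}$ with $V\subset V_{0}$, let $P\in\cB(V_{0})[T]$ be monic of the same degree as~$P_{0}$ with $\|P-P_{0}\|_{V_{0},\infty}\le\eps$, and let $r\in[r_{2},r_{1}]$, $s\in[s_{1},s_{2}]$. For $\eps<1$, Lemma~\ref{lem:borneT} bounds $|T|$ on $\overline{C}_{V_{0}}(P,r,s)$ uniformly in~$P$, so $|P-P_{0}|$ is $O(\eps)$ there; for $\eps$ small this yields $\overline{C}_{V_{0}}(P,r,s)\subset\overline{C}_{V_{0}}(P_{0},s_{0},s_{0}')\subset U$ and, over~$b$, $\overline{C}_{b}(P,r,s)\subset\overline{C}_{b}(Q_{0},\rho_{1},\rho_{2})$; likewise $|P(x)|\in[r_{1},s_{1}]$, so $x\in\overline{C}_{b}(P,r,s)\subset\overline{C}_{V}(P,r,s)$. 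After possibly shrinking~$\eps$, $\overline{C}_{b}(P,r,s)$ is connected: in the type~$4$ case by Lemma~\ref{lem:DQsconnexe} (continuity of roots), and in the type~$2$/$3$ case because the variation-graph argument behind Lemma~\ref{lem:bv23} depends only on the combinatorial configuration of the roots of~$P$, which varies continuously with the coefficients. The separability clause is ensured by the choice of~$Q_{0}$ in Step~1.

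\emph{Expected main obstacle.} The delicate points are the type~$4$ case of Step~1 (turning a disk neighbourhood into an annular one that remains a neighbourhood of~$x$, which relies on~$x$ being non algebraic) and, above all, the uniform control of the connectedness of the perturbed fibres $\overline{C}_{b}(P,r,s)$ when~$Q_{0}$ is a \emph{product} of several irreducible polynomials, where Lemma~\ref{lem:DQsconnexe} does not apply verbatim and one has to track the variation graph of~$|P|$ directly; the remainder is bookkeeping with the radii $\rho_{1},\rho_{2},s_{0},s_{0}',r_{1},r_{2},s_{1},s_{2}$ and with~$\eps$.
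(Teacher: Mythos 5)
Your proof follows the same route as the paper's, which is itself only a sketch: reduce to $b$ ultrametric, apply Lemmas~\ref{lem:bv23} and~\ref{lem:bv14} in the fibre to get an annular neighbourhood $\overline{C}_{b}(Q_{0},\rho_{1},\rho_{2})$ of~$x$, then rerun the approximation and spreading-out argument of Proposition~\ref{prop:basevoisdim1rigide}. One small slip: a non-rigid point of~$X_{b}$ can perfectly well be of type~1 (e.g.\ a point of $\E{1}{\Q_{p}}$ corresponding to an element of $\C_{p}$ transcendental over~$\Q_{p}$), so your dichotomy ``type~2/3 versus type~4'' omits a case; this is harmless because such a point satisfies $|Q(x)|>0$ for every nonzero $Q\in\cH(b)[T]$ and is an intersection of nested disks, so your type-4 argument (and Lemma~\ref{lem:bv14}, which is stated for types~1 and~4 together) applies verbatim.

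The point you flag as the main unresolved obstacle --- connectedness of the perturbed fibres $\overline{C}_{b}(P,r,s)$ when $Q_{0}$ is a product of several irreducibles, where Lemma~\ref{lem:DQsconnexe} does not apply and your appeal to the ``combinatorial configuration of the roots varying continuously'' is not a proof --- has a one-line resolution that you missed and that the paper uses. Since $r\ge r_{2}>0$ and $|T|$ is bounded, say by~$R$, on $\overline{C}_{b}(P_{0},r_{2},s_{2})\cup\overline{C}_{b}(P,r_{2},s_{2})$ by Lemma~\ref{lem:borneT}, one has $|(P-P_{0})(y)|\le\eps\max(1,R^{d})$ there (the fibre is ultrametric); choosing $\eps<r_{2}\max(1,R^{d})^{-1}$, the ultrametric triangle inequality gives $|P(y)|=|P_{0}(y)|$ wherever $|P_{0}(y)|\ge r$ and $|P(y)|<r$ wherever $|P_{0}(y)|<r$, and symmetrically, so that $\overline{C}_{b}(P,r,s)=\overline{C}_{b}(P_{0},r,s)$ \emph{as sets}. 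Connectedness of the perturbed fibre is therefore inherited for free from the unperturbed one, for which the variation-graph argument of Lemma~\ref{lem:bv23} (resp.\ the Swiss-cheese picture in the type~1/4 case) applies. With that substitution, and noting that in Step~3 the perturbation~$P$ should be taken monic of the same degree as~$P_{0}$ as in the rigid case, your argument is complete.
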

\begin{proof}
Remarquons que les hypoth\`eses entra\^inent que le point~$b$ est ultram\'etrique. Le r\'esultat d\'ecoule alors des lemmes~\ref{lem:bv23} et~\ref{lem:bv14} 
appliqu\'es avec $k = \cH(b)$ et d'arguments d'approximation similaires \`a ceux de la preuve de la proposition~\ref{prop:basevoisdim1rigide}. La d\'emonstration de la connexit\'e de~$C_{b}(P,r,s)$ repose sur l'in\'egalit\'e triangulaire ultram\'etrique, qui assure que~$C_{b}(P,r,s)$ reste inchang\'e si l'on remplace~$P$ par un polyn\^ome suffisamment proche.  
\end{proof}

\begin{coro}[\protect{\cite[corollaire~6.8]{EtudeLocale}}]\label{cor:projectionouverte}\index{Projection!ouverte}
Soient $n,m \in \N$ avec $n\ge m$. La projection $\pi_{n,m} \colon \E{n}{\cA} \to \E{m}{\cA}$ sur les $m$ premi\`eres coordonn\'ees est ouverte.
\end{coro}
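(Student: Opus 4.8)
The plan is to reduce, by a standard fibration argument, to the openness of the base projection $\E{1}{\cC}\to\cM(\cC)$ for an arbitrary Banach ring $\cC$, and to derive the latter from the explicit neighborhood bases of Propositions~\ref{prop:basevoisdim1rigide} and~\ref{prop:basevoisdim1}. Since $\pi_{n,m}=\pi_{m+1,m}\circ\pi_{m+2,m+1}\circ\dots\circ\pi_{n,n-1}$ and a composition of open maps is open, it suffices to treat $\pi_{n,n-1}\colon\E{n}{\cA}\to\E{n-1}{\cA}$. Let $U\subseteq\E{n}{\cA}$ be open and $x\in U$, and set $y:=\pi_{n,n-1}(x)$. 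Compact rational neighborhoods of $y$ form a basis, and being pro-rational they are spectrally convex by Theorem~\ref{thm:rationnel}; pick such a neighborhood $V$. By Remark~\ref{spectral}, $\varphi_V$ identifies $\pi_{n,n-1}^{-1}(V)$ homeomorphically with $\E{1}{\cB(V)}$, compatibly with the projections onto $V=\cM(\cB(V))$. Then $U\cap\pi_{n,n-1}^{-1}(\mathring V)$ is an open neighborhood of $x$ inside $\pi_{n,n-1}^{-1}(V)$, hence corresponds to an open neighborhood $U'$ of the image $x'$ of $x$ in $\E{1}{\cB(V)}$, and $\pi_{n,n-1}(U)$ contains the image of $U'$ under $\E{1}{\cB(V)}\to\cM(\cB(V))$. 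Granting that this projection is open, that image is a neighborhood of $y$ in $V$, hence a neighborhood of $y$ in $\E{n-1}{\cA}$ since $V$ is itself one; thus $\pi_{n,n-1}(U)$ contains a neighborhood of each of its points and is open.

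It remains to prove that $\pi\colon\E{1}{\cC}\to\cM(\cC)$ is open for any Banach ring $\cC$. Fix an open $U\subseteq\E{1}{\cC}$, a point $x\in U$ and $b:=\pi(x)$. I would apply Proposition~\ref{prop:basevoisdim1rigide} if $x$ is rigid in $X_b\simeq\E{1}{\cH(b)}$ and Proposition~\ref{prop:basevoisdim1} otherwise, taking for $\cV_b$ the family of all compact neighborhoods of $b$. In the rigid case this yields a spectrally convex (hence compact) neighborhood $V$ of $b$, a monic nonconstant $P_0\in\cB(V)[T]$ and a real $s_1>0$ such that, applying the proposition with this very $V$, with $P=P_0$ and $s=s_1$, one gets $x\in\overline D_V(P_0,s_1)\subseteq U$. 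In the non-rigid case one obtains likewise $V$, $P_0$ and reals $0<r_1<s_1$ with $x\in\overline C_V(P_0,r_1,s_1)\subseteq U$.

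In both cases I claim $\pi$ maps this polynomial domain onto the whole of $V$, which is a neighborhood of $b$ and finishes the proof (as $b\in\pi(U)$ was arbitrary). Indeed, for every $b'\in V$ the polynomial $P_0(b')\in\cH(b')[T]$ is monic and nonconstant, so it has a zero $\alpha$ in some complete valued extension of $\cH(b')$, and the multiplicative seminorm $Q\mapsto|Q(\alpha)|$ is a point of $X_{b'}$ at which $|P_0(b')|$ vanishes; moreover $|P_0(b')|$ is unbounded on $X_{b'}$ since $P_0(b')$ is nonconstant. As $|P_0(b')|$ is continuous (Lemma~\ref{lem:evaluationcontinueaffine}) and $\E{1}{\cH(b')}$ is connected — one may instead track $|P_0(b')|$ along an explicit monotone path through $\alpha$, e.g.\ $u\mapsto\eta_{\alpha,u}$ in the non-archimedean case — its image is all of $\R_{\ge0}$. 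Hence it contains $s_1$, resp.\ meets $[r_1,s_1]$, so the fibre of $\overline D_V(P_0,s_1)$, resp.\ of $\overline C_V(P_0,r_1,s_1)$, over $b'$ is nonempty, i.e.\ $b'\in\pi(U)$.

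The substance is entirely carried by Propositions~\ref{prop:basevoisdim1rigide} and~\ref{prop:basevoisdim1}, so the part requiring care is the bookkeeping of the reduction: checking that the identification of Remark~\ref{spectral} is compatible with the two projections onto the base, and that a neighborhood of $y$ inside the spectrally convex set $V$ really is a neighborhood of $y$ in $\E{n-1}{\cA}$. The only genuinely geometric ingredient beyond the cited results is the fibrewise surjectivity of $\pi$ over a polynomial domain attached to a monic nonconstant polynomial, which I expect to be the mild obstacle but which reduces, as above, to the fact that $|P_0(b')|$ sweeps out $[0,+\infty)$ on the connected line $\E{1}{\cH(b')}$.
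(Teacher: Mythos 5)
Your proof is correct and follows essentially the paper's own route: reduce, via Remark~\ref{spectralement_conv}, to the openness of the base projection $\E{1}{\cB(V)}\to\cM(\cB(V))$ for a spectrally convex compact neighbourhood~$V$, and then conclude from Propositions~\ref{prop:basevoisdim1rigide} and~\ref{prop:basevoisdim1}. The only difference is that you make explicit the fibrewise surjectivity of the projection on the polynomial domains $\overline{D}_V(P_0,s)$ and $\overline{C}_V(P_0,r,s)$ (the intermediate-value argument for $|P_0(b')|$ on each fibre), a step the paper's one-line deduction leaves implicit.
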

\begin{proof}
La remarque~\ref{spectralement_conv} permet de se ramener au cas o\`u~$m=0$ et~$n=1$. Le r\'esultat d\'ecoule alors des propositions~\ref{prop:basevoisdim1rigide} et~\ref{prop:basevoisdim1}.
\end{proof}

\section{R\'esultats locaux}\label{sec:resultatslocaux}

Soit~$\cA$ un anneau de Banach et soit~$n\in \N$. 
Dans cette section, nous rappelons quelques r\'esultats locaux sur~$\E{n}{\cA}$ obtenus par le second auteur dans~\cite{EtudeLocale}. 
Posons $B := \Mc(\Ac)$.

\subsection{Th\'eor\`eme de division de Weierstra\ss}

En g\'eom\'etrie analytique classique, les th\'eor\`emes de division et de pr\'eparation de Weierstra\ss{} sont des outils fondamentaux. Ils interviennent notamment de fa\c{c}on essentielle dans la d\'emonstration de la coh\'erence du faisceau structural. Le second auteur a d\'emontr\'e des versions de ces th\'eor\`emes pour les espaces de Berkovich dans~\cite{EtudeLocale}. 

Dans cette section, nous nous consacrons au th\'eor\`eme de division. Nous reviendrons plus tard sur le th\'eor\`eme de pr\'eparation (\cf{} th\'eor\`eme~\ref{thm:preparationW}). Commen\c{c}ons par rappeler quelques d\'efinitions.

\begin{defi}\label{def:bordan}\index{Bord analytique|textbf}
Soit~$V$ un ensemble compact spectralement convexe de~$\E{n}{\cA}$. On dit qu'une partie ferm\'ee~$\Gamma$ de~$V$ est un \emph{bord analytique} de~$V$ si, pour tout~$f\in\cB(V)$, on a 
\[\|f\|_\Gamma=\|f\|_V.\]
\end{defi}

\begin{defi}\label{def:decent}\index{Point!ultrametrique typique@ultram\'etrique typique|textbf}\index{Point!decent@d\'ecent|textbf}\index{Partie!ultrametrique typique@ultram\'etrique typique|textbf}\index{Partie!decente@d\'ecente|textbf}
Un point $x$ de $\E{n}{\cA}$ est dit \emph{ultram\'etrique typique} s'il appartient \`a l'int\'erieur de la partie ultram\'etrique de~$\E{n}{\cA}$ et s'il poss\`ede une base de voisinages form\'ee d'ensembles compacts et spectralement convexes poss\'edant un bord analytique fini.

Un point $x$ de $\E{n}{\cA}$ est dit \emph{d\'ecent} si l'une (au moins) des trois conditions suivantes est satisfaite~:
\begin{enumerate}[i)]
\item $\cH(x)$ est de caract\'eristique nulle~;
\item $\cH(x)$ est de valuation non triviale~;
\item $x$ est ultram\'etrique typique.
\end{enumerate}

Une partie de $\E{n}{\cA}$ est dite ultram\'etrique typique (resp. d\'ecente) lorsque tous ses points le sont. 
\end{defi}

\begin{exem}\label{ex:tresdecent}\index{Anneau!de base}
\index{Corps!valu\'e}\index{Corps!hybride}\index{Anneau!des entiers relatifs $\Z$}\index{Anneau!des entiers d'un corps de nombres}\index{Anneau!de valuation discr\`ete}\index{Anneau!de Dedekind trivialement valu\'e}
Lorsque $\cA$ est l'un de nos exemples usuels \ref{ex:corpsvalue} \`a~\ref{ex:Dedekind}~: les corps valu\'es complets, l'anneau~$\Z$ et les anneaux d'entiers de corps de nombres, les corps hybrides, les anneaux de valuation discr\`ete, les anneaux de Dedekind trivialement valu\'es, le spectre analytique~$\cM(\cA)$ est d\'ecent.
\end{exem}

\begin{prop}\label{prop:typiqueAn}
Soit~$b\in B$ un point ultram\'etrique typique (resp. d\'ecent). Alors, tout point de~$\E{n}{\cA}$ situ\'e au-dessus de~$b$ est encore ultram\'etrique typique (resp. d\'ecent).
\qed
\end{prop}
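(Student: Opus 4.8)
\noindent\emph{Esquisse.} Le plan est de se ramener au cas où~$b$ est ultramétrique typique, puis de traiter ce cas par récurrence sur~$n$. La réduction au cas ultramétrique typique est aisée~: pour tout point~$x$ de~$\E{n}{\cA}$ au-dessus de~$b$, la restriction de~$\va_{x}$ à~$\cA$ est~$\va_{b}$, d'où un plongement isométrique $\cH(b)\hookrightarrow\cH(x)$~; si donc~$\cH(b)$ est de caractéristique nulle, ou de valuation non triviale, il en va de même de~$\cH(x)$, et~$x$ est décent (\cf~définition~\ref{def:decent}). On supposera donc~$b$ ultramétrique typique et l'on montrera que tout point~$x$ au-dessus de~$b$ l'est aussi. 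Que~$x$ soit intérieur à la partie ultramétrique de~$\E{n}{\cA}$ est immédiat~: posant $B := \cM(\cA)$, on a $(\E{n}{\cA})_{\um} = \pi_{n}^{-1}(B_{\um})$ d'après la remarque~\ref{rem:umfermee}, et la préimage par~$\pi_{n}$ d'un voisinage ouvert de~$b$ contenu dans~$B_{\um}$ convient. On ne considérera dans la suite que des voisinages de~$b$ contenus dans~$B_{\um}$.

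On procédera par récurrence sur~$n$, le cas $n=0$ étant trivial. Pour $n\ge1$, on posera $y := \pi_{n,n-1}(x)$, point de~$\E{n-1}{\cA}$ au-dessus de~$b$, donc ultramétrique typique par hypothèse de récurrence~; on en choisira un voisinage compact spectralement convexe~$V$, de bord analytique fini, contenu dans la partie ultramétrique de~$\E{n-1}{\cA}$. La remarque~\ref{spectral} identifie $\pi_{n,n-1}^{-1}(\mathring V)$ à l'ouvert de~$\E{1}{\cB(V)}$ situé au-dessus de~$\mathring V$, et~$y$ y reste ultramétrique typique, vu comme point de~$\cM(\cB(V)) = V$ (la convexité spectrale et la finitude d'un bord analytique ne faisant intervenir que les anneaux~$\cB(\wc)$, qui sont les mêmes des deux côtés, en vertu de la propriété universelle de la proposition~\ref{crit_spectral_pu}). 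On sera ainsi ramené, en remplaçant~$\cA$ par~$\cB(V)$, au cas $n=1$.

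Pour traiter le cas $n=1$, on choisira une base~$\cV_{b}$ de voisinages de~$b$ formée d'ensembles compacts, spectralement convexes, de bord analytique fini et contenus dans~$B_{\um}$. Appliquée à~$\cV_{b}$, la proposition~\ref{prop:basevoisdim1rigide} (si~$x$ est rigide dans la fibre $\pi^{-1}(b)\simeq\E{1}{\cH(b)}$) ou la proposition~\ref{prop:basevoisdim1} (sinon) fournit une base de voisinages de~$x$ constituée d'ensembles de la forme $\oD_{V}(P,s)$, resp. $\oC_{V}(P,r,s)$, avec $V\in\cV_{b}$ et $P\in\cB(V)[T]$ unitaire. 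De tels ensembles sont compacts~; de plus, via la remarque~\ref{spectral}, $\pi^{-1}(V)\simeq\E{1}{\cB(V)}$, et $\oD_{V}(P,s) = \{|P|\le s\,|1|\}$ y est un domaine rationnel, donc spectralement convexe d'après le théorème~\ref{thm:rationnel}~; la transitivité de la convexité spectrale (elle aussi déduite de la propriété universelle de la proposition~\ref{crit_spectral_pu}) en donne la convexité spectrale dans~$\E{1}{\cA}$.

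Le point délicat sera de munir $\oD_{V}(P,s)$ d'un bord analytique fini (le cas de la couronne est analogue)~; notons~$\Gamma$ un bord analytique fini de~$V$. L'idée est d'introduire le morphisme $\psi\colon\oD_{V}(P,s)\to\oD_{V}(s)$ envoyant la coordonnée~$U$ du but sur la fonction~$P$~: la division de Weierstra\ss{} par le polynôme~$P(T)-U$, unitaire en~$T$ (\cf~théorème~\ref{weierstrass}, appliqué fibre à fibre puis globalisé au-dessus de~$\mathring V$), montre que $\cB(\oD_{V}(P,s))$ est libre de rang $d := \deg P$ sur $\cB(\oD_{V}(s))$, de base $1,T,\dots,T^{d-1}$. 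Par ailleurs, le lemme~\ref{lem:couronneum} identifie $\cB(\oD_{V}(s))$ à $\overline{\cO(V)}\{|U|\le s\}$, muni de la norme $\sum a_{n}U^n\mapsto\max_{n}\|a_{n}\|_{V}\,s^n = \max_{n}\|a_{n}\|_{\Gamma}\,s^n$~; autrement dit, l'ensemble fini~$\Gamma'$ formé, pour chaque $b'\in\Gamma$, du point de Gauss de rayon~$s$ de la fibre au-dessus de~$b'$, est un bord analytique de~$\oD_{V}(s)$. Enfin, pour $f\in\cB(\oD_{V}(P,s))$ de polynôme caractéristique $Z^d + c_{d-1}Z^{d-1} + \dots + c_{0}$ sur~$\cB(\oD_{V}(s))$, le lemme~\ref{lem:spuni} appliqué aux fibres de~$\psi$, joint à la théorie des polygones de Newton, donnera $\max_{y\in\psi^{-1}(w)}|f(y)| = \max_{i}|c_{i}(w)|^{1/(d-i)}$ pour tout $w\in\oD_{V}(s)$, d'où, en prenant la borne supérieure sur~$w$ puis en exploitant que~$\Gamma'$ est un bord analytique de~$\oD_{V}(s)$,
\[\|f\|_{\oD_{V}(P,s)} = \max_{i}\|c_{i}\|_{\oD_{V}(s)}^{1/(d-i)} = \max_{i}\|c_{i}\|_{\Gamma'}^{1/(d-i)} = \|f\|_{\psi^{-1}(\Gamma')}.\]
L'ensemble fini $\psi^{-1}(\Gamma')$ sera donc un bord analytique de~$\oD_{V}(P,s)$, ce qui établira que~$x$ est ultramétrique typique. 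Dans tout ce schéma, l'obstacle principal est la finitude de~$\psi$, c'est-à-dire la globalisation de la division de Weierstra\ss{} au-dessus de la base~; tout le reste est soit formel (récurrence, transitivité de la convexité spectrale), soit une conséquence immédiate des résultats rappelés plus haut.
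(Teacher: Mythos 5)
Le texte ne démontre pas cette proposition~: elle est reprise telle quelle de \cite[proposition~6.10]{EtudeLocale}. Votre schéma est correct dans ses grandes lignes et correspond bien à l'argument attendu~: réduction du cas décent au cas ultramétrique typique via le plongement isométrique $\cH(b)\hookrightarrow\cH(x)$, appartenance à l'intérieur de la partie ultramétrique gr\^ace à l'égalité $(\E{n}{\cA})_{\um}=\pi_{n}^{-1}(B_{\um})$, récurrence sur~$n$ en rempla\c{c}ant~$\cA$ par~$\cB(V)$, bases de voisinages $\oD_{V}(P,s)$ ou $\oC_{V}(P,r,s)$ fournies par les propositions~\ref{prop:basevoisdim1rigide} et~\ref{prop:basevoisdim1}, et enfin transfert d'un bord analytique fini le long du morphisme fini~$\psi$ par la formule reliant $\|f\|$ aux coefficients du polyn\^ome caractéristique.

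Deux points doivent cependant être consolidés. Le premier est celui que vous signalez vous-même~: la liberté de $\cB(\oD_{V}(P,s))$ sur $\cB(\oD_{V}(s))$ ne découle pas du théorème~\ref{weierstrass} \og appliqué fibre à fibre puis globalisé\fg{} --- c'est précisément pour contr\^oler cette globalisation que les conditions $(\cB N_{G})$ sont introduites. L'énoncé voulu est toutefois disponible~: puisque $V\subset B_{\um}$ possède un bord analytique fini, la proposition~\ref{prop:CNBNPST},~i) assure que $\oD_{V}(s)$ (et toute couronne $\oC_{V}(r,s)$) satisfait $(\cB N_{P(S)-T})$, et le corollaire~\ref{coro:BNPST} fournit alors l'isomorphisme $\cB(\oD_{V}(s))[S]/(P(S)-T)\to\cB(\oD_{V}(P,s))$~; il faut citer ces résultats au lieu de laisser le point en suspens. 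Le second point est la confusion entre $\cB(\oD_{V}(s))$ et $\overline{\cO(\oD_{V}(s))}$~: le lemme~\ref{lem:couronneum} décrit le second, et comme~$\Gamma$ n'est qu'un bord analytique au sens de~$\cB(V)$ (et non un bord analytique fort), l'égalité $\|a_{n}\|_{\Gamma}=\|a_{n}\|_{V}$ n'est pas acquise pour des coefficients $a_{n}\in\overline{\cO(V)}$. On répare l'argument en remarquant que l'égalité à établir, $\|c\|_{\Gamma'}=\|c\|_{\oD_{V}(s)}$ pour $c\in\cB(\oD_{V}(s))$, est continue en~$c$ pour la norme uniforme~; il suffit donc de la vérifier sur la sous-algèbre dense $\cK(\oD_{V}(s))$, dont les développements en~$U$ ont leurs coefficients dans $\cK(V)\subset\cB(V)$ (le terme constant du dénominateur, sans zéro sur~$V$, y est inversible), ensemble sur lequel~$\Gamma$ contr\^ole bien la norme. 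Les autres raccourcis (convexité spectrale de $\oD_{V}(P,s)$ dans~$\E{1}{\cA}$, compatibilité des algèbres $\cB(\wc)$ calculées sur~$\cA$ ou sur~$\cB(V)$ dans l'étape de récurrence) sont conformes à la pratique du texte (\cf~remarque~\ref{spectralement_conv} et corollaire~\ref{cor:BVfineNPST}), mais relèvent eux aussi des vérifications effectuées dans~\cite{EtudeLocale}.
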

\begin{proof}
Le cas ultram\'etrique typique fait l'objet de \cite[proposition~6.10]{EtudeLocale}. Le cas d\'ecent s'en d\'eduit, les propri\'et\'es d'\^etre de caract\'eristique nulle ou de valuation non triviale \'etant stables par extension de corps.
\end{proof}

Posons $X := \AunA$, avec coordonn\'ee~$S$. 
Notons $\pi \colon X \to B$ la projection sur la base. Rappelons que si $b$ est un point de~$B$, la fibre $X_{b} := \pi^{-1}(b)$ s'identifie \`a~$\E{1}{\cH(b)}$. Rappelons \'egalement que, si $x$ est un point rigide de~$X_{b}$, alors l'anneau local $\cO_{X_{b},x}$ est un anneau de valuation discr\`ete d'uniformisante~$\mu_{x}$ (\cf~d\'efinition~\ref{def:rigidedroite} pour la notation).

\begin{theo}[Division de Weierstra\ss, \protect{\cite[th\'eor\`eme~8.3]{EtudeLocale}}]\label{weierstrass}\index{Theoreme@Th\'eor\`eme!de division de Weierstra\ss}
Soit $b$ un point de~$B$. Soit~$x$ un point rigide de~$X_{b}$. Si~$\mu_{x}$ est ins\'eparable et si $b$ est trivialement valu\'e, supposons que~$b$ est ultram\'etrique typique. Soit~$G$ un \'el\'ement de l'anneau local~$\cO_{X,x}$. Supposons que son image dans l'anneau de valuation discr\`ete~$\cO_{X_{b},x}$ n'est pas nulle et notons~$n$ sa valuation.

Alors, pour tout~$F\in\cO_{X,x}$, il existe un unique couple~$(Q,R)\in \cO_{X,x}^2$ tel que 
\begin{enumerate}[i)]
\item $F=QG+R$ ;
\item $R\in\cO_{B,b}[S]$ est un polyn\^ome de degr\'e strictement inf\'erieur \`a~$n\deg(x)$.
\end{enumerate}
\qed
\end{theo}

On peut \'etendre ce th\'eor\`eme de fa\c con \`a diviser simultan\'ement en plusieurs points de la fibre. On renvoie \`a la preuve de \cite[th\'eor\`eme~5.5.3]{A1Z} pour les d\'etails\footnote{Dans cette r\'ef\'erence, le polyn\^ome~$G$ est suppos\'e d'une forme particuli\`ere, mais cela n'est pas utilis\'e dans la preuve. La condition~$(I_{G})$ peut \^etre remplac\'ee par la condition~$(D_{G})$ et \cite[corollaire~5.4]{EtudeLocale} assure qu'elle est toujours satisfaite. La condition~$(S)$ n'est pr\'esente que pour assurer que le th\'eor\`eme de division de Weierstra\ss{} s'applique.}.

\begin{coro}\label{cor:weierstrassgeneralise}\index{Theoreme@Th\'eor\`eme!de division de Weierstra\ss!g\'en\'eralis\'e}
Soit $b$ un point d\'ecent de~$B$. Soit~$G$ un polyn\^ome unitaire de degr\'e~$d$ \`a coefficients dans~$\cO_{B,b}$. Notons~$\{z_1,\dotsc,z_{t}\}$ l'ensemble des z\'eros de~$G$ dans~$X_{b}$. 

Alors, pour tout~$(f_1,\dotsc,f_t)\in\prod_{i=1}^t \cO_{X,z_i}$, il existe un unique \'el\'ement~$(r,q_1,\dotsc,q_t)$ de $\cO_{B,b}[T]\times\prod_{i=1}^t\cO_{X,z_i}$ v\'erifiant les propri\'et\'es suivantes~:
\begin{enumerate}[i)]
\item pour tout~$i\in\cn{1}{t}$, on a~$f_i = q_i G + r$ dans~$\cO_{X,z_i}$~;
\item le polyn\^ome~$r$ est de degr\'e strictement inf\'erieur \`a~$d$.
\end{enumerate}
\qed
\end{coro}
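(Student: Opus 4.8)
The plan is to deduce this generalized division statement from the single-point Weierstrass division theorem (Théorème~\ref{weierstrass}) by a standard bootstrapping argument that splits the monic polynomial~$G$ according to its zeros in the fibre~$X_b$, exactly as in the proof of \cite[théorème~5.5.3]{A1Z}. First I would observe that since $b$ is décent, the hypotheses of Théorème~\ref{weierstrass} are available at each zero $z_i$ of~$G$: the relevant typicality/separability condition is automatic in the three cases of Définition~\ref{def:decent}, so there is no obstruction to applying single-point division at any~$z_i$.

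The next step is the factorization of~$G$. The monic polynomial $G\in\cO_{B,b}[T]$ of degree~$d$ has finitely many zeros $z_1,\dotsc,z_t$ in the fibre $X_b\simeq\E{1}{\cH(b)}$, each $z_i$ being a rigid point with minimal polynomial $\mu_{z_i}$ and multiplicity, say, $n_i$, so that $n_1\deg(z_1)+\dotsb+n_t\deg(z_t)=d$. The image of~$G$ in $\cO_{X_b,z_i}$ is nonzero of valuation exactly~$n_i$ (its order of vanishing at~$z_i$), so Théorème~\ref{weierstrass} applies to the divisor~$G$ at each~$z_i$. For a fixed tuple $(f_1,\dotsc,f_t)\in\prod_i\cO_{X,z_i}$, applying single-point Weierstrass division at each~$z_i$ separately gives, for every~$i$, a unique pair $(q_i',r_i)\in\cO_{X,z_i}\times\cO_{B,b}[T]$ with $f_i=q_i'G+r_i$ and $\deg(r_i)<d$. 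The content of the corollary is that one can choose the \emph{same} remainder $r=r_1=\dotsb=r_t$; equivalently, the remainders $r_i$ obtained at different points already coincide.

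To see this I would argue that the remainder is characterized intrinsically. Fix~$i$ and $j$. The difference $r_i-r_j$ lies in $\cO_{B,b}[T]$, has degree $<d$, and — reading everything inside $\cO_{X,z_i}$ — is divisible by~$G$ there, since $r_i-r_j = (q_j'-q_i')G$ as germs at~$z_i$ whenever both $f_i$ and $f_j$ are taken to agree, or more precisely one reduces to the case $f_i=f_j=0$ for all indices by subtracting off the $q_i'G$ terms, which forces every $r_i=0$ by the uniqueness clause of Théorème~\ref{weierstrass} applied individually. Hence the only real point is \emph{existence with a common remainder}, which one gets by running the construction symmetrically: set $r$ to be the unique polynomial of degree $<d$ representing the class of the prescribed data modulo $G$ in the product ring $\prod_i \cO_{X,z_i}/(G)$, using that $\cO_{B,b}[T]/(G)\to\prod_i\cO_{X,z_i}/(G)$ is an isomorphism (both sides are free $\cO_{B,b}$-modules of rank~$d$, and the map is injective with the right rank by Weierstrass division applied at each point); then define $q_i$ by dividing $f_i-r$ by~$G$ at~$z_i$. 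Uniqueness of the full tuple $(r,q_1,\dotsc,q_t)$ follows from uniqueness in Théorème~\ref{weierstrass}.

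The main obstacle, as the footnote in the excerpt already signals, is purely bookkeeping about which auxiliary hypotheses are really needed: one must check that the inseparability/typicality condition required in Théorème~\ref{weierstrass} is indeed implied by $b$ being décent for \emph{every} zero~$z_i$, and that the earlier results (in particular \cite[corollaire~5.4]{EtudeLocale}) let one drop the special form of~$G$ assumed in the cited proof of \cite[théorème~5.5.3]{A1Z}. Once those verifications are in place, the argument is a routine repackaging of single-point division plus the rank count for $\cO_{B,b}[T]/(G)$.
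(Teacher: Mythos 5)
Your overall strategy ends up in the right place (the division is encoded by the isomorphism $\cO_{B,b}[T]/(G)\to\prod_i\cO_{X,z_i}/(G)$, which is exactly how the proof of \cite[th\'eor\`eme~5.5.3]{A1Z} that the paper invokes proceeds), but the argument you give for that isomorphism has a genuine gap, and an intermediate claim is false. First, the false claim: the remainders $r_i$ produced by applying Th\'eor\`eme~\ref{weierstrass} at each $z_i$ separately do \emph{not} coincide in general. The single-point division of $f_i$ by $G$ at $z_i$ yields a remainder of degree $<n_i\deg(z_i)$ (where $n_i$ is the valuation of $G$ at $z_i$), not $<d$, and these remainders depend only on the germ $f_i$ at $z_i$. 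For $G=T(T-1)$ over $\C$, with $f_1=0$ at $z_1=0$ and $f_2=1$ at $z_2=1$, one gets $r_1=0$ and $r_2=1$, while the common remainder of the corollary is $r=T$. The passage in which you argue that $r_i-r_j$ is divisible by $G$ at $z_i$ has no basis: the $f_i$ are unrelated germs at distinct points.

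Second, and more seriously, your justification of surjectivity of $\cO_{B,b}[T]/(G)\to\prod_i\cO_{X,z_i}/(G)$ — ``injective with the right rank'' — is not valid: $\cO_{B,b}$ is a local ring which may be a discrete valuation ring, and an injective map between free modules of the same finite rank over such a ring need not be surjective. Moreover, surjectivity of this map \emph{is} the existence statement of the corollary, so invoking it without proof is circular. The missing ingredient is the one you relegate to ``bookkeeping'' at the end: by hens\'elianit\'e de $\kappa(b)$ (\cite[corollaire~5.4]{EtudeLocale}, i.e.\ the condition $(D_G)$ mentioned in the paper's footnote), $G$ factors in $\cO_{B,b}[T]$ as $G=\prod_j H_j$ with $H_j$ monic, $H_j(b)=\mu_{z_j}^{n_j}$. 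Then for $j\ne i$ the factor $H_j$ is invertible in $\cO_{X,z_i}$, so $(G)=(H_i)$ there; single-point division by $H_i$ identifies $\cO_{X,z_i}/(G)$ with $\cO_{B,b}[T]/(H_i)$; and since the $H_j$ are pairwise comaximal (their reductions modulo $\m_b$ are coprime, and one concludes by Nakayama), the Chinese remainder theorem gives $\cO_{B,b}[T]/(G)\simeq\prod_j\cO_{B,b}[T]/(H_j)$. Both injectivity and surjectivity of your comparison map, hence existence and uniqueness of $(r,q_1,\dotsc,q_t)$, follow from this; without the factorization neither is established by your argument. (Your verification that decency of $b$ supplies the hypotheses of Th\'eor\`eme~\ref{weierstrass} at every $z_i$ is correct.)
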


On d\'emontrera plus loin une version raffin\'ee de cet \'enonc\'e permettant de contr\^oler les normes du reste et du quotient (\cf~th\'eor\`eme~\ref{weierstrassam} et corollaire~\ref{cor:divisionGnormes}).

\medbreak

Le th\'eor\`eme de division de Weierstra\ss{} a de nombreuses cons\'equences importantes, par exemple pour l'\'etude des morphismes finis. En voici une. Soit $P(S) \in \cA[S]$ unitaire non constant. Le morphisme de $\cA$-alg\`ebres naturel 
\[\cA[T] \too \cA[T,S]/(P(S) -T) \simtoo \cA[S]\]
induit un morphisme d'espaces localement annel\'es $\varphi_{P} \colon \AunA \to \AunA$, o\`u nous notons~$S$ (resp.~$T$) la coordonn\'ee sur l'espace de d\'epart (resp. d'arriv\'ee). Soit~$V$ une partie de~$B$. Soient $s,t\in\R$ tels que $0=s<t$ ou $0<s\le t$. Nous avons alors $\varphi_{P}^{-1}(\overline{C}_{V}(s,t)) = \overline{C}_{V}(P,s,t)$.

\begin{theo}[\protect{\cite[th\'eor\`eme~8.8]{EtudeLocale}}]\label{thm:isolemniscate}\index{Domaine polynomial!algebre@alg\`ebre d'un|(}
Dans la situation pr\'ec\'edente, si $V$~est d\'ecente, alors le morphisme de $\cA$-alg\`ebres naturel
\[\cO(\overline{C}_{V}(s,t))[S]/(P(S)-T) \too \cO(\overline{C}_{V}(P,s,t))\]
est un isomorphisme.
\qed
\end{theo}

On en d\'eduit une version du corollaire~\ref{cor:couronneglobale} pour les domaines polynomiaux.

\begin{coro}\label{cor:lemniscateglobale}
Dans la situation pr\'ec\'edente, si~$V$ est ultram\'etrique et d\'ecente, alors, le morphisme de $\cA$-alg\`ebres naturel
\[\colim_{W\supset V,u\prec s \le t < v} \cO(W)\la u\le |T|\le v\ra[S]/(P(S)-T) \too \cO(\overline{C}_{V}(P,s,t)),\]
o\`u~$W$ d\'ecrit l'ensemble des voisinages compacts de~$V$ dans~$B$ sur lesquels les coefficients de~$P$ sont d\'efinis, est un isomorphisme.
\qed
\end{coro}

On va maintenant modifier l'\'enonc\'e pr\'ec\'edent de fa\c{c}on \`a faire intervenir des anneaux de Banach dans la limite inductive, ce qui se r\'ev\`elera utile par la suite.

\begin{lemm}\label{lem:isoBVcompacts}\index{Fonction!surconvergente}
Soient~$V$ une partie compacte de~$B$ et~$\cV$ une base de voisinages compacts de~$V$ dans~$B$. Alors, pour tout $W\in\cV$, le morphisme de restriction induit un morphisme de $\cA$-alg\`ebres
\[\varphi_{W} \colon \overline{\cO(W)} \too \cO(V).\]
Le morphisme de $\cA$-alg\`ebres naturel
\[\colim_{W\in \cV} \overline{\cO(W)} \too \cO(V)\]
qui s'en d\'eduit est un isomorphisme. 
\end{lemm}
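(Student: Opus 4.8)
The plan is the following. I would first construct the maps $\varphi_W$, then identify $\bar\varphi := \colim_{W\in\mathcal V}\varphi_W$ as an isomorphism by exhibiting an explicit inverse.

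Fix $W\in\mathcal V$ and let $\xi\in\overline{\mathcal O(W)}$ be represented by a sequence $(f_n)_{n\ge 0}$ in $\mathcal O(W)$ that is Cauchy for $\nm_W$. For every $x$ in the interior $\mathring W$ (which contains $V$, since $W$ is a neighbourhood of $V$) the inequality $|f_{n+1}(x)-f_n(x)|\le\|f_{n+1}-f_n\|_W$ shows that $(f_n(x))_n$ is a Cauchy sequence in the complete field $\cH(x)$; hence $g(x):=\lim_n f_n(x)\in\cH(x)$ is well defined and $f_n\to g$ uniformly on $\mathring W$. If $(f_n)$ and $(f'_n)$ represent the same class $\xi$, then $|f_n(x)-f'_n(x)|\le\|f_n-f'_n\|_W\to 0$ for each $x$, so $g$ does not depend on the chosen representative. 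Once we know that $g$ is a section of $\cO_X$ on the open set $\mathring W$, we may set $\varphi_W(\xi):=$ the image of $g$ in $\cO(V)=\colim_{U\supset V}\cO(U)$. These maps are visibly compatible with the restriction maps $\overline{\mathcal O(W)}\to\overline{\mathcal O(W')}$ attached to inclusions $W'\subset W$ in $\mathcal V$, so they assemble into a morphism $\bar\varphi\colon\colim_{W\in\mathcal V}\overline{\mathcal O(W)}\to\cO(V)$.

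The delicate point is thus to check that the uniform limit $g$ of the $f_n$ is again a section of $\cO_X$ on $\mathring W$, i.e. that it is, locally, a uniform limit of rational functions without poles. Here I would use the local description of $X=\AunA$: by Propositions~\ref{prop:basevoisdim1rigide} and~\ref{prop:basevoisdim1}, every point $x\in\mathring W$ has a compact neighbourhood $K\subset\mathring W$ of the form $\overline D_{V_0}(P,s)$ or $\overline C_{V_0}(P,r,s)$ with $V_0$ a spectrally convex compact of $B$. By Proposition~\ref{prop:disqueglobal} (together with Theorem~\ref{thm:isolemniscate} and Corollary~\ref{cor:lemniscateglobale} to reduce a general polynomial domain to a disc or a crown through the finite morphism $\varphi_P$), any overconvergent function on such a $K$, and in particular each $f_n|_K$, is a uniform limit on $K$ of polynomials (resp. Laurent polynomials) in the relevant coordinate with coefficients overconvergent on the spectrally convex base $V_0$, hence of elements of $\cK(K)$. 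The essential gain over the naive approach is that the single neighbourhood $K$ works for all $n$ at once; therefore $g|_K$, being a uniform limit on $K$ of the $f_n|_K$, is itself a uniform limit of elements of $\cK(K)$ by a diagonal argument. This proves $g\in\cO_X(\mathring W)$, and hence that $\varphi_W$ is well defined.

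It remains to see that $\bar\varphi$ is bijective, which I would do by writing down its inverse. For $h\in\cO(U)$ with $U\supset V$ open, choose $W\in\mathcal V$ with $W\subset U$; sending $[h]$ to the class of the image of $h_{|W}$ in $\overline{\mathcal O(W)}$ defines a morphism $\Psi\colon\cO(V)\to\colim_{W\in\mathcal V}\overline{\mathcal O(W)}$, independence of the choice of $W$ following from the filteredness of $\mathcal V$. The morphism $\Psi$ is injective: if $h_{|W}$ is $\nm_W$-null then $h$ vanishes on $W$, hence on the open neighbourhood $\mathring W$ of $V$, so $[h]=0$ in $\cO(V)$. Moreover $\bar\varphi\circ\Psi=\id$ is clear (take constant sequences: $\varphi_W(h_{|W})=[h_{|\mathring W}]=[h]$), and $\Psi\circ\bar\varphi=\id$ follows from the construction: with the notation above, $f_n\to g$ uniformly on $\mathring W$, hence on every $W'\in\mathcal V$ contained in $\mathring W$, which says precisely that $\xi$ and $g_{|W'}$ have the same class in $\overline{\mathcal O(W')}$, and therefore in the colimit. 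I expect the second paragraph — establishing that the pointwise limit $g$ is a genuine section of the structure sheaf, with local approximating domains chosen uniformly in $n$ — to be the real content; the rest is formal bookkeeping with colimits and separated completions.
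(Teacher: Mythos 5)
Your overall architecture --- define $\varphi_{W}$ by taking the pointwise limit $g$ of a $\nm_{W}$-Cauchy sequence on $\mathring{W}$, check that $g$ is a section of the structure sheaf there, then invert the colimit map by restriction --- is exactly the route the paper takes; the paper's own proof merely asserts that well-definedness of $\varphi_{W}$ follows from the definition of the structure sheaf (sections are locally uniform limits of elements of $\cK(\cdot)$) and calls the rest immediate. You are also right to isolate the one non-formal point: producing, for each $x\in\mathring{W}$, a \emph{single} compact neighbourhood $K$ on which \emph{every} $f_{n}$ is uniformly approximable by elements of $\cK(K)$, so that a diagonal argument applies to $g$. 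The second half of your argument (construction of $\Psi$, injectivity because $\nm_{W}$-nullity forces vanishing on $\mathring{W}$, and the two composite identities) is correct and, as you say, purely formal.

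The resolution you propose for the key step does not close, however. What Proposition~\ref{prop:disqueglobal} (and its variants for crowns and polynomial domains via Theorem~\ref{thm:isolemniscate} and Corollary~\ref{cor:lemniscateglobale}) gives you is that $f_{n|K}$ is a uniform limit of polynomials $\sum_{i} a_{i}\,T^{i}$ whose coefficients $a_{i}$ lie in $\cO(W_{0})$ for $W_{0}$ a compact neighbourhood of $V_{0}$ in the base $B$ --- that is, the coefficients are overconvergent sections on the base, not elements of $S_{V_{0}}^{-1}\cA$. Such a polynomial is therefore not an element of $\cK(K)=S_{K}^{-1}\cA[T]$, and the step ``hence of elements of $\cK(K)$'' is a non sequitur. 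To repair it you would need to know that overconvergent functions on a suitable compact of $\cM(\cA)$ are themselves uniform limits of elements of $\cK$ of that compact; but this is the same approximation statement one dimension down, where the very difficulty you are trying to circumvent (which compact neighbourhood works for all terms of the approximating sequence at once) recurs, and it is established nowhere in your argument. A secondary point: Propositions~\ref{prop:basevoisdim1rigide} and~\ref{prop:basevoisdim1} and Theorem~\ref{thm:isolemniscate} carry decency (or separability/typicality) hypotheses that the lemma does not assume, so even granting the approximation step your proof would cover a smaller class of compacts than the statement claims.
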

\begin{proof}
Par d\'efinition, les sections du faisceau structural sur un ouvert~$U$ sont localement des limites uniformes de fractions rationnelles sans p\^oles. On en d\'eduit que le morphisme $\varphi_{W} \colon \overline{\cO(W)} \to \cO(V)$ est bien d\'efini. Le second r\'esultat est imm\'ediat.
\end{proof}

\begin{coro}\label{cor:lemniscateglobaleBanach}
Dans la situation pr\'ec\'edente, si~$V$ est ultram\'etrique et d\'ecente, alors, le morphisme de $\cA$-alg\`ebres naturel
\[\colim_{W\supset V,u\prec s \le t < v} \overline{\cO(W)}\la u\le |T|\le v\ra[S]/(P(S)-T) \too \cO(\overline{C}_{V}(P,s,t)),\]
o\`u~$W$ d\'ecrit l'ensemble des voisinages compacts de~$V$ dans~$B$ sur lesquels les coefficients de~$P$ sont d\'efinis, est un isomorphisme.
\qed
\end{coro}
\index{Domaine polynomial!algebre@alg\`ebre d'un|)}

\subsection{Changement de variables}
\label{comparaison_entre_les_normes}

Dans cette section, nous d\'emontrons un r\'esultat de changement de variables tr\`es utile en pratique. \'Etant donn\'e une fonction non nulle sur un espace affine, il permet d'assurer que celle-ci reste non nulle lorsque l'on sp\'ecialise toutes les variables sauf une, ce qui permet notamment d'appliquer le th\'eor\`eme de division de Weierstra\ss~\ref{weierstrass}. Dans le cadre des espaces sur un corps valu\'e ultram\'etrique complet, on retrouve le fait qu'une fonction non nulle peut \^etre transform\'ee en une fonction distingu\'ee par rapport \`a une variable, par le biais d'un automorphisme (\cf~\cite[proposition~5.2.4/1]{BGR}). Notre r\'esultat figure d\'ej\`a dans \cite[lemme~9.15]{EtudeLocale}, mais l'auteur n'en a r\'edig\'e qu'une preuve succinte. Nous en proposons ici une d\'emonstration compl\`ete et diff\'erente.

\medbreak

Pour $n\in \N$, notons $T_{1},\dotsc,T_{n}$ les coordonn\'ees de~$\E{n}{\cA}$. Pour $\bu = (u_1,\dotsc,u_{n-1}) \in(\N^\ast)^{n-1}$, notons $\psi_{\bu} \colon \E{n}{\cA} \to \E{n}{\cA}$ l'automorphisme d'espaces localement annel\'es induit par le changement de variables 
\[\left\lbrace
\begin{array}{rcl}
T_1 &\longmapsto& T_1+T_n^{u_1}~;\\
&\vdots&\\
T_{n-1}&\longmapsto &T_{n-1}+T_n^{u_{n-1}}~;\\
T_n&\longmapsto& T_n.
\end{array}
\right.\]

\begin{lemm}\label{changement_variable}\index{Changement de variables}
Soient~$b$ un point de~$B$. Soient~$n\in\N$ et $x$ un point rigide de~$\pi_{n}^{-1}(b)$. Soit~$f$ un \'el\'ement de~$\cO_{\E{n}{\cA},x}$ dont  la restriction \`a~$\pi_{n}^{-1}(b)$ n'est pas nulle. Alors, il existe $\bu \in (\N^\ast)^{n-1}$ tel que l'image de $\psi_{\bu}^\#(f)$ dans $\pi_{n-1}^{-1}(\pi_{n-1}(\psi_{\bu}^{-1}(x)))$ ne soit pas nulle. 
\end{lemm}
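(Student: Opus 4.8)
The plan is to reduce the statement in two moves — first to the fibre over $b$, then, by completing the local ring at $x$, to an elementary assertion about substitutions in a formal power series ring, namely the analogue for $k[\![ S_1,\dotsc,S_n ]\!]$ of Nagata's change-of-variables trick.

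\textbf{Step 1 (reduction to the fibre).} I would set $k:=\cH(b)$ and identify $\pi_n^{-1}(b)$ with $\E{n}{k}$. Since $\psi_{\bu}$ is defined over $\cA$, it commutes with $\pi_n$, so $z:=\psi_{\bu}^{-1}(x)$ still lies over $b$ and the fibre $\pi_{n,n-1}^{-1}(\pi_{n,n-1}(z))$ is contained in $\pi_n^{-1}(b)=\E{n}{k}$; hence the relevant restriction maps factor through $\cO_{\E{n}{k},z}$, and it suffices to treat the case where $\cA$ is replaced by $k$ and $f$ by its restriction $\bar f\in\cO_{\E{n}{k},x}$, which is nonzero by hypothesis. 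Through the isomorphism $\psi_{\bu}$, the fibre of $\pi_{n,n-1}$ through $z$ is a smooth analytic curve, and its image $\Gamma_{\bu}:=\psi_{\bu}\bigl(\pi_{n,n-1}^{-1}(\pi_{n,n-1}(z))\bigr)$ is a smooth analytic curve through $x$; the claim becomes the existence of $\bu$ for which $\bar f|_{\Gamma_{\bu}}$ is nonzero near $x$. Replacing $k$ by the finite extension $\cH(x)$ — which preserves the hypothesis, the induced morphism on local rings being faithfully flat, and the conclusion, since $\psi_{\bu}$ and $\pi_{n,n-1}$ are compatible with base change — I may assume $x$ is $k$-rational with coordinates $\theta_1,\dotsc,\theta_n\in k$, and then $\Gamma_{\bu}$ has equations $T_i=\theta_i+(T_n^{u_i}-\theta_n^{u_i})$ for $1\le i\le n-1$.

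\textbf{Step 2 (completion).} As $x$ is a $k$-rational rigid point, the regularity and noetherianity results of~\cite{EtudeLocale} give that $\cO_{\E{n}{k},x}$ is regular of dimension $n$, with maximal ideal generated by $S_i:=T_i-\theta_i$ and completion $k[\![ S_1,\dotsc,S_n ]\!]$, while $\cO_{\Gamma_{\bu},x}$ is a discrete valuation ring with completion $k[\![ S_n ]\!]$, the restriction map becoming after completion the substitution $S_i\mapsto (S_n+\theta_n)^{u_i}-\theta_n^{u_i}$ ($i<n$), $S_n\mapsto S_n$. Since a noetherian local ring injects into its completion, I am reduced to the purely algebraic assertion: for every nonzero $g\in k[\![ S_1,\dotsc,S_n ]\!]$ and every $a\in k$, there is $\bu\in(\N_{\ge 1})^{n-1}$ — which may moreover be taken prime to the characteristic when that characteristic is positive — such that $g\bigl((S_n+a)^{u_1}-a^{u_1},\dotsc,(S_n+a)^{u_{n-1}}-a^{u_{n-1}},S_n\bigr)\ne 0$ in $k[\![ S_n ]\!]$. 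When $a=0$ this substitution is $S_i\mapsto S_n^{u_i}$, and I would argue by induction on $n$ (the case $n=1$ being vacuous): writing $g=\sum_{j\ge 0}S_{n-1}^{j}g_j$ with $g_j\in k[\![ S_1,\dotsc,S_{n-2},S_n ]\!]$, $j_0$ minimal with $g_{j_0}\ne 0$ and $\ell_0$ the $S_n$-order of $g_{j_0}$, for any integer $u_{n-1}>\ell_0$ the coefficient of $S_n^{j_0u_{n-1}+\ell_0}$ in $\sum_j S_n^{ju_{n-1}}g_j$ is exactly the nonzero leading $S_n$-coefficient of $g_{j_0}$, and the resulting nonzero element of $k[\![ S_1,\dotsc,S_{n-2},S_n ]\!]$ is handled by the induction hypothesis. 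When $a\ne 0$ I would substitute all variables at once with the $u_i$ prime to the characteristic, so that $(S_n+a)^{u_i}-a^{u_i}=u_ia^{u_i-1}S_n+O(S_n^{2})$ with $u_ia^{u_i-1}\ne 0$; the lowest-degree term of the substituted series is then $g_{d_0}\bigl(u_1a^{u_1-1},\dotsc,u_{n-1}a^{u_{n-1}-1},1\bigr)\,S_n^{d_0}$, where $d_0$ is the order of $g$ and $g_{d_0}$ its lowest homogeneous part, so it suffices to choose $\bu$ making that coefficient nonzero; since $G(S_1,\dotsc,S_{n-1}):=g_{d_0}(S_1,\dotsc,S_{n-1},1)$ is a nonzero polynomial and $u\mapsto ua^{u-1}$ has infinite image, one fixes $u_1,\dotsc,u_{n-1}$ successively, each step excluding only finitely many values.

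\textbf{Main obstacle.} The delicate point is the case $a=0$: a naive form of Nagata's trick substituting all variables simultaneously can fail, because the exponents $\sum_{i<n}u_i\alpha_i+\alpha_n$ attached to distinct monomials of $g$ may coincide and their coefficients cancel, and for a power series with infinite support one cannot in general separate all these exponents by a single choice of $\bu$. Handling one variable at a time, with a single sufficiently large exponent at each stage, avoids this cleanly. The remaining care is the bookkeeping in Step~1, namely checking that the reduction to the fibre and the base change to $\cH(x)$ are harmless, which comes down to the injectivity (faithful flatness) of the restriction and base-change morphisms on the relevant local rings.
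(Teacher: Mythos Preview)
Your reductions in Step~1 and Step~2 are sound, and your treatment of the case $a=0$ is correct. The gap is in the case $a\ne 0$ in positive characteristic: the claim that $u\mapsto u\,a^{u-1}$ has infinite image is false precisely when $k=\cH(x)$ is a finite field, and this does occur in the setting of the lemma --- for instance when $\cH(b)$ is a trivially valued finite field, as at the point $a_p^{+\infty}$ of $\cM(\Z)$. Take $k=\F_2$, $n=2$, $a=1$ and $g=S_1+S_2$: then $d_0=1$, $G(X_1)=X_1+1$, and for every odd $u_1$ one has $\lambda_1=u_1\cdot 1^{u_1-1}=1$ in $\F_2$, so $G(\lambda_1)=0$ and your leading-term criterion never succeeds. (The substitution with $u_1=3$ in fact yields $(S_2+1)^3-1+S_2=S_2^3+S_2^2\ne 0$, so the assertion you want is true here; it is your method, not the statement, that fails.)

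The simplest repair in characteristic~$p$ is to drop the coprimality restriction and take every $u_i$ to be a power of~$p$: Frobenius gives $(S_n+a)^{p^e}-a^{p^e}=S_n^{p^e}$, so you are thrown back into the case $a=0$, which your one-variable-at-a-time argument handles for arbitrary~$k$. The paper avoids the case distinction altogether. It works over an algebraically closed extension of $\cH(b)$ and, rather than constructing~$\bu$, shows directly that the intersection over all $\bu\in(\N_{\ge1})^{n-1}$ of the ideals $\bigl(S_1-h_{u_1}(S_n),\dotsc,S_{n-1}-h_{u_{n-1}}(S_n)\bigr)$ is zero: by induction on~$n$ this intersection lies in $(S_{n-1}-h_u(S_n))$ for every $u\ge 1$, and these are pairwise non-associate primes in the UFD $K[\![S_1,\dotsc,S_n]\!]$, forcing the intersection to vanish. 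Once patched, your argument has the merit of being constructive; the paper's is shorter and uniform in the characteristic.
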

\begin{proof}
Puisque le r\'esultat ne concerne que la restriction de la fonction~$f$ \`a~$\pi_{n}^{-1}(b)$, nous pouvons remplacer~$\cA$ par~$\cH(b)$, et donc supposer que~$\cA$ est un corps valu\'e complet. Nous le noterons d\'esormais~$K$.

Remarquons qu'il suffit de d\'emontrer le r\'esultat apr\`es extension des scalaires. Nous pouvons donc supposer que~$K$ est alg\'ebriquement clos. Le point~$x$ s'identifie alors \`a un \'el\'ement $(\alpha_{1},\dotsc,\alpha_{n})$ de~$K^n$.

Pour tout $n\in \N$, d\'efinissons maintenant un id\'eal~$I_{n}$ de~$\cO_{\E{n}{K},x}$. Posons $I_{0} := (0)$, $I_{1} := (0)$ et, pour tout $n\ge 2$,
\[I_{n} := \bigcap_{\bu \in \N^{n-1}} \big(T_1-\alpha_1-T_n^{u_{1}}+\alpha_{n}^{u_1},\dotsc,T_{n-1}-\alpha_{n-1}-T_n^{u_{n-1}}  + \alpha_{n}^{u_{n-1}}\big).\]
Rappelons que, d'apr\`es la proposition~\ref{prop:disqueglobal}, l'anneau local~$\cO_{\E{n}{K},x}$ s'identifie \`a un sous-anneau de $K\llbracket T_1-\alpha_1,\dotsc,T_n - \alpha_{n}\rrbracket$.

Nous allons montrer que $I_{n} = (0)$ par r\'ecurrence sur~$n$. Si $n$ vaut~0 ou~1, le r\'esultat vaut par d\'efinition. Soit $n\in \N$ avec $n\ge 2$ et supposons avoir d\'emontr\'e que $I_{n-1} = (0)$. 

Fixons~$u_{n-1}\in\N^\ast$. Puisque $T_{n} - \alpha_{n}$ divise $T_{n}^{u_{n-1}} - \alpha_{n}^{u_{n-1}}$, on a un isomorphisme naturel
\begin{align*} 
& K\llbracket T_1-\alpha_1,\dotsc,T_n-\alpha_n\rrbracket /(T_{n-1}-\alpha_{n-1} - T_{n}^{u_{n-1}} + \alpha_{n}^{u_{n-1}})\\ 
\simeq \ & K\llbracket T_1-\alpha_1,\dotsc,T_{n-2}-\alpha_{n-2},T_{n}-\alpha_{n}\rrbracket.
\end{align*}
L'image de~$I_{n}$ par cet isomorphisme s'identifie \`a~$I_{n-1}$ (en effectuant le changement de variable $T_{n} \mapsto T_{n-1}  - \alpha_{n-1} +\alpha_{n}$), et est donc nulle, par hypoth\`ese de r\'ecurrence. On en d\'eduit que~$I_{n}$ est contenu dans l'id\'eal de $ K\llbracket T_1-\alpha_1,\dotsc,T_n-\alpha_n\rrbracket$ engendr\'e par $T_{n-1}-\alpha_{n-1} - T_{n}^{u_{n-1}} -\alpha_{n}^{u_{n-1}}$.

Puisque le r\'esultat pr\'ec\'edent vaut pour tout $u_{n-1} \in \N^\ast$, que tous les $T_{n-1}-\alpha_{n-1}-T_{n}^{u_{n-1}} -\alpha_{n}^{u_{n-1}}$ sont irr\'eductibles et non associ\'es, et que $K\llbracket T_1-\alpha_1,\dotsc,T_n-\alpha_n\rrbracket$ est factoriel, on en d\'eduit que $I_{n}=0$.

On peut maintenant conclure. Puisque~$f$ n'est pas nulle, il existe $\bu = (u_1,\dotsc,u_{n-1})\in (\N^\ast)^{n-1}$ tel que~$f$ n'appartienne pas \`a l'id\'eal
\[(T_1-\alpha_1- T_n^{u_{1}} + \alpha_{n}^{u_1},\dotsc,T_{n-1}-\alpha_{n-1}- T_{n}^{u_{n-1}} + \alpha_{n}^{u_{n-1}})\]
de $\cO_{\E{n}{K},x}$. On en d\'eduit que $\psi_{\bu}^\sharp(f)$ 
n'appartient pas \`a l'id\'eal
\[(T_1-\alpha_1 + \alpha_{n}^{u_1},\dotsc,T_{n-1}-\alpha_{n-1} + \alpha_{n}^{u_{n-1}})\]
de $\cO_{\E{n}{K},\psi_{\bu}^{-1}(x)}$, ce qui signifie exactement que~$\psi_{\bu}^\sharp(f)$ ne s'annule pas en~$\psi_{\bu}^{-1}(x)$.
\end{proof}

\subsection{Anneaux de Banach de base} Nous introduisons ici une classe d'anneaux de Banach sur lesquels nous travaillerons dans la suite.

Posons $X:= \E{n}{\cA}$. Remarquons que, par d\'efinition du faisceau structural, pour tout point~$x$ de~$X$, on a un isomorphisme 
\[\colim_{V \ni x} \cB(V) \simtoo \cO_{x},\]
o\`u $V$ parcourt l'ensemble des voisinages compacts de~$x$.

\begin{defi}\label{def:B-defini}\index{B-definie@$\cB$-d\'efinie|see{Fonction}}\index{Fonction!B-definie@$\cB$-d\'efinie|textbf}
Soient~$x\in X$ et $V$ un voisinage compact de~$x$. On dit qu'un \'el\'ement~$f$ de~$\cO_{x}$ est \emph{$\cB$-d\'efini} sur~$V$ s'il appartient \`a l'image du morphisme naturel $\cB(V) \to \cO_{x}$.
\end{defi}

%

\begin{defi}\index{Base de voisinages!fine|textbf}
Soient~$T$ un espace topologique, $t$ un point de~$T$ et $\cU$ une base de voisinages de~$t$. On dit que~$\cU$ est \emph{fine} si elle contient une base de voisinages de chacun de ses \'el\'ements.
\end{defi}

\begin{defi}[\protect{\cite[d\'efinition~9.1]{EtudeLocale}}]\index{Ideal@Id\'eal!B-fortement de type fini@$\cB$-fortement de type fini|textbf}\index{Ideal@Id\'eal!$\cB$-syst\`eme de g\'en\'erateurs forts|textbf}
Soient~$x$ un point de~$X$ et~$\cV$ une base de voisinages fine de~$x$ form\'ee d'ensembles compacts et spectralement convexes. On dit qu'un id\'eal~$I$ de~$\cO_{x}$ est \emph{$\cB$-fortement de type fini} relativement \`a~$\cV$ s'il existe~$f_1,\dotsc,f_p$ appartenant \`a~$I$ tels que
\begin{enumerate}[i)]
\item pour tous~$V\in\cV$ et~$i\in \{1,\dotsc,p\}$, $f_i$ est~$\cB$-d\'efini sur~$V$.
\item pour tout voisinage compact~$U$ de~$x$, il existe une famille $(K_{V,U})_{V\in\cV}$ de~$\R_{>0}$ telle que, pour tout \'el\'ement~$f$ de~$I$ qui est $\cB$-d\'efini sur~$U$ et tout \'el\'ement~$V$ de~$\cV$ contenu dans~$\mathring U$, il existe des \'el\'ements~$a_1,\ldots,a_p$ de~$\cB(V)$ tels que 
\[\begin{cases}
f=a_1f_1+\cdots+a_pf_p \textrm{ dans }\cB(V)~;\\
\forall i\in \{1,\dotsc,p\}, \|a_i\|_V\leq K_{V,U}\, \|f\|_U.
\end{cases}\]
\end{enumerate}
Une famille~$(f_1,\ldots,f_p)$ v\'erifiant les propri\'et\'es pr\'ec\'edentes est appel\'ee \emph{$\cB$-syst\`eme de g\'en\'erateurs fort} de l'id\'eal~$I$ relativement \`a~$\cV$. On dit \'egalement qu'elle \emph{engendre $\cB$-fortement} l'id\'eal~$I$ relativement \`a~$\cV$.
\end{defi}

\begin{defi}[\protect{\cite[d\'efinition~9.3]{EtudeLocale}}]\index{Anneau!fortement regulier@fortement r\'egulier|textbf}\index{Anneau!fortement de valuation discrete@fortement de valuation discr\`ete|textbf}\index{Corps!fort|textbf}
Soient~$x$ un point de~$X$ et~$\cV$ une base de voisinages fine de~$x$ form\'ee d'ensembles compacts et spectralement convexes. Soit $d\in \N$. On dit que l'anneau local~$\cO_{x}$ est \emph{fortement r\'egulier} de dimension~$d$ relativement \`a~$\cV$ si 
\begin{enumerate}[i)]
\item $\cO_{x}$ est noeth\'erien de dimension de Krull~$d$~;
\item il existe des \'el\'ements $f_1,\dotsc,f_d$ de~$\m_x$ tels que $(f_1,\ldots,f_d)$ engendre~$\cB$-fortement l'id\'eal~$\m_x$ relativement \`a~$\cV$.
\end{enumerate}

On dit que l'anneau local~$\Oc_{x}$ est un \emph{corps fort} (resp. \emph{fortement de valuation discr\`ete}) s'il est fortement r\'egulier de dimension~0 (resp.~1).
\end{defi}

\begin{rema}\label{rem:corpsfortprolan}
La propri\'et\'e d'\^etre un corps fort s'apparente \`a une propri\'et\'e de prolongement analytique. 
La d\'efinition requiert qu'une fonction d\'efinie sur~$U$ et nulle au voisinage de~$x$ soit nulle sur tout voisinage de~$x$ appartenant \`a~$\cV$ et inclus dans~$\mathring U$. 

R\'eciproquement, supposons que $X$ satisfait le principe du prolongement analytique (au sens o\`u une fonction nulle au voisinage d'un point l'est encore sur tout ouvert connexe contenant ce point) et que tous les \'el\'ements de~$\cV$ sont connexes. Alors le fait que l'anneau~$\cO_{x}$ soit un corps implique que c'est un corps fort pour la base de voisinages~$\cV$.

Nous reviendrons plus pr\'ecis\'ement sur la notion de prolongement analytique \`a la d\'efinition~\ref{def:prolongementanalytique} et les \'enonc\'es qui la suivent.

\end{rema}

\begin{defi}[\protect{\cite[d\'efinition~9.5]{EtudeLocale}}]\label{def:basique}\index{Anneau!de base|textbf}\index{Anneau!basique|see{de base}}
On dit que l'anneau de Banach~$\cA$ est \emph{de base} 
ou \emph{basique} 
si tout point~$b$ de $B = \cM(\cA)$ poss\`ede une base fine de voisinages compacts et spectralement convexes~$\cV_b$ satisfaisant les propri\'et\'es suivantes~:
\begin{enumerate}[i)]
\item si $\cH(b)$ est de caract\'eristique non nulle et trivialement valu\'e, alors tout \'el\'ement de~$\cV_{b}$ est contenu dans $B_{\um}$ et poss\`ede un bord analytique fini~;
\item l'anneau local~$\cO_{B,b}$ est un corps fort ou un anneau fortement de valuation discr\`ete relativement \`a~$\cV_{b}$.
\end{enumerate}
La condition~i) entra\^ine que tout point de~$B$ est d\'ecent.
\end{defi}

\begin{exem}\label{ex:basique}\index{Anneau!de base}
\index{Corps!valu\'e}\index{Corps!hybride}\index{Anneau!des entiers relatifs $\Z$}\index{Anneau!des entiers d'un corps de nombres}\index{Anneau!de valuation discr\`ete}\index{Anneau!de Dedekind trivialement valu\'e}
Les exemples \ref{ex:corpsvalue} \`a~\ref{ex:Dedekind} donn\'es plus haut~: les corps valu\'es complets, l'anneau~$\Z$ et les anneaux d'entiers de corps de nombres, les corps hybrides, les anneaux de valuation discr\`ete, les anneaux de Dedekind trivialement valu\'es sont tous des anneaux de base. On renvoie \`a \cite[\S 3.1]{A1Z} pour des d\'etails dans le cas des anneaux d'entiers de corps de nombres.
\end{exem}

\subsection{Anneaux locaux et faisceau structural}

Dans cette section, nous \'enon\c cons les principaux r\'esultats obtenus dans~\cite{EtudeLocale}. Posons $X:= \E{n}{\cA}$, avec coordonn\'ees $T_{1},\dotsc,T_{n}$.

\subsubsection{Types de points}

Il sera utile de distinguer diff\'erents types de points dans les espaces affines.

\begin{defi}\index{Point!rigide epais@rigide \'epais|textbf}\index{Point!rigide epais@rigide \'epais|(}\index{Point!localement transcendant|textbf}\index{Point!purement localement transcendant|textbf}
Soient $b$ un point de~$B$ et $x$ un point de~$X$ au-dessus de~$b$. On dit que le point~$x$ est \emph{rigide \'epais} si $T_1(x),\ldots,T_n(x)$ sont alg\'ebriques sur~$\kappa(b)$. Dans le cas contraire, on dit que le point~$x$ est \emph{localement transcendant}. 

On dit que le point~$x$ est \emph{purement localement transcendant} si, pour tout $i \in \{1,\dotsc,n\}$, $\pi_{n,i}(x)$ est localement transcendant au-dessus de $\pi_{i,i-1}(\pi_{n,i}(x)) = \pi_{n,i-1}(x)$.
\end{defi}

Dans~\cite[d\'efinition~8.1]{EtudeLocale}, un point est dit rigide \'epais lorsque~$\kappa(x)$ est une extension finie de~$\kappa(b)$, mais c'est en r\'ealit\'e la condition que nous avons \'enonc\'ee qui est utilis\'ee (sous la forme de l'existence d'un polyn\^ome non nul \`a coefficients dans~$\cO_{b}$ dont l'image dans $\kappa(x)$ est nulle). 

\begin{prop}\label{rigide_\'epais}
Soient $b$ un point d\'ecent de~$B$ et $x$ un point de~$X$ au-dessus de~$b$. Alors, $x$ est rigide \'epais au-dessus de~$b$ si, et seulement si, $\kappa(x)$ est une extension finie de~$\kappa(b)$.

En particulier, la notion de point rigide \'epais est ind\'ependante du choix des coordonn\'ees $T_{1},\dotsc,T_{n}$.

Soit $m\in \cn{0}{n}$. Alors $x$ est rigide \'epais au-dessus de~$b$ si, et seulement si, $x$ est rigide \'epais au-dessus de~$\pi_{n,m}(x)$ et $\pi_{n,m}(x)$~est rigide \'epais au-dessus de~$b$.
\end{prop}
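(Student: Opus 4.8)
The plan is to prove the three assertions in order, reducing everything to the one-variable situation via the fibration results of Section~\ref{sec:spconvexe}, and to isolate the characterization in terms of $\kappa(x)$ as the key technical point. First I would prove the equivalence ``$x$ rigide \'epais $\iff$ $\kappa(x)/\kappa(b)$ finie''. The implication $\Leftarrow$ is immediate: if $\kappa(x)/\kappa(b)$ is finite, then each $T_i(x)$, being an element of $\kappa(x)$, is algebraic over $\kappa(b)$. For the converse, suppose $T_1(x),\dotsc,T_n(x)$ are algebraic over $\kappa(b)$. One proceeds by induction on $n$, using the isomorphism $\pi_{n,n-1}^{-1}(V) \simeq \E{1}{\cB(V)}$ over a spectrally convex compact neighbourhood $V$ of $b' := \pi_{n,n-1}(x)$ (Remark~\ref{spectralement_conv}): this lets us replace $\cA$ by $\cB(V)$ and treat $x$ as a point of $\E{1}{\cB(V)}$ over $b'$, whose local ring is $\cO_{X,x}$. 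By induction $\kappa(b')/\kappa(b)$ is finite, so it remains to show that if $T_n(x)$ is algebraic over $\kappa(b') \subset \cH(b')$ then $\kappa(x)/\kappa(b')$ is finite. Here I would invoke the decency of $b$: by Proposition~\ref{prop:typiqueAn}, $b'$ is again decent, hence so is the point $b'$ viewed in the base $\cM(\cB(V))$, and by Proposition~\ref{rigide_\'epais} applied one variable at a time (or directly) the point $x$ lying over $b'$ with $T_n(x)$ algebraic is a \emph{rigid} point of the fibre $X_{b'} \simeq \E{1}{\cH(b')}$ — more precisely $\cH(x)/\cH(b')$ is finite. The passage from $\cH(x)/\cH(b')$ finite to $\kappa(x)/\kappa(b')$ finite uses that $\kappa(b')$ is dense in $\cH(b')$, that $\kappa(x)$ is dense in $\cH(x)$, and that the minimal polynomial $\mu_x \in \cH(b')[T]$ can be taken (after the change of variables of Lemma~\ref{changement_variable} if necessary, and by the density of $\kappa(b')$) to have coefficients in a finite extension of $\kappa(b')$; the decency hypothesis is exactly what guarantees such an algebraic element cannot be ``spread out'' transcendentally. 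Concretely one shows $\kappa(x) = \kappa(b')[T_n(x)]$, a finite extension.

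The main obstacle I anticipate is precisely this last point: showing that ``$T_n(x)$ algebraic over $\kappa(b')$'' forces $\kappa(x)/\kappa(b')$ to be finite rather than merely an algebraic (possibly infinite, if the local ring were pathological) extension, and doing so uniformly over the base using only decency. The remark following the statement in the excerpt signals that the original reference~\cite[d\'efinition~8.1]{EtudeLocale} defined rigid-thickness via $\kappa(x)/\kappa(b)$ finite directly, while what is \emph{used} there is the existence of a nonzero polynomial over $\cO_b$ vanishing in $\kappa(x)$; so the real content is reconciling these, and the tool for that is the Weierstra\ss{} division theorem~\ref{weierstrass} (which applies because $b$, hence $b'$, is decent, and $\mu_x$ can be arranged separable or $b'$ ultram\'etrique typique via Proposition~\ref{prop:basevoisdim1rigide}). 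The local ring $\cO_{X_{b'},x}$ is then a discrete valuation ring with uniformizer $\mu_x$ of finite degree, and $\kappa(x) = \cO_{X_{b'},x}/\m \cdot$ (the residue field upstairs) is finite over $\kappa(b')$.

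For the second assertion — independence of the choice of coordinates — I would argue that the property ``$\kappa(x)/\kappa(b)$ finite'' is manifestly intrinsic, depending only on the local ring $\cO_{X,x}$ and the map $\cO_{B,b} \to \cO_{X,x}$, not on any choice of $T_1,\dotsc,T_n$; since by the first assertion this property is equivalent to rigid-thickness, the latter is coordinate-independent as well. For the third assertion — transitivity along $\pi_{n,m}$ — I would again use the first assertion to translate everything into finiteness of residue-field extensions: writing $b'' := \pi_{n,m}(x)$, the tower $\kappa(b) \subset \kappa(b'') \subset \kappa(x)$ gives that $\kappa(x)/\kappa(b)$ is finite iff both $\kappa(x)/\kappa(b'')$ and $\kappa(b'')/\kappa(b)$ are finite (the ``only if'' being the standard fact that a subextension of a finite extension is finite, for which one needs $\kappa(b'')$ to sit between them, which it does). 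The one subtlety is that the equivalence of the first assertion must be applicable to the point $x$ over $b''$ and to $b''$ over $b$: this is fine because $b''$ is decent (it lies over the decent point $b$ and decency propagates downward trivially, or rather: $b''\in\E{m}{\cA}$ lies over $b\in B$, and $x$ lies over $b''$, so Proposition~\ref{prop:typiqueAn} and Proposition~\ref{rigide_\'epais} for $m$ variables close the loop). This part is essentially bookkeeping once the first assertion is in hand.
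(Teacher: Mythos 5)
Your overall route is the paper's: the implication ``$\kappa(x)/\kappa(b)$ finie $\Rightarrow$ rigide \'epais'' is immediate, the converse is reduced to $n=1$ by induction via $\pi_{n,n-1}^{-1}(V)\simeq\E{1}{\cB(V)}$, and the one-variable case is settled by the th\'eor\`eme de division de Weierstra\ss~\ref{weierstrass} (whose hypotheses are met because $b$, hence $b'$, is d\'ecent); the second and third assertions then follow by intrinsicness of $\kappa(x)/\kappa(b)$ and the tower $\kappa(b)\subset\kappa(\pi_{n,m}(x))\subset\kappa(x)$, exactly as you say.

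One passage of your key step, however, is not a valid argument as written. The claim that one passes from ``$\cH(x)/\cH(b')$ finie'' to ``$\kappa(x)/\kappa(b')$ finie'' using the density of $\kappa(b')$ dans $\cH(b')$ et de $\kappa(x)$ dans $\cH(x)$ proves nothing: a dense subfield of a finite extension can perfectly well be of infinite degree over a dense subfield of the base, and this is precisely the difficulty the proposition addresses. Relatedly, the identification $\kappa(x)=\cO_{X_{b'},x}/\m$ is incorrect: that quotient is the residue field of $x$ \emph{dans la fibre}, an extension of $\cH(b')$, whereas $\kappa(x)$ is $\cO_{\E{n}{\cA},x}/\m_{x}$. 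The correct mechanism, which is what the paper does and which you do name, bypasses $\cH$ entirely: lift a nonzero polynomial of $\kappa(b')[T]$ annihilating $T(x)$ to $G\in\cO_{b'}[T]$ (its image in $\cO_{X_{b'},x}$ is nonzero since its reduction in $\cH(b')[T]$ is), then divide an \emph{arbitrary} germ $F\in\cO_{X,x}$ by $G$ to get $F=QG+R$ with $R\in\cO_{b'}[T]$ of bounded degree; reducing modulo $\m_{x}$ and using $G(x)=0$ in $\kappa(x)$ shows that $\kappa(x)$ is spanned over $\kappa(b')$ by finitely many powers of $T(x)$. If you replace the density detour by this two-line computation, your proof coincides with the paper's.
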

\begin{proof}
Si $\kappa(x)/\kappa(b)$ est finie, alors $x$ est rigide \'epais. R\'eciproquement, supposons que~$x$ est rigide \'epais. Le cas $n=0$ est trivial et une r\'ecurrence permet ensuite de se ramener au cas $n=1$. Posons $T := T_{1}$. Par hypoth\`ese, il existe un polyn\^ome $G \in \cO_{b}[T]$ non nul dont l'image dans~$\kappa(x)$ est nulle. Le th\'eor\`eme de division de Weierstra\ss~\ref{weierstrass} assure alors que~$\kappa(x)$ est engendr\'e, en tant que $\kappa(b)$-espace vectoriel, par un nombre fini de puissances de~$T(x)$. Le r\'esultat s'ensuit.
\end{proof}

\begin{rema}\index{Point!purement localement transcendant}
L'analogue de la proposition~\ref{rigide_\'epais} tombe en d\'efaut pour les points localement transcendants, m\^eme dans le cas o\`u l'anneau de base~$\cA$ est un corps valu\'e. On renvoie \`a \cite[\S 5.2.7]{TemkinTranscendence} pour un contre-exemple d\^u \`a M.~Temkin, inspir\'e par un r\'esultat de M.~Matignon et M.~Reversat dans~\cite{MatignonReversatSousCorpsFermes}.
\end{rema}

\begin{rema}\label{rem:rigeptrans}\index{Point!rigide epais@rigide \'epais}\index{Point!purement localement transcendant}
Soient $b$ un point de~$B$ et $x$ un point de~$X$ au-dessus de~$b$. 
Alors, quitte \`a permuter les coordonn\'ees $T_{1},\dotsc,T_{n}$, il existe $m_{x} \in\cn{0}{n}$ tel que les conditions suivantes soient satisfaites~:
\begin{enumerate}[i)]
\item $\pi_{n,m_{x}}(x)$ est purement localement transcendant au-dessus de~$\pi_{n}(x)$~;
\item $x$ est rigide \'epais au-dessus de~$\pi_{n,m_{x}}(x)$.
\end{enumerate}

Pour $m\in \N^\ast$ et $i_{1},\dotsc,i_{m} \in \cn{0}{n}$, notons $x_{i_{1},\dotsc,i_{m}}$ la projection du point~$x$ sur l'espace affine $\E{m}{\cA}$ de coordonn\'ees $T_{i_{1}},\dotsc,T_{i_{m}}$. L'entier $m_x$ peut \^etre d\'efini comme la borne sup\'erieure de l'ensemble des entiers~$m$ pour lesquels il existe $i_{1},\dotsc,i_{m} \in \cn{0}{n}$ tels que 
\begin{itemize}
\item $x_{i_{1}}$ est transcendant au-dessus de~$b$ ; 
\item pour tout $j\in \cn{2}{m}$, $x_{i_{1},\dotsc,i_{j}}$ est transcendant au-dessus de~$x_{i_{1},\dotsc,i_{j-1}}$.
\end{itemize}
\end{rema}

\subsubsection{Points rigides \'epais d'une droite relative}

Dans cette section, nous supposons que $n=1$ et notons $T=T_{1}$. Soit~$b$ un point de~$B$ et $x$ de~$X$ au-dessus de~$b$. 

\begin{nota}
\index{Point!rigide epais@rigide \'epais!polynome minimal@polyn\^ome minimal|textbf}\index{Point!rigide epais@rigide \'epais!polynome minimal@polyn\^ome minimal|(}%
\nomenclature[Jr]{$\mu_{\kappa,x}$}{polyn\^ome minimal \'epais d'un point rigide \'epais~$x$ de $\E{1}{\cA}$ (dans $\kappa(b)[T]$ si $x$ est au-dessus de $b \in \cM(\cA)$)}
Supposons que~$x$ est un point rigide \'epais. On appelle \emph{polyn\^ome minimal \'epais} de~$x$ le polyn\^ome minimal de~$T(x)$ sur~$\kappa(b)$ et on le note $\mu_{\kappa,x}(T)$. 
\end{nota}

Rappelons que l'on note $\mu_{x}(T)$ le polyn\^ome minimal de~$T(x)$ sur~$\cH(b)$ (\cf~d\'efinition~\ref{def:rigidedroite}). Dans~ \cite[d\'efinition~8.2]{EtudeLocale}, l'auteur affirme que $\mu_{\kappa,x} = \mu_{x}$, mais cet \'enonc\'e est incorrect. 
En revanche, on peut d\'emontrer que le polyn\^ome minimal sur~$\kappa(b)$ est une puissance de celui sur~$\cH(b)$.

\begin{lemm}\label{lem:PTp}
Soit~$K$ un corps de caract\'eristique~$p>0$. Soient $P \in K[X]$ un polyn\^ome irr\'eductible s\'eparable et $s\in \N$. Alors il existe un polyn\^ome irr\'eductible $Q \in K[X]$ et $r \in \N$ tels que
\[P(T^{p^s}) = Q(T)^{p^r}.\]
\end{lemm}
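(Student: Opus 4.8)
La stratégie est de passer à la clôture algébrique $\bar K$ de $K$, dans laquelle tout élément possède une unique racine $p^s$-ième, et d'identifier $P(T^{p^s})$ à une puissance du polynôme minimal d'un élément bien choisi. On peut supposer $P$ unitaire, seul cas dont nous aurons besoin puisque les polynômes minimaux auxquels ce lemme sera appliqué le sont. Fixons une racine $\alpha \in \bar K$ de $P$ et notons $\beta \in \bar K$ l'unique élément tel que $\beta^{p^s} = \alpha$, l'unicité provenant de ce que le Frobenius $x \mapsto x^p$ est une bijection de $\bar K$. On prendra pour $Q$ le polynôme minimal de $\beta$ sur $K$, qui est unitaire et irréductible, et on montrera que $P(T^{p^s}) = Q(T)^{p^{s-m}}$, où $p^m$ désigne le degré d'inséparabilité de l'extension $K(\beta)/K$.

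Pour établir cette identité, on calcule les deux membres comme polynômes sur $\bar K$. D'une part, puisque $P$ est séparable et unitaire, on a $P = \prod_{i=1}^d (T - \alpha_i)$ avec $\alpha_1, \dotsc, \alpha_d$ deux à deux distincts ; en notant $\beta_i$ l'unique racine $p^s$-ième de $\alpha_i$ et en utilisant l'égalité $T^{p^s} - \alpha_i = (T - \beta_i)^{p^s}$, valable en caractéristique $p$, on obtient $P(T^{p^s}) = \prod_{i=1}^d (T - \beta_i)^{p^s} = \bigl(\prod_{i=1}^d (T - \beta_i)\bigr)^{p^s}$, les $\beta_i$ restant deux à deux distincts. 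D'autre part, les racines distinctes de $Q$ sont exactement les conjugués de $\beta$ sous $\mathrm{Aut}(\bar K/K)$ ; comme les $\alpha_i$ forment une unique orbite sous $\mathrm{Aut}(\bar K/K)$, du fait que $P$ est irréductible, et que l'application $x \mapsto x^{1/p^s}$ est équivariante pour cette action, ces conjugués sont précisément $\beta_1, \dotsc, \beta_d$. De plus, étant irréductible de degré d'inséparabilité $p^m$, le polynôme $Q$ a toutes ses racines de même multiplicité $p^m$, puisqu'on peut l'écrire $Q(T) = Q_0(T^{p^m})$ avec $Q_0$ irréductible séparable. Il vient donc $Q = \prod_{i=1}^d (T - \beta_i)^{p^m}$. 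Enfin $m \le s$ : comme $\beta$ est racine de $P(T^{p^s})$, son polynôme minimal $Q$ divise $P(T^{p^s})$, d'où $p^m d = \deg Q \le \deg P(T^{p^s}) = p^s d$. En combinant ces calculs, $P(T^{p^s}) = \bigl(\prod_i (T - \beta_i)^{p^m}\bigr)^{p^{s-m}} = Q(T)^{p^{s-m}}$, ce qui est la conclusion voulue avec $r := s - m$.

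L'obstacle principal est la description $Q = \prod_i (T - \beta_i)^{p^m}$. Elle repose sur deux faits classiques : en caractéristique $p$, un polynôme irréductible s'écrit $Q_0(T^{p^m})$ avec $Q_0$ séparable, de sorte que toutes ses racines partagent la multiplicité $p^m$, égale au degré d'inséparabilité ; et les racines d'un polynôme irréductible forment une unique orbite sous $\mathrm{Aut}(\bar K/K)$, orbite que l'on transporte ici des $\alpha_i$ aux $\beta_i$ grâce à la bijection réciproque du Frobenius. Tout le reste se ramène à un décompte de degrés, et aucune estimation fine n'intervient ; la seule légère subtilité est la réduction préalable au cas unitaire, inoffensive car les polynômes minimaux auxquels on applique l'énoncé sont unitaires.
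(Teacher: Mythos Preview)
Your proof is correct. Both you and the paper reduce to monic~$P$, factor $P(T^{p^s}) = \prod_i (T-\beta_i)^{p^s}$ over~$\bar K$, and take~$Q$ to be the minimal polynomial of~$\beta_1$; but the arguments then diverge. The paper shows that every monic irreducible factor~$R$ of $P(T^{p^s})$ in~$K[T]$ equals~$Q$ via a divisibility chain: a root~$\beta_i$ of~$R$ makes~$\alpha_i$ a root of $R(T^{p^s})$, so the irreducible~$P$ divides $R(T^{p^s})$, so~$\alpha_1$ is a root, so~$\beta_1$ is a root of~$R$, whence $Q=R$; thus $P(T^{p^s})=Q^m$, and a glance at multiplicities over~$\bar K$ forces~$m$ to be a power of~$p$. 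You instead compute~$Q$ directly over~$\bar K$: its distinct roots are the $\mathrm{Aut}(\bar K/K)$-orbit of~$\beta_1$, which you identify with $\{\beta_1,\dotsc,\beta_d\}$ by transporting the orbit $\{\alpha_1,\dotsc,\alpha_d\}$ through the equivariant inverse Frobenius, and each occurs with multiplicity~$p^m$ from the standard form $Q=Q_0(T^{p^m})$; comparing with $\prod_i(T-\beta_i)^{p^s}$ gives the explicit value $r=s-m$. Your route leans on slightly more background (transitivity of the $\mathrm{Aut}(\bar K/K)$-action on roots, the separable/inseparable decomposition of an irreducible polynomial) but yields the exact exponent, while the paper's divisibility trick is more self-contained. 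Your caveat on the monic reduction is also well placed: the statement can genuinely fail for non-monic~$P$ (take $P=t(X^2+X+1)$ over $\F_2(t)$ with $s=1$, where $P(T^2)=t(T^2+T+1)^2$ and $t\notin K^2$), so the paper's unadorned ``on peut supposer que $P$ est unitaire'' is in fact a restriction that you rightly flag.
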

\begin{proof}
On peut supposer que~$P$ est unitaire. Soit~$\bar K$ une cl\^oture alg\'ebrique de~$K$. Par hypoth\`ese, il existe des \'el\'ements $\alpha_{1},\dotsc,\alpha_{d} \in \bar K$, deux \`a deux distincts, tels que l'on ait 
\[P(T) = \prod_{i=1}^d (T-\alpha_{i}) \textrm{ dans } \bar K[T].\]
Pour tout $i \in \{1,\dotsc,d\}$, $\alpha_{i}$ poss\`ede une unique racine $p^s$-\`eme dans~$\bar K$. Notons-la~$\alpha_{i}^{1/p^s}$. On a alors
\[P(T^{p^s}) = \prod_{i=1}^d (T^{p^s}-\alpha_{i}) = \prod_{i=1}^d (T-\alpha_{i}^{1/p^s})^{p^s}  \textrm{ dans } \bar K[T].\]
Notons~$Q(T)$ le polyn\^ome minimal de~$\alpha_{1}^{1/p^s}$ sur~$K$. 

Soit~$R(T)$ un facteur irr\'eductible de~$P(T^{p^s})$ dans~$K[T]$. Il existe $i \in \{1,\dotsc,d\}$ tel que $R(\alpha_{i}^{1/p^s}) = 0$. En posant $S(T) = R(T^{p^s})$, on a $S(\alpha_{i}) = 0$. Puisque~$P(T)$ est irr\'eductible, $P(T)$ divise~$S(T)$ et on a donc $S(\alpha_{1})=0$. On en d\'eduit que $R(\alpha_{1}^{1/p^s})=0$, donc que $Q(T)$ divise $R(T)$, puis que $R(T) = Q(T)$.

On a montr\'e que tous les facteurs irr\'eductibles de~$P(T^{p^s})$ sont \'egaux \`a~$Q(T)$. Par cons\'equent, il existe $m\in \N$ tel que $P(T^{p^s}) = Q(T)^m$. En consid\'erant les d\'ecompositions des polyn\^omes dans~$\bar K[T]$, on montre que~$m$ est n\'ecessairement une puissance de~$p$.
\end{proof}

\begin{lemm}\label{lem:polmin}\index{Point!rigide!polynome minimal@polyn\^ome minimal}\index{Point!rigide epais@rigide \'epais!polynome minimal@polyn\^ome minimal}
Supposons que~$x$ est un point rigide \'epais.  Alors il existe $r\in \N$ tel que 
\[\mu_{\kappa,x}(T) = \mu_{x}(T)^{e^r},\]
o\`u $e$ d\'esigne l'exposant caract\'eristique de~$\kappa(b)$.
\end{lemm}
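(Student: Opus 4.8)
The plan is to reduce to a purely separable situation and then read off the exponent from Lemma~\ref{lem:PTp}.

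First, recall that $\kappa(b)$ sits inside $\cH(b)$ as a dense subfield, and that $\alpha := T(x) \in \kappa(x) \subseteq \cH(x)$ is a root of both $\mu_{\kappa,x} \in \kappa(b)[T]$ and $\mu_x \in \cH(b)[T]$; hence $\mu_x$ divides $\mu_{\kappa,x}$ in $\cH(b)[T]$, and $\mu_x$ is the only monic irreducible polynomial of $\cH(b)[T]$ vanishing at $\alpha$. I then reduce to the case where $\mu_{\kappa,x}$ is separable: by the structure theorem for irreducible polynomials over a field of characteristic exponent $e$, write $\mu_{\kappa,x}(T) = g(T^{e^s})$ with $g \in \kappa(b)[T]$ separable and irreducible (with $(s,g) = (0,\mu_{\kappa,x})$ when $e = 1$), and set $\beta := \alpha^{e^s} \in \kappa(x)$. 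Then $\beta$ is algebraic over $\kappa(b)$ with minimal polynomial $g$, the point $x'$ of $X_b := \pi^{-1}(b) \simeq \E{1}{\cH(b)}$ defined by $T(x') = \beta$ is rigide (because $\cH(x') = \cH(b)(\beta)$ is a finite subextension of $\cH(x)/\cH(b)$) and épais, with $\mu_{\kappa,x'} = g$; and its minimal polynomial $\mu_{x'}$ over $\cH(b)$ divides $g$, hence is separable.

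The heart of the matter is to show that $g$ remains irreducible over $\cH(b)$, i.e. $g = \mu_{x'}$. The key input is that $\kappa(b)$ is separably closed in $\cH(b)$, a reflection of the analytic nature of the point $b$ (it is not a formal consequence of density, since e.g. $\mathbf{Q}$ is not separably closed in $\mathbf{Q}_p$; for $b$ a point of an analytic space it can be obtained from a Hensel-type property of $\cO_{B,b}$, and for our purposes one either quotes this or reduces to the basic case where it is available). Granting it, let $N \subseteq \overline{\kappa(b)}$ be the splitting field of $g$ over $\kappa(b)$, a finite Galois extension of $\kappa(b)$ containing $\kappa(b)(\beta)$. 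Every element of $N \cap \cH(b)$ is separable algebraic over $\kappa(b)$, hence lies in $\kappa(b)$, so $N \cap \cH(b) = \kappa(b)$; since $N/\kappa(b)$ is Galois, $N$ and $\cH(b)$ are linearly disjoint over $\kappa(b)$, whence $[\cH(b)(\beta):\cH(b)] = [\kappa(b)(\beta):\kappa(b)] = \deg g$. As $\mu_{x'} \mid g$ in $\cH(b)[T]$ and both are monic, this forces $g = \mu_{x'}$, which is therefore irreducible over $\cH(b)$.

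Finally I reassemble. We have $\mu_{\kappa,x}(T) = g(T^{e^s}) = \mu_{x'}(T^{e^s})$ with $\mu_{x'}$ separable and irreducible over $\cH(b)$. If $e = 1$, then $x' = x$ and $\mu_{\kappa,x} = \mu_{x'} = \mu_x$. If $e = p > 0$, Lemma~\ref{lem:PTp} applied over $\cH(b)$ to the separable irreducible polynomial $P := \mu_{x'}$ and the exponent $s$ yields an irreducible $Q \in \cH(b)[T]$ and $r \in \N$ with $\mu_{x'}(T^{p^s}) = Q(T)^{p^r}$; hence $\mu_{\kappa,x} = Q^{p^r}$, and since $Q(\alpha) = 0$ forces $Q = \mu_x$, we obtain $\mu_{\kappa,x} = \mu_x^{e^r}$. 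The main obstacle is precisely the separable-closedness of $\kappa(b)$ in $\cH(b)$: it genuinely fails for arbitrary dense subfields of complete valued fields, so the argument must use that $\kappa(b)$ is the completed residue field's dense subfield coming from an analytic point; one should also keep an eye on the degenerate case (inseparable $\mu_x$, trivially valued $b$) in which the division theorem~\ref{weierstrass} requires $b$ to be ultramétrique typique, should one instead route the irreducibility step through Weierstraß division.
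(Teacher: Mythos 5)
Your proof is correct and follows essentially the same route as the paper's: write $\mu_{\kappa,x}(T) = P(T^{e^s})$ with $P$ separable irreducible over $\kappa(b)$, transfer the irreducibility of $P$ to $\cH(b)[T]$, then apply Lemma~\ref{lem:PTp}. The paper handles the middle step by citing that $\kappa(b)$ is hens\'elien (\cite[th\'eor\`eme~5.2]{EtudeLocale}) together with the standard fact (Bourbaki or Berkovich) that a separable irreducible polynomial over a henselian field remains irreducible over its completion --- precisely the Hensel-type input you correctly identify as the crucial non-formal ingredient, which your linear-disjointness argument then reproves from the separable-closedness of $\kappa(b)$ in $\cH(b)$.
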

\begin{proof}
On suit ici la preuve de \cite[corollaire~5.4]{EtudeLocale}.
Il existe un polyn\^ome irr\'eductible et s\'eparable $P \in \kappa(b)[T]$ et un entier $s\in\N$ tels que $\mu_{\kappa,x}(T) = P(T^{e^s})$. 

D'apr\`es~\cite[th\'eor\`eme~5.2]{EtudeLocale}, le corps~$\kappa(b)$ est hens\'elien, donc, d'apr\`es \cite[VI, \S 8, Exercices 14a et 12b]{BourbakiAC57} ou \cite[Proposition~2.4.1]{Ber2}, le polyn\^ome~$P$ reste irr\'eductible dans~$\cH(b)[T]$. D'apr\`es le lemme~\ref{lem:PTp}, il existe un polyn\^ome irr\'eductible $Q \in \cH(b)[T]$ et un entier $r \in \N$ tel que $P(T^{e^s}) = Q(T)^{e^r}$. Le r\'esultat s'ensuit.
\end{proof}

\begin{defi}\label{def:epaisseur}\index{Point!rigide epais@rigide \'epais!epaisseur@\'epaisseur|textbf}
Avec les notations du lemme~\ref{lem:polmin}, on appelle \emph{\'epaisseur du point~$x$} l'entier
\[\delta(x) := e^r \in\N^{\ast},\]
o\`u $e$ d\'esigne l'exposant caract\'eristique de~$\kappa(b)$.
\end{defi}
\index{Point!rigide epais@rigide \'epais!polynome minimal@polyn\^ome minimal|)}
\index{Point!rigide epais@rigide \'epais|)}

Nous pouvons maintenant \'enoncer un th\'eor\`eme de pr\'eparation de Weierstra\ss{} g\'en\'eralisant \cite[th\'eor\`eme~8.6]{EtudeLocale} et valable pour tout point rigide \'epais d'une droite relative.

\begin{theo}[Pr\'eparation de Weierstra\ss]\label{thm:preparationW}\index{Theoreme@Th\'eor\`eme!de pr\'eparation de Weierstra\ss}
Supposons que~$x$ est un point rigide \'epais. Si~$\mu_{x}$ est ins\'eparable et si $b$ est trivialement valu\'e, supposons que~$b$ est ultram\'etrique typique. Soit~$G$ un \'el\'ement de l'anneau local $\cO_{X,x}$. Supposons que son image dans~$\cO_{X_{b},x}$ n'est pas nulle et notons~$n$ sa valuation $\mu_{x}$-adique. Alors $n$ est multiple de~$\delta(x)$ et il existe un unique couple $(\Omega,E) \in \cO_{X,x}^2$ tel que
\begin{enumerate}[i)]
\item $\Omega \in \cO_{B,b}[T]$ est un polyn\^ome unitaire de degr\'e $n \deg(\mu_{x})$ dont l'image $\Omega(b)$ dans $\kappa(b)[T]$ satisfait $\Omega(b) = \mu_{\kappa,x}^{n/\delta(x)}$ ;
\item $E$ est inversible dans $\cO_{X,x}$ ;
\item $G = \Omega E$.
\end{enumerate}
\end{theo}
\begin{proof}
Posons $d := \deg(\mu_{x})$ et $\delta := \delta(x)$. Le polyn\^ome~$\mu_{\kappa,x}$ est alors de degr\'e $d\delta$. Choisissons un relev\'e~$M$ de~$\mu_{\kappa,x}$ unitaire de degr\'e~$d\delta$ dans~$\cO_{B,b}[T]$. Lorsque l'on parlera de valuation, il s'agira de la valuation $\mu_{x}$-adique.

Effectuons la division euclidienne de~$n$ par~$m$~: il existe $a\in \N$ et $b\in \cn{0}{\delta-1}$ tels que $n = a \delta + b$. D'apr\`es le th\'eor\`eme de division de Weierstra\ss{} \ref{weierstrass} appliqu\'e \`a~$G$ et~$M^a$ , il existe $Q \in \cO_{X,x}$ et $R\in \cO_{B,b}[T]$ de degr\'e strictement inf\'erieur \`a~$ad\delta$ tels que $G = QM^a + R$. Puisque le degr\'e de~$R$ est strictement inf\'erieur \`a~$ad\delta$, sa valuation est soit infinie, soit strictement inf\'erieure \`a~$a\delta$. Puisqu'elle ne peut \^etre strictement inf\'erieure aux valuations de~$G$ et de~$QM^a$, on en d\'eduit qu'elle est infinie. En d'autres termes, on a $R(b) = 0$.

Le raisonnement qui pr\'ec\`ede montre que la valuation de~$Q$ est \'egale \`a~$b$. Supposons, par l'absurde, que $b>0$. Alors, en appliquant le th\'eor\`eme de division de Weierstra\ss{} \`a~$M$ et \`a~$Q$, on obtient un reste dans $\cO_{B,b}[T]$ qui s'annule en~$x$ et dont le degr\'e est strictement inf\'erieur \`a~$bd$, et donc \`a $d\delta = \deg(\mu_{\kappa,x})$. On aboutit donc \`a une contradiction, ce qui d\'emontre que $b=0$. On en d\'eduit que~$\delta$ divise~$n$ (et que~$Q$ est inversible dans~$\cO_{X,x}$).

En appliquant, \`a pr\'esent, le th\'eor\`eme de division de Weierstra\ss{} \`a~$M^a$ et~$G$, on obtient une \'egalit\'e de la forme $M^a = Q'G+R'$. En raisonnant sur les valuations comme pr\'ecedemment, on montre que~$Q'$ est inversible dans~$\cO_{X,x}$ et que~$R'(b)=0$. Les fonctions $\Omega := M^a - R'$ et $E := Q'^{-1}$ satisfont alors les propri\'et\'es de l'\'enonc\'e.

Il reste \`a d\'emontrer l'unicit\'e du couple~$(\Omega,E)$ satisfaisant les propri\'et\'es de l'\'enonc\'e. Partant d'un tel couple, $R := \Omega - M^{n/\delta}$ est un \'el\'ement de~$\cO_{B,b}[T]$ de degr\'e strictement inf\'erieur \`a~$nd$ et on a $M^{n/\delta} = E^{-1}G - R$. L'unicit\'e du couple~$(\Omega,E)$ se d\'eduit donc de l'\'enonc\'e d'unicit\'e dans le th\'eor\`eme de division de Weierstra\ss.
\end{proof}

\subsubsection{R\'esultats}

Revenons au cas o\`u $X = \E{n}{\cA}$, avec $n\in \N$ arbitraire. Le r\'esultat qui suit permet de comprendre l'anneau local en un point de~$X$ en fonction de celui en sa projection sur la base~$B$. C'est une combinaison de \cite[corollaire~9.11 et th\'eor\`emes 9.17 et~9.18]{EtudeLocale}.

\begin{theo}\label{rigide}\index{Anneau!noetherien@noeth\'erien}\index{Anneau!fortement regulier@fortement r\'egulier}\index{Anneau!fortement de valuation discrete@fortement de valuation discr\`ete}\index{Corps!fort}
Supposons que~$\cA$ est basique. Soient~$b$ un point de~$B$ et~$x$ un point de~$X$ au-dessus de~$b$.
Alors l'anneau local~$\cO_{\E{n}{\cA},x}$ est noeth\'erien et fortement r\'egulier de dimension 
\[\dim(\cO_{X,x})=\dim(\cO_{B,b})+n-m_{x},\]
o\`u $m_{x}$ est d\'efini comme dans la remarque~\ref{rem:rigeptrans}.

Si~$x$ est puremement localement transcendant au-dessus de~$b$ (c'est-\`a-dire si $m_{x}=n$) et si $\cO_{B,b}$ est un corps fort (resp. un anneau fortement de valuation discr\`ete d'uniformisante~$\varpi_b$), alors~$\cO_{X,x}$ est un corps fort (resp. un anneau fortement de valuation discr\`ete d'uniformisante~$\varpi_b$). 
\qed
\end{theo}

Int\'eressons-nous, \`a pr\'esent, au principe du prolongement analytique. Nous adopterons la d\'efinition suivante, qui reste maniable en l'absence de connexit\'e locale.

\begin{defi}[\protect{\cite[d\'efinition~11.1]{EtudeLocale}}]\label{def:prolongementanalytique}\index{Prolongement analytique|textbf}\index{Prolongement analytique|(}
Soit $S$ un espace localement annel\'e. Soit $s\in S$. On dit que \emph{$S$ satisfait le principe du prolongement analytique en~$s$} si, pour tout ouvert~$U$ de~$S$ contenant~$s$ et tout \'el\'ement~$f$ de~$\cO_{S}(U)$ dont l'image dans~$\cO_{S,s}$ n'est pas nulle, il existe un voisinage~$V$ de~$s$ dans~$U$ tel que, pour tout $t\in V$, l'image de~$f$ dans~$\cO_{S,t}$ n'est pas nulle.

On dit que \emph{$S$ satisfait le principe du prolongement analytique} s'il le satisfait en tout point.
\end{defi}

\begin{rema}\label{rem:prolanversions}
Soit~$S$ un espace localement annel\'e. Pour tout ouvert~$U$ de~$S$ et tout \'el\'ement~$f$ de~$\cO_{S}(U)$, posons 
\[N_{U,f} := \{s \in U : f \ne 0 \textrm{ dans } \cO_{S,s}\}.\] 
C'est une partie ferm\'ee de~$U$.

L'espace~$S$ satisfait le principe du prolongement analytique au sens de la d\'efinition~\ref{def:prolongementanalytique} si, et seulement si, pour tout ouvert~$U$ de~$S$ et tout \'el\'ement~$f$ de~$\cO_{S}(U)$, l'ensemble $N_{U,f}$ est ouvert. 

La version classique du principe du prolongement analytique stipule que, pour tout ouvert connexe~$U$ de~$S$ et tout \'el\'ement~$f$ de~$\cO_{S}(U)$, s'il existe $s\in U$ tel que l'image de~$f$ dans~$\cO_{S,s}$ soit nulle, alors $f$ est nulle dans~$\cO_{S}(U)$. En d'autres termes, l'ensemble $N_{U,f}$ est soit vide, soit \'egal \`a~$U$. On observe que cet \'enonc\'e d\'ecoule du principe du prolongement analytique au sens de la d\'efinition~\ref{def:prolongementanalytique}.

Ce raisonnement permet de montrer que, dans le cas o\`u $S$ est localement connexe, les deux versions du prolongement analytique sont \'equivalentes.

\end{rema}


\begin{rema}\label{rem:prolongementanalytiquecorpsavd}
Soit $S$ un espace localement annel\'e et soit $s\in S$. 

Si $\cO_{S,s}$ est un corps, alors $S$ satisfait le principe du prolongement analytique en~$s$.

Si $\cO_{S,s}$ est un anneau de valuation discr\`ete d'uniformisante~$\pi$, alors $S$ satisfait le principe du prolongement analytique en~$s$ si, et seulement si, il existe un voisinage ouvert~$U$ de~$s$ dans~$S$ tel que, pour tout $t\in U$, l'image de~$\pi$ dans~$\cO_{S,t}$ ne soit pas nulle.
\end{rema}

\begin{exem}\index{Prolongement analytique}
Si~$\cA$ est l'un des anneaux cit\'e dans les exemples \ref{ex:corpsvalue} \`a~\ref{ex:Dedekind}, alors $\cM(\cA)$ satisfait le principe du prolongement analytique.
\end{exem}

\begin{prop}[\protect{\cite[corollaire~11.5]{EtudeLocale}}]\label{prop:prolongementanalytique}\index{Prolongement analytique}
Supposons que~$\cA$ est basique. Si~$\cM(\cA)$ satisfait le principe du prolongement analytique, alors il en va de m\^eme pour~$\E{n}{\cA}$.
\qed
\end{prop}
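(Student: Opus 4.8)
La d\'emonstration proc\`ede par r\'ecurrence sur~$n$. Le cas $n=0$ est exactement l'hypoth\`ese, puisque $\E{0}{\cA}=\cM(\cA)$. Soit $n\ge 1$ et supposons le r\'esultat acquis pour~$\E{n-1}{\cA}$. Fixons un point~$x$ de~$\E{n}{\cA}$, posons $b:=\pi_{n}(x)$, et donnons-nous un germe non nul $f\in\cO_{\E{n}{\cA},x}$, repr\'esent\'e par une fonction sur un voisinage ouvert~$U$ de~$x$~; il s'agit de construire un voisinage de~$x$ sur lequel le germe de~$f$ ne s'annule en aucun point. Quitte \`a permuter les coordonn\'ees, on peut supposer, gr\^ace \`a la remarque~\ref{rem:rigeptrans}, que $y:=\pi_{n,m}(x)$, avec $m:=m_{x}$, est purement localement transcendant au-dessus de~$b$ et que~$x$ est rigide \'epais au-dessus de~$y$. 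On traite s\'epar\'ement les cas $m=n$ et $m<n$.

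Si $m=n$, le point~$x$ est purement localement transcendant au-dessus de~$b$. Comme~$\cA$ est basique, $\cO_{\cM(\cA),b}$ est un corps fort ou un anneau fortement de valuation discr\`ete (d\'efinition~\ref{def:basique}), et le th\'eor\`eme~\ref{rigide} assure que~$\cO_{\E{n}{\cA},x}$ est, respectivement, un corps fort ou un anneau fortement de valuation discr\`ete dont une uniformisante~$\varpi$ est l'image d'une uniformisante de~$\cO_{\cM(\cA),b}$. Dans le premier cas, la remarque~\ref{rem:prolongementanalytiquecorpsavd} donne directement la conclusion. Dans le second, $\cO_{\cM(\cA),b}$ est un anneau de valuation discr\`ete et, $\cM(\cA)$ satisfaisant le prolongement analytique en~$b$, la remarque~\ref{rem:prolongementanalytiquecorpsavd} fournit un voisinage ouvert~$N$ de~$b$ dans~$\cM(\cA)$ sur lequel le germe de~$\varpi$ ne s'annule en aucun point. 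Les sections du faisceau structural \'etant des fonctions \`a valeurs dans les corps r\'esiduels compl\'et\'es et $\pi_{n}$ \'etant ouverte (corollaire~\ref{cor:projectionouverte}), un germe nul de~$\varpi\circ\pi_{n}$ en un point de~$\pi_{n}^{-1}(N)$ se projetterait sur un germe nul de~$\varpi$ en un point de~$N$~; le germe de~$\varpi$ ne s'annule donc en aucun point de~$\pi_{n}^{-1}(N)$. Comme $f=\varpi^{k}u$ avec~$u$ inversible dans~$\cO_{\E{n}{\cA},x}$, l'inversibilit\'e \'etant une condition ouverte, le germe de~$f$ ne s'annule en aucun point d'un voisinage de~$x$.

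Si $m<n$, la derni\`ere coordonn\'ee~$T_{n}$ est \og rigide \'epaisse \fg{} pour~$x$~: posant $c:=\pi_{n,n-1}(x)$, l'\'el\'ement $T_{n}(x)$ est alg\'ebrique sur~$\kappa(c)$, de sorte que~$x$ est rigide dans la fibre $\pi_{n,n-1}^{-1}(c)\simeq\E{1}{\cH(c)}$. Choisissons un voisinage compact spectralement convexe~$V$ de~$c$ dans~$\E{n-1}{\cA}$~: d'apr\`es la remarque~\ref{spectralement_conv}, un voisinage de~$x$ s'identifie, comme espace annel\'e au-dessus de~$\mathring V$, \`a un voisinage d'un point~$x'$ de la droite relative $X:=\E{1}{\cB(V)}$, lequel est situ\'e au-dessus de~$c\in\mathring V$ et rigide \'epais au-dessus de~$c$. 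Comme~$\mathring V$ est ouvert dans~$\E{n-1}{\cA}$, l'hypoth\`ese de r\'ecurrence assure que~$\cM(\cB(V))$ satisfait le prolongement analytique en tout point de~$\mathring V$, en particulier en~$c$. De plus, $\cA$ \'etant basique, tout point de~$\E{n-1}{\cA}$ est d\'ecent~; on peut donc appliquer en~$x'$ la proposition~\ref{prop:basevoisdim1rigide}, qui fournit un disque polynomial relatif $\Delta:=\overline{D}_{V}(P,s)$, voisinage de~$x'$ contenu dans~$U$ et \`a fibres connexes, puis le th\'eor\`eme~\ref{thm:isolemniscate} et la proposition~\ref{prop:disqueglobal}, qui \'ecrivent~$f$ sur~$\Delta$ sous la forme d'une s\'erie convergente \`a coefficients des germes de fonctions $(a_{i})_{i}$ en~$c$ sur~$\cM(\cB(V))$. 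Par unicit\'e de ce d\'eveloppement, pour tout~$c'$ voisin de~$c$, la restriction de~$f$ \`a la fibre~$\Delta_{c'}$ est identiquement nulle si et seulement si $a_{i}(c')=0$ pour tout~$i$, et de m\^eme le germe de~$f$ en~$x'$ est nul si et seulement si tous les germes~$a_{i}$ sont nuls.

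Puisque le germe de~$f$ en~$x'$ est non nul, il existe un indice~$i_{0}$ tel que le germe de~$a_{i_{0}}$ en~$c$ soit non nul. Le prolongement analytique en~$c$ fournit un voisinage ouvert~$N$ de~$c$ dans~$\cM(\cB(V))$ sur lequel le germe de~$a_{i_{0}}$ ne s'annule en aucun point~; les sections \'etant des fonctions, $a_{i_{0}}$ ne s'annule identiquement sur aucun ouvert non vide de~$N$, si bien que le ferm\'e
\[Z:=\{\,c'\in N \ :\ a_{i}(c')=0 \text{ pour tout } i\,\}\]
est d'int\'erieur vide dans~$N$. Soit~$t$ un point de~$\Delta$ au-dessus de~$N$. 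Si le germe de~$f$ en~$t$ \'etait nul, $f$~s'annulerait sur un voisinage ouvert~$W$ de~$t$ dans~$\Delta$~; pour tout~$t'\in W$, la restriction de~$f$ \`a la fibre~$\Delta_{\pi(t')}$ s'annulerait sur un ouvert non vide de cette fibre, donc serait identiquement nulle par le principe des z\'eros isol\'es pour les fonctions analytiques sur un disque au-dessus d'un corps valu\'e complet (les fibres de~$\Delta$ \'etant connexes), d'o\`u $\pi(t')\in Z$~; on aurait donc $\pi(W)\subset Z$, en contradiction avec le fait que~$\pi(W)$ est un ouvert non vide (corollaire~\ref{cor:projectionouverte}) contenu dans le ferm\'e~$Z$ d'int\'erieur vide. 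Ainsi le germe de~$f$ ne s'annule en aucun point de~$\Delta\cap\pi^{-1}(N)$, qui est un voisinage de~$x'$, donc de~$x$~; la r\'ecurrence est compl\`ete. Le point le plus d\'elicat est l'obtention, au troisi\`eme paragraphe, du d\'eveloppement de~$f$ \`a coefficients dans les germes de la base~: il faut y articuler la r\'eduction au cas relatif, la description fine des voisinages d'un point rigide \'epais et celle des fonctions globales sur les disques et domaines polynomiaux, et contr\^oler l'unicit\'e du d\'eveloppement ainsi que la connexit\'e des fibres~; le cas $m=n$ et l'argument de densit\'e final sont, eux, essentiellement formels.
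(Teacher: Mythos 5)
La proposition n'est pas d\'emontr\'ee dans le texte : elle y est simplement d\'eduite de \cite[corollaire~11.5]{EtudeLocale}, de sorte que je ne peux pas comparer votre r\'edaction \`a une preuve interne et que je l'\'evalue pour elle-m\^eme. L'architecture que vous proposez (r\'ecurrence sur~$n$, d\'ecoupage via la remarque~\ref{rem:rigeptrans} en une direction purement localement transcendante et une direction rigide \'epaisse, traitement de la premi\`ere par le th\'eor\`eme~\ref{rigide} et la remarque~\ref{rem:prolongementanalytiquecorpsavd}) est la bonne, et le cas $m=n$ est correct, y compris le recours implicite \`a l'int\'egrit\'e des anneaux locaux voisins pour multiplier des germes non nuls.

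Le cas $m<n$ contient en revanche une lacune r\'eelle, au point que vous signalez vous-m\^eme comme le plus d\'elicat. Vous affirmez que, \og par unicit\'e du d\'eveloppement \fg{}, le germe de~$f$ en~$x'$ est nul si et seulement si tous les germes des coefficients~$a_{i,k}$ en~$c$ sont nuls, et vous utilisez ensuite le sens \og tous les germes des coefficients nuls $\Rightarrow$ germe de~$f$ nul \fg{} (sous forme contrapos\'ee, pour produire l'indice~$i_{0}$). Or l'unicit\'e du d\'eveloppement ne fournit que l'autre sens. La famille $(a_{i,k})_{k}$ est infinie~: chaque coefficient de germe nul s'annule sur un voisinage de~$c$ qui lui est propre, et rien ne garantit l'existence d'un voisinage commun sur lequel tous s'annulent --- c'est pourtant ce qu'il faut pour conclure que~$f$ s'annule au voisinage de~$x'$. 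C'est pr\'ecis\'ement pour obtenir cette uniformit\'e que la d\'efinition d'anneau basique impose que~$\cO_{B,b}$ soit un corps fort ou un anneau fortement de valuation discr\`ete~: le caract\`ere \og $\cB$-fortement de type fini \fg{} de l'id\'eal nul et sa propagation aux fibres (cf. les propositions~\ref{prolongement_purement_localement_transcendant}, \ref{prolongement_rigide} et~\ref{prolongement}, ainsi que les lemmes~\ref{restriction_fibre} et~\ref{restriction_fibre_avd}) donnent l'annulation sur un voisinage fix\'e \`a l'avance, ind\'ependant du coefficient. Votre preuve n'utilise nulle part la partie \og forte \fg{} de l'hypoth\`ese basique dans ce cas, alors qu'elle en est le c{\oe}ur~; sans elle, l'implication invoqu\'ee est fausse en g\'en\'eral. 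Le reste du paragraphe (vide int\'erieur de~$Z$, connexit\'e des fibres, ouverture de la projection) est correct une fois ce point acquis, \`a un d\'etail pr\`es~: il faut travailler avec l'int\'erieur de~$\overline{D}_{V}(P,s)$ pour que l'image~$\pi(W)$ soit effectivement ouverte.
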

\index{Prolongement analytique|)}

\'Enon\c cons finalement un r\'esultat de coh\'erence. Il sera utilis\'e \`a de nombreuses reprises dans ce travail. 

\begin{theo}[\protect{\cite[th\'eor\`eme~11.9]{EtudeLocale}}]\label{coherent}\index{Faisceau!coherent@coh\'erent}
Supposons que~$\cA$ est basique et que~$\cM(\cA)$ satisfait le principe du prolongement analytique. Alors, le faisceau structural de~$\E{n}{\cA}$ est coh\'erent.
\end{theo}

\chapter[Cat\'egorie des espaces analytiques~: d\'efinitions]{Cat\'egorie des espaces analytiques~: d\'efinitions}
\label{def_cat}

Ce chapitre est consacr\'e \`a la construction de la cat\'egorie des espaces analytiques sur un anneau de Banach. Fixons un anneau de Banach $(\cA,\nm)$.

Dans la section~\ref{sec:catAan}, nous d\'efinissons les espaces $\cA$-analytiques et les morphismes analytiques entre ces espaces. Nous proc\'edons en plusieurs \'etapes en commen\c{c}ant par les ouverts d'espaces affines analytiques sur~$\cA$, en poursuivant avec leurs ferm\'es analytiques (appel\'es mod\`eles locaux $\cA$-analytiques), avant de traiter le cas g\'en\'eral. La cat\'egorie correspondante est not\'ee~$\cAAn$. 

Nous attirons l'attention du lecteur sur le fait que, lorsque $\cA=k$ est un corps valu\'e ultram\'etrique complet, la d\'efinition propos\'ee ne permet pas de retrouver tous les espaces $k$-analytiques d\'efinis par V.~Berkovich dans~\cite{Ber1,Ber2}, mais seulement les espaces sans bords.

Dans la section~\ref{sec:An_A}, nous d\'efinissons la cat\'egorie~$\An_\cA$ des espaces analytiques au-dessus de~$\cA$. Plus grosse que la premi\`ere, elle autorise des morphismes entre espaces sur des anneaux de Banach de base diff\'erents (mais n\'eanmoins reli\'es par un morphisme born\'e). Elle nous permettra, dans un chapitre ult\'erieur, d'effectuer des extensions des scalaires ou encore de munir les fibres des morphismes de structures analytiques. 

Finalement, dans la section~\ref{sec:immersion}, nous introduisons la notion d'immersion d'espaces $\cA$-analytiques et en d\'emontrons quelques propri\'et\'es \'el\'ementaires, analogues \`a celles dont on dispose dans d'autres cadres.


\section[Espaces $\cA$-analytiques]{Cat\'egorie des espaces $\cA$-analytiques}\label{sec:catAan}

Nous d\'efinissons ici les notions d'espace analytique et de morphisme d'espaces analytiques au-dessus d'un anneau de Banach~$\cA$. Ces d\'efinitions ne requi\`erent  pas de propri\'et\'es particuli\`eres sur~$\cA$.

\subsection{Morphismes entre ouverts d'espaces affines analytiques}

\index{Morphisme!analytique|see{Morphisme analytique}}
\index{Morphisme analytique|(}

\begin{defi}\label{def:morphismeouvertaffine}\index{Morphisme analytique|textbf}
Soient~$U$ et~$V$ des ouverts d'espaces affines analytiques sur~$\cA$.
Un \emph{morphisme analytique} de~$U$ dans~$V$ est un morphisme d'espaces localement annel\'es
$\varphi \colon U \to V$
v\'erifiant la condition suivante~: pour toute partie compacte~$U'$ de~$U$ et toute partie compacte~$V'$ de~$V$ telles que $\varphi(U') \subset V'$, le morphisme~$\cO_V(V')\to\cO_U(U')$ induit par $\varphi^\sharp$ est contractant (\textit{i.e.} pour tout~$f\in\cO_V(V')$, on a $\|\varphi^\sharp(f)\|_{U'}\leq\|f\|_{V'}$, o\`u $\nm_{U'}$ et $\nm_{V'}$ d\'esignent les normes uniformes sur~$U'$ et~$V'$).
\end{defi}


Nous pouvons caract\'eriser ces morphismes de la fa\c con suivante.

\begin{prop}\label{isomouvert}\index{Morphisme!de corps r\'esiduels}
Soient $U$ et~$V$ des ouverts d'espaces affines analytiques sur~$\cA$. 
Soit~$\mor{\varphi}: (U,\cO_U)\to(V,\cO_V)$ un morphisme d'espaces localement annel\'es. Les deux conditions suivantes sont \'equivalentes~:
\begin{enumerate}[i)]
\item le morphisme~$\mor{\varphi}$ est un morphisme analytique ;
\item pour tout $x \in U$, l'extension de corps $\kappa(\varphi(x))\to\kappa(x)$ induite par~$\varphi^\sharp$ est isom\'etrique.
\end{enumerate}
\end{prop}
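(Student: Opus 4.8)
The plan is to reformulate both conditions in terms of the local homomorphisms $\varphi^{\sharp}_{x}\colon\cO_{V,\varphi(x)}\to\cO_{U,x}$ and of the multiplicative seminorms $g\mapsto |g(x)|$ on the stalks. Recall that on $\cO_{U,x}$ the kernel of $g\mapsto |g(x)|$ is the maximal ideal $\m_{x}$ (the germs vanishing at $x$), so this seminorm descends to a multiplicative norm on $\kappa(x)$, for which the inclusion $\kappa(x)\hookrightarrow\cH(x)$ is isometric. Since $\mor{\varphi}$ is a morphism of locally annelled spaces, $\varphi^{\sharp}_{x}$ carries $\m_{\varphi(x)}$ into $\m_{x}$ and therefore induces a field embedding $\kappa(\varphi(x))\hookrightarrow\kappa(x)$; condition ii) amounts to saying that, for every $x\in U$ and every $g\in\cO_{V,\varphi(x)}$, one has $|\varphi^{\sharp}_{x}(g)(x)| = |g(\varphi(x))|$ (the case $g\in\m_{\varphi(x)}$ being automatic, both sides then being $0$).

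For the implication ii) $\Rightarrow$ i), I would fix compacts $U'\subset U$ and $V'\subset V$ with $\varphi(U')\subset V'$ and a section $f\in\cO_{V}(V')$, represented by some $\tilde f\in\cO_{V}(W)$ on an open neighbourhood $W\supset V'$; then $\varphi^{\sharp}(f)$ is represented by $\varphi^{\sharp}(\tilde f)\in\cO_{U}(\varphi^{-1}(W))$. For $x\in U'$, evaluation at $x$ factors through the germ, so $|\varphi^{\sharp}(\tilde f)(x)| = |\varphi^{\sharp}_{x}(\tilde f_{\varphi(x)})(x)|$, which equals $|\tilde f(\varphi(x))| = |f(\varphi(x))|$ by ii). Taking the maximum over $x\in U'$ (a maximum by compactness and Lemma~\ref{lem:evaluationcontinueaffine}) and using $\varphi(U')\subset V'$ gives $\|\varphi^{\sharp}(f)\|_{U'} = \max_{x\in U'}|f(\varphi(x))| \le \|f\|_{V'}$, so $\varphi$ is a morphisme analytique.

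For the implication i) $\Rightarrow$ ii), fix $x\in U$, put $y=\varphi(x)$, and apply the contracting condition to the compacts $\{x\}\subset U$ and $\{y\}\subset V$: since $\cO_{U}(\{x\}) = \cO_{U,x}$ and $\cO_{V}(\{y\}) = \cO_{V,y}$ (with the evaluation seminorms) and $\varphi(\{x\})\subset\{y\}$ trivially, this yields $|\varphi^{\sharp}_{x}(g)(x)| \le |g(y)|$ for all $g\in\cO_{V,y}$; in particular the embedding $\kappa(y)\hookrightarrow\kappa(x)$ is norm-decreasing. For the reverse inequality, take $g\in\cO_{V,y}$ with $g(y)\ne 0$; then $g\notin\m_{y}$, so $g$ is invertible in $\cO_{V,y}$, and, $\varphi^{\sharp}_{x}$ being local, $\varphi^{\sharp}_{x}(g)\notin\m_{x}$ is invertible in $\cO_{U,x}$ with inverse $\varphi^{\sharp}_{x}(g^{-1})$. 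Multiplicativity of $|\cdot|_{x}$ gives $|\varphi^{\sharp}_{x}(g)(x)|\,|\varphi^{\sharp}_{x}(g^{-1})(x)| = 1$, and combining with $|\varphi^{\sharp}_{x}(g^{-1})(x)| \le |g^{-1}(y)| = |g(y)|^{-1}$ we obtain $|\varphi^{\sharp}_{x}(g)(x)| \ge |g(y)|$, whence equality. Thus the induced embedding $\kappa(y)\hookrightarrow\kappa(x)$ is isometric, which is condition ii).

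The one step that is not purely formal is obtaining the reverse inequality in i) $\Rightarrow$ ii): the contracting hypothesis by itself only forces the residue-field embeddings to be norm-decreasing, and upgrading this to an isometry genuinely uses that $\mor{\varphi}$ is a morphism of \emph{locally} annelled spaces — it is the invertibility of $\varphi^{\sharp}_{x}(g)$ for $g$ invertible that closes the gap. Everything else is a routine unwinding of the definitions and of the overconvergent-section conventions.
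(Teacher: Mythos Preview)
Your proof is correct and follows essentially the same strategy as the paper: for ii) $\Rightarrow$ i) both arguments evaluate pointwise and use $\varphi(U')\subset V'$, and for i) $\Rightarrow$ ii) both first extract the inequality $|\varphi^{\sharp}_{x}(g)(x)|\le |g(\varphi(x))|$ and then upgrade it to an equality by applying it to $g^{-1}$. The one notable difference is in how you obtain the inequality in i) $\Rightarrow$ ii): the paper fixes a germ representative on a compact neighbourhood $V'$ of $\varphi(x)$, gets $|\varphi^{\sharp}(f)(x)|\le\|f\|_{V'}$, and then passes to the limit $\|f\|_{V'}\to|f(\varphi(x))|$ as $V'$ shrinks to $\{\varphi(x)\}$; you instead apply the contracting condition directly to the singleton compacts $U'=\{x\}$ and $V'=\{\varphi(x)\}$, which gives the inequality in one stroke since $\cO_{V}(\{\varphi(x)\})=\cO_{V,\varphi(x)}$ with $\nm_{\{\varphi(x)\}}=|\wc(\varphi(x))|$. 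Your shortcut is legitimate and slightly cleaner; the paper's limiting argument is perhaps more cautious but leads to the same place.
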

\begin{proof}
$i) \implies ii)$ 
Soit~$a\in\kappa(\varphi(x))$. Soit~$f\in\cO_{V,\varphi(x)}$ un relev\'e de~$a$. Soit~$V'$ un voisinage compact de~$\varphi(x)$ dans~$V$ sur lequel~$f$ est d\'efinie. Soit~$U'$ un voisinage compact de~$x$ dans~$\varphi^{-1}(V')$. Par hypoth\`ese, on a 
\[\|\varphi^\sharp(f)\|_{U'}\leq\|f\|_{V'}\]
et, en particulier, 
\[|\varphi^\sharp(f)(x)|\leq\|f\|_{V'}.\]
En outre, on a l'\'egalit\'e
\[ |a| = |f(\varphi(x))|=\lim_{V'\ni \varphi(x)}\|f\|_{V'},\] 
o\`u~$V'$ parcourt les voisinages compacts de~$\varphi(x)$ dans~$V$ sur lesquels~$f$ est d\'efini. On en d\'eduit que 
\[|\varphi^\sharp(f)(x)|\leq |f(\varphi(x))|,\]
puis que 
\[|\varphi^\sharp(f)(x)| = |f(\varphi(x))|\]
en distinguant selon que~$a$ est nul, auquel cas l'\'egalit\'e \'evidente, ou non nul, auquel cas le r\'esultat d\'ecoule de l'in\'egalit\'e pour~$a^{-1}$.

\medbreak

$ii) \implies i)$ 
Soient~$x\in U$, $V'$ un voisinage compact de~$\varphi(x)$ dans~$V$ et~$U'$ un voisinage compact de~$x$ dans~$\varphi^{-1}(V')$. Soit $f\in \cO_{V}(V')$. Par hypoth\`ese, pour tout~$x'\in U'$, on a 
\[|\varphi^\sharp(f)(x')|  = |f(\varphi(x'))| \le \|f\|_{V'}.\] 
Le r\'esultat s'en d\'eduit.
\end{proof}

Il est utile de noter que les morphismes analytiques induisent \'egalement des morphismes entre anneaux du type~$\cB(W)$. 
Remarquons que, si $W$ est une partie compacte d'un espace affine, tout \'el\'ement de~$\cO(W)$ (au sens surconvergent) induit, par d\'efinition, une fonction
\[W \too \bigsqcup_{x\in W} \cH(x),\] 
et de m\^eme pour tout \'el\'ement de~$\cB(W)$. On peut donc comparer des \'el\'ements de~$\cO(W)$ et~$\cB(W)$.


\begin{lemm}\label{lem:morphismeB}\index{Fonction!B-definie@$\cB$-d\'efinie}
Soient~$U$ et~$V$ des ouverts d'espaces affines analytiques sur~$\cA$ et $\varphi \colon U \to V$ un morphisme analytique. Soit $x \in U$ et soit~$V'$ un voisinage compact de~$\varphi(x)$ dans~$V$. Alors il existe un voisinage compact~$U'$ de~$x$ dans~$U$ tel que $\varphi(U')\subset \mathring{V}'$ et, pour tout $F \in \cB(V')$, $\varphi^\sharp(F_{| \mathring{V}'})_{|U'} \in \cB(U')$.
\end{lemm}
\begin{proof}
Soit $n\in \N$ (resp. $m\in \N$) tel que $U$ (resp. $V$) soit un ouvert de l'espace affine $\E{n}{\cA}$ (resp. $\E{m}{\cA}$). Notons $T=(T_{1},\dotsc,T_{n})$ (resp. $S=(S_{1},\dotsc,S_{m})$) les coordonn\'ees sur ce dernier. Posons $\varphi^\sharp(S) := (\varphi^\sharp(S_{1}),\dotsc,\varphi^\sharp(S_{m}))$.

Soit $U'$ un voisinage compact de~$x$ dans~$U$ tel que $\varphi(U') \subset \mathring{V}'$. Soit $M\in \R_{>0}$ tel que, pour tout $i \in \cn{1}{m}$, on ait $\|\varphi^\sharp(S_{i})\|_{U'} < M$. Quitte \`a restreindre~$U'$, on peut supposer que, pour tout $i \in \cn{1}{m}$, il existe une suite de fractions rationnelles $(A_{i,p}(T))_{p\in \N}$ sans p\^oles sur~$U'$ qui converge uniform\'ement vers~$\varphi^\sharp(S_{i})$ sur~$U'$. On peut \'egalement supposer que, pour tout $p\in \N$, on a $\|A_{i,p}(T)\|_{U'} < M$. Posons $A_{p}(T) = (A_{1,p}(T),\dotsc,A_{m,p}(T))$. 


Soit $F \in \cB(V')$. Il existe une suite de fractions rationnelles $\big(B_{q}(S) = \frac{P_{q}(S)}{Q_{q}(S)}\big)_{q\in \N}$ sans p\^oles sur~$V'$ qui converge uniform\'ement vers~$F$ sur~$V'$. 

Soit $N \in \N$. Il existe $q_{N} \in \N$ tel que
\[ \Big\|F - \frac{P_{q_{N}}(S)}{Q_{q_{N}}(S)} \Big\|_{V'} \le \frac1{2^{N+1}},\]
et donc
\[ \Big\|\varphi^\sharp(F) - \varphi^\sharp\Big(\frac{P_{q_{N}}(S)}{Q_{q_{N}}(S)}\Big) \Big\|_{U'} = 
\Big\|\varphi^\sharp(F) - \frac{P_{q_{N}}(\varphi^\sharp(S))}{Q_{q_{N}}(\varphi^\sharp(S))} \Big\|_{U'} 
\le  \frac1{2^{N+1}}.\]

Il existe $C_{N}\in \R_{>0}$ tel que, pour tout corps valu\'e~$(K,\va)$ tel qu'il existe un morphisme born\'e $\psi \colon \cA \to K$ et tous $a_{1},\dotsc,a_{m},b_{1},\dotsc,b_{m} \in K$ tels que $\max_{1\le i\le m}(|b_{i} - a_{i}|) \le M$, on ait
\[ |\psi(P_{q_{N}})(b) - \psi(P_{q_{N}})(a)| \le C_{N} \, \max_{1\le i\le m}(|b_{i} - a_{i}|) \]
et
\[ |\psi(Q_{q_{N}})(b) - \psi(Q_{q_{N}})(a)| \le C_{N} \, \max_{1\le i\le m}(|b_{i} - a_{i}|) .\]

Par hypoth\`ese, $Q_{q_{N}}(S)$ ne s'annule pas sur~$V'$. Par compacit\'e de~$V'$, on a
\[ m_{N} := \inf(\{ |Q_{q_{N}}(S)(y)| : y \in V'\}) > 0.\]
La borne inf\'erieure de la valeur absolue de $\varphi^\sharp(Q_{q_{N}}(S)) = Q_{q_{N}}(\varphi^\sharp(S))$ sur~$U'$ est donc sup\'erieure \`a~$m_{N}$,
donc il existe $p_{N} \in \N$ tel que, pour tout $p\ge p_{N}$,
\[  \inf(\{ |Q_{q_{N}}(A_{p})(z)| : z \in U'\})  \ge \frac12 \, m_{N} >0.\]
On en d\'eduit qu'il existe $p'_{N} \ge p_{N}$ tel que 
\[ \Big\|\frac{P_{q_{N}}(\varphi^\sharp(S))}{Q_{q_{N}}(\varphi^\sharp(S))} - \frac{P_{q_{N}}(A_{p'_{N}}(T))}{Q_{q_{N}}(A_{p'_{N}}(T))} \Big\|_{U'} \le  \frac1{2^{N+1}},\]
et donc  
\[\Big\|\varphi^\sharp(F) - \frac{P_{q_{N}}(A_{p'_{N}}(T))}{Q_{q_{N}}(A_{p'_{N}}(T))} \Big\|_{U'} 
\le  \frac1{2^{N}}.\]
Le r\'esultat s'ensuit.
\end{proof}

Concluons avec un exemple classique et fondamental.

\begin{exem}\label{exemple_morphisme}\index{Morphisme analytique!vers un espace affine}
Soient $n,m \in \N$. Soient~$U$ un ouvert de~$\E{m}{\cA}$ et $f_{1},\dotsc,f_{n} \in \cO(U)$. Nous allons expliquer comment construire un morphisme analytique $\varphi \colon U \to \E{n}{\cA}$ \`a partir de ces donn\'ees. Plus pr\'ecis\'ement, si l'on d\'esigne par $T_{1},\dotsc,T_{n}$ les coordonn\'ees sur~$\E{n}{\cA}$, le morphisme~$\varphi$ satisfera la condition suivante~:
\[ \forall i\in \cn{1}{n},\ \varphi^\sharp(T_{i})=f_{i}.\]

Commen\c cons par d\'efinir l'application ensembliste sous-jacente. Soit $x\in U$. Notons~$\varphi(x)$ le point de $\E{n}{\cA}$ associ\'e \`a la semi-norme multiplicative
\[\renewcommand{\arraystretch}{1.2}
\begin{array}{ccc}
\cA[T_1,\ldots,T_n] & \too & \R_{\ge 0}\\
P(T_{1},\dotsc,T_{n}) & \mapstoo & |P(f_{1}(x),\dotsc,f_{n}(x))|
\end{array}.\]
On d\'efinit ainsi une application $\varphi \colon x \in U \mapsto \varphi(x) \in \E{n}{\cA}$.
Sa continuit\'e d\'ecoule du lemme~\ref{lem:evaluationcontinueaffine}.

Il suit \'egalement de la d\'efinition que, pour tout point~$x$ de~$U$, on a un morphisme de corps isom\'etrique
\[ \varphi_{x}^\sharp \colon \cH(\varphi(x))\too\cH(x).\]

Construisons, \`a pr\'esent, l'application~$\varphi^\sharp$. Soient~$V$ un ouvert de~$\E{n}{\cA}$ et $g$ un \'el\'ement de~$\cO(V)$. Par d\'efinition, ce dernier est une application
\[g \colon V\too\displaystyle\coprod_{y\in V}\cH(y)\] 
telle que
\begin{enumerate}[i)]
\item pour tout $y\in V$, on a $g(y)\in\cH(y)$~;
\item pour tout~$y\in V$, il existe un voisinage compact~$V_{y}$ de~$y$ dans~$V$ tel que~$g$ soit limite uniforme sur~$V_{y}$ d'une suite de fractions rationnelles sans p\^oles. 
\end{enumerate}

Posons 
\[\renewcommand{\arraystretch}{1.5}
\fonction{\varphi^\sharp(g)}{\varphi^{-1}(V)}{\displaystyle\coprod_{x\in \varphi^{-1}(V)}\cH(x)}{x}{\varphi_{x}^\sharp(g(\varphi(x)))}.\]
Montrons que~$\varphi^\sharp(g)$ est un \'el\'ement de~$\cO(\varphi^{-1}(V))$. 

Soit~$x\in \varphi^{-1}(V)$. Posons $y :=\varphi(x)$. Il existe un voisinage compact~$V_{y}$ de~$y$ dans~$V$ et une suite de fractions rationnelles $\left(\frac{P_{i}}{Q_{i}}\right)_{i\in\N}$ sans p\^oles sur~$V_{y}$ qui converge uniform\'ement vers~$g$ sur~$V_{y}$. Soit~$U_{x}$ un voisinage compact de~$x$ dans~$\varphi^{-1}(V_{y})$. Consid\'erons la suite $\left(\frac{P_i(f_1,\ldots,f_n)}{Q_i(f_1,\ldots,f_n)}\right)_{i\in\N}$ d'\'el\'ements de~$\cO(U_{x})$. Pour tout $x' \in U_{x}$, on a 
\[ \left| \frac{P_i(f_1,\ldots,f_n)(x')}{Q_i(f_1,\ldots,f_n)(x')} - \varphi^\sharp(g)(x')\right| = \left| \frac{P_i(\varphi(x'))}{Q_i(\varphi(x'))} - g(\varphi(x')) \right|.\]
On en d\'eduit que la suite~$\left(\frac{P_i(f_1,\ldots,f_n)}{Q_i(f_1,\ldots,f_n)}\right)_{i\in\N}$ converge uniform\'ement vers~$\varphi^\sharp(g)$ sur~$U_{x}$. Par cons\'equent, $\varphi^\sharp(g)$ appartient bien \`a~$\cO(\varphi^{-1}(V))$.

Remarquons que, pour tout $x \in \varphi^{-1}(V)$, on a 
\begin{equation}\label{eq:phifg} \tag{*}
|\varphi^\sharp(g)(x)| = |g(\varphi(x))|.
\end{equation}

Cette \'egalit\'e entra\^ine que le morphisme $\varphi^\sharp \colon \cO_{\varphi(x)}\to\cO_{x}$ est un morphisme local d'anneaux locaux. Pour le d\'emontrer, il suffit d'utiliser le fait que, dans chacun de ces anneaux locaux, l'id\'eal maximal est constitu\'e des \'el\'ements qui s'annulent au point consid\'er\'e. 

Finalement, nous avons d\'efini un morphisme d'espaces localement annel\'es $\varphi \colon U\to \E{n}{\cA}$. D'apr\`es~\eqref{eq:phifg}, c'est un morphisme analytique. Par construction, pour tout $i\in \cn{1}{n}$, on a $\varphi^\sharp(T_{i})=f_{i}$.

Ajoutons que le morphisme~$\varphi$ que nous venons de construire est l'unique morphisme qui satisfasse les conditions $\varphi^\sharp(T_{i})=f_{i}$. Cette propri\'et\'e d\'ecoule directement des d\'efinitions. Nous renvoyons au d\'ebut de la preuve de la proposition~\ref{morphsec} pour les d\'etails. (La condition d'anneau de base g\'eom\'etrique qui figure son \'enonc\'e n'intervient pas dans le cadre de notre exemple.)
\end{exem}

\begin{exem}\label{ex:structural}\index{Morphisme!structural|textbf}\index{Projection}
Lorsque $n\le m$ et qu'on applique le r\'esultat de l'exemple~\ref{exemple_morphisme} avec $U = \E{m}{\cA}$ et, pour tout $i\in \cn{1}{n}$, $f_{i} = T_{i}$, on obtient un morphisme analytique
\[ \pi_{m,n} \colon \E{m}{\cA} \too \E{n}{\cA}\]
qui n'est autre que la projection sur les $n$ premi\`eres coordonn\'ees (\cf~d\'efinition~\ref{def:projection}).

Dans le cas particulier o\`u $n=0$, on obtient un morphisme analytique
\[ \pi_{m} \colon \E{m}{\cA} \too \cM(\cA),\]
qui n'est autre que la projection sur la base (\cf~d\'efinition~\ref{def:projection}). On l'appelle \'egalement \emph{morphisme structural}.

\end{exem}

\index{Morphisme analytique|)}

\subsection{Mod\`eles locaux $\cA$-analytiques}

\index{Modele local analytique@Mod\`ele local analytique|(}

\begin{defi}\label{def:fermeanalytique}\index{Ferme analytique@Ferm\'e analytique|textbf}
Soit~$U$ un ouvert de~$\E{n}{\cA}$. Un \emph{ferm\'e analytique} de~$U$ est un quadruplet~$(X,\cO_X,j,\cI)$, o\`u~$\cI$ est un faisceau coh\'erent d'id\'eaux de~$\cO_U$, $X$~est le support du faisceau~$\cO_U/\cI$, $j \colon X \to U$ l'inclusion canonique et $\cO_X$ le faisceau $j^{-1} (\cO_U/\cI)$. Le couple~$(X,\cO_X)$ est un espace localement annel\'e.

\end{defi}

\begin{rema}\label{rem:fermesanalytiques}
Soient~$(X,\cO_X,j,\cI)$ un ferm\'e analytique d'un ouvert~$U$ de~$\E{n}{\cA}$ et~$Y$ un ouvert de~$X$. Il existe alors un ouvert~$V$ de~$U$ tel que~$X\cap V=Y$ et $Y$~h\'erite naturellement d'une structure de ferm\'e analytique de~$V$, le faisceau d'id\'eaux associ\'e \'etant~$\cI_{|V}$.
\end{rema}

\begin{defi}\index{Immersion!fermee@ferm\'ee|textbf}
Soit~$(X,\cO_X,j,\cI)$ un ferm\'e analytique d'un ouvert~$U$ de~$\E{n}{\cA}$. On a un morphisme naturel $j^\sharp \colon \cO_{U} \to j_{\ast}\cO_{X} = \cO_{U}/\cI$. Le couple~$\mor{j}$ d\'efinit un morphisme d'espaces localement annel\'es de~$(X,\cO_X)$ dans~$(U,\cO_U)$ appel\'e \emph{immersion ferm\'ee} de~$X$ dans~$U$.

Ce morphisme est injectif et induit des isomorphismes isom\'etriques entre les corps r\'esiduels.
\end{defi}

\begin{defi}\label{def:modelelocal}\index{Modele local analytique@Mod\`ele local analytique|textbf}\index{Morphisme!structural|textbf}
Un \emph{mod\`ele local $\cA$-analytique} est la donn\'ee d'un entier~$n$, d'un ouvert~$U$ de~$\E{n}{\cA}$ et d'un ferm\'e analytique~$X$ de~$U$.

Le morphisme $\E{n}{\cA} \to \cM(\cA)$ de l'exemple~\ref{ex:structural} induit un morphisme d'espaces localement annel\'es $\pi \colon X \to \cM(\cA)$ appel\'e \emph{morphisme structural}.
\end{defi}

\index{Morphisme analytique|(}
\begin{defi}\label{def:morphismefermeouvert}\index{Morphisme analytique|textbf}
Soient $X$ et $Y$ des mod\`eles locaux $\cA$-analytiques. Par d\'efinition, il existe des immersions ferm\'ees $j_{U} \colon X \to U$ et $j_{V} \colon Y \to V$, o\`u $U$ et $V$ sont des ouverts d'espaces analytiques sur~$\cA$.

Un \emph{morphisme analytique} de~$X$ dans~$Y$ est un morphisme d'espaces localement annel\'es
$\varphi \colon X \to Y$
v\'erifiant la condition suivante : pour tout point~$x$ de~$X$, il existe un voisinage ouvert~$U'$ de~$j_{U}(x)$ dans~$U$ et un morphisme analytique 
$\tilde \varphi \colon U' \to V$
tels que le diagramme
\[\begin{tikzcd}
U' \arrow[r, "\tilde{\varphi}"] &V\\
j_{U}^{-1}(U') \arrow[u, hook, "j_{U}"] \arrow[r, "\varphi"'] & \arrow[u, hook, "j_{V}"] Y
\end{tikzcd}\]
commute.
\end{defi}

\begin{exem}\label{ex:structuralmodele}\index{Morphisme!structural}
Le morphisme structural $\pi \colon X \to \cM(\cA)$ est un morphisme analytique.
\end{exem}

\begin{rema}
Soit $\varphi \colon X \to Y$ un morphisme analytique entre mod\`eles locaux $\cA$-analytiques. Soient~$X'$ et~$Y'$ des ouverts de~$X$ et~$Y$ respectivement tels que $\varphi(X') \subset Y'$. Alors, $\varphi$ induit par restriction un morphisme analytique $X' \to Y'$.
\end{rema}

\begin{lemm}\label{lem:localmodele}
La notion de morphisme analytique entre mod\`eles locaux $\cA$-analytiques est locale \`a la source et au but. 

Soit $\varphi \colon X \to Y$ un morphisme d'espaces localement annel\'es entre mod\`eles locaux $\cA$-analytiques. Alors $\varphi$ est un morphisme analytique si, et seulement si, pour tout point $x$ de~$X$ et tout point~$y$ de~$Y$, il existe un voisinage ouvert~$X'$ de~$x$ et un voisinage ouvert~$Y'$ de~$y$ tels que $\varphi(X') \subset Y'$ et le morphisme $X' \to Y'$ induit par~$\varphi$ soit un morphisme analytique.
\qed
\end{lemm}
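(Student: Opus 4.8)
The plan is to reduce the statement to Definition~\ref{def:morphismefermeouvert}, which by design is a condition to be checked at each point of the source, together with Proposition~\ref{isomouvert}, which shows that being an analytic morphism between opens of affine analytic spaces over~$\cA$ is itself a purely pointwise condition on the source (an isometric embedding on residual fields) making no global reference to the target. Both facts already make ``analytic'' a notion that is local on the source; the remark preceding the lemma --- restriction of an analytic morphism to opens is analytic --- then provides locality on the target. So the whole lemma should come down to carefully unwinding these definitions.

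For the easy implication I would simply note that if $\varphi$ is analytic, then for any choice of opens $X' \ni x$ of $X$ and $Y' \ni y$ of $Y$ with $\varphi(X') \subset Y'$ --- for instance $X' = X$ and $Y' = Y$ --- the induced morphism $X' \to Y'$ is analytic by that same restriction remark. This also settles one half of the ``local to the source and to the target'' assertion.

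The substance is in the converse. Fixing $x \in X$ and applying the hypothesis with $y = \varphi(x)$, I get opens $X' \ni x$ in $X$, $Y' \ni \varphi(x)$ in $Y$ with $\varphi(X') \subset Y'$ and $\varphi' := \varphi|_{X'} \colon X' \to Y'$ analytic. Writing $X \hookrightarrow U$ and $Y \hookrightarrow V$ for the ambient closed immersions into opens of affine analytic spaces, Remark~\ref{rem:fermesanalytiques} lets me realize $X'$ as a closed analytic subset of an open $U'' \subset U$ (with the induced immersion) and $Y'$ as a closed analytic subset of an open $V'' \subset V$. Then Definition~\ref{def:morphismefermeouvert}, applied to $\varphi'$ at the point $x$, furnishes an open neighbourhood $U'$ of $j_U(x)$ in $U''$ --- hence in $U$ --- together with an analytic lift $\tilde\varphi \colon U' \to V''$ of $\varphi'$ over $j_U^{-1}(U') = X \cap U' \subset X'$. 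Composing $\tilde\varphi$ with the open inclusion $V'' \hookrightarrow V$ produces the analytic morphism $U' \to V$ required by Definition~\ref{def:morphismefermeouvert} at $x$; since $x$ is arbitrary this shows that $\varphi$ is analytic, and the general ``local to the source and to the target'' statement then follows by applying the criterion at each point of $X$ against any given pair of open covers.

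I expect the only delicate point to be this last promotion: the lift $\tilde\varphi$ is a priori valued in the auxiliary open $V'' \subset V$, and to view it as a lift valued in $V$ itself I need (i) that an open immersion of opens of affine analytic spaces is an analytic morphism --- clear from Definition~\ref{def:morphismeouvertaffine} since restriction of functions is norm-contracting, or from Proposition~\ref{isomouvert} --- and (ii) that analyticity of morphisms of opens is stable under composition, again immediate from Definition~\ref{def:morphismeouvertaffine}. One must also check that the square witnessing that $\tilde\varphi$ lifts $\varphi'$ transports correctly through the inclusions $U'' \hookrightarrow U$ and $V'' \hookrightarrow V$, and that $\varphi'$ and $\varphi$ agree on $X \cap U'$; both are routine bookkeeping.
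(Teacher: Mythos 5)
Your argument is correct and is exactly the routine unwinding of Définitions~\ref{def:morphismeouvertaffine} et~\ref{def:morphismefermeouvert} (plus la remarque de restriction et la stabilité par composition des morphismes entre ouverts d'espaces affines) que le texte laisse au lecteur, la preuve étant omise dans le papier. Les points que vous signalez comme délicats — promotion du relèvement $\tilde\varphi \colon U' \to V''$ en un relèvement à valeurs dans~$V$ via l'immersion ouverte $V'' \hookrightarrow V$, et compatibilité des carrés commutatifs — sont bien traités et suffisent.
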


\begin{lemm}\label{lem:compositionmodele}
Les morphismes compatibles entre mod\`eles locaux $\cA$-analytiques analytiques se composent.

Soient $\varphi \colon X \to Y$ et $\psi \colon Y \to Z$ des morphismes analytiques entre mod\`eles locaux $\cA$-analytiques. Alors $\psi \circ \varphi$ est un morphisme analytique.
\qed
\end{lemm}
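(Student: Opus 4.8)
The plan is to reduce everything to the already-understood case of open subsets of affine analytic spaces, and then to chase the two commutative squares furnished by Definition~\ref{def:morphismefermeouvert}.

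First I would record the base case: if $\varphi \colon U \to V$ and $\psi \colon V \to W$ are analytic morphisms between open subsets of affine analytic spaces over~$\cA$, then $\psi \circ \varphi$ is analytic. Indeed, $\psi \circ \varphi$ is a morphism of locally ringed spaces, being a composition of such, and by Proposition~\ref{isomouvert} it is enough to check that for every $x \in U$ the homomorphism $(\psi\circ\varphi)^\sharp = \varphi^\sharp \circ \psi^\sharp$ induces an isometric field embedding $\kappa(\psi(\varphi(x))) \to \kappa(x)$. But this embedding factors as the composite of the isometric embeddings $\kappa(\psi(\varphi(x))) \to \kappa(\varphi(x))$ and $\kappa(\varphi(x)) \to \kappa(x)$ provided by Proposition~\ref{isomouvert} applied to~$\psi$ and to~$\varphi$, hence is isometric.

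Now for the general case, let $\varphi \colon X \to Y$ and $\psi \colon Y \to Z$ be analytic morphisms between local models, and write $X$ (resp.\ $Y$, resp.\ $Z$) as a closed analytic subset of an open subset~$U$ (resp.\ $V$, resp.\ $W$) of an affine analytic space over~$\cA$, with closed immersions $j_U \colon X \to U$, $j_V \colon Y \to V$, $j_W \colon Z \to W$. The composite $\psi \circ \varphi \colon X \to Z$ is automatically a morphism of locally ringed spaces; to show it is analytic, fix $x \in X$ and produce a lift as in Definition~\ref{def:morphismefermeouvert}. Applying that definition to~$\varphi$ at~$x$ yields an open neighbourhood~$U_1$ of~$j_U(x)$ in~$U$ and an analytic morphism $\tilde\varphi \colon U_1 \to V$ with $j_V \circ \varphi = \tilde\varphi \circ j_U$ over $j_U^{-1}(U_1)$; setting $y := \varphi(x)$, this identity gives in particular $\tilde\varphi(j_U(x)) = j_V(y)$. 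Applying the definition to~$\psi$ at~$y$ yields an open neighbourhood~$V_1$ of~$j_V(y)$ in~$V$ and an analytic morphism $\tilde\psi \colon V_1 \to W$ with $j_W \circ \psi = \tilde\psi \circ j_V$ over $j_V^{-1}(V_1)$. I then set $U_2 := \tilde\varphi^{-1}(V_1)$, an open neighbourhood of~$j_U(x)$ in~$U$ thanks to the previous identity, and $\tilde\chi := \tilde\psi \circ (\tilde\varphi|_{U_2}) \colon U_2 \to W$, which is analytic by the base case.

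It then remains to check that $\tilde\chi$ lifts $\psi \circ \varphi$ near~$x$, i.e.\ that $\tilde\chi \circ j_U = j_W \circ (\psi\circ\varphi)$ over $j_U^{-1}(U_2)$ as morphisms of locally ringed spaces. For $x' \in j_U^{-1}(U_2)$ one has $j_V(\varphi(x')) = \tilde\varphi(j_U(x')) \in V_1$, so $\varphi(x') \in j_V^{-1}(V_1)$, and the two squares above restrict to the commutative diagram
\[\begin{tikzcd}
U_2 \arrow[r, "\tilde\varphi"] & V_1 \arrow[r, "\tilde\psi"] & W\\
j_U^{-1}(U_2) \arrow[u, hook] \arrow[r, "\varphi"'] & j_V^{-1}(V_1) \arrow[u, hook] \arrow[r, "\psi"'] & Z \arrow[u, hook]
\end{tikzcd}\]
whose two inner squares commute; the outer rectangle gives the desired identity, so $\psi \circ \varphi$ satisfies Definition~\ref{def:morphismefermeouvert} at every point and is therefore analytic. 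I do not expect any real obstacle: the argument is essentially a diagram chase, the one point demanding attention being the identity $\tilde\varphi(j_U(x)) = j_V(y)$, which legitimizes shrinking the source to $U_2 = \tilde\varphi^{-1}(V_1)$ so that $\tilde\psi$ can be composed with~$\tilde\varphi$; one must also keep track of the ambient open sets $U,V,W$, since the lift required by Definition~\ref{def:morphismefermeouvert} must land in the fixed ambient open set of the target.
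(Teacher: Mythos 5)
Your argument is correct, and it is essentially the routine verification that the paper leaves implicit (the lemma is stated with no written proof): treat first the affine case, then chase the two lifting squares of Definition~\ref{def:morphismefermeouvert}, the key point being precisely the one you isolate, namely $\tilde\varphi(j_U(x))=j_V(\varphi(x))$, which allows you to shrink to $U_2=\tilde\varphi^{-1}(V_1)$ so that the lifts compose with target the fixed ambient open set~$W$. One small remark: for the base case you do not even need Proposition~\ref{isomouvert}; the contracting condition of Definition~\ref{def:morphismeouvertaffine} composes directly, since for compacts $U'\subset U$ and $W'\subset W$ with $(\psi\circ\varphi)(U')\subset W'$ one may take the intermediate compact $V':=\varphi(U')$ and chain the two inequalities $\|\varphi^\sharp(\psi^\sharp f)\|_{U'}\le\|\psi^\sharp f\|_{V'}\le\|f\|_{W'}$.
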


\begin{prop}\label{prop:isometrie}\index{Morphisme!de corps r\'esiduels}
Soit $\varphi \colon X \to Y$ un morphisme entre mod\`eles locaux $\cA$-analytiques. Pour tout point~$x$ de~$X$, l'extension de corps $\kappa(\varphi(x)) \to \kappa(x)$ induite par le morphisme~$\varphi^\sharp$ est isom\'etrique.

\end{prop}
\begin{proof}
On se ram\`ene imm\'ediatement au cas o\`u~$X$ et~$Y$ sont des ouverts d'espaces affines analytiques. Le r\'esultat d\'ecoule alors de la proposition~\ref{isomouvert}.
\end{proof}

\index{Morphisme analytique|)}
\index{Modele local analytique@Mod\`ele local analytique|)}

\subsection{Espaces $\cA$-analytiques}
\index{Espace analytique|(}

\begin{defi}\label{def:carte}\index{Carte analytique|textbf}\index{Atlas analytique|textbf}
Soit~$X$ un espace localement annel\'e. 

Une \emph{carte $\cA$-analytique} de~$X$ est la donn\'ee d'un ouvert~$U$ de~$X$, d'un mod\`ele local $\cA$-analytique~$Z_{U}$ et d'un isomorphisme d'espaces localement annel\'es~$j_U \colon U\xrightarrow[]{\sim} Z_U$. 

 Un \emph{atlas $\cA$-analytique} de~$X$ est un ensemble $\{(U,Z_{U},j_{U})\}_{U \in \cU}$ de cartes $\cA$-analytiques de~$X$ tel que l'ensemble $\{U\}_{U\in \cU}$ forme un recouvrement ouvert de~$X$ et que, pour tous $U,U' \in \cU$, le morphisme 
\[j_{U}\circ j_{U'}^{-1} \colon j_{U'}(U\cap U') \too j_{U}(U\cap U')\]
soit un isomorphisme analytique (entre mod\`eles locaux $\cA$-analytiques).

Deux atlas $\cA$-analytiques sur~$X$ sont dits \emph{compatibles} si leur r\'eunion est encore un atlas $\cA$-analytique de~$X$. Cela d\'efinit une relation d'\'equivalence sur l'ensemble des atlas $\cA$-analytiques d'un espace localement annel\'e. 
\end{defi}

\begin{rema}
Il existe un unique atlas maximal compatible avec un atlas donn\'e sur un espace $\cA$-analytique. On l'obtient en consid\'erant la r\'eunion de tous les atlas avec lesquels il est compatible.
\end{rema}

\begin{defi}\label{def:espaceanalytique}\index{Espace analytique|textbf}\index{Morphisme!structural|textbf}
Un \emph{espace $\cA$-analytique} est un espace localement annel\'e muni d'une classe d'\'equivalence d'atlas $\cA$-analytiques.

Les morphismes structuraux des cartes d'un espace $\cA$-analytique~$X$ se recollent en un morphisme d'espaces localement annel\'es $\pi \colon X \to \cM(\cA)$, encore appel\'e \emph{morphisme structural}.
\end{defi}

Lorsque nous parlerons d'un atlas d'un $\cA$-espace analytique, il s'agira d'un \'el\'ement de cette classe d'\'equivalence.

\begin{rema}\label{remarque_base_espace_analytique}
Soit~$X$ un espace $\cA$-analytique. Tout ouvert~$U$ de~$X$ h\'erite par restriction d'une structure d'espace $\cA$-analytique.
\end{rema}

\begin{rema}\label{rem:proplocales}
Il d\'ecoule de la d\'efinition \ref{def:espaceanalytique} que les propri\'et\'es locales des espaces affines $\cA$-analytiques se transf\`erent aux espaces $\cA$-analytiques g\'en\'eraux. Par exemple, tout espace $\cA$-analytique est localement compact (et donc localement s\'epar\'e).
\end{rema}

\begin{rema}\label{rem:kansansbords}
Lorsque $\cA = k$ est un corps valu\'e complet, la d\'efinition~\ref{def:espaceanalytique} ne permet pas de retrouver tous les espaces $k$-analytiques d\'efinis par V.~Berkovich dans~\cite{Ber1,Ber2}, mais seulement les espaces sans bords.
\end{rema}

Soit~$X$ un espace $\cA$-analytique. \`A tout point~$x$ de~$X$, on peut associer un corps r\'esiduel~$\kappa(x)$. En choisissant une carte de~$X$ contenant~$x$, on peut munir~$\kappa(x)$ d'une valeur absolue. D'apr\`es la proposition~\ref{prop:isometrie}, celle-ci ne d\'epend pas du choix de la carte.
\nomenclature[Ka]{$\kappa(x)$}{corps r\'esiduel d'un point~$x$ d'un espace $\cA$-analytique}

\begin{defi}\index{Corps!residuel@r\'esiduel|textbf}\index{Corps!residuel complete@r\'esiduel compl\'et\'e|textbf}\index{Evaluation@\'Evaluation|textbf}%
\nomenclature[Kb]{$\cH(x)$}{corps r\'esiduel compl\'et\'e du point~$x$}%
\nomenclature[Kc]{$f(x)$}{\'evaluation en $x$ d'une section $f$ de $\cO_{X}$}
Soit~$X$ un espace $\cA$-analytique. Soit~$x \in X$. On munit le corps r\'esiduel~$\kappa(x)$ de la valeur absolue provenant de n'importe quelle carte de~$X$ contenant~$x$. On note~$\cH(x)$ le compl\'et\'e correspondant.

Pour tout $f\in \cO_{X}(X)$, on note~$f(x)$ l'image de~$f$ dans~$\cH(x)$.
\end{defi}

\begin{lemm}\label{lem:evaluationcontinue}\index{Evaluation@\'Evaluation}
Soit~$X$ un espace $\cA$-analytique. Pour tout $f\in\cO_X(X)$, l'application 
\[\fonction{|f|}{X}{\R_{\ge 0}}{x}{|f(x)|}\]
est continue.
\end{lemm}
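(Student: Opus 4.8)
The plan is to reduce the statement to the affine case already established in Lemma~\ref{lem:evaluationcontinueaffine}, using the definition of an $\cA$-analytic space as a space locally modeled on analytic closed subsets of opens in affine analytic spaces. First I would observe that continuity is a local property on~$X$: it suffices to check that $|f|$ is continuous in a neighbourhood of each point $x\in X$. So fix $x\in X$ and choose a chart $(U,Z_U,j_U)$ of the given atlas with $x\in U$, where $Z_U$ is an analytic closed subset of an open $W\subset\E{n}{\cA}$ for some~$n$, with structural data $(Z_U,\cO_{Z_U},k,\cI)$, $k\colon Z_U\hookrightarrow W$ the inclusion and $\cO_{Z_U}=k^{-1}(\cO_W/\cI)$. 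Transporting $f|_U$ through the isomorphism $j_U$, we get a section $g\in\cO_{Z_U}(Z_U)$, and for every $y\in U$ the isometric identification of residual fields (coming from the definition of a chart together with Proposition~\ref{prop:isometrie}) gives $|f(y)|=|g(j_U(y))|$; since $j_U$ is a homeomorphism onto~$Z_U$, it is enough to prove that $z\mapsto|g(z)|$ is continuous on~$Z_U$.

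Next I would unwind the evaluation on a closed analytic subset. For $z\in Z_U$, the stalk $\cO_{Z_U,z}=(\cO_W/\cI)_z=\cO_{W,z}/\cI_z$, and the completed residual field $\cH(z)$ computed in $Z_U$ coincides with the one computed in~$W$ at the point $k(z)$ — indeed $\kappa(z)=\cO_{Z_U,z}/\m_z$ is the image of $\kappa(k(z))=\cO_{W,k(z)}/\m_{k(z)}$, carrying the restricted absolute value, and taking completions identifies the two (the relevant quotient map is surjective on residue fields and isometric for the induced absolute value). Therefore, lifting $g$ near~$z$ to a section $\tilde g$ of $\cO_W$ on a small open $W'\subset W$ with $k(z)\in W'$ — which is possible because $k^\sharp\colon\cO_W\to k_\ast\cO_{Z_U}$ is surjective, so any section of $\cO_{Z_U}$ extends locally to a section of $\cO_W$ — we get $|g(z')|=|\tilde g(k(z'))|$ for all $z'\in Z_U\cap W'$. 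By Lemma~\ref{lem:evaluationcontinueaffine}, the function $w\mapsto|\tilde g(w)|$ is continuous on~$W'$; restricting along the continuous (indeed topological) inclusion $k$ shows that $z'\mapsto|g(z')|$ is continuous on the neighbourhood $Z_U\cap W'$ of~$z$. Since $z$ was arbitrary in $Z_U$, the map $|g|$ is continuous on all of~$Z_U$, hence $|f|$ is continuous on~$U$, and letting the chart~$U$ vary over the atlas gives continuity of $|f|$ on~$X$.

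The only genuinely delicate point is the compatibility of the three descriptions of evaluation — on $X$ through a chart, on the local model $Z_U$, and on the ambient open $W$ — and in particular the fact that passing to a closed analytic subset does not change the completed residual field at a point and that sections of $\cO_{Z_U}$ lift locally to sections of $\cO_W$; both follow from the very definition of a closed analytic subset (the structure sheaf is $k^{-1}$ of a quotient of $\cO_W$, so $k^\sharp$ is surjective) together with the isometry statement of Proposition~\ref{prop:isometrie}. Everything else is a routine patching argument, so I expect no real obstacle; the proof in the text is presumably a one-line reduction: ``On se ram\`ene au cas d'un ouvert d'un espace affine analytique par la d\'efinition, et le r\'esultat d\'ecoule alors du lemme~\ref{lem:evaluationcontinueaffine}.''
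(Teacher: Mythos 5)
Votre démonstration est correcte et suit exactement la même stratégie que celle du texte, qui se réduit en une ligne au cas d'un modèle local puis d'un ouvert d'espace affine analytique pour conclure par le lemme~\ref{lem:evaluationcontinueaffine} ; vous avez d'ailleurs deviné la formulation. Les vérifications de compatibilité que vous détaillez (invariance du corps résiduel complété par passage à un fermé analytique, relèvement local des sections) sont bien les points implicites de cette réduction.
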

\begin{proof}
On se ram\`ene au cas d'un mod\`ele local, puis au cas d'un espace affine analytique, et on conclut alors par le lemme~\ref{lem:evaluationcontinueaffine}.
\end{proof}

\index{Morphisme analytique|)}

\begin{defi}\label{def:morphismegeneral}\index{Morphisme analytique|textbf}
\nomenclature[Ke]{$\Hom_{\cAAn}(X,Y)$}{pour $X,Y$ espaces $\cA$-analytiques, ensemble des morphismes analytiques de~$X$ dans~$Y$}
Soient~$X$ et~$Y$ deux espaces $\cA$-analytiques. Un \emph{morphisme analytique} de~$X$ dans~$Y$ est une application $\varphi \colon X \to Y$ v\'erifiant la propri\'et\'e suivante~: pour tout $x\in X$, il existe une carte~$U$ de~$X$ contenant~$x$ et une carte~$V$ de~$Y$ telles que $\varphi^{-1}(V)$ soit un ouvert de~$X$ et le morphisme
\[ Z_U \cap j_{U}(\varphi^{-1}(V)) \too Z_V\] 
induit par $j_V\circ\varphi \circ j_U^{-1}$ soit un morphisme analytique (entre mod\`eles locaux $\cA$-analytiques). 

On note $\Hom_{\cAAn}(X,Y)$ l'ensemble des morphismes analytiques de~$X$ dans~$Y$.
\end{defi}

%

\begin{rema}\label{atlas}
Un morphisme analytique $\varphi \colon X \to Y$ est un morphisme d'espaces localement annel\'es. 

La d\'efinition donn\'e est abusive en ce qu'elle ne pr\'ecise pas les atlas choisis sur~$X$ et~$Y$ (au sein de la classe d'\'equivalence fix\'ee). On v\'erifie cependant qu'elle ne d\'epend pas de ces choix, car la notion de morphisme analytique entre mod\`eles locaux $\cA$-analytiques est locale (\cf~lemme~\ref{lem:localmodele}).

\end{rema}

\begin{exem}\label{ex:structuralAanalytique}\index{Morphisme!structural}
Le morphisme structural $\pi \colon X \to \cM(\cA)$ est un morphisme analytique.
\end{exem}

Nous disposons de r\'esultats analogues \`a ceux des lemmes~\ref{lem:localmodele} et~\ref{lem:compositionmodele} et de la proposition~\ref{prop:isometrie}.

\begin{prop}\label{prop:proprietemorphismes}\index{Morphisme!de corps r\'esiduels}

\

\begin{enumerate}[i)]
\item La notion de morphisme analytique d'espaces $\cA$-analytiques est locale \`a la source et au but (au sens du lemme \ref{lem:localmodele}).

\item Les morphismes compatibles d'espaces $\cA$-analytiques se composent (au sens du lemme \ref{lem:compositionmodele}).

\item Les morphismes d'espaces $\cA$-analytiques induisent des plongements isom\'etriques entre les corps r\'esiduels (au sens de la proposition~\ref{prop:isometrie}).
\end{enumerate}
\qed
\end{prop}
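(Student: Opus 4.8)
The three assertions are the global counterparts of Lemmas~\ref{lem:localmodele} and~\ref{lem:compositionmodele} and Proposition~\ref{prop:isometrie}, and the plan is simply to reduce each of them to those local-model statements. The two devices that make the reduction work are: first, by Remark~\ref{atlas}, analyticity of a morphism of locally ringed spaces between $\cA$-analytic spaces can be tested on a \emph{single} pair of atlases; and second, open subspaces of $\cA$-analytic spaces (Remark~\ref{remarque_base_espace_analytique}), and open subspaces of local models (Remark~\ref{rem:fermesanalytiques}), inherit the relevant structure, so one is free to shrink and refine charts at will.

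\textbf{Part i) (locality).} Suppose $X=\bigcup_i X_i$ and $Y=\bigcup_j Y_j$ are open covers, and $\varphi\colon X\to Y$ is a morphism of locally ringed spaces such that each induced morphism $X_i\cap\varphi^{-1}(Y_j)\to Y_j$ is analytic. Choose $\cA$-analytic atlases $\cU_i$ of $X_i$ and $\cV_j$ of $Y_j$; then $\cU:=\bigcup_i\cU_i$ and $\cV:=\bigcup_j\cV_j$ are atlases of $X$ and $Y$. For $U\in\cU_i$ and $V\in\cV_j$, the chart-level morphism attached to the pair $(U,V)$ is the restriction to opens of the chart-level morphism attached to the analytic morphism $X_i\cap\varphi^{-1}(Y_j)\to Y_j$; by Lemma~\ref{lem:localmodele} it is therefore an analytic morphism between local models, so $\varphi$ is analytic by Remark~\ref{atlas}. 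The converse implication is immediate, since restricting to opens preserves analyticity both for local models (Lemma~\ref{lem:localmodele}) and, a fortiori, for $\cA$-analytic spaces.

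\textbf{Part ii) (composition) and part iii) (residue fields).} Let $\varphi\colon X\to Y$ and $\psi\colon Y\to Z$ be analytic; the composite $\psi\circ\varphi$ is a morphism of locally ringed spaces, and we must check the analyticity condition. Fix $x\in X$, put $z:=\psi(\varphi(x))$, and choose a chart $W$ of $Z$ with $z\in W$. As $\psi^{-1}(W)$ is open and contains $\varphi(x)$, after refining the atlas of $Y$ we may pick a chart $V$ with $\varphi(x)\in V\subseteq\psi^{-1}(W)$; by continuity of $\varphi$ and after refining the atlas of $X$ we may pick a chart $U$ with $x\in U$ and $\varphi(U)\subseteq V$. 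The chart-level morphisms $U\to V$ (induced by $\varphi$) and $V\to W$ (induced by $\psi$) are analytic morphisms between local models, so Lemma~\ref{lem:compositionmodele} shows their composite $U\to W$ is analytic; this composite is exactly the chart-level morphism of $\psi\circ\varphi$ on a neighbourhood of $x$, so by part~i) $\psi\circ\varphi$ is analytic. For part~iii), with $U$, $V$ chosen as above for a single analytic $\varphi$, Proposition~\ref{prop:isometrie} applied to the analytic morphism $U\to V$ between local models gives an isometric embedding $\kappa(\varphi(x))\to\kappa(x)$ for the absolute values coming from $V$ and $U$; by the very definition of the absolute values on $\kappa(x)$ and $\kappa(\varphi(x))$ (which is legitimate precisely by Proposition~\ref{prop:isometrie}), this is the desired isometric embedding, and it extends to $\cH(\varphi(x))\to\cH(x)$ by completion.

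\textbf{Main point of care.} There is no new analytic content: everything reduces to the already proved local-model statements. The only thing to handle attentively is the interposition of charts in parts~ii) and~iii) — one must shrink charts so that $\varphi$ carries a neighbourhood of $x$ into a chart $V$ of $Y$ which, in part~ii), is small enough to land under $\psi$ inside the prescribed chart $W$ of $Z$. This is harmless because atlases may be refined freely and open subsets of local models are again local models; once this is arranged, Remark~\ref{atlas} does the rest.
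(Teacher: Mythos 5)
Your proof is correct and follows exactly the route the paper intends: the statement is given with no written proof precisely because it reduces, via Remark~\ref{atlas} and the stability of local models under passage to open subsets, to Lemmas~\ref{lem:localmodele} and~\ref{lem:compositionmodele} and Proposition~\ref{prop:isometrie}. Your careful interposition of charts in parts~ii) and~iii) is the only point requiring attention, and you handle it correctly.
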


\begin{defi}\index{Categorie@Cat\'egorie!des espaces A-analytiques@des espaces $\cA$-analytiques|textbf}
\nomenclature[Kd]{$\cAAn$}{cat\'egorie des espaces~$\cA$-analytiques}
La \emph{cat\'egorie des espaces $\cA$-analytiques} est la cat\'egorie dont les objets sont les espaces~$\cA$-analytiques et dont les morphismes sont les morphismes analytiques. Nous noterons~$\cAAn$ cette cat\'egorie.
\end{defi}


\index{Espace analytique|)}
\index{Morphisme analytique|)}

\section[Espaces analytiques au-dessus de~$\cA$]{Cat\'egorie des espaces analytiques au-dessus de~$\cA$}\label{sec:An_A}
\index{Espace analytique!au-dessus de~$\cA$|(}

Nous d\'efinissons ici une cat\'egorie plus grande que~$\cAAn$. Elle nous permettra de proc\'eder \`a des extensions des scalaires.

\begin{defi}\label{def:espaceaudessusdeA}\index{Espace analytique!au-dessus de~$\cA$|textbf}
Un \emph{espace analytique au-dessus de~$\cA$} est la donn\'ee d'un morphisme born\'e d'anneaux de Banach~$\cA\to\cB$ et d'un espace~$\cB$-analytique. 
\end{defi}

Remarquons que l'espace~$\cB$-analytique est alors muni une structure d'espace localement~$\cA$-annel\'e.

\medbreak

Soit $\tau \colon \cB\to\cB'$ un morphisme born\'e de~$\cA$-alg\`ebres de Banach. Notons $\cM(\tau) \colon \cM(\cB') \to \cM(\cB)$ le morphisme d'espaces localement annel\'es associ\'e (\cf~lemme~\ref{lem:M(tau)Anfoncteur}).

Nous allons maintenant d\'efinir une notion de morphisme d'espaces analytiques au-dessus de~$\tau$, allant d'un espace~$\cB'$-analytique vers un espace~$\cB$-analytique. Nous proc\'ederons de la m\^eme mani\`ere que pour les morphismes d'espaces~$\cA$-analytiques en commen\c cant par d\'efinir les morphismes entre ouvert d'espaces affines analytiques. 

\index{Morphisme analytique!au-dessus d'un morphisme d'anneaux de Banach|(}
\begin{defi}\label{defi:morphismeaudessusdefouvertaffine}\index{Morphisme analytique!au-dessus d'un morphisme d'anneaux de Banach|textbf}
Soient~$U$ un ouvert d'un espace affine analytique sur~$\cB'$ et $V$ un ouvert d'un espace affine analytique sur~$\cB$. Un \emph{morphisme analytique de~$U$ dans~$V$ au-dessus de~$\tau$} est un morphisme  d'espaces localement annel\'es $\varphi \colon U\to V$ qui fait commuter le diagramme
\[\begin{tikzcd}
U \arrow[r, "\varphi"] \arrow[d, "\pi"]& V \arrow[d, "\pi"]\\
\cM(\cB')  \arrow[r, "\cM(\tau)"] & \cM(\cB)
\end{tikzcd},\]
o\`u les morphismes verticaux sont les morphismes structuraux, 
et qui satisfait la condition suivante~: pour toute partie compacte~$U'$ de~$U$ et toute partie compacte~$V'$ de~$V$ telles que $\varphi(U') \subset V'$, le morphisme $\cO_V(V')\to\cO_U(U')$ induit par~$\varphi^\sharp$ est contractant (\textit{i.e.} pour tout~$a\in\cO_V(V')$, on a $\|\varphi^\sharp(a)\|_{U'}\leq\|a\|_{V'}$, o\`u $\nm_{U'}$ et $\nm_{V'}$ d\'esignent les normes uniformes sur~$U'$ et~$V'$).
\end{defi}


\begin{rema}
Dans la situation de la d\'efinition pr\'ec\'edente, le morphisme~$\tau$ munit~$U$ d'une structure d'espace localement $\cB$-annel\'e et le morphisme analytique~$\varphi$ respecte cette structure.
\end{rema}

\begin{exem}\label{ex:tildefn}
Soit~$n\in\N$. D'apr\`es le lemme~\ref{lem:M(tau)Anfoncteur}, le morphisme~$\tau$ induit un morphisme d'espaces localement annel\'e $\E{n}{\tau} \colon \E{n}{\cB'} \to \E{n}{\cB}$. C'est un morphisme analytique au-dessus de~$\tau$.
\end{exem}

\begin{defi}\label{defi:morphismeaudessusdeffermeouvert}\index{Morphisme analytique!au-dessus d'un morphisme d'anneaux de Banach|textbf}
Soient $X$ un ferm\'e analytique d'un ouvert~$U$ d'un espace affine analytique sur~$\cA$ et~$Y$ un  ferm\'e analytique d'un ouvert~$V$ d'un espace affine analytique sur~$\cA$. Notons $j_{U} \colon X \to U$ et $j_{V} \colon Y \to V$ les immersions ferm\'ees associ\'ees.

Un \emph{morphisme analytique de~$X$ dans~$Y$ au-dessus de~$\tau$} est un morphisme d'espaces localement annel\'es
$\varphi \colon X \to Y$
v\'erifiant la condition suivante : pour tout point~$x$ de~$X$, il existe un voisinage ouvert~$U'$ de~$j_{U}(x)$ dans~$U$ et un morphisme analytique 
$\tilde \varphi \colon U' \to V$ au-dessus de~$\tau$ 
tels que le diagramme
\[\begin{tikzcd}
U' \arrow[r, "\tilde{\varphi}"] &V\\
j_{U}^{-1}(U') \arrow[u, hook, "j_{U}"] \arrow[r, "\varphi"] & \arrow[u, hook, "j_{V}"] Y
\end{tikzcd}\]
commute.
\end{defi}

\begin{defi}\label{defi:morphismeaudessusdef}\index{Morphisme analytique!au-dessus d'un morphisme d'anneaux de Banach|textbf}%
\nomenclature[Ki]{$\Hom_{\An_{\cA},\tau}(X,Y)$}{pour $X$ espace $\cB$-analytique, $Y$ espace $\cB'$-analytique et $\tau\colon \cB \to \cB'$ morphisme d'anneaux de Banach, ensemble des morphismes analytiques de~$X$ dans~$Y$ au-dessus de~$\tau$}
Soient~$X$ un espace $\cB'$-analytique et~$Y$ un espace $\cB$-analytique. 
Un \emph{morphisme analytique de~$X$ dans~$Y$ au-dessus de~$\tau$} est une application $\varphi \colon X \to Y$ v\'erifiant la propri\'et\'e suivante~: pour tout $x\in X$, il existe une carte~$U$ de~$X$ contenant~$x$ et une carte~$V$ de~$Y$ telles que $\varphi^{-1}(V)$ soit un ouvert de~$X$ et le morphisme
\[ Z_U \cap j_{U}(\varphi^{-1}(V)) \too Z_V\] 
induit par $j_V\circ\varphi \circ j_U^{-1}$ soit un morphisme analytique au-dessus de~$\tau$. 


On note $\Hom_{\An_{\cA},\tau}(X,Y)$ l'ensemble des morphismes analytiques de~$X$ dans~$Y$ au-dessus de~$\tau$.
\end{defi}

\begin{rema}
Comme dans le cas des morphismes entres espaces $\cA$-analytiques, les morphismes analytiques au-dessus de~$\tau$ sont des morphismes d'espaces localement annel\'es.

\end{rema}

\begin{rema}
Dans la situation de la d\'efinition pr\'ec\'edente, on a un diagramme commutatif 
\[\begin{tikzcd}
X \arrow[r, "\varphi"] \arrow[d]& Y \arrow[d]\\
\cM(\cB')  \arrow[r, "\cM(\tau)"] & \cM(\cB)
\end{tikzcd}.\]
En particulier, le morphisme analytique~$\varphi$ respecte les structures d'espaces localement $\cB$-annel\'es de~$X$ (induite par le morphisme $\tau \colon \cB \to \cB'$) et~$Y$. 
\end{rema}

\begin{rema}
Supposons que le morphisme~$\tau$ est le morphisme identit\'e $\id \colon \cB \to \cB$ (et donc que $\cB' = \cB$). La notion de morphisme analytique au-dessus de~$\id$ co\"incide alors avec celle de morphisme analytique entre espaces $\cB$-analytiques. 
\end{rema}

Les propri\'et\'es des morphismes entre espaces $\cA$-analytiques se transposent \emph{mutatis mutandis} \`a notre nouveau cadre.

\begin{prop}\label{prop:propmorphismeaudessusdef}\index{Morphisme!de corps r\'esiduels}
\

\begin{enumerate}[i)]
\item La notion de morphisme analytique au-dessus de~$\tau$ est locale \`a la source et au but (au sens du lemme \ref{lem:localmodele}).

\item Les morphismes analytiques au-dessus de~$\tau$ induisent des plongements isom\'etriques entre les corps r\'esiduels (au sens de la proposition~\ref{prop:isometrie}).
\end{enumerate}
\qed

\end{prop}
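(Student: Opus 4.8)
The plan is to transpose, almost verbatim, the arguments already used in the absolute case (Lemmas~\ref{lem:localmodele} and~\ref{lem:compositionmodele}, Propositions~\ref{prop:isometrie} and~\ref{prop:proprietemorphismes}) to the relative setting ``au-dessus de~$f$''. The only difference between the two settings is the extra requirement that the structural triangle over~$\tilde f$ commute; this is a condition that is manifestly local on both source and target, and it plays no role whatsoever in the residual-field computations. So nothing essentially new is needed, and the main point is bookkeeping.

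For~(i), I would first treat the case of open subsets of affine analytic spaces. If $\varphi\colon U\to V$ is a morphism of locally ringed spaces over~$\tilde f$ between an open of $\E{m}{\cB'}$ and an open of $\E{n}{\cB}$, then the contracting condition bears on the induced maps $\cO_V(V')\to\cO_U(U')$ for compact $U'\subset U$, $V'\subset V$ with $\varphi(U')\subset V'$; since such compacts can be taken inside any prescribed opens, this condition is insensitive to shrinking $U$ and $V$, which gives locality at this level. Passing to local models (closed analytic subsets) is done exactly as in Definition~\ref{defi:morphismeaudessusdeffermeouvert}: the existence, around each point, of a local lift $\tilde\varphi\colon U'\to V$ above~$f$ is a local matter, so the notion of morphism analytique au-dessus de~$f$ between local models is local à la source et au but, just as in Lemma~\ref{lem:localmodele}. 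Finally, for arbitrary $\cB'$- and $\cB$-analytic spaces, the condition of Definition~\ref{defi:morphismeaudessusdef} need (by the remark following that definition) only be tested on one pair of atlases, in which the morphism is read as a morphism of local models over~$f$; locality at the source and target then follows from the local-model case. This establishes~(i).

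For~(ii), I would reduce via charts to the case of a morphism between local models over~$f$, then, using that the immersions ferm\'ees induce isometric isomorphisms on residual fields (Definition~\ref{def:modelelocal} and the remark on immersions ferm\'ees), further to a morphism $\varphi\colon U\to V$ between opens of affine analytic spaces over~$\cB'$ and~$\cB$ lying over~$\tilde f$. There the implication $i)\Rightarrow ii)$ of Proposition~\ref{isomouvert} applies without change: given $x\in U$ and $a\in\kappa(\varphi(x))$, pick a lift $g\in\cO_{V,\varphi(x)}$, a compact neighbourhood $V'$ of~$\varphi(x)$ on which $g$ is defined, and a compact neighbourhood $U'$ of~$x$ in $\varphi^{-1}(V')$; the contracting condition gives $|\varphi^\sharp(g)(x)|\le\|g\|_{V'}$, and letting $V'$ shrink to $\varphi(x)$ yields $|\varphi^\sharp(g)(x)|\le|g(\varphi(x))|=|a|$. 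Applying this to $a^{-1}$ when $a\neq 0$ forces equality, so the field morphism $\kappa(\varphi(x))\to\kappa(x)$ induced by $\varphi^\sharp$ is an isometric embedding. The main (and only) obstacle is a notational one: in the relative setting $\cO_V$ carries a $\cB$-algebra structure while $\cO_U$ carries a $\cB'$-algebra structure, so one must keep track of which base algebra acts where; but since neither the locality arguments nor the residual-field computation uses the base-algebra structure beyond the underlying valued-field maps, the proofs of §\ref{sec:catAan} carry over intact.
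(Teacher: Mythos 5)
Your proposal is correct and coincides with the paper's intention: the paper gives no separate proof, stating just before the proposition that the properties of morphisms of $\cA$-analytic spaces transpose \emph{mutatis mutandis} to the relative setting, which is exactly the transposition you carry out (locality via compacts, local models and atlases for~(i), and the argument of Proposition~\ref{isomouvert} for the isometric embeddings in~(ii)). Your observation that the commutativity condition over~$\tilde f$ is local and irrelevant to the residual-field computation is precisely the point that makes the verbatim transfer legitimate.
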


La propri\'et\'e de composition prend une forme un peu diff\'erente.

\begin{lemm}
Soient $\tau \colon \cB\to\cB'$ et $\tau' \colon \cB'\to\cB''$ deux morphismes born\'es de~$\cA$-alg\`ebres de Banach. Soient $\varphi \colon X\to Y$ un morphisme analytique au-dessus de~$\tau'$ et $\varphi' \colon Y\to Z$ un morphisme analytique au-dessus de~$\tau$. Alors le morphisme~$\varphi'\circ\varphi \colon X\to Z$ est un morphisme analytique au-dessus de~$\tau'\circ \tau$. 
\qed
\end{lemm}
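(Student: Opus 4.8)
The statement is purely formal and follows the same pattern as the composition statements for morphismes between modèles locaux (lemme~\ref{lem:compositionmodele}) and the locality statements of la proposition~\ref{prop:propmorphismeaudessusdef}. The plan is to reduce immediately to the local case and then to the affine case, where the assertion becomes the transitivity of the two defining conditions: compatibility with the structural morphisms over~$\cM(\cA)$, and the contracting property of the induced maps on rings of sections over compacts.

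First I would observe that $\varphi' \circ \varphi$ is a morphism d'espaces localement annelés by the general theory, so only the extra conditions have to be checked. By la proposition~\ref{prop:propmorphismeaudessusdef}~i), being a morphisme analytique au-dessus d'un morphisme d'anneaux de Banach is local à la source et au but; hence it suffices to check the condition after restricting to cartes. Choosing atlases of $X$, $Y$, $Z$ and using remarque~\ref{atlas} (in its au-dessus version), one reduces to the case where $X$, $Y$, $Z$ are fermés analytiques d'ouverts d'espaces affines analytiques, and then — via the lifting condition of la définition~\ref{defi:morphismeaudessusdeffermeouvert} and lemme~\ref{lem:localmodele}-type arguments — to the case where $X=U'$, $Y=V'$, $Z=W'$ are themselves ouverts d'espaces affines analytiques, with $\varphi\colon U'\to V'$ analytique au-dessus de~$f'$ and $\varphi'\colon V'\to W'$ analytique au-dessus de~$f$.

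In that affine situation two things must be verified. (1) The square relating $\varphi'\circ\varphi$ to the structural projections and to $\widetilde{f'\circ f}\colon \cM(\cB'')\to\cM(\cB)$ commutes: this is just the pasting of the two given commutative squares (one for $\varphi$ over $\tilde{f'}$, one for $\varphi'$ over $\tilde f$) together with the functoriality $\widetilde{f\circ f'}=\tilde{f'}\circ\tilde f$ of $\cM(-)$. (2) The contracting property: given compacts $U''\subset U'$ and $W''\subset W'$ with $(\varphi'\circ\varphi)(U'')\subset W''$, set $V'' := \varphi(U'')$, a compact of $V'$ with $\varphi(U'')\subset V''$ and $\varphi'(V'')\subset W''$ (shrinking is harmless, or one takes any compact neighbourhood containing it). Then for $a\in\cO_{W'}(W'')$ one has
\[
\|(\varphi'\circ\varphi)^\sharp(a)\|_{U''} = \|\varphi^\sharp(\varphi'^\sharp(a))\|_{U''} \le \|\varphi'^\sharp(a)\|_{V''} \le \|a\|_{W''},
\]
using the contracting property of $\varphi$ for the pair $(U'',V'')$ and of $\varphi'$ for the pair $(V'',W'')$. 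Hence $\varphi'\circ\varphi$ is contracting on sections, and therefore a morphisme analytique au-dessus de~$f'\circ f$.

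I do not expect any genuine obstacle here; the only mild point of care is bookkeeping in the reduction from general espaces to ouverts d'espaces affines analytiques, making sure the chosen atlases and the local lifts $\tilde\varphi$, $\tilde{\varphi'}$ are compatible on overlaps — but this is exactly parallel to the arguments already used for lemme~\ref{lem:compositionmodele} and needs no new idea. The composition of the underlying ring homomorphisms $\varphi'^\sharp$ then $\varphi^\sharp$ is manifestly local, so the "locale à la source et au but" reduction is legitimate.
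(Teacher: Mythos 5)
Votre argument est correct et correspond exactement à celui que le texte laisse implicite (le lemme y est énoncé sans démonstration, comme conséquence formelle des définitions) : réduction locale aux ouverts d'espaces affines via les relevés, collage des deux carrés commutatifs au-dessus de $\tilde f$ et $\tilde f'$, puis composition des inégalités de contraction avec $V'':=\varphi(U'')$. Rien à redire.
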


\begin{defi}\index{Categorie@Cat\'egorie!des espaces analytiques au-dessus de~$\cA$|textbf}%
\nomenclature[Kj]{$\Hom_{\An_{\cA}}(X,Y)$}{pour $X,Y$ espaces analytiques au-dessus de~$\cA$, ensemble des morphismes analytiques de $X$ dans $Y$}%
\nomenclature[Kh]{$\An_{\cA}$}{cat\'egorie des espaces analytiques au-dessus de~$\cA$}
La \emph{cat\'egorie des espaces analytiques au-dessus de~$\cA$} est d\'efinie comme suit. Ses objets sont les espaces analytiques au-dessus de~$\cA$. Pour toutes $\cA$-alg\`ebres de Banach~$\cB$ et~$\cB'$, tout espace $\cB'$-analytique~$X$ et tout espace $\cB$-analytique~$Y$, l'ensemble des morphismes de~$X$ dans~$Y$ est 
\[\Hom_{\An_{\cA}}(X,Y) := \bigcup_{\tau}  \Hom_{\An_{\cA},\tau}(X,Y),\]
o\`u $\tau$~parcourt l'ensemble des morphismes born\'es de $\cA$-alg\`ebres de Banach de~$\cB$ dans~$\cB'$. 

On note~$\An_{\cA}$ la cat\'egorie des espaces analytiques au-dessus de~$\cA$.
\end{defi}

\begin{exem}\label{ex:morphismex}\index{Morphisme analytique!associe a un point@associ\'e \`a un point}%
\nomenclature[Kl]{$\gamma_{x}$}{morphisme analytique $\cM(\cH(x)) \to X$ associ\'e \`a un point $x$ d'un espace $\cA$-analytique $X$}
Soit~$X$ un espace $\cA$-analytique. \`A tout point~$x$ de~$X$, on peut associer canoniquement un morphisme $\gamma_{x} \colon \cM(\cH(x)) \to X$ d'espaces analytiques au-dessus de~$\cA$  dont l'image est~$\{x\}$. 

Pour le construire, on peut raisonner localement et donc supposer que~$X$ est un ferm\'e analytique d'un ouvert de~$\E{n}{\cA}$. Le morphisme $\ev_{x} \colon \cA[T_{1},\dotsc,T_{n}] \to \cH(x)$ d'\'evaluation en~$x$ induit alors le morphisme
\[ \gamma_{x} \colon \cM(\cH(x)) \too X \lhook\joinrel\too \E{n}{\cA}\]
recherch\'e. C'est un morphisme au-dessus du morphisme $\cA \to \cH(x)$ obtenu en restreignant le morphisme~$\ev_{x}$. 
\end{exem}

\index{Morphisme analytique!au-dessus d'un morphisme d'anneaux de Banach|)}
\index{Espace analytique!au-dessus de~$\cA$|)}

\section{Immersions d'espaces analytiques}\label{sec:immersion}
\index{Immersion|(}

Revenons maintenant \`a la cat\'egorie des espaces~$\cA$-analytiques~$\cAAn$. 

\begin{defi}\index{Ferme analytique@Ferm\'e analytique|textbf}
Soit~$(X,\cO_{X})$ un espace $\cA$-analytique. Un \emph{ferm\'e analytique} de~$X$ est un quadruplet~$(Z,\cO_Z,j,\cI)$, o\`u~$\cI$ est un faisceau coh\'erent d'id\'eaux de~$\cO_X$, $Z$~le lieu des z\'eros de~$\cI$, $j \colon Z \to X$ l'inclusion canonique et $\cO_Z$ le faisceau $j^{-1} (\cO_X/\cI)$. 

L'espace~$(Z,\cO_Z)$ est naturellement muni d'une structure d'espace $\cA$-analytique, obtenue en restreignant les cartes des atlas de~$X$.
\end{defi}

\begin{rema}\label{rem:supportfaisceau}\index{Faisceau!annulateur d'un}\index{Faisceau!support d'un}%
\nomenclature[Eb]{$\textrm{Ann}(\cF)$}{annulateur de~$\cF$}
Soient~$X$ un espace $\cA$-analytique et~$\cF$ un faisceau coh\'erent sur~$X$. Alors l'annulateur~$\textrm{Ann}(\cF)$ de~$\cF$ est un faisceau d'id\'eaux coh\'erents dont le lieu des z\'eros co\"incide avec le support de~$\cF$ (\cf~\cite[Annex, \S 4, 5]{Gr-Re2}). En particulier, le support de~$\cF$ est naturellement muni d'une structure de ferm\'e analytique de~$X$.
\end{rema}

\begin{defi}\index{Immersion|textbf}\index{Immersion!ouverte|textbf}\index{Immersion!fermee@ferm\'ee|textbf}
Soit~$i  \colon X\to Y$ un morphisme d'espaces $\cA$-analytiques. On dit que~$i$ est
\begin{itemize}
\item une \emph{immersion ouverte} s'il induit un isomorphisme entre~$X$ et un ouvert de~$Y$ (qui h\'erite naturellement d'une structure d'espace analytique d'apr\`es la remarque \ref{remarque_base_espace_analytique}) ;
\item une \emph{immersion ferm\'ee} s'il induit un isomorphisme entre~$X$ et un ferm\'e analytique de~$Y$ ;
\item une \emph{immersion} s'il existe une immersion ouverte~$i_{1}$ et une immersion ferm\'ee~$i_{2}$ telles que $i = i_1\circ i_2$.
\end{itemize}
\end{defi}

\'Enon\c cons quelques propri\'et\'es des immersions.

\begin{lemm}
Soient $i \colon X \to Y$ une immersion ferm\'ee d'espaces $\cA$-analytiques. Soit~$\cI$ le noyau du morphisme $\cO_{Y} \to i_{\ast}\cO_{X}$. C'est un faisceau d'id\'eaux coh\'erent. Notons~$Z$ le ferm\'e analytique de~$Y$ qu'il d\'efinit et $j \colon Z \to Y$ le morphisme d'inclusion. Alors, il existe un unique isomorphisme d'espaces $\cA$-analytiques $\psi \colon X \to Z$ tel que $i = j \circ \psi$.
\qed
\end{lemm}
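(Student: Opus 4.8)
The statement asks us to show that an arbitrary closed immersion $i\colon X\to Y$ of $\cA$-analytic spaces factors canonically through the closed analytic subspace $Z$ of $Y$ cut out by $\cI := \ker(\cO_Y \to i_*\cO_X)$. The plan is as follows. First I would observe that $i$ being a closed immersion means, by definition, that $i$ induces an isomorphism of $X$ onto \emph{some} closed analytic subspace $W$ of $Y$, say defined by a coherent ideal sheaf $\cJ$. So one is really comparing two ideal sheaves, $\cI$ and $\cJ$, associated to the same underlying set-theoretic closed subset. The first key step is therefore to check that $\cI$ is indeed a coherent ideal sheaf: it is the kernel of the morphism $\cO_Y \to i_*\cO_X$, and since $i$ is a closed immersion the target $i_*\cO_X$ is isomorphic to $\cO_Y/\cJ$ with $\cJ$ coherent, so $\cO_Y \to i_*\cO_X$ is the surjection $\cO_Y \to \cO_Y/\cJ$; its kernel is precisely $\cJ$. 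In particular $\cI = \cJ$, hence $Z = W$, and the coherence of $\cI$ is automatic. This already essentially gives the result, but to be careful one should phrase it without presupposing the identification and instead use the universal property of the quotient.

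Concretely, I would proceed as follows. Let $\psi_0\colon X \to W$ be the isomorphism provided by the definition of closed immersion, so that $i = \iota_W \circ \psi_0$ where $\iota_W\colon W \hookrightarrow Y$ is the inclusion of the closed analytic subspace $W$ defined by $\cJ$. Applying $i_*$ and using $\iota_{W,*}$ on the tautological surjection $\cO_Y \to \iota_{W,*}\cO_W = \cO_Y/\cJ$, together with the fact that $\psi_0$ is an isomorphism (so $\psi_{0,*}\cO_X \cong \cO_W$ compatibly with the algebra structures), one identifies the morphism $i^\sharp\colon \cO_Y \to i_*\cO_X$ with $\cO_Y \to \cO_Y/\cJ$ up to the isomorphism induced by $\psi_0$. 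Reading off kernels gives $\cI = \cJ$ as subsheaves of $\cO_Y$, hence $Z = W$ as closed analytic subspaces (same ideal, same zero locus, same structure sheaf). The desired isomorphism is then simply $\psi := \psi_0\colon X \xrightarrow{\sim} Z$, and by construction $i = j \circ \psi$ with $j = \iota_Z = \iota_W$.

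For uniqueness of $\psi$: suppose $\psi'\colon X \to Z$ is another isomorphism of $\cA$-analytic spaces with $i = j \circ \psi'$. Since $j$ is a monomorphism in $\cA-\An$ (a closed immersion is injective on points and the structure-sheaf map $\cO_Y \to \cO_Z$ is an epimorphism, so $j_*$ reflects equality of morphisms into $Z$), the equation $j \circ \psi = j \circ \psi' = i$ forces $\psi = \psi'$. More elementarily: $j$ is injective on underlying topological spaces, which pins down the continuous map underlying $\psi$, and then the sheaf-level component is determined because $\cO_Z$ is a quotient of $j^{-1}\cO_Y$, so $\psi^\sharp$ is forced by $i^\sharp$ and the factorization through $j^\sharp$.

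The only place requiring a little care — and the step I would flag as the main (mild) obstacle — is verifying that the kernel $\cI$ of $\cO_Y \to i_*\cO_X$ really is coherent \emph{a priori}, i.e.\ before invoking the isomorphism $\psi_0$. One cannot in general expect kernels of morphisms of $\cO_Y$-modules to be coherent without a coherence hypothesis on $\cO_Y$ itself; here, however, one does not need the structural coherence theorem because the definition of closed immersion already hands us the presentation $i_*\cO_X \cong \cO_Y/\cJ$ with $\cJ$ coherent, so the subtlety dissolves: $\cI$ is literally equal to $\cJ$. Thus the proof is essentially a bookkeeping argument once one unwinds the definitions of closed immersion and of analytic closed subspace, and the substance is contained entirely in the identification $\cI = \cJ$.
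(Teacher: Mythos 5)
Votre argument est correct : le paper ne donne d'ailleurs aucune démonstration (le lemme est énoncé avec \qed comme conséquence immédiate des définitions), et votre rédaction est exactement le dévissage attendu — l'identification $\cI=\cJ$ via la factorisation $i=\iota_W\circ\psi_0$ avec $\psi_0$ isomorphisme, puis l'unicité par le fait que $j$ est un monomorphisme (injectivité sur les points et surjectivité de $\cO_Y\to j_\ast\cO_Z$). Rien à redire.
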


\begin{lemm}
Soit $\varphi \colon X \to Y$ un morphisme d'espaces $\cA$-analytiques. Soit $U$ un ouvert de~$Y$ et notons $i_{U} \colon U \to Y$ l'immersion ouverte associ\'ee. Supposons que $\varphi(X) \subset U$. Alors, le morphisme~$\varphi$ se factorise par~$i_{U}$.
\end{lemm}
\begin{proof}
Le r\'esultat d\'ecoule du fait que la notion de morphisme analytique est locale au but, \cf~proposition~\ref{prop:proprietemorphismes}, i).
\end{proof}

\begin{prop}
La compos\'ee d'un nombre fini d'immersions ouvertes (resp. immersions ferm\'ees, resp. immersions) est une immersion ouverte (resp. immersion ferm\'ee, resp. immersion).
\end{prop}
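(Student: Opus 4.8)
The plan is to reduce to the composition of two immersions and then induct. The open case is immediate: if $i_{1}\colon X\to Y$ and $i_{2}\colon Y\to Z$ are open immersions, then $i_{1}$ identifies $X$ with an open subspace of $Y$ (remarque~\ref{remarque_base_espace_analytique}) and $i_{2}$ identifies $Y$ with an open subspace of $Z$; since an open of an open is an open and the restricted analytic structures are transitive, $i_{2}\circ i_{1}$ is an open immersion.

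For the closed case I would argue as follows. After replacing $X$ and $Y$ by their images, suppose $X$ is the closed analytic subset of $Y$ defined by a coherent ideal $\cI_{1}\subset\cO_{Y}$ and $Y$ the closed analytic subset of $Z$ defined by a coherent ideal $\cI_{2}\subset\cO_{Z}$, so that $i_{2,*}\cO_{Y}=\cO_{Z}/\cI_{2}$. Let $\cJ\subset\cO_{Z}$ be the preimage of $i_{2,*}\cI_{1}$ under the surjection $\cO_{Z}\to\cO_{Z}/\cI_{2}$; it contains $\cI_{2}$ and sits in an exact sequence $0\to\cI_{2}\to\cJ\to i_{2,*}\cI_{1}\to 0$. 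The crucial point is the coherence of $\cJ$: since coherence is stable under extensions it suffices to know that $i_{2,*}\cI_{1}$ is a coherent $\cO_{Z}$-module, i.e. that the direct image of a coherent sheaf along the closed immersion $i_{2}$ is coherent — which one checks locally by lifting a finite presentation of $\cI_{1}$ to $\cO_{Z}$ and using the coherence of $\cI_{2}$. Granting this, $\cO_{Z}/\cJ\simeq i_{2,*}(\cO_{Y}/\cI_{1})=(i_{2}\circ i_{1})_{*}\cO_{X}$, so $i_{2}\circ i_{1}$ realizes $X$, as a locally ringed space, as the closed analytic subset $V(\cJ)$ of $Z$; it then remains to check that the $\cA$-analytic structure on $V(\cJ)$ inherited from $Z$ agrees with the one on $X$ inherited from $Y\simeq V(\cI_{2})$, which is a routine transitivity check for restricted atlases. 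Hence $i_{2}\circ i_{1}$ is a closed immersion.

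For general immersions, write an immersion as $i_{1}\circ i_{2}$ with $i_{2}$ a closed immersion and $i_{1}$ an open immersion, and consider two composable immersions $X\xrightarrow{i_{2}}W\xrightarrow{i_{1}}Y$ and $Y\xrightarrow{i_{2}'}W'\xrightarrow{i_{1}'}Z$. I would first rewrite the middle arrow $i_{2}'\circ i_{1}\colon W\to W'$ as a closed immersion followed by an open immersion: $i_{1}$ presents $W$ as an open of $Y$ and $i_{2}'$ presents $Y$ as a closed analytic subset of $W'$, so choosing an open $U$ of $W'$ with $i_{2}'(Y)\cap U=i_{2}'(i_{1}(W))$ the set $i_{1}(W)$ becomes a closed analytic subset of $U$ (defined by restricting to $U$ the ideal cutting out $i_{2}'(Y)$, cf.~remarque~\ref{rem:fermesanalytiques}), and $i_{2}'\circ i_{1}$ factors as the closed immersion $W\to U$ followed by the open immersion $U\hookrightarrow W'$. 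The total composite then reorganizes into two consecutive closed immersions $X\to W\to U$ followed by two consecutive open immersions $U\to W'\to Z$; by the closed and open cases already treated these compose to a closed immersion and to an open immersion respectively, so the whole composite is an immersion. A finite composition then follows by induction on the number of factors.

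The main obstacle I expect is exactly the coherence of $\cJ$ in the closed case — equivalently, that the direct image of a coherent ideal along a closed immersion is again coherent, which requires the local presentation-lifting argument — together with the bookkeeping needed to identify the two a priori distinct $\cA$-analytic structures placed on the image of the composite.
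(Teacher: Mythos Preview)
Your proof is correct and follows the same route as the paper: the open and closed cases are handled directly, and the general case is reduced to these by the standard ``swap'' maneuver (rewriting an open immersion followed by a closed immersion as a closed immersion into an open, via remarque~\ref{rem:fermesanalytiques}), exactly as in the paper and the Stacks Project argument it cites. The only difference is that where the paper declares the closed case \og imm\'ediat\fg, you correctly isolate the one nontrivial point --- the coherence of the ideal~$\cJ$ cutting out~$X$ in~$Z$ --- and your extension argument (reducing to the coherence of $i_{2,*}\cI_{1}$ as an $\cO_{Z}$-module, which indeed follows from the coherence of~$\cI_{1}$ and~$\cI_{2}$ alone by the lifting argument you indicate) is valid and makes explicit what the paper leaves implicit.
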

\begin{proof}
Il suffit de d\'emontrer le r\'esultat pour la compos\'ee de deux morphismes. Le r\'esultat pour les immersions ouvertes et ferm\'ees est imm\'ediat.

Soient $i \colon X \to Y$ et $j \colon Y \to Z$ des immersions. On suit la preuve pour le cas des sch\'emas, \cf~\cite[\href{https://stacks.math.columbia.edu/tag/02V0}{lemma~02V0}]{stacks-project}. \'Ecrivons $i = i_1\circ i_2$ et $j = j_1\circ j_2$ o\`u $i_{2} \colon X \to U$ et $j_{2} \colon Y \to V$ sont des immersions ferm\'ees et $i_{1} \colon U \to Y$ et $j_{1} \colon V \to Z$ sont des immersions ouvertes. Il existe un ouvert~$V'$ de~$V$ tel que $U = j_{2}^{-1}(V')$. On peut alors \'ecrire~$j \circ i$ comme la compos\'ee de l'immersion ouverte 
\[V' \too V \too Z\]
et de l'immersion ferm\'ee
\[X \too U = j_{2}^{-1}(V') \too V'.\]
\end{proof}

\begin{lemm}\label{lem:immersionfermeelocann}
Soient $\varphi \colon X \to Y$ un morphisme d'espaces $\cA$-analytiques et $i \colon Y' \to Y$ une immersion ferm\'ee d'espaces $\cA$-analytiques. Le morphisme~$\varphi$ se factorise par~$i$ en tant que morphisme d'espaces $\cA$-analytiques si, et seulement si, il se factorise par~$i$ en tant que morphisme d'espaces localement annel\'es. 
\end{lemm}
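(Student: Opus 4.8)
Le plan est le suivant. L'une des implications est imm\'ediate~: un morphisme d'espaces $\cA$-analytiques est en particulier un morphisme d'espaces localement annel\'es et ces derniers se composent, de sorte que si $\varphi = i \circ \psi$ avec $\psi \colon X \to Y'$ analytique, la m\^eme \'egalit\'e fournit une factorisation de~$\varphi$ par~$i$ dans la cat\'egorie des espaces localement annel\'es. Pour la r\'eciproque, supposons que $\varphi = i \circ \psi_{0}$, o\`u $\psi_{0} \colon X \to Y'$ est un morphisme d'espaces localement annel\'es~; il suffit de montrer que~$\psi_{0}$ est analytique, car alors $\varphi = i \circ \psi_{0}$ est la factorisation cherch\'ee dans~$\cA-\An$ (on n'a donc pas besoin d'invoquer l'unicit\'e du rel\`evement).

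Pour montrer que~$\psi_{0}$ est analytique, j'utiliserais que l'analyticit\'e d'un morphisme d'espaces $\cA$-analytiques est locale \`a la source et au but (proposition~\ref{prop:proprietemorphismes}). Quitte \`a choisir des atlas et \`a restreindre, on peut donc supposer que~$X$ est un mod\`ele local, ferm\'e analytique d'un ouvert~$U$ de~$\E{n}{\cA}$, que~$Y$ est un mod\`ele local, ferm\'e analytique d'un ouvert~$V$ de~$\E{m}{\cA}$, et que~$Y'$ est un ferm\'e analytique de~$Y$, l'image de~$\psi_{0}$ \'etant contenue dans une carte de~$Y'$ obtenue en restreignant une carte de~$Y$. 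Je commencerais par observer que~$Y'$ est alors lui-m\^eme un ferm\'e analytique de~$V$~: si $\cI_{Y} \subset \cO_{V}$ et $\cI' \subset \cO_{Y} = \cO_{V}/\cI_{Y}$ d\'esignent les id\'eaux coh\'erents d\'efinissant~$Y$ et~$Y'$, alors~$\cI'$ est coh\'erent comme $\cO_{V}$-module (tout morphisme $\cO_{V}$-lin\'eaire d'un module libre vers~$\cI'$ annule~$\cI_{Y}$, donc se factorise par le $\cO_{Y}$-module libre correspondant, et l'on conclut par la coh\'erence de~$\cI_{Y}$, la coh\'erence se comportant bien dans les suites exactes courtes), et l'id\'eal $\cJ := \ker(\cO_{V} \to (\cO_{V}/\cI_{Y})/\cI')$ s'ins\`ere dans une suite exacte $0 \to \cI_{Y} \to \cJ \to \cI' \to 0$, donc est coh\'erent. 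Notons $j_{Y} \colon Y \hookrightarrow V$, $i \colon Y' \hookrightarrow Y$ et $j_{Y'} = j_{Y} \circ i \colon Y' \hookrightarrow V$ les immersions ferm\'ees associ\'ees, qui sont analytiques (prendre l'identit\'e de l'ouvert ambiant comme rel\`evement local dans la d\'efinition~\ref{def:morphismefermeouvert}).

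L'observation cl\'e est alors que, vu la d\'efinition~\ref{def:morphismefermeouvert}, un morphisme d'espaces localement annel\'es $g \colon X \to Y'$ est analytique si et seulement si $j_{Y'} \circ g \colon X \to V$ l'est~: les rel\`evements locaux $U' \to V$ requis, avec~$U'$ ouvert de~$U$, sont litt\'eralement les m\^emes dans les deux conditions. En appliquant ceci \`a $g = \psi_{0}$ et en utilisant $j_{Y'} \circ \psi_{0} = j_{Y} \circ i \circ \psi_{0} = j_{Y} \circ \varphi$, on se ram\`ene \`a montrer que $j_{Y} \circ \varphi \colon X \to V$ est analytique~; or cela d\'ecoule du lemme~\ref{lem:compositionmodele}, puisque $\varphi \colon X \to Y$ est analytique par hypoth\`ese et $j_{Y} \colon Y \to V$ est analytique. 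La r\'eciproque, et donc le lemme, s'ensuivent.

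Le seul point qui ne soit pas purement formel est la pr\'eservation de la coh\'erence par les deux quotients successifs, c'est-\`a-dire le fait qu'un ferm\'e analytique d'un mod\`ele local soit encore un mod\`ele local~; c'est l\`a, \`a mon sens, la principale (et l\'eg\`ere) difficult\'e, que l'argument homologique esquiss\'e ci-dessus r\`egle sans qu'il soit besoin de supposer que le faisceau structural de~$\E{n}{\cA}$ est un anneau coh\'erent, ce qui est appr\'eciable puisqu'aucune hypoth\`ese n'est faite sur~$\cA$ dans ce chapitre.
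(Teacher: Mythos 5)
Votre preuve est correcte et suit essentiellement la m\^eme d\'emarche que celle du texte~: r\'eduction aux mod\`eles locaux, puis observation que les rel\`evements locaux $U' \to V$ t\'emoignant de l'analyticit\'e de~$\varphi$ t\'emoignent aussi de celle de~$\psi_{0}$, une fois $Y'$ vu comme ferm\'e analytique de~$V$. Le point de coh\'erence que vous explicitez (un ferm\'e analytique d'un mod\`ele local est un mod\`ele local) est, dans le texte, incorpor\'e \`a la d\'efinition m\^eme des ferm\'es analytiques d'un espace $\cA$-analytique, mais votre justification homologique en est un compl\'ement utile.
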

\begin{proof}
Si~$\varphi$ se factorise par~$i$ en tant que morphisme d'espaces $\cA$-analytiques, il se factorise par~$i$ en tant que morphisme d'espaces localement annel\'es, par d\'efinition.

D\'emontrons l'implication r\'eciproque. Supposons qu'il existe un morphisme d'espaces localement annel\'es $\varphi' \colon X \to Y'$ tel que $\varphi = i \circ \varphi'$. On veut montrer que~$\varphi'$ est un morphisme d'espaces analytiques. Par d\'efinition, $Y'$ est isomorphe \`a~$Z$ dans la cat\'egorie des espaces $\cA$-analytiques. On peut donc supposer que $Y'=Z$ et que $i \colon Z \to X$ est le morphisme d'inclusion.

La question \'etant locale, on peut supposer que~$X$ et~$Y$ sont des ferm\'es analytiques d'ouverts~$U$ et~$V$ d'espaces affines et que~$\varphi$ se rel\`eve en un morphisme $\tilde{\varphi} \colon U\to V$. L'espace~$Z$ est alors un ferm\'e analytique de~$V$ et le morphisme~$\tilde{\varphi}$ rel\`eve~$\varphi'$. Le r\'esultat s'ensuit.
\end{proof}

\begin{prop}\label{immersion_ferm\'e}
Soit $\varphi \colon X \to Y$ un morphisme d'espaces $\cA$-analytiques. 
Soit $i \colon Y' \to Y$ une immersion ferm\'ee d'espaces $\cA$-analytiques. Soit~$\cI$ un faisceau coh\'erent d'id\'eaux d\'efinissant le ferm\'e analytique $Z := i(Y')$ de~$Y$. Les propositions suivantes sont \'equivalentes~:
\begin{enumerate}[i)]
\item le morphisme~$\varphi$ se factorise par~$i$, en tant que morphisme d'espaces analytiques~;
\item l'application $\varphi^\ast \cI \to \varphi^\ast\cO_{Y} = \cO_{X}$ est nulle.
\end{enumerate}
En outre, lorsqu'elle sont satisfaites, le morphisme $\varphi' \colon X \to Y'$ tel que $\varphi = i\circ  \varphi'$ est unique.
\end{prop}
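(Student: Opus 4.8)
The plan is to reduce the assertion to the analogous statement for morphisms of locally ringed spaces, which is the classical universal property of a closed immersion, and then transport it back to $\cA-\An$. By Lemma~\ref{lem:immersionfermeelocann}, the morphism $\varphi$ factors through $i$ in $\cA-\An$ if and only if it factors through $i$ as a morphism of locally ringed spaces, so it suffices to prove that the latter is equivalent to~(ii). I will also replace $Y'$ by the closed analytic subspace $Z := V(\cI)$, so that $i \colon Z \hookrightarrow Y$ becomes the canonical inclusion with $i_{*}\cO_{Z} = \cO_{Y}/\cI$; and I read~(ii) as saying that the image of $\cI$ under $\varphi^{\sharp}\colon \cO_{Y}\to\varphi_{*}\cO_{X}$ is zero, which by the adjunction between $\varphi^{*}$ and $\varphi_{*}$ is the same as the vanishing of the ideal sheaf $\varphi^{*}(\cI) \subset \cO_{X}$ generated by the pullbacks of local sections of~$\cI$.

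The implication (i)$\Rightarrow$(ii) is then immediate: if $\varphi = i\circ\varphi'$, then $\varphi^{\sharp}$ is the composite of the quotient map $i^{\sharp}\colon\cO_{Y}\to\cO_{Y}/\cI = i_{*}\cO_{Z}$ with $i_{*}(\varphi'^{\sharp})$, and the first arrow already annihilates~$\cI$.

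For (ii)$\Rightarrow$(i) I first factor $\varphi$ on underlying spaces. If a point $x\in X$ had image $y := \varphi(x)\notin Z$, then $\cI_{y}=\cO_{Y,y}$, so some local section $s$ of $\cI$ near $y$ has $s_{y}=1$; applying the local ring homomorphism $\varphi^{\sharp}_{x}$ and invoking~(ii) gives $1 = \varphi^{\sharp}_{x}(s_{y}) = 0$ in the nonzero local ring $\cO_{X,x}$, a contradiction. So $\varphi = i\circ\varphi_{0}$ for a unique continuous $\varphi_{0}\colon X\to Z$. Next, since $\varphi^{\sharp}\colon\cO_{Y}\to\varphi_{*}\cO_{X}$ kills $\cI$, it factors through $\cO_{Y}/\cI = i_{*}\cO_{Z}$, and applying $i^{-1}$ (using $i^{-1}i_{*}=\id$ for sheaves on $Z$) yields a morphism $\varphi'^{\sharp}\colon\cO_{Z}\to(\varphi_{0})_{*}\cO_{X}$ lifting $\varphi^{\sharp}$. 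On stalks, $\varphi'^{\sharp}_{x}$ is the homomorphism induced by $\varphi^{\sharp}_{x}$ on the quotient $\cO_{Z,y}=\cO_{Y,y}/\cI_{y}$; since $y\in Z$ forces $\cI_{y}\subset\m_{y}$, it is again a local homomorphism. Hence $\varphi':=(\varphi_{0},\varphi'^{\sharp})$ is a morphism of locally ringed spaces with $i\circ\varphi' = \varphi$, and Lemma~\ref{lem:immersionfermeelocann} promotes it to a morphism of $\cA$-analytic spaces, giving~(i). Uniqueness is then the observation that $i$ is a monomorphism: it is injective on points and $i^{\sharp}$ is surjective, so any two factorizations of $\varphi$ through $i$ necessarily agree.

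I do not anticipate a real obstacle: after Lemma~\ref{lem:immersionfermeelocann} is in hand, the argument is precisely the one from scheme theory, and coherence of $\cI$ is used only through the standing hypothesis that $V(\cI)$ is a genuine closed analytic subspace. The only places asking for a little care are the identification of~(ii) via the $\varphi^{*}\dashv\varphi_{*}$ adjunction, and the repeated use of the fact that stalks of $\cA$-analytic spaces are nonzero local rings with $\cI_{y}\subset\m_{y}$ along $Z$, which is what makes both the set-theoretic reduction and the locality of $\varphi'^{\sharp}$ work.
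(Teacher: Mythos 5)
Your proof is correct and follows essentially the same route as the paper: the paper also reduces to the locally ringed space statement via Lemma~\ref{lem:immersionfermeelocann} and then simply cites \cite[lemma~01HP]{stacks-project} for that statement, whose standard proof is exactly the argument you write out (including the correct reading of~(ii) as the vanishing of the inverse image ideal sheaf $\varphi^{-1}\cI\cdot\cO_{X}$).
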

\begin{proof}
Dans la cat\'egorie des espaces localement annel\'es, le r\'esultat est d\'emontr\'e dans
\cite[\href{https://stacks.math.columbia.edu/tag/01HP}{lemma~01HP}]{stacks-project}. Notre r\'esultat s'en d\'eduit en utilisant le lemme~\ref{lem:immersionfermeelocann}.
\end{proof}

\begin{prop}\label{immersion_ferm\'e2}
Soit~$i \colon X \to Y$ une immersion d'espaces $\cA$-analytiques. Les propositions suivantes sont \'equivalentes~:
\begin{enumerate}[i)]
\item le morphisme $i$ est une immersion ferm\'ee ;
\item l'image de~$i$ est ferm\'ee dans~$Y$.
\end{enumerate}
De m\^eme, les propositions suivantes sont \'equivalentes~:
\begin{enumerate}[i')]
\item le morphisme $i$ est une immersion ouverte ;
\item l'image de~$i$ est ouverte dans~$Y$.
\end{enumerate}
\end{prop}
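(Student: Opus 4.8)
The plan is to establish the four implications separately. The two implications i)$\Rightarrow$ii) and i')$\Rightarrow$ii') are immediate from the definitions: if $i$ is a closed immersion it identifies $X$ with a closed analytic subset $(Z,\cO_{Z},j,\cI)$ of $Y$, and $i(X)=|Z|$ is by construction the support of the coherent sheaf $\cO_{Y}/\cI$, hence closed in $Y$; if $i$ is an open immersion, $i(X)$ is by definition an open subset of $Y$.

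For ii)$\Rightarrow$i), write $i=i_{1}\circ i_{2}$ with $i_{1}\colon U\hookrightarrow Y$ an open immersion and $i_{2}\colon X\xrightarrow{\sim}Z$ an isomorphism onto a closed analytic subset $Z$ of $U$, cut out by a coherent ideal $\cI\subseteq\cO_{U}$; thus $i(X)=|Z|\subseteq U$. Assuming $i(X)$ closed in $Y$, the open sets $U$ and $Y\setminus i(X)$ cover $Y$, and on their overlap $U\setminus i(X)=U\setminus V(\cI)$ one has $\cI=\cO_{U}$ by definition of the zero locus; hence $\cI$ and the unit ideal of $\cO_{Y\setminus i(X)}$ glue to a coherent ideal sheaf $\cJ\subseteq\cO_{Y}$. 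The identities $V(\cJ)=i(X)$ and $\cO_{Y}/\cJ\xrightarrow{\sim}i_{*}\cO_{X}$ are local, and reduce to the corresponding facts for $\cI$ on $U$ together with the triviality of $\cJ$ off $i(X)$; so $i$ is the closed immersion attached to $\cJ$.

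For ii')$\Rightarrow$i'), keep the factorization $i=i_{1}\circ i_{2}$. If $i(X)$ is open in $Y$ it is open in $U$, so, replacing $U$ by the open subset $W:=i(X)$ of $Y$ and $i_{2}$ by its restriction (Remark \ref{rem:fermesanalytiques}), we may assume $X\xrightarrow{\sim}Z$ with $Z$ a closed analytic subset of $W$ whose underlying space is all of $|W|$, cut out by a coherent ideal $\cI\subseteq\cO_{W}$ with $V(\cI)=|W|$. It then remains to show $\cI=0$: from this $\cO_{Z}=\cO_{W}$, so $i$ identifies $X$ with the open subspace $W$ of $Y$, i.e. $i$ is an open immersion. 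This last step is the main obstacle, namely to exclude a nonzero coherent ideal whose zero locus is the whole space. I would attack it locally, combining the Nullstellensatz of Corollary \ref{cor:IVJ} (which gives $\sqrt{\cI}=\cI(V(\cI))=\cI(|W|)$) with the local noetherianity of the structure sheaf (Theorem \ref{rigide}) and the fact, built into the definition of a morphism, that $i$ induces isometric isomorphisms on residue fields (Proposition \ref{prop:proprietemorphismes}), so as to force each stalk $\cI_{x}$ to vanish.
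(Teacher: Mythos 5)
Your treatment of the first equivalence is correct and is exactly the paper's argument: i)$\Rightarrow$ii) is immediate, and for ii)$\Rightarrow$i) you glue the coherent ideal $\cI$ defining $i(X)$ in the open set $U$ with $\cO_{Y\setminus i(X)}$, using that $\cI$ restricts to the unit ideal on $U\setminus i(X)$; the quotient $\cO_{Y}/\cJ$ has support contained in $U$ and restricts there to $\cO_{U}/\cI$, so $i$ is the closed immersion attached to $\cJ$. Nothing to add on that half.

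The gap is in ii')$\Rightarrow$i'). First, the step you yourself single out, namely proving $\cI=0$ once $X$ is identified with a closed analytic subset of $W=i(X)$ whose zero locus is all of $|W|$, is only announced, not carried out, so the proof is incomplete as written. Second, and more seriously, the tools you list cannot yield $\cI=0$ in general: Corollary \ref{cor:IVJ} (or, more directly, Corollary \ref{cor:nilpotent}) only shows that every germ of $\cI$ vanishes identically on a neighbourhood of the point, hence is nilpotent, so that $\cI_{x}\subseteq\sqrt{0}\subseteq\cO_{Y,x}$; neither the noetherianity of the local rings nor the isometric embeddings of residue fields removes nilpotents. Concretely, let $Y$ be the closed analytic subset of $\E{1}{\C}$ defined by $T^{2}$ and $X$ the closed analytic subset of $Y$ defined by the image of $T$: then $X\to Y$ is an immersion whose image is open in $Y$ (it is all of $|Y|$), yet the defining ideal is $(T)\neq 0$ and the map is not an isomorphism onto an open subspace of $Y$. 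So your reduction is fine, but the remaining step only goes through when the local rings of $Y$ are reduced, in which case Corollary \ref{cor:nilpotent} alone suffices and the extra machinery is unnecessary. For comparison, the paper settles i')$\Leftrightarrow$ii') with the single remark that it follows from the definitions; your instinct that something must actually be checked here is sound, but as the example shows, completing that check requires an additional reducedness-type hypothesis rather than more machinery.
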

\begin{proof}
L'implication $i) \implies ii)$ est \'evidente. D\'emontrons $ii) \implies i)$. On suit la preuve de~\cite[\href{https://stacks.math.columbia.edu/tag/01IQ}{lemma~01IQ}]{stacks-project} (pour les sch\'emas). Supposons que $i(X)$ est ferm\'ee dans~$Y$. Puisque le morphisme~$i$ est une immersion, il induit un isomorphisme avec son image et il suffit donc de montrer que $i(X)$ est un ferm\'e analytique de~$Y$. Par d\'efinition, il existe un ouvert~$U$ de~$Y$ et un faisceau d'id\'eaux coh\'erent~$\cI$ sur~$U$ tel que $i(X)$ soit le ferm\'e analytique de~$U$ associ\'e \`a l'id\'eal~$\cI$.

Remarquons que l'on a 
\[\cI_{|U \setminus i(X)} = \cO_{|U \setminus i(X)} = \big(\cO_{Y \setminus i(X)}\big)_{|U \setminus i(X)}.\]
On peut donc d\'efinir un faisceau d'id\'eaux coh\'erent~$\cJ$ en recollant~$\cI$ et~$\cO_{Y \setminus i(X)}$. Le faisceau~$\cO/\cJ$ est alors support\'e sur~$U$ et sa restriction \`a~$U$ est \'egale \`a~$\cO_{U}/\cI$. Le r\'esultat s'en d\'eduit.

\medbreak

L'\'equivalence $i') \Longleftrightarrow ii')$ d\'ecoule directement des d\'efinitions.
\end{proof}

\begin{prop}\label{crit\`ere_local_immersion}
Soit $i \colon X \to Y$ un morphisme d'espaces $\cA$-analytiques. Les propositions suivantes sont \'equivalentes~:
\begin{enumerate}[i)]
\item le morphisme~$i$ est une immersion~;
\item pour tout~$y\in i(X)$, il existe un voisinage ouvert de~$V$ de~$y$ dans~$Y$ tel que le morphisme $i^{-1}(V) \to V$ induit par~$i$ soit une immersion ferm\'ee.
\end{enumerate}
\end{prop}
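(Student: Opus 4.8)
The plan is to prove the two implications separately. The implication $i)\implies ii)$ is immediate: writing $i=i_{1}\circ i_{2}$ with $i_{2}\colon X\to U$ a closed immersion and $i_{1}\colon U\to Y$ an open immersion, one identifies~$U$ with the open subspace $V:=i_{1}(U)$ of~$Y$; since~$i$ factors through~$V$, one has $i^{-1}(V)=X$, and the morphism $i^{-1}(V)\to V$ induced by~$i$ is, under this identification, exactly~$i_{2}$, hence a closed immersion. As $i(X)\subset V$, this single open set works for all $y\in i(X)$. The substance of the statement is the converse, $ii)\implies i)$, which will follow from the fact that being a closed immersion is local on the target.

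So assume $ii)$ and set $W:=\bigcup_{y\in i(X)}V_{y}$, an open subset of~$Y$ containing~$i(X)$, so that $i^{-1}(W)=X$. A closed immersion has closed image, since the underlying space of a closed analytic subset is the zero locus of a coherent ideal sheaf; hence $i(X)\cap V_{y}$ is closed in~$V_{y}$ for each~$y$, and so $i(X)$ is closed in~$W$. Therefore $W\setminus i(X)$ is open, and $\{V_{y}\}_{y\in i(X)}\cup\{W\setminus i(X)\}$ is an open cover of~$W$ on each member of which the morphism induced by~$i$ is a closed immersion — over $W\setminus i(X)$ it is the inclusion of the empty space, defined by the unit ideal. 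It thus suffices to show that $i\colon X\to W$ is a closed immersion, for then $i$ factors as $X\to W\hookrightarrow Y$ with the second arrow an open immersion, so $i$ is an immersion.

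To this end, consider the ideal sheaf $\cI:=\ker(\cO_{W}\to i_{\ast}\cO_{X})$. As pushforward commutes with restriction to open subsets and restriction preserves kernels, $\cI_{|V_{y}}$ is the kernel of $\cO_{V_{y}}\to i'_{\ast}\cO_{i^{-1}(V_{y})}$, $i'$ being the restriction of~$i$; by the lemma on closed immersions stated just before Proposition~\ref{immersion_ferm\'e}, this is a coherent ideal sheaf and $i^{-1}(V_{y})$ is isomorphic over~$V_{y}$ to the closed analytic subset $V(\cI_{|V_{y}})$. On $W\setminus i(X)$ one has $i_{\ast}\cO_{X}=0$, so $\cI_{|W\setminus i(X)}=\cO$, which is coherent as well; coherence being local, $\cI$ is a coherent ideal sheaf on~$W$. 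Let $Z:=V(\cI)$; checking on the cover, its underlying space is~$i(X)$. The isomorphisms $i^{-1}(V_{y})\simto Z\cap V_{y}$ over~$W$ agree on overlaps — the factorization of a morphism through a closed immersion being unique (Lemma~\ref{lem:immersionfermeelocann}, Proposition~\ref{immersion_ferm\'e}) — hence glue, as morphisms of $\cA$-analytic spaces form a sheaf (Remark~\ref{rem:Homfaisceau}), to an isomorphism $X\simto Z$ compatible with the inclusions into~$W$. So $i$ identifies~$X$ with a closed analytic subset of~$W$, i.e. is a closed immersion, which finishes the proof.

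The main obstacle is precisely this last step. One has to check carefully that the kernel ideal sheaf restricts correctly to the members of the open cover (so that coherence propagates to a genuine coherent ideal sheaf on~$W$), that its zero locus recovers~$i(X)$ with the correct analytic structure, and — above all — that the local identifications of~$X$ with $V(\cI)$, which a priori are only isomorphisms of locally ringed spaces but are in fact isomorphisms of $\cA$-analytic spaces, are mutually compatible and therefore glue to a global one.
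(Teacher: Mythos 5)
Your proof is correct and follows essentially the same route as the paper: the forward implication from the definition of an immersion, and the converse by gluing the locally defined coherent ideal sheaves over $W=\bigcup_{y}V_{y}$ and identifying~$X$ with the resulting closed analytic subset. Your use of the kernel sheaf $\sKer(\cO_{W}\to i_{\ast}\cO_{X})$ is just a canonical way of carrying out the gluing the paper asserts, with the compatibility of the local isomorphisms handled, as in the paper's surrounding lemmas, by the uniqueness of factorizations through closed immersions.
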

\begin{proof}
L'implication $i) \implies ii)$ d\'ecoule des d\'efinitions. D\'emontrons $ii) \implies i)$. Supposons que, pour tout $y \in i(X)$, il existe un voisinage~$V_y$ de~$y$ dans~$Y$ tel que~$i$ induise un isomorphisme entre~$i^{-1}(V_y)$ et un ferm\'e analytique~$Z_y$ de~$V_y$ d\'efini par un faisceau d'id\'eaux coh\'erent~$\cI^y$. 

Posons $U := \bigcup_{y\in i(X)} V_{y}$. C'est un ouvert de~$Y$ sur lequel les faisceaux~$\cI^{y}$ se recollent en un faisceau d'id\'eaux coh\'erent~$\cI$. On v\'erifie que le morphisme~$i$ induit un isomorphisme entre~$X$ et le ferm\'e analytique de~$Y$ d\'efini par~$\cI$.
\end{proof}

\index{Immersion|)}

\chapter[Topologie des anneaux de fonctions]{Quelques r\'esultats topologiques sur les anneaux de fonctions analytiques}
\label{analyse}

Le but principal de ce chapitre est de d\'emontrer un r\'esultat de fermeture des id\'eaux du faisceau structural. Il sera essentiel, par la suite, dans la d\'emonstration des propri\'et\'es de la cat\'egorie des espaces analytiques (\cf~proposition~\ref{morphsec}, dont d\'ecoule le th\'eor\`eme~\ref{thm:analytification}) ou pour obtenir des propri\'et\'es d'annulation cohomologiques sur des espaces ouverts (\cf~th\'eor\`eme~\ref{th:Bouvert}).

Dans la section~\ref{sec:normesquotients}, nous d\'emontrons des r\'esultats techniques permettant de comparer des normes (norme r\'esiduelle, norme quotient, etc.) sur des anneaux de Banach. Nous commen\c{c}ons par rappeler quelques d\'efinitions et r\'esultats issus de~\cite{A1Z}, et les adaptons ensuite afin de pouvoir les appliquer \`a d'autres anneaux. 

La section~\ref{sec:divWnormes} contient le c\oe ur technique de ce chapitre~: la d\'emonstration d'une version raffin\'ee du th\'eor\`eme de division de Weierstra\ss~\ref{weierstrass} dans laquelle on garde un contr\^ole sur les normes du reste et du quotient. 

Dans la section~\ref{sec:fermeture}, nous d\'emontrons finalement un r\'esultat de fermeture pour les id\'eaux du faisceau structural ou, plus g\'en\'eralement, les sous-modules d'une puissance finie du faisceau structural. Pour ce faire, nous devons imposer \`a l'anneau de base au-dessus duquel nous travaillons de satisfaire certaines conditions, que nous regroupons sous le vocable d'anneau de base g\'eom\'etrique. Nos exemples usuels (corps valu\'es, anneaux d'entiers de corps de nombres, corps hybrides, anneaux de valuation discr\`ete, anneaux de Dedekind trivialement valu\'es) satisfont ces conditions.

\section{Normes sur les quotients}\label{sec:normesquotients}

Soit $(\Ac,\nm)$ un anneau de Banach. Posons $B := \Mc(\Ac)$. Soit~$G(S)$ un polyn\^ome unitaire non constant \`a coefficients dans~$\cA$~:
\[ G = S^d + \sum_{i=0}^{d-1} g_i \, S^i  \in\cA[S]. \]
Dans cette section, nous \'etudierons plusieurs normes et semi-normes dont on peut munir le quotient $\Ac[S]/(G(S))$.

\subsection{G\'en\'eralit\'es}\label{sec:generalitesnormes}

\index{Norme!comparaison|(}

Cette section est reprise de \cite[\S 5.2]{A1Z}. 

Nous noterons encore~$\nm$ la norme sur~$\Ac^d$ d\'efinie par 
\[ \fonction{\nm}{\cA^d}{\R_{\ge 0}}{(a_0,\ldots,a_{d-1})}{\max(\|a_{0}\|,\dotsc,\|a_{d-1}\|)}.\]
Nous avons un isomorphisme 
\[ \fonction{n_{G}}{\cA^d}{\cA[S]/(G(S))}{(a_0,\ldots,a_{d-1})}{\displaystyle\sum^{d-1}_{i=0} a_i\, S^i}.\]

\begin{defi}\index{Norme!divisorielle|textbf}%
\nomenclature[Bja]{$\nm_{\div}$}{norme divisorielle sur $\Ac[S]/(G(S))$, h\'erit\'ee de~$\cA^d$ par division euclidienne}
On appelle \emph{norme divisorielle} sur $\Ac[S]/(G(S))$ la norme~$\nm_{\div}$ d\'efinie par
\[ \|F\|_{\div} := \|n_{G}^{-1}(F)\| \]
pour $F \in \cA[S]/(G(S))$.
\end{defi}

Introduisons maintenant les semi-normes r\'esiduelles. Pour $v\in \R_{>0}$, consid\'erons la norme~$\nm_{v}$ sur $\Ac[S]$ d\'efinie par 
\[ \|F\|_{v} := \max_{0\le i\le n} (\|a_{i}\|\, v^i) \]
pour $F = \displaystyle\sum_{i=0}^n a_{i}\, S^i \in \cA[S]$.%
\nomenclature[Bib]{$\nm_{v}$}{norme $\infty$ de rayon~$v$ sur $\Ac[S]$}

\begin{defi}\index{Norme!residuelle@r\'esiduelle|textbf}%
\nomenclature[Bjb]{$\nm_{v,\res}$}{norme r\'esiduelle $\Ac[S]/(G(S))$ induite par~$\nm_{v}$}
Pour~$v \in \R_{>0}$, on appelle \emph{semi-norme r\'esiduelle de rayon~$v$} la semi-norme $\nm_{v,\res}$ sur $\cA[S]/(G(S))$ d\'efinie par
\[ \|F\|_{v, \res} := \inf \big( \|F'\|_{v} \ \colon \ F' \in \cA[S], F' = F \mod G \big)\]
pour $F \in \cA[S]/(G(S))$.
\end{defi}

Soit $v_{1}$ un \'el\'ement de~$\R_{>0}$ satisfaisant l'in\'egalit\'e
\begin{equation*}
\sum_{i=0}^{d-1} \|g_i\|\, v_{1}^{i-d} \le \frac{1}{2}.
\end{equation*}

\begin{prop}[\protect{\cite[th\'eor\`eme~5.2.1, lemmes~5.2.2 et~5.2.3]{A1Z}}]\label{prop:equivalencedivres}\index{Norme!residuelle@r\'esiduelle}\index{Norme!divisorielle}
Soit $v \ge v_{1}$. On a les propri\'et\'es suivantes :
\begin{enumerate}[i)]
\item la semi-norme~$\|.\|_{v,\res}$ sur~$\cA[S]/(G(S))$ est une norme ;
\item l'anneau~$\cA[S]/(G(S))$ muni de $\nm_{v,\res}$ est un anneau de Banach ;
\item pour tout~$F\in\cA[S]/(G(S))$ on a les in\'egalit\'es :
\[v^{-d+1}\|F\|_{v,\res}\leq\|F\|_{\div}\leq 2\|F\|_{v,\res}.\]
\end{enumerate}

En particulier, les normes~$\nm_{\div}$ et les normes~$\nm_{v,\res}$ pour $v\ge v_{1}$ sont toutes \'equivalentes.
\qed
\end{prop}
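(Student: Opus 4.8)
The plan is to prove the three assertions together, since they are tightly linked, by exhibiting explicit two-sided comparisons between $\nm_{\div}$ and $\nm_{v,\res}$ for $v \ge v_1$, and then reading off i) and ii) as formal consequences. The starting point is the Euclidean division of a polynomial by the \emph{monic} polynomial $G$: for any $F' \in \cA[S]$ there is a unique $(Q,R) \in \cA[S]^2$ with $F' = QG + R$ and $\deg R < d$, and $n_G^{-1}$ of the class of $F'$ is precisely the coefficient vector of $R$. So the whole point is to control the coefficients of the remainder $R$ in terms of $\|F'\|_v$.

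First I would establish the easy inequality $\|F\|_{v,\res} \le \|F\|_v^{\mathrm{rep}}$ for any representative, hence in particular, choosing the canonical degree-$<d$ representative $R_0 = n_G^{-1}(F)$, one gets $\|F\|_{v,\res} \le \|R_0\|_v \le v^{d-1}\,\|R_0\| = v^{d-1}\|F\|_{\div}$ when $v \ge 1$ (and a similar bound in general; note $v_1$ is forced to be fairly large by $\sum \|g_i\| v_1^{i-d} \le \tfrac12$, so in practice $v \ge v_1$ will give $v \ge 1$ — this should be checked, and if not one argues with $\max(v^i)$ directly). Rearranging gives $v^{-d+1}\|F\|_{v,\res} \le \|F\|_{\div}$, which is the left half of iii). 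The harder half is $\|F\|_{\div} \le 2\|F\|_{v,\res}$. Here is where the hypothesis on $v_1$ enters. Take any representative $F'$ with $\|F'\|_v$ close to $\|F\|_{v,\res}$; perform Euclidean division $F' = QG + R$. The key computation is a bound of the form $\|R\|_v \le \|F'\|_v + (\text{something})\cdot \|R\|_v$, obtained by writing $R = F' - QG$ and estimating the coefficients of $QG$ using the fact that $G = S^d + \sum_{i<d} g_i S^i$, so that the "overflow" coefficients are governed by $\sum_{i=0}^{d-1}\|g_i\| v^{i-d}$. Since this quantity is $\le \tfrac12$ for $v \ge v_1$, one gets $\|R\|_v \le \|F'\|_v + \tfrac12 \|R\|_v$, i.e. $\|R\|_v \le 2\|F'\|_v$; and since $R$ has degree $<d$, $\|F\|_{\div} = \|n_G^{-1}(F)\| \le \|R\|_v \le 2\|F'\|_v$ (using again $v \ge 1$ so that $\|R\| \le \|R\|_v$), and taking the infimum over $F'$ gives $\|F\|_{\div} \le 2\|F\|_{v,\res}$.

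Once iii) is in hand, i) and ii) follow formally. For i): if $\|F\|_{v,\res} = 0$ then $\|F\|_{\div} = 0$ by the right inequality, hence $n_G^{-1}(F) = 0$ since $\nm_{\div}$ is visibly a norm (it is transported from the norm $\nm$ on $\cA^d$ by the bijection $n_G$), so $F = 0$; thus $\nm_{v,\res}$ separates points and is a norm. For ii): the inequalities in iii) say that $\nm_{v,\res}$ and $\nm_{\div}$ are equivalent norms on $\cA[S]/(G(S))$; since $(\cA,\nm)$ is a Banach ring, $\cA^d$ with the sup norm is complete, hence $\cA[S]/(G(S))$ is complete for $\nm_{\div}$ via the isometry $n_G$, hence also complete for the equivalent norm $\nm_{v,\res}$; submultiplicativity of $\nm_{v,\res}$ is a direct check (the residual norm of a product of representatives is bounded using submultiplicativity of $\nm_v$ on $\cA[S]$, which itself follows from submultiplicativity of $\nm$ on $\cA$). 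The final sentence — that all the $\nm_{v,\res}$ for $v \ge v_1$ and $\nm_{\div}$ are mutually equivalent — is then immediate by transitivity of equivalence, with constants depending on $v$ as exhibited in iii).

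The main obstacle, and the only step requiring genuine care rather than bookkeeping, is the self-referential estimate $\|R\|_v \le \|F'\|_v + \tfrac12\|R\|_v$: one must set it up so that the iteration (or the direct algebraic manipulation of the Euclidean division, reducing $F'$ modulo $G$ degree by degree) actually closes, and this is exactly what the constraint $\sum_{i=0}^{d-1}\|g_i\| v_1^{i-d} \le \tfrac12$ is engineered to guarantee. A clean way to organize it is to observe that reducing a single monomial $a_m S^m$ with $m \ge d$ modulo $G$ replaces it by $-a_m \sum_{i<d} g_i S^{m-d+i}$, whose $\nm_v$-size is $\le (\sum_i \|g_i\| v^{i-d})\cdot \|a_m S^m\|_v \le \tfrac12 \|a_m S^m\|_v$, so the reduction process is a contraction and converges; I would phrase the proof in that language to avoid any appeal to the original text's computation and keep it self-contained. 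Everything else is routine, and since iii) is quoted directly from \cite[th\'eor\`eme~5.2.1, lemmes~5.2.2 et~5.2.3]{A1Z}, in the paper itself one may of course simply cite it, but the above reconstruction is the honest argument behind it.
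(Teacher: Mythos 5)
The paper offers no proof of this proposition: it is quoted verbatim from \cite[th\'eor\`eme~5.2.1, lemmes~5.2.2 et~5.2.3]{A1Z} and closed with \og\qed\fg. Your reconstruction is the standard argument behind that reference (canonical degree-$<d$ representative for the left inequality; Euclidean division controlled by the contraction estimate $\sum_{i<d}\|g_i\|v^{i-d}\le \tfrac12$ for the right one, with i) and ii) as formal consequences), and the monomial-by-monomial reduction you sketch at the end does close correctly with the constant $2$, since the degree-$\ge d$ mass is halved at each step while the degree-$<d$ part accumulates a geometric series bounded by $\|F'\|_v$.

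Two points need tightening. First, the role of $v\ge 1$ is not a mere convenience you can dismiss or patch by \og arguing with $\max(v^i)$ directly\fg: both halves of iii) with the stated constants genuinely use it, and the hypothesis on $v_1$ alone does not force it. For instance with $G=S^d$, $d\ge 2$, any $v_1>0$ satisfies the defining inequality, yet for a nonzero constant class $F$ one has $\|F\|_{v,\res}=\|F\|_{\div}$, so the left inequality reads $v^{-d+1}\le 1$ and fails for $v<1$. So your write-up should either record $v\ge 1$ as part of the standing hypotheses (as is implicit in the regime of the cited source) or adjust the constants; as it stands this is the one step of your proof that would actually fail outside that regime. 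Second, the parenthetical claim that $\nm_v$ on $\cA[S]$ is submultiplicative is false for the max-norm used here (already $(1+S)^2$ over $\R$ with $v=1$ gives norm $2$): only the $\ell^1$-type norm $\sum_i\|a_i\|v^i$ is. To get submultiplicativity of the residual (semi)norm, use instead the mixed bound $\|AB\|_v\le \big(\sum_i\|a_i\|v^i\big)\,\|B\|_v$ together with the fact that, after Euclidean reduction, the $\ell^1$- and max-based residual seminorms differ by a factor at most $2d$; alternatively work throughout with the $\ell^1$ polynomial norm, which is presumably what makes ii) literally an \og anneau de Banach\fg{} in the sense of the paper. With these two repairs the proposal is complete and coincides with the intended proof.
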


Les constantes que nous donnons ne figurent pas dans l'\'enonc\'e de~\cite[th\'eor\`eme 5.2.1]{A1Z}, mais sa preuve les fournit. 

\begin{defi}\index{Norme!spectrale|textbf}%
\nomenclature[Bjd]{$\nm_{\sp}$}{semi-norme spectrale de $\nm_{v,\res}$ sur $\cA[S]/(G(S))$ pour $v$ assez grand}
On appelle \emph{semi-norme spectrale} sur $\cA[S]/(G(S))$ la semi-norme~$\nm_{\sp}$ obtenue comme semi-norme spectrale de $\nm_{v_{1}, \res}$. Elle est ind\'ependante du choix de~$v_{1}$. 
\end{defi}

Notons $Z_{G}$ le ferm\'e de Zariski de~$\E{1}{\Ac}$ d\'efini par l'\'equation $G=0$. Le morphisme d'anneaux de Banach $\Ac \to \Ac[S]/(G(S))$ induit un morphisme d'espaces localement annel\'es
\[ \varphi_{G} \colon Z_{G} \too B .\]%
\nomenclature[Jya]{$Z_{G}$}{ferm\'e de Zariski de~$\E{1}{\Ac}$ d\'efini par $G \in \cA[S]$}%
\nomenclature[Jyb]{$\varphi_{G}$}{morphisme $Z_{G} \simto \cM(\cA)$}

Soit $v_{2}$ un \'el\'ement de~$\R_{>0}$ satisfaisant l'in\'egalit\'e
\begin{equation*}\label{eq:v02}
v_{2} \ge \max_{0\le i\le d-1}(\|g_{i}\|^{1/(d-i)}).
\end{equation*}

\begin{lemm}[\protect{\cite[lemme~5.2.4]{A1Z}}]\label{lem:ZGspectreASG}\index{Norme!residuelle@r\'esiduelle}\index{Norme!uniforme}
Pour tout $v\ge v_{2}$, le disque~$\overline{D}_{B}(v)$ contient toutes les racines de~$G$ dans~$\AunA$ et le spectre de l'anneau~$\cA[S]/(G(S))$ muni de $\nm_{v,\res}$ s'identifie \`a $Z_{G}$. 

En particulier, d'apr\`es le lemme~\ref{lem:spuni}, la semi-norme spectrale~$\nm_{\sp}$ s'identifie \`a la semi-norme uniforme sur~$Z_{G}$.
\qed
\end{lemm}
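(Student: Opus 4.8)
I would prove the two assertions separately: the inclusion $Z_{G} \subseteq \overline{D}_{B}(v)$, and the identification of the spectrum of $(\cA[S]/(G(S)),\nm_{v,\res})$ with $Z_{G}$.

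For the first, let $x\in\AunA$ be a zero of $G$ and let $b$ be its image in $B=\cM(\cA)$. Then $x$ lies in the fibre over $b$, which is $\E{1}{\cH(b)}$, and it is a root of the monic polynomial $S^{d}+\sum_{i=0}^{d-1}g_{i}(b)\,S^{i}\in\cH(b)[S]$. Classical bounds on the roots of a monic polynomial---Newton polygons in the ultrametric fibres (\cite[proposition~3.1.2/1]{BGR}) and a Cauchy-type estimate in the archimedean ones---bound $|S(x)|$ in terms of the quantities $|g_{i}(b)|^{1/(d-i)}$; since $|g_{i}(b)|\le\|g_{i}\|$ for every $i$, the choice of $v_{2}$ yields $|S(x)|\le v_{2}$, hence $x\in\overline{D}_{B}(v_{2})\subseteq\overline{D}_{B}(v)$ for $v\ge v_{2}$. (Alternatively one may apply lemma~\ref{lem:borneT} fibre by fibre.)

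For the second, I would first recall that, for $v$ at least $v_{1}$---which we may assume, being free to enlarge $v_{2}$---proposition~\ref{prop:equivalencedivres} makes $\nm_{v,\res}$ a Banach norm equivalent to the divisorial norm $\nm_{\div}$, so that $\cM(\cA[S]/(G(S)))$ is well defined and is unchanged if $\nm_{v,\res}$ is replaced by $\nm_{\div}$. Pulling back a bounded multiplicative seminorm on $\cA[S]/(G(S))$ along the quotient map gives a multiplicative seminorm on $\cA[S]$ that kills $G$ and is bounded on $\cA$ (for $a\in\cA$ one has $\|a\|_{\div}=\|a\|$, as $a$ already has degree $<d$), hence a point of $Z_{G}$; this defines a map $\cM(\cA[S]/(G(S)))\to Z_{G}$. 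Conversely, for $x\in Z_{G}$ the seminorm $\va_{x}$ on $\cA[S]$ kills the ideal $(G)$, hence descends to a multiplicative seminorm on $\cA[S]/(G(S))$, and the crux is to check that it is bounded for $\nm_{v,\res}$. By the first part, $|S(x)|\le\Lambda$ for a constant $\Lambda$ depending only on $G$; writing an element $F$ by its unique representative $\sum_{i=0}^{d-1}a_{i}S^{i}$ gives $|F|_{x}\le\sum_{i=0}^{d-1}\|a_{i}\|\,\Lambda^{i}\le C\,\|F\|_{\div}$ with $C:=\sum_{i=0}^{d-1}\Lambda^{i}$. Since $\nm_{\div}$ is sub-multiplicative and $\va_{x}$ multiplicative, $|F|_{x}^{n}=|F^{n}|_{x}\le C\,\|F^{n}\|_{\div}\le C\,\|F\|_{\div}^{\,n}$ for every $n\ge 1$, whence $|F|_{x}\le C^{1/n}\|F\|_{\div}\to\|F\|_{\div}$; thus $\va_{x}$ is bounded for $\nm_{\div}$, hence for $\nm_{v,\res}$. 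The two maps are mutually inverse bijections, and they are homeomorphisms for the topologies of pointwise convergence because the quotient $\cA[S]\to\cA[S]/(G(S))$ is onto, so the two families of evaluation functions generate the same topology on $Z_{G}$; this yields the identification $\cM(\cA[S]/(G(S)))\simto Z_{G}$, compatibly with the projections to $B$ (i.e. with $\varphi_{G}$).

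The last sentence then follows by applying lemma~\ref{lem:spuni} to the Banach ring $(\cA[S]/(G(S)),\nm_{v,\res})$: its spectrum being $Z_{G}$, one gets $\|F\|_{\sp}=\max_{x\in Z_{G}}|F(x)|=\|F\|_{Z_{G}}$ for all $F$, so $\nm_{\sp}$ is the uniform seminorm on $Z_{G}$ (and is independent of $v$, the spectral seminorm being invariant under replacement by an equivalent norm). The main obstacle is the boundedness estimate in the second part: one needs a uniform bound on $|S(x)|$ over $Z_{G}$---which is exactly where monicity of $G$ and the choice of $v_{2}$ enter---together with the multiplicativity trick and the equivalence $\nm_{\div}\sim\nm_{v,\res}$ of proposition~\ref{prop:equivalencedivres}; the archimedean fibres require the usual care with Cauchy bounds.
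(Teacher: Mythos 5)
The second half of your argument uses the right mechanism (pull back and descend seminorms, bound $|F|_{x}$ on the canonical degree-$<d$ representative, power trick, then Lemma~\ref{lem:spuni}), but the first half has a genuine gap at the archimedean points, and it is exactly the delicate point of the lemma. There is no classical bound of the form $|z|\le\max_{i}|g_{i}|^{1/(d-i)}$ for the roots of a monic polynomial over an archimedean field: the Lagrange--Fujiwara type bounds all carry an extra factor (typically $2$). Concretely, for $\cA=\Z$ and $G=S^{2}-S-1$ one may take $v_{2}=1$, while the zero of $G$ in the fibre of $\AunA$ over the archimedean point satisfies $|S(x)|=(1+\sqrt{5})/2>1$; so the inequality $|S(x)|\le v_{2}$ you invoke is simply not available, and your fallback via Lemma~\ref{lem:borneT} only gives $|S(x)|\le d\,\|G\|_{\infty}$, which is not $\le v_{2}$ either. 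The usual way to get the containment $Z_{G}\subset\overline{D}_{B}(v)$ uniformly in the fibres is from an inequality of $v_{1}$-type, $\sum_{i}\|g_{i}\|\,v^{i-d}\le\frac12$, which forces $|G(x)|\ge\frac12|S(x)|^{d}>0$ whenever $|S(x)|>v$. Note also that your opening move (``enlarge $v_{2}$ so that $v\ge v_{1}$'') is not permitted: $v_{2}$ is fixed by its defining inequality and the lemma quantifies over \emph{all} $v\ge v_{2}$, so you would only be proving the weaker statement for $v\ge\max(v_{1},v_{2})$; the problematic range is precisely $v_{2}\le v<v_{1}$ (in the example above, at $v=1$ the point with $|S(x)|=(1+\sqrt{5})/2$ is not bounded for $\nm_{1,\res}$ by any constant, since $\|S^{n}\|_{1,\res}\le 1$ for all $n$ while $|S^{n}(x)|\to\infty$). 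So under the bare $v_{2}$ hypothesis your route cannot be completed; the ingredient that must come from the hypotheses of \cite[lemme~5.2.4]{A1Z} (the paper gives no proof, only this citation) is exactly this root containment.

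In the boundedness step you also assert that $\nm_{\div}$ is submultiplicative; this is not justified (multiplying two degree-$<d$ representatives and reducing modulo $G$ introduces constants), and it is not needed: for $v\ge v_{1}$, Proposition~\ref{prop:equivalencedivres} makes $\cA[S]/(G(S))$ with $\nm_{v,\res}$ a Banach ring, so the power trick should be run with $\nm_{v,\res}$ itself, starting from $|F|_{x}\le C\,\|F\|_{\div}\le 2C\,\|F\|_{v,\res}$. The same remark repairs your final inference ``bounded for $\nm_{\div}$, hence for $\nm_{v,\res}$'', where equivalence of norms only yields a constant, not the inequality $|F|_{x}\le\|F\|_{v,\res}$ demanded by the definition of the spectrum. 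With these repairs the identification of the spectrum with $Z_{G}$, and the last sentence via Lemma~\ref{lem:spuni}, do follow --- but only in the regime where the root containment and the submultiplicativity of $\nm_{v,\res}$ are guaranteed, which is not what the statement, as you have argued it, delivers.
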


\subsection{Condition $(\cB N_{G})$}\label{sec:BNG}
\index{Condition!BNG@$(\cB N_{G})$|(}

Nous introduisons ici une condition permettant de comparer normes r\'esiduelles et normes spectrales dans le cadre de la section~\ref{sec:generalitesnormes}. 
Il s'agit d'un outil technique qui sera pour nous crucial dans la suite. 


Cette section est principalement reprise de \cite[\S 5.3]{A1Z} et \cite[\S 6]{EtudeLocale}. Signalons un changement de notation~: la condition~$(N_{G})$ de~\cite{EtudeLocale} sera not\'ee ici~$(\cB N_{G})$, en vue d'une g\'en\'eralisation \`a la section~\ref{sec:ONG}.\index{Condition!NG@$(N_{G})$}

\begin{defi}[\protect{\cite[d\'efinition~4.1]{EtudeLocale}}]\index{Condition!BNG@$(\cB N_{G})$}\index{Norme!residuelle@r\'esiduelle}\index{Norme!spectrale}
On dit qu'une partie compacte~$U$ de~$B$ \emph{satisfait la condition~$(\cB N_G)$} si elle est spectralement convexe et s'il existe $v_{U,0} \in \R_{>0}$ tel que, pour tout $v \ge v_{U,0}$, la semi-norme~$\nm_{U,v,\res}$ sur~$\cB(U)[S]/(G(S))$ soit \'equivalente \`a la semi-norme spectrale.
\end{defi}

On peut supposer que~$v_{U,0}$ satisfait les in\'egalit\'es 
\[ \sum_{i=0}^{d-1} \|g_i\|_{U}\, v_{U,0}^{i-d} \le \frac{1}{2} \textrm{ et } v_{U,0} \ge \max_{0\le i\le d-1}\big(\|g_{i}\|_{U}^{1/(d-i)}\big).\]
Nous nous placerons toujours sous cette hypoth\`ese dans la suite.

\begin{prop}[\protect{\cite[proposition~5.3.3]{A1Z}}]\index{Norme!residuelle@r\'esiduelle}\index{Norme!divisorielle}\index{Norme!uniforme}\index{Condition!BNG@$(\cB N_{G})$}
Soit~$U$ une partie compacte de~$B$ qui satisfait la condition~$(\Bc N_{G})$. Alors, pour tout $v\ge v_{U,0}$, 
les normes $\nm_{U,\div}$ et $\nm_{U,v,\res}$ sur~$\Bc(U)[T]/(G(T))$ sont \'equivalentes \`a la norme uniforme sur~$\varphi_{G}^{-1}(U)$. De plus, le morphisme de $\cA$-alg\`ebres de Banach naturel
\[ \Bc(U)[T]/(G(T)) \too \Bc(\varphi_{G}^{-1}(U)), \]
o\`u $\varphi_{G}^{-1}(U)$ est vu comme partie de~$\E{1}{\Ac}$, est un isomorphisme bi-born\'e.
\qed
\end{prop}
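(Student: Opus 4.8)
The plan is to transport the results of the previous subsection from the base ring~$\cA$ to the Banach ring~$\cB(U)$. Since $U$ is spectrally convex one has $\cM(\cB(U))\simeq U$ (theorem~\ref{thm:rationnel}), and remark~\ref{spectral} identifies $\E{1}{\cB(U)}$ with $\pi_{1}^{-1}(U)\subset\E{1}{\cA}$ in such a way that the Zariski closed subset of $\E{1}{\cB(U)}$ cut out by~$G$ corresponds to $\varphi_{G}^{-1}(U)$. Moreover, the standing hypothesis on~$v_{U,0}$ recalled just after the definition of~$(\cB N_{G})$ says precisely that $v_{U,0}$ dominates the constants $v_{1}$ and $v_{2}$ attached to the norms $\|g_{i}\|_{U}$, so that proposition~\ref{prop:equivalencedivres} and lemma~\ref{lem:ZGspectreASG}, read over~$\cB(U)$, are available for every $v\ge v_{U,0}$.

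With this in place I would first chain the norm equivalences. Proposition~\ref{prop:equivalencedivres} over~$\cB(U)$ makes $\cB(U)[S]/(G(S))$ a Banach ring for each $\nm_{U,v,\res}$ and shows all these norms equivalent to $\nm_{U,\div}$; condition~$(\cB N_{G})$ adds the equivalence of $\nm_{U,v,\res}$ with the spectral seminorm $\nm_{\sp}$; and lemma~\ref{lem:ZGspectreASG} followed by lemma~\ref{lem:spuni} (both over~$\cB(U)$) identify the spectrum of $(\cB(U)[S]/(G(S)),\nm_{U,v,\res})$ with $\varphi_{G}^{-1}(U)$, hence $\nm_{\sp}$ with the uniform seminorm on $\varphi_{G}^{-1}(U)$. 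Chaining these, $\nm_{U,\div}$, $\nm_{U,v,\res}$ and the uniform seminorm on $\varphi_{G}^{-1}(U)$ are pairwise equivalent; in particular the last is a genuine norm, and $(\cB(U)[S]/(G(S)),\nm_{\sp})$ is a complete Banach ring. For the remaining isomorphism I would then observe that each $P\in S_{\varphi_{G}^{-1}(U)}$, being nowhere zero on the spectrum $\varphi_{G}^{-1}(U)$, has invertible image in $\cB(U)[S]/(G(S))$ by \cite[theorem~1.2.1]{Ber1}, so that $\cA[S]\to\cB(U)[S]/(G(S))$ extends to a map $\cK(\varphi_{G}^{-1}(U))\to\cB(U)[S]/(G(S))$; this extension is isometric for the uniform seminorms, both being the supremum of the evaluations over $\varphi_{G}^{-1}(U)$, and its image contains that of $\cK(U)[S]/(G(S))$ — since $\pi_{1}(\varphi_{G}^{-1}(U))\subset U$ forces $S_{U}\subset S_{\varphi_{G}^{-1}(U)}$ — which is dense because $\cK(U)$ is dense in $\cB(U)$. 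Being a complete separated ring into which $\cK(\varphi_{G}^{-1}(U))$ embeds isometrically and densely, $(\cB(U)[S]/(G(S)),\nm_{\sp})$ must coincide with the separated completion $\cB(\varphi_{G}^{-1}(U))$; the resulting isomorphism is inverse to the natural morphism of the statement (and, incidentally, shows $\varphi_{G}^{-1}(U)$ to be spectrally convex).

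The main obstacle, I expect, lies in the bookkeeping of the first step: one must check that the identification of $\cM(\cB(U)[S]/(G(S)))$ with the subset $\varphi_{G}^{-1}(U)$ of $\E{1}{\cA}$, obtained from lemma~\ref{lem:ZGspectreASG} together with the spectral convexity of~$U$, is compatible with the evaluation morphisms, so that the abstract spectral seminorm of $\nm_{U,v,\res}$ genuinely computes the concrete uniform norm on $\varphi_{G}^{-1}(U)$. Once this compatibility is pinned down, both the chain of equivalences and the completion argument are routine.
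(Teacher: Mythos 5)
Your argument is correct, and it follows the route one would expect (and the one taken in the cited reference \cite[proposition~5.3.3]{A1Z}, since the paper itself only recalls this statement without reproducing a proof): transport to the base $\cB(U)$ via spectral convexity, chain the equivalences $\nm_{U,\div}\sim\nm_{U,v,\res}\sim\nm_{\sp}=\nm_{\varphi_{G}^{-1}(U)}$ using proposition~\ref{prop:equivalencedivres}, condition~$(\cB N_{G})$ and lemmes~\ref{lem:ZGspectreASG} et~\ref{lem:spuni}, then identify the (complete, separated) quotient with the completion $\cB(\varphi_{G}^{-1}(U))$ by a density argument on $\cK(U)[S]$. The compatibility of the spectrum identification with the evaluation morphisms, which you rightly single out as the only point needing care, does hold and is exactly the content of the identification $\cM(\cB(U)[S]/(G(S)))\simeq Z_{G}\cap\pi_{1}^{-1}(U)=\varphi_{G}^{-1}(U)$.
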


Nous rappelons maintenant quelques conditions suffisantes pour que la condition~$(\Bc N_{G})$ soit satisfaite. En pratique, nous utiliserons la premi\`ere dans la partie ultram\'etrique de l'espace et la seconde dans la partie de caract\'eristique nulle.


\begin{prop}[\protect{\cite[corollaire~6.3 et proposition~6.14]{EtudeLocale}}]\label{prop:CNBNG}\index{Condition!BNG@$(\cB N_{G})$}
\index{Bord analytique}\index{Resultant@R\'esultant}%
\nomenclature[Jz]{$\Res$}{r\'esultant}
Soit~$U$ une partie compacte et spectralement convexe de~$B$. Supposons que~$U$ poss\`ede un bord analytique~$\Gamma_{U}$ satisfaisant l'une ou l'autre des conditions suivantes~:
\begin{enumerate}
\item $\Gamma_{U}$ est fini et, pour tout $\gamma \in \Gamma_{U}$, $G(\gamma)$ est sans facteurs multiples;
\item la fonction $|\Res(G,G')|$, o\`u $\Res(G,G')$ d\'esigne le r\'esultant des polyn\^omes~$G$ et~$G'$, est born\'ee inf\'erieurement sur~$\Gamma_{U}$ par une constante strictement positive.
\end{enumerate}
Alors, $U$ satisfait la condition~$(\Bc N_{G})$. 
\qed
\end{prop}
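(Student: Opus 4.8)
Since the spectral seminorm of a submultiplicative norm is dominated by it, the inequality $\|F\|_{\sp}\le\|F\|_{U,v,\res}$ holds automatically for every $v$; the whole content of the statement is to produce, for every $v\ge v_{U,0}$, a constant (allowed to depend on $U$, $G$, $v$) with $\|F\|_{U,v,\res}\le C\,\|F\|_{\sp}$ for all $F\in\cB(U)[S]/(G(S))$. By Proposition~\ref{prop:equivalencedivres} applied over the uniform Banach ring $\cB(U)$ — legitimate once $v\ge v_{U,0}$, with $v_{U,0}$ chosen to satisfy the inequalities recalled after the definition of $(\cB N_{G})$ — the norm $\|\cdot\|_{U,v,\res}$ is equivalent to the divisorial norm $\|\cdot\|_{U,\div}$, so it suffices to bound $\|F\|_{U,\div}$ by a constant times $\|F\|_{\sp}$. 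Represent $F$ by its Euclidean remainder modulo the monic polynomial $G$, say $F=\sum_{i=0}^{d-1}a_{i}S^i$ with $a_{i}\in\cB(U)$, so that $\|F\|_{U,\div}=\max_{i}\|a_{i}\|_{U}$. Since $\Gamma_{U}$ is an analytic boundary of $U$ we have $\|a_{i}\|_{U}=\|a_{i}\|_{\Gamma_{U}}$, and since Euclidean division by a monic polynomial commutes with the evaluations $\cB(U)\to\cH(\gamma)$ this gives
\[\|F\|_{U,\div}=\sup_{\gamma\in\Gamma_{U}}\max_{i}|a_{i}(\gamma)|=\sup_{\gamma\in\Gamma_{U}}\|F(\gamma)\|_{\div,\cH(\gamma)},\]
where $F(\gamma)$ is the image of $F$ in $\cH(\gamma)[S]/(G(\gamma))$ and $\|\cdot\|_{\div,\cH(\gamma)}$ its divisorial norm over the complete valued field $\cH(\gamma)$. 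Finally, all zeros of $G(\gamma)$ in $\E{1}{\cH(\gamma)}$ lie in $\varphi_{G}^{-1}(U)$ because $\gamma\in U$, so by Lemmas~\ref{lem:spuni} and~\ref{lem:ZGspectreASG} one has $\|F(\gamma)\|_{\sp,\cH(\gamma)}\le\|F\|_{\varphi_{G}^{-1}(U)}=\|F\|_{\sp}$. Everything is therefore reduced to the following fibrewise comparison, with a constant uniform over $\gamma\in\Gamma_{U}$: for $k$ a complete valued field and $G_{k}\in k[S]$ monic of degree $d$ without multiple factors, $\|x\|_{\div}\le C_{G_{k}}\,\|x\|_{\sp}$ for every $x\in k[S]/(G_{k})$.

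For a fixed such $k$ this is immediate: $G_{k}$ squarefree means $k[S]/(G_{k})$ is reduced, a finite product of finite extensions of $k$, each complete; both $\|\cdot\|_{\div}$ and $\|\cdot\|_{\sp}$ are then norms on the finite-dimensional $k$-vector space $k[S]/(G_{k})$, hence equivalent. This already settles the statement under hypothesis~(1): there $\Gamma_{U}$ is finite, so $C:=\max_{\gamma\in\Gamma_{U}}C_{G(\gamma)}<\infty$ works.

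Under hypothesis~(2), $\Gamma_{U}$ may be infinite and one must bound $C_{G(\gamma)}$ uniformly along $\Gamma_{U}$. The plan is to make this constant explicit by Lagrange interpolation: after base change to a splitting field write $G_{k}=\prod_{j=1}^{d}(S-\alpha_{j})$ with the $\alpha_{j}$ pairwise distinct; a polynomial $P$ of degree $<d$ is recovered from its values at the roots via $P=\sum_{j}P(\alpha_{j})\prod_{\ell\ne j}\frac{S-\alpha_{\ell}}{\alpha_{j}-\alpha_{\ell}}$, which yields an estimate on the coefficients of $P$ of the shape
\[\|P\|_{\infty}\le c(d)\,\bigl(\max_{j}\max(|\alpha_{j}|,1)\bigr)^{d^{2}}\,|\mathrm{disc}(G_{k})|^{-1}\,\max_{j}|P(\alpha_{j})|,\]
the denominator coming from the lower bound $\prod_{\ell\ne j}|\alpha_{j}-\alpha_{\ell}|\ge|\mathrm{disc}(G_{k})|\cdot(\max_{j}\max(|\alpha_{j}|,1))^{-(d-1)^{2}}$, valid both in the ultrametric and the archimedean case. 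Applying this with $k=\cH(\gamma)$ and $G_{k}=G(\gamma)$: the $|\alpha_{j}|$ are bounded by $\max(1,\max_{0\le i<d}\|g_{i}\|_{U}^{1/(d-i)})$ on ultrametric fibres and by a Cauchy bound in $\max_{i}\|g_{i}\|_{U}$ on archimedean fibres, both finite since $U$ is compact and both independent of $\gamma$; and $|\mathrm{disc}(G)(\gamma)|=|\Res(G,G')(\gamma)|$ is bounded below by the positive constant supplied by hypothesis~(2). This produces a single $C$ valid for all $\gamma\in\Gamma_{U}$, and combined with the two preceding paragraphs it shows that $U$ satisfies $(\cB N_{G})$. (Observe that (1) implies (2) in any case: on a finite set, "without multiple factors" forces $|\Res(G,G')|$ to be bounded below; so the two cases could be merged.)

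I expect the main obstacle to be precisely this explicit, discriminant-controlled form of the fibrewise comparison needed for hypothesis~(2): one has to quantify how the equivalence constant between $\|\cdot\|_{\div}$ and $\|\cdot\|_{\sp}$ degenerates as the roots of $G(\gamma)$ collide, bound it uniformly using only data available at the boundary points, and treat the ultrametric, the positive-characteristic and the archimedean fibres — as well as possibly inseparable residue fields $\cH(\gamma)$ — on the same footing. A secondary subtlety worth stressing is that there is no minimum-modulus principle on $U$ ($\mathrm{disc}(G)$ may perfectly well vanish at interior points of $U$); it is exactly the reduction to the analytic boundary $\Gamma_{U}$ that avoids this difficulty, and one obtains as a by-product that $\|\cdot\|_{\sp}$ is in fact a norm, i.e. that $\cB(U)[S]/(G(S))$ is reduced.
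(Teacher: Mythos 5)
Your argument is sound in substance and follows essentially the route behind the cited results (the paper itself only records this statement with a reference to \cite[corollaire~6.3 et proposition~6.14]{EtudeLocale}): reduce the divisorial norm to the analytic boundary via the canonical degree~$<d$ representative, whose coefficients lie in $\cB(U)$ and whose evaluation commutes with reduction modulo the monic $G(\gamma)$, then prove a fibrewise comparison over each $\cH(\gamma)$ — abstract equivalence of norms on the finite-dimensional reduced algebra in case~(1), and an explicit resultant-controlled interpolation bound, uniform along $\Gamma_U$, in case~(2). Two details deserve a word: over a trivially valued $\cH(\gamma)$ (which does occur under hypothesis~(1), e.g.\ for ultramétrique typique points) the usual ``all norms on a finite-dimensional space are equivalent'' is stated for non-trivially valued complete fields; it is harmless here because the two norms in play are ultrametric (their balls are subspaces, so they take finitely many values — in fact both are $\{0,1\}$-valued in that case), but you should say so. Also, to make sense of $|P(\alpha_j)|$ you should place the roots in the completed algebraic closure of $\cH(\gamma)$, where the absolute value extends uniquely.

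The one genuine error is your closing parenthetical: (1) does \emph{not} imply (2). ``Sans facteurs multiples'' means squarefree, not separable; in positive characteristic a squarefree $G(\gamma)$ can perfectly well satisfy $\Res(G,G')(\gamma)=0$, e.g.\ $G(\gamma)=S^p-t$ over $\F_p(t)$ trivially valued, where $G'(\gamma)=0$. This is precisely why the statement keeps the two hypotheses separate: (1) is tailored to finite analytic boundaries with possibly inseparable (and trivially valued) residue fields, where your Lagrange-interpolation bound collapses because the roots collide, whereas (2) forces separability at every boundary point; and it is why your case-(1) argument, which only uses reducedness of $\cH(\gamma)[S]/(G(\gamma))$ and not distinctness of the roots, is the right one there. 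Since your proof treats (1) independently of (2), the slip does not invalidate the proof, but the remark about merging the two cases should be deleted.
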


Dans le cas particulier o\`u le polyn\^ome~$G$ est de degr\'e~1, la condition~$(\Bc N_{G})$ est toujours v\'erifi\'ee (ce que l'on peut d\'emontrer soit directement, soit \`a l'aide de la proposition \ref{prop:CNBNG}). \`A l'aide des r\'esultats de ce paragraphe, nous obtenons l'\'enonc\'e suivant.  Notons qu'il serait ici plus simple de le d\'emontrer directement.

\begin{prop}\label{prop:isodegre1}\index{Norme!divisorielle}\index{Norme!uniforme}
Supposons que le polyn\^ome~$G$ est de degr\'e~1. Alors, pour toute partie compacte spectralement convexe~$U$ de~$B$, la norme~$\nm_{U,\div}$ sur~$\Bc(U)[T]/(G(T))$ est \'equivalente \`a la norme uniforme sur~$\varphi_{G}^{-1}(U)$ et le morphisme de $\cA$-alg\`ebres de Banach naturel 
\[\Bc(U) \simtoo \Bc(U)[T]/(G(T)) \too \Bc(\varphi_{G}^{-1}(U)) \]
est un isomorphisme bi-borné.

En particulier, le morphisme naturel $Z_{G} \to B$ est un isomorphisme d'espaces annel\'es. 
\qed
\end{prop}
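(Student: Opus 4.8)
The plan is to exploit the fact that a monic degree-one polynomial $G = S - g$ with $g \in \cA$ produces an especially transparent situation: the quotient $\cA[S]/(G(S))$ is canonically isomorphic to $\cA$ itself (send $S \mapsto g$), and the associated closed Zariski subset $Z_G \subset \E{1}{\cA}$ is the graph of the function $g$, so the projection $\varphi_G \colon Z_G \to B$ is already a bijection of sets. First I would observe that under the isomorphism $\cA \simto \cA[S]/(G(S))$, the divisorial norm $\nm_{U,\div}$ on $\cB(U)[T]/(G(T))$ is nothing but the norm of $\cB(U)$ itself, since division by the monic degree-one polynomial $G$ leaves only a constant term. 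So the content of the statement reduces to identifying the norm of $\cB(U)$ with the uniform norm on $\varphi_G^{-1}(U) \subset \E{1}{\cA}$, and to checking that the induced map $\cB(U) \to \cB(\varphi_G^{-1}(U))$ is an admissible (i.e. bounded with bounded inverse, hence a topological) isomorphism.

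The key steps, in order: (i) recall from the paragraph preceding the statement that condition $(\cB N_G)$ is automatically satisfied for $G$ of degree $1$ — this follows for instance from Proposition \ref{prop:CNBNG}, part 2, since $\Res(G, G')$ is, up to sign, the leading coefficient $1$ of $G'$ (the derivative of a monic linear polynomial is the constant $1$), so $|\Res(G,G')| \equiv 1$ is bounded below by a positive constant on any analytic boundary of $U$; alternatively one checks $(\cB N_G)$ by hand, as the quotient is just $\cB(U)$. (ii) Apply Proposition \ref{prop:CNBNG}'s consequence — more precisely the displayed proposition \cite[proposition~5.3.3]{A1Z} quoted just above — which, once $(\cB N_G)$ holds, gives that $\nm_{U,\div}$ and $\nm_{U,v,\res}$ are equivalent to the uniform norm on $\varphi_G^{-1}(U)$ and that the natural morphism $\cB(U)[T]/(G(T)) \to \cB(\varphi_G^{-1}(U))$ is an isomorphism. (iii) Transport this back through $\cB(U) \simto \cB(U)[T]/(G(T))$ to obtain the asserted admissible isomorphism $\cB(U) \simto \cB(\varphi_G^{-1}(U))$.

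For the final assertion that $Z_G \to B$ is an isomorphism of ringed spaces, I would argue as follows. The statement $\cB(U) \simto \cB(\varphi_G^{-1}(U))$ for every compact spectrally convex $U \subset B$, combined with Theorem \ref{thm:rationnel}, shows that $\varphi_G$ restricts to an isomorphism of ringed spaces over the interiors; letting $U$ range over a basis of compact spectrally convex neighbourhoods of points of $B$ (which exists by the discussion after the definition of rational subsets, since rational subsets form a neighbourhood basis and are spectrally convex) and taking colimits identifies the stalks of $\cO_{Z_G}$ with those of $\cO_B$ compatibly, while $\varphi_G$ is already a homeomorphism because it is the restriction of a continuous bijection from a compact space (or by direct inspection of the graph).

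The main obstacle I expect is purely bookkeeping rather than conceptual: verifying carefully that the identification $\varphi_G^{-1}(U) = \{x \in \E{1}{\cA} : \pi(x) \in U\} \cap Z_G$ is genuinely spectrally convex inside $\E{1}{\cA}$ (so that $\cB(\varphi_G^{-1}(U))$ behaves well and Theorem \ref{thm:rationnel} applies), and tracking the chain of norm equivalences with their explicit constants so that the word \emph{admissible} is justified. None of this is deep; as the authors note, a direct proof — writing $f \in \cB(\varphi_G^{-1}(U))$ as a power series in $T$ centred appropriately and using that $T = g$ on $Z_G$ to collapse it to an element of $\cB(U)$, with the uniform norm on the graph visibly equal to the uniform norm on $U$ — is arguably shorter than invoking the machinery of $(\cB N_G)$. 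I would present the $(\cB N_G)$-based argument as the primary route since it requires no new computation, and remark that the direct argument is available.
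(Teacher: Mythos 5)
Your proposal follows essentially the same route the paper intends: the paper itself observes just before the statement that $(\cB N_G)$ is automatic in degree~1 (either directly or via Proposition~\ref{prop:CNBNG}, exactly as you argue with the resultant), and then the result is read off from the quoted proposition of \cite{A1Z} together with the identification $\cB(U)\simeq\cB(U)[T]/(G(T))$ — and, like you, the authors remark that a direct computation would be even simpler. Your treatment of the final assertion via a basis of compact spectrally convex neighbourhoods and Theorem~\ref{thm:rationnel} is the natural way to fill in the detail the paper leaves implicit, so there is nothing to correct.
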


Les r\'esultats pr\'ec\'edents peuvent s'appliquer dans le cadre d'endomorphismes polynomiaux de la droite. \index{Endomorphisme de la droite}\index{Condition!BNPS-T@$(\cB N_{P(S)-T})$|(}

Soit~$W$ une partie compacte et spectralement convexe de~$B$. Soit~$P$ un polyn\^ome unitaire non constant \`a coefficients dans~$\cB(W)$. Le morphisme naturel 
\[\varphi^\sharp_{P} \colon \cB(W)[T] \too \cB(W)[T,S]/(P(S) - T) \simtoo \cB(W)[S]\]%
\nomenclature[Jy]{$\varphi_{P}$}{endomorphisme de $\E{1}{\cA}$ induit par $P \in \cA[T]$}
induit un endomorphisme~$\varphi_{P}$ de~$\E{1}{\cB(W)}$. Pour toute partie compacte spectralement convexe~$U$ de~$\E{1}{\cB(W)}$ (avec coordonn\'ee~$T$), notons~$Z_{U,P(S)-T}$ le ferm\'e de Zariski de $\E{1}{\cB(U)}$ (avec coordonn\'ee~$S$) d\'efini par l'\'equation $P(S)-T = 0$. D'apr\`es la proposition~\ref{prop:isodegre1}, il s'identifie \`a~$\varphi_{P}^{-1}(U)$. 

En particulier, pour tous $r,s \in \R_{\ge 0}$ tels que $0 = r< s$ ou $0< r\le s$, si $U = \overline{C}_{W}(r,s)$, $Z_{U,P(S)-T}$ s'identifie \`a $\overline{C}_{W}(P;r,s)$.

\begin{coro}[\protect{\cite[proposition~7.1]{EtudeLocale}}]\label{coro:BNPST}\index{Couronne!algebre@alg\`ebre d'une}\index{Disque!algebre@alg\`ebre d'un}\index{Domaine polynomial!algebre@alg\`ebre d'un}\index{Norme!residuelle@r\'esiduelle}\index{Norme!uniforme}\index{Norme!divisorielle}
Soient $W,P,r,s$ comme ci-dessus.
Supposons que $\overline{C}_{W}(r,s)$ satisfait la condition $(\cB N_{P(S)-T})$. Alors, pour tout $w\ge v_{0}$, les normes $\nm_{\overline{C}_{W}(r,s),\div}$ et $\nm_{\overline{C}_{W}(r,s),w,\res}$ sur~$\Bc(\overline{C}_{W}(r,s))[S]/(P(S)-T)$ sont \'equivalentes 
\`a la norme uniforme sur $\overline{C}_{W}(P;r,s)$. De plus, le morphisme de $\cA$-alg\`ebres de Banach
\[ \cB(\overline{C}_{W}(r,s))[S]/(P(S) - T) \too \cB(\overline{C}_{W}(P;r,s))\]
est un isomorphisme bi-born\'e.
\qed
\end{coro}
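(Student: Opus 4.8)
The plan is to deduce the statement from the proposition \cite[proposition~5.3.3]{A1Z} recalled above, after a change of base ring. Write $U := \overline{C}_{W}(r,s)$, a compact subset of $\E{1}{\cB(W)}$ (coordinate~$T$). Unwinding the definition of the condition $(\cB N_{P(S)-T})$ with base ring $\cB(U)$ and monic non‑constant polynomial $G := P(S)-T \in \cB(U)[S]$, the hypothesis of the corollary says precisely that $U$ is spectrally convex and that there is $v_{0} \in \R_{>0}$ such that, for every $v \ge v_{0}$, the residual seminorm $\nm_{U,v,\res}$ on $\cB(U)[S]/(P(S)-T)$ is equivalent to the spectral seminorm. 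In particular $U$ is spectrally convex, so by Definition~\ref{def:spconvexe} (and Theorem~\ref{thm:rationnel}) the morphism $\varphi_{U}$ identifies $\cM(\cB(U))$ with $U$; moreover $\cB(U)$ is a uniform Banach ring, so $\cM(\cB(U))$ is the whole analytic spectrum of $\cB(U)$ and $\cB(\cM(\cB(U))) = \cB(U)$.

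Then I would apply \cite[proposition~5.3.3]{A1Z} with ambient Banach ring $\cB(U)$, polynomial $G = P(S)-T$ (of degree $p = \deg P$), and compact subset the whole spectrum $\cM(\cB(U))$ — which satisfies $(\cB N_{G})$ by the previous paragraph. This gives, for every $w \ge v_{0}$: the norms $\nm_{U,\div}$ and $\nm_{U,w,\res}$ on $\cB(U)[S]/(P(S)-T)$ are equivalent to the uniform norm on $\varphi_{G}^{-1}(\cM(\cB(U)))$, and the natural morphism $\cB(U)[S]/(P(S)-T) \to \cB(\varphi_{G}^{-1}(\cM(\cB(U))))$ is an isomorphism. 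Since $\cM(\cB(U))$ is the whole target of $\varphi_{G} \colon Z_{G} \to \cM(\cB(U))$, one has $\varphi_{G}^{-1}(\cM(\cB(U))) = Z_{G}$, which is by construction the Zariski‑closed subset $Z_{U,P(S)-T}$ of $\E{1}{\cB(U)}$ defined by $P(S) = T$.

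Finally I would transport the conclusion through the two identifications already available: $\cB(U) = \cB(\overline{C}_{W}(r,s))$ by definition, and the identification $Z_{U,P(S)-T} \simeq \overline{C}_{W}(P;r,s)$ (as locally ringed spaces, compatibly with the uniform norms), which is the consequence of Proposition~\ref{prop:isodegre1} recalled just before the statement. Substituting these into the previous paragraph yields the two assertions of the corollary verbatim, with $v_{0}$ the constant furnished by the condition (which we may assume satisfies the auxiliary inequalities recalled after the definition of $(\cB N_{G})$).

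The only delicate point is the bookkeeping of the change of base ring, not any estimate: one must make sure that the data consumed by \cite[proposition~5.3.3]{A1Z} — a Banach ring, a monic polynomial over it, a spectrally convex compact in its spectrum satisfying $(\cB N_{\cdot})$ — are literally the objects produced by replacing $\cB(W)$ with $\cB(U)$; that the residual seminorms $\nm_{U,v,\res}$ appearing in the hypothesis of the corollary are the ones the cited proposition expects (true because $\cB(\cM(\cB(U))) = \cB(U)$); and that the ``uniform norm on $\varphi_{G}^{-1}(U)$'' of that proposition matches the uniform norm on $\overline{C}_{W}(P;r,s)$ under the identification of $Z_{U,P(S)-T}$. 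That last identification, which is the one genuinely geometric input, is not part of the present argument — it has already been established via Proposition~\ref{prop:isodegre1} — so once the definitions are unwound there is nothing further to prove.
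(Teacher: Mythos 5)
Votre argument est correct et suit essentiellement la voie que le texte sous-entend (le corollaire est laissé sans démonstration, renvoyé à \cite[proposition~7.1]{EtudeLocale})~: changement de base à $\cB(\overline{C}_{W}(r,s))$, application de \cite[proposition~5.3.3]{A1Z} au polynôme $P(S)-T$ et au spectre tout entier, puis transport par l'identification $Z_{U,P(S)-T} \simeq \overline{C}_{W}(P;r,s)$ fournie par la proposition~\ref{prop:isodegre1}. Le soin que vous apportez à vérifier que $\cB(\cM(\cB(U))) = \cB(U)$ et que les semi-normes résiduelles de l'hypothèse sont bien celles attendues par la proposition citée est exactement le point de comptabilité qu'il fallait contrôler~; rien ne manque.
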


On peut d\'eduire de la proposition~\ref{prop:CNBNG} des conditions permettant d'assurer que certaines couronnes $\overline{C}_{W}(r,s)$ satisfont la condition $(\cB N_{P(S)-T})$.

\begin{prop}[\protect{\cite[proposition~6.5 et lemme~6.16]{EtudeLocale}}]\label{prop:CNBNPST}
Soient $W,P,r,s$ comme ci-dessus.
\begin{enumerate}[i)]
\item Supposons que $W$ est contenue dans la partie ultram\'etrique de~$B$ et poss\`ede un bord analytique fini. Alors $\overline{C}_{W}(r,s)$ satisfait la condition $(\cB N_{P(S)-T})$.
\item Supposons que $P(b) \in \cH(b)[T]$ est s\'eparable. Soient $r,r',s,s' \in \R_{\ge 0}$ tels que $r \prec r'$ et $0 < s' < s$. Alors, il existe $r_{1},r_{2},s_{1},s_{2} \in \R_{\ge 0}$ v\'erifiant $r \prec  r_{1}\prec r_{2} \prec r'$ et $s' < s_{1} < s_{2}< s$, un voisinage compact~$V_{0}$ de~$b$ dans~$B$ et $\eps\in \R_{>0}$ tels que, pour toute partie compacte spectralement convexe~$V$ de~$V_{0}$ contenant~$b$, tout $u \in[r_{1},r_{2}]$ et tout $v \in[s_{1},s_{2}]$, tout $Q \in \cB(V)[T]$ tel que $\|Q-P\|_{V,\infty} \le \eps$, la couronne $\overline{C}_{V}(u,v)$ satisfasse la condition $(\cB N_{Q(S)-T})$.
\end{enumerate}
\qed
\end{prop}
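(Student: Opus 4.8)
L'id\'ee sera, dans les deux cas, d'appliquer la proposition~\ref{prop:CNBNG} au polyn\^ome $G := Q(S)-T$ (ou $G := P(S)-T$), vu comme \'el\'ement de $\cB(U)[S]$, o\`u~$U$ d\'esigne la couronne consid\'er\'ee au-dessus de la base et $G' = \partial_{S}G = Q'(S)$. Il faudra donc, \`a chaque fois, exhiber un bord analytique de~$U$ sur lequel l'une des deux conditions suffisantes de cette proposition soit v\'erifi\'ee, le choix de la condition et la construction du bord d\'ependant du caract\`ere ultram\'etrique ou non de la base.

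Pour~$i)$, on posera $U := \overline{C}_{W}(r,s)$, qui est une partie compacte spectralement convexe de $\E{1}{\cB(W)}$. Comme~$W$ est ultram\'etrique et poss\`ede un bord analytique fini~$\Gamma_{W}$, la description des anneaux de couronnes dans le cas ultram\'etrique (lemme~\ref{lem:couronneum}) donne, pour une fonction $f = \sum_{n} a_{n}T^{n}$ sur~$U$, l'\'egalit\'e $\|f\|_{U} = \max_{n} \|a_{n}\|_{W}\,\max(r^{n},s^{n})$~; en \'evaluant~$f$ aux points des fibres situ\'es aux rayons~$r$ et~$s$ au-dessus des points de~$\Gamma_{W}$, on retrouve pr\'ecis\'ement cette quantit\'e, de sorte que l'ensemble \emph{fini} form\'e de ces points est un bord analytique de~$U$. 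En un tel point~$\gamma$, de projection~$b$, la valeur $T(\gamma)$ est transcendante sur~$\cH(b)$~: en \'ecrivant $P(S)-T(\gamma)$ comme polyn\^ome de degr\'e~$1$ en l'\'el\'ement transcendant $T(\gamma)$ \`a coefficients dans $\cH(b)[S]$, il est irr\'eductible, donc sans facteur multiple, dans $\cH(b)(T(\gamma))[S]$, et l'on v\'erifiera que cette propri\'et\'e persiste dans $\cH(\gamma)[S]$. La condition~1 de la proposition~\ref{prop:CNBNG} sera alors satisfaite. On pourra aussi pr\'ef\'erer une approche directe, manipulant les s\'eries de Laurent qui d\'ecrivent les anneaux de couronnes, sans passer par la proposition~\ref{prop:CNBNG}.

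Pour~$ii)$, o\`u~$V$ n'est plus suppos\'ee ultram\'etrique, on utilisera plut\^ot la condition~2~: il s'agira de minorer strictement positivement la fonction $|\Res_{S}(Q(S)-T,\,Q'(S))|$ sur un bord analytique de $U := \overline{C}_{V}(u,v)$. Le principe du maximum, appliqu\'e fibre \`a fibre (qu'il s'agisse de couronnes complexes ou ultram\'etriques), montre que la r\'eunion $\overline{C}_{V}(u,u) \cup \overline{C}_{V}(v,v)$ est un bord analytique de~$U$. Au-dessus d'un point~$b'$ de~$V$, la fonction $\Res_{S}(Q(S)-T,\,Q'(S))$ s'identifie au polyn\^ome $\phi_{Q,b'}(X) := \Res_{S}(Q(b')-X,\,Q(b')')$ en la coordonn\'ee (ici not\'ee~$T$), \`a coefficients dans $\cH(b')$, dont le terme constant vaut, \`a une unit\'e pr\`es, le discriminant de~$Q(b')$~; l'hypoth\`ese que $P(b)$ est s\'eparable assure alors que $\phi_{P,b}$ est un polyn\^ome non nul, de terme constant non nul, n'ayant donc qu'un nombre fini de racines. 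On choisira $r_{1},r_{2} \in \intoo{r,r'}$ et $s_{1},s_{2} \in \intoo{s',s}$ de fa\c{c}on que les intervalles compacts $[r_{1},r_{2}]$ et $[s_{1},s_{2}]$ \'evitent l'ensemble fini des modules des racines de~$\phi_{P,b}$. Puisque, sur $\overline{C}_{V}(u,u)$, la fonction consid\'er\'ee prend les valeurs $|\phi_{Q,b'}(\tau)|$ pour $b' \in V$ et $|\tau| = u$ (et de m\^eme avec~$v$), la continuit\'e des racines de~$\phi_{Q,b'}$ en $(Q,b')$ permettra, en r\'etr\'ecissant~$V_{0}$ et~$\eps$, de garantir que les modules des racines de~$\phi_{Q,b'}$ restent \`a distance strictement positive de $[r_{1},r_{2}] \cup [s_{1},s_{2}]$, donc d'obtenir la minoration voulue, uniform\'ement en~$u$, en~$v$ et en~$Q$. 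La proposition~\ref{prop:CNBNG} s'appliquera alors.

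La principale difficult\'e se situera dans la partie~$ii)$, \`a savoir obtenir la minoration de $|\Res_{S}(Q(S)-T,Q'(S))|$ de mani\`ere \emph{uniforme} en~$Q$ voisin de~$P$ et en la base~$b'$ voisine de~$b$. Cela repose sur la d\'ependance continue des modules des racines de $\phi_{Q,b'}$ vis-\`a-vis de ses coefficients, eux-m\^emes fonctions (polynomiales) de ceux de~$Q$ et continues en~$b'$~; il faudra veiller, en prenant~$\eps$ et~$V_{0}$ assez petits, \`a ce que $\phi_{Q,b'}$ ne d\'eg\'en\`ere pas (terme constant restant non nul, degr\'e constant), ce qui d\'ecoule encore de la s\'eparabilit\'e de~$P(b)$. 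Dans la partie~$i)$, la seule subtilit\'e sera la v\'erification de l'absence de facteur multiple aux points consid\'er\'es en caract\'eristique positive, autrement dit que $T(\gamma)$ n'y est jamais une puissance $p$-i\`eme dans~$\cH(\gamma)$~; cela s'obtient en examinant le groupe des valeurs et le corps r\'esiduel de~$\cH(\gamma)$, ou se contourne par l'approche directe signal\'ee plus haut.
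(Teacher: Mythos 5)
La proposition est reprise de \cite{EtudeLocale} et le pr\'esent texte n'en reproduit pas la d\'emonstration (elle se conclut par un \og\qed\fg{} de citation)~; votre strat\'egie --- ramener les deux assertions aux deux crit\`eres de la proposition~\ref{prop:CNBNG} en exhibant un bord analytique convenable de la couronne relative --- est bien celle que sugg\`ere le contexte, et le point~ii) est essentiellement complet~: le bord $\overline{C}_{V}(u,u)\cup\overline{C}_{V}(v,v)$, la minoration de $|\Res_{S}(Q(S)-T,Q'(S))|$ via les valeurs critiques de~$Q(b')$ (le terme constant de $\phi_{P,b}$ \'etant le discriminant de $P(b)$, non nul par s\'eparabilit\'e) et l'uniformit\'e en $u,v,Q,b'$ fonctionnent comme vous le d\'ecrivez. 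Notez seulement que la \og continuit\'e des racines en~$b'$\fg{} doit \^etre formul\'ee au niveau des modules (polygone de Newton dans le cas ultram\'etrique, identification \`a~$\C$ \`a une puissance de la valeur absolue pr\`es dans le cas archim\'edien), les coefficients de~$Q(b')$ vivant dans des corps~$\cH(b')$ distincts~; ce point, que vous signalez, se r\`egle effectivement ainsi.

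Le point~i), en revanche, comporte une lacune r\'eelle~: l'assertion que $P(S)-T(\gamma)$ est sans facteurs multiples dans $\cH(\gamma)[S]$ aux points de Shilov $\gamma=\eta_{b,r}$, $\eta_{b,s}$, alors qu'aucune hypoth\`ese de s\'eparabilit\'e n'est faite sur~$P$ dans~i). Si $P'\ne 0$, la transcendance de~$T(\gamma)$ sur~$\cH(b)$ donne $\Res_{S}(P(S)-T(\gamma),P'(S))\ne 0$ et conclut. Mais si $P\in\cH(b)[S^{p}]$, \'ecrivons $P=P_{1}(S^{p^{e}})$ avec $P_{1}'\ne 0$~: les facteurs irr\'eductibles de $P(S)-T(\gamma)$ sont de la forme $R(S^{p^{e}})$ o\`u $R$ est un facteur irr\'eductible s\'eparable de $P_{1}(U)-T(\gamma)$, et le lemme~\ref{lem:PTp} donne seulement $R(S^{p^{e}})=Q(S)^{p^{r}}$ avec $r\ge 0$~: rien ne garantit a priori $r=0$. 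Il faut d\'emontrer que, pour toute racine~$\alpha$ de $P_{1}(U)-T(\gamma)$, l'extension $\cH(\gamma)(\alpha)\big(\alpha^{1/p^{e}}\big)/\cH(\gamma)(\alpha)$ est de degr\'e~$p^{e}$, ce qui requiert l'analyse du groupe des valeurs et du corps r\'esiduel des points de type Gauss (et montre au passage que le point rigide $T=0$, o\`u $P(S)$ peut parfaitement avoir des facteurs multiples, doit \^etre exclu du bord lorsque $r=0$). Vous mentionnez cette difficult\'e sans la traiter~; c'est pourtant le c\oe ur de la preuve de \cite[proposition~6.5]{EtudeLocale} en caract\'eristique positive, et l'argument alternatif \og direct\fg{} par les s\'eries de Laurent que vous \'evoquez n'est pas esquiss\'e.
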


L'\'enonc\'e de \cite[lemme~6.16]{EtudeLocale} est moins pr\'ecis que~ii), mais sa preuve fournit cette version raffin\'ee.

\begin{coro}\label{cor:BVfineNPST}\index{Voisinage}
Soient $b\in B$ et~$x$ un point de~$\E{1}{\cA}$ (avec coordonn\'ee~$S$) au-dessus de~$b$. Soit~$\cV_{b}$ une base fine de voisinages compacts spectralement convexes de~$b$ dans~$B$. Si~$\cH(b)$ est de caract\'eristique non nulle et trivialement valu\'e, supposons que tout \'el\'ement de~$\cV_{b}$ poss\`ede un bord analytique fini. Alors $x$ poss\`ede une base fine~$\cV_{x}$ de voisinages compacts spectralement convexes de la forme 
\[\overline{C}_{V}(P,r,s),\]
avec $V \in \cV_{b}$, $P \in \cB(V)[S]$ unitaire non constant et $r,s \in \R_{\ge 0}$. En outre, on peut supposer que, pour tous $V,P,r,s$ tels que $\overline{C}_{V}(P,r,s)$ appartienne \`a~$\cV_{x}$,
\begin{enumerate}[i)]
\item $\overline{C}_{b}(P,r,s)$ est connexe~;
\item il existe un voisinage~$V_{0}$ de~$V$ tel que $P \in \cB(V_{0})[S]$ et $r_{0},s_{0} \in \R_{\ge 0}$ avec $r_{0}\prec r$ et $s_{0}>s$ tels que, pour tout $V' \in \cV_{b}$ contenu dans~$V_{0}$
et tous $r',s' \in \R_{\ge 0}$ v\'erifiant $r_{0}\prec r' \prec r$ et $s_{0}> s'>s$, $\overline{C}_{V'}(P',r',s')$ appartienne \`a~$\cV_{x}$~;
\item la couronne $\overline{C}_{V}(r,s)$ (avec coordonn\'ee~$T$) satisfait la propri\'et\'e~$(\cB N_{P(S)-T})$.
\end{enumerate}
\end{coro}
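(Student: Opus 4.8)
The plan is to deduce everything from the two neighbourhood-basis statements, Propositions~\ref{prop:basevoisdim1rigide} and~\ref{prop:basevoisdim1}, together with the comparison result Proposition~\ref{prop:CNBNPST}; the real work is a careful matching of data. First I would identify the fibre $X_{b} := \pi^{-1}(b)$ with $\E{1}{\cH(b)}$ and split according to whether~$x$ is rigid there or not. If~$x$ is rigid, with minimal polynomial $\mu_{x}$ (\cf~définition~\ref{def:rigidedroite}), I apply Proposition~\ref{prop:basevoisdim1rigide} to each open neighbourhood~$U$ of~$x$ and the family~$\cV_{b}$; this yields a spectrally convex neighbourhood~$V_{0}$ of~$b$, a monic non-constant $P_{0} \in \cB(V_{0})[S]$ and reals $s_{1} < s_{2}$, $\eps > 0$, and I reinterpret the disks $\overline{D}_{V}(P,s)$ obtained there as the degenerate crowns $\overline{C}_{V}(P,0,s)$, i.e. $r := 0$. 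If~$x$ is not rigid, Proposition~\ref{prop:basevoisdim1} forces~$b$ to be ultrametric and produces the analogous data with genuine radii. In both cases I obtain, for each~$U$, a ``box'' of quadruples $(V,P,r,s)$ — with $V \in \cV_{b}$ contained in~$V_{0}$, $P$ monic (of the same degree as~$P_{0}$ in the rigid case) satisfying $\|P-P_{0}\|_{V_{0},\infty} \le \eps$, and $r$, $s$ in prescribed intervals — for which $x \in \overline{C}_{V}(P,r,s) \subset U$ and $\overline{C}_{b}(P,r,s)$ is connected.

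Next I would secure condition~iii), namely that the coordinate-$T$ crown $\overline{C}_{V}(r,s)$ satisfies $(\cB N_{P(S)-T})$, distinguishing two cases. When $\cH(b)$ has positive characteristic and trivial valuation,~$b$ is ultrametric and, by hypothesis, every element of~$\cV_{b}$ lies in $B_{\um}$ and has a finite analytic boundary, so Proposition~\ref{prop:CNBNPST}~i) gives $(\cB N_{P(S)-T})$ directly and uniformly on the whole box. Otherwise $\cH(b)$ has characteristic zero or a non-trivial valuation; I then arrange that $P_{0}(b)$ be separable — via part~i) of Proposition~\ref{prop:basevoisdim1rigide} in the rigid case (which applies exactly when $\cH(b)$ has non-trivial valuation or $\mu_{x}$ is separable, the latter being automatic in characteristic zero), via the last assertion of Proposition~\ref{prop:basevoisdim1} in the non-rigid non-trivially-valued case, and, in the remaining non-rigid characteristic-zero case, by observing that the polynomials produced in the proof of Proposition~\ref{prop:basevoisdim1} are squarefree (they are products of pairwise distinct irreducible factors, \cf~lemmes~\ref{lem:bv23} and~\ref{lem:bv14}), hence separable, a property that persists after a small perturbation. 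Once $P_{0}(b)$ is separable, Proposition~\ref{prop:CNBNPST}~ii) supplies a smaller box on which $(\cB N_{P(S)-T})$ holds; intersecting it with the box of the first paragraph (shrinking~$V_{0}$, the tolerance~$\eps$, and the two radius intervals) produces a single box all of whose quadruples satisfy~iii) as well as the properties of the first paragraph.

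Finally I would set~$\cV_{x}$ to be the set of all crowns $\overline{C}_{V}(P,r,s) = \varphi_{P}^{-1}(\overline{C}_{V}(r,s))$ arising, over all open $U \ni x$, from these boxes. Each is a compact neighbourhood of~$x$; it is spectrally convex because $\overline{C}_{V}(r,s)$ is spectrally convex — a rational-type domain over the spectrally convex~$V$ — and $(\cB N_{P(S)-T})$, through Corollaire~\ref{coro:BNPST} (or Proposition~\ref{prop:isodegre1} when $\deg P = 1$), identifies $\cB$ of the preimage with the expected quotient, hence its spectrum with $\overline{C}_{V}(P,r,s)$, which one reads inside $\E{1}{\cA}$ via la remarque~\ref{spectral}. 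By construction $\cV_{x}$ consists of sets of the prescribed form and satisfies~i) and~iii); Propositions~\ref{prop:basevoisdim1rigide} and~\ref{prop:basevoisdim1} ensure it is a neighbourhood basis of~$x$; and condition~ii), together with the fineness of~$\cV_{x}$, follows by taking, for $W = \overline{C}_{V}(P,r,s) \in \cV_{x}$ produced inside some~$U$ with box $(V_{0},P_{0},\dots)$, the neighbourhood~$V_{0}$ together with~$r_{0}$ the lower endpoint of the $r$-interval and~$s_{0}$ the upper endpoint of the $s$-interval: the tighter crowns $\overline{C}_{V'}(P,r',s')$ with $V \subset V' \subset V_{0}$, $r_{0} \prec r' \prec r$ and $s_{0} > s' > s$ still belong to the same box, hence to~$\cV_{x}$, and, $\cV_{b}$ being a fine basis, they form a neighbourhood basis of~$W$. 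The main obstacle is precisely this bookkeeping: there is no new geometric input, everything being already contained in Propositions~\ref{prop:basevoisdim1rigide}, \ref{prop:basevoisdim1} and~\ref{prop:CNBNPST}, but one must choose neighbourhoods, polynomials, radii and tolerances compatibly, and keep track throughout of the harmless mismatch between the two coordinates~$S$ and~$T$.
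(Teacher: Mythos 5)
Your proposal is correct and follows essentially the same route as the paper: the paper's proof consists precisely of the topological remark guaranteeing fineness (small enlargements $\overline{C}_{V'}(P,r',s')$ of a crown stay inside any given neighbourhood of it) followed by an appeal to Proposition~\ref{prop:basevoisdim1rigide} (rigid case) or Proposition~\ref{prop:basevoisdim1} (non-rigid case) combined with Proposition~\ref{prop:CNBNPST}, which is exactly the bookkeeping you carry out. The only difference is that you spell out the case analysis for securing condition~iii) (separability of $P_{0}(b)$ versus the finite-analytic-boundary hypothesis) and the spectral convexity of the crowns, which the paper leaves implicit.
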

\begin{proof}
Commen\c cons par une remarque topologique qui nous permettra d'assurer que les bases de voisinages que l'on construit sont fines. Soient~$V$ une partie compacte de~$B$, $P \in \cO(V)[S]$ et $r,s \in \R_{\ge 0}$. Soit~$U$ un voisinage de $\overline{C}_{V}(P,r,s)$. Alors, pour tout voisinage~$V'$ de~$V$ assez petit et tous $r',s' \in \R_{\ge 0}$ avec $r'\prec r$ assez proche de~$r$ et $s'>s$ assez proche de~$s$, on a $\overline{C}_{V'}(P,r',s') \subset U$.

L'\'enonc\'e d\'ecoule alors des propositions~\ref{prop:basevoisdim1rigide} et \ref{prop:CNBNPST}, si $x$~est rigide dans la fibre au-dessus de~$b$, et des propositions~\ref{prop:basevoisdim1} et \ref{prop:CNBNPST} sinon.
\end{proof}

Pr\'ecisons le r\'esultat dans le cas des points rigides \'epais. 

\begin{coro}\label{cor:BVfineNPSTrigideepais}\index{Voisinage!d'un point rigide \'epais}\index{Point!rigide epais@rigide \'epais!voisinage|see{Voisinage}}
Soient $b\in B$ et~$x$ un point rigide \'epais de~$\E{1}{\cA}$ (avec coordonn\'ee~$S$) au-dessus de~$b$. Notons $d$ le degr\'e de $\mu_{\kappa,x} \in \kappa(b)[S]$. Soit~$P_{0}$ un relev\'e unitaire de degr\'e~$d$ de~$\mu_{\kappa,x}$ dans $\cO_{B,b}[S]$. Soit~$\cV_{b}$ une base fine de voisinages compacts spectralement convexes de~$b$ dans~$B$ sur lesquels~$P_{0}$ est $\cB$-d\'efinie. 
Alors $x$ poss\`ede une base fine~$\cV_{x}$ de voisinages compacts spectralement convexes de la forme 
\[\overline{D}_{V}(P_{0},s),\]
avec $V \in \cV_{b}$ et $s \in \R_{> 0}$. En outre, on peut supposer que pour tous $V,s$ tels que $\overline{D}_{V}(P_{0},s)$ appartienne \`a~$\cV_{x}$,
\begin{enumerate}[i)]
\item $\overline{D}_{b}(P_{0},s)$ est connexe~;
\item il existe un voisinage~$V_{0}$ de~$V$ et $s_{0} \in \R_{\ge 0}$ avec $s_{0}>s$ tels que, pour tout $V' \in \cV_{b}$ tel que $V \subset V' \subset V_{0}$ et tout $s' \in \intoo{s,s_{0}}$, $\overline{D}_{V'}(P',s')$ appartienne \`a~$\cV_{x}$.
\end{enumerate}
Si $\mu_{\kappa,x}$ est s\'eparable ou si tout \'el\'ement de~$\cV_{b}$ poss\`ede un bord analytique fini, on peut \'egalement supposer que
\begin{enumerate}
\item[iii)] le disque  $\overline{D}_{V}(s)$ (avec coordonn\'ee~$T$) satisfait la propri\'et\'e~$(\cB N_{P_{0}(S)-T})$.
\end{enumerate}
Si $\cH(b)$ est ultram\'etrique et non trivialement valu\'e, on peut \'egalement supposer que
\begin{enumerate}
\item[iii')] il existe $Q \in \cB(V)[S]$ unitaire de degr\'e~$d$ tel que $Q(b) \in \cH(b)[T]$ soit s\'eparable et un intervalle ouvert~$I$ de~$\R_{>0}$ contenant~$s$ tels que, pour tout voisinage compact spectralement convexe~$W$ de~$b$ dans~$V$ et tout $s' \in I$, on ait $\overline{D}_{W}(Q,s') = \overline{D}_{W}(P_{0},s')$ et le disque  $\overline{D}_{W}(s)$ (avec coordonn\'ee~$T$) satisfasse la propri\'et\'e~$(\cB N_{Q(S)-T})$. 
\end{enumerate}
\end{coro}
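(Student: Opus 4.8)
The plan is to specialize the construction of Corollary~\ref{cor:BVfineNPST} to the rigid thick case, invoking the sharper conclusion~ii) of Proposition~\ref{prop:basevoisdim1rigide} (the one that allows choosing for $P_0$ a fixed lift of $\mu_x^\delta$, here $\mu_{\kappa,x}$, to $\cO_{B,b}[S]$). Indeed, since $x$ is rigid thick, the polynomial $\mu_{\kappa,x}\in\kappa(b)[S]$ is well-defined and, by definition, it lies in $\kappa(b)[S]\subset\cH(b)[S]$; by Lemma~\ref{lem:polmin} it is a power of $\mu_x$, so $\mu_x^{\delta}$ makes sense in $\kappa(b)[S]$ with $\delta = \delta(x)$, and $\mu_{\kappa,x} = \mu_x^{\delta}$. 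Thus the hypothesis ``$\mu_x^{\delta}\in\kappa(b)[T]$'' of Proposition~\ref{prop:basevoisdim1rigide}~ii) is automatically satisfied, and we may take for $P_0$ the prescribed unitary lift of $\mu_{\kappa,x}$ of degree $d=\deg(\mu_{\kappa,x})$ to $\cO_{B,b}[S]$. First I would apply Proposition~\ref{prop:basevoisdim1rigide} with this choice and with $\cV_b$ the given fine base of compact spectrally convex neighbourhoods on which $P_0$ is $\cB$-defined: this yields a spectrally convex neighbourhood $V_0$ of $b$, the polynomial $P_0$, and reals $s_1<s_2$, $\eps$ such that for every $V\in\cV_b$ contained in $V_0$, every unitary $P\in\cB(V)[S]$ of degree $d$ with $\|P-P_0\|_{V,\infty}\le\eps$ and every $s\in[s_1,s_2]$ one has $x\in\overline{D}_V(P,s)\subset U$ with $\overline{D}_b(P,s)$ connected. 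Taking $P=P_0$ throughout (which is legitimate, $P_0$ being $\cB$-defined on each $V\in\cV_b$) gives disks of the form $\overline{D}_V(P_0,s)$; running this over all neighbourhoods $U$ of $x$, and using the topological remark at the start of the proof of Corollary~\ref{cor:BVfineNPST} (any neighbourhood of $\overline{D}_V(P_0,s)$ contains $\overline{D}_{V'}(P_0,s')$ for $V'$ slightly larger and $s'$ slightly larger than $s$), one obtains a \emph{fine} base $\cV_x$ of such disks, establishing the main assertion together with conditions~i) and~ii).

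It remains to obtain the additional conditions iii) and iii'), which concern the property $(\cB N_{Q(S)-T})$ for the \emph{disk} $\overline{D}_W(s)$ in the coordinate $T$ (recall $\overline{D}_V(P_0,s) = \varphi_{P_0}^{-1}(\overline{D}_V(s))$ and that, by Remark~\ref{rem:Tn} together with Proposition~\ref{prop:isodegre1}, $\overline{D}_W(s) = \overline{C}_W(0,s)$ in the degenerate sense, so the machinery of Proposition~\ref{prop:CNBNPST} applies with $r=0$). For~iii): if every element of $\cV_b$ has a finite analytic boundary and lies in $B_{\um}$, then part~i) of Proposition~\ref{prop:CNBNPST} applies directly. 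If instead $\mu_{\kappa,x}$ is separable, then $P_0(b)=\mu_{\kappa,x}$ is separable (a lift of a separable polynomial may be chosen separable, or simply: $\mu_{\kappa,x}$ itself is separable and $P_0$ reduces to it), so part~ii) of Proposition~\ref{prop:CNBNPST} applies — possibly after shrinking $V_0$, $\eps$ and narrowing $[s_1,s_2]$, which is harmless since we then just re-extract a fine base as above. For~iii'): if $\cH(b)$ is ultrametric and nontrivially valued, then, as in the proof of Proposition~\ref{prop:basevoisdim1rigide}~i), any polynomial over $\cH(b)$ can be approximated arbitrarily well by a separable one with coefficients even in $\kappa(b)$; choose $Q\in\cB(V)[S]$ unitary of degree $d$ with $Q(b)$ separable and $\|Q-P_0\|_{V,\infty}$ small enough that Lemma~\ref{lem:DQsconnexe} (and the ultrametric triangle inequality controlling $\overline{D}_W(Q,s')$) forces $\overline{D}_W(Q,s') = \overline{D}_W(P_0,s')$ for $s'$ in a suitable open interval $I\ni s$ and $W\subset V$; then Proposition~\ref{prop:CNBNPST}~ii) (applied to $Q$) gives $(\cB N_{Q(S)-T})$ for $\overline{D}_W(s)$.

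The main obstacle is bookkeeping rather than conceptual: one must arrange all the constraints — the constants $s_1<s_2$, $\eps$, $V_0$ from Proposition~\ref{prop:basevoisdim1rigide}, the (possibly smaller) constants demanded by Proposition~\ref{prop:CNBNPST}, and the interval $I$ needed for iii') — to be mutually compatible, and then verify that after imposing them the family of disks $\overline{D}_V(P_0,s)$ still forms a \emph{fine} base (i.e.\ contains a neighbourhood base of each of its members), which is exactly where the initial topological remark is used. One subtle point deserving a line of care is the claim ``$\overline{D}_W(Q,s') = \overline{D}_W(P_0,s')$'': this is where one uses that $b$ is ultrametric so that, in each fibre over $W$, the set $\{|Q|\le s'\}$ depends only on the Newton polygon of $Q$ near its roots, which is stable under small perturbation — precisely the argument already invoked in the proof of Proposition~\ref{prop:basevoisdim1} for crowns. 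No genuinely new idea beyond those assembled in Corollaries~\ref{cor:BVfineNPST} and \ref{cor:weierstrassgeneralise} is required.
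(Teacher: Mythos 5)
Your proposal is correct and follows essentially the same route as the paper: Lemma~\ref{lem:polmin} identifies $\mu_{\kappa,x}$ as a power of $\mu_{x}$ so that the refinement~ii) of Proposition~\ref{prop:basevoisdim1rigide} applies with the prescribed lift~$P_{0}$, which (together with the topological remark from the proof of Corollary~\ref{cor:BVfineNPST}) yields the fine base and conditions~i)--ii), while iii) and iii') are obtained from Proposition~\ref{prop:CNBNPST}. Your write-up is simply a more detailed elaboration of the paper's two-line argument, including the separable-approximation step for iii') that the paper leaves implicit.
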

\begin{proof}
La premi\`ere partie de l'\'enonc\'e d\'ecoule du lemme~\ref{lem:polmin} et de la proposition~\ref{prop:basevoisdim1rigide}, ainsi que les points~i) et~ii). Les points~iii) et~iii') suivent alors de la proposition~\ref{prop:CNBNPST}.
\end{proof}
\index{Condition!BNG@$(\cB N_{G})$|)}
\index{Condition!BNPS-T@$(\cB N_{P(S)-T})$|)}

\subsection{Condition $(\Oc N_{G})$}\label{sec:ONG}
\index{Condition!ONG@$(\cO N_{G})$|(}

Dans le chapitre~\ref{chap:Stein} de ce texte, consacr\'e aux espaces de Stein, nous aurons besoin d'utiliser des r\'esultats similaires \`a ceux de la section~\ref{sec:BNG} en rempla\c cant l'alg\`ebre~$\Bc(U)$ par $\overline{\Oc(U)}$. Cela nous conduit \`a modifier certaines d\'efinitions. Il sera \'egalement important de ne plus imposer \`a~$U$ d'\^etre spectralement convexe. Nous convervons encore le cadre de la section~\ref{sec:generalitesnormes}.

Soit~$U$ une partie compacte de~$B$. Le morphisme canonique $\varphi_{G} \colon Z_{G} \to B$ induit un morphisme
\[\psi_{G,U} \colon \Oc(U)[T]/(G(T)) \too \Oc_{Z_{G}}(\varphi_{G}^{-1}(U)).\]%
\nomenclature[Jyc]{$\psi_{G,U}$}{morphisme naturel $\Oc(U)[T]/(G(T)) \to \Oc_{Z_{G}}(\varphi_{G}^{-1}(U))$}

\begin{defi}\label{def:NG}\index{Condition!ONG@$(\cO N_{G})$|textbf}\index{Norme!residuelle@r\'esiduelle}\index{Norme!uniforme}
On dit que le compact~$U$ satisfait la condition~$(\Oc N_{G})$ s'il existe $v_{U,0} \in \R_{>0}$ tel que, pour tout $v\ge v_{U,0}$, la norme r\'esiduelle $\nm_{U,v,\res}$ sur $\Oc_{X}(U)[T]/(G(T))$ soit \'equivalente \`a $\|\psi_{G,U}(\wc)\|_{\varphi_{G}^{-1}(U)}$.
\end{defi}

On peut supposer que~$v_{U,0}$ satisfait les in\'egalit\'es 
\[ \sum_{i=0}^{d-1} \|g_i\|_{U}\, v_{U,0}^{i-d} \le \frac{1}{2} \textrm{ et } v_{U,0} \ge \max_{0\le i\le d-1}(\|g_{i}\|_{U}^{1/(d-i)}).\]
\emph{Dans la suite de cette section, nous nous placerons toujours sous cette hypoth\`ese.}

\begin{lemm}\label{lem:eqnormespectrale}\index{Norme!residuelle@r\'esiduelle}\index{Norme!spectrale}
Supposons que~$U$ satisfait la condition~$(\Oc N_{G})$. Alors, pour tout $v\ge v_{U,0}$, la norme r\'esiduelle $\nm_{U,v,\res}$ sur $\overline{\Oc(U)}[T]/(G(T))$ est \'equivalente \`a sa norme spectrale.

En outre, si le morphisme canonique $U \to \Mc(\overline{\Oc(U)})$ est bijectif, cette condition est \'equivalente \`a la condition~$(\Oc N_{G})$.
\end{lemm}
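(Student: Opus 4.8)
Le plan est le suivant. On commence par observer que, pour $v \ge v_{U,0}$, l'anneau $\overline{\Oc(U)}[T]/(G(T))$ muni de $\nm_{U,v,\res}$ est, \`a \'equivalence de normes pr\`es, le compl\'et\'e s\'epar\'e de $\bigl(\Oc(U)[T]/(G(T)), \nm_{U,v,\res}\bigr)$. En effet, la proposition~\ref{prop:equivalencedivres} (applicable gr\^ace au choix de~$v_{U,0}$, aussi bien sur~$\Oc(U)$ que sur~$\overline{\Oc(U)}$, car les~$g_{i}$ y ont les m\^emes normes) assure que $\nm_{U,v,\res}$ est \'equivalente \`a la norme divisorielle $\nm_{U,\div}$, laquelle n'est autre que la norme maximum sur $\Oc(U)^d$ (resp.\ $\overline{\Oc(U)}^d$) transport\'ee via~$n_{G}$~; or le compl\'et\'e s\'epar\'e de $\Oc(U)^d$ muni du maximum des normes uniformes sur~$U$ est $\overline{\Oc(U)}^d$. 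En particulier ces deux anneaux norm\'es ont m\^eme semi-norme spectrale, que l'on note~$\nm_{\sp}$, et une \'equivalence de normes s'\'etend d'un sous-anneau dense \`a son compl\'et\'e, et r\'eciproquement.

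Pour la premi\`ere assertion, je supposerais $(\Oc N_{G})$ et je fixerais $v \ge v_{U,0}$. Par d\'efinition, $\nm_{U,v,\res}$ est \'equivalente sur $\Oc(U)[T]/(G(T))$ \`a $\|\psi_{G,U}(\wc)\|_{\varphi_{G}^{-1}(U)}$. Le point-cl\'e est que cette derni\`ere semi-norme est puissance-multiplicative~: c'est l'image r\'eciproque, par le morphisme d'anneaux~$\psi_{G,U}$, de la semi-norme uniforme sur le compact~$\varphi_{G}^{-1}(U)$, qui v\'erifie $\|h^n\| = \|h\|^n$. Par densit\'e et continuit\'e, son prolongement~$N$ au compl\'et\'e $\overline{\Oc(U)}[T]/(G(T))$ reste puissance-multiplicatif, donc \'egal \`a sa propre semi-norme spectrale. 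Comme $\nm_{U,v,\res} \sim N$ sur ce compl\'et\'e (premier paragraphe), on obtient $\nm_{\sp} = N_{\sp} = N$, puis $\nm_{U,v,\res} \sim \nm_{\sp}$, ce qui est l'\'equivalence voulue.

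Pour la r\'eciproque, je supposerais $U \to \Mc(\overline{\Oc(U)})$ bijectif et $\nm_{U,v,\res} \sim \nm_{\sp}$ sur $\overline{\Oc(U)}[T]/(G(T))$ pour tout $v \ge v_{U,0}$. D'apr\`es le lemme~\ref{lem:spuni}, $\nm_{\sp}$ est la semi-norme uniforme sur le spectre $\Mc(\overline{\Oc(U)}[T]/(G(T)))$. J'appliquerais ensuite le lemme~\ref{lem:ZGspectreASG} \`a l'anneau de Banach uniforme~$\overline{\Oc(U)}$ \`a la place de~$\cA$~: pour $v \ge v_{U,0}$, ce spectre s'identifie au ferm\'e de Zariski de $\E{1}{\overline{\Oc(U)}}$ d\'efini par~$G$, au-dessus de $\Mc(\overline{\Oc(U)})$. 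Gr\^ace \`a la bijection $U \simeq \Mc(\overline{\Oc(U)})$ et au fait que les coefficients de~$G$ sont dans~$\cA$ — de sorte que, pour $b \in U$, les racines de~$G$ dans $\E{1}{\cH(b)}$ sont les m\^emes selon que l'on voit~$b$ dans~$\cM(\cA)$ ou dans~$\Mc(\overline{\Oc(U)})$ — j'identifierais ce spectre \`a $\varphi_{G}^{-1}(U)$, de fa\c con compatible avec~$\psi_{G,U}$. Alors $\nm_{\sp}$ co\"incide sur $\Oc(U)[T]/(G(T))$ avec $\|\psi_{G,U}(\wc)\|_{\varphi_{G}^{-1}(U)}$, et l'\'equivalence $\nm_{U,v,\res} \sim \nm_{\sp}$ redescendue \`a l'anneau non compl\'et\'e (premier paragraphe) donne la condition~$(\Oc N_{G})$.

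L'obstacle principal est pr\'ecis\'ement cette identification, dans la r\'eciproque, de $\Mc(\overline{\Oc(U)}[T]/(G(T)))$ avec $\varphi_{G}^{-1}(U)$~: c'est l\`a qu'intervient l'hypoth\`ese de bijectivit\'e de $U \to \Mc(\overline{\Oc(U)})$, et il faut v\'erifier avec soin que le passage de~$\cA$ \`a~$\overline{\Oc(U)}$ ne fait appara\^itre aucun nouveau point dans la fibre de~$\varphi_{G}$ au-dessus d'un $b \in U$ fix\'e, ce qui tient \`a ce que~$G$ est d\'ej\`a d\'efini sur~$\cA$ et que sa factorisation au-dessus de~$\cH(b)$ ne d\'epend que de ce corps valu\'e compl\'et\'e. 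Le sens direct, lui, ne pr\'esente pas de difficult\'e une fois remarqu\'e le caract\`ere puissance-multiplicatif de $\|\psi_{G,U}(\wc)\|_{\varphi_{G}^{-1}(U)}$.
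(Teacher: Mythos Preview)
Your proposal is correct and follows essentially the same approach as the paper. The only stylistic difference is in the forward direction: the paper sets up the cartesian square with $\hat Z_{G} \subset \E{1}{\overline{\cO(U)}}$ from the start and uses the chain $\|\psi_{G,U}(f)\|_{\varphi_G^{-1}(U)} \le \|\hat\psi_{G,U}(f)\|_{\hat Z_G} \le \|f\|_{U,v,\res}$ together with the identification of the middle term as the spectral norm (via the lemma on $Z_G$), whereas you avoid naming $\hat Z_G$ at that stage and argue abstractly through the power-multiplicativity of the uniform seminorm on $\varphi_G^{-1}(U)$; in the reverse direction both proofs coincide, invoking the bijection $U \simeq \cM(\overline{\cO(U)})$ to identify $\hat Z_G$ with $\varphi_G^{-1}(U)$.
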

\begin{proof}
Consid\'erons l'espace $\hat{U} = \Mc(\overline{\Oc(U)})$ et le ferm\'e $\hat Z_{G}$ d\'efini par $G=0$ dans $\E{1}{\hat U}$. Notons $\hat{\varphi}_{G} \colon \hat Z_{G} \to \hat V$ et 
\[\hat \psi_{G,U} \colon \overline{\Oc(U)}[T]/(G(T)) \too \Oc_{\hat Z_{G}}(\hat Z_{G})\] 
les morphismes canoniques. On a alors un diagramme cart\'esien
\[\begin{tikzcd}
\varphi_{G}^{-1}(U) \arrow[r, hook] \arrow[d] & \hat Z_{G} \arrow[d]\\
U \arrow[r, hook, ] & \hat{U}
\end{tikzcd}.\]

Soit~$v\ge v_{U,0}$. Pour tout $f\in \Oc_{X}(U)[T]/(G(T))$, on a 
\[\|\psi_{G,U}(f)\|_{\varphi_{G}^{-1}(U)} \le \|\hat\psi_{G,U}(f)\|_{\hat Z_{G}} \le  \|f\|_{U, v, \res},\]
la derni\`ere in\'egalit\'e provenant du fait que l'on a un morphisme born\'e 
\[\overline{\Oc_{X}(U)}[T] \too \Oc_{\hat Z_{G}}(\hat Z_{G})\]
puisque $\overline{D}_{\overline{\Oc(U)}}(v)$ contient toutes les racines de~$G(T)$ dans~$\E{1}{\overline{\Oc(U)}}$, d'apr\`es le lemme~\ref{lem:ZGspectreASG}.

Or, d'apr\`es le lemme~\ref{lem:ZGspectreASG} encore, $\hat{Z}_{G}$ s'identifie naturellement au spectre de l'alg\`ebre $\overline{\Oc_{X}(U)}[T]/(G(T))$, donc $\|\hat\psi_{G,U}(\wc)\|_{\hat Z_{G}}$ n'est autre que la norme spectrale de $\nm_{U, v, \res}$. Le r\'esultat d\'ecoule maintenant directement de la condition~$(\Oc N_{G})$.

Sous l'hypoth\`ese de l'assertion finale, la fl\`eche horizontale inf\'erieure est bijective. On en d\'eduit que la fl\`eche sup\'erieure l'est aussi, ce qui permet d'identifier $\|\psi_{G,U}(\wc)\|_{\varphi_{G}^{-1}(U)}$ \`a la norme spectrale de $\nm_{U, v, \res}$ et donc de conclure.
\end{proof}

\begin{rema}
Pour toute partie compacte~$U$ de~$B$, l'anneau $\Bc(U)$ s'envoie isom\'etriquement dans~$\overline{\Oc(U)}$. Il suit alors du lemme~\ref{lem:eqnormespectrale} que, si $U$ est spectralement convexe et satisfait la condition~$(\Oc N_{G})$, il satisfait aussi la condition~$(\Bc N_{G})$.
\end{rema}

On reprend maintenant les \'enonc\'es de \cite[\S 6.1]{EtudeLocale} en indiquant les modifications \`a effectuer.
\index{Bord analytique!fort|(}

\begin{defi}[\protect{\cf~\cite[d\'efinition~6.1]{EtudeLocale}}]\index{Bord analytique!fort|textbf}
Soit~$U$ une partie compacte de~$B$. On dit qu'une partie~$\Gamma$ de~$U$ est un bord analytique fort de~$U$ si, pour tout $f\in\Oc(U)$, on a
\[\|f\|_{U} = \|f\|_{\Gamma}.\]
\end{defi}

\begin{rema}\label{rem:bordanvsanfort}
La condition de bord analytique fort porte sur tous les \'el\'ements de~$\cO(U)$, alors que celle de bord analytique (\cf~d\'efinition~\ref{def:bordan}) fait intervenir ceux de~$\cB(U)$, qui proviennent de sections globales sur~$B$.
\end{rema}

\begin{prop}[\protect{\cf~\cite[proposition~6.2]{EtudeLocale}}]\label{prop:NGunion}
Supposons qu'il existe des parties $V_{1},\dotsc,V_{r}$ de~$U$ telles que
\begin{enumerate}[i)]
\item pour tout $i\in\cn{1}{r}$, $V_{i}$ soit compacte et satisfasse la condition~$(\Oc N_{G})$~;
\item la r\'eunion $\Gamma_{U} := \bigcup_{1\le i\le r} V_{i}$ soit un bord analytique fort de~$U$.
\end{enumerate}
Alors, $U$ satisfait \'egalement la condition~$(\Oc N_{G})$. 

En outre, si $U$~est d\'ecente,
alors la partie $\Gamma_{U,G} := \varphi_{G}^{-1}(\Gamma_{U})$ est un bord analytique fort de $\varphi_{G}^{-1}(U)$.
\end{prop}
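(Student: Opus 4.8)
The plan is to compare, for $v$ large enough, the residual seminorm $\nm_{U,v,\res}$ on $\Oc(U)[T]/(G(T))$ with the uniform seminorm $\|\psi_{G,U}(\wc)\|_{\varphi_{G}^{-1}(U)}$. By the elementary inequalities of Proposition~\ref{prop:equivalencedivres} (which hold over any normed base ring, here $\Oc(U)$ and each $\Oc(V_{j})$), $\nm_{\wc,v,\res}$ and the divisorial seminorm $\nm_{\wc,\div}$ are equivalent with explicit constants as soon as $v$ dominates a quantity depending only on the $\|g_{i}\|$; so it suffices to compare $\nm_{U,\div}$ with $\|\psi_{G,U}(\wc)\|_{\varphi_{G}^{-1}(U)}$. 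First I would fix $v_{U,0}$ so large that it meets the standing inequalities, dominates each $v_{V_{j},0}$, and is $\ge\max_{0\le i\le d-1}\|g_{i}\|_{U}^{1/(d-i)}$, so that by Lemma~\ref{lem:ZGspectreASG} every root of $G$ lies in $\overline{D}_{B}(v)$, whence $\varphi_{G}^{-1}(U)\subset\overline{D}_{U}(v)$ for all $v\ge v_{U,0}$.

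The easy inequality needs neither the $V_{j}$ nor the hypothesis on $\Gamma_{U}$: given $f$ and any lift $F'=\sum_{k}b_{k}T^{k}\in\Oc(U)[T]$, for every $z\in\varphi_{G}^{-1}(U)$ one has $|T(z)|\le v$, hence $|\psi_{G,U}(f)(z)|=|F'(z)|\le\max_{k}\|b_{k}\|_{U}\,v^{k}$; taking the infimum over $F'$ and the supremum over $z$ gives $\|\psi_{G,U}(f)\|_{\varphi_{G}^{-1}(U)}\le\|f\|_{U,v,\res}$. For the converse, write the unique representative of $f$ of degree $<d$ as $\sum_{i=0}^{d-1}a_{i}T^{i}$ with $a_{i}\in\Oc(U)$, so $\|f\|_{U,\div}=\max_{i}\|a_{i}\|_{U}$; since $\Gamma_{U}=\bigcup_{j}V_{j}$ is a strong analytic boundary of $U$, $\|a_{i}\|_{U}=\|a_{i}\|_{\Gamma_{U}}=\max_{j}\|a_{i}\|_{V_{j}}$, and exchanging maxima gives $\|f\|_{U,\div}=\max_{j}\|f_{j}\|_{V_{j},\div}$, where $f_{j}$ is the image of $f$ in $\Oc(V_{j})[T]/(G(T))$. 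Now apply the condition $(\Oc N_{G})$ for each $V_{j}$: there are constants $C_{j}>0$ with $\|f_{j}\|_{V_{j},\div}\le C_{j}\,\|\psi_{G,V_{j}}(f_{j})\|_{\varphi_{G}^{-1}(V_{j})}$. As $\psi_{G,\wc}$ is compatible with restriction, $\psi_{G,V_{j}}(f_{j})$ is the restriction of $\psi_{G,U}(f)$ to $\varphi_{G}^{-1}(V_{j})\subset\varphi_{G}^{-1}(U)$, so its uniform norm is $\le\|\psi_{G,U}(f)\|_{\varphi_{G}^{-1}(U)}$. Combining and using Proposition~\ref{prop:equivalencedivres} once more bounds $\|f\|_{U,v,\res}$ by $(\max_{j}C_{j})\,v^{d-1}\,\|\psi_{G,U}(f)\|_{\varphi_{G}^{-1}(U)}$; hence the two seminorms are equivalent, i.e. $U$ satisfies $(\Oc N_{G})$.

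For the second assertion, assume $U$ decent; then every point of $\E{1}{\cA}$ above a point of $U$, in particular every point of $\varphi_{G}^{-1}(U)$, is decent by Proposition~\ref{prop:typiqueAn}. The crucial input is that the image of $\psi_{G,U}$ is dense in $\Oc_{Z_{G}}(\varphi_{G}^{-1}(U))$ for the uniform norm: decency allows a fibrewise application of Weierstrass division (in the spirit of the proof of Theorem~\ref{thm:isolemniscate}), so that, up to uniform completion, overconvergent functions on $\varphi_{G}^{-1}(U)$ are polynomials of degree $<d$ in $T$ with coefficients overconvergent on $U$. Granting this, the same chain of inequalities as above, now with $\Gamma_{U,G}=\varphi_{G}^{-1}(\Gamma_{U})=\bigcup_{j}\varphi_{G}^{-1}(V_{j})$ in place of $\varphi_{G}^{-1}(U)$, yields $\|\psi_{G,U}(f)\|_{\varphi_{G}^{-1}(U)}\le C\,\|\psi_{G,U}(f)\|_{\Gamma_{U,G}}$ for a constant $C$ independent of $f$. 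Since $\|\wc\|_{\varphi_{G}^{-1}(U)}$ and $\|\wc\|_{\Gamma_{U,G}}$ are suprema of absolute values, they are power‑multiplicative; applying the inequality to $f^{n}$, using $\psi_{G,U}(f^{n})=\psi_{G,U}(f)^{n}$, and letting $n\to\infty$ upgrades it to the equality $\|\psi_{G,U}(f)\|_{\varphi_{G}^{-1}(U)}=\|\psi_{G,U}(f)\|_{\Gamma_{U,G}}$. By the density just recalled and continuity of both seminorms, this passes to every $h\in\Oc_{Z_{G}}(\varphi_{G}^{-1}(U))$, so $\Gamma_{U,G}$ is a strong analytic boundary of $\varphi_{G}^{-1}(U)$.

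The main obstacle is precisely that density statement: the first part is essentially formal once one has the explicit constants of Proposition~\ref{prop:equivalencedivres} and the functoriality of $\psi_{G,\wc}$, but identifying $\Oc_{Z_{G}}(\varphi_{G}^{-1}(U))$ up to completion with a polynomial algebra over $\overline{\Oc(U)}$, for a \emph{general} monic $G$ and a \emph{general} decent compact $U$, requires a careful fibrewise Weierstrass argument and a gluing step (compare Theorem~\ref{thm:isolemniscate} and Lemma~\ref{lem:eqnormespectrale}); this is the only place where decency is used.
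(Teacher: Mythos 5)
Your plan is correct and its skeleton is the same as the paper's: for the first assertion you compare the residual norm with the divisorial norm via Proposition~\ref{prop:equivalencedivres}, transfer the strong-boundary property of $\Gamma_{U}$ to the coefficients of the canonical degree~$<d$ representative, and invoke $(\Oc N_{G})$ on each $V_{j}$ — this is precisely the argument the paper delegates to the cited reference ``with obvious modifications''. For the second assertion you diverge slightly, and the variation is worth noting: the paper first establishes the exact equality $\|\psi_{G,U}(f)\|_{\varphi_{G}^{-1}(U)}=\|\psi_{G,U}(f)\|_{\Gamma_{U,G}}$ for polynomial classes and then concludes by the \emph{isomorphism} $\Oc(U)[T]/(G(T))\simeq\Oc_{Z_{G}}(\varphi_{G}^{-1}(U))$ supplied by decency and Weierstra\ss{} division (Theorem~\ref{weierstrass}, in the form of Theorem~\ref{thm:isolemniscate} adapted to~$G$, cf.\ Corollary~\ref{cor:weierstrassgeneralise}); you instead get only an inequality with a constant and upgrade it to an equality by power-multiplicativity of sup-seminorms, then pass to all sections by density and continuity. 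The power trick is a clean, self-contained substitute for the paper's direct equality; and the ``main obstacle'' you flag is not an obstacle at all in this text — the surjectivity (not merely density) of $\psi_{G,U}$ onto $\Oc_{Z_{G}}(\varphi_{G}^{-1}(U))$ is exactly what decency buys via the results just cited, so your argument closes with material already available. One small inaccuracy: your ``easy'' inequality $\|\psi_{G,U}(f)\|_{\varphi_{G}^{-1}(U)}\le\|f\|_{U,v,\res}$ with constant~$1$ is only valid over the ultrametric part; at archimedean points the triangle inequality produces a factor bounded by the number of terms of the chosen representative, which is not uniform if you take the infimum over representatives of arbitrary degree. The harmless fix is to bound the evaluation by the degree~$<d$ representative, i.e. by $d\max(1,v^{d-1})\,\|f\|_{U,\div}$, and then use Proposition~\ref{prop:equivalencedivres}; since $(\Oc N_{G})$ only asks for an equivalence of seminorms, this changes nothing in the conclusion.
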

\begin{proof}
La d\'emonstration de la premi\`ere assertion de~\cite[proposition~6.2]{EtudeLocale} utilise pr\'ecis\'ement la condition~$(N_{G})$ (c'est-\`a-dire $(\Bc N_{G})$ avec nos notations) sous la forme que nous avons \'enonc\'ee dans la d\'efinition~\ref{def:NG}. Il n'y a donc que des modifications \'evidentes \`a effectuer.

En reprenant la d\'emonstration de la seconde assertion de~\cite[proposition~6.2]{EtudeLocale}, on montre que, pour tout $f\in \Oc(U)[T]/(G(T))$, on a
\[\|\psi_{G,U}(f)\|_{\varphi_{G}^{-1}(U)} = \max_{x\in \Gamma_{U,G}} (|\psi_{G,U}(f)(x)|).\]
La d\'ecence de~$U$ et le th\'eor\`eme de division de Weierstra\ss{} \ref{weierstrass} (sous la forme du th\'eor\`eme~\ref{thm:isolemniscate} adapt\'e au polyn\^ome~$G$)
assurent qu'on a un isomorphisme
\[\Oc(U)[T]/(G(T)) \simtoo \Oc_{Z_{G}}(\varphi_{G}^{-1}(U)).\] 
Le r\'esultat s'en d\'eduit.
\end{proof}

Le corollaire~6.3 de~\cite{EtudeLocale} s'adapte imm\'ediatement et nous l'omettrons. Le lemme~6.4 a d\'ej\`a \'et\'e repris par le lemme~\ref{lem:couronneum}. Les r\'esultats techniques n\'ecessaires sont maintenant tous \`a notre disposition. Nous \'enon\c{c}ons leurs cons\'equences sans plus de commentaires.

\begin{prop}[\protect{\cf~\cite[propositions~6.5 et~6.6]{EtudeLocale}}]\label{prop:bordanfortfini}\index{Domaine polynomial}\index{Condition!ONPS-T@$(\cO N_{P(S)-T})$}
Soit~$U$ une partie compacte et d\'ecente de~$B_{\um}$ qui poss\`ede un bord analytique fort fini. Soit $P(S) \in \Oc(U)[S]$ unitaire non constant et soient $r,s\in\R$ v\'erifiant $0=r<s$ ou $0<r\le s$. Alors, le domaine polynomial $\overline{C}_{U}(P;r,s)$ poss\`ede un bord analytique fort fini.

En outre, pour tout $Q(T) \in \Oc(U)[T]$ unitaire non constant, le domaine polynomial $\overline{C}_{U}(Q;r,s)$ satisfait la condition $(\Oc N_{P(S)-T})$.
\qed
\end{prop}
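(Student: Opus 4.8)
The plan is to transpose the proof of \cite[propositions~6.5 et~6.6]{EtudeLocale} essentially verbatim, replacing $\Bc(\wc)$ by $\overline{\Oc(\wc)}$ everywhere. The ingredients this requires are now in place: the Laurent-series description of $\overline{\Oc(\overline{C}_U(r,s))}$ (Lemma~\ref{lem:couronneum}), the $\Oc$-analog of \cite[corollaire~6.3]{EtudeLocale} noted above (a d\'ecente compact of~$X_{\um}$ possessing a finite strong analytic boundary~$\Gamma$ such that $G(\gamma)$ is without multiple factor for every $\gamma\in\Gamma$ satisfies~$(\Oc N_{G})$), and Proposition~\ref{prop:NGunion}.

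First I would establish the existence of a finite strong analytic boundary for $\overline{C}_U(P;r,s)$, reducing to $0<r<s$ (the cases $r=0$ and $r=s$ being entirely similar). By Lemma~\ref{lem:couronneum}, any $g\in\Oc(\overline{C}_U(r,s))$ has an expansion $g=\sum_{n}a_{n}T^n$ with $a_{n}\in\Oc(U)$ and $\|g\|_{\overline{C}_U(r,s)}=\max_{n}\bigl(\|a_{n}\|_{U}\max(r^n,s^n)\bigr)$; since $\Gamma_{U}$ is a strong analytic boundary of~$U$ and its points are ultrametric, this equals $\max_{b\in\Gamma_{U}}\max\bigl(|g(\eta^{b}_{r})|,|g(\eta^{b}_{s})|\bigr)$, where $\eta^{b}_{\rho}$ denotes the Gauss point of radius~$\rho$ in the fibre $X_{b}\simeq\E{1}{\cH(b)}$. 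Hence $\Gamma_{r,s}:=\{\eta^{b}_{r},\eta^{b}_{s}:b\in\Gamma_{U}\}$ is a finite strong analytic boundary of $\overline{C}_U(r,s)$. The compact $\overline{C}_U(r,s)$ is d\'ecente (Proposition~\ref{prop:typiqueAn}), so by Theorem~\ref{thm:isolemniscate} the endomorphism $\varphi_{P}$ identifies $\Oc(\overline{C}_U(P;r,s))$ with $\Oc(\overline{C}_U(r,s))[S]/(P(S)-T)$, a free $\Oc(\overline{C}_U(r,s))$-module of rank $d:=\deg P$; in particular $\varphi_{P}$ has fibres of cardinality at most~$d$. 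Given $f\in\Oc(\overline{C}_U(P;r,s))$, let $\chi_{f}(Y)=Y^{d}+c_{d-1}Y^{d-1}+\dotsb+c_{0}\in\Oc(\overline{C}_U(r,s))[Y]$ be the characteristic polynomial of multiplication by~$f$. For $z\in\overline{C}_U(r,s)$ the roots of $\chi_{f}(z)$ in $\overline{\cH(z)}$ are the values $f(y)$, $y\in\varphi_{P}^{-1}(z)$, counted with multiplicity, so by the theory of Newton polygons (\cf~\cite[proposition~3.1.2/1]{BGR}) one gets $\max_{y\in\varphi_{P}^{-1}(z)}|f(y)|=\max_{0\le i\le d-1}|c_{i}(z)|^{1/(d-i)}$. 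Taking the supremum over~$z$ and using that each $|c_{i}|$ attains its maximum on~$\Gamma_{r,s}$ yields $\|f\|_{\overline{C}_U(P;r,s)}=\|f\|_{\varphi_{P}^{-1}(\Gamma_{r,s})}$, and $\varphi_{P}^{-1}(\Gamma_{r,s})$ is finite.

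For the condition $(\Oc N_{P(S)-T})$, I would apply the first part to~$Q$: the domain $\overline{C}_U(Q;r,s)$ is a d\'ecente compact of~$X_{\um}$ with a finite strong analytic boundary $\Gamma=\varphi_{Q}^{-1}(\Gamma_{r,s})$, each of whose points~$z$ lies above some $\eta^{b}_{\rho}$, $b\in\Gamma_{U}$. Then $\cH(z)/\cH(b)$ has transcendence degree~$1$ and $\cH(z)$ is a dense extension of~$\cH(b)$ generated by the coordinate value~$T(z)$, so $T(z)$ is transcendental over~$\cH(b)$; hence, as soon as~$P$ is separable, the discriminant of $P(S)-T(z)$ is a nonzero polynomial evaluated at~$T(z)$, so $P(S)-T(z)$ has no multiple factor in $\cH(z)[S]$. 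The $\Oc$-analog of \cite[corollaire~6.3]{EtudeLocale} then yields $(\Oc N_{P(S)-T})$ for $\overline{C}_U(Q;r,s)$ (equivalently, one applies Proposition~\ref{prop:NGunion} to the singletons $\{z\}$, $z\in\Gamma$). When~$P$ is inseparable one argues instead with the global algebra $\overline{\Oc(\overline{C}_U(Q;r,s))}[S]/(P(S)-T)$, which stays reduced because the coordinate~$T$ is not a perfect power on $\overline{C}_U(Q;r,s)$, exactly as in the proof of \cite[proposition~6.5]{EtudeLocale}.

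The delicate point is this inseparable case: $P(S)-T$ then has multiple factors in every fibre, so the squarefree criterion fails fibrewise and it must be replaced by the global-reducedness argument — which is precisely where the ultrametric and d\'ecente hypotheses are used, as in Proposition~\ref{prop:CNBNPST}. The remaining adaptations (using Lemma~\ref{lem:couronneum} and Theorem~\ref{thm:isolemniscate} in their relative form over the compact~$U$, rather than over $\Mc(\Ac)$) are routine.
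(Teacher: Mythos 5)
Your proposal is correct and follows exactly the route the paper intends: the paper states this proposition with no argument beyond the reference to \cite[propositions~6.5 et~6.6]{EtudeLocale} and the preceding remarks listing which $\Bc(\wc)\rightsquigarrow\overline{\Oc(\wc)}$ substitutes are now available (lemme~\ref{lem:couronneum}, l'analogue du corollaire~6.3 de \emph{loc.~cit.}, proposition~\ref{prop:NGunion}), and your reconstruction — bord analytique fort fini form\'e des points de Gauss au-dessus de $\Gamma_U$ puis calcul de norme via le polyn\^ome caract\'eristique et les polygones de Newton pour la premi\`ere assertion, crit\`ere fibre \`a fibre \og sans facteurs multiples \fg{} par transcendance de $T(z)$ dans le cas s\'eparable et argument global de r\'eduction dans le cas ins\'eparable pour la seconde — est pr\'ecis\'ement cette adaptation. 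Vous identifiez en outre correctement le seul point d\'elicat (le cas ins\'eparable, o\`u le crit\`ere fibre \`a fibre \'echoue effectivement), donc il n'y a rien \`a redire.
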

\index{Bord analytique!fort|)}

\begin{defi}[\protect{\cf~\cite[D\'efinition~6.9]{EtudeLocale}}]\index{Point!ultrametrique tres typique@ultram\'etrique tr\`es typique|textbf}
Un \emph{point}~$x$ de~$B$ est dit \emph{ultram\'etrique tr\`es typique} s'il appartient \`a l'int\'erieur de~$B_{\um}$ et poss\`ede un syst\`eme fondamental de voisinages compacts, spectralement convexes admettant un bord analytique fort fini.
\end{defi}

\begin{defi}\label{defi:honnete}\index{Point!tres decent@tr\`es d\'ecent|textbf}\index{Partie!tres decente@tr\`es d\'ecente|textbf}
On dit qu'un point~$x$ d'un espace analytique est \emph{tr\`es d\'ecent} si l'une (au moins) des trois conditions suivantes est satisfaite~:
\begin{enumerate}[i)]
\item $\cH(x)$ est de caract\'eristique nulle~;
\item $\cH(x)$ est de valuation non triviale~;
\item $x$ est ultram\'etrique tr\`es typique.
\end{enumerate}

On dit qu'une partie d'un espace analytique est tr\`es d\'ecente lorsque tous ses points le sont.
\end{defi}

\begin{exem}\label{ex:tresdecent}\index{Anneau!de base}
\index{Corps!valu\'e}\index{Corps!hybride}\index{Anneau!des entiers relatifs $\Z$}\index{Anneau!des entiers d'un corps de nombres}\index{Anneau!de valuation discr\`ete}\index{Anneau!de Dedekind trivialement valu\'e}
Lorsque $\cA$ est l'un de nos exemples usuels \ref{ex:corpsvalue} \`a~\ref{ex:Dedekind}~: les corps valu\'es complets, l'anneau~$\Z$ et les anneaux d'entiers de corps de nombres, les corps hybrides, les anneaux de valuation discr\`ete, les anneaux de Dedekind trivialement valu\'es, le spectre analytique~$\cM(\cA)$ est tr\`es d\'ecent.
\end{exem}

\begin{coro}[\protect{\cf~\cite[proposition~6.10]{EtudeLocale}}]
Soit~$b\in B$ un point ultram\'etrique tr\`es typique (resp. tr\`es d\'ecent). Soit $n\in\N$. Alors, tout point de~$\E{n}{\cA}$ situ\'e au-dessus de~$b$ est encore ultram\'etrique tr\`es typique (resp. tr\`es d\'ecent).
\qed
\end{coro}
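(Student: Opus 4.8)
Supposons~$b$ tr\`es d\'ecent (resp. ultram\'etrique tr\`es typique). La preuve est le d\'ecalque de celle de~\cite[proposition~6.10]{EtudeLocale} (reprise ici sous la forme de la proposition~\ref{prop:typiqueAn}), o\`u l'on remplace partout la notion de bord analytique par celle de bord analytique fort et la condition~$(\cB N_{G})$ par la condition~$(\cO N_{G})$, et o\`u l'on invoque les variantes \og fortes\fg{} \'etablies dans cette section, \`a savoir les propositions~\ref{prop:NGunion} et~\ref{prop:bordanfortfini} ainsi que les corollaires~\ref{cor:BVfineNPST} et~\ref{cor:BVfineNPSTrigideepais}. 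Le morphisme structural induisant un plongement isom\'etrique $\cH(b)\hookrightarrow\cH(x)$ (proposition~\ref{prop:proprietemorphismes}), le point~$x$ est tr\`es d\'ecent d\`es que~$\cH(b)$ est de caract\'eristique nulle ou de valuation non triviale~; on peut donc se limiter au cas o\`u~$b$ est ultram\'etrique tr\`es typique.

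On \'etablit d'abord l'\'enonc\'e pour $n=1$ (sur un anneau de Banach arbitraire), puis on passe au cas g\'en\'eral par r\'ecurrence sur~$n$. Pour cette r\'ecurrence, on pose $y := \pi_{n,n-1}(x)$, qui est tr\`es d\'ecent par hypoth\`ese de r\'ecurrence (et que l'on peut supposer ultram\'etrique tr\`es typique, le cas o\`u $\cH(y)$ est de valuation non triviale se ramenant, comme ci-dessus, \`a $\cH(x)$ de valuation non triviale). On choisit un voisinage compact spectralement convexe~$V$ de~$y$ et l'on applique la remarque~\ref{spectralement_conv}~: l'hom\'eomorphisme $\E{1}{\cB(V)}\simto\pi_{n,n-1}^{-1}(V)$, qui est un isomorphisme d'espaces annel\'es au-dessus de~$\mathring V$, transporte~$x$ en un point de $\E{1}{\cB(V)}$ situ\'e au-dessus du point $\beta\in\cM(\cB(V))\simeq V$ associ\'e \`a~$y$~; comme $y\in\mathring V$, ce point~$\beta$ est encore ultram\'etrique tr\`es typique dans~$\cM(\cB(V))$, et les voisinages de~$x$ s'obtiennent par transport de ceux du point correspondant dans $\E{1}{\cB(V)}$. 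On est ainsi ramen\'e au cas $n=1$ sur l'anneau de Banach~$\cB(V)$.

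Il reste \`a traiter le cas $n=1$, avec $X = \E{1}{\cA}$ et~$b$ ultram\'etrique tr\`es typique. Puisque $X_{\um} = \pi^{-1}(B_{\um})$ (remarque~\ref{rem:umfermee}, appliqu\'ee \`a l'\'el\'ement $2\in\cA$) et que~$\pi$ est ouverte (corollaire~\ref{cor:projectionouverte}), on a $\mathring{X_{\um}} = \pi^{-1}(\mathring{B_{\um}})$, donc $x\in\mathring{X_{\um}}$. On fixe une base fine~$\cV_{b}$ de voisinages compacts spectralement convexes de~$b$, contenus dans~$B_{\um}$ et poss\'edant un bord analytique fort fini. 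Les corollaires~\ref{cor:BVfineNPST} (cas g\'en\'eral) et~\ref{cor:BVfineNPSTrigideepais} (cas rigide \'epais) fournissent une base fine de voisinages compacts spectralement convexes de~$x$ de la forme $\overline{C}_{V}(P,r,s)$ (ou $\overline{D}_{V}(P_{0},s)$), avec $V\in\cV_{b}$, $P\in\cB(V)[S]$ unitaire non constant et $r,s\in\R_{\ge 0}$. Chacun de ces~$V$ \'etant contenu dans~$B_{\um}$, d\'ecent et muni d'un bord analytique fort fini, la proposition~\ref{prop:bordanfortfini} assure que $\overline{C}_{V}(P;r,s)$ poss\`ede un bord analytique fort fini. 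La base de voisinages ainsi construite t\'emoigne de ce que~$x$ est ultram\'etrique tr\`es typique, ce qui conclut.

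Le principal obstacle est le soin \`a apporter \`a la r\'ecurrence sur~$n$~: il faut v\'erifier que le passage de~$\cA$ \`a~$\cB(V)$ pr\'eserve la tr\`es-d\'ecence du point projet\'e~$y$, et que la correspondance entre voisinages de~$x$ dans $\E{n}{\cA}$ et voisinages de son image dans $\E{1}{\cB(V)}$ pr\'eserve \`a la fois la forme $\overline{C}_{V}(P,r,s)$ et la finitude du bord analytique fort, ce qui repose de fa\c con essentielle sur le fait que l'isomorphisme de la remarque~\ref{spectralement_conv} est un isomorphisme d'espaces annel\'es au-dessus de~$\mathring V$. L'autre point \`a ne pas n\'egliger est la d\'ecence des compacts~$V$ mis en jeu, indispensable pour appliquer la proposition~\ref{prop:bordanfortfini}~; elle est automatique lorsque l'anneau de base est basique (ce qui recouvre tous les exemples de ce texte), et doit \^etre assur\'ee autrement dans le cadre g\'en\'eral.
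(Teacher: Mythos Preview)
Your proof is correct and follows exactly the approach the paper leaves implicit behind its \qed: a direct transposition of the argument of \cite[proposition~6.10]{EtudeLocale}, replacing analytic boundaries by strong analytic boundaries and invoking the ``strong'' analogues established earlier in the subsection (chiefly proposition~\ref{prop:bordanfortfini}). Note that you have, rightly, read the hypothesis as ``$b$ ultram\'etrique \emph{tr\`es} typique (resp.\ \emph{tr\`es} d\'ecent)'', which is the intended statement and matches the pattern of the surrounding results; with the hypothesis as literally printed, proposition~\ref{prop:bordanfortfini} would not apply.
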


Le corollaire qui suit est nouveau.

\begin{coro}\label{cor:BVcompactN}\index{Condition!ONPS-T@$(\cO N_{P(S)-T})$}
Soit~$U$ une partie compacte de~$B_{\um}$ dont tout point est ultram\'etrique tr\`es typique. Alors, le compact~$U$ poss\`ede un syst\`eme fondamental de voisinages compacts~$\cV$ tel que, pour tout $V\in \cV$, pour tout $P(S) \in \cO(V)[S]$ unitaire non constant et pour tous $r,s\in\R$ v\'erifiant $0=r<s$ ou $0<r\le s$, la couronne $\overline{C}_{V}(r,s)$ satisfasse la condition~$(\cO N_{P(S)-T})$.
\end{coro}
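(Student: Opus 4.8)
L'id\'ee est de recouvrir~$U$ par un nombre fini de bons voisinages de ses points, de prendre leur r\'eunion, puis de combiner les propositions~\ref{prop:bordanfortfini} et~\ref{prop:NGunion}. Concr\`etement, je partirais d'un voisinage ouvert~$W$ de~$U$ dans~$B$ et construirais un voisinage compact $V \subseteq W$ de~$U$ poss\'edant la propri\'et\'e voulue~; en faisant varier~$W$, la collection des~$V$ obtenus sera cofinale parmi les voisinages compacts de~$U$ et fournira donc le syst\`eme fondamental~$\cV$ cherch\'e. Comme tout point~$b$ de~$U$ est ultram\'etrique tr\`es typique, il appartient \`a~$\mathring{B}_{\um}$ et poss\`ede un syst\`eme fondamental de voisinages compacts spectralement convexes admettant un bord analytique fort fini~; j'en choisirais un, not\'e~$V_{b}$, contenu dans~$W \cap \mathring{B}_{\um}$, puis le r\'etr\'ecirais au besoin pour qu'il soit de surcro\^it tr\`es d\'ecent, donc d\'ecent. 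Par compacit\'e de~$U$, j'extrairais du recouvrement~$\{\mathring{V}_{b}\}_{b\in U}$ un sous-recouvrement fini associ\'e \`a des points~$b_{1},\dotsc,b_{m}$ de~$U$, et poserais $V := \bigcup_{i=1}^{m} V_{b_{i}}$, qui est un voisinage compact de~$U$ inclus dans~$W$.

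Il s'agirait ensuite de fixer un polyn\^ome unitaire non constant $P(S) \in \cO(V)[S]$ et des r\'eels $r,s$ v\'erifiant $0 = r < s$ ou $0 < r \le s$, puis de remarquer que $\overline{C}_{V}(r,s) = \bigcup_{i=1}^{m} \overline{C}_{V_{b_{i}}}(r,s)$. Chaque~$V_{b_{i}}$ \'etant compact, d\'ecent, contenu dans~$B_{\um}$ et muni d'un bord analytique fort fini, la proposition~\ref{prop:bordanfortfini}, appliqu\'ee avec le polyn\^ome~$P(S)$ (dont les coefficients, \'el\'ements de~$\cO(V)$, se restreignent \`a~$\cO(V_{b_{i}})$) et avec $Q(T) = T$, assure que le domaine polynomial $\overline{C}_{V_{b_{i}}}(T;r,s) = \overline{C}_{V_{b_{i}}}(r,s)$ satisfait la condition~$(\cO N_{P(S)-T})$. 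J'appliquerais alors la proposition~\ref{prop:NGunion} en prenant pour compact ambiant~$\overline{C}_{V}(r,s)$, pour polyn\^ome~$P(S) - T$ (unitaire en~$S$, \`a coefficients dans~$\cO(\overline{C}_{V}(r,s))$) et pour parties les~$\overline{C}_{V_{b_{i}}}(r,s)$~: l'hypoth\`ese~i) est exactement ce qui vient d'\^etre \'etabli, et l'hypoth\`ese~ii) est automatique puisque la r\'eunion des~$\overline{C}_{V_{b_{i}}}(r,s)$ \'egale~$\overline{C}_{V}(r,s)$ tout entier, lequel est trivialement un bord analytique fort de lui-m\^eme. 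On en d\'eduirait que~$\overline{C}_{V}(r,s)$ satisfait~$(\cO N_{P(S)-T})$, ce qui ach\`everait la preuve.

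Le point qui me para\^it demander le plus de soin est double. D'une part, la r\'eduction \`a des voisinages~$V_{b}$ d\'ecents~: il faut savoir qu'au voisinage d'un point ultram\'etrique tr\`es typique, les points probl\'ematiques (corps r\'esiduel compl\'et\'e trivialement valu\'e de caract\'eristique non nulle) sont eux-m\^emes ultram\'etriques tr\`es typiques, de sorte que la proposition~\ref{prop:bordanfortfini} s'applique effectivement \`a chacun des~$V_{b_{i}}$. D'autre part, la traduction correcte entre le cadre abstrait de la condition~$(\cO N_{G})$ et celui des couronnes~: choix de l'anneau de base pertinent, identification du bon polyn\^ome $G = P(S) - T$, et compatibilit\'e (par restriction) de la condition~$(\cO N_{P(S)-T})$ pour~$\overline{C}_{V_{b_{i}}}(r,s)$ avec celle pour~$\overline{C}_{V}(r,s)$. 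Le reste rel\`eve de la v\'erification de routine.
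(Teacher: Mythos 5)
Votre proposition suit essentiellement la même démarche que la preuve du texte : recouvrir $U$, par compacité, par un nombre fini de voisinages compacts admettant un bord analytique fort fini, prendre leur réunion $V$, appliquer la proposition~\ref{prop:bordanfortfini} (avec $Q=T$) à chaque morceau, puis la proposition~\ref{prop:NGunion} à $\overline{C}_{V}(r,s)$, qui est trivialement un bord analytique fort de lui-même. La question de la décence des morceaux, que vous soulevez (et dont votre justification esquissée n'est pas entièrement établie), est également laissée implicite dans la preuve du texte, de sorte que votre argument n'en diffère pas sur le fond.
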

\begin{proof}
Soit~$W$ un voisinage de~$U$. Puisque tout point de~$U$ poss\`ede un voisinage compact dans~$W$ admettant un bord analytique fort fini, par compacit\'e de~$U$, il existe un voisinage compact~$V$ de~$U$ dans~$W$ qui est union finie de compacts admettant un bord analytique fort fini. On conclut alors \`a l'aide des propositions~\ref{prop:bordanfortfini} et~\ref{prop:NGunion}.
\end{proof}

On peut \'egalement reprendre les \'enonc\'es de \cite[\S 6.2]{EtudeLocale}.

\begin{defi}[\protect{\cf~\cite[d\'efinition~6.12]{EtudeLocale}}]\index{Condition!ORG@$(\cO R_{G})$|textbf}\index{Bord analytique!fort}\index{Resultant@R\'esultant}
On dit que le compact~$U$ satisfait la condition $(\cO R_{G})$ s'il poss\`ede un bord analytique fort sur lequel la fonction $|\Res(G,G')|$ est born\'ee inf\'erieurement par un nombre r\'eel strictement positif.
\end{defi}

\begin{prop}[\protect{\cf~\cite[proposition~6.14]{EtudeLocale}}]\label{prop:RG}
Toute partie compacte de~$B$ qui satisfait la condition~$(\cO R_{G})$ satisfait aussi la condition~$(\Oc N_{G})$. 
\end{prop}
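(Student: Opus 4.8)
La preuve reprend, \emph{mutatis mutandis}, celle de \cite[proposition~6.14]{EtudeLocale}, en rempla\c cant partout l'alg\`ebre~$\Bc(U)$ par~$\overline{\Oc(U)}$ et la notion de bord analytique par celle de bord analytique fort. Soit~$U$ une partie compacte de~$B$ satisfaisant la condition~$(\cO R_{G})$ et soit~$\Gamma$ un bord analytique fort de~$U$ sur lequel $|\Res(G,G')|$ est minor\'ee par un r\'eel $c>0$. Fixons~$v_{U,0}$ comme pr\'ecis\'e apr\`es la d\'efinition~\ref{def:NG} et soit $v\ge v_{U,0}$. L'in\'egalit\'e facile est $\|\psi_{G,U}(f)\|_{\varphi_{G}^{-1}(U)} \le \|f\|_{U,v,\res}$ pour tout $f\in \Oc(U)[T]/(G(T))$~: pour tout repr\'esentant~$F'\in \Oc(U)[T]$ de~$f$ et tout point~$\xi$ de~$\varphi_{G}^{-1}(U)$, on a $|\psi_{G,U}(f)(\xi)| = |F'(\xi)| \le \|F'\|_{U,v}$, la coordonn\'ee de~$\xi$ \'etant de valeur absolue au plus~$v$ d'apr\`es le lemme~\ref{lem:ZGspectreASG} (appliqu\'e fibre \`a fibre au polyn\^ome unitaire~$G$); on prend l'infimum sur~$F'$. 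Pour l'in\'egalit\'e r\'eciproque, gr\^ace aux m\^emes calculs de division euclidienne qu'\`a la proposition~\ref{prop:equivalencedivres}, il suffit de majorer la norme divisorielle $\|f\|_{U,\div} = \max_{0\le i\le d-1}(\|a_{i}\|_{U})$ par $K\,\|\psi_{G,U}(f)\|_{\varphi_{G}^{-1}(U)}$, o\`u $F = \sum_{i=0}^{d-1} a_{i}\,T^i$ est l'unique repr\'esentant de degr\'e~$<d$ de~$f$, obtenu par division euclidienne par le polyn\^ome unitaire~$G$ (donc \`a coefficients~$a_{i}$ dans~$\Oc(U)$). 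Comme~$\Gamma$ est un bord analytique fort de~$U$, on a $\|a_{i}\|_{U} = \sup_{\gamma\in\Gamma} |a_{i}(\gamma)|$, et tout revient \`a \'etablir une majoration uniforme fibre \`a fibre~:
\[ \max_{0\le i\le d-1} |a_{i}(\gamma)| \le K \, \|F(\gamma)\|_{\varphi_{G}^{-1}(\gamma)} \qquad (\gamma\in\Gamma). \]

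C'est ici qu'intervient l'hypoth\`ese sur le r\'esultant. Pour $\gamma\in\Gamma$, le polyn\^ome unitaire $G(\gamma)\in\cH(\gamma)[T]$ est s\'eparable, car $|\Res(G,G')(\gamma)|\ge c>0$; sur une cl\^oture alg\'ebrique compl\'et\'ee de~$\cH(\gamma)$, on \'ecrit $G(\gamma) = \prod_{j=1}^d (T-z_{j})$ avec les~$z_{j}$ distincts. La formule d'interpolation de Lagrange exprime chaque~$a_{i}(\gamma)$ comme combinaison des~$F(z_{j})$, les coefficients \'etant ceux des polyn\^omes $\prod_{k\ne j}(T-z_{k})/\prod_{k\ne j}(z_{j}-z_{k})$; on les majore en valeur absolue par une constante ne d\'ependant que de~$d$, de $\max_{k}|z_{k}|$ et de $\min_{j}\prod_{k\ne j}|z_{j}-z_{k}|$. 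Les~$|z_{j}|$ sont born\'es en fonction des~$\|g_{i}\|_{U}$, donc de~$v_{U,0}$ (borne sur les racines d'un polyn\^ome unitaire, \cf{} lemme~\ref{lem:ZGspectreASG} ou \cite[proposition~3.1.2/1]{BGR}); et le produit sur~$j$ des~$\prod_{k\ne j}|z_{j}-z_{k}|$ est, au signe pr\`es, le discriminant de~$G(\gamma)$, c'est-\`a-dire $\Res(G,G')(\gamma)$ (puisque~$G$ est unitaire), donc minor\'e par~$c$; comme chaque facteur $|z_{j}-z_{k}|$ est major\'e en fonction de~$v_{U,0}$, on en d\'eduit une minoration de chaque $\prod_{k\ne j}|z_{j}-z_{k}|$ par un r\'eel strictement positif ne d\'ependant que de~$c$, $d$ et~$v_{U,0}$. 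On obtient ainsi la majoration fibre \`a fibre annonc\'ee, en utilisant enfin que $\max_{j}|F(z_{j})| = \|F(\gamma)\|_{\varphi_{G}^{-1}(\gamma)}$, la valeur absolue \'etant invariante par conjugaison. Combin\'ee \`a l'\'equivalence entre $\nm_{U,\div}$ et $\nm_{U,v,\res}$ pour $v\ge v_{U,0}$, cette estimation fournit la condition~$(\cO N_{G})$.

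Il n'y a pas d'obstacle s\'erieux~: comme dans tout ce paragraphe, il s'agit de constater que les arguments de~\cite{EtudeLocale} ne sollicitent l'alg\`ebre~$\Bc(U)$ qu'\`a travers sa norme uniforme et le fait que~$\Gamma$ soit un bord analytique, et de les transposer en rempla\c cant ces deux ingr\'edients par~$\overline{\Oc(U)}$ et par la notion de bord analytique fort (on remarquera au passage que, lorsque~$\Gamma$ peut \^etre choisi fini, on a affaire \`a un~$\Gamma$ sur lequel~$G$ est sans facteurs multiples, et l'\'enonc\'e d\'ecoule directement de l'analogue omis de \cite[corollaire~6.3]{EtudeLocale}). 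Le seul point demandant un peu de soin est le c\oe ur quantitatif de la preuve~: traduire proprement \og $|\Res(G,G')|$ minor\'ee sur~$\Gamma$ \fg{} en une borne uniforme sur~$\Gamma$ pour les constantes d'interpolation fibre \`a fibre, via le discriminant et la s\'eparation des racines, en prenant garde \`a la normalisation unitaire de~$G$ et aux extensions successives de corps ($\cH(\gamma)$, sa cl\^oture alg\'ebrique, son compl\'et\'e).
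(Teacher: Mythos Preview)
Your proposal is correct and follows exactly the approach the paper intends: the paper gives no proof of its own here, merely signalling that \cite[proposition~6.14]{EtudeLocale} adapts by replacing~$\cB(U)$ with~$\overline{\cO(U)}$ and the ordinary analytic boundary with a strong one, and you have carried out this adaptation faithfully. The Lagrange interpolation argument over~$\Gamma$, with the resultant hypothesis converted into a uniform lower bound on the separation of the roots via the discriminant identity $\prod_{j}\prod_{k\ne j}|z_{j}-z_{k}| = |\Res(G,G')(\gamma)|$ (valid since~$G$ is monic), is precisely the heart of the cited proof.
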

\index{Condition!ONG@$(\cO N_{G})$|)}
\index{Norme!comparaison|)}

\section{Division de Weierstra\ss{} avec contr\^ole sur les normes}\label{sec:divWnormes}

Dans cette section, nous d\'emontrons une version raffin\'ee du th\'eor\`eme de division de Weierstra\ss{} (\cf~th\'eor\`eme~\ref{weierstrass}) et du th\'eor\`eme de Weierstra\ss{} g\'en\'eralis\'e (\cf~corollaire~\ref{cor:weierstrassgeneralise}) comprenant un contr\^ole des normes du reste et du quotient.

Soit $(\Ac,\nm)$ un anneau de Banach. Posons $B := \Mc(\Ac)$ et $X := \E{1}{\cA}$ avec coordonn\'ee~$S$. Notons $\pi \colon X \to B$ le morphisme de projection. Pour tout point~$b$ de~$B$, posons $X_{b} := \pi^{-1}(b) \simeq \E{1}{\cH(b)}$. 

Rappelons qu'\`a tout point rigide~$x$ de~$\Aunk$, on associe un polyn\^ome minimal~$\mu_{x}$ et un degr\'e~$\deg(x)$ (\cf~d\'efinition~\ref{def:rigidedroite}) et que l'anneau local $\cO_{\Aunk,x}$ en ce point est un anneau de valuation discr\`ete d'uniformisante~$\mu_{x}$.

\begin{theo}\label{weierstrassam}\index{Theoreme@Th\'eor\`eme!de division de Weierstra\ss!avec contr\^ole des normes}
Soient $b\in B$ et~$x$ un point rigide de~$X_{b}$. Soit~$\cV_b$ une base de voisinages compacts spectralement convexes de~$b$ dans~$B$. Si $\cH(b)$ est trivialement valu\'e et $\mu_{x}$ ins\'eparable, supposons que tous les \'el\'ements de~$\cV_{b}$ sont inclus dans~$B_{\um}$ et poss\`edent un bord analytique fini. 

Soit~$U_{0}$ un voisinage compact de~$x$ dans~$X$ et soit~$G$ un \'el\'ement de~$\cB(U_{0})$. Supposons que son image dans $\cO_{X_{b},x}$ n'est pas nulle et notons $n \in \N$ la valuation~$\mu_{x}$-adique de cette derni\`ere. Soit~$\eta \in \R_{>0}$.

Alors, il existe un voisinage~$V_{0}$ de~$b$ dans~$B$, un polyn\^ome~$P_{0}$ \`a coefficients dans~$\cB(V_{0})$, unitaire, de m\^eme degr\'e que~$\mu_{x}$, tel que $\|P_{0}(b) - \mu_{x}\|_{b,\infty}\le \eta$ et des nombres r\'eels $s_{1},s_{2},\eps \in \R_{>0}$ avec $s_{1} < s_{2}$ v\'erifiant les propri\'et\'es suivantes~: pour tout $V \in \cV_{b}$ contenu dans~$V_{0}$, tout $P \in \cB(V)[T]$ unitaire de m\^eme degr\'e que~$P_{0}$ tel que $\|P-P_{0}\|_{V,\infty} \le \eps$, on a 
\[x \in \overline{D}_V(P;s_{1}) \subset \overline{D}_V(P;s_{2}) \subset U_{0}\] 
et, pour tout $F \in \cB(\overline{D}_V(P;s_{2}))$ et tout $s \in \intoo{s_{1},s_{2}}$, il existe un unique couple $(Q_{s},R_{s}) \in \cB(\overline{D}_V(P;s))^2$ v\'erifiant 
\begin{enumerate}[i)]
\item $F = Q_{s}G+R_{s}$ dans $\cB(\overline{D}_V(P;s))$;
\item $R_{s} \in \cB(V)[S]$ est un polyn\^ome de degr\'e strictement inf\'erieur \`a~$n\deg(x)$.
\end{enumerate}
En outre, pour tout $s\in \intoo{s_{1},s_{2}}$, il existe $C_{s} \in \R_{>0}$ tel que, pour tout $F \in \cB(\overline{D}_V(P;s_{2}))$, avec les notations pr\'ec\'edentes, 
on ait
\[\|Q_{s}\|_{\overline{D}_V(P;s)} \leq C_{s}\, \|F\|_{\overline{D}_V(P;s_{2})} \textrm{ et } \|R_{s}\|_{\overline{D}_V(P;s)}\leq C_{s}\,\|F\|_{\overline{D}_V(P;s_{2})}.\]
Pour $F \in \cB(\overline{D}_V(P;s_{2}))$ fix\'e, les \'el\'ements $Q_{s}$ et~$R_{s}$ sont ind\'ependants de $s$, au sens o\`u, pour tous $s,s' \in   \intoo{s_{1},s_{2}}$ avec $s'\le s$, on a
\[ (Q_{s})_{| \overline{D}_V(P;s')} = Q_{s'} \textrm{ et } (R_{s})_{| \overline{D}_V(P;s')} = R_{s'},\]
la notation $(\wc)_{| \overline{D}_V(P;s')}$ d\'esignant l'image par le morphisme canonique $\cB(\overline{D}_V(P;s) \to \cB(\overline{D}_V(P;s'))$.
\end{theo}
\begin{proof}
Posons $d:=\deg(x)$.
Il existe~$\alpha_0,\ldots,\alpha_{d-1}\in\cH(b)$ tels que 
\[\mu_{x}(S)=S^d+ \sum_{i=0}^{d-1}\alpha_iS^i.\]
Puisque la valuation~$\mu_{x}$-adique de~$G$ est \'egale \`a~$n$, il existe un \'el\'ement inversible~$H$ de~$\cO_{X_{b},x}$ tel que $G=H \mu_{x}^n$. Quitte \`a restreindre~$U_{0}$, on peut supposer que $H$ et $H^{-1}$ sont d\'efinies sur $U_{0} \cap X_{b}$.

Posons $N := \|\mu_{x}\|_{b,\infty}$. Soit~$v \in \R_{>0}$ tel que
\[(|\alpha_{0}| + 1) v^{-d} + \sum_{i=1}^{d-1}|\alpha_i| v^{i-d} <  \frac{1}{2}.\]

Soient~$V_{0}, P_{0}, s_{1}, s_{2}, \eps$ satisfaisant les conclusions de la proposition~\ref{prop:basevoisdim1rigide} avec $U=U_{0}$ et $\sigma=1$. En particulier, on a $s_{2} \le 1$. On peut \'egalement supposer que~$P_{0}$ est de degr\'e~$d$. D'apr\`es les points i) et ii) \'enonc\'es \`a la fin de la proposition~\ref{prop:basevoisdim1rigide}, si~$\mu_{x}$ est s\'eparable ou si $\cH(b)$ n'est pas trivialement valu\'e, on peut supposer que~$P_{0}(b)$ est s\'eparable. Sinon, par hypoth\`ese, tous les \'el\'ements de~$\cV_{b}$ sont inclus dans~$B_{\um}$ et poss\`edent un bord analytique fini. Dans tous les cas, d'apr\`es la proposition~\ref{prop:CNBNPST}, quitte \`a diminuer~$V_{0}$, $s_{2}$ et~$\eps$ et augmenter~$s_{1}$, on peut supposer que, pour toute partie spectralement convexe~$V$ de~$V_{0}$ contenant~$b$, tout $s \in [s_{1},s_{2}]$ et tout $P \in \cB(V)[T]$ tel que $\|P-P_{0}\|_{V,\infty} \le \eps$, le disque $\overline{D}_{V}(s)$ satisfasse la condition $(\cB N_{P(S)-T})$. 

Soit $P \in \cB(V)[T]$ unitaire de degr\'e~$d$ tel que $\|P-P_{0}\|_{V,\infty} \le \eps$. \'Ecrivons $P$ sous la forme $P= S^d + \sum_{i=0}^{d-1} p_i  S^i \in \cB(V_{0})[S]$. On peut \'egalement supposer que 
\begin{equation}\label{eq1}
\|P(b) - \mu_{x}\|_{b,\infty} \le \max \left(1, \frac{s_{1}^n \, v^{-(n+1) d + n + 1}}{8 n (N+1)^{n-1}} \right)
\end{equation}
et 
\begin{equation*}
(|p_{0}(b)| + 1) v^{-d} + \sum_{i=1}^{d-1} |p_{i}(b)|\, v^{i-d} < \frac12.
\end{equation*}
Par cons\'equent, il existe un voisinage compact spectralement convexe~$V$ de~$b$ contenu dans~$V_{0}$ sur lequel on a 
\begin{equation}\label{eq2}
(\|p_{0}\|_{V} + 1) v^{-d} + \sum_{i=1}^{d-1} \|p_{i}\|_{V}\, v^{i-d} \le \frac12.
\end{equation}

Notons \'egalement que l'\'equation~\eqref{eq1} entra\^ine
\begin{equation}\label{eq:LN+2}
\|P(b)\|_{b,\infty} \le \|P(b) - \mu_{x}\|_{b,\infty} + \|\mu_{x}\|_{b,\infty} \le N+1.
\end{equation}

Soit $s'_{2} \in \intoo{s_{1},s_{2}}$. Les choix effectu\'es assurent que $\overline{D}_{V}(s_{2})$ satisfait la condition $(\cB N_{P(S)-T})$. D'apr\`es le corollaire~\ref{coro:BNPST}, le morphisme naturel
\[ \cB(\overline{D}_{V}(s_{2}))[S]/(P(S) - T) \too \cB(\overline{D}_{V}(P;s_{2}))\]
est donc un isomorphisme. En outre, d'apr\`es la proposition~\ref{prop:restrictionserie}, tout \'el\'ement de $\cB(\overline{D}_{V}(s_{2}))$ induit par restriction un \'el\'ement de $\cB(V)\la |T| \le s'_{2}\ra$. Quitte \`a remplacer~$s_{2}$ par~$s'_{2}$, on peut donc supposer que~$G$ appartient \`a $\cB(V)\la |T| \le s_{2}\ra[S]/(P(S)-T)$.

Soit~$W$ une partie compacte de~$V$ et $t \in (0,1]$. D'apr\`es la proposition~\ref{prop:equivalencedivres} appliqu\'ee avec $\cA = \cB(W)\la |T| \le t\ra$ et $G(S) = P(S) -T$, pour tout $F\in \cB(W)\la |T|\le t\ra[S]/(P(S)-T)$, on a
\begin{equation}\label{eq3}
v^{-d+1} \|F\|_{\cB(W)\la|T|\le t\ra,v,\res} \le \|F\|_{\cB(W)\la|T|\le t\ra,\div}\le 2\|F\|_{\cB(W)\la|T|\le t\ra,v,\res}.
\end{equation}

Par la suite, on notera~$\|.\|_{W,t,v,\res}$ la norme~$\|.\|_{\cB(W)\langle|T|\leq t\rangle,v,\res}$ et~$\|.\|_{W,t,\div}$ la norme~$\|.\|_{\cB(W)\langle|T|\leq t\rangle,\div}$. 

D'apr\`es la proposition~\ref{prop:disqueglobal} et \cite[corollaire~7.4]{EtudeLocale}, les morphismes naturels
\[\underset{t>s_{2}}\colim\;\cH(b)\langle|T|\leq t\rangle \too \cO_{\Aunb}(\overline{D}_{b}(s_{2}))\]
et
\[\cO_{\Aunb}(\overline{D}_{b}(s_{2}))[S]/(P(S)-T) \too \cO_{\Aunb}(\overline{D}_{b}(P;s_{2}))\]
sont des isomorphismes. 

Puisque $\overline{D}_{b}(P;s_{2}) \subset U_{0}\cap X_{b}$, il existe $t>s_{2}$ tel que~$H^{-1}$ poss\`ede un repr\'esentant dans $\cH(b)\langle|T|\leq t\rangle[S]/(P(S)-T)$. Ce repr\'esentant peut lui-m\^eme \^etre approch\'e avec une pr\'ecision arbitraire pour la norme $\nm_{b,t,v,\res}$ par un \'el\'ement de $\cB(W)\langle|T|\leq t\rangle[S]/(P(S)-T)$ pour un voisinage compact~$W$ de~$b$ suffisamment petit. Quitte \`a r\'etr\'ecir~$V$, on peut donc supposer qu'il existe un \'el\'ement~$K$ de~$\cB(V)\langle|T|\leq s_{2}\rangle[S]/(P(S)-T)$  tel que 
\begin{equation}
\|K(b)G(b)-P^n\|_{b,s_{2},v,\res} < \frac{s_{1}^n}{8v^{d-1}}.
\label{eq7}
\end{equation}

On a alors
\begin{equation}
\begin{array}{rcl}
\|\mu_{x}^n-P(b)^n\|_{b,s_{2},v,\res}&\leq&\|\mu_{x}-P(b)\|_{b,s_{2},v,\res} \, \|\sum_{i=0}^{n-1} \mu_{x}^i P(b)^{n-1-i}\|_{b,s_{2},v,\res}\\
&\leq&\|\mu_{x}-P(b)\|_{b,s_{2},v,\res}\, n\max(\|\mu_{x}\|_{b,s_{2},v,\res},\|P(b)\|_{b,s_{2},v,\res})^{n-1}\\
&\overset{(\ref{eq3})}\leq&n v^{n(d-1)}\|\mu_{x}-P(b)\|_{b,s_{2},\div}\,\max(\|\mu_{x}\|_{b,s_{2},\div},\|P(b)\|_{b,s_{2},\div})^{n-1}\\
&\overset{\eqref{eq:LN+2}}\leq&n v^{n(d-1)}\|\mu_{x}-P(b)\|_{b,s_{2},\div}\,(N+1)^{n-1}\\
&\overset{\eqref{eq1}}\leq &\frac{s_{1}^nv^{-d+1}}{8}.
\end{array}
\label{eq9}
\end{equation}

On en d\'eduit que
\begin{equation}
\|K(b)G(b)-P(b)^n\|_{b,s_{2},v,\res}\overset{(\ref{eq7})+(\ref{eq9})}\le \frac{v^{-d+1}s_{1}^n}{4}.
\label{eq10}
\end{equation}
Quitte \`a restreindre~$V$, on peut donc supposer que
\begin{equation}
\|KG-P^n\|_{V,s_{2},v,\res}\leq\frac{v^{-d+1}s_{1}^n}{4}.
\label{eq12}
\end{equation}

Soit $s\in \intoo{s_{1},s_{2}}$. Tout \'el\'ement~$\varphi$ de $\cB(V)\langle |T|\leq s\rangle [S]/(P(S)-T)$ poss\`ede un unique repr\'esentant de la forme 
\[\varphi=\sum_{i=0}^{d-1}(\alpha_i(\varphi)T^n+\beta_i(\varphi))S^i,\]
o\`u les~$\alpha_i(\varphi)$ sont des \'el\'ements de~$\cB(V)\langle|T|\leq s\rangle$ et les~$\beta_i(\varphi)$ des \'el\'ements de~$\cB(V)[T]$ de degr\'e strictement inf\'erieur \`a~$n$. Posons 
\[\alpha(\varphi) := \sum_{i=0}^{d-1}\alpha_i(\varphi)S^i \text{ et } \beta(\varphi) := \sum_{i=0}^{d-1} \beta_i(\varphi)S^i.\]
On a alors
\[\varphi=\alpha(\varphi)T^n+\beta(\varphi)\]
et les deux in\'egalit\'es suivantes :
\begin{equation}\label{eq:alphabeta}
\|\alpha(\varphi)\|_{V,s,\div}\leq s^{-n}\|\varphi\|_{V,s,\div}\text{ et }\|\beta(\varphi)\|_{V,s,\div}\leq \|\varphi\|_{V,s,\div}.
\end{equation}
Remarquons que, dans $\cB(V)\langle |T|\leq s\rangle [S]/(P(S)-T)$, on peut \'ecrire~$\beta(\varphi)$ comme un polyn\^ome en~$S$ de degr\'e inf\'erieur \`a~$nd-1$. R\'eciproquement, tout polyn\^ome en~$S$ de degr\'e strictement inf\'erieur \`a~$nd$ peut s'\'ecrire sous la forme $\sum_{i=0}^{d-1} b_i S^i$ o\`u les~$b_{i}$ sont des polyn\^omes en~$T$ de degr\'e strictement inf\'erieur \`a~$n$.

Consid\'erons, \`a pr\'esent, l'endomorphisme 
\[\fonction{A_{s}}{\cB(V)\langle|T|\leq s\rangle[S]/(P(S)-T)}{\cB(V)\langle|T|\leq s\rangle[S]/(P(S)-T)}{\varphi}{\alpha(\varphi)KG+\beta(\varphi)}.\]
Pour tout~$\varphi\in \cB(V)\langle|T|\leq s\rangle[S]/(P(S)-T)$, on a
\begin{equation}
\begin{array}{rcl}
\|A_{s}(\varphi)-\varphi\|_{V,s,v,\res}&=&\|\alpha(\varphi)(KG-T^n)\|_{V,s,v,\res}\\
&\leq&\|\alpha(\varphi)\|_{V,s,v,\res}\, \|KG-P^n\|_{V,s,v,\res}\\
&\overset{(\ref{eq3})+(\ref{eq:alphabeta})}\leq&2v^{d-1}s^{-n}\,\|KG-P^n\|_{V,s,v,\res}\, \|\varphi\|_{V,s,v,\res}.
\end{array}
\label{eq14}
\end{equation}
On d\'eduit donc de l'in\'egalit\'e (\ref{eq12}) que la norme de l'op\'erateur~$A_{s}-\Id$ sur $\cB(V)\langle|T|\leq s\rangle[S]/(P(S)-T)$ est inf\'erieure \`a $1/2 (s_{1}/s)^n \le 1/2 < 1$. En particulier, $A_{s}$~est un isomorphisme d'espaces de Banach. Remarquons que l'on a
\begin{equation}\label{eq:normeAs-1}
\|A_{s}^{-1}\| \le 2.
\end{equation}

Les choix effectu\'es assurent que $\overline{D}_{V}(s_{2})$ satisfait la condition $(\cB N_{P(S)-T})$. D'apr\`es le corollaire~\ref{coro:BNPST}, le morphisme naturel
\[ \cB(\overline{D}_{V}(s_{2}))[S]/(P(S) - T) \too \cB(\overline{D}_{V}(P;s_{2}))\]
est un isomorphisme et il existe $w\ge v$ et $C' \in \R_{>0}$ tel que, pour tout $F \in \cB(\overline{D}_{V}(s_{2}))[S]/(P(S)-T)$, on ait
\begin{equation*}
\|F\|_{\overline{D}_{V}(s_{2}),w,\res}\leq C' \|F\|_{\overline{D}_{V}(P;s_{2})}
\end{equation*}
et donc 
\begin{equation*}
\|F\|_{\overline{D}_{V}(s_{2}),\div}\leq C'' \|F\|_{\overline{D}_{V}(P;s_{2})}
\end{equation*}
pour une certaine constante $C'' \in \R_{>0}$, d'apr\`es la proposition~\ref{prop:equivalencedivres}. D'apr\`es la proposition~\ref{prop:restrictionserie} tout \'el\'ement~$f$ de~$\cB(\overline{D}_{V}(s_{2}))$ induit naturellement par restriction un \'el\'ement de $\cB(V)\la |T|\le s\ra$ et on a
\begin{equation*}
\|f\|_{V,s} \le \frac{s_{2}}{s_{2}-s}\, \|f\|_{\overline{D}_{V}(s_{2})}
\end{equation*}
On en d\'eduit que tout \'el\'ement~$F$ de $\cB(\overline{D}_{V}(s_{2}))[S]/(P(S) - T)$ induit naturellement par restriction un \'el\'ement de $\cB(V)\la |T|\le s\ra[S]/(P(S)-T)$ et que l'on a
\begin{equation}\label{eq4}
\|F\|_{V,s,\div}\leq C'' \max \left(1, \big(\frac{s_{2}}{s_{2}-s}\big)^{d-1} \right)  \|F\|_{\overline{D}_{V}(P;s_{2})}.
\end{equation}

Soit $F \in \overline{D}_{V}(s_{2})$. Les \'el\'ements $Q_{s}^{\la\ra} := \alpha(A_{s}^{-1}(F)) K$ et $R_{s}^{\la\ra} := \beta(A_{s}^{-1}(F))$ de $\cB(V)\langle|T|\leq s\rangle[S]/(P(S)-T)$ satisfont alors l'\'egalit\'e $F = Q^{\la\ra}_{s}G+R^{\la\ra}_{s}$ dans $\cB(V)\langle|T|\leq s\rangle[S]/(P(S)-T)$. En outre, en combinant \eqref{eq:alphabeta}, \eqref{eq:normeAs-1} et~\eqref{eq4}, on montre qu'il existe une constante $C'_{s} \in \R_{>0}$, ind\'ependante de~$F$, telle que 
\[\|Q_{s}^{\la\ra}\|_{V,s,\div} \leq C'_{s}\, \|F\|_{\overline{D}_V(P;s_{2})} \textrm{ et } \|R_{s}^{\la\ra}\|_{V,s,\div}\leq C'\,\|F\|_{\overline{D}_V(P;s_{2})}.\]
Les choix effectu\'es assurent que $\overline{D}_{V}(s)$ satisfait la condition $(\cB N_{P(S)-T})$. On en d\'eduit que les images respectives~$Q_{s}$ et~$R_{s}$ de~$Q_{s}^{\la\ra}$ et~$R_{s}^{\la\ra}$ dans $\cB(\overline{D}_{V}(s))[S]/(P(S)-T) \simeq \cB(\overline{D}_{V}(P;s))$ satisfont les conditions de l'\'enonc\'e, y compris la partie sur l'\'egalit\'e des normes, pour une constante $C_{s} \in \R_{>0}$ bien choisie.

D\'emontrons maintenant l'unicit\'e du couple $(Q_{s},R_{s})$. Soient $Q'_{s}, R'_{s} \in \cB(\overline{D}_{V}(P;s'))$ satisfaisant les conditions de l'\'enonc\'e. On peut les consid\'erer dans $\cB(\overline{D}_{V}(s))[S]/(P(S)-T)$ et donc dans $\cB(V)\la|T|\le s'\ra[S]/(P(S)-T)$ pour $s' \in \intoo{s_{1},s}$, d'apr\`es la proposition~\ref{prop:restrictionserie}. La bijectivit\'e de l'op\'erateur~$A_{s'}$ assure que les images de~$Q'_{s}$ et~$R'_{s}$ dans $\cB(V)\la|T|\le s'\ra[S]/(P(S)-T)$ co\"incident avec celles de~$Q_{s}$ et~$R_{s}$. Puisque, d'apr\`es la proposition~\ref{prop:restrictionserie}, le morphisme $\cB(\overline{D}_{V}(s)) \to \cB(V)\la|T|\le s'\ra$ est injectif, le r\'esultat s'ensuit.

L'assertion d'ind\'ependance finale d\'ecoule de l'unicit\'e.
\end{proof}

\begin{rema}\label{rem:kappaseparable}
Il d\'ecoule du point iii) final de la proposition \ref{prop:basevoisdim1rigide} que, si $\mu_{x}$ est \`a coefficients dans~$\kappa(b)$ et s\'eparable, on peut choisir pour $P_{0}$ n'importe quel relev\'e de $\mu_{x}$ \`a $\cO_{B,b}[T]$.

\end{rema}

D\'emontrons \`a pr\'esent quelques cons\'equences de ce r\'esultat. Soit $G(T) \in \cA[T]$ un polyn\^ome unitaire de degr\'e~$d \ge 1$. Fixons un point~$b$ de~$B$. 

Supposons que $G(b)(T)$ est une puissance d'un polyn\^ome irr\'eductible dans~$\cH(b)[T]$. Le lieu d'annulation de~$G(b)$ d\'etermine alors un unique point rigide~$x$ de~$X_{b}$ et il existe un entier $n\in \N^\ast$ tel que $G(b) = \mu_{x}^n$. Posons $d:=\deg(x)$.

\begin{coro}\label{cor:divisionGirrednormes}
Pla\c cons-nous dans le cadre pr\'ec\'edent. Si~$\cH(b)$ est trivialement valu\'e et~$\mu_{x}$ ins\'eparable, supposons que $b$~est ultram\'etrique typique. 

Soit~$U$ un voisinage de~$x$ dans~$X$. Alors, il existe un voisinage compact spectralement convexe~$V$ de~$b$ dans~$B$, un polyn\^ome $P_{0} \in \cB(V)[T]$ unitaire non constant et des nombres r\'eels $s_{1},s_{2} \in \R_{>0}$ avec $s_{1} < s_{2}$ tels qu'on ait
\[ x \in \overline{D}_{V}(P_{0};s_{1}) \subset \overline{D}_{V}(P_{0};s_{2}) \subset U\] 
et, en posant, 
\[\psi \colon 
\begin{array}{ccc}
\Bc(V)^{nd} &\too& \Bc(\overline{D}_{V}(P_{0};s_2)),\\
(a_{0},\dotsc,a_{nd-1}) & \mapstoo & \sum_{i=0}^{nd-1} a_{i}\, T^i
\end{array}\]
les propri\'et\'es suivantes soient v\'erifi\'ees~:
\begin{enumerate}[i)]
\item il existe $K \in \R_{>0}$ tel que, pour tout $A \in \Bc(V)^{nd}$ et tout $s\in [s_{1},s_{2}]$, on ait $\|\psi(A)\|_{\overline{D}_{V}(P_{0};s)} \le K\, \|A\|_{V,\infty}$;
\item pour tout $s\in \intoo{s_{1},s_{2}}$ et pour tout $B \in \Bc(\overline{D}_{V}(P_{0};s_{2}))$, il existe un unique $A_s \in \Bc(V)^{nd}$ tel que $\psi(A_s) = B \mod G$ dans $\Bc(\overline{D}_{V}(P_{0};s))$. L'\'el\'ement $A_{s}$ est ind\'ependant de $s\in \intoo{s_{1},s_{2}}$. En outre, il existe $K'_{s} \in \R_{>0}$ (ind\'ependant de~$B$) tel que $\|A_s\|_{V,\infty} \le K'_{s} \, \|B\|_{\overline{D}_{V}(P_{0};s_{2})}$. 
\end{enumerate} 
En particulier, le morphisme naturel
\[\Oc_{b}[T]/(G(T)) \too \Oc_{x}/(G(T))\]
est un isomorphisme.
\end{coro}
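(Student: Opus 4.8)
The strategy is to derive Corollary~\ref{cor:divisionGirrednormes} from Theorem~\ref{weierstrassam} by applying the latter with the element $G$ of the theorem taken to be (a representative of) the polynomial $G(T)$, together with a careful bookkeeping of the two directions of the division map. First I would set up the situation: since $G(b)(T) = \mu_{x}^n$ in $\cH(b)[T]$ and $G \in \cA[T]$ is unitary of degree $nd$, its image in $\cO_{X_{b},x}$ has $\mu_{x}$-adic valuation exactly $n$, so the hypotheses of Theorem~\ref{weierstrassam} are met (the auxiliary hypothesis about $b$ being ultramétrique typique when $\cH(b)$ is trivially valued and $\mu_{x}$ inséparable is exactly what we have assumed, and one notes that ultramétrique typique points possess a base of compact spectrally convex neighborhoods inside $B_{\um}$ with finite analytic boundary, so one may take such a family as the $\cV_b$ in the theorem). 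Applying the theorem with $U_{0} := U$ and $\eta$ arbitrary produces a neighborhood $V_{0}$ of $b$, a unitary polynomial $P_{0} \in \cB(V_{0})[T]$ close to $\mu_{x}$, and reals $s_{1} < s_{2}$, $\eps$. I would then take $G_{0} := P_{0}$ and choose $V$ to be a compact spectrally convex element of $\cV_b$ contained in $V_0$ (shrinking $s_1,s_2$ if needed so that the statement's $\overline{D}_{V}(G_{0};s_{2}) \subset U$ holds, which is part of the theorem's conclusion). Note that one applies the theorem with $P_1 = P_0$ itself, so the perturbation parameter $\eps$ plays no further role here.

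Next I would unwind what the division statement gives us. For $s \in (s_{1},s_{2})$ and any $F \in \cB(\overline{D}_{V}(P_{0};s_{2}))$, the theorem produces a unique $(Q,R)$ with $F = QG + R$ in $\cB(\overline{D}_{V}(P_{0};s))$ and $R \in \cB(V)[S]$ of degree $< n\deg(x) = nd$, together with the norm bound $\|R\|_{\overline{D}_{V}(P_{0};s)} \le C_{s}\|F\|_{\overline{D}_{V}(P_{0};s_{2})}$. The map $B \mapsto A := (a_0,\dots,a_{nd-1})$ where $R = \sum a_i T^i$ is the inverse map I want in~ii): uniqueness of $(Q,R)$ gives uniqueness of $A$, and since the coefficients $a_i$ of a polynomial are recovered from $R$ by norms controlled on $\overline{D}_{V}(P_{0};s)$ — here one invokes Proposition~\ref{prop:minorationDPsk} (resp. Remark~\ref{rem:Tn} if $P_0 = T^{nd}$), applied fiberwise over $V$ and then taking the supremum, exactly as in Corollary~\ref{coro:minorationDPsA} — one gets $\|A\|_{V,\infty} = \max_i \|a_i\|_{V} \le K_P^{nd-1} C_{s} \|B\|_{\overline{D}_{V}(P_{0};s_{2})}$, which is the claimed bound with $K'_{s} := K_P^{nd-1}C_s$. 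For the forward direction~i): $\psi_s(A)$ is just the polynomial $\sum a_i T^i$ viewed in $\cB(\overline{D}_{V}(P_{0};s))$, and its uniform norm is bounded by $\sum_i \|a_i\|_V \cdot \|T\|^i_{\overline{D}_{V}(P_{0};s)}$, hence by a constant (depending only on $V$, $P_0$, $s_2$, $nd$, via Lemma~\ref{lem:borneT} which bounds $|T|$ on $\overline{D}_{V}(P_{0};s)$ uniformly in $s \le s_2$) times $\|A\|_{V,\infty}$; this gives a single $K$ valid for all $s \in [s_1,s_2]$.

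Finally, the last assertion — that $\cO_{b}[T]/(G(T)) \to \cO_{x}/(G(T))$ is an isomorphism — I would obtain by passing to the colimit over the neighborhoods in question. Recall $\cO_x = \colim_{V'\ni x}\cB(V')$ and the compacts $\overline{D}_{V}(P_{0};s)$ form a cofinal system of neighborhoods of $x$ as $V$ shrinks within $\cV_b$ and $s \downarrow s_1$ (this is the fineness built into Corollary~\ref{cor:BVfineNPSTrigideepais}, or directly from Proposition~\ref{prop:basevoisdim1rigide}); likewise $\cO_b = \colim \cB(V)$. The maps $\psi_s$ are compatible with restriction, and parts~i)–ii) say that modulo $G$, $\psi_s$ induces an isomorphism $\cB(V)^{nd} \simeq \cB(\overline{D}_{V}(P_{0};s))/(G)$ of $\cB(V)$-modules (surjectivity from the division, injectivity from uniqueness of the remainder). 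Passing to the colimit, $\psi$ induces $\cO_b^{nd} \simeq \cO_x/(G)$, and identifying $\cO_b^{nd}$ with $\cO_b[T]/(G(T))$ via the standard basis $1,T,\dots,T^{nd-1}$ (legitimate since $G$ is unitary of degree $nd$ with coefficients in $\cA \subset \cO_b$), the isomorphism follows.

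The main obstacle is not conceptual but lies in keeping the constants uniform: one must check that the constant $K$ in~i) can be chosen independently of $s \in [s_1,s_2]$ (which requires a uniform bound on $|T|$ over the whole family $\overline{D}_{V}(P_{0};s)$, $s \le s_2$ — supplied by Lemma~\ref{lem:borneT} since $P_0$ is unitary with $V$-bounded coefficients) and that the coefficient-extraction constant in Proposition~\ref{prop:minorationDPsk}, which a priori depends on the radius, is likewise controllable — here one uses that $P_0$ is a fixed polynomial close to $\mu_x$ so the quantities $\sigma,\rho$ appearing there are bounded away from their degenerate values uniformly over $V$ and $s$, giving a single $K_P$ as in Corollary~\ref{coro:minorationDPsA}. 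The one genuinely delicate point is matching the remainder-degree convention: Theorem~\ref{weierstrassam} gives $\deg R < n\deg(x)$, and one must remember that $\deg(x) = d$ here (since $\mu_x$ has degree $d$ over $\cH(b)$), so that $\psi_s$ has the right source $\cB(V)^{nd}$; this is purely notational but worth stating explicitly.
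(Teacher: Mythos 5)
Your proof is correct and follows essentially the same route as the paper's: apply Theorem~\ref{weierstrassam} to~$G$, deduce~ii) from the norm bound on the remainder combined with the coefficient-extraction estimate (Corollary~\ref{coro:minorationDPsA}, or Remark~\ref{rem:Tn} in the degenerate case), obtain~i) by a direct estimate involving $\|T\|_{\overline{D}_{V}(G_{0};s_{2})}$, and pass to the colimit for the final isomorphism. The one imprecision is the claim that $\eta$ may be taken arbitrary: to invoke Corollary~\ref{coro:minorationDPsA} one needs $G_{0}(b')\neq T^{d}$ for every $b'\in V$, which the paper secures by taking $\eta<\min_{i\in I}(|a_{i}|)$ when $x\neq 0$, the degenerate case being $x=0$ (i.e.\ $\mu_{x}=T$, so that $P_{0}$, of degree~$d$ and not~$nd$, is taken equal to~$T$ via Remark~\ref{rem:kappaseparable} and one uses Remark~\ref{rem:Tn}); since your closing paragraph acknowledges exactly this non-degeneracy requirement, this is a slip in the write-up rather than a genuine gap.
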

\begin{proof}
Soit~$U_{0}$ un voisinage compact de~$x$ dans~$U$ tel que $G \in \cB(U_{0})$. Soit~$\cV_b$ une base de voisinages compacts spectralement convexes de~$b$ dans~$B$. Si $\cH(b)$ est trivialement valu\'e et $\mu_{x}$ ins\'eparable, supposons de plus que tous les \'el\'ements de~$\cV_{b}$ sont inclus dans~$B_{\um}$ et poss\`edent un bord analytique fini. 

$\bullet$ Supposons que $x\ne 0$.

Posons $d:=\deg(x)$ et \'ecrivons $\mu_{x}(T) = T^d + \sum_{i=0}^{d-1} a_{i} T^i$. Posons $I := \{i \in \cn{0}{d-1} : a_{i} \ne 0\}$. Puique $x\ne 0$, on a $I \ne \emptyset$. Soit $\eta \in \intoo{0,\min_{i \in I}(|a_{i}|)}$. 

Le th\'eor\`eme~\ref{weierstrassam} appliqu\'e avec $x$, $G$ et~$\eta$ fournit un voisinage~$V_{0}$ de~$b$ dans~$B$, un polyn\^ome $P_{0} \in \cB(V_{0})[T]$ unitaire de degr\'e~$d$ et des nombres r\'eels $s_{1},s_{2},\eps \in \R_{>0}$. 
Puisque $\|P_{0}(b) - \mu_{x}\| \le \eta$, $P_{0}(b)$ n'est pas une puissance de~$T$. 
Soit $V$ un \'el\'ement de~$\cV_{b}$ contenu dans~$V_{0}$. Identifions~$P_{0}$ \`a son image dans $\cB(V)[T]$.
Quitte \`a  restreindre~$V$, on peut supposer que, pour tout $b'\in B$, $P_{0}(b')$ n'est pas une puissance de~$T$. La propri\'et\'e~i) est imm\'ediate avec $K = nd \max(\|T\|^{nd-1}_{\overline{D}_{V}(P_{0};s_{2})},1)$. La propri\'et\'e~ii) d\'ecoule 
du th\'eor\`eme~\ref{weierstrassam} et du corollaire~\ref{coro:minorationDPsA}.

$\bullet$ Supposons que $x=0$.

D'apr\`es la remarque~\ref{rem:kappaseparable}, on peut choisir $P_{0} = T$ dans l'\'enonc\'e du th\'eor\`eme~\ref{weierstrassam}. 
Le m\^eme raisonnement que pr\'ec\'edemment s'applique alors, en utilisant la remarque~\ref{rem:Tn} au lieu du corollaire~\ref{coro:minorationDPsA}.
\end{proof}

Des arguments standards permettent de d\'emontrer un r\'esultat similaire sans hypoth\`ese sur~$G(b)(T)$. 
Nous renvoyons \`a la preuve de \cite[th\'eor\`eme~5.5.3]{A1Z} pour les d\'etails (dans un cadre o\`u les normes ne sont pas prises en compte).

Notons $G(b)(T) = \prod_{j=1}^t h_{j}(T)^{n_{j}}$ la d\'ecomposition en produit de polyn\^omes irr\'eductibles et unitaires de~$G(b)(T)$ dans~$\cH(b)[T]$. D'apr\`es~\cite[corollaire~5.4]{EtudeLocale}, il existe $H_{1},\dotsc,H_{t} \in \cO_{B,b}[T]$ unitaires tels que
\begin{enumerate}[i)]
\item $G = \prod_{j=1}^t H_{j}$ dans $\cO_{B,b}[T]$;
\item pour tout $j\in \cn{1}{t}$, on a $H_{j}(b) = h_{j}^{n_{j}}$.
\end{enumerate}
Il s'agit de la condition~$(D_{G})$ de \cite[d\'efinition~4.6]{EtudeLocale}, elle-m\^eme li\'ee \`a la condition $(I_{G})$ de \cite[d\'efinition~5.3.5]{A1Z} qui est pr\'esente dans l'\'enonc\'e de \cite[th\'eor\`eme~5.5.3]{A1Z}. Le r\'esultat pr\'ec\'edent est cons\'equence de l'hens\'elianit\'e du corps~$\kappa(b)$ (\cf~\cite[th\'eor\`eme~5.2]{EtudeLocale}).

Pour tout $j\in \cn{1}{t}$, notons $z_{j}$ le point rigide de~$X_{b}$ d\'etermin\'e par l'annulation de~$h_{j}$.

\begin{coro}\label{cor:divisionGnormes}\index{Theoreme@Th\'eor\`eme!de division de Weierstra\ss!g\'en\'eralis\'e avec contr\^ole des normes}
Pla\c cons-nous dans le cadre pr\'ec\'edent. 
Si~$\cH(b)$ est trivialement valu\'e et l'un des~$h_{j}$ est ins\'eparable, supposons que $b$~est ultram\'etrique typique. 

Pour tout $j \in \cn{1}{t}$, soit~$U_{j}$ un voisinage de~$z_{j}$ dans~$X$ sur lequel $H_{j}$~est d\'efini. Alors, il existe un voisinage compact spectralement convexe~$V$ de~$b$ dans~$B$, des polyn\^omes $P_{1,0},\dotsc,P_{t,0} \in \cB(V)[T]$ unitaires non constants et des nombres r\'eels $s_{1,1},s_{1,2},\dotsc,s_{t,1},s_{t,2} \in \R_{>0}$ avec $s_{j,1} < s_{j,2}$ pour tout~$j$ tels qu'on ait 
\[x\in \overline{D}_{V}(P_{j,0};s_{j,1}) \subset  \overline{D}_{V}(P_{j,0};s_{j,2}) \subset U_{j} \textrm{ pour tout }j\] 
et, en posant
\[\psi \colon 
\begin{array}{ccc}
\Bc(V)^d &\too& \prod_{j=1}^t \Bc(\overline{D}_{V}(P_{j,0};s_{j,2})),\\
(a_{0},\dotsc,a_{d-1}) & \mapstoo & (\sum_{i=0}^{d-1} a_{i}\, T^i,\dotsc,\sum_{i=0}^{d-1} a_{i}\, T^i)
\end{array}\]

les propri\'et\'es suivantes soient v\'erifi\'ees~:
\begin{enumerate}[i)]
\item il existe $K \in \R_{>0}$ tel que, pour tout $A \in \Bc(V)^d$ et tout $s \in \prod_{j=1}^t [s_{j,1},s_{j,2}]$, on ait 
\[\|\psi(A)\|_{s} \le K\, \|A\|_{V,\infty},\]
o\`u $\nm_{s}$ d\'esigne la norme uniforme sur $\prod_{j=1}^t\Bc(\overline{D}_{V}(P_{j,0};s_{j}))$~;
\item pour tout $s \in \prod_{j=1}^t \intoo{s_{j,1},s_{j,2}}$ et tout $B \in  \prod_{j=1}^t \Bc(\overline{D}_{V}(P_{j,0};s_{j,2}))$, il existe un unique $A_{s} \in \Bc(V)^d$ tel que $\psi(A_{s}) = B$ dans $\prod_{j=1}^t \Bc(\overline{D}_{V}(P_{j,0};s_{j}))/(H_{j})$. L'\'el\'ement~$A_{s}$ est ind\'ependant de $s \in \prod_{j=1}^t \intoo{s_{j,1},s_{j,2}}$. En outre, il existe $K'_{s} \in \R_{>0}$ (ind\'ependant de~$B$) tel que $\|A_{s}\|_{V,\infty} \le K'_{s} \, \|B\|_{s_{2}}$,
o\`u $\nm_{s_{2}}$ d\'esigne la norme uniforme sur $\prod_{j=1}^t \Bc(\overline{D}_{V}(P_{j,0};s_{j,2}))$.
\end{enumerate} 
En particulier, le morphisme naturel
\[\Oc_{b}[T]/(G(T)) \too \prod_{j=1}^t\Oc_{z_{j}}/(H_{j})\]
est un isomorphisme.
\end{coro}

Appliquons finalement ces r\'esultats au cas d'un endomorphisme polynomial de~$X$, de fa\c con \`a obtenir une version norm\'ee du th\'eor\`eme~\ref{thm:isolemniscate}. Soit $P\in \cA[T]$ un polyn\^ome unitaire de degr\'e $d\ge 1$. Le morphisme 
\[\begin{array}{ccc}
\Ac[T] &\too& \Ac[T]\\
T & \mapstoo & P(T)
\end{array}\]
induit un morphisme $\varphi_{P} \colon X\to X$. \index{Endomorphisme de la droite}

Remarquons que si $U$ et $V$ sont des parties compactes de~$X$ telles que $\varphi_{P}(U) \subset W$, alors nous avons un morphisme naturel $\cB(W) \to \cB(U)$ envoyant~$T$ sur~$P(T)$.
 
\begin{coro}\label{cor:phinormes} \index{Endomorphisme de la droite}
Pla\c cons-nous dans le cadre pr\'ec\'edent. Si~$\cH(b)$ est de caract\'eristique non nulle et trivialement valu\'e, supposons que $b$~est ultram\'etrique typique. 

Soit~$y \in X$. Notons $\varphi_{P}^{-1}(y) = \{x_{1},\dotsc,x_{t}\}$. Soit~$W$ un voisinage de~$y$ dans~$X$. Pour $j\in \cn{1}{t}$, soit $U_{j}$ un voisinage de~$x_{j}$ dans~$\varphi_{P}^{-1}(W)$. Alors, il existe un voisinage compact spectralement convexe~$V$ de~$y$ dans~$W$ et, pour tout $j\in \cn{1}{t}$, un voisinage spectralement convexe~$U'_{j}$ de~$x_{j}$ dans $\varphi_{P}^{-1}(V) \cap U_{j}$ tels que, 
en d\'efinissant
\[\chi_{W} \colon 
\begin{array}{ccc}
\Bc(W)^d &\too& \Bc(U_{1}) \times \dotsb \times \Bc(U_{t})\\
(a_{0}(T),\dotsc,a_{d-1}(T)) & \mapstoo & \disp \big(\sum_{i=0}^{d-1} \varphi_{P}^\sharp(a_{i})\, T^i,\dotsc,\sum_{i=0}^{d-1} \varphi_{P}^\sharp(a_{i})\, T^i\big)
\end{array}\]
et $\chi_{V} \colon \Bc(V)^d \to \prod_{j=1}^t \Bc(U'_{j})$ par la m\^eme formule, 
les propri\'et\'es suivantes soient v\'erifi\'ees~:
\begin{enumerate}[i)]
\item il existe $K \in \R_{>0}$ tel que, pour tout $A \in \Bc(W)^d$, on ait $\|\chi_{W}(A)\|_{U} \le K\, \|A\|_{W,\infty}$ et $\|\chi_{V}(A)\|_{U'} \le K\, \|A\|_{V,\infty}$, o\`u~$\nm_{U}$ et~$\nm_{U'}$ d\'esignent respectivement les normes infini sur $\prod_{i=1}^t \cB(U_{i})$ et $\prod_{i=1}^t \cB(U_{i}')$~; 
\item il existe $K' \in \R_{>0}$ tel que, pour tout $B \in  \prod_{j=1}^t \Bc(U_{j})$, il existe un unique $A \in \Bc(V)^d$ tel que $\chi_{V}(A) = B$ (dans $\prod_{j=1}^t \Bc(U'_{j})$) et $\|A\|_{V,\infty} \le K' \, \|B\|_{U}$.
\end{enumerate}  
En particulier, le morphisme $\chi \colon \Oc_{y}^d \to \prod_{j=1}^t \Oc_{x_{j}}$ d\'efini par la m\^eme formule que~$\chi_{W}$ est un isomorphisme.
\end{coro}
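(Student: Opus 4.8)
The plan is to realise $\varphi_{P}$ locally over the target as a finite free cover of degree~$d$ cut out by a single monic polynomial, and then to quote Corollary~\ref{cor:divisionGnormes}. Write $S$ (resp.\ $T$) for the coordinate on the target (resp.\ source) copy of $\E{1}{\cA}$, so that $\varphi_{P}$ is induced by $S\mapsto P(T)$, and set $b:=\pi(y)$, so that $x_{1},\dots,x_{t}$ lie in $X_{b}$. Choose a compact spectrally convex neighbourhood $W_{0}$ of $y$ in $W$ and put $\cC:=\cB(W_{0})$, a uniform Banach ring with $\cM(\cC)=W_{0}$. Decomposing $\varphi_{P}$ as the graph immersion $\E{1}{\cA}_{T}\hookrightarrow\E{2}{\cA}_{S,T}$ onto the hypersurface $\{S=P(T)\}$ followed by $\pi_{2,1}$, and using the identification $\pi_{2,1}^{-1}(W_{0})\simeq\E{1}{\cC}_{T}$ of Remark~\ref{spectral} (an isomorphism of ringed spaces over $\mathring W_{0}$), one gets $\varphi_{P}^{-1}(W_{0})\simeq Z_{G}$ with $G:=P(T)-S\in\cC[T]$ monic of degree~$d$. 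Under this identification $\varphi_{P}^{\sharp}\colon\cC\to\cB(\varphi_{P}^{-1}(W_{0}))$ is the structural morphism of $Z_{G}$; the fibre of $\varphi_{P}$ above $y$ is the zero locus of $G(y)=P(y)(T)-S(y)$, whose irreducible factors $h_{1},\dots,h_{t}$ correspond to $x_{1},\dots,x_{t}$; and for any $x_{j}$ lying over $y\in\mathring W_{0}$ the local ring $\cO_{Z_{G},x_{j}}$ coincides with $\cO_{x_{j}}$, since $y$ is an interior point of $W_{0}$.

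I would then simply apply Corollary~\ref{cor:divisionGnormes} with base ring $\cC$, point $y\in\cM(\cC)$ and monic polynomial $G=P(T)-S$. First one checks that the hypotheses transfer: $\kappa(y)$ is henselian (so that the decomposition $G=\prod_{j}H_{j}$ of \cite[corollaire~5.4]{EtudeLocale} is available), and if $\cH(y)$ is trivially valued and some $h_{j}$ inseparable then $\cH(b)$ is trivially valued of positive characteristic, so by hypothesis $b$ is ultramétrique typique and hence, by Proposition~\ref{prop:typiqueAn}, so is $y$ — all these properties being insensitive to replacing $\E{1}{\cA}$ by $W_{0}$ because $W_{0}$ is spectrally convex. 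Corollary~\ref{cor:divisionGnormes} then produces a compact spectrally convex neighbourhood $V$ of $y$ in $W_{0}$, monic polynomials $H_{1,0},\dots,H_{t,0}\in\cB(V)[T]$, radii $s_{j,1}<s_{j,2}$ with $\overline{D}_{V}(H_{j,0};s_{j,2})$ contained in $U_{j}\cap\varphi_{P}^{-1}(W_{0})$, and the bounded morphism $\psi_{s}$ together with the norm estimates~i) and~ii).

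It remains to translate this back. Fix $s_{j}\in(s_{j,1},s_{j,2})$ and put $U'_{j}:=\overline{D}_{V}(H_{j,0};s_{j})$; this is a spectrally convex neighbourhood of $x_{j}$ contained in $\varphi_{P}^{-1}(V)\cap U_{j}$ (one may shrink it to a rational domain if preferred), and $V\subset W_{0}\subset\E{1}{\cA}_{S}$ being spectrally convex gives $\cB_{\cC}(V)=\cB(V)$. Since $G=H_{j,0}\cdot(\text{unit})$ in $\cO_{\E{1}{\cC},x_{j}}$, the ring $\cB(U'_{j})$ of functions on the disc inside $\varphi_{P}^{-1}(W_{0})=Z_{G}$ is canonically $\cB_{\cC}(\overline{D}_{V}(H_{j,0};s_{j}))/(H_{j})$, so that under all these identifications $\psi_{s}$ becomes exactly $\chi_{V}$ (the structural map $\cC\to\cB_{\cC}(\overline{D}_{V}(\cdots))$ being $\varphi_{P}^{\sharp}$ followed by restriction). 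Assertion~i) for $\chi_{W}$ and $\chi_{V}$ is then immediate: $\|\varphi_{P}^{\sharp}(a)\|_{U_{j}}=\|a\|_{\varphi_{P}(U_{j})}\le\|a\|_{W}$ because $\varphi_{P}(U_{j})\subset W$, whence $\|\chi_{W}(A)\|_{U}\le\bigl(\max_{j}\sum_{i<d}\|T\|_{U_{j}}^{i}\bigr)\|A\|_{W,\infty}$, and likewise for $\chi_{V}$. Assertion~ii) is Corollary~\ref{cor:divisionGnormes}.ii): given $B\in\prod_{j}\cB(U_{j})$, restrict it to $\prod_{j}\cB(U'_{j})=\prod_{j}\cB_{\cC}(\overline{D}_{V}(H_{j,0};s_{j}))/(H_{j})$, lift it to $\prod_{j}\cB_{\cC}(\overline{D}_{V}(H_{j,0};s_{j,2}))$ with controlled norm (using the norm equivalences of that corollary and Proposition~\ref{prop:equivalencedivres}), apply $\psi_{s}^{-1}$ to obtain the unique $A$, and note $\|A\|_{V,\infty}\le K'_{s}\,\|B\|_{s_{2}}\le K'_{s}\,\|B\|_{U}$ since restriction is norm-decreasing. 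Finally, reading off the last assertion of Corollary~\ref{cor:divisionGnormes}, namely $\cO_{y}[T]/(P(T)-S)\simto\prod_{j}\cO_{z_{j}}/(H_{j})=\prod_{j}\cO_{x_{j}}$, and composing with the free-module isomorphism $\cO_{y}^{d}\simto\cO_{y}[T]/(P(T)-S)$, one gets that $\chi$ is an isomorphism.

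I expect the main obstacle to be purely bookkeeping rather than conceptual: keeping straight the several identifications (source versus target line; $\cB$ relative to $\cA$, to $\cC$, and to $Z_{G}$; the polynomials $H_{j}$ of \cite[corollaire~5.4]{EtudeLocale} versus their approximations $H_{j,0}$; interior versus boundary points of $W_{0}$), and verifying carefully that each henselian / décent / ultramétrique-typique hypothesis required by Corollary~\ref{cor:divisionGnormes} over the auxiliary base $\cC$ really does follow from the corresponding hypothesis on $b$ over $\cA$.
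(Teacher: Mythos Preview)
Your approach is exactly the paper's: factor $\varphi_{P}$ through the graph $\{P(S)-T=0\}$ in $\E{2}{\cA}$, pass to a spectrally convex neighbourhood $W_{0}$ of~$y$ so as to work over $\cC=\cB(W_{0})$, and invoke Corollary~\ref{cor:divisionGnormes} with $G=P(S)-T$. The paper is slightly crisper in the final translation --- it sets $U'_{j}:=p_{2}(\overline{D}_{V}(H_{j,0},s_{j}))$ and handles all the norm inequalities at once by the single remark that pull-back along $\sigma$ or $p_{2}$ is norm-decreasing --- whereas your ``lift with controlled norm'' step conflates a pull-back with a lift modulo $(H_{j})$; but this is exactly the bookkeeping you anticipated, not a gap.
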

\begin{proof}
On se ram\`ene au corollaire~\ref{cor:divisionGnormes} en \'ecrivant comme d'habitude le morphisme~$\varphi_{P}$ comme celui provenant de la composition
\[\cA[T] \too \cA[T,S]/(P(S) - T) \simtoo \cA[S].\]
Notons $p_{1}, p_{2}$ les projections de~$\E{2}{\cA}$ sur $\AunA$. Notons~$\sigma$ la section de~$p_{2}$ envoyant $x$ sur $(P(x),x)$. 

\[\begin{tikzcd}
& \E{2}{\cA}\ar[ld, "p_{1}"] \ar[rd, "p_2"'] &\\
\AunA &&\AunA  \ar[ll, "\varphi_{P}"'] \ar[ul, bend right, "\sigma"']
\end{tikzcd}\]

Notons $Z$ le ferm\'e de Zariski de~$\E{2}{\cA}$ d\'efini par l'\'equation $P(S)-T = 0$. La projection~$p_{2}$ induit un isomorphisme entre~$Z$ et~$\AunA$ dont l'inverse est la section~$\sigma$.

D'apr\`es la proposition~\ref{prop:typiqueAn}, si~$b$ est ultram\'etrique typique, il en va de m\^eme pour~$y$ et les~$\sigma(x_{j})$. Nous pouvons donc appliquer le corollaire~\ref{cor:divisionGnormes} en rempla\c cant $\cA$ par un voisinage compact spectralement convexe de~$y$ dans~$W$, $T$ par~$S$, $G$ par $P(S)-T$ et chaque $U_{j}$ par un voisinage de~$\sigma(x_{j})$ dans $p_{2}^{-1}(U_{j})$ sur lequel~$H_{j}$ est d\'efini. On obtient l'\'enonc\'e d\'esir\'e en posant $U'_{j} = p_{2}(\overline{D}_{V}(P_{j,0},s_{j}))$ et en utilisant le fait que tirer en arri\`ere une fonction (ici par~$\sigma$ ou~$p_{2}$) diminue sa norme.
\end{proof}

\section{Fermeture des id\'eaux du faisceau structural}\label{sec:fermeture}

Dans cette section, nous d\'emontrons un r\'esultat de fermeture pour les faisceaux d'id\'eaux du faisceau structural d'un espace analytique. 
Nous commencerons par \'etudier quelques cas particuliers~: celui de l'id\'eal nul et celui de l'id\'eal engendr\'e par une puissance d'une uniformisante (en un point situ\'e au-dessus d'un point dont l'anneau local est fortement de valuation discr\`ete).

Soit $(\Ac,\nm)$ un anneau de Banach. Posons $B := \Mc(\Ac)$. 

\begin{lemm}\label{fortement_de_valuation_discr\`ete}
\index{Anneau!fortement de valuation discrete@fortement de valuation discr\`ete}
\index{Ideal@Id\'eal!B-fortement de type fini@$\cB$-fortement de type fini}
Soit~$b \in B$. Supposons que~$\cO_{B,b}$ est un anneau fortement de valuation discr\`ete relativement \`a une base fine~$\cV$ de voisinages compacts spectralement convexes de~$b$ dans~$B$. Soit~$\varpi_{b}$ une uniformisante de~$\cO_{B,b}$. Alors, pour tout~$v\in\N$, l'id\'eal~$(\varpi_b^v)\subset\cO_{B,b}$ est~$\cB$-fortement de type fini relativement \`a~$\cV$.
\end{lemm}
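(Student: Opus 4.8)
Le plan est d'exhiber un singleton comme système de générateurs forts de $(\varpi_{b}^{v})$, en itérant $v$ fois la propriété de divisibilité forte par une uniformisante que fournit l'hypothèse sur $\cO_{B,b}$.

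\emph{Choix du générateur.} Soit $\varpi \in \m_{b}$ un élément engendrant $\cB$-fortement $\m_{b}$ relativement à $\cV$, dont l'existence est garantie par l'hypothèse. Tout $f \in \m_{b}$ est $\cB$-défini sur un voisinage compact convenable $U$ de $b$, et la condition~(ii) de la définition de système de générateurs forts fournit, pour $V \in \cV$ contenu dans $\mathring U$, une écriture $f = a\varpi$ dans $\cB(V)$, donc dans $\cO_{B,b}$ ; ainsi $\m_{b} = (\varpi)$, $\varpi$ est une uniformisante, et $(\varpi_{b}^{v}) = (\varpi^{v})$ pour tout $v \in \N$. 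Il suffit donc de montrer que $(\varpi^{v})$ admet $\{\varpi^{v}\}$ pour système de générateurs forts relativement à $\cV$. Le cas $v = 0$ est immédiat. Pour $v \ge 1$, la condition~(i) est claire, $\varpi$ et donc $\varpi^{v}$ étant $\cB$-définis sur tout $V \in \cV$.

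\emph{Vérification de la condition~(ii).} Fixons un voisinage compact $U$ de $b$, un élément $f$ de $(\varpi^{v})$ qui est $\cB$-défini sur $U$, et $V \in \cV$ contenu dans $\mathring U$. La divisibilité forte ne permettant de diviser qu'« en se rétrécissant », j'intercale d'abord une chaîne de voisinages : comme $\cV$ est une base fine, elle contient une base de voisinages de $V$, de sorte qu'en utilisant que $\mathring U$, puis récursivement les intérieurs des voisinages déjà construits, sont des voisinages ouverts de $V$, on peut choisir $W_{1}, \dotsc, W_{v-1} \in \cV$ tels que, en posant $W_{0} := U$ et $W_{v} := V$, on ait $W_{j} \subseteq \mathring{W_{j-1}}$ pour tout $j \in \cn{1}{v}$. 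On pose $g_{0} := f$ ; pour $j \in \cn{1}{v}$, on applique la divisibilité forte de $\m_{b} = (\varpi)$ à $g_{j-1}$, qui appartient à $\m_{b}$ et est $\cB$-défini sur le voisinage compact $W_{j-1}$, relativement à l'élément $W_{j}$ de $\cV$ contenu dans $\mathring{W_{j-1}}$ : on obtient un $g_{j} \in \cB(W_{j})$ avec $g_{j-1} = g_{j}\,\varpi$ dans $\cB(W_{j})$ et $\|g_{j}\|_{W_{j}} \le K_{W_{j},W_{j-1}}\,\|g_{j-1}\|_{W_{j-1}}$, où $K_{W_{j},W_{j-1}}$ est la constante associée par la définition (pour l'idéal $\m_{b}$) au voisinage compact $W_{j-1}$. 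On vérifie au passage, par récurrence, que l'image de $g_{j}$ dans $\cO_{B,b}$ appartient à $(\varpi^{v-j})$ : c'est vrai pour $j=0$, et de $g_{j-1} \in (\varpi^{v-j+1})$ et $g_{j-1} = g_{j}\varpi$ on tire $g_{j} \in (\varpi^{v-j})$ grâce à l'intégrité de $\cO_{B,b}$ ; en particulier $g_{j-1} \in \m_{b}$ pour tout $j \in \cn{1}{v}$, ce qui légitime l'étape précédente.

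\emph{Conclusion.} L'élément $a := g_{v}$ appartient à $\cB(W_{v}) = \cB(V)$. En restreignant à $\cB(V)$ les égalités $g_{j-1} = g_{j}\varpi$ (ce qui est licite car $W_{v} = V \subseteq W_{j}$ pour tout $j$), on obtient $f = a\,\varpi^{v}$ dans $\cB(V)$, et en composant les inégalités de normes, $\|a\|_{V} \le \bigl(\prod_{j=1}^{v} K_{W_{j},W_{j-1}}\bigr)\,\|f\|_{U}$. Comme $K_{V,U} := \prod_{j=1}^{v} K_{W_{j},W_{j-1}}$ ne dépend que de $U$ et $V$, la condition~(ii) est satisfaite. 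Le principal point délicat est le recours conjugué à la finesse de $\cV$ — pour disposer d'une chaîne $W_{0} \supseteq \dotsb \supseteq W_{v}$ aboutissant précisément au voisinage prescrit $V$ — et à l'intégrité de l'anneau de valuation discrète $\cO_{B,b}$ — pour suivre les divisibilités successives par $\varpi$.
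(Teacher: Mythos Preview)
Your proof is correct and follows essentially the same approach as the paper's: the paper argues by induction on~$v$, intercalating a single $V_{1}\in\cV$ between~$U$ and~$V$ at each step, whereas you unfold this induction into an explicit chain $W_{0}=U\supseteq W_{1}\supseteq\dotsb\supseteq W_{v}=V$ and iterate the division by~$\varpi$ directly. You are also slightly more careful than the paper in distinguishing the arbitrary uniformisante~$\varpi_{b}$ of the statement from the strong generator~$\varpi$ furnished by the definition, observing that they generate the same ideals.
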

\begin{proof}
Nous allons d\'emontrer le r\'esultat par r\'ecurrence sur~$v$. Le cas $v=0$ est imm\'ediat. 

Supposons avoir d\'emontr\'e le r\'esultat pour $v\ge 0$. Soient~$U$ un voisinage compact de~$b$ et~$V$ un \'el\'ement de~$\cV$ tel que $V \subset \mathring U$. Puisque~$\cV$ est une base fine de voisinages, il existe un \'el\'ement~$V_1$ de~$\cV$ tel que $V_{1} \subset \mathring U$ et $V \subset \mathring V_1$. Soit $f \in \cB(U)$ dont l'image dans~$\cO_{B,b}$ appartient \`a l'id\'eal~$(\varpi_b^{v+1})$.
 
Par d\'efinition d'anneau fortement de valuation discr\`ete, il existe $g \in \cB(V_1)$ tel que $f = g \varpi_b$ sur~$\cB(V_1)$ et $\|g\|_{V_1}\leq K_{V_1,U}\,\|f\|_U$, o\`u~$K_{V_1,U}$ est la constante intervenant dans la d\'efinition d'anneau fortement de valuation discr\`ete. De plus, puisque~$\cO_{B,b}$ est int\`egre, le germe au voisinage de~$b$ correspondant \`a~$g$ est un multiple de~$\varpi_b^{v}$. 

Par hypoth\`ese de r\'ecurrence, il existe $h \in \cB(V)$ telle que~$h\varpi_b^{v}=g$ dans~$\cB(V)$ et $\|h\|_{V}\leq K_{V,V_1,v} \,\|g\|_{V_1}$ o\`u~$K_{V,V_1,v}$ est la constante intervenant dans la d\'efinition d'id\'eal~$\cB$-fortement de type fini de~$(\varpi_b^{v})$. Ainsi, on a l'\'egalit\'e~$h\pi_{b}^{v + 1}=f$ dans~$\cB(V)$ et l'in\'egalit\'e~$\|h\|_{V}\leq K_{V,V_1,v}\, K_{V_1,U}\,\|f\|_U$. 
 \end{proof}

\begin{lemm}\label{division_anneau_valuation_discr\`ete}
Soit~$b$ un point d\'ecent de~$B$. 
Supposons que $\cO_{B,b}$ est un anneau fortement de valuation discr\`ete. Soit~$\varpi_b$ une uniformisante de $\cO_{B,b}$. Soient~$x$ un point rigide \'epais de~$\E{n}{\cA}$ au-dessus de~$b$ et $v\in \N$. Alors, pour tout voisinage compact~$V$ de~$x$ dans~$\E{n}{\cA}$, il existe un voisinage compact spectralement convexe~$V'$ de~$x$ dans~$\mathring{V}$ et une constante~$K_{V',V,v} \in \R_{>0}$ tels que, pour tout $f \in \cB(V)$ dont l'image dans $\cO_{\bA^n_\cA,x}$ appartient \`a $(\varpi_b^v)$, il existe $g \in \cB(V')$ tel que 
\begin{enumerate}[i)]
\item $f=\varpi_b^vg$ dans $\cB(V')$ ;
\item $\|g\|_{V'}\leq K_{V',V,v}\,\|f\|_V$.
\end{enumerate}
\end{lemm}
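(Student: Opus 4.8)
The plan is to argue by induction on~$n$, keeping~$\cA$ and~$b$ fixed; for a given~$n$ the statement is understood to range over all rigide épais points~$x$ of~$\E{n}{\cA}$ above~$b$ and all~$v\in\N$. For $n=0$ one has $\E{0}{\cA}=\cM(\cA)$ and $x=b$, so $\cO_{\E{0}{\cA},x}=\cO_{B,b}$ is fortement de valuation discrète relativement à some fine basis~$\cV$ of compact spectrally convex neighbourhoods of~$b$; by Lemma~\ref{fortement_de_valuation_discr\`ete} the ideal~$(\varpi_b^v)$ is $\cB$-fortement de type fini relativement à~$\cV$, with~$\varpi_b^v$ itself for a strong generator. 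Given a compact neighbourhood~$V$ of~$b$, I would choose $V'\in\cV$ with $V'\subset\mathring V$ and apply the definition of a strong system of generators with~$U:=V$: every $f\in\cB(V)$ whose germ at~$b$ lies in~$(\varpi_b^v)$ can be written $f=\varpi_b^v g$ in~$\cB(V')$ with~$\|g\|_{V'}\le K_{V',V}\,\|f\|_V$, as wanted.

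For the inductive step, set $Z:=\E{n-1}{\cA}$ and $z:=\pi_{n,n-1}(x)\in Z$; then~$z$ is again rigide épais above~$b$ (its coordinates are among those of~$x$) and is décent by Proposition~\ref{prop:typiqueAn}, and if~$\cH(z)$ is trivially valued — hence~$\cH(b)$ is — décence forces~$b$, and therefore~$z$, to be ultramétrique typique. Inside the fibre $X_z:=\pi_{n,n-1}^{-1}(z)\simeq\E{1}{\cH(z)}$ the point~$x$ is a rigide épais, hence rigid, point; let $\mu_{\kappa,x}\in\kappa(z)[T_n]$ be its polynôme minimal épais, of degree~$N$, and recall from Lemma~\ref{lem:polmin} that $\mu_{\kappa,x}=\mu_x^{\delta(x)}$, so that any monic lift~$G_0\in\cO_{Z,z}[T_n]$ of~$\mu_{\kappa,x}$ has image of valuation~$\delta(x)$ in the discrete valuation ring~$\cO_{X_z,x}$. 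For each~$k\ge 1$ I would divide by~$G_0^k$ (the Weierstraß division Theorem~\ref{weierstrass} applies, since~$z$ is décent and, in the inséparable trivially valued case, ultramétrique typique) to get $\cO_{\E{n}{\cA},x}=\{P\in\cO_{Z,z}[T_n]:\deg_{T_n}P<Nk\}\oplus G_0^k\,\cO_{\E{n}{\cA},x}$; comparing this with the euclidean division of~$\cO_{Z,z}[T_n]$ by~$G_0^k$ and using the uniqueness in Theorem~\ref{weierstrass} yields a ring isomorphism $\cO_{Z,z}[T_n]/(G_0^k)\simto\cO_{\E{n}{\cA},x}/(G_0^k)$, compatibly in~$k$. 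Hence the image in~$\cO_{\E{n}{\cA},x}$ of any germ admits a canonical $G_0$-adic expansion $\sum_{j\ge 0}\bar e_j\,G_0^j$ with~$\bar e_j\in\cO_{Z,z}[T_n]$ of degree~$<N$ in~$T_n$, and it lies in~$(\varpi_b^v)$ if and only if every coefficient of every~$\bar e_j$ lies in~$(\varpi_b^v)\subset\cO_{Z,z}$.

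Now fix a compact neighbourhood~$V$ of~$x$. Using option~ii) of Proposition~\ref{prop:basevoisdim1rigide} (and Corollary~\ref{cor:BVfineNPSTrigideepais}) I would produce a compact spectrally convex neighbourhood~$W'$ of~$z$ in~$Z$ on which~$G_0$ is $\cB$-defined, and radii $0<s''<s'<s_2$, such that~$\overline{D}_{W'}(G_0,s_2)\subset\mathring V$ and the disc algebra~$\cB(\overline{D}_{W'}(G_0,s_2))$ is, by the structure of polynomial-domain algebras (Theorem~\ref{thm:isolemniscate} or Corollary~\ref{coro:BNPST}, together with the restriction estimates of Proposition~\ref{prop:restrictionserie}), a suitably completed finite free module over the disc algebra of the base, so that every~$f\in\cB(V)$ restricted to~$\overline{D}_{W'}(G_0,s_2)$ expands as $f=\sum_{j\ge 0}e_j\,G_0^j$ with~$e_j$ a polynomial of degree~$<N$ in~$T_n$ over~$\cB(W')$, with $\sum_j\|e_j\|_{W',\infty}\,(s')^{j}\le C\,\|f\|_V$ for a constant~$C$ independent of~$f$, and with the germs of the~$e_j$ equal to the~$\bar e_j$ of the previous paragraph. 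If the germ of~$f$ lies in~$(\varpi_b^v)$, each coefficient of each~$e_j$ then has germ in~$(\varpi_b^v)\subset\cO_{Z,z}$; applying the induction hypothesis at~$z$ to the neighbourhood~$W'$ — which provides a compact spectrally convex neighbourhood~$W''$ of~$z$ in~$\mathring{W'}$ and a constant~$K_1$, both independent of~$f$ — I obtain $e_j=\varpi_b^v e'_j$ in~$\cB(W'')[T_n]$ with~$\|e'_j\|_{W'',\infty}\le K_1\,\|e_j\|_{W',\infty}$. Since on~$\overline{D}_{W''}(G_0,s'')$ one has~$\|G_0\|\le (s'')^{\delta(x)}\le s''$ and the coefficient norms of a polynomial of degree~$<N$ control its uniform norm up to a fixed constant, the series~$g:=\sum_{j\ge 0}e'_j\,G_0^j$ converges in~$\cB(\overline{D}_{W''}(G_0,s''))$, satisfies~$\varpi_b^v g=f$ there, and obeys~$\|g\|\le K_2\,\|f\|_V$ for a constant~$K_2$ depending only on the data above; taking~$V':=\overline{D}_{W''}(G_0,s'')$ finishes the induction.

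The hard part is the inductive step, precisely the assembling of this ``$G_0$-adic package'' with \emph{uniform} norm control: identifying~$\cO_{\E{n}{\cA},x}/(G_0^k)$ with~$\cO_{Z,z}[T_n]/(G_0^k)$ compatibly in~$k$ — where the uniqueness in Theorem~\ref{weierstrass} is exactly what makes the comparison map injective; extracting the $G_0$-adic expansion of an analytic function on a polynomial disc with an estimate $\sum_j\|e_j\|\,(s')^{j}\lesssim\|f\|$, which rests on realising~$\overline{D}_{W'}(G_0,s)$ as a finite free module over~$\cB(\overline{D}_{W'}(s))$ and on Proposition~\ref{prop:restrictionserie}; and checking that dividing, for each~$j$, the finitely many coefficients by~$\varpi_b^v$ using the \emph{single} constant~$K_1$ furnished by the induction hypothesis keeps the series convergent — which is why the radius is shrunk from~$s'$ to~$s''$. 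The remaining bookkeeping — the case distinctions (according as~$\cH(z)$ is of characteristic zero, non-trivially valued, or ultramétrique typique) needed to invoke Theorem~\ref{weierstrass} and to exhibit good neighbourhoods for which the polynomial-domain algebra has the required form — is routine but must be carried out.
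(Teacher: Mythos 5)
Your proposal is correct in substance and follows the same overall strategy as the paper: induction on~$n$, base case via le lemme~\ref{fortement_de_valuation_discr\`ete}, and in the inductive step an expansion of~$f$ with coefficients over the base $\E{n-1}{\cA}$ to which the induction hypothesis is applied coefficient by coefficient, the single constant furnished by that hypothesis being exactly what allows re-summation with norm control. The organizational difference is in the reduction: the paper first treats the point~$0_{x_{n-1}}$, where the expansion is an ordinary power series in~$T_n$ (proposition~\ref{prop:disqueglobal}, \ref{prop:restrictionserie}), and then transports the general rigide épais case to that one through the finite endomorphism~$\varphi_{\tilde P}$ and the norm-controlled isomorphism of the corollaire~\ref{cor:phinormes}; you instead stay at~$x$ and write a $G_0$-adic expansion on a polynomial disc, thereby re-deriving by hand, via~$(\cB N_{G_0(S)-T})$, \ref{coro:BNPST}, \ref{prop:equivalencedivres} and~\ref{prop:restrictionserie}, precisely the content that \ref{cor:phinormes} packages. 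What the paper's route buys is that all the delicate case distinctions (séparabilité, valuation triviale, bord analytique fini) are already absorbed in \ref{weierstrassam}--\ref{cor:phinormes}; your route avoids quoting those corollaries but must redo that bookkeeping. One point in your sketch needs repair rather than mere bookkeeping: your identification of the germs of the analytic coefficients with the formal $G_0$-adic digits rests on Weierstraß division by~$G_0^k$ at~$x$, hence on~$G_0$ vanishing at~$x$; but in the subcase where $\cH(x_{n-1})$ is non trivially valued and $\mu_{\kappa,x}$ is inseparable, le corollaire~\ref{cor:BVfineNPSTrigideepais} only provides the condition $(\cB N_{Q(S)-T})$ for a nearby separable~$Q$ which need not vanish at~$x$, so the expansion is in powers of~$Q$ and the formal-digit argument breaks down. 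The fix is the one the paper itself uses after reduction to the zero point: spread the germ identity $f=\varpi_b^v h$ to a small polynomial disc of the fine basis, apply the isomorphism $\cB(W)\la|T|\le s\ra[S]/(Q(S)-T)\simeq\cB(\overline{D}_W(Q,s))$ to both sides and use uniqueness of the representative of degree~$<N$ to conclude that each series coefficient of~$f$ equals $\varpi_b^v$ times the corresponding coefficient of~$h$ in~$\cB(W)$; with that replacement your argument goes through in all cases.
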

\begin{proof}
Nous allons d\'emontrer ce r\'esultat par r\'ecurrence sur~$n$. Le cas~$n=0$ est donn\'e par le lemme \ref{fortement_de_valuation_discr\`ete}.

Soit~$n\ge 1$ et supposons que le r\'esultat est satisfait pour~$\E{n-1}{\cA}$. Notons~$x_{n-1}$ la projection de~$x$ sur ses~$n-1$ premi\`eres coordonn\'ees. 

Commen\c cons par d\'emontrer l'\'enonc\'e dans le cas o\`u~$x$ est le point~0 au-dessus du point~$x_{n-1}$, que nous noterons~$0_{x_{n-1}}$. Soient~$V$ un voisinage compact de~$x$ dans~$\E{n}{\cA}$ et $f \in \cB(V)$ dont l'image dans $\cO_{0_{x_{n-1}}}$ appartienne \`a $(\varpi_b^v)$. Il existe $g\in\cO_{0_{x_{n-1}}}$ tel que~$f=\varpi_b^vg$ dans~$\cO_{0_{x_{n-1}}}$.

Le point~$0_{x_{n-1},n}$ admet une base de voisinages de la forme~$\overline{D}_W(t)$, o\`u~$W$ parcourt une base de voisinages de~$x_{n-1}$. Soit~$W$ un voisinage compact spectralement convexe de~$x_{n-1}$ et~$t \in \R_{>0}$ tels que $\overline{D}_W(t)$ soit contenu dans~$\mathring{V}$, les fonctions~$f$, $g$ et~$\varpi_b$ soient~$\cB$-d\'efinies sur~$\overline{D}_W(t)$ et $f=\varpi_b^vg$ dans~$\cB(\overline{D}_W(t))$. Soit $s \in \intoo{0,t}$. D'apr\`es la proposition~\ref{prop:restrictionserie}, on a 
\[\|f\|_{W,s}\leq \frac{t}{t-s}\, \|f\|_{\overline{D}_W(t)} \textrm{ et }\|g\|_{W,s}\leq \frac{t}{t-s}\,\|g\|_{\overline{D}_W(t)},\]
 o\`u les normes \`a gauche des in\'egalit\'es sont les normes de~$f$ et~$g$ vues comme des \'el\'ements $\sum_{i\ge0}\alpha_iT^i$ et $\sum_{i\ge0}\beta_iT^i$ de $\cB(W)\langle |T|\leq s\rangle$. 
 Nous avons  
\[\sum_{i\ge0}\alpha_iT^i=\varpi_b^v \sum_{i\ge0}\beta_iT^i \textrm{ dans } \cB(W)\langle |T|\leq s\rangle,\]
ce qui implique que, pour tout~$i \ge 0$,~$\alpha_i=\varpi_b^v\beta_i$.

Par hypoth\`ese de r\'ecurrence, il existe un voisinage spectralement convexe~$W'$ de~$x_{n-1}$ dans~$\mathring{W}$, une constante $K_{W',W,v} \in \R_{>0}$ et, pour tout $i\ge 0$, un \'el\'ement~$\beta_{i,v}$ de~$\cB(W')$ tels que 
 \begin{enumerate}[i)]
 \item pour tout $i\ge 0$, $\|\beta_{i,v}\|_{W'}\leq K_{W',W,v}\, \|\alpha_i\|_W$ ; 
 \item $\sum_{i\ge0}\alpha_iT^i=\varpi_b^{v}\big(\sum_{i\ge0}\beta_{i,v}\, T^i\big)$ dans~$\cB(W')\langle |T|\leq s\rangle$.
 \end{enumerate}
Le voisinage $V' := \overline{D}_{W'}(s)$ satisfait donc les propri\'et\'es de l'\'enonc\'e.

\smallbreak
 
Passons maintenant au cas o\`u $x$ est un point rigide \'epais quelconque au-dessus de~$x_{n-1}$. Soit~$P(S)\in\kappa(x_{n-1})[S]$ le polyn\^ome minimal du point~$x$ au-dessus de~$x_{n-1}$. On note~$\tilde{P}\in\cO_{x_{n-1}}[S]$ un rel\`evement unitaire de~$P$. 
Soit~$\tilde{U}$ un voisinage compact spectralement convexe de~$x_{n-1}$ tel que chacun des coefficients de~$\tilde{P}$ soit~$\cB$-d\'efini sur~$\tilde{U}$. En notant $\varphi_P \colon \E{1}{\cB(\tilde U)}\to \E{1}{\cB(\tilde{U})}$ le morphisme d\'efini par le polyn\^ome~$\tilde P\in\cB(\tilde{U})[T]$, nous avons $\varphi_{P}^{-1}(0_{x_{n-1}}) = \{x\}$. Le r\'esultat se d\'eduit alors du cas du point~$0_{x_{n-1}}$ et du corollaire~\ref{cor:phinormes}

On note encore~$\varphi_P:\E{1}{\cB(\tilde U)}\to \E{1}{\cB(\tilde{U})}$ le morphisme d'espaces analytiques associ\'e au polyn\^ome~$\tilde P\in\cB(\tilde{U})[T]$. Il envoie le point~$x$ sur le point~$0_{x_{n-1}}$ et nous avons $\varphi_{P}^{-1}(0_{x_{n-1}}) = \{x\}$.
\end{proof}

La proposition suivante montre que, sous certaines conditions, la propri\'et\'e que l'id\'eal nul de l'anneau local en un point soit~$\cB$-fortement de type fini se transf\`ere aux points de la fibre.

\begin{prop}\label{prolongement_purement_localement_transcendant}
Soit~$b \in B$. Soit~$\cV_{b}$ une base fine de voisinages compacts spectralement convexes de~$b$ dans~$B$. Si~$\cH(b)$ est de caract\'eristique non nulle et trivialement valu\'e, supposons que tout \'el\'ement de~$\cV_{b}$ poss\`ede un bord analytique fini. Supposons que~$\cO_{B,b}$ soit un corps fort ou un anneau fortement de valuation discr\`ete relativement \`a~$\cV_b$ et que l'id\'eal nul de~$\cO_{B,b}$ soit $\cB$-fortement de type fini relativement \`a~$\cV_b$.

Soit~$x$ un point de~$\AunA$ au-dessus de~$b$. Alors, il existe une base fine de voisinages compacts spectralement convexes de~$x$ dans~$\E{1}{\cA}$ relativement \`a laquelle l'id\'eal nul de~$\cO_{\AunA,x}$ est $\cB$-fortement de type fini.
\end{prop}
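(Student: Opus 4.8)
The plan is to reduce the statement to a form of strong analytic continuation and then propagate it from~$b$ to~$x$ through the explicit description of the rings of analytic functions on a neighbourhood basis of~$x$ consisting of polynomial domains. Since the only element of the zero ideal of~$\cO_{\AunA,x}$ is~$0$, requiring that~$(0)$ be $\cB$-fortement de type fini relativement \`a une base fine~$\cV_x$ de voisinages compacts spectralement convexes amounts to requiring that, for every voisinage compact~$U$ of~$x$ and every $V'\in\cV_x$ with $V'\subset\mathring U$, any $f\in\cB(U)$ whose image in~$\cO_{\AunA,x}$ vanishes already vanishes in~$\cB(V')$ (the~$a_i$ are then~$0$, and the constants~$K_{V',U}$ are irrelevant). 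I would first dispose of the easy sub-case: if~$x$ est purement localement transcendant au-dessus de~$b$ and~$\cO_{B,b}$ est un corps fort, then the théorème~\ref{rigide} shows that~$\cO_{\AunA,x}$ est un corps fort, so~$(0)=\m_x$ is $\cB$-fortement de type fini by the very definition of a corps fort. It then remains to treat either the case where~$\cO_{B,b}$ est fortement de valuation discr\`ete --- in which, in all the situations we consider, $b$ appartient à l'intérieur de~$B_{\um}$ --- or the case of a point~$x$ rigide épais au-dessus de~$b$ (\cf~la remarque~\ref{rem:rigeptrans}), and in both of these cases the argument below applies uniformly.

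Next I would build the neighbourhood basis. By le corollaire~\ref{cor:BVfineNPST} (and, for a point rigide épais, by le corollaire~\ref{cor:BVfineNPSTrigideepais}), $x$ admits a fine basis~$\cV_x$ of voisinages compacts spectralement convexes of the form $\overline{C}_V(P,r,s)$ with $V\in\cV_b$, $P\in\cB(V)[S]$ unitaire non constant and $r,s\in\R_{\ge 0}$ (one allows $r=0$, giving a disque~$\overline{D}_V(P,s)$), such that: $\overline{C}_b(P,r,s)$ est connexe; the basis is stable under a slight enlargement of~$V$ and~$s$, $P$ remaining $\cB$-défini on a voisinage of~$V$; and the couronne~$\overline{C}_V(r,s)$ (in a coordinate~$T$) satisfies the condition~$(\cB N_{P(S)-T})$. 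For such a $V'=\overline{C}_V(P,r,s)$, le corollaire~\ref{coro:BNPST} identifies~$\cB(V')$ with $\cB(\overline{C}_V(r,s))[S]/(P(S)-T)$, and la proposition~\ref{prop:disqueglobal} (when $r=0$) or le corollaire~\ref{cor:couronneglobale} (when $r>0$, whence $V\subset B_{\um}$) identifies $\cB(\overline{C}_V(r,s))$ with a ring of (Laurent) series $\sum_i\alpha_i\,T^i$ with $\alpha_i\in\cB(V)$. Consequently every $f\in\cB(V')$ has a unique expansion
\[f=\sum_{0\le j<\deg P}\Bigl(\sum_i\alpha_{i,j}\,T^i\Bigr)S^j,\qquad \alpha_{i,j}\in\cB(V),\]
and these expansions are compatible with restriction to smaller neighbourhoods of the same shape.

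Finally comes the propagation. Fix $f\in\cB(U)$ with zero image in~$\cO_{\AunA,x}$, a voisinage compact~$U$ of~$x$, and $V'=\overline{C}_V(P,r,s)\in\cV_x$ with $V'\subset\mathring U$. Using the fineness of~$\cV_b$ together with the stability clause above, I would choose~$W\in\cV_b$ with $V\subset\mathring W$, $P\in\cB(W)[S]$, and radii such that $\overline{C}_W(P,r',s')\in\cV_x$, $\overline{C}_W(P,r',s')\supset V'$ and $\overline{C}_W(P,r',s')\subset\mathring U$, and expand~$f$ over this larger domain, obtaining coefficients $\alpha_{i,j}\in\cB(W)$ that restrict over~$V$ to the coefficients of~$f$ on~$V'$. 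Since the germ of~$f$ at~$x$ is zero and~$\cV_x$ is a basis, $f$ vanishes in~$\cB(W_0')$ for some $W_0'=\overline{C}_{W_0}(P,r_0,s_0)\in\cV_x$ with $W_0\in\cV_b$, $b\in W_0\subset\mathring W$; by injectivity of the expansion on~$W_0'$ all coefficients of~$f$ over~$W_0$ vanish, hence $\alpha_{i,j}|_{W_0}=0$, i.e.\ each~$\alpha_{i,j}$ has zero germ at~$b$. As~$\alpha_{i,j}$ is $\cB$-défini on~$W\supsetneq V$ with $V\in\cV_b$ and $V\subset\mathring W$, the hypothesis that the zero ideal of~$\cO_{B,b}$ is $\cB$-fortement de type fini relativement à~$\cV_b$ gives $\alpha_{i,j}=0$ in~$\cB(V)$ for all~$i,j$, whence $f=0$ in~$\cB(V')=\cB(\overline{C}_V(r,s))[S]/(P(S)-T)$. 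This is exactly the strong analytic continuation property sought, so~$(0)\subset\cO_{\AunA,x}$ is $\cB$-fortement de type fini relativement à~$\cV_x$.

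The main obstacle is the neighbourhood bookkeeping in the last paragraph --- producing the enlarged domain~$\overline{C}_W(P,r',s')$ that is simultaneously in~$\cV_x$, contains~$V'$ and lies in~$\mathring U$ (this rests on the ``continuity'' of the families~$\overline{C}_V(P,r,s)$ combined with the stability clause of le corollaire~\ref{cor:BVfineNPST}), and verifying that the series expansions are compatible with restriction --- together with, more structurally, checking that when~$\cO_{B,b}$ est fortement de valuation discr\`ete the point~$b$ lies in the interior of~$B_{\um}$, so that the couronne description of le corollaire~\ref{cor:couronneglobale} is available.
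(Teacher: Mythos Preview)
Your argument is correct when $x$ est rigide épais (where the basis from le corollaire~\ref{cor:BVfineNPSTrigideepais} uses a single fixed polynomial~$P_0$; this part of your proof is essentially that of proposition~\ref{prolongement_rigide}) and when $\cO_{B,b}$ est un corps fort. But there is a genuine gap in the remaining case: $\cO_{B,b}$ fortement de valuation discrète and $x$ localement transcendant. The step that fails is the claim that $f$ vanishes on some $W_0'=\overline{C}_{W_0}(P,r_0,s_0)\in\cV_x$ with the \emph{same} polynomial~$P$ as the one used in the expansion. For a localement transcendant point --- take for instance a type~$2$ point such as the Gauss point, with $P=T$ --- the domains $\overline{C}_{W_0}(P,r_0,s_0)$ for fixed~$P$ do not form a neighbourhood basis of~$x$: as $r_0,s_0\to|P(x)|$ they shrink only to the level set $\{|P|=|P(x)|\}$ in the fibre, not to~$\{x\}$. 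The basis~$\cV_x$ of le corollaire~\ref{cor:BVfineNPST} uses \emph{various} polynomials, and its stability clause~(ii) only lets you \emph{enlarge} $(V,r,s)$ while keeping~$P$, not shrink. So the neighbourhood on which $f$ actually vanishes may be some $\overline{C}_{W_0}(P_0,r_0,s_0)$ with $P_0\ne P$, and there is then no direct way to compare the two expansions.

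What analytic continuation in the fibre does give you is $f=0$ on the connected set $\overline{C}_b(P,r',s')$, hence $a_{i,k}(b)=0$ for all~$i,k$. In the corps fort case this already means the germ of~$a_{i,k}$ at~$b$ vanishes; but in the DVR case it only yields $a_{i,k}\in(\varpi_b)$. The paper bridges this gap via \cite[corollaire~9.8]{EtudeLocale}: from $f=0$ on the fibre one obtains $f\in(\varpi_b)\,\cB(\overline{C}_{V'}(P,r',s'))$; iterating (starting each time from a slightly larger domain --- this is where the enlargement clause~(ii) is actually used) gives $f\in\bigcap_n(\varpi_b^n)$ in this Banach algebra; and the second part of that corollaire then forces $f=0$ on a neighbourhood of the whole fibre $\overline{C}_b(P,r',s')$, which by properness of the projection contains some $\overline{C}_{W_0}(P,r',s')$ with the same~$P$. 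Only after this detour can one read off that each $a_{i,k}$ has zero germ at~$b$ and conclude as you do.
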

\begin{proof}
Nous supposerons que~$\cO_{B,b}$ est un anneau fortement de valuation discr\`ete d'uniformisante~$\varpi_{b}$.
Le cas des corps forts se traite de mani\`ere similaire (et plus simple).

Soit~$\cV_{x}$ une base de voisinages de~$x$ satisfaisant les conclusions du corollaire~\ref{cor:BVfineNPST}. Soit~$U$ un voisinage compact de~$x$ et $f$ un \'el\'ement de~$\cB(U)$ dont l'image dans~$\cO_{x}$ est nulle. Soit~$\overline{C}_V(P,r,s)$ un \'el\'ement de~$\cV_x$ contenu dans~$\mathring U$. D'après le point~ii) du corollaire~\ref{cor:BVfineNPST}, il existe $r',s'\in \R_{\ge 0}$ avec $r'\prec r$ et $s'>s$ et un \'el\'ement~$V'$ de~$\cV_{b}$ contenant~$V$ tels que $\overline{C}_{V'}(P,r',s')$ appartienne \`a~$\cV_{x}$ et que l'on ait
\[\overline{C}_{V}(P,r,s)\subset\mathring{\overbrace{\overline{C}_{V'}(P,r',s')}}\subset \overline{C}_{V'}(P,r',s')\subset\mathring U.\]

On va montrer que~$f$ est nulle sur~$\overline{C}_V(P,r,s)$. D'après le point~i) du corollaire~\ref{cor:BVfineNPST}, $\overline{C}_{b}(P,r',s')$ est connexe, donc le principe du prolongement analytique assure que~$f$ y est nulle. D'apr\`es \cite[corollaire~9.8]{EtudeLocale}, $f$ est multiple de~$\pi_{b}$ sur~$\cB(\overline{C}_{V'}(P,r',s'))$. En r\'ep\'etant cet argument (en partant d'un $\overline{C}_{V''}(P,r'',s'')$ l\'eg\`erement plus grand), on montre que, pour tout~$n\in\N$, $f$~est multiple de~$\pi_{b}^n$ dans~$\cB(\overline{C}_{V'}(P,r',s'))$. La seconde partie de l'\'enonc\'e de \cite[corollaire~9.8]{EtudeLocale} assure alors que $f$ est nulle au voisinage de~$\overline{C}_{b}(P,r',s')$. 

D'après le point~iii) du corollaire~\ref{cor:BVfineNPST}, le morphisme naturel 
\[\cB(\overline{C}_{V'}(r',s'))[S]/(P(S)-T)\too\cB(\overline{C}_{V'}(P;r',s'))\]
est un isomorphisme. En utilisant la proposition~\ref{prop:restrictionserie}, on en d\'eduit un morphisme injectif naturel
\[\cB(\overline{C}_{V'}(P,r',s''))\too \cB(V')\langle r\leq|T|\leq s\rangle[S]/(P(S)-T).\]
L'image de~$f$ par ce morphisme poss\`ede un repr\'esentant de la forme $\sum_{i=0}^{d-1}\alpha_iS^i$, o\`u~$d$ est le degr\'e du polyn\^ome~$P$ et, pour tout $i \in \cn{0}{d-1}$, $\alpha_i = \sum_{k=-\infty}^{+\infty}a_{i,k}T^k$ est un \'el\'ement~$\cB(V')\langle r\leq|T|\leq s\rangle$. Puisque~$f$ est nulle au voisinage de~$\overline{C}_{b}(P,r',s')$, pour tous $i \in \cn{0}{d-1}$ et $k \in \Z$, $a_{i,k}$ est nul au voisinage de~$b$, et donc sur~$V'$ car l'id\'eal nul de~$\Oc_{B,b}$ est $\cB$-fortement de type fini relativement \`a~$\cV_b$. Le r\'esultat s'ensuit.
\end{proof}

On peut d\'emontrer un \'enonc\'e similaire sans hypoth\`eses sur~$\cO_{B,b}$ dans le cas des points rigides \'epais. 

\begin{prop}\label{prolongement_rigide}
Soit~$b \in B$. Soit~$\cV_{b}$ une base fine de voisinages compacts spectralement convexes de~$b$ dans~$B$. Supposons que l'id\'eal nul de~$\cO_{B,b}$ soit $\cB$-fortement de type fini relativement \`a~$\cV_b$. 

Soit~$x$ un point de~$\AunA$ rigide \'epais au-dessus de~$b$. Si~$\mu_{\kappa,x}$ est ins\'eparable et si $\cH(b)$ est trivialement valu\'e, supposons que tout \'el\'ement de~$\cV_{b}$ poss\`ede un bord analytique fini. Alors, il existe une base fine de voisinages compacts spectralement convexes de~$x$ dans~$\E{1}{\cA}$ relativement \`a laquelle l'id\'eal nul de~$\cO_{\AunA,x}$ est $\cB$-fortement de type fini.
\end{prop}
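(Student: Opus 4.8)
La stratégie est de se ramener au cas traité dans la proposition~\ref{prolongement_purement_localement_transcendant} (où l'on disposait d'hypothèses sur $\cO_{B,b}$) grâce au théorème de préparation de Weierstra\ss~\ref{thm:preparationW} et aux résultats normés de la section~\ref{sec:divWnormes}. L'idée est que, $x$ étant rigide épais au-dessus de $b$, l'anneau local $\cO_{\AunA,x}$ se décrit, via la division de Weierstra\ss, comme un module de rang fini sur $\cO_{B,b}$, de sorte qu'une fonction nulle en $x$ reste contrôlée par ses « coefficients » dans $\cO_{B,b}$, auxquels on peut appliquer le fait que l'idéal nul de $\cO_{B,b}$ est $\cB$-fortement de type fini relativement à $\cV_b$.

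\textbf{Étapes.} D'abord, je noterais $d$ le degré de $\mu_{\kappa,x} \in \kappa(b)[S]$ et je choisirais un relevé unitaire $P_{0}$ de $\mu_{\kappa,x}$ de degré $d$ à coefficients dans $\cO_{B,b}[S]$ ; quitte à rétrécir $\cV_b$, on peut supposer $P_0$ à coefficients $\cB$-définis sur tous les éléments de $\cV_b$. J'invoquerais ensuite le corollaire~\ref{cor:BVfineNPSTrigideepais} pour obtenir une base fine $\cV_x$ de voisinages compacts spectralement convexes de $x$ de la forme $\overline{D}_{V}(P_{0},s)$ avec $V \in \cV_b$, satisfaisant les propriétés i), ii) (connexité de $\overline{D}_b(P_0,s)$ et stabilité par léger agrandissement) et, selon le cas, iii) ou iii') (la condition $(\cB N_{P_0(S)-T})$, respectivement l'existence d'un polynôme $Q$ séparable voisin). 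Soit alors $U$ un voisinage compact de $x$, $f \in \cB(U)$ d'image nulle dans $\cO_{\AunA,x}$, et $\overline{D}_{V}(P_{0},s) \in \cV_x$ contenu dans $\mathring U$ ; la propriété ii) fournit $V' \in \cV_b$ contenant $V$ et $s' > s$ avec $\overline{D}_{V'}(P_0,s') \in \cV_x$ intercalé entre $\overline{D}_{V}(P_0,s)$ et $\mathring U$. Comme $\overline{D}_b(P_0,s')$ est connexe, le principe du prolongement analytique donne la nullité de $f$ au voisinage de la fibre $\overline{D}_b(P_0,s')$. Ensuite, la condition $(\cB N_{P_0(S)-T})$ (point iii) ou iii')) et le corollaire~\ref{coro:BNPST} fournissent un isomorphisme
\[
\cB(\overline{D}_{V'}(s'))[S]/(P_{0}(S)-T) \xrightarrow{\ \sim\ } \cB(\overline{D}_{V'}(P_{0};s')),
\]
puis, via la proposition~\ref{prop:restrictionserie}, un morphisme injectif naturel $\cB(\overline{D}_{V'}(P_0,s')) \to \cB(V')\la |T| \le s\ra[S]/(P_{0}(S)-T)$. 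L'image de $f$ y admet un représentant $\sum_{i=0}^{d-1} \alpha_i S^i$ avec $\alpha_i = \sum_{k\ge 0} a_{i,k}\,T^k \in \cB(V')\la |T|\le s\ra$. La nullité de $f$ au voisinage de $\overline{D}_b(P_0,s')$ entraîne, pour tous $i,k$, que $a_{i,k}$ est nul au voisinage de $b$, donc — l'idéal nul de $\cO_{B,b}$ étant $\cB$-fortement de type fini relativement à $\cV_b$ — que $a_{i,k}$ est nul sur $V'$, donc sur $V$. On conclut que $f$ est nulle sur $\overline{D}_{V}(P_0,s)$, ce qui est exactement l'énoncé cherché ; le système $(0)$ (ou plus exactement l'absence de générateurs) engendre $\cB$-fortement l'idéal nul, les constantes $K_{V,U}$ de la définition étant sans objet.

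\textbf{Le point délicat.} La principale difficulté est le traitement du cas où $\mu_{\kappa,x}$ (ou son éventuel modèle $P_0$) est inséparable, typiquement lorsque $\cH(b)$ est trivialement valué de caractéristique non nulle : c'est précisément pour contourner cet obstacle que l'hypothèse « tout élément de $\cV_b$ possède un bord analytique fini » est imposée, permettant d'invoquer la proposition~\ref{prop:CNBNPST}~i) — qui garantit la condition $(\cB N_{P_0(S)-T})$ dans la partie ultramétrique — via le point iii) du corollaire~\ref{cor:BVfineNPSTrigideepais}. Il faut aussi vérifier soigneusement que le passage du disque $\overline{D}_{V'}(P_0,s')$ à l'algèbre de série de Laurent restreinte $\cB(V')\la|T|\le s\ra$ est bien injectif sur les germes pertinents, et que le rétrécissement successif des voisinages (agrandir légèrement pour appliquer le prolongement analytique, puis redescendre) reste compatible avec le fait que $\cV_x$ est une base \emph{fine} — ce qui est assuré par la remarque topologique préliminaire de la preuve du corollaire~\ref{cor:BVfineNPST} et les propriétés ii) de $\cV_x$. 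Le reste du raisonnement est parallèle à celui de la proposition~\ref{prolongement_purement_localement_transcendant}, avec les couronnes remplacées par des disques et le corollaire~\ref{cor:phinormes} remplacé par l'isomorphisme de $(\cB N_{P_0(S)-T})$.
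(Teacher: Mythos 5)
L'architecture générale de votre preuve est celle du texte~: présentation de Weierstra\ss{} de $\cB(\overline{D}_V(P_0,s))$ via la condition $(\cB N_{P_0(S)-T})$ (corollaire~\ref{cor:BVfineNPSTrigideepais}, iii) ou iii')), décomposition de $f$ en coefficients $a_{i,k}\in\cB(V)$, puis application de l'hypothèse sur l'idéal nul de $\cO_{B,b}$. Vous identifiez aussi correctement le rôle de l'hypothèse de bord analytique fini dans le cas inséparable trivialement valué~; en revanche, vous ne traitez pas réellement le cas iii') ($\cH(b)$ non trivialement valué et $\mu_{\kappa,x}$ inséparable), qui demande le transfert en deux temps par le polynôme séparable approché $Q'$.

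Le point qui ne tient pas est l'étape centrale. Vous agrandissez le disque en $\overline{D}_{V'}(P_0,s')$ avec $V'\supset V$ et $s'>s$, puis invoquez la connexité de $\overline{D}_b(P_0,s')$ et le prolongement analytique pour obtenir \og la nullité de $f$ au voisinage de la fibre \fg. Or le prolongement analytique, appliqué dans la fibre $X_b\simeq\E{1}{\cH(b)}$, ne donne que la nullité de $f$ \emph{sur} $\overline{D}_b(P_0,s')$, c'est-à-dire $a_{i,k}(b)=0$ dans $\cH(b)$~; cela ne suffit pas à conclure que le \emph{germe} de $a_{i,k}$ en~$b$ est nul, qui est exactement ce qu'exige la définition d'idéal nul $\cB$-fortement de type fini. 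Passer de la nullité sur la fibre à la nullité au voisinage de la fibre est précisément l'étape qui, dans la proposition~\ref{prolongement_purement_localement_transcendant}, requiert que $\cO_{B,b}$ soit un corps fort ou un anneau fortement de valuation discrète (via \cite[corollaire~9.8]{EtudeLocale}) — hypothèse absente ici. Le remède consiste à raisonner dans l'autre sens~: le germe de $f$ en~$x$ étant nul dans $\cO_{\AunA,x}$ (et non seulement dans la fibre), $f$ s'annule sur un voisinage de~$x$ dans $\E{1}{\cA}$, lequel contient un élément $\overline{D}_{V''}(P_0,s'')$ de~$\cV_x$ avec $V''\subset V$ et $s''\le s$~; l'injectivité de $\cB(V'')\la|T|\le s''\ra[S]/(P_0(S)-T)\to\cB(\overline{D}_{V''}(P_0,s''))$ force alors la nullité des $a_{i,k}$ sur~$V''$, donc celle de leurs germes en~$b$, et l'hypothèse sur l'idéal nul de $\cO_{B,b}$ permet de conclure. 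C'est la marche que suit le texte.
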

\begin{proof}
Posons $d :=\deg(\mu_{\kappa,x})$. Choisissons un relev\'e unitaire~$P_{0}$ de~$\mu_{\kappa,x}$ dans~$\Oc_{B,b}[T]$. Nous allons distinguer deux cas.

\medbreak

$\bullet$ Supposons que $\mu_{\kappa,x}$ est s\'eparable ou que tout \'el\'ement de~$\cV_{b}$ poss\`ede un bord analytique fini. 

Soit~$\cV_{x}$ une base de voisinages de~$x$ satisfaisant les conclusions du corollaire~\ref{cor:BVfineNPSTrigideepais} iii). 
Soient~$U$ un voisinage compact de~$x$, $\overline{D}_V(P_{0},s)$ un \'el\'ement de~$\cV_x$ inclus dans~$\mathring U$ et $f$ un \'el\'ement de $\cB(U)$ dont l'image dans~$\Oc_x$ est nulle. 

Les propri\'et\'es de~$\cV_x$ assurent que le morphisme naturel 
\[\cB(V)\langle |T|\leq s\rangle[S]/(P_{0}(S)-T) \too \cB(\overline{D}_{V}(P_{0},s))\]
est un isomorphisme. L'ant\'ec\'edent de~$f$ poss\`ede un unique repr\'esentant de la forme $\sum_{i=0}^{d-1}\alpha_iS^i$, avec $\alpha_i = \sum_{k=0}^{+\infty}a_{i,k}T^k\in\cB(V)\langle|T|\leq s\rangle$.

Pour tous~$i$ et~$k$, le germe de~$a_{i,k}$ en~$b$ est nul. En effet, puisque le germe de~$f$ en~$x$ est nul, il existe~$V'\subset V$ et~$s'\leq s$ tels que $\overline{D}_{V'}(P_{0},s')$ appartienne \`a~$\cV_{x}$ et $f$~soit nulle sur $\overline{D}_{V'}(P_{0},s')$. Le r\'esultat d\'ecoule alors du fait que le morphisme naturel 
\[\cB(V')\langle |T|\leq s'\rangle[S]/(P_{0}(S)-T) \too \cB(\overline{D}_{V'}(P_{0},s'))\]
est un isomorphisme.

Puisque l'id\'eal nul de~$\cO_{B,b}$ est $\cB$-fortement de type fini relativement \`a~$\cV_b$, pour tous~$i$ et~$k$, $a_{i,k}$ est nul sur~$V$. Le r\'esultat s'ensuit.

\medbreak

$\bullet$ Supposons que~$\cH(b)$ est ultram\'etrique et non trivialement valu\'e.

Soit~$\cV_{x}$ une base de voisinages de~$x$ satisfaisant les conclusions du corollaire~\ref{cor:BVfineNPSTrigideepais} iii'). Soient~$U$ un voisinage compact de~$x$, $\overline{D}_V(P_{0},s)$ un \'el\'ement de~$\cV_x$ inclus dans~$\mathring U$ et $f$ un \'el\'ement de $\cB(U)$ dont l'image dans~$\Oc_x$ est nulle. Par hypoth\`ese, il existe $Q \in \cB(V)[S]$ unitaire de degr\'e~$d$ et un intervalle ouvert~$I$ de~$\R_{>0}$ contenant~$t$ tels que, pour tout voisinage compact spectralement convexe~$V'$ de~$b$ dans~$V$ et tout $s' \in I$, on ait $\overline{D}_{V'}(Q,s') = \overline{D}_{V'}(P_{0},s')$ et le disque  $\overline{D}_{V'}(s')$ (avec coordonn\'ee~$T$) satisfasse la propri\'et\'e~$(\cB N_{Q(S)-T})$. 

Puisque~$\cH(b)$ n'est pas trivialement valu\'e, on peut trouver un polyn\^ome \`a coefficients dans~$\kappa(b)$ unitaire de degr\'e~$d$ et s\'eparable qui soit arbitrairement proche de~$P_{0}(b)$. Par cons\'equent, il existe un voisinage compact spectralement convexe~$V'$ de~$b$ dans~$V$, $Q' \in \cB(V')[S]$ unitaire de degr\'e~$d$ tel que $Q'(b)$ soit s\'eparable et $s_{0}, s_{1} \in \R_{>0}$ v\'erifiant $s_{0} < s < s_{1}$ tels que, pour tout $s' \in \intff{s_{0},s_{1}}$, on ait $\overline{D}_{V'}(Q',s') = \overline{D}_{V'}(P_{0},s')$. On peut \'egalement supposer que $f$ est nulle sur $\overline{D}_{V'}(Q',s_{0})$, que $\overline{D}_{V'}(Q',s_{1})$ soit contenu dans~$U$ et que $s_{1}\in I$. D'apr\`es la proposition~\ref{prop:CNBNPST}, ii), on peut supposer que $\overline{D}_{V'}(s_{0})$ et $\overline{D}_{V'}(s_{1})$ satisfont la condition $(\cB N_{Q'(S)-T})$. 

En utilisant le m\^eme argument que dans le cas pr\'ec\'edent, on montre que l'image de~$f$ dans $\overline{D}_{V'}(Q',s_{1})$ est nulle. Par cons\'equent, l'image de~$f$ dans $\overline{D}_{V'}(Q',s) = \overline{D}_{V'}(P_{0},s) = \overline{D}_{V'}(Q,s)$ est nulle. En r\'eutilisant le m\^eme argument, on montre que l'image de~$f$ dans $\overline{D}_{V}(Q,s) = \overline{D}_{V}(P_{0},s)$ est nulle. 
\end{proof}

Nous allons maintenant synth\'etiser les propositions \ref{prolongement_purement_localement_transcendant} et \ref{prolongement_rigide} dans le r\'esultat suivant.

\begin{prop}\label{prolongement}\index{Ideal@Id\'eal!B-fortement de type fini@$\cB$-fortement de type fini}
Soit~$b$ un point d\'ecent de~$B$.
Supposons que l'id\'eal nul de~$\cO_{B,b}$ est $\cB$-fortement de type fini. Alors, pour tout point~$x$ de~$\E{n}{\cA}$ au-dessus de~$b$, l'id\'eal nul de~$\cO_{\E{n}{\cA},x}$ est $\cB$-fortement de type fini.
\end{prop}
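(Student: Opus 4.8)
The plan is to argue by induction on~$n$, reducing each dimension step to the relative affine line, where the two relevant cases have already been disposed of in Propositions~\ref{prolongement_purement_localement_transcendant} and~\ref{prolongement_rigide}. For $n=0$ there is nothing to prove. Assume the statement for $\E{n-1}{\cA}$ and let $x$ be a point of $\E{n}{\cA}$ above $b$, with $x':=\pi_{n,n-1}(x)$. By Proposition~\ref{prop:typiqueAn} the point $x'$ is again decent, and by the induction hypothesis the zero ideal of $\cO_{\E{n-1}{\cA},x'}$ is $\cB$-strongly of finite type relative to a fine base $\cV_{x'}$ of compact spectrally convex neighbourhoods. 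Fixing $V\in\cV_{x'}$, Remark~\ref{spectralement_conv} identifies $\pi_{n,n-1}^{-1}(V)$ with $\E{1}{\cB(V)}$ as ringed spaces over $\mathring V$; under the homeomorphism $\cM(\cB(V))\simeq V$ of Theorem~\ref{thm:rationnel} the point $x'$ corresponds to a point $\beta$ lying in $\mathring V$, and $\cO_{\cM(\cB(V)),\beta}$ is canonically isomorphic to $\cO_{\E{n-1}{\cA},x'}$, compatibly with the algebras $\cB(W)$ attached to compact spectrally convex $W\subset\mathring V$ and their uniform norms. Hence, over the base ring $\cB(V)$, the hypotheses of the statement in dimension~$1$ hold at $\beta$; applying the dimension-one case to the point of $\E{1}{\cB(V)}$ corresponding to $x$ and transporting the conclusion back along the same identification gives the assertion at $x$.

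So it remains to treat $n=1$: for $x$ a point of $\AunA$ above $b$, the zero ideal of $\cO_{\AunA,x}$ is $\cB$-strongly of finite type. Here one separates according to the type of $x$ over $b$. If $x$ is rigid thick over $b$, Proposition~\ref{prolongement_rigide} applies directly; its extra hypothesis, needed only when $\mu_{\kappa,x}$ is inseparable and $\cH(b)$ is trivially valued, is supplied by the decency of $b$, which in that case forces $b$ to be ultramétrique typique — after shrinking $\cV_b$ to a fine subbase whose members have a finite analytic boundary, which is harmless since the zero ideal being $\cB$-strongly of finite type relative to $\cV_b$ passes to any fine subbase. If instead $x$ is locally transcendent over $b$, one uses Proposition~\ref{prolongement_purement_localement_transcendant}, for which one needs in addition that $\cO_{B,b}$ is a strong field or a strong discrete valuation ring; when $\cA$ is basic, Theorem~\ref{rigide} guarantees that this property persists at the local rings met along the purely locally transcendent steps of the induction.

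To make the induction run cleanly one first invokes Remark~\ref{rem:rigeptrans}: after a permutation of the coordinates $T_1,\dots,T_n$ there is an integer $m_x$ so that $\pi_{n,m_x}(x)$ is purely locally transcendent over $b$ and $x$ is rigid thick over $\pi_{n,m_x}(x)$. With this choice the first $m_x$ coordinate steps are locally transcendent steps whose base point $\pi_{n,i-1}(x)$ is purely locally transcendent over $b$, so Theorem~\ref{rigide} keeps its local ring a strong field or a strong discrete valuation ring and Proposition~\ref{prolongement_purement_localement_transcendant} (over the appropriate $\cB(V)$) applies; the remaining steps are rigid thick and handled by Proposition~\ref{prolongement_rigide}. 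The main obstacle is the bookkeeping in the reduction to a relative line over $\cB(V)$: one must verify that the identification of Remark~\ref{spectralement_conv} transports faithfully all the data entering the definition of "$\cB$-strongly of finite type" — decency of the base point, the algebras $\cB(W)$ with their uniform norms, and the prolongation-with-norm-control property for the zero ideal itself — and that the strong field / strong discrete valuation structure is preserved along the purely locally transcendent steps; once these compatibilities are checked, the proof is just an assembly of Propositions~\ref{prolongement_purement_localement_transcendant} and~\ref{prolongement_rigide}.
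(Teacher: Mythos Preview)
Your proposal is correct and follows essentially the same route as the paper: split the coordinates via Remark~\ref{rem:rigeptrans} into a purely locally transcendent part and a rigid thick part, handle the first with Proposition~\ref{prolongement_purement_localement_transcendant} (using Theorem~\ref{rigide} to propagate the strong field/strong DVR property along the purely locally transcendent steps) and the second with Proposition~\ref{prolongement_rigide} (where decency of~$b$ supplies the typicality hypothesis in the inseparable trivially valued case). The only difference is presentational: you first set up a naive induction on~$n$ via the projection $\pi_{n,n-1}$ before observing it needs the coordinate permutation to control the structure of the intermediate local rings, whereas the paper goes directly to the two-phase induction; your explicit passage through $\E{1}{\cB(V)}$ via Remark~\ref{spectralement_conv} makes visible a reduction that the paper leaves implicit when it applies the one-variable propositions inductively.
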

\begin{proof}
Soit~$x$ un point de~$\E{n}{\cA}$ au-dessus de~$b$. D'après la remarque~\ref{rem:rigeptrans}, quitte \`a permuter les coordonn\'ees, on peut supposer qu'il existe~$k\in \cn{0}{n}$ tel que la projection~$x_{k}$ de~$x$ sur les~$k$ premi\`eres soit purement localement transcendante au-dessus de~$b$ et tel que~$x$ soit rigide \'epais au-dessus de~$x_k$. Rappelons que, d'apr\`es la proposition~\ref{prop:typiqueAn}, si~$b$ est ultram\'etrique typique, alors tous les points au-dessus de~$b$ le sont \'egalement.

On montre tout d'abord que le point~$x_{k}$ satisfait les conclusions de l'\'enonc\'e \`a l'aide d'une r\'ecurrence utilisant la proposition~\ref{prolongement_purement_localement_transcendant}. On passe de~$x_{k}$ \`a~$x$ par une r\'ecurrence utilisant la proposition~\ref{prolongement_rigide}.
\end{proof}

Rappelons un r\'esultat technique qui nous sera utile.

\begin{lemm}[\protect{\cite[lemme~9.14]{EtudeLocale}}]\label{restriction_fibre_avd}\index{Anneau!fortement de valuation discrete@fortement de valuation discr\`ete}\index{Point!rigide epais@rigide \'epais}
Soit~$b$ un point d\'ecent de~$B$ tel que~$\Oc_{B,b}$ soit un anneau fortement de valuation discr\`ete d'uniformisante~$\varpi_{b}$. Soit $x\in \E{n}{\Ac}$ un point rigide \'epais au-dessus de~$b$. Alors, pour tout \'el\'ement non nul~$f$ de~$\Oc_{\E{n}{\Ac},x}$, il existe un unique couple $(v,g) \in \N \times \Oc_{\E{n}{\Ac},x}$ v\'erifiant les propri\'et\'es suivantes~:
\begin{enumerate}[i)]
\item $f = \varpi_{b}^v g$ dans~$\Oc_{\E{n}{\Ac},x}$~;
\item la restriction de~$g$ \`a~$\E{n}{\cH(b)}$ n'est pas nulle.
\end{enumerate} 
\qed
\end{lemm}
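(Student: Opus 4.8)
Le plan est de traiter d'abord l'unicit\'e, puis l'existence par r\'ecurrence sur~$n$. Pour l'unicit\'e, je noterais que l'anneau local~$\Oc_{\E{n}{\Ac},x}$ est r\'egulier, donc int\`egre (th\'eor\`eme~\ref{rigide}), que l'image de l'uniformisante~$\varpi_{b}$ y est non nulle (car $\Oc_{B,b}$ s'injecte dans~$\Oc_{\E{n}{\Ac},x}$), donc un \'el\'ement non diviseur de z\'ero, et que, $\varpi_{b}$ appartenant \`a l'id\'eal maximal de~$\Oc_{B,b}$ dont l'image dans~$\cH(b)$ est nulle, la restriction de~$\varpi_{b}$ \`a~$\E{n}{\cH(b)}$ est nulle. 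D\`es lors, si $f=\varpi_{b}^{v}g=\varpi_{b}^{v'}g'$ avec $v\le v'$ et les restrictions de~$g$ et~$g'$ \`a~$\E{n}{\cH(b)}$ non nulles, restreindre la relation $g=\varpi_{b}^{v'-v}g'$ \`a la fibre force $v=v'$, et la simplification par~$\varpi_{b}^{v}$ donne alors $g=g'$.

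Pour l'existence, je raisonnerais par r\'ecurrence sur~$n$. Le cas $n=0$ est la structure d'anneau de valuation discr\`ete de~$\Oc_{B,b}$, la fibre \'etant un point. Pour $n\ge 1$, je poserais $z:=\pi_{n,n-1}(x)$~; d'apr\`es la proposition~\ref{rigide_\'epais} (valable car~$b$ est d\'ecent), $z$ est un point rigide \'epais de~$\E{n-1}{\Ac}$ au-dessus de~$b$, de sorte que l'hypoth\`ese de r\'ecurrence s'applique en~$z$, et~$x$ est rigide \'epais au-dessus de~$z$. Je traiterais d'abord le point~$0_{z}$ au-dessus de~$z$~: d'apr\`es la proposition~\ref{prop:disqueglobal}, l'anneau~$\Oc_{\E{n}{\Ac},0_{z}}$ s'identifie \`a un anneau de s\'eries $\sum_{k}a_{k}T_{n}^{k}$ \`a coefficients~$a_{k}$ dans~$\Oc_{\E{n-1}{\Ac},z}$, convergentes sur un disque relatif au-dessus d'un voisinage de~$z$, et la restriction d'une telle s\'erie \`a~$\E{n}{\cH(b)}$ s'obtient, fibre \`a fibre, en restreignant chaque~$a_{k}$ \`a~$\E{n-1}{\cH(b)}$ (unicit\'e du d\'eveloppement). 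Appliquant l'hypoth\`ese de r\'ecurrence \`a chaque~$a_{k}$ non nul, j'\'ecrirais $a_{k}=\varpi_{b}^{v_{k}}g_{k}$ avec~$g_{k}$ de restriction non nulle \`a la fibre, je poserais $v:=\min_{k}v_{k}$ (fini car $f\ne 0$, atteint car entier), et je diviserais~: $\varpi_{b}^{v}$ divise chaque~$a_{k}$ dans~$\Oc_{\E{n-1}{\Ac},z}$, et le lemme~\ref{division_anneau_valuation_discr\`ete} (appliqu\'e en~$z$, avec l'exposant~$v$) fournit des repr\'esentants des~$a_{k}/\varpi_{b}^{v}$ sur un m\^eme voisinage compact spectralement convexe~$V'$ de~$z$ avec une majoration \emph{uniforme} $\|a_{k}/\varpi_{b}^{v}\|_{V'}\le K\,\|a_{k}\|$. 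La s\'erie $g:=\sum_{k}(a_{k}/\varpi_{b}^{v})T_{n}^{k}$ converge donc encore au voisinage du disque, on a $f=\varpi_{b}^{v}g$, et pour un indice~$k$ r\'ealisant le minimum le coefficient~$g_{k}$ a une restriction non nulle \`a la fibre, donc~$g$ aussi. Le cas g\'en\'eral se ram\`enerait \`a celui-ci via le corollaire~\ref{cor:phinormes}~: pour~$P\in\Oc_{\E{n-1}{\Ac},z}[T_{n}]$ un rel\`evement unitaire du polyn\^ome minimal de~$T_{n}(x)$ sur~$\kappa(z)$, l'un des points de~$\varphi_{P}^{-1}(0_{z})$ est~$x$, et le corollaire --- dont l'hypoth\`ese sur les points trivialement valu\'es de caract\'eristique non nulle est satisfaite ici, $z$ \'etant d\'ecent d'apr\`es la proposition~\ref{prop:typiqueAn} --- fournit un isomorphisme $\Oc_{\E{n}{\Ac},0_{z}}^{d}\simto\prod_{j}\Oc_{x_{j}}$ compatible \`a la restriction \`a la fibre~$\E{n}{\cH(b)}$ (puisque~$\varphi_{P}$ est au-dessus de~$\cM(\Ac)$), ce qui permet de transporter la d\'ecomposition obtenue pour les composantes de~$f$. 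Je signalerais enfin le raccourci suivant~: lorsque~$x$ est purement localement transcendant au-dessus de~$b$, le th\'eor\`eme~\ref{rigide} affirme que~$\Oc_{\E{n}{\Ac},x}$ est lui-m\^eme fortement de valuation discr\`ete d'uniformisante~$\varpi_{b}$, ce qui donne imm\'ediatement l'\'enonc\'e.

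Le point d\'elicat est le contr\^ole de la convergence de la s\'erie~$g$ apr\`es division de chaque coefficient par~$\varpi_{b}^{v}$~: une famille non born\'ee de constantes de division ruinerait la convergence, et c'est pr\'ecis\'ement l'hypoth\`ese que~$\Oc_{B,b}$ est \emph{fortement} de valuation discr\`ete, via l'uniformit\'e des constantes du lemme~\ref{division_anneau_valuation_discr\`ete}, qui la garantit.
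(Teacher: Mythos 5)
Le texte ne red\'emontre pas ce lemme~: il est simplement rappel\'e de \cite[lemme~9.14]{EtudeLocale}, sans d\'emonstration. Votre argument est correct et suit pr\'ecis\'ement la m\'ethode employ\'ee ici pour les \'enonc\'es voisins~: unicit\'e par int\'egrit\'e de $\cO_{\E{n}{\cA},x}$ (th\'eor\`eme~\ref{rigide}) et nullit\'e de $\varpi_{b}$ sur la fibre, puis r\'ecurrence sur~$n$ avec r\'eduction au point~$0_{z}$ via un relev\'e du polyn\^ome minimal et le corollaire~\ref{cor:phinormes} (comme dans les preuves des lemmes~\ref{division_anneau_valuation_discr\`ete} et~\ref{restriction_fibre}), le point crucial \'etant, comme vous le soulignez, l'uniformit\'e en~$k$ des constantes de division garantie par l'hypoth\`ese \emph{fortement} de valuation discr\`ete. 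Une seule retouche technique~: la proposition~\ref{prop:disqueglobal} fournit des coefficients~$a_{k}$ dans~$\cO(W)$, alors que le lemme~\ref{division_anneau_valuation_discr\`ete} s'applique \`a des \'el\'ements de~$\cB(V)$~; pour l'invoquer avec une constante ind\'ependante de~$k$, il vaut mieux choisir un repr\'esentant de~$f$ qui soit $\cB$-d\'efini sur un disque $\overline{D}_{W}(t)$ (ces disques forment une base de voisinages compacts de~$0_{z}$ et $\cO_{0_{z}} = \colim \cB(\overline{D}_{W}(t))$), puis extraire les coefficients dans~$\cB(W)$ avec normes sommables par la proposition~\ref{prop:restrictionserie} --- c'est exactement le m\'ecanisme de la preuve du lemme~\ref{division_anneau_valuation_discr\`ete} lui-m\^eme, et votre argument se d\'eroule ensuite mot pour mot. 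Le reste (compatibilit\'e de l'isomorphisme avec la restriction \`a la fibre, hypoth\`ese de d\'ecence de~$z$ via la proposition~\ref{prop:typiqueAn}, raccourci du cas purement localement transcendant) est correct.
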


Adaptons maintenant ce r\'esultat (et sa preuve) au cas d'un corps fort.

\begin{lemm}\label{restriction_fibre}\index{Corps!fort}
Soit~$b$ un point d\'ecent de~$B$ tel que~$\Oc_{B,b}$ soit un corps fort. Soit $x\in \E{n}{\Ac}$ au-dessus de~$b$. Alors, pour tout \'el\'ement non nul~$f$ de~$\Oc_{\E{n}{\Ac},x}$, la restriction de~$f$  \`a~$\E{n}{\cH(b)}$ n'est pas nulle.
\end{lemm}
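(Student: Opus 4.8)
Le plan est de d\'emontrer l'\'enonc\'e par r\'ecurrence sur~$n$, en suivant de pr\`es la preuve de la proposition~\ref{prolongement_rigide}~; la seule diff\'erence avec le cas discr\`etement valu\'e de la proposition~\ref{restriction_fibre_avd} est qu'un corps fort ne poss\`ede pas d'uniformisante, de sorte qu'aucune puissance de~$\varpi_b$ n'a \`a \^etre mise en facteur. Pour $n=0$, l'espace $\E{0}{\cA}$ est~$B$, le point~$x$ est~$b$, la fibre $\E{0}{\cH(b)}$ est l'espace \`a un point $\cM(\cH(b))$ d'anneau structural~$\cH(b)$, et le morphisme de restriction est l'inclusion $\cO_{B,b}=\kappa(b)\hookrightarrow\cH(b)$, qui est injective~; cela r\`egle le cas de base.

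Supposons $n\ge 1$ et l'\'enonc\'e acquis pour $\E{n-1}{\cA}$. Posons $x_{n-1}:=\pi_{n,n-1}(x)$~; d'apr\`es la proposition~\ref{rigide_\'epais}, c'est un point rigide \'epais au-dessus de~$b$. Puisque~$\cO_{B,b}$ est un corps fort, son id\'eal nul est $\cB$-fortement de type fini, donc, d'apr\`es la proposition~\ref{prolongement}, l'id\'eal nul de~$\cO_{\E{n-1}{\cA},x_{n-1}}$ est $\cB$-fortement de type fini relativement \`a une base fine~$\cV_{x_{n-1}}$ de voisinages compacts spectralement convexes de~$x_{n-1}$~; si $\cH(b)$ est de caract\'eristique non nulle et trivialement valu\'e, on peut de plus, gr\^ace \`a la proposition~\ref{prop:typiqueAn}, supposer que tout \'el\'ement de~$\cV_{x_{n-1}}$ poss\`ede un bord analytique fini. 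Notons $\mu_{\kappa,x}\in\kappa(x_{n-1})[T_n]$ le polyn\^ome minimal \'epais de~$x$ au-dessus de~$x_{n-1}$, $d$ son degr\'e et $P_0\in\cO_{\E{n-1}{\cA},x_{n-1}}[T_n]$ un relev\'e unitaire de degr\'e~$d$. D'apr\`es le corollaire~\ref{cor:BVfineNPSTrigideepais} (dont les hypoth\`eses sont ici satisfaites, via l'un de ses points~iii) ou~iii')), le point~$x$ poss\`ede une base fine~$\cV_x$ de voisinages compacts spectralement convexes de la forme $\overline{D}_V(P_0,s)$, avec $V\in\cV_{x_{n-1}}$ et $s\in\R_{>0}$, et pour chacun d'eux le disque correspondant satisfait la condition $(\cB N_{P_0(S)-T_n})$, si bien que le morphisme naturel $\cB(V)\la|T_n|\le s\ra[S]/(P_0(S)-T_n)\to\cB(\overline{D}_V(P_0,s))$ est un isomorphisme, d'apr\`es le corollaire~\ref{coro:BNPST}.

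Soit alors~$f\in\cO_{\E{n}{\cA},x}$ dont la restriction \`a $\E{n}{\cH(b)}$ est nulle~; montrons que~$f$ est nul comme germe en~$x$. Choisissons un voisinage compact~$U$ de~$x$ tel que $f\in\cB(U)$, puis, \`a l'aide de la finesse de~$\cV_x$, un \'el\'ement $\overline{D}_{V_1}(P_0,s_1)$ de~$\cV_x$ contenu dans~$\mathring U$ et un \'el\'ement plus petit $\overline{D}_V(P_0,s)$ de~$\cV_x$ contenu dans l'int\'erieur de $\overline{D}_{V_1}(P_0,s_1)$, avec $V,V_1\in\cV_{x_{n-1}}$ et $V\subset\mathring V_1$. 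Via l'isomorphisme pr\'ec\'edent, $f$ admet sur $\overline{D}_{V_1}(P_0,s_1)$ un unique repr\'esentant $\sum_{i=0}^{d-1}\alpha_iS^i$ avec $\alpha_i=\sum_{k\ge0}a_{i,k}T_n^k\in\cB(V_1)\la|T_n|\le s_1\ra$. En restreignant \`a la fibre au-dessus de~$b$, ce m\^eme isomorphisme se sp\'ecialise en l'isomorphisme analogue \`a coefficients dans $\cO_{\E{n-1}{\cH(b)},x_{n-1}}$, de sorte que la nullit\'e de~$f$ sur cette fibre entra\^ine, par s\'eparation des coefficients en~$S$ et en~$T_n$ puis — en appliquant la proposition~\ref{prolongement} au-dessus de~$\cH(b)$ — par la forme uniforme du prolongement analytique sur $\E{n-1}{\cH(b)}$ en~$x_{n-1}$, que le germe de chaque~$a_{i,k}$ en~$x_{n-1}$ dans $\E{n-1}{\cH(b)}$ est nul. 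L'hypoth\`ese de r\'ecurrence appliqu\'ee \`a $\E{n-1}{\cA}$ fournit alors que le germe de chaque~$a_{i,k}$ en~$x_{n-1}$ dans $\E{n-1}{\cA}$ est nul. Enfin, puisque chaque~$a_{i,k}$ appartient \`a~$\cB(V_1)$ et a un germe nul en~$x_{n-1}$, et que $V\in\cV_{x_{n-1}}$ est contenu dans~$\mathring V_1$, la propri\'et\'e pour l'id\'eal nul de~$\cO_{\E{n-1}{\cA},x_{n-1}}$ d'\^etre $\cB$-fortement de type fini assure que tout~$a_{i,k}$ s'annule sur~$V$ --- et, c'est le point d\'ecisif, sur un m\^eme~$V$~; donc $\sum_{i}\big(\sum_k a_{i,k}T_n^k\big)S^i=0$ dans $\cB(V)\la|T_n|\le s\ra[S]/(P_0(S)-T_n)$, c'est-\`a-dire~$f$ s'annule sur $\overline{D}_V(P_0,s)$, qui est un voisinage de~$x$. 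Ceci montre que~$f$ a un germe nul en~$x$ et ach\`eve la r\'ecurrence.

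L'obstacle principal est le passage de \og le germe de chaque~$a_{i,k}$ en~$x_{n-1}$ est nul \fg{} \`a \og $f$ a un germe nul en~$x$ \fg{}~: une famille infinie de germes, nuls chacun sur un voisinage a priori diff\'erent, n'a aucune raison de s'annuler sur un voisinage commun, et c'est pr\'ecis\'ement l'hypoth\`ese $\cB$-fortement de type fini (issue de l'hypoth\`ese de corps fort via la proposition~\ref{prolongement}) qui fournit le voisinage uniforme requis. Un point secondaire, plus routinier, consiste \`a v\'erifier que l'isomorphisme du corollaire~\ref{coro:BNPST} est compatible \`a la restriction \`a la fibre au-dessus de~$b$, ainsi qu'\`a contr\^oler les hypoth\`eses du corollaire~\ref{cor:BVfineNPSTrigideepais} dans le cas de caract\'eristique non nulle et trivialement valu\'e.
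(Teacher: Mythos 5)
Votre d\'emonstration est correcte, mais elle emprunte une route r\'eellement diff\'erente de celle du texte. Vous \'etablissez la contrapos\'ee (restriction nulle $\Rightarrow$ germe nul), ce qui vous oblige \`a passer de \og chacun des coefficients $a_{i,k}$, en nombre infini, a un germe nul en~$x_{n-1}$ \fg{} \`a \og $f$ a un germe nul en~$x$ \fg{}, c'est-\`a-dire \`a produire un voisinage commun d'annulation~; comme vous le soulignez, c'est la propri\'et\'e pour l'id\'eal nul d'\^etre $\cB$-fortement de type fini (proposition~\ref{prolongement}, disponible parce que $\cO_{B,b}$ est un corps fort) qui fournit ce voisinage uniforme, au prix de tout l'appareil du corollaire~\ref{cor:BVfineNPSTrigideepais} et des conditions~$(\cB N_{P_{0}(S)-T})$~: vous rejouez essentiellement la preuve de la proposition~\ref{prolongement_rigide}. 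Le texte proc\`ede au contraire par implication directe~: le th\'eor\`eme~\ref{thm:isolemniscate} et le lemme~\ref{lem:polmin} identifient $\cO_{\E{n}{\cA},x}$ \`a $\cO_{\E{n}{\cA},0_{x_{n-1}}}[S]/(P(S)-T_{n})$, puis la proposition~\ref{prop:disqueglobal} d\'eveloppe tout germe en~$0_{x_{n-1}}$ en s\'erie enti\`ere \`a coefficients dans~$\cO_{\E{n-1}{\cA},x_{n-1}}$~; un germe non nul a un coefficient non nul, dont la restriction \`a la fibre est non nulle par hypoth\`ese de r\'ecurrence, et la s\'erie restreinte est donc non nulle. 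Cette version \'evite toute estimation de normes et toute discussion de bases fines de voisinages~; la v\^otre, plus lourde, a l'avantage de montrer que~$f$ s'annule sur un voisinage compact explicite et de s'aligner sur le sch\'ema commun aux propositions~\ref{prolongement_purement_localement_transcendant} et~\ref{prolongement_rigide} et au lemme~\ref{restriction_fibre_avd}. Deux points \`a resserrer, mineurs mais r\'eels~: la s\'eparation des coefficients dans la fibre rel\`eve de l'unicit\'e du d\'eveloppement (proposition~\ref{prop:disqueglobal} sur~$\cH(b)$ et division de Weierstra\ss{} dans la fibre), et non de la proposition~\ref{prolongement}~; et dans le cas~iii') du corollaire~\ref{cor:BVfineNPSTrigideepais}, la condition~$(\cB N)$ porte sur un polyn\^ome~$Q$ voisin de~$P_{0}$ et non sur~$P_{0}$ lui-m\^eme, ce qui exige le petit d\'etour d\'ej\`a pr\'esent dans la preuve de la proposition~\ref{prolongement_rigide}.
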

\begin{proof}
Notons $T_{1},\dotsc,T_{n}$ les coordonn\'ees sur~$\E{n}{\cA}$. D'après la remarque~\ref{rem:rigeptrans}, quitte \`a les permuter, on peut supposer que la projection~$x_{k}$ de~$x$ sur les~$k$ premi\`eres coordonn\'ees est purement localement transcendante au-dessus de~$b$ et que~$x$ est rigide \'epais au-dessus de~$x_k$. D\'emontrons l'\'enonc\'e par r\'ecurence sur l'entier~$n-k$.

Si~$n-k=0$, alors~$x$ est purement localement transcendant sur~$b$ et, d'apr\`es le th\'eor\`eme~\ref{rigide}, $\cO_{\bA^n_\cA,x}$ est un corps. Par cons\'equent, si~$f$ est non nul dans dans~$\Oc_{\E{n}{\Ac},x}$, alors $f(x)$ est non nul. Le r\'esultat s'ensuit.

Soit $l\in \N$ et supposons avoir d\'emontr\'e le r\'esultat pour $n-k = l$. Supposons que $n-k = l+1$. Notons~$x_{n-1}$  la projection de~$x$ sur les~$n-1$ premi\`eres coordonn\'ees. Par hypoth\`ese, $x$ est rigide \'epais au-dessus de~$x_{n-1}$. Consid\'erons son polyn\^ome minimal $\mu_{\kappa,x} \in \kappa(x_{n-1})[T_{n}]$ sur~$\kappa(x_{n-1})$ et choisissons $P \in \cO_{\E{n-1}{\cA},x_{n-1}}[T_n]$ un relev\'e unitaire de ce polyn\^ome. Notons $0_{x_{n-1}}$ le point de~$\E{n}{\cA}$ au-dessus de~$x_{n-1}$ d\'efini par $T_{n} = 0$. D'apr\`es le th\'eor\`eme~\ref{thm:isolemniscate} et le lemme~\ref{lem:polmin}, le morphisme naturel
\[ \cO_{\E{n}{\cA},0_{x_{n-1}}}[S]/(P(S)-T_n)\too \cO_{\E{n}{\cA},x}, \]
est un isomorphisme. Il suffit donc de d\'emontrer le r\'esultat pour~$0_{x_{n-1}}$. Or, d'apr\`es la proposition~\ref{prop:disqueglobal}, tout \'el\'ement de~$\cO_{\E{n}{\cA},0_{x_{n-1}}}$ poss\`ede un d\'eveloppement en s\'erie enti\`ere \`a coefficients dans~$\cO_{\E{n-1}{\cA},x_{n-1}}$. On conclut alors par l'hypoth\`ese de r\'ecurrence.
\end{proof}

\`A l'aide du th\'eor\`eme de division de Weierstra\ss{} avec contr\^ole sur les normes que nous avons d\'emontr\'e dans la partie pr\'ec\'edente (\cf~th\'eor\`eme~\ref{weierstrassam}), nous allons pouvoir montrer l'\'enonc\'e de fermeture des id\'eaux souhait\'e. Pour ce faire, nous allons nous placer dans un cadre l\'eg\`erement plus restrictif que celui des anneaux de base de la d\'efinition~\ref{def:basique}. 

\begin{defi}\index{Anneau!de base!geometrique@g\'eom\'etrique|textbf}
Soit~$(\cA,\nm)$ un anneau de Banach. On dit que~$\cA$ est un \emph{anneau de base g\'eom\'etrique}
si les conditions suivantes sont satisfaites~:
\begin{enumerate}[i)]
\item $\cA$ est un anneau de base tel que, pour tout $b\in B$, les \'el\'ements de~$\cV_{b}$ puissent \^etre choisis d'int\'erieur connexe~;
\item $\cM(\cA)$ satisfait le principe du prolongement analytique.
\end{enumerate}
\end{defi}

\begin{exem}\index{Corps!valu\'e}\index{Anneau!des entiers relatifs $\Z$}\index{Anneau!des entiers d'un corps de nombres}\index{Corps!hybride}\index{Anneau!de valuation discr\`ete}\index{Anneau!de Dedekind trivialement valu\'e}
Nos exemples usuels \ref{ex:corpsvalue} \`a~\ref{ex:Dedekind}~: les corps valu\'es, l'anneau~$\Z$ et les anneaux d'entiers de corps de nombres, les corps hybrides, les anneaux de valuation discr\`ete et les anneaux de Dedekind trivialement valu\'es sont tous des anneaux de base g\'eom\'etriques. 
\end{exem}

\begin{lemm}\label{lem:basegeometriqueideaux}\index{Ideal@Id\'eal!B-fortement de type fini@$\cB$-fortement de type fini}
Soit~$\cA$ un anneau de base g\'eom\'etrique. Alors, pour tout $b\in B$, tout id\'eal de~$\cO_{B,b}$ est~$\cB$-fortement de type fini relativement \`a~$\cV_{b}$.
\end{lemm}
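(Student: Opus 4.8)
The statement to prove is Lemma~\ref{lem:basegeometriqueideaux}: if $\cA$ is a geometric base ring, then for every $b\in B$ every ideal of $\cO_{B,b}$ is $\cB$-strongly finitely generated relative to $\cV_{b}$. Since $\cA$ is a base ring, by Definition~\ref{def:basique} the local ring $\cO_{B,b}$ is either a strong field or a strongly discrete valuation ring relative to $\cV_{b}$, so we must treat these two cases.

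The plan is to reduce both cases to the single nontrivial fact that the \emph{zero ideal} of $\cO_{B,b}$ is $\cB$-strongly finitely generated relative to $\cV_{b}$, and to that end invoke the analytic continuation principle. First I would handle the case where $\cO_{B,b}$ is a strong field. Here there are only two ideals, namely $(0)$ and $\cO_{B,b}$ itself; the latter is $\cB$-strongly generated by the element $1$ (take $p=1$, $f_{1}=1$, and $K_{V,U}=1$ works since any $f$ that is $\cB$-defined on $U$ satisfies $f = f\cdot 1$ with $\|f\|_{V}\le\|f\|_{U}$ when $V\subset\mathring U$). For the zero ideal, I would use that $\cO_{B,b}$ is a strong field together with the fact that all elements of $\cV_{b}$ have connected interior (condition~i) in the definition of geometric base ring) and that $\cM(\cA)$ satisfies the analytic continuation principle (condition~ii)). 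Concretely: if $f\in\cB(U)$ has zero image in $\cO_{B,b}$, then $f$ vanishes in a neighborhood of $b$; by the analytic continuation principle applied on the connected interior of an element $V\in\cV_{b}$ with $V\subset\mathring U$, $f$ vanishes on $V$; hence $f=0$ in $\cB(V)$, so $(0)$ is $\cB$-strongly generated by the empty family (or by $f_{1}=0$) with any choice of constants. This uses the discussion following Definition~\ref{def:basique} relating ``strong field'' to analytic continuation.

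Next I would treat the case where $\cO_{B,b}$ is a strongly discrete valuation ring with uniformizer $\varpi_{b}$. Every nonzero ideal of a discrete valuation ring is of the form $(\varpi_{b}^{v})$ for some $v\in\N$, and $(0)$ and $\cO_{B,b}$ are the remaining ones. The ideals $(\varpi_{b}^{v})$ are $\cB$-strongly finitely generated relative to $\cV_{b}$ by Lemma~\ref{fortement_de_valuation_discr\`ete}, whose proof is an induction on $v$ using repeatedly the division property built into the definition of ``strongly discrete valuation ring'' together with the fineness of $\cV_{b}$. The unit ideal is handled as above. The zero ideal is again the delicate point: here I would argue that if $f\in\cB(U)$ has zero germ at $b$, then, as in the strong-field case, $f$ vanishes on the connected interior of a suitable $V\in\cV_{b}$, $f$ being zero in a neighborhood of $b$ by hypothesis and hence everywhere on $\mathring V$ by analytic continuation; since $\cB(V)$ is separated (it is a Banach algebra, being the completion of $\cK(V)$ for the uniform norm, which is a norm because $V$ is spectrally convex and $\cM(\cA)$ satisfies analytic continuation so that the uniform semi-norm detects nonvanishing), $f=0$ in $\cB(V)$. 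Thus $(0)$ is $\cB$-strongly generated with trivial constants.

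\textbf{Main obstacle.} The only real subtlety is the treatment of the zero ideal, i.e.\ verifying that a section $\cB$-defined on $U$ whose germ at $b$ vanishes actually vanishes \emph{identically} on members $V\in\cV_{b}$ contained in $\mathring U$. This is exactly where the two defining conditions of a \emph{geometric} base ring (connected interiors of the $\cV_{b}$ and validity of the analytic continuation principle on $\cM(\cA)$) are indispensable, and it is the point the proof must spell out carefully; everything else is bookkeeping with the constants $K_{V,U}$ and an appeal to Lemma~\ref{fortement_de_valuation_discr\`ete}. A secondary point requiring care is making sure, in the DVR case, that the induction of Lemma~\ref{fortement_de_valuation_discr\`ete} genuinely applies, i.e.\ that $\cV_{b}$ is a fine base of compact spectrally convex neighborhoods as required there — but this is part of the hypothesis that $\cA$ is a base ring. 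I would conclude by noting that since every ideal of $\cO_{B,b}$ is one of $(0)$, $\cO_{B,b}$, or $(\varpi_{b}^{v})$ (or just $(0)$ and $\cO_{B,b}$ in the field case), and all have been shown to be $\cB$-strongly finitely generated relative to $\cV_{b}$, the lemma follows.
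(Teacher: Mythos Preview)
Your proposal is correct and follows essentially the same approach as the paper's proof: split on whether $\cO_{B,b}$ is a strong field or a strong DVR, handle the unit ideal trivially, invoke Lemma~\ref{fortement_de_valuation_discr\`ete} for the powers $(\varpi_b^v)$, and treat the zero ideal via analytic continuation on the connected interiors of the $V\in\cV_b$. One small simplification worth noting: in the strong-field case you do more work than necessary, since the fact that $(0)$ is $\cB$-strongly generated is \emph{built into} the definition of ``corps fort'' (strongly regular of dimension~$0$ means precisely that the empty family $\cB$-strongly generates $\m_b=(0)$), so no appeal to connected interiors or analytic continuation is needed there---the paper simply says ``par hypoth\`ese''. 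Those extra hypotheses of a geometric base ring are genuinely required only in the DVR case, exactly as you identified.
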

\begin{proof}
Soit~$b\in B$. Supposons que $\cO_{B,b}$ est un corps fort. Les id\'eaux de~$\cO_{B,b}$ sont $\cO_{B,b}$ et~$(0)$. Par hypoth\`ese, il existe une base de voisinages compacts et spectralement convexes~$\cV_{b}$ de~$b$ tel que 0~engendre $\cB$-fortement l'id\'eal~$(0)$ relativement \`a~$\cV_{b}$. Il est imm\'ediat que 1~engendre $\cB$-fortement l'id\'eal~$\cO_{B,b}$ relativement \`a~$\cV_{b}$.

Supposons que $\cO_{B,b}$ est un anneau fortement de valuation discr\`ete. Fixons-en une uniformisante~$\varpi$. Les id\'eaux de~$\cO_{B,b}$ sont $\cO_{B,b}$, $(0)$ et les $(\varpi^n)$ pour $n\in \N^\ast$. Par hypoth\`ese, il existe une base de voisinages compacts, spectralement convexes et d'int\'erieur connexe~$\cV_{b}$ de~$b$ tel que $\varpi$~engendre $\cB$-fortement l'id\'eal~$(\varpi)$ relativement \`a~$\cV_{b}$. D'apr\`es le lemme~\ref{fortement_de_valuation_discr\`ete}, pour tout $n\ge 2$, $\varpi^n$~engendre $\cB$-fortement l'id\'eal~$(\varpi^n)$ relativement \`a~$\cV_{b}$. Comme pr\'ec\'edemment, il est \'evident que 1~engendre $\cB$-fortement l'id\'eal~$\cO_{B,b}$ relativement \`a~$\cV_{b}$. Il reste \`a traiter le cas de l'id\'eal nul. Puisque les \'el\'ements de~$\cV_{b}$ sont d'int\'erieur connexe, le fait que 0~engendre $\cB$-fortement l'id\'eal~$(0)$ relativement \`a~$\cV_{b}$ d\'ecoule du principe du prolongement analytique.
\end{proof}

Le r\'esultat qui suit g\'en\'eralise \cite[th\'eor\`eme~6.6.19]{A1Z} en dimension quelconque. La preuve que nous proposons est tr\`es largement inspir\'ee de celle de \cite[theorem II.D.2]{Gu-Ro}.

Pour $x \in \E{n}{\cA}$, $g=(g_1,\ldots,g_p)\in\cO_{x}^p$ et~$V$ voisinage compact de~$x$ dans~$\E{n}{\cA}$ sur lequel~$g$ est d\'efinie, on pose 
\[\|g\|_V := \max_{1\le i\le p} (\|g_{i}\|_{V}).\]

\begin{theo}\label{fermeture}\index{Ideal@Id\'eal!ferme@ferm\'e}\index{Sous-module|see{Id\'eal}}
Supposons que~$\cA$ est un anneau de base g\'eom\'etrique. Soient~$x$ un point de~$\E{n}{\cA}$, $p \ge 1$ un entier et $M$~un sous-module de~$\cO^p_{x}$ engendr\'e par des \'el\'ements $f_1,\dotsc,f_l$.

Alors, pour tout voisinage compact~$V$ de~$x$ dans~$\E{n}{\Ac}$, il existe un voisinage compact spectralement convexe~$V'$ de~$x$ dans~$V$ sur lequel les~$f_{i}$ sont $\cB$-d\'efinis et une constante $K_{V',V} \in \R_{>0}$ v\'erifiant la propri\'et\'e suivante~: pour tout \'el\'ement~$f$ de~$\cB(V)^p$ dont l'image dans~$\Oc_{x}^p$ appartient \`a~$M$, il existe~$a_1,\dotsc,a_{l}\in\cB(V')$ tels que
\begin{enumerate}[i)]
\item $\disp f = \sum_{i=1}^l a_i f_i$ dans~$\cB(V')^p$ ;
\item  pour tout $i\in \cn{1}{l}$, on a $\|a_i\|_{V'}\leq K_{V',V}\,\|f\|_V$.
\end{enumerate}
\end{theo}
\begin{proof}
Remarquons tout d'abord que que si l'\'enonc\'e vaut pour l'entier $p=1$ et pour un entier $p = p_{0}\ge 1$ donn\'e, alors il vaut encore pour $p=p_{0}+1$. En effet, consid\'erons un sous-module~$M$ de~$\cO^{p_{0}+1}_{x}$  engendr\'e par des \'el\'ements $f_1,\ldots,f_l\in \cO^{p_{0}+1}_{x}$. Notons $f'_1,\ldots,f'_l \in \Oc_{x}^{p_{0}}$ les projections respectives de $f_1,\ldots,f_l$ sur les $p_{0}$~premiers facteurs et~$M'$ le sous-module de~$\cO^{p_{0}}_{x}$ engendr\'e par $f'_{1},\dotsc,f'_{l}$. Le noyau $M''$ du morphisme $M \to M'$ s'identifie alors \`a un sous-module de~$\Oc_{x}$. Le r\'esultat pour le module~$M$ d\'ecoule ais\'ement de celui pour~$M'$ et~$M''$.

Une r\'ecurrence imm\'ediate montre qu'il suffit de traiter le cas o\`u $p=1$, cadre que l'on adopte d\'esormais.

Notons~$b$ la projection de~$x$ sur $B = \cM(\cA)$. D'après la remarque~\ref{rem:rigeptrans}, quitte \`a permuter les coordonn\'ees sur~$\E{n}{\cA}$, on peut supposer qu'il existe~$k\in\cn{0}{n}$ tel que la projection~$x_k$ de~$x$ sur les~$k$ premi\`eres coordonn\'ees soit purement localement transcendante au-dessus de~$b$ et que~$x$ soit rigide \'epais au-dessus de~$x_k$. D\'emontrons le r\'esultat par r\'ecurence sur l'entier~$n-k$. 
 
\medbreak
 
\noindent$\bullet$ \textit{Initialisation de la r\'ecurrence~: $n-k=0$. }

Dans ce cas, d'apr\`es le th\'eor\`eme~\ref{rigide}, l'anneau local~$\cO_{x}$ est un corps fort ou un anneau fortement de valuation discr\`ete. Soit~$V$ un voisinage compact de~$x$ dans~$\E{n}{\cA}$.

Supposons tout d'abord que~$\cO_{X,x}$ est un corps fort. Si tous les~$f_{i}$ sont nuls, le r\'esultat d\'ecoule de la d\'efinition de corps fort. Sinon, nous pouvons supposer que $f_{1} \ne 0$. Il existe alors un voisinage spectralement convexe~$V'$ de~$x$ dans~$V$ sur lequel $f_{1}$ et~$f_{1}^{-1}$ sont $\cB$-d\'efinies. Pour tout $f\in \cB(V)$, nous avons alors $f = (f f_{1}^{-1}) \, f_{1}$ dans $\cB(V')$ et $\|f f_{1}^{-1}\|_{V'} \le \|f_{1}^{-1}\|_{V'} \, \|f\|_{V}$, ce qui d\'emontre l'\'enonc\'e.

Supposons maintenant que~$\cO_{x}$ est un anneau fortement de valuation discr\`ete. Si $M=0$, le r\'esultat d\'ecoule de la proposition~ \ref{prolongement}. Si $M = \Oc_{x}$, l'un des~$f_{i}$ est inversible et le r\'esultat s'obtient comme dans le cas o\`u $\cO_{x}$~est un corps fort. Sinon, on peut se ramener au cas o\`u $l=1$ et $f_{1}$ est une puissance non triviale d'une uniformisante de~$\cO_{x}$. Le r\'esultat se d\'eduit alors du lemme~\ref{fortement_de_valuation_discr\`ete}.  
 
\medbreak

\noindent$\bullet$ \textit{\'Etape de r\'ecurrence.}

Supposons que le r\'esultat soit satisfait lorsque $n-k=N \ge 0$ (pour $p=1$ et donc pour tout $p\in \N^\ast$ d'apr\`es la remarque pr\'eliminaire) et d\'emontrons-le dans le cas o\`u $n-k=N+1$. 

\medbreak

\noindent$-$ \textit{Cas o\`u la restriction de~$f_1$ \`a~$\E{n-k}{\sH(x_k)}$ n'est pas nulle}

Soient~$x_{n-1}$ la projection de~$x$ sur les~$n-1$ premi\`eres coordonn\'ees. Puisque $n-k\ge 1$, le point~$x$ est rigide \'epais au-dessus de~$x_{n-1}$. Notons~$P\in\kappa(x_{n-1})[T]$ son polyn\^ome minimal. Soit~$W$ un voisinage spectralement convexe de~$x_{n-1}$ dans~$\E{n-1}{\cA}$ sur lequel les coefficients de~$P$ sont $\cB$-d\'efinis. Puisque l'\'enonc\'e est local, on peut se placer dans~$\E{1}{\cB(W)}$ pour le d\'emontrer.

Soit~$\varphi_P \colon \E{1}{\cB(W)}  \to \E{1}{\cB(W)}$ le morphisme induit par~$P$. En notant $0_{x_{n-1}}$ le point de~$\E{1}{\cB(W)}$ appartenant \`a la fibre au-dessus de~$x_{n-1}$ et \'egal \`a~0 dans cette fibre, on a $\varphi_{P}^{-1}(0_{x_{n-1}}) = \{x\}$. Le corollaire~\ref{cor:phinormes} permet de ramener la d\'emonstration du r\'esultat pour le point~$x$ (pour $p=1$) \`a celui pour le point~$0_{x_{n-1}}$ (pour $p$ \'egal au degr\'e de~$P$, et donc pour $p=1$ d'apr\`es la remarque pr\'eliminaire). On peut donc supposer que $x= 0_{x_{n-1}}$.

Par hypoth\`ese, la restriction de~$f_1$ \`a~$\E{n-k}{\sH(x_k)}$ n'est pas nulle. Le lemme~\ref{changement_variable} assure que, quitte \`a effectuer un changement de variables, on peut supposer que la restriction de~$f_1$ \`a~$\E{n-1}{\sH(x_k)}$ n'est pas nulle. Notons~$T$ la derni\`ere coordonn\'ee sur~$\E{n}{\cA}$. Alors $\Oc_{\E{1}{\sH(x_{n-1})},x}$ est un anneau de valuation discr\`ete d'uniformisante~$T$. Notons~$v$ la valuation de~$f_{1}$.

D'apr\`es le th\'eor\`eme de division de Weierstra\ss{} \ref{weierstrassam} (la version sans normes \cite[th\'eor\`eme 8.3]{EtudeLocale} suffirait), tout \'el\'ement~$f$ de~$\Oc_{x}$ peut s'\'ecrire de fa\c con unique sous la forme $f = f' f_{1} + \tilde f$ avec $f' \in \Oc_{x}$ et $\tilde f \in \Oc_{x_{n-1}}[T]$ de degr\'e inf\'erieur \`a~$v-1$.

Consid\'erons le $\Oc_{x_{n-1}}$-module $\tilde M :=  \cO_{x_{n-1}}[T]_{\leq v-1}\cap M$, autrement dit le module des restes de~$M$ dans la division de Weierstra\ss{} par~$f_{1}$. 
Par noetherianit\'e, $\tilde M$ est de type fini. Fixons des g\'en\'erateurs $\ti g_{1},\dotsc, \ti g_{m}$. Pour tout $i\in\cn{0}{m}$, il existe $a_{i,1},\dotsc,a_{i,l} \in \Oc_{x}$ tels que $\ti g_{i} = \sum_{j=1}^l a_{i,j}f_j$ dans~$\Oc_{x}$.

Soit~$V$ un voisinage compact de~$x$ dans~$\E{n}{\cA}$. D'apr\`es le th\'eor\`eme de division de Weierstra\ss{} \ref{weierstrassam}, il existe un voisinage compact~$W$ de~$x_{n-1}$ dans~$\E{n-1}{\cA}$, un polyn\^ome $P \in \cB(W)[T]$ et des nombres r\'eel $s,K \in \R_{>0}$ tels que $\overline{D}_{W}(P;s)$ soit contenu dans~$V$ et, pour tout \'el\'ement~$f$ de~$\cB(V)$ les propri\'et\'es suivantes soient satisfaites~: $f'$ est $\cB$-d\'efini sur $\overline{D}_{W}(P;s)$, les coefficients de $\ti f$ sont $\cB$-d\'efinis sur~$W$ et on a
\[ \|f'\|_{\overline{D}_{W}(P;s)} \le K\, \|f\|_{V} \textrm{ et } \|\ti f\|_{\overline{D}_{W}(P;s)} \le K\, \|f\|_{V}.\]
Puisque le point~$x$ est le point~$0$ au-dessus de~$x_{n-1}$, il poss\`ede une base de voisinages de la forme $\overline{D}_{W}(T;s)$ et on peut donc supposer que~$P = T$. On peut en outre supposer que tous les~$a_{i,j}$ sont $\Bc$-d\'efinis sur $\overline{D}_{W}(T;s)$.

Pour toute partie compacte~$U$ de~$\E{n}{\cA}$ et tout nombre r\'eel $t\in \R_{>0}$, consid\'erons l'isomorphisme
\[\fonction{\psi}{\cB(U)^{v}}{\cB(U)[T]_{\le v-1}}{(h_{0},\dotsc,h_{v-1})}{\disp\sum_{i=0}^{v-1} h_{i} T^i}.\]
Pour tous $h\in \cB(U)^{v}$ et $k\in \cB(U)[T]_{\le v-1}$, on a
\[\|\psi(h)\|_{\overline{D}_{U}(T;t)} \le v \max(1,t^{v-1}) \|h\|_{U}\] 
et
\[\|\psi^{-1}(k)\|_{U} \le \max(1,t^{1-v}) \, \|k\|_{\overline{D}_{U}(T;t)}.\] 
La premi\`ere in\'egalit\'e est imm\'ediate. Pour la seconde, on se ram\`ene imm\'ediatement \`a une situation un corps valu\'e complet, auquel cas elle est cons\'equence de la description explicite des normes des disques dans le cas ultram\'etrique ou de la formule de Cauchy dans le cas archim\'edien (\cf~\cite[lemme~2.1.2]{A1Z}).

L'hypoth\`ese de r\'ecurrence appliqu\'ee au module~$\ti M$, que l'on identifie \`a un sous-module de~$\Oc_{x_{n-1}}^{v}$ \textit{via} la restriction de~$\psi$, et \`a~$W$ fournit un voisinage~$W'$ de~$x_{n-1}$ dans~$\E{n-1}{\cA}$ et une constante~$K'$.

On peut maintenant d\'emontrer le r\'esultat. Soit $f \in \cB(V)$ dont l'image dans~$\Oc_{x}$ appartient \`a~$M$. On peut l'\'ecrire sous la forme $f = f' f_{1} + \sum_{j=1}^m b_{j} \ti g_{j}$ avec $f' \in \cB(\overline{D}_{W}(T;s))$, $b_{1},\dotsc,b_{m} \in \cB(W')$, 
\[\|f'\|_{\overline{D}_{W}(T;s)} \le K \|f\|_{V}\]
et, pour tout $j\in \cn{1}{m}$, 
\[\|b_{j}\|_{W'} \le K \max(1,t^{1-v}) \|f\|_{V}.\]
On conclut alors en r\'e\'ecrivant~$f$ sous la forme
\[f = \big( f' + \sum_{j=1}^m b_{j} a_{i,j}\big) f_{1} + \sum_{i=2}^l \big(\sum_{j=1}^m b_{j} a_{i,j}\big) f_{i}\]
et en choisissant $V' := \overline{D}_{W'}(T;s)$.

\medbreak

\noindent~$-$ \textit{Cas g\'en\'eral}

Si~$M=0$, le r\'esultat d\'ecoule de la proposition~\ref{prolongement}. On peut donc supposer que $M\ne 0$, et m\^eme que tous les~$f_{i}$ sont non nuls.

Si~$\cO_{b}$ est un corps fort, alors le lemme~\ref{restriction_fibre} assure que la restriction de~$f_1$ \`a~$\E{n-k}{\sH(x_k)}$ n'est pas nulle et on se ram\`ene au cas pr\'ec\'edent. On peut donc supposer que~$\Oc_{b}$ est un anneau fortement de valuation discr\`ete. Soit~$\varpi_{b}$ une uniformisante de~$\cO_{b}$.

D'apr\`es le lemme~\ref{restriction_fibre_avd}, pour tout $i\in \cn{1}{l}$, il existe $v_{i}\in \N$ et $g_{i} \in \Oc_{x}$ dont la restriction \`a~$\E{n-k}{\sH(x_k)}$ n'est pas nulle tels que $f_{i} = \varpi_{b}^{v_{i}} g_{i}$ dans~$\Oc_{x}$. Quitte \`a permuter les~$f_{i}$, on peut supposer que~$v_{1}$ est le minimum des~$v_{i}$. Le lemme~\ref{division_anneau_valuation_discr\`ete} permet de d\'eduire le r\'esultat pour l'id\'eal~$M$ de~$\Oc_{x}$ engendr\'e par les~$f_{i}$ du r\'esultat pour l'id\'eal engendr\'e par les $\pi_{b}^{-v_{1}}f_{i}$. Pour ce dernier, la restriction de la fonction $\pi_{b}^{-v_{1}}f_{1} = g_{1}$ \`a~$\E{n-k}{\sH(x_k)}$ n'est pas nulle et on se ram\`ene de nouveau au cas pr\'ec\'edent.
\end{proof}

\begin{coro}\label{limite}\index{Ideal@Id\'eal!ferme@ferm\'e}
Supposons que~$\cA$ est un anneau de base g\'eom\'etrique. Soient~$x$ un point de~$\E{n}{\cA}$, $p\ge 1$ un entier et $M$~un sous-module de~$\cO^p_{x}$. Soient $V$ un voisinage compact de~$x$ dans~$\E{n}{\cA}$ et $(g_n)_{n\in\N}$ une suite d'\'el\'ements de~$\cB(V)^p$ qui converge vers un \'el\'ement~$g$ de~$\cB(V)^p$. Si, pour tout $n\in \N$, l'image de~$g_{n}$ dans~$\cO_{x}^p$ appartient \`a~$M$, alors l'image de~$g$ dans~$\cO_{x}^p$ appartient \`a~$M$. 
\end{coro}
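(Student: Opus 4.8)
L'idée est de déduire ce corollaire du théorème~\ref{fermeture}. Notons $b$ la projection de~$x$ sur~$B = \cM(\cA)$. Le module~$M$ est de type fini sur~$\cO_{x}$ d'après le théorème~\ref{rigide} (qui assure en particulier la noethérianité de~$\cO_{x}$, puisque~$\cA$, étant un anneau de base géométrique, est basique). Choisissons donc des générateurs $f_{1},\dotsc,f_{l}$ de~$M$. Quitte à rétrécir~$V$, on peut supposer que les~$f_{i}$ sont $\cB$-définis sur~$V$, c'est-à-dire proviennent d'éléments (encore notés~$f_{i}$) de~$\cB(V)^p$.

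Appliquons alors le théorème~\ref{fermeture} au point~$x$, à l'entier~$p$, au sous-module~$M$ engendré par $f_{1},\dotsc,f_{l}$ et au voisinage compact~$V$. Il fournit un voisinage compact spectralement convexe~$V'$ de~$x$ dans~$V$ et une constante $K := K_{V',V} \in \R_{>0}$ tels que, pour tout élément~$h$ de~$\cB(V)^p$ dont l'image dans~$\cO_{x}^p$ appartient à~$M$, il existe $a_{1},\dotsc,a_{l} \in \cB(V')$ vérifiant $h = \sum_{i=1}^l a_{i} f_{i}$ dans $\cB(V')^p$ avec $\|a_{i}\|_{V'} \le K\, \|h\|_{V}$ pour tout~$i$. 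Appliquons ceci successivement aux~$g_{n}$~: on obtient, pour chaque~$n$, des coefficients $a_{n,1},\dotsc,a_{n,l} \in \cB(V')$ tels que $g_{n} = \sum_{i=1}^l a_{n,i}\, f_{i}$ dans $\cB(V')^p$ et $\|a_{n,i}\|_{V'} \le K\, \|g_{n}\|_{V}$.

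Le point-clé est maintenant d'exploiter le contrôle uniforme des normes. La suite $(g_{n})_{n\in\N}$ converge dans $\cB(V)^p$, donc est de Cauchy, donc bornée en norme~$\nm_{V}$. En appliquant l'estimation du théorème~\ref{fermeture} aux différences~$g_{n} - g_{m}$ (dont l'image dans~$\cO_{x}^p$ appartient encore à~$M$), on peut choisir les coefficients de façon à ce que $(a_{n,i})_{n\in\N}$ soit de Cauchy dans $\cB(V')$ pour chaque~$i$~: plus précisément, en posant $a_{n,i} - a_{m,i}$ comme un système de coefficients pour~$g_{n} - g_{m}$ fourni par le théorème, on a $\|a_{n,i} - a_{m,i}\|_{V'} \le K\, \|g_{n} - g_{m}\|_{V} \to 0$. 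Comme $\cB(V')$ est complet, chaque suite $(a_{n,i})_{n\in\N}$ converge vers un élément $a_{i} \in \cB(V')$. En passant à la limite dans l'égalité $g_{n} = \sum_{i=1}^l a_{n,i}\, f_{i}$ (licite car la multiplication et l'addition sont continues dans l'anneau de Banach $\cB(V')$), on obtient $g = \sum_{i=1}^l a_{i}\, f_{i}$ dans $\cB(V')^p$. En particulier, l'image de~$g$ dans~$\cO_{x}^p$ est combinaison $\cO_{x}$-linéaire des~$f_{i}$, donc appartient à~$M$, ce qui conclut.

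\textbf{Difficulté principale.} L'essentiel du travail est déjà contenu dans le théorème~\ref{fermeture}~; ici la seule subtilité est de bien utiliser le fait que la constante~$K_{V',V}$ ne dépend pas de l'élément divisé, ce qui permet, en l'appliquant aux différences~$g_{n}-g_{m}$, de transférer la propriété de Cauchy de~$(g_{n})$ vers les suites de coefficients~$(a_{n,i})$. Il faut veiller à ce que le système de coefficients attaché à~$g_{n}-g_{m}$ soit bien la différence de ceux attachés à~$g_{n}$ et~$g_{m}$~: c'est loisible car on peut construire les~$a_{n,i}$ de proche en proche (ou, plus simplement, appliquer directement le théorème à~$g_{n}-g_{m}$ et observer que $g_n - g_m - \sum (a_{n,i}-a_{m,i})f_i = 0$, ce qui, joint à l'estimation, donne des coefficients de Cauchy quitte à les redéfinir). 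Aucun obstacle sérieux n'est à prévoir au-delà de cette vérification de routine.
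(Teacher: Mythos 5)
Votre stratégie est exactement celle du texte~: réduction au théorème~\ref{fermeture} via la noethérianité de~$\cO_{x}$, puis exploitation du fait que la constante~$K_{V',V}$ est indépendante de l'élément décomposé pour transférer la propriété de Cauchy de~$(g_{n})$ aux coefficients. Il y a cependant un point où l'argument, tel qu'écrit, ne se referme pas. Le théorème~\ref{fermeture} affirme seulement l'\emph{existence} de coefficients pour chaque élément dont le germe est dans~$M$~; comme les~$f_{i}$ peuvent satisfaire des relations, rien ne garantit que les coefficients fournis pour~$g_{n}-g_{m}$ coïncident avec~$a_{n,i}-a_{m,i}$ pour une famille~$(a_{n,i})$ fixée à l'avance, et l'observation \og $g_{n}-g_{m}-\sum_{i}(a_{n,i}-a_{m,i})f_{i}=0$ \fg{} ne fournit aucun contrôle de norme sur~$a_{n,i}-a_{m,i}$~: l'estimation du théorème ne vaut que pour les coefficients qu'il produit, pas pour une décomposition arbitraire. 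Votre remède \og de proche en proche \fg{} (poser $a_{n+1,i}=a_{n,i}+b_{n,i}$ où les~$b_{n,i}$ décomposent $g_{n+1}-g_{n}$) est la bonne idée, mais il ne contrôle que les différences \emph{consécutives}~: on obtient alors $\|a_{n,i}-a_{m,i}\|_{V'}\le K\sum_{j=m}^{n-1}\|g_{j+1}-g_{j}\|_{V}$, et une suite convergente n'a aucune raison d'avoir des différences consécutives sommables. La suite~$(a_{n,i})$ n'est donc pas nécessairement de Cauchy.

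La correction est standard et c'est précisément ce que fait le texte~: on extrait une sous-suite~$(g_{n_{k}})_{k\in\N}$ telle que $\|g_{n_{k+1}}-g_{n_{k}}\|_{V}\le 2^{-k}$, on applique le théorème~\ref{fermeture} à chaque différence $g_{n_{k+1}}-g_{n_{k}}$ pour obtenir des coefficients~$a_{i,k}$ de norme~$\le K\,2^{-k}$, et les sommes partielles $\sum_{j=0}^{k}a_{i,j}$ convergent alors vers des~$s_{i}\in\cB(V')$ par complétude. On conclut en écrivant $g=g_{n_{0}}+\sum_{i=1}^{l}s_{i}f_{i}$ dans~$\cB(V')^{p}$~: il est inutile de décomposer~$g_{n_{0}}$ lui-même, puisque son image dans~$\cO_{x}^{p}$ appartient déjà à~$M$ par hypothèse. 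Moyennant cette extraction de sous-suite, votre démonstration est complète et identique à celle du texte.
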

\begin{proof}
D'apr\`es le th\'eor\`eme~\ref{rigide}, $\cO_{x}$ est noeth\'erien, donc $M$~est de type fini. Soient $f_{1},\dotsc,f_{l}$ des g\'en\'erateurs de~$M$. D'apr\`es le th\'eor\`eme~\ref{fermeture}, il existe un voisinage compact~$V'$ de~$x$ dans~$V$ sur lequel les~$f_{i}$ sont $\cB$-d\'efinis et une constante $K \in \R_{>0}$ v\'erifiant la propri\'et\'e suivante~: pour tout \'el\'ement~$f$ de~$\cB(V)^p$ dont l'image dans~$\Oc_{x}^p$ appartient \`a~$M$, il existe~$a_1,\dotsc,a_{l}\in\cB(V')$ tels que
\begin{enumerate}[i)]
\item $\disp f = \sum_{i=1}^l a_i f_i$ dans~$\cB(V')^p$ ;
\item  pour tout $i\in \cn{1}{l}$, on a $\|a_i\|_{V'}\leq K\,\|f\|_V$.
\end{enumerate}
Soit~$(g_{n_k})_{k\in\N}$ une sous-suite de~$(g_n)_{n\in\N}$ telle que, pour tout~$k\in\N$, on ait l'in\'egalit\'e~$\|g_{n_{k+1}}-g_{n_k}\|_V\leq 1/2^k$. Pour tout $k\in \N$, il existe $a_{1,k},\dotsc,a_{l,k}\in\cB(V')$ tels que
\begin{enumerate}[i)]
\item $\disp g_{n_{k+1}} - g_{n_{k}} = \sum_{i=1}^l a_{i,k} f_i$ dans~$\cB(V')^p$ ;
\item  pour tout $i\in \cn{1}{l}$, on a $\|a_{i,k}\|_{V'}\leq \frac{K}{2^k}$.
\end{enumerate}
Pour tout $i \in \cn{1}{l}$, la suite $(\sum_{j=0}^k a_{i,j})_{k\in \N}$ est de Cauchy, et donc convergente. Notons~$s_{i} \in \cB(V')$ sa limite. On a alors 
\[g = g_{n_{0}} +    \sum_{i=1}^l s_{i} f_i \textrm{ dans } \cB(V')^p.\]
Le r\'esultat s'ensuit.
\end{proof}

\begin{rema}
La pr\'esence des anneaux $\cB(V)$ dans l'\'enonc\'e du corollaire~\ref{limite} n'est gu\`ere heureuse et peut le rendre difficile \`a utiliser en pratique. Au chapitre~\ref{chap:Stein}, gr\^ace \`a la th\'eorie des espaces de Stein, nous montrerons que le r\'esultat reste valable en rempla\c cant les~$\cB(V)$ par des~$\cO(U)$, \cf~corollaire~\ref{cor:limiteOU}.
\end{rema}
\chapter[Cat\'egorie des espaces analytiques~: propri\'et\'es]{Cat\'egorie des espaces analytiques~: propri\'et\'es}\label{catan}

Ce chapitre est consacr\'e \`a l'\'etude de la cat\'egorie des espaces analytiques sur un anneau de Banach. Fixons un anneau de Banach $(\cA,\nm)$. 

Dans la section~\ref{sec:analytification}, nous construisons un foncteur d'analytification de la cat\'egorie des sch\'emas localement de pr\'esentation finie sur~$\cA$ vers celle des espaces $\cA$-analytiques. L'ingr\'edient principal est l'existence d'une correspondance bijective entre les morphismes d'un espace $\cA$-analytique vers l'espace affine analytique~$\E{n}{\cA}$ et les $n$-uplets de fonctions globales sur~$X$. La d\'emonstration de ce r\'esultat repose de fa\c{c}on cruciale sur le th\'eor\`eme de fermeture des id\'eaux~\ref{fermeture}. Aussi devrons-nous souvent, dans cette section et tout au long du chapitre, supposer que~$\cA$ est un anneau de base g\'eom\'etrique.

Dans la section~\ref{sec:extensionB}, \'etant donn\'e un anneau de Banach~$\cB$ et un morphisme born\'e $\cA \to \cB$, nous construisons un foncteur~$\wc \ho{\cA} \cB$ d'extension des scalaires, d\'efini de la cat\'egorie des espaces $\cA$-analytiques vers celle des espaces $\cB$-analytiques. Dans la section~\ref{sec:produitsfibres}, nous d\'emontrons que la cat\'egorie des espaces $\cA$-analytiques admet des produits fibr\'es.

Nous d\'edions deux sections \`a des propri\'et\'es importantes des morphismes~: propret\'e dans la section~\ref{sec:propre} et s\'eparation dans la section~\ref{sec:morphismessepares}.

La section~\ref{sec:paspconv} est consacr\'ee aux parties spectralement convexes et, plus pr\'ecis\'ement, \`a la description de l'extension des scalaires d'une partie spectralement convexe et du produit fibr\'e de deux parties spectralement convexes. 
Nous expliquons \'egalement comment identifier une fibre d'un morphisme \`a un produit fibr\'e.


\section{Analytification de sch\'emas}
\label{exemple_esp}\label{sec:analytification}
\index{Analytification!|(}

La th\'eorie des sch\'emas peut fournir de nombreux exemples d'espaces analytiques, \emph{via} un foncteur d'analytification. Afin de construire ce foncteur, nous aurons besoin de la description attendue de l'ensemble des morphismes d'un espace analytique quelconque vers un espace affine analytique.

\begin{prop}\label{morphsec}\index{Morphisme analytique!vers un espace affine}
Soit~$n\in \N$. Soit~$f \colon \cB\to\cB'$ un morphisme born\'e de~$\cA$-alg\`ebres de Banach, o\`u $\cB'$ est un anneau de base g\'eom\'etrique. Soit~$X$ un espace $\cB'$-analytique. L'application 
\[\fonction{s_{X}}{\Hom_{\An_{\cA},f}(X,\E{n}{\cB})}{\Gamma(X,\cO_{X})^n}{\varphi}{(\varphi^\sharp(T_1),\dotsc,\varphi^\sharp(T_n))},\]
o\`u $T_{1},\dotsc,T_{n}$ sont les coordonn\'ees, sur~$\E{n}{\cB}$ est bijective.

En outre, la famille $(s_{X})_{X \in \Ob(\cB'-\An)}$ d\'efinit une transformation naturelle entre les foncteurs $\Hom_{\An_\cA,f}(\wc,\E{n}{\cB})$ et $\Gamma(\wc,\cO)^n$. 
\end{prop}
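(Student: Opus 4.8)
The plan is to establish bijectivity of $s_X$ by constructing an explicit inverse. Given an $n$-tuple $(g_1,\dotsc,g_n) \in \Gamma(X,\cO_X)^n$, I want to produce a morphism $\varphi \colon X \to \E{n}{\cB}$ over $f$ with $\varphi^\sharp(T_i) = g_i$. First I would reduce to the local situation: by Proposition~\ref{prop:propmorphismeaudessusdef} (locality to source and target) and the fact that $\Hom_{\An_\cA,f}(\wc,\E{n}{\cB})$ is a sheaf (as in Remark~\ref{rem:Homfaisceau}, transposed to the relative setting over $f$), it suffices to treat the case where $X$ is an analytic closed subset of an open subset $U$ of $\E{m}{\cB'}$, defined by a coherent ideal sheaf $\cI$. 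In this case, choosing local lifts $\tilde g_i \in \cO_U(U')$ of the $g_i$ on a suitable open $U' \subset U$, the construction of Example~\ref{exemple_morphisme} (applied with the base $\cB$ and the bounded morphism $f$, so really the relative version: first compose $\cB[T_1,\dotsc,T_n] \to \cB'[T_1,\dotsc,T_n]$ and then evaluate) yields a morphism $\tilde\varphi \colon U' \to \E{n}{\cB}$ over $f$ with $\tilde\varphi^\sharp(T_i) = \tilde g_i$. Restricting $\tilde\varphi$ to $X \cap U'$ gives the desired $\varphi$ after one checks it is independent of the choice of lifts — and for this independence, and for the well-definedness of the composite of local pieces, one needs to know that two lifts differing by a section of $\cI$ induce the same morphism on $X$, which reduces to the statement that a function in $\cI$ pulls back to a function vanishing on $X$.

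The key technical input is injectivity together with the compatibility of the pullback with infinite series: the point is that if $\varphi^\sharp(T_i) = g_i$ then $\varphi^\sharp$ is forced on all polynomials, hence by continuity (the morphisms $\varphi^\sharp_{U',V'}$ are contracting) on all rational functions without poles, hence on all elements of $\cO(V)$ for $V$ an open subset of $\E{n}{\cB}$, since these are locally uniform limits of rational functions. This already gives injectivity of $s_X$ once we know such limits make sense — but the subtle part is surjectivity, namely that the candidate map $\varphi^\sharp$ one writes down actually lands in $\cO_X$. Concretely, a section $h$ of $\cO(V)$ near $\varphi(x)$ is a uniform limit of rational functions $R_j$ without poles; pulling back, $R_j(g_1,\dotsc,g_n)$ is a Cauchy sequence in $\cB(V')$ for a compact neighbourhood $V'$ of $x$, with limit some $\bar h \in \cB(V')$; one must check that the image of $\bar h$ in $\cO_{X,x}$ depends only on $h$ and not on the chosen approximating sequence, and more importantly that the locally-defined $\bar h$'s glue to an honest section of $\cO_X$ over $\varphi^{-1}(V)$, i.e. that they are compatible in the overlaps and that, on $X$, they are genuinely local limits of rational functions without poles in the ambient space. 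The main obstacle, and the reason the geometric base hypothesis on $\cB'$ enters, is precisely this: one must know that if a sequence of elements of $\cB(V')$ whose images in $\cO_{X,x}$ vanish (equivalently lie in $\cI_x$) converges, then the limit's image still vanishes (lies in $\cI_x$) — this is exactly the closedness of ideals statement, Corollary~\ref{limite} (applied to the submodule $\cI_x \subseteq \cO_x$). This is what lets one transfer the well-definedness from the ambient affine space down to $X$.

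So the detailed steps are: (1) the relative version of Example~\ref{exemple_morphisme} producing, from functions on an open subset of an affine space, a morphism over $f$ to $\E{n}{\cB}$, with the stated pullback on coordinates; (2) injectivity of $s_X$ via density of rational functions without poles and the contracting property of $\varphi^\sharp$, plus locality to reduce to the affine case — here one shows two morphisms with the same $\varphi^\sharp(T_i)$ agree on polynomials, then on $\cK(V)$, then on $\cO(V)$ by passing to uniform limits, and agree as maps of topological spaces because the underlying point map is determined by the seminorms $P \mapsto |P(g_1(x),\dotsc,g_n(x))|$; (3) surjectivity: given $(g_i)$, work locally where $X \subset U \subset \E{m}{\cB'}$ is cut out by $\cI$, lift to $\tilde g_i$, form $\tilde\varphi$ by step~(1), restrict to $X$; check via Corollary~\ref{limite} that the resulting $\varphi^\sharp$ is well-defined on $X$ (independence of lifts and of approximating sequences, and the glueing of local data into a section of $\cO_X$); verify $\varphi$ is a morphism of locally ringed spaces (the pullback is local because maximal ideals are the functions vanishing at the point, by the relation $|\varphi^\sharp(h)(x)| = |h(\varphi(x))|$ which persists on $X$) and is a morphism over $f$ (the structure-map square commutes since it commutes on the ambient affine spaces, by Example~\ref{ex:tildefn}); and that it is analytic, again because the analyticity condition $\|\varphi^\sharp(h)\|_{X'} \le \|h\|_{Y'}$ is inherited from $\tilde\varphi$ by restriction to $X$ and compatibility of the uniform seminorms. (4) Naturality: for $u \colon X' \to X$ a morphism of $\cB'$-analytic spaces, both $s_{X'}(\varphi \circ u) = (u^\sharp\varphi^\sharp(T_i))_i$ and $\Gamma(u)(s_X(\varphi)) = (u^\sharp(\varphi^\sharp(T_i)))_i$ coincide by definition of the pullback on global sections, so the square of functors commutes. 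The bulk of the work — and the only place a hypothesis beyond "$\cA$ a Banach ring" is needed — is the well-definedness in step~(3), which is a direct application of the ideal-closedness corollary.
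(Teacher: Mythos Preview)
Your ingredients are right and the strategy is essentially the paper's, but step~(2) as written has a gap, and you have placed the crucial use of Corollaire~\ref{limite} in the wrong step.

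In step~(2), from $\varphi^\sharp(R_j) = \varphi'^\sharp(R_j)$ in~$\cO_X$ and the contracting property you only obtain $\|\varphi^\sharp(h) - \varphi'^\sharp(h)\|_{X'} = 0$ on a compact neighbourhood~$X'$ of~$x$; on a non-reduced local model the uniform seminorm on~$\cO_X$ is not a norm, so equality in~$\cO_{X,x}$ does not follow. The paper's fix is precisely the mechanism you reserve for step~(3): lift~$\varphi,\varphi'$ locally to~$\tilde\varphi,\tilde\varphi'$ on the ambient open~$U'$ (this is built into the definition of morphism over~$f$), observe that each $\tilde\varphi^\sharp(P_i/Q_i) - \tilde\varphi'^\sharp(P_i/Q_i)$ lies in~$\cI(U'')$ because the $\tilde\varphi^\sharp(T_k) - \tilde\varphi'^\sharp(T_k)$ do, and invoke Corollaire~\ref{limite} to conclude that the uniform limit $\tilde\varphi^\sharp(F) - \tilde\varphi'^\sharp(F)$ lies in~$\cI_{j(x)}$, hence $\varphi^\sharp(F) = \varphi'^\sharp(F)$ in~$\cO_{X,x}$. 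So Corollaire~\ref{limite} belongs in injectivity, not surjectivity.

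Conversely, once injectivity is in hand, surjectivity is shorter than you suggest: the paper sets $\varphi := \tilde f_n \circ \psi \circ j$ locally, with $\psi \colon U \to \E{n}{\cB'}$ from Exemple~\ref{exemple_morphisme} applied to \emph{one} fixed choice of lifts~$F'_i$, and this is already a well-defined morphism over~$f$ (a composite of existing morphisms, no well-definedness to check). The local pieces then glue because injectivity of~$s_Z$ on overlaps forces them to agree --- so no separate ``independence of lifts'' argument is needed. Your route would also work if reorganised (prove independence of lifts first via Corollaire~\ref{limite}, then deduce injectivity from it together with the elementary bijectivity of~$s_{U'}$ on opens of affine space), but as written the logical order is broken.
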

\begin{proof}
Commen\c cons par v\'erifier que la famille $s := (s_{X})$ d\'efinit bien une transformation naturelle. En effet, pour tout morphisme d'espaces $\cB'$-analytiques $\psi \colon Y \to X$, on a
\[s_{Y}(\varphi\circ\psi)=(\psi^\sharp(\varphi^\sharp(T_1)),\ldots,\psi^\sharp(\varphi^\sharp(T_n)))=\psi^\sharp(s_{X}(\varphi)).\]

\medbreak

Soit $X$ un espace $\cB'$-analytique. Montrons que $s_{X}$ est bijective.

$\bullet$ \textit{Injectivit\'e :} Soient $\varphi \colon X \to \E{n}{\cB}$ et $\varphi' \colon X \to \E{n}{\cB}$ des morphismes d'espaces analytiques au-dessus de~$f$ tels que l'on ait 
\[ (\varphi^\sharp(T_1),\ldots ,\varphi^\sharp(T_n))=(\varphi'^\sharp(T_1),....\varphi'^\sharp(T_n)). \]
Soit $x\in X$. Commen\c cons par montrer que~$\varphi(x)=\varphi'(x)$. Soit $P \in \cB[T_{1},\dotsc,T_{n}]$. Notons $f(P) \in \cB'[T_{1},\dotsc,T_{n}]$ le polyn\^ome obtenu en appliquant~$f$ \`a tous les coefficients de~$P$. D'apr\`es la proposition~\ref{prop:propmorphismeaudessusdef}, on a 
\begin{align*}
|P(T_{1}(\varphi(x)),\dotsc,T_{n}(\varphi(x)))| &= |f(P)(\varphi^\sharp(T_{1})(x),\dotsc,\varphi^\sharp(T_{n})(x))|\\
& = |f(P)(\varphi'^\sharp(T_{1}),\dotsc,\varphi'^\sharp(T_{n}))(x)|\\
& =|P(T_{1}(\varphi'(x)),\dotsc,T_{n}(\varphi'(x)))|. 
\end{align*} 
On en d\'eduit que $\varphi(x) = \varphi'(x)$.

Montrons, \`a pr\'esent, que, pour tout $y \in \E{n}{\cB}$, les morphismes $\varphi^\sharp \colon \cO_{\E{n}{\cB},y}\to\varphi_\ast(\cO_X)_y$ et $\varphi'^\sharp:\cO_{\E{n}{\cB},y}\to \varphi'_\ast(\cO_X)_y = \varphi_\ast(\cO_X)_y$ sont \'egaux. 

Soit $y \in \E{n}{\cB}$. Soient~$F\in \cO_{\E{n}{\cB},y}$ et~$V$ un voisinage compact de~$y$ dans~$\E{n}{\cB}$ sur lequel~$F$ est d\'efinie. Quitte \`a r\'eduire~$V$, on peut supposer que~$F$ est limite uniforme sur~$V$ d'une suite de fractions rationnelles~$\left(\frac{P_i}{Q_i}\right)_{i\in \N}$ sans p\^oles sur~$V$. 

Soit~$x\in\varphi^{-1}(\mathring V)$. Montrons que $\varphi^\sharp(F) = \varphi'^\sharp(F)$ dans~$\cO_{X,x}$. Par d\'efinition d'espace $\cB'$-analytique, $x$ poss\`ede un voisinage~$Z$ qui est un ferm\'e analytique d'un ouvert~$U$ d'un espace affine analytique sur~$\cB'$. Notons~$\cI$ le faisceau d'id\'eaux coh\'erent sur~$U$ d\'efinissant~$Z$ et $j \colon Z \to U$ l'immersion ferm\'ee correspondante.

La d\'efinition de morphisme assure qu'il existe un voisinage~$U'$ de~$j(x)$ dans~$U$ et un morphisme $\tilde\varphi \colon U' \to \E{n}{\cB}$ au-dessus de~$f$ tels que le diagramme
\[\begin{tikzcd} 
U' \arrow[r, "\tilde\varphi"] & \E{n}{\cB}\\
j^{-1}(U') \arrow[u, "j"] \arrow[ru, "\varphi_{|j^{-1}(U')}"']&
\end{tikzcd}\]
commute. Quitte \`a restreindre~$U'$, on peut supposer qu'il existe \'egalement un morphisme $\tilde\varphi' \colon U' \to \E{n}{\cB}$ au-dessus de~$f$ tel que le diagramme
\[\begin{tikzcd} 
U' \arrow[r, "\tilde\varphi'"] & \E{n}{\cB}\\
j^{-1}(U') \arrow[u, "j"] \arrow[ru, "\varphi'_{|j^{-1}(U')}"']&
\end{tikzcd}\]
commute. 

Par la d\'efinition de morphisme entre ouverts d'espaces affines, il existe un voisinage compact~$U''$ de~$x$ dans~$U'$ tel que $\tilde\varphi(U'') \subset V$ et, pour tout $G \in \cO_{\E{n}{\cB}}(V)$, $\|\tilde{\varphi}^\sharp(G)\|_{U''}\leq\|G\|_{V}$. On en d\'eduit que
\[\lim_{i \to +\infty} \left\|\tilde{\varphi}^\sharp(F) - \tilde{\varphi}^\sharp\left(\frac{P_i}{Q_i}\right) \right\|_{U''} = 0.\]
Quitte \`a restreindre $U''$, on peut supposer que le m\^eme r\'esultat vaut pour~$\tilde\varphi'$, et donc que 
\[\|\tilde{\varphi}^\sharp(F) - \tilde{\varphi}'^\sharp(F)\|_{U''} = 0.\]

Par hypoth\`ese, pour tout $k \in \cn{1}{n}$, $\tilde{\varphi}^\sharp(T_k)-\tilde{\varphi}'^\sharp(T_k)$ appartient \`a~$\cI(U'')$. On en d\'eduit que, pour~$i\in\N$, $\tilde{\varphi}^\sharp\left(\frac{P_i}{Q_i}\right)-\tilde{\varphi}'^\sharp\left(\frac{P_i}{Q_i}\right)$ appartient \`a~$\cI(U'')$. Le lemme~\ref{lem:morphismeB} et le corollaire~\ref{limite} assurent alors que l'\'el\'ement $\tilde{\varphi}^\sharp(F)-\tilde{\varphi}'^\sharp(F)$ de $\cO_{U'',j(x)}$ appartient \`a~$\cI_{j(x)}$ et donc que $\varphi^\sharp(F)=\varphi'^\sharp(F)$ dans $\cO_{X,x}$.

On a d\'emontr\'e que $\varphi^\sharp=\varphi'^\sharp$, et donc que $\varphi=\varphi'$ .

\medbreak

$\bullet$ \textit{Surjectivit\'e :} Soient $F_1,\ldots,F_n\in\cO_X(X)$. Puisque l'application~$s_{X}$ est injective, il suffit de v\'erifier que tout point~$x$ de~$X$ poss\`ede un voisinage ouvert~$Z$ tel que $(F_{1},\dotsc,F_{n})$ appartienne \`a l'image de~$s_{Z}$.

Soit $x$ un point de~$X$. Il poss\`ede un voisinage~$Z$ qui est un ferm\'e analytique d'un ouvert~$U$ d'un espace affine analytique sur~$\cB'$. Notons $j \colon Z \to U$ l'immersion ferm\'ee correspondante. Quitte \`a restreindre~$Z$ et~$U$, on peut supposer que $F_{1},\dotsc,F_{n} \in \cO_{Z}(Z)$ appartiennent \`a l'image du morphisme $j^\sharp \colon \cO_U(U)\to\cO_Z(Z)$. Choisissons des relev\'es $F'_1,\ldots,F'_n \in\cO_{U}(U)$ de ces \'el\'ements.

D'apr\`es l'exemple~\ref{exemple_morphisme}, il existe un morphisme d'espaces $\cB'$-analytiques $\psi \colon U \to \E{n}{\cB'}$ tel que, pour tout $i \in \cn{1}{n}$, on ait $\psi^\sharp(T_{i}) = F'_{i}$. Posons $\varphi := \tilde f_{n} \circ \psi \circ j$, o\`u $\tilde f_{n} \colon \E{n}{\cB'} \to \E{n}{\cB}$ est le morphisme d\'efini dans l'exemple~\ref{ex:tildefn}. C'est un morphisme analytique au-dessus de~$f$ qui satisfait $s_{Z}(\varphi) = (F_{1},\dotsc,F_{n})$.
\end{proof}

\begin{coro}\label{cor:factorisationouvert}
Soit~$f \colon \cB\to\cB'$ un morphisme born\'e de~$\cA$-alg\`ebres de Banach, o\`u $\cB'$ est un anneau de base g\'eom\'etrique. Soient $P_{1},\dotsc,P_{k} \in \cB[T_1,\ldots,T_n]$ et $r_{1},s_{1},\dotsc,r_{k},s_{k} \in \R$. Posons 
\[U:=\bigcap_{i=1}^k\{y\in\E{n}{\cB} : r_i<|P_i(y)|<s_i\}.\]

Soit~$X$ un espace $\cB'$-analytique. Alors, l'application~$s_{X}$ induit une bijection entre $\Hom_{\An_{\cA},f}(X,U)$ et l'ensemble 
\[\{(f_{1},\dotsc,f_{n}) \in \cO(X)^n : 
\forall x\in X, \forall i\in\cn{1}{k},\  r_i<|P_i(f_1,\ldots,f_n)(x)|<s_i.\}\] 
\end{coro}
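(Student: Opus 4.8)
The statement to prove is Corollary~\ref{cor:factorisationouvert}, which refines Proposition~\ref{morphsec} by characterizing the morphisms from $X$ that land in the open subset $U$ defined by the inequalities $r_i < |P_i| < s_i$. The plan is to deduce it from Proposition~\ref{morphsec} together with Proposition~\ref{immersion_ferm\'e} on factoring morphisms through closed immersions, or more directly through an argument using the description of morphisms to $\E{n}{\cB}$ and the fact that $U$ is an open subspace, hence carries a natural $\cB$-analytic structure by Remark~\ref{remarque_base_espace_analytique}.

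\medbreak

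First I would recall that $U$ is an open subset of $\E{n}{\cB}$ and therefore inherits a structure of $\cB$-analytic space; a morphism $X \to U$ over $f$ is the same thing as a morphism $\varphi \colon X \to \E{n}{\cB}$ over $f$ whose image is contained in $U$. So the content reduces to the following: for $\varphi \in \Hom_{\An_{\cA},f}(X,\E{n}{\cB})$ with associated tuple $s_{X}(\varphi) = (f_1,\dotsc,f_n)$, the condition $\varphi(X) \subset U$ is equivalent to the stated system of strict inequalities on the $|P_i(f_1,\dotsc,f_n)(x)|$. This is where the key identity comes in: by the definition of $\varphi$ (or by Proposition~\ref{prop:propmorphismeaudessusdef} applied pointwise, exactly as in the injectivity part of the proof of Proposition~\ref{morphsec}), for every $x \in X$ and every polynomial $P \in \cB[T_1,\dotsc,T_n]$ one has
\[ |P(\varphi(x))| = |f(P)(f_1(x),\dotsc,f_n(x))| = |P(f_1,\dotsc,f_n)(x)|, \]
where in the last equality we view $P(f_1,\dotsc,f_n)$ as the global section of $\cO_X$ obtained via $f$. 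Since $\varphi(x) \in U$ precisely means $r_i < |P_i(\varphi(x))| < s_i$ for all $i$, substituting the identity above gives exactly $r_i < |P_i(f_1,\dotsc,f_n)(x)| < s_i$ for all $i$. Thus $\varphi(X) \subset U$ if and only if the tuple $(f_1,\dotsc,f_n)$ lies in the indicated set.

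\medbreak

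It then remains to combine this with the bijectivity of $s_{X} \colon \Hom_{\An_{\cA},f}(X,\E{n}{\cB}) \to \Gamma(X,\cO_X)^n$ from Proposition~\ref{morphsec}. That bijection restricts to a bijection between the subset of morphisms with image in $U$ and the subset of tuples satisfying the inequalities, which is precisely the claimed bijection between $\Hom_{\An_{\cA},f}(X,U)$ and the displayed set of $n$-tuples of global functions. One small point to check is that a morphism $X \to U$ (of $\cB$-analytic spaces, over $f$) is genuinely the same datum as a morphism $X \to \E{n}{\cB}$ factoring set-theoretically through $U$: this holds because $U \hookrightarrow \E{n}{\cB}$ is an open immersion, so factoring through it as spaces is equivalent to factoring through it as underlying topological spaces (and the analytic structure on $U$ is simply the restriction), which is immediate from the definition of open immersion and the locality of the notion of morphism (Proposition~\ref{prop:proprietemorphismes}~i)).

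\medbreak

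The main (and essentially only) subtlety is the identity $|P(\varphi(x))| = |P(f_1,\dotsc,f_n)(x)|$; once this is in hand the corollary is formal. This identity is not quite a tautology because of the presence of the base-change morphism $f$: one must be careful that the seminorm at $\varphi(x)$ on $\cB[T_1,\dotsc,T_n]$ pulls back, via the isometric embedding $\kappa(\varphi(x)) \hookrightarrow \kappa(x)$ of Proposition~\ref{prop:propmorphismeaudessusdef}, to the evaluation of $P(f_1,\dotsc,f_n)$ at $x$ — but this is exactly the computation already carried out in the injectivity step of the proof of Proposition~\ref{morphsec}, so it can simply be invoked. Everything else (openness of $U$, the restricted-structure description, compatibility of $s_X$ with restriction to subspaces) is routine.
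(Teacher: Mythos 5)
Your proof is correct and follows essentially the same route as the paper: the paper's entire argument consists of the key identity $|P_{i}(\varphi(x))| = |P_{i}(T_{1}(\varphi(x)),\dotsc,T_{n}(\varphi(x)))| = |P_{i}(\varphi^\sharp(T_{1})(x),\dotsc,\varphi^\sharp(T_{n})(x))|$, combined implicitly with the bijectivity of $s_{X}$ from Proposition~\ref{morphsec}. Your additional remarks (the open subspace structure on~$U$ and the equivalence between morphisms into~$U$ and morphisms into~$\E{n}{\cB}$ landing set-theoretically in~$U$) merely make explicit what the paper leaves implicit.
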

\begin{proof}
Le r\'esultat provient du fait que, pour tout $x\in X$ et tout $i\in \cn{1}{k}$, on a 
\begin{align*}
|P_{i}(\varphi(x))| & = |P_{i}(T_{1}(\varphi(x)),\dotsc,T_{n}(\varphi(x)))|\\
& = |P_{i}(\varphi^\sharp(T_{1})(x),\dotsc,\varphi^\sharp(T_{n})(x))|.
\end{align*}
\end{proof}

Il d\'ecoule de la proposition~\ref{morphsec} que, sous des hypoth\`eses convenables, l'espace affine analytique~$\E{n}{\cA}$ repr\'esente le foncteur attendu.

\begin{coro}\label{cor:foncteurAn}\index{Espace affine analytique!foncteur repr\'esent\'e}\index{Espace affine analytique!morphisme|see{Morphisme analytique vers un espace affine}}
Supposons que~$\cA$ est un anneau de base g\'eom\'etrique. Pour tout $n\in \N$, l'espace~$\E{n}{\cA}$ repr\'esente le foncteur
\[\fonctionsp{\cAAn}{\Ens}{X}{\Gamma(X,\cO_{X})^n}.\]
\qed
\end{coro}

Nous pouvons maintenant construire un foncteur d'analytification. Nous noterons $\Hom_{\cA-\loc}(\wc,\wc)$ l'ensemble des morphismes dans la cat\'egorie des espaces localement $\cA$-annel\'es.%
\nomenclature[Kcz]{$\Hom_{\cA-\loc}(X,Y)$}{pour $X,Y$ espaces localement $\cA$-annel\'es, ensemble des morphismes d'espaces localement $\cA$-annel\'es de~$X$ dans~$Y$}

\begin{theo}\label{thm:analytification}\index{Analytification}
Supposons que~$\cA$ est un anneau de base g\'eom\'etrique.
Soit~$\cX$ un sch\'ema localement de pr\'esentation finie sur~$\cA$. Le foncteur
\[\fonction{\Phi_{\cX}}{\cAAn}{\Ens}{Y}{\Hom_{\cA-\loc}(Y,\cX)}\]
est repr\'esentable.
\end{theo}
\begin{proof}
Nous allons proc\'eder en plusieurs \'etapes.

\medbreak

$\bullet$ \textit{$\cX$ est un espace affine.}

Il existe $n\in \N$ tel que $\cX = \A^n_{\cA}$. Notons~$T_{1},\dotsc,T_{n}$ les coordonn\'ees sur~$\A^n_{\cA}$. Pour tout espace $\cA$-analytique~$Y$, l'application 
\[\begin{array}{ccc}
\Hom_{\cA-\loc}(Y,\A^n_{\cA}) & \to & \cO(Y)^n\\
\varphi & \mapsto & (\varphi^\sharp(T_{1}),\dotsc,\varphi^\sharp(T_{n}))
\end{array}\]
est alors une bijection (\cf~\cite[proposition~1.6.3]{EGAInew}). La proposition~\ref{morphsec} entra\^ine alors que~$\Phi_{\A^n_{\cA}}$ est repr\'esent\'e par~$\E{n}{\cA}$. 
 
 \medbreak

$\bullet$ \textit{$\cX$ est un sch\'ema affine de pr\'esentation finie sur~$\cA$.}

Il existe $n\in \N$ et un id\'eal de type fini~$I$ de $\cO(\A^n_{\cA})$ tel que~$\cX$ soit le sous-sch\'ema ferm\'e de~$\A^n_{\cA}$ d\'efinir par~$I$. L'id\'eal~$I$ engendre un faisceau d'id\'eaux coh\'erent~$\cI$ de~$\E{n}{\cA}$. Notons~$X$ le ferm\'e analytique de~$\E{n}{\cA}$ d\'efini par~$\cI$. Le point pr\'ec\'edent et la proposition~\ref{immersion_ferm\'e} assurent que le foncteur~$\Phi_{\cX}$ est repr\'esent\'e par~$X$.

\medbreak
 
$\bullet$ \textit{$\cX$ est un sch\'ema localement de pr\'esentation finie sur~$\cA$.}

Le sch\'ema~$\cX$ admet un recouvrement ouvert~$\{\cX_i\}_{i\in I}$ par des sch\'emas affines de pr\'esentation finie sur~$\cA$. Pour chaque $i\in I$, consid\'erons l'espace $\cA$-analytique~$X_{i}$ et le morphisme $\rho_{i} \colon X_{i} \to \cX_{i}$ associ\'es \`a~$\cX_{i}$ par la construction du point pr\'ec\'edent.

Pour tout~$i\in I$ et tout ouvert~$U$ de~$\cX_{i}$, on v\'erifie que $\rho_{i}^{-1}(U)$ repr\'esente le foncteur~$\Phi_{U}$. On d\'eduit de cette propri\'et\'e que les~$X_{i}$ se recollent. En effet, on peut identifier les intersections puisqu'elles repr\'esentent un m\^eme foncteur.

On v\'erifie que l'espace $\cA$-analytique~$X$ obtenu en recollant les~$X_{i}$ repr\'esente~$\cX$.
\end{proof}

\begin{defi}\label{def:analytification}\index{Analytification!d'un schema@d'un sch\'ema|textbf}\index{Analytification!d'un morphisme|textbf}\index{Morphisme analytique!analytifie@analytifi\'e|see{Analytification}}\index{Espace analytique|analytifie@analytifi\'e|see{Analytification}}%
\nomenclature[Kma]{$\cX^\an$}{analytifi\'e d'un sch\'ema~$\cX$}%
\nomenclature[Kmb]{$\rho_{\cX}$}{morphisme canonique $\cX^\an\to\cX$}%
\nomenclature[Kmc]{$\varphi^\an$}{analytifi\'e d'un morphisme de sch\'emas~$\varphi$}
Supposons que~$\cA$ est un anneau de base g\'eom\'etrique.
Soit~$\cX$ un sch\'ema localement de pr\'esentation finie sur~$\cA$. On appelle \emph{analytification} ou \emph{analytifi\'e} de~$\cX$ l'espace $\cA$-analytique qui repr\'esente~$\Phi_{\cX}$. On le note~$\cX^\an$ et on note $\rho_{\cX} \colon \cX^\an\to\cX$ le morphisme d'espaces localement $\cA$-annel\'es canoniquement associ\'e.

Soit $\varphi \colon \cX \to \cY$ un morphisme entre sch\'emas localement de pr\'esentation finie sur~$\cA$. La repr\'esentabilit\'e du foncteur~$\Phi_{\cY}$ assure l'existence d'un unique morphisme d'espace $\cA$-analytiques de~$\cX^\an$ dans~$\cY^\an$, not\'e~$\varphi^\an$, 
faisant commuter le diagramme
\[\begin{tikzcd}
\cX^\an \ar[d, "\varphi^\an"] \ar[r, "\rho_{\cX}"] & \cX \ar[d, "\varphi"]\\
\cY^\an \ar[r, "\rho_{\cY}"] & \cY
\end{tikzcd}.\]
Le morphisme $\varphi^\an \colon \cX^\an \to \cY^\an$ est appel\'e \emph{analytification} ou \emph{analytifi\'e} de~$\varphi$.
\end{defi}

\index{Analytification!|)}

\section{Extension des scalaires}\label{sec:extensionB}
\index{Extension des scalaires|(}

Soit $f\colon \cA \to \cB$ un morphisme d'anneaux de Banach born\'e, o\`u $\cB$ est un anneau de base g\'eom\'etrique. 

\begin{defi}\label{def:extensionscalaire}\index{Extension des scalaires|textbf}%
\nomenclature[Kna]{$X_{\cB}$}{extension des scalaires d'un espace $\cA$-analytique~$X$ \`a un anneau de Banach~$\cB$}%
\nomenclature[Knb]{$X \ho{\cA} \cB$}{extension des scalaires d'un espace $\cA$-analytique~$X$ \`a un anneau de Banach~$\cB$}%
Soit $X$ un espace $\cA$-analytique. On dit que \emph{$X$ s'\'etend \`a~$\cB$} si le foncteur 
\[\fonction{B_{X,\cB}}{\cB-\An}{\Ens}{Y}{\Hom_{\An_{\cA},f}(Y,X)}\]
est repr\'esentable. Dans ce cas, on appelle \emph{extension des scalaires de~$X$ \`a~$\cB$} l'espace $\cB$-analytique repr\'esentant~$B_{X,\cB}$. On le note~$X_{\cB}$ ou~$X \ho{\cA} \cB$ (par analogie avec le cas classique, \cf~\cite[\S 1.4]{Ber2}).
\end{defi}

Commen\c cons par un lemme g\'en\'eral.

\begin{lemm}\label{lem:extensionBouverts}
Soit $X$ un espace $\cA$-analytique qui s'\'etend \`a~$\cB$. Notons $\pi_{\cB} \colon X_{\cB} \to X$ le morphisme canonique.

\begin{enumerate}[i)]
\item Soit~$U$ un ouvert de~$X$. Alors $\pi_{\cB}^{-1}(U)$ est l'extension des scalaires de~$U$ \`a~$\cB$.

\item Soient~$F$ un ferm\'e de~$X$ d\'efini par un faisceau d'id\'eaux coh\'erent~$\cI$. Alors le ferm\'e analytique de~$X_\cB$ d\'efini par l'id\'eal image de ${\pi_{\cB}}^\ast \cI$ dans~$\cO_{X_{\cB}}$ est l'extension des scalaires de~$F$ \`a~$\cB$.

\item Soit $\iota \colon X' \to X$ une immersion (resp. immersion ouverte, resp. immersion ferm\'ee) d'espaces $\cA$-analytiques. Alors~$X'$ s'\'etend \`a~$\cB$ et le morphisme canonique $X'_{\cB} \to X_{\cB}$ est une immersion (resp. immersion ouverte, resp. immersion ferm\'ee) d'espaces $\cB$-analytiques. 
\end{enumerate}
\end{lemm}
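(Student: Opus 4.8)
The plan is to exploit the universal property defining $X_{\cB}$ together with the compatibility of the representing functors with the operations considered. Throughout, write $B_{X,\cB}$ for the functor $Y \mapsto \Hom_{\An_{\cA},f}(Y,X)$ on $\cB-\An$, so that $X_{\cB}$ represents $B_{X,\cB}$ with $\pi_{\cB} \colon X_{\cB}\to X$ the morphism corresponding to $\id_{X_{\cB}}$.

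\smallbreak

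For i), I would first show that for any open immersion $U \hookrightarrow X$ the object $\pi_{\cB}^{-1}(U)$ represents $B_{U,\cB}$. The point is that a morphism $\psi \colon Y \to X$ in $\An_{\cA}$ over $f$ factors through $U$ if and only if its image lies in $U$, and (since $\pi_{\cB}^{-1}(U)$ is, by definition of the topology on $X_{\cB}$ as a space over $X$, precisely the preimage of $U$) the corresponding morphism $Y \to X_{\cB}$ factors through $\pi_{\cB}^{-1}(U)$ exactly in that case. Concretely: given $\psi \colon Y \to U \subset X$ over $f$, the universal property of $X_{\cB}$ produces a unique $\tilde\psi \colon Y \to X_{\cB}$ with $\pi_{\cB}\circ\tilde\psi = \psi$; since $\psi(Y) \subset U$ we get $\tilde\psi(Y) \subset \pi_{\cB}^{-1}(U)$, and by Remark~\ref{remarque_base_espace_analytique} the open subspace $\pi_{\cB}^{-1}(U)$ carries a natural $\cB$-analytic structure through which $\tilde\psi$ factors; conversely any $Y \to \pi_{\cB}^{-1}(U)$ composes with $\pi_{\cB}$ to land in $U$. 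This bijection is functorial in $Y$, giving $\pi_{\cB}^{-1}(U) \simeq U_{\cB}$.

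\smallbreak

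For ii), given $F$ defined by a coherent ideal sheaf $\cI$ on $X$, let $F'$ denote the closed analytic subspace of $X_{\cB}$ cut out by $\pi_{\cB}^\ast\cI$ (this is coherent since $\pi_{\cB}^\ast$ of a coherent sheaf is coherent). By Proposition~\ref{immersion_ferm\'e}, a morphism $\psi \colon Y \to X$ over $f$ factors through the closed immersion $F \hookrightarrow X$ if and only if $\psi^\ast\cI = 0$, and similarly a morphism $Y \to X_{\cB}$ factors through $F'$ iff its pullback of $\pi_{\cB}^\ast\cI$ vanishes. Now if $g \colon Y \to X_{\cB}$ is the morphism of $\cB$-analytic spaces corresponding to $\psi$ (so $\psi = \pi_{\cB}\circ g$ in $\An_{\cA}$), then $\psi^\ast\cI = g^\ast\pi_{\cB}^\ast\cI$, so the two factorization conditions match. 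Hence $B_{F,\cB}$ is represented by $F'$. For iii), write an immersion $\iota = \iota_1\circ\iota_2$ with $\iota_2$ a closed immersion into an open subspace of $X$ and $\iota_1$ the open inclusion; apply i) to the open part and ii) to the closed part, and use the composition property of immersions (the proposition preceding Lemma~\ref{lem:immersionfermeelocann}) and the fact that the base-change morphisms constructed in i) and ii) are open, resp. closed, immersions.

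\smallbreak

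The main obstacle — and the only genuinely delicate point — is verifying that the candidate objects in i) and ii) actually represent the relevant functors with the \emph{same} morphism data, i.e. that the identifications are compatible with $\pi_{\cB}$ and with the $\cA$-analytic (not merely localement annelé) structures over $f$. This reduces to checking that factorizations of morphisms \emph{in $\An_{\cA}$ over $f$} through open and closed immersions behave as expected, which is exactly the content of Remark~\ref{remarque_base_espace_analytique}, Lemma~\ref{lem:immersionfermeelocann} and Proposition~\ref{immersion_ferm\'e}; once these are invoked the verification is formal. No hypothesis beyond "$X$ s'étend à $\cB$" is needed for i) and ii); the hypothesis that $\cB$ is an anneau de base géométrique enters only implicitly through the already-established representability results used to make sense of closed analytic subspaces.
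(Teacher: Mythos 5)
Your proof is correct and follows essentially the same route as the paper, whose own proof is only a three-line sketch: point i) from the universal property, point ii) via Proposition~\ref{immersion_ferm\'e} applied to the ideal $\pi_{\cB}^\ast\cI$ (using $\psi^\ast\cI = g^\ast\pi_{\cB}^\ast\cI$), and point iii) by decomposing the immersion into its closed and open parts. Your write-up simply supplies the details the paper leaves implicit, including the (correctly identified) point that the factorization criteria must be checked for morphisms in $\An_{\cA}$ over $f$ and not merely for morphisms of locally ringed spaces.
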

\begin{proof}
Le point~i) d\'ecoule directement de la propri\'et\'e universelle de l'extension. Pour d\'emontrer le point~ii), on utilise la proposition~\ref{immersion_ferm\'e}. Le point~iii) se d\'eduit des deux premiers.
\end{proof}

Passons maintenant \`a des r\'esultats d'existence.

\begin{prop}\label{prop:extensionBaffine}\index{Espace affine analytique!extension des scalaires}
Soit~$n\in \N$. Alors l'extension des scalaires de~$\E{n}{\cA}$ \`a~$\cB$ est~$\E{n}{\cB}$ avec pour morphisme canonique $\tilde f_{n} \colon \E{n}{\cB} \to \E{n}{\cA}$, tel que défini dans l'exemple~\ref{ex:tildefn}.
\end{prop}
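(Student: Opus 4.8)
The statement asserts that $\E{n}{\cB}$, equipped with the morphism $\tilde f_{n} \colon \E{n}{\cB} \to \E{n}{\cA}$ of example~\ref{ex:tildefn}, represents the functor $B_{\E{n}{\cA},\cB}$. Concretely, I must produce, for every $\cB$-analytic space~$Y$, a bijection
\[
\Hom_{\cB-\An}(Y,\E{n}{\cB}) \simto \Hom_{\An_{\cA},f}(Y,\E{n}{\cA})
\]
which is natural in~$Y$ and compatible with $\tilde f_{n}$. The plan is to build this bijection by going through global sections in both directions, using proposition~\ref{morphsec} twice (once over $\cB$ via the identity $\id_{\cB}$, once over $\cA$ via the bounded morphism $f$), together with the fact that $\cB$ is a geometric base ring, so that proposition~\ref{morphsec} applies and gives canonical bijections $s_{Y} \colon \Hom_{\cB-\An}(Y,\E{n}{\cB}) \simto \Gamma(Y,\cO_{Y})^n$ and $s_{Y} \colon \Hom_{\An_{\cA},f}(Y,\E{n}{\cA}) \simto \Gamma(Y,\cO_{Y})^n$ — the same target in both cases, since for a $\cB$-analytic space~$Y$ the underlying sheaf of rings $\cO_Y$ and its global sections are intrinsic.

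\textbf{Key steps, in order.} First I would invoke proposition~\ref{morphsec} with $(\cB,\id_\cB)$ in place of $(\cA,f)$: since $\cB$ is a geometric base ring, the map $\varphi \mapsto (\varphi^\sharp(T_1),\dotsc,\varphi^\sharp(T_n))$ identifies $\Hom_{\cB-\An}(Y,\E{n}{\cB})$ with $\Gamma(Y,\cO_Y)^n$, naturally in~$Y$. Second, I would invoke proposition~\ref{morphsec} as stated, with the bounded morphism $f \colon \cA \to \cB$ and $\cB' = \cB$ (still a geometric base ring), obtaining that $\varphi \mapsto (\varphi^\sharp(T_1),\dotsc,\varphi^\sharp(T_n))$ identifies $\Hom_{\An_{\cA},f}(Y,\E{n}{\cB}) = \Hom_{\An_{\cA},f}(Y,\E{n}{\cA})$ — wait, more precisely $\Hom_{\An_{\cA},f}(Y,\E{n}{\cA})$ — with $\Gamma(Y,\cO_Y)^n$. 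Composing one bijection with the inverse of the other yields the desired bijection $B_{\E{n}{\cB},\id}(Y) \simto B_{\E{n}{\cA},\cB}(Y)$, and since both identifications are the ``same formula'' $\varphi \mapsto \varphi^\sharp$ applied to the coordinates, the composite bijection is precisely post-composition with $\tilde f_{n}$: indeed $\tilde f_{n}^\sharp(T_i) = T_i$ for each~$i$ by construction in example~\ref{ex:tildefn}, so $(\tilde f_{n} \circ \psi)^\sharp(T_i) = \psi^\sharp(T_i)$, which shows that $\psi \mapsto \tilde f_{n}\circ \psi$ realizes exactly the matching of global sections. Third, naturality in~$Y$ of the whole construction follows from the naturality clause in proposition~\ref{morphsec} (the transformations $s$ are natural), so $\tilde f_{n}$ indeed represents the functor in the sense of definition~\ref{def:extensionscalaire}.

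\textbf{Main obstacle.} The genuinely substantive content is entirely hidden in proposition~\ref{morphsec} (hence ultimately in the closedness-of-ideals theorem~\ref{fermeture} and corollary~\ref{limite}); the present proof is a formal bookkeeping argument once that is granted. The one point requiring a little care is verifying that the bijection one obtains is literally ``compose with $\tilde f_{n}$'' and not merely an abstract isomorphism of functors: one must check that for $\psi \colon Y \to \E{n}{\cB}$ the $\cA$-morphism $\tilde f_{n} \circ \psi$ is well-defined (it is a morphism over $f = f \circ \id$ by the composition lemma for morphisms above a morphism of Banach rings) and that $s_Y(\tilde f_{n}\circ \psi) = s_Y(\psi)$ via the identity $\tilde f_n^\sharp(T_i) = T_i$. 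Together with the injectivity half of proposition~\ref{morphsec} (applied over~$\cA$), this forces $\psi \mapsto \tilde f_n \circ \psi$ to be the bijection, and no further verification is needed. Thus $\E{n}{\cB}$ with $\tilde f_n$ represents $B_{\E{n}{\cA},\cB}$, i.e. $\E{n}{\cA} \ho{\cA} \cB \simeq \E{n}{\cB}$.
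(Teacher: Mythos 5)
Your argument is correct and is exactly the paper's route: the paper's proof consists of the single line "le résultat découle de la proposition~\ref{morphsec}", i.e.\ both $\Hom_{\cB-\An}(Y,\E{n}{\cB})$ and $\Hom_{\An_{\cA},f}(Y,\E{n}{\cA})$ are identified with $\Gamma(Y,\cO_{Y})^n$ via that proposition, and the identification is realized by post-composition with $\tilde f_{n}$ since $\tilde f_{n}^\sharp(T_{i})=T_{i}$. Your write-up merely makes explicit the two applications of proposition~\ref{morphsec} (over $\id_{\cB}$ and over $f$) and the naturality check that the paper leaves implicit.
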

\begin{proof}
Le r\'esultat d\'ecoule de la proposition~\ref{morphsec}.
\end{proof}

\begin{theo}\label{thm:extensionB}
Tout espace $\cA$-analytique s'\'etend \`a~$\cB$.
\end{theo}
\begin{proof}
Soit~$X$ un espace $\cA$-analytique. Si~$X$ est un mod\`ele local $\cA$-analytique, alors le r\'esultat d\'ecoule de la proposition~\ref{prop:extensionBaffine} et du lemme~\ref{lem:extensionBouverts}. Le cas g\'en\'eral s'en d\'eduit en recouvrant~$X$ par des mod\`eles locaux et en recollant les diff\'erentes extensions.
\end{proof}

On peut consid\'erer des extensions des scalaires successives.

\begin{lemm}\label{lem:BB'}
Soit~$X$ un espace $\cA$-analytique. Soit $\cB'$ un anneau de base g\'eom\'etrique et soit $f'\colon \cB \to \cB'$ un morphisme d'anneaux de Banach born\'e.
Alors on a un isomorphisme canonique 
\[X\ho{\cA} \cB' \simtoo (X \ho{\cA} \cB)\ho{\cB} \cB'.\]
\end{lemm}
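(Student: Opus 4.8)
The plan is to prove the transitivity of scalar extension by the standard Yoneda-style argument: one checks that $(X \ho{\cA} \cB)\ho{\cB} \cB'$ represents the same functor on $\cB'-\An$ as $X \ho{\cA} \cB'$, and concludes by Yoneda. Both spaces exist as objects of $\cB'-\An$ thanks to Theorem~\ref{thm:extensionB} (applied twice on the right-hand side, once via $\cB$ and once via $\cB'$, and once on the left-hand side via the composite $\cA \to \cB'$), so the only issue is to produce a natural isomorphism of the representing functors.

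First I would unwind the definitions. Let $g \colon \cB \to \cB'$ and let $h := g \circ f \colon \cA \to \cB'$ be the composite (where $f \colon \cA \to \cB$ is the morphism fixed at the start of the section). For a $\cB'$-analytic space $Y$, the functor represented by $X \ho{\cA} \cB'$ sends $Y$ to $\Hom_{\An_{\cA},h}(Y,X)$. On the other hand, the functor represented by $(X \ho{\cA} \cB)\ho{\cB} \cB'$ sends $Y$ to $\Hom_{\An_{\cB},g}(Y, X\ho{\cA}\cB)$, which by the universal property defining $X\ho{\cA}\cB$ (Definition~\ref{def:extensionscalaire}) is in bijection with $\Hom_{\An_{\cA},f}(X\ho{\cA}\cB\text{-composite with } Y\to X\ho{\cA}\cB \ldots)$. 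More precisely: a morphism $Y \to X\ho{\cA}\cB$ over $g$, post-composed with the canonical morphism $\pi_{\cB}\colon X\ho{\cA}\cB \to X$ (which is a morphism over $f$), gives a morphism $Y \to X$ over $g \circ f = h$; conversely, a morphism $Y \to X$ over $h$ factors uniquely through $\pi_{\cB}$ as a morphism over $g$ by the very representability property of $B_{X,\cB}$ applied to the $\cB'$-analytic space $Y$ viewed as a $\cB$-analytic space over $g$ — here one uses that composing with $g$ turns a $\cB'$-analytic space into an object over $\cB$, and that a morphism over $h$ into $X$ is the same datum as a morphism over $f$ from that object. Chasing these bijections shows $\Hom_{\An_{\cB},g}(Y, X\ho{\cA}\cB) \simeq \Hom_{\An_{\cA},h}(Y,X)$, naturally in $Y$.

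Then I would verify naturality: given a morphism $Y' \to Y$ of $\cB'$-analytic spaces, the two composition operations (post-compose with $\pi_{\cB}$; and the universal factorization) both commute with precomposition by $Y' \to Y$, essentially because composition in $\An_{\cA}$ is associative and the canonical morphisms are fixed. This makes the displayed bijection a natural isomorphism of functors $\cB'-\An \to \Ens$, and Yoneda's lemma then produces the canonical isomorphism $X\ho{\cA}\cB' \xrightarrow{\sim} (X\ho{\cA}\cB)\ho{\cB}\cB'$ of $\cB'$-analytic spaces.

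The main obstacle is largely bookkeeping rather than genuine difficulty: one must be careful that all the Hom-sets are taken in the correct category (over which base ring, and over which morphism of Banach rings), since the categories $\cA-\An$ and $\An_{\cA}$ and their ``over $f$'' variants are distinct, and the identifications such as ``a $\cB'$-analytic space is canonically a $\cB$-analytic space over $g$, hence an $\cA$-analytic space over $h$'' must be invoked consistently. It is also worth remarking explicitly that $\cB'$ being an anneau de base géométrique is used to guarantee that the relevant extensions exist (so that the functors are indeed representable), and that the hypotheses on $\cB$ made at the start of the section are in force throughout. Once the functorial identifications are written down with care, the proof is a one-line appeal to Yoneda.
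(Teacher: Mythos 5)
Your overall strategy (identify the two functors on $\cB'-\An$ and apply Yoneda) is reasonable, but the key step hides a genuine gap. The universal property recorded in Definition~\ref{def:extensionscalaire} says only that, for $Y$ a \emph{$\cB$-analytic space}, one has $\Hom_{\cB-\An}(Y, X\ho{\cA}\cB)\simeq \Hom_{\An_{\cA},f}(Y,X)$. What your argument needs is the stronger, ``enhanced'' statement
\[\Hom_{\An_{\cB},f'}(Y, X\ho{\cA}\cB)\;\simeq\;\Hom_{\An_{\cA},f'\circ f}(Y,X)\]
for $Y$ a \emph{$\cB'$-analytic space}. Your justification --- ``the representability of $B_{X,\cB}$ applied to $Y$ viewed as a $\cB$-analytic space over $g$'' --- is not licensed: a $\cB'$-analytic space is an object of $\An_{\cB}$ (via $f'$) but it is \emph{not} an object of $\cB-\An$ (it is locally modelled on closed analytic subsets of opens of $\E{n}{\cB'}$, not of $\E{n}{\cB}$), and the functor $B_{X,\cB}$ is only defined on $\cB-\An$. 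So the backward direction of your bijection (factoring a morphism $Y\to X$ over $f'\circ f$ through $\pi_{\cB}$ as a morphism over $f'$) does not follow formally from the stated universal property; it is essentially equivalent to the lemma itself.

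The enhanced universal property is true, but proving it requires geometric input: for affine spaces it is exactly Proposition~\ref{morphsec} (which is stated in the enhanced form and yields $\E{n}{\cA}\ho{\cA}\cB'\simeq\E{n}{\cB'}\simeq(\E{n}{\cA}\ho{\cA}\cB)\ho{\cB}\cB'$), one then passes to local models using Lemma~\ref{lem:extensionBouverts} (compatibility of extension of scalars with open subsets and closed analytic subsets), and finally one glues. This is precisely the paper's proof. To repair your write-up, either follow that route directly, or first state and prove the enhanced universal property of $X\ho{\cA}\cB$ (by the same affine/local-model/gluing argument) and only then invoke Yoneda.
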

\begin{proof}
Lorsque~$X$ est un espace affine analytique, le r\'esultat d\'ecoule de la proposition~\ref{prop:extensionBaffine}. Celui pour les mod\`eles locaux analytiques s'en d\'eduit par le lemme~\ref{lem:extensionBouverts}, et, finalement, le cas g\'en\'eral, par recollement.
\end{proof}

On peut \'egalement combiner extension des scalaires et analytification.

\begin{lemm}\label{lem:anB}\index{Analytification!d'un schema@d'un sch\'ema}
Soit~$\cX$ un sch\'ema localement de pr\'esentation finie sur~$\cA$. Alors on a un isomorphisme d'espaces $\cB$-analytiques canonique 
\[\cX^\an \ho{\cA} \cB  \simtoo (\cX\otimes_{\cA} \cB)^\an.\]
\end{lemm}
\begin{proof}
Lorsque~$\cX$ est un espace affine, le r\'esultat d\'ecoule de la proposition~\ref{prop:extensionBaffine}. Celui pour les sch\'emas affines s'en d\'eduit par le lemme~\ref{lem:extensionBouverts} et la construction de l'analytifi\'e (deuxi\`eme point de la preuve du th\'eor\`eme~\ref{thm:analytification}). Le cas g\'en\'eral s'en d\'eduit  par recollement (troisii\`eme point de la preuve du th\'eor\`eme~\ref{thm:analytification}).
\end{proof}

Il est souvent utile de consid\'erer l'extension des scalaires d'un espace au corps r\'esiduel compl\'et\'e d'un de ses points.

\begin{lemm}\label{lem:morphismexHx}\index{Morphisme analytique!associe a un point@associ\'e \`a un point}
Soit~$X$ un espace $\cA$-analytique. Le morphisme $\lambda_{x} \colon \cM(\cH(x)) \to X$ de l'exemple~\ref{ex:morphismex} induit un morphisme
\[\lambda_{x} \ho{\cA}  \cH(x) \colon \cM(\cH(x)) \too X \ho{\cA} \cH(x) \]
qui est une immersion ferm\'ee. Son image~$x'$ est envoy\'ee sur~$x$ par le morphisme $X \ho{\cA} \cH(x)  \to X$ et le morphisme canonique $\cH(x) \to \cH(x')$ est un isomorphisme isom\'etrique.
\end{lemm}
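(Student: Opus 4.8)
\textbf{Plan de la preuve du lemme~\ref{lem:morphismexHx}.}

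L'approche est de travailler localement au voisinage de~$x$ et de se ramener au cas d'un espace affine analytique, puis d'un mod\`ele local, o\`u l'on peut effectuer des calculs explicites. D'abord je remarque que, puisque~$\cM(\cH(x))$ est un point, il suffit de construire le morphisme $\lambda_{x} \ho{\cA} \cH(x)$ et de v\'erifier les propri\'et\'es annonc\'ees apr\`es restriction \`a un voisinage ouvert de~$x$ dans~$X$ (le lieu d'une immersion ferm\'ee \'etant local au but, d'apr\`es la proposition~\ref{crit\`ere_local_immersion}). On peut donc supposer que~$X$ est un ferm\'e analytique d'un ouvert~$U$ de~$\E{n}{\cA}$, d\'efini par un faisceau d'id\'eaux coh\'erent~$\cI$. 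Le morphisme $\lambda_{x} \colon \cM(\cH(x)) \to X$ provient alors, par construction (\cf~exemple~\ref{ex:morphismex}), du morphisme d'\'evaluation $\ev_{x} \colon \cA[T_{1},\dotsc,T_{n}] \to \cH(x)$, qui est un morphisme de $\cA$-alg\`ebres born\'e au-dessus du morphisme $\cA \to \cH(x)$.

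Ensuite, j'utilise la propri\'et\'e universelle de l'extension des scalaires. Le morphisme~$\lambda_{x}$ est un morphisme analytique au-dessus de~$f \colon \cA \to \cH(x)$, donc, par d\'efinition de $X \ho{\cA} \cH(x)$ (\cf~d\'efinition~\ref{def:extensionscalaire}), il se factorise de mani\`ere unique en
\[\cM(\cH(x)) \xrightarrow[]{\lambda_{x} \ho{\cA} \cH(x)} X \ho{\cA} \cH(x) \xrightarrow[]{\pi_{\cH(x)}} X,\]
ce qui d\'efinit le morphisme cherch\'e et montre d\'ej\`a que l'image~$x'$ de $\lambda_{x} \ho{\cA} \cH(x)$ est envoy\'ee sur~$x$ par~$\pi_{\cH(x)}$. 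Pour identifier explicitement~$X \ho{\cA} \cH(x)$ et ce morphisme, j'invoque la proposition~\ref{prop:extensionBaffine} : l'extension de~$\E{n}{\cA}$ \`a~$\cH(x)$ est~$\E{n}{\cH(x)}$, et celle de~$U$ est $\tilde f_{n}^{-1}(U)$ d'apr\`es le lemme~\ref{lem:extensionBouverts}~i), et celle de~$X$ est le ferm\'e analytique de $\tilde f_{n}^{-1}(U)$ d\'efini par $\tilde f_{n}^\ast \cI$ d'apr\`es le lemme~\ref{lem:extensionBouverts}~ii). Le point~$x'$ est alors le point de~$\E{n}{\cH(x)}$ correspondant \`a la semi-norme sur $\cH(x)[T_{1},\dotsc,T_{n}]$ obtenue en composant l'\'evaluation $T_{i} \mapsto T_{i}(x) \in \cH(x)$ avec la valeur absolue de~$\cH(x)$ ; c'est un point \emph{rationnel} au sens o\`u le morphisme d'\'evaluation $\cH(x)[T_{1},\dotsc,T_{n}] \to \cH(x)$ est surjectif de noyau engendr\'e par $(T_{i} - T_{i}(x))_{1 \le i \le n}$.

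Il reste \`a v\'erifier que $\lambda_{x} \ho{\cA} \cH(x)$ est une immersion ferm\'ee et que $\cH(x) \to \cH(x')$ est un isomorphisme isom\'etrique. Pour le second point : par construction, $\kappa(x')$ est un sous-corps dense de~$\cH(x')$ et le morphisme d'\'evaluation en~$x'$ co\"incide avec celui qui envoie chaque $T_{i}$ sur $T_{i}(x) \in \cH(x)$ ; comme les $T_{i}(x)$ engendrent un sous-corps dense de~$\cH(x)$ (le corps $\kappa(x)$, cf.\ la d\'efinition de~$\cH(x)$) et que le morphisme est isom\'etrique par la proposition~\ref{prop:propmorphismeaudessusdef}, on obtient l'isomorphisme isom\'etrique par passage au compl\'et\'e. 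Pour le premier point, on montre directement que l'image de $\lambda_{x} \ho{\cA} \cH(x)$ est le ferm\'e analytique de~$X \ho{\cA} \cH(x)$ d\'efini par l'id\'eal des fonctions s'annulant en~$x'$ : localement dans~$\E{n}{\cH(x)}$, le point~$x'$ est le ferm\'e analytique d\'efini par l'id\'eal de type fini $(T_{1} - T_{1}(x),\dotsc,T_{n} - T_{n}(x))$ de $\cH(x)[T_{1},\dotsc,T_{n}]$ (qui engendre un faisceau coh\'erent sur l'ouvert $\tilde f_{n}^{-1}(U)$, lequel est nul hors de $\{x'\}$), et l'intersection de ce ferm\'e avec celui d\'efini par $\tilde f_{n}^\ast \cI$ est pr\'ecis\'ement $\cM(\cH(x'))$ avec sa structure r\'eduite. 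La principale difficult\'e est de justifier proprement cette derni\`ere identification, \textit{i.e.} que le morphisme $\lambda_{x} \ho{\cA} \cH(x)$ induit bien un isomorphisme entre~$\cM(\cH(x))$ et ce ferm\'e analytique et pas seulement une bijection ensembliste : cela d\'ecoule du fait que le morphisme de $\cH(x)$-alg\`ebres $\cH(x)[T_{1},\dotsc,T_{n}]/(T_{i} - T_{i}(x))_{i} \to \cH(x)$ est un isomorphisme, et que la structure de ferm\'e analytique induite sur $\{x'\}$ par $(T_{i} - T_{i}(x))_{i} + \tilde f_{n}^\ast\cI$ co\"incide avec celle pour laquelle le faisceau structural vaut $\cH(x') = \cH(x)$, puisque les $T_{i} - T_{i}(x)$ appartiennent d\'ej\`a au noyau de toute \'evaluation donnant naissance \`a $\cH(x')$.
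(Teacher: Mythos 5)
Votre preuve est correcte et suit essentiellement la m\^eme d\'emarche que celle du texte~: localisation au voisinage de~$x$ pour se ramener \`a un ferm\'e analytique d'un ouvert de~$\E{n}{\cA}$, extension des scalaires \`a~$\cH(x)$, puis identification de l'immersion ferm\'ee comme celle d\'efinie par le faisceau d'id\'eaux engendr\'e par $T_{1}-T_{1}(x),\dotsc,T_{n}-T_{n}(x)$ et son tir\'e en arri\`ere sur $X \ho{\cA} \cH(x)$. Votre r\'edaction est simplement plus d\'etaill\'ee sur les points que le texte d\'eclare \og clairs \fg{} (l'isom\'etrie $\cH(x)\to\cH(x')$ et l'identification du faisceau structural du ferm\'e avec~$\cH(x)$).
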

\begin{proof}
L'image de~$\lambda_{x}$ \'etant le point~$x$, on peut localiser au voisinage de~$x$ pour d\'emontrer le r\'esultat. On peut donc supposer que~$X$ est un ferm\'e analytique d'un ouvert~$U$ d'un espace affine~$\E{n}{\cA}$. Notons $T_{1},\dotsc,T_{n}$ les coordonn\'ees sur~$\E{n}{\cA}$. Le morphisme~$\lambda_{x}$ est alors d\'efini comme celui associ\'e au morphisme d'\'evaluation $\cA[T_{1},\dotsc,T_{n}] \to \cH(x)$. 

Par extension des scalaires \`a~$\cH(x)$, on obtient un diagramme commutatif
\[\begin{tikzcd}
\cM(\cH(x)) \arrow[r] \arrow[dr]& X \arrow[r]  & U \arrow[r] & \E{n}{\cA} \\
& X\ho{\cA} \cH(x) \arrow[r] \arrow[u] & U\ho{\cA} \cH(x) \arrow[r] \arrow[u] & \E{n}{\cH(x)} \arrow[u]
\end{tikzcd}\]
dans lequel le morphisme $\cM(\cH(x)) \to  \E{n}{\cH(x)}$ est associ\'e au morphisme
\[ \begin{array}{ccc}
\cA[T_{1},\dotsc,T_{n}] & \too & \cH(x)\\
T_{i} & \mapstoo & T_{i}(x)
\end{array}.\]
En particulier, le morphisme $\cM(\cH(x)) \to  \E{n}{\cH(x)}$ est une immersion ferm\'ee d\'efinie par le faisceau d'id\'eaux~$\cI$ sur~$\E{n}{\cH(x)}$ engendr\'e par $T_{1}-T_{1}(x),\dotsc,T_{n}-T_{n}(x)$. Le morphisme  $\cM(\cH(x)) \to  X\ho{\cA} \cH(x)$ est donc encore  une immersion ferm\'ee, d\'efinie par le tir\'e en arri\`ere de~$\cI$ sur~$X$. 

La derni\`ere partie de l'\'enonc\'e est claire.
\end{proof}
\index{Extension des scalaires|)}

\section{Produits fibr\'es}\label{sec:produitsfibres}
\index{Produit!fibre@fibr\'e|(}

Dans cette section, nous supposerons que $\cA$ est un anneau de base g\'eom\'etrique. Nous allons d\'emontrer que la cat\'egorie~$\cAAn$ admet des produits fibr\'es. La strat\'egie adopt\'ee est la m\^eme qu'\`a la section~\ref{sec:extensionB} pour l'extension des scalaires.

\medskip

Commen\c cons par deux lemmes g\'en\'eraux.

\begin{lemm}\label{lem:produitouverts}
Soient $\varphi\colon X \to Z$ et $\psi \colon Y \to Z$ des morphismes d'espaces $\cA$-analytiques. 
Supposons que le produit fibr\'e de~$X$ et~$Y$ au-dessus de~$Z$ existe dans la cat\'egorie~$\cAAn$. Notons~$P$ ce produit et $p_{X} \colon P \to X$ et $p_{Y}\colon P \to Y$ les deux projections canoniques. 

\begin{enumerate}[i)]
\item Soient~$U$ un ouvert de~$X$ et~$V$ un ouvert de~$Y$. Alors $p_{X}^{-1}(U) \cap p_{Y}^{-1}(V)$ est le produit fibr\'e de~$U$ et~$V$ au-dessus de~$Z$ dans~$\cAAn$.

\item Soient~$F$ un ferm\'e de~$X$ d\'efini par un faisceau d'id\'eaux coh\'erent~$\cI$ et~$G$ un ferm\'e de~$Y$ d\'efini par un faisceau d'id\'eaux coh\'erent~$\cJ$. Alors le ferm\'e analytique de~$P$ d\'efini par l'id\'eal somme des images de ${p_{X}}^\ast \cI$ et ${p_{Y}}^\ast \cJ$ dans~$\cO_{P}$ est le produit fibr\'e de~$F$ et~$G$ au-dessus de~$Z$ dans~$\cAAn$.
\end{enumerate}
\end{lemm}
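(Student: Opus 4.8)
\textbf{Plan of proof for Lemma~\ref{lem:produitouverts}.}
The statement has the same shape as Lemma~\ref{lem:extensionBouverts} (and will be proved the same way): both assertions are purely formal consequences of the universal property, once one knows that open immersions and closed immersions of $\cA$-analytic spaces are monomorphisms and that their formation is compatible with the relevant pullbacks. So the plan is to work entirely at the level of representable functors, using the Yoneda-type description provided by the topology $\Ouv$ (\cf~remark~\ref{rem:Homfaisceau}) and by the characterisation of factorisations through closed immersions (proposition~\ref{immersion_ferm\'e}).

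For~i), first I would set $W := p_{X}^{-1}(U) \cap p_{Y}^{-1}(V)$, an open subspace of~$P$ (remark~\ref{remarque_base_espace_analytique}), and check that the restrictions of $p_{X}$ and $p_{Y}$ send~$W$ into~$U$ and~$V$ respectively, so that $W$ comes equipped with morphisms to~$U$, to~$V$, and compatibly to~$Z$. Then, given any $\cA$-analytic space~$T$ with morphisms $a\colon T\to U$ and $b\colon T\to V$ over~$Z$, composing with the open immersions $U\hookrightarrow X$ and $V\hookrightarrow Y$ and using the universal property of~$P$ yields a unique $g\colon T\to P$; one checks set-theoretically that $g(T)\subset p_{X}^{-1}(U)\cap p_{Y}^{-1}(V) = W$ (because $p_{X}\circ g = (\text{incl})\circ a$ has image in~$U$, and similarly for~$b$), hence $g$ factors uniquely through the open immersion $W\hookrightarrow P$. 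Uniqueness is inherited from uniqueness in~$P$ together with the fact that an open immersion is a monomorphism. This shows $W$ represents the fibre product functor of~$U$ and~$V$ over~$Z$.

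For~ii), set $H$ to be the closed analytic subspace of~$P$ defined by the coherent ideal sheaf $p_{X}^{\ast}\cI + p_{Y}^{\ast}\cJ$, with closed immersion $\iota\colon H\to P$. It carries morphisms to~$F$, $G$, $Z$: indeed $\iota^{\ast}(p_{X}^{\ast}\cI)$ is zero by construction, so by proposition~\ref{immersion_ferm\'e} the composite $p_{X}\circ\iota$ factors through the closed immersion $F\hookrightarrow X$, and symmetrically for~$G$. Conversely, given $T$ with $a\colon T\to F$, $b\colon T\to G$ over~$Z$, composing with $F\hookrightarrow X$, $G\hookrightarrow Y$ and using the universal property of~$P$ gives a unique $g\colon T\to P$; the factorisations of $p_{X}\circ g$ through $F$ and of $p_{Y}\circ g$ through $G$ translate, via proposition~\ref{immersion_ferm\'e}, into the vanishing of $g^{\ast}(p_{X}^{\ast}\cI)$ and $g^{\ast}(p_{Y}^{\ast}\cJ)$, hence of $g^{\ast}(p_{X}^{\ast}\cI + p_{Y}^{\ast}\cJ)$, so $g$ factors uniquely through~$\iota$. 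Uniqueness again follows from uniqueness in~$P$ and the fact that closed immersions are monomorphisms (which itself follows from lemma~\ref{lem:immersionfermeelocann} and the analogous fact for locally ringed spaces). The only mild subtlety — and the point I would be most careful about — is the interplay of the set-theoretic supports with the scheme-theoretic (ideal-sheaf) structures in~ii): one must make sure that $p_{X}^{\ast}\cI + p_{Y}^{\ast}\cJ$ is indeed coherent (a sum of two coherent ideals, hence coherent since $\cO_{P}$ is coherent by theorem~\ref{coherent} pulled back through the local-model description) and that its zero locus is exactly $p_{X}^{-1}(F)\cap p_{Y}^{-1}(G)$, so that $H$ is a well-defined closed analytic subspace; no genuinely hard analysis intervenes, everything reduces to the universal property plus propositions~\ref{immersion_ferm\'e} and~\ref{immersion_ferm\'e2}.
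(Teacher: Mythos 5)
Your proposal is correct and follows exactly the route the paper takes: point~i) is deduced directly from the universal property of the product, and point~ii) reduces, via proposition~\ref{immersion_ferm\'e}, to the equivalence between factoring through a closed immersion and the vanishing of the pulled-back ideal sheaf. The extra care you take (coherence of $p_{X}^{\ast}\cI + p_{Y}^{\ast}\cJ$, monomorphism property of immersions) is sound and merely makes explicit what the paper's two-line proof leaves implicit.
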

\begin{proof}
Le point~i) d\'ecoule directement de la propri\'et\'e universelle du produit. Pour d\'emontrer le point~ii), on utilise la proposition~\ref{immersion_ferm\'e}. 
\end{proof}

L'\'enonc\'e qui suit d\'ecoule du pr\'ec\'edent.

\begin{lemm}\label{lem:changementbaseimmersion}
Soit~$Z$ un espace $\cA$-analytique. Soient~$X$ et~$Y$ des espaces $\cA$-analytiques au-dessus de~$Z$. Supposons qu'il existe un produit fibr\'e~$P$ de~$X$ et~$Y$ au-dessus de~$Z$ dans~$\cAAn$. Soit $\iota \colon X' \to X$ une immersion (resp. immersion ouverte, resp. immersion ferm\'ee) d'espaces $\cA$-analytiques au-dessus de~$Z$. Alors, il existe un produit fibr\'e~$P'$ de~$X'$ et~$Y$ au-dessus de~$Z$ dans~$\cAAn$ et le morphisme canonique $P' \to P$ est une immersion (resp. immersion ouverte, resp. immersion ferm\'ee). 
\qed
\end{lemm}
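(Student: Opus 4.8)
\textbf{Plan for the proof of Lemma~\ref{lem:changementbaseimmersion}.}
The statement is deduced from Lemma~\ref{lem:produitouverts}, and the argument proceeds by handling the two basic types of immersions and then combining them. First I would dispose of the case of an open immersion: if $\iota\colon X' \to X$ realizes $X'$ as an open subspace $U$ of $X$, then by Lemma~\ref{lem:produitouverts}~i) the open subspace $p_{X}^{-1}(U)$ of $P$ is a product fibré of $U$ and $Y$ over $Z$, and the canonical morphism $p_{X}^{-1}(U) \to P$ is the inclusion of an open subspace, hence an open immersion. Next I would treat the case of a closed immersion: writing $X'$ as the closed analytic subspace of $X$ defined by a coherent ideal sheaf $\cI$, Lemma~\ref{lem:produitouverts}~ii) (applied with $G = Y$, i.e.\ $\cJ = 0$) says that the closed analytic subspace of $P$ defined by $p_{X}^{\ast}\cI$ is a product fibré of $X'$ and $Y$ over $Z$, and the canonical morphism to $P$ is by construction a closed immersion.

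For the general case, recall that by definition an immersion factors as $\iota = \iota_{1} \circ \iota_{2}$ where $\iota_{2}\colon X' \to W$ is a closed immersion and $\iota_{1}\colon W \to X$ is an open immersion. Applying the open-immersion case to $\iota_{1}$ produces a product fibré $P_{W}$ of $W$ and $Y$ over $Z$ together with an open immersion $P_{W} \to P$; then applying the closed-immersion case to $\iota_{2}$ (with $P_{W}$ now playing the role of $P$) produces a product fibré $P'$ of $X'$ and $Y$ over $Z$ together with a closed immersion $P' \to P_{W}$. Composing, the canonical morphism $P' \to P$ is the composite of a closed immersion followed by an open immersion, hence an immersion. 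One must check that $P'$ so obtained does represent the correct functor, but this is immediate from the universal properties chained together, together with the fact that an immersion over $Z$ is in particular a morphism over $Z$ so the structural morphisms match up.

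The only point requiring a little care is verifying that at each stage the object produced is a product fibré \emph{in the category $\cA-\An$ and over $Z$}, rather than merely in the category of locally ringed spaces; but this is exactly what Lemma~\ref{lem:produitouverts} provides, since both an open subspace and a closed analytic subspace of an $\cA$-analytic space carry canonical $\cA$-analytic structures (cf.\ Remark~\ref{remarque_base_espace_analytique} and the definition of fermé analytique), and the inclusions are analytic morphisms. There is no real obstacle here: the lemma is a formal consequence of Lemma~\ref{lem:produitouverts} and of the factorization of immersions into open followed by closed immersions, exactly paralleling the scheme-theoretic situation. I would therefore keep the proof to a couple of lines, simply indicating the factorization and the two invocations of Lemma~\ref{lem:produitouverts}, and noting that the compatibility of the structural morphisms to $Z$ is automatic.
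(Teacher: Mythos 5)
Your proposal is correct and is precisely the argument the paper intends: the paper gives no written proof, stating only that the lemma follows from Lemma~\ref{lem:produitouverts}, and your spelling-out (part~i) for open immersions, part~ii) with $\cJ=0$ for closed immersions, then the factorization of a general immersion as a closed immersion followed by an open one) is the expected filling-in of that remark.
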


Nous allons maintenant d\'emontrer des r\'esultats d'existence de produits. 

\begin{prop}\label{prop:produitaffines}\index{Espace affine analytique!produit d'}
Soient $n,m\in \N$. Alors $\E{n+m}{\cA}$ est le produit de~$\E{n}{\cA}$ et~$\E{m}{\cA}$ dans la cat\'egorie~$\cAAn$.
\end{prop}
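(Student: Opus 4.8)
The plan is to reduce the assertion to the universal property of the affine analytic space established in Proposition~\ref{morphsec}. Recall that by Proposition~\ref{morphsec} (applied with $f = \id_{\cA}$, using that $\cA$ is a geometric base ring), for every $\cA$-analytic space~$Y$ the map $s_{Y} \colon \Hom_{\cA-\An}(Y,\E{k}{\cA}) \to \Gamma(Y,\cO_{Y})^k$ sending $\varphi$ to $(\varphi^\sharp(T_1),\dotsc,\varphi^\sharp(T_k))$ is a bijection, natural in~$Y$. First I would fix the two canonical projections $\pi_{n+m,n} \colon \E{n+m}{\cA} \to \E{n}{\cA}$ and $\pi_{n+m,m}^\circ \colon \E{n+m}{\cA} \to \E{m}{\cA}$ (the latter being the projection onto the last $m$ coordinates $T_{n+1},\dotsc,T_{n+m}$, which is an analytic morphism by Example~\ref{exemple_morphisme}), and then check the universal property directly: given an $\cA$-analytic space~$Y$ together with analytic morphisms $\varphi \colon Y \to \E{n}{\cA}$ and $\psi \colon Y \to \E{m}{\cA}$, I must produce a unique analytic morphism $\theta \colon Y \to \E{n+m}{\cA}$ with $\pi_{n+m,n} \circ \theta = \varphi$ and $\pi_{n+m,m}^\circ \circ \theta = \psi$.

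The key step is the following translation. By Proposition~\ref{morphsec}, $\varphi$ corresponds to a tuple $(f_1,\dotsc,f_n) \in \Gamma(Y,\cO_Y)^n$ and $\psi$ to a tuple $(g_1,\dotsc,g_m) \in \Gamma(Y,\cO_Y)^m$; likewise a morphism $\theta \colon Y \to \E{n+m}{\cA}$ corresponds to a tuple $(h_1,\dotsc,h_{n+m}) \in \Gamma(Y,\cO_Y)^{n+m}$. Naturality of~$s$ against the projections $\pi_{n+m,n}$ and $\pi_{n+m,m}^\circ$ shows that the two compatibility conditions on~$\theta$ translate exactly into $(h_1,\dotsc,h_n) = (f_1,\dotsc,f_n)$ and $(h_{n+1},\dotsc,h_{n+m}) = (g_1,\dotsc,g_m)$. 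Hence the tuple $(f_1,\dotsc,f_n,g_1,\dotsc,g_m)$ determines, via~$s_Y^{-1}$, a morphism~$\theta$ satisfying the required identities, and it is the unique such morphism because $s_Y$ is injective. To make the naturality computation precise I would simply note that $\pi_{n+m,n}^\sharp(T_i) = T_i$ for $1 \le i \le n$ and $(\pi_{n+m,m}^\circ)^\sharp(T_j) = T_{n+j}$ for $1 \le j \le m$, so that $s_Y(\pi_{n+m,n} \circ \theta) = (\theta^\sharp(T_1),\dotsc,\theta^\sharp(T_n))$ and $s_Y(\pi_{n+m,m}^\circ \circ \theta) = (\theta^\sharp(T_{n+1}),\dotsc,\theta^\sharp(T_{n+m}))$.

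I do not expect a genuine obstacle here; the statement is essentially a formal consequence of Proposition~\ref{morphsec}. The only point requiring a little care is the bookkeeping of coordinates: one must choose the identification $\cA[T_1,\dotsc,T_{n+m}] \simeq \cA[T_1,\dotsc,T_n] \otimes_{\cA} \cA[T_1,\dotsc,T_m]$ so that the first factor maps to $T_1,\dotsc,T_n$ and the second to $T_{n+1},\dotsc,T_{n+m}$, and verify that the projection onto the last $m$ coordinates is indeed an analytic morphism (which is Example~\ref{exemple_morphisme} applied to $U = \E{n+m}{\cA}$ and the functions $T_{n+1},\dotsc,T_{n+m}$). Everything else is a direct unwinding of the bijection~$s_Y$ and its naturality, so the argument is short. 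This proposition, combined with Lemmas~\ref{lem:produitouverts} and~\ref{lem:changementbaseimmersion}, will then give fibre products of arbitrary $\cA$-analytic spaces by the same gluing strategy as in the proof of Theorem~\ref{thm:extensionB}.
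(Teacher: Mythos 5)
Your proof is correct and follows exactly the route the paper takes: the paper's proof of this proposition is the single line ``Le r\'esultat d\'ecoule de la proposition~\ref{morphsec}'', and your argument is precisely the unwinding of that reduction via the naturality of $s_Y$ and the coordinate bookkeeping for the two projections.
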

\begin{proof}
Le r\'esultat d\'ecoule de la proposition~\ref{morphsec}.
\end{proof}

Puisque chaque espace $\cA$-analytique poss\`ede un morphisme canonique vers~$\cM(\cA)$, le produit dans la cat\'egorie $\cAAn$ co\"incide avec le produit fibr\'e au-dessus de~$\cM(\cA)$. Nous pourrons donc utiliser le lemme~\ref{lem:produitouverts} dans le contexte des produits.

\begin{coro}\label{cor:produitmodeles}\index{Modele local analytique@Mod\`ele local analytique!produit de}
Soient $n,m\in \N$. Soient~$X$ un ferm\'e analytique d'un ouvert de~$\E{n}{\cA}$ et $Y$~un ferm\'e analytique d'un ouvert de~$\E{m}{\cA}$. Alors le produit de~$X$ et~$Y$ existe dans la cat\'egorie~$\cAAn$ et c'est un ferm\'e analytique d'un ouvert de~$\E{n+m}{\cA}$.
\qed
\end{coro}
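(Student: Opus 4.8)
The plan is to deduce the statement about products of local models from the already-established case of affine analytic spaces (Proposition~\ref{prop:produitaffines}) together with the two general reduction lemmas, Lemma~\ref{lem:produitouverts} and Lemma~\ref{lem:changementbaseimmersion}. First I would recall that since every $\cA$-analytic space carries a canonical structural morphism to $\cM(\cA)$, the product in $\cA-\An$ coincides with the fibre product over $\cM(\cA)$; this is exactly the remark preceding the corollary, so the two lemmas above (stated for fibre products over an arbitrary base $Z$, here $Z=\cM(\cA)$) apply directly.

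The proof proceeds in three steps, mirroring the definition of a local model. Step one: by Proposition~\ref{prop:produitaffines}, $\E{n+m}{\cA}$ is the product of $\E{n}{\cA}$ and $\E{m}{\cA}$ in $\cA-\An$. Step two: write $X$ as a closed analytic subset of an open subset $U$ of $\E{n}{\cA}$ and $Y$ as a closed analytic subset of an open subset $V$ of $\E{m}{\cA}$. Applying part~i) of Lemma~\ref{lem:produitouverts} with the open sets $U \subset \E{n}{\cA}$ and $V \subset \E{m}{\cA}$, the preimage $p_{\E{n}{\cA}}^{-1}(U) \cap p_{\E{m}{\cA}}^{-1}(V)$ — which is an open subset of $\E{n+m}{\cA}$, namely $U \times V$ suitably interpreted — is the product of $U$ and $V$ in $\cA-\An$. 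Step three: the closed immersions $X \hookrightarrow U$ and $Y \hookrightarrow V$ are defined by coherent sheaves of ideals $\cI$ on $U$ and $\cJ$ on $V$; by part~ii) of Lemma~\ref{lem:produitouverts} (or by two successive applications of Lemma~\ref{lem:changementbaseimmersion}), the closed analytic subset of $U \times V$ cut out by $p_{U}^\ast \cI + p_{V}^\ast \cJ$ is the product $X \times Y$ in $\cA-\An$, and it is visibly a closed analytic subset of an open subset of $\E{n+m}{\cA}$. This establishes both that the product exists and that it has the required shape.

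The main technical point to be careful about — rather than a genuine obstacle — is checking that the coherent ideal sheaf $p_{U}^\ast \cI + p_{V}^\ast \cJ$ on $U \times V$ is again coherent, so that it legitimately defines a local model: this uses coherence of the structure sheaf of $\E{n+m}{\cA}$ (Theorem~\ref{coherent}), which is available since $\cA$ is a geometric base ring, hence basic and with $\cM(\cA)$ satisfying analytic continuation. One also needs that pullback of a coherent ideal along the (analytic) projection morphisms preserves coherence and that a finite sum of coherent ideals is coherent, both standard. Everything else is a formal consequence of the universal properties already packaged into Lemmas~\ref{lem:produitouverts} and~\ref{lem:changementbaseimmersion}, so no new hard analysis is needed here; the substance was already absorbed into Proposition~\ref{morphsec} and, through it, into the closure theorem for ideals (Theorem~\ref{fermeture}).
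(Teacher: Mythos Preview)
Your proposal is correct and follows exactly the approach the paper intends: the corollary is marked \qed{} because it is an immediate consequence of Proposition~\ref{prop:produitaffines} together with Lemma~\ref{lem:produitouverts} (via the remark that products in $\cA-\An$ are fibre products over $\cM(\cA)$), which is precisely your three-step reduction. Your coherence discussion is accurate but already absorbed into the statement of Lemma~\ref{lem:produitouverts}~ii), so you may simply cite that lemma without further justification.
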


\begin{coro}\label{cor:produit}\index{Produit|textbf}\index{Espace analytique!produit d'|see{Produit}}
La cat\'egorie~$\cAAn$ admet des produits finis.
\end{coro}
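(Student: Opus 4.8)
The final statement, Corollary~\ref{cor:produit}, asserts that the category $\cA-\An$ admits finite products, when $\cA$ is a geometric base ring. The plan is to build the product of two arbitrary $\cA$-analytic spaces $X$ and $Y$ by the same gluing strategy already used for extension of scalars in Section~\ref{sec:extensionB} and for fibred products of closed subspaces: first handle affine analytic spaces, then local models, then glue.

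First I would reduce an empty product to the case of binary products over $\cM(\cA)$: since every $\cA$-analytic space carries a canonical structural morphism to $\cM(\cA)$, the product of $X$ and $Y$ in $\cA-\An$ is the same as the fibred product over $\cM(\cA)$, so it suffices to construct $X \times Y$ for two given spaces. The affine case is Proposition~\ref{prop:produitaffines}: $\E{n+m}{\cA}$ represents the product of $\E{n}{\cA}$ and $\E{m}{\cA}$, via the natural projections coming from Proposition~\ref{morphsec}. The case of local models is Corollary~\ref{cor:produitmodeles}, obtained from the affine case together with Lemma~\ref{lem:produitouverts}: an open subset of $\E{n}{\cA}$ times an open subset of $\E{m}{\cA}$ is an open subset of $\E{n+m}{\cA}$, and closed analytic subsets defined by coherent ideals $\cI$, $\cJ$ give a closed analytic subset of that open set, defined by $p_{X}^\ast\cI + p_{Y}^\ast\cJ$.

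For the general case, I would choose atlases $\{U_i\}_{i}$ of $X$ and $\{V_j\}_{j}$ of $Y$ by local models. By Corollary~\ref{cor:produitmodeles}, each product $U_i \times V_j$ exists in $\cA-\An$ and is again a local model. These pieces glue: for the overlaps, one uses Lemma~\ref{lem:produitouverts}~i), which identifies $p_{U_i}^{-1}(U_i \cap U_{i'}) \cap p_{V_j}^{-1}(V_j \cap V_{j'})$ inside $U_i \times V_j$ with a product $(U_i \cap U_{i'}) \times (V_j \cap V_{j'})$, hence with the corresponding open piece of $U_{i'} \times V_{j'}$, because both represent the same functor; the cocycle condition is automatic since all identifications are canonical isomorphisms between representing objects. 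Gluing the $U_i \times V_j$ along these canonical isomorphisms produces an $\cA$-analytic space $P$ with projections to $X$ and $Y$. Finally one checks the universal property: a morphism $T \to P$ from an $\cA$-analytic space $T$ is, by locality of morphisms to the source (Proposition~\ref{prop:proprietemorphismes}~i)) and the fact that $\Hom_{\cA-\An}(\wc, Z)$ is a sheaf for the topology $\Ouv$ (Remark~\ref{rem:Homfaisceau}), the same data as a compatible family of pairs of morphisms $T \to U_i$, $T \to V_j$, i.e.\ a pair of morphisms $T \to X$, $T \to Y$.

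I do not expect a serious obstacle here: the real work has already been done in establishing Proposition~\ref{morphsec} (which rests on the ideal-closure theorem~\ref{fermeture}), Proposition~\ref{prop:produitaffines}, and Lemma~\ref{lem:produitouverts}. The only point requiring a little care is the gluing bookkeeping — verifying that the transition isomorphisms between the various $U_i \times V_j$ on their overlaps are compatible (satisfy the cocycle condition) — but this is formal once one observes, as above, that every overlap and every transition map is characterized by a universal property and hence canonical. The sheaf property of $\Hom_{\cA-\An}(\wc,Y)$ recorded in Remark~\ref{rem:Homfaisceau} is exactly what makes both the gluing of the $U_i\times V_j$ and the verification of the universal property of the glued space $P$ go through without friction.
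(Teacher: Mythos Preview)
Your proposal is correct and follows exactly the same approach as the paper: reduce to binary products, invoke Corollary~\ref{cor:produitmodeles} for local models, then glue over atlases using Lemma~\ref{lem:produitouverts} to identify overlaps. The paper's own proof is simply a two-sentence summary of this argument, so your version is a faithful (and more detailed) unpacking of it.
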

\begin{proof}
Il suffit de montrer que le produit de deux espaces existe. Soient~$X$ et~$Y$ des espaces $\cA$-analytiques. Si ce sont des mod\`eles locaux $\cA$-analytiques, le r\'esultat d\'ecoule du corollaire~\ref{cor:produitmodeles}. Le cas g\'en\'eral s'en d\'eduit en recouvrant les espaces par des mod\`eles locaux et en recollant les diff\'erents produits. 
\end{proof}

\begin{nota}%
\nomenclature[Ko]{$X\times_{\cA} Y$}{produit d'espaces $\cA$-analytiques~$X$ et~$Y$}
Soient~$X$ et~$Y$ des espaces $\cA$-analytiques. Nous noterons $X\times_{\cA} Y$ leur produit dans la cat\'egorie $\cAAn$.
\end{nota}

L'existence de produits va nous permettre de d\'emontrer l'existence de produits fibr\'es.

\begin{prop}\label{prop:produitfibreZaffine}
Soient $\varphi\colon X \to Z$ et $\psi \colon Y \to Z$ des morphismes d'espaces $\cA$-analytiques. 
Notons $p_{X} \colon X\times_{\cA} Y\to X$ et $p_{Y} \colon X\times_{\cA} Y \to Y$ les deux morphismes de projection. Supposons que~$Z$ soit un ferm\'e analytique d'un ouvert de~$\E{n}{\cA}$. Notons~$T_{1},\dotsc,T_{n}$ les coordonn\'ees sur~$\E{n}{\cA}$ et, pour tout $i\in \cn{1}{n}$, posons 
\[ f_{i} := (\varphi\circ p_{X})^\sharp(T_{i}) \textrm{ et } g_{i} := (\psi\circ p_{Y})^\sharp(T_{i})
\textrm{ dans } \cO(X\times_{\cA} Y) .\]
Alors, le ferm\'e analytique de~$X\times_{\cA} Y$ d\'efini par l'id\'eal $(f_{1}-g_{1},\dotsc,f_{n}-g_{n})$ est le produit fibr\'e de~$X$ et~$Y$ au-dessus de~$Z$.
\end{prop}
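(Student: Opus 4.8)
L'idée est de vérifier la propriété universelle du produit fibré en testant par un espace $\cA$-analytique arbitraire~$W$. Notons $Z'$ le fermé analytique de $X \times_{\cA} Y$ défini par l'idéal $\cJ := (f_{1}-g_{1},\dotsc,f_{n}-g_{n})$, et $\iota \colon Z' \hookrightarrow X \times_{\cA} Y$ l'immersion fermée correspondante ; posons $q_{X} := p_{X} \circ \iota$ et $q_{Y} := p_{Y} \circ \iota$. Il faut d'abord vérifier que $\varphi \circ q_{X} = \psi \circ q_{Y}$ : cela résultera du fait que $Z$ est un fermé analytique de~$U$, ouvert de~$\E{n}{\cA}$, et que les deux morphismes composés sont, après restriction, des morphismes vers $\E{n}{\cA}$ dont les coordonnées sont respectivement $\iota^\sharp(f_{i})$ et $\iota^\sharp(g_{i})$, égales par construction de~$\cJ$ — on invoque ici la proposition~\ref{morphsec} (injectivité de~$s_{X}$) pour conclure à l'égalité des morphismes vers l'espace affine, puis le fait que ces morphismes se factorisent par~$Z$.

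Ensuite, étant donné deux morphismes $u \colon W \to X$ et $v \colon W \to Y$ tels que $\varphi \circ u = \psi \circ v$, on obtient par la propriété universelle du produit (corollaire~\ref{cor:produit}) un unique morphisme $w \colon W \to X \times_{\cA} Y$ avec $p_{X} \circ w = u$ et $p_{Y} \circ w = v$. Il reste à montrer que $w$ se factorise (de façon nécessairement unique) par l'immersion fermée $\iota$. D'après la proposition~\ref{immersion_ferm\'e}, il suffit de vérifier que $w^{*}(\cJ) = 0$, c'est-à-dire que $w^\sharp(f_{i}) = w^\sharp(g_{i})$ dans $\cO(W)$ pour tout~$i$. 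Or $w^\sharp(f_{i}) = u^\sharp(\varphi^\sharp(T_{i}))$ et $w^\sharp(g_{i}) = v^\sharp(\psi^\sharp(T_{i}))$ (en désignant abusivement par $\varphi^\sharp(T_{i})$, $\psi^\sharp(T_{i})$ les images des coordonnées par les morphismes composés $X \to Z \hookrightarrow U$ et $Y \to Z \hookrightarrow U$), et l'hypothèse $\varphi \circ u = \psi \circ v$ donne précisément l'égalité cherchée, après composition avec l'immersion $Z \hookrightarrow \E{n}{\cA}$ et utilisation de nouveau de la proposition~\ref{morphsec}.

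Le point délicat me semble être la gestion soigneuse des factorisations à travers l'immersion fermée $Z \hookrightarrow \E{n}{\cA}$ : un morphisme $\varphi \colon X \to Z$ ne fournit pas directement des fonctions globales $\varphi^\sharp(T_{i})$ sur~$X$, mais seulement après composition avec l'inclusion $Z \hookrightarrow U \hookrightarrow \E{n}{\cA}$, et il faut s'assurer que les diverses identifications (entre $\Hom_{\cA-\An}(W, Z)$ et le sous-ensemble de $\Hom_{\cA-\An}(W, \E{n}{\cA})$ dont l'image tombe dans~$Z$, via le lemme~\ref{lem:immersionfermeelocann}) sont compatibles tout au long de l'argument. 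Une fois ce formalisme en place, l'unicité de~$w'$ factorisant~$w$ à travers~$\iota$ découle de l'unicité dans la proposition~\ref{immersion_ferm\'e}, et l'unicité de~$w$ lui-même de celle dans le corollaire~\ref{cor:produit}. On aura alors établi que $Z'$ muni des projections $q_{X}, q_{Y}$ représente le foncteur $W \mapsto \{(u,v) : \varphi \circ u = \psi \circ v\}$, ce qui est exactement l'énoncé.
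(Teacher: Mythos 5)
Votre démonstration est correcte et suit exactement la même voie que celle du texte, dont la preuve se réduit à la phrase \og Cela d\'ecoule de la proposition~\ref{immersion_ferm\'e} \fg{} : votre rédaction n'est rien d'autre que le déploiement complet de cet argument (critère de factorisation à travers une immersion fermée via l'annulation de l'idéal tiré en arrière, combiné à la proposition~\ref{morphsec} et à la propriété universelle du produit). Le point que vous signalez comme délicat — la gestion des identifications à travers l'immersion $Z \hookrightarrow \E{n}{\cA}$ — est bien traité correctement.
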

\begin{proof}
Cela d\'ecoule de la proposition~\ref{immersion_ferm\'e}.
\end{proof}

\begin{theo}\label{produit_fibr\'e}\index{Produit!fibre@fibr\'e|textbf}\index{Espace analytique!produit fibr\'e d'|see{Produit fibr\'e}}
Pour tous morphismes d'espaces $\cA$-analytiques $\varphi\colon X \to Z$ et $\psi \colon Y \to Z$, le produit fibr\'e de~$X$ et~$Y$ au-dessus de~$Z$ existe et est naturellement un ferm\'e analytique de~$X\times_{\cA} Y$.

En particulier, la cat\'egorie~$\cAAn$ admet des produits fibr\'es finis.
\end{theo}
\begin{proof}
Le r\'esultat s'obtient en recouvrant~$Z$ par des mod\`eles locaux, en appliquant la proposition~\ref{prop:produitfibreZaffine} au-dessus de chacun d'eux, puis en recollant les produits fibr\'es obtenus.
\end{proof}

\begin{nota}
\nomenclature[Kproduitfibre1]{$X\times_{Z} Y$}{produit fibr\'e d'espaces $\cA$-analytiques~$X$ et~$Y$ au-dessus d'un espace $\cA$-analytique~$Z$}
Soient $\varphi\colon X \to Z$ et $\psi \colon Y \to Z$ des morphismes d'espaces $\cA$-analytiques. Nous noterons $X\times_{Z} Y$ le produit fibr\'e de~$X$ et~$Y$ au-dessus de~$Z$ dans la cat\'egorie $\cAAn$ (en sous-entendant les morphismes~$\varphi$ et~$\psi$).
\end{nota}

En utilisant le corollaire~\ref{cor:produitmodeles}, nous obtenons le r\'esultat suivant.

\begin{coro}\label{cor:produitfibremodeles}\index{Modele local analytique@Mod\`ele local analytique!produit fibr\'e de}
Soient $n,m\in \N$. Soient~$X$ un ferm\'e analytique d'un ouvert de~$\E{n}{\cA}$ et $Y$~un ferm\'e analytique d'un ouvert de~$\E{m}{\cA}$. Soient $\varphi\colon X \to Z$ et $\psi \colon Y \to Z$ des morphismes d'espaces $\cA$-analytiques. Alors le produit fibr\'e $X\times_{Z}Y$ est un ferm\'e analytique d'un ouvert de~$\E{n+m}{\cA}$.
\qed
\end{coro}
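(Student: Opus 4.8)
The statement to prove is Corollary~\ref{cor:produitfibremodeles}: if $X$ is a closed analytic subset of an open subset of $\E{n}{\cA}$ and $Y$ is a closed analytic subset of an open subset of $\E{m}{\cA}$, then for any morphisms $\varphi\colon X\to Z$ and $\psi\colon Y\to Z$ the fibered product $X\times_{Z}Y$ is a closed analytic subset of an open subset of $\E{n+m}{\cA}$. The plan is to combine two facts already established: Theorem~\ref{produit_fibr\'e}, which says that $X\times_{Z}Y$ exists and is naturally a closed analytic subset of the product $X\times_{\cA}Y$; and Corollary~\ref{cor:produitmodeles}, which says that the product $X\times_{\cA}Y$ of two local models is itself a closed analytic subset of an open subset of $\E{n+m}{\cA}$. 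So the proof is essentially a two-line composition of these.

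\textbf{Key steps, in order.} First I would invoke Corollary~\ref{cor:produitmodeles} to identify $X\times_{\cA}Y$ with a closed analytic subset of an open subset $W$ of $\E{n+m}{\cA}$; concretely, if $X$ sits inside an open $U\subset\E{n}{\cA}$ and $Y$ inside an open $U'\subset\E{m}{\cA}$, then by the construction underlying Proposition~\ref{prop:produitaffines} the product $U\times_{\cA}U'$ is the open subset of $\E{n+m}{\cA}$ obtained from $U$ and $U'$ (using the projections $\pi_{n+m,n}$ and the appropriate shifted projection onto the last $m$ coordinates, as in Lemma~\ref{lem:produitouverts}~i)), and $X\times_{\cA}Y$ is cut out inside it by the coherent ideal $p_{X}^{\ast}\cI_{X}+p_{Y}^{\ast}\cI_{Y}$, following Lemma~\ref{lem:produitouverts}~ii). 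Second, I would apply Theorem~\ref{produit_fibr\'e}: the fibered product $X\times_{Z}Y$ exists and is a closed analytic subset of $X\times_{\cA}Y$, defined by a coherent ideal sheaf (explicitly, if $Z$ is covered by local models inside $\E{k}{\cA}$, by the ideal generated by the differences $f_{i}-g_{i}$ of Proposition~\ref{prop:produitfibreZaffine}, glued over a cover of $Z$). Third, I would observe that a closed analytic subset of a closed analytic subset of an open subset of $\E{n+m}{\cA}$ is again a closed analytic subset of that open subset: this is immediate from the fact that the composition of two closed immersions is a closed immersion (Proposition~\ref{immersion_ferm\'e2} together with the stability of closed immersions under composition), the relevant coherent ideal on $W$ being the kernel of $\cO_{W}\to (\text{pushforward of }\cO_{X\times_{Z}Y})$, which is coherent by Remark~\ref{rem:supportfaisceau} and the coherence results already available. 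This gives the claimed description, and the final sentence of the corollary (existence of fibered products in $\cA-\An$ for arbitrary local models, hence the parenthetical reminder) is then just a restatement.

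\textbf{Main obstacle.} Honestly there is no serious obstacle here: the corollary is a bookkeeping consequence of Theorem~\ref{produit_fibr\'e} and Corollary~\ref{cor:produitmodeles}, both already proved. The only point requiring a little care is making the identifications of opens and ideals explicit enough that ``closed analytic subset of $X\times_{\cA}Y$'' composes cleanly with ``closed analytic subset of an open of $\E{n+m}{\cA}$'' to yield ``closed analytic subset of an open of $\E{n+m}{\cA}$'' — i.e.\ checking that the relevant ideal sheaves glue to a single coherent ideal on the ambient open $W$ and that the support is as expected. This is handled by the already-cited coherence of the structure sheaf (Theorem~\ref{coherent}, valid since $\cA$ is assumed a geometric base ring, hence basic with analytic continuation) and by the general nonsense on closed immersions of locally ringed spaces in Proposition~\ref{immersion_ferm\'e} and Proposition~\ref{immersion_ferm\'e2}. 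So I expect the proof to be short: a sentence invoking Corollary~\ref{cor:produitmodeles}, a sentence invoking Theorem~\ref{produit_fibr\'e}, and a sentence on composing closed immersions.
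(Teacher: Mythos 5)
Votre démonstration est correcte et suit exactement la même voie que le texte : le théorème~\ref{produit_fibr\'e} réalise $X\times_{Z}Y$ comme fermé analytique de $X\times_{\cA}Y$, le corollaire~\ref{cor:produitmodeles} réalise $X\times_{\cA}Y$ comme fermé analytique d'un ouvert de~$\E{n+m}{\cA}$, et la composition des immersions fermées conclut. Rien à redire.
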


%
%
%

En utilisant les propri\'et\'es universelles, on v\'erifie que le produit fibr\'e commute \`a l'extension des scalaires.

\begin{lemm}\label{lem:produitifbreextensionscalaires}\index{Extension des scalaires}
Soient $\varphi\colon X \to Z$ et $\psi \colon Y \to Z$ des morphismes d'espaces $\cA$-analytiques. 
Soit $f\colon \cA \to \cB$ un morphisme d'anneaux de Banach born\'e, o\`u $\cB$~est un anneau de base g\'eom\'etrique. Alors, on a un isomorphisme canonique
\[(X\times_{Z}Y)\ho{\cA} \cB \simeq (X\ho{\cA} \cB)\times_{Z\ho{\cA} \cB}(Y\ho{\cA} \cB).\]
\qed
\end{lemm}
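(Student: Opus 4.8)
The statement to prove is Lemma~\ref{lem:produitifbreextensionscalaires}: for morphisms of $\cA$-analytic spaces $\varphi\colon X \to Z$ and $\psi \colon Y \to Z$, and a bounded morphism $f\colon \cA \to \cB$ with $\cB$ a geometric base ring, there is a canonical isomorphism $(X\times_{Z}Y)\ho{\cA} \cB \simeq (X\ho{\cA} \cB)\times_{Z\ho{\cA} \cB}(Y\ho{\cA} \cB)$.

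The plan is to argue purely by universal properties, which is the standard route for such compatibilities; there is no need to unwind coordinates or local models. First I would recall that, by Theorem~\ref{thm:extensionB}, every $\cA$-analytic space extends to $\cB$, so all the objects appearing in the statement are well defined. I would also note (from Definition~\ref{def:extensionscalaire}) that for an $\cA$-analytic space $W$ and a $\cB$-analytic space $T$, the extension $W\ho{\cA}\cB$ represents the functor $T \mapsto \Hom_{\An_{\cA},f}(T,W)$, and that (by the definition of products and fibre products over an $\cA$-analytic base, together with Theorem~\ref{produit_fibr\'e}) the fibre product $X\times_Z Y$ represents, on $\cA-\An$, the functor $S\mapsto \Hom_{\cA-\An}(S,X)\times_{\Hom_{\cA-\An}(S,Z)}\Hom_{\cA-\An}(S,Y)$.

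The heart of the argument is then a chain of natural bijections: for any $\cB$-analytic space $T$,
\begin{align*}
\Hom_{\cB-\An}\bigl(T,(X\times_Z Y)\ho{\cA}\cB\bigr)
&\simeq \Hom_{\An_{\cA},f}(T,X\times_Z Y)\\
&\simeq \Hom_{\An_{\cA},f}(T,X)\times_{\Hom_{\An_{\cA},f}(T,Z)}\Hom_{\An_{\cA},f}(T,Y)\\
&\simeq \Hom_{\cB-\An}(T,X\ho{\cA}\cB)\times_{\Hom_{\cB-\An}(T,Z\ho{\cA}\cB)}\Hom_{\cB-\An}(T,Y\ho{\cA}\cB)\\
&\simeq \Hom_{\cB-\An}\bigl(T,(X\ho{\cA}\cB)\times_{Z\ho{\cA}\cB}(Y\ho{\cA}\cB)\bigr).
\end{align*}
The first and third lines use the universal property of extension of scalars (Definition~\ref{def:extensionscalaire}); the last line uses the universal property of the fibre product in $\cB-\An$ (valid since $\cB$ is a geometric base ring, so Theorem~\ref{produit_fibr\'e} applies over $\cB$). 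The second line is the only step requiring a small verification: one must check that a morphism $T\to X\times_Z Y$ over $f$ is the same datum as a pair of morphisms $T\to X$, $T\to Y$ over $f$ agreeing after composition to $Z$. This follows by reduction to the case where $Z$ is a local model, where $X\times_Z Y$ sits inside $X\times_\cA Y$ as the closed analytic subspace cut out by the equations $f_i - g_i$ (Proposition~\ref{prop:produitfibreZaffine}): a morphism over $f$ into $X\times_\cA Y$ is a pair of morphisms over $f$ into $X$ and $Y$ by the universal property of the product, and the factorization through the closed subspace is governed by Proposition~\ref{immersion_ferm\'e}, exactly as over $\cA$. Glueing over a cover of $Z$ by local models then handles the general case; the glueing is legitimate because each piece represents a functor, so the identifications on overlaps are forced.

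By Yoneda, the chain of natural isomorphisms of functors on $\cB-\An$ yields the desired canonical isomorphism of $\cB$-analytic spaces. The main (mild) obstacle is simply being careful that the "morphisms over $f$" in the middle step compose correctly and that the functor-of-points description of the fibre product transports verbatim to the relative setting $\An_{\cA}$; once this is granted, everything is formal. I would close by remarking that the same argument, with $Z$ taken to be $\cM(\cA)$ (resp.\ the fibre product replaced by the product), recovers the compatibility of products with extension of scalars, and that an analogous transitivity statement has already been recorded in Lemma~\ref{lem:BB'}.
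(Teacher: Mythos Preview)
Your proposal is correct and follows exactly the approach the paper indicates: the paper's proof is simply ``En utilisant les propri\'et\'es universelles, on v\'erifie que le produit fibr\'e commute \`a l'extension des scalaires'' followed by \qed, and your Yoneda argument via the chain of natural bijections is precisely the unwinding of that sentence. Your care in justifying the middle step (that $\Hom_{\An_{\cA},f}(T,-)$ sends the fibre product $X\times_Z Y$ to the fibre product of Hom-sets, via Proposition~\ref{prop:produitfibreZaffine} and Proposition~\ref{immersion_ferm\'e}) is appropriate and fills in the only point the paper leaves implicit.
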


On peut \'egalement combiner produit fibr\'e et analytification.

\begin{lemm}\label{lem:produitan}\index{Analytification!d'un schema@d'un sch\'ema}
Soient $f \colon \cX \to \cZ$ et $g\colon \cY \to \cZ$ des morphismes de sch\'emas localement de pr\'esentation finie sur~$\cA$. Alors on a un isomorphisme d'espaces $\cA$-analytiques canonique 
\[(\cX\times_{\cZ}\cY)^\an \simtoo \cX^\an \times_{\cZ^\an} \cY^\an.\]
\end{lemm}
\begin{proof}
La propri\'et\'e universelle du produit fibr\'e permet de construire un morphisme canonique $(\cX\times_{\cZ}\cY)^\an \to \cX^\an \times_{\cZ^\an} \cY^\an$. 

En utilisant la propri\'et\'e universelle de l'analytification et le fait que les produits fibr\'es dans la cat\'egorie des sch\'emas sont \'egalement des produits fibr\'es dans la cat\'egorie des espaces localement annel\'es (\cf~\cite[\href{https://stacks.math.columbia.edu/tag/01JN}{Tag 01JN}]{stacks-project}), 
on construit un morphisme canonique $\cX^\an \times_{\cZ^\an} \cY^\an \to (\cX\times_{\cZ}\cY)^\an$. 

On v\'erifie que ces morphismes sont inverses l'un de l'autre, ce qui permet de conclure. 
\end{proof}

En utilisant l'extension des scalaires, 
on peut obtenir un \'enonc\'e plus g\'en\'eral d'existence de produits fibr\'es. Il se d\'emontre directement \`a l'aide des r\'esultats d\'ej\`a obtenus.

\begin{prop}\label{prop:produitfibreAnA}\index{Extension des scalaires}
Soit~$\varphi \colon X\to Z$ un morphisme d'espaces~$\cA$-analytiques. Soit $f\colon \cA \to \cB$ un morphisme d'anneaux de Banach born\'e, o\`u $\cB$~est un anneau de base g\'eom\'etrique.
Soient~$Y$ un espace~$\cB$-analytique et $\psi \colon Y\to Z$ un morphisme d'espaces analytiques au-dessus de~$f$. Alors l'espace $\cB$-analytique $(X\ho{\cA} \cB)\times_{Z\ho{\cA} \cB} Y$ repr\'esente le foncteur qui \`a tout espace analytique~$T$ au-dessus de~$\cB$ associe l'ensemble des diagrammes commutatifs d'espaces analytiques au-dessus de~$\cA$ de la forme
\[\begin{tikzcd}
T\ar[r]\ar[d]&X\ar[d]\\
Y\ar[r]&Z 
\end{tikzcd}.\]
\qed
\end{prop}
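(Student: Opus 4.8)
Le but est d'identifier l'espace $\cB$-analytique $P := (X\ho{\cA} \cB)\times_{Z\ho{\cA} \cB} Y$ avec le foncteur des carr\'es commutatifs annonc\'es. La strat\'egie consiste simplement \`a enchaîner les propri\'et\'es universelles d\'ej\`a \'etablies. D'abord je d\'eroulerais la d\'efinition : pour tout espace analytique $T$ au-dessus de~$\cB$, un morphisme $T\to P$ dans la cat\'egorie $\cB-\An$ est, par la propri\'et\'e universelle du produit fibr\'e (th\'eor\`eme~\ref{produit_fibr\'e}, appliqu\'e ici dans $\cB-\An$ puisque $\cB$ est un anneau de base g\'eom\'etrique), la donn\'ee d'un couple de morphismes $T\to X\ho{\cA}\cB$ et $T\to Y$ rendant commutatif le triangle au-dessus de $Z\ho{\cA}\cB$.

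Ensuite j'utiliserais la propri\'et\'e universelle de l'extension des scalaires (d\'efinition~\ref{def:extensionscalaire} et th\'eor\`eme~\ref{thm:extensionB}) : un morphisme d'espaces $\cB$-analytiques $T\to X\ho{\cA}\cB$ correspond bijectivement \`a un morphisme $T\to X$ d'espaces analytiques au-dessus de~$f$, c'est-\`a-dire \`a un \'el\'ement de $\Hom_{\An_{\cA},f}(T,X)$, et de m\^eme pour $Z$. Le point d\'elicat est la compatibilit\'e : il faut v\'erifier que la condition de commutativit\'e \og au-dessus de $Z\ho{\cA}\cB$ \fg{} dans $\cB-\An$ se traduit exactement, via ces bijections, par la commutativit\'e \og au-dessus de $Z$ \fg{} dans $\An_{\cA}$. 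Pour cela j'invoquerais la naturalit\'e de la bijection d\'efinissant l'extension des scalaires (elle provient d'un isomorphisme de foncteurs $B_{X,\cB}\simeq \Hom_{\An_\cA,f}(\wc,X)$), qui assure que le carr\'e
\[\begin{tikzcd}
\Hom_{\cB-\An}(T, X\ho{\cA}\cB) \ar[r, "\sim"] \ar[d] & \Hom_{\An_\cA,f}(T,X) \ar[d]\\
\Hom_{\cB-\An}(T, Z\ho{\cA}\cB) \ar[r, "\sim"] & \Hom_{\An_\cA,f}(T,Z)
\end{tikzcd}\]
commute, les fl\`eches verticales \'etant induites par $\varphi\ho{\cA}\cB$ et $\varphi$ respectivement. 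De m\^eme, le morphisme $\psi\colon Y\to Z$ d\'efinit par composition avec $Z\to Z\ho{\cA}\cB$ (ou plut\^ot via le morphisme canonique $Z\ho{\cA}\cB\to Z$) la fl\`eche $Y\to Z\ho{\cA}\cB$ utilis\'ee dans la formation de~$P$, et sa compatibilit\'e avec le morphisme structural est exactement celle demand\'ee.

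En combinant ces identifications, un morphisme $T\to P$ correspond \`a un couple form\'e d'un \'el\'ement de $\Hom_{\An_\cA,f}(T,X)$ et d'un morphisme $T\to Y$ d'espaces $\cB$-analytiques (donc a fortiori au-dessus de~$f$ via $Y\to\cM(\cB)\to\cM(\cA)$), tels que leurs compositions respectives avec $\varphi$ et $\psi$ co\"incident comme morphismes $T\to Z$ au-dessus de~$\cA$ : c'est pr\'ecis\'ement la donn\'ee d'un carr\'e commutatif d'espaces analytiques au-dessus de~$\cA$ de la forme voulue. La bijection \'etant fonctorielle en~$T$ (toutes les \'etapes le sont), $P$ repr\'esente bien le foncteur annonc\'e. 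Le principal obstacle technique est la v\'erification soigneuse de la naturalit\'e \'evoqu\'ee ci-dessus et le fait que, dans la d\'efinition du foncteur cible, les morphismes $T\to Y$ et $Y\to Z$ sont pris dans $\An_{\cA}$ (au-dessus de morphismes d'anneaux de Banach) alors que dans la construction de~$P$ on travaille dans $\cB-\An$ ; mais puisque tout espace $\cB$-analytique est canoniquement un espace analytique au-dessus de~$\cA$ et que les $\Hom$ correspondants co\"incident lorsque le morphisme de base est fix\'e \'egal \`a~$f$, ces deux points de vue se recollent sans difficult\'e suppl\'ementaire.
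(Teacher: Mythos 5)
Votre preuve est correcte et suit exactement la voie que le texte a en tête : le papier se contente d'affirmer que l'énoncé « se démontre directement à l'aide des résultats déjà obtenus », et votre proposition déroule précisément cet argument (propriété universelle du produit fibré dans $\cB-\An$, propriété universelle de l'extension des scalaires, et naturalité des bijections pour faire coïncider la commutativité au-dessus de $Z\ho{\cA}\cB$ avec celle au-dessus de $Z$ dans $\An_{\cA}$). Rien à redire.
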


Dans le cadre de la proposition pr\'ec\'edente, nous adopterons parfois la notation simplifi\'ee
\[ X \times_{Z} Y := (X\ho{\cA} \cB)\times_{Z\ho{\cA} \cB} Y.\]
\nomenclature[Kproduitfibre2]{$X\times_{Z} Y$}{produit fibr\'e d'un espace $\cA$-analytique~$X$ et d'un espace $\cB$-analytique~$Y$ au-dessus d'un espace $\cA$-analytique~$Z$}

\medbreak

D\'efinissons maintenant les morphismes diagonaux.\index{Morphisme analytique!diagonal|(}

\begin{defi}\label{def:diagonale}\index{Morphisme analytique!diagonal|textbf}%
\nomenclature[Kqb]{$\Delta_{X/Y}$}{pour un espace $\cA$-analytique~$X$ au-dessus d'un espace $\cA$-analytique~$Y$, morphisme diagonal $X\to X\times_{Y} X$}%
\nomenclature[Kqa]{$\Delta_{X}$}{pour un espace $\cA$-analytique~$X$, morphisme diagonal $X\to X\times_{\cA} X$}%
Soient $X$, $Y$ des espaces $\cA$-analytiques et $\varphi \colon X\to Y$ un morphisme. On note $p_{1} \colon X\times_{Y} X \to X$ et $p_{2} \colon X\times_{Y} X \to X$ les deux projections canoniques. On appelle \emph{morphisme diagonal} l'unique morphisme $\Delta_{X/Y} \colon X\to X\times_{Y} X$ qui fait commuter le diagramme
\[\begin{tikzcd}
X \arrow[rd, "\Delta_{X/Y}"] \arrow[rrd, bend left, "\id_{X}"] \arrow[rdd, bend right, "\id_{X}"'] & &\\
&X\times_{Y} X \arrow[r, "p_{2}"] \arrow[d, "p_{1}"']& X \arrow[d, "\varphi"]\\
& X \arrow[r, "\varphi"]& Y
\end{tikzcd}.\]
Dans le cas o\`u $Y = \cM(\cA)$ et~$\varphi$ est le morphisme structural, on pose $\Delta_{X} := \Delta_{X/\cM(\cA)}$. 
\end{defi}

\begin{prop}\label{prop:immersiondiagonalemodelelocal}\index{Immersion!fermee@ferm\'ee}
Soient $X$, $Y$ des espaces $\cA$-analytiques et $\varphi \colon X\to Y$ un morphisme. Supposons que $X$ est un mod\`ele local $\cA$-analytique. Alors, le morphisme diagonal $\Delta_{X/Y} \colon X\to X\times_{Y} X$ est une immersion ferm\'ee.
\end{prop}
\begin{proof}
Par hypoth\`ese, $X$ s'identifie \`a un ferm\'e analytique d'un ouvert de~$\E{n}{\cA}$. Notons $T_{1},\dotsc,T_{n}$ les coordonn\'ees sur l'espace~$\E{n}{\cA}$ et identiquement leur restriction \`a~$X$. Posons $\Delta := \Delta_{X/Y}$. 

Soit $\cI$ le faisceau d'id\'eaux de~$X \times_{Y} X$ engendr\'e par les sections $p_{1}^\ast T_{j} - p_{2}^\ast T_{j}$ avec $j \in\cn{1}{m}$, $F$ le ferm\'e analytique de~$X\times_{Y} X$ qu'il d\'efinit et $\iota \colon F \to X \times_{Y} X$ l'immersion ferm\'ee associ\'ee. Pour tout $i\in \cn{1}{m}$, on a 
\begin{align*}
\Delta^\ast (p_{1}^\ast T_{j} - p_{2}^\ast T_{j}) &= \Delta^\ast p_{1}^\ast T_{j}  - \Delta^\ast p_{2}^\ast T_{j}\\
&= T_{j} - T_{j}\\
&=0
\end{align*}
donc, d'apr\`es la proposition~\ref{immersion_ferm\'e}, il existe un morphisme $\psi \colon X \to F$ tel que $\Delta= \iota \circ \psi$. 

Montrons que~$\psi$ est un isomorphisme, d'inverse $p_{1} \circ \iota$. Cela entra\^inera que~$\Delta$ est une immersion ferm\'ee, comme d\'esir\'e. Remarquons, tout d'abord, que l'on a 
\[p_{1} \circ \iota \circ \psi = p_{1} \circ \Delta = \id_{X}\]
par d\'efinition. 

Calculons maintenant  $\psi \circ p_{1} \circ \iota$. D'apr\`es le corollaire~\ref{cor:produitfibremodeles}, $X\times_{Y} X$ est un ferm\'e analytique d'un ouvert de~$\E{2n}{\cA}$. Les fonctions $p_{1}^\ast T_{1},\dotsc, p_{1}^\ast T_{n},p_{2}^\ast T_{1},\dotsc, p_{2}^\ast T_{n}$ forment un syst\`eme de coordonn\'ees sur~$\E{2n}{\cA}$. Pour tout $i\in \cn{1}{n}$, on a 
\[ (\psi \circ p_{1} \circ \iota)^\ast \iota^\ast p_{1}^\ast T_{i} = \iota^\ast p_{1}^\ast \psi^\ast \iota^\ast p_{1}^\ast T_{i} = \iota^\ast p_{1}^\ast T_{i},\]
et, pour  tout $j\in \cn{1}{m}$, on a
\[ (\psi \circ p_{1} \circ \iota)^\ast \iota^\ast p_{2}^\ast T_{j} = \iota^\ast p_{1}^\ast \psi^\ast \iota^\ast p_{2}^\ast T_{j} =  \iota^\ast p_{1}^\ast T_{j} = \iota^\ast p_{2}^\ast T_{j},\]
par d\'efinition de~$F$. On d\'eduit alors de la proposition~\ref{morphsec} que $\psi \circ p_{1} \circ \iota = \id_{X}$.
\end{proof}

\begin{coro}\label{cor:immersiondiagonale}\index{Immersion}
Soient $X$, $Y$ des espaces $\cA$-analytiques et $\varphi \colon X\to Y$ un morphisme. Alors, le morphisme diagonal $\Delta_{X/Y} \colon X\to X\times_{Y} X$ est une immersion.
\end{coro}
\begin{proof}
D'apr\`es la proposition~\ref{crit\`ere_local_immersion}, il suffit de montrer que tout point~$z$ de~$X\times_{Y} X$ poss\`ede un voisinage~$W$ tel que le morphisme $\Delta_{X/Y}^{-1}(W)\to W$ induit par~$\Delta$ soit une immersion ferm\'ee. La question est donc locale sur~$X\times_{Y} X$. D'apr\`es le lemme~\ref{lem:produitouverts}, on peut donc restreindre~$X$ et supposer que c'est un mod\`ele local $\cA$-analytique. Le r\'esultat d\'ecoule alors de la proposition~\ref{prop:immersiondiagonalemodelelocal}.
\end{proof}

\index{Morphisme analytique!diagonal|)}

\index{Produit!fibre@fibr\'e|)}

\section{Morphismes propres}\label{sec:propre}

Commen\c cons par rappeler la d\'efinition d'application propre entre espaces topologiques (\cf~\cite[I, \S 10]{BourbakiTG14}).

\index{Application!propre|(}\index{Morphisme analytique!propre|(}

\begin{defi}\index{Application!propre|textbf}\index{Morphisme analytique!propre|textbf}\index{Morphisme!topologique|see{Application}}
Une application $f \colon T\to T'$ entre espaces topologiques est dite \emph{propre} si elle est continue et si, pour tout espace topologique~$T''$, l'application $f \times \id_{T''} \colon T \times T'' \to T' \times T''$ est ferm\'ee. 

Un morphisme d'espaces $\cA$-analytiques est dit \emph{propre} 
si l'application induite entre les espaces topologiques sous-jacents est propre au sens pr\'ec\'edent.
\end{defi}

\begin{rema}\label{rem:proprelocalaubut}
La propret\'e est une notion locale au but, \cf~\cite[I, \S 10, \no 1, proposition~3 b)]{BourbakiTG14}. 
\end{rema}

\begin{rema}\label{rem:proprekan}
Dans le cas o\`u $\cA = k$ est un corps valu\'e ultram\'etrique complet, V.~Berkovich d\'efinit les morphismes propres comme les morphismes qui sont propres au sens topologique et sans bord (\cf~\cite[p.~50, apr\`es le corollaire~3.1.6]{Ber1}). Puisque nous ne consid\'erons ici que des espaces sans bords (\cf~remarque~\ref{rem:kansansbords}), cette derni\`ere condition est toujours satisfaite et il n'est donc pas n\'ecessaire de l'imposer.
\end{rema}

\'Enon\c cons quelques propri\'et\'es des applications propres vis-\`a-vis de la composition.

\begin{lemm}[\protect{\cite[I, \S 10, \no 1, proposition~5]{BourbakiTG14}}]\label{lem:compositionpropre}
Soient $f\colon T \to T'$ et $g\colon T' \to T''$ des applications continues entre espaces topologiques.
\begin{enumerate}[i)]
\item Si $f$ et $g$ sont propres, alors $g\circ f$ est propre.
\item Si $g\circ f$ est propre et $f$ est surjective, alors $g$ est propre.
\item Si $g\circ f$ est propre et $g$ est injective, alors $f$ est propre.
\item Si $g\circ f$ est propre et $T'$ est s\'epar\'e, alors $f$ est propre.
\end{enumerate} 
\end{lemm}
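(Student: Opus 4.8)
The statement to prove is the standard list of stability properties of proper maps of topological spaces, quoted from Bourbaki (\cite[I, \S 10, \no 1, proposition~5]{BourbakiTG14}). Since this is just a citation of a known result, the ``proof'' is really a matter of recalling the argument, and the plan is to deduce all four assertions from the basic characterization of properness (a continuous map $f\colon T\to T'$ is proper if and only if it is universally closed, i.e.\ $f\times\id_{T''}$ is closed for every $T''$) together with elementary stability properties of closed maps.

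First I would record the elementary fact that a composite of closed maps is closed and that $(g\circ f)\times\id_{T''} = (g\times\id_{T''})\circ(f\times\id_{T''})$. Assertion~i) is then immediate: if $f$ and $g$ are proper, then for any $T''$ both $f\times\id_{T''}$ and $g\times\id_{T''}$ are closed, hence so is their composite $(g\circ f)\times\id_{T''}$; since $g\circ f$ is also continuous, it is proper. For assertion~ii), assume $g\circ f$ is proper and $f$ is surjective. Fix $T''$ and a closed subset $Z\subset T'\times T''$. Then $(f\times\id_{T''})^{-1}(Z)$ is closed in $T\times T''$, so $(g\circ f\times\id_{T''})\bigl((f\times\id_{T''})^{-1}(Z)\bigr)$ is closed in $T''\times T''$ — wait, rather in $T''\times T''$; the point is that since $f\times\id_{T''}$ is surjective (as $f$ is), this image equals $(g\times\id_{T''})(Z)$, which is therefore closed. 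Hence $g\times\id_{T''}$ is closed for all $T''$, and $g$ is proper.

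For assertion~iii), assume $g\circ f$ is proper and $g$ is injective. Here I would use that an injective continuous map $g$ has the property that $f = (g\times\id)^{-1}\circ\bigl((g\circ f)\times\id\bigr)$ in a suitable sense; more precisely, fix $T''$ and a closed $Z\subset T\times T''$. Then $((g\circ f)\times\id_{T''})(Z)$ is closed in $T''\times T''$; intersecting with the image of $g\times\id_{T''}$ and pulling back along the injection $g\times\id_{T''}$ (which is a homeomorphism onto its image precisely because one can check it is closed onto its image using properness of $g\circ f$ composed with the graph trick), one recovers $(f\times\id_{T''})(Z)$ as a closed set. The cleanest route, and the one I would actually write, is to factor $f$ as $T \xrightarrow{\ \Gamma_f\ } T\times_{T''} T' \to T'$ is not quite right; instead factor $f$ through the graph of $g\circ f$: since $g$ is injective, $f$ is obtained from $g\circ f$ by restricting the target, and restriction of a proper map to a subspace containing the image (or to the preimage of a subspace) is proper. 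So assertion~iii) follows from the general fact (also in Bourbaki) that if $h\colon T\to T''$ is proper and $h(T)\subset S\subset T''$, then the corestriction $T\to S$ is proper — applied with $h = g\circ f$ and $S = g(T')$, identified with $T'$ via the injection $g$.

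Finally, assertion~iv) follows from~iii) by a standard reduction: if $T'$ is separated, then $g$ need not be injective, but one factors $f$ through the graph $\Gamma_f\colon T\to T\times T'$, $t\mapsto(t,f(t))$, which is a closed immersion because $T'$ is separated (the graph of a continuous map into a separated space is closed), hence proper; and the projection $T\times T'\to T'$ composed appropriately, together with properness of $g\circ f$, gives properness of $f$ by~i) and~iii). The main (minor) obstacle is simply being careful with the surjectivity/injectivity bookkeeping in~ii) and~iii) — there are no real difficulties, and since the whole statement is quoted verbatim from \cite{BourbakiTG14}, in the final text I would most likely content myself with citing that reference rather than reproducing the argument in full.

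\begin{proof}
Voir \cite[I, \S 10, \no 1, proposition~5]{BourbakiTG14}. Rappelons bri\`evement les arguments. Un morphisme continu entre espaces topologiques est propre si et seulement s'il est universellement ferm\'e, et la classe des applications ferm\'ees est stable par composition, avec $(g\circ f)\times\id_{T''} = (g\times\id_{T''})\circ(f\times\id_{T''})$.

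i) Si $f$ et $g$ sont propres, alors pour tout $T''$ les applications $f\times\id_{T''}$ et $g\times\id_{T''}$ sont ferm\'ees, donc leur compos\'ee $(g\circ f)\times\id_{T''}$ aussi~; comme $g\circ f$ est continue, elle est propre.

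ii) Supposons $g\circ f$ propre et $f$ surjective. Fixons $T''$ et un ferm\'e $Z$ de $T'\times T''$. Alors $(f\times\id_{T''})^{-1}(Z)$ est ferm\'e dans $T\times T''$ et son image par $(g\circ f)\times\id_{T''}$ est ferm\'ee~; cette image est \'egale \`a $(g\times\id_{T''})(Z)$ car $f\times\id_{T''}$ est surjective. Donc $g\times\id_{T''}$ est ferm\'ee pour tout $T''$, et $g$ est propre.

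iii) Supposons $g\circ f$ propre et $g$ injective. On utilise le fait g\'en\'eral suivant, lui aussi d\^u \`a Bourbaki~: si $h\colon T\to T''$ est propre et $h(T)\subset S\subset T''$, alors la corestriction $T\to S$ est propre. On l'applique \`a $h=g\circ f$ et $S=g(T')$, que l'on identifie \`a~$T'$ gr\^ace \`a l'injection continue~$g$~: on obtient que~$f$ est propre.

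iv) Supposons $g\circ f$ propre et $T'$ s\'epar\'e. L'application graphe $\Gamma_f\colon T\to T\times T'$, $t\mapsto(t,f(t))$, a une image ferm\'ee (car $T'$ est s\'epar\'e) et induit un hom\'eomorphisme sur celle-ci, donc est propre. Par ailleurs $f$ est la compos\'ee de~$\Gamma_f$ avec la seconde projection $T\times T'\to T'$. En consid\'erant le diagramme form\'e par $\Gamma_f$, la projection et~$g$, et en utilisant la propret\'e de $g\circ f$ avec les points~i) et~iii), on conclut que~$f$ est propre.
\end{proof}
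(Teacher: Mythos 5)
The paper gives no proof of this lemma at all: it is stated purely as a citation of \cite[I, \S 10, \no 1, proposition~5]{BourbakiTG14}, exactly as you propose to do in your final sentence. So your proposal matches the paper's approach, and the citation alone is what the paper relies on.

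Since you also supply sketches, one remark on point~iii): as written, your argument identifies $T'$ with $g(T')$ \emph{via} the continuous injection $g$, but a continuous injection need not be a homeomorphism onto its image, so this identification is not legitimate and the corestriction argument does not go through in that form. The standard fix is direct: for $Z$ closed in $T\times T''$, injectivity of $g$ gives
\[(f\times\id_{T''})(Z) \;=\; (g\times\id_{T''})^{-1}\bigl(((g\circ f)\times\id_{T''})(Z)\bigr),\]
which is closed as the preimage of a closed set under a continuous map. Point~iv) then follows from~i) and~iii) by factoring $f$ through $t\mapsto\bigl(g(f(t)),f(t)\bigr)$ (proper, by the graph trick using separation of~$T'$) and the injection $y\mapsto(g(y),y)$ of $T'$ into $T''\times T'$; your sketch gestures at this but leaves the bookkeeping implicit. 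None of this affects the paper, which only cites the result.
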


La propret\'e peut essentiellement s'exprimer en termes de compacit\'e. Nous disposons en particulier d'un crit\`ere utile pour d\'emontrer la propret\'e.

\begin{lemm}\label{lem:criterepropre}
Soit $f \colon T\to T'$ une application continue entre espaces topologiques. 
\begin{enumerate}[i)]
\item Si $f$ est propre, alors pour tout partie quasi-compacte~$K$ de~$T$, l'image r\'eciproque $f^{-1}(K)$ est quasi-compacte.
\item Si tout point~$t$ de~$T'$ poss\`ede un voisinage compact~$V$ dont l'image r\'eciproque~$f^{-1}(V)$ est compacte, alors $f$~est propre.
\end{enumerate}
\end{lemm}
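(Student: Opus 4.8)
The statement to prove is Lemma~\ref{lem:criterepropre}, a standard pair of criteria relating properness of a continuous map to compactness. Both assertions are classical facts of general topology; the plan is simply to invoke the definition of properness (involving closedness after base change by an arbitrary space $T''$) and to derive each conclusion with the usual compactness arguments, possibly citing \cite{BourbakiTG14}.

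For part~i), suppose $f\colon T\to T'$ is proper and let $K$ be a quasi-compact subset of $T'$. First I would reduce to the case $K=T'$ by replacing $f$ with its restriction $f^{-1}(K)\to K$, which is again proper (properness is local at the target by Remark~\ref{rem:proprelocalaubut}, and more generally is stable under base change to a subspace, which also follows directly from the definition). So it suffices to show: if $f\colon T\to T'$ is proper and $T'$ is quasi-compact, then $T$ is quasi-compact. The argument is the classical one using the ultrafilter (or net) characterisation of compactness, or equivalently the tube-lemma-type argument: one tests properness against a well-chosen auxiliary space $T''$. Concretely, take $T''$ to be $T$ equipped with a topology making a given ultrafilter converge, or use the formulation that $f$ proper implies $f$ is closed and has quasi-compact fibres, and then a quasi-compact space fibred in quasi-compact fibres over a quasi-compact base with closed projection is quasi-compact. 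This is precisely \cite[I, \S 10]{BourbakiTG14}, so I would cite it rather than reprove it.

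For part~ii), assume every point $t\in T'$ has a compact neighbourhood $V$ with $f^{-1}(V)$ compact. Since properness is local at the target (Remark~\ref{rem:proprelocalaubut}), I may check properness of $f$ over each such $V$ separately; that is, it suffices to show that the restriction $f^{-1}(V)\to V$ is proper. Now $f^{-1}(V)$ is compact and $V$ is compact, hence Hausdorff issues aside this restriction is a continuous map from a compact space, and a continuous map from a compact space to any space is proper (again \cite[I, \S 10]{BourbakiTG14}: a continuous map whose source is quasi-compact is proper, since for any $T''$ the product $f^{-1}(V)\times T''\to V\times T''$ is a map from a space that is quasi-compact over $T''$, and such maps are closed). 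Gluing these local conclusions gives that $f$ is proper on all of $T'$.

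\textbf{Main obstacle.} There is no real obstacle: the content is entirely standard point-set topology and the paper has already set up the relevant vocabulary (Remark~\ref{rem:proprelocalaubut}, Lemma~\ref{lem:compositionpropre}) and the reference \cite{BourbakiTG14}. The only mild subtlety is that, without Hausdorff hypotheses, one should be careful to use the Bourbaki definition of proper (closedness of $f\times\id_{T''}$ for all $T''$) uniformly rather than the shortcut ``closed with compact fibres'', which is only equivalent in good cases; accordingly I would phrase both arguments so they pass through the base-change-by-$T''$ definition, or simply quote the corresponding statements of \cite[I, \S 10]{BourbakiTG14} verbatim.
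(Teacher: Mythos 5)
Votre démarche est essentiellement celle du texte : la partie i) est exactement \cite[I, \S 10, \no 3, proposition~6]{BourbakiTG14} (l'image réciproque d'une partie quasi-compacte par une application propre est quasi-compacte), et la partie ii) s'obtient, comme vous le proposez, en combinant la localité au but (remarque~\ref{rem:proprelocalaubut}) avec \cite[I, \S 10, \no 2, corollaire~2 du lemme~2]{BourbakiTG14}.

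Une seule mise en garde : la justification entre parenthèses de ii), \og une application continue d'un espace compact dans un espace \emph{quelconque} est propre\fg, est fausse telle quelle. Sans hypothèse de séparation au but, l'argument \og l'espace source est quasi-compact au-dessus de $T''$, donc l'application est fermée\fg{} ne tient pas : une application constante vers un point non fermé d'un espace non séparé fournit un contre-exemple. L'énoncé correct de Bourbaki demande le but séparé, ce qui est le cas ici puisque $V$ est compact (donc séparé, au sens français) ; votre remarque finale sur la nécessité de rester fidèle à la définition par changement de base ou aux énoncés de Bourbaki règle d'ailleurs précisément ce point. Avec cette précision, votre preuve coïncide avec celle du texte.
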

\begin{proof}
i) d\'ecoule de \cite[I, \S 10, \no 2, proposition~6]{BourbakiTG14}.

ii) d\'ecoule de la remarque~\ref{rem:proprelocalaubut} et de \cite[I, \S 10, \no 2, corollaire~2 du lemme~2]{BourbakiTG14}.
\end{proof}
\index{Application!propre|)}

Rappelons que, pour tout $n\in \N$, nous avons d\'efini un morphisme $\tilde{f}_n \colon \E{n}{\cB} \to \E{n}{\cA}$ dans l'exemple~\ref{ex:tildefn}.

\begin{prop}\label{prop:tildefnpropre}
Soit $n\in \N$. Le morphisme~$\tilde{f}_{n}$ est propre.
\end{prop}
\begin{proof}
Notons~$T_{1},\dotsc,T_{n}$ les coordonn\'ees sur~$\E{n}{\cA}$ et~$\E{n}{\cB}$. Pour tout $r \in \R_{>0}$, notons $\oD_{\cM(\cA)}(r)$ et $\oD_{\cM(\cB)}(r)$ les polydisques ferm\'es de polyrayon~$(r,\dotsc,r)$ au-dessus de~$\cM(\cA)$ et~$\cM(\cB)$ respectivement.

Soit~$K$ une partie compacte de~$\E{n}{\cA}$. Il existe $r\in \R_{>0}$ tel que $K \subset \overline{D}_{\cM(\cA)}(r)$. Or, on a 
\[ \tilde{f}_{n}^{-1}(\overline{D}_{\cM(\cA)}(r)) = \overline{D}_{\cM(\cB)}(r), \]
qui est une partie compacte de~$\E{n}{\cB}$. Puisque~$\tilde f_{n}$ est continue, $\tilde{f}_{n}^{-1}(K)$ est ferm\'ee dans un compact de~$\E{n}{\cB}$, et donc compacte. On d\'eduit alors du lemme~\ref{lem:criterepropre}, ii) que~$\tilde f_{n}$ est propre. 
\end{proof}

\begin{coro}\label{stabilit\'e_propre_extension_scalaire}\index{Extension des scalaires}
Pour tout espace $\cA$-analytique~$X$, le morphisme d'extension des scalaires $X \ho{\cA} \cB \to X$ est propre. 
\end{coro}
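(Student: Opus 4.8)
The plan is to reduce to the affine case already handled in Proposition~\ref{extension_scalaire_spectral} and then globalize using the local nature of properness. The key observation is that properness is local at the target (Remark~\ref{rem:proprelocalaubut}), so it suffices to check the statement after restricting to a covering of~$X$ by suitable open subsets, and moreover that the formation of $X \ho{\cA} \cB$ is compatible with passing to open subsets and analytic closed subsets (Lemma~\ref{lem:extensionBouverts}).

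First I would treat the case where $X$ is a model local $\cA$-analytic space, that is, an analytic closed subset~$Z$ of an open subset~$U$ of some $\E{n}{\cA}$. Denote by $\pi_{\cB} \colon X \ho{\cA} \cB \to X$ the canonical morphism. By Lemma~\ref{lem:extensionBouverts}, $U \ho{\cA} \cB$ is the open subset $\tilde{f}_{n}^{-1}(U)$ of $\E{n}{\cB}$ and $X \ho{\cA} \cB = Z \ho{\cA} \cB$ is the analytic closed subset of $U \ho{\cA} \cB$ defined by $\pi_{\cB}^{\ast}\cI$, where $\cI$ is the coherent ideal sheaf defining~$Z$. Now the inclusion $Z \ho{\cA} \cB \hookrightarrow U \ho{\cA} \cB$ is a closed immersion, hence a closed, injective continuous map, so in particular a proper map. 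Similarly $U \ho{\cA} \cB = \tilde{f}_{n}^{-1}(U)$ sits inside $\E{n}{\cB}$ as an open subset, and the restriction $\tilde{f}_{n}^{-1}(U) \to U$ of $\tilde{f}_{n}$ is proper: indeed $\tilde{f}_{n}$ itself is proper by Proposition~\ref{extension_scalaire_spectral}, and properness is stable under base change to an open subset of the target by Remark~\ref{rem:proprelocalaubut}. Composing, the morphism $X \ho{\cA} \cB \to X$ factors as the proper closed immersion $X \ho{\cA} \cB \hookrightarrow U \ho{\cA} \cB$ followed by the proper map $U \ho{\cA} \cB \to U$, followed by the open immersion $U \supset X$; since composites of proper maps are proper (Lemma~\ref{lem:compositionpropre}~i)) and the last map is the inclusion of an open which restricts properly by locality at the target, we conclude that $X \ho{\cA} \cB \to X$ is proper.

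For the general case, cover~$X$ by open subsets $X_{i}$ each of which is a model local $\cA$-analytic space. By Lemma~\ref{lem:extensionBouverts}~i), $\pi_{\cB}^{-1}(X_{i})$ is the extension of scalars $X_{i} \ho{\cA} \cB$, and the previous paragraph shows that each morphism $\pi_{\cB}^{-1}(X_{i}) \to X_{i}$ is proper. Since $\{X_{i}\}$ is an open cover of~$X$ and properness is local at the target (Remark~\ref{rem:proprelocalaubut}), it follows that $\pi_{\cB} \colon X \ho{\cA} \cB \to X$ is proper. There is no real obstacle here beyond bookkeeping; the only substantive input is Proposition~\ref{extension_scalaire_spectral}, whose properness assertion was the genuine content, established via the compactness criterion of Lemma~\ref{lem:criterepropre} applied to the preimages of relative polydiscs.
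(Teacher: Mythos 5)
Your argument is correct in substance and follows essentially the same route as the paper: reduce to a local model $X \subset U \subset \E{n}{\cA}$, invoke the properness of $\tilde f_{n}$ from Proposition~\ref{extension_scalaire_spectral} together with the identifications $U \ho{\cA}\cB = \tilde f_{n}^{-1}(U)$ and $X \ho{\cA}\cB = \tilde f_{n}^{-1}(X)$ of Lemma~\ref{lem:extensionBouverts}; the paper simply concludes via the compactness criterion of Lemma~\ref{lem:criterepropre}, which already encodes the locality at the target that you use explicitly. One slip in your local step: $X$ is a \emph{closed} analytic subset of~$U$, not an open one, so the phrase about an ``open immersion $U \supset X$'' is off; the clean finish is to observe that the composite $X\ho{\cA}\cB \to U\ho{\cA}\cB \to U$ is proper and factors through the injective inclusion $X \hookrightarrow U$, so that $X\ho{\cA}\cB \to X$ is proper by Lemma~\ref{lem:compositionpropre}~iii) (equivalently, restrict the proper map $U\ho{\cA}\cB \to U$ over the subspace~$X$, whose preimage is exactly $X\ho{\cA}\cB$).
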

\begin{proof}
D'apr\`es le lemme~\ref{lem:criterepropre}, il suffit de montrer que tout point~$x$ de~$X$ poss\`ede un voisinage compact dont l'image r\'eciproque est compacte.

Soit~$x\in X$. Quitte \`a remplacer~$X$ par un voisinage de~$x$, on peut supposer que~$X$ est un sous-espace analytique d'un ouvert~$U$ de l'espace affine~$\E{n}{\cA}$. La proposition~\ref{extension_scalaire_spectral} assure que le morphisme $\tilde{f}_{n} \colon \E{n}{\cB}\to\E{n}{\cA}$ est propre. 
D'apr\`es le lemme~\ref{lem:extensionBouverts}, on a $U \ho{\cA} \cB = \tilde{f}_{n}^{-1}(U)$ et $X \ho{\cA} \cB = \tilde{f}_{n}^{-1}(X)$. Le r\'esultat s'en d\'eduit.
\end{proof}

\begin{prop}\label{prop:propreAB}\index{Extension des scalaires}
Soit~$\varphi \colon X\to Y$ un morphisme propre d'espaces $\cA$-analytiques. Soit $f \colon \cA \to \cB$ un morphisme d'anneaux de Banach born\'e, o\`u~$\cB$ est un anneau de base g\'eom\'etrique. Alors le morphisme $\varphi_{\cB} \colon X \ho{\cA} \cB \to Y\ho{\cA} \cB$, d\'eduit de~$\varphi$ par extension des scalaires \`a~$\cB$, est propre.
\end{prop}
\begin{proof}
La propret\'e \'etant locale au but, d'apr\`es le lemme~\ref{lem:extensionBouverts}, on peut supposer que~$Y$ est un ferm\'e analytique d'un ouvert de~$\E{n}{\cA}$. D'apr\`es le lemme~\ref{lem:extensionBouverts} et la proposition~\ref{prop:extensionBaffine}, $Y\ho{\cA}\cB$ est alors un ferm\'e analytique d'un ouvert de~$\E{n}{\cB}$. En particulier, $|Y\ho{\cA}\cB|$ est s\'epar\'e.

On a un diagramme commutatif
\[\begin{tikzcd}
X \ho{\cA}\cB \arrow[r, "p_{X}"] \arrow[d, "\varphi_{\cB}"] & X  \arrow[d, "\varphi"] \\
Y\ho{\cA} \cB \arrow[r, "p_{Y}"] & Y
\end{tikzcd}.\]
D'apr\`es le corollaire~\ref{stabilit\'e_propre_extension_scalaire}, le morphisme~$p_{X}$ est propre. On d\'eduit alors du lemme~\ref{lem:compositionpropre} que $\varphi \circ p_{X} = p_{Y} \circ \varphi_{\cB}$ est propre, puis que $\varphi_{\cB}$ est propre, comme d\'esir\'e.
\end{proof}

Comme on s'y attend, la propret\'e est \'egalement stable par changement de base. Nous le d\'emontrerons \`a la proposition~\ref{stabilite_propre}.

\index{Morphisme analytique!propre|)}

\section{Parties spectralement convexes}\label{sec:paspconv}

Dans cette section, nous \'etudions le comportement des parties spectralement convexes par extension des scalaires et produit fibr\'e.

\medbreak

Commen\c cons par l'extension des scalaires. Soient~$\cB$ un anneau de Banach et $f \colon \cA \to \cB$ un morphisme born\'e. On suppose que~$\cA$ et~$\cB$ sont des anneaux de base g\'eom\'etriques.

\begin{nota}\index{Norme!tensorielle}\index{Produit tensoriel compl\'et\'e}%
\nomenclature[Bka]{$\nm_{\cM \otimes_{\cA} \cN}$}{pour $\cM,\cN$ des $\cA$-alg\`ebres de Banach, semi-norme tensorielle sur $ \cM \otimes_{\cA} \cN$}%
\nomenclature[Bkb]{$\cM \ho{\cA} \cN$}{s\'epar\'e compl\'et\'e de~$\cM \otimes_{\cA} \cN$ pour la semi-norme~$\nm_{\cM \otimes_{\cA} \cN}$}%
\nomenclature[Bkc]{$\cM \hosp{\cA} \cN$}{s\'epar\'e compl\'et\'e de~$\cM \otimes_{\cA} \cN$ pour la semi-norme spectrale associ\'ee \`a~$\nm_{\cM \otimes_{\cA} \cN}$}
Soient~$(\cM,\nm_{\cM})$ et~$(\cN,\nm_{\cN})$ deux $\cA$-alg\`ebres de Banach. On d\'efinit une semi-norme~$\nm_{\cM \otimes_{\cA} \cN}$ sur $\cM \otimes_{\cA} \cN$ par la formule suivante~: pour tout $x \in \cM \otimes_{\cA} \cN$, 
\[ \|x\|_{\cM \otimes_{\cA} \cN} = \inf\big(\big\{ \sum_{i\in I} \|m_{i}\|_{\cM}\, \|n_{i}\|_{\cN}, \ x = \sum_{i\in I} m_{i}\otimes n_{i}\big\}\big).\]
On note $\cM \ho{\cA} \cN$ le s\'epar\'e compl\'et\'e de~$\cM \otimes_{\cA} \cN$ pour la semi-norme~$\nm_{\cM \otimes_{\cA} \cN}$ et $\cM \hosp{\cA} \cN$ le s\'epar\'e compl\'et\'e pour la semi-norme spectrale associ\'ee.
\end{nota}

Rappelons que, pour tout $n\in \N$, nous avons d\'efini un morphisme $\tilde{f}_n \colon \E{n}{\cB} \to \E{n}{\cA}$ dans l'exemple~\ref{ex:tildefn}.

\begin{prop}\label{extension_scalaire_spectral}\index{Partie!spectralement convexe}\index{Produit tensoriel compl\'et\'e}
Soit $n\in \N$ et soit~$V$ une partie compacte spectralement convexe de~$\E{n}{\cA}$. Alors $\tilde{f}_{n}^{-1}(V)$ est compacte et spectralement convexe et le morphisme canonique
\[ \cB(V)\hosp{\cA} \cB \simtoo \cB(\tilde{f}_{n}^{-1}(V))\]
est un isomorphisme isom\'etrique.
\end{prop}
\begin{proof}
Notons~$T_{1},\dotsc,T_{n}$ les coordonn\'ees sur~$\E{n}{\cA}$ et~$\E{n}{\cB}$. 
%

D'apr\`es la proposition~\ref{prop:tildefnpropre} et le lemme~\ref{lem:criterepropre}, i), $\tilde{f}_{n}^{-1}(V)$ est compacte. 
Le morphisme $\cA[T_{1},\dotsc,T_{n}] \to \cB[T_{1},\dotsc,T_{n}]$ induit un morphisme $\cK(V) \to \cK(\tilde{f}_{n}^{-1}(V))$ et un morphisme born\'e d'anneaux de Banach $\cB(V) \to \cB(\tilde{f}_{n}^{-1}(V))$. Ces morphismes s'ins\'erent dans un diagramme commutatif
\[\begin{tikzcd}
\cA[T_{1},\dotsc,T_{n}] \arrow[r] \arrow[d] & \cB[T_{1},\dotsc,T_{n}] \arrow[d]\\
\cB(V) \arrow[r] & \cB(\tilde{f}_{n}^{-1}(V))\\
\end{tikzcd}.\]
Puisque~$V$ est spectralement convexe, l'image de l'application $\cM(\cB(V)) \to \E{n}{\cA}$ est \'egale \`a~$V$. On en d\'eduit que l'image de l'application $\cM(\cB(\tilde{f}_{n}^{-1}V)) \to \E{n}{\cB}$ est contenue dans~$\tilde{f}_{n}^{-1}(V)$, et donc que $\tilde f_{n}^{-1}(V)$ est spectralement convexe, d'apr\`es la proposition~\ref{crit_spectral}.

Passons maintenant \`a la d\'emonstration de l'isomorphisme. Le morphisme de $\cA$-alg\`ebres $\cA[T_1,\ldots,T_n]\to\cB(V)$ induit un morphisme de $\cB$-alg\`ebres $\cB[T_1,\ldots,T_n]\to \cB(V) \hosp{\cA} \cB$. Consid\'erons le morphisme induit $\cM( \cB(V) \hosp{\cA} \cB) \to \E{n}{\cB}$.

D'apr\`es la proposition \ref{crit_spectral_pu}, il suffit de d\'emontrer les deux propri\'et\'es suivantes~:
\begin{enumerate}[i)]
\item l'image du morphisme $\cM(\cB(V) \hosp{\cA} \cB)\to\E{n}{\cB}$ est contenue dans~$\tilde{f}_{n}^{-1}(V)$ ;
\item pour toute~$\cB$-alg\`ebre de Banach uniforme~$\cC$ et tout morphisme de $\cB$-alg\`ebres $\cB[T_1,\dotsc,T_n]\to\cC$ tel que l'image du morphisme induit $\cM(\cC)\to\E{n}{\cB}$ soit contenue dans~$\tilde{f}_{n}^{-1}(V)$, le morphisme~$\cB[T_1,\dotsc,T_n]\to\cC$ se factorise de mani\`ere unique par $\cB(V) \hosp{\cA} \cB$. 
\end{enumerate}

Pour d\'emontrer~i), il suffit de v\'erifier que l'image du morphisme $\cM(\cB(V) \hosp{\cA} \cB)\to\E{n}{\cA}$ est contenue dans~$V$. Or le morphisme de $\cA$-alg\`ebres $\cA[T_1,\ldots,T_n]\to\cB(V) \hosp{\cA} \cB$ 
se factorise par~$\cB(V)$. Puisque~$V$ est spectralement convexe, le r\'esultat s'ensuit.

D\'emontrons maintenant~ii). Soient~$\cC$ une~$\cB$-alg\`ebre de Banach uniforme et $\cB[T_1,\ldots,T_n]\to\cC$ un morphisme de~$\cB$-alg\`ebres tel que le morphisme induit $\cM(\cC)\to\E{n}{\cB}$ se factorise par~$\tilde{f}_{n}^{-1}(V)$. 
Le morphisme~$\cM(\cC)\to\E{n}{\cA}$ se factorise alors par~$V$. Puisque~$V$ est spectralement convexe, la proposition~\ref{crit_spectral_pu} assure que le morphisme $\cA[T_1,\ldots,T_n]\to\cC$ se factorise par~$\cB(V)$, et ce de fa\c con unique. En tensorisant par~$\cB$ au-dessus de~$\cA$, on en d\'eduit une factorisation de~$\cB[T_1,\ldots,T_n]\to\cC$ par~$\cB(V)\otimes_{\cA}\cB$, elle aussi unique. Puisque l'alg\`ebre~$\cC$ est compl\`ete et uniforme, le morphisme $\cB(V)\otimes_{\cA}\cB \to \cC$  se factorise de mani\`ere unique par $\cB(V) \hosp{\cA} \cB$, ce qui cl\^ot la d\'emonstration.
\end{proof}

On peut, comme de coutume, exprimer la fibre d'un morphisme au-dessus d'un point \`a l'aide d'un produit fibr\'e.

\begin{prop}\label{preimage}\index{Morphisme analytique!fibre d'un}\index{Morphisme analytique!associe a un point@associ\'e \`a un point}\index{Produit!fibre@fibr\'e}
 Soient~$\varphi\colon X\to Y$ un morphisme d'espaces~$\cA$-analytiques et~$y$ un point de~$Y$. Consid\'erons le morphisme $\lambda_{y} \colon \cM(\cH(y)) \to Y$ de l'exemple~\ref{ex:morphismex} et le produit fibr\'e 
 \[X\times_Y \cM(\cH(y)) := (X \ho{\cA} \cH(y)) \times_{Y \ho{\cA} \cH(y)} \cM(\cH(y))\] 
 de la proposition~\ref{prop:produitfibreAnA}.  Alors, la projection $p_{X} \colon X\times_Y \cM(\cH(y)) \to X$ induit un hom\'eomorphisme 
 \[X\times_Y \cM(\cH(y)) \simtoo \varphi^{-1}(y).\]
 \end{prop}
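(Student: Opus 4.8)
The plan is to prove that $p_X$ induces a homeomorphism onto $\varphi^{-1}(y)$ by combining the universal property of the fibred product (Proposition~\ref{prop:produitfibreAnA}) with a careful point-set analysis of the underlying topological spaces. First I would establish that the set-theoretic image of $p_X$ is exactly $\varphi^{-1}(y)$. The inclusion $p_X(X\times_Y \cM(\cH(y))) \subseteq \varphi^{-1}(y)$ is immediate from the commutativity of the defining diagram, since any point of the fibred product maps to $\cM(\cH(y))$ and hence, after composing with $\lambda_y$, into $\{y\}$. For the reverse inclusion, given $x \in \varphi^{-1}(y)$, I would use the morphism $\lambda_x \colon \cM(\cH(x)) \to X$ of Example~\ref{ex:morphismex}, together with the fact that $\varphi\circ\lambda_x$ and $\lambda_y$ both factor through $\cM(\cH(x)) \to \cM(\cH(y))$ (the residue field map induced by $\varphi^\sharp$ at $x$, which makes sense precisely because $\varphi(x)=y$). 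This produces via the universal property a morphism $\cM(\cH(x)) \to X\times_Y\cM(\cH(y))$ whose image is a point lying over $x$; hence $x$ is in the image of $p_X$.

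Next I would analyze injectivity and the topology. The key structural input is Lemma~\ref{lem:morphismexHx}: the immersion $\lambda_y \ho{\cA} \cH(y) \colon \cM(\cH(y)) \to Y\ho{\cA}\cH(y)$ is a closed immersion whose image $y'$ has $\cH(y')\simeq\cH(y)$. Therefore $X\times_Y\cM(\cH(y))$ is, by Lemma~\ref{lem:changementbaseimmersion} and Theorem~\ref{produit_fibr\'e}, a closed analytic subspace of $X\ho{\cA}\cH(y)$, namely the preimage of $y'$ under the base-changed morphism $\varphi\ho{\cA}\cH(y) \colon X\ho{\cA}\cH(y) \to Y\ho{\cA}\cH(y)$. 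So the statement reduces to: the composite $X\ho{\cA}\cH(y) \to X$ (the scalar-extension morphism $\pi$) restricts to a homeomorphism between $(\varphi\ho{\cA}\cH(y))^{-1}(y')$ and $\varphi^{-1}(y)$. I would prove this by a local computation: reduce to the case where $X$ is a closed analytic subset of an open $U\subseteq\E{n}{\cA}$ and $Y$ a closed analytic subset of an open $V\subseteq\E{m}{\cA}$, so that $\varphi$ lifts to $\tilde\varphi\colon U'\to V$ given by functions $f_1,\dots,f_m$, and $y$ corresponds to fixing the values $T_j(y)\in\cH(y)$. Then $(\varphi\ho{\cA}\cH(y))^{-1}(y')$ is cut out inside $X\ho{\cA}\cH(y)$ by the equations $f_j - f_j(y) = 0$ together with the equations defining $y'$; these equations, pulled back to $X$, cut out exactly $\varphi^{-1}(y)$ as a set, and a point of $\E{n}{\cA}$ lying over $\E{n}{\cA}$ with a prescribed value $f_j(y)$ in its residue field for each $j$ already has all the data determining its image in the base change, so the fibres of $\pi$ over points of $\varphi^{-1}(y)$ are singletons.

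The continuity of the bijection $p_X$ is clear since it is a morphism of analytic spaces (hence of topological spaces). The main obstacle — the step I expect to require the most care — is showing that the inverse map is continuous, i.e.\ that $p_X$ is a \emph{closed} or \emph{proper} map onto its image. Here I would invoke Corollary~\ref{stabilit\'e_propre_extension_scalaire}: the scalar-extension morphism $X\ho{\cA}\cH(y)\to X$ is proper. By Lemma~\ref{lem:compositionpropre} and the fact that closed immersions are proper (being homeomorphisms onto closed subsets, their base changes by Lemma~\ref{lem:changementbaseimmersion} are still closed immersions), the composite $X\times_Y\cM(\cH(y)) \hookrightarrow X\ho{\cA}\cH(y) \to X$ is proper. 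A continuous proper bijection onto a subspace is a homeomorphism (by Lemma~\ref{lem:criterepropre}~i), preimages of compact sets are compact, and one checks directly that a proper continuous injection is a closed embedding, using that $X$ is locally compact so that closedness can be tested on compact subsets). This yields the desired homeomorphism $X\times_Y\cM(\cH(y)) \simtoo \varphi^{-1}(y)$.
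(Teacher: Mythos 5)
Your overall architecture is the same as the paper's: identify $X\times_Y \cM(\cH(y))$ with the fibre $\varphi_{\cH(y)}^{-1}(y')$ of the base-changed morphism $\varphi\ho{\cA}\cH(y)$ via Lemma~\ref{lem:morphismexHx} and Lemma~\ref{lem:produitouverts}, use the properness of the scalar-extension morphism (Corollaire~\ref{stabilit\'e_propre_extension_scalaire}, plus the fact that a proper continuous injection is a closed embedding) to reduce everything to a bijectivity statement, and then argue locally on affine models. Your surjectivity argument through $\lambda_x$ and the universal property of Proposition~\ref{prop:produitfibreAnA} is a legitimate variant of the paper's explicit construction of a section $\sigma$ over $\varphi^{-1}(y)$, once you check (as you should state) that $\varphi\circ\lambda_x$ and $\lambda_y$ composed with the morphism $\cM(\cH(x))\to\cM(\cH(y))$ induced by the isometric embedding $\cH(y)\to\cH(x)$ are the same morphism over~$\cA$; this is immediate on coordinates.

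The genuine gap is in the uniqueness step. As written, the claim that \emph{the fibres of $\pi$ over points of $\varphi^{-1}(y)$ are singletons} is false: the fibre of $X\ho{\cA}\cH(y)\to X$ over a point $z$ is, locally, $\cM(\cH(z)\hosp{\cA}\cH(y))$ (Proposition~\ref{extension_scalaire_spectral}), which is in general far from a point --- already $\cM(\C\hosp{\R}\C)$ has two points. What must be proved is that this fibre meets $\varphi_{\cH(y)}^{-1}(y')$ in exactly one point, and your justification (``a point \dots already has all the data determining its image in the base change'') is precisely the assertion that needs an argument: two distinct points of $X\ho{\cA}\cH(y)$ above the same $z$ induce the same seminorm on all functions coming from $X$, so one must explain why the extra condition of lying above $y'$ pins the point down. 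This is where the paper does its real work: using Proposition~\ref{extension_scalaire_spectral} it identifies $\cB(\tilde f_n^{-1}(V))$ with $\cB(V)\hosp{\cA}\cH(y)$ for a spectrally convex compact neighbourhood $V$ of $z$, so that a point $t$ of the preimage is determined by a character of the dense subalgebra $\cB(V)\otimes_{\cA}\cH(y)$; the restriction to $\cB(V)$ is fixed by $\pi_{\cH(y)}(t)=z$, and the restriction to $\cH(y)$ is forced to be the canonical embedding $\cH(y)\to\cH(z)$ by the condition of lying over $y'$ together with $\varphi(z)=y$, via a density/continuity argument on (values of) rational functions. You need to supply this identification-and-density argument (or an equivalent one) in place of the one-line singleton claim; with it, your proof goes through and coincides in substance with the paper's.
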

\begin{proof}
Apr\`es extension des scalaires de~$\cA$ \`a~$\cH(y)$, on obtient le morphisme 
\[\varphi_{\cH(y)} := \varphi \ho{\cA} \cM(\cH(y)) \colon X \ho{\cA} \cM(\cH(y)) \too Y \ho{\cA} \cM(\cH(y)).\]
D'apr\`es le lemme~\ref{lem:morphismexHx}, le morphisme $\cM(\cH(y)) \to Y \ho{\cA} \cM(\cH(y))$ induit par~$\lambda_{y}$ est une immersion ferm\'ee et, en notant~$y'$ l'unique point de son image, on a un isomorphisme canonique $\cH(y) \xrightarrow[]{\sim}\cH(y')$. On d\'eduit alors du  lemme~\ref{lem:produitouverts} que le morphisme 
\[(X \ho{\cA} \cM(\cH(y))) \times_{Y \ho{\cA} \cM(\cH(y))} \cM(\cH(y)) \too \varphi_{\cH(y)}^{-1}(y')\]
induit par la premi\`ere projection est un hom\'eomorphisme. Il suffit donc de montrer que l'application $\varphi_{\cH(y)}^{-1}(y') \to \varphi^{-1}(y)$ induite par le morphisme $\pi_{\cH(y)} \colon X \ho{\cA} \cH(y) \to X$ est un hom\'eomorphisme. Le morphisme $\pi_{\cH(y)}$ est continu et propre, 
d'apr\`es le corollaire~\ref{stabilit\'e_propre_extension_scalaire}. Par cons\'equent, il suffit de montrer que l'application $\varphi_{\cH(y)}^{-1}(y') \to \varphi^{-1}(y)$ est bijective.

Quitte \`a restreindre~$Y$, on peut supposer qu'il s'envoie par une immersion dans un espace affine analytique~$\E{m}{\cA}$, et m\^eme qu'il co\"incide avec cet espace affine. Notons $T_{1},\dotsc,T_{m}$ les coordonn\'ees sur~$\E{m}{\cA}$. 

On peut \'egalement raisonner localement sur~$X$. Quitte \`a restreindre~$X$, on peut supposer que c'est un ferm\'e analytique 
d'un ouvert~$U$ d'un espace affine~$\E{n}{\cA}$. 
On peut \'egalement supposer que le morphisme~$\varphi$ s'\'etend en un morphisme $\tilde \varphi \colon X \to Y = \E{m}{\cA}$.

Soit~$V$ un voisinage spectralement convexe de~$x$. Soit $z \in V\cap \varphi^{-1}(y)$. Le morphisme
\[\cA[T_1,\ldots,T_{n'}]\too \cB(V) \too \cH(z)\]
se factorise par~$\cH(y)$ et induit donc un morphisme
\[\sigma_{z} \colon \cB(V) \hosp{\cA} \cH(y) \too \cH(z),\]
par la propri\'et\'e universelle du produit tensoriel. Or, d'apr\`es la proposition~\ref{extension_scalaire_spectral}, $\tilde f_{n}^{-1}(V)$ est spectralement convexe et on a un isomorphisme $\cB(\tilde f_{n}^{-1}(V)) \simeq \cB(V) \hosp{\cA} \cH(y)$. Par cons\'equent, le morphisme~$\sigma_{z}$ d\'efinit un point~$\sigma(z)$ de~$\tilde f_{n}^{-1}(V)$. On v\'erifie qu'il appartient \`a~$\varphi_{\cH(y)}^{-1}(y')$ et que $\pi_{\cH(y)}(\sigma(z)) = z$. On v\'erifie \'egalement que, pour tout point~$t$ de $\tilde f_{n}^{-1}(V) \cap \varphi_{\cH(y)}^{-1}(y')$, on a $\sigma(\pi_{\cH(y)}(t)) = t$. Ceci conclut la preuve.
\end{proof}

Passons maintenant \`a l'\'etude du produit fibr\'e.

\begin{prop}\label{produit_spectralement_convexe}\index{Partie!spectralement convexe}\index{Produit!fibre@fibr\'e}\index{Produit tensoriel compl\'et\'e}
Soient $n,m,N\in\N$. Soit~$U'$ (resp.~$V'$, resp.~$W'$) un ouvert de~$\E{n}{\cA}$ (resp.~$\E{m}{\cA}$, resp.~$\E{N}{\cA}$). Soit~$U$ (resp.~$V$, resp.~$W$) un ensemble compact spectralement convexe de~$U'$ (resp.~$V'$, resp.~$W'$). Soient $\varphi \colon U' \to W'$ et $\psi \colon V'\to W'$ des morphismes d'espaces $\cA$-analytiques tels que $\varphi(U) \subset W$ et $\psi(V) \subset W$. On a naturellement
\[ U'\times_{W'} V' \subset U'\times_{\cA} V' \subset \E{n}{\cA}\times_{\cA} \E{m}{\cA} \simeq \E{n+m}{\cA}.\]
Notons $p_{U'} \colon U' \times_{W'} V' \to U'$ et $p_{V'} \colon U' \times_{W'} V' \to V'$ les deux projections. 

Alors, l'ensemble
\[ U \times_{W} V := p_{U'}^{-1}(U) \cap p_{V'}^{-1}(V),\]
est une partie compacte spectralement convexe de~$\E{n+m}{\cA}$ et le morphisme canonique
\[\cB(V) \hosp{\cB(W)} \cB(U) \simtoo \cB(U\times_{W} V).\]
est un isomorphisme isom\'etrique. 
\end{prop}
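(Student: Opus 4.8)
The plan is to reduce the statement to the two tools that have already been established: the characterization of spectrally convex parts via the universal property of Proposition~\ref{crit_spectral_pu}, and the behavior of spectral convexity under extension of scalars from Proposition~\ref{extension_scalaire_spectral}. First I would describe the set $U \times_W V$ concretely. Writing $T_1,\dotsc,T_n$ for the coordinates on $\E{n}{\cA}$, $S_1,\dotsc,S_m$ for those on $\E{m}{\cA}$, and $R_1,\dotsc,R_N$ for those on $\E{N}{\cA}$, the fibered product $U'\times_{W'}V'$ is the closed analytic subset of $U'\times_{\cA}V'$ cut out by the equations $(\varphi\circ p_{U'})^\sharp(R_k) = (\psi\circ p_{V'})^\sharp(R_k)$ for $k\in\cn{1}{N}$, by Proposition~\ref{prop:produitfibreZaffine}. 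Its intersection with $p_{U'}^{-1}(U)\cap p_{V'}^{-1}(V)$ is then a compact subset of $\E{n+m}{\cA}$: it is closed (intersection of a closed analytic subset with preimages of compact, hence closed, sets) inside the compact $p_{U'}^{-1}(U)\cap p_{V'}^{-1}(V)$, which itself is compact as the intersection of preimages of compacts under continuous maps within a closed polydisc.

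Next I would identify the candidate Banach algebra. The morphism $\cB(W)\to\cB(U)$ coming from $\varphi$ and the morphism $\cB(W)\to\cB(V)$ coming from $\psi$ make $\cB(U)$ and $\cB(V)$ into $\cB(W)$-algebras, and one forms $\cC := \cB(V) \hosp{\cB(W)} \cB(U)$, a uniform Banach $\cA$-algebra. There is a canonical morphism $\cA[T_1,\dotsc,T_n,S_1,\dotsc,S_m]\to\cC$ sending $T_i$ to the image of $T_i\in\cB(U)$ and $S_j$ to the image of $S_j\in\cB(V)$; this induces a morphism $\cM(\cC)\to\E{n+m}{\cA}$. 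By Proposition~\ref{crit_spectral_pu}, it suffices to check two things: that the image of $\cM(\cC)\to\E{n+m}{\cA}$ lands in $U\times_W V$, and that for every uniform Banach $\cA$-algebra $\cD$ and every morphism $\cA[T_\bullet,S_\bullet]\to\cD$ whose induced map $\cM(\cD)\to\E{n+m}{\cA}$ lands in $U\times_W V$, there is a unique bounded $\cA$-algebra morphism $\cC\to\cD$ compatible with it. For the first point, the two composites $\cA[T_\bullet,S_\bullet]\to\cC$ restricted to $\cA[T_\bullet]$ and $\cA[S_\bullet]$ factor through $\cB(U)$ and $\cB(V)$ respectively (by spectral convexity of $U$ and $V$, using Proposition~\ref{crit_spectral_pu} again), so the image lands in $p_{U'}^{-1}(U)\cap p_{V'}^{-1}(V)$; and the relations $R_k$ pulled back via $\varphi$ and $\psi$ agree in $\cB(W)$, hence their images in $\cC$ coincide, which forces the image into the closed analytic subset defining $U'\times_{W'}V'$.

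For the second (universal) point I would argue as in the proof of Proposition~\ref{extension_scalaire_spectral}: given such a $\cD$, the induced map $\cM(\cD)\to\E{n}{\cA}$ lands in $U$, so $\cA[T_\bullet]\to\cD$ factors uniquely through $\cB(U)$; likewise $\cA[S_\bullet]\to\cD$ factors uniquely through $\cB(V)$; and since the induced map also lands in the locus where the pulled-back coordinates from $W'$ agree, the two resulting morphisms $\cB(W)\to\cD$ (one through $\cB(U)$, one through $\cB(V)$) coincide, so we obtain a $\cB(W)$-bilinear map and hence a unique morphism $\cB(V)\otimes_{\cB(W)}\cB(U)\to\cD$; completeness and uniformity of $\cD$ then yield a unique bounded factorization through $\cC = \cB(V)\hosp{\cB(W)}\cB(U)$, as in the end of the proof of Proposition~\ref{extension_scalaire_spectral}. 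This gives both the homeomorphism $\cM(\cC)\simto U\times_W V$ — so $U\times_W V$ is spectrally convex — and, since $\cB$ of a spectrally convex set is $\cM(\cC)$ with its uniform norm, the isometric identification $\cC\simto\cB(U\times_W V)$.

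The main obstacle I anticipate is the isometry claim for the norms rather than a mere topological or algebraic isomorphism: one must verify that the spectral seminorm on $\cB(V)\otimes_{\cB(W)}\cB(U)$ agrees with the uniform norm on $U\times_W V$. This is handled exactly as in Proposition~\ref{extension_scalaire_spectral} — the morphism $\cM(\cC)\to\E{n+m}{\cA}$ being a homeomorphism onto $U\times_W V$ means that, by Lemma~\ref{lem:spuni}, the spectral seminorm of any element of the tensor product equals the maximum of its absolute values over $\cM(\cC)$, which is its uniform norm over $U\times_W V$ — so passing to the spectral completion on the source corresponds precisely to the uniform completion on the target, and no seminorm is lost. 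A secondary point of care is making sure the fibered product $U'\times_{W'}V'$ really sits inside $\E{n+m}{\cA}$ in the stated way, which follows from Corollary~\ref{cor:produitfibremodeles}, so that the intersection $U\times_W V$ can legitimately be regarded as a compact subset of an affine analytic space and the machinery of spectrally convex parts applies.
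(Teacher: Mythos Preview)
Your proof is correct and follows essentially the same approach as the paper: both reduce to the universal-property characterization of spectrally convex parts (Proposition~\ref{crit_spectral_pu}) and argue in direct analogy with the proof of Proposition~\ref{extension_scalaire_spectral}. The only organizational difference is that the paper first shows $U\times_W V$ is spectrally convex by checking directly that $\cM(\cB(U\times_W V))$ lands in $U\times_W V$ (via Proposition~\ref{crit_spectral}), and then separately identifies $\cB(U\times_W V)$ with the spectral tensor product, whereas you establish both at once by working with $\cC = \cB(V)\hosp{\cB(W)}\cB(U)$ from the start; this is a harmless reordering.
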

\begin{proof}
La suite d'inclusions suivie d'un isomorphisme provient du th\'eor\`eme~\ref{produit_fibr\'e}, du lemme~\ref{lem:produitouverts} et de la proposition~\ref{prop:produitaffines}.

Notons $T_{1},\dotsc,T_{n}$ (resp. $S_{1},\dotsc,S_{m}$, resp. $Z_{1},\dotsc,Z_{N}$) les coordonn\'ees sur~$\E{n}{\cA}$ (resp.~$\E{m}{\cA}$, resp.~$\E{N}{\cA}$). 
Notons $p_{\cA,U'} \colon U' \times_{\cA} V' \to U'$ et $p_{\cA,V'} \colon U' \times_{\cA} V' \to V'$ les deux projections. Posons
\[ U \times_{\cA} V := p_{\cA,U'}^{-1}(U) \cap p_{\cA,V'}^{-1}(V).\]
Pour tout $i\in \cn{1}{N}$, posons 
\[ f_{i} := (\varphi\circ p_{\cA,U'})^\sharp(Z_{i}) \textrm{ et } g_{i} := (\psi\circ p_{\cA,V'})^\sharp(Z_{i})
\textrm{ dans } \cO(U'\times_{\cA} V') .\]
D'apr\`es la proposition~\ref{prop:produitfibreZaffine}, $U'\times_{W'} V' = U'\times_{\E{N}{\cA}} V'$ est le ferm\'e analytique de~$U'\times_{\cA} V'$ d\'efini par l'id\'eal $(f_{1}-g_{1},\dotsc,f_{N}-g_{N})$.

Montrons, tout d'abord, que $U\times_{W} V$ est compact. Puisque c'est un ferm\'e de~$U\times_{\cA} V$, il suffit de montrer que ce dernier l'est. Puisque~$U$ et~$V$ sont compacts, il existe $r_{1},\dotsc,r_{n},s_{1},\dotsc,s_{m}\in \R_{>0}$ tels que $U \subset \overline{D}_{\cM(\cA)}(r_{1},\dotsc,r_{n})$ et~$V \subset \overline{D}_{\cM(\cA)}(s_{1},\dotsc,s_{m})$. Alors, $U \times_{\cA} V$ est une partie ferm\'ee de~$\E{n+m}{\cA}$ qui est contenue dans le compact $\overline{D}_{\cM(\cA)}(r_{1},\dotsc,r_{n},s_{1},\dotsc,s_{m})$. Elle est donc compacte.

Montrons, maintenant, que~$U\times_{W} V$ est spectralement convexe. Le morphisme $\cA[T_1,\dotsc,T_n]\to \cB(U\times_{W} V)$ s'\'etend en un morphisme born\'e d'alg\`ebres de Banach~$\cB(U)\to\cB(U\times_{W} V)$. Par cons\'equent, l'image du morphisme $\cM(\cB(U\times_{W} V)) \to \E{n+m}{\cA}$ induit par $\cA[T_1,\dotsc,T_n,S_{1},\dotsc,S_{m}]\to \cB(U\times_{W} V)$ est contenue dans~$p_{U'}^{-1}(U)$. En raisonnant de fa\c con similaire, on montre que cette image est \'egalement contenue dans~$p_{V'}^{-1}(V)$, et donc dans~$U\times_{\cA} V$. En outre, pour tout $i\in \cn{1}{N}$, on a $f_{i} - g_{i} = 0$ sur $U'\times_{W'} V'$ et donc dans $\cB(U\times_{W} V)$. On en d\'eduit que l'image du morphisme $\cM(\cB(U\times_{W} V)) \to \E{n+m}{\cA}$ est contenue dans $U\times_{W} V$, ce qui entra\^ine que~$U\times_{W} V$ est spectralement convexe, d'apr\`es la proposition~\ref{spectralement_conv}.

Passons finalement \`a la d\'emonstration de l'isomorphisme. \`A partir des morphismes $\cA[T_1,\dotsc,T_n]\to \cB(U)$ et $\cA[S_1,\dotsc,S_m]\to \cB(V)$, on construit un morphisme
\[ \cA[T_1,\dotsc,T_n,S_{1},\dotsc,S_{m}] \too \cB(U) \hosp{\cB(W)} \cB(V).\]
Consid\'erons le morphisme induit $\cM(\cB(U) \hosp{\cB(W)} \cB(V)) \to \E{n+m}{\cA}$. D'apr\`es la proposition~\ref{crit_spectral_pu}, il suffit de d\'emontrer les deux propri\'et\'es suivantes~:
\begin{enumerate}[i)]
\item l'image du morphisme $\cM(\cB(U) \hosp{\cB(W)} \cB(V))\to\E{n+m}{\cA}$ est contenue dans~$U \times_{W}V$ ;
\item pour toute~$\cA$-alg\`ebre de Banach uniforme~$\cC$ et tout morphisme de $\cA$-alg\`ebres $\cA[T_1,\dotsc,T_{n},S_{1},\dotsc,S_{m}]\to\cC$ tel que l'image du morphisme induit $\cM(\cC)\to\E{n+m}{\cA}$ soit contenue dans~$U \times_{\cB(W)}V$, le morphisme~$\cA[T_1,\dotsc,T_{n},S_{1},\dotsc,S_{m}]\to\cC$ se factorise de mani\`ere unique par $\cB(U) \hosp{\cB(W)} \cB(V)$. 
\end{enumerate}
On y parvient en raisonnant d'une fa\c con similaire \`a celle adopt\'ee dans la d\'emonstration de la proposition~\ref{extension_scalaire_spectral}.
\end{proof}

\'Enon\c cons une cons\'equence du r\'esultat pr\'ec\'edent concernant les espaces topologiques sous-jacents aux produits fibr\'es d'espaces analytiques. 

\begin{prop}\label{produit_fibr\'e_propre}\index{Produit!fibre@fibr\'e}\index{Application!propre}
Soient $\varphi \colon X\to Z$ et $\psi \colon Y\to Z$ des morphismes d'espaces $\cA$-analytiques. Notons~$|X|$, $|Y|$ et~$|Z|$ les espaces topologiques sous-jacents \`a~$X$, $Y$ et~$Z$ respectivement. L'application naturelle 
\[|X\times_Z Y|\too|X|\times_{|Z|} |Y|\] 
est continue, propre et surjective.
\end{prop}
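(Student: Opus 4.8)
La stratégie est de traiter les trois propriétés (continuité, surjectivité, propreté) successivement, en se ramenant dès que possible au cas d'un produit au-dessus d'un espace affine, où l'on dispose de la description explicite du produit fibré comme fermé analytique de~$X\times_{\cA}Y$ (\cf~théorème~\ref{produit_fibr\'e} et proposition~\ref{prop:produitfibreZaffine}).

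D'abord, la continuité de l'application $|X\times_{Z}Y| \to |X|\times_{|Z|}|Y|$ est formelle~: par la propriété universelle du produit fibré, les deux projections $X\times_{Z}Y \to X$ et $X\times_{Z}Y \to Y$ sont des morphismes analytiques, donc continues, et elles coïncident après composition avec $\varphi$ et $\psi$~; l'application vers le produit fibré topologique s'en déduit par propriété universelle du produit fibré dans la catégorie des espaces topologiques. Ensuite, pour la surjectivité~: un point $(x,y)$ de $|X|\times_{|Z|}|Y|$ est la donnée de $x\in X$, $y\in Y$ avec $\varphi(x) = \psi(y) =: z$. On veut construire un point de $X\times_{Z}Y$ au-dessus de $(x,y)$. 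L'idée est d'utiliser la proposition~\ref{preimage}~: on a $\varphi^{-1}(z) \simeq X\times_{Z}\cM(\cH(z))$ et $\psi^{-1}(z) \simeq Y\times_{Z}\cM(\cH(z))$, d'où $x$ fournit un point $x'$ de $X\ho{\cA}\cH(z)$ et $y$ un point $y'$ de $Y\ho{\cA}\cH(z)$, tous deux au-dessus du point $z'$ de $\cM(\cH(z))$ (vu comme $\cM(\cH(z))$ se plongeant dans $Z\ho{\cA}\cH(z)$). On cherche alors un point du produit fibré $(X\ho{\cA}\cH(z))\times_{Z\ho{\cA}\cH(z)}(Y\ho{\cA}\cH(z))$ au-dessus de $(x',y')$~; en localisant on se ramène au cas où $Z$ est un espace affine, où le produit fibré est un fermé analytique de $X\times_{\cH(z)}Y$ (au-dessus de $\cH(z)$), et il suffit de produire un point de $X\times_{\cH(z)}Y$ dans ce fermé. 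Ce point se construit à partir des semi-normes~: sur l'anneau local (ou une $\cB(V)$ convenable) on peut composer les morphismes d'évaluation en $x'$ et $y'$ via le produit tensoriel complété $\cH(x')\hosp{\cH(z)}\cH(y')$, dont le spectre est non vide d'après le théorème~\ref{th:MAnonvide}~; c'est exactement le genre d'argument déployé dans la preuve de la proposition~\ref{preimage} (construction du morphisme $\sigma_{z}$) et dans la proposition~\ref{produit_spectralement_convexe}. Le point obtenu s'envoie sur $x$ et $y$ par construction.

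Enfin, pour la propreté~: d'après le lemme~\ref{lem:criterepropre}~ii) et la remarque~\ref{rem:proprelocalaubut}, il suffit de montrer que tout point de $|X|\times_{|Z|}|Y|$ possède un voisinage compact dont l'image réciproque est compacte, la question étant locale au but. On se localise donc~: on prend des voisinages ouverts affines-locaux $X_{0}$ de $x$, $Y_{0}$ de $y$ (fermés analytiques d'ouverts d'espaces affines) et un voisinage ouvert $Z_{0}$ de $z$ se plongeant dans un $\E{N}{\cA}$, avec $\varphi(X_{0})\subset Z_{0}$, $\psi(Y_{0})\subset Z_{0}$. Par le théorème~\ref{produit_fibr\'e} (ou le corollaire~\ref{cor:produitfibremodeles}), $X_{0}\times_{Z_{0}}Y_{0}$ est alors un fermé analytique d'un ouvert de $\E{n+m}{\cA}$. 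D'autre part, $|X_{0}|\times_{|Z_{0}|}|Y_{0}|$ s'identifie à un fermé de $|X_{0}|\times|Y_{0}|$, lui-même localement compact. En prenant un voisinage compact de $(x,y)$ de la forme $K_{X}\times K_{Y}$ (avec $K_{X}\subset X_{0}$, $K_{Y}\subset Y_{0}$ voisinages compacts), son image réciproque dans $|X_{0}\times_{Z_{0}}Y_{0}|$ est contenue dans $p_{X}^{-1}(K_{X})\cap p_{Y}^{-1}(K_{Y})$, où $p_{X}$, $p_{Y}$ sont les projections~; on montre que cet ensemble est compact en remarquant qu'il est fermé dans un produit de polydisques relatifs au-dessus de $\cM(\cA)$, comme dans la preuve de la proposition~\ref{produit_spectralement_convexe} (majoration des coordonnées sur les compacts $K_{X}$ et $K_{Y}$). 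La fermeture provient de la continuité de l'application naturelle vers $|X_{0}|\times|Y_{0}|$ et de la compacité de $K_{X}\times K_{Y}$.

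\textbf{Principal obstacle.} La difficulté centrale est la surjectivité, et plus précisément la construction effective d'un point du produit fibré au-dessus d'un couple $(x,y)$ donné~: il faut manipuler avec soin les produits tensoriels complétés de corps valués et invoquer la non-vacuité des spectres analytiques (théorème~\ref{th:MAnonvide}), en passant par l'extension des scalaires à $\cH(z)$ et la description locale du produit fibré. Les deux autres propriétés sont soit formelles (continuité), soit une application directe du critère de propreté combiné à la description du produit fibré comme fermé analytique d'un espace affine et aux majorations uniformes usuelles sur les compacts.
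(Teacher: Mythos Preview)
Votre plan est correct et recouvre essentiellement la démonstration du papier. La seule différence notable concerne la surjectivité~: vous passez par l'extension des scalaires à~$\cH(z)$ (via la proposition~\ref{preimage}) pour vous ramener à un produit fibré au-dessus d'un corps, tandis que le papier prend un chemin plus court en observant que tout point est une partie spectralement convexe (exemple~\ref{ex:pointprorationnel}, exemple~\ref{ex:Bpoint}, théorème~\ref{thm:rationnel}) et en appliquant directement la proposition~\ref{produit_spectralement_convexe} aux singletons $\{x\}$, $\{y\}$, $\{z\}$, ce qui identifie la fibre au-dessus de~$(x,y)$ au spectre $\cM(\cH(x)\hosp{\cH(z)}\cH(y))$ sans passer par l'extension des scalaires. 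De même, pour la propreté, le papier invoque la proposition~\ref{produit_spectralement_convexe} sur des voisinages compacts spectralement convexes $U_{0}$, $V_{0}$, $W_{0}$ (avec $\varphi(U_{0})\subset W_{0}$, $\psi(V_{0})\subset W_{0}$) au lieu de dérouler l'argument de compacité dans un polydisque comme vous le proposez~; votre approche revient à reprouver la partie compacité de cette proposition. Les deux routes aboutissent, la vôtre est simplement un peu moins économique.
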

\begin{proof}
L'application $\Pi \colon |X\times_Z Y|\to|X|\times_{|Z|} |Y|$ est continue par construction. Montrons qu'elle est surjective. Soient $x \in X$, $y\in Y$ et $z\in Z$ tels que $z = \varphi(x) = \psi(y)$. 
Quitte \`a restreindre les espaces, on peut supposer que~$X$, $Y$, et~$Z$ sont des ferm\'es analytiques d'espaces affines. Notons~$T$ l'ensemble des points de~$X\times_{Z} Y$ qui se projettent respectivement sur~$x$ et~$y$ par les premi\`ere et deuxi\`eme projections. D'apr\`es les exemples~\ref{ex:pointprorationnel} et~\ref{ex:Bpoint} et le th\'eor\`eme~\ref{thm:rationnel}, tout point~$x$ d'un espace affine est spectralement convexe et on a $\cB(\{x\}) = \cH(x)$. La proposition~\ref{produit_spectralement_convexe} assure donc que~$T$ est en bijection avec $\cM(\cH(x)\hosp{\cH(z)} \cH(y))$. Or $\cH(x)\hosp{\cH(z)} \cH(y)$ n'est pas nulle, donc, d'apr\`es le th\'eor\`eme~\ref{th:MAnonvide}, $T$~n'est pas vide. On en d\'eduit que l'application~$\Pi$ est surjective.

Montrons finalement que l'application $\Pi \colon |X\times_Z Y| \to |X|\times_{|Z|} |Y|$ est propre. D'apr\`es le lemme~\ref{lem:criterepropre}, il suffit de montrer que tout point de $|X|\times_{|Z|} |Y|$ poss\`ede un voisinage compact dont l'image r\'eciproque par~$\Pi$ est compacte. Soient~$t$ un point de~$|X|\times_{|Z|} |Y|$. Il existe $x \in X$, $y\in Y$ et $z\in Z$ tels que $z = \varphi(x) = \psi(y)$ et $t$ soit l'image du point $(x,y)$ de $|X|\times |Y|$. Soit~$U$ (resp.~$V$, resp.~$W$) un voisinage de~$x$ dans~$X$ (resp.~$y$ dans~$Y$, resp.~$z$ dans~$Z$) qui est un ferm\'e analytique d'un ouvert~$U'$ (resp.~$V'$, resp.~$W'$) d'un espace affine analytique. On peut supposer que $\varphi(U) \subset W$ et que~$\varphi$ s'\'etend en un morphisme $\tilde{\varphi} \colon U' \to W'$. De m\^eme, on peut supposer que $\psi(V) \subset W$ et que~$\psi$ s'\'etend en un morphisme $\tilde{\psi} \colon V' \to W'$. Soit~$W_{0}$ un voisinage compact de~$z$ dans~$W$ qui est spectralement convexe dans~$W'$. Soit~$U_{0}$ (resp.~$V_{0}$) un voisinage compact de~$x$ dans~$U$ (resp.~$y$ dans~$V$) qui est spectralement convexe dans~$U'$ (resp.~$V'$) et tel que $\varphi(U_{0}) \subset W_{0}$ (resp. $\psi(V_{0}) \subset W_{0}$). Il suffit de montrer que l'image r\'eciproque de $U_{0} \times_{W_{0}} V_{0}$ par~$\Pi$ est compacte. Cela d\'ecoule de la proposition~\ref{produit_spectralement_convexe}. 
\end{proof}

Nous pouvons maintenant d\'emontrer la stabilit\'e des morphismes propres par changement de base.

\begin{prop}\label{stabilite_propre}\index{Produit!fibre@fibr\'e}\index{Morphisme analytique!propre}
Soit~$\varphi \colon X\to Y$ un morphisme propre d'espaces $\cA$-analytiques. Soit $\psi \colon Z\to Y$ un morphisme d'espaces $\cA$-analytiques. Alors le morphisme $\varphi_{Z} \colon X \times_{Y} Z \to Z$, d\'eduit de~$\varphi$ par changement de base \`a~$Z$, est propre. 
\end{prop}
\begin{proof}
L'application entre espaces topologiques $|\varphi| \colon |X| \to |Y|$ est propre. Par d\'efinition, la propret\'e est pr\'eserv\'ee par changement de base, donc l'application $|X| \times_{|Z|} |Y| \to |Z|$ induite par~$|\varphi|$ et~$|\psi|$ est propre. En outre, d'apr\`es la proposition~\ref{produit_fibr\'e_propre}, l'application naturelle $|X\times_YZ|\to|X|\times_{|Y|}|Z|$ est propre. Le lemme~\ref{lem:compositionpropre} assure que la compos\'ee~$|\varphi_{Z}|$ des applications pr\'ec\'edentes est propre.
\end{proof}

\section{Morphismes s\'epar\'es}\label{sec:morphismessepares}
\index{Morphisme analytique!separe@s\'epar\'e|(}

Cette section est consacr\'ee \`a la notion de morphisme s\'epar\'e.
Pr\'ecisons que les espaces analytiques que nous consid\'erons, qui s'obtiennent pas des recollements g\'en\'eraux de sous-espaces d'espaces affines, ne sont pas n\'ecessairement topologiquement s\'epar\'es.

\begin{defi}\label{def:separe}\index{Morphisme analytique!separe@s\'epar\'e|textbf}\index{Immersion!fermee@ferm\'ee}\index{Espace analytique!separe@s\'epar\'e|textbf}
Soient $X$, $Y$ des espaces $\cA$-analytiques et $\varphi \colon X\to Y$ un morphisme. 
On dit que le morphisme~$\varphi$ est \emph{s\'epar\'e} si le morphisme diagonal $\Delta_{X/Y} \colon X \to X\times_{Y} X$ est une immersion ferm\'ee.

On dit que l'espace $\cA$-analytique~$X$ est \emph{s\'epar\'e} si son morphisme structural $\pi \colon X \to \cM(\cA)$ est s\'epar\'e.
\end{defi}

\begin{exem}
D'apr\`es la proposition~\ref{prop:immersiondiagonalemodelelocal}, tout mod\`ele local est s\'epar\'e (ainsi que tout morphisme dont la source est un mod\`ele local).
\end{exem}

\begin{prop}\label{stabilite_separe}\index{Extension des scalaires}\index{Produit!fibre@fibr\'e}
Soit~$\varphi \colon X\to Y$ un morphisme s\'epar\'e d'espaces $\cA$-analytiques. 

\begin{enumerate}[i)]
\item Soit $f \colon \cA \to \cB$ un morphisme d'anneaux de Banach born\'e, o\`u~$\cB$ est un anneau de base g\'eom\'etrique. Alors le morphisme $\varphi_{\cB} \colon X \ho{\cA} \cB \to Y\ho{\cA} \cB$, d\'eduit de~$\varphi$ par extension des scalaires \`a~$\cB$, est s\'epar\'e.
\item Soit $\psi \colon Z\to Y$ un morphisme d'espaces $\cA$-analytiques. Alors le morphisme $\varphi_{Z} \colon X \times_{Y} Z \to Z$, d\'eduit de~$\varphi$ par changement de base \`a~$Z$, est s\'epar\'e. 
\end{enumerate}
\end{prop}
\begin{proof}
i) Par hypoth\`ese, le morphisme diagonal $\Delta_{X/Y} \colon X \to X\times_{Y} X$ est une immersion ferm\'ee. Par la propri\'et\'e universelle de l'extension des scalaires, l'extension de~$\Delta_{X/Y}$ \`a~$\cB$ s'identifie au morphisme diagonal
\[\Delta_{X\ho{\cA} \cB/Y\ho{\cA} \cB} \colon X \ho{\cA} \cB \too (X\ho{\cA} \cB)\times_{Y\ho{\cA} \cB} (X\ho{\cA} \cB).\]
D'apr\`es le lemme~\ref{lem:extensionBouverts}, iii), c'est encore une immersion ferm\'ee. Le r\'esultat s'ensuit.

ii) Le r\'esultat se d\'emontre comme i), en utilisant le lemme~\ref{lem:extensionBouverts} au lieu du lemme~\ref{lem:changementbaseimmersion}, 
\end{proof}

Int\'eressons-nous maintenant \`a la version topologique de la s\'eparation.

\begin{defi}\index{Application!separee@s\'epar\'ee|textbf}
Soient~$T$, $T'$ des espaces topologiques. Une application $f \colon T\to T'$ est dite \emph{s\'epar\'ee} si l'image du morphisme diagonal $T \to T\times_{T'} T$ est ferm\'ee.
\end{defi}


La caract\'erisation suivante se d\'emontre sans difficult\'es.

\begin{lemm}
Soient~$T$, $T'$ des espaces topologiques. Une application $f \colon T\to T'$ est s\'epar\'ee si, et seulement si, pour tout $t'\in T'$ et tous $t_{1} \ne t_{2} \in f^{-1}(t')$, il existe des voisinages~$U_{1}$ de~$t_{1}$ et et~$U_{2}$ de~$t_{2}$ dans~$T$ tels que $U_{1} \cap U_{2} = \emptyset$.

En particulier, si~$T$ est s\'epar\'e, alors toute application $f \colon T \to T'$ est s\'epar\'ee.
\qed
\end{lemm}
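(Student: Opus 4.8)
The plan is to prove the equivalence in two directions, treating the ``only if'' direction as essentially formal and reducing the ``if'' direction to the topological statement of Proposition~\ref{produit_fibr\'e_propre}.

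\begin{proof}
First I would unwind the definitions. Write $p_{1},p_{2} \colon S \times_{T} S \to S$ for the two projections and $\delta \colon S \to S\times_{T} S$ for the diagonal map. Recall that the image of~$\delta$ is
\[ \delta(S) = \{ (s,s') \in S\times_{T} S : s = s'\},\]
and that the fiber of the underlying map $S\times_{T} S \to T$ over a point~$t$ is in bijection with $f^{-1}(t) \times f^{-1}(t)$ by the very construction of the fiber product of topological spaces.

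Suppose first that~$f$ is s\'epar\'ee, \emph{i.e.} that $\delta(S)$ is closed in $S\times_{T} S$. Fix $t\in T$ and $s \ne s'$ in~$f^{-1}(t)$. Then the point $(s,s')$ of $S\times_{T} S$ does not lie in $\delta(S)$, so, $\delta(S)$ being closed, there is an open neighborhood~$W$ of $(s,s')$ in $S\times_{T} S$ disjoint from~$\delta(S)$. By definition of the product topology (and the fact that $S\times_T S$ carries the topology induced from $S\times S$), we may shrink~$W$ so that $W = (U\times U') \cap (S\times_{T} S)$ for some open neighborhood~$U$ of~$s$ and~$U'$ of~$s'$ in~$S$. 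If a point $s'' \in U \cap U'$ with $s'' \in S$ existed, then $(s'',s'')$ would lie in $W$ (since $f(s'') = f(s'')$ trivially) and also in $\delta(S)$, a contradiction; hence $U \cap U' = \emptyset$, which is the separation property at~$t$.

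Conversely, suppose that for every $t\in T$ and every pair $s \ne s'$ in $f^{-1}(t)$ there exist disjoint open neighborhoods. I claim $\delta(S)$ is closed. Let $(s,s') \in (S\times_{T} S) \setminus \delta(S)$; then $f(s) = f(s') =: t$ but $s \ne s'$, so by hypothesis there are disjoint open $U \ni s$, $U' \ni s'$ in~$S$. The set $(U\times U')\cap (S\times_{T} S)$ is an open neighborhood of $(s,s')$ in $S\times_{T} S$, and it meets $\delta(S)$ only in points of the form $(s'',s'')$ with $s'' \in U \cap U' = \emptyset$, so it is disjoint from $\delta(S)$. Thus the complement of $\delta(S)$ is open, as desired. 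Finally, the last assertion is immediate: if~$S$ is s\'epar\'e (Hausdorff), then for any $s \ne s'$ in~$S$ one already finds disjoint neighborhoods without even invoking~$f$ or~$t$, so the criterion above is satisfied and $f$ is s\'epar\'ee. The only point requiring care is the reduction of an arbitrary open neighborhood of $(s,s')$ in $S\times_T S$ to one of the box form $(U\times U')\cap(S\times_T S)$, which holds because the fiber product topology is by definition the subspace topology of $S\times S$ and boxes form a base there; this is routine.
\end{proof}
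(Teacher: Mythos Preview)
Your proof is correct and entirely standard; the paper itself omits the proof, marking the lemma with \qed{} and the remark that it ``se d\'emontre sans difficult\'es.'' One small cosmetic point: your opening plan mentions reducing the ``if'' direction to Proposition~\ref{produit_fibr\'e_propre}, but your actual argument (rightly) makes no use of that proposition and proceeds purely by manipulating box-open sets in $S\times_T S$; you should drop that reference from the plan.
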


%

\begin{rema}\label{rem:separelocalaubut}
La s\'eparation est une propri\'et\'e locale au but.
\end{rema}

\begin{lemm}\label{lem:compositionsepare}
Soient $f\colon T \to T'$ et $g\colon T' \to T''$ des applications continues entre espaces topologiques.
\begin{enumerate}[i)]
\item Si $f$ et $g$ sont s\'epar\'ees, alors $g\circ f$ est s\'epar\'ee.
\item Si $g\circ f$ est s\'epar\'ee, alors $f$ est s\'epar\'ee.
\end{enumerate} 
\qed
\end{lemm}



Les notions de s\'eparation analytique et topologique sont reli\'ees.

\begin{prop}\label{prop:separeantop}
Soit $\varphi \colon X\to Y$ un morphisme d'espaces $\cA$-analytiques. Alors, $\varphi$ est s\'epar\'e si, et seulement si, le morphisme d'espaces topologiques sous-jacent $|\varphi| \colon |X| \to |Y|$ est s\'epar\'e. 
\end{prop}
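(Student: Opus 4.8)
The plan is to show both implications separately, reducing in each case to the topological content via the algebraic results already established, chiefly Proposition~\ref{prop:immersiondiagonale} (the diagonal is always an immersion) and Proposition~\ref{immersion_ferm\'e2} (an immersion is a closed immersion if and only if its image is closed).

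First I would prove the easy direction: if $\varphi$ is separated, then $|\varphi|$ is separated. By definition, $\Delta_{X/Y} \colon X \to X\times_{Y} X$ is a closed immersion, so in particular its image is closed in $|X\times_{Y} X|$. The underlying topological map of $\Delta_{X/Y}$ factors through the natural map $\Pi \colon |X\times_{Y} X| \to |X|\times_{|Y|} |X|$, and the composite $|X| \to |X|\times_{|Y|}|X|$ is precisely the topological diagonal of $|\varphi|$. Since $\Pi$ is continuous, proper and surjective by Proposition~\ref{produit_fibr\'e_propre}, a proper surjection is in particular a closed map, so the image of the topological diagonal, being the $\Pi$-image of the closed set $\Delta_{X/Y}(|X|)$, is closed in $|X|\times_{|Y|}|X|$. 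Hence $|\varphi|$ is separated.

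For the converse, suppose $|\varphi|$ is separated. By Proposition~\ref{prop:immersiondiagonale}, $\Delta_{X/Y}$ is an immersion of $\cA$-analytic spaces, so by Proposition~\ref{immersion_ferm\'e2} it suffices to show that its image is closed in $X\times_{Y} X$. Using the commutative square relating $\Delta_{X/Y}$, the diagonal of $|\varphi|$, and $\Pi$, one has $\Pi^{-1}\bigl(\Delta_{|\varphi|}(|X|)\bigr) = \Delta_{X/Y}(|X|)$: indeed a point $z$ of $X\times_{Y}X$ lies in the image of $\Delta_{X/Y}$ if and only if its two projections to $X$ agree, which is exactly the condition $\Pi(z) \in \Delta_{|\varphi|}(|X|)$, using that $\Delta_{X/Y}$ induces a homeomorphism onto its image (it is an immersion) together with surjectivity of $\Pi$. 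Since $|\varphi|$ is separated, $\Delta_{|\varphi|}(|X|)$ is closed in $|X|\times_{|Y|}|X|$, and by continuity of $\Pi$ its preimage $\Delta_{X/Y}(|X|)$ is closed in $|X\times_{Y}X|$. Therefore $\Delta_{X/Y}$ is a closed immersion and $\varphi$ is separated.

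The main obstacle is the precise bookkeeping in the converse direction: one must check carefully that $\Pi^{-1}\bigl(\Delta_{|\varphi|}(|X|)\bigr)$ equals $\Delta_{X/Y}(|X|)$ as subsets, which relies on combining the surjectivity of $\Pi$ (so that no points of the diagonal are missed) with the fact that the immersion $\Delta_{X/Y}$ is injective and a homeomorphism onto its image (so that the fiber of $\Pi$ over a diagonal point meets the image of $\Delta_{X/Y}$ in exactly one point and contains no others). The easy direction is genuinely easy once Proposition~\ref{produit_fibr\'e_propre} is invoked; all the real work has been front-loaded into that proposition and into the properties of immersions.
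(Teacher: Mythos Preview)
Your forward direction is correct and matches the paper. For the converse, the equality $\Pi^{-1}\bigl(\Delta_{|\varphi|}(|X|)\bigr) = \Delta_{X/Y}(|X|)$ you rely on is false in general; only the inclusion $\supseteq$ holds. The problem is exactly the sentence ``a point $z$ of $X\times_{Y}X$ lies in the image of $\Delta_{X/Y}$ if and only if its two projections to $X$ agree'': the fibre of $\Pi$ over a point $(x,x)$ of the topological diagonal identifies with $\cM\bigl(\cH(x)\hosp{\cH(\varphi(x))}\cH(x)\bigr)$, which typically contains more than the single point $\Delta_{X/Y}(x)$. Concretely, take $\cA=\R$ with the usual absolute value, $X=\E{1}{\R}$, $Y=\cM(\R)$, and let $x$ be the zero of $T^2+1$. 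Under $\E{2}{\R}\simeq\C^2/\Gal(\C/\R)$ the orbit $z'=\{(i,-i),(-i,i)\}$ has $p_1(z')=p_2(z')=x$, yet $(T_1-T_2)(z')\ne 0$, so $z'\notin\Delta_{X/Y}(X)$. Neither the injectivity of $\Delta_{X/Y}$ nor the surjectivity of $\Pi$ rules this out.

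The paper's own argument for the converse asserts the contrapositive of the same claim (``$t\notin\Delta_{X/Y}(X)$ implies $p_1(t)\ne p_2(t)$''), so in this sense you have faithfully reproduced its approach, gap included. A clean fix is to split into two cases. When $p_1(t)\ne p_2(t)$, the separating-neighbourhoods argument (yours or the paper's) goes through verbatim. When $p_1(t)=p_2(t)=x$, pick an open $U\ni x$ in $X$ that is a closed analytic subspace of an open of some $\E{n}{\cA}$; then $t\in U\times_Y U$, and the proof of Proposition~\ref{prop:immersiondiagonale} shows that $\Delta_{X/Y}(X)\cap(U\times_Y U)=\Delta_{U/Y}(U)$ is the zero locus of the functions $p_1^\ast T_j-p_2^\ast T_j$, hence closed in the open set $U\times_Y U$. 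So $t$ is automatically separated from the diagonal in this case, without using the hypothesis on $|\varphi|$ at all.
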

\begin{proof}
Supposons que $\varphi$ est s\'epar\'e. Le morphisme diagonal $\Delta_{X/Y}$ est alors une immersion ferm\'ee. En particulier, son image est ferm\'ee. Consid\'erons le diagramme commutatif
\[\begin{tikzcd}
\vert X \vert \arrow[r, "\vert\Delta_{X/Y}\vert"] \arrow[rd, "\Delta_{\vert X\vert/\vert Y\vert}"'] & \vert X\times_{Y} X\vert \arrow[d, "\Pi"]\\
& \vert X\vert\times_{\vert Y\vert} \vert X\vert
\end{tikzcd}.\]
On a 
\[ \Im(\Delta_{|X|/|Y|}) = \Pi( \Im(|\Delta_{X/Y}|)).\]
D'apr\`es la proposition~\ref{produit_fibr\'e_propre}, l'application~$\Pi$ est propre, donc ferm\'ee. Par cons\'equent, $\Im(\Delta_{|X|/|Y|})$ est ferm\'ee.

R\'eciproquement, supposons que~$|\varphi|$ est s\'epar\'e. Nous allons montrer que $\Delta_{X/Y}$ est une immersion ferm\'ee. D'apr\`es le corollaire~\ref{cor:immersiondiagonale}, $\Delta_{X/Y}$ est une immersion, donc, d'apr\`es la proposition~\ref{immersion_ferm\'e2}, il suffit de montrer que son image est ferm\'ee.

Soit $t \in (X\times_{Y} X) \setminus \Delta_{X/Y}(X)$. Alors, avec les notations de la d\'efinition~\ref{def:separe}, on a $p_{1}(t) \ne p_{2}(t)$. Puisque $|\varphi|$ est s\'epar\'e, il existe un voisinage ouvert~$U_{1}$ de~$p_{1}(t)$ dans~$X$ et un voisinage ouvert~$U_{2}$ de~$p_{2}(t)$ dans~$X$ tels que $U_{1}\cap U_{2} = \emptyset$. L'ouvert $p_{1}^{-1}(U_{1}) \cap p_{2}^{-1}(U_{2})$ de~$X\times_{Y} X$ contient alors~$t$ sans rencontrer $\Delta_{X/Y}(X)$. On en d\'eduit que $(X\times_{Y} X) \setminus \Delta_{X/Y}(X)$ est ouvert, et donc que $\Delta_{X/Y}(X)$ est ferm\'e.
\end{proof}

Terminons en consid\'erant les graphes de morphismes.

\begin{defi}\index{Morphisme analytique!graphe d'un}%
\nomenclature[Kr]{$\Gamma_{\varphi}$}{graphe d'un morphisme d'espaces $\cA$-analytiques~$\varphi$}
Soient $X$, $Y$, $Z$ des espaces $\cA$-analytiques et $\varphi \colon X\to Y$ un morphisme au-dessus de~$Z$.  Notons $p_X \colon X\times_{Z} Y\to X$ et $p_Y \colon X\times_{Z} Y\to Y$ les deux projections naturelles. On appelle \emph{graphe du morphisme~$\varphi$} l'unique morphisme $\Gamma_{\varphi} \colon X \to X\times_{Z} Y$ qui fait commuter le diagramme
\[\begin{tikzcd}
X \arrow[rd, "\Gamma_{\varphi}"] \arrow[rrd, bend left, "\varphi"] \arrow[rdd, bend right, "\id_{X}"'] & &\\
&X\times_{Z} Y \arrow[r, "p_{Y}"] \arrow[d, "p_{X}"']& Y \arrow[d]\\
& X \arrow[r]& Z.
\end{tikzcd}\]
\end{defi}

\begin{prop}\label{prop:grapheimmersion}\index{Morphisme analytique!graphe d'un} \index{Immersion}\index{Immersion!fermee@ferm\'ee}
Soient $X$, $Y$, $Z$ des espaces $\cA$-analytiques et $\varphi \colon X\to Y$ un morphisme au-dessus de~$Z$. Alors, le morphisme $\Gamma_{\varphi} \colon X \to X\times_{Z} Y$ est une immersion. 

Si le morphisme $Y \to Z$ est s\'epar\'e, alors $\Gamma_{\varphi}$ est une immersion ferm\'ee.
\end{prop}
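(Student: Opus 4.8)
The plan is to mimic the standard scheme-theoretic argument for graphs, reducing everything to the already-established properties of diagonal morphisms (Proposition~\ref{prop:immersiondiagonale}) and of immersions, together with the base-change behaviour of immersions under fibre products (Lemma~\ref{lem:changementbaseimmersion}).

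First I would realise $\Gamma_{\varphi}$ as a base change of a diagonal morphism. The morphism $\varphi \colon X\to Y$ over~$Z$ gives, by the universal property, a morphism $(\id_{X},\varphi) \colon X \to X\times_{Z} Y$, which is exactly $\Gamma_{\varphi}$. I claim that the square
\[\begin{tikzcd}
X \arrow[r, "\Gamma_{\varphi}"] \arrow[d, "\varphi"'] & X\times_{Z} Y \arrow[d, "\varphi\times \id_{Y}"]\\
Y \arrow[r, "\Delta_{Y/Z}"] & Y\times_{Z} Y
\end{tikzcd}\]
is cartesian, where $\varphi\times\id_{Y}$ is the morphism induced on fibre products over~$Z$ by $\varphi$ on the first factor. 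This is checked on the level of functors of points: a morphism $T\to X\times_{Z} Y$ is a pair $(a\colon T\to X,\, b\colon T\to Y)$ with equal composites to~$Z$; its image in $Y\times_{Z}Y$ is $(\varphi a, b)$; and the fibre product with~$Y$ over $Y\times_{Z}Y$ along $\Delta_{Y/Z}$ forces $\varphi a = b$, leaving precisely the datum of $a\colon T\to X$, i.e. a morphism $T\to X$, which is the functor of points of~$X$ via~$\Gamma_{\varphi}$. (One also checks the two projections agree with the stated maps, which is immediate from the definitions of $\Gamma_{\varphi}$ and $\Delta_{Y/Z}$.)

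Given the cartesian square, both statements follow quickly. By Proposition~\ref{prop:immersiondiagonale}, $\Delta_{Y/Z}$ is an immersion; then Lemma~\ref{lem:changementbaseimmersion}, applied with the immersion $\Delta_{Y/Z}\colon Y\to Y\times_{Z}Y$ and the morphism $\varphi\times\id_{Y}\colon X\times_{Z}Y \to Y\times_{Z}Y$, shows that the base change $\Gamma_{\varphi}\colon X\to X\times_{Z}Y$ is again an immersion. If moreover $Y\to Z$ is separated, then by Definition~\ref{def:separe} the morphism $\Delta_{Y/Z}$ is a \emph{closed} immersion, and Lemma~\ref{lem:changementbaseimmersion} then gives that $\Gamma_{\varphi}$ is a closed immersion as well.

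The only genuine point requiring care — the ``hard part'', though it is routine rather than deep — is verifying that the square above is actually cartesian in the category $\cA-\An$, i.e. that $X$ with the maps $\Gamma_{\varphi}$ and $\varphi$ represents the fibre product $(X\times_{Z}Y)\times_{Y\times_{Z}Y} Y$. This is a formal manipulation of universal properties (using the existence of fibre products from Theorem~\ref{produit_fibr\'e}), entirely analogous to the scheme case; alternatively one can argue locally, reducing to closed analytic subsets of open subsets of affine analytic spaces and computing the defining ideals directly, but the functorial argument is cleaner and avoids repeating the local computations already packaged in Lemma~\ref{lem:changementbaseimmersion}.
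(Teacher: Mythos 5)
Votre preuve est correcte et suit essentiellement la même démarche que celle du texte : on identifie $\Gamma_{\varphi}$ au changement de base de $\Delta_{Y/Z}$ le long de $\varphi\times\id_{Y}\colon X\times_{Z}Y\to Y\times_{Z}Y$, puis on conclut par la proposition~\ref{prop:immersiondiagonale}, la définition de la séparation et le lemme~\ref{lem:changementbaseimmersion}. Votre vérification explicite, par foncteurs de points, du caractère cartésien du carré est un complément bienvenu que le texte se contente d'affirmer.
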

\begin{proof}
On a le diagramme cart\'esien suivant dans la cat\'egorie des espaces $\cA$-analytiques~:
\[\begin{tikzcd}
X \arrow[r, "\Gamma_{\varphi}"] \arrow[d,  "\varphi"] & X \times_{Z} Y \arrow[d, "{(\id_{X},\varphi)}"] \\
Y \arrow[r, "\Delta_{Y/Z}"] & Y\times_{Z} Y.
\end{tikzcd}\]
D'apr\`es le corollaire~\ref{cor:immersiondiagonale}, $\Delta_{Y/Z}$ est une immersion. Si le morphisme $Y \to Z$ est s\'epar\'e, c'est m\^eme une immersion ferm\'ee, par d\'efinition. Le r\'esultat d\'ecoule maintenant du lemme~\ref{lem:changementbaseimmersion}.
\end{proof}

\index{Morphisme analytique!separe@s\'epar\'e|)}


\chapter{\'Etude des morphismes finis}\label{chap:fini}\index{Morphisme analytique!fini|(}

Ce chapitre est consacr\'e aux morphismes finis et \`a leurs applications.

Dans la section~\ref{sec:defpropfini}, nous d\'efinissons les morphismes finis d'espaces analytiques en \'etablissons quelques propri\'et\'es \'el\'ementaires. Nous nous int\'eressons ensuite aux analogues de certains r\'esultats classiques dans notre contexte. 

Dans la section~\ref{sec:imagedirectefinie}, nous montrons que l'image directe d'un faisceau coh\'erent par un morphisme fini est un faisceau coh\'erent (\cf~th\'eor\`eme~\ref{thm:fini}). La preuve fournit \'egalement un crit\`ere de finitude locale~: pour qu'un morphisme soit fini au voisinage d'un point, il faut et il suffit que ce point soit isol\'e dans sa fibre (\cf~th\'eor\`eme~\ref{thm:locfini}). Nous nous attendons, plus g\'en\'eralement, \`a ce que la coh\'erence soit pr\'eserv\'ee par image directe par un morphisme propre. Nous pr\'evoyons de traiter cette question dans un prochain travail.

Dans la section~\ref{sec:stabilitemorphismesfinis}, nous montrons que les morphismes finis sont stables par changement de base (\cf~proposition~\ref{stabilite_fini}).

Dans la section~\ref{sec:chgtbasefini}, nous \'etablissons une propri\'et\'e de changement de base fini pour les faisceaux coh\'erents. \'Etant donn\'e un carr\'e cart\'esien correspondant au produit fibr\'e d'un morphisme fini par un morphisme quelconque, les op\'erations de tirer en arri\`ere puis pousser en avant ou de pousser en avant puis tirer en arri\`ere fournissent des faisceaux coh\'erents isomorphes (\cf~corollaire~\ref{thm:formule_changement_base}). Le m\^eme r\'esultat vaut en rempla\c{c}ant ci-dessus le morphisme quelconque par une extension des scalaires (\cf~corollaire~\ref{thm:formule_extension_scalaire}). 

Dans la section~\ref{sec:Ruckert}, nous d\'emontrons une variante du Nullstellensatz, dite Nullstellensatz de R\"uckert (\cf~th\'eor\`eme~\ref{thm:Ruckert} et corollaire~\ref{cor:IVJ}), \'etablissant un lien entre fonctions nulles en tout point et fonctions nilpotentes. Elle est utilis\'ee dans la section finale~\ref{sec:ouverture} pour montrer des crit\`eres d'ouverture de morphismes finis, par exemple le fait qu'un morphisme fini et plat est ouvert (\cf~corollaire~\ref{plat}).

\medbreak

Fixons~$(\cA,\nm)$ un anneau de base g\'eom\'etrique. Posons $B:=\cM(\cA)$.

\section{D\'efinition et premi\`eres propri\'et\'es}\label{sec:defpropfini}

Dans cette section, nous d\'efinissons les morphismes finis et rappelons quelques r\'esultats classiques.

\begin{defi}\label{def:fini}\index{Application!finie|textbf}\index{Application!quasi-finie|textbf}\index{Application!separee@s\'epar\'ee}
Une application $f$ entre espaces topologiques est dite \emph{quasi-finie} si ses fibres sont finies. Elle est dite \emph{finie} si elle est, en outre, continue, s\'epar\'ee et ferm\'ee.

\end{defi}

\begin{rema}\label{rem:finilocalaubut}
La finitude est une propri\'et\'e locale au but.
\end{rema}

\begin{lemm}\label{lem:voisinagefibre}\index{Application!fermee@ferm\'ee}
Soit $f \colon T \to T'$ une application continue et ferm\'ee entre espaces topologiques. Soient $t' \in T'$ et~$\cV$ une base de voisinages de~$t'$ dans~$T'$. Alors 
\[\{ f^{-1}(V) : V \in \cV\}\]
est une base de voisinages de~$f^{-1}(t')$ dans~$T$.
\qed
\end{lemm}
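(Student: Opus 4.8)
The statement to prove is Lemma~\ref{lem:voisinagefibre}: if $f \colon T \to T'$ is a closed map between topological spaces, $t' \in T'$, and $\cV$ is a neighbourhood basis of $t'$, then $\{f^{-1}(V) : V \in \cV\}$ is a neighbourhood basis of the fibre $f^{-1}(t')$ in $T$.

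The plan is as follows. First I would check that each $f^{-1}(V)$ for $V \in \cV$ is indeed a neighbourhood of $f^{-1}(t')$: since $V$ is a neighbourhood of $t'$, there is an open $W$ with $t' \in W \subset V$, and then $f^{-1}(W)$ is open (by continuity of $f$) and contains $f^{-1}(t')$ and is contained in $f^{-1}(V)$, so $f^{-1}(V)$ is a neighbourhood of the fibre. The genuinely substantive part is the converse: given an arbitrary neighbourhood $N$ of $f^{-1}(t')$ in $T$, I must produce some $V \in \cV$ with $f^{-1}(V) \subset N$. Here I would use closedness of $f$ in the standard way. Replacing $N$ by an open neighbourhood contained in it, I may assume $N$ is open. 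Then $T \setminus N$ is closed, so $f(T \setminus N)$ is closed in $T'$. Now $t' \notin f(T \setminus N)$: indeed if $t' = f(z)$ for some $z \in T \setminus N$, then $z \in f^{-1}(t') \subset N$, a contradiction. Hence $T' \setminus f(T \setminus N)$ is an open neighbourhood of $t'$, so it contains some element $V \in \cV$. For this $V$ one has $f^{-1}(V) \subset N$: if $x \in f^{-1}(V)$ then $f(x) \in V$, so $f(x) \notin f(T \setminus N)$, which forces $x \notin T \setminus N$, i.e. $x \in N$.

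I do not anticipate a real obstacle here; the only point requiring a little care is the reduction to open neighbourhoods and the elementary set-theoretic manipulation $f^{-1}(T' \setminus f(T \setminus N)) \subset N$, which follows from the tautology that $x \notin f^{-1}(f(S))$ implies $x \notin S$. The argument is the familiar ``closed maps have the tube-type property for fibres'' lemma, and it does not use finiteness of the fibres at all (consistent with the fact that the lemma is stated only with the closedness hypothesis). I would present it in three short steps: (i) each $f^{-1}(V)$ is a neighbourhood of the fibre; (ii) given $N$, reduce to $N$ open and consider $f(T \setminus N)$; (iii) pick $V \in \cV$ inside $T' \setminus f(T \setminus N)$ and verify $f^{-1}(V) \subset N$.
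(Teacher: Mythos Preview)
Your argument is correct and is exactly the standard proof of this ``tube lemma'' for closed maps; the paper itself gives no proof (the statement is followed immediately by \qed), so there is nothing to compare. One small remark: in step~(i) you invoke continuity of~$f$, which is not literally part of the hypothesis as written, but is implicitly assumed throughout this chapter (and is indeed necessary for the conclusion, as easy counterexamples with non-continuous closed maps show).
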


Le r\'esultat suivant est une cons\'equence directe de \cite[I, \S 10, \no 2, th\'eor\`eme~1]{BourbakiTG14}.

\begin{prop}\label{prop:finipropre}\index{Application!propre}
Toute application finie entre espaces topologiques est propre.

En particulier, une application quasi-finie est finie si, et seulement si, elle est s\'epar\'ee et propre.
\qed
\end{prop}

\'Enon\c cons quelques propri\'et\'es des applications finies vis-\`a-vis de la composition. 

\begin{lemm}\label{lem:compositionfini}
Soient $f\colon T \to T'$ et $g\colon T' \to T''$ des applications continues entre espaces topologiques.
\begin{enumerate}[i)]
\item Si $f$ et $g$ sont quasi-finies (resp. finies), alors $g\circ f$ est quasi-finie (resp. finie).
\item Si $g\circ f$ est quasi-finie et $f$ est surjective, alors $g$ est quasi-finie.
\item Si $g\circ f$ et quasi-finie (resp. finie) et $g$ est injective, alors $f$ est quasi-finie (resp. finie).
\end{enumerate} 
\end{lemm}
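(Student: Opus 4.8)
The statement to prove is Lemma~\ref{lem:compositionfini}, a purely topological fact about composition of finite maps. The plan is to verify the three assertions directly from the definition of finite map (closed map with finite fibers), using the already-established analogous properties for proper maps (Lemma~\ref{lem:compositionpropre}) and for closed maps where convenient.

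First I would handle i). Assume $f$ and $g$ are finite. Then both are closed and have finite fibers. The composite $g\circ f$ is closed since a composite of closed maps is closed (this is elementary: the image of a closed set under $g\circ f$ is the image under $g$ of its image under $f$, which is closed). For the fibers, fix $t'' \in T''$; then $(g\circ f)^{-1}(t'') = f^{-1}(g^{-1}(t''))$, and $g^{-1}(t'')$ is a finite set, say $\{t'_1,\dots,t'_k\}$, so $f^{-1}(g^{-1}(t'')) = \bigcup_{j=1}^k f^{-1}(t'_j)$ is a finite union of finite sets, hence finite. Thus $g\circ f$ is finite.

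Next, ii). Assume $g\circ f$ is finite and $f$ is surjective. For closedness of $g$: by Proposition~\ref{prop:finipropre}, $g\circ f$ is proper; then Lemma~\ref{lem:compositionpropre} ii) (proper composite with surjective first map forces the second to be proper) gives that $g$ is proper, hence closed. Alternatively one can argue directly: if $C \subset T'$ is closed, then $g(C) = g(f(f^{-1}(C))) = (g\circ f)(f^{-1}(C))$ using surjectivity of $f$, and $f^{-1}(C)$ is closed and $g\circ f$ is closed, so $g(C)$ is closed. For finiteness of the fibers of $g$: fix $t'' \in T''$; since $f$ is surjective, each point of $g^{-1}(t'')$ has a nonempty preimage under $f$, so $g^{-1}(t'')$ is the image under $f$ of $(g\circ f)^{-1}(t'') = f^{-1}(g^{-1}(t''))$, which is finite; the image of a finite set is finite, so $g^{-1}(t'')$ is finite. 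Hence $g$ is finite.

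Finally, iii). Assume $g\circ f$ is finite and $g$ is injective. For closedness of $f$: by Proposition~\ref{prop:finipropre}, $g\circ f$ is proper, and Lemma~\ref{lem:compositionpropre} iii) (proper composite with injective second map forces the first to be proper) gives $f$ proper, hence closed. For finiteness of fibers: fix $t' \in T'$ and let $t'' := g(t')$; since $g$ is injective, $g^{-1}(t'') = \{t'\}$, so $f^{-1}(t') = f^{-1}(g^{-1}(t'')) = (g\circ f)^{-1}(t'')$, which is finite. Hence $f$ is finite. There is no real obstacle here; the only thing to be careful about is invoking the proper-map analogues (Proposition~\ref{prop:finipropre} and Lemma~\ref{lem:compositionpropre}) correctly, or else spelling out the direct set-theoretic arguments, both of which are routine.
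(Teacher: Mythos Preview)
Your proof is correct and follows essentially the same approach as the paper: the fiber-finiteness in each case is verified by the direct set-theoretic arguments you give, and closedness is obtained by reducing to properness via Proposition~\ref{prop:finipropre} and then invoking Lemma~\ref{lem:compositionpropre}. The paper's proof is simply a terser version of yours, stating that fiber-finiteness is easily checked and that closedness follows from the proper case; your additional direct closedness arguments in i) and ii) are valid but not needed.
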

\begin{proof}
Les r\'esultats se d\'emontrent ais\'ement pour la propri\'et\'e de quasi-finitude.
En utilisant la proposition~\ref{prop:finipropre}, on se ram\`ene \`a d\'emontrer les propri\'et\'es correspondantes pour les applications s\'epar\'ees et propres, et l'on conclut par les lemmes~\ref{lem:compositionpropre} et~\ref{lem:compositionsepare}.
\end{proof}

\begin{defi}\index{Morphisme analytique!fini|textbf}\index{Morphisme analytique!quasi-fini|textbf}\index{Morphisme analytique!fini en un point|textbf}\index{Morphisme analytique!quasi-fini|see{Morphisme analytique fini}}\index{Morphisme analytique!fini en un point|see{Morphisme analytique fini}}\index{Morphisme analytique!separe@s\'epar\'e}
Soit $\varphi \colon X \to Y$ un morphisme d'espaces $\cA$-analytiques. On dit que~$\varphi$ est  \emph{quasi-fini} (resp. \emph{fini}) si l'application induite entre les espaces topologiques sous-jacents est quasi-finie (resp. finie). 

Pour tout point~$x$ de~$X$, on dit que~$\varphi$ est \emph{fini en~$x$} s'il existe un voisinage~$U$ de~$x$ dans~$X$ et un voisinage~$V$ de~$\varphi(x)$ dans~$Y$ tel que $\varphi(U)\subset V$ et le morphisme $\varphi_{U,V} \colon U\to V$ induit par~$\varphi$ soit fini.
\end{defi}

\begin{lemm}\label{lem:compositionfinienx}
Soient $\varphi \colon X \to Y$ et $\psi \colon Y \to Z$ des morphismes d'espaces $\cA$-analytiques. Soit~$x \in X$. Si $\varphi$ et fini en~$x$ et $\psi$ est fini en~$\varphi(x)$, alors $\psi \circ \varphi$ est fini en~$x$.
\qed
\end{lemm}


\begin{exem}\label{ex:fini}\index{Endomorphisme de la droite}
Soit $P \in \cA[T]$ un polyn\^ome unitaire non constant. Alors le morphisme $\varphi_{P} \colon \E{1}{\cA} \to \E{1}{\cA}$ d\'efini par~$P$ (\cf~exemple~\ref{exemple_morphisme}) est fini.

En effet, $\varphi_{P}$ est clairement quasi-fini et s\'epar\'e, puisque $ \E{1}{\cA}$ est s\'epar\'e. En outre, pour tout $r\in \R_{\ge 0}$, l'ensemble
\[\varphi_{P}^{-1}(\overline{D}_{B}(r)) = \overline{D}_{B}(P,r)\]
est compact (\cf~\cite[Corollaire~1.1.12]{A1Z}). Par cons\'equent, d'apr\`es le lemme~\ref{lem:criterepropre}, le morphisme~$\varphi_{P}$ est propre. 
\end{exem}

L'\'enonc\'e suivant se d\'eduit sans difficult\'es du lemme~\ref{lem:voisinagefibre}. 

\begin{prop}\label{prop:fetoileenbasexact}
Soit $\varphi \colon X \to Y$ un morphisme fini s\'epar\'e d'espaces $\cA$-analytiques. 

\begin{enumerate}[i)]
\item Pour tout faisceau de~$\cO_{X}$-modules~$\cF$ et tout $y\in Y$, le morphisme naturel
\[(\varphi_{\ast}\cF)_{y} \too \prod_{x\in \varphi^{-1}(y)} \cF_{x}\]
est un isomorphisme de~$\cO_{Y,y}$-modules.

\item Le foncteur~$\varphi_{\ast}$  de la cat\'egorie des~$\cO_{X}$-modules dans celle des $\cO_{Y}$-modules est exact.
\end{enumerate}
\qed
\end{prop}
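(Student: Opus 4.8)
The plan is to prove both assertions by reducing everything to a purely local statement over a fixed point $y \in Y$ and then invoking the structure of finite maps on germs. First I would fix a faisceau de $\cO_X$-modules $\cF$ and a point $y \in Y$. Since $\varphi$ is finite, it is in particular closed, so by Lemma~\ref{lem:voisinagefibre} the family $\{\varphi^{-1}(V) : V \ni y \text{ voisinage}\}$ forms a base of voisinages of the finite fibre $\varphi^{-1}(y) = \{x_1,\dotsc,x_r\}$. Because this fibre is finite (hence discrete in the subspace topology) and $X$ is in particular separated at these points in the relevant local sense, one can choose $V$ small enough so that $\varphi^{-1}(V)$ is a disjoint union $\bigsqcup_{i=1}^r U_i$ with $x_i \in U_i$ and $U_i \cap U_j = \emptyset$ for $i \ne j$; this is the geometric input underlying the whole argument.

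Granting such a decomposition, the computation of the stalk is immediate: for a voisinage ouvert $V$ of $y$ with $\varphi^{-1}(V) = \bigsqcup_i U_i$ as above, one has $(\varphi_* \cF)(V) = \cF(\varphi^{-1}(V)) = \prod_i \cF(U_i)$, and passing to the colimit over such $V$ gives $(\varphi_* \cF)_y = \colim_V \prod_i \cF(U_i) = \prod_i \colim_{U_i \ni x_i} \cF(U_i) = \prod_i \cF_{x_i}$, where the interchange of finite product and filtered colimit is harmless. The $\cO_{Y,y}$-module structure on the right is via the maps $\cO_{Y,y} \to \cO_{X,x_i}$ induced by $\varphi^\sharp$, which matches the one coming from $\varphi_*$. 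This proves assertion i).

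For assertion ii), exactness of $\varphi_*$ on $\cO_X$-modules, I would argue that exactness of a functor between categories of sheaves can be checked on stalks (a sequence of sheaves is exact iff it is exact on all stalks), and by assertion~i) the stalk functor $\cF \mapsto (\varphi_* \cF)_y$ is the composite $\cF \mapsto (\cF_{x_1}, \dotsc, \cF_{x_r}) \mapsto \prod_i \cF_{x_i}$. The functor $\cF \mapsto \cF_x$ is exact (taking stalks is always exact) and a finite product of exact functors is exact, so $\cF \mapsto (\varphi_* \cF)_y$ is exact for every $y \in Y$; hence $\varphi_*$ is exact. Note that here $Y$ is an espace localement annelé and $\varphi_*$ preserves the module structure over $\cO_Y$, so this is genuinely a statement about $\cO_Y$-modules and not merely about abelian sheaves, but the exactness only depends on the underlying abelian sheaf structure.

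The main obstacle is the disjointness step in the first paragraph: one must be sure that the finite fibre can be separated by disjoint opens whose union is $\varphi^{-1}(V)$ for arbitrarily small $V$. This uses that finite implies closed (to apply Lemma~\ref{lem:voisinagefibre}) together with the fact that distinct points of an $\cA$-analytic space admit disjoint neighbourhoods — which follows because $\cA$-analytic spaces are built from locally closed subspaces of $\E{n}{\cA}$, and in $\E{n}{\cA}$ (a topological space whose points are semi-norms, with the topology of pointwise convergence) distinct points are separated, so the underlying topological space is $T_2$. Concretely, pick disjoint opens $W_i \ni x_i$ in $X$, set $W = \bigcup_i W_i \cup (X \setminus \varphi^{-1}(y))$; then $X \setminus W$ is closed and does not meet $\varphi^{-1}(y)$, so by closedness of $\varphi$ its image is closed and avoids $y$, giving a voisinage $V$ of $y$ with $\varphi^{-1}(V) \subset \bigcup_i W_i$; shrinking further and replacing $W_i$ by $W_i \cap \varphi^{-1}(V)$ yields the desired disjoint decomposition. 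Once this is in place, everything else is formal.
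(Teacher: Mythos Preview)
Your argument is correct and is exactly what the paper intends: the proposition is stated with a \qed\ immediately after the sentence ``L'\'enonc\'e suivant se d\'eduit sans difficult\'es du lemme~\ref{lem:voisinagefibre}'', and you have simply unpacked that deduction (base of neighbourhoods of the finite fibre, disjoint splitting, stalk computation, then exactness on stalks).

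One minor quibble on the justification of the disjoint decomposition: your inference that a general $\cA$-analytic space is $T_2$ ``because $\cA$-analytic spaces are built from locally closed subspaces of $\E{n}{\cA}$'' is not a valid deduction as stated---gluing Hausdorff charts can produce non-Hausdorff spaces (the line with two origins). The paper itself, however, uses the same disjoint-union splitting of $\varphi^{-1}(V)$ without further comment (see the proof of th\'eor\`eme~\ref{thm:fini}), so your level of detail matches the paper's; if you want to be safe you can simply invoke local Hausdorffness at each fibre point and observe that the argument you wrote at the end (using closedness of $\varphi$ to shrink $V$) only needs the $x_i$ to admit pairwise disjoint neighbourhoods, which is a local condition.
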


Rappelons qu'un faisceau de groupes ab\'eliens sur un espace topologique poss\`ede une r\'esolution flasque (par exemple, la r\'esolution canonique de Godement d\'efinie dans \cite[II, \S 4.3]{GodementTATF}). L'existence de ces r\'esolutions permet de d\'emontrer le r\'esultat suivant (\cf~par exemple \cite[I, \S 1, theorem~5]{Gr-Re} pour les d\'etails).\footnote{La structure analytique est ici totalement superflue et le r\'esultat vaut encore pour un morphisme fini entre espaces topologiques et un faisceau de groupes ab\'eliens.}

\begin{theo}\label{th:Hqfini}
Soit $\varphi \colon X \to Y$ un morphisme fini d'espaces $\cA$-analytiques. Pour tout faisceau de~$\cO_{X}$-modules~$\cF$ et tout $q\in \N$, on a un isomorphisme naturel
\[ H^q(X,\cF) \simeq H^q(Y,\varphi_{\ast} \cF).\]
\qed
\end{theo}

\section{Images directes des faisceaux coh\'erents}\label{sec:imagedirectefinie}

Le but de cette section est de montrer que si~$\varphi$ est un morphisme fini d'espaces analytiques, le foncteur~$\varphi_*$ pr\'eserve les faisceaux coh\'erents. 

\begin{theo}\label{thm:fini}\index{Morphisme analytique!fini!image d'un faisceau par un}\index{Faisceau!coherent@coh\'erent}
\index{Faisceau!image d'un|see{Immersion ferm\'ee et Morphisme fini}}
Soient~$\varphi \colon X\to Y$ un morphisme fini d'espaces $\cA$-analytiques et~$\cF$ un faisceau coh\'erent sur~$X$. Alors, le faisceau~$\varphi_*\cF$ est coh\'erent.
\end{theo}

D'un point de vue intuitif, cela signifie que la finitude topologique entra\^ine la finitude alg\'ebrique. Le point-cl\'e consiste \`a montrer qu'un morphisme fini ressemble localement \`a la projection d'un espace affine sur un sous-espace de coordonn\'ees, à composition par des immersions fermées près, \`a la source et au but (cf~lemme~\ref{lem:finiprojection}). Notre strat\'egie de preuve reprend celle de \cite[\S 3.1]{Gr-Re2}.

Les m\^emes m\'ethodes permettent de d\'emontrer un crit\`ere qui relie la propri\'et\'e d'\^etre fini au voisinage d'un point \`a celle d'\^etre isol\'e dans sa fibre (\cf~th\'eor\`eme~\ref{thm:locfini}).

\medbreak

Nous commen\c cons par traiter plusieurs cas particuliers du th\'eor\`eme~\ref{thm:fini}. Le premier d\'ecoule directement des d\'efinitions. 

\begin{lemm}\label{lem:immersionfermeecoherence}\index{Immersion!fermee@ferm\'ee!image d'un faisceau par une}
Soit $\iota \colon X \to X'$ une immersion ferm\'ee d'espaces $\cA$-analytiques. Alors, un faisceau de~$\cO_{X}$-modules~$\cF$ est coh\'erent si, et seulement si, $\iota_{\ast}\cF$ est coh\'erent. 
\qed
\end{lemm}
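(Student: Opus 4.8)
The statement to prove is Lemma~\ref{lem:immersionfermeecoherence}: for a closed immersion $\iota \colon X \to X'$ of $\cA$-analytic spaces, a sheaf of $\cO_X$-modules $\cF$ is coherent if and only if $\iota_\ast \cF$ is coherent. Here is how I would organize the proof.

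First I would reduce to a purely local statement. Coherence of a sheaf is a local property, and by the definition of a closed immersion, the underlying topological map of $\iota$ identifies $X$ with a closed analytic subset $Z$ of $X'$ defined by a coherent ideal sheaf $\cI$, with $\cO_X = \iota^{-1}(\cO_{X'}/\cI)$. After restricting to a suitable open subset of $X'$ we may therefore assume $X' $ is an open of an affine analytic space $\E{n}{\cA}$ (or even just work over an arbitrary open of $X'$, since everything is local), $X$ is the closed analytic subset defined by $\cI$, and $\iota$ is the canonical inclusion. The key structural fact is that $\iota_\ast \cO_X = \cO_{X'}/\cI$, so that for any $\cO_X$-module $\cF$, the pushforward $\iota_\ast \cF$ is an $\cO_{X'}/\cI$-module, i.e.\ an $\cO_{X'}$-module annihilated by $\cI$. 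The correspondence $\cF \mapsto \iota_\ast \cF$ is an equivalence between $\cO_X$-modules and $\cO_{X'}$-modules annihilated by $\cI$, with quasi-inverse $\cG \mapsto \iota^{-1}\cG$ (which on the level of sheaves is just restriction of $\cG$ to the closed set $Z$).

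Next I would prove the two implications. For the ``only if'' direction, assume $\cF$ is $\cO_X$-coherent. Locally there is a presentation $\cO_X^p \to \cO_X^q \to \cF \to 0$. Applying the exact functor $\iota_\ast$ (exactness here is elementary since $\iota$ is a closed immersion and in particular finite, or just because $\iota_\ast$ of a sheaf on a closed set is exact) gives $(\cO_{X'}/\cI)^p \to (\cO_{X'}/\cI)^q \to \iota_\ast\cF \to 0$. Since $\cI$ is a coherent ideal of $\cO_{X'}$ and $\cO_{X'}$ is coherent (by Theorem~\ref{coherent}, using that $\cA$ is basic with $\cM(\cA)$ satisfying analytic continuation, which holds since $\cA$ is a geometric base ring), the quotient $\cO_{X'}/\cI$ is a coherent $\cO_{X'}$-module; hence finite free quotients and cokernels of maps between them are coherent, so $\iota_\ast\cF$ is $\cO_{X'}$-coherent. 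For the ``if'' direction, assume $\iota_\ast\cF$ is $\cO_{X'}$-coherent. Locally take a presentation $\cO_{X'}^p \to \cO_{X'}^q \to \iota_\ast\cF \to 0$ over $X'$. Restricting to the closed set $Z$ (i.e.\ applying $\iota^{-1}$, which is exact) and using $\iota^{-1}\cO_{X'} $ surjects onto $\cO_X$, one obtains a presentation $\cO_X^p \to \cO_X^q \to \cF \to 0$ by first tensoring the $\cO_{X'}$-presentation with $\cO_{X'}/\cI$ over $\cO_{X'}$ (which is right exact) to get an $\cO_{X'}/\cI$-presentation, then transporting it through the equivalence $\iota_\ast$. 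Concretely: $\iota_\ast\cF$ being annihilated by $\cI$ means $\iota_\ast\cF = \iota_\ast\cF \otimes_{\cO_{X'}} \cO_{X'}/\cI$, so the presentation can be taken with $\cO_{X'}/\cI$-modules, and applying $\iota^{-1}$ yields the desired $\cO_X$-presentation.

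The only point requiring a little care—and the main (mild) obstacle—is the passage between presentations on $X'$ and on $X$: one must check that $\iota_\ast$ and $\iota^{-1}$ genuinely convert coherent presentations into coherent presentations, which hinges on the exactness properties of these functors for a closed immersion and on the coherence of $\cO_{X'}/\cI$ as an $\cO_{X'}$-module. The latter is where Theorem~\ref{coherent} enters, guaranteeing $\cO_{X'}$ is coherent so that the coherent ideal $\cI$ has coherent quotient. Everything else is formal bookkeeping with the equivalence of categories between $\cO_X$-modules and $\cO_{X'}$-modules killed by $\cI$, exactly as in the classical complex-analytic setting (\cf~\cite[\S 3.1]{Gr-Re2}), and I would present it at that level of detail, citing the classical argument where it transfers verbatim.
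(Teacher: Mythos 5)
Your proof is correct, and it is exactly the argument the paper has in mind: the lemma is stated with no written proof, the authors declaring just beforehand that it "découle directement des définitions", i.e.\ from the equivalence between $\cO_{X}$-modules and $\cO_{X'}$-modules annihilated by $\cI$, the exactness of $\iota_{\ast}$ and $\iota^{-1}$, and the coherence of $\cO_{X'}/\cI$ granted by Theorem~\ref{coherent}. Your write-up is a faithful (and more explicit) elaboration of that standard reasoning, with no gaps.
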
 

\begin{lemm}\label{cas_particulier_coh\'erence}
Posons $X := \E{1}{\cA}$ avec coordonn\'ee~$T$ et notons $\pi \colon X \to B$ le morphisme structural. Soit~$\omega\in\cA[T]$ un polyn\^ome unitaire de degr\'e~$d \ge 1$. Notons~$Z$ le ferm\'e analytique d\'efini par~$\omega$. Alors,
\begin{enumerate}[i)]
\item le morphisme~$\pi_{|Z} \colon Z \to B$ est un morphisme fini ;
\item le morphisme
\[\begin{array}{ccc}
\cO_B^d & \too & (\pi_{Z})_{*} \cO_{Z}\\
(a_{0},\dotsc,a_{d-1}) & \mapstoo & \disp \sum_{i=0}^{d-1} a_{i}\, T^i
\end{array}\] 
est un isomorphisme~;
\item pour tout faisceau coh\'erent~$\cF$ sur~$Z$, le faisceau~$(\pi_{|Z})_*\cF$ est un faisceau coh\'erent sur~$B$.
\end{enumerate}
\end{lemm}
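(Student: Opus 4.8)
\textbf{Plan for the proof of Lemma~\ref{cas_particulier_coh\'erence}.}

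The plan is to reduce everything to the Weierstra\ss{} division theorem, or rather to its normed refinement (Theorem~\ref{weierstrassam}) and to the results on quotient norms of Section~\ref{sec:normesquotients}. Write $\omega = T^d + \sum_{i=0}^{d-1} g_i T^i$ with $g_i \in \cA$. The first point~i) is purely topological: the ferm\'e analytique $Z$ is defined by $\omega = 0$, and for every $r \in \R_{\ge 0}$ one has $(\pi_{|Z})^{-1}(\oD_{B}(r)) = Z \cap \overline{D}_{B}(r) = \overline{D}_{B}(\omega,r) \cap Z$, which is compact (being a closed subset of a compact polynomial domain, \cf~Example~\ref{ex:fini} or \cite[Corollaire~1.1.12]{A1Z}). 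Hence by Lemma~\ref{lem:criterepropre}~ii) the map $\pi_{|Z}$ is propre, and since each fibre $(\pi_{|Z})^{-1}(b)$ is the zero set of $\omega(b) \in \cH(b)[T]$, hence finite (at most $d$ points), $\pi_{|Z}$ is finite; in particular it is ferm\'ee, which is all we shall use.

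For point~ii), the first observation is that, by Proposition~\ref{prop:fetoileenbasexact}~i), for every $b \in B$ the stalk $((\pi_{Z})_{*}\cO_{Z})_{b}$ is $\prod_{z \in (\pi_{|Z})^{-1}(b)} \cO_{Z,z}$; moreover, since $\pi_{|Z}$ is ferm\'ee, the sets $(\pi_{|Z})^{-1}(V)$ for $V$ running over a basis of neighbourhoods of $b$ form a basis of neighbourhoods of the finite fibre (Lemma~\ref{lem:voisinagefibre}). So the claim becomes local on $B$, and we must show that for a suitable basis of spectrally convex compacts $V$ of $B$ the natural map $\cB(V)^d \to \cB((\pi_{|Z})^{-1}(V))$ sending $(a_0,\dots,a_{d-1})$ to $\sum a_i T^i$ is an isomorphism. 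But $(\pi_{|Z})^{-1}(V)$, viewed inside $\E{1}{\cA}$, is exactly the Zariski closed set $Z_{\omega}$ of Section~\ref{sec:normesquotients} above $V$, and by the results recalled there — more precisely by Lemma~\ref{lem:ZGspectreASG} together with the statement that $\cB(U)[S]/(\omega(S)) \simto \cB(\varphi_{\omega}^{-1}(U))$ under condition $(\cB N_{\omega})$ — this is the desired isomorphism, provided $V$ satisfies $(\cB N_{\omega})$. Since $\cA$ is a basic ring, every point of $B$ has a basis of spectrally convex compacts satisfying the finiteness/typicity hypotheses of Proposition~\ref{prop:CNBNG}, and one checks directly (or invokes Proposition~\ref{prop:CNBNPST} applied to $\omega$ in place of $P(S)-T$) that, after shrinking, such a $V$ satisfies $(\cB N_{\omega})$. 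Alternatively, and this is probably cleaner, one applies Corollary~\ref{cor:divisionGirrednormes} or Corollary~\ref{cor:divisionGnormes} to the polynomial $G = \omega$ directly at each zero; these corollaries already produce, via the normed Weierstra\ss{} division, an isomorphism $\cO_{b}[T]/(\omega) \simto \prod_{z} \cO_{z}/(\omega)$, and since $\cO_{z}/(\omega) = \cO_{Z,z}$, taking the product of stalks and sheafifying gives ii). The map is $\cO_B$-linear and, by the normed statements, continuous with continuous inverse, hence an isomorphism of sheaves.

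For point~iii), a coherent sheaf $\cF$ on $Z$ admits, locally on $B$, a presentation $\cO_{Z}^{p} \to \cO_{Z}^{q} \to \cF \to 0$; applying the exact functor $(\pi_{|Z})_{*}$ (exactness by Proposition~\ref{prop:fetoileenbasexact}~ii)) gives an exact sequence $(\pi_{|Z})_{*}\cO_{Z}^{p} \to (\pi_{|Z})_{*}\cO_{Z}^{q} \to (\pi_{|Z})_{*}\cF \to 0$. By ii) the first two terms are, locally, free $\cO_{B}$-modules of finite rank $pd$ and $qd$; hence $(\pi_{|Z})_{*}\cF$ is locally a cokernel of a morphism between finite free $\cO_{B}$-modules, and is therefore coherent because the structure sheaf $\cO_{B} = \cO_{\cM(\cA)}$ is coherent (Theorem~\ref{coherent}, applicable since $\cA$ is basic and $\cM(\cA)$ satisfies analytic continuation, being a geometric basic ring). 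The one subtlety is that the local presentation of $\cF$ on $Z$ lives over a neighbourhood of a point $z \in Z$, not over a full fibre $(\pi_{|Z})^{-1}(b)$; but one simply takes the product over the finitely many $z \in (\pi_{|Z})^{-1}(b)$ of such presentations and uses the stalkwise description $((\pi_{|Z})_{*}\cF)_{b} = \prod_{z}\cF_{z}$ to glue them into a presentation over a neighbourhood of $b$. The main obstacle is really just bookkeeping: matching the neighbourhood of $b$ in $B$ on which $(\cB N_{\omega})$ holds with the neighbourhoods of the $z$'s on which a presentation of $\cF$ exists, and on which the relevant functions are $\cB$-defined — all of which is handled by the finiteness of the fibre and Lemma~\ref{lem:voisinagefibre}. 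No genuinely new idea beyond the normed Weierstra\ss{} division is needed.
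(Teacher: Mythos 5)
Your proposal is correct and follows essentially the same route as the paper: compactness of $Z$ plus finiteness of fibres for~i), the generalized Weierstra\ss{} division combined with the stalk formula $((\pi_{|Z})_{*}\cO_{Z})_{b}\simeq\prod_{z}\cO_{Z,z}$ for~ii), and pushforward of a local presentation over a full fibre for~iii). The only difference is that the paper gets ii) directly from the plain Corollary~\ref{cor:weierstrassgeneralise} (stalkwise bijectivity suffices for a sheaf isomorphism, so neither the normed refinements nor the condition $(\cB N_{\omega})$ is needed), and note that the pushforwards $(\pi_{|Z})_{*}\cO_{U_i}$ in iii) are in general only direct summands of $\cO_{B}^{d}$ rather than free of rank $pd$, which is still enough since coherence is all that is used.
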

\begin{proof}
i) Puisque~$\omega$ est unitaire, le ferm\'e analytique~$Z$ est compact (\cf~\cite[corollaire~1.1.12]{A1Z}). Par cons\'equent, d'apr\`es le lemme~\ref{lem:criterepropre}, le morphisme~$\pi_{|Z}$ est propre. En outre, pour tout point~$b$ de~$B$, l'image r\'eciproque $\pi_{|Z}^{-1}(b)$ s'identifie \`a l'ensemble des racines du polyn\^ome unitaire~$\omega(b)$, et elle est donc finie. En d'autres termes, $\pi_{|Z}$ est quasi-fini. Finalement, puisque $X$ est s\'epar\'e, $\pi_{|Z}$ est s\'epar\'e. On en d\'eduit que~$\pi_{|Z}$ est fini.

\medbreak

ii) Soit~$b \in B$. Puisque~$\omega$ est unitaire de degr\'e~$d$, le morphisme
\[\begin{array}{ccc}
\cO_{B,b}^d & \too & \cO_{B,b}[T]/(\omega)\\
(a_{0},\dotsc,a_{d-1}) & \mapstoo & \disp \sum_{i=0}^{d-1} a_{i}\, T^i
\end{array}\]
est un isomorphisme. D'apr\`es le corollaire~\ref{cor:weierstrassgeneralise}, on a un isomorphisme naturel $\cO_{B,b}[T]/(\omega) \simto \prod_{i=1}^t \cO_{z_{i}}$, o\`u $z_{1},\dotsc,z_{t}$ sont les z\'eros de~$\omega(b)$ dans~$\pi^{-1}(b)$, autrement dit les points de~$\pi_{|Z}^{-1}(b)$. Le r\'esultat d\'ecoule alors de l'isomorphisme naturel $((\pi_{|Z})_{\ast}\cO_{Z})_{b} \simto  \prod_{i=1}^t \cO_{z_{i}}$ de la proposition~\ref{prop:fetoileenbasexact}.

\medbreak

iii) Soit~$\cF$ un faisceau coh\'erent sur~$Z$. Soit $b\in B$. Notons $z_{1},\dotsc,z_{t}$ les \'el\'ements de~$\pi_{|Z}^{-1}(b)$. Pour tout $i\in\cn{1}{t}$, ou peut trouver un voisinage ouvert~$U_{i}$ de~$z_i$ dans~$X$ et une suite exacte 
\[\cO_{U_{i}}^{m_i}\too \cO_{U_{i}}^{n_i}\too \cF_{|U_{i}}\too 0.\]
D'apr\`es le lemme~\ref{lem:voisinagefibre}, on peut supposer que les~$U_{i}$ sont disjoints et qu'il existe un voisinage ouvert~$V$ de~$b$ tel que $\pi_{|Z}^{-1}(V) = \bigcup_{1\le i\le t} U_{i}$. D'apr\`es la proposition~\ref{prop:fetoileenbasexact}, la suite 
\[(\pi_{|Z})_*\cO_{U_{i}}^{m_i}\too (\pi_{|Z})_*\cO_{U_{i}}^{n_i}\too (\pi_{|Z})_*\cF_{|U_{i}} \too 0\]
est encore exacte et, d'apr\`es~ii), le faisceau $(\pi_{|Z})_*\cO_{U_{i}}$ est coh\'erent. Le r\'esultat s'ensuit.
\end{proof}

\'Enon\c cons maintenant deux lemmes qui nous seront utiles \`a plusieurs reprises pour effectuer des raisonnements par r\'ecurrence.

\begin{lemm}\label{lem:finiprojection}
Soit~$\varphi \colon X\to Y$ un morphisme propre d'espaces $\cA$-analytiques et soit~$x \in X$. Posons $y := \varphi(x)$ et supposons que $\varphi^{-1}(y) =\{x\}$. 

Alors, il existe un voisinage ouvert~$U$ de~$x$ dans~$X$, un voisinage ouvert~$V$ de~$y$ dans~$Y$ tel que $\varphi^{-1}(V) = U$, des entiers $k,l \in \N$, un ouvert~$U'$ de~$\E{k}{\cA}$, un ouvert~$V'$ de~$\E{l}{\cA}$, des immersions ferm\'ees $\iota_{U} \colon U \to U'\times_{\cA} V'$ et $\iota_{V} \colon V \to V'$ tels que, en notant $p_{V'} \colon U'\times_{\cA} V' \to V'$ la seconde projection,
on ait $\iota_{V} \circ \varphi_{|U} = p_{V'} \circ \iota_{U}$.
\end{lemm}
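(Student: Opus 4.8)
L'idée est d'abord de localiser sur le but pour ramener $\varphi$ à un morphisme entre modèles locaux, puis de le plonger dans un espace affine relatif. Comme l'énoncé ne concerne qu'un voisinage de~$x$ et de~$y = \varphi(x)$, on peut remplacer~$Y$ par un voisinage ouvert de~$y$ qui est un fermé analytique d'un ouvert~$V'$ d'un espace affine analytique~$\E{l}{\cA}$, ce qui fournit l'immersion fermée $\iota_{V} \colon V \to V'$. De même, quitte à restreindre~$X$, on peut supposer que~$X$ est un fermé analytique d'un ouvert~$U''$ d'un espace affine analytique~$\E{k}{\cA}$, et que le morphisme~$\varphi$ se relève localement en un morphisme analytique $\tilde\varphi \colon U'' \to V'$. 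D'après la proposition~\ref{morphsec}, le morphisme~$\tilde\varphi$ est déterminé par les fonctions $g_{1},\dotsc,g_{l} \in \cO(U'')$ images des coordonnées de~$\E{l}{\cA}$, et les coordonnées de~$\E{k}{\cA}$ fournissent $f_{1},\dotsc,f_{k} \in \cO(U'')$.

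Ensuite, on utilise l'hypothèse $\varphi^{-1}(y) = \{x\}$ : le point~$x$ est donc isolé dans sa fibre. Pour obtenir un voisinage~$V$ de~$y$ dans~$Y$ tel que $\varphi^{-1}(V)$ soit contenu dans un voisinage donné de~$x$, on ne peut pas encore invoquer le lemme~\ref{lem:voisinagefibre} (qui suppose $\varphi$ fermé, ce qu'on n'a pas encore établi) ; en revanche, puisque $\varphi^{-1}(y)$ est compact (un singleton) et que~$\varphi$ est continu, le caractère propre du morphisme structural et la remarque que toute partie compacte de~$Y$ a une image réciproque fermée permettent, après restriction à un voisinage compact spectralement convexe~$W$ de~$x$, de trouver un voisinage ouvert~$V$ de~$y$ tel que $\varphi^{-1}(V) \cap (X \setminus W) = \emptyset$ ; on pose alors $U := \varphi^{-1}(V)$, qui est un voisinage ouvert de~$x$ satisfaisant $\varphi^{-1}(V) = U$. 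Quitte à rétrécir encore, on peut supposer que~$U$ est un fermé analytique d'un ouvert~$U'$ de~$\E{k}{\cA}$ et que les~$f_{i}$ et~$g_{j}$ sont définis sur~$U'$.

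Il reste à construire l'immersion fermée $\iota_{U} \colon U \to U'\times_{\cA} V'$ compatible avec les projections. On considère le morphisme $U \to U' \times_{\cA} V'$ donné, via la proposition~\ref{morphsec} et le corollaire~\ref{cor:factorisationouvert}, par les coordonnées $(f_{1},\dotsc,f_{k},g_{1},\dotsc,g_{l})$ ; autrement dit c'est le graphe de~$\varphi_{|U}$ composé avec l'inclusion canonique. D'après la proposition~\ref{prop:grapheimmersion}, le graphe $\Gamma_{\varphi_{|U}} \colon U \to U \times_{\cA} V'$ est une immersion, et c'est même une immersion fermée dès que le morphisme $V' \to \cM(\cA)$ est séparé, ce qui est le cas puisque~$V'$ est un ouvert d'un espace affine analytique, donc séparé. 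En composant avec l'immersion fermée $U \times_{\cA} V' \to U' \times_{\cA} V'$ induite par l'immersion fermée $U \to U'$ (et le lemme~\ref{lem:changementbaseimmersion}), on obtient l'immersion fermée $\iota_{U}$ cherchée. Par construction, si $p_{V'} \colon U'\times_{\cA} V' \to V'$ désigne la seconde projection, on a $p_{V'} \circ \iota_{U} = \iota_{V} \circ \varphi_{|U}$, les deux membres étant caractérisés, via la proposition~\ref{morphsec} appliquée sur~$V'$, par les mêmes fonctions $g_{1},\dotsc,g_{l}$ sur~$U$.

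Le point le plus délicat est la toute première étape topologique : obtenir le voisinage~$V$ de~$y$ tel que $\varphi^{-1}(V) = U$ soit contenu dans un voisinage prescrit de~$x$, sans disposer encore de la finitude (donc du caractère fermé) de~$\varphi$. Ceci repose sur le fait que $\varphi^{-1}(y)$ est un compact et que le morphisme structural est propre (corollaire~\ref{stabilit\'e_propre_extension_scalaire} pour l'extension des scalaires, ou plus directement le caractère propre du morphisme structural établi via le lemme~\ref{lem:criterepropre}), ce qui force l'image réciproque d'un système fondamental de voisinages compacts de~$y$ à former un système fondamental de voisinages compacts de $\varphi^{-1}(y)$ dans les parties compactes contenant~$x$. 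Une fois ce point acquis, le reste n'est qu'un enchaînement formel des propriétés des immersions et des graphes déjà établies dans le chapitre~\ref{def_cat} et la section~\ref{sec:morphismessepares}.
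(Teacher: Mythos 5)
Votre démonstration suit, pour l'essentiel, la même route que celle du texte~: localisation au but puis à la source pour se ramener à des modèles locaux, puis construction de~$\iota_{U}$ comme composée du graphe de~$\varphi_{|U}$ (qui est une immersion fermée parce que le but, ouvert d'un espace affine analytique, est séparé, cf.~proposition~\ref{prop:grapheimmersion}) et d'une immersion fermée entre produits. Cette partie est correcte et coïncide avec l'argument du texte, à ceci près que le texte forme le graphe à valeurs dans $U\times_{\cA} V$ plutôt que dans $U\times_{\cA} V'$, ce qui revient au même.

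Le point que vous signalez vous-même comme \og le plus délicat \fg{} est en revanche trait\'e de fa\c con incorrecte. Il s'agit d'obtenir un voisinage ouvert~$V$ de~$y$ tel que $\varphi^{-1}(V)$ --- l'image r\'eciproque \emph{dans $X$ tout entier}, puisque l'\'enonc\'e exige $\varphi^{-1}(V)=U$ --- soit contenu dans une carte prescrite autour de~$x$. Votre argument invoque \og le caract\`ere propre du morphisme structural \fg{}, mais le morphisme structural $X\to\cM(\cA)$ n'est pas propre en g\'en\'eral (d\'ej\`a $\E{1}{\cA}\to\cM(\cA)$ ne l'est pas~; seuls les morphismes d'extension des scalaires $\E{n}{\cB}\to\E{n}{\cA}$ le sont, cf.~proposition~\ref{extension_scalaire_spectral}). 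Surtout, la conclusion vis\'ee, $\varphi^{-1}(V)\cap(X\setminus W)=\emptyset$, est fausse pour un morphisme quelconque v\'erifiant seulement $\varphi^{-1}(y)=\{x\}$~: pour le ferm\'e analytique $X$ de~$\E{2}{k}$ d\'efini par $T_{2}(T_{1}T_{2}-1)=0$ et la projection~$\varphi$ sur la coordonn\'ee~$T_{1}$, la fibre au-dessus de~$0$ est r\'eduite au point $(0,0)$, mais $\varphi^{-1}(V)$ contient, pour tout voisinage~$V$ de~$0$, des points $(t,t^{-1})$ qui sortent de tout compact. Restreindre pr\'ealablement~$X$ \`a un voisinage compact~$W$ de~$x$ ne r\'esout rien, car on perd alors la condition $\varphi^{-1}(V)=U$ relative \`a l'espace~$X$ initial. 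Le texte obtient ce voisinage en appliquant le lemme~\ref{lem:voisinagefibre}, lequel suppose l'application ferm\'ee~; vous avez donc raison d'observer que cette hypoth\`ese n'est pas acquise \`a ce stade, mais le substitut que vous proposez ne la remplace pas, et c'est pr\'ecis\'ement l\`a que votre preuve comporte une lacune.
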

\begin{proof}
Par d\'efinition d'espace analytique, le point~$x$ poss\`ede un voisinage ouvert~$U_{0}$ qui peut s'envoyer par une immersion ferm\'ee~$i_{U_{0}}$ dans un ouvert~$U_{0}'$ de~$\E{k}{\cA}$ pour un certain $k\in\N$. D'apr\`es le lemme~\ref{lem:voisinagefibre}, il existe un voisinage ouvert~$V$ de~$y$ tel que $\varphi^{-1}(V) \subset U_{0}$. Quitte \`a restreindre~$V$, on peut supposer qu'il existe une immersion ferm\'ee~$i_{V}$ dans un ouvert~$V'$ de~$\E{l}{\cA}$ pour un certain $l\in\N$. Posons $U :=\varphi^{-1}(V)$. Il existe alors une immersion ferm\'ee~$i_{U}$ dans un ouvert~$U'$ de~$\E{k}{\cA}$. Notons $\psi \colon U \to V$ le morphisme induit par~$\varphi$.

D'apr\`es la proposition~\ref{prop:grapheimmersion}, le morphisme $\Gamma_{\psi} \colon U \to U\times_{\cA} V$ est une immersion ferm\'ee. D'apr\`es le lemme~\ref{lem:produitouverts}, nous avons \'egalement une immersion ferm\'ee $j \colon U \times_{\cA} V \to U'\times_{\cA} V'$. En posant $\iota_{U} := j \circ \Gamma_{\psi}$ et $\iota_{V} := i_{V}$, on obtient le r\'esultat.
\end{proof}

\begin{lemm}\label{lem:recurrenceAn}
Soient $l,n \in \N$ avec $n>l$. Pour $m\in \cn{0}{n}$, notons $\varrho_{m} \colon \E{n}{\cA} \to \E{m}{\cA}$ le morphisme de projection sur les $m$~derni\`eres coordonn\'ees. Notons~$T$ la premi\`ere coordonn\'ee de~$\E{n}{\cA}$. 
Soit~$U$ un ferm\'e analytique d'un ouvert~$W'$ de~$\E{n}{\cA}$ et $V$ un ferm\'e analytique d'un ouvert~$V'$ de~$\E{l}{\cA}$ tels que $\varrho_{l}(U) = V$. 
Soient $x \in U$ et $y\in V$ tels que $\varrho_{l}^{-1}(y) \cap U=\{x\}$. 

Alors il existe un voisinage ouvert~$W'_{n-1}$ de~$\varrho_{n-1}(x)$ dans~$\varrho_{n-1}(W')$ et un polyn\^ome $\omega \in \cO(W'_{n-1})[T]$ unitaire non constant tel que $U \cap \varrho_{n-1}^{-1}(W'_{n-1})$ soit un ferm\'e analytique du ferm\'e analytique~$Z_{\omega}$ de~$\varrho_{n-1}^{-1}(W'_{n-1})$ d\'efini par~$\omega$. 
\end{lemm}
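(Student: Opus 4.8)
The plan is to exhibit, near $x$, a single monic polynomial in the first coordinate $T$ whose zero locus contains $U$, obtained from a carefully chosen equation of $U$ by a relative Weierstra\ss{} division. Since the assertion is local around $x$, we may freely shrink $W'$ to an open neighbourhood of $x$ along the way. Write $U = V(\cI)$ for a coherent ideal sheaf $\cI$ on $W'$ (the structure sheaf of $\E{n}{\cA}$ is coherent, Theorem~\ref{coherent}) and set $b' := \varrho_{n-1}(x)$, a point of the open set $\varrho_{n-1}(W')$ (projections are open, Corollary~\ref{cor:projectionouverte}). Since $l \le n-1$, the projection $\varrho_l$ factors through $\varrho_{n-1}$, so every point of $\varrho_{n-1}^{-1}(b') \cap U$ lies in $\varrho_l^{-1}(y) \cap U = \{x\}$; hence $\varrho_{n-1}^{-1}(b') \cap U = \{x\}$, and on the fibre $X_{b'} := \varrho_{n-1}^{-1}(b') \simeq \E{1}{\cH(b')}$ the ideal $\cI$ cuts out, near $x$, exactly $\{x\}$.

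First I would produce a germ $g \in \cI_x \subseteq \m_x$ whose restriction $g|_{X_{b'}}$ to the fibre is nonzero in $\cO_{X_{b'},x}$. Pick finitely many generators $g_1,\dots,g_r$ of $\cI$ near $x$; if every $g_i|_{X_{b'}}$ had zero germ at $x$, then by definition of the stalk all of them would vanish on a common open neighbourhood $N$ of $x$ in $X_{b'}$, forcing $N \subseteq X_{b'} \cap U = \{x\}$, which is absurd since $N$ is an open subset of a Berkovich line. So some $g := g_i$ works. As $g \in \m_x$, the nonzero germ $g|_{X_{b'}}$ vanishes at $x$, so $\cO_{X_{b'},x}$ is not a field; by Theorem~\ref{rigide}, applied over the valued field $\cH(b')$, this forces $x$ to be rigide (Definition~\ref{def:rigidedroite}), with minimal polynomial $\mu_x$ and $\cO_{X_{b'},x}$ a discrete valuation ring of uniformiser $\mu_x$. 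Let $\nu \ge 1$ be the $\mu_x$-adic valuation of $g|_{X_{b'}}$.

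The heart of the proof is to upgrade $g$ to a monic polynomial already lying in $\cI_x$. After permuting coordinates we may assume $\varrho_{n-1} = \pi_{n,n-1}$, and choosing a compact spectrally convex neighbourhood $V''$ of $b'$ in $\E{n-1}{\cA}$, Remark~\ref{spectral} identifies $\varrho_{n-1}^{-1}(V'')$ with $\E{1}{\cB(V'')}$, an isomorphism of annular spaces over $\mathring V''$. The Weierstra\ss{} division theorem~\ref{weierstrass} then applies to $g$ at $x$: its only additional hypothesis, concerning the case where $\cH(b')$ is trivially valued and $\mu_x$ inseparable, holds because $\cA$ is a geometric base ring (so every point is decent, and in that case $b'$ is ultram\'etrique typique by Definition~\ref{def:basique} and Proposition~\ref{prop:typiqueAn}, a property that passes to $\cM(\cB(V''))$). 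Division by $g$ yields a $\cO_{\E{n-1}{\cA},b'}$-module isomorphism
\[\cO_{\E{n}{\cA},x}/(g) \;\simto\; \bigoplus_{i=0}^{r-1}\cO_{\E{n-1}{\cA},b'}\, T^{\,i},\qquad r := \nu\deg(\mu_x) \ge 1.\]
Multiplication by $T$ on this free module is given by a matrix $M$ over $\cO_{\E{n-1}{\cA},b'}$; by the Cayley--Hamilton theorem its characteristic polynomial $\omega := \chi_M(T)$, monic and nonconstant of degree $r$, annihilates $\overline 1$, i.e.\ $\omega \in (g) \subseteq \cI_x$. Moreover the reduction of $\omega$ at $b'$ is a power of $\mu_x$.

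It remains to globalise. Choose representatives of the coefficients of $\omega$ over a small open neighbourhood $W'_{n-1}$ of $b'$ in $\varrho_{n-1}(W')$, so that $\omega \in \cO(W'_{n-1})[T]$ is monic and the closed analytic subset $Z_\omega$ it defines in $\varrho_{n-1}^{-1}(W'_{n-1})$ is finite over $W'_{n-1}$ (Lemma~\ref{cas_particulier_coh\'erence}, Proposition~\ref{prop:finipropre}), its fibre over $b'$ being the single point $x$ (by the last remark of the previous paragraph). Shrinking $W'$ around $x$ so that $\omega$ becomes a section of $\cI$ on it, and then shrinking $W'_{n-1}$ --- using Lemma~\ref{lem:voisinagefibre} and the finiteness of $Z_\omega \to W'_{n-1}$ --- so that $Z_\omega$ and $U$ over $W'_{n-1}$ both land in that shrunken $W'$, one obtains $U \cap \varrho_{n-1}^{-1}(W'_{n-1}) \subseteq Z_\omega$, cut out in $Z_\omega$ by the coherent ideal $\cI\cdot\cO_{Z_\omega}$, which is the claim.

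The main obstacle is the third step: turning the purely topological fact that the fibre is reduced to a point into an honest monic polynomial \emph{inside} $\cI_x$, rather than merely in its radical. This needs the full strength of the relative Weierstra\ss{} division theorem, and one must be careful to use division --- valid at any point rigid in its fibre --- rather than preparation, whose hypothesis (that $x$ be rigide \'epais) is strictly stronger here. The final globalisation is essentially bookkeeping, but it genuinely uses the finiteness of $Z_\omega$ over the base and the freedom to localise around $x$.
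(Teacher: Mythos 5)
Your proof is correct, and its skeleton is the same as the paper's: extract a germ $g$ in the ideal of $U$ at $x$ whose restriction to the fibre $\varrho_{n-1}^{-1}(\varrho_{n-1}(x))$ is nonzero, convert it into a monic $\omega\in\cO(W'_{n-1})[T]$ vanishing on $U$ near $x$, and finish using the finiteness of $Z_\omega\to W'_{n-1}$ from Lemma~\ref{cas_particulier_coh\'erence}. The difference lies in the middle step. The paper invokes the Weierstra\ss{} \emph{preparation} theorem~\ref{thm:preparationW} to write $g=e\omega$ with $e$ invertible, so that $Z_\omega$ agrees with the zero locus of $g$ near $x$; you use \emph{division} (Theorem~\ref{weierstrass}) to realize $\cO_{\E{n}{\cA},x}/(g)$ as a free module of rank $r=\nu\deg(\mu_x)$ over $\cO_{\E{n-1}{\cA},\varrho_{n-1}(x)}$ and produce a monic $\omega\in(g)$ by Cayley--Hamilton, which only yields an inclusion of $Z_g$ in $Z_\omega$ near $x$ --- but an inclusion is all the lemma needs. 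Your variant buys something real: preparation requires $x$ to be rigide \'epais over $\varrho_{n-1}(x)$, a hypothesis the paper's proof uses tacitly, whereas division only needs $x$ rigid in its fibre, which you establish directly. (In fact the rigide-\'epais condition does hold here, and your own mechanism shows why: the remainder of the division of $T^{r}$ by $g$ is a polynomial $R$ over $\cO_{\E{n-1}{\cA},\varrho_{n-1}(x)}$ of degree $<r$ with $T^{r}-R\in(g)\subseteq\m_x$, so $T(x)$ is algebraic over $\kappa(\varrho_{n-1}(x))$; this also shows that $\omega:=T^{r}-R$ would already do, making the Cayley--Hamilton step dispensable.) Your final globalisation matches the paper's; both implicitly shrink $W'$ as well as $W'_{n-1}$ before identifying $U\cap\varrho_{n-1}^{-1}(W'_{n-1})$ with a closed analytic subset of $Z_\omega$, which is consistent with how the lemma is used downstream.
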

\begin{proof}
Posons $x_{n-1}:=\varrho_{n-1}(x)$. Puisque $q_{l}^{-1}(y) \cap U=\{x\}$, on a et $\varrho_{n-1}^{-1}(x_{n-1}) \cap U= \{x\}$. Par cons\'equent, il existe $g \in \cO_{\E{n}{\cA},x}$ appartenant \`a l'id\'eal de~$U$, qui s'annule en~$x$ mais n'est pas identiquement nul sur $\varrho_{n-1}^{-1}(x_{n-1})$. 
Quitte \`a restreindre~$W'$, nous pouvons supposer que $g \in \cO(W')$ et que~$U$ est contenu dans le lieu d'annulation de~$g$ sur~$W'$. Posons $W'_{n-1} := \varrho_{n-1}(W')$. D'apr\`es le corollaire~\ref{cor:projectionouverte}, c'est un ouvert de~$\E{n-1}{\cA}$. D'apr\`es le th\'eor\`eme de pr\'eparation de Weierstra\ss{} \ref{thm:preparationW}, quitte \`a restreindre encore~$W'$, on peut supposer qu'il existe $e\in \cO(W')$ inversible et $\omega \in \cO(W'_{n-1})[T]$ unitaire tel que $g = e \omega$. Posons $d := \deg(\omega)$. Les propri\'et\'es de~$g$ assurent que $d\ge 1$.

Notons $Z_{\omega}$ le ferm\'e analytique de~$\varrho_{n-1}^{-1}(W'_{n-1})$ d\'efini par~$\omega$. D'apr\`es le lemme \ref{cas_particulier_coh\'erence}, le morphisme $\varrho_{n-1}' \colon Z_{\omega}\to W'_{n-1}$ induit par~$\varrho_{n-1}$ est fini. Par cons\'equent, quitte \`a r\'eduire~$W'_{n-1}$, et remplacer $W'$ par $W' \cap \varrho_{n-1}^{-1}(W'_{n-1})$, on peut supposer que~$Z_{\omega}$ est une r\'eunion finie d'ouverts disjoints et que l'un de ces ouverts est~$Z_{\omega} \cap W'$. Dans cette situation, $Z_{\omega} \cap W'$ est un ferm\'e analytique de~$Z_{\omega}$ et, par cons\'equent, $U$ aussi.
\end{proof}

D\'emontrons un autre cas particulier du th\'eor\`eme~\ref{thm:fini}, celui o\`u le morphisme provient d'une projection d'un espaces affine analytique sur un sous-espace de coordonn\'ees.

\begin{lemm}\label{lem:pousseprojection}
Soient~$\varphi \colon X\to Y$ un morphisme d'espaces $\cA$-analytiques. Soient $x\in X$ et $y\in Y$ tels que $\varphi^{-1}(y) = \{x\}$. 
Supposons qu'il existe des entiers $n,l$ avec $n\ge l$, des ouverts~$W'$ de~$\E{n}{\cA}$ et $V'$ de~$\E{l}{\cA}$ et des immersions ferm\'ees $\iota_{X} \colon X \to W'$ et $\iota_{Y} \colon Y \to V'$ tels que, en notant $\varrho_{l} \colon \E{n}{\cA} \to \E{l}{\cA}$ la projection sur les $l$~derni\`eres coordonn\'ees, on ait $\varrho_{l}(W') = V'$ et le diagramme
\[\begin{tikzcd}
X \arrow[d, "\varphi"] \arrow[r, "\iota_{X}"] & W' \arrow[d, "\varrho_{l, |W'}"]\\
Y \arrow[r, "\iota_{Y}"] & V'
\end{tikzcd}\]
commute. 

Alors, il existe un voisinage ouvert~$Y'$ de~$y$ dans~$Y$ tel que
\begin{enumerate}[i)]
\item le morphisme $\psi \colon \varphi^{-1}(Y') \to Y'$ le morphisme induit par~$\varphi$ soit fini~;
\item pour tout faisceau coh\'erent~$\cF$ sur~$ \varphi^{-1}(Y')$, le faisceau $\psi_{\ast} \cF$ soit coh\'erent.
\end{enumerate}
\end{lemm}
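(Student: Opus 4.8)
The plan is to argue by induction on the integer $n-l$.

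When $n-l=0$ the projection $\varrho_l$ is the identity of $\E{n}{\cA}$, so the hypothesis $\varrho_l(W')=V'$ forces $W'=V'$ and the commutative square degenerates to $\iota_Y\circ\varphi=\iota_X$. From this I would deduce that $\varphi$ is a closed immersion: if $\cI_X,\cI_Y\subseteq\cO_{V'}$ are the coherent ideal sheaves cutting out $\iota_X(X)$ and $\iota_Y(Y)$, the factorisation of $\iota_X^\sharp$ through $\iota_Y^\sharp$ gives $\cI_Y\subseteq\cI_X$, so $\iota_X(X)$ is the closed analytic subset of $\iota_Y(Y)\simeq Y$ defined by $\cI_X/\cI_Y$ and $\varphi$ is the induced closed immersion. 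A closed immersion is finite, and $\varphi_*\cF$ is coherent by Lemma~\ref{lem:immersionfermeecoherence} applied twice (using $\iota_{X*}\cF=\iota_{Y*}\varphi_*\cF$); one takes $Y'=Y$. Note the fibre hypothesis is not needed here.

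For the inductive step ($n>l$), writing $T=T_1$ and factoring $\varrho_l=q\circ\varrho_{n-1}$ with $\varrho_{n-1}\colon\E{n}{\cA}\to\E{n-1}{\cA}$ and $q\colon\E{n-1}{\cA}\to\E{l}{\cA}$ the coordinate projections, I would first observe that $\varrho_l^{-1}(\iota_Y(y))\cap\iota_X(X)=\iota_X(\varphi^{-1}(y))=\{\iota_X(x)\}$, so Lemma~\ref{lem:recurrenceAn} applies and yields an open neighbourhood $W'_{n-1}$ of $\varrho_{n-1}(\iota_X(x))$ in $\varrho_{n-1}(W')$ together with a monic non-constant $\omega\in\cO(W'_{n-1})[T]$ for which, over $W'_{n-1}$, $\iota_X(X)$ is a closed analytic subset of the closed analytic subset $Z_\omega$ of $\varrho_{n-1}^{-1}(W'_{n-1})$ defined by $\omega$. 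By Lemma~\ref{cas_particulier_coh\'erence} the morphism $\tau\colon Z_\omega\to W'_{n-1}$ induced by $\varrho_{n-1}$ is finite with coherence-preserving direct image; composing with the closed immersion $X\hookrightarrow Z_\omega$ over a neighbourhood $X_1$ of $x$ produces a \emph{finite} morphism $\nu\colon X_1\to W'_{n-1}$ whose pushforward preserves coherence, with $\nu^{-1}(\varrho_{n-1}(\iota_X(x)))=\{x\}$ and $\iota_Y\circ(\varphi|_{X_1})=(q|_{W'_{n-1}})\circ\nu$.

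The key move is then to replace the target of $\nu$ by its scheme-theoretic image $\overline X\subseteq W'_{n-1}$, the closed analytic subset defined by $\ker(\cO_{W'_{n-1}}\to\nu_*\cO_{X_1})$ — coherent, since $\cO_{W'_{n-1}}$ is coherent (Theorem~\ref{coherent}) and $\nu_*\cO_{X_1}$ is coherent. Using Proposition~\ref{prop:fetoileenbasexact} to compute stalks of $\nu_*$, and the minimality of the scheme-theoretic image (together with Proposition~\ref{immersion_ferm\'e} for factoring morphisms through closed immersions), I would check that $\nu$ factors as $X_1\xrightarrow{\nu'}\overline X\hookrightarrow W'_{n-1}$ with $\nu'$ finite and coherence-preserving, and that $q|_{W'_{n-1}}$ restricted to $\overline X$ factors through $\iota_Y$, giving $\bar\varphi\colon\overline X\to Y$ with $\iota_Y\circ\bar\varphi=q\circ\iota_{\overline X}$ and $\varphi|_{X_1}=\bar\varphi\circ\nu'$. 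After shrinking $V'$ to the open set $q(W'_{n-1})$ (Corollary~\ref{cor:projectionouverte}) and $Y$ accordingly, $\bar\varphi$ is a morphism of exactly the type treated by the lemma with parameter $n-1\ge l$, and $\bar\varphi^{-1}(y)=\{\nu'(x)\}$. The inductive hypothesis then provides an open $Y'\ni y$ over which $\bar\varphi$ is finite with coherence-preserving pushforward; pulling back along $\nu'$ and using that composites of finite morphisms are finite (Lemma~\ref{lem:compositionfini}) and that the two direct-image functors compose, one concludes that $\varphi$ is finite over $Y'$ with coherence-preserving pushforward.

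The hard part will not be this algebraic skeleton but the topological bookkeeping. At each stage one works over a shrinking neighbourhood $X_1,X_2,\dots$ of $x$, not over $\varphi^{-1}$ of a neighbourhood of $y$, so at the end one must produce a $Y'$ for which $\varphi^{-1}(Y')$ genuinely coincides with the small domain on which finiteness has been proved. This is where I would invoke Lemma~\ref{lem:voisinagefibre} together with the rigidity of finite morphisms — a finite morphism is closed, hence preimages of a neighbourhood basis of $y$ form a neighbourhood basis of the fibre — and it forces one to choose the successive neighbourhoods compatibly throughout the induction rather than independently at each step.
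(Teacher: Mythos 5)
Your proposal follows essentially the same route as the paper's proof: the same induction on $n-l$ with the same base case, the same use of Lemma~\ref{lem:recurrenceAn} and Lemma~\ref{cas_particulier_coh\'erence} to reduce by one variable, and the same passage to the image of $X$ in $\E{n-1}{\cA}$ as a closed analytic subset (the paper realizes it via the annihilator of $(\varrho_{n-1})_*(\iota_X)_*\cO_X$, you via the kernel of $\cO\to\nu_*\cO_{X_1}$, which here coincide) before invoking the induction hypothesis. The topological bookkeeping you flag at the end is real but is handled exactly as you suggest, so there is nothing to add.
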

\begin{proof}
D\'emontrons le r\'esultat par r\'ecurrence sur $n \ge l$. Si $n=l$, on a $W'=V'$, donc $\varrho_{l}=\id$ et $\iota_{Y} \circ \varphi = \iota_{X}$. Par cons\'equent, on a $(\iota_{Y})_{\ast} \varphi_{\ast} \cF = (\iota_{X})_{\ast} \cF$. Le r\'esultat d\'ecoule alors des lemmes~\ref{lem:compositionfini} et~\ref{lem:immersionfermeecoherence}.

Supposons maintenant que $n> l$ et que le r\'esultat est vrai pour~$n-1$. Notons $\varrho_{n-1} \colon \E{n}{\cA} \to \E{n-1}{\cA}$ la projection sur les $n-1$ derni\`eres coordonn\'ees et~$T$ la premi\`ere coordonn\'ee de~$\E{n}{\cA}$. D'apr\`es le lemme~\ref{lem:recurrenceAn}, il existe un voisinage ouvert~$W'_{n-1}$ de~$\varrho_{n-1}(x)$ dans~$\varrho_{n-1}(W')$ et un polyn\^ome $\omega \in \cO(W'_{n-1})[T]$ unitaire non constant tel que $X \cap \varrho_{n-1}^{-1}(W'_{n-1})$ soit un ferm\'e analytique du ferm\'e analytique~$Z_{\omega}$ de~$\varrho_{n-1}^{-1}(W'_{n-1})$ d\'efini par~$\omega$. Quitte \`a remplacer $W'$ par $W' \cap \varrho_{n-1}^{-1}(W'_{n-1})$ et les autres espaces en cons\'equence,
on peut supposer que $W'_{n-1} = \varrho_{n-1}(W')$.

Par hypoth\`ese, l'immersion ferm\'ee~$\iota_{X}$ peut se factoriser sous la forme $\iota_{X} = j_{Z_{\omega}} \circ j_{X}$, o\`u $j_{X} \colon X \to Z_{\omega}$ et $j_{Z_{\omega}} \colon Z_{\omega} \to \varrho_{n-1}^{-1}(W'_{n-1})$ sont deux immersions ferm\'ees. On a $\varrho_{n-1} \circ \iota_{X} = q_{n-1,|Z_{\omega}} \circ j_{X}$. D'apr\`es le lemme~\ref{cas_particulier_coh\'erence}, c'est un morphisme fini et, d'apr\`es les lemmes~\ref{lem:immersionfermeecoherence} et~\ref{cas_particulier_coh\'erence}, pour tout faisceau coh\'erent~$\cF$ sur~$X$, le faisceau 
\[(\varrho_{n-1})_{\ast}(\iota_{X})_{\ast}\cF = (q_{n-1,|Z_{\omega}})_{\ast} (j_{X})_{\ast} \cF\] 
est un faisceau coh\'erent sur~$W_{n-1}$.

Le raisonnement pr\'ec\'edent appliqu\'e avec le faisceau structural montre que $(\varrho_{n-1})_{\ast}(\iota_{X})_{\ast}\cO_{X}$ est coh\'erent. Son support $S := \varrho_{n-1}(\iota(X))$ est donc un ferm\'e analytique de~$W'_{n-1}$, d'apr\`es la remarque~\ref{rem:supportfaisceau}. En particulier, il est muni d'une structure d'espace $\cA$-analytique. Notons $\varrho'_{l} \colon \E{n-1}{\cA} \to \E{l}{\cA}$ la projection sur les $l$~derni\`eres coordonn\'ees et $\varrho' \colon S \to Y$ le morphisme induit. On a $(\varrho')^{-1}(y) = \{\varrho_{n-1}(x)\}$. Par hypoth\`ese de r\'ecurrence, quitte \`a restreindre~$S$ et~$Y$, on peut supposer que~$\varrho'$ est fini et que, pour tout faisceau coh\'erent~$\cG$ sur~$S$, le faisceau $(\varrho')_{\ast} \cG$ est coh\'erent. Le morphisme $\varphi = \varrho' \circ \varrho_{n-1} \circ \iota_{X}$ est alors fini et, pour tout faisceau coh\'erent~$\cF$ sur~$X$, le faisceau 
\[ \varphi_{\ast}\cF = 
(\varrho')_{\ast}(\varrho_{n-1})_{\ast}(\iota_{X})_{\ast}\cF\]
est coh\'erent.
\end{proof}

Tous les ingr\'edients sont maintenant en place pour d\'emontrer le r\'esultat de coh\'erence annonc\'e en toute g\'en\'eralit\'e. 



\begin{proof}[D\'emonstration du th\'eor\`eme~\ref{thm:fini}]
La coh\'erence \'etant une notion locale, il suffit de montrer le r\'esultat au voisinage de tout point de~$Y$. Soit $y\in Y$. 

Si~$y$ n'appartient pas \`a l'image de~$\varphi$, puisque~$\varphi$ est ferm\'e, il existe un voisinage~$V$ de~$y$ tel que~$\varphi^{-1}(V)$ soit vide. Par cons\'equent, $\varphi_*\cF$ est nul, et donc coh\'erent, au voisinage de~$y$.

Supposons que~$y$ appartient \`a l'image de~$\varphi$. Notons $x_{1},\dotsc,x_{t}$ ses ant\'ec\'edents par~$\varphi$. Puisque~$\varphi$ est fini, d'apr\`es le lemme~\ref{lem:voisinagefibre}, il existe un voisinage~$V$ de~$y$ tel que $\varphi^{-1}(V)$ puisse s'\'ecrire comme union disjointe finie d'ouverts $\bigsqcup_{i=1}^t U_{i}$, o\`u, pour tout $i\in \cn{1}{t}$, $U_{i}$ contient~$x_{i}$. Pour tout $i\in \cn{1}{t}$, notons $\varphi_{i} \colon U_{i} \to V$ le morphisme induit par~$\varphi$. Il est encore fini.

Il suffit de montrer que, pour tout $i\in \cn{1}{t}$, il existe un voisinage ouvert~$V_{i}$ de~$y$ dans~$V$ tel que, en notant $\psi_{i} \colon \varphi_{i}^{-1}(V_{i}) \to V_{i}$ le morphisme induit par~$\varphi_{i}$, le faisceau $(\psi_{i})_{\ast} \cF$ soit coh\'erent. En effet, dans ce cas, on peut choisir un voisinage ouvert~$W$ de~$y$ contenu dans tous les~$V_{i}$ et on a alors
\[ (\varphi_{\ast}\cF)_{|W} = \prod_{i=1}^t ((\psi_{i})_{\ast}\cF)_{|W},\]
ce qui permet de conclure. 

Soit $i\in \cn{1}{t}$. Puisque~$\varphi_{i}$ est fini, il est propre, d'apr\`es la proposition~\ref{prop:finipropre}, et le point~$x_{i}$ est isol\'e dans sa fibre. On peut donc appliquer le lemme~\ref{lem:finiprojection} avec~$\varphi_{i}$ et~$x_{i}$. On se trouve alors dans la situation du lemme~\ref{lem:pousseprojection}, qui permet de conclure.
\end{proof}

\begin{coro}\label{cor:finianneauxlocaux}
Soient~$\varphi \colon X\to Y$ un morphisme fini d'espaces $\cA$-analytiques. Soit $x\in X$. Alors
\begin{enumerate}[i)] 
\item le morphisme $\cO_{Y,\varphi(x)} \to \cO_{x}$ induit par~$\varphi$ fait de $\cO_{x}$ un $\cO_{Y,\varphi(x)}$-module de type fini ;
\item les extensions de corps $\kappa(x)/\kappa(\varphi(x))$ et $\cH(x)/\cH(\varphi(x))$ induites par~$\varphi$ sont finies.
\end{enumerate}
\end{coro}
\begin{proof}
D'apr\`es le th\'eor\`eme~\ref{thm:fini}, le faisceau $\varphi_{*}\cO_{X}$ est coh\'erent. Le point~i) d\'ecoule alors de la proposition~\ref{prop:fetoileenbasexact}, i). Le point~ii) s'en d\'eduit.
\end{proof}

\begin{coro}\label{cor:imagefini}\index{Ferme analytique@Ferm\'e analytique}\index{Morphisme analytique!fini!image d'un ferm\'e par un}
Soient~$\varphi \colon X\to Y$ un morphisme fini d'espaces $\cA$-analytiques. Alors $\varphi(X)$ est un ferm\'e analytique de~$Y$.
\end{coro}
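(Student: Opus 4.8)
The statement is an immediate consequence of Theorem~\ref{thm:fini} together with the description of direct images by finite morphisms in Proposition~\ref{prop:fetoileenbasexact} and the support construction recalled in Remark~\ref{rem:supportfaisceau}. The plan is to exhibit $\varphi(X)$ as the support of a coherent sheaf on $Y$.

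First I would apply Theorem~\ref{thm:fini} with $\cF = \cO_X$: since $\varphi$ is finite, the sheaf $\varphi_* \cO_X$ is a coherent $\cO_Y$-module. Next I would compute its support. For $y \in Y$, Proposition~\ref{prop:fetoileenbasexact}~i) gives a natural isomorphism of $\cO_{Y,y}$-modules
\[(\varphi_* \cO_X)_y \simeq \prod_{x \in \varphi^{-1}(y)} \cO_{X,x}.\]
Each local ring $\cO_{X,x}$ is nonzero (it is a local ring, so $0 \neq 1$), hence the product on the right is the zero module precisely when $\varphi^{-1}(y) = \emptyset$, that is, precisely when $y \notin \varphi(X)$. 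Therefore $\operatorname{Supp}(\varphi_* \cO_X) = \varphi(X)$.

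Finally I would invoke Remark~\ref{rem:supportfaisceau}: the annihilator $\operatorname{Ann}(\varphi_* \cO_X)$ is a coherent ideal sheaf whose zero locus coincides with $\operatorname{Supp}(\varphi_*\cO_X) = \varphi(X)$, so $\varphi(X)$ is naturally an analytic closed subset of $Y$. There is essentially no obstacle here once Theorem~\ref{thm:fini} is available; the only point to keep in mind is that the stalks of the structure sheaf of a genuine analytic space are nonzero, which is what makes the support equal to the set-theoretic image.
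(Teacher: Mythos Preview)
Your proof is correct and follows exactly the paper's approach: apply Theorem~\ref{thm:fini} to see that $\varphi_*\cO_X$ is coherent, then invoke Remark~\ref{rem:supportfaisceau} to conclude that its support $\varphi(X)$ is an analytic closed subset. You simply spell out in more detail, via Proposition~\ref{prop:fetoileenbasexact}, why the support of $\varphi_*\cO_X$ equals $\varphi(X)$, a point the paper leaves implicit.
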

\begin{proof}
D'apr\`es le th\'eor\`eme~\ref{thm:fini}, le faisceau $\varphi_{*}\cO_{X}$ est coh\'erent. D'apr\`es la remarque~\ref{rem:supportfaisceau}, son support, qui n'est autre que~$\varphi(X)$, est un ferm\'e analytique de~$Y$.
\end{proof}

D\'emontrons finalement un crit\`ere de finitude locale.

\begin{theo}\label{thm:locfini}\index{Morphisme analytique!fini en un point}\index{Morphisme analytique!quasi-fini}
Soient~$\varphi \colon X\to Y$ un morphisme d'espaces $\cA$-analytiques et~$x \in X$. Le morphisme~$\varphi$ est fini en~$x$ si, et seulement si, le point~$x$ est isol\'e dans sa fibre~$\varphi^{-1}(\varphi(x))$. 
\end{theo}
\begin{proof}
Suppsosons que~$\varphi$ est fini en~$x$. Alors, il existe un voisinage~$U$ de~$x$ dans~$X$ tel que $\varphi^{-1}(\varphi(x)) \cap U$ est fini. On en d\'eduit que~$x$ est isol\'e dans $\varphi^{-1}(\varphi(x))$.

\medbreak

Supposons que~$x$ est isol\'e dans~$\varphi^{-1}(\varphi(x))$. Quitte \`a restreindre~$X$, on peut supposer que $\varphi^{-1}(\varphi(x)) = \{x\}$. Dans ce cas, le lemme~\ref{lem:finiprojection} s'applique. On se trouve alors dans la situation du lemme~\ref{lem:pousseprojection}, qui permet de conclure.
\end{proof}

\section{Stabilit\'e des morphismes finis}\label{sec:stabilitemorphismesfinis}

Dans cette section, nous d\'emontrons que les morphismes finis sont stables par changement de base.

\begin{prop}\label{stabilite_fini}\index{Extension des scalaires}\index{Produit!fibre@fibr\'e}
Soit~$\varphi \colon X\to Y$ un morphisme d'espaces $\cA$-analytiques. Soit $x\in X$.

\begin{enumerate}[i)]
\item Soit $f \colon \cA \to \cB$ un morphisme d'anneaux de Banach born\'e, o\`u~$\cB$ est un anneau de base g\'eom\'etrique. Notons $\varphi_{\cB} \colon X \ho{\cA} \cB \to Y\ho{\cA} \cB$ le morphisme d\'eduit de~$\varphi$ par extension des scalaires \`a~$\cB$. Si $\varphi$ est quasi-fini (resp. fini, resp. fini en~$x$), alors $\varphi_{\cB}$ est quasi-fini (resp. fini, resp. fini en tout point au-dessus de~$x$). 
\item Soit $\psi \colon Z\to Y$ un morphisme d'espaces $\cA$-analytiques. Notons $\varphi_{Z} \colon X \times_{Y} Z \to Z$ le morphisme d\'eduit de~$\varphi$ par changement de base \`a~$Z$. Si $\varphi$ est quasi-fini (resp. fini, resp. fini en~$x$), alors $\varphi_{Z}$ est quasi-fini (resp. fini, resp. fini en tout point au-dessus de~$x$).
\end{enumerate}
\end{prop}
\begin{proof}
i) Supposons que~$\varphi$ est quasi-fini. Soit $z\in Y\ho{\cA} \cB$. Posons $y := p_{Y}(z)$. L'exemple~\ref{ex:morphismex} fournit un morphisme $\lambda_{z} \colon \cM(\cH(z)) \to Y\ho{\cA} \cB$. D'apr\`es la proposition~\ref{preimage}, il suffit de montrer que l'ensemble sous-jacent \`a l'espace
\[(X\ho{\cA} \cB)\times_{Y\ho{\cA} \cB} \cM(\cH(z)) :=  \big((X\ho{\cA} \cB)\ho{\cB} \cH(z)\big) \times_{(Y\ho{\cA} \cB) \ho{\cB} \cH(z)} \cM(\cH(z))\] 
est fini. Puisque le diagramme 
\[\begin{tikzcd}
Y \ho{\cA}\cB \arrow[r, "p_{Y}"] \arrow[d] & Y  \arrow[d] \\
\cM(\cB) \arrow[r] & \cM(\cA)
\end{tikzcd}\]
commute, en utilisant les lemmes~\ref{lem:BB'} et~\ref{lem:produitifbreextensionscalaires}, on obtient une suite d'isomorphismes
\begin{align*}
&(X\ho{\cA} \cB)\times_{Y\ho{\cA} \cB} \cM(\cH(z))\\
\simeq\ & (X\ho{\cA} \cH(z)) \times_{Y\ho{\cA} \cH(z)} \cM(\cH(z))\\
\simeq\ & \big((X\ho{\cA} \cH(y))\ho{\cH(y)} \cH(z)\big) \times_{(Y\ho{\cA} \cH(y)) \ho{\cH(y)} \cH(z)} \cM(\cH(z))\\
\simeq\ &  \big( (X\ho{\cA} \cH(y)) \times_{Y\ho{\cA}  \cH(y) } \cM(\cH(y)) \big) \ho{\cH(y)} \cH(z).
\end{align*}

D'apr\`es la proposition~\ref{preimage}, l'ensemble sous-jacent \`a $F := (X\ho{\cA} \cH(y)) \times_{Y\ho{\cA}  \cH(y) } \cM(\cH(y))$ est hom\'eomorphe \`a~$\varphi^{-1}(y)$. Par cons\'equent, $F$ est fini et la seconde projection $F \to \cM(\cH(y))$ est un morphisme fini (et donc s\'epar\'e) d'espaces $\cH(y)$-analytiques. 

D'apr\`es le lemme~\ref{lem:extensionBouverts}, pour montrer que $F \ho{\cH(y)} \cH(z)$ est fini, on peut supposer que~$F$ est r\'eduit \`a un point, disons~$a$. D'apr\`es le corollaire~\ref{cor:finianneauxlocaux}, l'extension de corps $\cH(a)/\cH(y)$ est finie.

L'espace~$F$ \'etant r\'eduit \`a un point, on peut l'identifier \`a un ferm\'e analytique d'un ouvert de~$\E{n}{\cH(y)}$. D'apr\`es la proposition~\ref{prop:extensionBaffine} et le lemme~\ref{lem:extensionBouverts}, l'ensemble sous-jacent \`a~$F \ho{\cH(y)} \cH(z)$ s'identifie alors \`a~$\tilde{f}_{n}^{-1}(F)$. Puisque~$F$ est une partie compacte spectralement convexe de~$\E{n}{\cH(y)}$, d'apr\`es la proposition~\ref{extension_scalaire_spectral}, $\tilde{f}_{n}^{-1}(F)$ est une partie compacte spectralement convexe de~$\E{n}{\cH(z)}$ et on a 
\[ \cB(\tilde{f}_{n}^{-1}(F)) \simeq \cB(F) \hosp{\cH(y)} \cH(z) = \cH(a) \ho{\cH(y)} \cH(z).\]
Puisque l'extension $\cH(a)/\cH(y)$ est finie, $\cH(a) \ho{\cH(y)} \cH(z)$ est une $\cH(z)$-alg\`ebre finie et 
\[ \tilde{f}_{n}^{-1}(F) = \cM(\cB(\tilde{f}_{n}^{-1}(F))) \simeq \cM(\cH(a) \ho{\cH(y)} \cH(z))\]
est finie, ce qui termine la d\'emonstration.

\medbreak

Le cas fini se d\'eduit des cas quasi-fini, s\'epar\'e (\cf~proposition~\ref{stabilite_separe}) et propre (\cf~proposition~\ref{prop:propreAB}). Le cas fini en~$x$ se ram\`ene au cas fini par les propri\'et\'es de l'extension des scalaires (\cf~lemme~\ref{lem:extensionBouverts}).

\medbreak

ii) Supposons que~$\varphi$ est quasi-fini. Soit $z\in Z$. Posons $y := \psi(z)$. L'exemple~\ref{ex:morphismex} fournit un morphisme $\lambda_{z} \colon \cM(\cH(z)) \to Z$. D'apr\`es la proposition~\ref{preimage}, il suffit de montrer que l'ensemble sous-jacent \`a l'espace
\[(X\times_YZ)\times_{Z} \cM(\cH(z)) :=  \big((X\times_YZ)\ho{\cA} \cH(z)\big) \times_{Z\ho{\cA} \cH(z)} \cM(\cH(z))\] 
est fini. En utilisant le lemme~\ref{lem:produitifbreextensionscalaires}, on obtient la suite d'isomorphismes canoniques
\begin{align*}
& \big((X\times_YZ)\ho{\cA} \cH(z)\big) \times_{Z\ho{\cA} \cM(\cH(z))} \cM(\cH(z))\\ 
\simeq\ & \big((X\ho{\cA} \cH(z))\times_{Y\ho{\cA} \cH(z)}(Z\ho{\cA} \cH(z))\big) \times_{Z\ho{\cA} \cH(z)} \cM(\cH(z))\\
\simeq\ &  (X\ho{\cA} \cH(z))\times_{Y\ho{\cA} \cH(z)} \cM(\cH(z)).
\end{align*}
D'apr\`es la proposition~\ref{preimage}, ce dernier espace est hom\'eomorphe \`a une fibre du morphisme $\varphi_{\cH(z)} \colon  X\ho{\cA} \cH(z) \to Y\ho{\cA} \cH(z)$ d\'eduit de~$\varphi$ par extension des scalaires \`a~$\cH(z)$. D'apr\`es~i), ce morphisme est fini et le r\'esultat s'ensuit.

\medbreak

Les cas fini et fini en~$x$ se d\'eduisent des cas quasi-fini, s\'epar\'e (\cf~proposition~\ref{stabilite_separe}) et propre (\cf~proposition~\ref{stabilite_propre}), comme dans~i).
\end{proof}

\section{Changement de base fini}\label{sec:chgtbasefini}

Le but de cette section est de d\'emontrer une propri\'et\'e de changement de base pour l'image directe d'un faisceau par un morphismes fini. \'Enon\c cons pr\'ecis\'ement ce dont il s'agit.

\begin{defi}\label{def:changementdebase}
Soit~$\varphi \colon X\to Y$ un morphisme fini d'espaces $\cA$-analytiques. Nous dirons que $\varphi$ \emph{satisfait la propri\'et\'e de changement de base} si, pour tout morphisme d'espaces $\cA$-analytiques $\psi \colon Z\to Y$ et tout faisceau de $\cO_{X}$-modules~$\cF$, le morphisme naturel 
\[\psi^*\varphi_*\cF\too \chi_*\rho^*\cF\] 
est un isomorphisme, o\`u~$\chi$ et~$\rho$ sont d\'efinis par le diagramme cart\'esien
\[\begin{tikzcd}
Z\times_Y X \arrow[r, "\rho"] \arrow[d, "\chi"] & X \arrow[d, "\varphi"] \\
Z  \arrow[r, "\psi"] & Y
\end{tikzcd}\ .\]
\end{defi}

%

Nous utiliserons \'egalement une version locale de cette propri\'et\'e.

\begin{defi}
Soit $\varphi \colon X\to Y$ un morphisme d'espaces $\cA$-analytiques. Soit $x\in X$ et supposons que~$\varphi$ est fini en~$x$. On dit que $\varphi$ \emph{satisfait la propri\'et\'e de changement de base en~$x$} si, pour tout morphisme d'espaces $\cA$-analytiques $\psi \colon Z \to Y$ et tout $z\in \psi^{-1}(\varphi(x))$, 
le morphisme naturel 
\[\cO_{X,x}\otimes_{\cO_{Y,\varphi(x)}}\cO_{Z,z} \too \prod_{t\in\rho^{-1}(x)  \cap \chi^{-1}(z)}\cO_{X\times_{Y}Z,t},\]
avec les notations de la d\'efinition~\ref{def:changementdebase}, est un isomorphisme. 
\end{defi}

Remarquons qu'il d\'ecoule de la stabilit\'e par changement de base des morphismes finis (\cf~proposition~\ref{stabilite_fini}) que l'ensemble $\rho^{-1}(x)  \cap \chi^{-1}(z)$ est fini.

\begin{lemm}\label{lem:cbflocal}
Soit $\varphi \colon X\to Y$ un morphisme fini d'espaces $\cA$-analytiques. Supposons que $\varphi$ satisfasse la propri\'et\'e de changement de base en tout point de~$X$. Alors, $\varphi$ satisfait la propri\'et\'e de changement de base. 

\end{lemm}
\begin{proof}
Soit $\psi \colon Z\to Y$ un morphisme d'espaces $\cA$-analytiques. Posons $T := Z\times_{Y} X$. Soit $\cF$ un faisceau de $\cO_{X}$-modules.
Il suffit de montrer que, pour tout $z\in Z$, le morphisme
\[(\psi^*\varphi_*\cF)_{z}\too (\chi_*\rho^*\cF)_{z}\]
est un isomorphisme.

Soit~$z\in Z$. Posons $y:=\psi(z)$. On a des isomorphismes naturels
\[(\psi^*\varphi_*\cF)_z\simeq\bigg(\prod_{x\in\varphi^{-1}(y)}\cF_x\bigg)\otimes_{\cO_{Y,y}}\cO_{Z,z}\]
et 
\[(\chi_*\rho^*\cF)_z\simeq \prod_{t\in\chi^{-1}(z)} \big(\cF_{\rho(t)}\otimes_{\cO_{X,\rho(t)}}\cO_{T,t}\big).\]
Or, par hypoth\`ese, pour tout~$x\in \varphi^{-1}(y)$, le morphisme naturel 
\[\cF_x\otimes_{\cO_{Y,y}}\cO_{Z,z} \too \prod_{t\in\rho^{-1}(x) \cap \chi^{-1}(z)}\cF_{x}\otimes_{\cO_{X,x}}\cO_{T,t}\]
est un isomorphisme. Le r\'esultat s'en d\'eduit.
\end{proof}

%



D\'emontrons maintenant la propri\'et\'e de changement de base locale. Commen\c cons par deux lemmes.

\begin{lemm}\label{lem:cbfcomposition}
Soient $\varphi_{1} \colon X_{1}\to X_{2}$ et $\varphi_{2} \colon X_{2} \to X_{3}$ des morphismes d'espaces $\cA$-analytiques. Soit $x_{1}\in X_{1}$. Supposons que $\varphi_{1}$ est fini en~$x_{1}$ et que $\varphi_{2}$ est fini en~$\varphi_{1}(x_{1})$. Si $\varphi_{1}$ et $\varphi_{2}$ satisfont la propri\'et\'e de changement de base respectivement en~$x_{1}$ et~$\varphi_{1}(x_{1})$, alors $\varphi_{2} \circ \varphi_{1}$ satisfait la propri\'et\'e de changement de base en~$x_{1}$. 
\qed
\end{lemm}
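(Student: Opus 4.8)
The plan is to reduce everything to a local statement and then chain the two given base-change hypotheses through a composite fibred product. First I would invoke Lemma~\ref{lem:cbflocal}: to show that $\varphi_{2}\circ\varphi_{1}$ satisfies the property of base change in $x_{1}$ it suffices to verify, for an arbitrary morphism $\psi\colon Z\to X_{3}$ and an arbitrary point $z\in\psi^{-1}(\varphi_{2}(\varphi_{1}(x_{1})))$, that the natural map on local rings
\[\cO_{X_{1},x_{1}}\otimes_{\cO_{X_{3},\varphi_{2}(\varphi_{1}(x_{1}))}}\cO_{Z,z}\to\prod_{t}\cO_{Z\times_{X_{3}}X_{1},t}\]
(the product running over the finitely many $t$ lying over $x_{1}$ and over $z$) is an isomorphism.

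Next I would decompose the fibred product $Z\times_{X_{3}}X_{1}$ as an iterated fibred product $(Z\times_{X_{3}}X_{2})\times_{X_{2}}X_{1}$, using the associativity of fibred products (which follows from the universal property, cf.\ Theorem~\ref{produit_fibr\'e}). Set $Z':=Z\times_{X_{3}}X_{2}$, with its projection $\psi'\colon Z'\to X_{2}$; by Proposition~\ref{stabilite_fini}~ii) the morphism $\psi'$ has finite fibres and, more importantly, $Z'\times_{X_{2}}X_{1}\simeq Z\times_{X_{3}}X_{1}$. Now apply the hypothesis that $\varphi_{2}$ satisfies the property of base change in $\varphi_{1}(x_{1})$: for each point $w\in\psi'^{-1}(\varphi_{1}(x_{1}))$ lying over $z$, it gives
\[\cO_{X_{2},\varphi_{1}(x_{1})}\otimes_{\cO_{X_{3},\varphi_{2}(\varphi_{1}(x_{1}))}}\cO_{Z,z}\simto\prod_{w}\cO_{Z',w}.\]
Then, for each such $w$, apply the hypothesis that $\varphi_{1}$ satisfies the property of base change in $x_{1}$, with the auxiliary morphism $Z'\to X_{2}$ and the point $w\in\psi'^{-1}(\varphi_{1}(x_{1}))$: this gives
\[\cO_{X_{1},x_{1}}\otimes_{\cO_{X_{2},\varphi_{1}(x_{1})}}\cO_{Z',w}\simto\prod_{t}\cO_{Z'\times_{X_{2}}X_{1},t},\]
where $t$ runs over the points over $x_{1}$ and over $w$.

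Finally I would splice these together. Tensoring the second isomorphism along $\cO_{X_{1},x_{1}}\otimes_{\cO_{X_{2},\varphi_{1}(x_{1})}}(-)$ is exact in the relevant sense because the rings are the local rings of analytic spaces over a geometric base ring and the products are finite (finiteness of the fibres, via Theorem~\ref{thm:locfini} and Theorem~\ref{thm:fini}, makes the products into honest finite products that commute with the tensor product); combining this with transitivity of tensor product $\cO_{X_{1},x_{1}}\otimes_{\cO_{X_{2}}}(\cO_{X_{2}}\otimes_{\cO_{X_{3}}}\cO_{Z,z})\simeq\cO_{X_{1},x_{1}}\otimes_{\cO_{X_{3}}}\cO_{Z,z}$ and re-indexing the points $t$ over $x_{1}$ and $z$ as a disjoint union over the intermediate points $w$ over $\varphi_{1}(x_{1})$ and $z$, one obtains exactly the desired isomorphism. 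The one point that needs care — and which I expect to be the main technical obstacle — is the bookkeeping of the index sets: one must check that the finite set of points of $Z\times_{X_{3}}X_{1}$ over $(x_{1},z)$ decomposes as the disjoint union, over $w\in\psi'^{-1}(\varphi_{1}(x_{1}))\cap(\text{over }z)$, of the points of $Z'\times_{X_{2}}X_{1}$ over $(x_{1},w)$, and that these identifications are compatible with the ring maps so that the products match up; this is a routine but slightly delicate application of Proposition~\ref{prop:fetoileenbasexact} i) together with the set-theoretic description of fibres of fibred products in Proposition~\ref{produit_fibr\'e_propre}.
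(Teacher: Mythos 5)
Your argument is correct and is exactly the routine verification the paper leaves to the reader (the lemma is stated with no proof): decompose $Z\times_{X_{3}}X_{1}$ as $(Z\times_{X_{3}}X_{2})\times_{X_{2}}X_{1}$, apply the two pointwise base-change isomorphisms in turn, and conclude by transitivity of the tensor product, commutation of $\otimes$ with finite products, and the matching of index sets, all of which you handle correctly. One small remark: you do not actually need Lemma~\ref{lem:cbflocal} here, since the property of base change \emph{at a point} is already defined by the local-ring statement you verify (that lemma goes in the opposite direction, from the pointwise statement to the sheaf-level one).
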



\begin{lemm}\label{lem:cbfYY'}
Soient $\varphi_{1} \colon X_{1}\to X_{2}$ et $\varphi_{2} \colon X_{2} \to X_{3}$ des morphismes d'espaces $\cA$-analytiques. Soit~$x_{1}\in X_{1}$. Supposons que $\varphi_{1}$ est fini en~$x_{1}$ et que $\varphi_{2}$ est fini en~$\varphi_{1}(x_{1})$. Si $\varphi_{2} \circ \varphi_{1}$ satisfait la propri\'et\'e de changement de base en~$x_{1}$, alors $\varphi_{1}$ aussi.
\end{lemm}
\begin{proof}
Soit $\psi \colon Z \to X_{2}$ un morphisme d'espaces $\cA$-analytiques. On a un isomorphisme canonique 
\[Z \times_{X_{2}} X_{1} \simtoo Z \times_{X_{3}} X_{1},\]
o\`u les morphismes d\'efinissant le produit fibr\'e de droite sont $\varphi_{2} \circ \psi$ et $\varphi_{2} \circ \varphi_{1}$. De m\^eme, pour tout $z \in \psi^{-1}(\varphi_{1}(x_{1}))$, on a un isomorphisme canonique
\[\cO_{X_{1},x_{1}} \otimes_{\cO_{X_{2},\varphi_{1}(x_{1})}}\cO_{Z,z} \simtoo \cO_{X_{1},x_{1}} \otimes_{\cO_{X_{3},\varphi_{2}(\varphi_{1}(x_{1}))}}\cO_{Z,z}.\]
Le r\'esultat s'en d\'eduit.
\end{proof}


Traitons maintenant deux cas particuliers de la propri\'et\'e de changement de base locale.

\begin{lemm}\label{lem:cbfomega}
Soit~$Y$ un espace $\cA$-analytique et $\omega\in\cO(Y)[T]$ un polyn\^ome unitaire non constant. Consid\'erons~$\E{1}{\cA}$ avec coordonn\'ee~$T$ et notons $\pi \colon Y\times_{\cA}\E{1}{\cA} \to Y$ le morphisme de projection. Notons $Z_{\omega}$ le ferm\'e analytique de~$Y\times_{\cA}\E{1}{\cA} $ d\'efini par~$\omega$. Soit $x\in Z_{\omega}$ tel que $\pi_{|Z_{\omega}}^{-1}(\pi_{|Z_{\omega}}(x)) = \{x\}$. Alors $\pi_{|Z_{\omega}}$ satisfait la propri\'et\'e de changement de base en~$x$.
\end{lemm}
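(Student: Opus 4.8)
The plan is to reduce the statement to the algebraic fact established by the generalized Weierstrass division theorem with normed control, namely Corollary~\ref{cor:divisionGnormes}, or rather its underlying isomorphism statement. The key point is that finiteness in a single fiber (here $\pi_{|Z_{\omega}}^{-1}(\pi_{|Z_{\omega}}(x)) = \{x\}$) together with the structure of $Z_\omega$ as a Weierstrass hypersurface gives a very concrete description of the local ring $\cO_{Z_\omega, x}$ as a finite free $\cO_{Y,\pi(x)}$-module, and free modules behave perfectly under base change. First I would work locally on $Y$ and use Lemma~\ref{lem:cbflocal}, so it suffices to show that for any morphism $\psi \colon Z \to Y$ and any $z \in \psi^{-1}(\pi_{|Z_\omega}(x))$, the natural map
\[\cO_{Z_\omega,x} \otimes_{\cO_{Y,\pi(x)}} \cO_{Z,z} \to \prod_{t \in \rho^{-1}(x) \cap \chi^{-1}(z)} \cO_{(Z\times_Y Z_\omega),t}\]
is an isomorphism.

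Next I would identify both sides explicitly. Set $y := \pi_{|Z_\omega}(x)$ and $d := \deg(\omega)$. Since $\pi_{|Z_\omega}^{-1}(y) = \{x\}$, the polynomial $\omega(y) \in \cH(y)[T]$ has a single root (up to multiplicity), so it is a power of an irreducible polynomial. The isomorphism statement at the end of Corollary~\ref{cor:divisionGirrednormes} (applied with $b = y$, $G = \omega$) gives that $\cO_{Y,y}[T]/(\omega(T)) \xrightarrow{\sim} \cO_{Z_\omega,x}$, so $\cO_{Z_\omega,x}$ is a free $\cO_{Y,y}$-module with basis $1, T, \dots, T^{d-1}$. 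On the other side, $Z \times_Y Z_\omega$ is the closed analytic subspace of $Z \times_\cA \E{1}{\cA}$ cut out by the pullback of $\omega$, which is again a unitary polynomial of degree $d$ over $\cO(Z)$ near $z$; and the fiber of $\chi$ over $z$ corresponds to the roots of $\psi^\sharp(\omega)(z) \in \cH(z)[T]$. Applying Corollary~\ref{cor:divisionGirrednormes} once more, this time over $Z$ at the point $z$ with the polynomial $\psi^\sharp(\omega)$ — noting that $\psi^\sharp(\omega)(z)$ is the image of $\omega(y)$ under $\cH(y) \to \cH(z)$, but we do \emph{not} need it to stay a prime power — gives that $\cO_{Z,z}[T]/(\psi^\sharp(\omega)) \xrightarrow{\sim} \prod_{t} \cO_{(Z\times_Y Z_\omega),t}$, so the right-hand side is a free $\cO_{Z,z}$-module with basis $1, T, \dots, T^{d-1}$.

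Finally I would check that the natural map matches these two free presentations. The left-hand side is
\[\big(\cO_{Y,y}[T]/(\omega)\big) \otimes_{\cO_{Y,y}} \cO_{Z,z} \simeq \cO_{Z,z}[T]/(\psi^\sharp(\omega)),\]
and under the identifications above the comparison map is the identity on this algebra, so it is an isomorphism. One subtlety to address: Corollary~\ref{cor:divisionGirrednormes} requires a decency-type hypothesis on $b$ only when $\cH(b)$ is trivially valued and the minimal polynomial is inseparable; since $\cA$ is assumed a geometric base ring, every point of $B$, and hence every point of an $\cA$-analytic space, is decent (indeed this is built into Definition~\ref{def:basique} via condition~i)), so the hypothesis is automatically available both at $y$ over $B$ and, after the relevant base change, at $z$. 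The main obstacle is bookkeeping rather than conceptual: one must be careful that the second application of the division theorem (over $Z$, at $z$) is legitimate even though $\psi^\sharp(\omega)(z)$ may no longer be a power of an irreducible polynomial — this is why Corollary~\ref{cor:divisionGnormes} (the version for a general product decomposition) is the right tool, and its concluding isomorphism $\cO_{z}[T]/(G) \simeq \prod_j \cO_{z_j}/(H_j)$, combined with the factorization compatible with reduction modulo the fiber, yields exactly the free description of the right-hand side needed above.
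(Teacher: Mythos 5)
Your proof is correct and follows essentially the same route as the paper: both identify $\cO_{Z_\omega,x}$ with the free $\cO_{Y,y}$-module $\cO_{Y,y}[T]/(\omega)$ and $\prod_t \cO_{Z\times_Y Z_\omega,t}$ with $\cO_{Z,z}[T]/(\psi^\sharp(\omega))$ via Weierstrass division (the paper packages this through le lemme~\ref{cas_particulier_coh\'erence} and la proposition~\ref{prop:fetoileenbasexact} rather than through les corollaires~\ref{cor:divisionGirrednormes} et~\ref{cor:divisionGnormes}, but these rest on the same division theorem), and then conclude by the tensor computation. The only point worth making explicit is that the hypothesis $\pi_{|Z_\omega}^{-1}(y)=\{x\}$ gives $\chi^{-1}(z)\subset\rho^{-1}(x)$, so the product over $\chi^{-1}(z)$ coincides with the product over $\rho^{-1}(x)\cap\chi^{-1}(z)$ appearing in the definition.
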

\begin{proof}
Soit $\psi \colon Z \to Y$ un morphisme d'espaces $\cA$-analytiques. Soit $z\in Z$ tel que $\psi(z) = \pi(x)$. Posons $y := \pi(x)$. Consid\'erons le carr\'e cart\'esien
\[\begin{tikzcd}
T := Z\times_Y Z_{\omega} \arrow[r, "\rho"] \arrow[d, "\chi"] & Z_{\omega} \arrow[d, "\pi_{|Z_{\omega}}"] \\
Z  \arrow[r, "\psi"] & Y
\end{tikzcd}\ .\]
D'apr\`es le lemme~\ref{lem:produitouverts}, le produit fibr\'e~$T$ s'identifie au ferm\'e analytique de~$Z\times_{\cA} \E{1}{\cA}$ d\'efini par le polyn\^ome $\psi^\sharp(\omega) \in \cO(Z)[T]$. D'apr\`es la proposition~\ref{prop:fetoileenbasexact} et le lemme~\ref{cas_particulier_coh\'erence}, on a des isomorphismes naturels
\[ \prod_{t\in \chi^{-1}(z)} \cO_{T,t} \simeq (\chi_*\cO_{T})_z \simeq  \cO_{Z,z}[T]/(\psi^{\sharp}(\omega))\]
et 
\[\cO_{Z_{\omega},x} \simeq \big((\pi_{|Z_{\omega}})_*\cO_{Z_{\omega}}\big)_{y} \simeq \cO_{Y,y}[T]/(\omega).\]
On en d\'eduit que $\cO_{Z_{\omega},x} \otimes_{\cO_{Y,y}} \cO_{Z,z} \simeq \prod_{t\in \chi^{-1}(z)} \cO_{T,t} $. Puisque $\pi_{|Z_{\omega}}^{-1}(y) = \{x\}$, on a $\chi^{-1}(z) \subset \rho^{-1}(x)$. Le r\'esultat s'en d\'eduit.
\end{proof}

\begin{lemm}\label{lem:cbfprojection}
Soient~$\varphi \colon X\to Y$ un morphisme d'espaces $\cA$-analytiques. Soient $x\in X$ et $y\in Y$ tels que $\varphi^{-1}(y) = \{x\}$. 
Supposons qu'il existe des entiers $n,l$ avec $n\ge l$, des ouverts~$W'$ de~$\E{n}{\cA}$ et $V'$ de~$\E{l}{\cA}$ et des immersions ferm\'ees $\iota_{X} \colon X \to W'$ et $\iota_{Y} \colon Y \to V'$ tels que, en notant $\varrho_{l} \colon \E{n}{\cA} \to \E{l}{\cA}$ la projection sur les $l$~derni\`eres coordonn\'ees, on ait $\varrho_{l}(W') = V'$ et le diagramme
\[\begin{tikzcd}
X \arrow[d, "\varphi"] \arrow[r, "\iota_{X}"] & W' \arrow[d, "\varrho_{l, |W'}"]\\
Y \arrow[r, "\iota_{Y}"] & V'
\end{tikzcd}\]
commute. Alors $\varphi$ satisfait la propri\'et\'e de changement de base en~$x$.
\end{lemm}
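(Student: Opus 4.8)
The plan is to run an induction on the integer $n \ge l$, exactly parallel to the induction in the proof of Lemma~\ref{lem:pousseprojection}, of which this is the "change of base" refinement. The base case $n = l$ is immediate: there $W' = V'$, $\varrho_{l} = \id$, and $\iota_{Y} \circ \varphi = \iota_{X}$, so $\varphi$ is (up to the closed immersions $\iota_X$, $\iota_Y$) an isomorphism onto a closed analytic subset; closed immersions trivially satisfy the change of base property in every point because the relevant local rings and tensor products agree on the nose, and Lemma~\ref{lem:cbfYY'} then strips off $\iota_Y$. For the inductive step, assume the statement holds for $n-1$ and suppose $n > l$. Write $T$ for the first coordinate of $\E{n}{\cA}$ and $\varrho_{n-1} \colon \E{n}{\cA} \to \E{n-1}{\cA}$ for the projection onto the last $n-1$ coordinates. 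Since $\varphi^{-1}(y) = \{x\}$, we have $\varrho_{n-1}^{-1}(\varrho_{n-1}(x)) \cap \iota_X(X) = \{\iota_X(x)\}$, so Lemma~\ref{lem:recurrenceAn} produces, after shrinking $W'$ (and the other spaces accordingly), an open neighbourhood $W'_{n-1}$ of $\varrho_{n-1}(x)$ in $\varrho_{n-1}(W')$ and a unitary non-constant polynomial $\omega \in \cO(W'_{n-1})[T]$ such that $X \cap \varrho_{n-1}^{-1}(W'_{n-1})$ is a closed analytic subset of the closed analytic subset $Z_{\omega}$ of $\varrho_{n-1}^{-1}(W'_{n-1})$ cut out by $\omega$.

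Next I would factor $\varphi$ through this $Z_{\omega}$. The closed immersion $\iota_X$ factors as $\iota_X = j_{Z_\omega} \circ j_X$ with $j_X \colon X \to Z_\omega$ and $j_{Z_\omega} \colon Z_\omega \to \varrho_{n-1}^{-1}(W'_{n-1})$ closed immersions, and $\varrho_{n-1} \circ \iota_X = \varrho_{n-1,|Z_\omega} \circ j_X$. Exactly as in the proof of Lemma~\ref{lem:pousseprojection}, the pushforward $(\varrho_{n-1})_*(\iota_X)_*\cO_X = (\varrho_{n-1,|Z_\omega})_*(j_X)_*\cO_X$ is coherent (by Lemma~\ref{cas_particulier_coh\'erence} and Lemma~\ref{lem:immersionfermeecoherence}), so its support $S := \varrho_{n-1}(\iota_X(X))$ is a closed analytic subset of $W'_{n-1}$, hence naturally an $\cA$-analytic space, and $\varrho'_l \colon S \to Y$ (the restriction of the projection onto the last $l$ coordinates of $\E{n-1}{\cA}$) satisfies $(\varrho'_l)^{-1}(y) = \{\varrho_{n-1}(x)\}$ with $S$ sitting via a closed immersion in an open of $\E{n-1}{\cA}$ compatibly over $V'$. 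Thus the inductive hypothesis applies to $\varrho'_l$ at the point $\varrho_{n-1}(x)$: it satisfies the change of base property there. I would then write $\varphi$, restricted to a suitable neighbourhood, as the composite
\[
X \xrightarrow{\ j_X\ } Z_\omega \cap W' \xrightarrow{\ \varrho_{n-1,|Z_\omega}\ } S \xrightarrow{\ \varrho'_l\ } Y,
\]
where the middle map is the restriction of $\varrho_{n-1,|Z_\omega}$ (a finite morphism by Lemma~\ref{cas_particulier_coh\'erence}) to the component $Z_\omega \cap W'$ over the appropriate open, and the first map is a closed immersion. Lemma~\ref{lem:cbfomega} handles $\varrho_{n-1,|Z_\omega}$ at the point in question (it satisfies change of base there), closed immersions handle $j_X$, and the inductive hypothesis handles $\varrho'_l$; assembling these via Lemma~\ref{lem:cbfcomposition} (applied twice) gives that the composite satisfies the change of base property at $x$.

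The main technical care — and the place where I expect the bookkeeping to be heaviest rather than genuinely hard — is ensuring that all the shrinkings of $W'$, $W'_{n-1}$, $S$ and $Y$ remain compatible with the hypotheses of the lemma (in particular that after shrinking one still has a closed immersion of the shrunk $X$ into an open of $\E{n}{\cA}$ sitting over $V'$ via $\varrho_l$, and that $S$ really is a closed analytic subset of $W'_{n-1}$ over which $\varrho'_l$ has the singleton fibre), and that the finite morphism $\varrho_{n-1,|Z_\omega}$ restricted to the relevant union-component $Z_\omega \cap W'$ is still finite and still a closed immersion composed with a standard finite projection, so that Lemma~\ref{lem:cbfomega} genuinely applies; here one uses that $\varrho_{n-1,|Z_\omega}$ is finite to write $Z_\omega$ near the fibre as a finite disjoint union of opens, one of which is $Z_\omega \cap W'$, making $Z_\omega \cap W'$ closed in $Z_\omega$. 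Once the geometry is set up correctly, the algebraic content is entirely contained in Lemmas~\ref{lem:cbfcomposition}, \ref{lem:cbfYY'}, \ref{lem:cbfomega} together with the coherence results already established, so no new tensor-product computation is needed beyond the ones recorded there. Finally, one observes that the statement is local at $x$ and at $\varphi(x)$, which legitimises all the neighbourhood replacements made along the way.
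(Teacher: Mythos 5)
Your proposal is correct and follows essentially the same route as the paper's proof: induction on $n\ge l$, with the base case handled by Lemma~\ref{lem:cbfYY'} plus the direct tensor-product computation for a closed immersion, and the inductive step via Lemma~\ref{lem:recurrenceAn}, Lemma~\ref{lem:cbfomega}, the realisation of $\varrho_{n-1}(X)$ as a closed analytic subset of~$W'_{n-1}$, and Lemma~\ref{lem:cbfcomposition}. Your explicit three-step factorisation through $Z_{\omega}$ (minding that the middle map should be corestricted to~$S$ via Lemma~\ref{lem:cbfYY'}) merely makes visible the use of the closed-immersion case that the paper leaves implicit when it applies Lemma~\ref{lem:cbfomega} directly to $X\to\varrho_{n-1}(X)$.
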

\begin{proof}
D\'emontrons par r\'ecurrence sur~$n\ge l$ que $\varphi$ satisfait la propri\'et\'e de changement de base en~$x$.

Si $n=l$, alors $V' = W'$ et $q_{l} = \id$. D'apr\`es le lemme~\ref{lem:cbfYY'}, on peut remplacer~$\varphi$ par sa compos\'ee avec~$\iota_{Y}$ et donc supposer que $Y=V'$ et que $\iota_{Y} =\id$. On a alors $\varphi = \iota_{X}$. Soit $\psi \colon Z\to Y$ un morphisme d'espaces $\cA$-analytiques et reprenons les notations du d\'ebut de la section. D'apr\`es le lemme~\ref{lem:produitouverts}, le morphisme~$\chi$ est alors le morphisme d'inclusion d'un ferm\'e analytique d\'efini par le m\^eme id\'eal que~$X$ dans~$W'$. Pour tout $z\in Z$ tel que $\psi(z) = \varphi(x)$, la fibre $\chi^{-1}(z)$ contient donc un unique point, disons~$t$, et nous avons $\cO_{T,t} = \cO_{Z,z} \otimes_{\cO_{W',\varphi(x)}} \cO_{X,x}$. Le r\'esultat s'en d\'eduit.

Supposons que $n >l$ et que le r\'esultat est satisfait pour~$n-1$. Notons $\varrho_{n-1} \colon \E{n}{\cA} \to \E{n-1}{\cA}$ le morphisme de projection sur les $n-1$ derni\`eres coordonn\'ees et~$T$ la premi\`ere coordonn\'ee de~$\E{n}{\cA}$. D'apr\`es le lemme~\ref{lem:recurrenceAn}, il existe un voisinage ouvert~$W'_{n-1}$ de~$\varrho_{n-1}(x)$ dans~$\varrho_{n-1}(W')$ et un polyn\^ome $\omega \in \cO(W'_{n-1})[T]$ unitaire non constant tel que $X \cap \varrho_{n-1}^{-1}(W'_{n-1})$ soit un ferm\'e analytique du ferm\'e analytique~$Z_{\omega}$ de~$\varrho_{n-1}^{-1}(W'_{n-1})$ d\'efini par~$\omega$. Quitte \`a remplacer $W'$ par $W' \cap \varrho_{n-1}^{-1}(W'_{n-1})$ et les autres espaces en cons\'equence, on peut supposer que $W'_{n-1} = \varrho_{n-1}(W')$.
 
D'apr\`es le lemme~\ref{cas_particulier_coh\'erence} et le corollaire~\ref{cor:imagefini}, $X_{n-1} := \varrho_{n-1}(X)$ est un ferm\'e analytique de~$W'_{n-1}$. En particulier, on peut le munir d'une structure d'espace $\cA$-analytique. Notons $\varrho'_{n-1} \colon X \to X_{n-1}$ le morphisme induit par~$\varrho_{n-1}$. D'apr\`es le lemme~\ref{lem:cbfomega}, $\varrho'_{n-1}$ satisfait la propri\'et\'e de changement de base en~$x$. Notons $\varrho_{n-1,l} \colon \E{n-1}{\cA} \to \E{l}{\cA}$ le morphisme de projection sur les $l$~derni\`eres coordonn\'ees et $\varrho'_{n-1,l} \colon X_{n-1} \to Y$ le morphisme induit. Par hypoth\`ese de r\'ecurrence, $\varrho'_{n-1,l}$ satisfait la propri\'et\'e de changement de base en~$\varrho_{n-1}(x)$. Le r\'esultat d\'ecoule alors du lemme~\ref{lem:cbfcomposition}.
\end{proof}

D\'emontrons, \`a pr\'esent, le r\'esultat de changement de base en toute g\'en\'eralit\'e.

\begin{theo}\label{thm:changementdebaselocal}\index{Changement de base fini|see{Morphisme analytique fini}}\index{Morphisme analytique!fini!changement de base d'un}\index{Produit!fibre@fibr\'e}
Soient $\varphi \colon X\to Y$ et $\psi \colon Z\to Y$ des morphismes d'espaces $\cA$-analytiques. Soit $x\in X$ et supposons que $\varphi$ est fini en~$x$. Consid\'erons le carr\'e cart\'esien
\[\begin{tikzcd}
X\times_YZ \arrow[r, "\rho"] \arrow[d, "\chi"] & X \arrow[d, "\varphi"] \\
Z  \arrow[r, "\psi"] & Y
\end{tikzcd}\ .\]
Alors, le morphisme naturel 
\[\cO_{X,x}\otimes_{\cO_{Y,\varphi(x)}}\cO_{Z,z} \too \prod_{t\in\rho^{-1}(x)  \cap \chi^{-1}(z)}\cO_{X\times_YZ,t}\]
est un isomorphisme. 
\end{theo}
\begin{proof}
Le r\'esultat \'etant local en~$x$, quitte \`a restreindre~$X$ et~$Y$, on peut supposer que~$\varphi^{-1}(\varphi(x)) = \{x\}$. On peut alors appliquer le lemme~\ref{lem:finiprojection} pour se retrouver dans la situation du lemme~\ref{lem:cbfprojection}, qui permet de conclure.
\end{proof}

\begin{coro}\label{thm:formule_changement_base}\index{Changement de base fini|see{Morphisme analytique fini}}\index{Morphisme analytique!fini!changement de base d'un}\index{Produit!fibre@fibr\'e}
Soient $\varphi \colon X\to Y$ et $\psi \colon Z\to Y$ des morphismes d'espaces $\cA$-analytiques. Supposons que~$\varphi$ est fini. Alors, avec les notations du th\'eor\`eme~\ref{thm:changementdebaselocal}, pour tout faisceau de $\cO_{X}$-modules~$\cF$, le morphisme naturel 
\[\psi^*\varphi_*\cF\too \chi_*\rho^*\cF\] 
est un isomorphisme.
\end{coro}
\begin{proof}
Le r\'esultat d\'ecoule du lemme~\ref{lem:cbflocal} et du th\'eor\`eme~\ref{thm:changementdebaselocal}.
\end{proof}

%

En utilisant les m\^emes arguments, on peut d\'emontrer un r\'esultat similaire en rempla\c cant le changement de base par une extension des scalaires. 

\begin{theo}\label{thm:extensionscalaireslocal}\index{Morphisme analytique!fini!changement de base d'un}\index{Extension des scalaires}
Soient $\varphi \colon X\to Y$ et $\psi \colon Z\to Y$ des morphismes d'espaces $\cA$-analytiques. Soit $x\in X$ et supposons que $\varphi$ est fini en~$x$. Soit $f\colon \cA \to \cB$ un morphisme d'anneaux de Banach born\'e, o\`u $\cB$ est un anneau de base g\'eom\'etrique. Consid\'erons le carr\'e cart\'esien
\[\begin{tikzcd}
X\ho{\cA} \cB \arrow[r, "\pi_{X}"] \arrow[d, "\varphi_{\cB}"] & X \arrow[d, "\varphi"] \\
Y\ho{\cA} \cB  \arrow[r, "\pi_{Y}"] & Y
\end{tikzcd}\ .\]
Alors, le morphisme naturel 
\[\cO_{X,x}\otimes_{\cO_{Y,\varphi(x)}}\cO_{Y\ho{\cA}\cB,z} \too \prod_{t\in\rho^{-1}(x)  \cap \chi^{-1}(z)}\cO_{X\ho{\cA} \cB,t}\]
est un isomorphisme. 
\qed
\end{theo}

\begin{coro}\label{thm:formule_extension_scalaire}\index{Morphisme analytique!fini!changement de base d'un}\index{Extension des scalaires}
Soient $\varphi \colon X\to Y$ et $\psi \colon Z\to Y$ des morphismes d'espaces $\cA$-analytiques. Supposons que~$\varphi$ est fini. Alors, avec les notations du th\'eor\`eme~\ref{thm:extensionscalaireslocal}, pour tout faisceau de $\cO_{X}$-modules~$\cF$, le morphisme naturel 
\[(\pi_{Y})^*\varphi_*\cF\too (\varphi_{\cB})_*(\pi_{X})^*\cF\] 
est un isomorphisme.
\qed
\end{coro}


\section{Nullstellensatz de R\"uckert}\label{sec:Ruckert}

Dans cette section, nous d\'emontrons une version analytique du Nullstellensatz, connue en g\'eom\'etrie analytique complexe sous le nom de Nullstellensatz de R\"uckert. 
Dans sa premi\`ere version, il s'agit de montrer que si une fonction s'annule sur le support d'un faisceau coh\'erent, alors une puissance de cette fonction annule le faisceau. Nous suivons ici la strat\'egie de \cite[\S 3.2 et \S 4.1.5]{Gr-Re2}.

Commen\c{c}ons par d\'emontrer plusieurs cas particuliers du r\'esultat. Comme de coutume, nous noterons $T_{1},\dotsc,T_{n}$ les coordonn\'ees sur~$\E{n}{\cA}$. Rappelons la d\'efinition~\ref{def:projection} concernant les projections.

\begin{lemm}\label{lem:Ruckertloctr}
Soient~$x$ un point de~$U$ et~$U$ un voisinage ouvert de~$x$ dans~$\E{n}{\cA}$. 
Supposons que $x$~est purement localement transcendant au-dessus de $b := \pi_{n}(x)$. Soient~$\cF$ un faisceau coh\'erent sur~$U$ et $f\in\cO_{\E{n}{\cA}}(U)$ une fonction analytique nulle en tout point du support de~$\cF$. Alors, il existe~$d\in\N$ tel que~$f^d\cF_x=0$.
\end{lemm}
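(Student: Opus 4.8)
The plan is to reduce the statement to a purely algebraic fact about the Noetherian local ring $\cO_{\E{n}{\cA},x}$, namely that the radical of the annihilator of a finitely generated module annihilates a power of any function vanishing on the support. First I would recall that, since $x$ is purely localement transcendant au-dessus de $b$, Theorem~\ref{rigide} tells us that $\cO_{x} := \cO_{\E{n}{\cA},x}$ is a corps fort (if $\cO_{B,b}$ is a corps fort) or an anneau fortement de valuation discr\`ete (if $\cO_{B,b}$ is one); in both cases $\cO_{x}$ is Noetherian. Shrinking $U$, I may assume $\cF$ is given by an exact sequence $\cO_{U}^{m} \to \cO_{U}^{p} \to \cF \to 0$, so $\cF_{x}$ is a finitely generated $\cO_{x}$-module, and $f$ induces a germ $f_{x} \in \cO_{x}$.

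The key step is to show that $f_{x}$ lies in the radical $\sqrt{\operatorname{Ann}(\cF_{x})}$ of the annihilator ideal $I := \operatorname{Ann}(\cF_{x}) \subset \cO_{x}$. Since $\cO_{x}$ is Noetherian, $I$ has a primary decomposition, so it suffices to check that $f_{x}$ belongs to every prime $\p$ of $\cO_{x}$ that is minimal over $I$ — equivalently, by the Nullstellensatz-type reasoning, that $f_{x}$ maps to a nilpotent (hence, $\p$ being prime, to $0$) in $\cO_{x}/\p$ for each such $\p$. The geometric input is the hypothesis that $f$ vanishes on $\operatorname{Supp}(\cF)$: concretely, the support of $\cF$ near $x$ is the zero locus of $I$, which is a ferm\'e analytique of $U$ (remarque~\ref{rem:supportfaisceau}), and $f$ vanishes identically on it. I expect the main obstacle to be making this geometric vanishing condition yield the algebraic conclusion $f_{x}\in\sqrt{I}$ — the subtlety is that ``vanishing at every point of $\operatorname{Supp}(\cF)$'' is a statement about the points $\kappa(y)$, $y$ in a neighbourhood, whereas $\sqrt{I}$ is about prime ideals of the single local ring $\cO_{x}$. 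In the case where $\cO_{x}$ is a corps fort this is immediate: $\cO_{x}$ is a field, so either $\cF_{x}=0$ (and the statement is trivial) or $I = \cO_{x}$ is impossible unless $\cF_{x}=0$; in fact if $\cF_{x}\neq 0$ then $\operatorname{Supp}(\cF)$ contains $x$, $f$ vanishes at $x$, so $f_{x}\in\m_{x}=(0)$, giving $f_{x}=0$ and $d=1$ works. In the anneau fortement de valuation discr\`ete case, $\cO_{x}$ has only the primes $(0)$ and $\m_{x}=(\varpi_{b})$; if $I\subset(0)$ then $\cF_{x}$ is torsion-free, its support is all of a neighbourhood, $f$ vanishes there, so $f_{x}=0$ by the principe du prolongement analytique (valid since $\cA$ is an anneau de base g\'eom\'etrique, cf.\ Proposition~\ref{prop:prolongementanalytique}), again $d=1$; otherwise $I$ is $(\varpi_{b}^{v})$ for some $v\geq 1$ (or $\cO_{x}$ itself), $x\in\operatorname{Supp}(\cF)$, $f$ vanishes at $x$, so $f_{x}\in\m_{x}$, and then $f_{x}^{v}\in(\varpi_{b}^{v})=I$ annihilates $\cF_{x}$, so $d=v$ works.

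Having obtained $f_{x}\in\sqrt{I}$, there exists $d\in\N$ with $f_{x}^{d}\in I = \operatorname{Ann}(\cF_{x})$, which is exactly $f^{d}\cF_{x}=0$. I would present the argument uniformly by: (1) invoking Theorem~\ref{rigide} to get that $\cO_{x}$ is Noetherian and of Krull dimension $\leq \dim(\cO_{B,b})\leq 1$ with the explicit structure recalled above; (2) reducing to $\cF_{x}\neq 0$, so that $x\in\operatorname{Supp}(\cF)$ and hence $f(x)=0$, i.e.\ $f_{x}\in\m_{x}$; (3) splitting into the corps fort case (where $\m_{x}=0$, so $f_{x}=0$) and the avd case (where $\m_{x}=(\varpi_{b})$ and $I\supseteq(\varpi_{b}^{v})$ for $v$ the $\varpi_{b}$-valuation of a generator, handled via Lemma~\ref{fortement_de_valuation_discr\`ete} or directly, together with the prolongement analytique to dispatch the torsion-free subcase). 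This keeps the dimension bound $\leq 1$ doing all the work and avoids any genuine multi-dimensional primary-decomposition argument, which is what makes the purely localement transcendant hypothesis so convenient here.
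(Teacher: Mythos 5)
Your overall strategy coincides with the paper's: reduce via le th\'eor\`eme~\ref{rigide} to the dichotomy corps fort / anneau fortement de valuation discr\`ete for $\cO_{\E{n}{\cA},x}$, then use the coherence of the annihilator (remarque~\ref{rem:supportfaisceau}) and the simple ideal structure of a DVR. The corps fort case and the DVR subcase $\textrm{Ann}(\cF_x)=(\varpi_b^v)$ with $v\ge 1$ are handled correctly; your observation that $x\in\mathrm{Supp}(\cF)$ forces $f_x\in\m_x=(\varpi_b)$, hence $f_x^v\in(\varpi_b^v)=\textrm{Ann}(\cF_x)$, is even slightly cleaner than the paper's choice of $d$ with $dl\ge v$ after writing $f=h\varpi_x^l$.

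The gap is in the subcase $\textrm{Ann}(\cF_x)=(0)$. You correctly note that the support of~$\cF$ is then a neighbourhood of~$x$ and that $f$ vanishes at every point of it, but the inference ``$f$ vanishes at every point of a neighbourhood of~$x$, hence $f_x=0$'' is not delivered by the principe du prolongement analytique as defined in the paper (d\'efinition~\ref{def:prolongementanalytique}): that principle propagates the \emph{non-vanishing of germs} to nearby points; it does not convert pointwise vanishing of \emph{values} into vanishing of the germ (a nonzero germ in $\m_t$ still has value $0$ at $t$). The statement you actually need is corollaire~\ref{cor:nilpotent} (a function vanishing identically has nilpotent germs), but that corollary is deduced from the Nullstellensatz de R\"uckert, whose proof rests on the present lemma --- so invoking it here would be circular. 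The paper sidesteps this subcase entirely by arguing in the opposite direction: assuming $f_x\ne 0$ and writing $f=h\varpi_x^l$ with $h$ invertible, the support of~$\cF$ is contained near~$x$ in the zero locus of~$\varpi_x=\varpi_b$, and since the projection is open (corollaire~\ref{cor:projectionouverte}) this zero locus --- hence the support --- cannot be a neighbourhood of~$x$; therefore $\textrm{Ann}(\cF_x)\ne(0)$ and one lands in the subcase you do treat. To repair your proof, replace the appeal to the prolongement analytique by exactly this openness argument, which shows that the configuration ($f_x\ne 0$ and $\textrm{Ann}(\cF_x)=(0)$) simply does not occur. (A minor slip: $\textrm{Ann}(\cF_x)=(0)$ for a finitely generated module over a DVR means $\cF_x$ is faithful, i.e.\ has a nonzero free summand, not that it is torsion-free; this does not affect your use of the hypothesis.)
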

\begin{proof}
Si l'image de~$f$ dans~$\cO_{\E{n}{\cA},x}$ est nulle, alors le r\'esultat vaut avec $d=1$. Supposons donc que l'image de~$f$ dans~$\cO_{\E{n}{\cA},x}$ n'est pas nulle.

D'apr\`es le th\'eor\`eme~\ref{rigide}, si~$\cO_{B,b}$ est un corps fort (resp. un anneau fortement de valuation discr\`ete), alors il en va de m\^eme pour $\cO_{\E{n}{\cA},x}$. 

Supposons que $\cO_{\E{n}{\cA},x}$ est un corps fort. Alors, on a $f(x) \ne 0$, donc $x$ n'appartient pas au support de~$\cF$ et on a donc $\cF_{x}= 0$. 

Supposons que $\cO_{\E{n}{\cA},x}$ est un anneau fortement de valuation discr\`ete. Choisissons-en une uniformisante~$\varpi_{x}$. La fonction~$f$ peut alors s'\'ecrire de fa\c con unique sous la forme $f = h \varpi_x^l$ dans~$\cO_{\E{n}{\cA},x}$, o\`u $h$ est un \'el\'ement inversible de~$\cO_{\E{n}{\cA},x}$ et $l$ un entier non nul.

Par hypoth\`ese, le support de~$\cF$ est contenu dans le ferm\'e analytique d\'efini par~$f$. Au voisinage de~$x$, il est donc contenu dans le ferm\'e analytique d\'efini par~$\varpi_{x}$. En particulier, d'apr\`es le corollaire~\ref{cor:projectionouverte}, ce support n'est donc pas un voisinage de~$x$. Or le support de~$\cF$ co\"incide avec le lieu des z\'eros de son id\'eal annulateur, qui est coh\'erent. On en d\'eduit qu'il existe $g \in \cO_{\E{n}{\cA},x}$ non nul tel que $g\cF_x$ soit nul. Puisque $\cO_{\E{n}{\cA},x}$ est un anneau de valuation discr\`ete d'uniformisante~$\varpi_{x}$, on en d\'eduit qu'il existe~$v\in\N$ tel que~$\varpi_x^v \cF_x = 0$. Soit~$d\in \N$ tel que~$dl\geq v$. On a alors  
\[f^d\cF_x = h^d\varpi_x^{dl}\cF_x = h^d\varpi_x^{dl-v}(\varpi_x^v\cF_x)=0.\]
\end{proof}

\begin{lemm}\label{lem:Ruckertprojectioncorps}
Soient $k,n \in \N$ avec $n>k$. 
Soient~$U$ un ouvert de~$\E{n}{\cA}$ et $\cF$ un faisceau coh\'erent sur~$U$ dont le support est contenu dans le ferm\'e analytique d\'efini par~$T_{k+1}$. Soient~$b \in B$ tel que $\cO_{B,b}$ soit un corps fort. Soit $y \in \E{k}{\cA}$ tel que $\pi_{k}(y) = b$ et $y$ soit purement localement transcendant au-dessus de~$b$. Notons~$0_{y} \in \E{n}{\cA}$ le point~0 de la fibre $\pi_{n,k}^{-1}(y)$. Alors, il existe~$d\in\N$ tel que~$T_{k+1}^d \cF_{0_{y}}=0$.
\end{lemm}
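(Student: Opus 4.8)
The plan is to argue by induction on the integer~$n-k$, after a preliminary reduction. Write $X:=\E{n}{\cA}$ and let $V(T_{k+1})$ denote the closed analytic set $\{T_{k+1}=0\}$ of~$X$; by hypothesis $\mathrm{Supp}(\cF)\subseteq V(T_{k+1})$, and $0_{y}\in V(T_{k+1})$. First I would reduce to the case where $T_{k+1}$ is a nonzerodivisor on the relevant stalks. Let $\cT\subseteq\cF$ be the $T_{k+1}$-power-torsion subsheaf, the increasing union of the coherent subsheaves $\sKer(T_{k+1}^{j}\colon\cF\to\cF)$. Since $\cO_{X}$ is coherent (theorem~\ref{coherent}) with noetherian local rings (theorem~\ref{rigide}), this union is locally stationary, so $\cT$ is coherent and $\cT_{0_{y}}$ is annihilated by a power of~$T_{k+1}$. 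As $\mathrm{Supp}(\cF/\cT)\subseteq\mathrm{Supp}(\cF)\subseteq V(T_{k+1})$ and $T_{k+1}$ acts injectively on the stalks of $\cF/\cT$ near~$0_{y}$, it suffices to prove $(\cF/\cT)_{0_{y}}=0$. So from now on I assume $T_{k+1}$ acts injectively on $\cF_{0_{y}}$ (and on nearby stalks) and, arguing by contradiction, that $\cF_{0_{y}}\neq0$, i.e. $0_{y}\in\mathrm{Supp}(\cF)$.

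\emph{Base case $n=k+1$.} Using remark~\ref{spectral} one reduces the local study at~$0_{y}$ to that of the origin of a relative affine line $\E{1}{\cB(W)}$ over a compact spectrally convex neighbourhood~$W$ of~$y$ in $\E{k}{\cA}$; the point~$y$ then has local ring $\cO_{\E{k}{\cA},y}$, which is a strong field (hence a field) by the last assertion of theorem~\ref{rigide}, since $y$ is purely locally transcendent over~$b$ and $\cO_{B,b}$ is a strong field. Now $0_{y}$ is rigid thick over~$y$ with $\mu_{\kappa,0_{y}}=T_{k+1}$, which is separable, so Weierstraß preparation (theorem~\ref{thm:preparationW}) applies with no extra hypothesis: for a nonzero $G\in\cO_{X,0_{y}}$, its image in the fibre above~$y$ is nonzero by lemma~\ref{restriction_fibre}, of $T_{k+1}$-adic valuation~$v$ say, and $G=\Omega E$ with $E$ invertible and $\Omega\in\cO_{\E{k}{\cA},y}[T_{k+1}]$ monic of degree~$v$ congruent to $T_{k+1}^{v}$ modulo the maximal ideal of $\cO_{\E{k}{\cA},y}$; that ideal being zero, $\Omega=T_{k+1}^{v}$, so $G=T_{k+1}^{v}E$. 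Hence $\cO_{X,0_{y}}$ is a discrete valuation ring with uniformiser~$T_{k+1}$, and $\cF_{0_{y}}$ is a finitely generated $T_{k+1}$-torsion-free module over it, hence free; if it were nonzero its annihilator would vanish, so $\mathrm{Supp}(\cF)$ would be a neighbourhood of~$0_{y}$ (remark~\ref{rem:supportfaisceau}), contradicting $\mathrm{Supp}(\cF)\subseteq V(T_{k+1})$ — the latter has empty interior since $T_{k+1}$ is a nonzero section of the function sheaf~$\cO_{X}$. Thus $\cF_{0_{y}}=0$.

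\emph{Inductive step $n>k+1$.} Set $y':=\pi_{n,n-1}(0_{y})$, the point~$0$ of the fibre above~$y$ in $\E{n-1}{\cA}$, so that $0_{y}$ is the origin of the relative line $\pi_{n,n-1}^{-1}(y')$ and $(n-1)-k=(n-k)-1$; the hypotheses (strong field, purely locally transcendent) are inherited at~$y'$, so the induction hypothesis is available in $\E{n-1}{\cA}$ at~$y'$. I would first check that the germ of $\mathrm{Supp}(\cF)$ at~$0_{y}$ is a \emph{proper} subgerm of that of $V(T_{k+1})$: as $V(T_{k+1})$ is smooth, $(T_{k+1})$ is prime in $\cO_{X,x}$ for every $x\in V(T_{k+1})$; if some $g\in\mathrm{Ann}(\cF)_{0_{y}}$ were not divisible by~$T_{k+1}$, its image in $\cO_{V(T_{k+1}),0_{y}}$ would be a nonzero function, hence invertible at some nearby $x\in V(T_{k+1})$, forcing $\cF_{x}=0$; so if $\mathrm{Supp}(\cF)$ contained a $V(T_{k+1})$-neighbourhood of~$0_{y}$ we would get $\mathrm{Ann}(\cF)_{0_{y}}\subseteq(T_{k+1})$, and dividing a nonzero annihilator by its top power of~$T_{k+1}$ and using injectivity yields a contradiction. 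Granting this, pick a nonzero germ~$h$ at~$0_{y}$ of $\cO_{V(T_{k+1})}$ vanishing on $\mathrm{Supp}(\cF)$ and apply lemma~\ref{changement_variable} inside $V(T_{k+1})\simeq\E{n-1}{\cA}$, i.e. through a change of the coordinates $T_{1},\dots,T_{k},T_{k+2},\dots,T_{n}$ that fixes~$T_{k+1}$, so as to arrange that~$0_{y}$ is isolated in $\mathrm{Supp}(\cF)\cap\pi_{n,n-1}^{-1}(y')$. Viewing $\cF$ as a coherent sheaf on the analytic subspace $\mathrm{Supp}(\cF)$, the restriction of $\pi_{n,n-1}$ to it is then finite near~$0_{y}$ (theorem~\ref{thm:locfini}), so $\cG:=(\pi_{n,n-1})_{*}\cF$ is coherent on $\E{n-1}{\cA}$ (theorem~\ref{thm:fini}), supported in $\pi_{n,n-1}(V(T_{k+1}))=V(T_{k+1})\subseteq\E{n-1}{\cA}$ (as $T_{k+1}$ does not involve~$T_{n}$), and $\cG_{y'}\simeq\cF_{0_{y}}$ compatibly with the $T_{k+1}$-action (proposition~\ref{prop:fetoileenbasexact}, $0_{y}$ being the unique point of the fibre in the support, and $\pi_{n,n-1}^{\sharp}(T_{k+1})=T_{k+1}$). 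The induction hypothesis gives $T_{k+1}^{d}\cG_{y'}=0$ for some~$d$, hence $T_{k+1}^{d}\cF_{0_{y}}=0$; injectivity of~$T_{k+1}$ then forces $\cF_{0_{y}}=0$, a contradiction, which completes the induction.

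\emph{Main obstacle.} The formal skeleton — induction, the torsion reduction, the finite pushforward — is routine; the real content is the passage from the purely topological hypothesis $\mathrm{Supp}(\cF)\subseteq V(T_{k+1})$ to an algebraic annihilation statement. The delicate points are: (i) showing that after removing the $T_{k+1}$-power-torsion the support cannot fill up~$V(T_{k+1})$, which rests on the smoothness of $V(T_{k+1})$ (so $(T_{k+1})$ is prime in every local ring along it) together with the fact that $\cO_{X}$ is a genuine sheaf of functions (no nilpotent global sections), and (ii) performing the Noether-normalisation change of variables \emph{within} $V(T_{k+1})$ so that~$T_{k+1}$ is left untouched — this is possible exactly because $\mathrm{Supp}(\cF)$ is a proper subgerm of $V(T_{k+1})$, and it is essential, since otherwise the descent along $\pi_{n,n-1}$ would fail to track the function~$T_{k+1}$ whose power we need. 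One must also verify that the reductions to the relative-line picture over $\cB(W)$ keep the point~$y$ within the scope of the Weierstraß theorems, which holds because decency and the strong-field property are preserved under such reductions.
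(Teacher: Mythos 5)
Your argument is correct in substance and follows the same inductive skeleton as the paper (induction on $n-k$, with the base case settled by the fact that $\cO_{\E{n}{\cA},0_{y}}$ is a discrete valuation ring with uniformiser $T_{k+1}$, and the inductive step by making an annihilating function distinguished in $T_{n}$ and pushing forward along the now-finite projection $\pi_{n,n-1}$). Where you genuinely diverge is in the two preliminary dévissages: the paper simply takes any nonzero $g\in\mathrm{Ann}(\cF)_{0_{y}}$ (which exists because $\mathrm{Supp}(\cF)\subseteq V(T_{k+1})$ has empty interior), factors it as $T_{k+1}^{d}\cdot(\text{unit})$ in the base case, and in the inductive step applies the change of variables of lemma~\ref{changement_variable} to $g$ directly. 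Your route — first killing the $T_{k+1}$-power-torsion, then showing $\mathrm{Ann}(\cF)_{0_{y}}\not\subseteq(T_{k+1})$ so that one can choose the annihilating germ $h$ \emph{inside} $\cO_{V(T_{k+1})}$ and normalise by a coordinate change fixing $T_{k+1}$ — is longer, but it buys exactly what you say it buys: the hypothesis ``support contained in $V(T_{k+1})$'' is visibly preserved by the change of variables, so the induction hypothesis really does apply to the pushforward. This compatibility is glossed over in the paper's own proof, so your extra care is well placed rather than wasted.

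Two small points should be tightened. First, in the inductive step the change of variables must be applied over the base $\E{k}{\cA}$ (via remark~\ref{spectral}), i.e.\ only the fibre coordinates $T_{k+2},\dotsc,T_{n-1}$ may be shifted by powers of $T_{n}$: lemma~\ref{changement_variable} requires the point to be \emph{rigid} in the fibre over the base point, and $0_{y}$ is rigid over $y$ but not over $\pi_{n}(0_{y})\in\cM(\cA)$ when $k\ge 1$; your phrasing (``a change of the coordinates $T_{1},\dots,T_{k},T_{k+2},\dots,T_{n}$'') suggests shifting $T_{1},\dotsc,T_{k}$ as well, which is neither needed nor covered by the lemma. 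The hypothesis of lemma~\ref{changement_variable} in this relative setting — that $h$ restricts to a nonzero germ on the fibre $\pi_{n,k}^{-1}(y)\cap V(T_{k+1})$ — is exactly lemma~\ref{restriction_fibre}, which you should cite explicitly. Second, in your ``proper subgerm'' argument the phrase ``dividing a nonzero annihilator'' presupposes $\mathrm{Ann}(\cF)_{0_{y}}\neq 0$; this is automatic (by coherence of the annihilator, a zero stalk would force $\mathrm{Supp}(\cF)$ to contain a full neighbourhood of $0_{y}$ in $\E{n}{\cA}$, contradicting the empty interior of $V(T_{k+1})$), but it deserves a sentence, since it is also the statement that launches the whole proof.
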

\begin{proof}
D'apr\`es le th\'eor\`eme~\ref{rigide}, $\cO_{\E{k}{\cA},y}$ est un corps fort.

Par hypoth\`ese, le support de~$\cF$ est inclus dans le ferm\'e analytique de~$\E{n}{\cA}$ d\'efini par~$T_{k+1}$. En particulier, le support de~$\cF$ n'est pas un voisinage de~$0_{y}$. Or le support de~$\cF$ co\"incide avec le lieu des z\'eros de son id\'eal annulateur, qui est coh\'erent. On en d\'eduit qu'il existe $g \in \cO_{\E{n}{\cA},0_{y}}$ non nul tel que $g\cF_{0_{y}}$ soit nul. Quitte \`a restreindre~$U$, on peut supposer que~$g\cF = 0$. 

Notons~$Z_{g}$ le ferm\'e analytique de~$U$ d\'efini par~$g$ et $j\colon Z_{g} \to U$ l'immersion ferm\'ee associ\'ee. Le faisceau~$\cF$ poss\`ede une structure naturelle de $\cO_{U}/(g)$-module. Par cons\'equent, on a $j_{\ast}j^\ast\cF = \cF$.

D\'emontrons maintenant le r\'esultat par r\'ecurrence sur $n \ge k+1$. 

\medbreak

$\bullet$ Supposons que $n=k+1$. 

D'apr\`es la proposition~\ref{prop:disqueglobal} (appliqu\'ee avec $t=0$), $g$ peut s'\'ecrire sous la forme 
\[ g =  T_{k+1}^d \sum_{i\ge 0} \alpha_i \, T_{k+1}^i,\]
o\`u les~$\alpha_{i}$ sont des \'el\'ements de~$\cO_{\E{k}{\cA},y}$ avec $\alpha_{0} \ne 0$. Puisque $\cO_{\E{k}{\cA},y}$ est un corps fort, $\alpha_{0}$ est inversible et il en va de m\^eme pour $ \sum_{i\ge 0} \alpha_i \, T_{k+1}^i$. L'\'egalit\'e $g\cF_{0_{y}} = 0$ entra\^ine donc $T_{k+1}^d \cF_{0_{y}} = 0$.

\medbreak

$\bullet$ Supposons que $n > k+1$ et que l'\'enonc\'e est d\'emontr\'e pour $n-1$.

D'apr\`es la proposition~\ref{restriction_fibre}, la restriction de~$g$ \`a~$\pi_{n,k}^{-1}(y)$ n'est pas nulle. Posons $0_{n-1,y} := \pi_{n,n-1}(0_{y})$. D'apr\`es le lemme~\ref{changement_variable}, quitte \`a effectuer un changement de variables, on peut supposer que l'image de~$g$ dans $\cO_{\pi^{-1}_{n,n-1},0_{n-1,y}}$ n'est pas nulle. Par cons\'equent, quitte \`a restreindre~$U$, on peut supposer que $Z_{g} \cap \pi_{n,n-1}^{-1}(0_{n-1,y}) = \{0_{y}\}$. 

D'apr\`es le th\'eor\`eme~\ref{thm:locfini}, quitte \`a r\'eduire~$U$, on peut supposer qu'il existe un voisinage ouvert~$V$ de~$0_{n-1,y}$ dans~$\E{n-1}{\cA}$ tel que $\pi_{n,n-1}(U) \subset V$ et le morphisme $\pi'_{n,n-1} \colon U \cap Z_{g}\to V$ induit par~$\pi_{n,n-1}$ soit fini. D'apr\`es le th\'eor\`eme~\ref{thm:fini}, $(\pi'_{n,n-1})_*j^\ast\cF$ est un faisceau coh\'erent sur~$V$. Son support est contenu dans le ferm\'e analytique d\'efini par~$T_{k+1}$ donc, par hypoth\`ese de r\'ecurrence, il existe~$d\in \N$ tel que 
\[T_{k+1}^d \big((\pi'_{n,n-1})_*j^\ast\cF\big)_{0_{n-1,y}} = T_{k+1}^d j^\ast\cF_{0_{y}} = T_{k+1}^d \cF_{0_{y}}  = 0.\] 
\end{proof}

\begin{lemm}\label{lem:Ruckertprojectionavd}
Soient $k,n \in \N$ avec $n>k$. 
Soient~$U$ un ouvert de~$\E{n}{\cA}$ et $\cF$ un faisceau coh\'erent sur~$U$ dont le support est contenu dans le ferm\'e analytique d\'efini par~$T_{k+1}$. Soient~$b \in B$ tel que $\cO_{B,b}$ soit un anneau fortement de valuation discr\`ete. Soit $y \in \E{k}{\cA}$ tel que $\pi_{k}(y) = b$ et $y$ soit purement localement transcendant au-dessus de~$b$. Notons $0_{y} \in \E{n}{\cA}$ le point~0 de la fibre $\pi_{n,k}^{-1}(y)$. Alors, il existe~$d\in\N$ tel que~$T_{k+1}^d \cF_{0_{y}}=0$.
\end{lemm}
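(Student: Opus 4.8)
The plan is to mimic the proof of Lemma~\ref{lem:Ruckertprojectioncorps}, replacing the argument that $\cO_{\E{k}{\cA},y}$ is a field by the argument that it is a strongly discrete valuation ring, and handling the extra uniformizer that appears. By Theorem~\ref{rigide}, since $\cO_{B,b}$ is a strongly discrete valuation ring and $y$ is purely locally transcendent above $b$, the local ring $\cO_{\E{k}{\cA},y}$ is a strongly discrete valuation ring; fix a uniformizer $\varpi_{y}$ (which we may take to be the image of a uniformizer $\varpi_{b}$ of $\cO_{B,b}$). As in the field case, the support of $\cF$ is contained in the analytic closed subset defined by $T_{k+1}$, hence is not a neighbourhood of $0_{y}$; since that support is the zero locus of the (coherent) annihilator ideal, there is a nonzero $g\in\cO_{\E{n}{\cA},0_{y}}$ with $g\cF_{0_{y}}=0$, and after shrinking $U$ we may assume $g\cF=0$. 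Writing $Z_{g}$ for the closed analytic subset of $U$ defined by $g$ and $j\colon Z_{g}\to U$ the closed immersion, $\cF$ carries a natural $\cO_{U}/(g)$-module structure, so $j_{\ast}j^{\ast}\cF=\cF$.

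Now I would run the induction on $n\ge k+1$. For the base case $n=k+1$: by Proposition~\ref{prop:disqueglobal} (applied with $t=0$) one can write $g$ as a power series in $T_{k+1}$ with coefficients in $\cO_{\E{k}{\cA},y}$; factoring out the largest common power of $\varpi_{y}$ from the coefficients and then the smallest occurring power of $T_{k+1}$, one gets $g = \varpi_{y}^{m}\,T_{k+1}^{e}\,u$ where $u=\sum_{i\ge 0}\alpha_{i}T_{k+1}^{i}$ has at least one coefficient not divisible by $\varpi_{y}$; since $\cO_{\E{k}{\cA},y}$ is a DVR with uniformizer $\varpi_{y}$, the coefficient $\alpha_{0}$ is a unit (once we have arranged, by factoring out the lowest power of $T_{k+1}$, that the $\varpi_{y}$-valuations of the coefficients are increasing — this uses the Weierstrass-type description of $\cO_{\E{k+1}{\cA},0_{y}}$, cf.\ Lemma~\ref{restriction_fibre_avd}), so $u$ is a unit. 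The relation $g\cF_{0_{y}}=0$ then gives $\varpi_{y}^{m}T_{k+1}^{e}\cF_{0_{y}}=0$. Since $\varpi_{y}=\varpi_{b}$ is pulled back from the base and the support of $\cF$ lies over a subset avoiding a neighbourhood of $b$ only along $T_{k+1}$... more precisely, one argues as in Lemma~\ref{lem:Ruckertloctr}: $\varpi_{y}^{m}T_{k+1}^{e}$ annihilates $\cF_{0_{y}}$, and because $\cF_{0_{y}}$ is already annihilated by a power of $T_{k+1}$ alone (its support being contained in $\{T_{k+1}=0\}$, an argument identical to the one producing $g$ gives a nonzero $h\in\cO_{\E{n}{\cA},0_{y}}$ with $h\cF_{0_{y}}=0$ and, combined with the DVR structure of $\cO_{\E{n}{\cA},0_{y}}$... ) — in fact it is cleaner to conclude by combining the two torsion relations: from $\varpi_{y}^{m}T_{k+1}^{e}\cF_{0_{y}}=0$ and the known fact (Lemma~\ref{lem:Ruckertloctr} applied to $\cF$ and to the analytic function $T_{k+1}$, whose zero locus contains the support) that some power $T_{k+1}^{d'}$ annihilates $\cF_{0_{y}}$, take $d:=\max(e,d')$ to obtain $T_{k+1}^{d}\cF_{0_{y}}=0$.

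For the inductive step $n>k+1$, assuming the statement for $n-1$: by Proposition~\ref{restriction_fibre_avd}, writing $g=\varpi_{b}^{v}g'$ with $g'$ having nonzero restriction to $\E{n}{\cH(b)}$ (equivalently to $\pi_{n,k}^{-1}(y)$ after base change), and noting $\varpi_{b}$ is a non-zero-divisor that is moreover pulled back from the base so cannot cut down the fibre through $0_{y}$, we may replace $g$ by $g'$ and thus assume the restriction of $g$ to $\pi_{n,k}^{-1}(y)$ is nonzero; hence its restriction to the relative line $\pi_{n,n-1}^{-1}(0_{n-1,y})$ is nonzero, where $0_{n-1,y}:=\pi_{n,n-1}(0_{y})$. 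By Lemma~\ref{changement_variable}, after a change of variables we may assume the image of $g$ in $\cO_{\pi_{n,n-1}^{-1}(0_{n-1,y}),\,0_{y}}$ is nonzero, so after shrinking $U$ we have $Z_{g}\cap\pi_{n,n-1}^{-1}(0_{n-1,y})=\{0_{y}\}$. By Theorem~\ref{thm:locfini} we may shrink further so that $\pi_{n,n-1}$ restricts to a finite morphism $\pi'_{n,n-1}\colon U\cap Z_{g}\to V$ onto an open $V\subset\E{n-1}{\cA}$ with $0_{n-1,y}\in V$. By Theorem~\ref{thm:fini}, $(\pi'_{n,n-1})_{\ast}j^{\ast}\cF$ is coherent on $V$, and its support is still contained in $\{T_{k+1}=0\}$; the induction hypothesis yields $d\in\N$ with $T_{k+1}^{d}\big((\pi'_{n,n-1})_{\ast}j^{\ast}\cF\big)_{0_{n-1,y}}=0$, which via Proposition~\ref{prop:fetoileenbasexact} and $j_{\ast}j^{\ast}\cF=\cF$ translates into $T_{k+1}^{d}\cF_{0_{y}}=0$. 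The main obstacle is the base case: one must handle carefully the fact that $g$ may involve both the uniformizer $\varpi_{b}$ of the base and the variable $T_{k+1}$, so that factoring out a pure power of $T_{k+1}$ from its power-series expansion requires first knowing (from the DVR structure of $\cO_{\E{k+1}{\cA},0_{y}}$, i.e.\ Lemma~\ref{restriction_fibre_avd} applied one dimension up) that one can read off a well-defined ``$T_{k+1}$-order'' after dividing out the $\varpi_{b}$-torsion; the cleanest route, as indicated above, is to combine the resulting relation with Lemma~\ref{lem:Ruckertloctr} applied to the function $T_{k+1}$ itself rather than trying to extract a pure power of $T_{k+1}$ directly from $g$.
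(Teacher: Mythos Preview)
There is a genuine gap, appearing in both the base case and the inductive step for the same underlying reason: you never get rid of the powers of the uniformizer~$\varpi$ that inevitably show up.

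In the base case $n=k+1$, your factorization $g=\varpi_y^m T_{k+1}^e u$ with $u$ a unit is false in general: take $g=\varpi_y+T_{k+1}$. No power of~$\varpi_y$ can be factored out (the coefficient of~$T_{k+1}$ is~$1$) and no power of~$T_{k+1}$ either (the constant term is $\varpi_y\neq 0$), so $m=e=0$ and $u=g$, which vanishes at~$0_y$ and is not a unit. Your fallback, invoking Lemma~\ref{lem:Ruckertloctr} for~$T_{k+1}$ at~$0_y$, does not apply: that lemma requires the point to be purely locally transcendent above~$b$, whereas $0_y$ is rigid \'epais over~$y$ (its last $n-k$ coordinates are all~$0$). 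Had the hypothesis been met, that lemma would already give the full statement, making the rest of your argument moot. In the inductive step, ``replacing $g$ by $g'$'' after writing $g=\varpi_b^v g'$ is not free: $g'$ need not annihilate~$\cF_{0_y}$. If you also replace~$\cF$ by $\varpi_b^v\cF$ (which is what is actually required for $g'$ to be an annihilator), the induction only yields $\varpi_b^v T_{k+1}^d\cF_{0_y}=0$, not $T_{k+1}^d\cF_{0_y}=0$.

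The missing idea is the reduction the paper performs at the outset: it suffices to find $v,d$ with $\varpi^v T_{k+1}^d\cF_{0_y}=0$, because the $\varpi$'s can then be peeled off one at a time. Indeed, if $\varpi^v T_{k+1}^d\cF=0$ with $v\ge 1$, then $\varpi^{v-1}T_{k+1}^d\cF$ is an $\cO_U/(\varpi)$-module and hence lives on the closed subspace $\{\varpi=0\}$, where the relevant local ring at~$y$ becomes a strong field; Lemma~\ref{lem:Ruckertprojectioncorps} applied there produces~$d'$ with $\varpi^{v-1}T_{k+1}^{d+d'}\cF_{0_y}=0$, decreasing~$v$. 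With this reduction in hand, both your inductive step and the base case become tractable (the base case still needs a finite projection down to~$\E{k}{\cA}$ and an application of Lemma~\ref{lem:Ruckertloctr} at the point~$y$, which \emph{is} purely locally transcendent), since the target is now merely $\varpi^{\bullet} T_{k+1}^{\bullet}\cF_{0_y}=0$.
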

\begin{proof}
Notons $\varpi$ une uniformisante de~$\cO_{B,b}$. D'apr\`es le th\'eor\`eme~\ref{rigide}, $\cO_{\E{k}{\cA},y}$ est encore un anneau fortement de valuation discr\`ete d'uniformisante~$\varpi$. Quitte \`a restreindre~$U$, on peut supposer que~$\varpi$ est d\'efinie sur~$U$. Notons~$Z_{\varpi}$ le ferm\'e analytique de~$U$ d\'efini par~$\varpi$ et $j \colon Z_{\varpi} \to U$ l'immersion ferm\'ee associ\'ee. 

Il suffit de montrer qu'il existe $v,d\in \N$ tels que $\varpi^vT_{k+1}^d\cF_{0_{y}}=0$. Si $v=0$, on obtient directement le r\'esultat souhait\'e. Supposons que $v \ge 1$. Quitte \`a restreindre~$U$, on peut supposer que $\varpi^v T_{k+1}^d\cF=0$. Le faisceau $\varpi^{v-1}T_{k+1}^d\cF$ est alors naturellement muni d'une structure de~$\cO_{U}/(\varpi)$-module et on a donc $j_{\ast} j^\ast (\varpi^{v-1}T_{k+1}^d\cF) = \varpi^{v-1}T_{k+1}^d\cF$. 

Or $\cO_{Z,y} = \cO_{\E{n}{\cA},y}/(\varpi)$ est un corps fort. D'apr\`es le lemme~\ref{lem:Ruckertprojectioncorps}, il existe donc~$d'$ tel que 
\[T_{k+1}^{d'} j^\ast (\varpi^{v-1}T_{k+1}^d\cF)_{0_{y}} = j^\ast (\varpi^{v-1}T_{k+1}^{d+d'}\cF)_{0_{y}} = \varpi^{v-1}T_{k+1}^{d+d'}\cF_{0_{y}} =0.\]
En r\'ep\'etant le proc\'ed\'e, on montre finalement qu'il existe $d''\in \N$ tel que $T_{k+1}^{d''}\cF_{0_{y}} = 0$, comme attendu.

\medbreak

Passons maintenant \`a la d\'emonstration du r\'esultat. Par hypoth\`ese, le support de~$\cF$ est inclus dans le ferm\'e analytique de~$\E{n}{\cA}$ d\'efini par~$T_{k+1}$. En particulier, le support de~$\cF$ n'est pas un voisinage de~$0_{y}$. Or le support de~$\cF$ co\"incide avec le lieu des z\'eros de son id\'eal annulateur, qui est coh\'erent. On en d\'eduit qu'il existe $g \in \cO_{\E{n}{\cA},0_{y}}$ non nul tel que $g\cF_{0_{y}}$ soit nul. 

D\'emontrons le r\'esultat par r\'ecurrence sur $n \ge k+1$. 

\medbreak

$\bullet$ Supposons que $n=k+1$. 

D'apr\`es la proposition~\ref{prop:disqueglobal} (appliqu\'ee avec $t=0$), il existe un voisinage ouvert~$V$ de~$y$ dans~$\E{n}{k}$ tel que~$g$ puisse s'\'ecrire sous la forme 
\[ g =  T_{k+1}^d \sum_{i\ge 0} \alpha_i \, T_{k+1}^i,\]
o\`u les~$\alpha_{i}$ sont des \'el\'ements de~$\cO(V)$ avec $\alpha_{0} \ne 0$ dans $\cO_{\E{k}{\cA},y}$. Posons 
$\tilde g :=  \sum_{i\ge 0} \alpha_i \, T_{k+1}^i.$

Si $\alpha_{0}(y) \ne 0$, alors $\tilde{g}$ est inversible dans~$\cO_{\E{k+1}{\cA},0_{y}}$ et on en d\'eduit que $T_{k+1}^d \cF_{0_{y}} = 0$. On supposera donc d\'esormais que~$\alpha_0(x_k)=0$. D'apr\`es le lemme~\ref{restriction_fibre_avd}, il existe~$v\in\N$ et $h \in \cO_{\E{k+1}{\cA},0_y}$ tels que~$\tilde{g}=\varpi^v h$ dans~$\cO_{\E{k+1}{\cA},0_y}$ et la restriction de~$h$ \`a $\pi_{k+1,k}^{-1}(y)$ ne soit pas nulle. On a alors
\[ h \varpi^vT_{k+1}^d\cF_{0_{y}}=0.\]
Quitte \`a restreindre~$U$, nous pouvons supposer que $h \varpi^vT_{k+1}^d\cF = 0$.

Si $h(0_{y}) \ne 0$, alors on a $\varpi^vT_{k+1}^d\cF_{0_{y}}=0$. On supposera donc d\'esormais que $h(0_{y}) = 0$. D'apr\`es la proposition~\ref{prop:disqueglobal}, quitte \`a restreindre~$V$, on peut \'ecrire~$h$ sous la forme 
\[ h = \sum_{i\ge 0} \beta_i \, T_{k+1}^i,\]
o\`u les~$\beta_{i}$ sont des \'el\'ements de~$\cO(V)$. Remarquons que $\beta_{0}(y) = 0$, autrement dit, $\beta_{0}$ est divisible par~$\varpi$. Le support du faisceau $\varpi^vT_{k+1}^d\cF$ est contenu dans l'ensemble
\begin{align*}
\{h = 0\} \cap \{T_{k+1} = 0\} & = \{\beta_{0} = 0\} \cap \{T_{k+1} = 0\} \\
&= \{\varpi=0\}\cap\{T_{k+1}=0\}.
\end{align*}
En particulier, le support du faisceau $\varpi^vT_{k+1}^d\cF$ est contenu dans~$\{\varpi=0\}$. 

Puisque la restriction de~$h$ \`a~$\pi_{k+1,k}^{-1}(y)$ n'est pas nulle, quitte \`a restreindre~$U$, on peut supposer que l'ensemble des z\'eros de~$h$ dans~$\pi_{k+1,k}^{-1}(y)\cap U$ est r\'eduit \`a~$0_{y}$. D'apr\`es le th\'eor\`eme~\ref{thm:locfini}, quitte \`a r\'eduire~$U$ et~$V$, on peut supposer que $\pi_{k+1,k}(U) \subset V$ et que le morphisme $\pi'_{k+1,k} \colon U \cap \{h=0\} \to V$ induit par~$\pi_{k+1,k}$ est fini. D'apr\`es le th\'eor\`eme~\ref{thm:fini}, le faisceau 
\[(\pi_{k+1,k})_*(\varpi^v T_{k+1}^d \cF) = (\pi'_{k+1,k})_*(\varpi^v T_{k+1}^d \cF)_{|\{h=0\}}\] 
est un faisceau coh\'erent sur~$V$. 
 
Puisque le support de~$\varpi^vT_{k+1}^d\cF$ est contenu dans~$\{\varpi=0\}$, celui de~$(\pi_{k+1,k})_*(\varpi^vT_{k+1}^d\cF)$ l'est aussi. D'apr\`es le lemme~\ref{lem:Ruckertloctr}, il existe~$v'\in\N$ tel que
\[\varpi^{v'}(\pi_{k+1,k})_*(\varpi^vT_{k+1}^d\cF)_{y} = (\pi_{k+1,k})_*(\varpi^{v+v'}T_{k+1}^d\cF)_{y}=0.\] 
On en d\'eduit que $\varpi^{v+v'}T_{k+1}^d\cF_{0_{y}}=0$.

\medbreak

$\bullet$ Supposons que $n > k+1$ et l'\'enonc\'e est d\'emontr\'e pour $n-1$.

D'apr\`es le lemme~\ref{restriction_fibre_avd}, il existe $v\in \N$ et $h\in\cO_{\E{n}{\cA},0_{y}}$ dont la restriction \`a~$\pi_{n,k}^{-1}(y)$ n'est pas nulle tels que $g = \varpi^v h$. Quitte \`a remplacer~$g$ par~$h$ et~$\cF$ par~$\varpi^v \cF$, on peut donc supposer que la restriction de~$g$ \`a~$\pi_{n,k}^{-1}(y)$ n'est pas nulle. On peut maintenant conclure en reprenant la fin de la d\'emonstration du lemme~\ref{lem:Ruckertprojectioncorps}.
\end{proof}

D\'emontrons maintenant un lemme technique permettant de ramener l'\'etude d'un point rigide \'epais \`a celle du point rationnel~0. 

\begin{lemm}\label{lem:translation}\index{Point!rigide epais@rigide \'epais}
Soit~$x\in\E{n}{\cA}$. 
Soit $k\in \cn{0}{n}$. Posons $x_{k} := \pi_{n,k}(x)$ et notons~$0_{x_{k}}$ le point de~0 de la fibre $\pi_{n,k}^{-1}(x_{k}) \simeq \E{n-k}{\cH(x_{k})}$.

Supposons que~$x$ est rigide \'epais au-dessus de~$x_{k}$. Alors, il existe un voisinage ouvert~$U_{k}$ de~$x_{k}$ dans~$\E{k}{\cA}$, un voisinage ouvert~$U$ de~$x$ dans~$\E{n}{\cA}$, un voisinage ouvert~$V$ de~$0_{x_{k}}$ dans~$\E{n}{\cA}$ tel que $\pi_{n,k}(U) = \pi_{n,k}(V) = U_{k}$ et un morphisme fini $\varphi \colon U \to V$ tel que l'on ait $\varphi^{-1}(0_{x_k})=\{x\}$ et $\pi_{n,k} \circ \varphi = \pi_{n,k}$.
\end{lemm}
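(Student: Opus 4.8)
The plan is to induct on $n-k$, removing the coordinates $T_{k+1},\dots,T_n$ one at a time by composing with polynomial endomorphisms of a relative line. If $n=k$ the fibre $\pi_{n,k}^{-1}(x_k)$ is a single point, $0_{x_k}=x=x_k$, and one takes $U=V=U_k$ to be any open neighbourhood of $x_k$ and $\varphi=\id$. So assume $k<n$ and that the statement holds for pairs with difference $n-k-1$.

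Set $x_{k+1}:=\pi_{n,k+1}(x)$. By hypothesis $T_{k+1}(x),\dots,T_n(x)$ are algebraic over $\kappa(x_k)=\cO_{\E{k}{\cA},x_k}/\m_{x_k}$; in particular $T_{k+1}(x)=T_{k+1}(x_{k+1})$ has a minimal polynomial $\mu_{\kappa,x_{k+1}}\in\kappa(x_k)[T_{k+1}]$, of some degree $d$, which, its coefficients lying in $\cO_{\E{k}{\cA},x_k}/\m_{x_k}$, admits an honest monic lift $M\in\cO_{\E{k}{\cA},x_k}[T_{k+1}]$. Fix an open neighbourhood $W$ of $x_k$ in $\E{k}{\cA}$ on which the coefficients of $M$ are defined, and let $\psi\colon\pi_{n,k}^{-1}(W)\to\pi_{n,k}^{-1}(W)$ be the endomorphism (cf.\ example~\ref{exemple_morphisme}) with $\psi^\sharp(T_{k+1})=M(T_{k+1})$ and $\psi^\sharp(T_i)=T_i$ for $i\neq k+1$; by construction $\pi_{n,k}\circ\psi=\pi_{n,k}$.

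The core of the step is then a chain of routine verifications. First, $\psi(x)=:x'$ is the point above $x_k$ with $T_{k+1}(x')=0$ and $T_i(x')=T_i(x)$ for $i\neq k+1$, because $M(x_{k+1})$ is the image in $\cH(x_{k+1})$ of $\mu_{\kappa,x_{k+1}}(T_{k+1}(x_{k+1}))=0$. Second, over $x_k$ the morphism $\psi$ restricts to a polynomial endomorphism of $\E{n-k}{\cH(x_k)}$ in the single variable $T_{k+1}$, hence is finite (example~\ref{ex:fini}); thus $\psi$ has finite fibres, $x$ is isolated in $\psi^{-1}(x')$, and by theorem~\ref{thm:locfini} $\psi$ is finite in a neighbourhood of $x$, so after shrinking one gets a finite morphism $\psi\colon U_1\to V_1$ between open neighbourhoods of $x$ and $x'$ with $\psi^{-1}(x')=\{x\}$. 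Third, writing $0'_k:=\pi_{n,k+1}(x')$ (the point above $x_k$ with $T_{k+1}=0$), one has $\kappa(0'_k)=\kappa(x_k)$ — a rational value in the added coordinate changes nothing, by proposition~\ref{prop:disqueglobal} — so, as $T_i(x')=T_i(x)$ for $i\geq k+2$, the point $x'$ is again rigide épais above $0'_k$. Applying the induction hypothesis to the pair $(n,k+1)$ and the point $x'$ gives open neighbourhoods $U_2\ni x'$ and $V_2\ni 0_{0'_k}=0_{x_k}$ and a finite morphism $\varphi_2\colon U_2\to V_2$ with $\varphi_2^{-1}(0_{x_k})=\{x'\}$ and $\pi_{n,k+1}\circ\varphi_2=\pi_{n,k+1}$. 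Shrinking $V_1$ into $U_2$ and setting $\varphi:=\varphi_2\circ\psi$, lemma~\ref{lem:compositionfini} makes $\varphi$ finite, with $\varphi(x)=0_{x_k}$, $\varphi^{-1}(0_{x_k})=\{x\}$ and $\pi_{n,k}\circ\varphi=\pi_{n,k}$. A last topological adjustment fixes the projections: take $U$ to be the source of $\varphi$, $U_k:=\pi_{n,k}(U)$ (open and containing $x_k$, by corollary~\ref{cor:projectionouverte}) and $V:=V_2\cap\pi_{n,k}^{-1}(U_k)$; then $\varphi(U)\subset V$ since $\pi_{n,k}(\varphi(U))=\pi_{n,k}(U)=U_k$, whence $\varphi^{-1}(V)=U$, the restriction $\varphi\colon U\to V$ is still finite (finiteness being local at the target, remark~\ref{rem:finilocalaubut}), and $U_k=\pi_{n,k}(\varphi(U))\subset\pi_{n,k}(V)\subset U_k$ forces $\pi_{n,k}(V)=U_k$.

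I expect the genuine difficulty to be organisational rather than conceptual: one must keep track of the various points denoted "$0$" along the tower $\E{n}{\cA}\to\E{k+1}{\cA}\to\E{k}{\cA}$ and check the compatibilities $0_{0'_k}=0_{x_k}$, $\kappa(0'_k)=\kappa(x_k)$, $\pi_{n,k}\circ\psi=\pi_{n,k}$ and the reduction $M(x_{k+1})=0$; one must also be careful that "rigide épais above $x_k$" behaves as expected under $\pi_{n,k}$ (it is detected coordinatewise by algebraicity over $\kappa(x_k)$ and is transitive along the tower). The standing assumption that $\cA$ is a geometric base ring enters through theorem~\ref{thm:locfini}, corollary~\ref{cor:projectionouverte} and example~\ref{ex:fini}.
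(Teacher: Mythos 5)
Your overall strategy is the same as the paper's: induct on $n-k$ and straighten one coordinate at a time by composing with the endomorphism attached to a monic lift of a minimal polynomial. The difference is organisational — you peel off $T_{k+1}$ using the minimal polynomial over $\kappa(x_k)$ and then apply the induction hypothesis to the pair $(n,k+1)$, whereas the paper peels off $T_n$ using the minimal polynomial over $\kappa(x_{n-1})$ and lifts the induction-hypothesis morphism from $\E{n-1}{\cA}$ by the base change $\AunA\to\cM(\cA)$ (proposition~\ref{stabilite_fini}). Your auxiliary checks ($M(T_{k+1})(x)=0$, $\kappa(0'_k)=\kappa(x_k)$, persistence of \og rigide \'epais \fg{} along the tower) are correct.

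There is, however, one step that fails as written: the finiteness of $\varphi=\varphi_2\circ\psi$. After \og shrinking $V_1$ into $U_2$\fg{} you are composing $\psi\colon U_1\to V_1$ with the restriction of $\varphi_2$ to the \emph{open} subset $V_1$ of $U_2$, still regarded as a map to $V_2$; such a restriction is not finite in general (it is not closed), so lemma~\ref{lem:compositionfini} does not apply. The problem is not vacuous in your setup, because $\psi^{-1}(x')$ genuinely can contain points other than $x$: it consists of all points above $x_k$ agreeing with $x$ on $T_{k+2},\dotsc,T_n$ whose $T_{k+1}$-coordinate is a root of $\mu_{x_{k+1}}$, and that polynomial, irreducible over $\cH(x_k)$, may split further (take $x=(\sqrt p,\sqrt p)$ over $\Q_p$ with $k=0$: then $(-\sqrt p,\sqrt p)$ is a second preimage). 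So you can neither take $U_1=\psi^{-1}(U_2)$ (which would spoil $\varphi^{-1}(0_{x_k})=\{x\}$) nor compose through an open immersion. The repair is standard: since $\varphi_2^{-1}(0_{x_k})=\{x'\}$ and $\varphi_2$ is closed, lemma~\ref{lem:voisinagefibre} provides an open $V_2'\ni 0_{x_k}$ with $\varphi_2^{-1}(V_2')\subset V_1\cap U_2$; then $\varphi_2\colon\varphi_2^{-1}(V_2')\to V_2'$ and $\psi\colon\psi^{-1}(\varphi_2^{-1}(V_2'))\cap U_1\to\varphi_2^{-1}(V_2')$ are both finite and now compose legitimately (alternatively, note that $x$ is isolated in $(\varphi_2\circ\psi)^{-1}(0_{x_k})$ and reapply theorem~\ref{thm:locfini}). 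The paper's ordering avoids the issue entirely: its straightening map commutes with $\pi_{n,n-1}$ and, by lemma~\ref{lem:polmin}, satisfies $\varphi_M^{-1}(0_{x_{n-1}})=\{x\}$ globally on the relative line, so the two morphisms being composed have literally matching source and target and no conjugate points ever appear.
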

\begin{proof}
Posons~$l:=n-k$. D\'emontrons le r\'esultat par r\'ecurrence sur~$l$. Dans le cas o\`u~$l=0$, c'est imm\'ediat.

Supposons que~$l\ge 1$ et que le r\'esultat est satisfait pour~$l-1$. 
Posons $x_{n-1} := \pi_{n-1}(x)$. 
Puisque~$x$ est rigide \'epais au-dessus de~$x_k$, il l'est au-dessus de~$x_{n-1}$. Consid\'erons le polyn\^ome minimal~$\mu_{\kappa,x} \in \kappa(x_{n-1})[T_{n}]$ et relevons-le en un polyn\^ome unitaire $M \in\cO_{\E{n-1}{\cA},x_{n-1}}[T_{n}]$. Soit~$U$ un voisinage compact spectralement convexe de~$x_{n-1}$ sur lequel tous les coefficients de~$M$ sont $\cB$-d\'efinis. Notons $\varphi_{M} \colon \E{1}{\cB(U)} \to  \E{1}{\cB(U)}$ le morphisme d'espaces $\cB(U)$-analytiques d\'efini par~$M$ (\cf~exemple~\ref{exemple_morphisme}). Identifions $\E{1}{\cB(U)}$ \`a~$\pi_{n,n-1}^{-1}(U)$ et notons~$0_{x_{n-1}}$ le point de~0 de la fibre $\pi_{n,n-1}^{-1}(x_{n-1})$. Le morphisme~$\varphi_{M}$ est fini, par l'exemple~\ref{ex:fini}, et satisfait $\varphi_{M}^{-1}(0_{x_{n-1}}) = \{x\}$, car $\mu_{\kappa,x}$ est une puissance du polyn\^ome minimal de~$x$ sur~$\cH(x_{n-1})$, d'apr\`es le lemme~\ref{lem:polmin}. Remarquons que, puisque $\varphi_{M}$ est un morphisme d'espaces $\cB(U)$-analytiques, il satisfait $\pi_{n,n-1} \circ \varphi_{M} = \pi_{n,n-1}$.

Notons~$0_{n-1,x_{k}}$ le point de~0 de la fibre $\pi_{n-1,k}^{-1}(x_{k})$. 
Par hypoth\`ese de r\'ecurrence, il existe un voisinage ouvert~$U_{k}$ de~$x_{k}$ dans~$\E{k}{\cA}$, un voisinage ouvert~$U_{n-1}$ de~$x_{n-1}$ dans~$\E{n-1}{\cA}$, un voisinage ouvert~$V_{n-1}$ de~$0_{n-1,x_{k}}$ dans~$\E{n-1}{\cA}$ tel que $\pi_{n-1,k}(U_{n-1}) = \pi_{n-1,k}(V_{n-1}) = U_{k}$ et un morphisme fini $\psi \colon U_{n-1} \to V_{n-1}$ tel que l'on ait $\psi^{-1}(0_{n-1,x_k})=\{x_{n-1}\}$ et $\pi_{n-1,k} \circ \psi = \pi_{n-1,k}$. On peut supposer que $U_{n-1} \subset U$.

Notons $\psi_{n} \colon \pi_{n,n-1}^{-1}(U_{n-1}) = U_{n-1} \times_{\cA} \AunA \to V_{n-1} \times_{\cA} \AunA = \pi_{n,n-1}^{-1}(V_{n-1})$ le morphisme d\'eduit de~$\psi$ par le changement de base $\AunA \to \cM(\cA)$. D'apr\`es la proposition~\ref{stabilite_fini}, c'est encore un morphisme fini. Il v\'erifie $\psi_{n}^{-1}(0_{x_{k}}) = \{0_{x_{n-1}}\}$. 

Notons $\varphi'_{M} \colon \pi_{n,n-1}^{-1}(U_{n-1}) \to \pi_{n,n-1}^{-1}(U_{n-1})$ le morphisme induit par~$\varphi_{M}$. Le morphisme compos\'e $\psi_{n} \circ \varphi'_{M}$ satisfait alors les propri\'et\'es de l'\'enonc\'e.
\end{proof}

Nous pouvons maintenant d\'emontrer l'analogue du Nullstellensatz de R\"uckert en toute g\'en\'eralit\'e.

\begin{theo}\label{thm:Ruckert}\index{Nullstellensatz}\index{Theoreme@Th\'eor\`eme!Nullstellensatz}\index{Faisceau!support d'un}
Soient $X$ un espace $\cA$-analytique, $\cF$ un faisceau coh\'erent sur~$X$ et $f\in\cO(X)$ une fonction analytique nulle en tout point du support de~$\cF$. Alors, pour tout $x\in X$, il existe~$d\in\N$ tel que~$f^d\cF_x=0$.
\end{theo}
\begin{proof}
Soit $x\in X$. Le r\'esultat \'etant local au voisinage de~$X$, on peut supposer qu'il existe une immersion ferm\'ee $j \colon X \to U$, o\`u $U$ est un ouvert de~$\E{n}{\cA}$. Quitte \`a restreindre~$X$ et~$U$, on peut supposer qu'il existe $g\in \cO(U)$ tel que $j^\sharp(g) = f$. Le faisceau $j_{\ast} \cF$ est coh\'erent, son support est contenu dans le lieu d'annulation de~$g$ et on a $(j_{\ast}\cF)_{j(x)} = \cF_{x}$. On peut donc supposer que~$X$ est un ouvert~$U$ de~$\E{n}{\cA}$. 


D'apr\`es la remarque~\ref{rem:rigeptrans}, quitte \`a permuter les variables, on peut supposer qu'il existe~$k\in\cn{0}{n}$ tel que $x_k := \pi_{n,k}(x)$ soit purement localement transcendant au-dessus de~$\pi_{n}(x)$ et que~$x$ soit rigide \'epais au-dessus de~$x_k$. 


Si $n=k$, alors le r\'esultat d\'ecoule du lemme~\ref{lem:Ruckertloctr}. Supposons donc que $n>k$.
Notons $\AunA(T_{n+1})$ la droite affine analytique au-dessus de~$\cA$ munie de la coordonn\'ee~$T_{n+1}$. Notons $F \colon U \to \AunA(T_{n+1})$ le morphisme associ\'e \`a $f \in \cO_{\E{n}{\cA}}(U)$ par la proposition~\ref{morphsec}. D'apr\`es la proposition~\ref{prop:grapheimmersion}, le morphisme $\Gamma_{F} \colon U \to U\times_{\cA} \AunA(T_{n+1})$ est une immersion ferm\'ee. Identifions le produit $U \times_{\cA} \AunA(T_{n+1})$  \`a $\pi_{n+1,n}^{-1}(U)$.

Consid\'erons maintenant le faisceau coh\'erent~$(\Gamma_{F})_*\cF$. Soit $y \in \pi_{n+1,n}^{-1}(U)$ tel que $((\Gamma_{F})_*\cF)_y \ne 0$. Alors~$y$ poss\`ede un unique ant\'ec\'edent~$z$ par~$\Gamma_{F}$ et on a
\[ \cF_{z} \simeq ((\Gamma_{F})_*\cF)_y = 0.\]
On en d\'eduit que
\[ T_{n+1}(y) = \Gamma_{F}^\sharp(f)(y) = f(z) = 0.\]
Par cons\'equent, le support de~$(\Gamma_{F})_*\cF$ est contenu dans le ferm\'e analytique de~$\E{n+1}{\cA}$ d\'efini par~$T_{n+1}$.  
Il suffit maintenant de montrer qu'il existe~$d\in\N$ tel que
\[T_{n+1}^d ((\Gamma_{F})_*\cF)_{\Gamma_{F}(x)} \simeq ((\Gamma_{F})_*(f^d\cF))_x =0.\]

Pour la suite du raisonnement, par commodit\'e, \'echangeons les variables~$T_{k+1}$ et~$T_{n+1}$. On a alors $\pi_{n+1,k+1}(\Gamma_{F}(x)) = 0_{k+1,x_{k}}$, o\`u $0_{k+1,x_{k}} \in \E{k+1}{\cA}$ d\'esigne le point~0 de la fibre $\pi_{k+1,k}^{-1}(x_{k})$.

Notons $0_{n+1,x_{k}} \in \E{n+1}{\cA}$ le point~0 de la fibre $(\pi_{n+1,k})^{-1}(x_{k})$. D'apr\`es le lemme~\ref{lem:translation}, il existe un voisinage~$U'$ de~$\Gamma_{F}(x)$ dans~$\E{n+1}{\cA}$, un voisinage ouvert~$V$ de~$0_{n+1,x_{k}}$ dans~$\E{n+1}{\cA}$ tel que $\pi_{n+1,k+1}(U') = \pi_{n+1,k+1}(V)$ et un morphisme fini $\varphi \colon U' \to V$ tel que l'on ait $\varphi^{-1}(0_{n+1,x_k})=\{\Gamma_{F}(x)\}$ et $\pi_{n+1,k+1} \circ \varphi = \pi_{n+1,k+1}$.

D'apr\`es le th\'eor\`eme~\ref{thm:fini}, $\varphi_*(\Gamma_{F})_{\ast}\cF$ est un faisceau coh\'erent. Puisque~$\varphi$ commute \`a~$\pi_{n+1,k+1}$, on a $\varphi^\sharp(T_{k+1}) = T_{k+1}$. Par cons\'equent, le support de~$\varphi_*(\Gamma_{F})_{\ast}\cF$ est contenu dans le ferm\'e analytique de~$\E{n+1}{\cA}$ d\'efini par~$T_{k+1}$.  En outre, on a
\[(\varphi_*(\Gamma_{F})_{\ast}\cF)_{\varphi(\Gamma_{F}(x))} \simeq ((\Gamma_{F})_{\ast}\cF)_{\Gamma_{F}(x)}.\]
Par cons\'equent, on peut supposer que $x = 0_{n+1,x_{k}}$ et que $f = T_{k+1}$. Le r\'esultat d\'ecoule alors des lemmes~\ref{lem:Ruckertprojectioncorps} et~\ref{lem:Ruckertprojectionavd}.
\end{proof}

\begin{coro}\label{cor:nilpotent}\index{Fonction!nilpotente}\index{Nilpotent|see{Fonction nilpotente}}
Soient $X$ un espace $\cA$-analytique. Soit~$f$ un \'el\'ement de~$\cO(X)$ qui s'annule identiquement sur~$X$. Alors, pour tout $x\in X$, l'image de~$f$ dans~$\cO_{X,x}$ est nilpotente.
\qed
\end{coro}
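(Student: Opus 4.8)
Corollaire \ref{cor:nilpotent} is precisely the special case of the Nullstellensatz de Rückert (théorème \ref{thm:Ruckert}) applied to the structure sheaf itself. The plan is to take $\cF := \cO_{X}$ and observe that the support of $\cO_{X}$ is all of $X$, so the hypothesis that $f$ vanishes identically on $X$ means exactly that $f$ vanishes at every point of the support of $\cF$.

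First I would fix a point $x \in X$ and apply théorème \ref{thm:Ruckert} with $\cF = \cO_{X}$: this produces an integer $d \in \N$ such that $f^{d}\cO_{X,x} = 0$. Since $\cO_{X,x}$ contains the unit element $1$, multiplying it by $f^{d}$ gives $f^{d} \cdot 1 = f^{d} = 0$ in $\cO_{X,x}$, which is by definition the statement that the image of $f$ in $\cO_{X,x}$ is nilpotent. As $x$ was arbitrary, this holds at every point of $X$.

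There is essentially no obstacle here: the only thing to check is that the support of $\cO_{X}$ is indeed $X$ (the annihilator of $\cO_{X}$ is the zero ideal, whose zero locus is all of $X$), and that "$f$ s'annule identiquement sur $X$" is the same condition as "$f$ is nul en tout point du support de $\cO_{X}$", both of which are immediate from the definitions. Thus the corollary follows directly, and one needs no further argument beyond this translation and one invocation of the main theorem of the section.
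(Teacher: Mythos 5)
Your argument is correct and is exactly the intended one: the paper states this corollary with no written proof, as an immediate consequence of the théorème~\ref{thm:Ruckert} applied to $\cF = \cO_{X}$, whose support is all of~$X$, so that $f^{d}\cO_{X,x}=0$ gives $f^{d}=0$ dans $\cO_{X,x}$. Nothing more is needed.
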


\begin{nota}\label{nota:VI}%
\nomenclature[Eib]{$V(\cJ)$}{lieu des z\'eros d'un faisceau d'id\'eaux coh\'erent~$\cJ$}%
\nomenclature[Eia]{$\sqrt{\cJ}$}{radical d'un faisceau d'id\'eaux coh\'erent~$\cJ$}%
\nomenclature[Eic]{$\cI(Z)$}{faisceau d'id\'eaux des fonctions s'annulant en tout point d'un ferm\'e analytique~$Z$ d'un espace $\cA$-analytique}
Soit $X$ un espace $\cA$-analytique.

Soit~$\cJ$ un faisceau d'id\'eaux sur~$X$. On note $V(\cJ)$ le lieu des z\'eros de~$\cJ$ et $\sqrt{\cJ}$ le radical de~$\cJ$.

Soit~$Z$ un ferm\'e analytique de~$X$. On note $\cI(Z)$ le faisceau d'id\'eaux des fonctions s'annulant en tout point de~$Z$.
\end{nota}

\begin{coro}\label{cor:IVJ}\index{Nullstellensatz}\index{Theoreme@Th\'eor\`eme!Nullstellensatz}\index{Fonction!nilpotente}\index{Faisceau!radical d'un}\index{Faisceau!lieu des z\'eros d'un}\index{Ferme analytique@Ferm\'e analytique!faisceau associ\'e \`a un}
Soit $X$ un espace $\cA$-analytique. Soit~$\cJ$ un faisceau d'id\'eaux coh\'erent sur~$X$. Alors, on a 
\[\cI(V(\cJ)) = \sqrt{\cJ}.\]

En particulier, $\cI(X)$ co\"incide avec le faisceau d'id\'eaux~$\sqrt{0}$ form\'e par les \'el\'ements nilpotents.
\end{coro}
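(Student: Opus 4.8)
The plan is to deduce the statement on $\cI(V(\cJ))$ directly from the Nullstellensatz de R\"uckert (théorème~\ref{thm:Ruckert}), and then obtain the final assertion about $\cI(X)$ as the special case $\cJ = 0$. The statement is local, so I would fix a point $x \in X$ and compare the stalks $\cI(V(\cJ))_x$ and $(\sqrt{\cJ})_x$. Here $V(\cJ)$ is the support of the coherent sheaf $\cO_X/\cJ$, equipped with its structure of closed analytic subspace, and $\cI(V(\cJ))$ is the sheaf of ideals of functions vanishing at every point of $V(\cJ)$ (in the sense of notation~\ref{nota:VI}). Recall that $\sqrt{\cJ}$ is coherent: this follows from the noetherianity of the local rings $\cO_{X,x}$ given by théorème~\ref{rigide} (via the local description of $X$ as a closed analytic subset of an open subset of $\E{n}{\cA}$), so that $\sqrt{\cJ_x} = (\sqrt{\cJ})_x$ and all the stalk computations make sense.

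First I would prove the inclusion $(\sqrt{\cJ})_x \subset \cI(V(\cJ))_x$, which is the easy direction. Let $f \in \cO_{X,x}$ with $f^m \in \cJ_x$ for some $m \ge 1$. After shrinking $X$ to a neighbourhood of $x$ on which a representative of $f$ is defined, $f^m$ lies in $\cJ(X)$, hence vanishes on $V(\cJ)$ near $x$; therefore $f$ itself vanishes on $V(\cJ)$ near $x$, i.e.\ $f \in \cI(V(\cJ))_x$. The only subtlety is that one must check $f$ vanishes \emph{set-theoretically} at each point $y$ of $V(\cJ)$, which is immediate since $|f(y)|^m = |f^m(y)| = 0$.

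The substantive direction is $\cI(V(\cJ))_x \subset (\sqrt{\cJ})_x$. Let $f \in \cO_{X,x}$ vanish at every point of $V(\cJ)$ near $x$; after shrinking $X$, pick a representative $f \in \cO(X)$. Consider the coherent sheaf $\cF := \cO_X/\cJ$. Its support is exactly $V(\cJ)$ (the support of a coherent sheaf being the zero locus of its annihilator ideal, \cf~remarque~\ref{rem:supportfaisceau}, and $\mathrm{Ann}(\cO_X/\cJ)$ has the same zero locus as $\cJ$). By hypothesis $f$ vanishes on $\mathrm{Supp}(\cF)$, so théorème~\ref{thm:Ruckert} applies and yields an integer $d \in \N$ with $f^d \cF_x = 0$, i.e.\ $f^d \cdot (\cO_{X,x}/\cJ_x) = 0$, which means $f^d \in \cJ_x$. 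Hence $f \in \sqrt{\cJ_x} = (\sqrt{\cJ})_x$, as required. The main obstacle is really just making sure that the Nullstellensatz de R\"uckert is being invoked with the correct sheaf and that ``vanishing on the support'' is interpreted as vanishing of the function at each point of the support (which is what théorème~\ref{thm:Ruckert} requires), rather than vanishing of the germ in some stronger sense; once this bookkeeping is clear, the proof is a one-line application.

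Finally, taking $\cJ = 0$ gives $V(0) = X$ (with its reduced-free structure, the zero locus being all of $X$) and $\cI(X) = \cI(V(0)) = \sqrt{0}$, the sheaf of nilpotent sections, which is precisely corollaire~\ref{cor:nilpotent} re-read as an equality of sheaves of ideals. I would state this as an immediate consequence, with a one-sentence justification pointing back to corollaire~\ref{cor:nilpotent} for the stalkwise nilpotence.
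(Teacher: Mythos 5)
Your proof is correct and follows essentially the same route as the paper: stalkwise comparison, the easy inclusion $\sqrt{\cJ}\subset\cI(V(\cJ))$, the hard inclusion by applying the Nullstellensatz de R\"uckert (th\'eor\`eme~\ref{thm:Ruckert}) to $\cO_X/\cJ$ whose support is $V(\cJ)$, and the final assertion by taking $\cJ=0$. The extra remarks on coherence of $\sqrt{\cJ}$ and on set-theoretic vanishing are harmless bookkeeping that the paper leaves implicit.
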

\begin{proof}
Soit $x\in X$. 

Pour tout $f \in (\sqrt{\cJ})_{x}$, il existe $d \in \N$ tel que $f^d \in \cJ_{x}$. On en d\'eduit que $f \in \cI(V(\cJ))_{x}$.

Soit $f \in \cI(V(\cJ))_{x}$. Le lieu des z\'eros de~$\cJ$ s'identifie au support du faisceau $\cO/\cJ$. Par cons\'equent, au voisinage de~$x$, $f$ s'annule sur ce support. D'apr\`es le th\'eor\`eme~\ref{thm:Ruckert}, il existe $d\in \N$ tel que $f^d \cO_{x}/\cJ_{x} = 0$, autrement dit, $f^d \in \cJ_{x}$.

La derni\`ere partie du r\'esultat s'obtient en appliquant la premi\`ere au faisceau $\cJ = 0$.
\end{proof}

\section{Crit\`eres d'ouverture de morphismes finis}\label{sec:ouverture}
\index{Morphisme analytique!ouvert|(}

Nous allons maintenant utiliser les r\'esultats obtenus dans les sections pr\'ec\'edentes de ce chapitre pour formuler des conditions assurant que les morphismes finis soient ouverts.

\begin{defi}\index{Espace analytique!reduit@r\'eduit|textbf}
Soient~$X$ un espace $\cA$-analytique. On dit que l'espace~$X$ est \emph{r\'eduit en un point } $x$ de~$X$ si l'anneau local~$\cO_{X,x}$ est r\'eduit. On dit que l'espace~$X$ est \emph{r\'eduit} s'il est r\'eduit en tout point.
\end{defi}

On peut d'ores et d\'ej\`a remarquer que, pour tout~$n\in\N$, l'espace affine analytique~$\E{n}{\cA}$ est r\'eduit. En effet, d'apr\`es le th\'eor\`eme~\ref{rigide}, ses anneaux locaux sont r\'eguliers.
\index{Espace affine analytique!reduit@r\'eduit}

Le r\'esultat suivant est inspir\'e de \cite[3.3.2, Criterion of Openness]{Gr-Re2}.

\begin{prop}\label{prop:ouvert}
Soit~$\varphi \colon X\to Y$ un morphisme d'espaces~$\cA$-analytiques. Soit $x\in X$ et supposons que $\varphi$ est fini en~$x$. Supposons que $Y$ est r\'eduit en~$\varphi(x)$ et que $\cO_{X,x}$ est un $\cO_{Y,\varphi(x)}$-module sans torsion. Alors, le morphisme~$\varphi$ est ouvert en~$x$.
\end{prop}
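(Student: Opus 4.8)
L'\'enonc\'e est local, donc il suffit de montrer que, pour tout voisinage ouvert~$U$ de~$x$ dans~$X$, l'image $\varphi(U)$ est un voisinage de~$\varphi(x)$ dans~$Y$. Quitte \`a restreindre~$X$ et~$Y$, on peut supposer que $\varphi^{-1}(\varphi(x)) = \{x\}$ (le point~$x$ est isol\'e dans sa fibre puisque~$\varphi$ est fini) et que~$U=X$. Posons $y := \varphi(x)$. L'id\'ee directrice est la suivante~: d'apr\`es le th\'eor\`eme~\ref{thm:fini}, $\varphi_{\ast}\cO_{X}$ est un faisceau coh\'erent, et d'apr\`es la proposition~\ref{prop:fetoileenbasexact}, sa fibre en~$y$ est $\cO_{X,x}$. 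On va montrer que $\varphi(X)$ contient un voisinage de~$y$ en construisant, pour tout~$y'$ suffisamment proche de~$y$, un ant\'ec\'edent dans~$X$.

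\textbf{\'Etape principale~: factorisation par l'image.} D'apr\`es le corollaire~\ref{cor:imagefini}, l'image $Z := \varphi(X)$ est un ferm\'e analytique de~$Y$. Comme la question est de savoir si~$Z$ est un voisinage de~$y$, il suffit de montrer que le ferm\'e analytique~$Z$ co\"incide avec~$Y$ au voisinage de~$y$, c'est-\`a-dire que le faisceau d'id\'eaux coh\'erent~$\cJ$ d\'efinissant~$Z$ est nul en~$y$. Soit $g \in \cJ_{y}$. Alors~$g$ s'annule en tout point de~$Z = \varphi(X)$, donc $\varphi^{\sharp}(g)$ s'annule identiquement sur~$X$~; d'apr\`es le corollaire~\ref{cor:nilpotent}, l'image de~$\varphi^{\sharp}(g)$ dans~$\cO_{X,x}$ est nilpotente. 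Or $\varphi^{\sharp}$ identifie $(\varphi_{\ast}\cO_{X})_{y} = \cO_{X,x}$ \`a un $\cO_{Y,y}$-module, et l'hypoth\`ese de non-torsion signifie que le morphisme structural $\cO_{Y,y} \to \cO_{X,x}$ est injectif (car le noyau d'un morphisme d'anneaux $\cO_{Y,y} \to \cO_{X,x}$ est exactement l'ensemble des \'el\'ements de torsion lorsque~$\cO_{X,x}$ est vu comme $\cO_{Y,y}$-module, du moins si l'on prend garde~: plus pr\'ecis\'ement l'absence de torsion force $\cO_{Y,y} \to \cO_{X,x}$ \`a \^etre injectif puisque tout \'el\'ement du noyau annule $1 \in \cO_{X,x}$, donc est de torsion). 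L'image de~$g$ dans~$\cO_{X,x}$ \'etant nilpotente, et~$\cO_{Y,y}$ \'etant r\'eduit par hypoth\`ese, l'injectivit\'e de $\cO_{Y,y}\to \cO_{X,x}$ entra\^ine que~$g$ est nilpotent dans~$\cO_{Y,y}$, donc nul. Ainsi $\cJ_{y} = 0$ et $Z$ co\"incide avec~$Y$ au voisinage de~$y$.

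\textbf{Conclusion.} On vient de montrer que $\varphi(X)$ contient un voisinage ouvert~$V$ de~$y$ dans~$Y$~; quitte \`a restreindre~$Y$ \`a~$V$ et~$X$ \`a~$\varphi^{-1}(V)$ on peut supposer~$\varphi$ surjectif. Reste \`a obtenir l'ouverture pour un voisinage ouvert arbitraire~$U$ de~$x$, pas seulement pour~$X$ tout entier. Mais comme $\varphi^{-1}(y) = \{x\}$ et que~$\varphi$ est fini donc ferm\'e, $\varphi(X \setminus U)$ est un ferm\'e de~$Y$ ne contenant pas~$y$~; donc $Y \setminus \varphi(X\setminus U)$ est un voisinage ouvert de~$y$ contenu dans $\varphi(U)$, ce qui prouve que $\varphi(U)$ est un voisinage de~$y$. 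Comme cela vaut pour tout tel~$U$, le morphisme~$\varphi$ est ouvert en~$x$.

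\textbf{Point d\'elicat.} La principale subtilit\'e est le passage \og{} nilpotent en haut $\Rightarrow$ nilpotent (donc nul) en bas \fg{}~: il faut v\'erifier avec soin que l'hypoth\`ese de non-torsion de $(\varphi_{\ast}\cO_{X})_{y}$ sur $\cO_{Y,y}$ garantit bien l'injectivit\'e du morphisme structural $\cO_{Y,y}\to\cO_{X,x}$ (ce qui est correct car $1\in\cO_{X,x}$ n'est pas de torsion, tout \'el\'ement annulant~$1$ \'etant n\'ecessairement~$0$), puis que la r\'eduction de~$\cO_{Y,y}$ permet de conclure. C'est aussi le seul endroit o\`u interviennent r\'eellement les deux hypoth\`eses de l'\'enonc\'e, et il faut s'assurer qu'on n'utilise pas, par inadvertance, la r\'eduction de~$\cO_{X,x}$ (qui n'est pas suppos\'ee). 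L'utilisation du Nullstellensatz sous la forme du corollaire~\ref{cor:nilpotent} est ce qui remplace, dans ce cadre, l'argument plus direct disponible en g\'eom\'etrie analytique complexe.
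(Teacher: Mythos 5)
Votre preuve est correcte et suit pour l'essentiel la m\^eme voie que celle du texte~: dans les deux cas, tout repose sur le Nullstellensatz de R\"uckert (chez vous via le corollaire~\ref{cor:nilpotent}, qui en est un corollaire), la r\'eduction de~$\cO_{Y,\varphi(x)}$ et l'absence de torsion de~$(\varphi_*\cO_X)_{\varphi(x)}$ jouant exactement les m\^emes r\^oles, apr\`es la m\^eme localisation autour de la fibre. La diff\'erence n'est que de pr\'esentation~: vous appliquez le Nullstellensatz en haut (\`a~$\varphi^\sharp(g)$ sur~$X$) et concluez directement via l'injectivit\'e de~$\cO_{Y,\varphi(x)}\to\cO_{X,x}$ que l'id\'eal de l'image a une fibre nulle, l\`a o\`u le texte applique le th\'eor\`eme~\ref{thm:Ruckert} en bas au faisceau~$(\varphi_1)_*\cO_{U_1}$ et aboutit \`a une contradiction de torsion~; votre traitement explicite d'un voisinage~$U$ arbitraire par fermeture de~$\varphi$ est un compl\'ement bienvenu.
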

\begin{proof}
Le r\'esultat \'etant local en~$x$, on peut supposer que~$\varphi$ est fini. Il suffit de montrer que $\varphi(X)$ contient un voisinage de~$\varphi(x)$. Notons $x_{1} := x, x_{2},\dotsc,x_{n}$ les \'el\'ements de $\varphi^{-1}(\varphi(x))$. D'apr\`es le lemme~\ref{lem:voisinagefibre}, il existe un voisinage ouvert~$V$ de~$\varphi(x)$ tel que~$\varphi^{-1}(V)$ soit une union disjointe d'ouverts $\bigsqcup_{i=1}^n U_i$, avec, pour tout~$i\in\cn{1}{n}$, $x_i \in  U_i$. Quitte \`a remplacer $U$ par $U_{1}$, on peut supposer que $\varphi^{-1}(\varphi(x))=\{x\}$.


Posons $\cF := \varphi_*\cO_{U}$. On a alors $\cF_{\varphi(x)} = \cO_{U,x}$. D'apr\`es le th\'eor\`eme~\ref{thm:fini}, c'est un faisceau coh\'erent, et son support $\varphi(U)$ est donc un ferm\'e analytique de~$V$. Si ce ferm\'e n'est pas un voisinage de~$\varphi(x)$, alors il existe un voisinage~$V'$ de~$\varphi(x)$ dans~$V$ et $f\in \cO(V')$ non nulle qui s'annule identiquement sur $V' \cap \varphi(U)$. D'apr\`es le th\'eor\`eme~\ref{thm:Ruckert}, il existe~$d\in\N$ tel que~$f^d \cF_{\varphi(x)}=0$. Puisque~$Y$ est r\'eduit, l'image de~$f^d$ dans~$\cO_{Y,\varphi(x)}$ n'est pas nulle, donc  $\cF_{\varphi(x)} = \cO_{U,x}$ est de torsion, et on aboutit \`a une contradiction.
\end{proof}

\begin{defi}\label{def:plat}\index{Morphisme analytique!plat}
Soit~$\varphi \colon X\to Y$ un morphisme d'espaces $\cA$-analytiques. On dit que le morphisme~$\varphi$ est \emph{plat en un point} $x$ de~$X$ si le morphisme d'anneaux $\varphi^\sharp \colon \cO_{Y,\varphi(x)} \to \cO_{X,x}$ est plat. On dit que le morphisme~$\varphi$ est \emph{plat} s'il est plat en tout point de~$X$.
\end{defi}


\index{Morphisme analytique!fini!et plat|(}

Les morphismes finis et plats sont stables par changement de base.

\begin{prop}\label{changement_base_plat}\index{Produit!fibre@fibr\'e}\index{Extension des scalaires}
Soit $\varphi \colon X \to Y$ un morphisme d'espaces $\cA$-analytiques. Soit $x\in X$ et supposons que $\varphi$ est fini et plat en~$x$.

\begin{enumerate}[i)]
\item Soit $\psi \colon Z \to Y$ un morphisme d'espaces $\cA$-analytiques. Alors le morphisme $\varphi_{Z} \colon X\times_{Y} Z \to Z$ obtenu par changement de base \`a~$Z$ est fini et plat en tout point au-dessus de~$x$.

\item Soit $f\colon \cA \to \cB$ un morphisme d'anneaux de Banach born\'e, o\`u $\cB$ est un anneau de base g\'eom\'etrique. Alors le morphisme $\varphi_{\cB} \colon X\ho{\cA}\cB \to Y\ho{\cA}\cB$ obtenu par extension des scalaires \`a~$\cB$ est fini et plat en tout point au-dessus de~$x$.
\end{enumerate}
\end{prop}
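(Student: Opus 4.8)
The statement is the stability of \emph{finite and flat} morphisms under base change and under extension of scalars. Finiteness has already been established in Proposition~\ref{stabilite_fini}, so in both parts the only new content is the preservation of flatness. The strategy is to reduce the flatness assertion to a statement about stalks of $\varphi_{\ast}\cO_{X}$, then use the compatibility of $\varphi_{\ast}$ with base change (Theorem~\ref{thm:formule_changement_base}) and with extension of scalars (Theorem~\ref{thm:formule_extension_scalaire}) to identify the relevant stalks, and finally invoke the standard fact from commutative algebra that flatness of a module is preserved under base change of rings.

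\textit{Proof of i).} By Proposition~\ref{stabilite_fini}~ii), $\varphi_{Z}$ is finite; it remains to check flatness. Flatness is local, so fix $t \in Z$ and let $z := \psi(t)$. Since $\varphi$ is finite, $\varphi^{-1}(z) = \{x_{1},\dotsc,x_{r}\}$ is finite, and by Proposition~\ref{prop:fetoileenbasexact} we have $(\varphi_{\ast}\cO_{X})_{z} \simeq \prod_{i=1}^{r} \cO_{X,x_{i}}$; flatness of $\varphi$ at each $x_{i}$ means precisely that this $\cO_{Y,z}$-algebra is flat over $\cO_{Y,z}$. Form the cartesian square
\[\begin{tikzcd}
X\times_{Y}Z \arrow[r, "\rho"] \arrow[d, "\chi"] & X \arrow[d, "\varphi"] \\
Z \arrow[r, "\psi"] & Y
\end{tikzcd}\ .\]
Theorem~\ref{thm:formule_changement_base} applied to $\cF = \cO_{X}$ gives an isomorphism $\psi^{\ast}\varphi_{\ast}\cO_{X} \simto \chi_{\ast}\rho^{\ast}\cO_{X} = \chi_{\ast}\cO_{X\times_{Y}Z}$, hence on stalks at $t$,
\[(\chi_{\ast}\cO_{X\times_{Y}Z})_{t} \simeq (\varphi_{\ast}\cO_{X})_{z} \otimes_{\cO_{Y,z}} \cO_{Z,t}.\]
Since $(\varphi_{\ast}\cO_{X})_{z}$ is flat over $\cO_{Y,z}$, its base change along $\cO_{Y,z} \to \cO_{Z,t}$ is flat over $\cO_{Z,t}$. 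Thus $(\chi_{\ast}\cO_{X\times_{Y}Z})_{t}$, which by Proposition~\ref{prop:fetoileenbasexact} is the product of the local rings of $X\times_{Y}Z$ over the points of $\chi^{-1}(t)$, is a flat $\cO_{Z,t}$-module; in particular $\varphi_{Z}$ is flat at every point of $\chi^{-1}(t)$. As $t$ was arbitrary, $\varphi_{Z}$ is flat.

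\textit{Proof of ii).} This is entirely parallel, replacing Proposition~\ref{stabilite_fini}~ii) by Proposition~\ref{stabilite_fini}~i) for finiteness, and Theorem~\ref{thm:formule_changement_base} by Theorem~\ref{thm:formule_extension_scalaire} for the base-change identity. Concretely, fix a point of $Y\ho{\cA}\cB$ lying over $y \in Y$; Theorem~\ref{thm:formule_extension_scalaire} gives $(\pi_{Y})^{\ast}\varphi_{\ast}\cO_{X} \simeq (\varphi_{\cB})_{\ast}(\pi_{X})^{\ast}\cO_{X} = (\varphi_{\cB})_{\ast}\cO_{X\ho{\cA}\cB}$, so on stalks the local rings of $X\ho{\cA}\cB$ above the chosen point assemble (via Proposition~\ref{prop:fetoileenbasexact}) into $(\varphi_{\ast}\cO_{X})_{y} \otimes_{\cO_{Y,y}} \cO_{Y\ho{\cA}\cB, \cdot}$, which is flat over $\cO_{Y\ho{\cA}\cB,\cdot}$ by base change of a flat module. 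Hence $\varphi_{\cB}$ is flat.

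\textit{Expected difficulty.} There is essentially no obstacle: all the hard work — the coherence of $\varphi_{\ast}$, the finite base-change formulas, the stalk description of $\varphi_{\ast}$ — has already been carried out earlier in the chapter, and the remaining input is the elementary commutative-algebra fact that flatness is stable under base change of the base ring. The only mild point requiring care is to keep the bookkeeping of stalks straight, i.e.\ to use Proposition~\ref{prop:fetoileenbasexact} in both directions (once to translate ``$\varphi$ flat'' into a statement about $(\varphi_{\ast}\cO_{X})_{y}$, once to translate the conclusion back into flatness of $\varphi_{Z}$, resp.\ $\varphi_{\cB}$, at each individual point of the fibre).
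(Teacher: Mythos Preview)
Your proof is correct and follows essentially the same route as the paper: use Proposition~\ref{stabilite_fini} for finiteness, apply the finite base-change formula (Theorem~\ref{thm:formule_changement_base} for~i), Theorem~\ref{thm:formule_extension_scalaire} for~ii)) to $\cO_X$, pass to stalks, and conclude by the stability of flatness of modules under ring base change. The paper is slightly more terse, not spelling out the two invocations of Proposition~\ref{prop:fetoileenbasexact} that you make explicit, but the argument is the same.
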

\begin{proof}
Nous nous contenterons de d\'emontrer le point~i), l'autre s'obtenant de fa\c con similaire. Soit $t \in X\times_{Y}Z$ un point au-dessus de~$x$. D'apr\`es la proposition~\ref{stabilite_fini}, le morphisme~$\varphi_{Z}$ est fini en~$t$.

Il reste \`a montrer que l'anneau~$\cO_{X\times_{Y}Z,t}$ est plat sur~$\cO_{Z,\varphi_{Z}(t)}$. Par hypoth\`ese, $\cO_{X,x}$ est plat sur~$\cO_{Y,\varphi(x)}$, donc $\cO_{X,x}\otimes_{\cO_{Y,\varphi(x)}}\cO_{Z,\varphi_{Z}(t)}$ est plat sur~$\cO_{Z,\varphi_{Z}(t)}$. Or, d'apr\`es le th\'eor\`eme~\ref{thm:changementdebaselocal}, $\cO_{X\times_{Y}Z,t}$ s'identifie \`a un sous-module de $\cO_{X,x}\otimes_{\cO_{Y,\varphi(x)}}\cO_{Z,\varphi_{Z}(t)}$. Le r\'esultat s'ensuit.
\end{proof}

Les morphismes finis et plats sont ouverts.

\begin{coro}\label{plat}\index{Endomorphisme de la droite}
Soit $\varphi \colon X\to Y$ un morphisme d'espaces $\cA$-analytiques. Soit $x\in X$. Supposons que $\varphi$ est fini et plat en~$x$. 
Alors, $\varphi$ est ouvert en~$x$. 

En particulier, pour tout polyn\^ome~$P$ \`a coefficients dans~$\cA$, le morphisme $\varphi_{P} \colon \AunA \to \AunA$ induit par~$P$ (\cf~exemple~\ref{ex:fini}) est fini et plat, et donc ouvert.
\end{coro}
\begin{proof}
Rappelons que l'on d\'esigne par~$\cI(Y)$ le faisceau d'id\'eaux sur~$Y$ constitu\'e des fonctions qui s'annulent en tout point (\cf~notation~\ref{nota:VI}). Posons $y :=\varphi(x)$. D'apr\`es le th\'eor\`eme~\ref{rigide}, $\cO_{Y,y}$ est noeth\'erien, donc~$\cI(Y)_{y}$ est engendr\'e par un nombre fini d'\'el\'ements $g_{1},\dotsc,g_{m} \in\cO_{Y,y}$. Quitte \`a restreindre~$Y$ (et remplacer $X$ par sa pr\'eimage par~$\varphi$), on peut supposer que les~$g_j$ sont d\'efinis sur~$Y$ et s'annulent en tout point.


Notons~$\widetilde Y$ le ferm\'e analytique de~$Y$ d\'efini par le faisceau d'id\'eaux~$\cJ$ engendr\'e par les~$g_{j}$ et $r \colon \wti Y \to Y$ l'immersion ferm\'ee correspondante. Puisque les~$g_{j}$ s'annulent identiquement sur~$Y$, $r$ est un hom\'eomorphisme. Consid\'erons le diagramme cart\'esien
\[\begin{tikzcd}
X \times_{Y} \wti Y  \ar[d,"\tilde\varphi"] \ar[r, "r'"] & X \ar[d,"\varphi"]\\
\wti Y \ar[r,"r"] & Y
\end{tikzcd}.\]
Tout comme~$r$, le morphisme~$r'$ est un hom\'eomorphisme. En effet, d'apr\`es le lemme~\ref{lem:produitouverts}, le produit fibr\'e $X \times_{Y} \wti Y$ s'identifie au ferm\'e analytique d\'efini par l'id\'eal engendr\'e par $\varphi^\ast \cJ$, qui est encore constitu\'e de fonctions s'annulant en tout point. 

Puisque $r$ et~$r'$ sont des hom\'eomorphismes, il suffit de montrer que $\tilde\varphi$ est ouvert en~${r'}^{-1}(x)$. D'apr\`es la proposition~\ref{changement_base_plat}, i), $\tilde\varphi$ est fini est plat en~${r'}^{-1}(x)$. En particulier, le $\cO_{\wti Y,r^{-1}(y)}$-module $\cO_{X \times_{Y} \wti Y,{r'}^{-1}(x)}$ est libre, et donc sans torsion. Or, on a 
\[\cO_{\widetilde Y,r^{-1}(y)} = \cO_{Y,y}/\cJ_y = \cO_{Y,y}/\cI(Y)_y.\] 
Par cons\'equent, d'apr\`es le corollaire~\ref{cor:IVJ}, l'espace~$\wti Y$ est r\'eduit en~$r^{-1}(y)$. Le r\'esultat d\'ecoule alors de la proposition~\ref{prop:ouvert}.

\medbreak

D\'emontrons la seconde partie du r\'esultat. On a d\'ej\`a vu dans l'exemple~\ref{ex:fini} que le morphisme~$\varphi_{P}$ est fini. Consid\'erons une copie~$X_{T}$ de~$\AunA$ avec coordonn\'ee~$T$ et une autre~$X_{S}$ avec coordonn\'ee~$S$. On peut identifier~$X_{S}$ au ferm\'e analytique de~$\E{2}{\cA}$ avec coordonn\'ees $T,S$ d\'efini par $P(S)-T$. Notons $\pi \colon \E{2}{\cA} \to X_{T}$ le morphisme de projection. Le morphisme~$\varphi_{P}$ s'identifie alors \`a~$\pi_{|X_{S}}$ et sa platitude d\'ecoule du lemme~\ref{cas_particulier_coh\'erence} et de la proposition~\ref{prop:fetoileenbasexact}, i).
 \end{proof}

Nous pouvons maintenant pr\'eciser la conclusion du lemme~\ref{lem:translation}.

\begin{lemm}\label{lem:translation'}\index{Point!rigide epais@rigide \'epais}
Dans le lemme~\ref{lem:translation}, on peut imposer au morphisme $\varphi$ d'\^etre fini et plat, et donc ouvert.
\end{lemm}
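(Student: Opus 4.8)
Le plan est de reprendre la construction par r\'ecurrence effectu\'ee dans la preuve du lemme~\ref{lem:translation} et d'y v\'erifier \`a chaque \'etape que le morphisme produit est, en plus d'\^etre fini, \emph{plat}~; l'ouverture de~$\varphi$ en d\'ecoulera alors formellement par le corollaire~\ref{plat}, puisque le but de~$\varphi$ est un ouvert de~$\E{n}{\cA}$, donc r\'eduit.

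Rappelons que cette preuve proc\`ede par r\'ecurrence sur $l = n-k$. Le cas de base $l=0$ fournit $\varphi = \id$, qui est trivialement fini et plat. Dans l'\'etape de r\'ecurrence, $\varphi$ est obtenu comme une compos\'ee $\psi_{n} \circ \varphi'_{M}$, o\`u $\varphi'_{M}$ est la restriction \`a l'ouvert~$\pi_{n,n-1}^{-1}(U_{n-1})$ de l'endomorphisme $\varphi_{M} \colon \E{1}{\cB(U)} \to \E{1}{\cB(U)}$ d\'efini par un relev\'e unitaire $M \in \cB(U)[T_{n}]$ de~$\mu_{\kappa,x}$, tandis que $\psi_{n}$ est d\'eduit du morphisme~$\psi$ fourni par l'hypoth\`ese de r\'ecurrence par changement de base le long de $\AunA \to \cM(\cA)$. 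On commence par observer que~$\varphi_{M}$ est fini (exemple~\ref{ex:fini}) et plat~: la platitude s'obtient exactement comme dans la preuve de la derni\`ere assertion du corollaire~\ref{plat}, en identifiant, via le lemme~\ref{cas_particulier_coh\'erence}, l'image directe du faisceau structural par un morphisme d\'efini par un polyn\^ome unitaire avec un module libre de rang~$\deg(M)$. Tous les points de~$U \subset \E{n-1}{\cA}$ \'etant d\'ecents (cons\'equence de la proposition~\ref{prop:typiqueAn} et de la condition~i) de la d\'efinition~\ref{def:basique}), les r\'esultats de type Weierstra\ss{} qui interviennent dans cet argument restent valables au-dessus de~$\cB(U)$. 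La restriction \`a un ouvert pr\'eserve finitude et platitude, donc~$\varphi'_{M}$ est fini et plat~; et, par hypoth\`ese de r\'ecurrence (renforc\'ee), $\psi$~est fini et plat, d'o\`u il suit, par la proposition~\ref{changement_base_plat}~i), que~$\psi_{n}$ l'est aussi.

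Reste \`a v\'erifier qu'une compos\'ee de deux morphismes finis et plats est encore finie et plate. La finitude est le lemme~\ref{lem:compositionfini}~i). Pour la platitude, on combine la transitivit\'e de la platitude avec la formule pour les germes de la proposition~\ref{prop:fetoileenbasexact}~i)~: pour~$z$ dans le but de~$\psi_{n}$, on a $\big((\psi_{n} \circ \varphi'_{M})_{*}\cO\big)_{z} = \prod_{y} \big((\varphi'_{M})_{*}\cO\big)_{y}$ o\`u~$y$ parcourt la fibre de~$\psi_{n}$ au-dessus de~$z$, chaque facteur \'etant plat sur l'anneau local en~$y$, lequel est plat sur l'anneau local en~$z$ (platitude de~$\psi_{n}$)~; donc chaque facteur, et partant le produit, est plat sur l'anneau local en~$z$. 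Ainsi~$\varphi$ est fini et plat, et comme son but est r\'eduit, le corollaire~\ref{plat} assure qu'il est ouvert. Le seul point demandant un peu de soin (c'est l\`a l'obstacle principal, quoique b\'enin) est de justifier la platitude de~$\varphi_{M}$ lorsque les coefficients de~$M$ appartiennent \`a~$\cB(U)$ et non \`a l'anneau de base fix\'e~$\cA$~; cela ne pose pas de probl\`eme pr\'ecis\'ement parce que tous les points de~$\E{n-1}{\cA}$ sont d\'ecents, de sorte que les \'enonc\'es pertinents (corollaire~\ref{plat}, lemme~\ref{cas_particulier_coh\'erence}) demeurent applicables au-dessus de~$\cB(U)$.
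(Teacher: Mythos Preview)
Your proof is correct and follows exactly the approach sketched in the paper's one-sentence proof: redo the r\'ecurrence of lemme~\ref{lem:translation} using the corollaire~\ref{plat} for the platitude of~$\varphi_{M}$, the proposition~\ref{changement_base_plat} for stability under base change, and stability under composition. Your added justification of the composition step (via la proposition~\ref{prop:fetoileenbasexact}~i)) and the remark on working over~$\cB(U)$ are welcome expansions of points the paper simply asserts.
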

\begin{proof}
Il suffit de reprendre la d\'emonstration du lemme~\ref{lem:translation} en utilisant le corollaire~\ref{plat} et le fait que les morphismes finis et plats sont stables par changement de base, d'apr\`es la proposition~\ref{changement_base_plat}, et par composition. 
\end{proof}

\index{Morphisme analytique!fini!et plat|)}
\index{Morphisme analytique!ouvert|)}
\index{Morphisme analytique!fini|)}


\chapter{Structure locale des espaces analytiques}\label{chap:structurelocale}

Dans ce chapitre, nous d\'emontrons quelques r\'esultats sur la structure locale des espaces analytiques et en tirons des applications.

Dans la section~\ref{sec:decomposition}, nous d\'emontrons, qu'au voisinage d'un point, un espace analytique peut s'\'ecrire de fa\c{c}on unique comme union de ferm\'es analytiques int\`egres en ce point (\cf~proposition~\ref{decomp}). Il s'agit d'un r\'esultat de d\'ecomposition en composantes irr\'eductibles dans lequel on ne prend en compte que l'espace topologique sous-jacent.

Dans la section~\ref{sec:critereouverture}, nous d\'emontrons un crit\`ere permettant d'assurer qu'un morphisme ouvert en un point reste ouvert au voisinage de ce point (\cf~proposition~\ref{ouvert2}).

La section~\ref{sec:normalisation} contient une variante locale du lemme de normalisation de Noether dans notre contexte (\cf~th\'eor\`eme~\ref{proj}). Elle assure qu'\'etant donn\'e un point en lequel un espace analytique est int\`egre, il existe un voisinage de ce point pouvant s'envoyer par un morphisme fini et ouvert vers un ouvert d'un espace affine.

Dans la section~\ref{sec:dimlocale}, nous d\'efinissons la dimension locale d'un espace analytique (\cf~d\'efinition~\ref{def:dimlocale}). Nous d\'emontrons, au passage, un r\'esultat d'int\'er\^et ind\'ependant~: un morphisme structural plat est ouvert (\cf~proposition~\ref{prop:morphismestructuralouvert}).

Finalement, dans les deux derni\`eres sections, nous comparons certaines propri\'et\'es d'un sch\'ema ou d'un morphisme de sch\'emas \`a celles de son analytifi\'e. Nous commen\c{c}ons, dans la section~\ref{sec:schan1}, par des r\'esultats assez directs~: pour un morphisme, surjectivit\'e, propret\'e et finitude se transf\`erent \`a l'analytifi\'e (\cf~proposition~\ref{stabilit\'e_analytification1}), pour un faisceau, analytification et image directe par un morphisme fini commutent (\cf~proposition~\ref{formule_analytification}). 

La section~\ref{sec:schan2} est bas\'ee sur un r\'esultat plus difficile~: la platitude du morphisme d'analytification (\cf~th\'eor\`eme~\ref{platitude_analytification}). Nous la d\'emontrons pour une classe plus restrictive d'anneaux de base, celle des anneaux de Dedekind analytiques (\cf~d\'efinition~\ref{def:aDa}). Nos exemples usuels (corps valu\'es, anneaux d'entiers de corps de nombres, corps hybrides, anneaux de valuation discr\`ete, anneaux de Dedekind trivialement valu\'es) appartiennent \`a cette classe. Elle nous permet notamment de comparer la dimension d'un sch\'ema \`a celle de son analytifi\'e (\cf~th\'eor\`eme~\ref{thm:dimXXan}). 

\medbreak

Soit $(\cA,\nm)$ un anneau de base g\'eom\'etrique. 
Posons $B:=\cM(\cA)$.

\section[Composantes irr\'eductibles]{D\'ecomposition locale des espaces analytiques en composantes irr\'eductibles}\label{sec:decomposition}

Dans cette section, nous suivons de pr\`es \cite[\S 4.1.3]{Gr-Re2}. Commen\c cons par d\'efinir les espaces analytiques localement int\`egres \'evoqu\'es dans l'introduction.

\index{Espace analytique!integre@int\`egre|(}
\begin{defi}\index{Espace analytique!integre@int\`egre|textbf}
Soit~$X$ un espace $\cA$-analytique. On dit que l'espace~$X$ est \emph{int\`egre en un point } $x$ de~$X$ si l'anneau local~$\cO_{X,x}$ est intègre. On dit que l'espace~$X$ est \emph{localement int\`egre} s'il est int\`egre en tout point.
\end{defi}

Pour tout~$n\in\N$, l'espace affine~$\E{n}{\cA}$ est localement int\`egre, puisque tous ses anneaux locaux sont r\'eguliers, d'apr\`es le th\'eor\`eme~\ref{rigide}.
\index{Espace affine analytique!integre@int\`egre}

\medbreak

Rappelons que l'on note~$\cI(X)$ le faisceau d'id\'eaux des fonctions s'annulant en tout point de~$X$, $\sqrt{0}$ celui des \'el\'ements nilpotents de~$\cO_{X}$ (\cf~notation~\ref{nota:VI}) et que ces deux faisceaux co\"incident, d'apr\`es le corollaire~\ref{cor:IVJ}.

\index{Espace analytique!irreductible@irr\'eductible|(}
\begin{defi}\index{Espace analytique!irreductible@irr\'eductible|textbf}
Soit~$X$ un espace $\cA$-analytique. On dit que l'espace~$X$ est \emph{irr\'eductible en un point} $x$ de~$X$ si~$\cI(X)_x$ est un id\'eal premier de~$\cO_{X,x}$.
\end{defi}

\begin{lemm}\label{lem:integreirreductible}
Soient~$X$ un espace $\cA$-analytique et $x \in X$. Si~$X$ est int\`egre en~$x$, alors $X$ est irr\'eductible en~$x$.
\end{lemm}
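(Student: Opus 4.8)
The plan is to unwind the two definitions and reduce to a standard commutative algebra fact. Recall that ``$X$ is int\`egre en $x$'' means $\cO_{X,x}$ is a domain (here the excerpt's text says ``r\'eduit'' but the intended meaning, by contrast with the previous definition, is that $\cO_{X,x}$ is integral, i.e. an integral domain), while ``$X$ is irr\'eductible en $x$'' means that $\cI(X)_{x}$ is a prime ideal of $\cO_{X,x}$. So everything hinges on identifying the stalk $\cI(X)_{x}$ of the ideal sheaf of functions vanishing identically on $X$.

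First I would invoke Corollaire~\ref{cor:IVJ}, which gives $\cI(X) = \sqrt{0}$ as sheaves of ideals on $X$, hence $\cI(X)_{x} = (\sqrt{0})_{x}$, the nilradical of $\cO_{X,x}$ (taking stalks commutes with the radical since localization is exact and commutes with taking radicals of ideals). Now suppose $X$ is int\`egre en $x$, so that $\cO_{X,x}$ is an integral domain. Then its nilradical is the zero ideal, which is prime precisely because the ring is a domain. Therefore $\cI(X)_{x} = (0)$ is a prime ideal of $\cO_{X,x}$, which is exactly the assertion that $X$ is irr\'eductible en $x$.

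There is no real obstacle here: the only subtlety is purely bookkeeping, namely making sure that the sheaf-theoretic equality $\cI(X) = \sqrt{0}$ of Corollaire~\ref{cor:IVJ} really does pass to stalks as the nilradical of the local ring, and that ``int\`egre en $x$'' is read as ``$\cO_{X,x}$ integral domain'' rather than merely ``$\cO_{X,x}$ reduced'' (otherwise the statement would be false, as a reduced ring can have several minimal primes). Once those conventions are fixed, the argument is the one-line implication: a domain has $(0)$ prime. I would write it in essentially that form, citing Corollaire~\ref{cor:IVJ} for the identification $\cI(X)_{x} = \mathrm{Nil}(\cO_{X,x})$ and then concluding immediately.
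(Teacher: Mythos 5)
Your proof is correct and follows exactly the paper's own argument: identify $\cI(X)_{x}$ with the nilradical of $\cO_{X,x}$ via Corollaire~\ref{cor:IVJ}, then observe that in an integral domain the nilradical is $(0)$, which is prime. Your remark about the definition is also on point --- the paper's definition of \emph{int\`egre en $x$} does say \og r\'eduit \fg{} by an evident copy-paste slip, and the paper's own proof confirms the intended reading (\og l'anneau $\cO_{X,x}$ est int\`egre \fg).
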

\begin{proof}
Supposons que~$X$ est int\`egre en~$x$. Alors, l'anneau~$\cO_{X,x}$ est int\`egre et $\cI(X)_{x} = (\sqrt{0})_{x} =0$ est un id\'eal premier de~$\cO_{X,x}$.
\end{proof}

\begin{rema}
Il existe des espaces analytiques irr\'eductibles en un point mais non int\`egres en ce point. C'est le cas du ferm\'e analytique de~$\AunA$, avec coordonn\'ee~$T$, d\'efini par le polyn\^ome~$T^2$.
\end{rema}

Le r\'esultat qui suit permet cependant d'affirmer qu'un espace analytique irr\'eductible en un point n'est pas tr\`es loin d'\^etre int\`egre en ce point.

\begin{prop}\label{representant}
Soient~$X$ un espace $\cA$-analytique et~$x\in X$. Supposons que~$X$ est irr\'eductible en~$x$. Alors, il existe un voisinage~$V$ de~$x$ dans~$X$ et un ferm\'e analytique~$\widetilde V$ de~$V$ de m\^eme ensemble sous-jacent tels que~$\widetilde V$ soit int\`egre en~$x$.
\end{prop}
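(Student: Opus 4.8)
The plan is to follow the standard algebraic-geometry strategy: an ideal $I$ of a Noetherian ring is prime if and only if $\sqrt{I} = I$ and $V(I)$ is irreducible (and in a local ring, primality is automatic once we know $I$ is radical and the quotient has no zero-divisors coming from a decomposition of the zero locus). Concretely, since $X$ is irreducible at $x$ by hypothesis, $\cI(X)_x$ is a prime ideal of $\cO_{X,x}$. The faisceau $\cI(X)$ is coherent (it equals $\sqrt{0}$, the nilradical, by Corollaire~\ref{cor:IVJ}, and the annihilator of $\cO_X$ is coherent by Remarque~\ref{rem:supportfaisceau}, or one uses coherence of the structure sheaf and the fact that the nilradical of a Noetherian ring is a finitely generated ideal whose formation is compatible with the coherence structure). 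Let $\widetilde V$ be the closed analytic subspace of a suitable neighborhood $V$ of $x$ defined by $\cI(X)$; its underlying topological space is $V(\cI(X)) = V$ since $\cI(X)_y \subseteq \sqrt{0}_y$ forces the zero locus to be everything. Then $\cO_{\widetilde V, x} = \cO_{X,x}/\cI(X)_x$, which is a domain because $\cI(X)_x$ is prime. Hence $\widetilde V$ is integral at $x$, as required.

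First I would make precise that $\cI(X) = \sqrt{0}$ is a coherent sheaf of ideals on $X$: this is exactly the content of Corollaire~\ref{cor:IVJ} combined with the coherence of $\cO_X$ (which holds since $\cA$ is a geometric base ring, via Théorème~\ref{coherent} applied locally on the ambient affine analytic space, together with Lemme~\ref{lem:immersionfermeecoherence} to pass to closed analytic subspaces). Next, by the definition of a closed analytic subspace (Définition of \emph{ferm\'e analytique} in Chapter~\ref{def_cat}), there is a neighborhood $V$ of $x$ and a closed analytic subspace $\widetilde V \hookrightarrow V$ defined by $\cI(X)|_V$. The underlying set of $\widetilde V$ is the zero locus $V(\cI(X)|_V)$; since every stalk of $\cI(X)$ consists of nilpotent functions, every such function vanishes at every point of $X$, so $V(\cI(X)) = X$ and therefore $\widetilde V$ has the same underlying set as $V$. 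Finally, the stalk identity $\cO_{\widetilde V, x} \simeq \cO_{X,x}/\cI(X)_x$ is immediate from the definition of the structure sheaf of a closed analytic subspace, and the quotient of the Noetherian local ring $\cO_{X,x}$ (Noetherian by Théorème~\ref{rigide}) by the prime ideal $\cI(X)_x$ is an integral domain; this is precisely the statement that $\widetilde V$ is integral at $x$.

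The step that requires the most care is the verification that $\cI(X)$ is coherent, since the definition of irreducibility only refers to the stalk $\cI(X)_x$ at the single point $x$, whereas to form the closed analytic subspace $\widetilde V$ we need $\cI(X)$ to be a coherent sheaf of ideals on a whole neighborhood. This is where the hypothesis that $\cA$ is a geometric base ring is essential, through the coherence of the structure sheaf (Théorème~\ref{coherent}) and through Corollaire~\ref{cor:IVJ}, which identifies $\cI(X)$ with the nilradical sheaf $\sqrt{0}$ and, more importantly, identifies $\cI(V(\cJ))$ with $\sqrt{\cJ}$ for any coherent $\cJ$ — applying this with $\cJ = 0$ shows $\sqrt{0} = \cI(X)$ is obtained as the radical of a coherent ideal, hence is coherent (the radical of a coherent ideal on a space with Noetherian local rings is coherent, being locally finitely generated). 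Once coherence is in hand, everything else is formal bookkeeping about closed analytic subspaces and stalks.
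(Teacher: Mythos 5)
Your overall architecture (quotient by a prime ideal sheaf to produce an integral closed analytic subspace with the same underlying set) is the right one, but there is a genuine gap at the step you yourself flag as delicate: the coherence of $\cI(X)=\sqrt{0}$. Your justification — "the radical of a coherent ideal on a space with Noetherian local rings is coherent, being locally finitely generated" — does not work. Noetherianity of $\cO_{X,x}$ (th\'eor\`eme~\ref{rigide}) gives finite generation of the \emph{stalk} $\cI(X)_x$, but finite type of the \emph{sheaf} requires that generators of the stalk at~$x$ generate the stalks $\cI(X)_y$ for all $y$ near~$x$, and this is not automatic; coherence of the nilradical is a substantial theorem even in complex analytic geometry (Cartan--Oka), and it is neither proved in this text nor a consequence of corollaire~\ref{cor:IVJ}, which only identifies $\cI(V(\cJ))$ with $\sqrt{\cJ}$ as sheaves of ideals without asserting anything about coherence. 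So as written, the object $\widetilde V$ you want to form is not known to be a \og ferm\'e analytique \fg{} in the sense of the paper, since that notion requires a coherent ideal sheaf.

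The fix — and this is exactly what the paper does — is to avoid $\cI(X)$ as a global sheaf altogether. Since $\cO_{X,x}$ is noeth\'erien, choose finitely many germs $f_{1},\dotsc,f_{n}$ generating the prime ideal $\cI(X)_x$, take a neighborhood $V$ of $x$ on which they are defined and (after shrinking) vanish at every point of~$V$, and let $\cJ$ be the ideal sheaf they generate on~$V$. This $\cJ$ is coherent for free (it is the image of $\cO_{V}^n\to\cO_{V}$ and the structure sheaf is coherent by th\'eor\`eme~\ref{coherent}), its zero locus is all of~$V$, and $\cJ_x=\cI(X)_x$, so the resulting closed analytic subspace $\widetilde V$ satisfies $\cO_{\widetilde V,x}=\cO_{X,x}/\cI(X)_x$, which is integral. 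With this substitution your argument goes through; without it, the proof is incomplete.
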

\begin{proof}
Par hypoth\`ese, l'id\'eal~$\cI(X)_x$ de~$\cO_{X,x}$ est premier. D'apr\`es le th\'eor\`eme~\ref{rigide}, $\cO_{X,x}$ est noeth\'erien, donc~$\cI(X)_{x}$ est engendr\'e par un nombre fini d'\'el\'ements $f_{1},\dotsc,f_{n} \in\cO_{X,x}$. Soit~$V$ un voisinage de~$x$ dans~$X$ sur lequel tous les~$f_i$ sont d\'efinis. Quitte \`a restreindre~$V$, on peut supposer que les~$f_{i}$ s'annulent en tout point de~$V$.

Notons~$\widetilde V$ le sous-espace analytique de~$V$ d\'efini par le faisceau d'id\'eaux~$\cJ$ engendr\'e par les~$f_i$. Ensemblistement, on a $\widetilde V = V$. Qui plus est, on a 
\[\cO_{\widetilde V,x} = \cO_{X,x}/\cJ_x = \cO_{X,x}/\cI(X)_x,\] 
donc $\widetilde V$ est int\`egre en~$x$.
\end{proof}
\index{Espace analytique!irreductible@irr\'eductible|)}

D\'emontrons maintenant un \'enonc\'e local de d\'ecomposition d'un espace analytique en ferm\'es analytiques int\`egres. Commen\c cons par rappeler un r\'esultat d'alg\`ebre commutative. Il se d\'eduit, par exemple de \cite[IV, \S 2, \no 2, th\'eor\`eme~1]{BourbakiAC14} (pour l'existence) et \cite[IV, \S 2, \no 3, proposition~4]{BourbakiAC14} (pour l'unicit\'e).

\begin{theo}\label{noeth}
Soient~$R$ un anneau noeth\'erien et~$I$ un id\'eal de~$R$ tel que~$R/I$ est r\'eduit. Alors, il existe une unique famille finie d'id\'eaux premiers~$\p_1,\dotsc,\p_s$ de~$R$ v\'erifiant, pour tous $i, j \in \cn{1}{s}$ avec $i\ne j$, $\p_i\not\subset \p_j$ et telle que
\[I=\bigcap_{i=1}^s \p_i.\]
\end{theo}

\begin{lemm}\label{lem:Jintegre}
Soient~$X$ un espace $\cA$-analytique et $V$ un ferm\'e analytique de~$X$ d\'efini par un faisceau d'id\'eaux~$\cJ$ de~$\cO_{X}$. Si~$V$ est int\`egre en un point~$x$, alors on a $\cJ_{x} = \cI(V)_{x}$.
\end{lemm}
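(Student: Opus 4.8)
The plan is to deduce this directly from the R\"uckert Nullstellensatz (corollaire~\ref{cor:IVJ}), the point being that the hypothesis "$V$ int\`egre en~$x$" forces the stalk $\cJ_{x}$ to be a radical ideal, and the Nullstellensatz identifies $\cI(V)_{x}$ with the radical of $\cJ_{x}$.

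First I would record the easy inclusion $\cJ_{x} \subset \cI(V)_{x}$, which needs no hypothesis: if a germ $f$ lies in $\cJ_{y}$ for $y$ in a neighbourhood of~$x$, then its image in $\cO_{V,y} = \cO_{X,y}/\cJ_{y}$ is zero, hence $f(y) = 0$ for every such $y$ lying in~$V$; so the germ of~$f$ at~$x$ belongs to $\cI(V)_{x}$.

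Next I would handle the reverse inclusion. Since $\cJ$ is coherent, the underlying set of the closed analytic subspace~$V$ is exactly the support of $\cO_{X}/\cJ$, i.e.\ the zero locus $V(\cJ)$ (a germ of $\cJ$ at~$y$ vanishes at~$y$ if and only if it lies in $\m_{y}$, which holds precisely when $\cJ_{y} \neq \cO_{X,y}$). Consequently $\cI(V) = \cI(V(\cJ))$, and corollaire~\ref{cor:IVJ} gives $\cI(V)_{x} = (\sqrt{\cJ})_{x} = \sqrt{\cJ_{x}}$. Now the hypothesis that $V$ is int\`egre en~$x$ means that $\cO_{V,x} = \cO_{X,x}/\cJ_{x}$ is reduced, i.e.\ that $\cJ_{x}$ is a radical ideal of~$\cO_{X,x}$, so $\sqrt{\cJ_{x}} = \cJ_{x}$. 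Combining the two inclusions yields $\cJ_{x} = \cI(V)_{x}$.

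There is essentially no serious obstacle here: the statement is a one-line corollary of the Nullstellensatz once one observes that $|V| = V(\cJ)$ and that integrality (a fortiori reducedness) of $\cO_{V,x}$ makes $\cJ_{x}$ radical. The only mildly delicate point, which I would state explicitly rather than belabour, is the identification $(\sqrt{\cJ})_{x} = \sqrt{\cJ_{x}}$, which is already used in the proof of corollaire~\ref{cor:IVJ} and follows from coherence of~$\cJ$ (equivalently, from the noeth\'erianit\'e of $\cO_{X,x}$ furnished by th\'eor\`eme~\ref{rigide}).
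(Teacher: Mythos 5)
Your proof is correct and follows essentially the same route as the paper: the paper's own argument simply notes that integrality at~$x$ makes $\cJ_{x}$ a prime (hence radical) ideal and then applies corollaire~\ref{cor:IVJ} to get $\cI(V)_{x} = \sqrt{\cJ}_{x} = \cJ_{x}$. Your version spells out the easy inclusion, the identification $|V| = V(\cJ)$ and the stalkwise radical a bit more explicitly, and observes that reducedness of $\cO_{V,x}$ already suffices, but these are only minor elaborations of the same argument.
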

\begin{proof}
Soit $x\in V$ tel que~$V$ est int\`egre en~$x$. Alors, $\cJ_{x}$ est un id\'eal premier de~$\cO_{X,x}$. D'apr\`es le corollaire~\ref{cor:IVJ}, on a donc $\cI(V)_{x} = \sqrt{\cJ}_{x} = \cJ_{x}$.
\end{proof}

\begin{prop}\label{decomp}
Soient~$X$ un espace $\cA$-analytique et~$x\in X$. Alors, il existe un voisinage ouvert~$V$ de~$x$ et des ferm\'es analytiques $V_{1},\dotsc,V_{s}$ de~$V$ contenant~$x$ et int\`egres en~$x$ v\'erifiant, pour tous $i, j \in \cn{1}{s}$ avec $i\ne j$, $V_{i}\not\subset V_j$ et tels que
\[V=\bigcup_{i=1}^s V_i.\]

De plus, cette d\'ecomposition est unique au sens suivant~: s'il existe un voisinage ouvert~$V'$ de~$x$ et des ferm\'es analytiques $V'_{1},\dotsc,V'_{t}$ de~$V'$ satisfaisant les m\^emes propri\'et\'es, alors on a $s=t$ et il existe un voisinage ouvert~$W$ de~$x$ dans $V\cap V'$ et une permutation~$\sigma$ de $\cn{1}{s}$ tels que
\[\forall i\in \cn{1}{s},\ V_{i} \cap W = V'_{\sigma(i)} \cap W.\]
\end{prop}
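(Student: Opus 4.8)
The plan is to reduce the statement about the analytic space~$X$ near~$x$ to the purely algebraic decomposition statement (Theorem~\ref{noeth}) applied to the noetherian local ring~$\cO_{X,x}$, and then to upgrade the algebraic primary decomposition into a geometric decomposition by ferm\'es analytiques, using the coherence of the structural sheaf (Theorem~\ref{coherent}) and the fact that minimal primes of~$\cO_{X,x}$ extend to prime ideals of nearby stalks.

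First I would establish existence. Since~$X$ is an analytic space, after restricting~$X$ I may assume it is a ferm\'e analytique of an open subset of~$\E{n}{\cA}$, so~$\cO_{X,x}$ is noetherian by Theorem~\ref{rigide}. Apply Theorem~\ref{noeth} to~$R = \cO_{X,x}$ and the ideal~$I = \cI(X)_x = \sqrt{0}_x$: there is a unique family of primes~$\p_1,\dotsc,\p_s$ of~$\cO_{X,x}$, pairwise incomparable, with~$\bigcap_{i=1}^s \p_i = \cI(X)_x$. Each~$\p_i$ is finitely generated, so after shrinking~$X$ to a common neighbourhood~$V$ of~$x$ I may choose finitely many sections of~$\cO_V$ generating a coherent sheaf of ideals~$\cJ_i$ with~$(\cJ_i)_x = \p_i$; let~$V_i$ be the ferm\'e analytique of~$V$ defined by~$\cJ_i$. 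Shrinking~$V$ further I can arrange that all the~$V_i$ contain~$x$, that~$\bigcap_i \cJ_i = \cI(X)_x$ holds as sheaves (using that the~$V_i$ are coherent, the finitely many generators involved are defined on~$V$, and two coherent sheaves with the same stalk at~$x$ agree on a neighbourhood), so that~$\bigcup_i V_i = V$ as sets by Corollaire~\ref{cor:IVJ}, and that no~$V_i$ is contained in~$V_j$ for~$i\ne j$ (an inclusion of the underlying sets on every neighbourhood would force, after shrinking, $\cJ_j \subset \sqrt{\cJ_i} = \p_i$ by Corollaire~\ref{cor:IVJ}, hence~$\p_j \subset \p_i$, a contradiction). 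Finally each~$V_i$ is int\`egre en~$x$ because~$(\cJ_i)_x = \p_i$ is prime, so~$\cO_{V_i,x} = \cO_{X,x}/\p_i$ is a domain; here it is convenient to shrink~$V$ so that~$\cJ_i$ is the full ideal sheaf~$\cI(V_i)$ near~$x$ via Lemme~\ref{lem:Jintegre}.

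For uniqueness, suppose~$V' = \bigcup_{j=1}^t V'_j$ is another such decomposition near~$x$ with~$V'_j$ int\`egre en~$x$ and pairwise non-included. By Lemme~\ref{lem:Jintegre}, after shrinking, each~$V'_j$ is cut out near~$x$ by the prime ideal~$\q_j := \cI(V'_j)_x$, and~$\bigcap_j \q_j = \cI(V')_x = \cI(X)_x = \bigcap_i \p_i$; moreover the non-inclusion condition translates (again via Corollaire~\ref{cor:IVJ}, as in the existence step) into the~$\q_j$ being pairwise incomparable primes. The uniqueness clause of Theorem~\ref{noeth} then gives~$s = t$ and a permutation~$\sigma$ with~$\q_{\sigma(i)} = \p_i$ for all~$i$. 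Consequently the coherent ideal sheaves of~$V_i$ and~$V'_{\sigma(i)}$ have the same stalk at~$x$, hence coincide on some neighbourhood~$W_i$ of~$x$; taking~$W$ to be a connected (or just common) neighbourhood contained in the intersection of~$V$, $V'$ and all the~$W_i$ yields~$V_i \cap W = V'_{\sigma(i)} \cap W$.

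The main obstacle I expect is the bookkeeping needed to make the sheaf-theoretic identities (the equality~$\bigcap_i \cJ_i = \cI(X)$, the translation of the incomparability condition on sets into incomparability of stalks, and the passage from~$V_i \subset V_j$ as sets to~$\p_j \subset \p_i$) hold on an honest neighbourhood rather than just at the stalk~$x$; all of these rely on coherence (Theorem~\ref{coherent}) plus Corollaire~\ref{cor:IVJ}, and on the elementary but slightly fiddly principle that two coherent sheaves agreeing at a point agree nearby. Everything else is a direct transcription of the commutative-algebra statement Theorem~\ref{noeth} through the local ring~$\cO_{X,x}$.
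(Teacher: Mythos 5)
Your proof is correct and follows essentially the same route as the paper: apply Theorem~\ref{noeth} to $\cI(X)_x\subset\cO_{X,x}$, spread the resulting incomparable primes out to coherent ideal sheaves $\cJ_i$ on a neighbourhood, and read off existence and uniqueness via Lemme~\ref{lem:Jintegre}, Corollaire~\ref{cor:IVJ} and the coherence of the structure sheaf. Your treatment of the step $V=\bigcup_i V_i$ (arranging that the finitely many generators of $\bigcap_i\cJ_i$, whose germs at $x$ are nilpotent, vanish identically after shrinking) is in fact a slightly more careful rendering of the paper's terse shrinking argument; otherwise the two proofs coincide.
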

\begin{proof}
$\bullet$ \emph{Existence}

Soient $\p_{1},\dotsc,\p_{s}$ les id\'eaux premiers associ\'es \`a l'id\'eal~$\cI(X)_{x}$ de~$\cO_{X,x}$ par le th\'eor\`eme~\ref{noeth}. Pour tout $i\in \cn{1}{s}$, soient $f_{i,1},\dotsc,f_{i,k_{i}}$ des g\'en\'erateurs de~$\p_{i}$. Soit~$V$ un voisinage ouvert de~$x$ sur lequel tous les~$f_{i,j}$ sont d\'efinis. Pour tout $i\in \cn{1}{s}$, on note~$\cI_{i}$ le faisceau d'id\'eaux de~$\cO_{V}$ engendr\'e par $f_{i,1},\dotsc,f_{i,k_{i}}$ et $V_{i}$ le ferm\'e analytique de~$V$ qu'il d\'efinit.

Pour tout $i\in \cn{1}{s}$, on a, par d\'efinition, $\cO_{V_{i},x} = \cO_{X,x}/\cI_{i,x} = \cO_{X,x}/\p_{i}$, donc $V_{i}$ est int\`egre en~$x$. Pour tous $i,j \in \cn{1}{s}$ avec $i\ne j$, on a $\p_{i} \not\subset \p_{j}$ et on en d\'eduit que $V_{j} \not\subset V_{i}$. Quitte \`a restreindre~$V$, on peut supposer que, pour tout $i\in \cn{1}{s}$ et tout $j\in \cn{1}{k_{i}}$, on a $f_{i,j} \in \cI(X)(V)$. On a alors $V=\bigcup_{i=1}^s V_i$.

\medbreak

$\bullet$ \emph{Unicit\'e}

Soient $V'$ un voisinage ouvert de~$x$ et $V'_{1},\dotsc,V'_{t}$ v\'erifiant les propri\'et\'es de l'\'enonc\'e. Pour tout $i\in \cn{1}{t}$, notons~$\cI'_{i}$ le faisceau d'id\'eaux de~$V'$ d\'efinissant~$V'_{i}$. 

D'apr\`es le lemme~\ref{lem:Jintegre}, pour tout $i\in \cn{1}{t}$, on a $\cI(V'_{i})_{x} = \cI'_{i,x}$ et c'est un id\'eal premier de~$\cO_{X,x}$. Pour tous $i,j\in \cn{1}{t}$ avec $i\ne j$, on a $\cI(V'_{i})_{x} \not\subset \cI(V'_{j})_{x}$. En outre, puisque $V' = \bigcup_{i=1}^t V'_i$, on a $\cI(X)_{x} = \bigcap_{i=1}^t \cI(V'_{i})_{x}$. Le th\'eor\`eme~\ref{noeth} assure alors que la famille $(\cI(V'_{i})_{x})_{1\le i\le t}$ est une permutation de  la famille $(\cI(V_{i})_{x})_{1\le i\le s}$. Le r\'esultat s'en d\'eduit.
\end{proof}

\begin{defi}\label{def:composanteslocales}\index{Composantes locales|textbf}
Soient~$X$ un espace $\cA$-analytique et~$x\in X$. Les ferm\'es analytiques $V_{1},\dotsc,V_{s}$ de la proposition~\ref{decomp} sont appel\'es \emph{composantes locales de~$x$ en~$X$}.
\end{defi}
\index{Espace analytique!integre@int\`egre|)}

\section{Un crit\`ere d'ouverture}\label{sec:critereouverture}

Dans cette section, nous d\'emontrons un r\'esultat technique permettant d'assurer qu'un morphisme fini et ouvert en un point le reste sur un voisinage. Nous suivons le raisonnement de~\cite[\S 3.3.3]{Gr-Re2}.

\begin{nota}\index{Faisceau!torsion d'un}\index{Faisceau!dual d'un}%
\nomenclature[Ec]{$\cF^\tors$}{sous-faisceau des sections de torsion de~$\cF$}%
\nomenclature[Ed]{$\cF^\vee$}{dual de~$\cF$}
Soient $X$ un espace $\cA$-analytique et~$\cF$ un faisceau de $\cO_{X}$-modules. 

On note $\cF^\tors$ le sous-faisceau de~$\cF$ form\'e des sections de torsion.

On note $\cF^\vee := \sHom_{\cO_{X}}(\cF,\cO_{X})$ le dual de~$\cF$.
\end{nota}

\begin{lemm}\label{lem:Ftors}\index{Faisceau!coherent@coh\'erent}
Soient $X$ un espace $\cA$-analytique localement int\`egre et~$\cF$ un faisceau coh\'erent sur~$X$. Alors on a 
\[\cF^\tors = \sKer(\cF \to \cF^{\vee\vee}).\]

En particulier, le faisceau~$\cF^\tors$ est coh\'erent.
\end{lemm}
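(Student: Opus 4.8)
The plan is to reduce the asserted identity to a stalk-wise statement of commutative algebra over the Noetherian integral local rings of $X$, and then deduce coherence from coherence of $\cO_X$.

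First I would reduce to stalks. Since $X$ is locally a closed analytic subset of an open subset of an affine space, the structure sheaf $\cO_X$ is coherent, by Theorem~\ref{coherent} (applicable because $\cA$ is a geometric base ring, hence basic and such that $\cM(\cA)$ satisfies the analytic continuation principle). Consequently, from a local presentation $\cO_X^m \to \cO_X^n \to \cF \to 0$ one obtains an exact sequence $0 \to \cF^\vee \to \cO_X^n \to \cO_X^m$, which shows that $\cF^\vee = \sHom_{\cO_X}(\cF,\cO_X)$ is coherent and that $(\cF^\vee)_x = \Hom_{\cO_{X,x}}(\cF_x,\cO_{X,x})$; iterating, $\cF^{\vee\vee}$ is coherent and $(\cF^{\vee\vee})_x = (\cF_x)^{\vee\vee}$. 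The functor $\sKer$ commutes with stalks, and $\cF^\tors$ is the subsheaf of $\cF$ whose stalk at each point $x$ is $(\cF_x)^\tors$ (this makes sense, and is consistent with the notion of torsion section, because $X$ is locally integral, so each $\cO_{X,x}$ is a domain and a nonzero element extends to a non-zero-divisor on a neighbourhood by analytic continuation). Hence it suffices to prove, for every $x \in X$, that for $R := \cO_{X,x}$ --- which is a Noetherian integral domain by Theorem~\ref{rigide} together with the integrality hypothesis at $x$ --- and $M := \cF_x$ a finitely generated $R$-module, one has $M^\tors = \ker\bigl(M \to M^{\vee\vee}\bigr)$.

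For the inclusion $M^\tors \subseteq \ker(M \to M^{\vee\vee})$: if $m \in M^\tors$, pick $r \in R \setminus \{0\}$ with $rm = 0$; then for every $\varphi \in M^\vee = \Hom_R(M,R)$ we have $r\varphi(m) = \varphi(rm) = 0$, so $\varphi(m) = 0$ since $R$ is a domain, and therefore $m$ has zero image in $M^{\vee\vee}$. For the reverse inclusion, note that every $\varphi \in M^\vee$ kills $M^\tors$, so $M^\vee = (M/M^\tors)^\vee$ and the canonical map $M \to M^{\vee\vee}$ factors through $N := M/M^\tors \to N^{\vee\vee}$; it thus suffices to show that for a finitely generated torsion-free module $N$ over the Noetherian domain $R$ the map $N \to N^{\vee\vee}$ is injective. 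Writing $K := \Frac(R)$ and choosing generators $n_1,\dotsc,n_k$ of $N$, a maximal $K$-linearly independent subset, say $n_1,\dotsc,n_d$, spans $N \otimes_R K$, so there is $r \in R \setminus \{0\}$ with $r n_j \in R n_1 + \dotsb + R n_d$ for all $j$; hence $rN$ is contained in the free module $F := \bigoplus_{i=1}^d R n_i$, and $N$ embeds into $\tfrac1r F \cong R^d$. For any nonzero $n \in N$, some coordinate projection $R^d \to R$ does not vanish on the image of $n$; restricting it to $N$ yields an element of $N^\vee$ not annihilating $n$, so $n$ has nonzero image in $N^{\vee\vee}$. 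This gives injectivity, hence $\ker(M \to M^{\vee\vee}) \subseteq M^\tors$.

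Having the equality at every stalk, we conclude $\cF^\tors = \sKer(\cF \to \cF^{\vee\vee})$ as subsheaves of $\cF$; since $\cF^\vee$ and $\cF^{\vee\vee}$ are coherent and the kernel of a morphism of coherent sheaves is coherent, $\cF^\tors$ is coherent. The one point that requires care --- and the main obstacle --- is the stalk identity $(\cF^\tors)_x = (\cF_x)^\tors$, i.e.\ the verification that the torsion subsheaf is local in the expected way; this rests on local integrality of $X$ and on the analytic continuation principle (Proposition~\ref{prop:prolongementanalytique} and its consequences for $X$), whereas the algebraic core is routine once Noetherianity of the local rings (Theorem~\ref{rigide}) is in hand.
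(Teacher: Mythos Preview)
Your proof is correct and follows essentially the same path as the paper: reduce to stalks, then do the commutative algebra over the Noetherian integral domain $\cO_{X,x}$. The paper establishes the reverse inclusion by passing to the vector space $V_x=(\cO_{X,x}\setminus\{0\})^{-1}\cG_x$ with $\cG=\cF/\cF^\tors$ and clearing denominators to get a linear form in $\cG^\vee_x$, which is your free-module embedding argument in different clothing; and where you invoke the stalk identity $(\cF^\vee)_x=\Hom_{\cO_{X,x}}(\cF_x,\cO_{X,x})$ via a presentation, the paper uses the equivalent fact that a morphism $\cG_x\to\cO_{X,x}$ extends to a sheaf morphism on a neighbourhood because $\cG$ is of finite type. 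Your flag about $(\cF^\tors)_x=(\cF_x)^\tors$ is reasonable caution, but the paper treats it as immediate from the definition of the torsion subsheaf together with local integrality, and this suffices here.
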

\begin{proof}
Il suffit de montrer que, pour tout $x\in X$, on a $\cF^\tors_{x} = \Ker(\cF_{x} \to \cF^{\vee\vee}_{x})$.

Soit $x\in X$. Soit $f\in \cF^\tors_{x}$. Par d\'efinition, il existe $a\in \cO_{X,x} \setminus \{0\}$ tel que $af = 0$ dans~$\cF_{x}$. Soit $\varphi \in \cF^\vee_{x}$. On a $a \varphi(f) = \varphi(af) = 0$ dans $\cO_{X,x}$, donc $\varphi(f) = 0$ car $\cO_{X,x}$ est int\`egre. On en d\'eduit que l'image de~$f$ dans~$\cF^{\vee\vee}_{x}$ est nulle. Nous avons donc montr\'e que  $\cF^\tors_{x} \subset \Ker(\cF_{x} \to \cF^{\vee\vee}_{x})$.

Soit $f\in \cF_{x} \setminus \cF^{\tors}_{x}$. Posons $\cG := \cF/\cF^\tors$. Posons $V_{x} := (\cO_{X,x}\setminus \{0\})^{-1} \cG_{x}$. C'est un espace vectoriel de dimension finie sur~$\Frac{\cO_{X,x}}$ et le morphisme naturel $\cG_{x} \to V_{x}$ est injectif. Notons~$f'$ (resp.~$f'')$ l'image de~$f$ dans~$\cG_{x}$ (resp. $V_{x}$). L'\'el\'ement~$f''$ n'\'etant pas nul, il existe un morphisme $\Frac{\cO_{X,x}}$-lin\'eaire $\varphi''_{x} \colon V_{x} \to  \Frac{\cO_{X,x}}$ tel que $\varphi_{x}(f'') \ne 0$. Quitte \`a multiplier par un \'el\'ement convenable de~$\cO_{X,x}$, on obtient un morphisme $\cO_{X,x}$-lin\'eaire $\varphi'_{x} \colon \cG_{x} \to \cO_{X,x}$ tel que $\varphi'_{x}(f') \ne 0$. Le faisceau~$\cG$ \'etant de type fini, le morphisme~$\varphi'_{x}$ est induit par un morphisme $\varphi \colon \cG \to \cO_{X}$ d\'efini sur un voisinage de~$x$. On en d\'eduit que $f \notin \Ker(\cF_{x} \to \cF^{\vee\vee}_{x})$. Ceci cl\^ot la d\'emonstration.
\end{proof}

Venons-en maintenant au r\'esultat annonc\'e.

\begin{prop}\label{ouvert2}\index{Morphisme analytique!ouvert}\index{Espace analytique!integre@int\`egre}
Soit~$\varphi \colon X\to Y$ un morphisme fini entre espaces~$\cA$-analytiques, avec $Y$ localement int\`egre. Soit~$x\in X$. Si~$\varphi$ est ouvert en~$x$ et~$X$ est int\`egre en~$x$, alors il existe un voisinage ouvert~$U$ de~$x$ et un voisinage ouvert~$V$ de~$\varphi(x)$ tels que~$\varphi(U)\subset V$ et le morphisme $\varphi_{U,V} \colon U\to V$ induit par~$\varphi$ soit fini et ouvert.
\end{prop}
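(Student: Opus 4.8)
The statement is a local openness result: knowing that a finite morphism $\varphi$ is open at a single point $x$ (with $X$ integral at $x$, $Y$ locally integral), we want to spread the openness to a whole neighbourhood. The natural strategy, following Grauert--Remmert, is to replace openness at a point by an algebraic condition on the stalk of $\varphi_{*}\cO_{X}$, namely torsion-freeness, which is an open condition. The plan is to first reduce to the situation where $\varphi^{-1}(\varphi(x)) = \{x\}$ using the decomposition of $\varphi^{-1}(\varphi(x))$ into isolated points (Lemma~\ref{lem:voisinagefibre}), so that $\varphi_{*}\cO_{X}$ has the single stalk $\cO_{X,x}$ at $y := \varphi(x)$. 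Then I would invoke Theorem~\ref{thm:fini} to know that $\cF := \varphi_{*}\cO_{X}$ is coherent, and Proposition~\ref{prop:ouvert} as the precise criterion: if $\cO_{Y,y}$ is reduced and $\cF_{y}$ is torsion-free over $\cO_{Y,y}$, then $\varphi$ is open at every point above $y$ — in fact this gives openness of $\varphi_{U,V}$ for suitable neighbourhoods $U, V$.

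**Key steps in order.** (1) Using Proposition~\ref{decomp} applied to $X$ at $x$, or directly the integrality hypothesis, arrange that $X$ is a genuine integral local model near $x$; since $X$ is integral at $x$ it is in particular irreducible there (Lemma~\ref{lem:integreirreductible}), and by Proposition~\ref{representant} we may shrink $X$ so that $\cO_{X,x}$ is a domain and $X$ is reduced at $x$. (2) Shrink $Y$ around $y$ so that $\cO_{Y,y}$ is a domain too (using that $Y$ is locally integral); separate the fibre $\varphi^{-1}(y)$ into disjoint opens as in the proof of Proposition~\ref{prop:ouvert}, reducing to $\varphi^{-1}(y) = \{x\}$. (3) The heart: show $\cF_{y} = \cO_{X,x}$ is a torsion-free $\cO_{Y,y}$-module. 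Here is where openness at $x$ enters — the hypothesis that $\varphi$ is open at $x$ forces the image $\varphi(X)$ to contain a neighbourhood of $y$, hence (with $\cF$ coherent and $Y$ reduced) the support of $\cF$ cannot be a proper analytic subset, which by the Nullstellensatz (Theorem~\ref{thm:Ruckert}, exactly as in Proposition~\ref{prop:ouvert}) rules out any torsion in $\cF_{y}$; more carefully, since $\cO_{X,x}$ is a domain and $\cO_{Y,y} \to \cO_{X,x}$ is injective (as $\varphi$ is open at $x$, so $\varphi$ is dominant near $x$), $\cO_{X,x}$ is torsion-free over $\cO_{Y,y}$. (4) Apply Proposition~\ref{prop:ouvert} at $x$: it yields neighbourhoods $U \ni x$, $V \ni y$ with $\varphi(U) \subset V$ and $\varphi_{U,V}$ finite (Theorem~\ref{thm:locfini}) and open.

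**Main obstacle.** The delicate point is step~(3): establishing that $\cO_{X,x}$ is torsion-free over $\cO_{Y,y}$ purely from topological openness at the single point $x$. The subtlety is that $\varphi^{\sharp}\colon \cO_{Y,y} \to \cO_{X,x}$ need not \emph{a priori} be injective just because $\varphi$ is surjective near $x$; one must argue that if $g \in \cO_{Y,y}$ is nonzero but $\varphi^{\sharp}(g) = 0$, then the vanishing locus of $g$ in $Y$ would contain $\varphi(U)$ for a neighbourhood $U$ of $x$, contradicting openness of $\varphi$ at $x$ together with the fact that $Y$ is reduced (so a nonzero $g$ has vanishing locus with empty interior, by the Nullstellensatz, Corollary~\ref{cor:IVJ}, and the fact that $\E{n}{\cA}$-type stalks are domains via Theorem~\ref{rigide}). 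Once injectivity of $\varphi^{\sharp}$ is in hand, torsion-freeness is immediate from $\cO_{X,x}$ being a domain. A secondary technical nuisance is ensuring all the shrinkings of $X$ and $Y$ are compatible (that shrinking $Y$ does not destroy integrality of the remaining fibre components); this is handled routinely using Lemma~\ref{lem:voisinagefibre} and the fact that $x$ is isolated in its fibre.
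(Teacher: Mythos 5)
Your proposal follows essentially the same route as the paper's proof: reduce to $\varphi^{-1}(\varphi(x))=\{x\}$ via Lemma~\ref{lem:voisinagefibre}, use Theorem~\ref{thm:fini} and the criterion of Proposition~\ref{prop:ouvert}, prove injectivity of $\cO_{Y,\varphi(x)}\to\cO_{X,x}$ from openness at~$x$ combined with the Nullstellensatz and local integrality of~$Y$, and deduce torsion-freeness from integrality of~$\cO_{X,x}$. The only point you leave implicit — that torsion-freeness is indeed an open condition, so that Proposition~\ref{prop:ouvert} can be applied at every point of a neighbourhood and not only at~$x$ — is precisely Lemma~\ref{lem:Ftors} (coherence of $\cF^{\tors}$, hence closedness of its support), which is how the paper spreads the stalkwise statement.
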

\begin{proof}
D'apr\`es le lemme~\ref{lem:voisinagefibre}, quitte \`a restreindre~$X$, on peut supposer que $\varphi^{-1}(\varphi(x)) = \{x\}$. Posons $\cF := \varphi_*(\cO_X)$. D'apr\`es le th\'eor\`eme~\ref{thm:fini}, c'est un faisceau coh\'erent. D'apr\`es la proposition~\ref{prop:ouvert}, il suffit de montrer qu'il existe un voisinage ouvert~$V$ de~$\varphi(x)$ dans~$Y$ tel que pour tout~$y\in V$, $\cF_y$ soit un~$\cO_{Y,y}$-module sans torsion. D'apr\`es le lemme~\ref{lem:Ftors}, $\cF^\tors$ est coh\'erent. Par cons\'equent, son support est ferm\'e. Il suffit donc de montrer que $\cF_{\varphi(x)}$ est un~$\cO_{Y,\varphi(x)}$-module sans torsion. 

Montrons tout d'abord que le morphisme $\colon \cO_{Y,\varphi(x)} \to \cO_{X,x}$ induit par~$\varphi^\sharp$ est injectif. Soit~$f\in \cO_{Y,\varphi(x)}$ dont l'image~$\varphi^\sharp(f)$ dans~$\cO_{X,x}$ est nulle. Il existe un voisinage~$U$ de~$x$ dans~$X$ tel que~$\varphi^\sharp(f)$ est nulle sur~$U$. En utilisant la proposition~\ref{prop:proprietemorphismes}, on en d\'eduit que~$f$ est nulle en tout point de~$\varphi(U)$. Puisque~$\varphi$ est ouverte en~$x$, $f$ appartient donc \`a~$\cI(Y)_{\varphi(x)}$, donc~$f$ est nilpotente dans~$\cO_{Y,\varphi(x)}$. Or~$Y$ est localement int\`egre donc $f$ est nulle dans~$\cO_{Y,\varphi(x)}$.

Puisque $\varphi^{-1}(\varphi(x)) = \{x\}$, on a $\cF_{\varphi(x)}\simeq \cO_{X,x}$. Or, par hypoth\`ese, $\cO_{X,x}$ est int\`egre. Le r\'esultat s'ensuit. 
\end{proof}

\section{Normalisation de Noether locale}\label{sec:normalisation}

Le but de cette section est de d\'emontrer un analogue local du lemme de normalisation de Noether, valable en un point en lequel l'espace est int\`egre.

\medbreak

Rappelons la d\'efinition~\ref{def:projection} concernant les projections.
Pour tout $b\in B$, notons $0_{b,l} \in \E{n}{\cA}$ le point~0 de la fibre~$\pi_{l}^{-1}(b)$. Commen\c cons par un cas particulier.

\begin{lemm}\label{lem:finivoisinagefibre}
Soient~$X$ un espace $\cA$-analytique. Notons $\pi \colon X \to B$ le morphisme structural. Soient~$b \in B$, $x \in \pi^{-1}(b)$, $n \in \N$, $V_{0}$ un voisinage ouvert de~$0_{b,n}$ dans~$\E{n}{\cA}$ et $\psi\colon X \to V_{0}$ un morphisme fini tel que $\psi(x) = 0_{b,n}$. Alors, il existe un voisinage ouvert~$W$ de~$b$ dans~$B$, un voisinage ouvert~$U$ de~$x$ dans~$X$, un entier $l\le n$, un voisinage ouvert~$V$ de~$0_{b,l}$ dans~$\E{l}{\cA}$ et un morphisme fini $\varphi\colon U \to V$ tels que 
\begin{enumerate}[i)]
\item $\pi(U) = \pi_{l}(V) = W$ et $\pi_{l} \circ \varphi = \pi$ ;
\item $\varphi(U)$ est un voisinage de~$0_{b,l}$ dans~$\pi_{l}^{-1}(b)$.
\end{enumerate}
\end{lemm}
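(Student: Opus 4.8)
The goal is a local reduction: given a finite morphism $\psi$ from $X$ onto a neighbourhood of the origin $0_{b,n}$ in $\E{n}{\cA}$, with $\psi(x)=0_{b,n}$, I want to compose $\psi$ with a suitable linear projection $\pi_{n,l}$ (after a change of variables) so that the composite becomes a finite \emph{and surjective-onto-a-neighbourhood} morphism onto an open set of a smaller affine space $\E{l}{\cA}$, still compatible with the structural projection to $B$. The natural induction is on $n$: at each step I try to peel off one coordinate by applying the Weierstrass preparation machinery. If $\psi$ is already ``surjective near the origin in the fibre'' (condition~ii)), there is nothing to do; otherwise I produce a nonzero function vanishing on the image and use it to cut down the dimension.

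First I would reduce to the case where $\psi^{-1}(0_{b,n}) = \{x\}$, using Lemma~\ref{lem:voisinagefibre} to shrink $X$ and $V_{0}$ simultaneously. Then, if $\psi(X)$ is already a neighbourhood of $0_{b,n}$ in $\pi_{n}^{-1}(b)$, I take $l=n$, $\varphi=\psi$ (composed with the identity) and shrink the base appropriately so that $\pi(U)=\pi_{n}(V)=W$; the compatibility $\pi_{n}\circ\varphi=\pi$ holds because $\psi$ is a morphism of $\cA$-analytic spaces and the relevant projections commute. So assume $\psi(X)$ is not a neighbourhood of $0_{b,n}$ in the fibre. Since $\psi$ is finite, its image $\psi(X)$ is a closed analytic subset of $V_{0}$ (Corollary~\ref{cor:imagefini}), hence near $0_{b,n}$ it is the zero locus of its (coherent) ideal sheaf; as this zero locus is not a neighbourhood of $0_{b,n}$ in the fibre, there is $g\in\cO_{\E{n}{\cA},0_{b,n}}$ in that ideal whose restriction to $\pi_{n}^{-1}(b)$ is nonzero. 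Applying the change-of-variables Lemma~\ref{changement_variable} (at the rigid point $0_{b,n}$ of $\E{n}{\cH(b)}$, or rather after a translation to reduce to it), I may assume that the image of $g$ in $\cO_{\pi_{n,n-1}^{-1}(0_{b,n-1})}$ is nonzero, i.e.\ $g$ is ``distinguished'' with respect to the last variable $T_{n}$ at that point.

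Now comes the key step, which I also expect to be the main obstacle: using Weierstrass preparation (Theorem~\ref{thm:preparationW}, applicable since $0_{b,n}$ is rigid \'epais over $0_{b,n-1}$ and the base is geometric, hence every point is decent) to write $g=e\,\omega$ with $e$ invertible and $\omega\in\cO_{\E{n-1}{\cA},0_{b,n-1}}[T_{n}]$ unitary of degree $d\ge 1$; shrinking, $\psi(X)\cap\pi_{n,n-1}^{-1}(W'_{n-1})$ is a closed analytic subset of the Weierstrass hypersurface $Z_{\omega}$, and $\pi_{n,n-1}$ restricted to $Z_{\omega}$ is finite (Lemma~\ref{cas_particulier_coh\'erence}), so $X \to \pi_{n,n-1}(\psi(X))$ is finite. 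The delicate points are: keeping everything compatible with the structural map to $B$ throughout (so that $\pi_{l}\circ\varphi=\pi$ at the end), which I handle by performing all constructions over a spectrally convex neighbourhood of $b$ exactly as in Lemma~\ref{lem:recurrenceAn} and Lemma~\ref{lem:translation}; and ensuring condition~ii) ``$\varphi(U)$ is a neighbourhood of $0_{b,l}$ in the fibre'' actually holds at the end rather than having to peel off yet more coordinates. For the latter I would argue that the recursion terminates at the first stage where the image is a neighbourhood of the origin in the fibre of $\E{l}{\cA}\to B$; composing the successive finite projections (each finite by Lemma~\ref{cas_particulier_coh\'erence} and Lemma~\ref{lem:compositionfini}) with $\psi$ gives the desired finite $\varphi\colon U\to V$. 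Finally I would shrink $W$, $U$, $V$ once more to arrange $\pi(U)=\pi_{l}(V)=W$ cleanly, using openness of the projections $\pi_{l}$ (Corollary~\ref{cor:projectionouverte}) and of the structural map on the relevant pieces, and record that the image $\psi(X) \subset Z_\omega \subset Z_{\omega'} \subset \cdots$ gives, at the final step, a closed analytic subset whose projection is the required neighbourhood of $0_{b,l}$.
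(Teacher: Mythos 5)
Your proposal is correct in substance and shares the paper's skeleton: induction on $n$, the dichotomy according to whether $\psi(X)$ is already a neighbourhood of~$0_{b,n}$ in the fibre, Corollaire~\ref{cor:imagefini} to realize the image as a closed analytic subset and extract a function~$f$ vanishing on it but not identically on $\pi_n^{-1}(b)$, and Lemme~\ref{changement_variable} to make that function non-trivial on the vertical line before projecting away the last coordinate. Where you genuinely diverge is at the step you flag as the main obstacle: you re-run the Weierstrass machinery (Th\'eor\`eme~\ref{thm:preparationW}, the hypersurface $Z_\omega$, Lemme~\ref{cas_particulier_coh\'erence}, the clopen shrinking as in Lemme~\ref{lem:recurrenceAn}) to prove that the composite with $\pi_{n,n-1}$ is finite near~$x$. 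The paper sidesteps all of this: once $\sigma^\sharp(f)$ is non-zero on the line $\pi_{n,n-1}^{-1}(0_{b,n-1})$, the point $0_{b,n}$ is isolated in the image intersected with that line, hence $x$ is isolated in the fibre of $\pi_{n,n-1}\circ\sigma\circ\psi$, and Th\'eor\`eme~\ref{thm:locfini} — already available, and itself proved by exactly the preparation/$Z_\omega$ argument you sketch, through Lemmes~\ref{lem:finiprojection} and~\ref{lem:pousseprojection} — gives finiteness at~$x$ in one line, after which the induction hypothesis concludes. So your route is sound but re-proves a packaged criterion; invoking Th\'eor\`eme~\ref{thm:locfini} would shorten it considerably (and would also make the slightly garbled target ``$\pi_{n,n-1}(\psi(X))$'' unnecessary: what you need is a finite morphism onto the open set $W'_{n-1}$, with value $0_{b,n-1}$ at~$x$). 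Minor points: the preliminary reduction to $\psi^{-1}(0_{b,n})=\{x\}$ is never used, since the function~$f$ concerns the image of~$\psi$, not its fibre; no translation is needed before Lemme~\ref{changement_variable}, as $0_{b,n}$ is already the origin of its fibre; condition~ii) at the end is guaranteed because the descent stops at the latest at $l=0$, where it is vacuous — worth stating explicitly; and the bookkeeping for the equalities $\pi(U)=\pi_l(V)=W$ is left at the same informal ``shrink at the end'' level as in the paper's own proof, so no objection on that score.
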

\begin{proof}
D\'emontrons le r\'esultat par r\'ecurrence sur~$n$. Si $n=0$, le r\'esultat est satisfait avec $l=0$ puisque $\pi_{0}^{-1}(b) = \{b\}$.

Supposons que $n\ge 1$ et que le r\'esultat est vrai pour~$n-1$. Si $\psi(X)$ est un voisinage de~$0_{b,n}$ dans~$\pi_{n}^{-1}(b)$, alors $\psi$ satisfait toutes les propri\'et\'es requises. Supposons maintenant que tel n'est pas le cas.

D'apr\`es le corollaire~\ref{cor:imagefini}, $\psi(X)$ est un ferm\'e analytique de~$V_{0}$ et $\psi(X) \cap \pi_{n}^{-1}(b)$ est strictement contenu dans $V_{0} \cap \pi_{n}^{-1}(b)$. On en d\'eduit qu'il existe un voisinage ouvert~$V$ de~$0_{b,n}$ dans~$V_{0}$ et un \'el\'ement~$f$ de~$\cO(V)$ qui soit nul sur $V \cap \psi(X)$ mais pas identiquement nul sur $V \cap \pi_{n}^{-1}(b)$.  Posons $U := \psi^{-1}(V)$ et $W:= \pi_{n}(V)$. D'apr\`es le corollaire~\ref{cor:projectionouverte}, $W$~est un ouvert de~$B$. D'apr\`es le lemme~\ref{changement_variable}, il existe un isomorphisme $\sigma \colon\pi_{n}^{-1}(W) \to \pi_{n}^{-1}(W)$ tel que $\sigma(0_{b,n}) = 0_{b,n}$ et la restriction de~$\sigma^\sharp(f)$ \`a~$\pi_{n,n-1}^{-1}(b)$ ne soit pas nulle. Le point~$0_{b,n}$ est alors isol\'e dans $\sigma(\psi(U)) \cap \pi_{n,n-1}^{-1}(b)$. 

Posons $\varphi := \pi_{n,n-1} \circ \sigma \circ \psi$. On a alors $\varphi(x) = 0_{b,n-1}$ et le point~$x$ est isol\'e dans~$\varphi^{-1}(0_{b,n-1})$. D'apr\`es le th\'eor\`eme~\ref{thm:locfini}, le morphisme~$\varphi$ est donc fini en~$x$. Puisqu'il est \`a valeurs dans un ouvert de~$\E{n-1}{\cA}$, l'hypoth\`ese de r\'ecurrence permet de conclure.
\end{proof}

\begin{theo}\label{proj}\index{Espace analytique!integre@int\`egre}\index{Theoreme@Th\'eor\`eme!de normalisation de Noether}\index{Lemme!de normalisation de Noether}
Soit~$X$ un espace $\cA$-analytique. Notons $\pi \colon X \to B$ le morphisme structural. Soit~$x$ un point de~$X$ en lequel~$\cO_{X,x}$ est int\`egre. Posons $b := \pi(x)$ et notons $\pi^\sharp_{b,x} \colon \cO_{B,b}\to\cO_{X,x}$ le morphisme induit par~$\pi^\sharp$.

Si $\pi^\sharp_{b,x}$ est injectif, alors il existe un voisinage ouvert~$U$ de~$x$ dans~$X$, un entier $n\in \N$, un ouvert~$V$ de~$\E{n}{\cA}$ et un morphisme fini ouvert $\varphi \colon U\to V$.

Si $\pi^\sharp_{b,x}$ n'est pas injectif, alors il existe un voisinage ouvert~$U$ de~$x$ dans~$X$, un entier $n\in \N$, un ouvert~$V$ de~$\E{n}{\cH(b)}$ et un morphisme fini ouvert $\varphi \colon U\to V$.
\end{theo}
\begin{proof}
Puisque la question est locale, on peut supposer que~$X$ est un ferm\'e analytique d'un ouvert~$\tilde U$ de~$\E{n}{\cA}$. D'après la remarque~\ref{rem:rigeptrans}, quitte \`a permuter les variables, on peut supposer qu'il existe $k\in \cn{0}{n}$ tel que $x$ soit rigide \'epais au-dessus de $x_{k} :=\pi_{n,k}(x)$ et que $x_{k}$ soit purement localement transcendant au-dessus de~$b := \pi_{n}(x)$.

Pour tout $m\ge k$, notons $0_{x_{k},m} \in \E{m}{\cA}$ le point~0 de la fibre $\pi_{m,k}^{-1}(x_{k}) \simeq \E{m-k}{\cH(x_{k})}$. D'apr\`es le lemme~\ref{lem:translation}, il existe un voisinage~$W$ de~$x_{k}$ dans~$\E{k}{\cA}$, un voisinage ouvert~$U'$ de~$x$ dans~$\tilde U$ et un voisinage ouvert~$V_{0}$ de~$0_{x_{k},n}$ dans~$\E{n}{\cA}$ v\'erifiant $\pi_{n,k}(U') = \pi_{n,k}(V_{0}) = W$ et un morphisme fini $\psi \colon U' \to V_{0}$ tels que l'on ait $\psi^{-1}(0_{x_{k},n})=\{x\}$ et $\pi_{n,k} \circ \psi = \pi_{n,k}$. 

D'apr\`es le lemme~\ref{lem:finivoisinagefibre} appliqu\'e \`a~$\psi_{|X}$, quitte \`a restreindre~$W$, on peut supposer qu'il existe un voisinage ouvert~$U$ de~$x$ dans~$X$, un entier $l\ge k$, un voisinage ouvert~$V$ de~$0_{x_{k},l}$ dans~$\E{l}{\cA}$ et un morphisme fini $\varphi\colon U \to V$ tels que 
$\pi_{n,k}(U) = \pi_{l,k}(V) = W$, $\pi_{l,k} \circ \varphi = \pi_{n,k}$ et $\varphi(U)$ est un voisinage de~$0_{x_{k},l}$ dans~$\pi_{l,k}^{-1}(b)$. En outre, en utilisant le lemme~\ref{lem:voisinagefibre}, on montre que, quitte \`a restreindre~$U$, on peut supposer que $\varphi^{-1}(0_{x_{k},l}) = \{x\}$. Posons $y := 0_{x_{k},l}$.

D'apr\`es le th\'eor\`eme~\ref{thm:fini}, le faisceau $\varphi_{\ast} \cO_{U}$ est coh\'erent. Notons~$\cI$ le noyau du morphisme $\varphi^\sharp \colon \cO_{V} \to \varphi_{\ast} \cO_{U}$. L'ensemble~$\varphi(U)$ est l'ensemble sous-jacent au ferm\'e analytique de~$V$ d\'efini par~$\cI$. Nous consid\'ererons d\'esormais~$\varphi(U)$ comme un espace analytique muni de cette structure.

Remarquons que~$\cI_{y}$ est un id\'eal premier de~$\cO_{V,y}$. En effet, puisque $\varphi^{-1}(y) = \{x\}$, $\cI_{y}$ est le noyau du morphisme $\cO_{V,y}\to\cO_{U,x}$ induit par~$\varphi^\sharp$. On en d\'eduit que $\cO_{\varphi(U),y}\simeq\cO_{V,y}/\cI_{y}$ s'injecte dans~$\cO_{U,x}$, qui est suppos\'e int\`egre. Le r\'esultat s'ensuit.

Nous allons \`a pr\'esent traiter s\'epar\'ement les deux cas pr\'esent\'es dans l'\'enonc\'e.

\smallbreak

$\bullet$ Supposons que le morphisme~$\pi^\sharp_{b,x}$ est injectif.

Puisque~$U$ est int\`egre en~$x$ et que~$V$ est localement int\`egre, d'apr\`es la proposition~\ref{ouvert2}, il suffit de montrer que~$\varphi$ ouvert en~$x$. 
Puisqu'on a $\varphi^{-1}(y) = \{x\}$, d'apr\`es le lemme~\ref{lem:voisinagefibre}, pour tout voisinage~$U'$ de~$x$, $\varphi(U')$ est un voisinage de~$y$ dans~$\varphi(U)$.  Il suffit donc de montrer que~$\varphi(U)$ est un voisinage de~$y$ dans~$V$. 

Raisonnons par l'absurde et supposons que~$\varphi(U)$ n'est pas un voisinage de~$y$ dans~$V$. Dans ce cas, $\cI_{y}$ n'est pas nul. Soit $f \in \cI_{y} \setminus\{0\}$. 

Supposons que~$\cO_{B,b}$ est un corps fort. D'apr\`es le th\'eor\`eme~\ref{rigide}, $\cO_{\E{k}{\cA},x_k}$ est encore un corps fort. D'apr\`es le lemme~\ref{restriction_fibre}, l'image de~$f$ dans~$\cO_{\pi_{l,k}^{-1}(x_k),y}$ n'est pas nulle. Or l'image de~$f$ dans~$\cO_{\varphi(U),y}$ est nulle et $\varphi(U)$ est un voisinage de~$y$ dans $\pi_{l,k}^{-1}(x_k)$. On aboutit \`a une contradiction.

Supposons que $\cO_{B,b}$ est un anneau de valuation discr\`ete. Fixons-en une uniformisante~$\pi_{b}$. D'apr\`es le th\'eor\`eme~\ref{rigide}, $\cO_{\E{k}{\cA},x_k}$ est encore un anneau fortement de valuation discr\`ete d'uniformisante~$\pi_{b}$. D'apr\`es le lemme~\ref{restriction_fibre_avd}, il existe $v\in \N$ et $g\in\cO_{V,y}$ tels que la restriction de~$g$ \`a~$\pi_{l,k}^{-1}( x_{k})$ ne soit pas nulle et $f=\pi_b^v g$ dans~$\cO_{V,y}$. 

Puisque le morphisme $\pi^\sharp_{b,x}\colon \cO_{B,b}\to\cO_{X,x}$ est injectif, le morphisme $\cO_{B,b}\to\cO_{\varphi(U),y} \simeq\cO_{V,y}/\cI_{y}$ l'est aussi. Par cons\'equent, l'image de~$\pi_{b}$ dans $\cO_{V,y}$ n'appartient pas \`a~$\cI_{y}$. Puisque~$\cI_{y}$ est premier, on en d\'eduit que $g\in \cI_{y}$. On en d\'eduit une contradiction, comme pr\'ec\'edemment.

\smallbreak

$\bullet$ Supposons que le morphisme~$\pi^\sharp_{b,x}$ n'est pas injectif.

Ce cas ne peut se produire que lorsque~$\cO_{B,b}$ est un anneau de valuation discr\`ete. Fixons une uniformisante~$\pi_{b}$ de~$\cO_{B,b}$. Notons~$V_{b}$ le ferm\'e analytique de~$V$ d\'efini par~$\pi_{b}$. Rappelons que le morphisme $\cO_{\varphi(U),y} \simeq \cO_{V,y}/\cI_{y}   \to\cO_{U,x}$ induit par~$\varphi^\sharp$ est injectif. Puisque $\pi^\sharp_{b,x}\colon \cO_{B,b}\to\cO_{U,x}$ n'est pas injectif, l'image de~$\pi_{b}$ dans~$\cO_{V,y}$ appartient \`a~$\cI_{y}$. Par cons\'equent,  quitte \`a r\'etr\'ecir~$U$ et~$V$, on peut supposer que~$\varphi(U)$ est un ferm\'e analytique de~$V_b$. Le morphisme~$\varphi$ se factorise donc par un morphisme $\varphi' \colon U\to V_b$. On peut alors appliquer \`a~$\varphi'$ le m\^eme raisonnement que pr\'ec\'edemment, dans le cas des corps forts, pour montrer qu'il est ouvert en~$x$. On conclut alors par la proposition~\ref{ouvert2}.
\end{proof}

\begin{rema}
Dans le cas classique des espaces sur un corps valu\'e, on peut d\'emontrer un r\'esultat similaire \`a celui du th\'eor\`eme~\ref{proj} sans condition sur l'espace~$X$ et construire un morphisme ouvert en~$x$ (qui sera donc ouvert au voisinage de~$x$ si~$X$ est irr\'eductible, par exemple). Notre preuve fournit d'ailleurs ce r\'esultat lorsque~$\cO_{B,b}$ est un corps.

En revanche, cet \'enonc\'e plus g\'en\'eral tombe en d\'efaut lorsque~$\cO_{B,b}$ est un anneau de valuation discr\`ete, comme on le v\'erifie sur l'exemple du ferm\'e analytique~$X$ de~$\E{1}{\Z}$, avec coordonn\'ee~$T$, d\'efini par l'\'equation $2T=0$ et du point $x$ d\'efini par $T(x) = 2(x) = 0$.
\end{rema}

\begin{rema}
L'entier~$n$ pr\'esent dans l'\'enonc\'e du th\'eor\`eme~\ref{proj} ne d\'epend que de~$x$. Pour s'en convaincre, il suffit de remarquer que, dans tous les cas, on a un morphisme fini et ouvert $U \cap \pi^{-1}(b) \to \E{n}{\cH(b)}$. 
On est alors ramen\'es au cas classique des espaces sur un corps valu\'e complet et on v\'erifie que l'entier~$n$ n'est autre que la dimension de $U \cap \pi^{-1}(b)$ en~$x$.
\end{rema}

\section{Dimension locale d'un espace analytique}\label{sec:dimlocale}

Le but de cette section est de d\'efinir la dimension d'un espace analytique en un point. 

Commen\c cons par d\'emontrer un r\'esultat utile d'ouverture des morphismes plats. Nous remercions Mattias Jonsson qui nous a sugg\'er\'e de l'inclure.

\begin{prop}\label{prop:morphismestructuralouvert}\index{Morphisme analytique!plat}\index{Morphisme analytique!ouvert}\index{Morphisme!structural}
Soit~$X$ un espace~$\cA$-analytique. Notons $\pi \colon X \to B$ le morphisme structural. Soit $x\in X$. Si le morphisme $\cO_{B,\pi(x)} \to \cO_{X,x}$ induit par~$\pi^\sharp$ est plat, alors le morphisme~$\pi$ est ouvert au point~$x$.
\end{prop}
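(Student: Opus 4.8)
Je vais esquisser la preuve de l'énoncé : si le morphisme $\cO_{B,\pi(x)} \to \cO_{X,x}$ induit par $\pi^\sharp$ est plat, alors $\pi$ est ouvert au point $x$.

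Le plan est de se ramener, grâce aux résultats de structure locale déjà établis, au cas d'un espace affine analytique au-dessus de $B$, pour lequel l'ouverture des projections est connue (corollaire~\ref{cor:projectionouverte}). Voici comment je procéderais. D'abord, la question étant locale, on peut supposer que $X$ est un fermé analytique d'un ouvert de $\E{n}{\cA}$. Posons $b := \pi(x)$. Le point clé est que la platitude de $\cO_{B,b} \to \cO_{X,x}$ interdit à $x$ d'appartenir à une composante locale de $X$ en $x$ qui serait « verticale », c'est-à-dire contenue dans la fibre $\pi^{-1}(b)$ au voisinage de $x$ : en effet, si $\cO_{B,b}$ est un corps, il n'y a rien à démontrer (toute $\cO_{B,b}$-algèbre est plate et l'énoncé est trivial via le corollaire~\ref{cor:projectionouverte} combiné au théorème~\ref{proj}) ; et si $\cO_{B,b}$ est un anneau fortement de valuation discrète d'uniformisante $\varpi_b$, alors la platitude de $\cO_{X,x}$ comme $\cO_{B,b}$-module équivaut à l'absence de $\varpi_b$-torsion, ce qui signifie précisément que l'image de $\varpi_b$ dans $\cO_{X,x}$ n'est pas diviseur de zéro.

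Ensuite j'utiliserais la décomposition locale en composantes irréductibles (proposition~\ref{decomp}) : il existe un voisinage $U$ de $x$ et des fermés analytiques $V_1, \dotsc, V_s$ de $U$, intègres en $x$, avec $U = \bigcup_i V_i$. Comme l'image d'une réunion est la réunion des images, il suffit de démontrer que chaque $\pi_{|V_i} \colon V_i \to B$ est ouvert au point $x$. On est donc ramené au cas où $X$ est intègre en $x$. Or, dans ce cas, le morphisme $\pi^\sharp_{b,x} \colon \cO_{B,b} \to \cO_{X,x}$ est soit injectif, soit de noyau non nul ; mais s'il n'est pas injectif et $\cO_{B,b}$ est un anneau de valuation discrète, son noyau contient une puissance de $\varpi_b$, donc $\varpi_b$ est diviseur de zéro modulo ce noyau — cela contredirait la platitude (à moins que $\cO_{X,x}$ ne soit de $\varpi_b$-torsion, auquel cas $\pi$ n'est certainement pas plat en $x$ sauf si l'anneau est nul). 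La platitude force donc $\pi^\sharp_{b,x}$ injectif.

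Il ne reste alors qu'à invoquer le théorème~\ref{proj} dans le cas injectif : il existe un voisinage ouvert $U'$ de $x$, un entier $n$, un ouvert $V$ de $\E{n}{\cA}$ et un morphisme fini \emph{ouvert} $\varphi \colon U' \to V$. Puisque la projection structurale $\pi_n \colon \E{n}{\cA} \to B$ est ouverte (corollaire~\ref{cor:projectionouverte}, ou exemple~\ref{ex:structural} combiné au corollaire~\ref{cor:projectionouverte}), et que $\pi_{|U'} = \pi_n \circ \iota \circ \varphi$ où $\iota \colon V \hookrightarrow \E{n}{\cA}$ est l'inclusion ouverte, le morphisme $\pi_{|U'}$ est composé de morphismes ouverts, donc ouvert. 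Le principal obstacle me semble être le contrôle précis de l'argument de platitude : il faut soigneusement vérifier que la platitude de $\cO_{B,b} \to \cO_{X,x}$ survit à la restriction aux composantes locales $V_i$ (ce qui n'est pas automatique, une composante d'un module plat n'étant pas nécessairement plate) — je contournerais cette difficulté en n'utilisant la platitude que pour établir l'injectivité de $\pi^\sharp_{b,x}$ au niveau de $X$ tout entier, puis en remarquant que cette injectivité, jointe au fait que $V_i$ s'injecte dans un localisé convenable, entraîne l'injectivité correspondante pour chaque $V_i$ intègre en $x$ dont le support rencontre un voisinage où $\varpi_b$ reste non diviseur de zéro ; les composantes restantes, entièrement contenues dans $\pi^{-1}(b)$, seraient exclues directement par la platitude. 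C'est ce découplage entre « platitude globale $\Rightarrow$ injectivité » et « injectivité $\Rightarrow$ ouverture via Noether » qui rend l'argument propre.
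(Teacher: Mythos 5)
Votre schéma général est le même que celui du texte (décomposition en composantes locales intègres en $x$ via la proposition~\ref{decomp}, puis théorème~\ref{proj} dans le cas injectif et corollaire~\ref{cor:projectionouverte}), mais il y a un vrai trou à l'étape intermédiaire. Vous réduisez l'énoncé à \og chaque $\pi_{|V_i}$ est ouvert en $x$ \fg, ce qui vous oblige à exclure toute composante verticale, et le contournement que vous proposez --- n'utiliser la platitude que pour obtenir l'injectivité de $\cO_{B,b}\to\cO_{X,x}$, puis la transférer à chaque $V_i$ --- ne fonctionne pas~: l'injectivité au niveau de $X$ tout entier n'empêche nullement une composante particulière d'être contenue dans la fibre (penser à l'union d'une section horizontale et de la fibre au-dessus de~$b$~: $\varpi_b\neq 0$ dans $\cO_{X,x}$ mais $\varpi_b=0$ dans l'anneau local de la composante verticale). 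Le passage \og $V_i$ s'injecte dans un localisé convenable \fg{} n'est pas justifié (le morphisme naturel $\cO_{X,x}\to\cO_{V_i,x}$ est un quotient, pas une injection), et l'affirmation que les composantes verticales seraient \og exclues directement par la platitude \fg{} est vraie mais demande un argument de torsion que vous ne donnez pas (prendre $g$ dans l'intersection des idéaux premiers des autres composantes, hors de celui de la composante verticale~; alors $\varpi_b g$ est nilpotent, donc $g^M\neq 0$ est de $\varpi_b$-torsion pour $M$ assez grand, ce qui contredit la platitude).

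La preuve du texte évite entièrement cette difficulté~: comme $\pi(U)\supset\pi(U_{i_0})$, il suffit qu'il existe \emph{une} composante $U_{i_0}$ telle que $\cO_{B,b}\to\cO_{U_{i_0},x}$ soit injectif, et cette existence se déduit directement de la platitude~: si $\varpi_b$ s'annulait dans tous les $\cO_{U_i,x}$, elle s'annulerait en tout point d'un voisinage de~$x$, donc serait nilpotente dans $\cO_{X,x}$ d'après le corollaire~\ref{cor:nilpotent}, ce qui contredit l'absence de $\varpi_b$-torsion. Votre argument devient correct si vous remplacez \og chaque \fg{} par \og au moins une \fg{} et invoquez cet argument de nilpotence (ou si vous rédigez effectivement l'argument de torsion excluant toute composante verticale)~; en l'état, l'étape clé manque.
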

\begin{proof}
Supposons que le morphisme $\cO_{B,\pi(x)} \to \cO_{X,x}$ induit par~$\pi^\sharp$ est plat. Soit~$U$ un voisinage ouvert de~$x$ dans~$X$. Posons $b:=\pi(x)$. Montrons que~$\pi(U)$ est un voisinage de~$b$. 

D'apr\`es la proposition~\ref{decomp}, quitte \`a r\'etr\'ecir le voisinage~$U$, on peut supposer qu'il s'\'ecrit sous la forme $U = \bigcup_{i=1}^s U_{i}$, o\`u les~$U_{i}$ sont des ferm\'es analytiques de~$U$ contenant~$x$ et int\`egres en~$x$. Il suffit de montrer qu'il existe $i\in \cn{1}{s}$ tel que $\pi(U_{i})$ soit un voisinage de~$b$. 

Montrons, tout d'abord, qu'il existe $i\in \cn{1}{s}$ tel que le morphisme naturel $\cO_{B,b}\to\cO_{U_i,x}$ soit injectif. Si~$\cO_{B,b}$ est un corps, le r\'esultat est \'evident. Supposons donc que~$\cO_{B,b}$ est un anneau fortement de valuation discr\`ete. Soit~$\varpi_{b}$ une uniformisante de cet anneau. Il suffit de montrer qu'il existe $i\in \cn{1}{s}$ tel que l'image de~$\varpi_{b}$ dans $\cO_{U_i,x}$ ne soit pas nulle. Supposons, par l'absurde, que tel ne soit pas le cas. Alors $\varpi_{b}$ s'annule en tout point d'un voisinage de~$x$ dans~$U$, donc son image dans~$\cO_{U,x}$ est nilpotente, d'apr\`es le corollaire~\ref{cor:nilpotent}. Ceci contredit la platitude du morphisme $\cO_{B,b}\to\cO_{U,x}$.

Soit~$i_0 \in \cn{1}{s}$ tel que le morphisme $\cO_{B,b}\to\cO_{U_{i_0},x}$ est injectif. Puisque~$U_{i_0}$ est int\`egre en~$x$, le th\'eor\`eme~\ref{proj} assure que, quitte \`a r\'etr\'ecir~$U_{i_0}$, il existe un morphisme fini ouvert $\varphi \colon U_{i_0}\to V$, o\`u~$V$ est un ouvert d'un espace affine analytique~$\E{n}{\cA}$. Or, d'apr\`es le corollaire~\ref{cor:projectionouverte}, le morphisme de projection $\E{n}{\cA}\to B$ est ouvert. Le r\'esultat s'en d\'eduit.
\end{proof}

Venons-en maintenant aux questions de dimension. Nous nous permettrons d'utiliser la th\'eorie classique de la dimension des espaces analytiques sur un corps valu\'e complet (\cf~\cite[\S~2.3]{Ber1} ou \cite[\S~1]{variationdimension}).
\index{Dimension!en un point|see{Espace analytique}}\index{Espace analytique!dimension d'un|(}

\begin{defi}\index{Espace analytique!dimension d'un|textbf}
\nomenclature[Ksa]{$\dim_{x}(X/\cM(\cA))$}{dimension relative d'un espace $\cA$-analytique~$X$ en un point~$x$}
Soit~$X$ un espace $\cA$-analytique. Notons $\pi \colon X \to B$ le morphisme structural. Soit $x\in X$ tel que $x$ soit int\`egre en~$X$. On appelle \emph{dimension relative de~$X$ en~$x$} la quantit\'e
\[ \dim_{x}(X/B) := \dim_{x}\big(\pi^{-1}(\pi(x))\big).\]
\end{defi}

Avant de d\'efinir la dimension locale d'un espace, introduisons encore une nouvelle notion.

\begin{defi}\label{def:vertical}\index{Espace analytique!vertical en un point|textbf}
Soit~$X$ un espace $\cA$-analytique. Notons $\pi \colon X \to B$ le morphisme structural. On dit que l'espace~$X$ est \emph{vertical en~$x$} si~$\pi^{-1}(\pi(x))$ est un voisinage de~$x$ dans~$X$.
\end{defi}

\begin{nota}%
\nomenclature[Ksb]{$v_{x}(X)$}{$ =0$ si $X$ est vertical en~$x$, 1 sinon}
Soient~$X$ un espace $\cA$-analytique et $x\in X$. On pose 
\[ v_{x}(X) :=
\begin{cases} 
0 \textrm{ si } X \textrm{ est vertical en } x~;\\ 
1 \textrm{ sinon.}
\end{cases}\] 
\end{nota}

\begin{defi}\index{Point!isolable|textbf}
Un point~$b$ de~$B$ est dit \emph{isolable} s'il existe un voisinage ouvert~$V$ de~$b$ dans~$B$ et $h\in \cO(V)$ tels que
\[ \{ c\in V : h(c)=0\} = \{b\}.\]
\end{defi}

\begin{rema}\label{rem:isolable}
Soit $b\in B$. Si $b$ est isol\'e dans~$B$, alors il est isolable. Si~$b$ n'est pas isol\'e dans~$B$, alors il ne peut \^etre isolable que si $\cO_{B,b}$ n'est pas un corps, auquel cas c'est un anneau de valuation discr\`ete.

Dans nos exemples standard \ref{ex:corpsvalue} \`a \ref{ex:Dedekind}, la r\'eciproque de l'énoncé précédent vaut également. Ainsi, dans $\cM(\Z)$ (\cf~exemple~\ref{ex:Z}), les points isolables sont-ils exactement les points~$a_{p}^{+\infty}$, avec $p$ premier, chacun de ces points \'etant isol\'e par l'\'equation $p=0$.

Notons cependant que, dans le cas général, il existe des points non isolables dont l'anneau local est de valuation discr\`ete. L'anneau $\C_{\hyb}\la \vert T\vert\le 1 \ra$ fournit un exemple de cette situation. D'apr\`es la proposition~\ref{prop:spectreseries}, le spectre~$\cM(\C_{\hyb}\la \vert T\vert\le 1 \ra)$ s'identifie au disque unit\'e ferm\'e au-dessus de~$\cM(\C_{\hyb})$. Le point~0 de $\cM(\C_{\hyb}\la \vert T\vert\le 1 \ra)$ au-dessus de la valeur absolue triviale (ou de toute autre) n'est pas isolable, mais l'anneau local en ce point est un anneau de valuation discr\`ete.
\end{rema}

\begin{lemm}\label{lem:vertical}\index{Morphisme analytique!plat}\index{Morphisme!structural}
Soit~$X$ un espace $\cA$-analytique. Notons $\pi \colon X \to B$ le morphisme structural. Soit $x\in X$ tel que $X$ soit irr\'eductible en~$x$ et $b:= \pi(x)$ soit isolable. Notons $\pi_{b,x}^\sharp \colon \cO_{B,b} \to \cO_{X,x}$ le morphisme induit par~$\pi^\sharp$.

Si $\cO_{B,b}$ est un corps, alors $X$ est vertical en~$x$.

Si $\cO_{B,b}$ est un anneau de valuation discr\`ete, alors les propositions suivantes sont \'equivalentes~:
\begin{enumerate}[i)]
\item $X$ est vertical en~$x$~;
\item le morphisme $\pi_{b,x}^\sharp$ n'est pas plat~;
\item le morphisme $\pi_{b,x}^\sharp$ n'est pas injectif~;
\item toute uniformisante de~$\cO_{B,b}$ est nilpotente dans~$\cO_{X,x}$.
\end{enumerate}
\end{lemm}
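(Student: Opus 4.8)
The plan is to separate the two cases of the statement and, in the case where $b$ is not open, to use the dichotomy — valid since $\cA$ is a base ring — that $\cO_{B,b}$ is either a strong field or a strong discrete valuation ring. First, suppose $b$ is open in $B$. Then $\{b\}$ is itself a compact (pro-rational, hence spectrally convex) neighbourhood of $b$, so in the presentation $\cO_{B,b} = \colim_{V\ni b}\cB(V)$ the colimit stabilises at $V=\{b\}$, and $\cB(\{b\})=\cH(b)$ (Example~\ref{ex:Bpoint}); thus $\cO_{B,b}\simeq\cH(b)$ is a field. Moreover $\pi^{-1}(b)$ is the preimage of an open set, hence a neighbourhood of $x$, so $X$ is vertical at $x$, which settles the first assertion.

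Now assume $b$ is not open and write $\phi:=\pi_{b,x}^\sharp\colon\cO_{B,b}\to\cO_{X,x}$. I would first record two reformulations. For verticality: since $\cM(\cA)$ satisfies analytic continuation (it is a geometric base ring), if $\cO_{B,b}$ is a DVR with uniformiser $\varpi_b$ then by Remark~\ref{rem:prolongementanalytiquecorpsavd} $\varpi_b$ vanishes on no punctured neighbourhood of $b$, so $\{b\}=V(\varpi_b)$ near $b$ and $\pi^{-1}(b)=\{\pi^\sharp(\varpi_b)=0\}$ near $x$; by the Nullstellensatz (Corollaries~\ref{cor:IVJ} and~\ref{cor:nilpotent}) this is a neighbourhood of $x$ if and only if $\phi(\varpi_b)$ is nilpotent in $\cO_{X,x}$. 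For the algebra over a DVR: flatness of $\cO_{X,x}$ over $\cO_{B,b}$ is equivalent to torsion-freeness, i.e. to $\phi(\varpi_b)$ being a non-zero-divisor, while injectivity of $\phi$ is equivalent to $\phi(\varpi_b)$ being non-nilpotent (the kernel being an ideal $(\varpi_b^n)$ of a DVR). Via Proposition~\ref{representant} I would also pass to a closed analytic subspace with the same underlying space, integral at $x$, so that $\cO_{X,x}$ may be taken to be a domain; this is harmless for the topological condition~(i), and it is exactly this reduction that makes the remaining equivalences work.

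Then the non-open case splits. If $\cO_{B,b}$ is a field, then (ii), (iii), (iv) fail automatically; to see that (i) fails too, apply Theorem~\ref{proj} in the injective case (a field $\cO_{B,b}$ forces $\phi$ injective) to obtain a finite \emph{open} morphism $U\to V$ with $V$ open in some $\E{n}{\cA}$, compose with the open projection $V\to\cM(\cA)$ (Corollary~\ref{cor:projectionouverte}) to conclude that $\pi|_U$ is open, and note that if $X$ were vertical at $x$ the interior of $\pi^{-1}(b)\cap U$ would be a nonempty open set mapped into $\{b\}$, forcing $b$ open — a contradiction. If $\cO_{B,b}$ is a DVR with uniformiser $\varpi_b$, then (iv)$\Rightarrow$(iii) is trivial, (iii)$\Rightarrow$(ii) follows from the DVR dictionary (a nilpotent is a zero-divisor since $\cO_{X,x}\neq 0$), and (i)$\Leftrightarrow$(iii) is the verticality reformulation combined with $\cO_{X,x}$ being a domain (so nilpotent $\Leftrightarrow$ zero). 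It then remains to go from (ii) back to (iv): if $\phi(\varpi_b)\neq 0$ then, $\cO_{X,x}$ being a domain, $\phi(\varpi_b)$ is a non-zero-divisor and non-nilpotent, so $\phi$ is flat and injective and $X$ is not vertical at $x$ — the contrapositive of both (ii)$\Rightarrow$(iv) and (i)$\Rightarrow$(iv) — which closes the chain.

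The main obstacle, and the one essential use of integrality, is the passage from "$\varpi_b$ maps to a nilpotent (or zero-divisor) of $\cO_{X,x}$" to "$\varpi_b$ maps to $0$": over a DVR, non-injectivity of $\phi$ only yields $\phi(\varpi_b^n)=0$ for some $n\ge 1$, and upgrading $n$ to $1$ needs $\cO_{X,x}$ to have no nonzero nilpotents, so one must genuinely carry out the reduction to the integral case (using that verticality is topological) before the argument runs. Everything else is routine: the DVR-module facts, the Nullstellensatz translation of verticality into the vanishing of a pulled-back uniformiser, and Noether normalisation together with openness of the coordinate projections to handle the directions in which $X$ is asserted not to be vertical.
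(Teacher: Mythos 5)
Your overall architecture is the paper's: the open case is dispatched immediately, verticality is translated via the Nullstellensatz (Corollary~\ref{cor:nilpotent}) into the nilpotence of $\pi_{b,x}^\sharp(\varpi_b)$, the module theory over a discrete valuation ring supplies the dictionary between flatness, injectivity and the behaviour of the uniformiser, and the direction ``flat (or injective over a strong field) implies not vertical'' is obtained from openness of the structural morphism --- which the paper has packaged once and for all as Proposition~\ref{prop:morphismestructuralouvert}, whereas you re-derive it from Theorem~\ref{proj} and Corollary~\ref{cor:projectionouverte}; both routes work. Note also that the equivalence (i)$\Leftrightarrow$(iii) needs no integrality, contrary to what you write: the kernel of $\pi_{b,x}^\sharp$ is $(\varpi_b^n)$ for some $n$, so non-injectivity is exactly nilpotence of $\varpi_b$ in $\cO_{X,x}$, which by the Nullstellensatz is exactly verticality.

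The genuine gap is the reduction to the integral case. Conditions (ii), (iii), (iv) are statements about the morphism into the \emph{original} local ring $\cO_{X,x}$, whereas Proposition~\ref{representant} replaces that ring by its quotient $\cO_{X,x}/\cI(X)_x$; none of flatness, injectivity, or the vanishing of $\varpi_b$ transfers back and forth across this quotient in general (a torsion element may itself be nilpotent, and $\varpi_b$ may have a nonzero nilpotent image), so as written your argument establishes the equivalences for the integral model rather than for $X$. The paper does not perform this reduction: it works directly in $\cO_{X,x}$ using only that $\cI(X)_x$ is prime --- for instance its proof of (ii)$\Rightarrow$(iii) starts from $\varpi_b^n f=0$ with $f\neq 0$ and passes into the prime ideal $\cI(X)_x$. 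You should either argue in the same way, or isolate precisely the implications that genuinely require reducedness rather than irreducibility --- namely (ii)$\Rightarrow$(iii) and (iii)$\Rightarrow$(iv), where one must exclude that the torsion element, respectively the image of $\varpi_b$, is a nonzero nilpotent --- and supply the missing justification there. In the only place the lemma is used (Definition~\ref{def:dimlocale}, via the decomposition of Proposition~\ref{decomp}) the germs are in fact integral at $x$, so your instinct to reduce points at the right phenomenon, but the reduction step itself has to be argued and not merely asserted to be harmless.
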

\begin{proof}
Supposons que~$\cO_{B,b}$ est un corps. D'apr\`es la remarque~\ref{rem:isolable}, le point~$b$ est isol\'e, donc $X$ est vertical en~$x$.

Supposons que $\cO_{B,b}$ est un anneau de valuation discr\`ete.

$i) \implies ii)$ Supposons, par l'absurde, que $\pi_{x,b}^\sharp$ est plat. Alors, d'apr\`es la proposition~\ref{prop:morphismestructuralouvert}, $\pi(\pi^{-1}(b)) = \{b\}$ est un voisinage de~$b$ et on aboutit \`a une contradiction.

$ii) \implies iii)$ Soit~$\varpi_{b}$ une uniformisante de $\cO_{B,b}$. Par hypoth\`ese, il existe un entier~$n\ge 1$ et un \'el\'ement $f$ de~$\cO_{X,x}$ non nul tels que $\varpi_{b}^n f = 0$. En d'autres termes, $\varpi_{b}^n f \in \cI(X)_{x}$. Or $\cI(X)_{x}$ est un id\'eal premier de~$\cO_{X,x}$, donc $\varpi_{b}^n =0$ dans~$\cO_{X,x}$. On en d\'eduit que $\pi_{b,x}^\sharp$ n'est pas injectif.

$iii) \implies iv)$ C'est imm\'ediat.

$iv) \implies i)$ Soit~$\varpi_{b}$ une uniformisante de~$\cO_{B,b}$. Puisque~$b$ est isolable, il existe un voisinage ouvert~$V$ de~$b$ dans~$B$ et $h\in \cO(V)$ tels que $\{ c\in V : h(c)=0\} = \{b\}$. Quitte \`a restreindre~$V$, on peut \'ecrire~$h$ sous la forme $h = \varpi_{b}^n g$ avec $n\ge 1$ et $g \in \cO(V)$ inversible. On a donc
\[ \{b\} = \{ c\in V : h(c)=0\} = \{ c\in V : \varpi_{b}(c)=0\}.\]
Puisque l'image de~$\varpi_{b}$ dans~$\cO_{X,x}$ est nilpotente, le ferm\'e analytique d\'efini par~$\varpi_{b}$ dans~$\pi^{-1}(V)$ est un voisinage de~$x$ dans~$X$. Or, ce ferm\'e n'est autre que~$\pi^{-1}(b)$.
\end{proof}

\begin{defi}\label{def:dimlocale}\index{Espace analytique!dimension d'un|textbf}
\nomenclature[Ksc]{$\dim_{x}(X)$}{dimension de~$X$ en~$x$}
Soit~$X$ un espace $\cA$-analytique. Soit $x\in X$ tel que $x$ soit irr\'eductible en~$X$. On appelle \emph{dimension de~$X$ en~$x$} la quantit\'e
\[ \dim_{x}(X) := \dim_{x}(X/B) + v_{x}(X).\]

Soit~$x\in X$. D\'ecomposons~$X$ au voisinage de~$x$ sous la forme $X = \bigcup_{i=1}^s V_{i}$, o\`u $V_{1},\dotsc,V_{s}$ sont int\`egres en~$x$ comme dans la proposition~\ref{decomp}. On appelle \emph{dimension de~$X$ en~$x$} la quantit\'e
\[ \dim_{x}(X) := \max_{1\le i\le s} \dim_{x}(V_{i}).\]
\end{defi}
\index{Espace analytique!dimension d'un|)}

\section[D'un sch\'ema \`a son analytifi\'e~1]{D'un sch\'ema \`a son analytifi\'e~: premi\`eres propri\'et\'es}\label{sec:schan1}
\index{Analytification!|(}

Dans cette section, nous nous int\'eressons aux propri\'et\'es d'un sch\'ema localement de pr\'esentation finie sur~$\cA$ qui sont pr\'eserv\'ees lorsque l'on passe \`a son analytifi\'e. 

\medbreak

Rappelons qu'\`a tout sch\'ema localement de pr\'esentation finie~$\cX$ sur~$\cA$, on associe un espace $\cA$-analytique~$\cX^\an$ et un morphisme d'espaces localement $\cA$-annel\'es $\rho_{\cX} \colon \cX^\an \to \cX$ (\cf~d\'efinition~\ref{def:analytification}).

\begin{lemm}\label{lem:Pnancompact}\index{Espace analytique!projectif}
Pour tout $n\in \N$, l'espace $\EP{n}{\cA}$ est compact.
\end{lemm}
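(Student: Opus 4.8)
The statement to prove is that the analytic projective space $\EP{n}{\cA}$ over a geometric base ring $\cA$ is compact. The plan is to realize $\EP{n}{\cA}$ as a finite union of compact pieces, namely the images of closed polydiscs over $\cM(\cA)$ under the standard affine charts, together with a continuity/closedness argument. First I would recall the construction of $\EP{n}{\cA}$: it is obtained by gluing $n+1$ copies $U_0,\dotsc,U_n$ of $\E{n}{\cA}$ along the usual transition maps $T_j^{(i)} \mapsto T_j^{(k)}/T_i^{(k)}$. (Formally, $\EP{n}{\cA}$ should be the analytification of $\P^n_{\cA}$, or be constructed directly by gluing; either way the chart description is available.) Since $\cM(\cA)$ is compact by Theorem~\ref{th:MAnonvide}, each relative closed polydisc $\overline{D}_{\cM(\cA)}(1,\dotsc,1) \subset \E{n}{\cA}$ is compact — it is a closed subset of $\E{n}{\cA}$ cut out by the conditions $|T_j| \le 1$, and compactness of such relative polydiscs over a compact base is standard (\cf{} the argument already used in Example~\ref{ex:fini} and Lemma~\ref{cas_particulier_coh\'erence}, where $\overline{D}_{B}(P,r)$ is invoked as compact via \cite[corollaire~1.1.12]{A1Z}).

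The key step is the covering claim: $\EP{n}{\cA} = \bigcup_{i=0}^n K_i$, where $K_i$ is the image in $\EP{n}{\cA}$ of the closed unit polydisc inside the $i$-th chart $U_i \simeq \E{n}{\cA}$. To see this, take a point $x$ of $\EP{n}{\cA}$; it lies in some chart, and in the corresponding homogeneous ``coordinates'' $(a_0(x),\dotsc,a_n(x)) \in \cH(x)^{n+1}$ (not all zero) one picks an index $i$ with $|a_i(x)| = \max_j |a_j(x)|$; then $|a_j(x)/a_i(x)| \le 1$ for all $j$, so $x$ lies in the closed unit polydisc of the chart $U_i$, i.e. $x \in K_i$. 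This is the exact analogue of the classical covering of projective space by the unit polydiscs of its affine charts, and the verification is a short computation with the seminorm defining $x$. Since each $K_i$ is the continuous image of a compact set, it is compact (quasi-compact); hence $\EP{n}{\cA}$, being a finite union of compact sets, is quasi-compact. Separation (to upgrade quasi-compact to compact in the Hausdorff sense) follows from the fact that $\EP{n}{\cA}$ is separated — each transition map is an isomorphism of open analytic spaces and one checks the valuative/Hausdorff criterion chart by chart, or one simply notes that $\P^n_{\cA}$ is separated over $\cA$ and invokes (once available in the text) that analytification preserves separatedness; at this early point in the chapter it suffices to check directly that distinct points have disjoint neighbourhoods using the chart structure.

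The main obstacle I anticipate is purely bookkeeping rather than conceptual: one must be careful that the ``homogeneous coordinate'' description of a point of $\EP{n}{\cA}$ is set up correctly so that the choice of a maximal coordinate $|a_i(x)|$ is meaningful — i.e. that the quotients $a_j/a_i$ genuinely describe the image point in chart $U_i$ and that $|a_j(x)/a_i(x)| = |a_j(x)|/|a_i(x)|$, which uses multiplicativity of the seminorm $\va_x$ and the fact that $a_i(x) \ne 0$ for the chosen $i$. Once that is in place, the compactness of the relative unit polydisc over the compact base $\cM(\cA)$ and the finiteness of the chart cover do all the work. I would therefore structure the proof as: (1) recall the chart cover; (2) prove compactness of the relative closed unit polydisc over $\cM(\cA)$; (3) prove the covering $\EP{n}{\cA} = \bigcup K_i$ by the maximal-coordinate argument; (4) conclude quasi-compactness, and note separatedness to obtain compactness.
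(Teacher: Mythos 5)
Your argument is correct and is essentially the paper's own proof, which simply observes that $\EP{n}{\cA}$, besides being glued from copies of $\E{n}{\cA}$, can be obtained by gluing the closed unit polydiscs of the charts, which are compact. Your write-up merely fills in the details the paper leaves implicit (the maximal-coordinate covering argument and the compactness of $\overline{D}_{\cM(\cA)}(1,\dotsc,1)$ over the compact base $\cM(\cA)$).
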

\begin{proof}
Il suit de la construction de l'analytifi\'e (\cf~th\'eor\`eme~\ref{thm:analytification}) que $\EP{n}{\cA}$ peut \^etre construit \`a partir d'espaces affines~$\E{n}{\cA}$ par la prod\'edure de recollement habituelle. On v\'erifie directement que $\EP{n}{\cA}$ est alors recouvert par les images des disques unit\'es ferm\'es de ces espaces affines. Le r\'esultat s'ensuit, par la compacit\'e des disques.

\end{proof}

\begin{lemm}\label{lem:imageanalytification}\index{Morphisme analytique!image d'un}
Soient $\cX$ et~$\cY$ des sch\'emas localement de pr\'esentation finie sur~$\cA$ et $\varphi\colon \cX\to \cY$ un morphisme de sch\'emas. Soit $y\in \cY^{an}$. Munissons $\varphi^{-1}(\rho_Y(y))$ de sa structure de sch\'ema sur $\kappa(\rho_{Y}(y))$ canonique et $(\varphi^\an)^{-1}(y)$ de sa structure d'espace analytique sur $\cH(y)$ canonique (\cf~proposition~\ref{preimage}). On a alors un isomorphisme canonique d'espaces $\cH(y)$-analytiques
\[(\varphi^{-1}(\rho_Y(y))\otimes_{\kappa(\rho_Y(y))}\cH(y))^{an} \simeq (\varphi^\an)^{-1}(y).\]
En particulier, on a 
\[ \rho_{\cY}(\varphi^\an(\cX^\an)) = \varphi(\cX).\]
\end{lemm}
\begin{proof}
On a 
\[\varphi^{-1}(\rho_{Y}(y)) \simeq \cX \otimes_{\cY} \kappa(\rho_{Y}(y))\]
et
\[ (\varphi^\an)^{-1}(y) \simeq (\cX^\an \ho{\cA} \cH(y)) \times_{\cY^\an \ho{\cA} \cH(y)} \cM(\cH(y)).\]
L'isomorphisme d\'ecoule alors des propri\'et\'es universelles (\cf~d\'efinitions~\ref{def:analytification} et~\ref{def:extensionscalaire}). 

En particulier, $\varphi^{-1}(\rho_{\cY}(y))$ est vide si, et seulement si, $(\varphi^\an)^{-1}(y)$ l'est. L'\'egalit\'e finale s'en d\'eduit.
\end{proof}

%

\begin{prop}\label{stabilit\'e_analytification1}\index{Morphisme analytique!surjectif}\index{Morphisme analytique!propre}\index{Morphisme analytique!fini}\index{Immersion}
Soit $\varphi\colon \cX\to \cY$ un morphisme de sch\'emas localement de pr\'esentation finie sur~$\cA$. Si le morphisme~$\varphi$ est (1) une immersion (resp. une immersion ferm\'ee, resp. une immersion ouverte), (2) surjectif, (3) propre, (4) s\'epar\'e, (5) fini, alors le morphisme~$\varphi^{an}$ poss\`ede la m\^eme propri\'et\'e.
\end{prop}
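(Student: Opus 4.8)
The plan is to treat the four properties separately, in each case reducing the statement about $\varphi^{\an}$ to the analogous statement for schemes, which holds by hypothesis, together with the general facts already established in this chapter. For property~(1), the immersion case, the idea is to use the universal property of analytification (Definition~\ref{def:analytification}) together with the behaviour of analytification with respect to open and closed subschemes. If $\varphi$ is a closed immersion defined by a quasi-coherent ideal $\mathcal{I}$ of finite type, then locally $\cX$ is the closed subscheme of an affine space $\mathbf{A}^n_{\cA}$ cut out by a finitely generated ideal, and the very construction in the proof of Theorem~\ref{thm:analytification} shows that $\cX^{\an}$ is the closed analytic subset of $\E{n}{\cA}$ defined by the coherent ideal generated by that ideal; invoking Proposition~\ref{immersion_ferm\'e} (or rather its scheme-theoretic analogue transported via the universal property) gives that $\varphi^{\an}$ is a closed immersion. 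The open immersion case is even easier: if $\cX$ is an open subscheme of $\cY$, then $\rho_{\cY}^{-1}(\cX)$ represents the functor $\Phi_{\cX}$ by the argument already used in the last step of the proof of Theorem~\ref{thm:analytification}, so $\cX^{\an}$ is canonically an open subspace of $\cY^{\an}$. The general immersion case follows by composing, using that compositions of immersions are immersions (Proposition in Section~\ref{sec:immersion}).

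For properties~(2), (3), (4) I would first reduce to the case where $\cY$ is affine, hence where $\cY^{\an}$ is an open subspace of some $\E{m}{\cA}$, using that surjectivity, properness and finiteness are all local on the target (Remarks~\ref{rem:proprelocalaubut} and~\ref{rem:finilocalaubut} and the obvious locality of surjectivity). Surjectivity is then immediate from Lemma~\ref{lem:imageanalytification}: if $\varphi(\cX) = \cY$, then $\rho_{\cY}(\varphi^{\an}(\cX^{\an})) = \cY$, and since $\rho_{\cY}$ has image all of $\cY$ while every point of $\cY$ lifts to a point of $\cY^{\an}$ (again by the point-evaluation morphisms of Example~\ref{ex:morphismex} composed with the universal property), one concludes $\varphi^{\an}$ is surjective — more precisely, the fibrewise identification $(\varphi^{\an})^{-1}(y) \simeq (\cX_{\rho_{\cY}(y)} \otimes_{\kappa(\rho_{\cY}(y))} \cH(y))^{\an}$ from the proof of Lemma~\ref{lem:imageanalytification} shows the fibre is nonempty exactly when $\cX_{\rho_{\cY}(y)}$ is, which holds for all $y$ when $\varphi$ is surjective.

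For properness~(3), the standard trick is to use the valuative/compactness criterion in the form of Lemma~\ref{lem:criterepropre}: it suffices to show that every point of $\cY^{\an}$ has a compact neighbourhood whose preimage under $\varphi^{\an}$ is compact. A clean way to do this is to factor $\varphi$, after restricting to an affine open of $\cY$, through a projective space: a proper morphism of finite presentation over an affine base is, locally on the base, of the form $\cX \hookrightarrow \mathbf{P}^n_{\cY} \to \cY$ with the first map a closed immersion (using properness plus finite presentation, via Chow's lemma or directly the theory of EGA). Analytifying, $\cX^{\an}$ becomes a closed analytic subset of $\EP{n}{\cY^{\an}}$, and since $\EP{n}{\cA}$ is compact (Lemma~\ref{lem:Pnancompact}), the fibres of $\mathbf{P}^{n,\an}_{\cY^{\an}} \to \cY^{\an}$ over points are compact; pulling back compact neighbourhoods and using that closed analytic subsets of compact spaces are compact gives the criterion. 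Finiteness~(4) then follows by combining properness (just proved) with the fact that the fibres of $\varphi^{\an}$ are finite: by the fibrewise identification above, $(\varphi^{\an})^{-1}(y) \simeq (\cX_{\rho_{\cY}(y)} \otimes_{\kappa(\rho_{\cY}(y))}\cH(y))^{\an}$, and a finite scheme over a field has finite analytification (it is a disjoint union of spectra of finite field extensions, whose analytifications are single points), so $\varphi^{\an}$ is proper with finite fibres, i.e.\ finite. The main obstacle is the reduction step in~(3): producing the factorization through a projective space in the analytic category in a way that is compatible with $\rho$, which requires care with the universal property of analytification applied to $\mathbf{P}^n_{\cA}$ and with the transport of Chevalley/Chow-type results from schemes; once that factorization is in hand, the compactness argument via Lemma~\ref{lem:Pnancompact} and Lemma~\ref{lem:criterepropre} is routine.
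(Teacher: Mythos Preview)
Your treatment of~(1), (2) and~(4) is essentially the paper's. The genuine gap is in~(3): you assert that ``a proper morphism of finite presentation over an affine base is, locally on the base, of the form $\cX \hookrightarrow \mathbf{P}^n_{\cY} \to \cY$ with the first map a closed immersion''. This is false --- there exist proper non-projective schemes over a field, and properness does \emph{not} imply projectivity even Zariski-locally on the target. Chow's lemma does not give such a factorisation of~$\varphi$; what it gives is a $\cY$-projective scheme~$\cX'$ together with a projective surjective $\cY$-morphism $\psi\colon \cX' \to \cX$.

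The paper's argument uses exactly this: one first proves~(3) for \emph{projective} morphisms (which do factor through a projective space, and for which your compactness argument via Lemma~\ref{lem:Pnancompact} works, though the paper phrases it via stability of properness under base change and composition rather than Lemma~\ref{lem:criterepropre}). Then, for general proper~$\varphi$, Chow's lemma yields $\psi\colon \cX'\to\cX$ with $\cX'\to\cY$ projective; by~(2), $\psi^{\an}$ is surjective, and since $(\varphi\circ\psi)^{\an} = \varphi^{\an}\circ\psi^{\an}$ is proper by the projective case, Lemma~\ref{lem:compositionpropre}\,(ii) (if $g\circ f$ is proper and $f$ is surjective, then $g$ is proper) gives that $\varphi^{\an}$ is proper. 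Your plan is missing this reduction step; the rest of your argument for the projective case, and everything in~(1), (2), (4), is fine.
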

\begin{proof}
(1) Les trois propri\'et\'es d\'ecoulent de la construction de l'analytifi\'e (\cf~th\'eor\`eme~\ref{thm:analytification}).

(2) Cette propri\'et\'e d\'ecoule du lemme~\ref{lem:imageanalytification}.

(3) Supposons que~$\varphi$ est propre. D'apr\`es le lemme de Chow (\cf~\cite[corollaire~5.6.2]{EGAII})), il existe un $\cY$-sch\'ema projectif~$\cX'$ et un morphisme de $\cY$-sch\'emas $\psi \colon \cX'\to \cX$ projectif et surjectif. D'apr\`es (2), $\psi^\an$ est surjectif. D'apr\`es le lemme~\ref{lem:compositionpropre}, il suffit donc de montrer le r\'esultat pour les morphismes projectifs.

Supposons que~$\varphi$ est projectif. Puisque la propret\'e est une notion locale au but, quitte \`a r\'etr\'ecir~$\cX$ et~$\cY$, on peut supposer que~$\varphi$ s'\'ecrit comme la compos\'ee d'une immersion ferm\'ee $\cX\to \P^n_\cA\times_{\cA} \cY$ et de la projection sur le second facteur $\P^n_{\cA}\times_{\cA} \cY\to \cY$. Remarquons qu'il d\'ecoule imm\'ediatement des propri\'et\'es universelles que l'analytification commute au produit. D'apr\`es~(1), $\cX^\an\to \EP{n}{\cA}\times_{\cA} \cY^\an$ est une immersion ferm\'ee, donc un morphisme propre. D'apr\`es le lemme~\ref{lem:Pnancompact}, $\EP{n}{\cA}$ est compact, donc la projection $\EP{n}{\cA} \to\cM(\cA)$ est propre. D'apr\`es la proposition~\ref{stabilite_propre}, la projection $\EP{n}{\cA}\times_{\cA} Y\to Y$ l'est encore. On conclut en utilisant le fait que la compos\'ee de deux morphismes propres est propre (\cf~lemme~\ref{lem:compositionpropre}).

(4) Cette propri\'et\'e d\'ecoule du point~(1) et du lemme~\ref{lem:produitan}.

(5) Supposons que~$\varphi$ est un morphisme fini. Alors, d'apr\`es~(3) et~(4), $\varphi^{an}$ est propre et s\'epar\'e. Il suffit donc de montrer que les fibres de~$\varphi^\an$ sont finies. Cela d\'ecoule de l'isomorphisme qui figure au d\'ebut de la preuve de~(2). 
\end{proof}

\begin{rema}
Il est tr\`es plausible que les implications d\'emontr\'ees \`a la proposition~\ref{stabilit\'e_analytification1} soient en fait des \'equivalences. Nous n'en dirons pas plus ici.
\end{rema}

Nous allons maintenant d\'emontrer des r\'esultats sur les faisceaux.

\begin{defi}\index{Analytification!d'un faisceau|textbf}\index{Faisceau!analytifie@analytifi\'e|see{Analytification}}%
\nomenclature[Ee]{$\cF^\an$}{analytifi\'e d'un faisceau coh\'erent~$\cF$ sur un sch\'ema}
Soit~$\cX$ un sch\'ema localement de pr\'esentation finie sur~$\cA$. Pour tout faisceau de $\cO_{\cX}$-modules~$\cF$, on appelle \emph{analytification} ou \emph{analytifi\'e} de~$\cF$ le faisceau de $\cO_{\cX^\an}$-modules d\'efini par
\[ \cF^\an := (\rho_{\cX})^\ast \cF.\]
\end{defi}


\begin{lemm}\index{Faisceau!coherent@coh\'erent}
Soit~$\cX$ un sch\'ema localement de pr\'esentation finie sur~$\cA$. Soit~$\cF$ un faisceau de $\cO_{\cX}$-modules. Si $\cF$ est de type fini (resp. coh\'erent), alors~$\cF^\an$ est de type fini (resp. coh\'erent).
\end{lemm}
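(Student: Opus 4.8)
L'énoncé affirme que l'analytification d'un faisceau de type fini (resp. cohérent) est de type fini (resp. cohérente). La stratégie naturelle est locale : la propriété d'être de type fini, comme celle d'être cohérent, se vérifie au voisinage de chaque point, et le morphisme d'analytification $\rho_{\cX} \colon \cX^\an \to \cX$ est compatible aux restrictions à des ouverts (un ouvert affine $\cU$ de~$\cX$ a pour analytifié $\rho_{\cX}^{-1}(\cU)$, d'après la construction du théorème~\ref{thm:analytification}). On se ramène donc au cas où $\cX$ est un sous-schéma fermé d'un espace affine $\A^n_{\cA}$, auquel cas $\cX^\an$ est un fermé analytique de~$\E{n}{\cA}$, et il suffit par ailleurs, grâce au lemme~\ref{lem:immersionfermeecoherence}, de traiter le cas $\cX = \A^n_{\cA}$ et $\cX^\an = \E{n}{\cA}$, en poussant les faisceaux par l'immersion fermée.

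Premièrement, pour le cas de type fini : si $\cF$ est de type fini sur un voisinage affine, il existe une présentation $\cO_{\cX}^m \to \cF \to 0$ localement ; comme $\rho_{\cX}^\ast$ est exact à droite et envoie $\cO_{\cX}$ sur $\cO_{\cX^\an}$, on obtient $\cO_{\cX^\an}^m \to \cF^\an \to 0$, d'où $\cF^\an$ de type fini. Deuxièmement, pour la cohérence : le faisceau structural $\cO_{\E{n}{\cA}}$ est cohérent d'après le théorème~\ref{coherent} (l'anneau $\cA$ étant un anneau de base géométrique, il est basique et $\cM(\cA)$ satisfait le principe du prolongement analytique). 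On sait déjà que $\cF^\an$ est de type fini. Il reste à montrer que, pour tout morphisme $\cO_{\cX^\an}^p \to \cF^\an$ défini localement, le noyau est de type fini. L'ingrédient-clé est que $\cF$ admet localement une présentation finie $\cO_{\cX}^q \to \cO_{\cX}^p \to \cF \to 0$ (puisque $\cF$ est cohérent sur un schéma localement de présentation finie, donc sur un schéma noethérien-localement, le faisceau est de présentation finie localement) ; l'exactitude à droite de $\rho_{\cX}^\ast$ fournit une présentation $\cO_{\cX^\an}^q \to \cO_{\cX^\an}^p \to \cF^\an \to 0$. Comme $\cO_{\cX^\an}$ est cohérent, un faisceau de présentation finie de $\cO_{\cX^\an}$-modules est cohérent ; $\cF^\an$ l'est donc.

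Le point le plus délicat est d'assurer que le passage à l'analytifié transforme bien une présentation du schéma en une présentation de l'espace analytique, autrement dit de contrôler le comportement de $\rho_{\cX}^\ast$ vis-à-vis des noyaux. Ici, on n'a pas besoin d'exactitude à gauche : il suffit de partir d'une \emph{présentation} (deux termes libres) sur le schéma et d'appliquer le foncteur $\rho_{\cX}^\ast$, qui préserve les suites exactes à droite (c'est un adjoint à gauche). L'existence d'une présentation finie locale de~$\cF$ sur~$\cX$ résulte de l'hypothèse que $\cX$ est localement de présentation finie sur~$\cA$ — et donc localement noethérien, au sens où ses anneaux locaux sont des localisés d'algèbres de type fini sur un anneau noethérien — combinée à la cohérence de~$\cF$. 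Une fois cette réduction acquise, la conclusion est immédiate par application du théorème de cohérence du faisceau structural analytique (théorème~\ref{coherent}) et du fait standard qu'un faisceau de présentation finie sur un espace annelé cohérent est cohérent.
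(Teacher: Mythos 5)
Votre démonstration est correcte et suit essentiellement le même chemin que celle du texte : exactitude à droite de $\rho_{\cX}^\ast$ pour le type fini, puis « cohérent $\Rightarrow$ localement de présentation finie » sur~$\cX$, préservation de la présentation finie par image réciproque, et cohérence du faisceau structural analytique (théorème~\ref{coherent}) pour conclure. Seule remarque mineure : l'appel à une noethérianité locale de~$\cX$ est superflu (et douteux si~$\cA$ n'est pas noethérien) — un faisceau cohérent est toujours localement de présentation finie par définition de la cohérence.
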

\begin{proof}
Le foncteur $\cF \mapsto \cF^\an$ est exact \`a droite, donc il pr\'eserve les faisceaux de type fini. D'apr\`es le th\'eor\`eme~\ref{coherent}, le faisceau structural $\cO_{\cX^\an}$ est coh\'erent, donc les faisceaux coh\'erents sont exactement ceux de pr\'esentation finie. On en d\'eduit qu'il sont \'egalement pr\'eserv\'es. 
\end{proof}

\begin{prop}\label{formule_analytification}\index{Morphisme analytique!fini!image d'un faisceau par un}
Soit~$\varphi \colon \cX\to \cY$ un morphisme fini de sch\'emas localement de pr\'esentation finie sur~$\cA$. Soit~$\cF$ un faisceau de $\cO_{\cX}$-modules. Alors le morphisme naturel
\[(\varphi_*\cF)^{an}\too (\varphi^{an})_*\cF^{an}\]
est un isomorphisme.
\end{prop}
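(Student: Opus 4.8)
The plan is to realise the map in the statement as a base change morphism and to reduce it, via the structure of finite morphisms, to a ring‑theoretic identity that can be settled with Lemma~\ref{cas_particulier_coh\'erence}. The natural map is the one attached to the commutative square $\rho_{\cY}\circ\varphi^{\an}=\varphi\circ\rho_{\cX}$ of Definition~\ref{def:analytification}: it comes, by adjunction, from the unit $\varphi_{\ast}\cF\to\varphi_{\ast}(\rho_{\cX})_{\ast}\rho_{\cX}^{\ast}\cF=(\rho_{\cY})_{\ast}(\varphi^{\an})_{\ast}\rho_{\cX}^{\ast}\cF$. Being an isomorphism is a local question on~$\cY^{\an}$; since $\cU^{\an}=\rho_{\cY}^{-1}(\cU)$ for every open subscheme $\cU\subset\cY$ (cf.\ the proof of Theorem~\ref{thm:analytification}) and the natural map is compatible with these restrictions, I may assume that $\cY$, and then $\cX$ (a finite morphism being affine), is affine. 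By Proposition~\ref{stabilit\'e_analytification1} the morphism $\varphi^{\an}$ is finite, so it is enough to check the isomorphism on stalks. Fix $y\in\cY^{\an}$, set $\bar{y}:=\rho_{\cY}(y)$. As $\varphi$ is finite, hence closed with finite fibres, $(\varphi_{\ast}\cF)_{\bar{y}}\cong\bigoplus_{x\in\varphi^{-1}(\bar{y})}\cF_{x}$ exactly as in Proposition~\ref{prop:fetoileenbasexact}; and as $\varphi^{\an}$ is finite, Proposition~\ref{prop:fetoileenbasexact} gives $((\varphi^{\an})_{\ast}\cF^{\an})_{y}\cong\bigoplus_{z\in(\varphi^{\an})^{-1}(y)}(\cF^{\an})_{z}$ with $(\cF^{\an})_{z}=\cF_{\rho_{\cX}(z)}\otimes_{\cO_{\cX,\rho_{\cX}(z)}}\cO_{\cX^{\an},z}$. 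Since $\varphi(\rho_{\cX}(z))=\rho_{\cY}(\varphi^{\an}(z))=\bar{y}$, I may group the $z$'s by their image $x:=\rho_{\cX}(z)\in\varphi^{-1}(\bar{y})$; after tensoring by $\cF_{x}$ over $\cO_{\cX,x}$, everything reduces to the following ring‑theoretic assertion $(\star)$, one for each $x\in\varphi^{-1}(\bar{y})$:
\[
\cO_{\cX,x}\otimes_{\cO_{\cY,\bar{y}}}\cO_{\cY^{\an},y}\ \xrightarrow{\ \sim\ }\ \bigoplus_{\substack{z\in(\varphi^{\an})^{-1}(y)\\ \rho_{\cX}(z)=x}}\cO_{\cX^{\an},z}.
\]

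Assertion~$(\star)$ no longer involves $\cF$, and it behaves well under composition: if $\varphi=\psi\circ\iota$ with $\iota,\psi$ finite and $(\star)$ holds for $\iota$ and for $\psi$, then it holds for $\varphi$ (a formal check in the spirit of Lemma~\ref{lem:cbfcomposition}, using $\rho_{\cX}(z)=x\Rightarrow\rho_{\cZ}(\iota^{\an}(z))=\iota(x)$ and the companion relations). With $\cY=\Spec A$ affine, write $\cX=\Spec B$ with $B=A[b_{1},\dots,b_{n}]$, each $b_{i}$ a root of a monic $P_{i}\in A[T]$; then $\varphi$ factors as a closed immersion $\iota\colon\cX\hookrightarrow\cZ$ followed by the projection $\psi\colon\cZ:=\cZ_{P_{1}}\times_{\cY}\cdots\times_{\cY}\cZ_{P_{n}}\to\cY$, where $\cZ_{P_{i}}\subset\bA^{1}_{\cY}$ is the closed subscheme cut out by $P_{i}$. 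Analytification commutes with fibre products (this follows from the universal properties, cf.\ the proof of Proposition~\ref{stabilit\'e_analytification1}) and, again by Proposition~\ref{stabilit\'e_analytification1}, sends closed immersions to closed immersions; so it suffices to prove $(\star)$ for $\iota$ and for $\psi$.

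For the closed immersion $\iota$, let $I:=\Ker(\cO_{\cZ}\to\iota_{\ast}\cO_{\cX})$, so $\cO_{\cX,x}=\cO_{\cZ,\iota(x)}/I_{\iota(x)}$. The analytic subspace $\cX^{\an}\subset\cZ^{\an}$ is the one defined by the ideal generated by the pullbacks of local generators of $I$; since $\rho_{\cZ}$ is a morphism of locally ringed spaces one checks that $|\cX^{\an}|=\rho_{\cZ}^{-1}(|\cX|)$, that the point of $\cX^{\an}$ over a given $w\in\rho_{\cZ}^{-1}(|\cX|)$ is unique, and that $\cO_{\cX^{\an},w}=\cO_{\cZ^{\an},w}/I_{\rho_{\cZ}(w)}\cO_{\cZ^{\an},w}$. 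Then $(\star)$ for $\iota$ reads $\cO_{\cX,x}\otimes_{\cO_{\cZ,\iota(x)}}\cO_{\cZ^{\an},w}\xrightarrow{\sim}\cO_{\cX^{\an},w}$, and both sides equal $\cO_{\cZ^{\an},w}/I_{\iota(x)}\cO_{\cZ^{\an},w}$. For the projection $\psi$, which is finite and free, $\psi_{\ast}\cO_{\cZ}$ is the free $\cO_{\cY}$‑module on the $D:=\prod_{i}\deg P_{i}$ monomials $T_{1}^{i_{1}}\cdots T_{n}^{i_{n}}$ with $0\le i_{j}<\deg P_{j}$; hence $(\psi_{\ast}\cO_{\cZ})^{\an}\cong\cO_{\cY^{\an}}^{D}$. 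On the analytic side, $\psi^{\an}$ is finite and, by the relative form of Lemma~\ref{cas_particulier_coh\'erence} (its proof carries over verbatim, every point of $\cY^{\an}$ being decent by Proposition~\ref{prop:typiqueAn}), the map $\cO_{\cY^{\an}}^{D}\to\psi^{\an}_{\ast}\cO_{\cZ^{\an}}$ given by the same monomials is an isomorphism; moreover the proof of that lemma identifies the stalk $(\psi^{\an}_{\ast}\cO_{\cZ^{\an}})_{y}=\bigoplus_{w}\cO_{\cZ^{\an},w}$ through the decomposition $\cO_{\cY^{\an},y}[T_{1},\dots,T_{n}]/(P_{1},\dots,P_{n})\cong\prod_{w}\cO_{\cZ^{\an},w}$ coming from Corollary~\ref{cor:weierstrassgeneralise}. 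Comparing this with the parallel algebraic decomposition $(\psi_{\ast}\cO_{\cZ})_{\bar{y}}\cong\cO_{\cY,\bar{y}}[T_{1},\dots,T_{n}]/(P_{1},\dots,P_{n})\cong\bigoplus_{x'}\cO_{\cZ,x'}$ yields $(\star)$ for $\psi$.

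The main obstacle is precisely the compatibility invoked in this last step: one must verify that the two identifications $(\psi_{\ast}\cO_{\cZ})^{\an}_{y}\cong\cO_{\cY^{\an},y}^{D}\cong(\psi^{\an}_{\ast}\cO_{\cZ^{\an}})_{y}$ are the natural ones and, above all, that the algebraic and the analytic fibrewise decompositions correspond to one another — the matching of the idempotents — rather than merely exhibiting two abstractly isomorphic modules; this forces one to track the argument of Lemma~\ref{cas_particulier_coh\'erence}, itself resting on Corollary~\ref{cor:weierstrassgeneralise}, and to phrase carefully its extension to a relative base (legitimate by Proposition~\ref{prop:typiqueAn}). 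By contrast, the fact that $\cF$ is an arbitrary sheaf of $\cO_{\cX}$‑modules causes no difficulty, since the first step reduced the whole question to the $\cF$‑independent identity $(\star)$.
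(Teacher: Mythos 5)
Your architecture is the one the paper uses: reduce to a stalkwise statement, pass to the affine case, factor the finite morphism through a closed immersion into hypersurfaces $\cZ_{P_i}$ cut out by monic polynomials, handle closed immersions via the construction of the analytification and the finite free projections via Lemma~\ref{cas_particulier_coh\'erence} and the generalized Weierstra\ss{} division. The only real divergence is that you split off all the integral generators at once and recombine by composition, where the paper peels them off one variable at a time by induction; that difference is harmless.

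There is, however, a genuine gap in your first reduction, and it resurfaces in your treatment of the projection~$\psi$. The isomorphism $(\varphi_{\ast}\cF)_{\bar y}\cong\bigoplus_{x\in\varphi^{-1}(\bar y)}\cF_{x}$ that you invoke \og exactly as in Proposition~\ref{prop:fetoileenbasexact}\fg{} is false for schemes: that proposition rests on separating the points of a finite fibre by disjoint open sets, which the Zariski topology does not permit. Concretely, take $\cA=\Z$, $\varphi\colon\cX=\Spec\Z[\sqrt{2}]\to\cY=\Spec\Z$, $\cF=\cO_{\cX}$ and $\bar y=(7)$: since $7$ splits, $\varphi^{-1}(\bar y)=\{\p_{1},\p_{2}\}$, yet $(\varphi_{\ast}\cO_{\cX})_{(7)}=\Z[\sqrt{2}]\otimes_{\Z}\Z_{(7)}$ is a domain and cannot be a product of two nonzero rings. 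For the same reason your assertion~$(\star)$ fails as stated: with $y=a_{7}^{+\infty}$, so that $\cO_{\cY^{\an},y}=\Z_{7}$, and $x=\p_{1}$, the left-hand side $\Z[\sqrt{2}]_{\p_{1}}\otimes_{\Z_{(7)}}\Z_{7}$ is isomorphic to $\Z_{7}\times\Q_{7}$ (localizing $\Z_{7}\times\Z_{7}$ at the image of $\Z[\sqrt{2}]\setminus\p_{1}$ inverts a nonzero non-unit in the second factor), whereas the right-hand side is the single factor $\cO_{\cX^{\an},z_{1}}\cong\Z_{7}$; the natural map is the first projection, which is not injective. The same phenomenon invalidates the \og parallel algebraic decomposition\fg{} $\cO_{\cY,\bar y}[T_{1},\dotsc,T_{n}]/(P_{1},\dotsc,P_{n})\cong\bigoplus_{x'}\cO_{\cZ,x'}$ at the end of your argument. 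The repair is not to localize on the scheme side: the statement you actually need, and the one that Lemma~\ref{cas_particulier_coh\'erence} and Corollary~\ref{cor:weierstrassgeneralise} deliver, is
\[(\varphi_{\ast}\cO_{\cX})_{\bar y}\otimes_{\cO_{\cY,\bar y}}\cO_{\cY^{\an},y}\;\xrightarrow{\ \sim\ }\;\prod_{z\in(\varphi^{\an})^{-1}(y)}\cO_{\cX^{\an},z},\]
with the product taken over the whole analytic fibre and no grouping by $\rho_{\cX}(z)$. This version is stable under composition, holds for closed immersions exactly by your computation (there the local ring is a quotient, not a localization, so nothing goes wrong), and for the projection $\psi$ it is precisely the isomorphism $\cO_{\cY^{\an},y}[T_{1},\dotsc,T_{n}]/(P_{1},\dotsc,P_{n})\cong\prod_{w}\cO_{\cZ^{\an},w}$ of the generalized Weierstra\ss{} division — so the \og matching of idempotents\fg{} you worry about is a non-issue, there being no idempotents on the algebraic side. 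Comparing the two stalks of the proposition then amounts to tensoring this identity with the module of sections of~$\cF$, as in your first step but without the spurious direct-sum decomposition. (I note that the paper's own \og \'enonc\'e local\fg{} is written with the same localization at~$u$ on the left and should be read in this corrected, unlocalized form.)
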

\begin{proof}
Consid\'erons le diagramme commutatif
\[\begin{tikzcd}
\cX^{\an} \ar[d, "\varphi^{\an}"] \ar[r, "\rho_{\cX}"] &\cX \ar[d,"\varphi"] \\
\cY^\an \ar[r, "\rho_{\cY}"]  &\cY.
\end{tikzcd}\]
Par le m\^eme raisonnement que dans la preuve du lemme~\ref{lem:cbflocal}, on v\'erifie qu'il suffit de d\'emontrer l'\'enonc\'e suivant, que l'on appellera \emph{\'enonc\'e local}~: pour tout $u\in \cX$ et tout $y\in \cY^\an$ tels que $\varphi(u) = \rho_{\cY}(y)$, le morphisme naturel 
\[\cO_{\cX,u}\otimes_{\cO_{\cY,\varphi(u)}}\cO_{\cY^\an,y} \too \prod_{x\in\rho_{\cX}^{-1}(u) \cap (\varphi^\an)^{-1}(y)}\cO_{\cX^\an, x}\]
est un isomorphisme. 

La question \'etant locale sur~$\cY$, on peut supposer que~$\cY$ est affine. Puisque~$\varphi$ est fini, $\cX$ est \'egalement affine. Il existe donc des $\cA$-alg\`ebres~$A$ et~$B$ telles que $X= \Spec(A)$ et $\cY = \Spec(B)$. En outre, $\varphi^\sharp \colon B \to A$ munit $A$ d'une structure de $B$-module de type fini. Soit~$B[T_1,\dotsc,T_k]\to A$ un morphisme de $B$-alg\`ebres surjectif. Montrons l'\'enonc\'e souhait\'e par r\'ecurrence sur~$k$. 

Supposons que~$k=0$. Alors $\varphi$ est une immersion ferm\'ee et il va de m\^eme pour~$\varphi^\an$, d'apr\`es la proposition \ref{stabilit\'e_analytification1}. Soient $y \in Y^\an$ et $u\in X$ tels que $\varphi(u) = \rho_{\cY}(y)$. D'apr\`es le lemme~\ref{lem:imageanalytification}, on a $y\in \varphi^{\an}(\cX^\an)$. Notons~$x\in \cX^\an$ l'unique point tel que $\varphi^\an(x)=y$. On a alors $\rho_{\cX}(x) = u$. Il d\'ecoule de la construction de l'analytifi\'e (\cf~th\'eor\`eme~\ref{thm:analytification}) que si~$\cX$ est d\'efini par un faisceau d'id\'eaux~$\cI$ de~$\cY$, alors $\cX^\an$ est d\'efini par le faisceau d'id\'eaux~$\cI\otimes_{\cO_{\cY}} \cO_{\cY^\an}$. On en d\'eduit que 
\[ \cO_{\cX,u} \otimes_{\cO_{\cY,\rho_{\cY}(y)}} \cO_{\cY^\an,y} \simeq \cO_{\cX^\an,x}  ,\] 
ce qui n'est autre que l'\'enonc\'e local souhait\'e.

\medbreak

Supposons que $k\ge 1$ et que l'\'enonc\'e est vrai pour~$k-1$. Notons $A_{k-1}$ l'image de $B[T_{1},\dotsc,T_{k-1}]$ par la surjection $B[T_1,\dotsc,T_k]\to A$ et posons $\cX_{k-1} := \Spec(A_{k-1})$. L'inclusion~$A_{k-1}\to A$ induit un morphisme de sch\'emas $\psi \colon X\to X_{k-1}$ et le morphisme~$\varphi$ se factorise sous la forme $\varphi = \psi' \circ \psi $ avec $\psi' \colon X_{k-1} \to Y$. Par hypoth\`ese de r\'ecurrence, $\psi'$ satisfait l'\'enonc\'e local. Il suffit de d\'emontrer que~$\psi$ le satisfait \'egalement.

Par construction, il existe une surjection $A_{k-1}[S]\to A$. Puisque~$A$ est un~$A_{k-1}$-module de type fini, il existe un polyn\^ome unitaire~$P\in A_{k-1}[S]$ tel que cette surjection se factorise par $A_{k-1}[S]/(P)\to A$. Puisque~$B$ est une $\cA$-alg\`ebre de pr\'esentation finie, il existe un morphisme de $\cA$-alg\`ebres surjectif $\cA[S_{1},\dotsc,S_{n}] \to A_{k-1}$. On notera encore~$P$ un rel\`evement unitaire de~$P$ \`a $\cA[S_1,\dotsc,S_n][S]$. Soit~$Z_{P}$ le sous-sch\'ema ferm\'e de~$\E{n+1}{\cA}$ d\'efini par~$P$. On a un diagramme commutatif
\[\begin{tikzcd}
X\ar[r]\ar[d, "\psi"] & Z_{P} \ar[d, "\pi_{n+1,n}"]\\
X_{k-1}\ar[r]&\E{n}{\cA},
\end{tikzcd}\]
o\`u les morphismes $X\to Z_{P}$ et~$X_k\to \E{n}{\cA}$ sont des immersions ferm\'ees et le morphisme $\pi_{n+1,n} \colon Z_{P}\to\E{n}{\cA}$ est la restriction \`a~$Z_{P}$ de la projection sur les~$n$ premiers facteurs. On v\'erifie qu'il suffit de d\'emontrer l'\'enonc\'e local pour le morphisme~$\pi_{n+1,n}$. Ce dernier d\'ecoule du lemme \ref{cas_particulier_coh\'erence}.
\end{proof}

\section[D'un sch\'ema \`a son analytifi\'e~2]{D'un sch\'ema \`a son analytifi\'e~: platitude et cons\'equences}\label{sec:schan2}

Soit $\cX$ un sch\'ema localement de pr\'esentation finie sur~$\cA$. L'objet de cette section est de d\'emontrer que le morphisme $\rho_{\cX} \colon \cX^\an \to \cX$ est plat. Il s'agit d'un r\'esultat essentiel pour les applications. En g\'eom\'etrie complexe, il est par exemple utilis\'e dans la preuve du th\'eor\`eme GAGA de J.~P.~Serre (\cf~\cite{GAGA}). 

Nous aurons besoin d'imposer des propri\'et\'es suppl\'ementaires sur l'anneau~$\cA$, permettant de relier ses localis\'es alg\'ebriques et analytiques. Pour ce faire, nous introduisons une nouvelle d\'efinition.

\begin{defi}\label{def:aDa}\index{Anneau!de Dedekind analytique|textbf}
On dit que $(\cA,\nm)$ est un \emph{anneau de Dedekind analytique} s'il satisfait les propri\'et\'es suivantes~:
\begin{enumerate}[i)]
\item $\cA$ est un anneau de Dedekind~;
\item $\cA$ est un anneau de base g\'eom\'etrique~;
\item le morphisme $\rho \colon \cM(\cA) \to \Spec(\cA)$ est surjectif~;
\item pour tout $a \in \Spec(\cA)$,
\begin{enumerate}
\item[$\alpha$)] si $\cO_{\Spec(\cA),a}$ est un corps, alors pour tout $b\in \rho^{-1}(a)$, $\cO_{\cM(\cA),b}$ est un corps~;
\item[$\beta$)] si $\cO_{\Spec(\cA),a}$ est un anneau de valuation discr\`ete, alors $\rho^{-1}(a)$ contient un unique point, disons~$b$, $\cO_{\cM(\cA),b}$ est un anneau de valuation discr\`ete et le morphisme $\cO_{\Spec(\cA),a} \to \cO_{\cM(\cA),b}$ induit par~$\rho^\sharp$ est le morphisme de compl\'etion.
\end{enumerate}
\end{enumerate}
\end{defi}

\begin{rema}\index{Point!isolable}
Supposons que~$\cA$ est un anneau de Dedekind analytique. Soit~$b \in \cM(\cA)$ tel que $\cO_{\cM(\cA),b}$ soit un anneau de valuation discr\`ete. Alors le point~$b$ est isolable. En effet, d'apr\`es iv), $\cO_{\Spec(\cA),\rho(b)}$ est un anneau de valuation discr\`ete, et on peut donc isoler~$\rho(b)$ dans~$\Spec(\cA)$ en utilisant une uniformisante~$\varpi$ de $\cO_{\Spec(\cA),\rho(b)}$. D'apr\`es~iv), $\rho^\sharp(\varpi)$ isole encore $b$ dans~$\cM(\cA)$. 
\end{rema}

\begin{exem}\index{Corps!valu\'e}\index{Anneau!des entiers relatifs $\Z$}\index{Anneau!des entiers d'un corps de nombres}\index{Corps!hybride}\index{Anneau!de valuation discr\`ete}\index{Anneau!de Dedekind trivialement valu\'e}
Nos exemples usuels \ref{ex:corpsvalue} \`a~\ref{ex:Dedekind}~: les corps valu\'es, l'anneau~$\Z$ et les anneaux d'entiers de corps de nombres, les corps hybrides, les anneaux de valuation discr\`ete et les anneaux de Dedekind trivialement valu\'es sont tous des anneaux de Dedekind analytiques.
\end{exem}

\emph{Nous supposerons d\'esormais que $\cA$ est un anneau de Dedekind analytique. }

\medbreak

Commen\c cons par un cas particulier du r\'esultat \`a d\'emontrer.

\begin{prop}\label{prop:affineplat}
Soit $n\in \N$. Pour tout $x\in \E{n}{\cA}$, le morphisme $\cO_{\A^n_{\cA},\rho_{\A^n_{\cA}}(x)} \to \cO_{\E{n}{\cA},x}$ induit par~$\rho_{\A^n_{\cA}}^\sharp$ est plat.
\end{prop}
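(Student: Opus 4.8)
The plan is to reduce the statement to a purely local algebraic question about the local ring $\cO_{\E{n}{\cA},x}$ and its image under $\rho^\sharp$, and then to use the structural description of this local ring furnished by Theorem~\ref{rigide} together with the hypotheses defining an anneau de Dedekind analytique. First I would set $u := \rho_{\A^n_{\cA}}(x)$ and $b := \pi_{n}(x)$, and let $a \in \Spec(\cA)$ be the image of $b$ under $\rho \colon \cM(\cA) \to \Spec(\cA)$ (which is well-defined since $\cO_{\A^n_{\cA},u}$ sits over $\cO_{\Spec(\cA),a}$ and $\cO_{\E{n}{\cA},x}$ sits over $\cO_{\cM(\cA),b}$). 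The morphism $\cO_{\A^n_{\cA},u} \to \cO_{\E{n}{\cA},x}$ factors as
\[
\cO_{\A^n_{\cA},u} \to \cO_{\A^n_{\cA},u} \otimes_{\cO_{\Spec(\cA),a}} \cO_{\cM(\cA),b} \to \cO_{\E{n}{\cA},x},
\]
so it suffices to check that each of the two arrows is flat. For the first arrow, flatness reduces to the flatness of $\cO_{\Spec(\cA),a} \to \cO_{\cM(\cA),b}$, since flatness is stable under base change. For the second arrow, one is left with a comparison between an algebraic localization of a polynomial ring over a complete (or trivially modified) discrete valuation ring or field and the analytic local ring over the same base.

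Second, I would dispose of the base comparison $\cO_{\Spec(\cA),a} \to \cO_{\cM(\cA),b}$. In case $\alpha)$ of Definition~\ref{def:aDa}, $\cO_{\Spec(\cA),a}$ is a field, hence the morphism is automatically flat (indeed faithfully flat, being a ring map from a field to a nonzero ring). In case $\beta)$, $\cO_{\Spec(\cA),a}$ is a discrete valuation ring and, by hypothesis, $\cO_{\cM(\cA),b}$ is its completion; completion of a Noetherian local ring along its maximal ideal is flat, so this arrow is flat as well. This settles the first arrow above.

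Third, and this is where the bulk of the work lies, I would analyze the second arrow $\cO_{\A^n_{\cA},u} \otimes_{\cO_{\Spec(\cA),a}} \cO_{\cM(\cA),b} \to \cO_{\E{n}{\cA},x}$, i.e. compare the algebraic local ring of $\A^n$ over $\cO_{\cM(\cA),b}$ with the analytic local ring $\cO_{\E{n}{\cA},x}$. By Remark~\ref{rem:rigeptrans}, after permuting coordinates, $x$ is rigid \'epais above its purely locally transcendent projection $x_k := \pi_{n,k}(x)$. I would proceed in stages mirroring the proof of Theorem~\ref{rigide}: first handle the purely locally transcendent case, where by Theorem~\ref{rigide} the analytic local ring is either a strong field or a strongly discrete valuation ring with the same uniformizer $\varpi_b$ as $\cO_{\cM(\cA),b}$; in that situation one uses Proposition~\ref{prop:disqueglobal} (power series expansions of functions near disks, hence near the point $0$ of each successive relative line) to identify $\cO_{\E{n}{\cA},x}$ as a colimit of localizations that are flat over $\cO_{\cM(\cA),b}[T_1,\dotsc,T_k]$-localizations. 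Then pass from $x_k$ to $x$ by the finite-morphism technique: $x$ is obtained from the point $0$ over $x_{k}$ via a $\varphi_P$ coming from a monic lift of $\mu_{\kappa,x}$, and by Theorem~\ref{thm:isolemniscate} (or Corollary~\ref{cor:divisionGnormes}) and Lemma~\ref{lem:polmin} the analytic local ring $\cO_{\E{n}{\cA},x}$ is a free module of rank $\deg(\mu_x)\cdot\delta(x)$ over $\cO_{\E{n}{\cA},0_{x_k}}$; a parallel statement holds on the algebraic side, so flatness propagates. The hard part will be bookkeeping the colimits of the local rings of disks and the compatibility of the algebraic-to-analytic comparison with the finite base changes $\varphi_P$ — in particular, verifying that the algebraic $\cO_{\A^n}$-localization over $\cO_{\cM(\cA),b}$ really does become, after the appropriate faithfully flat completion/localization manipulations, a subring of the analytic local ring through which flatness can be checked fiber-by-fiber. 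Once these compatibilities are in place, flatness of the whole composite $\cO_{\A^n_{\cA},u} \to \cO_{\E{n}{\cA},x}$ follows by transitivity of flatness.
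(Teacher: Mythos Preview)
Your factorization through $\cO_{\A^n_{\cA},u} \otimes_{\cO_{\Spec(\cA),a}} \cO_{\cM(\cA),b}$ does not simplify matters: the first arrow is faithfully flat, but that alone does not reduce flatness of the composite to flatness of the second arrow (faithful flatness of $A\to B$ does not let one descend flatness of $A\to C$ to $B\to C$). More seriously, your treatment of the purely locally transcendent case is where the real gap lies. You invoke Proposition~\ref{prop:disqueglobal} there, but that result describes germs near the origin of a relative line, i.e.\ near a \emph{rigid} point; a purely locally transcendent point admits no such power-series description, and your ``colimit of localizations that are flat'' is not a proof. The missing idea is on the algebraic side: when $x$ is purely locally transcendent over $b$, the point $\rho(x)$ is the generic point of its fibre in $\A^n_{\cA}$, so $\cO_{\A^n_{\cA},\rho(x)}$ is either the field $\Frac(\cA)(T_1,\dotsc,T_n)$ (if $\q := \Ker(\cA\to\kappa(b))$ is zero) or a DVR with maximal ideal generated by $\q$ (if $\q\ne0$). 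Flatness over a field is automatic; over a DVR it is torsion-freeness, which holds since $\cO_{\E{n}{\cA},x}$ is integral (Theorem~\ref{rigide}) and the map is injective (using openness of $\pi_n$, Corollary~\ref{cor:projectionouverte}).

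The paper organizes the proof as a direct induction on $n$, bypassing the tensor factorization entirely. The case $n=0$ is your first-arrow computation. For $n\ge1$, the case $\p_x=0$ is handled as above. When $\p_x\ne0$, your finite-morphism idea is indeed the right one: lifting the minimal polynomial of $T_n(\rho(x))$ over $\kappa(\rho(x_{n-1}))$ to a monic $\tilde P$, one has $(\varphi_{\tilde P})_*\cO$ free of rank $\deg\tilde P$ both algebraically and analytically (the latter by Corollary~\ref{cor:phinormes}), with the two isomorphisms given by the same formula; this reduces flatness at $x$ to flatness at $0_{x_{n-1}}$. Proposition~\ref{prop:disqueglobal} enters precisely here, not in the transcendent case: it identifies the $T_n$-adic completion of $\cO_{\E{n}{\cA},0_{x_{n-1}}}$ with $\cO_{\E{n-1}{\cA},x_{n-1}}\llbracket T_n\rrbracket$, so that the completed map is $\cO_{\A^{n-1}_{\cA},\rho(x_{n-1})}\llbracket T_n\rrbracket \to \cO_{\E{n-1}{\cA},x_{n-1}}\llbracket T_n\rrbracket$, flat by the induction hypothesis, and one descends by faithful flatness of completion.
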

\begin{proof}
Posons $\rho := \rho_{\A^n_{\cA}}$. D\'emontrons l'\'enonc\'e par r\'ecurrence sur~$n$. 

Supposons que~$n=0$. Soit $x\in \cM(\cA)$. Si $\cA_{\rho(x)}$ est un corps, alors le morphisme $\cO_{\Spec(\cA),\rho(x)} = \cA_{\rho(x)} \to \cO_{\cM(\cA),x}$ est plat. Si $\cA_{\rho(x)}$ est un anneau de valuation discr\`ete, alors le morphisme $\cO_{\Spec(\cA),\rho(x)} = \cA_{\rho(x)} \to \cO_{\cM(\cA),x}$ est le morphisme de compl\'etion. Il est donc encore plat.

\medbreak

Supposons que $n\ge 1$ et que le r\'esultat vaut pour~$n-1$. Soit~$x\in\E{n}{\cA}$. Notons~$\q$ l'id\'eal premier de~$\cA$ correspondant \`a la projection de~$\rho(x)$ sur~$\Spec(\cA)$. Le point~$\rho(x)$ correspond alors \`a un id\'eal premier~$\p_{x}$ de~$\Frac(\cA/\q)[T_1,\dotsc,T_{n}]$. 

Si $\p_{x} = (0)$ et $\q = (0)$, alors $\cO_{\A^{n}_\cA,\rho(x)}$ est un corps, donc $\cO_{\E{n}{\cA},x}$ est plat sur $\cO_{\A^{n}_\cA,\rho(x)}$. 

Si $\p_{x} = (0)$ et $\q \ne (0)$, alors $\cO_{\A^{n}_\cA,\rho(x)}$ est un anneau de valuation discr\`ete d'id\'eal maximal engendr\'e par~$\q$. D'apr\`es le corollaire~\ref{cor:projectionouverte}, le morphisme de projection $\E{n}{\cA} \to \cM(\cA)$ est ouvert, donc, pour tout~$m\in \N^\ast$, l'image de~$\q^m$ dans~$\cO_{\E{n}{\cA},x}$ n'est pas nulle. On en d\'eduit que le morphisme $\cO_{\A^{n}_\cA,\rho(x)}\to \cO_{\E{n}{\cA},x}$ est injectif. Or, d'apr\`es le th\'eor\`eme~\ref{rigide}, $\cO_{\E{n}{\cA},x}$ est int\`egre. Par cons\'equent, il est sans $\q$-torsion, et donc plat sur~$\cO_{\A^{n}_\cA,\rho(x)}$.

Supposons d\'esormais que $\p_{x} \ne (0)$. Notons $\p'_{x}$ l'id\'eal de~$\Frac(\cA/\q)[T_1,\dotsc,T_{n}]$ correspondant \`a~$\rho(x)$. Quitte \`a permuter les variables, on peut supposer qu'il existe~$k\in\cn{0}{n-1}$ tel que 
\[ \p'_{x} \cap \Frac(\cA/\q)[T_1,\dotsc,T_k] = \{0\} \]
et $\kappa(\rho(x))$ soit une extension finie de $\Frac(\cA/\q)(T_1,\dotsc,T_k)$. 

Notons $x_{n-1} \in \E{n-1}{\cA}$ la projection de~$x$ sur les $n-1$ premi\`eres coordonn\'ees. Son image canonique $\rho(x_{n-1})$ dans $\A^{n-1}_{\cA}$ est l'image de~$\rho(x)$ par la projection sur les $n-1$ premi\`eres coordonn\'ees. Par construction, le morphisme naturel $\kappa(\rho(x_{n-1}))[T_{n}]\to \kappa(\rho(x))$ n'est pas injectif. Notons $P \in \kappa(\rho(x_{n-1}))[T_{n}]$ un g\'en\'erateur unitaire de son noyau et~$d$ le degr\'e de ce polyn\^ome. Soient~$\tilde{P}$ un relev\'e de~$P$ \`a $\cO_{\A^{n-1}_{\cA},\rho(x_{n-1})}[T_{n}]$  unitaire de degr\'e~$d$. 
Soit $U=\Spec(B)$ un voisinage affine de~$\rho(x_{n-1})$ dans~$\A^{n-1}_{\cA}$ sur lequel tous les coefficients de~$\tilde{P}$ sont d\'efinis.

Proc\'edons comme dans la preuve du corollaire~\ref{cor:phinormes}. Consid\'erons le sch\'ema~$\A^2_{B}$ muni des coordonn\'ees~$T_{n}$ et~$T$. Notons $p_{1} \colon \A^2_{B} \to \A^1_{B}$ (resp. $p_{2} \colon \A^2_{B} \to \A^1_{B}$) le morphisme de projection sur~$\A^1_{B}$ muni de la coordonn\'ee~$T$ (resp.~$T_{n}$). Notons $\varphi_{\tilde P} \colon \A^1_{B} \to \A^1_{B}$ le morphisme induit par le morphisme naturel
\[B[T] \too B[T,T_{n}]/(\tilde P(T_{n}) - T) \simtoo B[T_{n}] \]
et $\sigma$ la section de $p_{2}$ induite par le morphisme de $B$-alg\`ebres
\[\begin{array}{ccc}
B[T,T_{n}]&\too&B[T_{n}]\\
T&\mapstoo&\tilde{P}(T_{n})\\
T_{n}&\mapstoo&T_{n}
\end{array}.\]
On a alors $p_{1} \circ \sigma = p_{2}$.

\[\begin{tikzcd}
& \A^2_{B}\ar[ld, "p_{1}"] \ar[rd, "p_2"'] &\\
\A^1_{B} &&\A^1_{B}  \ar[ll, "\varphi_{\tilde P}"'] \ar[ul, bend right, "\sigma"']
\end{tikzcd}\]

Notons $Z$ le ferm\'e de Zariski de~$\A^{2}_{\cA}$ d\'efini par $\tilde P(T_{n})-T$. La projection~$p_{2}$ induit un isomorphisme entre~$Z$ et~$\A^1_{\cA}$ dont l'inverse est la section~$\sigma$. 

Le morphisme
\[\fonction{\psi_{\tilde P}}{\cO^d_{\A^1_B}}{(p_1)_*\cO_Z}{(a_0(T),\dotsc,a_{d-1}(T))}{\displaystyle\sum_{i=0}^{d-1}a_i(T)T_{n}^i}\] 
est un isomorphisme, dont l'inverse est le reste de la division euclidienne par~$\tilde P(T_{n})-T$. En composant par l'isomorphisme $(p_1)_*\cO_Z \to (p_1)_*\sigma_{*}\cO_Z = (\varphi_{\tilde P})_{*} \cO_{\A^1_B}$ induit par~$\sigma^\sharp$, on obtient un isomorphisme
\[\fonction{\psi'}{\cO^d_{\A^1_B}}{(\varphi_{\tilde P})_{*} \cO_{\A^1_B}}{(a_0(T),\dotsc,a_{d-1}(T))}{\displaystyle\sum_{i=0}^{d-1}a_i(\tilde P (T_{n}))T_{n}^i}.\] 

Consid\'erons $\E{n}{\cA}$ avec coordonn\'ees $(T_{1},\dotsc,T_{n-1},S)$ et $\pi_{n,n-1} \colon \E{n}{\cA} \to \E{n-1}{\cA}$ la projection sur les $n-1$ premi\`eres coordonn\'ees. Notons $0_{x_{n-1}} \in \E{n}{\cA}$ le point~0 de la fibre $\pi_{n,n-1}^{-1}(x_{n-1}) \simeq \E{1}{\cH(x_{n-1})}$. Par construction, on a $\varphi_{\tilde P}^{-1}(\rho(0_{x_{n-1}})) = \{\rho(x)\}$ et le morphisme $\psi_{\tilde P}$ induit donc un isomorphisme entre de germes
\[ \cO^d_{\A^1_B,\rho(0_{x_{n-1}})} \to \big((\varphi_{\tilde{P}})_* \cO_{\A^1_B}\big)_{\rho(0_{x_{n-1}})}\simeq \cO_{\A^1_B,\rho(x)}. \]
En analytifiant la construction, on obtient un morphisme de germes 
\[ \cO^d_{\AunA\times_{\cA} U^\an,0_{x_{n-1}}} \to \big((\varphi^\an_{\tilde{P}})_* \cO_{\AunA\times_{\cA} U^\an}\big)_{0_{x_{n-1}}}, \]
et le corollaire~\ref{cor:phinormes} assure que c'est un isomorphisme. 

Or, $x$ est un ant\'ec\'edent de~$0_{x_{n-1}}$ par~$\varphi^{an}_{\tilde{P}}$, donc, d'apr\`es la proposition~\ref{prop:fetoileenbasexact}, $\cO_{\AunA\times_{\cA} U^{\an},x}$ est un facteur de $\big((\varphi^{\an}_{\tilde{P}})_*\cO_{\AunA\times_{\cA} U^{\an}}\big)_{0_{x_{n-1}}}$. Par cons\'equent, pour montrer que le morphisme $\cO_{\A^1_B,\rho(x)}\to\cO_{\AunA\times_{\cA} U^{\an},x}$ est plat, il suffit de montrer que le morphisme $\cO_{\A^1_B,\rho(x)}\to \big((\varphi^{\an}_{\tilde{P}})_*\cO_{\AunA\times_{\cA} U^{\an}}\big)_{0_{x_{n-1}}}$ l'est. Gr\^ace aux isomorphismes obtenus pr\'ec\'edemment, cela revient encore \`a montrer que le morphisme
\[\cO_{\A^1_B,\rho(0_{x_{n-1}})} \simeq \cO_{\A^{n}_\cA,\rho(0_{x_{n-1}})} \too \cO_{\E{n}{\cA},0_{x_{n-1}}} \simeq \cO_{\AunA\times_{\cA} U^{\an},0_{x_{n-1}}}\] 
est plat. 

Pour ce faire, il suffit de montrer que le morphisme induit entre les compl\'et\'es $T_{n}$-adiques est plat. Or, ces compl\'et\'es sont respectivement isomorphes \`a $\cO_{\A^{n}_{\cA},\rho(x_{n-1})}\llbracket T_{n}\rrbracket$ et $\cO_{\E{n}{\cA},x_{n-1}}\llbracket T_{n}\rrbracket$ (d'apr\`es la proposition~\ref{prop:disqueglobal} pour le second). L'hypoth\`ese de r\'ecurrence permet de conclure.
\end{proof}

\begin{theo}\label{platitude_analytification}\index{Morphisme analytique!plat}
Soit~$\cX$ un sch\'ema localement de pr\'esentation finie sur~$\cA$. Alors, pour tout $x\in \cX^\an$, le morphisme $\cO_{\cX,\rho_{\cX}(x)}\to\cO_{\cX^{\an},x}$ induit par~$\rho_{\cX}$ est plat.
\end{theo}
\begin{proof}
Quitte \`a localiser au voisinage de~$\rho(x)$, on peut supposer que~$\cX$ est un sous-sch\'ema ferm\'e d'un espace affine~$\A^n_{\cA}$. Par construction de l'analytifi\'e (\cf~th\'eor\`eme~\ref{thm:analytification}), on a un isomorphisme
\[\cO_{X,x} \simeq \cO_{\cX,\rho_{\cX}(x)} \otimes_{\cO_{\A^{n}_{\cA},\rho_{\cX}(x)}} \cO_{\E{n}{\cA},x}.  \]
Or, d'apr\`es la proposition~\ref{prop:affineplat}, $\cO_{\E{n}{\cA},x}$ est plat sur $\cO_{\A^{n}_{\cA},\rho_{\cX}(x)}$. Le r\'esultat s'en d\'eduit.  
\end{proof}

\begin{rema}\label{rem:pfp}
Rappelons qu'un morphisme local d'anneaux locaux qui est plat est automatiquement fid\`element plat (\cf~\cite[\href{https://stacks.math.columbia.edu/tag/00HR}{lemma 00HR}]{stacks-project} par exemple). Le morphisme du th\'eor\`eme~\ref{platitude_analytification} est donc fid\`element plat.
\end{rema}

\begin{coro}\index{Analytification!d'un faisceau!exactitude du foncteur d'}
Soit~$\cX$ un sch\'ema localement de pr\'esentation finie sur~$\cA$. Le foncteur qui \`a un faisceau de $\cO_{\cX}$-modules~$\cF$ associe le faisceau de $\cO_{\cX^\an}$-modules~$\cF^\an$ est exact et fid\`ele.
\qed
\end{coro}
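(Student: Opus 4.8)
The statement to prove is the following corollary: for $\cX$ a scheme locally of finite presentation over a Dedekind analytic ring $\cA$, the functor $\cF \mapsto \cF^\an = \rho_{\cX}^\ast \cF$ from $\cO_{\cX}$-modules to $\cO_{\cX^\an}$-modules is exact.

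The plan is to reduce the exactness of the functor $\rho_{\cX}^\ast$ to a pointwise statement about stalks, and then invoke the flatness theorem~\ref{platitude_analytification} just proved. First I would recall that a sequence of sheaves is exact if and only if it is exact on all stalks, so it suffices to check that for every $x \in \cX^\an$ the functor $\cF \mapsto (\rho_{\cX}^\ast \cF)_x$ is exact on the category of $\cO_{\cX}$-modules. Next, by the standard description of the pullback of a sheaf of modules along a morphism of locally ringed spaces, one has a canonical isomorphism
\[(\rho_{\cX}^\ast \cF)_x \simeq \cF_{\rho_{\cX}(x)} \otimes_{\cO_{\cX,\rho_{\cX}(x)}} \cO_{\cX^\an,x}.\]
Thus the stalk functor at $x$ is the composite of the localization functor $\cF \mapsto \cF_{\rho_{\cX}(x)}$ (which is exact, localization being exact) with the base change functor $M \mapsto M \otimes_{\cO_{\cX,\rho_{\cX}(x)}} \cO_{\cX^\an,x}$ along the ring map $\cO_{\cX,\rho_{\cX}(x)} \to \cO_{\cX^\an,x}$ induced by $\rho_{\cX}^\sharp$.

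The key input is then that this ring map is flat, which is exactly the content of theorem~\ref{platitude_analytification}. A base change functor $- \otimes_R S$ along a ring homomorphism $R \to S$ is exact precisely when $S$ is flat as an $R$-module, so the composite functor is exact. Since exactness of a functor on sheaves is detected on stalks, this proves that $\cF \mapsto \cF^\an$ is exact. I do not anticipate any serious obstacle here: all the genuine work — establishing the flatness of $\cO_{\cX,\rho_{\cX}(x)} \to \cO_{\cX^\an,x}$, which ultimately rests on the affine-space case (proposition~\ref{prop:affineplat}) and the Dedekind-analytic hypothesis — has already been carried out in theorem~\ref{platitude_analytification}. The only mild care needed is to make sure the canonical stalk isomorphism for $\rho_{\cX}^\ast$ is invoked correctly (it is a general fact about inverse image of modules under morphisms of ringed spaces), and that left-exactness as well as right-exactness is covered; since $\rho_{\cX}^\ast$ is always right exact, the new content is really just preservation of injections, which flatness delivers.

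In short: the corollary is a one-line consequence of theorem~\ref{platitude_analytification} together with the exactness criterion on stalks and the standard formula for the stalks of a pulled-back module sheaf. The "hard part" is entirely upstream and already done.
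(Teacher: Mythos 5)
Votre argument est correct et suit exactement la voie attendue par le texte~: le corollaire est \'enonc\'e sans d\'emonstration pr\'ecis\'ement parce qu'il d\'ecoule imm\'ediatement du th\'eor\`eme~\ref{platitude_analytification} \emph{via} la formule $(\rho_{\cX}^\ast\cF)_x \simeq \cF_{\rho_{\cX}(x)} \otimes_{\cO_{\cX,\rho_{\cX}(x)}} \cO_{\cX^\an,x}$ et la v\'erification de l'exactitude fibre \`a fibre. Rien \`a redire.
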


\begin{coro}\label{cor:XYplat}\index{Morphisme analytique!plat}
Soit $\varphi \colon \cX\to \cY$ un morphisme entre sch\'emas localement de pr\'esentation finie sur~$\cA$. Soit $x\in \cX^\an$. Si le morphisme $\varphi^\an$ est plat en~$x$, alors le morphisme $\varphi$ est plat en~$\rho_{\cX}(x)$. 
\end{coro}
\begin{proof}
Le r\'esultat d\'ecoule du th\'eor\`eme~\ref{platitude_analytification} et de \cite[\href{https://stacks.math.columbia.edu/tag/039V}{lemma 039V}]{stacks-project}.
\end{proof}

Il est tr\`es vraisemblable que la r\'eciproque du corollaire~\ref{cor:XYplat} soit vraie, mais nous ne savons pas la d\'emontrer en g\'en\'eral. Nous \'enon\c cons cependant maintenant quelques cas particuliers int\'eressants. 


\begin{prop}\label{stabilit\'e_analytification2}\index{Faisceau!plat}\index{Faisceau!sans torsion}
Soit $\varphi \colon \cX\to \cY$ un morphisme fini entre sch\'emas localement de pr\'esentation finie sur~$\cA$. Soit~$\cF$ un faisceau coh\'erent sur~$\cX$ et soit $y\in \cY^\an$. 
\begin{enumerate}[i)]
\item Le $\cO_{\cY,\rho_{\cY}(y)}$-module $(\varphi_*\cF)_{\rho_{\cY}(y)}$ est sans torsion si, et seulement si, le $\cO_{\cY^\an,y}$-module $(\varphi^\an_*\cF^\an)_{y}$ est sans torsion.
\item Soit $u\in \varphi^{-1}(\rho_{\cY}(y))$. Le $\cO_{\cY,\rho_{\cY}(y)}$-module $\cF_{u}$ est  plat si, et seulement si, pour tout $x\in (\varphi^\an)^{-1}(y) \cap \rho_{\cX}^{-1}(u)$, le $\cO_{\cY^\an,y}$-module $\cF^\an_{x}$ est plat.
\end{enumerate}
\end{prop}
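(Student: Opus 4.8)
\textbf{Plan for the proof of Proposition~\ref{stabilit\'e_analytification2}.}
The plan is to reduce both statements to a purely local question on stalks, where the key ingredients are already available: the flatness of the analytification morphism (Theorem~\ref{platitude_analytification}), the finite base change formula (Theorem~\ref{thm:formule_changement_base} and its variant in the spirit of Proposition~\ref{formule_analytification}), and the description of stalks of direct images under finite morphisms (Proposition~\ref{prop:fetoileenbasexact}). Since the question is local on~$\cY$, I would first replace $\cY$ by an affine neighbourhood of~$\rho_{\cY}(y)$ and, since $\varphi$ is finite, $\cX$ by the corresponding affine scheme, so that $\cX$ and $\cY$ are affine and $\cF$ corresponds to a finitely generated module over the finite $\cO(\cY)$-algebra $\cO(\cX)$.

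For~i), the point is that ``torsion-free'' can be tested after the faithfully flat base change $\cO_{\cY,\rho_{\cY}(y)} \to \cO_{\cY^\an,y}$ supplied by Theorem~\ref{platitude_analytification}. First I would identify, using Proposition~\ref{formule_analytification}, the stalk $(\varphi^\an_*\cF^\an)_{y}$ with $(\varphi_*\cF)_{\rho_{\cY}(y)} \otimes_{\cO_{\cY,\rho_{\cY}(y)}} \cO_{\cY^\an,y}$. Then I would use that $\cO_{\cY^\an,y}$ is a flat $\cO_{\cY,\rho_{\cY}(y)}$-algebra which is moreover faithfully flat (the maximal ideal of the source lies under that of the target, by construction of the analytification and since $\rho_{\cX}$ is surjective on closed points of a fibre, cf.\ the proof of Proposition~\ref{prop:affineplat}), together with the fact that $\cO_{\cY,\rho_{\cY}(y)}$ is a noetherian local ring of Krull dimension $\le 1$ (it is a localisation of a finite algebra over the Dedekind ring~$\cA$). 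Over such a ring, a finitely generated module $M$ is torsion-free if and only if the natural map $M \to M^{\vee\vee}$ is injective (the argument of Lemma~\ref{lem:Ftors}), and injectivity of a map of finitely generated modules is preserved and reflected by the flat, and here faithfully flat, base change. This gives the equivalence in~i).

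For~ii), I would proceed similarly but now use the finite base change formula more directly. Choosing presentations of $\cF_u$ as an $\cO_{\cY,\rho_{\cY}(y)}$-module and comparing with the $\cO_{\cY^\an,y}$-modules $\cF^\an_x$ for $x$ ranging over $(\varphi^\an)^{-1}(y) \cap \rho_{\cX}^{-1}(u)$, the key identity is
\[
\prod_{x \in (\varphi^\an)^{-1}(y) \cap \rho_{\cX}^{-1}(u)} \cF^\an_x \simeq \cF_u \otimes_{\cO_{\cX,u}} \cO_{\cX^\an}\text{-stalks},
\]
which follows from the local shape of the analytification morphism over an affine patch (the isomorphism $\cO_{\cX^\an,x} \simeq \cO_{\cX,\rho_{\cX}(x)} \otimes_{\cO_{\A^n_{\cA},\rho_{\cX}(x)}} \cO_{\E{n}{\cA},x}$ in the proof of Theorem~\ref{platitude_analytification}) combined with Proposition~\ref{prop:fetoileenbasexact}. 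Since $\cO_{\cX,u} \to \prod_x \cO_{\cX^\an,x}$ is faithfully flat (each factor is flat by Theorem~\ref{platitude_analytification}, and surjectivity on the relevant fibre makes the product faithfully flat), flatness of $\cF_u$ over $\cO_{\cY,\rho_{\cY}(y)}$ is equivalent to flatness of $\cF_u \otimes \cO_{\cX^\an,x}$ over $\cO_{\cY^\an,y}$ for each $x$, using transitivity of flatness along $\cO_{\cY,\rho_{\cY}(y)} \to \cO_{\cX,u} \to \cO_{\cX^\an,x}$ on one side and $\cO_{\cY,\rho_{\cY}(y)} \to \cO_{\cY^\an,y} \to \cO_{\cX^\an,x}$ on the other, together with faithful flatness of $\cO_{\cY,\rho_{\cY}(y)} \to \cO_{\cY^\an,y}$.

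\textbf{Main obstacle.} The routine parts are the reductions and the formal flatness bookkeeping; the delicate point is establishing \emph{faithful} (not merely plain) flatness of the relevant local analytification maps $\cO_{\cX,\rho_{\cX}(x)} \to \cO_{\cX^\an,x}$, i.e.\ checking that the maximal ideal of the scheme-theoretic local ring generates a proper ideal in the analytic local ring. This amounts to showing that the closed point of $\Spec(\cO_{\cX,\rho_{\cX}(x)})$ lies in the image of $\Spec(\cO_{\cX^\an,x})$, which I would extract from the surjectivity of $\rho \colon \cM(\cA) \to \Spec(\cA)$ and the explicit control of local rings in the definition of an anneau de Dedekind analytique (Definition~\ref{def:aDa}), propagated through the affine space case as in Proposition~\ref{prop:affineplat}. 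One must also be a little careful that when $\varphi^{-1}(\rho_{\cY}(y)) = \{u\}$ has several analytic preimages the product over $x$ is genuinely over a nonempty finite set, which is exactly the surjectivity of $\rho_{\cX}$ on that fibre.
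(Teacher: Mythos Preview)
Your approach matches the paper's: both use Proposition~\ref{formule_analytification} to identify $(\varphi^\an_*\cF^\an)_y \simeq (\varphi_*\cF)_{\rho_{\cY}(y)} \otimes_{\cO_{\cY,\rho_{\cY}(y)}} \cO_{\cY^\an,y}$, and then the faithful flatness of $\cO_{\cY,\rho_{\cY}(y)} \to \cO_{\cY^\an,y}$ coming from Theorem~\ref{platitude_analytification}.

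Two remarks. First, your claim that $\cO_{\cY,\rho_{\cY}(y)}$ has Krull dimension~$\le 1$ is wrong: $\cY$ is locally of finite \emph{presentation}, not finite, over~$\cA$, so this ring can have arbitrary dimension. Fortunately you do not need this; the bidual characterisation of torsion-freeness you quote from Lemma~\ref{lem:Ftors} requires integrality of the local ring, not low dimension. (The paper uses instead the kernel of the action map $\cO_{\cY,\rho_{\cY}(y)} \to \Hom(M,M)$ together with the compatibility of Hom with flat base change for finitely presented modules, \cite[I, \S 2, \no 10, prop.~11]{BourbakiAC14}.) Second, your ``main obstacle'' evaporates: since~$\rho_{\cX}$ and~$\rho_{\cY}$ are morphisms of \emph{locally} ringed spaces, the induced maps on stalks are local homomorphisms of local rings, and a flat local homomorphism of local rings is automatically faithfully flat --- no separate surjectivity argument is needed. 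For~ii) the paper also first restricts~$\cX$ so that $\varphi^{-1}(\rho_{\cY}(y)) = \{u\}$, giving $(\varphi_*\cF)_{\rho_{\cY}(y)} \simeq \cF_u$ directly and avoiding your transitivity chains.
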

\begin{proof}
Pour all\'eger l'\'ecriture, on notera indiff\'eremment~$\rho$ les deux morphismes~$\rho_{\cX}$ et~$\rho_{\cY}$.

i) D'apr\`es le th\'eor\`eme \ref{platitude_analytification}, $\cO_{\cY^{an},y}$ est plat sur $\cO_{\cY,\rho(y)}$. Puisque~$\varphi$ est fini, $(\varphi_*\cF)_{\rho(y)}$ est un $\cO_{\cY,\rho(y)}$-module de pr\'esentation finie. D'apr\`es \cite[I, \S 2, \no 10, proposition~11]{BourbakiAC14} et la proposition \ref{formule_analytification}, on a des isomorphismes canoniques 
\begin{align*}
&\Hom_{\cO_{\cY,\rho(y)}}((\varphi_*\cF)_{\rho_{\cY}(y)},(\varphi_*\cF)_{\rho(y)}) \otimes_{\cO_{\cY,\rho(y)}} \cO_{\cY^{an},y}\\
\simeq\ & 
\Hom_{\cO_{\cY^{an},y}}((\varphi_*\cF)_{\rho(y)}\otimes_{\cO_{\cY,\rho(y)}} \cO_{\cY^{an},y},(\varphi_*\cF)_{\rho(y)}\otimes_{\cO_{\cY,\rho(y)}} \cO_{\cY^{an},y})\\
\simeq \ &\Hom_{\cO_{\cY^{an},y}}((\varphi^{\an}_*\cF^{\an})_y,(\varphi^{\an}_*\cF^{\an})_y).
\end{align*}
Or, l'id\'eal de torsion de $(\varphi_*\cF)_{\rho(y)}$ s'identifie au noyau du morphisme
\[\cO_{\cY,\rho(y)} \too \Hom_{\cO_{\cY,\rho(y)}}((\varphi_*\cF)_{\rho(y)},(\varphi_*\cF)_{\rho(y)})\] 
et de m\^eme pour $(\varphi^{\an}_*\cF^{\an})_y$. Le r\'esultat d\'ecoule donc de la fid\`ele platitude de $\cO_{\cY^{an},y}$ sur $\cO_{\cY,\rho(y)}$. 

ii) Quitte \`a restreindre~$\cX$, on peut supposer que $\varphi^{-1}(\varphi(u)) = \{u\}$. Dans ce cas, on a $(\varphi_{*}\cF)_{\rho(y)} \simeq \cF_{u}$ et, d'apr\`es la proposition~\ref{prop:fetoileenbasexact}, 
\[ (\varphi^\an_{*}\cF^\an)_{y} \simeq \prod_{x \in (\varphi^\an)^{-1}(y)} \cF^\an_{x}.\]
En outre, d'apr\`es la proposition \ref{formule_analytification}, on a un isomorphisme canonique
\[\cO_{\cY^{an},y}\otimes_{\cO_{\cY,\rho(y)}} (\varphi_*\cF)_{\rho(y)} \simeq (\varphi^{\an}_*\cF^{\an})_y.\] 
Le r\'esultat d\'ecoule alors de la fid\`ele platitude de $\cO_{\cY^{an},y}$ sur $\cO_{\cY,\rho(y)}$ par \cite[I, \S 3, \no 3, proposition~6]{BourbakiAC14}.
\end{proof}

\begin{prop}\label{prop:platan}\index{Morphisme analytique!plat}\index{Morphisme!structural}
Soit~$\cX$ un sch\'ema localement de pr\'esentation finie sur~$\cA$. Notons $\pi \colon \cX \to \Spec(\cA)$ le morphisme structural. Soit $x\in \cX^\an$. Le morphisme $\pi$ est plat en~$\rho_{\cX}(x)$ si, et seulement si, le morphisme $\pi^\an$ est plat en~$x$.
\end{prop}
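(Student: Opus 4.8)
The strategy is to reduce the statement to a purely local, algebraic question about the flatness of the morphism $\cO_{\Spec(\cA),a} \to \cO_{\cM(\cA),b}$ for $a \in \Spec(\cA)$ and $b \in \rho^{-1}(a)$, which is handled by the definition of an anneau de Dedekind analytique (\cf{} d\'efinition~\ref{def:aDa}), combined with the key platitude result $\cO_{\A^n_\cA,\rho_{\A^n_\cA}(x)} \to \cO_{\E{n}{\cA},x}$ (\cf{} proposition~\ref{prop:affineplat}) already in hand. More precisely, the \emph{if} direction is essentially trivial: if $\pi^\an$ is plat at~$x$, then since $\cX^\an \to \cX$ is faithfully flat at~$x$ by the th\'eor\`eme~\ref{platitude_analytification}, one deduces that $\pi$ is plat at~$\rho_{\cX}(x)$ by descent of flatness along faithfully flat morphisms (\cf{} \cite[I, \S 3, \no 3, proposition~6]{BourbakiAC14}), after observing that the composite $\cO_{\Spec(\cA),\pi(\rho_{\cX}(x))} \to \cO_{\cX,\rho_{\cX}(x)} \to \cO_{\cX^\an,x}$ equals $\cO_{\Spec(\cA),\pi(\rho_{\cX}(x))} \to \cO_{\cM(\cA),\pi^\an(x)} \to \cO_{\cX^\an,x}$ and that the first map of the latter is flat by the definition of anneau de Dedekind analytique.

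For the \emph{only if} direction, I would proceed as follows. First I would reduce to the case where~$\cX$ is a closed subscheme of an affine space~$\A^n_\cA$, which is legitimate since flatness of~$\pi$ at a point and flatness of~$\pi^\an$ at a point are both local. By the construction of the analytification (\cf{} th\'eor\`eme~\ref{thm:analytification}), one has the base-change isomorphism $\cO_{\cX^\an,x} \simeq \cO_{\cX,\rho_{\cX}(x)} \otimes_{\cO_{\A^n_\cA,\rho_{\cX}(x)}} \cO_{\E{n}{\cA},x}$. Now suppose $\pi$ is plat at $u := \rho_{\cX}(x)$, i.e.\ $\cO_{\cX,u}$ is flat over $\cO_{\Spec(\cA),\pi(u)}$. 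The plan is to factor the structural morphism $\cO_{\Spec(\cA),\pi(u)} \to \cO_{\cX^\an,x}$ through $\cO_{\cX,u}$ and $\cO_{\E{n}{\cA},x}$, and to show each of the relevant maps is flat so that the composite is flat. One key point is that $\cO_{\E{n}{\cA},x}$ is flat over $\cO_{\A^n_\cA,u}$ by proposition~\ref{prop:affineplat}; combined with the flatness of $\cO_{\A^n_\cA,u}$ over $\cO_{\Spec(\cA),\pi(u)}$ (which itself follows from the anneau de Dedekind analytique hypothesis, since $\A^n_\cA$ is flat over $\Spec(\cA)$ and one is simply localizing), one gets that $\cO_{\E{n}{\cA},x}$ is flat over $\cO_{\Spec(\cA),\pi(u)}$. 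Then, using the base-change isomorphism above and the flatness of $\cO_{\cX,u}$ over $\cO_{\Spec(\cA),\pi(u)}$, a standard transitivity-of-flatness argument (flatness is preserved under base change, and $- \otimes_{\cO_{\A^n_\cA,u}} \cO_{\E{n}{\cA},x}$ applied to a flat $\cO_{\Spec(\cA),\pi(u)}$-module that is also an $\cO_{\A^n_\cA,u}$-module yields a flat $\cO_{\Spec(\cA),\pi(u)}$-module, because $\cO_{\E{n}{\cA},x}$ is flat over $\cO_{\A^n_\cA,u}$) shows that $\cO_{\cX^\an,x}$ is flat over $\cO_{\Spec(\cA),\pi(u)}$, hence over $\cO_{\cM(\cA),\pi^\an(x)}$ by faithful flatness of the latter (completion at a discrete valuation ring, or an isomorphism in the field case, is faithfully flat). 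This is exactly the plat-ness of $\pi^\an$ at~$x$.

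The main obstacle, and the reason some care is needed, is bookkeeping the various rings and the commutativity of the square relating $\rho$, $\pi$, $\rho_{\cX}$, $\pi^\an$: one must check carefully that the image of $\pi^\an(x)$ under $\rho \colon \cM(\cA) \to \Spec(\cA)$ coincides with $\pi(\rho_{\cX}(x))$ (this is immediate from the functoriality of analytification and the fact that $\rho$ is a morphism over $\Spec(\cA)$), and that the factorization of $\cO_{\Spec(\cA),\pi(u)} \to \cO_{\cX^\an,x}$ is genuinely the one produced by the base-change isomorphism. The delicate algebraic point is precisely the transitivity statement: if $M$ is an $\cO_{\A^n_\cA,u}$-algebra that is flat over the subring $\cO_{\Spec(\cA),\pi(u)}$, and $N := \cO_{\E{n}{\cA},x}$ is flat over $\cO_{\A^n_\cA,u}$, then $M \otimes_{\cO_{\A^n_\cA,u}} N$ is flat over $\cO_{\Spec(\cA),\pi(u)}$; this follows from \cite[I, \S 2]{BourbakiAC14} since $N$ flat over $\cO_{\A^n_\cA,u}$ implies $- \otimes_{\cO_{\A^n_\cA,u}} N$ sends flat $\cO_{\Spec(\cA),\pi(u)}$-modules to flat $\cO_{\Spec(\cA),\pi(u)}$-modules (tensoring an exact sequence of $\cO_{\Spec(\cA),\pi(u)}$-modules first with the flat $\cO_{\Spec(\cA),\pi(u)}$-module $M$ over $\cO_{\Spec(\cA),\pi(u)}$ keeps it exact, then with $N$ over $\cO_{\A^n_\cA,u}$ keeps it exact). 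Once these transitivity facts are assembled, both directions close immediately by faithful flatness of $\cO_{\cX^\an,x}$ over $\cO_{\cX,\rho_{\cX}(x)}$ and of $\cO_{\cM(\cA),b}$ over $\cO_{\Spec(\cA),a}$.
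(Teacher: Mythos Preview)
Your overall strategy is sound and both directions would close, but the route is considerably more elaborate than the paper's, and there is one genuine slip in the justification that you should fix.

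\textbf{The gap.} In the \emph{only if} direction, after showing that $\cO_{\cX^\an,x}$ is flat over $\cO_{\Spec(\cA),\pi(u)}$, you write ``hence over $\cO_{\cM(\cA),\pi^\an(x)}$ by faithful flatness of the latter''. This implication is not valid as stated: faithful flatness of a ring map $A \to B$ does \emph{not} ensure that a $B$-module flat over~$A$ is flat over~$B$ (take $A=k$ a field, $B=k[\![x]\!]$, $M=k$). What does work is the specific structure you allude to in the parenthetical: by the definition of anneau de Dedekind analytique, $\cO_{\cM(\cA),\pi^\an(x)}$ is either a field (so everything is flat over it) or a DVR with the \emph{same} uniformizer~$\varpi$ as $\cO_{\Spec(\cA),\pi(u)}$, so that flatness over either ring is exactly $\varpi$-torsion-freeness. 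Once you say this, your argument closes.

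\textbf{Comparison with the paper.} The paper exploits this Dedekind dichotomy from the very first line and thereby avoids all the transitivity bookkeeping. It simply observes that if $\cO_{\Spec(\cA),\pi(\rho(x))}$ is a field then so is $\cO_{\cM(\cA),\pi^\an(x)}$ and both flatness statements are automatic; if it is a DVR with uniformizer~$\varpi$, then so is $\cO_{\cM(\cA),\pi^\an(x)}$, and the question (in both directions simultaneously) becomes whether multiplication by~$\varpi$ is injective on $\cO_{\cX,\rho(x)}$, resp.\ on $\cO_{\cX^\an,x}$. These two injectivities are equivalent by the faithful flatness of $\cO_{\cX,\rho(x)} \to \cO_{\cX^\an,x}$ (th\'eor\`eme~\ref{platitude_analytification}). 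This is a three-line proof. Your approach, by contrast, separates the two directions and reproves part of the content of th\'eor\`eme~\ref{platitude_analytification} along the way (the base-change isomorphism and the flatness of $\cO_{\E{n}{\cA},x}$ over $\cO_{\A^n_\cA,u}$); it would generalize better to bases that are not Dedekind, but over a Dedekind base it is doing more work than necessary and, ironically, still needs the field/DVR dichotomy at the end to finish.
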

\begin{proof}
Posons $\rho := \rho_{\cX}$. Si $\cO_{\Spec(\cA),\pi(\rho(x))}$ est un corps, alors $\cO_{\cM(\cA),\pi^\an(x)}$ l'est aussi et le r\'esultat est imm\'ediat.

Supposons que $\cO_{\Spec(\cA),\pi(\rho(x))}$ est un anneau de valuation discr\`ete. Fixons-en une uniformisante~$\varpi$. Par hypoth\`ese, $\cO_{\cM(\cA),\pi^\an(x)}$ est encore un anneau de valuation discr\`ete d'uniformisante~$\varpi$. Il suffit de montrer que~$\varpi$ divise~0 dans~$\cO_{\cX,\rho(x)}$ si, et seulement si, $\varpi$ divise~0 dans~$\cO_{\cX^{\an},x}$, autrement dit, que la multiplication par~$\varpi$ est injective dans~$\cO_{\cX,\rho(x)}$ si, et seulement si, elle l'est dans~$\cO_{\cX^{\an},x}$. Ce r\'esultat d\'ecoule de la fid\`ele platitude du morphisme $\cO_{\cX,\rho(x)}\to\cO_{\cX^{\an},x}$ (\cf~th\'eor\`eme~\ref{platitude_analytification}).
\end{proof}

D\'emontrons finalement un r\'esultat sur la dimension. Commen\c cons par un lemme pr\'eliminaire.

\begin{lemm}\label{lem:finiouvertcomposante}\index{Morphisme analytique!fini}\index{Morphisme analytique!ouvert}\index{Composantes locales}
Soit $\varphi \colon X \to Y$ un morphisme fini et ouvert d'espaces $\cA$-analytiques. Soit $x\in X$ et supposons que $Y$~est int\`egre en~$\varphi(x)$. Notons $U_{1},\dotsc,U_{s}$ les composantes locales de~$X$ en~$x$. Alors il existe $i\in \cn{1}{s}$, un voisinage ouvert~$U'_{i}$ de~$x$ dans~$U_{i}$ et un voisinage ouvert~$V$ de~$\varphi(x)$ dans~$Y$ tels que le morphisme $U'_{i} \to V$ induit par~$\varphi$ soit fini et ouvert en~$x$.
\end{lemm}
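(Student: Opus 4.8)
The plan is to reduce the statement to the local description of $X$ at $x$ given by Proposition~\ref{decomp}, and then argue that openness forces one of the irreducible components to ``dominate'' the target. First I would invoke Proposition~\ref{decomp} to find an open neighborhood of $x$ on which $X = \bigcup_{i=1}^s U_i$, where the $U_i$ are closed analytic subsets containing $x$ and integral at $x$, with no $U_i$ contained in another. Since $Y$ is integral at $\varphi(x)$, in particular it is reduced there, and $\mathcal{O}_{Y,\varphi(x)}$ is a domain. The morphism $\varphi$ being finite, each restriction $\varphi_{|U_i} \colon U_i \to Y$ is still finite (it is a composition of the closed immersion $U_i \hookrightarrow X$ with $\varphi$, using Lemma~\ref{lem:compositionfini}), and its image $\varphi(U_i)$ is a closed analytic subset of $Y$ by Corollary~\ref{cor:imagefini}.

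The key step is to show that at least one $\varphi(U_i)$ is a neighborhood of $\varphi(x)$. Indeed, since $\varphi$ is open in $x$ (meaning open as a map at that point), for every open neighborhood $W$ of $x$ in $X$ the image $\varphi(W)$ is a neighborhood of $\varphi(x)$. Shrinking so that $\varphi^{-1}(\varphi(x)) = \{x\}$ using Lemma~\ref{lem:voisinagefibre}, and writing $W \cap U_i$ for the traces of the components, we get $\varphi(W) = \bigcup_{i=1}^s \varphi(W \cap U_i)$, a finite union of closed analytic subsets of a neighborhood of $\varphi(x)$. If none of the $\varphi(U_i)$ were a neighborhood of $\varphi(x)$, then each $\varphi(W \cap U_i)$ would be a proper closed analytic subset through $\varphi(x)$; I would then apply the Nullstellensatz of R\"uckert (Theorem~\ref{thm:Ruckert}), or rather the concrete consequence that on an integral germ a nonzero function cannot vanish on a closed analytic set that is not a neighborhood of the point, to derive a contradiction with $\mathcal{O}_{Y,\varphi(x)}$ being a domain. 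More precisely, pick $f \in \mathcal{O}_{Y,\varphi(x)}$ nonzero vanishing on one such $\varphi(U_i)$; feeding this into Theorem~\ref{thm:Ruckert} applied to the coherent sheaf $(\varphi_{|U_i})_*\mathcal{O}_{U_i}$ (coherent by Theorem~\ref{thm:fini}, with support exactly $\varphi(U_i)$ by Remark~\ref{rem:supportfaisceau}) shows some power $f^d$ annihilates this module at $\varphi(x)$, and since that module contains $\mathcal{O}_{U_i,x}$ as a direct factor (Proposition~\ref{prop:fetoileenbasexact}, as $\varphi^{-1}(\varphi(x))\cap U_i = \{x\}$), $f^d$ would annihilate the integral ring $\mathcal{O}_{U_i,x}$, impossible since $f \ne 0$ maps to a nonzero element via the injection $\mathcal{O}_{Y,\varphi(x)} \to \mathcal{O}_{U_i,x}$ — but wait, this injection needs openness of $\varphi_{|U_i}$, so I would instead argue that $f$ cannot be nilpotent in $\mathcal{O}_{Y,\varphi(x)}$ because $Y$ is reduced there, giving the contradiction directly.

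Having fixed $i$ with $\varphi(U_i)$ a neighborhood of $\varphi(x)$, shrink $U_i$ to $U'_i$ and choose an open neighborhood $V$ of $\varphi(x)$ with $V \subset \varphi(U_i)$ and $\varphi^{-1}(V) \cap U'_i$ mapping finitely to $V$. Then the induced morphism $\varphi_{|U'_i}^{-1}(V) \to V$ is finite, $U'_i$ is integral at $x$ by construction, and it remains to see it is open in $x$. For this I would use that $\varphi_{|U_i}$ is already open in $x$ — this follows because $\varphi(U_i)$ being a neighborhood of $\varphi(x)$ and $\varphi_{|U_i}^{-1}(\varphi(x))\cap U_i = \{x\}$ means $\varphi_{|U_i}$ is surjective onto a neighborhood with finite fiber over $\varphi(x)$ — together with Proposition~\ref{ouvert2}, which upgrades openness in $x$ to openness on a whole neighborhood for a finite morphism into a locally integral space, provided the source is integral at $x$. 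The main obstacle I anticipate is the careful bookkeeping in the second paragraph: making sure the Nullstellensatz argument is applied to the right coherent sheaf and that ``openness in $x$'' is used consistently with the definition in the text (pointwise openness as a map), rather than conflating it with the stronger ``open on a neighborhood'' conclusion which is exactly what we are proving.
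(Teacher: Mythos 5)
Your overall strategy is genuinely different from the paper's and is viable in principle: you aim to show that some $\varphi(U_i)$ is a neighbourhood of $\varphi(x)$ and then deduce openness at~$x$ from finiteness plus the singleton fibre, whereas the paper works entirely at the level of local rings. It first shows that $\varphi^\sharp_x \colon \cO_{Y,\varphi(x)}\to\cO_{X,x}$ is injective (openness at~$x$, Corollaire~\ref{cor:nilpotent} and reducedness of $\cO_{Y,\varphi(x)}$), pulls back the decomposition $\sqrt{(0)}=\bigcap_i \cJ_{i,x}$ to get $(0)=\bigcap_i(\varphi^\sharp_x)^{-1}(\cJ_{i,x})$, and uses the uniqueness part of Theorem~\ref{noeth} (primality of $(0)$) to produce an~$i$ for which $\cO_{Y,\varphi(x)}\to\cO_{U_i,x}$ is injective; torsion-freeness of the domain $\cO_{U_i,x}\simeq(\psi_*\cO_{U_i})_{\varphi(x)}$ then feeds directly into Proposition~\ref{prop:ouvert}. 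Your route bypasses Proposition~\ref{prop:ouvert}, and you do not actually need Proposition~\ref{ouvert2} either: the statement only asks for openness \emph{at}~$x$, and \ref{ouvert2} would in any case require $Y$ locally integral rather than merely integral at~$\varphi(x)$.

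However, the step you yourself flag as the main obstacle is exactly where your argument breaks. The ``concrete consequence'' you invoke --- that on an integral germ a nonzero function cannot vanish on a closed analytic set that is not a neighbourhood of the point --- is false (e.g.\ $T_1$ on $\{T_1=0\}$), and your ``more precisely'' version is circular, as you notice: it needs the injectivity of $\cO_{Y,\varphi(x)}\to\cO_{U_i,x}$, which is precisely what is at stake. Your final fix does not repair it: a single $f$ vanishing on a single $\varphi(U_i\cap W)$ that is not a neighbourhood of~$\varphi(x)$ has no reason to be nilpotent, so reducedness yields no contradiction. The missing idea is a product (prime-avoidance) argument combining \emph{all} the components: if no $\varphi(U_i\cap W)$ is a neighbourhood of~$\varphi(x)$, then for each~$i$ the coherent annihilator of $(\varphi_{|U_i\cap W})_*\cO_{U_i\cap W}$ has nonzero stalk at~$\varphi(x)$ (otherwise its zero locus, which is $\varphi(U_i\cap W)$, would contain a neighbourhood), so one can pick $f_i\neq 0$ vanishing on $\varphi(U_i\cap W)$; the product $\prod_i f_i$ is then nonzero in the domain $\cO_{Y,\varphi(x)}$ yet vanishes on $\bigcup_i\varphi(U_i\cap W)=\varphi(W)$, which is a neighbourhood of~$\varphi(x)$ by openness of~$\varphi$ at~$x$, whence it is nilpotent by Corollaire~\ref{cor:nilpotent} and therefore zero --- the desired contradiction. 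With that product step inserted, your proof closes.
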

\begin{proof}
Par d\'efinition des composantes locales, il existe un voisinage ouvert~$U$ de~$x$ dans~$X$ tel que les~$U_{i}$ soit des ferm\'es analytiques de~$U$ dont la r\'eunion recouvre~$U$. Pour $i\in \cn{1}{s}$, notons~$\cJ_{i}$ l'id\'eal de~$U_{i}$. D'apr\`es les lemmes~\ref{lem:Jintegre} et~\ref{lem:integreirreductible}, $\cJ_{i,x}$ est un id\'eal premier de~$\cO_{X,x}$. D'apr\`es le corollaire~\ref{cor:nilpotent}, on a $\sqrt{(0)} =\bigcap_{i=1}^s \cJ_{i,x}$.

Remarquons que le morphisme $\varphi^\sharp_{x} \colon \cO_{Y,\varphi(x)} \to \cO_{X,x}$ induit par~$\varphi^\sharp$ est injectif. Puisque~$\varphi$ ouvert et que $\cO_{Y,\varphi(x)}$ est int\`egre, cela d\'ecoule du corollaire~\ref{cor:nilpotent}. On en d\'eduit que 
\[ (0) = (\varphi^\sharp_{x})^{-1}\big(\sqrt{(0)}\big) = \bigcap_{i=1}^s (\varphi^\sharp_{x})^{-1}(\cJ_{i,x}). \]
D'apr\`es la partie unicit\'e du th\'eor\`eme~\ref{noeth}, il existe $i\in \cn{1}{s}$ tel que $(\varphi^\sharp_{x})^{-1}(\cJ_{i,x}) = (0)$.

Par construction, le morphisme $\cO_{Y,\varphi(x)} \to \cO_{U_{i},x}$ induit par~$\varphi^\sharp$ est encore injectif. Quitte \`a restreindre~$U_{i}$, on peut supposer qu'il existe un voisinage ouvert~$V$ de~$\varphi(x)$ dans~$Y$ tel que le morphisme $\psi \colon U_{i} \to V$ induit par~$\varphi$ soit fini. Quitte \`a restreindre encore, on peut supposer que $\psi^{-1}(\psi(x)) = \{x\}$.  On a alors un isomorphisme $(\psi_{\ast} \cO_{U_{i}})_{\psi(x)} \simeq \cO_{U_{i},x}$ et la proposition~\ref{prop:ouvert} assure que~$\psi$ est ouvert en~$x$. 
\end{proof}

\begin{theo}\label{thm:dimXXan}\index{Espace analytique!dimension d'un|(}
Supposons que $\dim(\cA) = 1$. Soient~$\cX$ un sch\'ema localement de pr\'esentation finie sur~$\cA$ et~$x$ un point de~$\cX^{\an}$. On a $\dim_{x}(\cX^\an) = \dim_{\rho_{\cX}(x)}(\cX)$.
\end{theo}
\begin{proof}
Il suffit de d\'emontrer le r\'esultat pour chaque composante irr\'eductible de~$\cX$. On peut donc supposer que~$\cX$ est irr\'eductible. Quitte \`a le r\'eduire et \`a localiser au voisinage de~$\rho_{\cX}(x)$, on peut supposer que~$\cX$ est int\`egre et affine. Il existe alors une $\cA$-alg\`ebre int\`egre~$\cA'$ telle que $\cX = \Spec(\cA')$. Notons $\pi \colon \cX \to \Spec(\cA)$ le morphisme structural. Distinguons deux cas.

\smallbreak

$\bullet$ Supposons que le morphisme $\pi^\sharp \colon \cA \to \cA'$ n'est pas injectif.

Notons~$\p$ le noyau de~$\pi^\sharp$. Par hypoth\`ese, le ferm\'e d\'efini par~$\p$ dans~$\Spec(\cA)$ et~$\cM(\cA)$ est un singleton. Puisque l'espace~$\cX$ et son analytifi\'e se projettent sur ce singleton, on a $v_{x}(\cX^\an) =0$ et on est ramen\'es \`a la th\'eorie classique de la dimension sur un corps valu\'e. Le r\'esultat est alors bien connu.

\smallbreak

$\bullet$ Supposons que le morphisme $\pi^\sharp \colon \cA \to \cA'$ est injectif.

Alors le morphisme $\pi^\sharp \colon \cA \to \cA'$ est plat, puisque~$\cA$ est un anneau de Dedekind. En particulier, on a 
\[ \dim(\cA') = \dim (\cA'\otimes_{\cA} K) + \dim(\cA) = \dim (\cA'\otimes_{\cA} K) +1,\]
o\`u $K := \Frac(\cA)$.

Posons $n := \dim (\cA'\otimes_{\cA} K)$. 
Le lemme de normalisation de Noether assure qu'il existe un morphisme injectif $K[T_{1},\dotsc,T_{n}] \to \cA' \otimes_{\cA} K$ qui fait de~$\cA'$ un $K[T_{1},\dotsc,T_{n}]$-module fini. On en d\'eduit qu'il existe $f\in \cA \setminus\{0\}$ et un morphisme injectif $\cA_{f}[T_{1},\dotsc,T_{n}] \to \cA' \otimes_{\cA} \cA_{f}$ qui fait de~$\cA'$ un $\cA_{f}[T_{1},\dotsc,T_{n}]$-module fini. Le morphisme associ\'e $\varphi \colon \Spec(\cA') \times_{\cA} \Spec(\cA_f) \to \A^n_{\cA} \times_{\cA} \Spec(\cA_f)$ est alors un morphisme fini sans torsion. Notons~$B_{f}$ l'ouvert de $B = \cM(\cA)$ d\'efini par l'in\'egalit\'e $f\ne 0$. D'apr\`es la proposition~\ref{stabilit\'e_analytification2}, le morphisme induit $\varphi^\an \colon \cX^\an_{f} := \cX^{\an}\times_{\cA} B_{f}\to \E{n}{\cA} \times_{\cA} B_{f}$ est \'egalement fini et sans torsion. D'apr\`es la proposition~\ref{prop:ouvert}, il est ouvert.

Notons $U_{1},\dotsc,U_{s}$ les composantes locales de~$\cX^\an_{f}$ en~$x$ et $U := \bigcup_{i=1}^s U_{i}$. Quitte \`a restreindre~$U$ et les~$U_{i}$, on peut supposer qu'il existe un ouvert~$V$ de $\E{n}{\cA} \times_{\cA} B_{f}$ tel que le morphisme $\psi \colon U \to V$ induit par~$\varphi^\an$ soit fini. La th\'eorie de la dimension sur un corps valu\'e assure que, pour tout $i\in \cn{1}{s}$, on a 
\[ \dim_{x}(U_{i}) = \dim_{x}(U_{i}/B) + v_{x}(U_{i}) \le n + 1.\]
En outre, d'apr\`es le lemme~\ref{lem:finiouvertcomposante}, il existe $j\in \cn{1}{s}$ tel que le morphisme $\psi_{j} \colon U_{j} \to V$ induit par~$\psi$ soit fini et ouvert en~$x$. On en d\'eduit que $v_{x}(U_{j}) = 1$ et que $\dim_{x}(U_{j}) = n$, donc que $\dim_{x}(U_{j}) = n+1$. Ceci montre finalement que 
\[ \dim_{x}(\cX^\an) = \dim_{x}(U) = n+1 = \dim(A) = \dim_{\rho_{\cX}(x)}(\cX).\]
\end{proof}

\begin{rema}
Si $\cA = k$ est un corps valu\'e, le r\'esultat $\dim_{x}(\cX^\an) = \dim_{\rho_{\cX}(x)}(\cX)$ vaut encore, par la th\'eorie classique. 

En revanche, il tombe en d\'efaut lorsque $\cA = \C^\hyb$ (\cf~exemple~\ref{ex:corpshybride}). Dans ce cas, $\cX$ est un sch\'ema de pr\'esentation finie sur~$\C$. Posons $B=\cM(\C^\hyb)$. Soit $x\in \cX^\an$. Notons $b$ l'image de~$x$ par  le morphisme structural $\pi^\an \colon \cX^\an \to B$. L'anneau local $\cO_{B,b}$ s'identifie au corps~$\C$, donc le morphisme $\cO_{B,b} \to \cO_{\cX^\an,x}$ et plat et, d'apr\`es la proposition~\ref{prop:morphismestructuralouvert}, le morphisme $\pi^\an$ est ouvert en~$x$. On en d\'eduit que~$\cX^\an$ n'est pas vertical en~$x$, et donc que $v_{x}(\cX^\an) = 1$. Par ailleurs, d'apr\`es le lemme~\ref{lem:imageanalytification}, la fibre $(\pi^\an)^{-1}(b)$ s'identifie \`a l'analytifi\'e de~$\cX$ sur le corps valu\'e~$\cH(b)$ (qui n'est autre que le corps~$\C$ muni d'une valeur absolue de la forme $\va_{\infty}^\eps$ ou~$\va_{0}$), et la th\'eorie classique de la dimension assure que $\dim_{x}\big((\pi^\an)^{-1}(b)\big) = \dim_{\rho_{\cX}(x)}(\cX)$. On obtient finalement l'\'egalit\'e 
\[\dim_{x}(\cX^\an) = \dim_{\rho_{\cX}(x)}(\cX) +1.\]

\end{rema}
\index{Espace analytique!dimension d'un|)}

\index{Analytification!|)}

\chapter{Propri\'et\'es topologiques des espaces analytiques}\label{chap:topo}

Dans ce chapitre, nous \'etudions certaines propri\'et\'es topologiques des espaces analytiques. 

La section~\ref{sec:elastique} traite de topologie g\'en\'erale. Nous y introduisons la notion d'espace \'elastique, qui renforce celle d'espace connexe par arcs (\cf~d\'efinition~\ref{def:elastique}).

Dans la section~\ref{sec:cpa}, nous d\'emontrons que les espaces analytiques sont localement connexes par arcs (\cf~th\'eor\`eme~\ref{th:cpageneral}). Comme on s'y attend, nous \'etudions d'abord les espaces affines analytiques, cas dans lequel nous pouvons exhiber des sections explicites de la projection (\cf~lemme~\ref{lem:section}). Le cas g\'en\'eral s'y r\'eduit, par normalisation de Noether.

Pour finir, dans la section~\ref{sec:dimtop}, nous calculons la dimension topologique, plus pr\'ecis\'ement la dimension de recouvrement, des espaces affines analytiques, ainsi que des disques (\cf~th\'eor\`eme~\ref{th:dimensionetoile}). Le calcul de la dimension des fibres est classique, mais, les espaces que nous consid\'erons n'\'etant, en g\'en\'eral, pas m\'etrisables, le passage \`a l'espace total requiert quelques pr\'ecautions.

\medbreak

Soit~$(\cA,\nm)$ un anneau de base g\'eom\'etrique. Posons $B:=\cM(\cA)$.

\section{Espaces \'elastiques}\label{sec:elastique}
\index{Espace topologique!elastique@\'elastique|(}

Dans cette section, nous regroupons quelques r\'esultats de topologie g\'en\'erale qui vont nous \^etre utiles pour montrer que les espaces affines, puis les espaces analytiques quelconques, sont localement connexes par arcs. Nous introduisons notamment la notion d'espace \'elastique.

\medbreak

Commen\c cons par montrer un r\'esultat de mod\'eration du nombre de composantes connexes.

\begin{prop}\label{prop:top}\index{Espace topologique!connexe}\index{Application!finie et ouverte}
Soit $f \colon X \to Y$ une application finie et ouverte entre espaces topologiques. Supposons que~$Y$ est connexe. Alors $X$ poss\`ede un nombre fini de composantes connexes et la restriction de~$f$ \`a chacune d'elle est surjective.
\end{prop}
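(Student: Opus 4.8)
The plan is to exploit the fact that a finite map is proper (Proposition~\ref{prop:finipropre}), so that preimages of quasi-compact sets are quasi-compact, together with the openness of~$f$. First I would observe that the connected components of~$X$ are open: indeed, for each $x\in X$ the point $f(x)$ has a connected open neighbourhood~$W$ (here we may as well assume $Y$ is a space in which connected components of points... — more carefully, we do not need local connectedness of~$Y$, only that $Y$ is connected, so instead I argue directly). Let me restructure: pick a point $y_{0}\in Y$ and consider its finite fibre $f^{-1}(y_{0}) = \{x_{1},\dotsc,x_{r}\}$. For each index $i$ let $C_{i}$ be the connected component of $X$ containing~$x_{i}$. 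The key claim is that $X = \bigcup_{i=1}^{r} C_{i}$, which immediately gives that $X$ has at most $r$ connected components.

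To prove this claim, I would show that $Z := \bigcup_{i=1}^{r} C_{i}$ is both open and closed in~$X$ and meets every fibre, so that, $Y$ being connected and $f(Z)$ being open and closed (using openness and finiteness of~$f$) and nonempty, one gets $f(Z) = Y$ and hence... — wait, this shows $Z$ surjects but not that $Z = X$. The cleaner route: show that $f(C_{i})$ is open and closed in~$Y$ for each~$i$. Openness of $f(C_{i})$ is clear since $f$ is open and $C_{i}$ is open in~$X$ (connected components of~$X$ are open because $X$, being finite and open over the connected — hence locally connected? no). Here is the genuinely careful argument: since $f$ is finite, hence closed, and $C_{i}$... is not necessarily closed. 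So instead I use that $X \setminus C_{i}$ is a union of connected components; if we already knew there were finitely many components we would be done, which is circular.

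The non-circular approach is the following. Since $f$ is open and finite, it is in particular a local homeomorphism onto its image in a weak sense; more precisely, by Lemma~\ref{lem:voisinagefibre} applied at $y_{0}$, there is an open neighbourhood $V$ of $y_{0}$ such that $f^{-1}(V) = \bigsqcup_{i=1}^{r} U_{i}$ with $x_{i}\in U_{i}$ and each $f_{|U_{i}}\colon U_{i}\to V$ finite and open. Now let $S\subseteq X$ be the union of the connected components of~$X$ that meet $f^{-1}(y_{0})$; there are at most $r$ of them. I claim $f(S)$ is open and closed in~$Y$. It is open because each such component is open in~$X$ (a connected component of~$X$ that meets the open set $f^{-1}(V)$ contains a whole $U_{i}$, which is open; more simply, each $U_{i}$ lies in a single component, and away from $f^{-1}(y_{0})$ one repeats the argument at other points — so in fact one shows every connected component of $X$ is open by running the neighbourhood argument at an arbitrary point). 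Granting that components of~$X$ are open, $X\setminus S$ is open, so $f(X\setminus S)$ is open; and $f(S)$ is open; these two open sets cover~$Y$; their intersection is empty iff $S$ and $X\setminus S$ have disjoint images. One checks $y_{0}\notin f(X\setminus S)$ since all of $f^{-1}(y_{0})$ lies in~$S$; hence $f(S)$ and $f(X\setminus S)$ are disjoint (any common value would have a fibre point in $S$ and one in $X\setminus S$, impossible as each fibre is either entirely in a component-of-a-point or... no — a fibre can be split among $S$ and $X\setminus S$).

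\textbf{The main obstacle.} The delicate point, and the step I expect to be the crux, is precisely ensuring disjointness of $f(S)$ and $f(X\setminus S)$: a priori a fibre $f^{-1}(y)$ could have one point in a component meeting $f^{-1}(y_{0})$ and another point in a component not meeting it. To handle this I would argue by "transport of components along the fibration": using the local product decomposition $f^{-1}(V) = \bigsqcup U_{i}$ from Lemma~\ref{lem:voisinagefibre} together with openness of $f_{|U_i}$, one shows that the set of $y\in Y$ whose entire fibre lies in~$S$ is open and closed, hence all of~$Y$ by connectedness. Concretely: let $T = \{y\in Y : f^{-1}(y)\subseteq S\}$. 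Then $y_{0}\in T$; $T$ is open because if $f^{-1}(y)\subseteq S$ then, $f$ being finite, $f^{-1}(y)$ has a neighbourhood $\bigsqcup U'_{j}$ with each $U'_{j}$ contained in a single (open) component, all inside~$S$, and by openness of $f$ the image of a small enough such neighbourhood is a neighbourhood of~$y$ contained in~$T$; and $T$ is closed because its complement is open by the symmetric argument (if some fibre point lies outside~$S$, i.e. in the open set $X\setminus S$, push forward). Hence $T = Y$, giving $f^{-1}(Y) = X \subseteq S$, so $X = S$ has at most $r$ components. Finally, for surjectivity of $f$ on each component $C$: $f(C)$ is open (openness of $f$, $C$ open) and closed ($f$ closed, and $C$ is also closed since it is the complement in~$X$ of the union of the finitely many other components, each open) and nonempty, hence equals~$Y$ by connectedness. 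This last sentence uses that we now know there are finitely many components, so each is clopen — which is why surjectivity is deduced only after the finiteness count.
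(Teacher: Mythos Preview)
Your argument hinges entirely on the claim that connected components of~$X$ are open, but the justification you give for this does not go through. Lemma~\ref{lem:voisinagefibre} says only that the sets $f^{-1}(V)$, for $V$ running over a base of neighbourhoods of~$y_{0}$, form a base of neighbourhoods of the \emph{whole} fibre $f^{-1}(y_{0})$; it does \emph{not} produce a decomposition $f^{-1}(V)=\bigsqcup_{i} U_{i}$ into disjoint opens around each fibre point. Obtaining such a splitting requires first separating the points $x_{1},\dotsc,x_{r}$ by pairwise disjoint open sets, which is a Hausdorff-type hypothesis on~$X$ that the statement does not assume. (Note that the very next proposition in the paper, Proposition~\ref{chemin}, \emph{does} add the hypothesis that~$X$ is s\'epar\'e precisely to make such splittings available.) Moreover, even granting the splitting, your sketch ``each $U_{i}$ lies in a single component'' does not show components are open, since the $U_{i}$ need not be connected. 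Everything downstream in your argument---the clopenness of~$S$, the set~$T$, the surjectivity on each component---rests on this unproven openness.

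The paper's proof avoids this obstacle by working not with connected components but with \emph{quasi-components}: for $x\in X$ one sets $OF_{x}=\bigcap\{U\subseteq X : U \text{ clopen},\ x\in U\}$. Each clopen~$U$ satisfies $f(U)=Y$ (since $f$ is open and closed and $Y$ connected), and a short pigeonhole argument using finiteness of fibres gives $f(OF_{x})=Y$ as well. One then checks the $OF_{x}$ partition~$X$, so that for any fixed $y\in Y$ one has $X=\bigsqcup_{z\in f^{-1}(y)} OF_{z}$, a finite disjoint union. Each $OF_{z}$ is closed (an intersection of closed sets) and open (the complement of finitely many closed sets), and finally connected (any clopen piece of $OF_{z}$ containing~$z$ already swallows $OF_{z}$). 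Only at this last step are the $OF_{z}$ identified with the connected components, and their openness is a \emph{conclusion}, not an input.
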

\begin{proof}
Soit~$x\in X$. Notons~$\cO\cF_x$ l'ensemble des parties ouvertes et ferm\'ees de~$X$ qui contiennent~$x$ et posons $OF_x:= \bigcap_{U\in\cO\cF_x} U$. Montrons que 
\[\bigcap_{U\in\cO\cF_x} f(U)=f(OF_x).\]

L'inclusion $f(OF_x)  \subset \bigcap_{U\in\cO\cF_x} f(U)$ est \'evidente. D\'emontrons l'inclusion r\'eciproque. Soit $y \in \bigcap_{U\in\cO\cF_x} f(U)$. Notons $x_{1},\dotsc,x_{n}$ les \'el\'ements de~$f^{-1}(y)$. Supposons, par l'absurde, que pour tout~$i\in\cn{1}{n}$, il existe~$U_i\in \cO\cF_x$ tel que~$x_i\notin U_i$. On a alors $(\bigcap_{i=1}^n U_i)\cap f^{-1}(y)= \emptyset$. Or, toute intersection finie de parties ouvertes et ferm\'ees est encore ouverte et ferm\'ee, donc $\bigcap_{i=1}^nU_{i} \in \cO\cF_x$, ce qui est absurde car~$y$ appartient \`a~$\bigcap_{U\in\cO\cF_x} f(U)$. Par cons\'equent, il existe~$i\in\cn{1}{n}$ tel que~$x_{i}$ appartienne \`a tous les \'el\'ements de~$\cO\cF_{x}$ et donc \`a~$OF_{x}$. On en d\'eduit que~$y$ appartient \`a~$f\left(OF_x\right)$.

Pour tout~$U\in\cO\cF_x$, $f(U)$ est ouvert, ferm\'e et non vide, donc \'egal \`a~$Y$, puisque~$Y$ est connexe. Ainsi, par ce qui pr\'ec\`ede, on en d\'eduit que $f(OF_x)=Y$.

\medbreak

Montrons maintenant que les ensembles~$OF_x$ forment une partition de~$X$. Soient $x,x' \in X$. 

Supposons que $x' \notin OF_{x}$. Alors il existe un ouvert ferm\'e~$U$ contenant~$x$ tel que~$x'$ n'appartienne pas \`a~$U$. Dans ce cas~$X\setminus U$ est un ouvert ferm\'e qui contient~$x'$ et ne contient pas~$x$. On en d\'eduit que $OF_x\cap OF_{x'} = \emptyset$.

Supposons que $x' \in OF_{x}$. Alors tout ouvert ferm\'e qui contient~$x$ contient~$x'$ donc $OF_{x'} \subset OF_x$. D'apr\`es ce qui pr\'ec\`ede, cela implique que $x \in OF_{x'}$, et finalement que $OF_x = OF_{x'}$. La propri\'et\'e annonc\'ee s'ensuit.

\medbreak

Soit~$y\in Y$. Pour tout $x\in X$, on a $f(OF_x)=Y$, donc il existe $z\in f^{-1}(y)$ tel que $z \in OF_{x}$, ce qui entra\^ine $OF_{z} = OF_{x}$. On en d\'eduit l'\'egalit\'e
\[ X = \bigsqcup_{z\in f^{-1}(y)} OF_{z}.\]

Soit $z\in f^{-1}(y)$. L'ensemble~$OF_{z}$ est intersection de ferm\'es, donc ferm\'e. En \'ecrivant~$OF_{z}$ comme le compl\'ementaire de la r\'eunion (finie) des~$OF_{z'}$, pour $z' \in f^{-1}(y) \setminus \{z\}$, on montre que~$OF_{z}$ est ouvert. 

Montrons que~$OF_{z}$ est connexe, ce qui permettra de conclure. Soient~$A$ et~$B$ des ouverts disjoints de~$OF_{z}$ de r\'eunion \'egale \`a~$OF_{z}$. On peut supposer que $z\in A$. Dans ce cas, on a $A \in \cO\cF_{z}$, donc $OF_{z} \subset A$, et finalement $OF_{z} = A$. Le r\'esultat s'ensuit.
\end{proof}

Nous souhaiterions disposer d'un r\'esultat analogue au pr\'ec\'edent mais concernant cette fois-ci la connexit\'e par arcs. Pour l'obtenir, nous aurons besoin d'imposer une condition plus forte sur l'espace d'arriv\'ee. Nous l'introduisons ici. 

\begin{defi}\label{def:elastique}\index{Espace topologique!elastique@\'elastique|textbf}\index{Espace topologique!connexe par arcs|textbf}\index{Chemin!elastique@\'elastique|textbf}\index{Chemin|textbf}
Soit~$X$ un espace topologique. 

Soient $x,y \in X$. On dit qu'une partie~$\ell$ de~$X$ est un \emph{chemin reliant $x$ \`a $y$} s'il existe une application continue surjective $\varphi \colon [0,1] \to \ell$ telle que $\varphi(0)=x$ et $\varphi(1)=y$.

Soit~$\ell$ un chemin reliant~$x$ \`a~$y$. Le chemin~$\ell$ est dit \emph{\'elastique en~$y$} si, pour tout voisinage~$U$ de~$\ell$, il existe un voisinage~$V$ de~$y$ tel que, pour tout~$y'\in V$, il existe un chemin de~$x$ \`a~$y'$ contenu dans~$U$. Le chemin~$\ell$ est dit \emph{\'elastique} s'il est \'elastique en~$x$ et en~$y$.


L'espace~$X$ est dit \emph{connexe par arcs} (resp. \emph{\'elastique}) si, pour tous $x,y \in X$, il existe un chemin (resp. chemin \'elastique) reliant~$x$ \`a~$y$.
\end{defi}

\begin{rema}\label{rem:definitionelastique}
\begin{enumerate}[i)]
\item Pour montrer qu'un espace~$X$ est \'elastique, il suffit de montrer qu'il est connexe par arcs et que, pour tout point~$x$ de~$X$, il existe un chemin d'origine~$x$ qui est \'elastique en~$x$. La preuve consiste en un simple argument bas\'e sur la concat\'enation des chemins.
\item Tout espace connexe et localement connexe par arcs est \'elastique. De m\^eme, tout espace contractile est \'elastique. La notion d'espace \'elastique est cependant plus g\'en\'erale. En effet, il existe des espaces connexes et localement connexes par arcs mais non contractiles (par exemple le cercle) ainsi que des espaces contractiles non localement connexes par arcs  (par exemple la r\'eunion des droites du plan r\'eel de pente rationnelle passant par~0).
\end{enumerate}
\end{rema}

\begin{exem}\label{exemple_non_\'elastique}
L'exemple qui suit montre que les notions d'espaces \'elastique et connexe par arcs sont distinctes. 

Notons~$G_{\sin(1/x)}$ le sous-ensemble de~$\R^2$ constitu\'e du graphe de~$\sin(1/x)$ sur~$\intof{0,1/\pi}$. Consid\'erons l'espace topologique~$X$ form\'e de la r\'eunion de l'adh\'erence $\overline{G_{\sin(1/x)}}$ de~$G_{\sin(1/x)}$ dans~$\R^2$ et d'un chemin~$\ell_{0}$ reliant le point $y_{0}$ de coordonn\'ees $(0,-1)$ au point~$x_{0}$ de coordonn\'ees $(1/\pi,0)$, de sorte que $\ell \cap \overline{G_{\sin(1/x)}} =\{x_{0},y_{0}\}$ (\cf~figure~\ref{fig:elastiquevscpa}). Cet espace est connexe par arcs. 

\begin{figure}[!h]
\centering
\labellist
\pinlabel~$\bullet$ at 3.5 27
\pinlabel~$y_{0}$ at -6 27
\pinlabel~$\bullet$ at 71 47
\pinlabel~$x_0$ at 79 47
\pinlabel~$\ell_0$ at 40 0
\endlabellist
\includegraphics[scale=1.5]{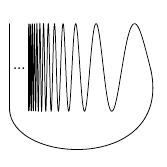}
\caption{Un espace connexe par arcs non \'elastique.}
\label{fig:elastiquevscpa}
\end{figure}

Soit~$\ell$ un chemin reliant~$x_{0}$ \`a~$y_{0}$. Tout voisinage de~$y_{0}$ contient un point de $G_{\sin(1/x)}$ avec une abscisse strictement positive tr\`es petite. Un tel point ne peut \^etre reli\'e \`a~$x_{0}$ dans aucun voisinage assez petit du chemin~$\ell$ (\cf~figure~\ref{fig:voisinagechemin}). 

 \pgfdeclareimage[interpolate=true,height=4cm]{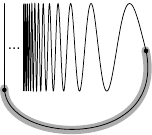}{nonelastique2}
 \begin{figure}[!h]
\centering
\labellist
\pinlabel~$\bullet$ at 1 21
\pinlabel~$y_{0}$ at -6 21
\pinlabel~$\bullet$ at 69 39.5
\pinlabel~$x_0$ at 77 40
\endlabellist
\includegraphics[scale=1.5]{nonelastique2}
\caption{Un voisinage d'un chemin de~$x_{0}$ \`a~$y_{0}$.}
\label{fig:voisinagechemin}
\end{figure}

Par cons\'equent, le chemin~$\ell$ n'est pas \'elastique en~$y_{0}$ et l'espace topologique~$X$ n'est donc pas \'elastique.

Ajoutons que des exemples de ce type peuvent appara\^itre dans l'\'etude des espaces de Berkovich g\'en\'eraux. En effet, tout espace compact~$K$ peut \^etre r\'ealis\'e comme spectre d'anneau de Banach~$\cA$, par exemple en choississant pour~$\cA$ l'alg\`ebre $\cC(K,\C)$ des fonctions continues de~$K$ dans~$\C$ munie de la norme uniforme. (Le corps~$\C$ est ici muni de la valeur absolue usuelle~$\va_{\infty}$.) Il est donc n\'ecessaire d'imposer des restrictions sur les anneaux~$\cA$ consid\'er\'es.
\end{exem}

La proposition suivante reprend \cite[lemma~3.2.5]{Ber1}. Nous en rappelons la d\'emonstration pour apporter quelques pr\'ecisions.

\begin{prop}\label{chemin}\index{Espace topologique!connexe par arcs}\index{Application!finie et ouverte}
Soit~$f \colon X \to Y$ une application finie et ouverte entre espaces topologiques. Supposons que~$X$ et~$Y$ sont connexes, que~$X$ est s\'epar\'e et que~$Y$ est \'elastique. Alors $X$~est connexe par arcs.
\end{prop}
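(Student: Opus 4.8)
L'id\'ee centrale est la suivante~: pour relier deux points $x_{0},x_{1}$ de~$X$ par un chemin, on descend leurs images $y_{0} := f(x_{0})$, $y_{1} := f(x_{1})$ dans~$Y$, o\`u l'\'elasticit\'e fournit un chemin~$\ell$ de~$y_{0}$ \`a~$y_{1}$ \emph{\'elastique}, puis on rel\`eve ce chemin en partant de~$x_{0}$. La finitude et l'ouverture de~$f$ assurent que le rel\`evement est possible et, gr\^ace \`a la compacit\'e de~$[0,1]$, qu'il aboutit \`a un point de~$f^{-1}(y_{1})$~; la propri\'et\'e d'\'elasticit\'e en~$y_{1}$ permet ensuite de corriger le chemin relev\'e pour qu'il atteigne pr\'ecis\'ement~$x_{1}$.

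\textbf{\'Etapes.} D'abord, je r\'eduirais \`a un \'enonc\'e local de rel\`evement des chemins. Pour tout $x\in X$ et tout chemin~$\gamma \colon [0,1] \to Y$ avec $\gamma(0) = f(x)$, je montrerais que l'ensemble des $t\in [0,1]$ tels que $\gamma_{|[0,t]}$ se rel\`eve en un chemin d'origine~$x$ est \`a la fois ouvert et ferm\'e dans~$[0,1]$, donc \'egal \`a~$[0,1]$. L'ouverture en~$t$ utilise le fait que~$f$ est finie et ouverte~: au voisinage d'un ant\'ec\'edent d'un point, on peut, d'apr\`es le lemme~\ref{lem:voisinagefibre} et la s\'eparation de~$X$, isoler cet ant\'ec\'edent dans un ouvert~$U$ tel que $f_{|U} \colon U \to f(U)$ soit ouverte et bijective (donc un hom\'eomorphisme sur un voisinage ouvert de~$f(x)$), ce qui permet de prolonger le rel\`evement un peu au-del\`a de~$t$. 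La fermeture en~$t$ (c'est-\`a-dire l'existence de $\lim_{s\to t^-}$ du rel\`evement) repose sur la finitude des fibres et la propret\'e induite (\cf~proposition~\ref{prop:finipropre})~: l'image r\'eciproque d'un voisinage compact de~$\gamma(t)$ est compacte, et comme $f^{-1}(\gamma(t))$ est finie, le chemin relev\'e sur~$[0,t[$ doit s'accumuler en un unique point de cette fibre, qui est alors sa limite (en utilisant \`a nouveau la structure locale d'hom\'eomorphisme).

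\textbf{Mise en \oe uvre de l'\'elasticit\'e.} Soient $x_{0},x_{1} \in X$. Puisque~$Y$ est \'elastique, il existe un chemin~$\ell$ de~$y_{0}$ \`a~$y_{1}$ \'elastique, en particulier \'elastique en~$y_{1}$. D'apr\`es l'\'etape de rel\`evement, $\ell$ se rel\`eve en un chemin~$\tilde\ell$ d'origine~$x_{0}$~; notons $x'_{1} := \tilde\ell(1) \in f^{-1}(y_{1})$. Si $x'_{1} = x_{1}$, c'est termin\'e. Sinon, on utilise la s\'eparation de~$X$ pour trouver un voisinage ouvert~$U$ de l'image de~$\tilde\ell$ \'evitant~$x_{1}$~: plus pr\'ecis\'ement, comme $f^{-1}(y_{1})$ est finie, on choisit un voisinage~$U$ de~$\tilde\ell([0,1])$ tel que $U \cap f^{-1}(y_{1}) = \{x'_{1}\}$, puis on prend l'image directe $f(U)$ (ouvert, car $f$~est ouverte) comme voisinage~$U'$ de~$\ell([0,1])$. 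L'\'elasticit\'e de~$\ell$ en~$y_{1}$ donne un voisinage~$W$ de~$y_{1}$ dans~$U'$ tel que tout point de~$W$ soit reliable \`a~$y_{0}$ par un chemin dans~$U'$. Enfin, comme $f$~est ouverte, $f^{-1}(W) \cap U$ est un voisinage ouvert de~$x'_{1}$~; par connexit\'e par arcs locale... non, plut\^ot~: le point~$x_{1}$ n'est pas dans~$U$, ce qui sugg\`ere plut\^ot de choisir $W$ assez petit et de rel\'ever un chemin de~$W$ bien choisi.

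\textbf{Principal obstacle.} Le point d\'elicat est pr\'ecis\'ement le dernier pas~: passer de \og on peut atteindre \emph{un} ant\'ec\'edent~$x'_{1}$ de~$y_{1}$\fg{} \`a \og on peut atteindre \emph{l'}ant\'ec\'edent~$x_{1}$\fg{}. C'est l\`a que l'\'elasticit\'e (et pas seulement la connexit\'e par arcs) est indispensable, et c'est pour cette raison qu'elle a \'et\'e introduite~; l'exemple~\ref{exemple_non_\'elastique} montre que sans elle l'\'enonc\'e serait faux. La strat\'egie correcte~: partant de~$x_{1}$ et de l'image~$\ell$ parcourue \`a rebours (de~$y_{1}$ \`a~$y_{0}$), le rel\`evement d'origine~$x_{1}$ aboutit \`a un point~$x'_{0} \in f^{-1}(y_{0})$. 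On dispose ainsi d'un chemin de~$x_{0}$ \`a~$x'_{1}$ et d'un chemin de~$x_{1}$ \`a~$x'_{0}$. Il faut alors exploiter l'\'elasticit\'e aux deux extr\'emit\'es~: dans un voisinage arbitrairement fin du chemin relev\'e de~$x_{0}$ vers~$x'_{1}$, l'\'elasticit\'e de~$\ell$ en~$y_{1}$ permet de rejoindre les ant\'ec\'edents de points proches de~$y_{1}$~; en combinant avec le fait que $f$ est ouverte et que la fibre $f^{-1}(y_{1})$ est finie et discr\`ete dans un voisinage convenable, on montre que \emph{tout} point de $f^{-1}(y_{1})$, et en particulier~$x_{1}$, peut \^etre atteint depuis~$x_{0}$. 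La v\'erification soigneuse de ce dernier argument combinatoire sur les fibres finies, utilisant de mani\`ere essentielle la proposition~\ref{prop:top} (qui garantit que les composantes connexes de~$X$ s'envoient surjectivement sur~$Y$) pour contr\^oler la r\'epartition des ant\'ec\'edents, constituera le c\oe ur technique de la preuve.
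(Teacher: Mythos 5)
Votre strat\'egie de rel\`evement de chemins repose sur une affirmation fausse~: vous pr\'etendez qu'au voisinage d'un ant\'ec\'edent isol\'e dans sa fibre, $f$ induit un hom\'eomorphisme d'un ouvert~$U$ sur un voisinage ouvert de~$f(x)$. Une application finie et ouverte n'est pas, en g\'en\'eral, un hom\'eomorphisme local (penser \`a $z\mapsto z^2$ sur~$\C$ en~$0$~: finie, ouverte, mais injective sur aucun voisinage de~$0$). La moiti\'e \og ouverte \fg{} de votre argument d'ouvert-ferm\'e s'effondre donc, et un vrai rel\`evement $\tilde\ell$ v\'erifiant $f\circ\tilde\ell=\ell$ peut ne pas exister. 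La preuve du texte \'evite tout rel\`evement~: elle \'etudie directement $f^{-1}(\ell)$, dont la topologie est engendr\'ee par une famille d\'enombrable d'ouverts connexes (composantes connexes des $f^{-1}(U)$ pour $U$ dans une base d\'enombrable de~$\ell$, en nombre fini par la proposition~\ref{prop:top})~; $f^{-1}(\ell)$ \'etant compact (propret\'e des applications finies, \cf~proposition~\ref{prop:finipropre} et lemme~\ref{lem:criterepropre}), il est m\'etrisable et localement connexe, donc ses composantes connexes sont compactes, connexes par arcs, et chacune se surjecte sur~$\ell$. Cela fournit un chemin de~$x_{0}$ vers \emph{un} point de la fibre au-dessus de~$y_{1}$ sans aucun rel\`evement.

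Plus grave, l'\'etape d\'ecisive --- passer de \og un ant\'ec\'edent de~$y_{1}$ \fg{} \`a \og l'ant\'ec\'edent prescrit~$x_{1}$ \fg{} --- est pr\'ecis\'ement l\`a o\`u votre proposition s'arr\^ete~: vous renvoyez \`a un \og argument combinatoire \fg{} non explicit\'e et \`a l'\'elasticit\'e \og aux deux extr\'emit\'es \fg{}, sans donner le m\'ecanisme~; et il n'est pas clair qu'une correction localis\'ee en~$y_{1}$ puisse fonctionner, car l'\'elasticit\'e ne fournit que des chemins en bas, pr\`es de~$y_{1}$, et rien n'oblige leurs composantes de pr\'eimage \`a relier deux feuillets donn\'es au-dessus de~$y_{1}$. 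La d\'emonstration du texte proc\`ede autrement~: on fixe~$y_{0}$ et on montre que la composante connexe par arcs d'un point arbitraire~$x$ (au-dessus de $y:=f(x)$) est \emph{ouverte}. On s\'epare les composantes $\Sigma_{1},\dotsc,\Sigma_{n}$ de~$f^{-1}(\ell)$ par des ouverts disjoints~$W_{i}$ ($X$ est s\'epar\'e), on choisit, gr\^ace au lemme~\ref{lem:voisinagefibre}, un voisinage~$U$ de~$\ell$ avec $f^{-1}(U)\subset\bigsqcup_{i}W_{i}$, puis l'\'elasticit\'e de~$\ell$ en~$y$ donne un voisinage~$V$ de~$y$ tel que tout $y'\in V$ se relie \`a~$y_{0}$ par un chemin $\ell'\subset U$. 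Pour $x'\in f^{-1}(V)\cap W_{1}$, la composante de~$x'$ dans $f^{-1}(\ell')$ est connexe par arcs, contenue dans~$W_{1}$, et se surjecte sur~$\ell'$~: elle rencontre donc $f^{-1}(y_{0})$ \emph{dans}~$\Sigma_{1}$, d'o\`u un chemin de~$x$ \`a~$x'$ par concat\'enation. Les composantes connexes par arcs sont donc ouvertes, et la connexit\'e de~$X$ (que votre strat\'egie directe \`a deux points n'exploite jamais) conclut. L'id\'ee cl\'e est ainsi de comparer les feuillets au-dessus de l'extr\'emit\'e lointaine~$y_{0}$, et non au-dessus de~$y_{1}$~: c'est elle qui manque \`a votre plan.
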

\begin{proof}
Soit~$\ell$ un chemin de~$Y$. Soit~$\cU$ une base d'ouverts de~$\ell$ qui engendre sa topologie. Nous pouvons supposer que~$\cU$ est d\'enombrable. Pour tout $U \in \cU$, l'application $f^{-1}(U) \to U$ induite par~$f$ est encore finie et ouverte. La proposition~\ref{prop:top} assure alors que~$f^{-1}(U)$ a un nombre fini de composantes connexes. Celles-ci sont donc ouvertes. Notons~$\cC_{U}$ l'ensemble des composantes connexes de~$f^{-1}(U)$. L'ensemble $\cC := \bigcup_{U\in \cU} \cC_{U}$ est alors d\'enombrable et, d'apr\`es le lemme~\ref{lem:voisinagefibre}, il engendre la topologie de~$f^{-1}(\ell)$. 

D'apr\`es le lemme~\ref{lem:criterepropre}, $f^{-1}(\ell)$ est compact. D'apr\`es \cite[IX, \S 2, \no 9, proposition~16]{BourbakiTG510}
il est donc m\'etrisable. Puisque les \'el\'ements de~$\cC$ sont connexes, $f^{-1}(\ell)$ est localement connexe. D'apr\`es \cite[III, \S 2, \no 4, corollaire~2 de la proposition~11]{BourbakiTA14},
$f^{-1}(\ell)$ est donc localement connexe par arcs. En particulier, les composantes connexes de~$f^{-1}(\ell)$ sont compactes et connexes par arcs.

\medbreak

Pour d\'emontrer le r\'esultat, il suffit de montrer que les composantes connexes par arcs de~$X$ sont ouvertes. Soit $x\in X$. Posons $y := f(x)$. Soit $y_{0} \in Y$. Soit~$\ell$ un chemin reliant~$y_{0}$ \`a~$y$ qui soit \'elastique en~$y$. D'apr\`es le raisonnement pr\'ec\'edent, $f^{-1}(\ell)$ poss\`ede un nombre fini de composantes connexes, qui sont compactes et connexes par arcs. Notons-les $\Sigma_{1},\dotsc,\Sigma_{n}$. On peut supposer que $x \in \Sigma_{1}$.

Puisque~$X$ est s\'epar\'e, on peut trouver des voisinages $W_{1},\dotsc,W_{n}$ respectivement de $\Sigma_{1},\dotsc,\Sigma_{n}$ qui soient deux \`a deux disjoints. D'apr\`es le lemme~\ref{lem:voisinagefibre}, il existe un voisinage~$U$ de~$\ell$ tel que $f^{-1}(U) \subset \bigsqcup_{i=1}^n W_{i}$.

Soit~$V$ un voisinage ouvert de~$y$ dans~$U$ tel que, pour tout $y' \in V$, il existe un chemin reliant~$y_{0}$ \`a~$y$ contenu dans~$U$. Alors $f^{-1}(V) \cap W_{1}$ est inclus dans la composante connexe par arcs de~$x$. En effet, soit $x' \in f^{-1}(V) \cap W_{1}$. Il existe un chemin~$\ell'$ reliant~$y_{0}$ \`a~$y':=f(x')$ contenu dans~$U$. Notons~$\Sigma'$ la composante connexe de~$f^{-1}(\ell')$ contenant~$x'$. Elle est connexe par arcs et contenue dans $f^{-1}(U)$, donc dans $\bigsqcup_{i=1}^n W_{i}$, donc dans~$W_{1}$ puisqu'elle rencontre~$W_{1}$. On a donc
\[ \Sigma' \cap f^{-1}(y_{0}) \subset W_{1} \cap f^{-1}(y_{0}) = \Sigma_{1} \cap  f^{-1}(y_{0}) .\]
On en d\'eduit que $\Sigma' \cup \Sigma_{1}$ est connexe par arcs. Il existe donc un chemin reliant~$x$ \`a~$x'$. Le r\'esultat s'en d\'eduit.
\end{proof}

\begin{rema}\label{rem:revetementdegre2}
Il existe un espace topologique connexe par arcs (mais non \'elastique) admettant un rev\^etement de degr\'e~$2$ connexe mais non connexe par arcs. On peut, par exemple, consid\'erer le rev\^etement de l'espace d\'efini \`a l'exemple \ref{exemple_non_\'elastique} repr\'esent\'e \`a la figure~\ref{fig:revetementcpa}.

\begin{figure}[!h]
\centering
\includegraphics[scale=2.2]{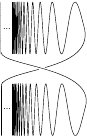}
\caption{Rev\^etement de degr\'e 2 connexe mais non connexe par arcs d'un espace connexe par arcs non \'elastique.}
\label{fig:revetementcpa}
\end{figure}

Cela permet de justifier l'introduction de la notion d'espace \'elastique.
\end{rema}

Dans le cas o\`u l'espace au but est seulement suppos\'e connexe par arcs, on dispose cependant du r\'esultat suivant.

\begin{prop}\label{prop:revetementcpa}\index{Espace topologique!connexe par arcs}\index{Application!finie et ouverte}
Soit~$f \colon X \to Y$ une application finie et ouverte entre espaces topologiques. Supposons que~$X$ et~$Y$ sont s\'epar\'es et que~$Y$ est connexe par arcs. Alors $X$ poss\`ede un nombre fini de composantes connexes par arcs et la restriction de~$f$ \`a chacune d'elles est surjective.
\end{prop}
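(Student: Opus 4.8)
L'id\'ee est de ramener le cas d'une application finie et ouverte vers un espace seulement connexe par arcs au cas d\'ej\`a trait\'e dans la proposition~\ref{chemin}, o\`u le but est suppos\'e \'elastique, en rempla\c{c}ant un chemin arbitraire de~$Y$ par un segment, c'est-\`a-dire l'image d'un chemin, qui est, lui, automatiquement \'elastique. Plus pr\'ecis\'ement, je commencerais par observer que l'image r\'eciproque d'un chemin poss\`ede de bonnes propri\'et\'es topologiques~: si $\ell \colon [0,1] \to Y$ est un chemin et $L := \ell([0,1])$ son image (une partie compacte connexe de~$Y$), alors l'application $f^{-1}(L) \to L$ induite par~$f$ est encore finie et ouverte. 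Le raisonnement men\'e au d\'ebut de la preuve de la proposition~\ref{chemin} montre alors que $f^{-1}(L)$ est un espace compact, m\'etrisable et localement connexe, donc localement connexe par arcs~; en particulier, $f^{-1}(L)$ a un nombre fini de composantes connexes, qui sont compactes et connexes par arcs.

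Pour montrer que~$X$ poss\`ede un nombre fini de composantes connexes par arcs, je fixerais un point~$y_{0}$ de~$Y$. Pour tout $x\in X$, il existe un chemin dans~$Y$ reliant~$f(x)$ \`a~$y_{0}$ (par connexit\'e par arcs de~$Y$) et donc, en relevant dans la composante connexe par arcs de~$f^{-1}$ de l'image de ce chemin contenant~$x$, un chemin dans~$X$ reliant~$x$ \`a un point de~$f^{-1}(y_{0})$. Ceci prouve que toute composante connexe par arcs de~$X$ rencontre~$f^{-1}(y_{0})$, ensemble qui est fini~; d'o\`u la finitude du nombre de composantes connexes par arcs. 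Le m\^eme argument, appliqu\'e \`a un point arbitraire~$y$ de~$Y$ \`a la place de~$y_{0}$, montre que chaque composante connexe par arcs de~$X$ rencontre~$f^{-1}(y)$, donc que sa restriction \`a chaque composante est surjective.

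Il reste \`a justifier proprement le pas de rel\`evement~: \'etant donn\'e un chemin $\ell$ dans~$Y$ d'origine~$z$ et d'extr\'emit\'e~$z'$, et un point~$x$ de~$f^{-1}(z)$, il faut exhiber un chemin dans~$X$ d'origine~$x$ et d'extr\'emit\'e dans~$f^{-1}(z')$. On utilise ici que $f^{-1}(L)$ (avec $L := \ell([0,1])$) est localement connexe par arcs et compact~: on suit la preuve de la proposition~\ref{chemin} en travaillant avec le segment~$L$, qui \emph{est} \'elastique puisqu'il est, par d\'efinition, l'image d'un chemin, donc lui-m\^eme connexe par arcs --- tout espace connexe et localement connexe par arcs \'etant \'elastique d'apr\`es la remarque~\ref{rem:definitionelastique}. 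On peut \'egalement invoquer directement la proposition~\ref{chemin} appliqu\'ee \`a l'application finie et ouverte $f^{-1}(L) \to L$, en remarquant que les composantes connexes de ces deux espaces sont \'elastiques. La principale subtilit\'e, qui est le point sur lequel je porterais le plus d'attention, est de v\'erifier que $f^{-1}(L)$ est bien m\'etrisable (donc que l'\'equivalence locale connexe / localement connexe par arcs s'applique)~: cela repose, comme dans la preuve de la proposition~\ref{chemin}, sur le caract\`ere d\'enombrable d'une base convenable d'ouverts et sur le th\'eor\`eme d'Urysohn pour les compacts \`a base d\'enombrable --- c'est \`a cet endroit qu'il est crucial que l'on ait restreint l'\'etude \`a l'image d'un chemin et non \`a un espace topologique quelconque, en g\'en\'eral non m\'etrisable.
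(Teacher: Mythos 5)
Your overall strategy is the same as the paper's: restrict $f$ to the preimage of a path, apply Propositions~\ref{prop:top} and~\ref{chemin} there, then conclude using the finiteness of the fibres. The genuine gap is in your treatment of the image $L=\ell([0,1])$: you assert that $L$ is elastic because, being the image of a path, it is path-connected. But path-connectedness does not imply elasticity --- that is precisely what Example~\ref{exemple_non_\'elastique} of the paper shows --- and the implication \og connexe et localement connexe par arcs $\Rightarrow$ \'elastique \fg{} of Remark~\ref{rem:definitionelastique} can only be invoked once you know that $L$ is \emph{locally} path-connected, which you never establish. The same missing property is also used implicitly earlier in your argument: to run the beginning of the proof of Proposition~\ref{chemin} on $f^{-1}(L)\to L$ and obtain finitely many open connected components above basis elements (via Proposition~\ref{prop:top}, whose hypothesis requires a connected target), you need a countable basis of \emph{connected} open subsets of $L$, i.e.\ again local connectedness of $L$. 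So the elasticity of $L$, which is the crux of your reduction, is asserted on insufficient grounds.

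The gap is repairable in two ways, and the paper chooses the first. Since $Y$ is Hausdorff, any two points joined by a path are joined by an arc, i.e.\ a subspace homeomorphic to $[0,1]$ (the Bourbaki result cited in the paper's proof); replacing the path by such an arc makes the target trivially elastic, and the rest of your argument then goes through essentially verbatim --- this is exactly the paper's proof. Alternatively, one can keep $L$ and invoke the Hahn--Mazurkiewicz circle of ideas: $[0,1]\to L$ is a closed, hence quotient, map onto a compact Hausdorff space, so $L$ is compact, metrizable, connected and locally connected, therefore locally path-connected, and only then does Remark~\ref{rem:definitionelastique} give elasticity. Either fix must be made explicit; without one of them, the step \og $L$ est \'elastique \fg{} is unjustified.
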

\begin{proof}
Soient~$y_{1},y_{2} \in Y$. Par hypoth\`ese, il existe un chemin~$\ell$ reliant~$y_{1}$ et~$y_{2}$. Puisque~$Y$ est s\'epar\'e, d'apr\`es~\cite[III, \S 2, \no 9, proposition~18]{BourbakiTA14},
on peut supposer que~$\ell$ est hom\'eomorphe au segment~$[0,1]$. D'apr\`es la remarque~\ref{rem:definitionelastique} ii), l'espace topologique~$\ell$ est alors \'elastique. Le morphisme $\varphi^{-1}(\ell) \to \ell$ induit par~$\varphi$ est encore fini et ouvert. D'apr\`es la proposition~\ref{prop:top} et la proposition~\ref{chemin}, $\varphi^{-1}(\ell)$ a un nombre fini de composantes connexes et celles-ci se surjectent sur~$\ell$ par~$\varphi$ et sont connexes par arcs.

On en d\'eduit imm\'ediatement que les composantes connexes par arcs de~$X$ se surjectent sur~$Y$ par~$f$. Puisque les fibres de~$f$ sont finies, il ne peut y en avoir qu'un nombre fini.
\end{proof}

\begin{rema}
L'exemple de la remarque~\ref{rem:revetementdegre2} montre que les composantes connexes par arcs de~$X$ peuvent n'\^etre ni ferm\'ees ni ouvertes.
\end{rema}

Nous allons maintenant \'enoncer deux crit\`eres permettant de montrer l'\'elasticit\'e de certains espaces topologiques.

\begin{lemm}\label{lem:crit_elast}
Soient~$X$ un espace topologique et~$W$ un ouvert de~$X$. Supposons que
\begin{enumerate}[i)]
\item $W$ et~$F:=X\setminus W$ sont \'elastiques ;
\item l'ensemble $\partial F:=F\setminus \mathring F$ n'est pas vide ;
\item tout point~$x$ de~$\partial F$ admet une base de voisinages~$\cU_x$ dans~$X$ telle que pour tout $U\in\cU_x$ et tout $y\in W\cap U$, il existe un chemin trac\'e sur~$U$ reliant~$y$ \`a un point de~$F$.
\end{enumerate}
Alors l'espace topologique~$X$ est \'elastique.
\end{lemm}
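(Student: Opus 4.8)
L'objectif est de montrer que $X$ est \'elastique en reliant deux points arbitraires de~$X$ par un chemin \'elastique. D'apr\`es la remarque~\ref{rem:definitionelastique}~i), il suffit de v\'erifier que $X$ est connexe par arcs et que tout point~$x$ de~$X$ est l'origine d'un chemin \'elastique en~$x$. Commen\c cons par la connexit\'e par arcs~: puisque $W$ et~$F$ sont \'elastiques (donc en particulier connexes par arcs) et que l'hypoth\`ese~iii) appliqu\'ee \`a un point de~$\partial F \subset F$ (non vide par~ii)) fournit un chemin reliant un point de~$W$ arbitrairement proche de~$\partial F$ \`a un point de~$F$, on obtient que $W\cup F = X$ est connexe par arcs par concat\'enation (tout point de~$W$ peut \^etre reli\'e \`a un point de~$\partial F$, donc \`a $F$, qui est connexe par arcs).

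\textbf{\'Elasticit\'e en un point.} Soit $x\in X$. Il y a trois cas. Si $x\in W$, alors comme $W$ est ouvert et \'elastique, il existe un chemin dans~$W$ d'origine~$x$ \'elastique en~$x$~; ce chemin reste \'elastique en~$x$ vu comme chemin de~$X$, car $W$ \'etant ouvert, tout voisinage suffisamment petit dans~$W$ est un voisinage dans~$X$. Si $x\in\mathring F$, le m\^eme argument fonctionne avec~$F$ et l'ouvert~$\mathring F$. Le cas d\'elicat, qui constituera le c\oe ur de la preuve, est celui o\`u $x\in\partial F$. Dans ce cas, j'utiliserais l'\'elasticit\'e de~$F$~: il existe un chemin~$\ell_{F}$ dans~$F$ d'origine~$x$ \'elastique en~$x$. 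Il faut montrer que, vu comme chemin de~$X$, il reste \'elastique en~$x$, c'est-\`a-dire que pour tout voisinage~$U$ de~$\ell_{F}$ dans~$X$, il existe un voisinage~$V$ de~$x$ dans~$X$ tel que tout point de~$V$ — qu'il soit dans~$F$ ou dans~$W$ — puisse \^etre reli\'e \`a~$x$ par un chemin contenu dans~$U$. Pour les points de~$F\cap V$, cela d\'ecoule directement de l'\'elasticit\'e de~$\ell_{F}$ dans~$F$. Pour les points de~$W\cap V$, c'est ici qu'intervient de fa\c con essentielle l'hypoth\`ese~iii)~: quitte \`a r\'etr\'ecir, on choisit~$V$ de la forme~$V' \cap U_{0}$ o\`u $U_{0}\in\cU_{x}$ est assez petit pour que tout point $y\in W\cap U_{0}$ se relie par un chemin trac\'e sur~$U_{0}$ \`a un point~$z(y)\in F$, et o\`u~$V'$ est donn\'e par l'\'elasticit\'e de~$\ell_{F}$ appliqu\'ee \`a un voisinage ad\'equat de~$\ell_{F}$ dans~$F$.

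\textbf{L'obstacle principal.} La difficult\'e technique r\'eside dans le contr\^ole simultan\'e des voisinages~: partant du voisinage~$U$ de~$\ell_{F}$ dans~$X$, on dispose de $U\cap F$ qui est un voisinage de~$\ell_{F}$ dans~$F$, d'o\`u un voisinage~$V_{1}$ de~$x$ dans~$F$ tel que tout point de~$V_{1}$ se relie \`a~$x$ dans $U\cap F \subset U$. Il faut alors que le point~$z(y)\in F$ fourni par~iii) tombe dans~$V_{1}$, ce qui impose de prendre $U_{0}\in\cU_{x}$ assez petit. Le point subtil est que le chemin de~$y$ \`a $z(y)$ est trac\'e sur~$U_{0}$, pas n\'ecessairement sur~$U$~; il faut donc \'egalement avoir choisi $U_{0} \subset U$ d\`es le d\'epart (ce qui est loisible car~$\cU_{x}$ est une base de voisinages de~$x$ dans~$X$ et~$x\in\ell_{F}\subset U$). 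En rassemblant, on pose $V := V_{1} \cap U_{0}$ (vu comme sous-ensemble de~$X$, apr\`es avoir au besoin \'epaissi~$V_{1}$ en un voisinage de~$x$ dans~$X$ dont la trace sur~$F$ est contenue dans le~$V_{1}$ pr\'ec\'edent), et pour $y\in V$~: si $y\in F$ on conclut par l'\'elasticit\'e de~$\ell_{F}$~; si $y\in W$, on concat\`ene le chemin de~$y$ \`a $z(y)$ (trac\'e sur $U_{0}\subset U$) avec le chemin de $z(y)$ \`a~$x$ (trac\'e sur~$U$, car $z(y)\in V_{1}$), ce qui donne un chemin de~$y$ \`a~$x$ contenu dans~$U$. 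Ceci \'etablit que~$\ell_{F}$ est \'elastique en~$x$ dans~$X$ et ach\`eve la preuve que~$X$ est \'elastique.

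\endinput
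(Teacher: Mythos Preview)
Your proof is correct and follows essentially the same approach as the paper's: reduction via remark~\ref{rem:definitionelastique}~i), the three-case split according to whether $x\in W$, $x\in\mathring F$, or $x\in\partial F$, and in the last case, taking a path in~$F$ elastic at~$x$ and using hypothesis~iii) to reach the nearby points of~$W$. The paper's version is marginally more streamlined in that it simply chooses $U_{0}\in\cU_{x}$ contained in the neighborhood~$U'$ furnished by elasticity in~$F$ (your~$V_{1}'$), which automatically forces $z(y)\in U_{0}\cap F\subset V_{1}$ without tracking~$V_{1}$ separately; and it connects to the other endpoint~$x_{0}$ rather than to~$x$, though as you implicitly use, these are equivalent since~$\ell_{F}\subset U$.
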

\begin{proof}
Soit~$x \in X$. D'apr\`es la remarque~\ref{rem:definitionelastique}, il suffit de montrer qu'il existe un chemin d'origine~$x$ qui est \'elastique en~$x$. Si $x\in W$, la propri\'et\'e d\'ecoule du fait que~$W$ est ouvert dans~$X$ et \'elastique. Il en va de m\^eme si $x\in \mathring F$. On peut donc supposer que $x\in \partial F$. 

Choisissons un chemin~$\ell$ de~$F$ d'origine~$x$ et \'elastique en~$x$ en tant que chemin de~$F$. Notons $x_{0}\in F$ son autre extr\'emit\'e. 

Soit~$V$ un voisinage de~$\ell$ dans~$X$. Le point~$x$ poss\`ede un voisinage~$U'$ dans~$V$ tel que, pour tout~$x'\in U'\cap F$, il existe un chemin reliant~$x'$ \`a~$x_{0}$ dans~$V$. Soient~$U$ un voisinage de~$x$ dans~$U'$ appartenant \`a~$\cU_{x}$. Soit~$y \in U$. Si $y\in F$, on a d\'ej\`a vu qu'il existe un chemin reliant~$y$ \`a~$x_0$ dans~$V$. Si~$y\in W$, alors il existe un chemin dans~$U$ reliant~$y$ \`a un point~$x'$ de~$U\cap F$. Il existe alors un chemin reliant~$x'$ \`a~$x_0$ dans~$V$, et donc un chemin reliant~$y$ \`a~$x_0$ dans~$V$. Ceci montre que~$\ell$ est \'elastique en~$x$ en tant que chemin de~$X$. 
\end{proof}

\begin{lemm}\label{lem:crit_elast'}
Soient~$X$ un espace topologique et~$(X_i)_{i\in\N}$ une suite croissante (pour l'inclusion) de sous-espaces topologiques tels que~$\bigcup_{i\in\N}\mathring X_i=X$. Si, pour tout~$i\in\N$, l'espace~$X_i$ est \'elastique, alors l'espace~$X$ est lui aussi \'elastique.
\qed
\end{lemm}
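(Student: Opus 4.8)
Il s'agit de montrer que si $X = \bigcup_{i \in \N} X_{i}$ avec $(X_{i})_{i \in \N}$ croissante, $\bigcup_{i} \mathring{X_{i}} = X$ et chaque $X_{i}$ \'elastique, alors $X$ est \'elastique. D'apr\`es la remarque~\ref{rem:definitionelastique} i), il suffit de montrer que $X$ est connexe par arcs et que, pour tout point $x$ de $X$, il existe un chemin d'origine $x$ \'elastique en $x$ (en tant que chemin de~$X$).

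\textbf{Connexit\'e par arcs.} Soient $x, y \in X$. Puisque $\bigcup_{i} \mathring{X_{i}} = X$ et que la suite est croissante, il existe $i \in \N$ tel que $x$ et $y$ appartiennent tous deux \`a $X_{i}$. Comme $X_{i}$ est \'elastique, il est en particulier connexe par arcs, donc il existe un chemin reliant $x$ \`a $y$ trac\'e sur $X_{i} \subset X$. Donc $X$ est connexe par arcs.

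\textbf{Existence d'un chemin \'elastique en chaque point.} Soit $x \in X$. Il existe $i \in \N$ tel que $x \in \mathring{X_{i}}$. Puisque $X_{i}$ est \'elastique, il existe un chemin $\ell$ trac\'e sur $X_{i}$, d'origine $x$ et \'elastique en $x$ en tant que chemin de $X_{i}$. Je v\'erifie que $\ell$ est encore \'elastique en $x$ en tant que chemin de~$X$. Soit $U$ un voisinage de $\ell$ dans $X$. Alors $U \cap X_{i}$ est un voisinage de $\ell$ dans $X_{i}$, donc, par \'elasticit\'e de $\ell$ dans $X_{i}$, il existe un voisinage $V_{i}$ de $x$ dans $X_{i}$ tel que, pour tout $x' \in V_{i}$, il existe un chemin de l'extr\'emit\'e de $\ell$ \`a $x'$ trac\'e sur $U \cap X_{i} \subset U$. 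Comme $x \in \mathring{X_{i}}$, l'ensemble $V := V_{i} \cap \mathring{X_{i}}$ (ou plus simplement $V_{i}$ intersect\'e avec un voisinage ouvert de $x$ contenu dans $X_{i}$) est un voisinage de $x$ dans $X$, et pour tout $y' \in V$ on dispose du chemin voulu trac\'e sur $U$. Donc $\ell$, vu comme chemin de $X$, est \'elastique en $x$. D'apr\`es la remarque~\ref{rem:definitionelastique} i), on conclut que $X$ est \'elastique.

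\textbf{Point d\'elicat.} L'unique subtilit\'e est de s'assurer que les voisinages fournis par l'\'elasticit\'e dans $X_{i}$ soient bien des voisinages dans $X$ : c'est pr\'ecis\'ement l\`a qu'intervient l'hypoth\`ese $x \in \mathring{X_{i}}$ (plus pr\'ecis\'ement $\bigcup_{i} \mathring{X_{i}} = X$), qui garantit que $X_{i}$ est \og gros \fg{} pr\`es de $x$. Sans cette hypoth\`ese (par exemple avec seulement $\bigcup_{i} X_{i} = X$), l'\'enonc\'e serait faux, comme le montre l'exemple de la remarque~\ref{rem:revetementdegre2} ou celui de l'exemple~\ref{exemple_non_\'elastique}, qui s'\'ecrivent comme r\'eunions croissantes de parties \'elastiques (voire contractiles) sans \^etre \'elastiques.
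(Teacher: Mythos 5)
Votre preuve est correcte : le texte énonce ce lemme sans démonstration, et votre argument — connexité par arcs obtenue en plaçant deux points quelconques dans un même $X_i$ par croissance de la suite, puis transfert de l'élasticité en $x$ de $X_i$ à $X$ via la remarque~\ref{rem:definitionelastique} — est exactement la vérification attendue. Le seul point délicat, à savoir que le voisinage fourni par l'élasticité dans $X_i$ contient bien un voisinage de $x$ dans $X$ grâce à l'hypothèse $x\in\mathring X_i$, est correctement traité.
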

\index{Espace topologique!elastique@\'elastique|)}

\section[Connexit\'e par arcs locale]{Connexit\'e par arcs locale}\label{sec:cpa}

Dans cette section, nous nous int\'eressons \`a la connexit\'e par arcs et la connexit\'e par arcs locale des espaces analytiques. Nous commencerons par le cas des espaces affines.

\medbreak

Soit $n\in \N$. Notons $\pi \colon \E{n}{\cA} \to B = \cM(\cA)$ le morphisme de projection. Fixons des coordonn\'ees $T_{1},\dotsc,T_{n}$ sur $\E{n}{\cA}$. Posons $\bT := (T_{1},\dotsc,T_{n})$.%
\nomenclature[Ita]{$\bT$}{$= (T_{1},\dotsc,T_{n})$, $n$-uplet de coordonn\'ees}

\begin{nota}%
\nomenclature[Itb]{$\br$}{$= (r_{1},\dotsc,r_{n}) \in \R_{\ge 0}^n$}%
\nomenclature[Itc]{$\bi$}{$= (i_{1},\dotsc,i_{n}) \in \N^n$}%
\nomenclature[Itd]{$\br^\bi$}{$= \prod_{m=1}^n r_{m}^{i_{m}}$}
Soit $A$ un anneau. Pour tout $\ba = (a_{1},\dotsc,a_{n}) \in A^n$ et tout $\bi = (i_{1},\dotsc,i_{n}) \in \N^n$, on pose
\[ \ba^\bi := \prod_{m=1}^n a_{m}^{i_{m}},\]
avec la convention que $a^0 = 1$, pour tout $a\in A$.
\end{nota}

\begin{nota}%
\nomenclature[Iua]{$\eta_{\br}$}{sur un corps valu\'e ultram\'etrique complet, unique point du bord de Shilov du polydisque centr\'e en~0 de polyrayon $\br$}
Soit $(k,\va)$ un corps valu\'e ultram\'etrique complet. Notons $\bT := (T_{1},\dotsc,T_{n})$ des coordonn\'ees sur~$\E{n}{k}$. Soit $\br = (r_1,\dotsc,r_n) \in \R_{\ge0}^n$. On note~$\eta_{\br}$ le point de~$\E{n}{k}$ associ\'e \`a la semi-norme multiplicative
\[{\renewcommand{\arraystretch}{1.3}\begin{array}{ccc}
k[\bT] & \too & \R_{\ge 0}\\
\disp \sum_{\bi \ge 0} a_{\bi}\, \bT^{\bi} & \mapstoo & \max_{\bi \ge 0} ( |a_{\bi}| \, \br^{\bi} ).
\end{array}}\]

Soit $b\in B_{\um}$. On note $\eta_{b,\br} \in \E{n}{\cA}$ le point~$\eta_{\br}$ de la fibre $\pi^{-1}(b) \simeq \E{n}{\cH(b)}$
\end{nota}

Rappelons que nous avons introduit une fonction $\eps \colon B_{\arc} \to \intof{0,1}$ (\cf~notation~\ref{not:epsilon}).

\begin{nota}
\nomenclature[Isz]{$\rho_{b}$}{pour $b$ point archim\'edien de~$\cM(\cA)$, plongement canonique $\R^n \to \pi_{n}^{-1}(b)$}
Soit $b\in B_{\arc}$. D'apr\`es le th\'eor\`eme~\ref{th:vaarchimedienne} et le lemme~\ref{lem:AnC}, la fibre $\pi^{-1}(b)$ s'identifie \`a~$\C^n$ si $\cH(b) = \C$ et \`a $\C^n/\Gal(\C/\R)$ si $\cH(b) = \R$ et, dans tous les cas, on a un plongement canonique 
\[ \rho_{b} \colon \R^n \too \pi^{-1}(b).\] 
\end{nota}

\begin{lemm}\label{lem:section}\index{Projection!section d'une}%
\nomenclature[Iuc]{$\sigma_{\br}$}{une section de la projection~$\pi_{n}$}
Soit $\br = (r_1,\dotsc,r_n) \in \R_{\ge0}^n$. L'application
\[\fonction{\sigma_{\br}}{B}{\E{n}{\cA}}{b}{
\begin{cases}
\eta_{b,\br} &\textrm{si } b\in B_{\um}~;\\
\rho_{b}(\br^{1/\eps(b)}) &\textrm{si } b\in B_{\arc}.
\end{cases}}\]
est une section de la projection $\pi \colon \E{n}{\cA} \to B$. Sa restriction \`a~$B_{\um}$ et sa restriction \`a~$B_{\arc}$ sont continues.

Notons $S := \{ i \in \cn{1}{n} : r_{i} \ne 0\}$. Soit~$b_{0} \in B$ tel que la famille $(r_{i})_{i\in S}$ soit libre dans le $\Q$-espace vectoriel $\R_{>0}/|\cH(b_{0})^\times|^\Q$. Alors $\sigma_{\br}$ est continue en~$b_{0}$.
\end{lemm}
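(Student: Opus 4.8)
The statement has three parts: $\sigma_{\br}$ is a set-theoretic section, it is continuous on $B_{\um}$ and on $B_{\arc}$ separately, and under the rational independence hypothesis it is continuous at a given point $b_0$. The first part is immediate from the constructions: the point $\eta_{b,\br}$ lies in the fibre $\pi^{-1}(b)$ by definition, and $\rho_b$ takes values in $\pi^{-1}(b)$, so $\pi \circ \sigma_{\br} = \id_B$ holds by inspection. The only subtlety is to check that $\rho_b(\br^{1/\eps(b)})$ makes sense, i.e. that the real numbers $r_i^{1/\eps(b)}$ are well-defined (they are, since $\eps(b) \in \intof{0,1}$ and the $r_i$ are $\ge 0$), and that when some $r_i = 0$ the corresponding coordinate is simply $0$.

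\textbf{Continuity on each stratum.} For continuity on $B_{\um}$, recall that the topology on $\E{n}{\cA}$ is the topology of pointwise convergence, so it suffices to show that for each $P \in \cA[T_1,\dots,T_n]$ the map $b \mapsto |P(\eta_{b,\br})|$ is continuous on $B_{\um}$. Writing $P = \sum_{\bi} a_{\bi}\, \bT^{\bi}$ with $a_{\bi} \in \cA$, one has $|P(\eta_{b,\br})| = \max_{\bi}\big( |a_{\bi}(b)|\, \br^{\bi}\big)$ precisely because $b$ is ultrametric; this is a finite maximum of the continuous functions $b \mapsto |a_{\bi}(b)|\, \br^{\bi}$ (continuity of $b\mapsto |a_{\bi}(b)|$ being the definition of the topology on $B$), hence continuous. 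For continuity on $B_{\arc}$, I would use Lemma~\ref{lem:epscontinue} (the map $\eps \colon B_{\arc} \to \intof{0,1}$ is continuous) together with the local model $\E{n}{\C} \simeq \C^n$ and $\E{n}{\R} \simeq \C^n/\Gal(\C/\R)$ from Lemma~\ref{lem:AnC}. Since $\eps$ is continuous and nonvanishing on $B_{\arc}$, the map $b \mapsto \br^{1/\eps(b)} = (r_1^{1/\eps(b)},\dots,r_n^{1/\eps(b)}) \in \R^n$ is continuous, and composing with the (continuous) family of embeddings $\rho_b$ gives continuity; more concretely, one again reduces to checking $b \mapsto |P(\sigma_{\br}(b))|$ is continuous, and here $|P(\rho_b(\br^{1/\eps(b)}))| = |P(\br^{1/\eps(b)})|_\infty^{\eps(b)}$ by the definition of $\eps(b)$ and Theorem~\ref{th:vaarchimedienne}, which is manifestly continuous in $b$.

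\textbf{Continuity at $b_0$: the main obstacle.} The delicate point is continuity of $\sigma_{\br}$ at a point $b_0$ lying on the boundary between $B_{\um}$ and $B_{\arc}$ (or more generally where the two stratawise-continuous pieces must be glued), under the hypothesis that $(r_i)_{i\in S}$ is free in $\R_{>0}/|\cH(b_0)^\times|^{\Q}$. Here I expect the argument to follow the lines of \cite[proposition~3.4.1]{A1Z}, adapted to $n$ variables as the remark preceding the lemma suggests. The strategy: pick $P = \sum_{\bi} a_{\bi}\bT^{\bi} \in \cA[T_1,\dots,T_n]$; we must show $b \mapsto |P(\sigma_{\br}(b))|$ is continuous at $b_0$. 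The rational independence hypothesis is exactly what forces, in the fibre over $b_0$, the value $|P(\sigma_{\br}(b_0))|$ to be the \emph{dominant} monomial term $\max_{\bi}(|a_{\bi}(b_0)|\,\br^{\bi})$ with the maximum attained at a \emph{unique} multi-index $\bi_0$ among those with $a_{\bi_0}(b_0) \ne 0$ — there are no ties because a tie $|a_{\bi}(b_0)|\br^{\bi} = |a_{\bj}(b_0)|\br^{\bj}$ would yield a nontrivial $\Q$-linear relation among the $\log r_i$ modulo $\log|\cH(b_0)^\times|$. Uniqueness of the dominant term gives a neighbourhood of $b_0$ where that same term $\bi_0$ stays strictly dominant, and on that neighbourhood — whether nearby points are ultrametric or archimedean — one checks that $|P(\sigma_{\br}(b))|$ agrees with $|a_{\bi_0}(b)|\,\br^{\bi_0}$ up to an error controlled by the subdominant terms, which tends to $0$; in the archimedean case one also uses that $\eps(b) \to \eps(b_0)$ if $b_0$ is archimedean, or that $\eps(b) \to 0$ and $\br^{\bi/\eps(b)}$ behaves correctly if $b_0$ is the point where the archimedean branch degenerates. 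The bookkeeping of these estimates — separating the dominant monomial, bounding the rest uniformly, and handling the archimedean exponent $1/\eps(b)$ as $b \to b_0$ — is the technical heart of the proof; the rest is formal.
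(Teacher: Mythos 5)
Your handling of the first two assertions is essentially the paper's, with one caveat on $B_{\arc}$: the identity $|P(\rho_b(\br^{1/\eps(b)}))| = |P(\br^{1/\eps(b)})|_\infty^{\eps(b)}$ is not by itself ``manifestly continuous in $b$'', because the identification $\cH(b)\simeq\R$ or $\C$ varies with $b$ and the only continuity you may invoke is that of $b\mapsto|f(b)|$ for global sections $f$. The paper makes this precise by splitting $\big||P(\sigma_{\br}(b'))|-|P(\sigma_{\br}(b))|\big|$ into a term where the fibre coordinate is frozen at $\br^{1/\eps(b)}$ — so that $P(\gamma_{\R}(\br^{1/\eps(b)}))$ is a single global function on $B_{\arc}$, whose absolute value is continuous by Lemma~\ref{lem:evaluationcontinueaffine} — and a term where only the coordinate moves, controlled by a uniform bound on the coefficients together with the continuity of $\eps$ (Lemma~\ref{lem:epscontinue}). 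That decomposition is the missing step that turns your one-line claim into a proof.

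For continuity at $b_0$ your route is correct in outline but genuinely different from the paper's, and much heavier. You use freeness to isolate, for each polynomial $P$, a unique dominant monomial at $b_0$ and then propose to estimate $|P(\sigma_{\br}(b))|$ as $b\to b_0$; the hard case is $b$ archimedean approaching an ultrametric $b_0$, where one must check that the subdominant-to-dominant ratios raised to the power $1/\eps(b)\to+\infty$ tend to $0$ while the count of monomials raised to $\eps(b)$ tends to $1$ — precisely the estimates you defer as ``bookkeeping''. The paper instead uses freeness once, at the level of the point rather than of each polynomial: it implies that $\sigma_{\br}(b_0)$ is the \emph{unique} point of $\pi^{-1}(b_0)$ with $|T_i(x)|=r_i$ for all $i$, whence (by \cite[lemme~2.4.1]{A1Z}) the tubes $\{x\in\pi^{-1}(V) : \forall i,\ s_i\prec|T_i(x)|<t_i\}$ with $s_i\prec r_i<t_i$ form a basis of neighbourhoods of $\sigma_{\br}(b_0)$; since $|T_i(\sigma_{\br}(b))|=r_i$ identically in $b$, the preimage under $\sigma_{\br}$ of any such tube contains $V$, and continuity follows with no estimates whatsoever. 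Your approach buys an explicit quantitative picture of how the archimedean fibres degenerate onto the ultrametric one; the paper's buys a three-line argument that sidesteps the $\eps(b)\to0$ analysis entirely. If you pursue your version, the dominant-term estimates do close (strict dominance is exactly what freeness gives, and ties can only occur among monomials with vanishing coefficient at $b_0$), but they must actually be written out.
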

\begin{proof}
Il d\'ecoule directement de la d\'efinition de~$\sigma_{\br}$ que c'est une section de~$\pi$. 

Rappelons que, par d\'efinition de la topologie de~$\E{n}{\cA}$, pour montrer que la restriction de~$\sigma_{\br}$ \`a une partie~$U$ de~$B$ est continue, il suffit de montrer que, pour tout $P \in \cA[\bT]$, l'application
\[ b\in U \mapsto |P(\sigma_{\br}(b))| \in \R_{\ge 0}\]
est continue. Soit $P = \sum_{\bi \ge 0} a_{\bi}\, \bT^{\bi}\in \cA[\bT]$.

Commen\c cons par traiter le cas de la restriction \`a~$B_{\um}$. Pour tout $b\in B_{\um}$, on a
\[ |P(\sigma_{\br}(b))| = \max_{\bi \ge0} (|a_{\bi}(b)| \, \br^\bi).\]
Le r\'esultat d\'ecoule donc de la continuit\'e des fonctions~$|a_{\bi}|$ (qui sont en nombre fini).

\'Etudions maintenant la restriction \`a~$B_{\arc}$. Rappelons que~$B_{\arc}$ est un ouvert de~$B$, d'apr\`es la remarque~\ref{rem:umfermee}. Soit~$b \in B_{\arc}$. Pour tout $b'\in B$, on a 
\begin{align*} 
& \big| |P(\sigma_{\br}(b'))| -|P(\sigma_{\br}(b))| \big|\\
=\ & \big| |P(\rho_{b'}(\br^{1/\eps(b')}))| - |P(\rho_{b}(\br^{1/\eps(b)})| \big|\\
\le\ &  \big| |P(\rho_{b'}(\br^{1/\eps(b')}))| - |P(\rho_{b'}(\br^{1/\eps(b)})| \big| +  \big| |P(\rho_{b'}(\br^{1/\eps(b)}))| - |P(\rho_{b}(\br^{1/\eps(b)})| \big|.
\end{align*}
\'Etudions d'abord le second terme de la somme. Rappelons que, d'apr\`es la remarque~\ref{rem:gammaR}, on a une injection canonique $\gamma_{\R} \colon \R \to \cO(B_{\arc})$. Pour tout $b'\in B$, on a
\[\big| |P(\rho_{b'}(\br^{1/\eps(b')}))| - |P(\rho_{b'}(\br^{1/\eps(b)})| \big| 
= \big| |P(\gamma_{\R}(\br^{1/\eps(b)}))(b')| - |P(\gamma_{\R}(\br^{1/\eps(b)}))(b)|  \big|\]
et cette quantit\'e tend donc vers~0 quand~$b'$ tend vers~$b$, d'apr\`es le lemme~\ref{lem:evaluationcontinueaffine}.

\'Etudions maintenant le premier terme de la somme. Il existe un voisinage~$V$ de~$b$ dans~$B_{\arc}$ et une constante $M\in \R$ tels que
\[ \forall \bi \ge 0, \forall b' \in V,\ |a_{\bi}(b')| \le M.\]
Soit $\alpha \in \R_{>0}$. D'apr\`es le lemme~\ref{lem:epscontinue}, la fonction~$\eps$ est continue. Quitte \`a restreindre~$V$, on peut donc supposer que
\[ \forall b' \in V,\ |\br^{1/\eps(b')} - \br^{1/\eps(b)}| \le \frac\alpha M.\]
On a alors
\begin{align*}
 \big| |P(\rho_{b'}(\br^{1/\eps(b')}))| - |P(\rho_{b'}(\br^{1/\eps(b)})| \big|
&\le \big| \sum_{\bi \ge 0} a_{\bi}(b') (\br^{1/\eps(b')} - \br^{1/\eps(b)})\big|\\
& \le d M \frac\alpha M \le d\alpha,
\end{align*}
o\`u~$d$ est le nombre de coefficients non nuls de~$P$.

On en d\'eduit que $|P(\sigma_{\br}(\wc))|$ est continue en~$b$, puis que~$\sigma_{\br}$ est continue sur~$B_{\arc}$.

\medbreak

D\'emontrons maintenant la derni\`ere partie du r\'esultat. L'hypoth\`ese de libert\'e entra\^ine que l'on a
\[ \{ x \in \pi^{-1}(b_{0}) : \forall i \in \cn{1}{n},\ |T_{i}(x)| = r_{i} \} = \{\sigma_{\br}(b_{0})\}.\]
Soit~$U$ un voisinage de~$\sigma_{\br}(b_{0})$. D'apr\`es l'\'egalit\'e pr\'ec\'edente (\cf~\cite[lemme~2.4.1]{A1Z} pour les d\'etails), il existe un voisinage~$V$ de~$b_{0}$ dans~$B$ et $s_{1},t_{1},\dotsc,s_{n},t_{n} \in \R_{\ge0}$ avec, pour tout $i\in \cn{1}{n}$, $s_{i} \prec r_{i} < t_{i}$, tels que 
\[ \{ x \in \pi^{-1}(V) : \forall i \in \cn{1}{n},\ s_{i} \prec |T_{i}(x)| < t_{i} \} \subset U.\]
Or, par d\'efinition, pour tout $b\in B$ et tout $i\in \cn{1}{n}$, on a $|T_{i}(\sigma_{\br}(b))| = r_{i}$. 
Par cons\'equent, $\sigma_{\br}^{-1}(U)$ contient~$V$. On en d\'eduit que~$\sigma_{\br}$ est continue en~$b_{0}$.
\end{proof}

Introduisons une d\'efinition qui nous permettra de tirer profit du lemme~\ref{lem:section}.

\begin{defi}\label{def:peumixte}\index{Chemin!pur|textbf}\index{Partie!peu mixte|textbf}
Soit $\varphi \colon [0,1] \to B$ une application continue. Le chemin correspondant \`a~$\varphi$ est dit \emph{pur} si $\varphi(\intoo{0,1})$ est enti\`erement contenu soit dans~$U_{\um}$ soit dans~$U_{\arc}$.

Une partie~$U$ de $B=\cM(\cA)$ est dite \emph{peu mixte} si
\begin{enumerate}[i)]
\item pour pour $b \in \partial U_{\um} = \partial U_{\arc}$, le $\Q$-espace vectoriel  $|\cH(b)^\times|^\Q$ est de codimension infinie dans~$\R_{>0}$~;
\item pour tous $x,y$ dans la m\^eme composante connexe par arcs de~$U$, il existe un chemin reliant~$x$ \`a~$y$ qui est concat\'enation d'un nombre fini de chemins purs.
\end{enumerate}
\end{defi}

\begin{rema}\label{rem:peumixte}
Le point~ii) de la d\'efinition de partie peu mixte est satisfait d\`es que $\partial U_{\um} = \partial U_{\arc}$ est discret.
\end{rema}

\begin{exem}\label{ex:peumixte}\index{Corps!valu\'e}\index{Anneau!des entiers relatifs $\Z$}\index{Anneau!des entiers d'un corps de nombres}\index{Corps!hybride}\index{Anneau!de valuation discr\`ete}\index{Anneau!de Dedekind trivialement valu\'e}
Si l'anneau de Banach~$\cA$ est l'un de nos exemples habituels (corps valu\'e, $\Z$ ou anneau d'entiers de corps de nombres, corps hybride, anneau de valuation discr\`ete, anneau de Dedekind trivialement valu\'e, \cf~exemples~\ref{ex:corpsvalue} \`a~\ref{ex:Dedekind}), alors toute partie de~$\cM(\cA)$ est peu mixte.
\end{exem}

Passons maintenant aux r\'esultats de connexit\'e. Nous allons, dans un premier temps, montrer que les polycouronnes relatives dont la base est connexe par arcs sont connexes par arcs. Nous utiliserons ensuite ce r\'esultat pour montrer que ces polycouronnes sont m\^eme \'elastiques. Finalement, nous en d\'eduirons que tout point d'un espace affine analytique admet une base de voisinages connexes par arcs.

Introduisons une notation pour les polycouronnes relatives.

\begin{nota}\label{nota:disquerelatif}%
\nomenclature[Iva]{$\E{n}{U}$}{espace affine relatif au-dessus de~$U$}%
\nomenclature[Ivaa]{$D_{U}(\bt)$}{polydisque ouvert relatif au-dessus de~$U$}%
\nomenclature[Ivb]{$\oD_{U}(\bt)$}{polydisque ferm\'e relatif au-dessus de~$U$}%
\nomenclature[Ivc]{$C_{U}(\bs,\bt)$}{polycouronne ouverte relative au-dessus de~$U$}%
\nomenclature[Ivd]{$\overline{C}_{U}(\bs,\bt)$}{polycouronne ferm\'ee relative au-dessus de~$U$}%
Soit $X$ un espace $\cA$-analytique. Consid\'erons l'espace affine analytique $\E{n}{\cA}$, avec coordonn\'ees $T_{1},\dotsc,T_{n}$, et posons $\E{n}{X} := X \times_{\cA} \E{n}{\cA}$. Notons $\pi\colon \E{n}{X} \to X$ la premi\`ere projection. Pour $i \in \cn{1}{n}$, on note encore~$T_{i}$ le tir\'e en arri\`ere de~$T_{i}$ sur~$\E{n}{X}$ par la seconde projection.

Soit~$U$ une partie de~$X$. On pose 
\[\E{n}{U} := \pi^{-1}(U).\]
Soient $\bs=(s_1,\ldots,s_n), \bt=(t_1,\ldots,t_n) \in \R^n$. On pose 
\begin{align*}
D_{U}(\bt) &:= \big\{x\in \E{n}{U} : \forall i\in\cn{1}{n}, |T_{i}(x)| < t_{i}\big\}~;\\
\overline{D}_{U}(\bt) &:= \big\{x\in \E{n}{U} : \forall i\in\cn{1}{n}, |T_{i}(x)| \le t_{i}\big\}~;\\
C_{U}(\bs,\bt) &:= \big\{x\in \E{n}{U} : \forall i\in\cn{1}{n}, s_{i} < |T_{i}(x)| < t_{i}\big\}~;\\
\overline{C}_{U}(\bs,\bt) &:= \big\{x\in \E{n}{U} : \forall i\in\cn{1}{n}, s_{i} \le |T_{i}(x)| \le t_{i}\big\}.
\end{align*}
\end{nota}


\begin{lemm}\label{lem:cpasurMk}
Soit $(k,\va)$ un corps valu\'e ultram\'etrique complet. Soient $\bs=(s_1,\ldots,s_n), \bt=(t_1,\ldots,t_n) \in \R^n$. 
Alors $C_{\cM(k)}(\bs,\bt)$ est connexe par arcs.
\end{lemm}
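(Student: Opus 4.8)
Since $\cM(k)$ is a single point (Example~\ref{ex:corpsvalue}), the space $C_{\cM(k)}(\bs,\bt)$ is just the open polyannulus $C(\bs,\bt)$ inside $\E{n}{k}$, a classical Berkovich space over a complete ultrametric field. The plan is to proceed by induction on $n$. For $n=0$ the space is a point, hence trivially connected by arcs; for $n=1$ this is the standard fact that open annuli in the Berkovich line over $k$ are path-connected, which follows from the tree structure of $\Aunk$ (every point can be joined to the Gauss-type point $\eta_{r}$ for any $r$ in the annulus by a canonical path, and these paths patch together). I would cite or recall this base case rather than re-prove it.

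\textbf{Inductive step.} Assume the result for $n-1$. Write $\bs=(\bs',s_n)$, $\bt=(\bt',t_n)$ with $\bs',\bt'\in\R^{n-1}$, and let $\pi_{n,n-1}\colon \E{n}{k}\to\E{n-1}{k}$ be the projection onto the first $n-1$ coordinates; its fibre over a point $y$ is $\E{1}{\cH(y)}$. First I would observe that $C(\bs,\bt)$ fibres over $C(\bs',\bt')$ with fibres the open annuli $C_{\cH(y)}(s_n,t_n)$ inside $\E{1}{\cH(y)}$, each of which is path-connected by the $n=1$ case applied over $\cH(y)$. Next, using the section $\sigma_{\bt''}$ of Lemma~\ref{lem:section} (for a suitable $\bt''$ with $s_i<t''_i<t_i$, say $t''_i$ the geometric mean of $s_i$ and $t_i$ when $s_i>0$, or $t''_i=t_i/2$ when $s_i=0$), restricted to the single point $\cM(k)$, one gets a point $\sigma=\eta_{\bt''}$ of $C(\bs,\bt)$; more usefully, the analogous section over $C(\bs',\bt')$ lands in $C(\bs,\bt)$ and is continuous because $\cM(k)$ is a point so the hypotheses of Lemma~\ref{lem:section} are vacuous in the $\um$ case. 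Concretely: the map $y\mapsto \eta_{y,t''_n}\in\pi_{n,n-1}^{-1}(y)$ is a continuous section $C(\bs',\bt')\to C(\bs,\bt)$. Call it $s$.

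\textbf{Gluing the pieces.} Given two points $x_1,x_2\in C(\bs,\bt)$, set $y_i:=\pi_{n,n-1}(x_i)$. Inside the fibre $\pi_{n,n-1}^{-1}(y_i)\cap C(\bs,\bt)$, which is a path-connected open annulus over $\cH(y_i)$, join $x_i$ to $s(y_i)$ by a path. By the induction hypothesis, $C(\bs',\bt')$ is path-connected, so join $y_1$ to $y_2$ by a path $\ell$; then $s\circ\ell$ is a path in $C(\bs,\bt)$ from $s(y_1)$ to $s(y_2)$. Concatenating the three paths gives a path from $x_1$ to $x_2$. This establishes path-connectedness of $C(\bs,\bt)=C_{\cM(k)}(\bs,\bt)$.

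\textbf{Main obstacle.} The delicate point is the $n=1$ base case over an arbitrary complete ultrametric field $k$ (not assumed algebraically closed or of any particular type), and, relatedly, the claim that the fibrewise section $y\mapsto\eta_{y,t''_n}$ is genuinely continuous as a map into $\E{n}{\cH(y)}$-varying total space — but this is exactly the content of the $\um$-part of Lemma~\ref{lem:section} (which is stated for a general base $\cA$, hence applies with $\cA=\cH(y)$ uniformly, and the continuity in the base direction is handled there). So the real work reduces to the one-variable statement about annuli in the Berkovich line, which is standard; everything else is a routine fibration-plus-section argument. I do not expect any serious difficulty beyond correctly invoking Lemma~\ref{lem:section} and the one-dimensional case.
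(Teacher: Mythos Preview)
Your proof is correct and gives a genuinely different argument from the paper's. The paper, for the ultrametric case, simply cites Berkovich's result \cite[corollary~3.2.3]{Ber1} that closed polydisks over~$k$ are path-connected, remarks that the same proof works for closed polyannuli, and then exhausts the open polyannulus by closed ones. Your approach is an induction on~$n$ using the fibration $\pi_{n,n-1}$ together with a continuous Gauss-point section and the path-connectedness of the one-dimensional fibres. This is more self-contained (it only needs the $n=1$ case over varying complete ultrametric fields, which follows from the tree structure of the Berkovich line) and is exactly the technique the paper deploys later in Propositions~\ref{connexe} and~\ref{elast_partiel1}; the paper's route is shorter but leans on an external reference in all dimensions at once.

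One point of imprecision: your appeal to Lemma~\ref{lem:section} for the continuity of $y\mapsto\eta_{y,t''_n}$ is not quite right as stated, since that lemma concerns the section of $\pi_n\colon\E{n}{\cA}\to\cM(\cA)$, not of $\pi_{n,n-1}$, and applying it with $\cA=\cH(y)$ for each~$y$ separately says nothing about continuity in~$y$. The clean fix is either to invoke Remark~\ref{spectralement_conv} to locally identify $\pi_{n,n-1}^{-1}(V)$ with $\E{1}{\cB(V)}$ for a compact spectrally convex~$V\subset\E{n-1}{k}$ and then apply Lemma~\ref{lem:section} with $\cA=\cB(V)$ (all of whose points are ultrametric), or simply to observe directly that for $P=\sum_j P_j(T_1,\dots,T_{n-1})T_n^j$ one has $|P(\eta_{y,t''_n})|=\max_j |P_j(y)|\,(t''_n)^j$, a finite maximum of continuous functions of~$y$. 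The paper itself uses this section over subsets of~$\E{n-1}$ (see the proof of Proposition~\ref{elast_partiel1}) with the same informal attribution, so you are in good company, but it is worth stating the justification cleanly.
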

\begin{proof}
Nous allons distinguer trois cas, selon le type du corps~$k$ consid\'er\'e. 

\smallbreak

$\bullet$ Supposons que~$k=\C$.

Alors, d'apr\`es le lemme~\ref{lem:AnC}, $\E{n}{k}$ est isomorphe \`a~$\C^n$. Par cons\'equent, $C_{\cM(k)}(\bs,\bt)$ est une polycouronne complexe. Elle est donc connexe par arcs.

\smallbreak

$\bullet$ Supposons que~$k=\R$.

Alors, d'apr\`es le lemme~\ref{lem:AnC}, $\E{n}{k}$ est hom\'eomorphe \`a~$\C^n/\Gal(\C/\R)$. Par cons\'equent, $C_{\cM(k)}(\bs,\bt)$ est l'image d'une polycouronne complexe par une application continue. Elle est donc \'egalement connexe par arcs.

\smallbreak

$\bullet$ Supposons que~$k$ est ultram\'etrique.

On renvoie \`a~\cite[corollary~3.2.3]{Ber1}. Dans cette r\'ef\'erence, V.~Berkovich d\'emontre la connexit\'e par arcs du polydisque unit\'e, mais le m\^eme raisonnement fonctionne pour une polycouronne ferm\'ee quelconque. Le cas d'une polycouronne ouverte s'en d\'eduit en l'\'ecrivant comme union croissante de polycouronnes ferm\'ees. 
\end{proof}

\begin{prop}\label{connexe}\index{Disque!connexe par arcs}\index{Couronne!connexe par arcs}\index{Espace affine analytique!connexe par arcs}
Soit~$U$ une partie connexe par arcs de~$B$. Supposons que~$U$ est peu mixte. Soient $\bs=(s_1,\ldots,s_n), \bt=(t_1,\ldots,t_n) \in \R^n$. 
Alors $C_U(\bs,\bt)$ est connexe par arcs.

En particulier, $\pi^{-1}(U)$ est connexe par arcs.
\end{prop}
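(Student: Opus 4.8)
The strategy is to reduce the statement for a relative polycrown $C_U(\bs,\bt)$ to the fibrewise statement (Lemma~\ref{lem:cpasurMk}) together with the existence of a continuous section provided by Lemma~\ref{lem:section}. First I would reduce to the case where $U$ is either entirely contained in $B_{\um}$ or entirely contained in $B_{\arc}$: since $U$ is assumed peu mixte, any two points $x,y$ in $C_U(\bs,\bt)$ project to points $\pi(x),\pi(y)$ that can be joined in $U$ by a concatenation of finitely many pure paths, and it suffices to join $x$ to $y$ over each pure subpath; over a pure path $\varphi\colon[0,1]\to B$ with $\varphi(\intoo{0,1})\subset B_{\um}$ (resp. $\subset B_{\arc}$) we only need to deal with endpoints, which lie in $\partial U_{\um}=\partial U_{\arc}$, where condition~i) of \og peu mixte \fg{} guarantees that $|\cH(b)^\times|^\Q$ has infinite codimension in $\R_{>0}$, hence there are polyradii $\br$ with the $(r_i)_{i\in S}$ linearly independent modulo $|\cH(b)^\times|^\Q$, so that $\sigma_{\br}$ is continuous at $b$ by the last assertion of Lemma~\ref{lem:section}.

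\textbf{The main construction.} So assume $U\subset B_{\um}$ (the case $U\subset B_{\arc}$ being handled by the same argument, using the continuity of $\sigma_{\br}$ on $B_{\arc}$). Pick a polyradius $\br=(r_1,\dots,r_n)$ with $s_i\prec r_i<t_i$ for all $i$ and with $(r_i)_{i: r_i\ne 0}$ linearly independent in $\R_{>0}/|\cH(b)^\times|^\Q$ for the relevant boundary points; then $\sigma_{\br}\colon U\to C_U(\bs,\bt)$ is a continuous section of $\pi$. Given two points $x,y\in C_U(\bs,\bt)$, choose a path $\gamma$ in $U$ joining $\pi(x)$ to $\pi(y)$ (using that $U$ is connected by arcs); the composite $\sigma_{\br}\circ\gamma$ is a path in $C_U(\bs,\bt)$ joining $\sigma_{\br}(\pi(x))$ to $\sigma_{\br}(\pi(y))$. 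It remains to join $x$ to $\sigma_{\br}(\pi(x))$ inside the fibre $\pi^{-1}(\pi(x))\cap C_U(\bs,\bt)=C_{\cM(\cH(\pi(x)))}(\bs,\bt)$, which is connected by arcs by Lemma~\ref{lem:cpasurMk}, and similarly at $y$. Concatenating these three paths gives a path from $x$ to $y$ in $C_U(\bs,\bt)$, proving connectedness by arcs. The statement about $\pi^{-1}(U)$ follows by taking $\bs=\bzero$ and $\bt$ with all $t_i=+\infty$ (writing $\pi^{-1}(U)$ as an increasing union of closed polydiscs $\overline D_U(\bt)$ if one prefers to stay with finite radii, each of which is arcwise connected by the argument just given, and using that an increasing union of arcwise connected sets with common point is arcwise connected).

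\textbf{Expected main obstacle.} The delicate point is the continuity of the section $\sigma_{\br}$ at the boundary points of $U_{\um}$ inside $U$ (and symmetrically for $U_{\arc}$): away from the boundary, Lemma~\ref{lem:section} already gives continuity of the restrictions of $\sigma_{\br}$ to $B_{\um}$ and to $B_{\arc}$ separately, but at a point $b\in\partial U_{\um}$ one genuinely needs the linear independence hypothesis to ensure $\{x\in\pi^{-1}(b):|T_i(x)|=r_i\ \forall i\}$ is the single point $\sigma_{\br}(b)$, which is exactly what the \og peu mixte \fg{} condition supplies. A secondary subtlety is that a single choice of $\br$ must work simultaneously for all boundary points crossed by the chosen path $\gamma$; since $\gamma([0,1])$ is compact and the set of boundary points on it is, in practice, discrete or at worst manageable (cf.\ Remark~\ref{rem:peumixte}), one can either choose $\br$ avoiding countably many $\Q$-subspaces of infinite codimension, or subdivide $\gamma$ further so that each subpath meets at most finitely many boundary points and adapt $\br$ on each piece, re-gluing by the fibrewise connectedness of Lemma~\ref{lem:cpasurMk}. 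Handling this subdivision cleanly, rather than the fibrewise input or the section construction, is where the real care is needed.
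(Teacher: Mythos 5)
Votre démonstration suit essentiellement la même stratégie que celle du texte : décomposition du chemin dans $U$ en sous-chemins purs grâce à la condition \og peu mixte \fg{}, utilisation de la section $\sigma_{\br}$ du lemme~\ref{lem:section} au-dessus de chaque sous-chemin pur (la condition de liberté de $\br$ dans $\R_{>0}/|\cH(b)^\times|^\Q$ assurant la continuité aux points de $\partial U_{\um}=\partial U_{\arc}$), recollement via la connexité par arcs des fibres (lemme~\ref{lem:cpasurMk}), puis écriture de $\pi^{-1}(U)$ comme union croissante de polydisques. La \og subtilité secondaire \fg{} que vous évoquez se résout exactement comme vous le suggérez en second lieu : on choisit un $\br$ adapté à chaque sous-chemin pur et on recolle dans les fibres, ce qui est précisément ce que fait le texte.
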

\begin{proof}
On peut supposer que, pour tout $i\in \cn{1}{n}$, on a $s_{i} < t_{i}$ et $t_{i}>0$. En effet, si tel n'est pas le cas, $C_{U}(\bs,\bt)$ est vide et le r\'esultat est imm\'ediat.

Soient $x,y \in C_U(\bs,\bt)$. Puisque~$U$ est connexe par arcs, il existe un chemin dans~$U$ reliant~$\pi(x)$ \`a~$\pi(y)$. Puisque~$U$ est peu mixte, on peut trouver un tel chemin~$\ell$ qui s'\'ecrive comme concat\'enation d'un nombre fini de chemins purs $\ell_{1},\dotsc,\ell_{m}$.

Soit $j \in \cn{1}{m}$. Soit $\varphi_{j} \colon [0,1] \to U$ l'application continue correspondant au chemin~$\ell_{j}$. Posons $a_{j} := \varphi_{j}(0)$ et $b_{j} := \varphi_{j}(1)$. Montrons que la fibre $\pi^{-1}(a_{j})$ peut-\^etre reli\'ee par un chemin \`a toute fibre au-dessus d'un point de $\varphi_{j}(\intoo{0,1})$. Distinguons trois cas.

$\bullet$ Supposons que $\varphi_{j}(\intoo{0,1}) \subset U_{\um}$.

Alors $a_{j} \in U_{\um}$. Soit $\br = (r_{1},\dotsc,r_{n}) \in \R_{>0}^n$ tel que, pour tout $i\in\cn{1}{n}$, on ait $s_{i} < r_{i} < t_{i}$. D'apr\`es le lemme~\ref{lem:section}, la restriction de la section~$\sigma_{\br}$  \`a~$\varphi_{j}(\intfo{0,1})$ est continue. Le r\'esultat s'ensuit.

$\bullet$ Supposons que $\varphi_{j}(\intoo{0,1}) \subset U_{\arc}$ et que $a_{j} \in U_{\arc}$.

On conclut par le m\^eme raisonnement que dans le cas pr\'ec\'edent.

$\bullet$ Supposons que $\varphi_{j}(\intoo{0,1}) \subset U_{\arc}$ et que $a_{j} \in U_{\um}$.

Alors $a_{j} \in \partial U_{\arc}$. Par hypoth\`ese, il existe $\br = (r_{1},\dotsc,r_{n}) \in \R_{>0}^n$ tel que, pour tout $i\in\cn{1}{n}$, on ait $s_{i} < r_{i} < t_{i}$, et $\br$ soit une famille libre de $\R_{>0}/|\cH(a_{j})^\times|^\Q$. D'apr\`es le lemme~\ref{lem:section}, la restriction de la section~$\sigma_{\br}$  \`a~$\varphi_{j}(\intfo{0,1})$ est encore continue. Le r\'esultat s'ensuit.

On a montr\'e que la fibre $\pi^{-1}(a_{j})$ peut-\^etre reli\'ee par un chemin \`a toute fibre au-dessus d'un point de $\varphi_{j}(\intoo{0,1})$. Le m\^eme raisonnement s'applique en rempla\c cant~$a_{j}$ par~$b_{j}$ et il en d\'ecoule que les fibres $\pi^{-1}(a_{j})$ et~$\pi^{-1}(b_{j})$ peuvent \^etre reli\'ees par un chemin. On en d\'eduit que fibres $\pi^{-1}(\pi(x))$ et~$\pi^{-1}(\pi(y))$ peuvent \^etre reli\'ees par un chemin. 

Pour d\'emontrer le r\'esultat, il suffit maintenant de d\'emontrer que les fibres de~$\pi$ en les points de~$U$ sont connexes par arcs. Or, pour tout $b\in U$, on a un hom\'eomorphisme $\pi^{-1}(b) \simeq \E{n}{\cH(b)}$, et le r\'esultat d\'ecoule donc du lemme~\ref{lem:cpasurMk}.

D\'emontrons maintenant la derni\`ere partie de l'\'enonc\'e. Posons $\bs := (-1,\dotsc,-1)$. On obtient le r\'esultat en \'ecrivant $\pi^{-1}(U)$ comme union croissante de polydisques ouverts $D_{U}(\bt) = C_{U}(\bs,\bt)$ et en utilisant la premi\`ere partie.
\end{proof}

Nous allons maintenant d\'eduire du r\'esultat pr\'ec\'edent la connexit\'e locale de~$\E{n}{\cA}$.

\begin{prop}\label{connexit\'e_fini_ouvert}\index{Voisinage}\index{Morphisme analytique!fini!et plat}\index{Couronne}
Supposons que~$B$ est localement connexe par arcs. Soit $x \in \E{n}{\cA}$. Le point~$x$ admet une base de voisinages ouverts~$\cV$ telle que, pour tout~$V\in\cV$, il existe un ouvert connexe par arcs~$U$ de~$B$, $\bs=(s_1,\dotsc,s_n), \bt=(t_1,\ldots,t_n) \in \R^n$ et un morphisme fini et plat $\varphi \colon V\to C_U(\bs,\bt)$.
\end{prop}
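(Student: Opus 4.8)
The plan is to argue by induction on $n$, proving the slightly stronger assertion that $x$ admits a \emph{fine} basis of \emph{compact} spectrally convex neighborhoods, each carrying a finite flat morphism onto a (compact) relative polycouronne $\overline{C}_{U}(\bs,\bt)$ over a compact $U\subseteq B$, and that when $\cH(x)$ is trivially valued of positive characteristic these neighborhoods can moreover be taken inside $(\E{n}{\cA})_{\um}$ with finite analytic boundary. The proposition then follows by passing to interiors and shrinking $U$ to a connected — hence, $B$ being locally connexe par arcs, path-connected — open neighborhood of $\pi_{n}(x)$, and replacing $\varphi$ by its restriction over $C_{U}(\bs,\bt)$ (note $\mathring{\overline{C}_{U}(\bs,\bt)}=C_{\mathring U}(\bs,\bt)$ and $\pi_n$ is open by Corollary~\ref{cor:projectionouverte}). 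For $n=0$ this reduces to the fact that $\cA$ is an anneau de base: $b=x$ has a fine basis of compact spectrally convex neighborhoods, taken inside $B_{\um}$ with finite analytic boundary when $\cH(b)$ is trivially valued of positive characteristic, and the identity of each is a finite flat morphism onto it. (Alternatively, one may first use Remark~\ref{rem:rigeptrans} and Lemma~\ref{lem:translation'} to move $x$, via a finite flat morphism, to the point $0_{x_{k}}$ above a purely locally transcendent point $x_{k}\in\E{k}{\cA}$, since the preimages of a neighborhood basis of $0_{x_{k}}$ along a finite morphism form a neighborhood basis of $x$; but this does not shorten the argument, which in any case proceeds fibrewise.)

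For the inductive step, set $x_{n-1}:=\pi_{n,n-1}(x)$. If $\cH(x)$ is trivially valued of positive characteristic then so is $\cH(x_{n-1})$ (the valuation restricts trivially), so by induction $x_{n-1}$ has such a basis $\cV_{x_{n-1}}$ with finite analytic boundary inside $(\E{n-1}{\cA})_{\um}$; let $\cV_{x_{n-1}}$ denote the inductive basis in general. Fix $W\in\cV_{x_{n-1}}$ and identify $\pi_{n,n-1}^{-1}(\mathring W)$ with an open of $\E{1}{\cB(W)}$ as in Remark~\ref{spectralement_conv}, so that $x$ becomes a point of $\E{1}{\cB(W)}$ above the point $x_{n-1}$ of $\cM(\cB(W))\simeq W$ (all points of $W$ are décent, $\cA$ being basic). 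Apply Corollary~\ref{cor:BVfineNPST} with base ring $\cB(W)$, base point $x_{n-1}$, and $\cV_{b}:=\{W'\in\cV_{x_{n-1}}:W'\subseteq W\}$ — a fine basis of compact spectrally convex neighborhoods of $x_{n-1}$ whose elements have finite analytic boundary in the trivially valued positive characteristic case, exactly as the corollary requires.

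This produces a fine basis of $x$ made of compact spectrally convex neighborhoods $\overline{C}_{W'}(P,r,s)$ with $W'\in\cV_{x_{n-1}}$, $W'\subseteq W$, $P\in\cB(W')[T_{n}]$ monic nonconstant, such that the couronne $\overline{C}_{W'}(r,s)$ in the coordinate $T_{n}$ satisfies $(\cB N_{P(S)-T_{n}})$. By Corollary~\ref{coro:BNPST} the natural map $\cB(\overline{C}_{W'}(r,s))[S]/(P(S)-T_{n})\to\cB(\overline{C}_{W'}(P;r,s))$ is an isomorphism, and together with the local Weierstrass description (Corollary~\ref{cor:divisionGirrednormes}, or Corollary~\ref{cor:phinormes}, which do not require the base ring to be geometric) this shows that the endomorphism $\varphi_{P}$ restricts to a finite flat morphism $\overline{C}_{W'}(P;r,s)\to\overline{C}_{W'}(r,s)$. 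It remains to map $\overline{C}_{W'}(r,s)$ finite flat onto a polycouronne in $\E{n}{\cA}$: since $W'$ belongs to the inductive basis there is a finite flat $\chi\colon W'\to\overline{C}_{U}(\bs',\bt')\subseteq\E{n-1}{\cA}$ over a compact $U\subseteq B$, and writing $\overline{C}_{W'}(r,s)=W'\times_{B}\overline{C}_{B}(r,s)$ and $\overline{C}_{U}(\bs',\bt')\times_{B}\overline{C}_{B}(r,s)=\overline{C}_{U}((\bs',r),(\bt',s))$, the base change of $\chi$ along $\overline{C}_{B}(r,s)\to B$ is finite flat by Proposition~\ref{changement_base_plat}. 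Composing $\varphi_{P}$ with this base change, and using that finite flat morphisms are stable under composition, yields the desired finite flat cover $\overline{C}_{W'}(P,r,s)\to\overline{C}_{U}((\bs',r),(\bt',s))\subseteq\E{n}{\cA}$; that $\overline{C}_{W'}(P,r,s)$ has finite analytic boundary in the trivially valued positive characteristic case follows from the stability of this property under passage to relative polynomial domains (Proposition~\ref{prop:bordanfortfini}). Finally these neighborhoods form a fine basis of $x$ because the basis produced by Corollary~\ref{cor:BVfineNPST} is fine and permits $W'$ to be shrunk within $\cV_{x_{n-1}}$, while the inductive basis of $x_{n-1}$ is fine.

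The main obstacle is the bookkeeping that makes this two-level construction yield a genuine (fine) basis of neighborhoods: one must propagate through the induction, in a compatible way, (i) fineness of the bases, so that composing a dimension-one cover with a lower-dimensional cover still produces arbitrarily small neighborhoods; (ii) compactness and spectral convexity of the bases, together with the fact that passing to interiors and restricting the cover over a smaller path-connected open $U'\subseteq B$ again produces a finite flat cover of an open polycouronne; and (iii) the finite-analytic-boundary condition in the trivially valued positive characteristic case — the single point where being an anneau de base, plus propagation of typicality (Proposition~\ref{prop:typiqueAn}), is genuinely needed, in order to feed the hypotheses of Corollary~\ref{cor:BVfineNPST} over the auxiliary base rings $\cB(W)$. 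Everything else — finite flat $\Rightarrow$ ouvert over a reduced space (Corollary~\ref{plat}), stability of finite flat under base change (Proposition~\ref{changement_base_plat}) and composition, and "connected open $\Rightarrow$ path-connected" in a locally path-connected space — is routine.
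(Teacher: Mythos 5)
Your skeleton is the paper's: induction on $n$, a relative-dimension-one neighbourhood basis made of polynomial domains $C_V(P,\cdot,\cdot)$ around $x$ over an inductively chosen neighbourhood $V$ of $x_{n-1}$, flatness and finiteness of the endomorphism $\varphi_P$, base change of the inductive finite flat cover $V\to C_U(\bs',\bt')$ along the extra coordinate, and composition. Where you diverge is in strengthening the induction to \emph{compact} spectrally convex fine bases carrying the condition $(\cB N_{P(S)-T})$ and, in the trivially valued positive-characteristic case, finite analytic boundaries, working over the auxiliary rings $\cB(W)$ via Corollary~\ref{cor:BVfineNPST} and Corollary~\ref{coro:BNPST}. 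This buys you nothing for the (purely topological-flatness) statement and creates real friction: finiteness, flatness, their stability under base change and composition, and Corollary~\ref{plat} and Proposition~\ref{changement_base_plat} are all stated for morphisms of $\cA$-analytic spaces, i.e.\ open-type objects, so invoking them for maps between compacts $\overline{C}_{W'}(P,r,s)\to\overline{C}_{W'}(r,s)\to\overline{C}_U((\bs',r),(\bt',s))$ is outside their scope as written (and the Banach-algebra isomorphism of Corollary~\ref{coro:BNPST} is not what gives flatness — that comes from Lemma~\ref{cas_particulier_coh\'erence} via Corollary~\ref{plat}); likewise the identification $\mathring{\overline{C}_U(\bs,\bt)}=C_{\mathring U}(\bs,\bt)$ you use when "passing to interiors" needs care. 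The paper sidesteps all of this by staying with open neighbourhoods throughout: Propositions~\ref{prop:basevoisdim1rigide} and~\ref{prop:basevoisdim1} already give $C_V(P,s,t)\subset W$ with $V$ in the inductive (open) basis, $\varphi_P\colon\pi_{n-1}^{-1}(V)\to\pi_{n-1}^{-1}(V)$ is finite flat by Corollary~\ref{plat}, the base change $\pi_{n-1}^{-1}(V)\to\pi_{n-1}^{-1}(C_U(\bs',\bt'))$ is finite flat by Proposition~\ref{changement_base_plat}, and restricting over the open polycouronne $C_U(\bs,\bt)$ is harmless because finiteness and flatness are local on the target — no spectral convexity, fineness, $(\cB N)$ conditions or boundary bookkeeping are needed. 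So your argument is essentially correct in outline, but as written it leans on results in a category where they are not available; dropping the compact formulation and running the same induction on open neighbourhoods turns it into the paper's proof.
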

\begin{proof}
Nous allons d\'emontrer l'\'enonc\'e par r\'ecurrence sur~$n$. Si $n=0$, le r\'esultat d\'ecoule directement de la connexit\'e par arcs locale de~$B$.

Soit $n\ge 1$ et supposons avoir d\'emontr\'e le r\'esultat pour~$n-1$. Soit $x\in \E{n}{\cA}$. Notons $\pi_{n-1} \colon \E{n}{\cA} \to \E{n-1}{\cA}$ la projection sur les $n-1$ premi\`eres coordonn\'ees et posons $x_{n-1} := \pi_{n-1}(x_{n})$. Soit~$\cV_{n-1}$ une base de voisinages ouverts de~$x_{n-1}$ dans~$\E{n-1}{\cA}$ satisfaisant les conditions de l'\'enonc\'e. Notons~$T$ la derni\`ere coordonn\'ee de~$\E{n}{\cA}$.

Soit~$W$ un voisinage de~$x$. D'apr\`es les propositions~\ref{prop:basevoisdim1rigide} et~\ref{prop:basevoisdim1}, il existe $V \in \cV_{n-1}$, $s,t \in \R$ et $P \in \cO_{\E{n-1}{\cA}}(V)[T]$ tels que $C_{V}(P,s,t) \subset W$. Puisque $V \in \cV_{n-1}$, il existe un ouvert connexe par arcs~$U$ de~$B$, $\bs'=(s_1,\dotsc,s_{n-1}), \bt'=(t_1,\ldots,t_{n-1}) \in \R^{n-1}$ et un morphisme fini et plat $\varphi_{n-1} \colon V\to C_U(\bs',\bt')$. D'apr\`es le lemme~\ref{lem:produitouverts} et la proposition~\ref{prop:produitaffines}, $\pi_{n-1}^{-1}(V)$ et $\pi_{n-1}^{-1}(C_U(\bs',\bt'))$ s'identifient respectivement aux produits $V\times_{\cA}\AunA$ et $C_U(\bs',\bt')\times_{\cA} \AunA$. La proposition~\ref{changement_base_plat} assure donc que le morphisme 
\[\varphi_{n} \colon \pi_{n-1}^{-1}(V) \too \pi_{n-1}^{-1}(C_U(\bs',\bt'))\]
d\'eduit de~$\varphi_{n-1}$ par changement de base est fini et plat. Posons $\bs := (\bs',s)$ et $\bt:=(\bt',t)$. Les propri\'et\'es de finitude et de platitude \'etant locales au but, le morphisme $\varphi'_{n} \colon C_V(s,t)\to C_U(\bs,\bt)$ induit par~$\varphi_{n}$ est encore fini et plat. 

D'apr\`es la proposition~\ref{plat}, le morphisme $\varphi_{P} \colon \pi_{n-1}^{-1}(V) \to \pi_{n-1}^{-1}(V)$ induit par~$P$ est fini et plat. Sa restriction $\varphi'_{P} \colon C_V(P,s,t)\to C_V(s,t)$ l'est donc \'egalement. Ceci montre que le morphisme $\varphi := \varphi'_{n} \circ \varphi'_{P}$ satisfait les conditions de l'\'enonc\'e.
\end{proof}

En combinant les propositions~\ref{connexit\'e_fini_ouvert} et~\ref{connexe}, le corollaire~\ref{plat} et la proposition~\ref{prop:top}, on obtient le r\'esultat suivant.

\begin{coro}
Supposons que~$B$ est localement connexe par arcs et peu mixte. Alors, l'espace $\E{n}{\cA}$ est localement connexe.
\qed
\end{coro}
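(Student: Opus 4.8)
La stratégie est de montrer que tout point~$x$ de~$\E{n}{\cA}$ possède une base de voisinages ouverts connexes. Soit donc~$W$ un voisinage ouvert de~$x$. D'après la proposition~\ref{connexit\'e_fini_ouvert}, il existe un voisinage ouvert~$V$ de~$x$ contenu dans~$W$, un ouvert connexe par arcs~$U$ de~$B$, des vecteurs $\bs,\bt \in \R^n$ et un morphisme fini et plat $\varphi \colon V\to C_U(\bs,\bt)$. Tout le reste de l'argument consiste à exploiter ce morphisme.

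Je commencerais par observer que~$C_U(\bs,\bt)$ est réduit~: c'est un ouvert de l'espace affine~$\E{n}{\cA}$, lequel est réduit puisque tous ses anneaux locaux sont réguliers d'après le théorème~\ref{rigide}, et la réductibilité se teste sur les anneaux locaux. Le corollaire~\ref{plat} s'applique alors et fournit que le morphisme fini et plat~$\varphi$ est \emph{ouvert}. Comme~$U$ est connexe par arcs et peu mixte, la proposition~\ref{connexe} assure que~$C_U(\bs,\bt)$ est connexe par arcs, donc en particulier connexe. La proposition~\ref{prop:top}, appliquée à l'application finie et ouverte~$\varphi$ dont le but est connexe, montre alors que~$V$ possède un nombre fini de composantes connexes. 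Ces composantes étant en nombre fini, chacune est le complémentaire dans~$V$ de la réunion (finie, donc fermée) des autres~; elle est donc ouverte dans~$V$, et par suite ouverte dans~$\E{n}{\cA}$. La composante connexe de~$x$ dans~$V$ est ainsi un voisinage ouvert connexe de~$x$ contenu dans~$W$, ce qui établit le résultat.

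L'essentiel du contenu ayant déjà été mis en place dans les résultats auxiliaires (en particulier les propositions~\ref{connexit\'e_fini_ouvert} et~\ref{connexe}), cette preuve n'est qu'un assemblage~; le seul point demandant un minimum d'attention est de s'assurer que l'ouvert~$U$ produit par la proposition~\ref{connexit\'e_fini_ouvert} peut être choisi peu mixte, de manière à pouvoir invoquer la proposition~\ref{connexe}. C'est bien le cas~: dans la preuve de la proposition~\ref{connexit\'e_fini_ouvert}, $U$ est choisi arbitrairement petit dans une base de voisinages connexes par arcs de son point, et la condition~i) de la définition de partie peu mixte pour un ouvert $U \subseteq B$ résulte de l'inclusion $\partial U_{\arc} \subseteq \partial B_{\arc}$ (puisque $B_{\um}$ est fermé et $B_{\arc}$ ouvert dans~$B$), tandis que la condition~ii) pour~$B$ se restreint à~$U$ dès que~$U$ est choisi assez petit. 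C'est donc là, si obstacle il y a, que se concentre la vérification, tout le reste étant de routine.
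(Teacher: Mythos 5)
Votre preuve est correcte et suit essentiellement la même voie que le texte, qui se contente de combiner la proposition~\ref{connexit\'e_fini_ouvert}, la proposition~\ref{connexe}, le corollaire~\ref{plat} et la proposition~\ref{prop:top} exactement comme vous le faites (réduction de $C_U(\bs,\bt)$ via le théorème~\ref{rigide}, ouverture de~$\varphi$, connexité du but, finitude et ouverture des composantes de~$V$). Le point que vous soulevez — vérifier que l'ouvert~$U$ fourni par la proposition~\ref{connexit\'e_fini_ouvert} est bien peu mixte pour pouvoir invoquer la proposition~\ref{connexe} — est effectivement passé sous silence dans le texte, qui l'utilise implicitement (et qui, dans les exemples usuels, est sans objet puisque toute partie de~$\cM(\cA)$ y est peu mixte).
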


Nous allons maintenant montrer que l'ensemble~$C_U(\bs,\bt)$ est \'elastique. 

\begin{prop}\label{elast_partiel1}
Supposons que~$B_{\um}$ est localement connexe par arcs. Soit~$U$ un ouvert connexe de~$B_{\um}$. Soient $\bs=(s_1,\ldots,s_n), \bt=(t_1,\ldots,t_n) \in \R^n$. Alors, la polycouronne~$C_U(\bs,\bt)$ est \'elastique. 
\end{prop}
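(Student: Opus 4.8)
The plan is to reduce, via Remark~\ref{rem:definitionelastique}~i), to exhibiting an elastic path through each point and to construct such a path fibrewise. First observe that, since $B_{\um}$ is locally arcwise connected, the open connected subset~$U$ is locally arcwise connected and connected, hence arcwise connected; and since $U \subset B_{\um}$ we have $U_{\arc} = \emptyset$, so $\partial U_{\um} = \partial U_{\arc} = \emptyset$, every path in~$U$ is pure and the boundary condition of Definition~\ref{def:peumixte} is vacuous, so $U$ is peu mixte. Proposition~\ref{connexe} therefore shows that $C_U(\bs,\bt)$ is arcwise connected, and by Remark~\ref{rem:definitionelastique}~i) it remains only to prove: for every $x \in C_U(\bs,\bt)$ there is a path with origin~$x$ that is elastic at~$x$.

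Fix~$x$, put $b_0 := \pi(x)$ and $\rho_i := |T_i(x)| \in \intoo{s_i,t_i}$, and choose $\br = (r_1,\dots,r_n)$ with $\rho_i < r_i < t_i$ for every $i$. Then $\eta_{b_0,\br}$ lies in the fibre $C_{b_0}(\bs,\bt) = \pi^{-1}(b_0) \cap C_U(\bs,\bt)$, a polycorona of $\E{n}{\cH(b_0)}$, and it dominates~$x$ in the sense that $|P(x)| \le |P(\eta_{b_0,\br})|$ for every $P \in \cH(b_0)[\bT]$ (indeed $|P(x)| \le \max_{\bi}(|a_{\bi}(x)|\,\rho^{\bi}) \le \max_{\bi}(|a_{\bi}(x)|\,\br^{\bi}) = |P(\eta_{b_0,\br})|$). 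Hence the classical description of the topology of Berkovich affine space over an ultrametric complete field (cf.~\cite[\S 3.2]{Ber1}) provides a canonical path~$\ell_x$ from~$x$ to $\eta_{b_0,\br} = \sigma_{\br}(b_0)$ contained in $C_{b_0}(\bs,\bt)$ (the ``straightening'' of~$x$ onto the monomial point). This~$\ell_x$ is the candidate; crucially its endpoint $\sigma_{\br}(b_0)$ does not depend on the points near~$x$ that will have to be joined to it.

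It then remains to check that $\ell_x$ is elastic at~$x$: given a neighbourhood~$\mathcal{W}$ of~$\ell_x$ in $C_U(\bs,\bt)$, one must find a neighbourhood~$\mathcal{V}$ of~$x$ such that every $x' \in \mathcal{V}$ reaches $\sigma_{\br}(b_0)$ inside~$\mathcal{W}$. Writing $b' := \pi(x')$, the connecting path for~$x'$ is the concatenation of (a) the canonical straightening of~$x'$ onto $\eta_{b',\br}$ inside $C_{b'}(\bs,\bt)$, which one wants to keep inside~$\mathcal{W}$ when~$x'$ is close to~$x$, and (b) the horizontal path $\sigma_{\br}(\gamma)$, where~$\gamma$ is a path from~$b'$ to~$b_0$ in a small arcwise connected neighbourhood of~$b_0$ in~$U$ (available by local arcwise connectedness of $B_{\um}$), which stays inside~$\mathcal{W}$ because $\sigma_{\br}|_{B_{\um}}$ is continuous by Lemma~\ref{lem:section}. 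To handle the non-metrizability of~$B$, one takes for~$\mathcal{W}$ a concrete neighbourhood cut out by finitely many inequalities $s_i \prec |T_i| < t_i$ (suitably refined along~$\ell_x$) together with inequalities bounding the finitely many polynomials governing the monomial points involved, in the spirit of \cite[lemme~2.4.1]{A1Z}, and one shows that a slightly shrunk tube of this form around~$\ell_x$ contains the paths attached to all~$x'$ in a suitable~$\mathcal{V}$.

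The main obstacle is step~(a): proving that the fibrewise straightening of a point onto its dominating monomial point can be carried out so as to depend continuously — and uniformly, in the polynomial-inequality sense — on the base point. Once that continuity statement is in place, step~(b) is routine, using only the continuity of $(b,\bu) \mapsto \eta_{b,\bu}$ on $B_{\um} \times \prod_{i}\intoo{s_i,t_i}$, the continuity of $\sigma_{\br}|_{B_{\um}}$, and the local arcwise connectedness of~$B_{\um}$; Remark~\ref{rem:definitionelastique}~i) then finishes the proof.
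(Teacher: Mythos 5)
Your route is genuinely different from the paper's: you fibre $C_U(\bs,\bt)$ directly over the base $U\subset B_{\um}$, so the fibres are $n$-dimensional Berkovich polycoronae over a field, and you propose to reach the monomial point $\sigma_{\br}(b_0)$ by a canonical ``straightening'' path inside the fibre, then travel horizontally along the section $\sigma_{\br}$. The paper instead argues by induction on $n$, peeling off one coordinate at a time (reduction via Lemma~\ref{lem:crit_elast'} to $\overline{D}_{U'}(s'_n,t'_n)$ with $U'=C_U(\bs',\bt')$), precisely so that the fibres are one-dimensional: there the tree structure gives a canonical interval $\{z\ge x\}$ joining $x$ to $\eta_{y,t'_n}$, the compact tubes $L_{\eps,W,P}=\{|P|\ge|P(x)|-\eps\}$ describe neighbourhoods of that interval, and upward saturation of these tubes yields the elasticity estimate uniformly in the basepoint; the arcwise connected neighbourhood of $y$ in $U'$ needed for the horizontal leg is supplied by the induction hypothesis together with Propositions~\ref{connexit\'e_fini_ouvert} and~\ref{chemin}. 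Your version, if completed, would in fact prove more (a fibrewise deformation retraction of a sub-polydisc onto the section $\sigma_{\br}(U)$), but it cannot lean on any of this one-dimensional machinery.

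And that is exactly where the gap is: the central object of your argument --- the canonical path $\ell_x$ from $x$ to $\eta_{b_0,\br}$ in the $n$-dimensional fibre, depending continuously (indeed uniformly, in the sense needed for elasticity) on the basepoint --- is asserted but never constructed. The citation of \cite[\S 3.2]{Ber1} does not supply it: that section proves arcwise connectedness of polydiscs by the same finite-open-map induction the paper uses (Proposition~\ref{chemin} here), not by exhibiting a retraction onto the Shilov point. You yourself flag this as ``the main obstacle'' and then stop. To close it you would have to introduce the map $(z,t)\mapsto z\ast\eta_{t\br}$, $P\mapsto\max_{\bi}|\Delta_{\bi}P(z)|(t\br)^{\bi}$ (Hasse derivatives), prove it is a multiplicative seminorm, that $z\ast\eta_{\br}=\eta_{\pi(z),\br}$ for $z$ in the closed polydisc of polyradius $\br$, that it preserves the polycorona $C_U(\bs,\bt)$, and that it is jointly continuous in $(z,t)$ so that a tube-lemma argument yields the neighbourhood $\mathcal V$ of $x$; none of these facts is available in the paper or in the reference you invoke. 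Note also that in dimension $\ge 2$ the set $\{z: x\le z\le\eta_{\br}\}$ for the pointwise order on polynomials is far larger than the path, so the paper's trick of describing neighbourhoods of $\ell_x$ by finitely many inequalities $|P|\ge|P(x)|-\eps$ and using upward saturation does not transplant directly; the joint continuity of $\ast$ is really needed. As written, the proposal is an outline that identifies the difficulty without resolving it.
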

\begin{proof}
Montrons, par r\'ecurrence sur~$n$, que, pour tout ouvert connexe~$U$ de~$B_{\um}$, et tous $\bs=(s_1,\ldots,s_n), \bt=(t_1,\ldots,t_n) \in \R^n$, la polycouronne $C_U(\bs,\bt)$ est \'elastique. 

Pour $n=0$, le r\'esultat est vrai car tout ouvert connexe de~$B_{\um}$ est connexe par arcs et localement connexe par arcs, donc \'elastique, d'apr\`es la remarque~\ref{rem:definitionelastique}.

Soit $n\ge 1$ et supposons que le r\'esultat est vrai pour~$n-1$. Soit~$U$ un ouvert connexe de~$B_{\um}$. Soient $\bs=(s_1,\ldots,s_n), \bt=(t_1,\ldots,t_n) \in \R^n$. Posons $\bs' := (s_1,\ldots,s_{n-1}), \bt' := (t_1,\ldots,t_{n-1})$ et $U' := C_{U}(\bs',\bt')$. On peut supposer que, pour tout $i\in \cn{1}{n}$, on a $s_{i} < t_{i}$ et $t_{i}>0$. D'apr\`es le lemme~\ref{lem:crit_elast'}, il suffit de montrer que, pour tous $s'_{n},t'_{n} \in \R$ avec $s_{n} < s'_{n} < t'_{n} < t_{n}$ et $t'_{n}>0$, l'espace $\overline{D}_{U'}(s'_{n},t'_{n})$ est \'elastique. 

Soient $s'_{n},t'_{n} \in \R$ avec $s_{n} < s'_{n} < t'_{n} < t_{n}$ et $t'_{n}>0$. Soit $x \in \overline{D}_{U'}(s'_{n},t'_{n})$. Notons~$y$ sa projection sur~$U'$. 
Montrons qu'il existe un chemin reliant~$x$ \`a~$\eta_{y,t'_{n}}$ dans $\overline{D}_{U'}(s'_{n},t'_{n})$ qui est \'elastique en~$x$. Cela suffira pour conclure, d'apr\`es la remarque~\ref{rem:definitionelastique}.

Fixons une coordonn\'ee~$T$ sur $\overline{D}_{U}(s'_{n},t'_{n})$. Elle induit une coordonn\'ee sur $\overline{D}_{y}(s'_{n},t'_{n})$ que l'on note identiquement. D\'efinissons un ordre partiel~$\le$ sur $\overline{D}_{y}(s'_{n},t'_{n})$ de la fa\c con suivante~: pour tous $z_{1},z_{2} \in \overline{D}_{y}(s'_{n},t'_{n})$, on a $z_{1} \le z_{2}$ si
\[\forall P \in \cH(y)[T],\ |P(z_{1})| \le |P(z_{2})|.\]
Puisque~$\cO_{U',y}$ est dense dans~$\cH(y)$, on peut remplacer~$\cH(y)$ par~$\cO_{U',y}$ dans cette d\'efinition. Le point~$\eta_{y,t'_{n}}$ est le plus grand point de $\overline{D}_{y}(s'_{n},t'_{n})$ pour l'ordre partiel~$\le$. En outre, l'ensemble
\[ \{z \in \overline{D}_{y}(s'_{n},t'_{n}) : z \ge x\}\]
est hom\'eomorphe \`a un intervalle ferm\'e dont les extr\'emit\'es s'envoient sur~$x$ et~$\eta_{y,t'_{n}}$. Identifions-le \`a un chemin~$\ell$ reliant~$\eta_{y,t'_{n}}$ \`a~$x$ et montrons qu'il est \'elastique en~$x$.

Soit~$U''$ un voisinage compact de~$y$ dans~$U'$. Pour tout $\eps \in \R_{>0}$, tout voisinage~$W$ de~$y$ dans~$U''$ et tout $P\in\cO(W)[T]$, posons 
\[L_{\eps,W,P}:=\{z\in \overline{D}_{W}(s'_n,t'_n) : |P(z)| \ge |P(x)|-\epsilon\}.\]
C'est un voisinage de~$\ell_{x}$ dans $\overline{D}_{U''}(s'_n,t'_n)$, compact si~$W$ est compact. Remarquons que l'on a 
\[ \ell = \bigcap_{\eps>0} \bigcap_{W \ni y}\bigcap_{P\in\cO(W)[T]} L_{\eps,W,P},\]
o\`u $W$ d\'ecrit l'ensemble des voisinages compacts de~$y$ dans~$U'$.
 
Soit~$V$ un voisinage ouvert de~$\ell$ dans~$\overline{D}_{U''}(s'_n,t'_n)$. Alors $\overline{D}_{U''}(s'_n,t'_n) \setminus \ell$ est compact et on en d\'eduit qu'il existe un ensemble fini~$I$ et, pour chaque $i\in I$, un nombre r\'eel~$\eps_{i} >0$, un voisinage compact~$W_{i}$ de~$y$ dans~$U''$ et un polyn\^ome $P_{i} \in \cO(W_{i})[T]$ tels que
\[ \bigcap_{i\in I} L_{\eps_{i},W_{i},P_{i}} \subset V.\]
En utilisant la proposition \ref{connexit\'e_fini_ouvert}, le corollaire~\ref{plat}, l'hypoth\`ese de r\'ecurrence et la proposition~\ref{chemin}, on montre que $\bigcap_{i\in I} W_{i}$ contient un voisinage connexe par arcs~$W_{0}$ de~$y$. On a encore 
\[L := \bigcap_{i\in I} L_{\eps_{i},W_{0},P_{i}} \subset V.\]

Montrons que tout point~$x'$ de~$L$ peut-\^etre reli\'e \`a~$\eta_{y,t'_{n}}$ par un chemin contenu dans~$V$. Soit $x'\in L$. Notons~$y'$ sa projection sur~$U'$. L'ensemble 
\[\ell_{x'} := \{z \in \overline{D}_{y'}(s'_{n},t'_{n}) : z \ge x'\}\]
s'identifie \`a un chemin reliant~$x'$ \`a~$\eta_{y',t'_{n}}$ et ce chemin est contenu dans~$L$. Puisque~$W_{0}$ est connexe par arcs, il existe un chemin~$\ell_{y'}$ dans~$W_{0}$ reliant~$y'$ \`a~$y$. Son image $\sigma_{t'_{n}}(\ell_{y'})$ est un chemin reliant~$\eta_{y',t'_{n}}$ \`a~$\eta_{y,t'_{n}}$ et contenu dans~$L$. Le r\'esultat s'ensuit.
\end{proof}

\begin{prop}\label{elast_partiel2}
Supposons que~$B_{\arc}$ est localement connexe par arcs. Soit~$U$ un ouvert connexe de~$B_{\arc}$. Soient $\bs=(s_1,\ldots,s_n), \bt=(t_1,\ldots,t_n) \in \R^n$. Alors, la polycouronne~$C_U(\bs,\bt)$ est \'elastique. 
\end{prop}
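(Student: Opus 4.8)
L'énoncé concerne le cas archimédien et est l'analogue exact de la proposition~\ref{elast_partiel1}, mais dans la partie archimédienne. La situation y est en fait plus favorable : la fibre $\pi^{-1}(b)$ d'un point archimédien $b$ s'identifie (via le théorème~\ref{th:vaarchimedienne} et le lemme~\ref{lem:AnC}) à $\C^n$ ou à $\C^n/\Gal(\C/\R)$, et la polycouronne dans la fibre est donc une polycouronne complexe usuelle (ou son quotient), qui est localement contractile, donc localement connexe par arcs, donc élastique d'après la remarque~\ref{rem:definitionelastique}. Le plan est d'adapter la preuve de la proposition~\ref{elast_partiel1} en remplaçant les arguments ultramétriques propres aux disques de Berkovich (l'ordre partiel sur $\overline{D}_{y}(s'_n,t'_n)$, le point de Shilov $\eta_{y,t'_n}$) par des arguments de géométrie complexe élémentaire, et en utilisant la section $\sigma_{\br}$ restreinte à $B_{\arc}$ fournie par le lemme~\ref{lem:section}.

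D'abord, je procéderais par récurrence sur $n$, exactement comme dans la proposition~\ref{elast_partiel1}. Le cas $n=0$ découle de ce que tout ouvert connexe de $B_{\arc}$ est localement connexe par arcs, donc élastique. Pour l'étape de récurrence, on pose $U' := C_U(\bs',\bt')$ avec $\bs'=(s_1,\ldots,s_{n-1})$, $\bt'=(t_1,\ldots,t_{n-1})$ et, grâce au lemme~\ref{lem:crit_elast'}, on se ramène à montrer que $\overline{D}_{U'}(s'_n,t'_n)$ est élastique pour $s_n < s'_n < t'_n < t_n$ et $t'_n > 0$. Soit $x \in \overline{D}_{U'}(s'_n,t'_n)$, de projection $y$ sur $U'$. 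On choisit un point de référence dans la fibre au-dessus de $y$ : dans le cas archimédien, on ne dispose plus d'un « plus grand point » pour un ordre partiel, mais on peut prendre un point quelconque de la fibre, par exemple l'image par $\sigma_{\br_0}$ de $y$ pour un $\br_0$ convenable, ou plus simplement un point fixé une fois pour toutes dans la polycouronne complexe. Comme la fibre $\pi^{-1}(y) \cap \overline{D}_{y}(s'_n,t'_n)$ est une polycouronne complexe (ou un quotient), elle est connexe par arcs et même localement connexe par arcs, donc on peut relier $x$ à ce point de référence par un chemin $\ell$ dans la fibre ; il reste à montrer qu'un tel chemin peut être choisi élastique en $x$ en tant que chemin de $\overline{D}_{U'}(s'_n,t'_n)$.

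Le point technique — et c'est le principal obstacle — est exactement le même que dans la proposition~\ref{elast_partiel1} : montrer que le chemin $\ell$ dans la fibre est élastique, c'est-à-dire que pour tout voisinage $V$ de $\ell$ dans $\overline{D}_{U''}(s'_n,t'_n)$ (avec $U''$ voisinage compact de $y$), il existe un voisinage de $x$ tous les points duquel se relient au point de référence dans $V$. Pour cela j'utiliserais la structure produit locale : $\pi_{n-1}^{-1}(U')$ s'identifie à $U' \times_{\cA} \AunA$ par le lemme~\ref{lem:produitouverts} et la proposition~\ref{prop:produitaffines}, ce qui permet de traiter séparément la direction « base » et la direction « fibre ». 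Pour la direction base, on invoque l'hypothèse de récurrence combinée aux propositions~\ref{connexit\'e_fini_ouvert}, \ref{chemin} et au corollaire~\ref{plat} pour extraire un voisinage connexe par arcs $W_0$ de $y$ dans $U''$. Pour la direction fibre, on exploite la connexité par arcs locale de la polycouronne complexe : tout point proche de $x$ dans la même fibre se relie à $x$ (donc au point de référence) dans un petit voisinage, puisque les polycouronnes complexes (et leurs quotients) sont localement contractiles. On conclut en combinant ces deux ingrédients via la section $\sigma_{t'_n}$ du lemme~\ref{lem:section} : l'image $\sigma_{t'_n}(W_0)$ fournit un chemin dans la direction base reliant les points de référence des différentes fibres, restant dans $V$ quitte à rétrécir $W_0$. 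Noter qu'ici, contrairement au cas ultramétrique, il faut vérifier que $\sigma_{t'_n}$ est continue sur $W_0$, ce qui est assuré par la continuité de la restriction de $\sigma_{\br}$ à $B_{\arc}$ établie dans le lemme~\ref{lem:section} ; aucune hypothèse de liberté sur les rayons n'est nécessaire dans la partie archimédienne.
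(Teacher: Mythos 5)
Votre plan suit pour l'essentiel la d\'emonstration du texte~: m\^eme r\'ecurrence sur~$n$, m\^eme r\'eduction aux disques ferm\'es relatifs via le lemme~\ref{lem:crit_elast'}, m\^eme usage de la section~$\sigma_{\br}$ (continue sur~$B_{\arc}$ sans hypoth\`ese de libert\'e sur les rayons) pour passer d'une fibre \`a l'autre. Le seul endroit o\`u votre proposition reste en de\c c\`a est pr\'ecis\'ement celui que vous signalez comme \og le principal obstacle \fg~: le choix du chemin dans la fibre et de la base de voisinages qui rend la v\'erification d'\'elasticit\'e effective. Le texte ne prend pas un point de r\'ef\'erence arbitraire~: il choisit pour~$\ell_x$ le cercle $\{z \in \pi^{-1}(y) : |T(z)| = |T(x)|\}$, qui relie~$x$ \`a~$\sigma_{|T(x)|}(y)$ (et non \`a~$\sigma_{t'_n}(y)$), et pour voisinages les ensembles $L_{\eps,W} = \bigl\{z\in \overline{D}_W(s'_n,t'_n) : \bigl||T(z)|-|T(x)|\bigr| \le \eps\bigr\}$ avec~$W$ connexe par arcs. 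Cette forme est cruciale~: tout point~$x'$ de~$L_{\eps,W}$ a son cercle entier $\{|T| = |T(x')|\}$ contenu dans~$L_{\eps,W}$, les points $\sigma_{r}(y')$ pour~$r$ entre~$|T(x')|$ et~$|T(x)|$ y restent aussi, de m\^eme que l'image par~$\sigma_{|T(x)|}$ d'un chemin de~$W$~; la concat\'enation de ces trois mouvements \'el\'ementaires relie~$x'$ au point de r\'ef\'erence sans sortir de $L_{\eps,W} \subset V$. L'invocation de la contractibilit\'e locale des polycouronnes complexes ne suffit pas \`a elle seule, car elle ne contr\^ole que la direction fibre~; c'est la stabilit\'e de~$L_{\eps,W}$ sous ces trois op\'erations qui fait fonctionner l'argument. \`A cette pr\'ecision pr\`es, votre approche est correcte et co\"incide avec celle du texte.
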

\begin{proof}
Nous suivrons une strat\'egie similaire \`a celle de la preuve de la proposition~\ref{elast_partiel1}. Montrons, par r\'ecurrence sur~$n$, que, pour tout ouvert connexe~$U$ de~$B_{\arc}$, et tous $\bs=(s_1,\ldots,s_n), \bt=(t_1,\ldots,t_n) \in \R^n$, la polycouronne $C_U(\bs,\bt)$ est \'elastique. 

Pour $n=0$, le r\'esultat est vrai car tout ouvert connexe de~$B$ est \'elastique.

Soit $n\ge 1$ et supposons que le r\'esultat est vrai pour~$n-1$. Soit~$U$ un ouvert connexe de~$B_{\arc}$. Soient $\bs=(s_1,\ldots,s_n), \bt=(t_1,\ldots,t_n) \in \R^n$. On peut supposer que, pour tout $i\in \cn{1}{n}$, on a $s_{i} < t_{i}$ et $t_{i}>0$. Posons $\bs' := (s_1,\ldots,s_{n-1}), \bt' := (t_1,\ldots,t_{n-1})$ et $U' := C_{U}(\bs',\bt')$. D'apr\`es le lemme~\ref{lem:crit_elast'}, il suffit de montrer que, pour tous $s'_{n},t'_{n} \in \R$ avec $s_{n} < s'_{n} < t'_{n} < t_{n}$ et $t'_{n}>0$, l'espace $\overline{D}_{U'}(s'_{n},t'_{n})$ est \'elastique. 

Soient $s'_{n},t'_{n} \in \R$ avec $s_{n} < s'_{n} < t'_{n} < t_{n}$ et $t'_{n}>0$. Notons $\pi \colon \overline{D}_{U'}(s'_{n},t'_{n}) \to U'$ le morphisme de projection. Rappelons que, pour tout $r\in [s'_{n},t'_{n}] \cap \R_{\ge 0}$, nous en avons d\'efini une section continue~$\sigma_{r}$ au lemme~\ref{lem:section}.

Fixons une coordonn\'ee~$T$ sur $\overline{D}_{U'}(s'_{n},t'_{n})$. Soit $x \in \overline{D}_{U'}(s'_{n},t'_{n})$. Posons $y := \pi(x)$. L'ensemble
\[\{z \in \pi^{-1}(y) : |T(z)| = |T(x)|\} \]
est hom\'eomorphe \`a un cercle si $\cH(y) = \C$ et \`a un demi-cercle si $\cH(y) = \R$. C'est l'ensemble sous-jacent \`a un chemin~$\ell_{x}$ reliant~$x$ \`a~$\sigma_{|T(x)|}(y)$. 

Soit~$U''$ un voisinage compact de~$y$ dans~$U'$. Pour tout $\eps \in \R_{>0}$ et tout voisinage~$W$ de~$y$ dans~$U'$, posons 
\[L_{\eps,W}:=\big\{z\in \overline{D}_{W}(s'_n,t'_n) : \big||T(z)| - |T(x)|\big| \le \epsilon\big\}.\]
C'est un voisinage de~$\ell_{x}$ dans $\overline{D}_{U''}(s'_n,t'_n)$, compact si~$W$ est compact. 

Soit~$V$ un voisinage ouvert de~$\ell_{x}$ dans~$\overline{D}_{U''}(s'_n,t'_n)$. Par le m\^eme raisonnement que dans la preuve de la proposition~\ref{elast_partiel1}, on montre qu'il existe un nombre r\'eel $\eps >0$ et un voisinage connexe par arcs~$W$ de~$y$ dans~$U''$ tels que
\[ L_{\eps,W} \subset V.\]

Montrons que tout point~$x'$ de~$L_{\eps,W}$ peut-\^etre reli\'e \`a~$\sigma_{|T(x)|}(y)$ par un chemin contenu dans~$V$. Soit $x'\in L$. Notons~$y'$ sa projection sur~$W$. Puisque l'ensemble
\[\{z \in \pi^{-1}(y') : |T(z)| = |T(x')|\} \]
est contenu dans~$L_{\eps,W}$, il existe un chemin reliant~$x'$ \`a~$\sigma_{|T(x')|}(y')$ contenu dans~$L_{\eps,W}$. On peut \'egalement relier $\sigma_{|T(x')|}(y')$ \`a $\sigma_{|T(x)|}(y')$ dans~$L_{\eps,W}$, puis $\sigma_{|T(x)|}(y')$ \`a $\sigma_{|T(x)|}(y)$, toujours dans~$L_{\eps,W}$, gr\^ace \`a la continuit\'e de la section~$\sigma_{|T(x)|}$. Ceci termine la d\'emonstration.
\end{proof}

\begin{prop}\label{elast}\index{Couronne!elastique@\'elastique}\index{Disque!elastique@\'elastique}
Supposons que~$B$ et~$B_{\um}$ sont localement connexes par arcs et que $B$~est peu mixte.
Soit~$U$ un ouvert connexe de~$B$. Soient $\bs=(s_1,\ldots,s_n), \bt=(t_1,\ldots,t_n) \in \R^n$. Alors la polycouronne $C_U(\bs,\bt)$ est \'elastique. 
\end{prop}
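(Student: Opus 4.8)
La stratégie naturelle consiste à combiner les deux cas particuliers déjà traités — celui où la base est entièrement ultramétrique (proposition~\ref{elast_partiel1}) et celui où elle est entièrement archimédienne (proposition~\ref{elast_partiel2}) — à l'aide du critère de recollement fourni par le lemme~\ref{lem:crit_elast}. Plus précisément, je poserais $W := C_{U\cap B_{\arc}}(\bs,\bt)$, qui est un ouvert de $C_{U}(\bs,\bt)$ puisque $B_{\arc}$ est ouvert dans $B$ (remarque~\ref{rem:umfermee}), et $F := C_{U\cap B_{\um}}(\bs,\bt)$ qui en est le fermé complémentaire. Comme $U$ est connexe, quitte à se ramener aux composantes connexes de $W$ et de $F$ — ce qui est licite car l'élasticité d'un espace topologique se teste sur ses composantes connexes par arcs, et $W$, $F$ sont localement connexes par arcs d'après les propositions~\ref{elast_partiel1} et~\ref{elast_partiel2} — les hypothèses i) du lemme~\ref{lem:crit_elast} (élasticité de $W$ et de $F$) sont assurées par ces mêmes propositions appliquées à chaque composante connexe de $U\cap B_{\arc}$ et de $U\cap B_{\um}$.

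Le point ii) du lemme~\ref{lem:crit_elast} (la non-vacuité de $\partial F$) est à discuter selon les cas : si $U$ ne rencontre qu'une seule des deux parties $B_{\um}$, $B_{\arc}$, le résultat est directement l'une des deux propositions précédentes ; sinon, par connexité de $U$ et puisque $U\cap B_{\um}$ est fermé et $U\cap B_{\arc}$ ouvert dans $U$, la frontière $\partial(U\cap B_{\um})$ dans $U$ est non vide, donc $\partial F$ aussi. Le point iii) est le cœur technique : il faut montrer que tout point $x$ de $\partial F$, c'est-à-dire un point au-dessus d'un $b\in\partial U_{\um}=\partial U_{\arc}$, admet une base de voisinages $\cU_x$ telle que, pour $U'\in\cU_x$ et tout $y\in W\cap U'$, il existe un chemin tracé sur $U'$ reliant $y$ à un point de $F$. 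C'est ici qu'interviennent de façon essentielle l'hypothèse \og{}peu mixte\fg{} (qui force $|\cH(b)^\times|^\Q$ à être de codimension infinie dans $\R_{>0}$) et les sections explicites $\sigma_{\br}$ du lemme~\ref{lem:section} : on choisit un polyrayon $\br=(r_1,\dots,r_n)$ avec $s_i < r_i < t_i$ pour tout $i$ et formant une famille libre dans $\R_{>0}/|\cH(b)^\times|^\Q$, ce qui rend $\sigma_{\br}$ continue en $b$, donc continue sur un voisinage de $b$ que l'on peut prendre connexe (par connexité par arcs locale de $B$). L'image de ce voisinage par $\sigma_{\br}$ fournit une \og{}feuille\fg{} traversant à la fois $W$ et $F$, et un point $y\in W\cap U'$ se relie d'abord à $\sigma_{\br}(\pi(y))$ à l'intérieur de la fibre ultramétrique ou archimédienne de $y$ (connexité par arcs des fibres, lemme~\ref{lem:cpasurMk}), puis le long de cette feuille jusqu'à un point de $F$.

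Le principal obstacle sera de contrôler finement la forme de la base de voisinages $\cU_x$ dans ce dernier argument : les voisinages de $x$ dans $C_U(\bs,\bt)$ ne sont pas simplement des produits de voisinages de $b$ par des polycouronnes, et il faudra, en s'appuyant sur les descriptions de bases de voisinages obtenues plus haut (lemmes~\ref{lem:bv23} et~\ref{lem:bv14}, ou plus directement sur le fait que les $C_{V}(\bs',\bt')$ avec $V$ voisinage connexe de $b$ forment une base), vérifier que le chemin construit (concaténation d'un arc dans la fibre de $y$ et d'un arc le long de l'image de $\sigma_{\br}$) reste bien tracé dans le voisinage $U'$ prescrit. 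Une subtilité additionnelle est que le point $y$ peut être arbitrairement proche de $x$ mais au-dessus d'un point archimédien très proche de $b$ ; c'est précisément la continuité de $\sigma_{\br}$ \emph{en $b$} (et pas seulement sur $B_{\arc}$ ou sur $B_{\um}$ séparément), garantie par le choix de $\br$ libre modulo $|\cH(b)^\times|^\Q$, qui permet de franchir cet obstacle. Une fois le point iii) établi, la conclusion est immédiate par le lemme~\ref{lem:crit_elast}.
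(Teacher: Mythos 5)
Votre mise en place est exactement celle du texte : même décomposition $W=C_{U_{\arc}}(\bs,\bt)$, $F=C_{U_{\um}}(\bs,\bt)$, mêmes propositions~\ref{elast_partiel1} et~\ref{elast_partiel2} pour le point~i) du lemme~\ref{lem:crit_elast}, et tout repose sur le point~iii). C'est là que votre argument présente une lacune réelle. Le chemin que vous proposez pour relier $y\in W\cap U'$ à $F$ — un arc dans la fibre de $y$ jusqu'à $\sigma_{\br}(\pi(y))$, puis un arc le long de l'image de $\sigma_{\br}$ — ne reste pas dans $U'$ : le point $x\in\partial F$ peut être un point quelconque de la fibre au-dessus de $b$, par exemple un point rigide, et ses voisinages sont alors découpés par des conditions du type $|P|<\eps$ pour des polynômes $P$ qui ne sont pas des monômes en les coordonnées (lemmes~\ref{lem:bv23} et~\ref{lem:bv14}) ; or la section $\sigma_{\br}$, qui ne fixe que les $|T_i|$, ne donne aucun contrôle sur $|P|$. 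Concrètement, prenez $B=\cM(\Z)$, $n=1$, $b=a_0$, $s<1<t$, $x$ le point rigide $T=1$ de $C_{a_0}(s,t)$ et $U'=\{|T-1|<\eps_0\}$ avec $\eps_0<1$. La condition de liberté modulo $|\cH(a_0)^\times|^\Q=\{1\}$ impose $r\ne 1$, donc déjà $\sigma_r(b)=\eta_r\notin U'$ puisque $|(T-1)(\eta_r)|=\max(r,1)\ge 1$ ; et pour $\pi(y)=a_\infty^\delta$, le point $\sigma_r(a_\infty^\delta)$ est le nombre réel $r^{1/\delta}$, avec $|(T-1)(\sigma_r(a_\infty^\delta))|=|r^{1/\delta}-1|_\infty^\delta\to\max(r,1)\ge 1>\eps_0$ quand $\delta\to 0$ : l'extrémité même de votre premier arc sort de $U'$. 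L'affirmation que les $C_V(\bs',\bt')$ formeraient une base de voisinages de $x$ est fausse dès que $x$ n'est pas un point du type $\eta_{b,\br}$, et c'est elle qui masque le problème. (Dans cet exemple, le bon chemin passe par le zéro de $T-1$ dans la fibre archimédienne, puis suit l'arc $\delta\mapsto x_{\delta}$, où $x_{\delta}$ désigne le point $T=1$ au-dessus de $a_\infty^\delta$, le long duquel $|T-1|$ reste nul : rien à voir avec $\sigma_{\br}$.)

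Le texte établit le point~iii) par un argument de nature différente, qui traite uniformément tous les points de $\partial F$ : d'après la proposition~\ref{connexit\'e_fini_ouvert}, $x$ admet une base de voisinages $V$ munis d'un morphisme fini et plat, donc ouvert (corollaire~\ref{plat}), $\varphi\colon V\to C_{U'}(\bs',\bt')$ vers une polycouronne standard au-dessus d'un ouvert connexe $U'\ni b$. Cette polycouronne est connexe par arcs (proposition~\ref{connexe}, où l'hypothèse \og peu mixte \fg{} intervient effectivement), et la proposition~\ref{prop:revetementcpa} garantit que chaque composante connexe par arcs de $V$ se surjecte sur elle ; comme $U'$ rencontre $B_{\um}$, tout point de $V$ se relie dans $V$ à un point de $V_{\um}\subset F$. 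C'est cette structure de revêtement fini ouvert au-dessus d'une polycouronne standard — et non une section explicite — qui permet de localiser l'argument de connexité par arcs ; je vous invite à reprendre le point~iii) sous cette forme.
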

\begin{proof}
D'apr\`es les propositions~\ref{elast_partiel1} et~\ref{elast_partiel2}, les polycouronnes $C_{U_{\arc}}(\bs,\bt)$ et $C_{U_{\um}}(\bs,\bt)$ sont \'elastiques. D'apr\`es le lemme~\ref{lem:crit_elast} appliqu\'e avec $W=C_{U_{\arc}}(\bs,\bt)$ et $F=C_{U_{\um}}(\bs,\bt)$, il suffit de montrer que tout point $x$ de $\partial C_{U_{\um}}(\bs,\bt)$ admet une base de voisinages~$\cV_x$ dans~$C_U(\bs,\bt)$ telle que, pour tout $V \in \cV_x$ et tout $y\in V\cap C_{U_\arc}(\bs,\bt)$, il existe un chemin dans~$V$ reliant~$y$ \`a un point de~$C_{U_{\um}}(\bs,\bt)$.

Soit $x \in \partial C_{U_{\um}}(\bs,\bt)$. D'apr\`es la proposition~\ref{connexit\'e_fini_ouvert}, $x$ admet une base de voisinages dans~$C_U(\bs,\bt)$ form\'ee d'ouverts~$V$ pour lesquels il existe un ouvert connexe~$U'$ de~$U$, $\bs'=(s'_1,\dotsc,s'_n), \bt'=(t'_1,\dotsc,t'_n) \in \R_{\ge0}^n$ avec $s'_{i}<t'_{i}$ pour tout $i\in\cn{1}{n}$ et un morphisme fini et plat $\varphi \colon V\to C_{U'}(\bs',\bt')$. D'apr\`es le corollaire~\ref{plat}, $\varphi$ est ouvert.

Montrons qu'un tel~$V$ satisfait la propri\'et\'e requise. D'apr\`es la proposition~\ref{connexe}, $C_{U'}(\bs',\bt')$ est connexe par arcs donc, d'apr\`es la proposition~\ref{prop:revetementcpa}, chaque composante connexe par arcs de~$V$ se surjecte sur $C_{U'}(\bs',\bt')$. En particulier, tout point de~$V$ peut-\^etre reli\'e \`a un point de $V_{\um}$.
\end{proof}

\begin{theo}\label{arbre}\index{Espace affine analytique!localement connexe par arcs}
Supposons que~$B$ et~$B_{\um}$ sont localement connexes par arcs et que $B$~est peu mixte. Alors, l'espace analytique~$\E{n}{\cA}$ est localement connexe par arcs.
\end{theo}
\begin{proof}
Soient~$x\in\E{n}{\cA}$ et~$W$ un voisinage de~$x$ dans~$\E{n}{\cA}$. D'apr\`es la proposition \ref{connexit\'e_fini_ouvert}, il existe un voisinage~$V$ de~$x$ dans~$W$ pour lesquel il existe un ouvert connexe~$U$ de~$B$, $\bs=(s_1,\dotsc,s_n), \bt=(t_1,\dotsc,t_n) \in \R_{\ge0}^n$ avec $s_{i}<t_{i}$ pour tout $i\in\cn{1}{n}$ et un morphisme fini et plat $\varphi \colon V\to C_{U}(\bs,\bt)$. D'apr\`es le corollaire~\ref{plat}, $\varphi$ est ouvert. Il suit des propositions~\ref{connexe} et~\ref{prop:top} que la composante connexe~$V_{x}$ de~$V$ contenant~$x$ est ouverte et se surjecte sur $C_{U}(\bs,\bt)$ par~$\varphi$. D'apr\`es les propositions~\ref{elast} et~\ref{chemin}, $V_{x}$ est connexe par arcs. Le r\'esultat s'ensuit.
\end{proof}

Nous g\'en\'eralisons finalement le r\'esultat de connexit\'e par arcs locale \`a des espaces analytiques quelconques. 

\begin{theo}\label{th:cpageneral}\index{Espace analytique!localement connexe par arcs}
Supposons que~$B$ et~$B_{\um}$ sont localement connexes par arcs et que $B$~est peu mixte. Alors tout espace $\cA$-analytique est localement connexe par arcs.
\end{theo}
\begin{proof}
Soit $X$ un espace $\cA$-analytique et soit $x\in X$. La proposition \ref{decomp} assure qu'il existe un voisinage ouvert~$V$ de~$x$ dans~$X$ et des ferm\'es analytiques $V_1,\dotsc,V_{s}$ de~$V$ contenant~$x$ et int\`egres en~$x$ tels que $V=\bigcup_{i=1}^s V_i$. Il suffit de montrer que, pour tout $i\in\cn{1}{s}$, $x$ admet une base de voisinages connexes par arcs chacun des~$V_{i}$. Il suffit donc de traiter le cas d'un espace int\`egre en~$x$. Par cons\'equent, on supposera d\'esormais que~$X$ est int\`egre en~$x$. 

Soit~$U$ un voisinage ouvert de~$x$ dans~$X$. On peut supposer qu'il est s\'epar\'e. Le th\'eor\`eme \ref{proj} assure que, quitte \`a r\'etr\'ecir~$U$, on peut supposer qu'il existe un morphisme fini ouvert $\varphi \colon U \to V$, o\`u~$V$ est un ouvert de $\E{n}{\cA}$ ou de~$\E{n}{\cH(b)}$ pour un certain $b\in B$. D'apr\`es le th\'eor\`eme~\ref{arbre}, $\E{n}{\cA}$ et~$\E{n}{\cH(b)}$ sont localement connexes par arcs. Quitte \`a r\'etr\'ecir~$V$ (et~$U$ en cons\'equence), on peut donc supposer que~$V$ est connexes par arcs et localement connexe par arcs. La remarque~\ref{rem:definitionelastique} assure que~$V$ est \'egalement \'elastique. 

D'apr\`es les propositions~\ref{prop:top} et~\ref{chemin} , $U$ poss\`ede un nombre fini de composantes connexes et celles-ci sont connexes par arcs. On en d\'eduit le r\'esultat souhait\'e.
\end{proof}

\section[Dimension topologique]{Dimension topologique}\label{sec:dimtop}

Dans cette section, nous calculons la dimension topologique des espaces affines et des disques sur des bases convenables.

Rappelons tout d'abord la d\'efinition de dimension de recouvrement. On rappelle qu'un espace topologique~$T$ est dit \emph{normal} si deux ferm\'es disjoints de~$T$ sont toujours contenus dans deux ouverts disjoints. Par exemple, un espace paracompact (s\'epar\'e) est normal. \index{Espace topologique!normal|textbf}

Un espace $\cA$-analytique peut ne pas \^etre normal (ni m\^eme s\'epar\'e). En revanche, d'apr\`es la remarque~\ref{rem:proplocales}, il est toujours localement compact (s\'epar\'e), donc localement normal.

\index{Espace topologique!dimension d'un|(}\index{Dimension!topologique|see{Espace topologique}}

\begin{defi}[\protect{\cite[definitions~1.6.6, 1.6.7]{Eng1}}]\index{Espace topologique!dimension d'un|textbf}%
\nomenclature[D]{$\dimr$}{dimension de recouvrement d'un espace topologique}
Soient~$T$ un espace topologique normal non vide. 
\begin{enumerate}[i)]
\item Soit~$\cU$ un ensemble non vide de parties de~$T$. On appelle \emph{ordre de~$\cU$} la borne sup\'erieure de l'ensemble des entiers~$n$ tel qu'il existe $n+1$ \'el\'ements distincts de~$\cU$ d'intersection non vide.
\item On appelle \emph{dimension de recouvrement de~$T$} la borne sup\'erieure de l'ensemble des entiers~$n$ tel que tout recouvrement fini ouvert de~$T$ admette un raffinement fini d'ordre inf\'erieur o\`u \'egal \`a~$n$. On la note $\dimr(X) \in \N \cup \{\infty\}$.
\end{enumerate}
\end{defi}

Dans la suite du texte, la dimension d'un espace sera toujours \`a prendre au sens de la dimension de recouvrement, m\^eme si nous omettrons parfois cette pr\'ecision.

\begin{rema}\label{rem:angelique}\index{Espace topologique!m\'etrisable}\index{Espace affine analytique!m\'etrisable}
La notion de dimension se comporte bien dans le cas des espaces m\'etrisables et s\'eparables (au sens o\`u ils poss\`edent un sous-ensemble d\'enombrable dense). Les espaces que nous consid\'erons ne jouissent malheureusement pas tous de ces propri\'et\'es. Par exemple, la droite analytique sur le corps~$\C$ muni de la valuation triviale n'est ni m\'etrisable, ni s\'eparable. Nous devrons donc manipuler la notion de dimension avec pr\'ecaution. 

Signalons cependant que, si~$\cA$ poss\`ede un sous-ensemble d\'enombrable dense (par exemple, si $\cA$ est~$\Z$ ou un anneau d'entiers de corps de nombres), alors tout espace affine analytique sur~$\cA$ est m\'etrisable et s\'eparable (\cf~\cite[th\'eor\`eme~3.5.1]{A1Z} et sa preuve). 

Ajoutons finalement que, m\^eme lorsqu'ils ne sont pas m\'etrisables, les espaces de Berkovich sur un corps valu\'e ultram\'etrique sont toujours ang\'eliques (\cf~\cite{Angie}). Ils se comportent donc comme des espaces m\'etriques \`a bien des \'egards. Par exemple, les caract\'erisations s\'equentielles usuelles de la fermeture ou de la compacit\'e sont valables. Nous ignorons si ce r\'esultat d'ang\'elicit\'e persiste pour tous les espaces analytiques globaux. 
\end{rema}

Rappelons quelques propri\'et\'es de la notion de dimension de recouvrement sur les espaces topologiques normaux.

\begin{theo}[\protect{\cite[theorem~3.1.3]{Eng1}}]\label{dimension_sous-espace}
Soient~$T$ un espace topologique normal et~$F$ une partie ferm\'ee de~$T$. Alors, on a
\[\dimr(F)\le \dimr(T).\]
\qed
\end{theo}

\begin{theo}[\protect{\cite[theorem~3.1.8]{Eng1}}]\label{dimension_union_d\'enombrable}
Soient~$T$ un espace topologique normal et~$(F_i)_{i\in I}$ une famille d\'enombrable de parties ferm\'ees de~$T$ telle que $T=\bigcup_{i\in I} F_i$. 
Alors, on a
\[\dimr(T)\leq \sup_{i\in I} (\dimr(F_{i})).\]
\qed
\end{theo}

\begin{theo}[\protect{\cite[theorem~3.2.13]{Eng1}}]\label{th:produit}
Soient~$T$ et~$S$ des espaces topologiques compacts non vides. Alors, on a
\[\dimr(T\times S)\le \dimr(T) + \dimr(S).\]
\qed
\end{theo}
\index{Espace topologique!dimension d'un|)}

\index{Espace affine analytique!dimension d'un|(}\index{Disque!dimension d'un|(}
Rappelons le r\'esultat suivant pour les espaces analytiques sur un corps valu\'e complet.

\begin{theo}\label{th:dimensioncorps}
Soit $(k,\va)$ un corps valu\'e complet. Soient $n\in \N$ et $\br \in \R_{>0}^n$. Alors, on a 
\begin{align*} 
\dimr(\E{n}{k}) & = \dimr(\oD_{k}(\br)) = \dimr(D_{k}(\br))\\ 
& = \begin{cases}
2n & \textrm{ si } (k,\va) \textrm{ est archim\'edien~;}\\
n & \textrm{ si } (k,\va) \textrm{ est ultram\'etrique.}
\end{cases}
\end{align*}
En outre, chacun des espaces $\E{n}{k}$, $\oD_{k}(\br)$ et~$D_{k}(\br)$ contient un pav\'e compact de m\^eme dimension.
\end{theo}
\begin{proof}
Si $(k,\va)$ est archim\'edien, alors, d'apr\`es le th\'eor\`eme~\ref{th:vaarchimedienne} et le lemme~\ref{lem:AnC}, $\E{n}{k}$ est hom\'eomorphe \`a~$\C^n$ ou $\C^n/\Gal(\C/\R)$. Le r\'esultat s'en d\'eduit. 

Supposons que $(k,\va)$ est ultram\'etrique. D'apr\`es~\cite[corollary~3.2.8]{Ber1}, on a $\dimr(\E{n}{k}) = n$.

D'apr\`es le th\'eor\`eme~\ref{dimension_sous-espace}, on a donc $\dimr(\oD_{k}(\br)) \le n$. Notons $\br = (r_{1},\dotsc,r_{n})$. Pour tout $(t_{1},\dotsc,t_{n}) \in \R_{>0}$, notons $\eta_{t_{1},\dotsc,t_{n}}$ l'unique point du bord de Shilov du disque $\oD_{k}(t_{1},\dotsc,t_{n})$. Pour tout $i\in \cn{1}{n}$, soient $s_{i} \in \intoo{0,r_{i}}$. L'application
\[ \begin{array}{cccc}
\varphi \colon & \disp\prod_{i=1}^n [s_{i},r_{i}] & \too & \oD_{k}(r_{1},\dotsc,r_{n})\\
& (t_{1},\dotsc,t_{n}) & \mapstoo & \eta_{t_{1},\dotsc,t_{n}}
\end{array}\]
r\'ealise un hom\'eomorphisme du pav\'e compact $\prod_{i=1}^n [s_{i},r_{i}]$ sur un ferm\'e de~$\oD_{k}(r_{1},\dotsc,r_{n})$. D'apr\`es le th\'eor\`eme~\ref{dimension_sous-espace}, on a donc 
\[ \dimr(\oD_{k}(\br)) \ge \dimr \Big(\prod_{i=1}^n [s_{i},r_{i}] \Big) = n.\]
Le r\'esultat s'ensuit.

Puisque le polydisque ouvert~$D_{k}(\br)$ contient un polydisque ferm\'e et peut s'\'ecrire comme union d\'enombrable de polydisques ferm\'es, le r\'esultat pr\'ec\'edent, joint aux th\'eor\`emes~\ref{dimension_sous-espace} et~\ref{dimension_union_d\'enombrable}, entra\^ine que $\dimr(D_{k}(\br)) = n$.
\end{proof}

En nous basant sur ce r\'esultat, nous allons pouvoir calculer la dimension de recouvrement d'espaces affines sur des bases plus g\'en\'erales. Nous utiliserons les notions de partie lin\'eaire et de partie d'Ostrowski introduites pr\'ec\'edemment (\cf~d\'efinitions~\ref{def:lineaire} et~\ref{def:ostrowski}).

\begin{theo}\label{th:dimensionetoile}\index{Partie!lineaire@lin\'eaire}\index{Partie!d'Ostrowski}
Soit~$V$ une partie lin\'eaire ou d'Ostrowski de~$B$. Soient $n\in \N$ et $\br \in \R_{>0}^n$. Notons $\pi \colon \E{n}{\cA} \to B$ la projection canonique. Alors, on a 
\begin{align*} 
\dimr(\pi^{-1}(V)) & = \dimr(\oD_{V}(\br)) = \dimr(D_{V}(\br))\\ 
& = \begin{cases}
2n+1 & \textrm{ si } V \cap B^\arc \ne \emptyset~;\\
n+1 & \textrm{ si } V \subset B^\um.
\end{cases}
\end{align*}
En outre, chacun des espaces $\pi^{-1}(V)$, $\oD_{V}(\br)$ et~$D_{V}(\br)$ contient un pav\'e compact de m\^eme dimension.
\end{theo}
\begin{proof}
Remarquons qu'il suffit de d\'emontrer le r\'esultat pour les polydisques ferm\'es. Ceci d\'ecoule des th\'eor\`emes~\ref{dimension_sous-espace} et~\ref{dimension_union_d\'enombrable} puisque les espaces affines et les polydisques ouverts contiennent et sont union d\'enombrables de tels disques. 

Posons 
\[ d = 
\begin{cases}
2n & \textrm{ si } V \cap B^\arc \ne \emptyset~;\\
n & \textrm{ si } V \subset B^\um. 
\end{cases}
\]

$\bullet$ Supposons que $V$ est lin\'eaire.

Par d\'efinition, il existe une partie~$V^{\partial}$ de~$V$, un point~$b$ de~$V$ et un intervalle $I \subset I_{b}$ non r\'eduit \`a un point satisfaisant les propri\'et\'es suivantes~: 
\begin{enumerate}[i)]
\item $V^{\partial}$ est contenue dans le bord de~$V$ et $\sharp V^{\partial}\le 2$~; 
\item l'application 
\[\begin{array}{ccc}
I & \too & V\setminus V^{\partial}\\
\eps & \mapstoo & b^\eps
\end{array}\]
est un hom\'eomorphisme.
\end{enumerate}

Supposons que $I$ est un segment et que $V^\partial = \emptyset$. Alors, d'apr\`es le lemme~\ref{lem:flot}, l'application
\[\fonction{\Phi}{\pi^{-1}(b) \times I}{\pi^{-1}(V)}{(x,\eps)}{x^\eps}\]
est un hom\'eomorphisme. 

Soit $\bs = (s_{1},\dotsc,s_{n}) \in \R_{>0}^n$. On a alors 
\[ \Phi( \oD_{b}(\bs) \times I) = \bigcup_{\eps \in I} \oD_{b^\eps}(s_{1}^\eps,\dotsc,s_{n}^\eps).\]

On peut choisir~$\bs$ de fa\c con que $\Phi( \oD_{b}(\bs) \times I) \subset \oD_{V}(\br)$. D'apr\`es le th\'eor\`eme~\ref{th:dimensioncorps}, $\oD_{b}(\bs)$ contient un pav\'e compact de dimension~$d$, donc $\oD_{b}(\bs) \times I$ contient un pav\'e compact de dimension~$d+1$. D'apr\`es le th\'eor\`eme~\ref{dimension_sous-espace}, on en d\'eduit que $\dimr(\oD_{V}(\br)) \ge d+1$.

On peut choisir \'egalement choisir~$\bs$ de fa\c con que $\oD_{V}(\br) \subset \Phi( \oD_{b}(\bs) \times I)$. D'apr\`es les th\'eor\`emes~\ref{th:dimensioncorps}, \ref{th:produit} et~\ref{dimension_sous-espace}, on a alors $\dimr(\oD_{V}(\br)) \le d+1$. Le r\'esultat s'ensuit.

\medbreak

Traitons maintenant le cas g\'en\'eral. Posons $F := \bigsqcup_{c \in V^\partial} \oD_{c}(\br)$. D'apr\`es le th\'eor\`eme~\ref{th:dimensioncorps}, $F$ est soit vide, soit de dimension~$d$. 

\'Ecrivons l'intervalle~$I$ comme une r\'eunion d\'enombrable $\bigcup_{n\in\N} I_{n}$, o\`u les~$I_{n}$ sont des segments non r\'eduits \`a un point.  Pour tout $n\in \N$, posons $V_{n} := \{b^\eps : \eps \in I_{n}\}$, $\oD_{n} := F \cup \oD_{V_{n}}(\br)$. D'apr\`es le cas pr\'ec\'edent, $\oD_{n}$ est de dimension~$d+1$ et contient un pav\'e de dimension~$d+1$. On en d\'eduit que $\oD_{V}(\br)$ contient un pav\'e de dimension~$d+1$ et que $\dimr(\oD_{V}(\br)) \ge d+1$, d'apr\`es le th\'eor\`eme~\ref{dimension_sous-espace}.

Pour $n\in\N$, posons $\oD'_{n} := F \sqcup \oD_{n}$. On a $\dimr(\oD'_{n}) = d+1$. Puisque, pour tout $n\in \N$, $\oD'_{n}$ est ferm\'e dans~$\oD_{V}(\br)$ et que $\oD_{V}(\br) = \bigcup_{n\in \N} \oD'_{n}$, le th\'eor\`eme~\ref{dimension_union_d\'enombrable} assure que $\dimr(\oD_{V}(\br)) \le d+1$. Le r\'esultat s'ensuit.

\medbreak

$\bullet$ Supposons que $V$ est d'Ostrowski.

Par hypoth\`ese, il existe un point $a_{0}$ de~$V$, un ensemble d\'enombrable non vide~$\Sigma$  et une famille $(V_{\sigma})_{\sigma\in \Sigma}$ de parties disjointes de~$V\setminus \{a_{0}\}$ satisfaisant les propri\'et\'es suivantes~: 
\begin{enumerate}[i)]
\item $V = \bigcup_{\sigma \in \Sigma} V_{\sigma} \cup \{a_{0}\}$~;
\item pour tout $\sigma\in \Sigma$, $V_{\sigma}$ et $V_{\sigma} \cup \{a_{0}\}$ sont des parties lin\'eaires de~$\cM(\cA)$~;
\item l'ensemble des parties de la forme
\[\bigcup_{\sigma \in \Sigma'} V'_{\sigma} \cup \bigcup_{\sigma \in \Sigma\setminus\Sigma'} V_{\sigma},\]
o\`u $\Sigma'$ est un sous-ensemble fini de~$\Sigma$ et, pour tout $\sigma \in \Sigma'$, $V'_{\sigma}$ est un voisinage de~$a_{0}$ dans~$V_{\sigma} \cup \{a_{0}\}$, est une base de voisinages de~$a_{0}$ dans~$V$.
\end{enumerate}


Soit $\sigma\in \Sigma$. Si $V \cap B^\arc \ne \emptyset$, on peut supposer que $V_{\sigma} \cap B^\arc \ne \emptyset$. La premi\`ere partie de la preuve assure que~$\oD_{V_{\sigma}}(\br)$ contient un pav\'e compact de dimension~$d+1$. Il en va donc de m\^eme pour~$\oD_{V}(\br)$. En particulier, d'apr\`es le th\'eor\`eme~\ref{dimension_sous-espace}, on a $\dimr(\oD_{V}(\br)) \ge d+1$.

\medbreak

D\'emontrons l'\'in\'egalit\'e r\'eciproque. Nous supposerons que~$\Sigma$ est infini et l'identifierons \`a~$\N$. Le cas fini se traite de fa\c{c}on similaire.

Pour tout $n \in \N$, $V_{n}$ est lin\'eaire, donc il existe une partie~$V_{n}^{\partial}$ de~$V$, un point~$b_{n}$ de~$V$ et un intervalle $I_{n} \subset I_{b_{n}}$ non r\'eduit \`a un point satisfaisant les propri\'et\'es suivantes~: 
\begin{enumerate}[i)]
\item $V_{n}^{\partial}$ est contenue dans le bord de~$V_{n}$ et $\sharp V_{n}^{\partial}\le 2$~; 
\item l'application 
\[\begin{array}{cccc}
p_{n} \colon &I_{n} & \too & V_{n}\setminus V_{n}^{\partial}\\
&\eps & \mapstoo & b_{n}^\eps
\end{array}\]
est un hom\'eomorphisme.
\end{enumerate}
\'Ecrivons~$I_{n}$ comme une r\'eunion d\'enombrable $\bigcup_{m\in \N} I_{n,m}$, o\`u les~$I_{n,m}$ sont des segments non r\'eduits \`a un point. 

Pour tout $n\in \N$, posons
\[ W_{n} := \{a_{0}\} \sqcup \bigsqcup_{m=0}^n \big(p_{m}(I_{m,n}) \sqcup V_{m}^\partial\big).\]
C'est une partie ferm\'ee de~$V$ qui est union disjointe finie de points et de parties lin\'eaires. D'apr\`es le th\'eor\`eme~\ref{th:dimensioncorps} et la premi\`ere partie de la preuve, on a $\dimr(\oD_{W_{n}}(\br)) \le d+1$. Or on a $\oD_{V}(\br) = \bigcup_{n\in \N} \oD_{W_{n}}(\br)$, donc, d'apr\`es le th\'eor\`eme~\ref{dimension_union_d\'enombrable}, $\dimr(\oD_{V}(\br)) \le d+1$. Ceci conclut la preuve.
\end{proof}

\index{Espace affine analytique!dimension d'un|)}\index{Disque!dimension d'un|)}

\chapter{Espaces de Stein}\label{chap:Stein}

Le but de ce chapitre est d'initier la th\'eorie des espaces de Stein sur un anneau de Banach en dimension sup\'erieure (le cas de la dimension~1 ayant d\'ej\`a fait l'objet de~\cite[chapitre~6]{A1Z}). Plus pr\'ecis\'ement, nous exhibons une famille d'espaces satisfaisant les conclusions des th\'eor\`emes~A et~B de Cartan~: la famille des polydisques ferm\'es (et de leurs ferm\'es analytiques). Rappelons que le th\'eor\`eme~A stipule que tout faisceau coh\'erent est engendr\'e par ses sections globales et le th\'eor\`eme~B que la cohomologie de tout faisceau coh\'erent est nulle en degr\'e strictement positif.\index{Theoreme@Th\'eor\`eme!A}\index{Theoreme@Th\'eor\`eme!B}

Dans la section~\ref{sec:CousinRunge}, nous d\'efinissons les notions de syst\`emes de Cousin et de Runge (\cf~d\'efinitions~\ref{def:Cousin} et~\ref{def:Runge}). Deux compacts \'etant donn\'es, il s'agit d'imposer des conditions reliant les fonctions sur l'intersection \`a des fonctions sur chacun des compacts. Dans ce cadre, nous d\'emontrons que si un faisceau est globalement engendr\'e sur chacun des compacts, il l'est encore sur leur r\'eunion (\cf~corollaire~\ref{cor:K-K+A}). Nous introduisons ensuite la notion, assez lourde, d'arbre de Cousin-Runge (\cf~d\'efinition~\ref{def:arbreCR}). Il s'agit d'un outil technique permettant de construire des syst\`emes de Cousin-Runge dans un disque relatif \`a partir de syst\`emes de Cousin-Runge sur la base. Il nous permettra d'effectuer des raisonnements par r\'ecurrence sur la dimension. 

Dans la section~\ref{sec:AB}, nous d\'emontrons les th\'eor\`emes~A et~B sur les polydisques ferm\'es relatifs (\cf~corollaires~\ref{cor:thA} et~\ref{cor:thB}). Pour ce faire, nous nous pla\c{c}ons sur une base que nous appelons de Stein (\cf~d\'efinition~\ref{def:basedeStein}). Les spectres de nos anneaux de Banach habituels (corps valu\'es, anneaux d'entiers de corps de nombres, corps hybrides, anneaux de valuation discr\`ete, anneaux de Dedekind trivialement valu\'es) en sont des exemples.

Dans la section~\ref{sec:affinoides}, nous d\'efinissons les espaces affino\"ides surconvergents (\cf~d\'efinition~\ref{def:affinoide}), par analogie avec la g\'eom\'etrie analytique rigide. De nombreuses propri\'et\'es restent valable dans notre cadre. Les exemples classiques, domaines de Weierstra\ss, domaines de Laurent et domaines rationnels s'adaptent sans peine (\cf~d\'efinition~\ref{def:domaines} et proposition~\ref{prop:domaines}). Nous d\'emontrons \'egalement des analogues du th\'eor\`eme d'acyclicit\'e de Tate et du th\'eor\`eme de Kiehl (\cf~th\'eor\`emes~\ref{th:sectionsglobalesaffinoide} et~\ref{th:affinoideAB}). Il s'agit de cons\'equences assez directes de nos th\'eor\`emes~A et~B sur des polydisques qu'il nous semble utile d'\'enoncer explicitement pour renforcer le parall\`ele avec la th\'eorie ultram\'etrique classique. 

Dans la courte section \ref{sec:complements}, nous utilisons les th\'eor\`emes~A et~B pour raffiner quelques r\'esultats ant\'erieurs, en particulier celui ayant trait \`a la fermeture des id\'eaux (\cf~corollaire~\ref{cor:limiteOU}). 

Dans la section~\ref{sec:Bouvert}, nous d\'emontrons que les polydisques ouverts relatifs, et certains espaces de nature similaire, satisfont le th\'eor\`eme~B (\cf~corollaire~\ref{cor:Bouvertdisque}). Nous suivons une strat\'egie classique~: exhaustion par des polydisques ferm\'es relatifs et passage \`a la limite. Le r\'esultat de fermeture des id\'eaux de la section pr\'ec\'edente y joue un r\^ole essentiel.

Dans la section finale~\ref{sec:noetherianite}, nous appliquons le r\'esultat d'annulation cohomologique sur les polydisques ferm\'es \`a l'\'etude des s\'eries arithm\'etiques convergentes, c'est-\`a-dire de s\'eries \`a coefficients entiers qui convergent sur un polydisque complexe. Dans~\cite{HarbaterConvergent}, D.~Harbater a d\'emontr\'e que certains anneaux naturels de s\'eries arithm\'etiques convergentes en une variable sont noeth\'eriens. Nous \'etendons son r\'esultat \`a des s\'eries en un nombre quelconque de variables (\cf~corollaire~\ref{cor:noetherienconcret}).

\medbreak

Soit $(\cA,\nm)$ un anneau de base g\'eom\'etrique. On pose $B := \cM(\cA)$. Pr\'ecisons que l'hypoth\`ese sur l'anneau ne sera peu utilis\'ee. Toute la section~\ref{sec:CousinRunge} reste, par exemple, valable pour un anneau de Banach arbitraire. Dans les sections~\ref{sec:AB} et \ref{sec:affinoides}, l'hypoth\`ese interviendra lorsque nous aurons besoin de la coh\'erence du faisceau structural, et uniquement pour cette raison. Dans les sections~\ref{sec:complements} et~\ref{sec:Bouvert}, en revanche, elle jouera un r\^ole important lorsqu'interviendra la fermeture des id\'eaux du faisceau structural, et il faudra m\^eme la compl\'eter par une hypoth\`ese suppl\'ementaire (\cf~d\'efinition~\ref{def:Badapte}). Nous avons choisi de supposer d\`es le d\'ebut que l'anneau de Banach $(\cA,\nm)$ est anneau de base g\'eom\'etrique afin de ne pas alourdir la r\'edaction.


\medbreak

Dans ce chapitre, nous travaillerons souvent avec des parties compactes d'espaces $\cA$-analytiques. Elles seront munies du faisceau structural surconvergent (\cf~notation~\ref{nota:surconvergent}), sans que nous le pr\'ecisions d\'esormais. 


\section{Syst\`emes de Cousin-Runge}\label{sec:CousinRunge}

\subsection{G\'en\'eralit\'es}\label{sec:CousinRungeGeneralites}
\index{Systeme@Syst\`eme|(}

Soit~$X$ un espace $\cA$-analytique.

Soient~$K^-$ et~$K^+$ deux parties compactes de~$X$. Posons $L := K^- \cap K^+$ et $M := K^- \cup K^+$. Dans cette section, nous introduisons diff\'erentes notions permettant de relier les sections globales d'un faisceau coh\'erent sur~$K^-$ et~$K^+$ \`a celles sur~$M$. Elles sont inspir\'ees de celles introduites dans \cite[\S 6.2.1]{A1Z}, tout en \'etant parfois diff\'erentes, de fa\c con \`a prendre en compte le cadre plus g\'en\'eral \'etudi\'e ici.%
\nomenclature[La]{$K^-$, $K^+$}{compacts d'un espace $\cA$-analytique}%
\nomenclature[Lb]{$L$}{intersection $K^-\cap K^+$ des compacts $K^-$ et $K^+$}%
\nomenclature[Lc]{$M$}{union $K^-\cup K^+$ des compacts $K^-$ et $K^+$}%

Sous nos conventions, les anneaux $\cO(K^+)$, $\cO(K^-)$ et $\cO(L)$ sont des anneaux de sections surconvergentes (\cf~notation~\ref{nota:surconvergent}). En g\'en\'eral, ce ne sont pas des anneaux de Banach, mais on peut chercher \`a les \'ecrire comme limites de suites de tels anneaux. On peut par exemple \'ecrire les sections sur un disque \`a l'aide des compl\'et\'es des sections sur des disques de rayons strictement plus grands, dans l'esprit de la proposition~\ref{prop:disqueglobal}. La notion de syst\`eme de Banach vient formaliser cette id\'ee, en imposant de plus une condition de compatibilit\'e entre les suites.

Nous travaillerons ici, sans plus le pr\'eciser, avec la cat\'egorie des anneaux de Banach dont les objets sont les anneaux de Banach et les morphismes sont les morphismes born\'es. 

\begin{defi}\label{def:systemedeBanachfin}\index{Systeme@Syst\`eme!de Banach|textbf}\index{Systeme@Syst\`eme!de Banach fort|textbf}%
\nomenclature[Ld]{$(\cB_{m}^-,\nm_{m}^-)$}{famille d'espaces de Banach associ\'ee \`a~$K^-$ dans un syst\`eme de Banach}%
\nomenclature[Le]{$(\cB_{m}^+,\nm_{m}^+)$}{famille d'espaces de Banach associ\'ee \`a~$K^+$ dans un syst\`eme de Banach}%
\nomenclature[Lf]{$(\cC_{m}^-,\nm_{m})$}{famille d'espaces de Banach associ\'ee \`a~$L$ dans un syst\`eme de Banach}%
Un \emph{syst\`eme de Banach} associ\'e \`a~$(K^-,K^+)$ dans~$X$ est la donn\'ee de syst\`emes inductifs d'anneaux de Banach
\begin{align*}
\cB^- &= ((\cB_{m}^-,\nm_{m}^-)_{m\in \N},(\varphi^-_{m',m})_{m' \ge m\, \in \N}),\\
\cB^+ &= ((\cB_{m}^+,\nm_{m}^+)_{m\in \N},(\varphi^+_{m',m})_{m' \ge m\, \in \N}),\\
\cC &= ((\cC_{m},\nm_{m})_{m\in \N},(\varphi_{m',m})_{m' \ge m\, \in \N})
\end{align*}
et de morphismes
\[(\psi^-_{m})_{m\in\N} \colon \cB^- \to \cC, \ (\psi^+_{m})_{m\in\N} \colon \cB^+ \to \cC\]
et 
\[(\rho^-_{m})_{m\in\N} \colon \cB^- \to \cO(K^-), \ (\rho^+_{m})_{m\in\N} \colon \cB^+ \to \cO(K^+), \ (\rho_{m})_{m\in\N} \colon \cC \to \cO(L)\]
tels que le morphisme
\[ \colim_{m\in \N} \cC_{m} \too \cO(L) \]
induit par les~$\rho_{m}$ soit un isomorphisme d'anneaux.

On dit que le syst\`eme de Banach est \emph{fort} si, de plus, pour tout voisinage compact~$V$ de~$L$, il existe $m_{V} \in\N$ satisfaisant la propri\'et\'e suivante~: pour tout $m\ge m_{V}$, il existe $C_{m} \in \R$ tel que, pour tout $f\in \cO(V)$, il existe $g\in \cC_{m}$ v\'erifiant
\[\begin{cases}
f = \rho_{m}(g) \textrm{ dans } \cO(L)~;\\
\|g\|_{m} \le C_{m}\, \|f\|_{V}.
\end{cases}\]
\end{defi}

Dans la suite, nous nous permettrons d'utiliser implicitement les morphismes $\psi^-_{m}, \psi^+_{m}, \rho^-_{m}, \rho^+_{m}, \rho_{m}$, sans que cela ne soit source de confusions. Nous sous-entendrons \'egalement souvent l'espace~$X$ ambiant.

\begin{rema}
La d\'efinition de syst\`eme de Banach reprend~\cite[d\'efinition~6.2.1]{A1Z}. La d\'efinition de syst\`eme de Banach fort est nouvelle.
\end{rema}

Introduisons la notion de syst\`eme de Cousin. L'id\'ee sous-jacente consiste \`a \'ecrire une section sur l'intersection~$L$ comme somme de sections sur les parties~$K^-$ et~$K^+$, avec un contr\^ole sur les normes.

\begin{defi}\label{def:Cousin}\index{Systeme@Syst\`eme!de Cousin|textbf}
Un \emph{syst\`eme de Cousin} associ\'e \`a~$(K^-,K^+)$ est un syst\`eme de Banach associ\'e \`a~$(K^-,K^+)$ pour lequel il existe $D \in\R$ satisfaisant la propri\'et\'e suivante~: pour tous $m\in\N$ et $f \in \cC_{m}$, il existe $f^-\in\cB_{m}^-$ et $f^+\in\cB_{m}^+$ tels que
\begin{enumerate}[i)]
\item $f = f^- + f^+$ dans $\cC_{m}$~;
\item $\|f^-\|^-_{m} \le D\, \|f\|_{m}$~;
\item $\|f^+\|^+_{m} \le D\, \|f\|_{m}$.
\end{enumerate}
\end{defi}

\begin{rema}
La d\'efinition de syst\`eme de Cousin reprend~\cite[d\'efinition~6.2.2]{A1Z}. 
\end{rema}

Donnons quelques exemples simples pour illustrer la d\'efinition.

\begin{exem}\label{ex:ZCousinum}\index{Anneau!des entiers relatifs $\Z$}
Consid\'erons le cas o\`u $\cA = \Z$ et $X=\cM(\Z)$. Reprenons les notations de l'exemple~\ref{ex:Z}. 

Soit $q$ un nombre premier et soit $\alpha \in \R_{>0}$. Posons $K^- := [a_{q}^\alpha,a_{q}^{+\infty}]$ et  $K^+ := \cM(\Z) \setminus \intof{a_{q}^\alpha,a_{q}^{+\infty}}$. On a $K^-\cap K^+ = \{a_{q}^\alpha\}$ et $K^-\cup K^+ = \cM(\Z)$. Cette situation est repr\'esent\'ee \`a la figure~\ref{fig:K-K+}. 

\begin{figure}[!h]
\centering
\begin{tikzpicture}
\foreach \x [count=\xi] in {-2,-1,...,17}
\draw (0,0) -- ({10*cos(\x*pi/10 r)/\xi},{10*sin(\x*pi/10 r)/\xi}) ;
\foreach \x [count=\xi] in {-2,-1,...,17}
\fill ({10*cos(\x*pi/10 r)/\xi},{10*sin(\x*pi/10 r)/\xi}) circle ({0.07/(sqrt(\xi)}) ;




\fill (2.9,0) circle ({0.07/(sqrt(3)}) ;
\draw (2.6,-0.3) node{$a_q^\alpha$} ;

\draw (2.9,0) to[out=100,in=0] (0,1.7);
\draw (0,1.7) to[out=180,in=90] (-1.4,0);
\draw (-1.4,0) to[out=270,in=135] (-0.8,-1);
\draw (-0.8,-1) to[out=-45,in=160] (8,-6.4);
\draw (8,-6.4) to[out=-20,in=-70] (8.7,-5.7);
\draw (8.7,-5.7) to[out=110,in=-45] (5.5,-1.5);
\draw (5.5,-1.5) to[out=135,in=-65] (2.9,0);

\draw (2.9,0) to[out=45,in=90] (3.7,0);
\draw (3.7,0) to[out=-90,in=-45] (2.9,0);

\draw (4.1,0.4) node{$K^-$} ;
\draw (-.8,-1.6) node{$K^+$} ;

\end{tikzpicture}
\caption{Un syst\`eme de Cousin de $\cM(\Z)$.}\label{fig:K-K+}
\end{figure}

Pour tout $m\in \N$, posons 
\[\begin{cases}
(\cB^-_{m},\nm^-_{m}) := (\cB(K^-),\nm_{K^-}) = (\Z_{q}, \va_{q}^\alpha)~;\\[2pt]  
(\cB^+_{m},\nm^+_{m}) := (\cB(K^+),\nm_{K^+}) = \big(\Z\big[\frac1q\big],\max(\va_{q}^\alpha,\va_{\infty})\big)~;\\[2pt]
(\cC_{m},\nm_{m}) := (\cH(a_{q}^\alpha), \va_{a_{q}^\alpha}) = (\Q_{q},\va_{q}^\alpha).
\end{cases}\]
Soit $f\in \cH(a_{q}^\alpha) = \Q_{q}$. Supposons que $f\ne 0$ et \'ecrivons-le sous la forme $f = \sum_{i\ge i_{0}} a_{i} \, q^i$ avec $i_{0} \in \Z$, $a_{i_{0}} \ne 0$ et, pour tout $i \ge i_{0}$, $a_{i} \in \cn{0}{q-1}$. 

Si $i_{0} \ge 0$, on pose 
\[ f^- := f \textrm{ et } f^+ := 0.\]
On a alors $f = f^-+f^+$, $\|f^-\|_{K^-} = |f|_{q}^\alpha$ et $\|f\|_{K^+} = 0$.

Si $i_{0} < 0$, on a $|f|_{q}^\alpha >1$ et on pose 
\[ f^- := \sum_{i\ge 0} a_{i} \, q^i \textrm{ et } f^+ := \sum_{i_{0}\le i <0} a_{i} \, q^i.\]
On a alors $f = f^-+f^+$, $\|f^-\|_{K^-} \le 1$ et $\|f\|_{K^+}  \le \sum_{i<0} (q-1)\, q^{-i} = 1$.

On a donc bien d\'efini un syst\`eme de Cousin associ\'e \`a $(K^-,K^+)$.
\end{exem}

\begin{exem}\label{ex:ZCousinarc}\index{Anneau!des entiers relatifs $\Z$}
Consid\'erons le cas o\`u $\cA = \Z$ et $X=\cM(\Z)$. Reprenons les notations de l'exemple~\ref{ex:Z}. 

Soit $\beta \in \intoo{0,1}$. Posons $K^- := [a_{\infty}^\beta,a_{\infty}]$ et  $K^+ := \cM(\Z) \setminus \intof{a_{\infty}^\beta,a_{\infty}}$. On a $K^-\cap K^+ = \{a_{\infty}^\beta\}$ et $K^-\cup K^+ = \cM(\Z)$.  

Pour tout $m\in \N$, posons 
\[\begin{cases}
(\cB^-_{m},\nm^-_{m}) := (\cB(K^-),\nm_{K^-}) = (\R, \max(\va_{\infty}^\beta,\va_{\infty}))~;\\
(\cB^+_{m},\nm^+_{m}) := (\cB(K^+),\nm_{K^+}) = (\Z,\va_{\infty}^\beta)~;\\
(\cC_{m},\nm_{m}) := (\cH(a_{\infty}^\beta), \va_{a_{\infty}^\beta}) = (\R,\va_{\infty}^\beta).
\end{cases}\]
Soit $f\in \cH(a_{\infty}^\beta) = \R$. 

Si $|f|_{\infty} \le 1$, on pose
\[ f^- := f \textrm{ et } f^+ := 0.\]
On a alors $f = f^-+f^+$, $\|f^-\|_{K^-} = |f|_{\infty}^\beta$ et $\|f\|_{K^+} = 0$.

Si $|f|_{\infty} > 1$, il existe $n\in \Z$ tel que $|n|_{\infty} \le |f|_{\infty}$ et $|f-n|_{\infty} < 1$ et on pose
\[ f^- := f-n \textrm{ et } f^+ := n.\]
On a alors $f = f^-+f^+$, $\|f^-\|_{K^-} \le 1$ et $\|f\|_{K^+} = |n|_{\infty}^\beta \le |f|_{\infty}^\beta$.

On a donc bien d\'efini un syst\`eme de Cousin associ\'e \`a $(K^-,K^+)$.
\end{exem}

Les exemples pr\'ec\'edents se g\'en\'eralisent.

\begin{exem}\label{ex:Cousin}
\index{Corps!valu\'e}
\index{Anneau!des entiers d'un corps de nombres}\index{Corps!hybride}\index{Anneau!de valuation discr\`ete}\index{Anneau!de Dedekind trivialement valu\'e}
Soit $\cA$ l'un de nos anneaux de Banach usuels~: corps valu\'e, anneau d'entiers de corps de nombres, corps hybride, anneau de valuation discr\`ete, anneau de Dedekind trivialement valu\'e (\cf~exemples~\ref{ex:corpsvalue} \`a~\ref{ex:Dedekind}). Posons $X := \cM(\cA)$.

Soit~$a$ un point de~$\cM(\cA)$ non associ\'e \`a la valuation triviale sur~$\cA$. Dans ce cas, il existe deux compacts~$K^-$ et~$K^+$ de~$\cM(\cA)$ tels que $K^- \cap K^+ = \{a\}$ et $K^- \cup K^+ = \cM(\cA)$. (Si $\cM(\cA) \setminus \{a\}$ poss\`ede deux composantes connexes, ce qui est le cas g\'en\'eral, $K^-$ et~$K^+$ sont les adh\'erences de ces composantes connexes.) 

On obtient alors un syst\`eme de Cousin fort associ\'e \`a~$(K^-,K^+)$ en posant, pour tout $m\in \N$, $(\cB^-_{m},\nm^-_{m}) := (\cB(K^-),\nm_{K^-})$,  $(\cB^+_{m},\nm^+_{m}) := (\cB(K^+),\nm_{K^+})$ et  $(\cC_{m},\nm_{m}) := (\cH(a),\va_{a})$. Le seul cas qui n'est pas imm\'ediat est celui des anneaux d'entiers de corps de nombres. On peut le d\'eduire du th\'eor\`eme des unit\'es de Dirichlet (\cf~\cite[lemme~6.3.2]{A1Z}).
\end{exem}

Introduisons la notion de syst\`eme de Runge. L'id\'ee sous-jacente consiste \`a \'ecrire les sections sur l'intersection \`a partir de sections globales, et plus pr\'ecis\'ement d'approcher une section sur l'intersection~$L$ par une section sur une partie ($K^-$ ou~$K^+$), apr\`es multiplication \'eventuelle par une section inversible sur l'autre ($K^+$ ou~$K^-$).

\begin{defi}\label{def:Runge}\index{Systeme@Syst\`eme!de Runge|textbf}
Un \emph{syst\`eme de Runge} associ\'e \`a~$(K^-,K^+)$ est un syst\`eme de Banach associ\'e \`a~$(K^-,K^+)$ tel que, pour tous $m \in \N$, $\eps >0$, $p\in\N^\ast$, $s_{1},\dotsc,s_{p}\in \cC_{m}$, les propri\'et\'es suivantes soient v\'erifi\'ees~: pour tout $\sigma\in \{-,+\}$, il existe $f \in \cB_{m}^\sigma$ inversible et $s'_{1},\dotsc,s'_{p}\in \cB_{m}^{-\sigma}$ tels que
\begin{enumerate}[i)]
\item $\lim_{m'\to +\infty} \|f\|_{m'} \, \|f^{-1}\|_{m'} = 1$~;
\item pour tout $i\in\cn{1}{p}$, on ait
\[\|fs_{i} - s'_{i}\|_{m} < \eps \|f\|_{m}.\]
\end{enumerate}
\end{defi}

\begin{rema}\label{rem:CR}
Dans le cadre de la d\'efinition pr\'ec\'edente, pour $m$ assez grand (d\'ependant de~$f$), on a 
\[\|fs_{i} - s'_{i}\|_{m} \, \|f^{-1}\|_{m} < \eps.\]
\end{rema}

Nous utiliserons souvent des combinaisons des d\'efinitions pr\'ec\'edentes dont le sens est clair~: syst\`eme de Cousin fort, syst\`eme de Cousin-Runge, syst\`eme de Cousin-Runge fort, etc.

\begin{rema}
Dans~\cite{A1Z} figure uniquement la d\'efinition de syst\`eme de Cousin-Runge (\cf~\cite[d\'efinition~6.2.8]{A1Z}). Elle est proche de celle que nous proposons ici, mais plus compliqu\'ee. 
\end{rema}

Reprenons les exemples de syst\`emes de Banach vus plus haut.

\begin{exem}\label{ex:ZCousinRungeum}\index{Anneau!des entiers relatifs $\Z$}
Reprenons l'exemple~\ref{ex:ZCousinum}. Soit $\eps \in \R_{>0}$ et soient $s_{1},\dotsc,s_{p} \in \cH(a_{q}^\alpha) = \Q_{q}$. 

Consid\'erons le cas $\sigma = -$. Puisque $\Z[1/q]$ est dense dans~$\Q_{q}$, il existe $s'_{1},\dotsc,s'_{p} \in \Z[1/q]$ tel que, pour tout $i\in \cn{1}{p}$, on ait $|s_{i} - s'_{i}| < \eps$. Le r\'esultat vaut donc avec $f=1$.

Consid\'erons le cas $\sigma = +$. Il existe $N \in \N$ tel que, pour tout $i\in \cn{1}{p}$, on ait $q^N s_{i} \in \Z_{q}$. Le r\'esultat vaut donc avec $f=q^N$ (qui est inversible dans~$\Z[1/q]$) et, pour tout $i\in \cn{1}{p}$, $s'_{i} = q^N s_{i}$.

On en d\'eduit que le syst\`eme de l'exemple~\ref{ex:ZCousinum} est de Runge.
\end{exem}

\begin{exem}\label{ex:ZCousinRungearc}\index{Anneau!des entiers relatifs $\Z$}
Reprenons l'exemple~\ref{ex:ZCousinarc}. Soit $\eps \in \R_{>0}$ et soient $s_{1},\dotsc,s_{p} \in \cH(a_{\infty}^\beta) = \R$. 

Consid\'erons le cas $\sigma = +$. Les anneaux sous-jacents \`a~$\cH(a_{\infty}^\beta)$ et $\cB(K^-)$ co\"incident et le r\'esultat vaut donc avec $f=1$ et, pour tout $i\in \cn{1}{p}$, $s'_{i} = s_{i}$.

Consid\'erons le cas $\sigma = -$. Il existe $M \in \N_{\ge 1}$ tel que $\eps \ge 1/M^\beta$. Pour tout $i\in \cn{1}{p}$, il existe $s'_{i} \in \Z$ tel que $|M s_{i} - s'_{i}|_{\infty} < 1$. Le r\'esultat vaut alors avec $f=M$ (qui est inversible dans~$\R$).

On en d\'eduit que le syst\`eme de l'exemple~\ref{ex:ZCousinarc} est de Runge.
\end{exem}

\begin{exem}\label{ex:CousinRunge}
\index{Corps!valu\'e}
\index{Anneau!des entiers d'un corps de nombres}\index{Corps!hybride}\index{Anneau!de valuation discr\`ete}\index{Anneau!de Dedekind trivialement valu\'e}

Les syst\`emes de Cousin de l'exemple~\ref{ex:Cousin} sont des syst\`emes de Cousin-Runge forts. Comme pr\'ec\'edemment, le seul cas difficile est celui des anneaux d'entiers de corps de nombres. On peut le d\'eduire du th\'eor\`eme d'approximation fort (\cf~\cite[lemme~6.3.3]{A1Z}).
\end{exem}

Rappelons la notation~\ref{nota:disquerelatif} pour les disques relatifs.

\begin{lemm}\label{lem:CRrelatif}\index{Systeme@Syst\`eme!sur un disque}\index{Disque!systeme@syst\`eme sur un|see{Syst\`eme}}
Soient $r_{1},\dotsc,r_{n} \in \R_{\ge 0}$. Soient~$K^-$ et~$K^+$ des parties compactes de~$X$. S'il existe un syst\`eme de Banach (resp. de Banach fort, resp. de Cousin, resp. de Runge) associ\'e \`a $(K^-,K^+)$, alors il existe un syst\`eme de Banach (resp. de Banach fort, de Cousin, resp. de Runge) associ\'e \`a $(\overline{D}_{K^-}(r_{1},\dotsc,r_{n}),\overline{D}_{K^+}(r_{1},\dotsc,r_{n}))$ dans $\E{n}{X}$.
\end{lemm}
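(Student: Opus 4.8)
The idea is to transfer each structure from the base to the disk by pulling back along the projection~$\pi$ and invoking the description of functions on relative polydisks (Proposition~\ref{prop:disqueglobal}). Write $\overline{D} := \overline{D}_{X}(r_{1},\dotsc,r_{n})$, and for a compact $K$ of $X$ write $\overline{D}_{K} := \pi^{-1}(K)$. By Proposition~\ref{prop:disqueglobal}, for any part $V$ of $X$ one has
\[
\cO(\overline{D}_{V}) \simeq \colim_{W \supset V,\ \bv > \br} \cO(W)\la |T_{1}| \le v_{1},\dotsc,|T_{n}| \le v_{n}\ra,
\]
where $W$ runs over compact neighborhoods of~$V$ in~$X$ (iterating the one-variable statement in each coordinate, or using the evident multivariable analogue). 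So given the data $(\cB^{\pm}_{m}, \cC_{m}, \psi^{\pm}_{m}, \rho^{\pm}_{m}, \rho_{m})$ of a system of Banach associated to $(K^-, K^+)$, I would define, for each $m$, new Banach algebras
\[
\widetilde{\cB}^{\pm}_{m} := \cB^{\pm}_{m}\la |T_{1}| \le v^{(m)}_{1},\dotsc,|T_{n}| \le v^{(m)}_{n}\ra,\qquad
\widetilde{\cC}_{m} := \cC_{m}\la |T_{1}| \le v^{(m)}_{1},\dotsc,|T_{n}| \le v^{(m)}_{n}\ra,
\]
where $(v^{(m)})_{m}$ is a fixed sequence decreasing to $\br$ (strictly above it), with transition maps induced by the $\varphi_{m',m}$ together with the natural restriction maps on the polydisk algebras, and with the maps $\psi^{\pm}_{m}, \rho^{\pm}_{m}, \rho_{m}$ extended coefficientwise. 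The isomorphism $\colim_{m} \widetilde{\cC}_{m} \simeq \cO(\overline{D}_{L})$ then follows by combining $\colim_{m} \cC_{m} \simeq \cO(L)$ with the colimit description above, commuting the two colimits.

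\textbf{Checking each reinforcement.} For the \emph{Cousin} property: given $f = \sum_{\bi} f_{\bi}\, T^{\bi} \in \widetilde{\cC}_{m}$, apply the Cousin decomposition on the base coefficientwise, writing $f_{\bi} = f^{-}_{\bi} + f^{+}_{\bi}$ with $\|f^{\pm}_{\bi}\|^{\pm}_{m} \le D\, \|f_{\bi}\|_{m}$, and set $f^{\pm} := \sum_{\bi} f^{\pm}_{\bi}\, T^{\bi}$. Since the norm on the polydisk algebra is (for the $\nm_{1}$-type norm of Notation~\ref{nota:ATt}) simply $\sum_{\bi} \|f_{\bi}\|\, {v^{(m)}}^{\bi}$, the estimate $\|f^{\pm}\|^{\pm}_{m} \le D\, \|f\|_{m}$ is immediate with the same constant~$D$, so convergence of the series for $f^{\pm}$ is automatic. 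For the \emph{Runge} property: given $s_{1},\dotsc,s_{p} \in \widetilde{\cC}_{m}$ and $\eps > 0$, truncate each $s_{i}$ to a polynomial in $T$ up to high enough degree so that the tail has norm $< \eps/3$ times the relevant quantities (using $v^{(m)}_{j} < $ radius of the next algebra), apply the base Runge property to the finitely many coefficients appearing, obtaining $f \in \cB^{\sigma}_{m}$ invertible with $\lim_{m'} \|f\|_{m'}\|f^{-1}\|_{m'} = 1$ and approximants $s'_{i,\bi}$; then $f$ (viewed in $\widetilde{\cB}^{\sigma}_{m}$, where it remains invertible with the same limit product of norms since constants have polydisk-norm equal to their base-norm) and $s'_{i} := \sum_{\bi} s'_{i,\bi} T^{\bi}$ do the job. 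For the \emph{strong} Banach property: a compact neighborhood $V'$ of $\overline{D}_{L}$ contains some $\overline{D}_{V}$ with $V$ a compact neighborhood of $L$, and an element $f \in \cO(V')$ restricts to $\overline{D}_{V}$ hence, shrinking $V$ and enlarging the polyradius slightly, expands (Proposition~\ref{prop:restrictionserie}) as a series with coefficients in $\cO(V)$ with controlled norms; apply the strong Banach property of the base system to each coefficient and reassemble, the norm-control multiplying up as before.

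\textbf{Main obstacle.} The routine parts are the algebraic manipulations with the colimits and the norm bookkeeping; the one point that genuinely requires care is the Runge property, because the definition (Definition~\ref{def:Runge}) does not ask for an exact lift but for an $\eps$-approximation, and the $s_{i} \in \widetilde{\cC}_{m}$ are honest power series, not polynomials. The issue is to control \emph{simultaneously} the tail of the series (which lives in $T$-adic directions and is handled by choosing $v^{(m)}$ strictly below the radius of $\widetilde{\cC}_{m}$, so that restriction to a slightly smaller polyradius kills high-degree terms) \emph{and} the base approximation error on the finitely many surviving coefficients, all while keeping the single multiplier $f$ uniform across the $s_{i}$ — which is exactly what the base Runge property delivers for finite families. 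I expect the verification that the resulting $f$, now an element of $\widetilde{\cB}^{\sigma}_{m}$, still satisfies $\lim_{m'} \|f\|_{m'}\|f^{-1}\|_{m'} = 1$ to be the subtle bookkeeping point: this works because $f$ is a constant (in $T$), so its polydisk-norm equals its base-norm regardless of the polyradius, and likewise for $f^{-1}$. Modulo this, the proof is a straightforward, if somewhat tedious, coefficientwise transfer, and I would present it in exactly that order: build the three inductive systems, build all the morphisms, check the colimit-of-$\cC$ isomorphism, then check Cousin, then Runge, then the strong variant.
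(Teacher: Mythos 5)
Your proposal is correct and follows essentially the same route as the paper's proof: tensor each term of the inductive systems with a polydisk algebra of polyradius shrinking to $\br$, transfer the Cousin decomposition coefficientwise (with the same constant $D$), prove Runge by polynomial truncation followed by the base Runge property on the finitely many remaining coefficients (the multiplier $f$ being constant in $T$, its polydisk norm equals its base norm, which preserves the limit condition), and prove the strong Banach property by lifting coefficients and controlling norms via Proposition~\ref{prop:restrictionserie}. The only cosmetic difference is that the paper first reduces to $n=1$ by an immediate induction, whereas you work with the multivariable polydisk algebras directly.
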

\begin{proof}
Une r\'ecurrence imm\'ediate montre qu'il suffit de traiter le cas $n=1$. Posons $r=r_{1}$. 

Supposons tout d'abord qu'il existe un syst\`eme de Banach~$\Omega$ associ\'e au couple $(K^-,K^+)$. Soit $(R_{n})_{n\in\N}$ une suite d\'ecroissante d'\'el\'ements de $\intof{r,+\infty}$ de limite~$r$. Pour $n \in \N$, posons $\bar\cB_{n}^- = \cB_{n}^-\la |T|\le R_{n}\ra$, $\bar\cB_{n}^+ = \cB_{n}^+\la |T|\le R_{n}\ra$, $\bar\cC_{n} = \cC_{n}\la |T|\le R_{n}\ra$. Avec les morphismes \'evidents, on obtient un syst\`eme de Banach~$\bar \Omega$ associ\'e au couple $(\overline{D}_{K^-}(r),\overline{D}_{K^+}(r))$.
En effet, les autres propri\'et\'es \'etant imm\'ediates, il suffit de v\'erifier que le morphisme
\[\colim_{m\in\N} \bar\cC_{m} \too \cO(\overline{D}_{L}(r))\]
est un isomorphisme, ce qui d\'ecoule de la proposition~\ref{prop:disqueglobal}.

\smallbreak

Supposons, maintenant, que~$\Omega$ est un syst\`eme de Banach fort. Soit~$V$ un voisinage compact de~$\overline{D}_{L}(r)$ dans~$\E{1}{X}$. On peut supposer qu'il est de la forme $\overline{D}_{W}(s)$, o\`u~$W$ est un voisinage compact de~$L$ et~$s$ un nombre r\'eel strictement sup\'erieur \`a~$r$. Soit~$m_{W}$ un entier v\'erifiant les propri\'et\'es de la d\'efinition de syst\`eme de Banach fort pour le voisinage compact~$W$ de~$L$. Soit~$m_{V} \ge m_{W}$ tel que $R_{m_{V}} < s$. Soient $m\ge m_{V}$ et $f\in\cO(\overline{D}_{W}(s))$. D'apr\`es la proposition~\ref{prop:disqueglobal}, on peut \'ecrire~$f$ sous la forme 
\[f = \sum_{n\ge 0} a_{n}\, T^n \in \cO(W)\llbracket T \rrbracket,\]
o\`u la s\'erie $\sum_{n\ge 0} \|a_{n}\|_{W}\, s^n$ converge. Pour tout $n\in\N$, il existe $b_{n} \in \cC_{m}$ tel que 
\[\begin{cases}
a_{n} = b_{n} \textrm{ dans } \cO(L)~;\\
\|b_{n}\|_{m} \le C_{m}\, \|a_{n}\|_{W}.
\end{cases}\]
Consid\'erons la s\'erie
\[g = \sum_{n\ge 0} b_{n}\, T^n \in \cC_{m}\llbracket T \rrbracket.\]
Elle appartient \`a $\cC_{m}\la |T|\le s\ra$, donc \`a $\cC_{m}\la |T|\le R_{m}\ra$, et son image dans $\cO(\overline{D}_{L}(r))$ co\"{\i}ncide avec celle de~$f$. D'apr\`es la proposition~\ref{prop:restrictionserie}, on a 
\begin{align*}
\|g\|_{\cC_{m}\la |T|\le R_{m}\ra} &=  \sum_{n\ge 0} \|b_{n}\|_{m}\, R_{m}^n\\
&\le C_{m}\,  \sum_{n\ge 0} \|b_{n}\|_{W}\, R_{m}^n\\
&\le C_{m} \frac{s}{s-R_{m}}\, \|f\|_{\overline{D}_{W}(s)}.
\end{align*}
Ceci montre que~$\bar\Omega$ est un syst\`eme de Banach fort.

\smallbreak

Supposons, maintenant, que~$\Omega$ est un syst\`eme de Cousin. Par hypoth\`ese, il existe $D\in \R$ tel que, pour tous $m\in\N$ et $a\in \cC_{m}$, il existe $a^-\in \cB_{m}^-$ et $a^+\in \cB_{m}^+$ tels que
\[\begin{cases}
a = a^- + a^+ \textrm{ dans } \cC_{m}~;\\
\|a^-\|^-_{m} \le D\, \|a\|_{m}~;\\
\|a^-\|^+_{m} \le D\, \|a\|_{m}.
\end{cases}\]
Soient $m\in\N$ et $f \in \bar\cC_{m} = \cC_{m}\la |T|\le R_{m}\ra$. On peut l'\'ecrire sous la forme
\[f = \sum_{n\ge 0} a_{n}\, T^n.\]
Les s\'eries
\[f^- := \sum_{n\ge 0} a_{n}^-\, T^n \textrm{ et } f^+ := \sum_{n\ge 0} a_{n}^+\, T^n\]
d\'efinissent alors respectivement des \'el\'ements de~$\bar\cB_{m}^-$ et~$\bar\cB_{m}^+$ tels que
\[\begin{cases}
f = f^- + f^+ \textrm{ dans } \bar\cC_{m}~;\\
\|f^-\|^-_{m} \le D\, \|f\|_{m}~;\\
\|f^-\|^+_{m} \le D\, \|f\|_{m}.
\end{cases}\]
Ceci montre que~$\bar\Omega$ est un syst\`eme de Cousin.

\smallbreak

Supposons, finalement, que~$\Omega$ est un syst\`eme de Runge. Soient $m\in\N$, $\eps>0$ et $\sigma\in\{-,+\}$. Soient $p\in\N^\ast$ et $s_{1},\dotsc,s_{p}\in \bar\cC_{m}$. 

Pour tout $i\in\cn{1}{p}$, il existe 
\[ s'_{i} = \sum_{n=0}^{d_{i}} a'_{i,n} \, T^n \in \cC_{m}[T] \]
tel que $\|s'_{i}-s_{i}\|_{m} < \eps/2$. On peut supposer que les~$d_{i}$ sont tous \'egaux. Notons~$d$ leur valeur commune. Posons $C = \sum_{n=0}^d R_{m}^n$. D'apr\`es la propri\'et\'e de Runge pour~$\cC_{m}$, il existe $f\in \cB_{m}^\sigma$ inversible et, pour tout $(i,n) \in \cn{1}{p} \times \cn{0}{d}$, $a''_{i,n} \in \cB_{m}^{-\sigma}$ tels que 
\begin{enumerate}[i)]
\item $\lim_{m'\to +\infty} \|f\|_{m'} \, \|f^{-1}\|_{m'} = 1$~;
\item pour tout $(i,n) \in \cn{1}{p} \times \cn{0}{d}$, on ait
\[\|fa'_{i,n} - a''_{i,n}\|_{m} < \eps/(2C)\, \|f\|_{m}.\]
\end{enumerate}

Pour tout $i\in\cn{1}{p}$, posons 
\[ s''_{i} = \sum_{n=0}^{d_{i}} a''_{i,n} \, T^n \in \cC_{m}[T]. \]
On a alors 
\[ \|fs_{i} - s''_{i}\|_{\bar{\cC}_{m}} \le \|f(s_{i} - s'_{i})\|_{\bar{\cC}_{m}}+ \|fs'_{i} - s''_{i}\|_{\bar{\cC}_{m}} \le \eps \|f\|_{\bar{\cC}_{m}}.\]

De plus, pour tout $m'\ge m$ et tout \'el\'ement~$g$ de~$\cC_{m}$, on a $\|g\|_{\bar{\cC}_{m'}} = \|g\|_{m'}$. On en d\'eduit que 
\[\lim_{m'\to +\infty} \|f\|_{\bar{\cC}_{m'}} \, \|f^{-1}\|_{\bar{\cC}_{m'}} = 1.\]
Ceci montre que~$\bar\Omega$ est un syst\`eme de Runge.
\end{proof}

Rappelons que, dans le cadre des syst\`emes de Cousin, on dispose de l'analogue du lemme de Cartan.

\begin{theo}[\protect{\cite[th\'eor\`eme~6.2.7]{A1Z}}]\label{thm:lemmeCartan}\index{Lemme!de Cartan}\index{Theoreme@Th\'eor\`eme!de Cartan}
Soit $\Omega = (\cB^-_{m},\cB^+_{m},\cC_{m})_{m\in\N}$ un syst\`eme de Cousin.
Alors, il existe $\varepsilon>0$ tel que, pour tous $m\in\N$, $q\in\N^\ast$ et $A\in M_{q}(\cC_{m})$ v\'erifiant $\|A-I\|_{m} < \varepsilon$, il existe $C^- \in GL_{q}(\cB_{m}^-)$ et $C^+ \in GL_{q}(\cB_{m}^+)$ telles que
\[\begin{cases}
A = C^-\, C^+ \textrm{ dans } GL_{q}(\cC_{m})~;\\
\|C^--I\|_{m}^- \le 4 D\, \|A-I\|_{m}~;\\
\|C^+-I\|_{m}^+ \le 4 D\, \|A-I\|_{m},\\
\end{cases}\]
o\`u~$D$ est la constante apparaissant dans la d\'efinition de syst\`eme de Cousin.
\qed
\end{theo}

Nous souhaitons maintenant d\'emontrer que, pour un faisceau coh\'erent sur $M = K^-\cup K^+$, la propri\'et\'e d'\^etre globalement engendr\'e passe de~$K^-$ et~$K^+$ \`a~$M$. Le lemme qui suit est une version l\'eg\`erement modifi\'ee de \cite[lemme~6.2.9]{A1Z}, dont la preuve reste essentiellement inchang\'ee.

\begin{lemm}\label{lem:matriceapprochee}
Soit $\Omega = (\cB^-_{m},\cB^+_{m},\cC_{m})_{m\in\N}$ un syst\`eme de Cousin associ\'e \`a $(K^-,K^+)$. Soient~$\cF$ un faisceau de $\cO_{M}$-modules, $p,q\in \N^\ast$, $T^- \in \cF(K^-)^p$, $T^+ \in \cF(K^+)^q$, 
$U\in M_{p,q}(\cO(\cC_{m}))$ et $V \in M_{q,p}(\cO(\cC_{m}))$
tels que qu'on ait
\[\begin{cases}
T^- = U\,T^+ \textrm{ dans } \cF(L)^p~;\\
T^+=V\,T^- \textrm{ dans } \cF(L)^q.
\end{cases}\] 
Soit $\eps>0$ satisfaisant la conclusion du th\'eor\`eme~\ref{thm:lemmeCartan}. Supposons qu'il existe $m\in \N$ et $\bar U \in M_{p,q}(\cB_{m}^+)$ tels que
\[ \|\bar U - U\|_{m}\, \|V\|_{m} < \varepsilon.\]
Alors, il existe $S^-\in \cF(M)^p$ et $A^- \in GL_{p}(\cO(K^-))$ tels que
\[ S^- = A^-\, T^- \textrm{ dans } \cF(K^-)^p.\]
\end{lemm}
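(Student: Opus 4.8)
Le plan est de fabriquer les sections globales~$S^-$ en recollant, \`a l'aide du lemme de Cartan~\ref{thm:lemmeCartan}, des modifications convenables des sections~$T^-$ et~$T^+$ le long de~$L$. L'hypoth\`ese cl\'e est que la matrice de passage~$U$, qui traduit~$T^-$ en fonction de~$T^+$ sur~$L$, peut \^etre approch\'ee par une matrice~$\bar U$ d\'efinie sur~$K^+$ tout entier ; sym\'etriquement, $V$ exprime~$T^+$ en fonction de~$T^-$. En combinant ces deux matrices et leur approximation, on formera une matrice carr\'ee proche de l'identit\'e sur~$L$, \`a laquelle on appliquera le lemme de Cartan pour obtenir une factorisation $C^-\, C^+$ avec~$C^-$ inversible sur~$K^-$ et~$C^+$ inversible sur~$K^+$.

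\textbf{\'Etapes principales.} D'abord je formerais la matrice carr\'ee $A := I_{p} + (\bar U - U)\, V \in M_{p}(\cC_{m})$. L'hypoth\`ese $\|\bar U - U\|_{m}\, \|V\|_{m} < \varepsilon$ donne imm\'ediatement $\|A - I_{p}\|_{m} \le \|\bar U - U\|_{m}\, \|V\|_{m} < \varepsilon$, de sorte que le th\'eor\`eme~\ref{thm:lemmeCartan} s'applique : il fournit $C^- \in GL_{p}(\cB_{m}^-)$ et $C^+ \in GL_{p}(\cB_{m}^+)$ avec $A = C^-\, C^+$ dans $GL_{p}(\cC_{m})$. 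Ensuite j'utiliserais les relations $T^- = U\, T^+$ et $T^+ = V\, T^-$ sur~$L$ pour r\'e\'ecrire, dans $\cF(L)^p$,
\[ A\, T^- = T^- + (\bar U - U)\, V\, T^- = T^- + (\bar U - U)\, T^+ = U\, T^+ + (\bar U - U)\, T^+ = \bar U\, T^+. \]
On obtient donc $C^-\, C^+\, T^- = \bar U\, T^+$ dans $\cF(L)^p$, puis $C^+\, T^- = (C^-)^{-1}\, \bar U\, T^+$ dans $\cF(L)^p$. Posons alors
\[ S^- := C^+\, T^- \in \cF(K^-)^p, \qquad S^+ := (C^-)^{-1}\, \bar U\, T^+ \in \cF(K^+)^p ; \]
ces \'el\'ements sont bien d\'efinis puisque $C^+$ est \`a coefficients dans~$\cB_{m}^+ \subset \cO(K^+)$ mais aussi, par le morphisme structural du syst\`eme, dans~$\cO(K^-)$ (de m\^eme pour $(C^-)^{-1}$ et $\bar U$ c\^ot\'e~$K^-$) — plus pr\'ecis\'ement $C^+ \in GL_{p}(\cB_{m}^+)$ agit sur $T^- \in \cF(K^-)^p$ via le morphisme $\cB_{m}^+ \to \cC_{m} \to \cO(L)$ et la restriction, et inversement. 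Comme $S^-$ et $S^+$ co\"incident dans $\cF(L)^p$, ils se recollent en une section $\widetilde S \in \cF(M)^p$ ; je poserais $S^- := \widetilde S$ et $A^- := C^+ \in GL_{p}(\cO(K^-))$, ce qui donne $S^- = A^-\, T^-$ dans $\cF(K^-)^p$ comme voulu.

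\textbf{Point d\'elicat.} La principale subtilit\'e n'est pas le calcul matriciel, qui est imm\'ediat, mais le soin \`a apporter aux identifications : il faut v\'erifier que les matrices $C^-, C^+, \bar U$ et leurs inverses, initialement vues \`a coefficients dans les anneaux $\cB_{m}^\pm$ du syst\`eme de Cousin, agissent de fa\c{c}on coh\'erente sur les sections de~$\cF$ au-dessus de~$K^-$, $K^+$ et~$L$ \emph{via} les morphismes structuraux $\rho_{m}^\pm$, $\psi_{m}^\pm$ du syst\`eme de Banach ; c'est exactement la compatibilit\'e qui permet d'appliquer les \'egalit\'es dans $\cF(L)^p$ et de recoller. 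Le fait que $A^-$ soit un \emph{isomorphisme} de $\cO(K^-)$-modules r\'esulte de $C^+ \in GL_{p}(\cB_{m}^+)$ : l'image de~$C^+$ dans $M_{p}(\cO(K^-))$ reste inversible, d'inverse l'image de $(C^+)^{-1}$. Enfin, la sym\'etrie de l'\'enonc\'e (le r\^ole de~$\bar U$ sur~$K^+$) sugg\`ere qu'un \'enonc\'e jumeau avec $S^+ \in \cF(M)^q$ et $A^+ \in GL_{q}(\cO(K^+))$ se d\'emontrerait de la m\^eme mani\`ere en \'echangeant les r\^oles ; je mentionnerais ce point si la suite du texte en a besoin.
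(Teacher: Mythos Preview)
L'approche g\'en\'erale est la bonne, et co\"incide avec celle du papier~: former $A = I_p + (\bar U - U)V$, constater que $A\,T^- = \bar U\,T^+$ sur~$L$, puis factoriser~$A$ par le lemme de Cartan et recoller. Il y a cependant une vraie erreur dans l'ex\'ecution.

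Tu \'ecris $A = C^- C^+$ avec $C^- \in GL_p(\cB_m^-)$ et $C^+ \in GL_p(\cB_m^+)$, puis tu poses $S^- := C^+\,T^-$ en affirmant que $C^+$, \`a coefficients dans~$\cB_m^+$, peut aussi \^etre vu dans~$\cO(K^-)$ \og par le morphisme structural du syst\`eme\fg. C'est faux~: le syst\`eme de Banach fournit $\rho_m^+ \colon \cB_m^+ \to \cO(K^+)$ et $\psi_m^+ \colon \cB_m^+ \to \cC_m \to \cO(L)$, mais \emph{aucun} morphisme $\cB_m^+ \to \cO(K^-)$. L'expression $C^+\,T^-$ n'a donc de sens que dans~$\cF(L)^p$, pas dans~$\cF(K^-)^p$~; sym\'etriquement, $(C^-)^{-1}\,\bar U\,T^+$ n'est pas d\'efini dans~$\cF(K^+)^p$. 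Ta conclusion $A^- := C^+ \in GL_p(\cO(K^-))$ est donc inexacte.

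La correction est simple~: il faut utiliser la factorisation dans l'autre ordre, $A = C^+\,C^-$ (que le lemme de Cartan fournit tout aussi bien, par sym\'etrie des r\^oles de~$\cB_m^-$ et~$\cB_m^+$). On obtient alors $C^-\,T^- = (C^+)^{-1}\,\bar U\,T^+$ dans~$\cF(L)^p$, et cette fois chaque membre a bien un sens du bon c\^ot\'e~: $C^-\,T^- \in \cF(K^-)^p$ puisque $C^-$ agit via $\rho_m^-$, et $(C^+)^{-1}\,\bar U\,T^+ \in \cF(K^+)^p$ puisque $(C^+)^{-1}$ et~$\bar U$ sont tous deux \`a coefficients dans~$\cB_m^+$. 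On recolle en un $S^- \in \cF(M)^p$ et on prend $A^- := C^- \in GL_p(\cO(K^-))$. C'est exactement la d\'emonstration du papier.
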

\begin{proof}
Posons $A = I + (\bar U -U) V \in M_{p}(\cC_{m})$. Nous avons alors $A\, T^- = \bar U\, T^+$ dans $\cF(L)^p$.

Nous avons \'egalement $\|A-I\|_{m} < \eps$, donc, par hypoth\`ese, il existe $C^- \in GL_{q}(\cB_{m}^-)$ et $C^+ \in GL_{q}(\cB_{m}^+)$ telles que $A = C^+\, C^-$ dans $GL_{p}(\cC_{m})$. On a alors 
\[C^-\, T^- = (C^+)^{-1}\, A\, T^- = (C^+)^{-1}\, \bar U\, T^+.\]
On peut donc d\'efinir un \'el\'ement~$S^-$ de~$\cF(M)^p$ par
\[\begin{cases}
S^- = C^-\, T^- \textrm{ sur } K^-~;\\
S^- = (C^+)^{-1}\, \bar U\, T^+ \textrm{ sur } K^+.
\end{cases}\]
Il v\'erifie la propri\'et\'e requise avec la matrice~$A^-$ qui est l'image de~$C^-$ dans $GL_{p}(\cO(K^-))$.
\end{proof}

Nous en d\'eduisons un analogue de \cite[th\'eor\`eme~6.2.10]{A1Z}.

\begin{theo}\index{Faisceau!globalement engendr\'e}
Soit $\Omega = (\cB^-_{m},\cB^+_{m},\cC_{m})_{m\in\N}$ un syst\`eme de Cousin-Runge associ\'e \`a $(K^-,K^+)$. Soit~$\cF$ un faisceau de $\cO_{M}$-modules. Supposons qu'il existe deux entiers~$p$ et~$q$, une famille $(t_{1}^-,\dotsc,t_{p}^-)$ d'\'el\'ements de~$\cF(K^-)$ et une famille $(t_{1}^+,\dotsc,t_{q}^+)$ d'\'el\'ements de~$\cF(K^+)$ dont les restrictions \`a~$L$ engendrent le m\^eme sous-$\cO(L)$-module de~$\cF(L)$. Alors, il existe $s_{1}^-,\dotsc,s_{p}^-,s_{1}^+,\dotsc,s_{q}^+ \in \cF(M)$, $A^- \in GL_{p}(\cO(K^-))$ et $A^+ \in GL_{q}(\cO(K^+))$ tels que 
\[\begin{pmatrix} s_{1}^-\\ \vdots \\ s_{p}^- \end{pmatrix} = A^- \begin{pmatrix} t_{1}^-\\ \vdots \\ t_{p}^- \end{pmatrix} \textrm{ dans } \cF(K^-)^p\]  
et
\[\begin{pmatrix} s_{1}^+\\ \vdots \\ s_{q}^+ \end{pmatrix} = A^+ \begin{pmatrix} t_{1}^+\\ \vdots \\ t_{q}^+ \end{pmatrix} \textrm{ dans } \cF(K^+)^q.\]  

En outre, on peut choisir $s_{1}^-,\dotsc,s_{p}^-,s_{1}^+,\dotsc,s_{q}^+$ de fa\c{c}on que, pour tout compact~$K^0$ contenu dans~$K^-$ (resp.~$K^+$) tel que $(t_{1}^-,\dotsc,t_{p}^-)_{|K^0}$ engendre le $\cO(K^0)$-module $\cF(K^0)$ (resp. $(t_{1}^+,\dotsc,t_{q}^+)_{|K^0}$ engendre le $\cO(K^0)$-module $\cF(K^0)$), la famille $(s_{1}^-,\dotsc,s_{p}^-,s_{1}^+,\dotsc,s_{q}^+)_{|K^0}$  engendre encore le $\cO(K^0)$-module $\cF(K^0)$.
\end{theo}
\begin{proof}
Posons 
\[T^- = \begin{pmatrix} t_{1}^-\\ \vdots \\ t_{p}^- \end{pmatrix} \textrm{ et } T^+ = \begin{pmatrix} t_{1}^+\\ \vdots \\ t_{q}^+ \end{pmatrix}.\]
Par hypoth\`ese, il existe $m\in\N$, $U = (u_{a,i}) \in M_{p,q}(\cC_{m})$ et $V = (v_{b,j}) \in M_{q,p}(\cC_{m})$ tels que l'on ait
\[\begin{cases}
T^- = U\,T^+ \textrm{ dans } \cF(L)^p~;\\
T^+=V\,T^- \textrm{ dans } \cF(L)^q.
\end{cases}\] 
Consid\'erons un nombre r\'eel~$\eps>0$ satisfaisant la conclusion du th\'eor\`eme~\ref{thm:lemmeCartan}. D'apr\`es la remarque~\ref{rem:CR}, il existe un entier $m' \ge m$, un \'el\'ement inversible~$f$ de~$\cB_{m'}^-$ et des \'el\'ements $\bar{u}_{a,i}$ de~$\cB_{m'}^+$, pour $(a,i) \in \cn{1}{p}\times\cn{1}{q}$, tels que, pour tous $(a,i) \in \cn{1}{p}\times\cn{1}{q}$ et $(b,j) \in \cn{1}{q}\times\cn{1}{p}$, on ait
\[\|f u_{a,i} - \bar{u}_{a,i}\|_{m'}\, \|f^{-1} v_{b,j}\|_{m'} \le \|f u_{a,i} - \bar{u}_{a,i}\|_{m'}\, \|f^{-1}\|_{m'}\, \| v_{b,j}\|_{m'}  < \eps.\]
On a alors 
\[\begin{cases}
fT^- = (fU)\, T^+ \textrm{ dans } \cF(L)^p~;\\
T^+ = (f^{-1}V)\, (fT^-) \textrm{ dans } \cF(L)^q
\end{cases}\]
et, en posant $\bar U = (\bar{u}_{a,i}) \in M_{p,q}(\cB_{m'}^+)$, 
\[\|\bar U - fU\|\, \|f^{-1}V\| < \eps.\]
On se trouve donc sous les hypoth\`eses du lemme~\ref{lem:matriceapprochee}. On en d\'eduit qu'il existe $S^- \in \cF(M)^p$ et $A^- \in GL_{p}(\cO(K^-))$ tels que
\[S^- = A^- \, f T^- \textrm{ dans } \cF(K^-)^p.\]
Le premier r\'esultat s'en d\'eduit en utilisant le fait que~$f$ est inversible sur~$K^-$. Le r\'esultat pour~$K^+$ se d\'emontre identiquement.

La remarque finale d\'ecoule de la construction et de l'inversibilit\'e des matrices~$A^-$ et~$A^+$.
\end{proof}

Nous en d\'eduisons finalement un analogue raffin\'e et corrig\'e de \cite[corollaire~6.2.11]{A1Z}. (L'hypoth\`ese de surjectivit\'e sur~$L$ a malencontreusement \'et\'e omise dans l'\'enonc\'e original.)

\begin{coro}\label{cor:K-K+A}\index{Faisceau!globalement engendr\'e}
Supposons qu'il existe un syst\`eme de Cousin-Runge fort associ\'e \`a $(K^-,K^+)$. Soit~$\cF$ un faisceau de $\cO_{M}$-modules. Supposons qu'il existe des morphismes surjectifs $\varphi^- \colon \cO^{p}_{K^-} \to \cF_{|K^-}$ et  $\varphi^+ \colon \cO^{q}_{K^+} \to \cF_{|K^+}$ tels que les morphismes induits $\cO^{p}(L) \to \cF(L)$ et  $\cO^{q}(L) \to \cF(L)$ soient surjectifs. Alors, $\cF$~est globalement engendr\'e sur~$M$.

Plus pr\'ecis\'ement, il existe un morphisme surjectif $\varphi \colon \cO^{p+q} \to \cF$ tel que, pour tout compact~$K^0$ contenu dans~$K^-$ (resp.~$K^+$) tel que le morphisme induit $\cO^{p}(K^0) \to \cF(K^0)$ (resp. $\cO^{q}(K^0) \to \cF(K^0)$) soit surjectif, le morphisme induit $\cO^{p+q}(K^0) \to \cF(K^0)$ soit surjectif.
\qed
\end{coro}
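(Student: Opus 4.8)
The statement is a direct consequence of the theorem immediately preceding it, applied with $\cF$ replaced by a kernel sheaf. The plan is to reduce the problem of surjectivity of a global presentation to the problem of \emph{global generation} of an auxiliary coherent sheaf, for which the preceding theorem (on passing global generators from $K^-$ and $K^+$ to $M$, in the presence of a Cousin--Runge system) applies directly — but one needs the strong form of the hypothesis precisely so that the auxiliary sheaf still admits global generators over $L$. First I would set up notation: let $\varphi^-\colon \cO_{K^-}^p \to \cF_{|K^-}$ and $\varphi^+\colon \cO_{K^+}^q \to \cF_{|K^+}$ be the given surjective morphisms, and denote by $t^-_1,\dots,t^-_p \in \cF(K^-)$ and $t^+_1,\dots,t^+_q\in\cF(K^+)$ the images of the standard bases; these are families of global sections whose restrictions to $L$ generate $\cF_{|L}$ (by the surjectivity of $\cO^p(L)\to\cF(L)$, resp.\ $\cO^q(L)\to\cF(L)$, which is part of the hypothesis). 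Apply the preceding theorem: since there is a Cousin--Runge system associated to $(K^-,K^+)$, we obtain sections $s^-_1,\dots,s^-_p, s^+_1,\dots,s^+_q \in \cF(M)$ and matrices $A^-\in GL_p(\cO(K^-))$, $A^+\in GL_q(\cO(K^+))$ relating the $s$'s to the $t$'s as in the statement of that theorem.

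Next I would define $\varphi\colon \cO_M^{p+q}\to \cF$ to be the morphism of sheaves sending the standard basis to $(s^-_1,\dots,s^-_p,s^+_1,\dots,s^+_q)$; such a morphism of $\cO_M$-modules exists and is unique because $\cO_M^{p+q}$ is free. It remains to check that $\varphi$ is surjective. Surjectivity is a stalkwise condition, so let $x\in M$. If $x\in K^-$, then the relation $S^- = A^-\, T^-$ together with $A^-\in GL_p(\cO(K^-))$ shows that, in a neighbourhood of $x$ inside $K^-$, the germs $s^-_{1,x},\dots,s^-_{p,x}$ generate the same $\cO_{M,x}$-submodule of $\cF_x$ as $t^-_{1,x},\dots,t^-_{p,x}$; but the latter generate $\cF_x$ since $\varphi^-$ is surjective. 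Hence $\varphi_x$ is surjective. The case $x\in K^+$ is identical, using $A^+$ and $\varphi^+$. Since $M = K^-\cup K^+$, this proves $\varphi$ is surjective everywhere, establishing that $\cF$ is globally generated on $M$.

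Finally, for the refined statement, let $K^0\subset K^-$ be a compact on which the induced morphism $\cO^p(K^0)\to\cF(K^0)$ is surjective, i.e.\ $(t^-_1,\dots,t^-_p)_{|K^0}$ generates the $\cO(K^0)$-module $\cF(K^0)$. By the last assertion of the preceding theorem, we may choose the $s$'s so that $(s^-_1,\dots,s^-_p,s^+_1,\dots,s^+_q)_{|K^0}$ still generates $\cF(K^0)$; this is exactly the assertion that $\cO^{p+q}(K^0)\to\cF(K^0)$ is surjective. The case $K^0\subset K^+$ is symmetric. I do not anticipate a genuine obstacle here: the only subtlety is remembering that the \emph{strong} Cousin--Runge hypothesis, as opposed to the plain one, is what guarantees — via the surjectivity over $L$ that we assumed — that the preceding theorem is applicable; the proof is then a matter of unwinding the conclusion of that theorem and checking stalks, which is routine.
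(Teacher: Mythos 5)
Your proof is correct and is exactly the intended one: the paper states this corollary with a \qed precisely because it is the immediate consequence of the preceding theorem that you describe (take $t^{\pm}_i$ to be the images of the standard bases, invoke the theorem to get the $s$'s and the invertible matrices $A^{\pm}$, check surjectivity of $\varphi$ stalkwise on $K^-$ and $K^+$ separately using the invertibility of $A^{\pm}$, and read off the refined statement from the theorem's final assertion). The only slip is your closing remark about the role of the \emph{strong} hypothesis: the preceding theorem requires only a plain Cousin--Runge system, and what makes it applicable here is the assumed surjectivity of $\cO^p(L)\to\cF(L)$ and $\cO^q(L)\to\cF(L)$ (so that both families generate all of $\cF(L)$, hence the same submodule); the ``fort'' property is not actually used in this deduction and is carried in the statement for the sake of later applications.
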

\index{Systeme@Syst\`eme|)}

\subsection{Alg\`ebres de domaines polynomiaux}

Nous pr\'esentons ici l'exemple principal de syst\`eme de Cousin-Runge fort que nous utiliserons dans la suite de ce texte.

Posons $X := \E{1}{\cA}$ et notons $\pi \colon X \to B$ le morphisme structural. Pour toute partie~$V$ de~$B$, on note $X_{V} := \pi^{-1}(V)$.

\begin{prop}\label{prop:systemeCR}\index{Systeme@Syst\`eme!sur un domaine polynomial}\index{Domaine polynomial!systeme@syst\`eme sur un|see{Syst\`eme}}\index{Systeme@Syst\`eme!de Runge}\index{Systeme@Syst\`eme!de Cousin}\index{Condition!ONPS-T@$(\cO N_{P(S)-T})$}
Soit~$V$ une partie compacte et d\'ecente de~$B_{\um}$. 
Soit $P \in \cO(V)[T]$ unitaire non constant. Soient $r,s\in\R$ tels que $0<r\le s$. Soient~$K^-$ et~$K^+$ deux parties compactes de~$X_{V}$ telles que 
\[\begin{cases}
K^- \subset \{x\in X_{V} : |P(x)|\le s\}\ ;\\
K^+ \subset \{x\in X_{V} : |P(x)|\ge r\}\ ;\\
K^- \cap K^+ = \{x\in X_{V} : r \le |P(x)| \le s\}.
\end{cases}\]
Alors, il existe un syst\`eme de Cousin associ\'e \`a $(K^-,K^+)$. Si $r=s$, alors il existe un syst\`eme de Cousin-Runge associ\'e \`a $(K^-,K^+)$.

Supposons, en outre, qu'il existe une suite d\'ecroissante $(W_{m})_{m\in \N}$ de parties compactes de~$B$ sur lesquels les coefficients de~$P$ sont d\'efinis et formant une base de voisinages de~$V$, une suite $(u_{m})_{m\in\N}$ d'\'el\'ements de~$\intoo{0,r}$ strictement croissante de limite~$r$ et une suite $(v_{m})_{m\in\N}$ d'\'el\'ements de~$\intoo{s,+\infty}$ strictement d\'ecroissante de limite~$s$ telles que, pour tout $m\in\N$, le compact $\overline{C}_{W_{m}}(u_{m},v_{m})$ satisfasse la condition $(\cO N_{P(S)-T})$. Alors, il existe un syst\`eme de Banach fort (et donc de Cousin fort, et m\^eme de Cousin-Runge fort si $s=r$) associ\'e \`a $(K^-,K^+)$. 
\end{prop}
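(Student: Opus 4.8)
The plan is to construct the three inductive systems of Banach algebras explicitly, using the algebras of sections on closed polynomial domains, and to verify the Cousin, Runge and strength properties by reducing everything to the corresponding statements on the base $V$ and on closed annuli via Weierstra\ss{} division. First I would fix, for each $m\in\N$, a suitable shrinking family of compact neighbourhoods. For the plain (non-strong) part, I would set $\cB_m^-:=\overline{\Oc(K^-)}$, $\cB_m^+:=\overline{\Oc(K^+)}$ and $\cC_m:=\overline{\Oc(K^-\cap K^+)}$, with all transition maps the identity; the fact that $\colim_m \cC_m\to\Oc(K^-\cap K^+)$ is an isomorphism is immediate from Lemma~\ref{lem:isoBVcompacts}. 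The Cousin property is where the annulus geometry enters: an element of $\overline{\Oc(K^-\cap K^+)}$ lives on a neighbourhood of $\{x\in X_V: r\le|P(x)|\le s\}$, which after pulling back along $\varphi_P$ (using Theorem~\ref{thm:isolemniscate} and its normed refinement, Corollary~\ref{coro:BNPST}, together with the hypothesis $(\cO N_{P(S)-T})$ in the strong case) becomes an element on an annulus $\overline{C}_V(r,s)$; there one splits the Laurent series into its nonnegative-degree part (which extends to $\overline{D}_V(s)$, hence to a neighbourhood of $K^-$) and its strictly-negative-degree part (which extends to the complement, hence to a neighbourhood of $K^+$). The normed estimates of Proposition~\ref{prop:restrictionserie} provide the constant $D$. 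When $r=s$ the annulus degenerates to a ``circle'' $\overline{C}_V(s,s)$ and one additionally has the Runge property: a finite family of Laurent series can be truncated to Laurent polynomials, and multiplication by a suitable power of the variable (an invertible element of the relevant algebra, with $\|f\|_{m'}\|f^{-1}\|_{m'}\to 1$ since the spectral norm is multiplicative on monomials) moves all negative exponents away — this is exactly the mechanism of Examples~\ref{ex:ZCousinRungeum} and~\ref{ex:ZCousinRungearc} transported to the relative setting.

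Next I would handle the strong variant. Here one must produce, for every compact neighbourhood $V'$ of $L=K^-\cap K^+$, an index $m_{V'}$ and, for $m\ge m_{V'}$, a bounded lift $\Oc(V')\to\cC_m$ with a norm bound. The data $(W_m,u_m,v_m)$ in the hypothesis is precisely tailored for this: shrinking $V'$ one may assume $V'$ is of the form $\overline{C}_{W_m}(u_m,v_m)$ for $m$ large, and then Corollary~\ref{cor:lemniscateglobaleBanach} (equivalently Corollary~\ref{cor:couronneglobale} combined with Lemma~\ref{lem:isoBVcompacts}) writes a function on such a neighbourhood as a Laurent series whose coefficients already lie in $\overline{\Oc(W_m)}\langle\cdot\rangle$; reassembling the series with coefficients lifted to $\cC_m$ and controlling the sum via Proposition~\ref{prop:restrictionserie} yields the required constant $C_m$. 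The condition $(\cO N_{P(S)-T})$ is what guarantees that the residual norm on $\overline{\Oc(\overline{C}_{W_m}(u_m,v_m))}[S]/(P(S)-T)$ is comparable to the uniform norm on $\overline{C}_{W_m}(P;u_m,v_m)$, so that passing back and forth between the $S$-coordinate and the $T$-coordinate does not destroy the estimates.

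A cleaner route to the strong statement — and the one I would ultimately write up — is to exploit the fact proved in the ``disk'' direction (Lemma~\ref{lem:CRrelatif}): a system on the base $V$, once we know it exists, promotes to a system on a relative disk. So I would first establish the existence of a Cousin / Cousin-Runge / strong system associated to a decomposition of (a neighbourhood basis of) $V$ inside $B_{\um}$ — which is available because $\cA$ is a base geometric ring and $V\subset B_{\um}$ is decent, the relevant ingredient being Corollary~\ref{cor:BVcompactN} and Proposition~\ref{prop:NGunion} — and then push it through $\varphi_P$, which is finite and flat by Example~\ref{ex:fini} and Corollary~\ref{plat}, using Corollary~\ref{coro:BNPST} to identify $\Bc(\overline{C}_W(r,s))[S]/(P(S)-T)$ with $\Bc(\overline{C}_W(P;r,s))$ and hence to transfer sections and their norms. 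The main obstacle, and the step that needs the most care, is the bookkeeping of the three families of constants ($D$ for Cousin, the Runge approximation parameter, and $C_m$ for strength) under the iterated reductions: each application of Weierstra\ss{} division (Theorem~\ref{weierstrassam}) and each restriction of annuli (Proposition~\ref{prop:restrictionserie}) introduces a multiplicative factor, and one must check that choosing the neighbourhoods $W_m$, $u_m$, $v_m$ small enough keeps all these factors bounded uniformly in $m$ along the given exhausting sequence. The decency hypothesis on $V$ and the assumption $V\subset B_{\um}$ are used exactly to make Theorem~\ref{thm:isolemniscate}, Corollary~\ref{coro:BNPST} and the existence of finite strong analytic boundaries available; without them the comparison of norms — and hence the Cousin splitting — would fail.
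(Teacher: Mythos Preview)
Your intuition is right --- the Cousin splitting comes from cutting a Laurent series into nonnegative and negative parts, and the Runge trick is multiplication by $T^{-l_0}$ --- but the concrete construction you propose has a genuine gap.

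You set $\cB_m^- := \overline{\Oc(K^-)}$, $\cB_m^+ := \overline{\Oc(K^+)}$, $\cC_m := \overline{\Oc(L)}$ with identity transition maps. This is \emph{not} a Banach system in the sense of Definition~\ref{def:systemedeBanachfin}: that definition requires the induced map $\colim_m \cC_m \to \cO(L)$ to be an isomorphism of rings, but with a constant system the colimit is $\overline{\cO(L)}$, the completion of $\cO(L)$, and there is not even a natural map $\overline{\cO(L)} \to \cO(L)$, let alone an isomorphism. Lemma~\ref{lem:isoBVcompacts} says precisely that to recover $\cO(L)$ as a colimit of Banach rings you must run over a \emph{shrinking} basis of neighbourhoods. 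The same defect kills your Runge verification: you need $\|f\|_{m'}\|f^{-1}\|_{m'} \to 1$ as $m' \to \infty$, but with $f = T^{-l_0}$ and a fixed annulus norm this product equals $(v/u)^{|l_0|}$, constant in $m'$ and strictly bigger than~$1$ unless the inner and outer radii already coincide. The shrinking $u_m \nearrow r = s \swarrow v_m$ is exactly what forces this to tend to~$1$.

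The paper does the following instead. It fixes from the outset a shrinking family $(W_m,u_m,v_m)$ with $W_m \searrow V$, $u_m \nearrow r$, $v_m \searrow s$, and defines
\[
\cB_m^- = \overline{\cO(W_m)}\la |T|\le v_m\ra[S]/(P(S)-T), \quad
\cC_m = \overline{\cO(W_m)}\la u_m \le |T|\le v_m\ra[S]/(P(S)-T),
\]
and similarly for $\cB_m^+$, each equipped with a fixed residual norm $\nm_{\cdot,w,\res}$. Corollary~\ref{cor:lemniscateglobaleBanach} then gives $\colim_m \cC_m \simeq \cO(L)$, so this is a Banach system. With these explicit Laurent-series models, the Cousin decomposition is literally the split into nonnegative and negative $T$-powers (constant $D=2$, independent of~$m$), and the Runge argument is the truncation-plus-$T^{-l_0}$ trick you describe, where now $\|T^{-l_0}\|_{m'}\|T^{l_0}\|_{m'} \le (v_{m'}/u_{m'})^{|l_0|} \to (s/r)^{|l_0|} = 1$. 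The strength is obtained from the condition $(\cO N_{P(S)-T})$ together with Proposition~\ref{prop:restrictionserie}. Your ``cleaner route'' via Lemma~\ref{lem:CRrelatif} does not apply directly: that lemma promotes a system on $X$ to one on a relative polydisc $\overline{D}_X(\br)$ via the projection, whereas here there is no decomposition of $V$ inducing $(K^-,K^+)$; the split happens in the fibre direction, through $\varphi_P$, and you must build the system by hand as above.
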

\begin{proof}
Puisque~$K^+$ est compact, il existe un nombre r\'eel $t > s$ qui majore la fonction~$|P|$ sur~$K^+$. Nous pouvons supposer que $K^+ = \overline{C}_{V}(P;r,t)$ et $K^- = \overline{C}_{V}(P;s)$. Posons $L =: K^-\cap K^+ = \overline{C}_{V}(P;r,s)$.

Soient $(W_{m})_{m\in \N}$ une suite d\'ecroissante de parties compactes de~$B$ sur lesquels les coefficients de~$P$ sont d\'efinis et formant une base de voisinages de~$V$, $(u_{m})_{m\in\N}$ une suite d'\'el\'ements de~$\intoo{0,r}$ strictement croissante de limite~$r$ et $(v_{m})_{m\in\N}$ une suite d'\'el\'ements de~$\intoo{s,+\infty}$ strictement d\'ecroissante de limite~$s$. 

D'apr\`es la proposition~\ref{prop:equivalencedivres} appliqu\'ee avec $\cA = \overline{\cO(W_{0})}\la |T|\le t\ra$, il existe $w \in \R_{>0}$ tel que, pour tout entier~$m$, la semi-norme r\'esiduelle $\nm_{\overline{\cO(W_{m})}\la|T|\le v_{m}\ra,w,\res}$ d\'efinie sur $\overline{\cO(W_{m})}\la|T|\le v_{m}\ra[S]/(P(S)-T)$ soit une norme pour laquelle cet espace est complet. On notera $(\cB_{m}^-,\nm_{m}^-)$ cet anneau de Banach. Les m\^emes propri\'et\'es valent pour $\overline{\cO(W_{m})} \la u_{m} \le |T|\le t\ra[S]/(P(S)-T)$, que l'on notera $(\cB_{m}^+,\nm_{m}^+)$, et $\overline{\cO(W_{m})}\la u_{m}\le |T|\le v_{m}\ra[S]/(P(S)-T)$, que l'on notera $(\cC_{m},\nm_{m})$. On peut supposer que le m\^eme nombre r\'eel~$w$ convient pour tous ces anneaux.

Avec les morphismes \'evidents, on vient de d\'efinir un syst\`eme de Banach~$\Omega$ associ\'e \`a $(K^-,K^+)$. Le seul point d\'elicat \`a v\'erifier est le fait que le morphisme 
\[\colim_{m\in\N} \cC_{m} \too \cO(L)\]
soit un isomorphisme. C'est l'objet du corollaire~\ref{cor:lemniscateglobaleBanach}.

\smallbreak

D\'emontrons maintenant que~$\Omega$ est un syst\`eme de Cousin. Soient~$m\in \N$ et $f\in \cC_{m}$. On peut supposer que $f\ne 0$. Il existe alors un entier $d \in \N$ et un \'el\'ement $F = \sum_{i=0}^d a_{i} \, S^i$ de $\overline{\cO(W_{m})}\la u_{m}\le |T|\le v_{m}\ra[S]$ tel que $\|F\|_{\overline{\cO(W_{m})}\la u_{m}\le |T|\le v_{m}\ra,w} \le 2 \|f\|_{m}$. En utilisant la description explicite de $\overline{\cO(W_{m})}\la u_{m}\le |T|\le v_{m}\ra$ comme anneau de s\'eries de Laurent, on montre que, pour tout $i\in \cn{0}{d}$, il existe $a_{i}^- \in \overline{\cO(W_{m})}\la |T|\le v_{m}\ra$ et $a_{i}^+ \in \overline{\cO(W_{m})}\la u_{m}\le |T|\le t\ra$ tels que
\[\begin{cases}
a_{i} = a_{i}^-+a_{i}^+ \textrm{ dans } \overline{\cO(W_{m})}\la u_{m}\le |T|\le v_{m}\ra\ ;\\[2pt]
\|a_{i}^-\|_{\overline{\cO(W_{m})}\la |T|\le v_{m}\ra} \le \|a_{i}\|_{\overline{\cO(W_{m})}\la u_{m}\le |T|\le v_{m}\ra}\ ;\\[2pt]
\|a_{i}^+\|_{\overline{\cO(W_{m})}\la u_{m}\le |T|\le t\ra} \le \|a_{i}\|_{\overline{\cO(W_{m})}\la u_{m}\le |T|\le v_{m}\ra}.
\end{cases}\]

En posant $F^- = \sum_{i=0}^d a_{i}^- \,S^i \in\overline{\cO(W_{m})}\la |T|\le v_{m}\ra[S]$ et $F^+ = \sum_{i=0}^d a_{i}^+ \,S^i \in \overline{\cO(W_{m})}\la u_{m}\le |T|\le t\ra[S]$ et en notant~$f^-$ et~$f^+$ les classes r\'esiduelles respectives modulo $P(S)-T$, on obtient le r\'esultat d\'esir\'e avec la constante~$D=2$ (ind\'ependante de~$m$).

\smallbreak

Supposons que $r=s$ et d\'emontrons que~$\Omega$ est un syst\`eme de Runge. Soient $m\in\N$ et $\eps>0$. Soient $p\in\N^\ast$ et $s_{1},\dotsc,s_{p} \in \cC_{m}$. Pour tout $i\in\cn{1}{p}$, choisissons un \'el\'ement $S_{i} = \sum_{k=0}^d A_{i,k}\, S^k$ de $\overline{\cO(W_{m})}\la u_{m}\le |T|\le v_{m}\ra[S]$ qui rel\`eve~$s_{i}$. Remarquons qu'il est loisible de supposer l'entier~$d$ ind\'ependant de~$i$. 

Commen\c{c}ons par traiter le cas $\sigma=-$. Pour tous $i\in\cn{1}{p}$ et $k\in\cn{0}{d}$, il existe $A'_{i,k} \in \overline{\cO(W_{m})}[T,T^{-1}]$ tel que
\[ \|A_{i,k} - A'_{i,k}\|_{\overline{\cO(W_{m})}\la u_{m}\le |T|\le v_{m}\ra} \, w^k < \eps/(d+1).\]
Pour tout $i\in\cn{1}{p}$, on pose $S'_{i} = \sum_{k=0}^d A'_{i,k}\, S^k$ et on a alors 
\[ \| S_{i} - S'_{i}\|_{\overline{\cO(W_{m})}\la u_{m}\le |T|\le v_{m}\ra,w} < \eps.\]
Tout \'el\'ement~$S'_{i}$ se prolonge en un \'el\'ement de $\overline{\cO(W_{m})}\la u_{m}\le |T|\le t\ra[S]$ et d\'efinit donc un \'el\'ement~$s'_{i}$ de $\cB_{m}^+$ par passage au quotient. La propri\'et\'e d'approximation de l'\'enonc\'e est alors v\'erifi\'ee avec $f=1$. La condition 
\[\lim_{m'\to\infty} \|f\|_{m} \, \|f^{-1}\|_{m} = 1\]
est \'evidemment v\'erifi\'ee aussi.

Traitons maitenant le cas $\sigma=+$. Pour tous $i\in\cn{1}{p}$ et $k\in\cn{0}{d}$, notons 
\[A_{i,k} = \sum_{l\in\Z} a_{i,k}^l \,T^l \in \overline{\cO(W_{m})}\la u_{m}\le |T|\le v_{m}\ra.\]
Il existe un entier $l_{0}\le 0$ tel que, pour tous $i\in\cn{1}{p}$ et $k\in\cn{0}{d}$, on ait
\[\sum_{l\le l_{0} - 1} \|a_{i,k}^l\|_{W_{m}}\, \max(u_{m}^l,v_{m}^l)  < \frac{\eps}{(d+1) w^k}.\]
Posons $F = T^{-l_{0}} \in \overline{\cO(W_{m})}\la u_{m}\le |T|\le t\ra[S]$. C'est un \'el\'ement inversible. Notons~$f$ son image dans~$\cB_{m}^+$. 

Pour $i\in\cn{1}{p}$ et $k\in\cn{0}{d}$, posons 
\[A'_{i,k} = \sum_{l\ge l_{0}} a_{i,k}^l\, T^l.\]
On a
\[F A'_{i,k} = \sum_{l\ge 0} a_{i,k}^{l+l_{0}}\, T^l \in \overline{\cO(W_{m})}\la |T|\le v_{m}\ra.\]
Pour $i\in\cn{1}{p}$, posons 
\[S'_{i} = \sum_{k=0}^d F A'_{i,k}\, S^k \in \overline{\cO(W_{m})}\la |T|\le v_{m}\ra[S]\]
et notons~$s'_{i}$ son image dans~$\cB_{m}^-$. Pour tout $i\in\cn{1}{p}$, on a
\begin{align*}
\|f s_{i}-s'_{i}\|_{m} & \le \|f\|_{m}\, \|s_{i} - f^{-1}s'_{i}\|_{m}\\
& \le \|f\|_{m} \,  \left\| \sum_{k=0}^d (A_{i,k} - A'_{i,k}) \, S^{k}\right\|_{\overline{\cO(W_{m})}\la u_{m}\le |T|\le v_{m}\ra,w} \\
& <\eps\, \|f\|_{m}.
\end{align*}
Finalement, pour tout $m' \ge m$, on a
\begin{align*} 
\|f\|_{m'}\, \| f^{-1}\|_{m'} &\le \|T^{-l_{0}}\|_{\overline{\cO(W_{m'})}\la u_{m'}\le |T|\le v_{m'}\ra} \, \|T^{l_{0}}\|_{\overline{\cO(W_{m'})}\la u_{m'}\le |T|\le v_{m'}\ra}\\ 
& \le  \left(\frac{v_{m}'}{u_{m'}}\right)^{|l_{0}|}.
\end{align*}
Cette derni\`ere quantit\'e tend vers $s/r =1$ lorsque~$m'$ tend vers l'infini (et c'est le seul endroit o\`u l'\'egalit\'e $s=r$ est utilis\'ee).

\smallbreak

Finalement, supposons que, pour tout $m\in\N$, le compact $\overline{C}_{W_{m}}(u_{m},v_{m})$ satisfait la condition $(\cO N_{P(S)-T})$ et d\'emontrons que~$\Omega$ est un syst\`eme de Banach fort.

Soit~$U$ un voisinage compact de~$L$. Il contient un voisinage de la forme $\overline{C}_{W_{m_{0}}}(P;u_{m_{0}},v_{m_{0}})$. D'apr\`es la condition~$(\cO N_{P(S)-T})$, il existe $A\in\R$ tel que, pour tout $h\in\cO(\overline{C}_{W_{m_{0}}}(u_{m_{0}},v_{m_{0}}))[S]/(P(S)-T)$, on ait
\[\|h\|_{\cO(\overline{C}_{W_{m_{0}}}(u_{m_{0}},v_{m_{0}})),w,\res} \le A\, \|h\|_{\overline{C}_{W_{m_{0}}}(P;u_{m_{0}},v_{m_{0}}) } \le A\, \| h\|_{U}.\]

Soit $m\ge m_{0}+1$. Alors, d'apr\`es la proposition~\ref{prop:restrictionserie}, pour tout $k\in\overline{\cO(W_{m_{0}})}\la u_{m_{0}}\le|T|\le v_{m_{0}}\ra$, on a
\[\|k\|_{\overline{\cO(W_{m})}\la u_{m}\le|T|\le v_{m}\ra} \le \left(\frac{u_{m_{0}}}{u_{m}-u_{m_{0}}} + \frac{v_{m_{0}}}{v_{m_{0}}-v_{m}} \right) \, \|k\|_{\cO(\overline{C}_{W_{m_{0}}}(u_{m_{0}},v_{m_{0}}))}.\]

Soit $f\in \cO(U)$. D'apr\`es le corollaire~\ref{cor:lemniscateglobale}, il existe un \'el\'ement~$g$ de $\overline{\cO(W_{m})}\la u_{m}\le|T|\le v_{m}\ra[S]/(P(S)-T)$ dont l'image dans~$\cO(L)$ co\"{\i}ncide avec celle de~$f$. Les \'egalit\'es pr\'ec\'edentes montrent par ailleurs que l'on a
\begin{align*}
\|g\|_{\overline{\cO(W_{m})}\la u_{m}\le|T|\le v_{m}\ra,w,\res} &\le \left(\frac{u_{m_{0}}}{u_{m}-u_{m_{0}}} + \frac{v_{m_{0}}}{v_{m_{0}}-v_{m}} \right)\, \|g\|_{\cO(\overline{C}_{W_{m_{0}}}(u_{m_{0}},v_{m_{0}})),w,\res}\\
&\le A \left(\frac{u_{m_{0}}}{u_{m}-u_{m_{0}}} + \frac{v_{m_{0}}}{v_{m_{0}}-v_{m}} \right)\, \|f\|_{U}.
\end{align*}
Le r\'esultat s'ensuit.
\end{proof}

Dans la proposition~\ref{prop:systemeCR}, la difficult\'e r\'eside, bien s\^ur, dans la v\'erification de la condition suppl\'ementaire finale portant sur la propri\'et\'e $(\cO N_{P(S)-T})$. Remarquons que le corollaire~\ref{cor:BVcompactN} fournit un moyen d'y parvenir dans le cas o\`u tous les points du compact~$V$ sont ultram\'etriques tr\`es typiques.

Mentionnons \'egalement le r\'esultat suivant, particuli\`erement adapt\'e au voisinage de points dont le corps r\'esiduel est de caract\'eristique nulle ou de valuation non triviale.

\begin{lemm}\label{lem:conditionscar0}\index{Condition!ONPS-T@$(\cO N_{P(S)-T})$}\index{Resultant@R\'esultant}
Soient~$V$ une partie compacte de~$B_{\um}$ et $P \in \cO(V)[T]$ unitaire non constant. Soient $r,s \in \R$ tels que $0<r\le s$. Supposons que le polyn\^ome $R(T) = \Res_{S}(P(S)-T,P'(S))$ ne s'annule ni sur $\overline{C}_{V}(r,r)$, ni sur $\overline{C}_{V}(s,s)$. 

Alors, il existe un voisinage~$U$ de~$V$ sur lequel les coefficients de~$P$ sont d\'efinis et des nombres r\'eels $r_{0}\in\intoo{0,r}$ et $s_{0}\in\intoo{s,+\infty}$ tels que, pour toute partie compacte~$W$ de~$U$ contenant~$V$, tout $u\in\intff{r_{0},r}$ et tout $v\in\intff{s,s_{0}}$, le compact $\overline{C}_{W}(u,v)$ satisfasse la condition $(\cO N_{P(S)-T})$.

En particulier, la condition finale de la proposition~\ref{prop:systemeCR} est satisfaite.
\end{lemm}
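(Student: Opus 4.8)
The plan is to deduce Lemma~\ref{lem:conditionscar0} from the resultant criterion already established, namely Proposition~\ref{prop:RG}, which says that a compact satisfying the condition $(\cO R_{Q})$ also satisfies $(\cO N_{Q})$, applied with $Q(S) = P(S)-T$ viewed as a polynomial over an appropriate algebra. First I would recall the link between the polynomial $R(T) = \Res_{S}(P(S)-T,P'(S))$ and the non-degeneracy of $P(S)-T$: at a point $x$ of a crown $\overline{C}_{W}(u,v)$, the image $(P(S)-T)(x) \in \cH(x)[S]$ has a multiple factor exactly when $\Res_{S}((P(S)-T)(x),P'(S)(x)) = 0$, i.e.\ exactly when $R(x) = 0$. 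Hence the hypothesis that $R$ vanishes neither on $\overline{C}_{V}(r,r)$ nor on $\overline{C}_{V}(s,s)$ says precisely that $|R|$ is bounded below by a strictly positive constant on each of these two compact sets.

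Next I would use compactness and continuity to propagate this bound to a neighbourhood. Since $V$ is compact and $|R|$ is continuous, there are strictly positive constants $c_{r}, c_{s}$ with $|R| \ge c_{r}$ on $\overline{C}_{V}(r,r)$ and $|R| \ge c_{s}$ on $\overline{C}_{V}(s,s)$. By continuity of $|R|$ on $\E{1}{\cA}$ (lemme~\ref{lem:evaluationcontinueaffine}) and compactness, I would produce a compact neighbourhood $U$ of $V$ on which the coefficients of $P$ are defined, together with $r_{0} \in \intoo{0,r}$ and $s_{0}\in\intoo{s,+\infty}$, such that $|R| \ge c_{r}/2$ on $\overline{C}_{U}(r_{0},r)$ and $|R| \ge c_{s}/2$ on $\overline{C}_{U}(s,s_{0})$. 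Then for any compact $W$ with $V \subset W \subset U$, any $u \in \intff{r_{0},r}$ and any $v \in \intff{s,s_{0}}$, the crown $\overline{C}_{W}(u,v)$ has as a natural boundary the union of the two sub-crowns $\overline{C}_{W}(u,u)$ and $\overline{C}_{W}(v,v)$ (this is the standard Shilov-type boundary description for relative crowns in the ultrametric fibres), and on each of these $|R|$ is bounded below by $\min(c_{r},c_{s})/2 > 0$. That is exactly the condition $(\cO R_{P(S)-T})$ for $\overline{C}_{W}(u,v)$, so Proposition~\ref{prop:RG} gives $(\cO N_{P(S)-T})$.

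Finally, for the last sentence of the statement, I would check that the setup just constructed produces the data required by the final hypothesis of Proposition~\ref{prop:systemeCR}: choose a decreasing sequence $(W_{m})_{m\in\N}$ of compact neighbourhoods of $V$ inside $U$ forming a base of neighbourhoods (possible since $\cA$ is a basic ring, so $B$ is nice enough), a strictly increasing sequence $(u_{m})$ in $\intoo{r_{0},r}$ converging to $r$, and a strictly decreasing sequence $(v_{m})$ in $\intoo{s,s_{0}}$ converging to $s$; by the previous paragraph each $\overline{C}_{W_{m}}(u_{m},v_{m})$ satisfies $(\cO N_{P(S)-T})$, which is precisely the extra condition needed. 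The main obstacle I expect is purely technical rather than conceptual: making rigorous the boundary description of the relative crown $\overline{C}_{W}(u,v)$ that lets one reduce the verification of $(\cO R_{P(S)-T})$ to the two ``extreme'' sub-crowns, since the global analytic (or strong analytic) boundary of a relative domain is only controlled fibrewise and one must combine this with the decency of $V$ and the comparison results of Section~\ref{sec:ONG}. Everything else is continuity-plus-compactness bookkeeping.
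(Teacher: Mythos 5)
Your proposal is correct and follows essentially the same route as the paper: restrict $U$, $r_{0}$, $s_{0}$ by continuity and compactness so that $R$ stays non-vanishing on the crowns $\overline{C}_{U}(r_{0},r)$ and $\overline{C}_{U}(s,s_{0})$, observe that the two extreme sub-crowns form a strong analytic boundary of $\overline{C}_{W}(u,v)$ on which $|R|$ is bounded below, and conclude via the condition $(\cO R_{P(S)-T})$ and Proposition~\ref{prop:RG}. The fibrewise Shilov-boundary step you flag as the main technical point is exactly what the paper also relies on (implicitly, without further comment), so there is no gap.
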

\begin{proof}
Soit~$U$ un voisinage de~$V$ sur lequel les coefficients de~$P$ sont d\'efinis. Consid\'erons le polyn\^ome $R(T) = \Res_{S}(P(S)-T,P'(S)) \in \cO(U)[T]$. Par hypoth\`ese, il ne s'annule ni sur $\overline{C}_{V}(r,r)$, ni sur $\overline{C}_{V}(s,s)$. Quitte \`a restreindre~$U$, on peut supposer qu'il ne s'annule pas non plus sur $\overline{C}_{U}(r,r)$ et $\overline{C}_{U}(s,s)$, et m\^eme sur $\overline{C}_{U}(u,u)$ et $\overline{C}_{U}(v,v)$, pour tous $u \in [r_{0},r]$ et $v \in [s,s_{0}]$ avec $r_{0}<r$ et $s_{0}>s$ bien choisis. Le r\'esultat d\'ecoule alors de la proposition~\ref{prop:RG}.
\end{proof}

\subsection{Arbres de Cousin-Runge}
\index{Arbre|(}

Soit~$X$ un espace $\cA$-analytique. 

Nous introduisons maintenant une nouvelle notion qui sera utile pour effectuer des r\'ecurrences sur la dimension.

\begin{defi}\label{def:arbrebinaire}\index{Arbre!binaire|textbf}
\nomenclature[Ra]{$\cT$}{arbre binaire de compacts}%
\nomenclature[Rb]{$\cI(\cT)$}{ensemble des n\oe uds internes de~$\cT$}%
On appelle \emph{arbre binaire} un arbre fini~$\cT$ avec une racine~$r$ dont tout n\oe ud~$v$ qui n'est pas une feuille poss\`ede exactement deux fils~$v^-$ et~$v^+$. Dans la repr\'esentation graphique de~$\cT$, nous placerons $v^-$ et~$v^+$ respectivement \`a gauche et \`a droite de~$v$.

On appelle \emph{n\oe ud interne} de~$\cT$ tout n\oe ud qui n'est pas une feuille. On note~$\cI(\cT)$ l'ensemble de ces n\oe uds.
\end{defi}

\begin{defi}\label{def:arbreCR}\index{Arbre!binaire!de compacts|textbf}\index{Arbre!assez d'|textbf}\index{Arbre!a intersections simples@\`a intersections simples|textbf}\index{Arbre!de Cousin|textbf}\index{Arbre!adapt\'e \`a un recouvrement|textbf}%
\nomenclature[Rc]{$K(v)$}{compact \'etiquetant un n\oe ud~$v$ de~$\cT$}%
\nomenclature[Rd]{$\cF(\cT)$}{famille des compacts associ\'es aux feuilles de~$\cT$}%
Soit~$V$ une partie compacte de~$X$. On appelle \emph{arbre binaire de compacts} sur~$V$ un arbre binaire~$\cT$ dont chaque n{\oe}ud~$v$ est \'etiquet\'e par une partie compacte $K(v)$ de~$V$ et qui v\'erifie les propri\'et\'es suivantes~:
\begin{enumerate}[i)]
\item la racine~$r$ de~$\cT$ est \'etiquet\'ee par~$K(r) = V$~;
\item pour tout n\oe ud interne~$v$ de~$\cT$, on a 
\[K(v) = K(v^-) \cup K(v^+).\]
\end{enumerate}
En particulier, la famille~$\cF(\cT)$ des compacts associ\'es aux feuilles de~$\cT$ est un recouvrement de~$V$. \'Etant donn\'e un recouvrement~$\cU$ de~$V$, on dit que l'\emph{arbre}~$\cT$ est \emph{adapt\'e} \`a~$\cU$ si~$\cF(\cT)$ est plus fin que~$\cU$.

On appelle \emph{arbre de Cousin} (resp. Cousin-Runge, etc.) sur~$V$ tout arbre binaire de compacts sur~$V$ qui v\'erifie la propri\'et\'e suivante~:
\begin{enumerate}
\item[iii)] pour tout n{\oe}ud interne~$v$, il existe un syst\`eme de Cousin (resp. Cousin-Runge, etc.) associ\'e \`a $(K(v^-),K(v^+))$.
\end{enumerate}

Un arbre binaire de compacts~$\cT$ sur~$V$ est dit \emph{\`a intersections simples} si, pour tout n{\oe}ud~$v$ de~$\cT$ qui n'est pas une feuille, il existe une feuille~$f(v^-)$ du sous-arbre issu de~$v^-$ et une feuille~$f(v^+)$ du sous-arbre issu de~$v^+$ telles que 
\[K(v^-)\cap K(v^+) = K(f(v^-)) \cap K(f(v^+)).\] 

On dit que le compact~$V$ de~$X$ poss\`ede \emph{assez d'arbres} de Cousin (resp. Cousin-Runge, etc.) si, pour tout recouvrement ouvert~$\cU$ de~$V$, il existe un arbre de Cousin (resp. Cousin-Runge, etc.) sur~$V$ adapt\'e \`a~$\cU$.
\end{defi}


\begin{prop}\label{prop:CRimpliqueA}\index{Faisceau!globalement engendr\'e}
Soit~$V$ une partie compacte de~$X$. Soit $\br \in \R_{\ge 0}^n$. 
Soit~$\cF$ un faisceau de $\cO_{\overline{D}_{V}(\br)}$-modules de type fini. Supposons qu'il existe un arbre de Cousin-Runge fort \`a intersections simples~$\cT$ sur~$V$ et, pour toute feuille~$f$ de~$\cT$, un morphisme surjectif $\varphi_{f} \colon \cO_{\overline{D}_{K(f)}(\br)}^{n_{f}} \to \cF_{|\overline{D}_{K(f)}(\br)}$ v\'erifiant la propri\'et\'e suivante~: pour toute feuille~$g$ de~$\cT$, le morphisme $\cO^{n_{f}}(\overline{D}_{K(f)\cap K(g)}(\br)) \to \cF(\overline{D}_{K(f)\cap K(g)}(\br))$ induit par~$\varphi_{f}$ est surjectif.

Alors, le faisceau $\cF$ est globalement engendr\'e sur~$\overline{D}_{V}(\br)$.
\end{prop}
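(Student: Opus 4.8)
The statement to be proved (Proposition~\ref{prop:CRimpliqueA}) says that global generation of a finite-type sheaf propagates from the leaves of a strong Cousin-Runge tree with simple intersections to the root. The plan is to proceed by induction on the structure of the tree~$\cT$, using as the basic propagation step the combination of two leaves at a node, which is exactly what Corollary~\ref{cor:K-K+A} handles. More precisely, I would prove by induction on the height of the subtree issued from a node~$v$ the following claim: there exists an integer~$N_{v}$ and a surjective morphism $\varphi_{v} \colon \cO_{\pi^{-1}(K(v))}^{N_{v}} \to \cF_{|\pi^{-1}(K(v))}$ such that, for \emph{every} leaf~$g$ of the full tree~$\cT$, the induced morphism $\cO^{N_{v}}(\pi^{-1}(K(v)\cap K(g))) \to \cF(\pi^{-1}(K(v)\cap K(g)))$ is surjective. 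Applying this to the root~$r$, where $K(r)=V$, gives the desired conclusion.

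\textbf{The induction.} The base case is when~$v$ is a leaf: this is precisely the hypothesis of the proposition (with $N_{v} = n_{v}$ and $\varphi_{v} = \varphi_{v}$). For the inductive step, let~$v$ be an internal node with sons~$v^-$ and~$v^+$; by induction we have morphisms~$\varphi_{v^-}$ and~$\varphi_{v^+}$ with the stated surjectivity property over all leaf-intersections. First I would apply Lemma~\ref{lem:CRrelatif} to lift the strong Cousin-Runge system associated to $(K(v^-),K(v^+))$ to one associated to $(\pi^{-1}(K(v^-)),\pi^{-1}(K(v^+)))$. Then Corollary~\ref{cor:K-K+A}, applied with $K^- = \pi^{-1}(K(v^-))$, $K^+ = \pi^{-1}(K(v^+))$, $M = \pi^{-1}(K(v))$, $L = \pi^{-1}(K(v^-)\cap K(v^+))$, and the surjections~$\varphi_{v^-}$, $\varphi_{v^+}$ --- whose induced maps on $\cO(L)$ are surjective precisely because $K(v^-)\cap K(v^+) = K(f(v^-))\cap K(f(v^+))$ for some leaves $f(v^-)$, $f(v^+)$ (the \emph{intersections simples} hypothesis) and by the induction hypothesis applied to those leaf-intersections --- yields a surjective morphism $\varphi_{v} \colon \cO_{\pi^{-1}(K(v))}^{N_{v^-}+N_{v^+}} \to \cF_{|\pi^{-1}(K(v))}$. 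The refined statement in Corollary~\ref{cor:K-K+A} moreover guarantees that the induced map on $\cO^{N_{v}}(\pi^{-1}(K^0))$ is surjective for every compact $K^0 \subset \pi^{-1}(K(v^-))$ (resp.~$\pi^{-1}(K(v^+))$) over which the corresponding~$\varphi_{v^\pm}$ was already surjective on global sections. Taking $K^0 = \pi^{-1}(K(v)\cap K(g))$ for an arbitrary leaf~$g$, one notes $K(v)\cap K(g) = (K(v^-)\cap K(g)) \cup (K(v^+)\cap K(g))$ --- this is not yet a single intersection of the form covered, so some care is needed here.

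\textbf{Expected main obstacle.} The delicate point is ensuring that the surjectivity on global sections is preserved over \emph{all} leaf-intersections $K(v)\cap K(g)$, not just over the sets $\pi^{-1}(K(v^\pm))$ themselves, since $K(v)\cap K(g)$ splits as a union of two pieces rather than sitting inside one son. I expect to handle this by strengthening the inductive claim slightly: rather than tracking surjectivity over $K(v)\cap K(g)$ directly, I would track surjectivity over $\pi^{-1}(K(w))$ for every \emph{node}~$w$ of the subtree rooted at~$v$ and, separately, record that $\varphi_{v}$ restricted to each such $\pi^{-1}(K(w))$ factors through the leaf-generators in a controlled way. Alternatively --- and this seems cleaner --- one observes that it suffices to prove global generation over each $\pi^{-1}(K(f))$ for~$f$ a leaf together with surjectivity on $\cO(L)$-sections over the \emph{specific} intersections $\pi^{-1}(K(f(v^-))\cap K(f(v^+)))$ appearing in the tree, and that the ``simple intersections'' hypothesis was designed exactly so that these are themselves leaf-intersections, for which the leaf hypotheses supply surjectivity directly. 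I would therefore recast the inductive invariant as: \emph{for every node~$w$ descending from~$v$ and every leaf~$g$, the map $\cO^{N_{v}}(\pi^{-1}(K(w)\cap K(g))) \to \cF(\pi^{-1}(K(w)\cap K(g)))$ is surjective}, and verify that Corollary~\ref{cor:K-K+A}'s refined conclusion, combined with an induction over the finitely many nodes, propagates exactly this invariant up the tree. Once the root is reached, $\pi^{-1}(V) = \pi^{-1}(K(r))$ carries a surjection from a finite free $\cO$-module, which is the assertion.
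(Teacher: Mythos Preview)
Your approach is essentially the paper's: lift the tree to $\pi^{-1}(V)$ via Lemma~\ref{lem:CRrelatif}, then climb the tree using Corollary~\ref{cor:K-K+A}, with the \emph{intersections simples} hypothesis supplying the surjectivity on sections over $L$ at each step. The paper's proof is exactly this induction.

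The one point to fix is the precise inductive invariant. Your final formulation, \emph{for every node~$w$ descending from~$v$ and every leaf~$g$}, runs into trouble at the case $w=v$ with $g$ a leaf outside the subtree at~$v$: then $K(v)\cap K(g) = (K(v^-)\cap K(g))\cup(K(v^+)\cap K(g))$ sits in neither~$K(v^-)$ nor~$K(v^+)$ alone, so the refined conclusion of Corollary~\ref{cor:K-K+A} does not apply directly to it. The remedy is to restrict the invariant to \emph{leaves}~$f'$ descending from~$v$ (rather than all nodes): for every such~$f'$ and every leaf~$g'$ of~$\cT$, the map $\cO^{N_v}(\pi^{-1}(K(f')\cap K(g')))\to\cF(\pi^{-1}(K(f')\cap K(g')))$ is surjective. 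Since any leaf~$f'$ below~$v$ lies below one of~$v^-$ or~$v^+$, the set $K(f')\cap K(g')$ is always contained in one of $K(v^\pm)$, and the refined conclusion of Corollary~\ref{cor:K-K+A} propagates the surjectivity cleanly. This is exactly the invariant the paper uses. Your own ``cleaner'' observation two sentences earlier already points to this; you just need to carry it through to the formal invariant.
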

\begin{proof}
Modifions l'arbre binaire de compacts~$\cT$ en rempla\c{c}ant chaque \'etiquette~$K$ par $\overline{D}_{K}(\br)$. On obtient un arbre binaire de compacts~$\cT'$ sur~$\overline{D}_{V}(\br)$. Il est encore \`a intersections simples, et \'egalement de Cousin-Runge fort, d'apr\`es le lemme~\ref{lem:CRrelatif}. Pour tout n\oe ud~$v'$ de~$\cT'$, notons~$K'(v')$ son \'etiquette.

On souhaite, \`a pr\'esent, d\'emontrer que, pour tout n\oe ud~$v'$ de~$\cT'$, il existe un entier~$n_{v'}$ et un morphisme surjectif $\varphi_{v'} \colon \cO_{K'(v')}^{n_{v'}} \to \cF_{|K'(v')}$ qui induit un morphisme surjectif $\cO^{n_{v'}}(K'(f')\cap K'(g')) \to \cF(K'(f')\cap K'(g'))$, pour toute feuille~$f'$ de~$\cT'$ issue de~$v'$ et toute feuille~$g'$ de~$\cT'$. Il suffit, pour ce faire, d'appliquer le corollaire~\ref{cor:K-K+A} de fa\c{c}on r\'ep\'et\'ee en remontant dans l'arbre, l'hypoth\`ese de surjectivit\'e globale sur les intersections \'etant assur\'ee par la condition d'intersections simples sur~$\cT'$.

En particulier, lorsque~$v'$ est la racine de~$\cT'$, d'\'etiquette~$\overline{D}_{V}(\br)$, on obtient le r\'esultat souhait\'e. 
\end{proof}

\subsection{Au-dessus d'un corps ultram\'etrique}

Dans cette section, nous fixons un corps valu\'e ultram\'etrique complet $(k,\va)$. Nous allons construire des arbres de Cousin-Runge sur certaines parties de la droite analytique~$\E{1}{k}$.

\begin{defi}\index{Voisinage!elementaire@\'el\'ementaire|textbf}\index{Recouvrement elementaire@Recouvrement \'el\'ementaire|textbf}
Soient $x\in \Aunk$ un point de type~2 ou~3. On dit qu'un ouvert~$U$ de~$\Aunk$ est un \emph{voisinage \'el\'ementaire} de~$x$ dans~$\Aunk$ s'il existe un nombre r\'eel $\eps >0$, un ensemble fini~$F$ de composantes connexes relativement compactes de~$\Aunk\setminus\{x\}$ et, pour tout \'el\'ement~$C$ de~$F$, un point rigide~$x_{C}$ de~$C$ tels que 
\[U = C(P_{F},|P_{F}(x)| - \eps, |P_{F}(x)| + \eps),\]
o\`u $P_{F} := \prod_{C\in F} \mu_{x_{C}}$.

Soient $x\in \Aunk$ un point de type~1 ou~4. On dit qu'un ouvert~$U$ de~$\Aunk$ est un \emph{voisinage \'el\'ementaire} de~$x$ dans~$\Aunk$ s'il existe un nombre r\'eel $\eps >0$ et un polyn\^ome irr\'eductible $P\in k[T]$ tels que
\[U = D(P,|P(x)|+ \eps).\]

On dit qu'un recouvrement ouvert~$\cU$ de~$\Aunk$ est un \emph{recouvrement \'el\'ementaire} si tout \'el\'ement~$U$ de~$\cU$ est voisinage \'el\'ementaire de l'un de ses points.
\end{defi}

\begin{rema}\label{rem:raffinementelementaire}\index{Voisinage}
Les lemmes~\ref{lem:bv23} et~\ref{lem:bv14} assurent que tout point de~$\Aunk$ poss\`ede une base de voisinages \'el\'ementaires, et donc que tout recouvrement de~$\Aunk$ peut \^etre raffin\'e en un recouvrement \'el\'ementaire.
\end{rema}

\begin{defi}\label{defi:arbreelementaire}\index{Arbre!elementaire@\'el\'ementaire|textbf}\index{Arbre!bien decoupe@bien d\'ecoup\'e|textbf}%
\nomenclature[Re]{$G_{v}$}{ensemble des n{\oe}uds~$w\ne v$ situ\'es entre la racine et~$v$ tels que~$v$ soit \`a gauche de~$w$}%
\nomenclature[Rf]{$D_{v}$}{ensemble des n{\oe}uds~$w\ne v$ situ\'es entre la racine et~$v$ tels que~$v$ soit \`a droite de~$w$}%
\nomenclature[Rg]{$\cT(\cE)$}{arbre binaire de compacts \'el\'ementaire}%
Soient~$X$ un espace analytique et~$V$ une partie compacte de~$X$. Soient $P\in\cO(V)[T]$ et $a\in\R_{\ge 0}$. 

Soit~$\cT$ un arbre binaire. Pour tout n{\oe}ud $v$ de~$\cT$, notons~$G_{v}$ (resp.~$D_{v}$) l'ensemble des n{\oe}uds~$w\ne v$ situ\'es entre la racine et~$v$ et tels que~$v$ soit \`a gauche (resp. \`a droite) de~$w$, c'est-\`a-dire dans le sous-arbre issu du fils gauche (resp. droit), \cf~figure~\ref{fig:GvDv}.

\begin{figure}[!h]
\centering
\begin{tikzpicture}
\draw (-3.2,4.3) node[above right]{$\cT$} ;
\draw (-1.6,5) -- (-2.6,3.3);
\draw (-1.6,5)-- (-0.8,3.3);
\draw (-2.6,3.3)-- (-4,1.8);
\draw (-2.6,3.3)-- (-1.6,1.8);
\draw (-1.6,1.8)-- (-2.5,0.4);
\draw (-1.6,1.8)-- (-0.9,0.3);
\draw (-3,0.3) node[anchor=north west] {$v$};
\draw (2,3.6) node[anchor=north west] {$G_v$};
\draw (-5.7,3.6) node[anchor=north west] {$D_v$};
\draw [->] (1.8,3.2) to[out=120,in=-10] (-1.2,4.9);
\draw [->] (1.8,3.2) to[out=230,in=10]  (-1.3,1.8);
\draw [->] (-4.8,3.3) to[out=20,in=160] (-3,3.3);

\fill (-1.6,5) circle (2pt) ;
\fill (-2.6,3.3) circle (2pt) ;
\fill (-0.8,3.3) circle (2pt) ;
\fill (-4,1.8) circle (2pt) ;
\fill (-1.6,1.8) circle (2pt) ;
\fill (-2.5,0.4) circle (2pt) ;
\fill (-0.9,0.3) circle (2pt) ;

\end{tikzpicture}
\caption{Sous-ensembles $G_{v}$ et $D_{v}$.}\label{fig:GvDv}
\end{figure}

Soit $\cE = (P_{v},s_{v})_{v\in \cI(\cT)}$ une famille d'\'el\'ements de $\cO(V)[T] \times \R_{\ge 0}$ ou $\cB(V)[T] \times \R_{\ge 0}$  ind\'ex\'ee par l'ensemble~$\cI(\cT)$ des n{\oe}uds internes de~$\cT$. On d\'efinit alors un \emph{arbre binaire de compacts} $\cT(\cE)$ sur~$\overline{D}_{V}(P;a)$ dit \emph{\'el\'ementaire} en \'etiquetant tout n{\oe}ud~$v$ de~$\cT$ par
\begin{align*}
K(v) &= \overline{D}_{V}(P;a) \cap \bigcap_{w\in G_{v}}  \{x\in \E{1}{V} : |P_{w}(x)| \le s_{w}\}\\ 
&\qquad \cap \bigcap_{w\in D_{v}}  \{x\in \E{1}{V} : |P_{w}(x)| \ge s_{w}\}.
\end{align*}

On dit que l'\emph{arbre} $\cT(\cE)$ est \emph{bien d\'ecoup\'e} si, pour tout n\oe uds internes $v \ne v'$, on a
\[\{x\in \E{1}{V} : |P_{v}(x)| = s_{v}\} \cap \{x\in \E{1}{V} : |P_{v'}(x)| = s_{v'}\} = \emptyset.\]
\end{defi}

\begin{rema}\label{rem:biendecoupe}
Pour tout n{\oe}ud interne~$v$, on a
\[\begin{cases}
K(v^-) = K(v) \cap \{x\in \E{1}{V} : |P_{v}(x)|\le s_{v}\}\ ;\\
K(v^+) = K(v) \cap \{x\in \E{1}{V} : |P_{v}(x)|\ge s_{v}\}.
\end{cases}\]
En particulier, si~$\cT(\cE)$ est bien d\'ecoup\'e, on a
\[\begin{cases}
K(v^-) = \{x\in \E{1}{V} : |P_{v}(x)|\le s_{v}\}\ ;\\
K(v^-) \cap K(v^+) = \{x\in \E{1}{V} : |P_{v}(x)| = s_{v}\}.
\end{cases}\]
Nous retrouvons ainsi une partie des hypoth\`eses de la proposition~\ref{prop:systemeCR}. 

Remarquons encore que, si~$\cT(\cE)$ est bien d\'ecoup\'e, alors, pour tout n\oe ud interne~$v$, on a 
\[ \{x\in \E{1}{V} : |P_{v}(x)| = s_{v}\}  = K(f^d(v^-)) \cap K(f^d(v^+)),\]
o\`u $f^d(v^-)$ (resp. $f^d(v^+)$) d\'esigne la feuille la plus \`a droite du sous-arbre issu de~$v^-$ (resp. $v^+$). En particulier, $\cT(\cE)$ est \`a intersections simples. On peut \'egalement v\'erifier que, pour toutes feuilles $f\ne g$ de~$\cT(\cE)$, l'intersection $K(f) \cap K(g)$ est soit vide, soit de la forme $\{x\in \E{1}{V} : |P_{v}(x)| = s_{v}\}$ pour un certain n\oe ud interne~$v$.  
\end{rema}

\begin{defi}\label{defi:etoile}\index{Etoile@\'Etoile|textbf}
On dit qu'une partie~$E$ de~$\Aunk$ est une \emph{\'etoile} s'il existe un polyn\^ome irr\'eductible $P\in k[T]$, un nombre r\'eel~$a \ge 0$ et une famille finie~$\cC$ de composantes connexes de $\overline{D}(P,a) \setminus \{\eta\}$, o\`u~$\eta$ d\'esigne l'unique point du bord de~$\overline{D}(P,a)$, tels que 
\[E = \overline{D}(P,a) \setminus \bigcup_{C \in \cC} C.\] 
\end{defi}

\begin{exem}
Les disques et les couronnes, et plus g\'en\'eralement les parties de la forme $\overline{D}(P,s)$ et $\overline{C}(P;s,s)$, avec $P \in k[T]$ irr\'eductible et $s \in \R_{\ge 0}$, sont des \'etoiles.
\end{exem}

\begin{prop}\label{prop:CRcorps}\index{Etoile@\'Etoile!arbre sur une|see{Arbre}}\index{Arbre!sur une \'etoile}\index{Resultant@R\'esultant}
Soit~$E \subset \Aunk$ une \'etoile. Alors, pour toute famille d'ouverts~$\cU$ de~$\Aunk$ recouvrant~$E$, il existe un arbre binaire~$\cT$ et une famille $\cE = (P_{v},s_{v})_{v\in \cI(\cT)}$ d'\'el\'ements de $k[T]\times\R_{>0}$, les polyn\^omes~$P_{v}$ \'etant irr\'eductibles, index\'ee par l'ensemble des n{\oe}uds internes de~$\cT$ tels que l'arbre binaire de compacts \'el\'ementaire $\cT(\cE)$ sur~$E$ soit bien d\'ecoup\'e, adapt\'e \`a~$\cU$ et de Cousin-Runge fort.

En outre, si~$k$ est parfait ou de valuation non triviale, on peut supposer que, pour tout n{\oe}ud interne~$v$ de~$\cT$, le polyn\^ome $R_{v}(T) = \Res_{S}(P_{v}(S)-T,P_{v}'(S))$ ne s'annule pas sur $\overline{C}(s_{v},s_{v})$.
\end{prop}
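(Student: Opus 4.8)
\textbf{Plan de la preuve de la proposition~\ref{prop:CRcorps}.}

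L'id\'ee est de proc\'eder par r\'ecurrence sur la structure de l'\'etoile~$E = \overline{D}(P,a) \setminus \bigcup_{C\in\cC} C$, en d\'ecoupant progressivement~$E$ au moyen de voisinages \'el\'ementaires de ses points. Tout d'abord, d'apr\`es la remarque~\ref{rem:raffinementelementaire}, je peux supposer que le recouvrement~$\cU$ est \'el\'ementaire. Ensuite je construis l'arbre~$\cT(\cE)$ de proche en proche. Si~$E$ est enti\`erement contenu dans un \'el\'ement de~$\cU$, l'arbre r\'eduit \`a une feuille convient. Sinon, je choisis un point~$x$ de~$E$ qui n'est pas int\'erieur \`a une \'etoile strictement plus petite d\'ej\`a contr\^ol\'ee (typiquement le point du bord de Shilov~$\eta$ de $\overline{D}(P,a)$, ou un point de type~2 ou~3 bien plac\'e), un voisinage \'el\'ementaire~$U_{x}$ de~$x$ contenu dans un \'el\'ement de~$\cU$, et un \og anneau de s\'eparation\fg{} de la forme $\{|P_{v}(x')| = s_{v}\}$ s\'eparant~$U_{x}$ du reste de~$E$. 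Le fils gauche~$v^-$ du n\oe ud courant sera \'etiquet\'e par $K(v) \cap \{|P_{v}|\le s_{v}\}$ et le fils droit~$v^+$ par $K(v) \cap \{|P_{v}|\ge s_{v}\}$. La structure d'arbre de~$\Aunk$ (le fait que tout voisinage contienne toutes les composantes connexes de son compl\'ementaire sauf un nombre fini, \cf~lemmes~\ref{lem:bv23} et~\ref{lem:bv14}) garantit qu'en un nombre fini d'\'etapes, chaque feuille sera contenue dans un \'el\'ement de~$\cU$, donc que l'arbre est fini et adapt\'e \`a~$\cU$.

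Pour obtenir que l'arbre est \emph{bien d\'ecoup\'e}, il suffit de choisir les rayons~$s_{v}$ successifs de fa\c{c}on g\'en\'erique : chaque niveau de s\'eparation est introduit dans une couronne ou un disque o\`u le graphe de variation de~$|P_{v}|$ est strictement monotone, et l'on peut \'eviter les (en nombre fini) valeurs interdites provenant des niveaux d\'ej\`a pos\'es. Une fois que l'arbre est bien d\'ecoup\'e, la remarque~\ref{rem:biendecoupe} montre qu'en chaque n\oe ud interne~$v$ les compacts $K(v^-)$, $K(v^+)$ sont exactement de la forme $\{|P_{v}|\le s_{v}\}$, $\{|P_{v}|\ge s_{v}\}$, et que leur intersection est $\{|P_{v}| = s_{v}\}$ : on est donc pr\'ecis\'ement dans la situation de la proposition~\ref{prop:systemeCR} (avec $V = \cM(k)$, $r = s = s_{v}$), laquelle fournit un syst\`eme de Cousin-Runge associ\'e \`a $(K(v^-),K(v^+))$. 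Pour la \emph{force} du syst\`eme, il faut v\'erifier la condition suppl\'ementaire de la proposition~\ref{prop:systemeCR} portant sur $(\cO N_{P_{v}(S)-T})$ sur des couronnes $\overline{C}_{W_{m}}(u_{m},v_{m})$ voisines de $\overline{C}_{\cM(k)}(s_{v},s_{v})$ ; or ici la base~$\cM(k)$ est r\'eduite \`a un point, et l'on dispose de deux leviers : soit~$k$ est tr\`es typique au sens requis, auquel cas le corollaire~\ref{cor:BVcompactN} s'applique, soit~$k$ est parfait ou de valuation non triviale, auquel cas le lemme~\ref{lem:conditionscar0} s'applique pourvu que le r\'esultant $R_{v}(T) = \Res_{S}(P_{v}(S)-T,P_{v}'(S))$ ne s'annule pas sur $\overline{C}(s_{v},s_{v})$. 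C'est ici qu'intervient l'affinement final de l'\'enonc\'e : quand~$k$ est parfait ou de valuation non triviale, on peut, gr\^ace \`a la proposition~\ref{prop:basevoisdim1rigide}~i) (approximation par un polyn\^ome s\'eparable) et \`a la monotonie du graphe de~$|R_{v}|$, ajuster \`a la fois le polyn\^ome~$P_{v}$ (en le rempla\c{c}ant par un relev\'e s\'eparable convenable de m\^eme lieu d'annulation au point consid\'er\'e) et le rayon~$s_{v}$ pour \'eviter les z\'eros du r\'esultant.

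Le point d\'elicat sera la coordination des deux constructions : s'assurer que l'arbre reste \`a la fois bien d\'ecoup\'e (ce qui impose d'\'eviter un nombre fini de valeurs de rayons), adapt\'e \`a~$\cU$ (ce qui impose que les voisinages \'el\'ementaires choisis soient assez petits), \emph{et} tel que tous les r\'esultants~$R_{v}$ \'evitent les cercles correspondants (ce qui impose encore un nombre fini de contraintes g\'en\'eriques). Comme \`a chaque \'etape le nombre de contraintes est fini et chaque contrainte ne ferme qu'un ensemble de mesure nulle de param\`etres admissibles, on peut tout satisfaire simultan\'ement ; mais il faut ordonner soigneusement les choix (d'abord le point et son voisinage \'el\'ementaire, puis le polyn\^ome~$P_{v}$ s\'eparable, puis le rayon~$s_{v}$ dans un intervalle ouvert o\`u~$|P_{v}|$ est strictement monotone et o\`u ni~$R_{v}$ ni les niveaux d\'ej\`a pos\'es ne posent probl\`eme) et v\'erifier que le processus termine, ce qui repose sur la finitude du nombre de composantes connexes \og visibles\fg{} \`a chaque \'echelle dans~$\Aunk$. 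La r\'edaction compl\`ete demandera surtout de formaliser cette r\'ecurrence sur les \'etoiles et de tracer avec soin les compacts~$K(v)$ \`a chaque n\oe ud.
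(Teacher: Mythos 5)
Votre plan suit pour l'essentiel la strat\'egie de la d\'emonstration du texte (d\'ecoupage de~$E$ le long de lignes de niveau polynomiales en partant du point de Shilov, proposition~\ref{prop:systemeCR} pour chaque coupure, approximation s\'eparable pour la condition sur le r\'esultant~; notez au passage que le caract\`ere \emph{fort} est automatique ici, tout point de~$\cM(k)$ ultram\'etrique \'etant tr\`es typique, de sorte que le corollaire~\ref{cor:BVcompactN} s'applique toujours et que votre dichotomie est superflue). Mais deux \'etapes que vous renvoyez \`a la \og r\'edaction \fg{} sont en r\'ealit\'e le c\oe ur de la preuve et restent sans argument. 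La premi\`ere est la terminaison~: votre proc\'ed\'e \og de proche en proche \fg{} n'a aucun param\`etre qui d\'ecro\^it, et l'appel \`a la \og finitude des composantes visibles \fg{} ne constitue pas une preuve. Le texte commence par rendre le recouvrement fini (compacit\'e) et \'el\'ementaire, puis raisonne par r\'ecurrence sur son cardinal~$n$~: l'\'el\'ement~$U$ de~$\cU$ contenant~$\eta$ est, apr\`es discussion du type de son centre, de la forme $\mathring{D}(P_{F};r,s)$, et son compl\'ementaire dans~$E$ est une union \emph{finie} de disques $\oD(P_{i};a_{i})$ contenus dans des composantes~$C_{i}$ deux \`a deux disjointes, chacun \'etant recouvert par $\cU\setminus\{U\}$, donc par au plus $n-1$ ouverts~; c'est cela qui fait fonctionner la r\'ecurrence et qui garantit que les morceaux sur lesquels on r\'ecurre sont encore des \'etoiles (en fait des disques). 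C'est aussi ce qui impose la structure en \og peigne \fg{}~: on s\'epare composante par composante avec un polyn\^ome \emph{irr\'eductible}~$P_{i}$ par n\oe ud (condition exig\'ee par l'\'enonc\'e), et non avec le produit~$P_{F}$ issu du voisinage \'el\'ementaire.

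La seconde lacune concerne l'application de la proposition~\ref{prop:systemeCR}. Un choix \og g\'en\'erique \fg{} des rayons assure la disjonction des lignes de niveau (bien d\'ecoup\'e), mais la proposition exige davantage~: l'intersection $K(v^-)\cap K(v^+)$ doit \^etre le cercle \emph{entier} $\{|P_{v}| = s_{v}\}$ (hypoth\`ese $K^-\cap K^+=\{r\le |P|\le s\}$ avec $r=s$), ce qui demande que toute la ligne de niveau soit contenue dans~$K(v)$, et que $K(v^-)$, $K(v^+)$ soient exactement les parties d\'ecrites dans la remarque~\ref{rem:biendecoupe}. Rien dans une perturbation g\'en\'erique des~$s_{v}$ ne donne ces inclusions~; dans le texte elles sont obtenues par des choix explicites~: le compact interm\'ediaire $V=\{r'\le |P_{F}|\le |P_{F}(\eta)|\}\cap E$, puis des rayons $b_{i}\in\intoo{a_{i},c_{i}}$ tels que $\oD(P_{i};b_{i})\cap V=\{|P_{i}|=b_{i}\}$, et enfin la v\'erification que les niveaux des sous-arbres greff\'es sur les $\oD(P_{i};a_{i})$ restent disjoints des nouveaux niveaux $\{|P_{i}|=b_{i}\}$. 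Sans ce contr\^ole g\'eom\'etrique, l'\'etape \og on est pr\'ecis\'ement dans la situation de la proposition~\ref{prop:systemeCR} \fg{} n'est pas acquise~; il vous faut donc expliciter ces compacts et ces choix de rayons, en coordination avec la r\'ecurrence d\'ecrite ci-dessus.
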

\begin{proof}
Par hypoth\`ese, il existe un polyn\^ome irr\'eductible $P\in k[T]$, un nombre r\'eel~$a \ge 0$ et une famille finie~$\cC$ de composantes connexes de $\overline{D}(P,a) \setminus \{\eta\}$, o\`u~$\eta$ d\'esigne l'unique point du bord de~$\overline{D}(P,a)$, tels que  
\[E = \overline{D}(P,a) \setminus \bigcup_{C \in \cC} C.\] 

Soit~$\cU$ une famille d'ouverts de~$\Aunk$ recouvrant $E$.
D'apr\`es les lemmes~\ref{lem:bv23} et~\ref{lem:bv14}, on peut supposer que tout \'el\'ement de~$\cU$ est voisinage \'el\'ementaire de d'un point de~$E$. En outre, par compacit\'e de~$E$, on peut supposer que~$\cU$ est fini, de cardinal~$n \ge 1$. D\'emontrons le r\'esultat pour de tels recouvrements par r\'ecurrence sur~$n$. 

Traitons tout d'abord le cas o\`u $n=1$. Notons~$\cT$ l'arbre binaire r\'eduit \`a sa racine. En consid\'erant la famille vide~$\cE$, on obtient un arbre binaire de compacts~$\cT(\cE)$ sur~$E$ r\'eduit \`a sa racine, \'etiquet\'ee par~$E$. Les conditions de l'\'enonc\'e sont satisfaites.

Supposons d\'esormais que~$n\ge 2$ et que le r\'esultat est d\'emontr\'e pour les recouvrements \'el\'ementaires de cardinal inf\'erieur \`a~$n-1$. Soit un \'el\'ement~$U$ de~$\cU$ contenant~$\eta$. C'est un voisinage \'el\'ementaire d'un point~$x$ de~$E$. 

Si~$x$ est de type~1 ou~4, alors tout voisinage \'el\'ementaire de~$x$ contenant~$\eta$ contient $E$ tout entier, et nous sommes donc ramen\'es au cas $n=1$ d\'ej\`a trait\'e.

Nous pouvons donc supposer que~$x$ est de type~2 ou~3. Il existe alors un ensemble $F = \{C_{1},\dots,C_{p}\}$ de composantes connexes born\'ees de $\Aunk \setminus\{x\}$, pour tout $i\in\cn{1}{p}$, un polyn\^ome irr\'eductible~$P_{i}$ s'annulant en un point de~$C_{i}$ et deux nombres r\'eels $r,s$ avec $r < |P_{F}(x)| < s$ tels que $U = \mathring{D}(P_{F};r,s)$, o\`u $P_{F} = \prod_{1\le i\le p} P_{i}$. Remarquons que, puisque $U$ contient~$\eta$, on a 
\[U \cap E = \{y\in \Aunk : r < |P_{F}(y)| \le |P_{F}(\eta)|\} \cap E.\]

Si $r<0$, alors $\mathring{D}(P_{F};r,s) = \mathring{D}(P_{F},s)$ contient~$E$, et nous sommes de nouveau ramen\'es au cas $n=1$. Nous pouvons donc supposer que $r\ge 0$.

Soit $i\in \cn{1}{p}$. Le compact $C_{i} \setminus U$ est de la forme $\overline{D}(P_{i};a_{i})$ et il est recouvert par $\cU\setminus\{U\}$. Il existe $c_{i} >a_{i}$ tel que $\overline{D}(P_{i};c_{i})$ soit encore contenu dans~$C_{i}$ et recouvert par $\cU\setminus\{U\}$. 

Si $k$ est parfait, le polyn\^ome~$P_{i}$ est s\'eparable. Si~$k$ est de valuation non triviale, on peut perturber l\'eg\`erement les coefficients de~$P_{i}$ de fa\c{c}on \`a le rendre s\'eparable sans changer les disques $\overline{D}(P_{i};b_{i})$ pour $b_{i} \in [a_{i},c_{i}]$. Alors, le polyn\^ome $R_{i}(T) = \Res_{S}(P_{i}(S)-T,P'_{i}(S))$ n'est pas nul. Quitte \`a diminuer~$c_{i}$, on peut supposer qu'il ne s'annule pas sur $\overline{C}(b_{i},b_{i})$, pour tout $b_{i} \in \intoo{a_{i},c_{i}}$. 

D'apr\`es l'hypoth\`ese de r\'ecurrence appliqu\'ee \`a~$\overline{D}(P_{i},a_{i})$, il existe un arbre binaire~$\cT_{i}$ et une famille $\cE_{i} = (P_{v},s_{v})_{v\in \cI(\cT_{i})}$ d'\'el\'ements de $k[T]\times\R_{>0}$ tels que l'arbre binaire de compacts $\cT_{i}(\cE_{i})$ sur $\overline{D}(P_{i},a_{i})$ satisfasse les propri\'et\'es de l'\'enonc\'e (y compris la propri\'et\'e finale si~$k$ est parfait ou de valuation non triviale). Quitte \`a diminuer~$c_{i}$, on peut supposer que, pour tout $b_{i} \in \intff{a_{i},c_{i}}$, l'arbre binaire de compacts $\cT_{i}(\cE_{i})$ sur $\overline{D}(P_{i},b_{i})$ satisfait encore ces propri\'et\'es. Remarquons que, pour tout $b_{i} \in \intof{a_{i},c_{i}}$ et tout n\oe ud interne de~$\cT_{i}$, on a 
\begin{equation}\label{eq:biendecoupe}\tag{a}
\{x\in \E{1}{V} : |P_{v}(x)| = s_{v}\} \cap \{x\in \E{1}{V} : |P_{i}(x)| = b_{i}\} = \emptyset.
\end{equation}

Rappelons que $U = \mathring{D}(P_{F};r,s)$. Il existe donc $r' \in \intoo{r,|P_{F}(x)|}$ tel que, en posant 
\[V := \{y\in \Aunk : r' \le |P_{F}(y)| \le |P_{F}(z)|\} \cap E \subset U,\]
on ait, pour tout~$i \in \cn{1}{p}$, $C_{i} \setminus V \subset  \mathring{D}(P_{i};c_{i})$. 

Soit $i\in\cn{1}{p}$. Il existe $b_{i} \in \intoo{a_{i},c_{i}}$ tel que l'on ait 
\begin{equation}\label{eq:intersection}\tag{b}
\overline{D}(P_{i};b_{i}) \cap V = \{y\in\Aunk : |P_{i}(y)| = b_{i}\}.
\end{equation}
On a alors
\begin{equation}\label{eq:V}\tag{c}
V = E \cap \bigcap_{1\le i\le p} \{y\in\Aunk : |P_{i}(y)| \ge b_{i}\}.
\end{equation}

D\'efinissons maintenant un arbre binaire de compacts sur~$E$. Commen\c{c}ons par consid\'erer l'arbre \og peigne \fg{} \`a $2p+1$ n{\oe}uds avec les dents \`a gauche, \cf~figure~\ref{fig:peigne}. 
Pour l'obtenir, on part d'un arbre r\'eduit \`a sa racine et on effectue~$p$ fois l'op\'eration qui consiste \`a ajouter un fils gauche et un fils droit \`a la feuille la plus \`a droite. Notons~$\cT_{0}$ cet arbre. Ses n{\oe}uds internes forment un ensemble totalement ordonn\'e $v_{0} \ge \dotsb \ge v_{p-1}$, o\`u~$v_{0}$ d\'esigne la racine et~$v_{p-1}$ le p\`ere des feuilles les plus basses.

\begin{figure}[!h]
\centering
\begin{tikzpicture}
\foreach \r in {0,...,1}
{\draw ({(\r)*cos(30)},-{(\r)*sin(30)}) -- ({(\r+1)*cos(30)},-{(\r+1)*sin(30)}) ;}
\foreach \r in {0,...,2}
{\fill ({(\r)*cos(30)},-{(\r)*sin(30)}) circle (2pt) ;
\draw ({(\r)*cos(30)},-{(\r)*sin(30)}) node[above right]{$v_{\r}$} ;
\draw ({(\r)*cos(30)},-{(\r)*sin(30)}) -- ({(\r)*cos(30) - cos(60)},-{(\r)*sin(30)-sin(60)});
\fill ({(\r)*cos(30) - cos(60)},-{(\r)*sin(30)-sin(60)}) circle (2pt) ;
\draw ({(\r)*cos(30) - cos(60)},-{(\r)*sin(30)-sin(60)}) node[below left]{$v_{\r}^-$} ;
}
\draw[dashed]  ({(2)*cos(30)},-{(2)*sin(30)}) -- ({(2+2)*cos(30)},-{(2+2)*sin(30)}) ;
\foreach \r in {4}
{\fill ({(\r)*cos(30)},-{(\r)*sin(30)}) circle (2pt) ;
\draw ({(\r)*cos(30)},-{(\r)*sin(30)}) -- ({(\r+1)*cos(30)},-{(\r+1)*sin(30)}) ;
\draw ({(\r+1)*cos(30)},-{(\r+1)*sin(30)}) node[above right]{$v_{p-1}^+$} ;
\draw ({(\r)*cos(30)},-{(\r)*sin(30)}) -- ({(\r)*cos(30) - cos(60)},-{(\r)*sin(30)-sin(60)});
\fill ({(\r)*cos(30) - cos(60)},-{(\r)*sin(30)-sin(60)}) circle (2pt) ;
\draw ({(\r)*cos(30) - cos(60)},-{(\r)*sin(30)-sin(60)}) node[below left]{$v_{p-1}^-$} ;
}
\fill ({(5)*cos(30)},-{(5)*sin(30)}) circle (2pt) ;
\draw ({(4)*cos(30)},-{(4)*sin(30)}) node[above right]{$v_{p-1}$} ;
\draw (-0.5,-3.3) node[above right]{$\cT_{0}$} ;
\end{tikzpicture}
\caption{Un arbre peigne \`a $2p+1$ n\oe uds.}\label{fig:peigne}
\end{figure}

Consid\'erons maintenant la famille $\cE_{0} = (P_{i},b_{i})_{1\le i\le p}$ et l'arbre binaire de compacts \'el\'ementaire $\cT_{0}(\cE_{0})$ sur~$E$ qui lui est associ\'e. Ses feuilles sont $v_{0}^-$ (\'etiquet\'ee par $\overline{D}(P_{1},b_{1})$), \dots, $v_{p-1}^-$ (\'etiquet\'ee par $\overline{D}(P_{p},b_{p})$) et~$v_{p-1}^+$ (\'etiquet\'ee par~$V$, d'apr\`es~\eqref{eq:V}). Puisque, pour tout $i \in \{1,\dotsc,p\}$, $\overline{D}(P_{i},b_{i})$ est contenu dans~$C_{i}$, $\cT_{0}(\cE_{0})$ est bien d\'ecoup\'e.

D'apr\`es~\eqref{eq:intersection}, toutes les intersections $K(v^-)\cap K(v^+)$ sont de la forme $\overline{C}(P_{i};b_{i},b_{i})$, avec $i\in \cn{1}{p}$. On d\'eduit alors de la proposition~\ref{prop:systemeCR} que~$\cT_{0}(\cE_{0})$ est un arbre de Cousin-Runge fort sur~$E$. 

Construisons maintenant un autre arbre binaire de compacts~$\cT$ sur~$E$ \`a partir de~$\cT_{0}(\cE_{0})$ en rempla\c{c}ant, pour tout $i\in\cn{1}{p}$, la feuille~$v_{i-1}^-$, \'etiquet\'ee par $\overline{D}(P_{i},b_{i})$, par l'arbre~$\cT_{i}(\cE_{i})$. L'arbre~$\cT$ est encore un arbre de Cousin-Runge fort sur~$E$. On v\'erifie ais\'ement que c'est un arbre \'el\'ementaire, associ\'e \`a la famille obtenue en r\'eunissant~$\cE_{0}$ et les~$\cE_{i}$, pour $i\in\cn{1}{p}$.

L'arbre~$\cT$ est adapt\'e \`a~$\cU$ car chacune de ses feuilles est soit une feuille de l'un des~$\cT_{i}(\cE_{i})$, pour $i\in \cn{1}{p}$, soit~$V$. Le fait qu'il soit bien d\'ecoup\'e d\'ecoule du fait que $\cT_{0}(\cE_{0})$ est bien d\'ecoup\'e et de~\eqref{eq:biendecoupe}.
\end{proof}

\subsection{Au-dessus d'une base ultram\'etrique} Soit $X$ un espace $\cA$-analytique. 

Nous souhaitons montrer que la propri\'et\'e de poss\'eder assez d'arbres de Cousin-Runge se transf\`ere d'une partie de l'espace~$X$ \`a un disque ferm\'e relatif au-dessus de cette partie.

\begin{coro}\label{cor:CREx}\index{Arbre!sur une \'etoile}\index{Resultant@R\'esultant}
Soit~$x$ un point d\'ecent (resp. tr\`es d\'ecent) de~$X_{\um}$. Posons $\A^1_{X} := X \times_{\cA} \AunA$ et notons $\pi \colon \A^1_{X} \to X$ le morphisme de projection. Soit~$E$ une \'etoile de~$\pi^{-1}(x)$. Alors, pour toute famille d'ouverts~$\cU$ de~$\A^1_{X}$ recouvrant~$E$, il existe un arbre binaire~$\cT$ et une famille $\cE = (P_{v},s_{v})_{v\in \cI(\cT)}$ d'\'el\'ements de $\cO_{x}[T]\times\R_{>0}$ index\'ee par l'ensemble des n{\oe}uds internes de~$\cT$ tels que l'arbre binaire de compacts \'el\'ementaire $\cT(\cE)$ sur~$E$ soit bien d\'ecoup\'e, adapt\'e \`a~$\cU$ et de Cousin-Runge (resp. de Cousin-Runge fort). De plus, si $f \ne g$ sont des feuilles de~$\cT$, alors $K(f) \cap K(g)$ est une \'etoile de~$\pi^{-1}(x)$.

En outre, si~$\cH(x)$ est parfait ou de valuation non triviale, on peut supposer que, pour tout n{\oe}ud interne~$v$ de~$\cT$, le polyn\^ome $R_{v}(T) = \Res_{S}(P_{v}(S)-T,P_{v}'(S))$ ne s'annule pas sur $\overline{C}(s_{v},s_{v})$.
\end{coro}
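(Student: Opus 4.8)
The strategy is to lift the result of Proposition~\ref{prop:CRcorps}, which handles the case of a single ultrametric complete field $k$, to the relative setting over the germ at a point $x$ of $X_{\um}$. First I would reduce to the situation where $X$ is a closed analytic subset of an open subset of an affine space, so that the germs $\cO_{x}$ are accessible and the corollary~\ref{cor:divisionGnormes} and corollary~\ref{cor:phinormes} on normed Weierstra\ss{} division are available. Then I would transport the combinatorial output of Proposition~\ref{prop:CRcorps} applied to the étale $E$ inside the fibre $\pi^{-1}(x) \simeq \E{1}{\cH(x)}$: it produces a binary tree $\cT$ and a family $\cE = (P_{v}, s_{v})_{v \in \cI(\cT)}$ with $P_{v} \in \cH(x)[T]$ irreducible, such that the elementary tree of compacts $\cT(\cE)$ over $E$ is well-cut, adapted to the trace $\cU_{|\pi^{-1}(x)}$, and a Cousin-Runge tree. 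The point is to replace each $P_{v} \in \cH(x)[T]$ by a lift $\tilde P_{v} \in \cO_{x}[T]$ (monic, of the same degree), which is possible since $\kappa(x)$ is dense in $\cH(x)$ and $\cO_{x}$ surjects onto $\cH(x)$ modulo the maximal ideal; the radii $s_{v}$ are kept.

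The core of the argument is then to check that the elementary tree of compacts $\cT(\cE)$ built over $E$ with the lifted data $(\tilde P_{v}, s_{v})$ — now living in a small relative disc $\overline{D}_{V}(\tilde P; a)$ for a suitable compact neighbourhood $V$ of $x$ and a suitable $a > 0$ — is still well-cut, still adapted to $\cU$ (by shrinking $V$ and the radii if necessary, using the elementary fact that a compact covered by a finite family of opens has all these properties preserved under small perturbation, as in the proofs of Propositions~\ref{prop:basevoisdim1rigide} and~\ref{prop:basevoisdim1}), and above all still of Cousin-Runge type (resp.\ Cousin-Runge \emph{fort} when $x$ is très décent). For this last point I would invoke Proposition~\ref{prop:systemeCR}: at each internal node $v$, the remark~\ref{rem:biendecoupe} shows that the compacts $K(v^{-})$, $K(v^{+})$ and their intersection have exactly the form required by that proposition (with the polynomial $\tilde P_{v}$ playing the role of $P$), and since the tree is well-cut with $s$ equal to $r$ at each splitting, one gets a Cousin-Runge system. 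The statement that $K(f) \cap K(g)$ is again an étoile of $\pi^{-1}(x)$ for distinct leaves $f, g$ follows from the final observations in Remark~\ref{rem:biendecoupe}: such an intersection is either empty or of the form $\{|P_{v}| = s_{v}\}$ in the fibre, which is a couronne $\overline{C}(P_{v}; s_{v}, s_{v})$, hence an étoile.

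For the \emph{fort} part, the extra input is the condition $(\cO N_{P(S)-T})$ from the last clause of Proposition~\ref{prop:systemeCR}, which requires a decreasing basis of compact neighbourhoods $W_{m}$ of $V$ together with approximating radii $u_{m} \nearrow r$, $v_{m} \searrow s$ such that each $\overline{C}_{W_{m}}(u_{m}, v_{m})$ satisfies $(\cO N_{P_{v}(S)-T})$. When $x$ is très décent and ultramétrique, I would obtain this either from Corollary~\ref{cor:BVcompactN} (if the relevant points are ultramétriques très typiques) or from Lemma~\ref{lem:conditionscar0} (when $\cH(x)$ has characteristic zero or nontrivial valuation), the latter also giving the supplementary resultant condition $R_{v}(T) = \Res_{S}(P_{v}(S) - T, P_{v}'(S)) \neq 0$ on $\overline{C}(s_{v}, s_{v})$, after possibly perturbing $P_{v}$ slightly to make it separable as in the proof of Proposition~\ref{prop:CRcorps}.

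\textbf{Main obstacle.} The delicate point is coordinating all the shrinkings simultaneously: the tree $\cT$ is finite, so there are finitely many nodes, but for each node one must shrink $V$ (and adjust $a$ and the radii) to ensure adapted-ness, well-cut-ness, \emph{and} the $(\cO N)$ condition, while keeping the combinatorial shape of the tree unchanged — in particular making sure that the inequalities $|P_{v}(x)| \neq |P_{v'}(x)|$ defining well-cut-ness in the fibre propagate to a whole compact neighbourhood, and that the couronnes $\{|\tilde P_{v}| = s_{v}\}$ remain disjoint from $\{|\tilde P_{v'}| = s_{v'}\}$. This is a finite iterated-perturbation argument, routine in spirit but requiring care; I expect it to be the technically heaviest part, and it is exactly the relative analogue of the bookkeeping already carried out in the absolute case of Proposition~\ref{prop:CRcorps}.
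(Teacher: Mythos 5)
Your skeleton is the same as the paper's: apply Proposition~\ref{prop:CRcorps} to~$E$ inside the fibre $\pi^{-1}(x)\simeq\E{1}{\cH(x)}$, replace the polynomials $P_{v}\in\cH(x)[T]$ by nearby elements of $\cO_{x}[T]$ using the density of the image of $\cO_{X,x}$ in~$\cH(x)$, then invoke Proposition~\ref{prop:systemeCR} at each internal node, with Corollaire~\ref{cor:BVcompactN} or Lemme~\ref{lem:conditionscar0} supplying the $(\cO N_{P_{v}(S)-T})$ input in the tr\`es d\'ecent case. But your execution drifts off target on the central step. The tree of compacts demanded by the statement lives entirely in the fibre: every label $K(v)$ is a subset of $E\subset\pi^{-1}(x)$, and only the \emph{polynomials} are required to have coefficients in~$\cO_{x}$. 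You relocate the tree into a relative disc $\overline{D}_{V}(\tilde{P};a)$ over a compact neighbourhood~$V$ of~$x$ in~$X$ and then worry about propagating well-cut-ness and adaptedness ``to a whole compact neighbourhood''; that propagation is the content of the \emph{next} statement (Corollaire~\ref{cor:CRfibreum}), not of this one. Here the base compact fed into Proposition~\ref{prop:systemeCR} is just $\{x\}$, and neighbourhoods of~$x$ enter only through the hypothesis $(\cO N_{P(S)-T})$ needed for the strong case.

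Consequently the ``main obstacle'' you isolate --- coordinating simultaneous shrinkings of~$V$, of~$a$ and of the radii~$s_{v}$ --- does not exist, and seeing why is the one real idea of the proof. Since $\cH(x)$ is ultrametric and $E$ is compact, if $\tilde{P}_{v}\in\cO_{x}[T]$ is chosen close enough to~$P_{v}$ then $|(\tilde{P}_{v}-P_{v})(y)|<s_{v}$ for every $y\in E$, so the ultrametric inequality forces the three loci $\{y : |P_{v}(y)|\bowtie s_{v}\}$, $\bowtie\in\{\le,\ge,=\}$, to be \emph{literally unchanged} when $P_{v}$ is replaced by~$\tilde{P}_{v}$. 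Hence no label $K(v)$ changes, no radius is touched, well-cut-ness, adaptedness to~$\cU$ and the identification of the leaf intersections as \'etoiles (couronnes $\overline{C}(P_{v};s_{v},s_{v})$, cf.\ Remarque~\ref{rem:biendecoupe}) are inherited verbatim, and the resultant condition survives a further small perturbation by continuity of $|R_{v}|$ on the compact $\overline{C}(s_{v},s_{v})$. Your iterated-perturbation bookkeeping, besides being unnecessary, modifies the radii and hence the compacts, so at best it would produce a different tree from the one whose properties you established in the fibre, and you would then have to re-verify adaptedness and the leaf-intersection claim from scratch.
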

\begin{proof}
D'apr\`es la proposition~\ref{prop:CRcorps} appliqu\'ee \`a~$E$, il existe un arbre binaire~$\cT$ et une famille $\cE = (P_{v},s_{v})_{v\in \cI(\cT)}$ d'\'el\'ements de $\cH(x)[T]\times\R_{>0}$, les $P_{v}$ \'etant irr\'eductibles, tels que l'arbre binaire de compacts \'el\'ementaire $\cT(\cE)$ sur~$E$ soit bien d\'ecoup\'e et adapt\'e \`a~$\cU$. Si~$\cH(x)$ est parfait ou de valuation non triviale, on peut supposer que, pour tout n{\oe}ud interne~$v$ de~$\cT$, le polyn\^ome $R_{v}(T) = \Res_{S}(P_{v}(S)-T,P_{v}'(S))$ ne s'annule pas sur $\overline{C}(s_{v},s_{v})$.

L'image de~$\cO_{X,x}$ dans~$\cH(x)$ \'etant dense, on peut modifier les polyn\^omes~$P_{v}$, o\`u~$v$ est un n{\oe}ud interne de~$\cT_{x}$, de fa\c{c}on que leurs coefficients appartiennent \`a~$\cO_{X,x}$ sans modifier les ensembles $\{x\in \E{1}{\cH(x)} : |P_{v}(x)| \bowtie s_{v}\}$, avec $\bowtie\, \in \{\le, \ge, =\}$. Les propri\'et\'es de l'arbre~$\cT_{x}(\cE_{x})$ sont alors pr\'eserv\'ees. Puisque les \'etiquettes n'ont pas \'et\'e modif\'ees, les intersections des \'etiquettes de feuilles distinctes sont encore soit vides, soit des \'etoiles (\cf~remarque~\ref{rem:biendecoupe}). Si~$\cH(x)$ est parfait ou de valuation non triviale, on peut \'egalement supposer que la condition de non annulation du r\'esultant \'enonc\'ee ci-dessus reste satisfaite. 

Pour tout n{\oe}ud interne~$v$ de~$\cT_{x}(\cE_{x})$, la proposition~\ref{prop:systemeCR} assure qu'il existe un syst\`eme de Cousin-Runge associ\'e \`a $(K(v^-),K(v^+))$. Si~$x$ est tr\`es d\'ecent, il s'agit m\^eme d'un syst\`eme de Cousin-Runge fort, d'apr\`es le corollaire~\ref{cor:BVcompactN} si~$x$ est tr\`es typique et le lemme~\ref{lem:conditionscar0} sinon. 
\end{proof}

\begin{coro}\label{cor:CRfibreum}\index{Arbre!sur un disque}\index{Disque!arbre sur un|see{Arbre}}
Soit~$V$ une partie compacte et d\'ecente de~$X_{\um}$ et soit $\br \in \R_{\ge 0}^n$. Si~$V$ poss\`ede assez d'arbres de Cousin (resp. de Cousin-Runge), alors il en va de m\^eme pour $\overline{D}_{V}(\br)$.
\end{coro}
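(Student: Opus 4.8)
\textbf{Plan de la preuve du corollaire~\ref{cor:CRfibreum}.}

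Le plan est de r\'eduire le passage de~$V$ \`a $\overline{D}_{V}(r_{1},\dotsc,r_{n})$ au cas d'une seule variable, puis, dans ce cas, de combiner un arbre de Cousin(-Runge) donn\'e sur la base~$V$ avec les arbres construits fibre \`a fibre au corollaire~\ref{cor:CRcorps}, ou plus pr\'ecis\'ement au corollaire~\ref{cor:CREx}, apr\`es les avoir propag\'es \`a un voisinage. Une r\'ecurrence imm\'ediate sur~$n$ permet de supposer $n=1$ ; on pose $r:=r_{1}$ et $\A^1_{X} := X\times_{\cA}\AunA$, $\pi\colon \A^1_{X}\to X$ la projection. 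Soit $\cU$ un recouvrement ouvert de $\overline{D}_{V}(r)$. Comme $\overline{D}_{V}(r)$ est compact, on peut supposer $\cU$ fini. Pour chaque point $b\in V$, le disque $\overline{D}_{b}(r) = \pi^{-1}(b) \cap \overline{D}_{V}(r)$ est une \'etoile de~$\pi^{-1}(b)$ ; le corollaire~\ref{cor:CREx} (appliqu\'e en~$b$, qui est d\'ecent puisque~$V$ est d\'ecente) fournit un arbre \'el\'ementaire bien d\'ecoup\'e, adapt\'e \`a~$\cU$ et de Cousin(-Runge), dont les donn\'ees $(P_{v},s_{v})$ ont leurs polyn\^omes \`a coefficients dans~$\cO_{X,b}$, donc d\'efinis sur un voisinage compact de~$b$.

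Ensuite je propagerais cet arbre \`a un voisinage de~$b$ dans~$B$. Les polyn\^omes~$P_{v}$ sont $\cB$-d\'efinis sur un voisinage compact spectralement convexe~$W_{b}$ de~$b$ ; quitte \`a r\'etr\'ecir~$W_{b}$, les propri\'et\'es d'inclusion ($K(v^-)\cap K(v^+)$ bien d\'ecoup\'e, arbre adapt\'e \`a~$\cU$) restent valables sur $\overline{D}_{W_{b}}(r)$ par un argument de continuit\'e analogue \`a ceux de la preuve du corollaire~\ref{cor:BVfineNPST}, et, par le lemme~\ref{lem:conditionscar0} ou le corollaire~\ref{cor:BVcompactN} combin\'es \`a la proposition~\ref{prop:systemeCR}, chaque couple $(K(v^-),K(v^+))$ admet encore un syst\`eme de Cousin(-Runge) au-dessus de $\overline{D}_{W_{b}}(r)$. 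Notons $\cT_{b}$ l'arbre binaire de compacts sur $\overline{D}_{W_{b}}(r)$ ainsi obtenu (\`a intersections simples, adapt\'e \`a~$\cU$, de Cousin(-Runge)). Par compacit\'e de~$V$, on extrait un recouvrement fini $V\subset \bigcup_{i=1}^{k} \mathring{W}_{i}$, o\`u $W_{i} := W_{b_{i}}$ et $\cT_{i} := \cT_{b_{i}}$.

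Enfin j'utiliserais l'hypoth\`ese que~$V$ poss\`ede assez d'arbres de Cousin(-Runge). Appliqu\'ee au recouvrement ouvert $\{\mathring{W}_{1},\dotsc,\mathring{W}_{k}\}$ de~$V$, elle fournit un arbre de Cousin(-Runge)~$\cS$ sur~$V$ adapt\'e \`a ce recouvrement ; d'apr\`es le lemme~\ref{lem:CRrelatif}, l'arbre~$\cS'$ obtenu en rempla\c{c}ant chaque \'etiquette~$K$ de~$\cS$ par $\overline{D}_{K}(r)$ est un arbre de Cousin(-Runge) sur $\overline{D}_{V}(r)$. Chaque feuille de~$\cS'$ est de la forme $\overline{D}_{K}(r)$ avec $K$ contenu dans un~$\mathring{W}_{i}$, et on y greffe l'arbre $\cT_{i}$ restreint \`a $\overline{D}_{K}(r)$ (via la proposition~\ref{prop:restrictionserie} pour restreindre les syst\`emes de Cousin-Runge le long de~$\overline{D}_{K}(r)\subset\overline{D}_{W_{i}}(r)$, ce qui demande de v\'erifier la d\'ecence en descendant sur~$K$ — automatique puisque~$K\subset V$ d\'ecente). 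L'arbre total obtenu est un arbre de Cousin(-Runge) sur $\overline{D}_{V}(r)$ adapt\'e \`a~$\cU$. Le point d\'elicat sera la v\'erification que les restrictions de syst\`emes de Cousin-Runge (le long d'un rapetissement de la base et le long d'inclusions de disques) restent des syst\`emes de Cousin-Runge avec des constantes contr\^ol\'ees : c'est pr\'ecis\'ement ce que codifie le lemme~\ref{lem:CRrelatif} pour la direction \og disque \fg{} et la proposition~\ref{prop:systemeCR} pour la direction \og base \fg{}, de sorte qu'il s'agit surtout d'une v\'erification soigneuse de compatibilit\'e des donn\'ees plus que d'une nouvelle difficult\'e conceptuelle.
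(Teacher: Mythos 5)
Votre preuve est correcte et suit essentiellement la m\^eme d\'emarche que celle du texte~: r\'eduction \`a $n=1$, arbres \'el\'ementaires fibre \`a fibre via le corollaire~\ref{cor:CREx}, propagation \`a un voisinage compact de la base, puis greffe sur un arbre de la base transform\'e par le lemme~\ref{lem:CRrelatif}. Signalons seulement que l'invocation du lemme~\ref{lem:conditionscar0} ou du corollaire~\ref{cor:BVcompactN} est superflue ici~: la premi\`ere partie de la proposition~\ref{prop:systemeCR} (avec la remarque~\ref{rem:biendecoupe}) suffit pour obtenir des syst\`emes de Cousin(-Runge) ordinaires sous la seule hypoth\`ese de d\'ecence, ces conditions suppl\'ementaires n'\'etant requises que pour la version forte (corollaire~\ref{cor:CRfortfibreum}).
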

\begin{proof}
En raisonnant par r\'ecurrence, on se ram\`ene imm\'ediatement au cas o\`u~$n=1$. Notons $r=r_{1}$. Supposons que~$V$ poss\`ede assez d'arbres de Cousin (resp. de Cousin-Runge). Soit~$\cU$ un recouvrement ouvert de~$\overline{D}_{V}(r)$. 

Soit~$x$ un point de~$X$. D'apr\`es le corollaire~\ref{cor:CREx} appliqu\'e \`a $\overline{D}_{x}(r)$, il existe un arbre binaire~$\cT_{x}$ et une famille $\cE_{x} = (P_{v},s_{v})_{v\in \cI(\cT_{x})}$ d'\'el\'ements de $\cO_{x}[T]\times\R_{>0}$ tels que l'arbre binaire de compacts \'el\'ementaire $\cT_{x}(\cE_{x})$ sur~$\overline{D}_{x}(r)$ soit bien d\'ecoup\'e et adapt\'e \`a~$\cU$. 

Consid\'erons un voisinage ouvert~$V_{x}$ de~$x$ dans~$V$ sur lequel les coefficients des polyn\^omes~$P_{v}$ sont d\'efinis. Soit~$W$ un voisinage compact de~$x$ dans~$V_{x}$. 
Pour tout n\oe ud interne~$v$ de~$\cT_{x}$, on notera encore~$P_{v}$ un repr\'esentant de~$P_{v}$ dans $\cO(W)[T]$. Consid\'erons la famille  $\cE_{W} = (P_{v},s_{v})_{v\in \cI(\cT_{x})}$ de $\cO(W)[T]\times\R_{>0}$ et l'arbre de compacts \'el\'ementaire $\cT_{x}(\cE_{W})$ associ\'e sur $\overline{D}_{W}(r)$. Puisque~$\cT_{x}(\cE_{x})$ est bien d\'ecoup\'e et adapt\'e \`a~$\cU$, quitte \`a restreindre~$V_{x}$, on peut \'egalement supposer que $\cT_{x}(\cE_{W})$ est bien d\'ecoup\'e et adapt\'e \`a~$\cU$. 

En outre, pour tout n{\oe}ud interne~$v$ de~$\cT_{x}(\cE_{W})$, d'apr\`es la remarque~\ref{rem:biendecoupe} et la proposition~\ref{prop:systemeCR}, il existe un syst\`eme de Cousin (resp. Cousin-Runge) associ\'e \`a $(K(v^-),K(v^+))$. 

La famille $\cV = (V_{x})_{x\in X}$ forme un recouvrement ouvert de~$V$. Puisque~$V$ poss\`ede assez d'arbres de Cousin (resp. Cousin-Runge), il existe un arbre de Cousin (resp. Cousin-Runge) $\cT$ sur~$V$ adapt\'e \`a~$\cV$. Consid\'erons l'arbre binaire de compacts~$\cT'$ obtenu en rempla\c{c}ant dans~$\cT$ chaque \'etiquette~$K$ par l'\'etiquette $\overline{D}_{K}(r)$. D'apr\`es le lemme~\ref{lem:CRrelatif}, $\cT'$ est encore un arbre de Cousin (resp. de Cousin-Runge).

Finalement, pour toute feuille~$f$ de~$\cT$, consid\'erons un point~$x_{f}$ de~$V$ tel que $K(f) \subset V_{x_{f}}$ et rempla\c{c}ons la feuille correspondante de~$\cT'$ par l'arbre~$\cT_{x_{f}}(\cE_{K(f)})$. Le r\'esultat est un arbre de Cousin (resp. de Cousin-Runge) sur~$\overline{D}_{X}(r)$ adapt\'e \`a~$\cU$. 
\end{proof}

\begin{coro}\label{cor:CRfortfibreum}\index{Arbre!sur un disque}
Soit~$V$ une partie compacte de~$X_{\um}$ et soit $\br \in \R_{\ge 0}^n$.
Supposons que l'une des conditions suivantes est satisfaite~:
\begin{enumerate}[i)]
\item tous les points de~$V$ sont ultram\'etriques tr\`es typiques~;
\item en tout point~$b$ de~$V$, le corps r\'esiduel compl\'et\'e~$\cH(b)$ est de caract\'eristique nulle ou de valuation non triviale.
\end{enumerate}
Si~$V$ poss\`ede assez d'arbres de Cousin forts (resp. de Cousin-Runge forts), alors il en va de m\^eme pour $\overline{D}_{V}(\br)$.
\end{coro}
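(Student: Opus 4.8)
\textbf{Plan de preuve du corollaire~\ref{cor:CRfortfibreum}.} Le but est d'adapter la preuve du corollaire~\ref{cor:CRfibreum} en rempla\c{c}ant partout ``assez d'arbres de Cousin(-Runge)'' par ``assez d'arbres de Cousin(-Runge) forts'', ce qui revient \`a s'assurer que tous les syst\`emes de Cousin(-Runge) qui apparaissent en cours de route peuvent \^etre choisis forts. Comme d'habitude, une r\'ecurrence imm\'ediate sur~$n$ ram\`ene au cas $n=1$ ; on pose $r = r_{1}$. On fixe donc un recouvrement ouvert~$\cU$ de~$\overline{D}_{V}(r)$ et l'on veut construire un arbre de Cousin(-Runge) fort sur~$\overline{D}_{V}(r)$ adapt\'e \`a~$\cU$.

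Le point de d\'epart est d'observer que, sous l'une ou l'autre des hypoth\`eses i) ou ii), tout point~$b$ de~$V$ est tr\`es d\'ecent (au sens de la d\'efinition~\ref{defi:honnete}) : dans le cas i) c'est la d\'efinition de point ultram\'etrique tr\`es typique, et dans le cas ii) c'est imm\'ediat puisque $\cH(b)$ est de caract\'eristique nulle ou de valuation non triviale. Il en d\'ecoule en particulier que~$V$ est d\'ecente. On peut alors reprendre mot pour mot la premi\`ere moiti\'e de la preuve du corollaire~\ref{cor:CRfibreum} : pour chaque point~$x$ de~$X$, on applique le corollaire~\ref{cor:CREx}, mais cette fois dans sa version \emph{forte} --- ce qui est licite pr\'ecis\'ement parce que~$x$ est tr\`es d\'ecent --- pour obtenir un arbre binaire~$\cT_{x}$ et une famille $\cE_{x} = (P_{v},s_{v})_{v \in \cI(\cT_{x})}$ \`a coefficients dans~$\cO_{x}[T]$ tels que l'arbre binaire de compacts \'el\'ementaire $\cT_{x}(\cE_{x})$ sur~$\overline{D}_{x}(r)$ soit bien d\'ecoup\'e, adapt\'e \`a~$\cU$, et de Cousin(-Runge) fort. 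On \'epaissit ensuite en un voisinage compact~$W$ de~$x$ comme dans la preuve du corollaire~\ref{cor:CRfibreum}, et il faut v\'erifier que les syst\`emes de Cousin(-Runge) fournis par la proposition~\ref{prop:systemeCR} le long des ar\^etes de~$\cT_{x}(\cE_{W})$ restent forts quitte \`a r\'etr\'ecir~$W$ ; c'est l\`a qu'intervient la condition suppl\'ementaire finale de la proposition~\ref{prop:systemeCR}, laquelle est assur\'ee par le corollaire~\ref{cor:BVcompactN} dans le cas i) (apr\`es avoir \'eventuellement restreint~$W$ pour qu'il soit union finie de compacts admettant un bord analytique fort fini) et par le lemme~\ref{lem:conditionscar0} dans le cas ii) (en utilisant la libert\'e de choisir les~$P_{v}$ s\'eparables lorsque $\cH(b)$ est parfait ou de valuation non triviale, de sorte que le r\'esultant ne s'annule pas sur les couronnes pertinentes). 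On obtient ainsi un recouvrement ouvert $\cV = (V_{x})_{x\in X}$ de~$V$ tel que pour chaque~$x$ on dispose d'un arbre \'el\'ementaire sur $\overline{D}_{W}(r)$ bien d\'ecoup\'e, adapt\'e \`a~$\cU$, et de Cousin(-Runge) fort.

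On applique alors l'hypoth\`ese : $V$ poss\`ede assez d'arbres de Cousin(-Runge) forts, donc il existe un arbre de Cousin(-Runge) fort~$\cT$ sur~$V$ adapt\'e \`a~$\cV$. On le rel\`eve en un arbre binaire de compacts~$\cT'$ sur~$\overline{D}_{V}(r)$ en rempla\c{c}ant chaque \'etiquette~$K$ par~$\overline{D}_{K}(r)$ ; d'apr\`es la version \emph{forte} du lemme~\ref{lem:CRrelatif} (le lemme affirme bien que l'existence d'un syst\`eme de Banach fort est pr\'eserv\'ee, donc aussi celle d'un syst\`eme de Cousin-Runge fort), $\cT'$ reste un arbre de Cousin(-Runge) fort. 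Enfin, pour chaque feuille~$f$ de~$\cT$, on choisit un point~$x_{f}$ avec $K(f) \subset V_{x_{f}}$ et l'on greffe l'arbre~$\cT_{x_{f}}(\cE_{K(f)})$ \`a la place de la feuille correspondante de~$\cT'$ ; comme tous les syst\`emes greff\'es sont forts et que les recollements aux n\oe uds de greffe le sont aussi, l'arbre obtenu est un arbre de Cousin(-Runge) fort sur~$\overline{D}_{V}(r)$, et il est adapt\'e \`a~$\cU$ puisque ses feuilles sont soit des feuilles de l'un des~$\cT_{x_{f}}(\cE_{K(f)})$, soit des \'etiquettes~$\overline{D}_{K(f)}(r)$ contenues dans un \'el\'ement de~$\cV$, donc dans un voisinage \'el\'ementaire subordonn\'e \`a~$\cU$. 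Ceci ach\`eve la construction.

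L'obstacle principal n'est pas conceptuel --- la trame est exactement celle du corollaire~\ref{cor:CRfibreum} --- mais r\'eside dans le soin \`a apporter \`a la propagation de la propri\'et\'e ``fort'' \`a travers les trois op\'erations (application du corollaire~\ref{cor:CREx} fort, rel\`evement via le lemme~\ref{lem:CRrelatif}, et greffage), et surtout dans la v\'erification que la condition finale de la proposition~\ref{prop:systemeCR} reste satisfaite apr\`es \'epaississement des fibres en compacts~$W$ : il faut distinguer les deux cas i) et ii) et invoquer respectivement le corollaire~\ref{cor:BVcompactN} et le lemme~\ref{lem:conditionscar0}, en prenant garde que dans le cas i) l'\'epaississement doit pr\'eserver l'existence d'un bord analytique fort fini (possible puisque les points de~$V$ sont ultram\'etriques tr\`es typiques, donc admettent un syst\`eme fondamental de tels voisinages, ce qui par compacit\'e de~$W$ donne le r\'esultat). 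C'est essentiellement de la bureaucratie, mais c'est le seul endroit o\`u les hypoth\`eses i) et ii) du corollaire sont r\'eellement utilis\'ees.
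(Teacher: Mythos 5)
Votre preuve est correcte et suit essentiellement la m\^eme d\'emarche que celle du texte~: on reprend la construction des arbres $\cT_{x}(\cE_{x})$ et $\cT_{x}(\cE_{W})$ de la preuve du corollaire~\ref{cor:CRfibreum}, et l'on assure le caract\`ere \emph{fort} des syst\`emes aux n\oe uds internes via le corollaire~\ref{cor:BVcompactN} dans le cas~i) et via le lemme~\ref{lem:conditionscar0} dans le cas~ii), en s'appuyant sur l'assertion finale du corollaire~\ref{cor:CREx} pour la non-annulation du r\'esultant, avant de conclure par le rel\`evement (lemme~\ref{lem:CRrelatif}, version forte) et le greffage comme dans le corollaire~\ref{cor:CRfibreum}. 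Rien \`a redire.
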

\begin{proof}
Reprenons le d\'ebut de la d\'emonstration du corollaire~\ref{cor:CRfibreum} pour construire des arbres binaires de compacts~$\cT_{x}(\cE_{x})$ et~$\cT_{x}(\cE_{W})$. 

Soit~$v$ un n\oe ud interne de~$\cT_{x}(\cE_{W})$. Pour poursuivre, on a besoin d'un syst\`eme de Cousin fort ou Cousin-Runge fort associ\'e \`a $(K(v^-),K(v^+))$. Dans le cas~i), son existence d\'ecoule de la remarque~\ref{rem:biendecoupe}, la proposition~\ref{prop:systemeCR} et le corollaire~\ref{cor:BVcompactN}. 

Dans le cas~ii), remarquons tout d'abord que l'assertion finale du corollaire~\ref{cor:CREx} permet de supposer que le polyn\^ome $R_{v}(T) = \Res_{S}(P_{v}(S)-T,P_{v}'(S))$ ne s'annule pas sur~$\overline{C}_{x}(s_{v},s_{v})$. Quitte \`a restreindre~$V_{x}$, on peut supposer qu'ils ne s'annule pas non plus sur~$\overline{C}_{W}(s_{v},s_{v})$. L'existence d'un syst\`eme de Cousin fort ou de Cousin-Runge fort d\'ecoule alors de la remarque~\ref{rem:biendecoupe}, de la proposition~\ref{prop:systemeCR} et du lemme~\ref{lem:conditionscar0}. 

On peut alors conclure en reprenant la fin de la d\'emonstration du corollaire~\ref{cor:CRfibreum}.
\end{proof}
\index{Arbre|)}

\section{Th\'eor\`emes A et B sur les polydisques ferm\'es relatifs}\label{sec:AB}

Dans cette section, nous d\'emontrons que les polydisques ferm\'es relatifs sur des bases convenables satisfont les th\'eor\`emes~A et~B. Nous commencerons par traiter le cas o\`u la base est un point, archim\'edien d'abord, ultram\'etrique ensuite, avant de traiter le cas d'une base plus grande.

Rappelons que, suivant nos conventions, les diff\'erents disques et polydisques relatifs que nous rencontrons dans cette section sont munis de la structure surconvergente induite par leur plongement dans l'espace affine relatif. En particulier, la structure d'espace localement annel\'e d'un disque relatif au-dessus d'un point n'est pas, en g\'en\'eral, celle du disque sur un corps valu\'e qui lui est sous-jacent.

\subsection{Fibres archim\'ediennes}

Commen\c{c}ons par \'etablir les th\'eor\`emes~A et~B pour les fibres archim\'ediennes de disques de Berkovich. Ils d\'ecoulent assez directement de r\'esultats classiques dans le cadre des espaces analytiques complexes.

\begin{prop}\label{prop:C}
Munissons le corps~$\C$ muni de la valeur absolue~$\va_{\infty}^\eps$, avec $\eps\in\intof{0,1}$. Soit $\br \in\R_{\ge 0}^n$ et soit~$\cF$ un faisceau coh\'erent sur $\overline{D}_{\C}(\br)$. Alors, $\cF$ est globalement engendr\'e et, pour tout $q\ge 1$, on a
$H^q(\overline{D}_{\C}(\br),\cF)=0$.
\end{prop}
\begin{proof}
Le fait que la valeur absolue puisse \^etre une puissance diff\'erente de la valeur absolue usuelle n'influe pas sur le r\'esultat et on peut supposer que $\eps=1$, ce qui nous ram\`ene au cas des espaces analytiques complexes usuels. Il est alors bien connu que les disques ferm\'es sont des espaces de Stein et qu'ils satisfont les propri\'et\'es de l'\'enonc\'e. 
\end{proof}

Nous allons maintenant nous int\'eresser aux espaces analytiques sur~$\R$ (muni de la valeur absolue~$\va^\eps$, avec $\eps\in\intof{0,1}$) au sens de V.~Berkovich. Rappelons que l'extension des scalaires $\R \to \C$ permet d'associer \`a tout espace de Berkovich~$Y$ sur~$\R$ un espace analytique complexe~$Y_{\C}$ et un morphisme $\pi \colon Y_{\C} \to Y$, qui n'est autre que le quotient par la conjugaison complexe (\cf~lemme~\ref{lem:AnC}). En particulier, pour tout point~$y$ de~$Y$, la fibre $\pi^{-1}(y)$ contient un point si $\Hs(y)\simeq \R$ et deux points conjugu\'es si $\Hs(y)\simeq \C$. 

L'ingr\'edient cl\'e pour passer de~$\C$ \`a~$\R$ est le r\'esultat suivant, d\^u \`a Q.~Liu (\cf~\cite[proposition~2]{LiuContre-Exemple}) et \'enonc\'e sous cette forme dans \cite[th\'eor\`eme~6.11]{A1Z} (\`a une hypoth\`ese manquante pr\`es).

\begin{lemm}\label{lem:Liu}\index{Theoreme@Th\'eor\`eme!B}
Soit $\varphi \colon Z \to Y$ un morphisme surjectif d'espaces analytiques. Supposons que l'espace~$Z$ et les ferm\'es de Zariski de~$Y$ de support diff\'erent de~$Y$ satisfont le th\'eor\`eme~B. Supposons \'egalement qu'il existe un point~$y$ tel que~$\{y\}$ soit l'espace topologique sous-jacent \`a un ferm\'e analytique de~$Y$ et que la fibre $(\varphi_{\ast}\cO_{Z})_{y}$ soit un $\cO_{Y,y}$-module libre de rang fini. Alors, l'espace~$Y$ satisfait le th\'eor\`eme~B.
\qed
\end{lemm}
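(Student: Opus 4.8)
The plan is to descend Theorem~B from $Z$ to $Y$ along $\varphi$, using the point $y$ to localize the obstruction and the hypothesis on proper Zariski closed subsets to absorb it by a Noetherian/dimension induction.

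First I would set up the comparison. Given a coherent sheaf $\cF$ on $Y$ and $q\ge 1$, the pullback $\varphi^{*}\cF$ is coherent on $Z$, so $H^{q}(Z,\varphi^{*}\cF)=0$ because $Z$ satisfies Theorem~B; assuming (as in the intended applications, and as should presumably figure among the hypotheses) that $\varphi$ is finite, so that $\varphi_{*}$ is exact on coherent sheaves and preserves cohomology by Theorem~\ref{th:Hqfini}, this yields $H^{q}(Y,\varphi_{*}\varphi^{*}\cF)=0$ for every $q\ge 1$. Thus $\varphi_{*}\varphi^{*}\cF$ is acyclic, and the task becomes to compare $\cF$ with $\varphi_{*}\varphi^{*}\cF$ through the unit morphism $u\colon\cF\to\varphi_{*}\varphi^{*}\cF$, via the two exact sequences $0\to\cN\to\cF\to\cI\to 0$ and $0\to\cI\to\varphi_{*}\varphi^{*}\cF\to\cC\to 0$, where $\cN=\ker u$, $\cI=\operatorname{im}u$ and $\cC=\operatorname{coker}u$.

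Next comes the local analysis at $y$. Since $\varphi$ is surjective, $\varphi^{-1}(y)\ne\varnothing$, and Proposition~\ref{prop:fetoileenbasexact} identifies $(\varphi_{*}\cO_{Z})_{y}$ with $\prod_{x\in\varphi^{-1}(y)}\cO_{Z,x}$, which is free of finite rank over $\cO_{Y,y}$ by hypothesis; the reduction of the unit $\cO_{Y,y}\to(\varphi_{*}\cO_{Z})_{y}$ modulo $\m_{y}$ is the nonzero vector $(1,\dots,1)$ in $\bigoplus_{x}\cH(x)$, so Nakayama's lemma shows this unit is a split monomorphism of $\cO_{Y,y}$-modules. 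Applying $\cF_{y}\otimes_{\cO_{Y,y}}-$ (projection formula for the finite morphism $\varphi$), the stalk $u_{y}$ is again a split monomorphism, so $\cN_{y}=0$. Hence $\operatorname{Supp}(\cN)$, which is a Zariski closed subset of $Y$ by Remark~\ref{rem:supportfaisceau}, does not contain $y$ and is therefore proper; by hypothesis and Theorem~\ref{th:Hqfini}, $H^{q}(Y,\cN)=0$ for all $q\ge 1$, so $H^{q}(Y,\cF)\simeq H^{q}(Y,\cI)$ for $q\ge 1$. Combined with the acyclicity of $\varphi_{*}\varphi^{*}\cF$, the second exact sequence then gives $H^{q}(Y,\cF)\simeq H^{q-1}(Y,\cC)$ for $q\ge 2$; iterating the whole construction with $\cC$ in place of $\cF$ reduces the vanishing of all higher cohomology of all coherent sheaves on $Y$ to the single statement that $H^{1}(Y,\cG)=0$ for every coherent sheaf $\cG$.

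The hard part is exactly this $H^{1}$-vanishing: unwinding the same two exact sequences for $\cG$ shows it is equivalent to the surjectivity of $H^{0}(Z,\varphi^{*}\cG)\to H^{0}(Y,\operatorname{coker}(\cG\to\varphi_{*}\varphi^{*}\cG))$, a Theorem~A--type statement that does not follow formally from Theorem~B on $Z$. Here one must use the hypotheses in full — in particular that $\{y\}$ is a genuine closed analytic subset, which forces a clean model for $(\varphi_{*}\cO_{Z})_{y}$ — together with a finer dévissage of the cokernel along its $\{y\}$-torsion and an induction on the dimension of $Y$, fed by the assumption that all proper closed subsets satisfy Theorem~B. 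This is the step where I expect the real difficulty to lie, and it is the content of Q.~Liu's argument to which we refer rather than reproving it here.
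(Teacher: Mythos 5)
Note first that the paper gives no proof of this lemma: it is quoted from Q.~Liu (\emph{loc.\ cit.}, proposition~2) and from \cite[th\'eor\`eme~6.11]{A1Z}, and the \og\qedsymbol\fg{} after the statement records that fact. Your attempt therefore has to stand on its own, and as written it does not, for two reasons. The first is that you inject the hypothesis that $\varphi$ is finite, which is not in the statement (only surjectivity of $\varphi$ and freeness of the single stalk $(\varphi_{\ast}\Os_{Z})_{y}$ are assumed). Everything in your opening paragraph --- coherence of $\varphi_{\ast}\varphi^{\ast}\cF$, the identification $H^{q}(Z,\varphi^{\ast}\cF)\simeq H^{q}(Y,\varphi_{\ast}\varphi^{\ast}\cF)$ via le th\'eor\`eme~\ref{th:Hqfini}, the stalk description of la proposition~\ref{prop:fetoileenbasexact}, and the projection formula --- rests on finiteness. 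In the paper's application ($Y_{\C}\to Y$, quotient by conjugation) the morphism is indeed finite, but your argument then proves a statement with a stronger hypothesis than the lemma.

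The second and more serious gap is that you stop exactly at the point where the lemma has content. Your d\'evissage (split injectivity of $u_{y}$ by Nakayama and surjectivity of $\varphi$; properness of the support of $\ker u$, hence its acyclicity; the shift $H^{q}(Y,\cF)\simeq H^{q-1}(Y,\cC)$ for $q\ge 2$) is correct and reduces Theorem~B on $Y$ to the single assertion that $H^{0}(Y,\varphi_{\ast}\varphi^{\ast}\cG)\to H^{0}(Y,\mathrm{coker}(u))$ is surjective for every coherent~$\cG$. That surjectivity is precisely the part that does not follow formally from Theorem~B on $Z$ and on the proper closed subsets: the splitting of $u$ at $y$ only controls the stalk at $y$, and globalizing it is where the hypothesis that $\{y\}$ underlies a closed analytic subset must be used. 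Writing that this step \og is the content of Q.~Liu's argument to which we refer rather than reproving it here\fg{} means you have produced a reduction of the lemma to its hardest case, not a proof of it. To complete the argument you would need to supply this $H^{1}$-vanishing (for instance along the lines of Liu's original d\'evissage through the ideal sheaf of $\{y\}$), since the easy formal part is all that currently appears.
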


\begin{prop}\label{prop:R}
Consid\'erons le corps~$\R$ muni de la valeur absolue~$\va_{\infty}^\eps$, avec $\eps\in\intof{0,1}$. Soit $\br \in \R_{\ge 0}^n$ et soit~$\cF$ un faisceau coh\'erent sur $\overline{D}_{\R}(\br)$. Alors, $\cF$ est globalement engendr\'e et, pour tout $q\ge 1$, on a
$H^q(\overline{D}_{\R}(\br),\cF)=0$.
\end{prop}
\begin{proof}
Munissons $\overline{D}_{\R}(\br)$ et $\overline{D}_{\C}(\br)$ de la topologie dont les ferm\'es sont les ferm\'es analytiques. D'apr\`es \cite[proposition I,7]{Frisch}, l'espace topologique $\overline{D}_{\C}(\br)$ est noeth\'erien et il en va donc de m\^eme de l'espace $\overline{D}_{\R}(\br)$ sur lequel il se surjecte contin\^ument.

Supposons, par l'absurde, qu'il existe un ferm\'e analytique de $\overline{D}_{\R}(\br)$ ne satisfaisant pas le th\'eor\`eme~B. Par noeth\'erianit\'e, il en existe un qui soit minimal pour cette propri\'et\'e~; notons-le~$Y$. L'espace~$Y_{\C}$ est n\'ecessairement de dimension strictement positive et l'espace~$Y$ contient donc un point~$y$ donc le corps r\'esiduel est isomorphe \`a~$\C$. Le morphisme $\pi \colon Y_{\C} \to Y$ d\'efinit alors un isomorphisme d'un voisinage de chacun des deux ant\'ec\'edents de~$y$ vers un voisinage de~$y$. Par cons\'equent, la fibre $(\pi_{\ast}\cO_{Y_{\C}})_{y}$ est un $\cO_{Y,y}$-module libre de rang deux. D'apr\`es le lemme~\ref{lem:Liu}, $Y$ satisfait le th\'eor\`eme~B et nous aboutissons \`a une contradiction.

Il est par ailleurs classique que, sur un espace qui satisfait le th\'eor\`eme~B et dont tout point est l'espace topologique sous-jacent \`a un ferm\'e analytique, tout faisceau coh\'erent est globalement engendr\'e (\cf~\cite[IV, \S 1, theorem~2]{Gr-Re} ou~\cite[th\'eor\`eme~6.1.9]{A1Z} -- l\`a encore, il manque une hypoth\`ese).
\end{proof}

Soit $K$ le corps $\R$ ou~$\C$ muni de la norme $\nm_{\hyb} = \max(\va_{0},\va_{\infty})$, o\`u~$\va_{0}$ d\'esigne la valeur absolue triviale et~$\va_{\infty}$ la valeur absolue usuelle. Rappelons que le spectre~$\Mc(K)$ est constitu\'e des points associ\'es aux valeurs absolues~$\va_{\infty}^\eps$ pour $\eps\in[0,1]$ (en identifiant~$\va_{\infty}^0$ \`a~$\va_{0}$), \cf~exemple~\ref{ex:corpshybride}.

\begin{prop}\label{prop:fibrearc}
Soit $K$ le corps $\R$ ou~$\C$ muni de la norme $\nm_{\hyb}$. Posons $X=\Mc(K)$. Soit~$b$ un point de~$X$ associ\'e \`a une valeur absolue archim\'edienne (c'est-\`a-dire $\va_{\infty}^\eps$, avec $\eps\in\intof{0,1}$). Soit $\br \in \R_{\ge 0}^n$ et soit $\cF$~un faisceau coh\'erent sur $\overline{D}_{b}(\br)$. Alors, $\cF$ est globalement engendr\'e et, pour tout $q\ge 1$, on a 
$H^q(\overline{D}_{b}(\br),\cF)=0$.
\end{prop}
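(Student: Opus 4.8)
L'id\'ee est de ramener l'\'enonc\'e sur la fibre~$\overline{D}_{b}(r_{1},\dotsc,r_{n})$, o\`u $b \in \cM(K)$ est un point archim\'edien, au cas classique des espaces analytiques complexes d\'ej\`a trait\'e dans les propositions~\ref{prop:C} et~\ref{prop:R}. Le point cl\'e est que, par le th\'eor\`eme~\ref{th:vaarchimedienne}, le corps r\'esiduel compl\'et\'e~$\cH(b)$ s'identifie \`a~$\R$ ou~$\C$ muni d'une puissance $\va_{\infty}^{\eps(b)}$ de la valeur absolue usuelle, avec $\eps(b) \in \intof{0,1}$. Il faut cependant prendre garde au fait que le disque~$\overline{D}_{b}(r_{1},\dotsc,r_{n})$ est, \emph{a priori}, un sous-espace de la droite affine analytique relative~$\E{n}{K}$, et non directement un espace $\cH(b)$-analytique. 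Or, d'apr\`es le lemme~\ref{lem:flot} (ou plut\^ot son analogue \'evident pour les disques, d\'ecoulant de la proposition~\ref{prop:disqueglobal}), la fibre $\pi_{n}^{-1}(b)$ s'identifie, en tant qu'espace annel\'e, \`a l'espace affine analytique~$\E{n}{\cH(b)}$, et l'ensemble~$\overline{D}_{b}(r_{1},\dotsc,r_{n})$ correspond au polydisque ferm\'e~$\overline{D}_{\cH(b)}(r_{1},\dotsc,r_{n})$ de cet espace.

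Premi\`erement, je v\'erifierais soigneusement cette identification au niveau des faisceaux structuraux. La fibre $\pi_{n}^{-1}(b)$ est munie du faisceau structural surconvergent, et il s'agit de montrer que le morphisme naturel vers~$\E{n}{\cH(b)}$ identifie les fonctions surconvergentes sur~$\overline{D}_{b}(r_{1},\dotsc,r_{n})$ avec les fonctions analytiques sur~$\overline{D}_{\cH(b)}(r_{1},\dotsc,r_{n})$. Cela d\'ecoule de la proposition~\ref{prop:disqueglobal} appliqu\'ee \`a la partie~$V = \{b\}$ (qui est pro-rationnelle, donc spectralement convexe, d'apr\`es l'exemple~\ref{ex:pointprorationnel}), combin\'ee \`a l'exemple~\ref{ex:Bpoint} assurant $\cB(\{b\}) = \cH(b)$, et \`a la remarque~\ref{spectral} qui fournit l'isomorphisme d'espaces annel\'es $\E{n}{\cB(V)} \simto \pi_{n,0}^{-1}(V)$ au-dessus de l'int\'erieur. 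Une fois cette identification acquise, un faisceau coh\'erent~$\cF$ sur~$\overline{D}_{b}(r_{1},\dotsc,r_{n})$ se transporte en un faisceau coh\'erent sur~$\overline{D}_{\cH(b)}(r_{1},\dotsc,r_{n})$, et les groupes de cohomologie ainsi que la propri\'et\'e de g\'en\'eration globale se correspondent \`a travers cet isomorphisme d'espaces localement annel\'es.

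Deuxi\`emement, je distinguerais selon que $\cH(b) \simeq \C$ ou $\cH(b) \simeq \R$. Dans le premier cas, la proposition~\ref{prop:C} (appliqu\'ee avec $\eps = \eps(b)$) donne imm\'ediatement la g\'en\'eration globale de~$\cF$ et l'annulation $H^q(\overline{D}_{\C}(r_{1},\dotsc,r_{n}),\cF) = 0$ pour tout $q \ge 1$. Dans le second cas, on applique la proposition~\ref{prop:R} de la m\^eme mani\`ere. Il n'y a donc, apr\`es l'identification des espaces annel\'es, plus rien \`a d\'emontrer : l'\'enonc\'e est une simple reformulation des propositions~\ref{prop:C} et~\ref{prop:R} dans le langage des fibres d'espaces de Berkovich sur~$K$.

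Le point le plus d\'elicat me semble \^etre la justification rigoureuse de l'identification $\pi_{n}^{-1}(b) \simeq \E{n}{\cH(b)}$ en tant qu'espaces \emph{annel\'es} (et pas seulement topologiques), et en particulier le fait que cette identification respecte la notion de faisceau coh\'erent et de surconvergence. Une subtilit\'e suppl\'ementaire est que $\cH(b)$ n'est pas n\'ecessairement le corps des nombres complexes avec sa valeur absolue standard, mais une puissance $\va_{\infty}^{\eps(b)}$ ; comme d\'ej\`a remarqu\'e dans les preuves des propositions~\ref{prop:C} et~\ref{prop:R}, cela ne change ni l'espace topologique ni le faisceau structural (seules les normes sont modifi\'ees), de sorte que la r\'eduction au cas $\eps = 1$ est licite. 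Tout le reste est formel, et l'\'enonc\'e se d\'eduit alors sans effort.
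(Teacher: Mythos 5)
Votre strat\'egie est exactement celle du texte~: identifier $\overline{D}_{b}(r_{1},\dotsc,r_{n})$, muni du faisceau surconvergent induit, au polydisque ferm\'e $\overline{D}_{\cH(b)}(r_{1},\dotsc,r_{n})$ sur le corps valu\'e archim\'edien $\cH(b)$, puis conclure par les propositions~\ref{prop:C} et~\ref{prop:R}. La seule diff\'erence porte sur la justification de cette identification d'espaces annel\'es. Vous invoquez la remarque~\ref{spectral} (via la convexit\'e spectrale de $\{b\}$), mais celle-ci ne fournit un isomorphisme d'espaces annel\'es qu'au-dessus de~$\mathring V$, et pour $V=\{b\}$ l'int\'erieur est vide~: cet argument ne livre donc pas l'isomorphisme voulu sur la fibre elle-m\^eme. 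La proposition~\ref{prop:disqueglobal} ne donne, quant \`a elle, que la description des sections globales sur les disques ferm\'es, pas directement l'isomorphisme de faisceaux. Le texte contourne ce point en citant \cite[proposition~3.4.6]{A1Z}, r\'esultat sp\'ecifique aux espaces hybrides qui affirme pr\'ecis\'ement que l'inclusion de la fibre archim\'edienne induit un isomorphisme d'espaces annel\'es entre la fibre munie du faisceau restreint et l'espace analytique sur~$\cH(b)$. Le reste de votre argument (r\'eduction au cas $\eps=1$, distinction $\cH(b)\simeq\C$ ou~$\R$) co\"incide avec la preuve du texte.
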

\begin{proof}
D'apr\`es \cite[proposition~3.4.6]{A1Z}, l'inclusion 
\[j_{b} \colon \overline{D}_{b}(\br) \lhook\joinrel\too \overline{D}_{X}(\br)\]
de la fibre dans l'espace total induit un isomorphisme d'espaces annel\'es
\[\bigl(\overline{D}_{b}(\br), j_{b}^{-1} \cO_{\overline{D}_{X}(\br)}\bigr) \simtoo (\overline{D}_{b}(\br),\cO_{\overline{D}_{b}(\br)}).\]
Il suffit donc de d\'emontrer les r\'esultats sur l'espace de gauche, qui est un espace analytique sur un corps valu\'e. Les propositions~\ref{prop:C} et~\ref{prop:R} permettent alors de conclure.
\end{proof}


\subsection{Fibres ultram\'etriques}

Soit~$X$ un espace $\cA$-analytique. 

\begin{prop}\label{prop:H1}
Soit~$V$ une partie compacte de~$X$ et soit $\br \in \R_{\ge 0}^n$. 

Supposons que~$V$ poss\`ede assez d'arbres de Cousin et que, pour tout point~$x$ de~$V$, on a $H^1(\oD_{x}(\br),\cO)=0$. 
Alors, on a $H^{1}(\oD_{V}(r),\cO) = 0$. 

En particulier, si~$V$ poss\`ede assez d'arbres de Cousin, on a $H^1(V,\cO)=0$.
\end{prop}
\begin{proof}
Rappelons que la catégorie des faisceaux en groupes abéliens sur~$\oD_{V}(\br)$, comme sur tout site, poss\`ede assez d'injectifs (\cf~\cite[\href{https://stacks.math.columbia.edu/tag/01DP}{Theorem 01DP}]{stacks-project}). Soit 
\[0 \too \cO \too \cI_{0} \xlongrightarrow[]{d} \cI_{1} \xlongrightarrow[]{d} \dotsc\]
une r\'esolution injective du faisceau~$\cO$ sur~$\oD_V(\br)$. Soit $\alpha\in \cI_{1}(\oD_{V}(\br))$ tel que $d\alpha=0$.

Soit~$x$ un point de~$V$. Par hypoth\`ese, on a $H^{1}(\oD_{x}(\br),\cO) = 0$, donc il existe un \'el\'ement $\beta_{x} \in \cI_{0}(\oD_{x}(\br))$ tel que $d \beta_{x} = \alpha_{|\oD_{x}(\br)}$. Par d\'efinition, il existe un voisinage~$U_{x}$ de~$\oD_{x}(\br)$ et un \'el\'ement $\gamma_{x}$ de $\cI_{0}(U_{x})$ dont la restriction \`a~$\oD_{x}(\br)$ est \'egale \`a~$\beta_{x}$. En outre, $d \gamma_{x}$ est \'egale \`a~$\alpha$ sur un voisinage de~$\oD_{x}(\br)$, que l'on peut supposer \^etre \'egal \`a~$U_{x}$. En d'autres termes, l'image de~$\alpha$ dans $H^{1}(U_{x},\cO)$ est nulle. Puisque~$\pi$ est propre, on peut supposer que~$U_{x}$ est de la forme $\oD_{V_{x}}(\br)$, o\`u~$V_{x}$ est un voisinage ouvert de~$x$ dans~$X$. 

Consid\'erons le recouvrement ouvert $\cV = (V_{x})_{x\in V}$ de~$V$. Par hypoth\`ese, il existe un arbre de Cousin~$\cT$ sur~$V$ adapt\'e \`a~$\cU$. Par construction, pour toute feuille~$f$ de~$\cT$, l'image de~$\alpha$ dans $H^1(\oD_{K(f)}(\br),\cO)$ est nulle.

En remontant pas \`a pas dans l'arbre, on d\'emontre plus g\'en\'eralement que, pour tout n{\oe}ud~$v$ de~$\cT$, l'image de~$\alpha$ dans $H^1(\oD_{K(v)}(\br),\cO)$ est nulle. En effet, supposons le r\'esultat d\'emontr\'e pour les deux fils~$v^-$ et~$v^+$ d'un n{\oe}ud~$v$ de~$\cT$. Par hypoth\`ese, il existe un syst\`eme de Cousin associ\'e \`a $(K(v^-),K(v^+))$. En particulier, le morphisme
\[\begin{array}{rcl}
 \cO(K(v^-))\times \cO(K(v^+)) &\too & \cO(K(v^-) \cap K(v^+))\\
 (f^-,f^+) & \mapstoo & f^--f^+
 \end{array} \]
est surjectif et il en va donc de m\^eme du morphisme
\[\begin{array}{rcl}
 \cO(\oD_{K(v^-)}(\br))\times \cO(\oD_{K(v^+)}(\br)) &\too & \cO(\oD_{K(v^-)}(\br) \cap\oD_{K(v^+)}(\br))\\
 (f^-,f^+) & \mapstoo & f^--f^+
 \end{array},\]
d'apr\`es le lemme~\ref{lem:CRrelatif}. Le r\'esultat d\'ecoule alors de la suite exacte longue de Mayer-Vietoris
\[\hspace{-170pt}
\begin{tikzcd}
\dotsb \longrightarrow H^{0}(\oD_{K(v^-)}(\br),\cO) \oplus H^{0}(\oD_{K(v^+)(\br)},\cO) 
\ar[out=0, in=180, looseness=2]{d}\\
H^0(\oD_{K(v^-)}(\br) \cap \oD_{K(v^+)}(\br),\cO) \longrightarrow H^{1}(\oD_{K(v)}(\br),\cO) \ar[out=0, in=180, looseness=2]{d}\\
 H^{1}(\oD_{K(v^-)}(\br),\cO) \oplus H^{1}(\oD_{K(v^+)}(\br),\cO) \longrightarrow \dotsb
\end{tikzcd}\]

Puisque la racine de~$\cT$ est \'etiquet\'ee par le compact~$V$, ceci d\'emontre que l'image de~$\alpha$ dans $H^1(\oD_{V}(\br),\cO)$ est nulle.

\smallbreak

La partie finale du r\'esultat d\'ecoule du cas~$n=0$ en remarquant que, pour tout point~$x$ de~$V$, on a $H^1(\{x\},\cO)=0$.
\end{proof}

\begin{lemm}\label{lem:Hq}
Soit~$V$ une partie compacte de~$X$ et soit $\br \in \R_{\ge 0}^n$. Soit~$\cF$ un faisceau de groupes ab\'eliens sur~$\oD_{V}(\br)$. Soit $q\ge 1$.

Supposons que~$V$ est de dimension de recouvrement~1 et que, pour tout point~$x$ de~$V$, on a $H^q(\oD_{x}(\br),\cF)=H^{q+1}(\oD_{x}(\br),\cF)=0$. 
Alors, on a $H^{q+1}(\oD_{V}(\br),\cF) = 0$. 
\end{lemm}
\begin{proof}
Consid\'erons le morphisme de projection $\pi \colon \E{n}{X} \to X$. Il induit un morphisme d'espaces localement annel\'es $\pi_{V,\br} \colon \oD_{V}(\br) \to V$, o\`u $\oD_{V}(\br)$ et $V$ sont munis des structures surconvergentes induites respectivement par~$\E{n}{X}$ et~$X$. Consid\'erons la suite spectrale de Leray
\[ E_{2}^{p,p'} = H^p(V,R^{p'} (\pi_{V,\br})_{\ast} \cF) \implies H^{p+p'}(\oD_{V}(\br),\cF). \]
Puisque~$V$ est de dimension~1, pour tout faisceau de groupes ab\'eliens~$\cG$ sur~$V$ et tout $p'\ge 2$, on a $H^{p'}(V,\cG)=0$. En particulier, seules les deux premi\`eres colonnes de la deuxi\`eme page de la suite spectrale peuvent contenir des termes non nuls. En outre, par hypoth\`ese, on a $R^q (\pi_{V,\br})_{\ast}\cF = R^{q+1} (\pi_{V,\br})_{\ast}\cF = 0$. Le r\'esultat s'en d\'eduit.
\end{proof}

%

\begin{coro}\label{cor:HqO}
Soit~$x$ un point d\'ecent de~$X_{\um}$.  Alors, pour tout $\br \in \R_{\ge 0}^n$ et tout $q\ge 1$, on a
\[H^q(\overline{D}_{x}(\br),\cO) = 0.\] 
\end{coro}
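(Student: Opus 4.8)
The goal is to deduce the vanishing $H^q(\overline{D}_{x}(r_{1},\dotsc,r_{n}),\cO) = 0$ for $q\ge 1$, where $x$ is a decent point of $X_{\um}$, from the results accumulated in this section. The strategy is the standard dévissage: first treat $q=1$ via the Cousin machinery, then handle $q\ge 2$ by a spectral-sequence argument that trades a $q$-dimensional cohomology problem over the disk for a $1$-dimensional base problem.

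First I would reduce to working over a compact neighborhood of $x$ in the base. Apply the remark~\ref{spectral} (or rather the local-structure discussion around remarks~\ref{spectral}--\ref{spectralement_conv}) together with proposition~\ref{crit_spectral_pu} to replace $X$ by $\E{1}{\cB}$ for a suitable Banach algebra $\cB = \cB(V)$, where $V$ is a compact spectrally convex neighborhood; more precisely, since $x$ is a decent point of $X_{\um}$, I would choose a compact spectrally convex neighborhood $V$ of $x$ in $X_{\um}$ such that $V$ is decent, lies in $X_{\um}$, and — this is the key point — possesses \emph{enough Cousin trees}. For the $q=1$ case, I would then invoke proposition~\ref{prop:H1}: it suffices that $V$ possesses enough Cousin trees (which holds, say, by corollary~\ref{cor:CRfibreum} applied to a point, or more directly by taking $V$ to be an interval/star in the base which trivially has enough Cousin trees via corollary~\ref{cor:CRcorps} and its relative version corollary~\ref{cor:CRfibreum}) and that $H^1(\overline{D}_{y}(\br),\cO)=0$ for every $y\in V$. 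The latter fibrewise statement, over the field $\cH(y)$, is classical for closed polydisks over a complete non-archimedean field (the algebra $\cH(y)\la|T_1|\le r_1,\dotsc,|T_n|\le r_n\ra$ is a Tate/affinoid algebra, whose higher coherent cohomology vanishes); alternatively one gets it from the arguments of the preceding subsections. This gives $H^1(\overline{D}_{V}(\br),\cO)=0$ and in particular $H^1(\overline{D}_{x}(\br),\cO)=0$ after restricting, once one checks (again via proposition~\ref{prop:disqueglobal} and the identification $\cO_{\overline{D}_{x}(\br)} = j^{-1}\cO_{\overline{D}_{V}(\br)}$) that cohomology on the fibre is the colimit of cohomology on relative disks over shrinking $V$.

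For $q\ge 2$ I would argue by induction on $q$ using lemma~\ref{lem:Hq}. Choose $V$ a compact neighborhood of $x$ in $X_{\um}$ which is linear or of Ostrowski type (e.g. an interval in a branch), so that by theorem~\ref{th:dimensionetoile} it has covering dimension $1$. The hypotheses of lemma~\ref{lem:Hq} then require $H^q(\overline{D}_{y}(\br),\cO)=H^{q+1}(\overline{D}_{y}(\br),\cO)=0$ for all $y\in V$, which is again the classical fibrewise vanishing over the complete valued field $\cH(y)$. Lemma~\ref{lem:Hq} yields $H^{q+1}(\overline{D}_{V}(\br),\cO)=0$; restricting to the fibre over $x$ (using proposition~\ref{prop:disqueglobal} to pass to the colimit as above) gives $H^{q+1}(\overline{D}_{x}(\br),\cO)=0$. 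Combined with the base case $q=1$, this completes the induction and proves the corollary for all $q\ge 1$.

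\textbf{Main obstacle.} The delicate point is not any single step but the bookkeeping needed to conclude about the fibre $\overline{D}_{x}(\br)$ from the relative statements over $V$: one must know that $H^q$ on the fibre equals $\colim_{V'\ni x} H^q(\overline{D}_{V'}(\br),\cO)$, where $V'$ runs over compact neighborhoods of $x$, and that the colimit can be arranged so that each $V'$ has covering dimension $1$ and enough Cousin trees simultaneously. This rests on proposition~\ref{prop:disqueglobal} (to identify the structure sheaves) and on the fact that a point of $X_{\um}$ admits a fine base of such neighborhoods inside a linear or Ostrowski part; the ``enough Cousin trees'' input for the $q=1$ step, in particular, is exactly what corollaries~\ref{cor:CRcorps}, \ref{cor:CREx} and \ref{cor:CRfibreum} are designed to supply. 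The fibrewise vanishing over $\cH(y)$, while classical, also deserves a precise citation or short argument, since $\cH(y)$ need not be algebraically closed.
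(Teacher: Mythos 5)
There is a genuine gap, and it sits exactly where you flag it least: you repeatedly assume that the fibrewise vanishing $H^q(\overline{D}_{\cH(y)}(r_1,\dotsc,r_n),\cO)=0$ over a complete valued field is ``classical''. But the corollary you are asked to prove is precisely that statement for $y=x$: the space $\overline{D}_{x}(r_1,\dotsc,r_n)$ \emph{is} a closed polydisk over the complete ultrametric field $\cH(x)$, equipped with the overconvergent structure sheaf and with sheaf cohomology taken on the Berkovich topological space — Tate acyclicity, which concerns \v{C}ech cohomology for admissible coverings in the rigid G-topology, does not directly give this, and the paper deliberately establishes it from scratch. So either the result is already known and there is nothing left to prove, or your argument is circular. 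All the surrounding scaffolding (replacing $x$ by a compact linear or Ostrowski neighbourhood $V$ in the base, running Leray over $V$, the colimit identification of $H^q$ on the fibre with $\colim_{V'\ni x}H^q(\overline{D}_{V'}(r_1,\dotsc,r_n),\cO)$) is a self-inflicted complication that never touches this central point, and the colimit step is itself nontrivial and unnecessary.

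The paper's proof stays entirely inside the fibre over $x$ and inducts on the number of variables $n$, not on $q$. The singleton $\{x\}$ is a compact decent subset of $X_{\um}$ which trivially possesses enough Cousin trees, so corollary~\ref{cor:CRfibreum} gives enough Cousin trees on $\overline{D}_{x}(r_1,\dotsc,r_{n+1})$ and the last clause of proposition~\ref{prop:H1} yields $H^1=0$ there. For $q\ge 2$ one applies lemma~\ref{lem:Hq} to the projection $\overline{D}_{x}(r_1,\dotsc,r_{n+1})\to\overline{D}_{x}(r_{n+1})$ onto the last coordinate: the base is a one-dimensional disk over $\cH(x)$, of covering dimension $1$ by theorem~\ref{th:dimensioncorps}, its points are decent by proposition~\ref{prop:typiqueAn}, and the induction hypothesis in $n$ supplies the required vanishing on the fibres $\overline{D}_{y}(r_1,\dotsc,r_n)$. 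That induction on $n$ is where the genuine content lies; your proposal replaces it by an unproved appeal to a classical result.
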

\begin{proof}
D\'emontrons le r\'esultat par r\'ecurrence sur~$n$. Pour $n=0$, on a bien $H^q(\{x\},\cO) = 0$ pour tout $q\ge 1$.

Supposons que le r\'esultat est vrai pour les disques de dimension~$n$ et d\'emontrons-le pour la dimension~$n+1$. Soit $\br = (r_{1},\dotsc,r_{n+1}) \in \R_{\ge 0}^{n+1}$.

D'apr\`es le corollaire~\ref{cor:CRfibreum}, le disque relatif~$\overline{D}_{x}(\br)$ poss\`ede assez d'arbres de Cousin. D'apr\`es la proposition~\ref{prop:H1}, on a donc $H^1(\overline{D}_{x}(\br),\cO)=0$.

Consid\'erons le morphisme de projection sur la derni\`ere coordonn\'ee $\pi \colon \overline{D}_{x}(\br) \to \overline{D}_{x}(r_{n+1})$. Posons $\br' := (r_{1},\dotsc,r_{n})$. Par hypoth\`ese de r\'ecurrence, pour tout point~$y$ de $\overline{D}_{x}(r_{n+1})$ et tout $q\ge 1$, on a $H^q(\oD_{y}(\br'),\cO)=0$. Le lemme~\ref{lem:Hq} assure alors que, pour tout $q\ge 2$, on a $H^q(\overline{D}_{x}(\br),\cO)=0$.
\end{proof}

%
%
%

\subsection{Cas g\'en\'eral}

Soit~$X$ un espace $\cA$-analytique.

Nous souhaitons montrer que les polydisques ferm\'es relatifs au-dessus de certaines parties de~$X$ d'un type particulier satisfont les th\'eor\`emes~A et~B. Introduisons-les maintenant. 

\begin{defi}\label{def:basedeStein}\index{Base de Stein|textbf}
Une \emph{base de Stein} de~$X$ est une partie compacte~$V$ de~$X$ qui satisfait les conditions suivantes~:
\begin{enumerate}[i)]
\item $V$ est de dimension de recouvrement inf\'erieure \`a~1~;
\item tout point ultram\'etrique de~$V$ est d\'ecent~; 
\item tout point archim\'edien de~$V$ est isol\'e ou poss\`ede un voisinage lin\'eaire dans~$V$~;
\item tout polydisque ferm\'e relatif sur~$V$ est de dimension de recouvrement finie~;
\item pour tout recouvrement ouvert~$\cU$ de~$V$, il existe un arbre de Cousin-Runge fort \`a intersections simples~$\cT_{\cU}$ sur~$V$ adapt\'e \`a~$\cU$ tel que, pour toutes feuilles $f\ne g$ de~$\cT_{\cU}$, le compact $K(f)\cap K(g)$ poss\`ede assez d'arbres de Cousin.
\end{enumerate}
\end{defi}

\begin{exem}\label{ex:basedeStein}
\index{Corps!valu\'e}\index{Anneau!des entiers relatifs $\Z$}\index{Anneau!des entiers d'un corps de nombres}\index{Corps!hybride}\index{Anneau!de valuation discr\`ete}\index{Anneau!de Dedekind trivialement valu\'e}

Soit $\cA$ l'un de nos anneaux de Banach usuels~: corps valu\'es, anneaux d'entiers de corps de nombres, corps hybrides, anneaux de valuation discr\`ete, anneaux de Dedekind trivialement valu\'es (\cf~exemples~\ref{ex:corpsvalue} \`a~\ref{ex:Dedekind}). Alors, toute partie compacte et connexe de~$\cM(\cA)$ est une base de Stein de~$\cM(\cA)$.

En effet, soit~$\cU$ un recouvrement ouvert de~$V$. En utilisant la description explicite de~$\cM(\cA)$, on montre que l'on peut raffiner~$\cU$ en un recouvrement compact fini~$\cV$ tel que l'intersection de deux parties quelconques de~$\cV$ soit vide ou r\'eduite \`a un point non associ\'e \`a la valuation triviale sur~$\cA$. On peut alors associer un syst\`eme de Cousin-Runge fort par restriction, \`a partir du proc\'ed\'e d\'ecrit dans les exemples~\ref{ex:Cousin} et~\ref{ex:CousinRunge}. Le r\'esultat s'en d\'eduit.
\end{exem}

\begin{lemm}\label{lem:Dxrlineaire}
Soit $x$ un point tr\`es d\'ecent de~$X_{\um}$. Alors, pour tout $r\in \R_{\ge 0}$, la partie compacte $\overline{D}_{x}(r)$ de~$\A^1_{X}$ est une base de Stein.
\end{lemm}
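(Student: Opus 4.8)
The goal is to verify that $\overline{D}_{x}(r)$ satisfies the five conditions in Definition~\ref{def:basedeStein}, where $x$ is a very decent point of $X_{\um}$ and $r \in \R_{>0}$. Several of these are almost immediate from earlier results, so the real content is the last condition. First I would record that, since $x$ is ultrametric (hence so is the disk, as $\overline{D}_{x}(r) \subset (\A^1_{X})_{\um}$), condition~iii) on archimedean points is vacuous, and condition~ii) reduces to knowing that every point of $\overline{D}_{x}(r)$ is decent. By Proposition~\ref{prop:typiqueAn} (applied with $b = x$) every point of $\E{1}{\cH(x)}$ above $x$ is very decent, hence decent, and since $\overline{D}_{x}(r)$ is a subset of this fibre, condition~ii) holds. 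For conditions~i) and~iv): Theorem~\ref{th:dimensionetoile} shows that a disk $\overline{D}_{V}(\br)$ over a linear or Ostrowski part $V$ is of finite covering dimension, and in particular, taking $V = \{x\}$ (a single point, which is trivially linear since $\cM(\cH(x))$ is a point), each $\overline{D}_{x}(r,r_1,\dotsc,r_n)$ is of finite covering dimension; moreover $\overline{D}_{x}(r)$ itself is one-dimensional by the ultrametric case of Theorem~\ref{th:dimensioncorps} (it is a disk over the valued field $\cH(x)$, which has covering dimension $1$), giving~i), and the relative polydisks over it are iterated disks over $\cH(x)$, hence finite-dimensional, giving~iv).

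The heart of the matter is condition~v): given a covering $\cU$ of $\overline{D}_{x}(r)$, produce a strongly Cousin-Runge tree $\cT_{\cU}$ with simple intersections, adapted to $\cU$, such that the pairwise intersections of leaf-labels possess enough Cousin trees. Here I would invoke Corollary~\ref{cor:CREx} with the étoile $E := \overline{D}_{x}(r)$ (a disk is an étoile in the sense of Definition~\ref{defi:etoile}, taking $P = T$ and the empty family of removed components), applied at the very decent point $x$. That corollary, applied in the ``resp.''\ branch for very decent points, yields a binary tree $\cT$ and a family $\cE = (P_v, s_v)$ with coefficients in $\cO_{x}[T]$ such that the elementary tree $\cT(\cE)$ on $E$ is well-cut (\emph{bien découpé}), adapted to $\cU$, and \emph{strongly} Cousin-Runge, and moreover for distinct leaves $f \ne g$ the intersection $K(f) \cap K(g)$ is again an étoile of $\pi^{-1}(x) = \E{1}{\cH(x)}$. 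The well-cut property gives, by Remark~\ref{rem:biendecoupe}, that $\cT(\cE)$ is to intersections simples, as required.

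It then remains to check that for distinct leaves $f \ne g$ the étoile $K(f)\cap K(g)$ possesses enough Cousin trees. Since $K(f) \cap K(g)$ is either empty (nothing to prove) or of the form $\overline{C}_{x}(P_v, s_v, s_v) = \{y \in \E{1}{\cH(x)} : |P_v(y)| = s_v\}$ for some internal node $v$ (again by Remark~\ref{rem:biendecoupe}), it is itself an étoile in a fibre over a point. Applying Corollary~\ref{cor:CREx} once more to this smaller étoile at the same point $x$ — using that any covering of it can be refined and that $x$ is very decent — produces strongly Cousin-Runge (hence in particular Cousin) trees adapted to any given covering; this is precisely ``enough Cousin trees.'' The main obstacle I anticipate is bookkeeping: carefully matching the hypotheses of Corollary~\ref{cor:CREx} (which is stated for an arbitrary point of $X_{\um}$ and an étoile in its fibre) against the situation here, and checking that the systems it produces are genuinely \emph{strong} — this is where very decentness is used, via Corollary~\ref{cor:BVcompactN} for very typical points or Lemma~\ref{lem:conditionscar0} in the characteristic-zero / non-trivial-valuation case, exactly as in the proof of Corollary~\ref{cor:CREx}. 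Once these ingredients are lined up, all five conditions of Definition~\ref{def:basedeStein} are verified and the lemma follows.
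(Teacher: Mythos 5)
Your proof is correct and follows essentially the same route as the paper's: points~i)--iv) of the definition via Th\'eor\`eme~\ref{th:dimensioncorps} and Proposition~\ref{prop:typiqueAn} (iii) being vacuous), and point~v) by applying Corollaire~\ref{cor:CREx} to the \'etoile $\overline{D}_{x}(r)$, using Remarque~\ref{rem:biendecoupe} for the simple-intersections property and invoking Corollaire~\ref{cor:CREx} again on the leaf intersections, which are \'etoiles. One small correction: a singleton is \emph{not} a partie lin\'eaire in the sense of the d\'efinition~\ref{def:lineaire} (the interval there must not be reduced to a point), so for condition~iv) you should cite Th\'eor\`eme~\ref{th:dimensioncorps} directly — the relative polydisks over $\overline{D}_{x}(r)$ are polydisks over the valued field $\cH(x)$ — rather than Th\'eor\`eme~\ref{th:dimensionetoile} with $V=\{x\}$.
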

\begin{proof}
Soit $r\in \R_{\ge 0}$. Les points~i) et~iv) de la d\'efinition~\ref{def:basedeStein} d\'ecoulent du th\'eor\`eme~\ref{th:dimensioncorps}. Le point~ii) suit de la proposition~\ref{prop:typiqueAn}. Le point~iii) est trivialement v\'erifi\'e. 

D\'emontrons le point~v). Soit~$\cU$ un recouvrement ouvert de~$V$. D'apr\`es le corollaire~\ref{cor:CREx}, il existe un arbre binaire de compacts \'el\'ementaire~$\cT_{\cU}$ sur~$V$ qui soit bien d\'ecoup\'e, adapt\'e \`a~$\cU$ et de Cousin-Runge fort. D'apr\`es la remarque~\ref{rem:biendecoupe}, il est \`a intersections simples. Par construction, pour toutes feuilles $f\ne g$ de~$\cT_{\cU}$, le compact $K(f)\cap K(g)$ est une \'etoile, donc poss\`ede assez d'arbres de Cousin, d'apr\`es le corollaire~\ref{cor:CREx}.
\end{proof}

\begin{coro}\label{cor:HqVO}
Soit~$V$ une base de Stein de~$X$ et soit $\br \in \R_{\ge 0}^n$. Alors, pour tout $q\ge 1$, on a 
\[H^q(\overline{D}_{V}(\br),\cO) = 0.\] 
\end{coro}
\begin{proof}
D'apr\`es la proposition~\ref{prop:fibrearc} et le corollaire~\ref{cor:HqO}, pour tout point~$x$ de~$V$, on a $H^q(\overline{D}_{x}(\br),\cO) = 0$. Le r\'esultat d\'ecoule alors de la proposition~\ref{prop:H1} et du lemme~\ref{lem:Hq}.  
\end{proof}


Rappelons que, d'apr\`es le th\'eor\`eme~\ref{th:dimensionetoile}, si~$V$ est une partie lin\'eaire ou d'Ostrowski de~$X$, alors, tout disque ferm\'e relatif sur~$V$ est de dimension de recouvrement finie.

\begin{coro}\label{cor:HqVFresolution}
Soit~$V$ une base de Stein de~$X$ et soit $\br \in \R_{\ge 0}^n$. Soit~$\cF$ un faisceau de $\cO$-modules sur $\overline{D}_{V}(\br)$. Supposons qu'il existe une r\'esolution libre 
\[\cO^{N_{d}} \too \cO^{N_{d-1}} \too \dotsb \too \cO^{N_{1}} \too \cF \too 0,\] 
o\`u $d := \dimr(\overline{D}_{V}(\br))$. Alors, pour tout $q\ge 1$, on a 
\[H^q(\overline{D}_{V}(\br),\cF) = 0.\] 
\end{coro}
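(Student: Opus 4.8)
\textbf{Plan for the proof of Corollary~\ref{cor:HqVFresolution}.} The strategy is a standard dimension-shifting argument, using the vanishing result for the structure sheaf (Corollary~\ref{cor:HqVO}) as the base case and the finiteness of the covering dimension of $\overline{D}_{V}(r_{1},\dotsc,r_{n})$ (which holds because $V$ is a base de Stein, condition~iv) of Definition~\ref{def:basedeStein}) to control the length of the resolution. Write $X := \overline{D}_{V}(r_{1},\dotsc,r_{n})$ and $d := \dimr(X)$, and split the given resolution into short exact sequences
\[0 \to \cK_{1} \to \cO^{N_{1}} \to \cF \to 0, \quad 0 \to \cK_{i+1} \to \cO^{N_{i+1}} \to \cK_{i} \to 0 \ (1\le i\le d-1),\]
where $\cK_{i} := \ker(\cO^{N_{i}}\to\cO^{N_{i-1}})$ (with $\cO^{N_{0}}:=\cF$) and $\cK_{d} = \cO^{N_{d}}$ by exactness of the resolution at the last step — here one needs $\cO^{N_{d+1}}\to\cO^{N_{d}}$ to be zero, which I would arrange by simply not requiring a term $\cO^{N_{d+1}}$, i.e. the resolution has $\cK_{d}$ free.

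\textbf{The dimension-shifting step.} For each $i\ge 1$, the short exact sequence $0 \to \cK_{i+1} \to \cO^{N_{i+1}} \to \cK_{i} \to 0$ gives the long exact cohomology sequence, whence for every $q\ge 1$,
\[H^{q}(X,\cK_{i}) \cong H^{q+1}(X,\cK_{i+1}),\]
since $H^{q}(X,\cO^{N_{i+1}}) = (H^{q}(X,\cO))^{N_{i+1}} = 0$ for $q\ge 1$ by Corollary~\ref{cor:HqVO}, and similarly $H^{q+1}(X,\cO^{N_{i+1}}) = 0$. Iterating from $\cF = \cK_{0}$ up to $\cK_{d}$ yields, for every $q\ge 1$,
\[H^{q}(X,\cF) \cong H^{q+d}(X,\cK_{d}) = H^{q+d}(X,\cO^{N_{d}}) = \bigl(H^{q+d}(X,\cO)\bigr)^{N_{d}}.\]
Now $q+d \ge d+1 > d = \dimr(X)$, and since $X$ is a paracompact (even compact) Hausdorff space of covering dimension $d$, sheaf cohomology vanishes in degrees exceeding $d$; alternatively this is again Corollary~\ref{cor:HqVO} if one prefers to avoid invoking the general vanishing theorem, but I would cite the classical fact that $H^{m}(T,\cG) = 0$ for $m > \dimr(T)$ and any sheaf of abelian groups $\cG$ on a paracompact Hausdorff space $T$ (\cite[theorem~3.3.2]{Eng1} type statement, or Godement). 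Hence $H^{q}(X,\cF) = 0$ for all $q\ge 1$, as claimed.

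\textbf{On the main obstacle.} There is essentially no obstacle here once Corollary~\ref{cor:HqVO} and the finiteness of $d$ are in hand; the only point requiring a little care is the bookkeeping at the top of the resolution, namely making sure that the hypothesis "there exists a free resolution of length $d$" is interpreted so that $\cK_{d}$ is itself free (equivalently, the $d$-th syzygy is free), which is what lets the induction terminate after exactly $d$ steps and land in a cohomological degree strictly above $\dimr(X)$. One should also note for completeness that the case $d=0$ (i.e. $V$ a finite set of ultrametric or isolated points, so $X$ a disjoint union of fibres) is covered too: then $\cF$ is already free and the statement is immediate from Corollary~\ref{cor:HqVO}. I would phrase the induction so as to cover $d=0$ uniformly.
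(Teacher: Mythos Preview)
Your approach is exactly the paper's: dimension-shift along the syzygies using Corollary~\ref{cor:HqVO}, then kill the last cohomology group by the covering-dimension bound.

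There is, however, a genuine slip in your bookkeeping. The hypothesis is a partial free resolution
\[\cO^{N_{d}} \to \cO^{N_{d-1}} \to \dotsb \to \cO^{N_{1}} \to \cF \to 0,\]
with no exactness asserted at $\cO^{N_{d}}$. Your syzygy $\cK_{d} := \ker(\cO^{N_{d}}\to\cO^{N_{d-1}})$ is therefore \emph{not} equal to $\cO^{N_{d}}$; it is an arbitrary (not even necessarily coherent) sheaf of $\cO$-modules. The absence of a term $\cO^{N_{d+1}}$ on the left does not force the last map to be zero, nor does it make the $d$-th syzygy free. So the displayed line
\[H^{q}(X,\cF) \cong H^{q+d}(X,\cK_{d}) = H^{q+d}(X,\cO^{N_{d}})\]
is wrong at the second equality, and your ``alternative'' via Corollary~\ref{cor:HqVO} is not available.

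The fix is simply to drop the claim $\cK_{d}=\cO^{N_{d}}$: you already have $H^{q}(X,\cF)\cong H^{q+d}(X,\cK_{d})$, and since $q+d>d=\dimr(X)$ and $X$ is compact Hausdorff, this group vanishes for \emph{any} sheaf~$\cK_{d}$ of abelian groups. This is precisely how the paper concludes (``ces groupes sont nuls car $\overline{D}_{V}(r_{1},\dotsc,r_{n})$ est de dimension de recouvrement~$d$''). So your proof is salvaged by the very argument you already stated as primary; just don't claim the final syzygy is free.
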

\begin{proof}
Pour $m \in \cn{1}{d}$, notons $\cR_{m}$ le noyau de $\cO^{N_{m}} \to \cF$. Par r\'ecurrence sur~$m$, en utilisant des suites exactes longues de cohomologie et le corollaire~\ref{cor:HqVO}, on d\'emontre que, pour tout $m \in \cn{1}{d}$ et tout $q\ge 1$, on a
\[H^{q}(\overline{D}_{V}(\br),\cF) = H^{q+m}(\overline{D}_{V}(\br),\cR_{m}).\] 
Pour $m=d$, ces groupes sont nuls car~$\overline{D}_{V}(\br)$ est de dimension de recouvrement~$d$.
\end{proof}


\begin{prop}\label{prop:thArecurrence}
Soit~$V$ une base de Stein de~$X$ et soit $\br \in \R_{\ge 0}^n$.

Supposons que, pour tout point~$x$ de~$V$, tout faisceau coh\'erent sur~$\oD_{x}(\br)$ soit globalement engendr\'e. Alors, tout faisceau coh\'erent sur~$\oD_{V}(\br)$ est globalement engendr\'e. 
\end{prop}
\begin{proof}
Soit~$\cF$ un faisceau coh\'erent sur~$\oD_{V}(\br)$.
Soit~$x$ un point de~$V$. Par hypoth\`ese, il existe un morphisme de $\cO_{\oD_{x}(\br)}$-modules surjectif $\varphi_{x} \colon \cO_{\oD_{x}(\br)}^{N_{x}} \to \cF_{|\oD_{x}(\br)}$. Ce dernier s'\'etend en un morphisme surjectif $\psi_{x} \colon \cO^{N_{x}} \to \cF$ d\'efini sur un voisinage de~$\oD_{x}(\br)$, que l'on peut supposer de la forme~$\oD_{V_{x}}(\br)$, o\`u~$V_{x}$ est un voisinage ouvert de~$x$ dans~$V$. Notons~$\cR$ le noyau de~$\psi_{x}$. Le faisceau~$\cO$ est coh\'erent d'apr\`es le th\'eor\`eme~\ref{coherent}, le faisceau~$\cF$ l'est aussi par hypoth\`ese, donc le faisceau~$\cR$ l'est \'egalement.

En appliquant de fa\c{c}on r\'ep\'et\'ee l'hypoth\`ese de l'\'enonc\'e, on montre le faisceau~$\cR_{|\oD_{x}(\br)}$ poss\`ede une r\'esolution libre 
\[\cO_{\oD_{x}(\br)}^{N_{d}} \too \cO_{\oD_{x}(\br)}^{N_{d-1}} \too \dotsb \too \cO_{\oD_{x}(\br)}^{N_{1}} \too \cR_{|\oD_{x}(\br)} \too 0,\] 
o\`u~$d$ est la dimension de recouvrement de~$\oD_{V}(\br)$. Cette r\'esolution s'\'etend en une r\'esolution de~$\cR$ sur un voisinage de~$\oD_{x}(\br)$, que l'on peut supposer de la forme~$\oD_{V'_{x}}(\br)$, o\`u~$V'_{x}$ est un voisinage ouvert de~$x$ dans~$V_{x}$.

D'apr\`es le corollaire~\ref{cor:HqVFresolution}, pour toute partie compacte~$W$ de~$V'_{x}$ qui poss\`ede assez d'arbres de Cousin, nous avons $H^1(\oD_{W}(\br),\cR)=0$. On en d\'eduit que le morphisme $\cO^{N_{x}}(\oD_{W}(\br)) \to \cF(\oD_{W}(\br))$ induit par~$\psi_{x}$ est surjectif. 

Consid\'erons le recouvrement~$\cV$ de~$V$ d\'efini par les~$V'_{x}$. Par hypoth\`ese, il existe un arbre de Cousin-Runge fort \`a intersections simples~$\cT$ adapt\'e \`a~$\cV$ tel que, pour toutes feuilles $f \ne g \in \cT$, le compact $K(f)\cap K(g)$ poss\`ede assez d'arbres de Cousin. Soit~$f$ une feuille de~$\cT$. Il existe un point~$x$ de~$V$ tel que~$V'_{x}$ contienne~$K(f)$. Pour toute feuille~$g$ de~$\cT$, le compact $K(f)\cap K(g)$ poss\`ede assez d'arbres de Cousin. Le raisonnement ci-dessus montre alors que le morphisme $\cO^{N_{x}}(\oD_{K(f)\cap K(g)}(\br)) \to \cF(\oD_{K(f) \cap K(g)}(\br))$ induit par~$\psi_{x}$ est surjectif. Le r\'esultat d\'ecoule alors de la proposition~\ref{prop:CRimpliqueA}.
\end{proof}

\begin{coro}\label{cor:thAum}
Soit~$x$ un point tr\`es d\'ecent de~$X_{\um}$ et soit $\br \in \R_{\ge 0}^n$. Alors, tout faisceau coh\'erent sur $\overline{D}_{x}(\br)$ est globalement engendr\'e.
\end{coro}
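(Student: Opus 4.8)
On raisonne par r\'ecurrence sur~$n$, l'\'enonc\'e \'etant entendu uniform\'ement pour tout espace $\cA$-analytique~$X$ et tout point tr\`es d\'ecent~$x$ de~$X_{\um}$. Pour $n=0$, l'espace consid\'er\'e se r\'eduit \`a la fibre au-dessus de~$x$, c'est-\`a-dire \`a~$\cM(\cH(x))$, dont le faisceau structural est le corps~$\cH(x)$~; un faisceau coh\'erent y est alors un $\cH(x)$-espace vectoriel de dimension finie, donc trivialement globalement engendr\'e.

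Pour l'\'etape de r\'ecurrence, fixons $n\ge 1$, supposons l'\'enonc\'e acquis au rang~$n-1$, et soient $r_{1},\dotsc,r_{n}\in\R_{>0}$. Posons $\A^1_{X} := X\times_{\cA}\AunA$, avec~$T_{n}$ la coordonn\'ee sur la fibre~$\AunA$, et posons $V := \overline{D}_{x}(r_{n})$, vue comme partie compacte de~$\A^1_{X}$. En d\'eroulant la d\'efinition des disques relatifs it\'er\'es, on identifie $\overline{D}_{x}(r_{1},\dotsc,r_{n})$ \`a~$\pi^{-1}(V)$, o\`u $\pi\colon \overline{D}_{\A^1_{X}}(r_{1},\dotsc,r_{n-1}) \to \A^1_{X}$ d\'esigne la projection (l'identit\'e de~$\A^1_{X}$ lorsque $n=1$), et, pour $y\in V$, on a $\pi^{-1}(y) = \overline{D}_{y}(r_{1},\dotsc,r_{n-1})$. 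D'apr\`es le lemme~\ref{lem:Dxrlineaire}, $V$ est une base de Stein de~$\A^1_{X}$. En outre, tout point~$y$ de~$V$ est situ\'e au-dessus de~$x$~; puisque $x\in X_{\um}$, le point~$y$ est encore ultram\'etrique, et, $x$ \'etant tr\`es d\'ecent, $y$ l'est aussi (stabilit\'e de la tr\`es d\'ecence par passage aux points d'une droite affine relative au-dessus d'un point tr\`es d\'ecent, \cf~proposition~\ref{prop:typiqueAn} et son raffinement pour les points tr\`es d\'ecents \'enonc\'e juste apr\`es la d\'efinition~\ref{defi:honnete}, appliqu\'e localement apr\`es avoir plong\'e~$X$ comme ferm\'e analytique d'un espace affine). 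L'hypoth\`ese de r\'ecurrence, appliqu\'ee \`a l'espace $\cA$-analytique~$\A^1_{X}$ et au point tr\`es d\'ecent $y\in(\A^1_{X})_{\um}$, assure alors que tout faisceau coh\'erent sur $\pi^{-1}(y) = \overline{D}_{y}(r_{1},\dotsc,r_{n-1})$ est globalement engendr\'e. Toutes les hypoth\`eses de la proposition~\ref{prop:thArecurrence} sont donc v\'erifi\'ees (avec pour espace ambiant~$\A^1_{X}$, pour base de Stein~$V$ et pour disque relatif celui de dimension~$n-1$), et elle fournit le fait que tout faisceau coh\'erent sur $\pi^{-1}(V) = \overline{D}_{x}(r_{1},\dotsc,r_{n})$ est globalement engendr\'e, ce qui est l'assertion au rang~$n$.

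Hormis les deux ingr\'edients que sont le lemme~\ref{lem:Dxrlineaire} et la proposition~\ref{prop:thArecurrence}, l'argument est purement formel~; le seul point demandant une v\'eritable attention est la v\'erification que les points~$y$ de la fibre $V = \overline{D}_{x}(r_{n})$ sont \emph{tr\`es} d\'ecents et non seulement d\'ecents (cette derni\`ere propri\'et\'e \'etant la seule que retienne la d\'efinition d'une base de Stein), puisque c'est cette propri\'et\'e renforc\'ee qui autorise l'application r\'ecurrente de l'\'enonc\'e au rang~$n-1$.
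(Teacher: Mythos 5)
Votre d\'emonstration est correcte et suit essentiellement la m\^eme voie que celle du texte~: r\'ecurrence sur~$n$, projection sur la derni\`ere coordonn\'ee au-dessus de la base de Stein $\overline{D}_{x}(r_{n})$ fournie par le lemme~\ref{lem:Dxrlineaire}, puis conclusion par la proposition~\ref{prop:thArecurrence}, votre v\'erification explicite de la tr\`es d\'ecence des points de la fibre (qui l\'egitime l'application de l'hypoth\`ese de r\'ecurrence \`a l'espace~$\A^1_{X}$) rendant pr\'ecis ce que le texte laisse implicite. Seule inexactitude b\'enigne~: pour $n=0$, les sections surconvergentes sur~$\{x\}$ forment l'anneau local~$\cO_{X,x}$ et non le corps~$\cH(x)$, mais le cas de base reste trivialement vrai.
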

\begin{proof}
D\'emontrons le r\'esultat par r\'ecurrence sur~$n$. Pour~$n=0$, le r\'esultat est \'evident.

Supposons que le r\'esultat est vrai pour les disques de dimension~$n$ et d\'emontrons-le pour la dimension~$n+1$. Soit $\br=(r_{1},\dotsc,r_{n+1}) \in \R_{\ge 0}^{n+1}$. Soit~$\cF$ un faisceau coh\'erent sur $\overline{D}_{x}(\br)$. Consid\'erons le morphisme de projection sur la derni\`ere coordonn\'ee $\pi \colon \overline{D}_{x}(\br) \to \overline{D}_{x}(r_{n+1})$. Posons $\br':=(r_{1},\dotsc,r_{n})$. Par hypoth\`ese de r\'ecurrence, pour tout point~$y$ de $\overline{D}_{x}(r_{n+1})$, le faisceau~$\cF$ est globalement engendr\'e sur~$\oD_{y}(\br')$. En outre, d'apr\`es le lemme~\ref{lem:Dxrlineaire}, le disque $\overline{D}_{x}(r_{n+1})$ est une base de Stein. On conclut par la proposition~\ref{prop:thArecurrence}.
\end{proof}

%

\begin{coro}\label{cor:thA}\index{Faisceau!globalement engendr\'e}\index{Theoreme@Th\'eor\`eme!A}
Soit~$V$ une base de Stein de~$X$ et soit $\br \in \R_{\ge 0}^n$. Alors, tout faisceau coh\'erent sur~$\overline{D}_{V}(\br)$  est globalement engendr\'e.
\end{coro}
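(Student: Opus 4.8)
\textbf{Plan de la d\'emonstration du corollaire~\ref{cor:thA}.}

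L'id\'ee est de ramener le cas d'une base de Stein~$V$ quelconque au cas d'une fibre ponctuelle, c'est-\`a-dire au corollaire~\ref{cor:thAum} dans le cas ultram\'etrique et \`a la proposition~\ref{prop:fibrearc} dans le cas archim\'edien, puis d'appliquer la machinerie d\'ej\`a mise en place \`a la proposition~\ref{prop:thArecurrence}. Plus pr\'ecis\'ement, notons $\pi \colon \overline{D}_{X}(r_{1},\dotsc,r_{n}) \to X$ le morphisme de projection. La proposition~\ref{prop:thArecurrence} affirme exactement que, si, pour tout point~$x$ de~$V$, tout faisceau coh\'erent sur la fibre~$\pi^{-1}(x)$ est globalement engendr\'e, alors tout faisceau coh\'erent sur~$\pi^{-1}(V)$ l'est aussi. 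Il suffit donc de v\'erifier l'hypoth\`ese de cette proposition, point par point sur~$V$.

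Soit donc~$x$ un point de~$V$. Deux cas se pr\'esentent. Si~$x$ est ultram\'etrique, alors, par d\'efinition de base de Stein (\cf~d\'efinition~\ref{def:basedeStein}~ii)), le point~$x$ est d\'ecent~; mais le corollaire~\ref{cor:thAum} requiert en fait que~$x$ soit tr\`es d\'ecent. Il faut donc remarquer ici que, puisque~$x$ est un point situ\'e au-dessus d'un point~$b$ de~$B = \cM(\cA)$ et que l'on a suppos\'e~$\cA$ anneau de base g\'eom\'etrique (donc de base), tout point de~$B$ est d\'ecent~; en r\'ealit\'e la d\'efinition de base de Stein impose seulement la d\'ecence, mais le corollaire~\ref{cor:CREx} et le corollaire~\ref{cor:thAum} s'appliquent d\`es lors que les conditions de tr\`es d\'ecence sont v\'erifi\'ees ou que le corps r\'esiduel est de caract\'eristique nulle ou de valuation non triviale. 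En pratique, on se ram\`ene \`a~$\pi^{-1}(x) \simeq \overline{D}_{x}(r_{1},\dotsc,r_{n})$ vu comme disque ferm\'e sur le corps valu\'e~$\cH(x)$ (via l'isomorphisme d'espaces annel\'es du type de celui de la proposition~\ref{prop:fibrearc} ou de~\cite[proposition~3.4.6]{A1Z}), et le corollaire~\ref{cor:thAum} (ou son analogue sur un corps valu\'e ultram\'etrique complet) fournit alors la g\'en\'eration globale. Si~$x$ est archim\'edien, alors, toujours par d\'efinition de base de Stein, $x$ est isol\'e ou poss\`ede un voisinage lin\'eaire~; dans les deux cas~$\cH(x)$ est un corps valu\'e archim\'edien complet, donc isomorphe \`a~$\R$ ou~$\C$ muni d'une puissance de la valeur absolue usuelle, et la proposition~\ref{prop:fibrearc} (ou directement les propositions~\ref{prop:C} et~\ref{prop:R}) assure que tout faisceau coh\'erent sur $\pi^{-1}(x) \simeq \overline{D}_{x}(r_{1},\dotsc,r_{n})$ est globalement engendr\'e.

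Ayant ainsi v\'erifi\'e l'hypoth\`ese de la proposition~\ref{prop:thArecurrence} en chaque point de~$V$, on conclut directement. Le point d\'elicat n'est pas vraiment dans l'argument final, qui est une simple application de la proposition~\ref{prop:thArecurrence} une fois les fibres trait\'ees~; il r\'eside plut\^ot dans le fait de s'assurer que chaque fibre~$\pi^{-1}(x)$ rel\`eve bien d'un cas d\'ej\`a trait\'e, c'est-\`a-dire dans le maniement soigneux des diff\'erentes notions de d\'ecence (d\'ecent, tr\`es d\'ecent) et de la distinction ultram\'etrique/archim\'edien impos\'ee par la d\'efinition de base de Stein, ainsi que dans l'identification correcte de la fibre d'un disque relatif \`a un disque sur le corps r\'esiduel compl\'et\'e. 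Une fois ces v\'erifications de routine effectu\'ees, le corollaire est imm\'ediat.
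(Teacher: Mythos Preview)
Votre d\'emarche est exactement celle du papier~: on invoque la proposition~\ref{prop:thArecurrence}, dont l'hypoth\`ese sur les fibres est fournie par la proposition~\ref{prop:fibrearc} dans le cas archim\'edien et par le corollaire~\ref{cor:thAum} dans le cas ultram\'etrique. Le papier r\'edige cela en trois lignes, sans plus de commentaire.

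Vous relevez \`a juste titre une tension entre les hypoth\`eses~: la d\'efinition~\ref{def:basedeStein} ne demande aux points ultram\'etriques de~$V$ que d'\^etre \emph{d\'ecents}, alors que le corollaire~\ref{cor:thAum} est \'enonc\'e pour un point \emph{tr\`es d\'ecent}. Le papier ne commente pas cette diff\'erence et applique directement le corollaire~\ref{cor:thAum}. Il s'agit vraisemblablement d'un l\'eger flottement dans les d\'efinitions (dans tous les exemples de l'\'enonc\'e~\ref{ex:basedeStein}, les points ultram\'etriques sont effectivement tr\`es d\'ecents, et la condition~v) de la d\'efinition~\ref{def:basedeStein}, qui exige des syst\`emes de Cousin-Runge \emph{forts}, va dans le m\^eme sens). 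Votre tentative d'argumenter autour de ce point est honn\^ete mais ne r\'esout pas vraiment la difficult\'e~; ce n'est toutefois pas \`a votre charge, puisque le papier lui-m\^eme ne le fait pas. Notez-le simplement comme une coquille potentielle dans la condition~ii) de la d\'efinition~\ref{def:basedeStein}, qui devrait probablement demander \og tr\`es d\'ecent \fg, et passez.
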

\begin{proof}
Soit~$\cF$ un faisceau coh\'erent sur~$\oD_{V}(\br)$. Pour tout point~$x$ de~$V$, le faisceau~$\cF$ est globalement engendr\'e sur~$\oD_{x}(\br)$. Cela d\'ecoule de la proposition~\ref{prop:fibrearc} si~$x$ est archim\'edien et du corollaire~\ref{cor:thAum} si~$x$ est ultram\'etrique. On conclut par la proposition~\ref{prop:thArecurrence}.
\end{proof}


\begin{coro}\label{cor:thB}\index{Theoreme@Th\'eor\`eme!B}
Soit~$V$ une base de Stein de~$X$ et soit $\br \in \R_{\ge 0}^n$. Alors, pour tout faisceau coh\'erent~$\cF$ sur~$\overline{D}_{V}(\br)$ et tout $q\ge 1$, on a $H^q(\overline{D}_{V}(\br),\cF)=0$.
\end{coro}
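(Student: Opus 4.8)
\textbf{Proof proposal for Corollaire~\ref{cor:thB}.} The plan is to reduce the vanishing of higher cohomology to the acyclicity statement already obtained for the structure sheaf (Corollaire~\ref{cor:HqVO}) by using the global generation statement Corollaire~\ref{cor:thA} to build a finite free resolution. Concretely, let $d := \dimr(\overline{D}_{V}(r_{1},\dotsc,r_{n}))$, which is finite since $V$ is a base de Stein (condition iv) of Definition~\ref{def:basedeStein}). Let $\cF$ be a coherent sheaf on $\overline{D}_{V}(r_{1},\dotsc,r_{n})$.

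First I would construct, by repeated application of Corollaire~\ref{cor:thA}, a resolution of length~$d$
\[\cO^{N_{d}} \to \cO^{N_{d-1}} \to \dotsb \to \cO^{N_{1}} \to \cF \to 0.\]
The point is that $\cF$ is globally generated, hence there is a surjection $\cO^{N_{1}} \to \cF$; its kernel is coherent (the structure sheaf is coherent by Théorème~\ref{coherent} and $\cF$ is coherent, so the kernel of a map of coherent sheaves is coherent), hence again globally generated by Corollaire~\ref{cor:thA}, and we iterate. After $d$ steps the last kernel $\cR_{d}$ need not be free, but this does not matter: we only need a resolution by free sheaves up to step~$d$ together with an arbitrary coherent sheaf in position $d$. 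This is exactly the hypothesis of Corollaire~\ref{cor:HqVFresolution}. Actually, to match that corollary's statement verbatim one continues one more step to obtain $\cO^{N_{d}} \to \cR_{d} \to 0$ and names $\cF$ there to be our $\cF$; alternatively one invokes directly the dévissage inside its proof.

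Then I would apply Corollaire~\ref{cor:HqVFresolution}, which says precisely that a coherent sheaf admitting such a length-$d$ free resolution has vanishing $H^q$ for all $q\ge 1$ on $\overline{D}_{V}(r_{1},\dotsc,r_{n})$. Its proof is the standard dévissage: writing $\cR_{m}$ for the $m$-th syzygy sheaf, long exact sequences in cohomology together with the acyclicity of $\cO$ on the polydisc (Corollaire~\ref{cor:HqVO}) give $H^{q}(\overline{D}_{V}(\br),\cF) \simeq H^{q+m}(\overline{D}_{V}(\br),\cR_{m})$, and for $m=d$ this vanishes because the topological (covering) dimension of the space is $d$, so cohomology of any sheaf vanishes above degree~$d$. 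This completes the proof.

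There is no genuine obstacle here: the statement is a formal consequence of Théorème~A (Corollaire~\ref{cor:thA}), the acyclicity of $\cO$ on closed relative polydiscs (Corollaire~\ref{cor:HqVO}), the coherence of the structure sheaf, and the finiteness of the covering dimension of $\overline{D}_{V}(r_{1},\dotsc,r_{n})$. All the real work has already been done in establishing those inputs; the only thing to be slightly careful about is that one uses \emph{finite} covering dimension to truncate the resolution, which is why the hypothesis iv) on a base de Stein is invoked, and that coherence of syzygies is available, which rests on Théorème~\ref{coherent}. Thus the proof is essentially a one-line citation of Corollaire~\ref{cor:HqVFresolution} after building the resolution from Corollaire~\ref{cor:thA}.
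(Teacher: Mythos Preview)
Your proposal is correct and follows exactly the paper's approach: repeatedly apply Corollaire~\ref{cor:thA} to build a free resolution of length~$d$, then invoke Corollaire~\ref{cor:HqVFresolution}. The paper's proof is the two-sentence version of what you wrote.
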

\begin{proof}
Soit~$\cF$ un faisceau coh\'erent sur~$\overline{D}_{V}(\br)$. En appliquant de fa\c con r\'ep\'et\'ee le corollaire~\ref{cor:thA}, on montre que le faisceau~$\cF$ poss\`ede des r\'esolutions libres de longueur arbitraire. Le r\'esultat d\'ecoule alors du corollaire~\ref{cor:HqVFresolution}. 
\end{proof}


\section{Affino\"ides surconvergents}\label{sec:affinoides}

Dans cette section, nous introduisons les espaces affino\"ides surconvergents et montrons qu'ils satisfont des propri\'et\'es analogues aux affino\"ides de la g\'eom\'etrie rigide tels que le th\'eor\`eme d'acyclicit\'e de Tate et th\'eor\`eme de Kiehl. 

\medbreak

Rappelons que toute partie compacte~$V$ de~$\E{n}{\cA}$ est munie de son faisceau de fonctions surconvergentes, \cf~notation~\ref{nota:surconvergent}.
Par exemple, si $V$ est un polydisque ferm\'e $\overline{D}_{B}(r_{1},\dotsc,r_{n})$, chaque section globale converge sur un disque $\overline{D}_{B}(r'_{1},\dotsc,r'_{n})$ avec $r'_{i} >r_{i}$ pour tout $i \in \cn{1}{n}$.
\index{Faisceau!surconvergent}\index{Fonction!surconvergente}

\begin{defi}\label{def:affinoide}\index{Espace affino\"ide!surconvergent|textbf}
On dit qu'un espace $\cA$-analytique~$X$ est \emph{$\cA$-affino\"ide surconvergent} s'il existe un polydisque ferm\'e $\overline{D}_{B}(\br)$, avec $\br\in\R_{\ge 0}^n$, et un faisceau d'id\'eaux coh\'erent~$\cI$ sur $\oD_{B}(\br)$ tels que~$X$ soit isomorphe au ferm\'e analytique de~$\oD_{B}(\br)$ d\'efini par~$\cI$.
\end{defi}

\begin{rema}\label{rem:MAsurcv}
Le spectre analytique $\cM(\cA)$ lui-m\^eme est un espace $\cA$-affino\"ide surconvergent, obtenu en choisissant le rayon~$\br$ vide et le faisceau d'id\'eaux~$\cI$ nul.
\end{rema}

\begin{rema}
Dans la th\'eorie classique sur un corps ultram\'etrique, on ne consid\`ere g\'en\'eralement que des polydisques dont tous les rayons sont strictement positifs. Cette restriction apparente ne nuit pas \`a la g\'en\'eralit\'e car un disque dont l'un des rayons est nul est isomorphe \`a un disque de dimension plus petite. Dans notre cadre, surconvergent et non plus convergent, les disques dont certains rayons sont nuls fournissent des espaces suppl\'ementaires, et nous ne souhaitons pas les exclure.
\end{rema}

En g\'eom\'etrie analytique rigide, on dispose de proc\'ed\'es classiques de constructions de domaines affino\"ides. Nous en proposons ici l'analogue.

\begin{defi}\label{def:domaines}\index{Domaine!de Weierstra\ss|textbf}\index{Domaine!de Laurent|textbf}\index{Domaine!rationnel|textbf}
Soient~$X$ un espace $\cA$-analytique et~$Y$ une partie de~$X$.

On dit que $Y$ est un \emph{domaine de Weierstra\ss} de~$X$ s'il existe $p\in \N$, $f_{1},\dotsc,f_{p} \in \cO(X)$ et $r_{1},\dotsc,r_{p} \in \R_{\ge 0}$ tels que
\[ Y = \{x\in X : \forall i\in \cn{1}{p},\ |f_{i}(x)| \le r_{i}\}.\]

On dit que $Y$ est un \emph{domaine de Laurent} de~$X$ s'il existe $p,q\in \N$, $f_{1},\dotsc,f_{p},g_{1},\dotsc,g_{q} \in \cO(X)$, $r_{1},\dotsc,r_{p} \in \R_{\ge 0}$ et $s_{1},\dotsc,s_{q} \in \R_{> 0}$ tels que
\[ Y = \{x\in X : \forall i\in \cn{1}{p},\ |f_{i}(x)| \le r_{i},\ \forall j\in \cn{1}{q},\ |g_{j}(x)| \ge s_{j}\}.\]


On dit que $Y$ est un \emph{domaine rationnel} de~$X$ s'il existe $p\in \N$, $f_{1},\dotsc,f_{p},g \in \cO(X)$ sans z\'eros communs sur~$X$ et $r_{1},\dotsc,r_{p} \in \R_{\ge 0}$ tels que
\[ Y = \{x\in X : \forall i\in \cn{1}{p},\ |f_{i}(x)| \le r_{i} |g(x)|\}.\]
\end{defi}

\begin{rema}
Dans le cadre classique sur un corps ultram\'etrique, seuls les rayons strictement positifs sont autoris\'es dans la d\'efinition des diff\'erents domaines. Dans notre cas, la th\'eorie fonctionne encore avec certains rayons nuls, gr\^ace au fait que nous consid\'erons des faisceaux structuraux surconvergents et non seulement convergents.
\end{rema}

\begin{prop}\label{prop:domaines}
Soit~$X$ un espace $\cA$-affino\"ide surconvergent. Les domaines de Weierstra\ss, les domaines de Laurent et les domaines rationnels de~$X$ sont des espaces $\cA$-affino\"ides surconvergents.
\end{prop}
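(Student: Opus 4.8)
L'idée est de traiter les trois cas (Weierstra\ss, Laurent, rationnel) en exhibant à chaque fois un plongement explicite de~$Y$ comme fermé analytique d'un polydisque fermé non dégénéré sur~$B$, construit à partir du plongement de~$X$ dans un tel polydisque~$\oD_{B}(\bt)$. Par hypothèse, $X$ est isomorphe au fermé analytique de $\oD := \oD_{B}(\bt)$ défini par un faisceau d'idéaux cohérent~$\cI$. Les fonctions $f_{i}, g_{j}, g$ considérées sont des sections surconvergentes sur~$X$, donc (quitte à rétrécir un peu) proviennent, via le morphisme de restriction $\cO_{\oD'}(\oD') \to \cO_{X}(X)$ pour un polydisque~$\oD'$ légèrement plus grand, de fonctions définies sur un voisinage de~$\oD$ ; c'est là qu'intervient la description des fonctions globales sur les disques (proposition~\ref{prop:disqueglobal}) et le fait que l'on travaille avec le faisceau surconvergent. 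On relève ainsi chaque $f_{i}$ en $\tilde f_{i} \in \cO(\oD)$, etc.

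\textbf{Cas de Weierstra\ss.} Soit $Y = \{x\in X : |f_{i}(x)|\le r_{i},\ i\in\cn{1}{p}\}$. On considère le polydisque $\oD^{+} := \oD_{B}(\bt,r_{1},\dotsc,r_{p})$, de coordonnées $T_{1},\dotsc,T_{n},S_{1},\dotsc,S_{p}$, et le morphisme $\oD \to \oD^{+}$ donné par la proposition~\ref{morphsec}, envoyant $S_{i}$ sur~$\tilde f_{i}$. D'après le corollaire~\ref{cor:factorisationouvert} (ou directement la propriété universelle), ce morphisme est une immersion fermée de~$\oD$ dans le fermé analytique de~$\oD^{+}$ défini par $(S_{i}-\tilde f_{i})_{i}$ : c'est le graphe des~$\tilde f_{i}$, fermé d'après la proposition~\ref{prop:grapheimmersion} puisque~$\oD^{+}$ est séparé. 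En composant avec l'immersion fermée $X\hookrightarrow\oD$, on obtient une immersion fermée $X\hookrightarrow\oD^{+}$ ; son image est le fermé analytique défini par $\cI' := \cI \cdot \cO_{\oD^{+}} + (S_{i}-\tilde f_{i})_{i}$ (tiré en arrière de~$\cI$ plus les équations du graphe), faisceau d'idéaux cohérent car le faisceau structural de~$\oD^{+}$ est cohérent (théorème~\ref{coherent}). Or sur~$X$, la condition $|f_{i}(x)|\le r_{i}$ équivaut à $|S_{i}(x)|\le r_{i}$, laquelle est automatiquement satisfaite dans~$\oD^{+}$ ; donc $Y$ s'identifie au fermé analytique de~$\oD^{+}$ défini par~$\cI'$, ce qui montre que~$Y$ est $\cA$-affinoïde surconvergent.

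\textbf{Cas de Laurent et cas rationnel.} Pour le cas de Laurent, on répète la construction en ajoutant, pour chaque condition $|g_{j}(x)|\ge s_{j}$, une coordonnée~$U_{j}$ et l'équation $U_{j}\,\tilde g_{j} = 1$ : sur~$X$, se donner une fonction~$U_{j}$ avec $U_{j} g_{j}=1$ équivaut à imposer $g_{j}$ inversible, et le contrôle $|U_{j}(x)|\le 1/s_{j}$ (qui place bien~$U_{j}$ dans un disque fermé) est équivalent à $|g_{j}(x)|\ge s_{j}$. On obtient ainsi un plongement fermé de~$Y$ dans $\oD_{B}(\bt,\br,1/s_{1},\dotsc,1/s_{q})$, à condition de vérifier que les fonctions~$U_{j}$ sont bien surconvergentes et bornées, ce qui résulte de ce que $|g_{j}|$ est minorée par une constante $>s_{j}$ sur un voisinage compact de~$Y$ dans~$X$ (par compacité et continuité de~$|g_{j}|$, lemme~\ref{lem:evaluationcontinue}). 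Pour le cas rationnel, on pose de même $Y = \{|f_{i}|\le r_{i}|g|\}$ : on introduit des coordonnées $V_{i}$ avec les équations $V_{i}\, \tilde g = \tilde f_{i}$ (licites car $g$ ne s'annule pas sur~$X$, donc est inversible dans un voisinage de~$Y$) ; sur~$X$, $V_{i}$ représente $f_{i}/g$, et $|V_{i}(x)|\le r_{i}$ équivaut à $|f_{i}(x)|\le r_{i}|g(x)|$. Le point technique principal, dans les deux derniers cas, est de garantir que les fonctions auxiliaires construites ($U_{j}$, $V_{i}$) sont effectivement surconvergentes au voisinage du compact~$Y$ et à valeurs dans un disque fermé de rayon fini : cela repose sur l'inversibilité de~$g$ (resp. des~$g_{j}$) sur un voisinage compact bien choisi de~$Y$, fournie par le fait que l'ensemble des points où une fonction globale est inversible est ouvert (\textit{cf.} \cite[theorem~1.2.1]{Ber1} appliqué fibre par fibre, ou simplement la continuité de la norme) et par la compacité de~$Y$. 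Une fois ces relèvements obtenus, la conclusion est formelle via la proposition~\ref{morphsec}, la proposition~\ref{prop:grapheimmersion} et le théorème~\ref{coherent}.
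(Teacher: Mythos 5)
Votre construction est, pour l'essentiel, celle de la preuve du texte : une coordonnée nouvelle par inégalité, contrainte dans un disque fermé du rayon adéquat, équations de graphe ($S_i-\tilde f_i$) ou d'inversion ($U_j\tilde g_j-1$, $V_i\tilde g-\tilde f_i$), puis identification de~$Y$ avec le fermé analytique ainsi défini, l'inverse étant le morphisme fourni par la proposition~\ref{morphsec} ; votre choix du rayon $1/s_j$ dans le cas de Laurent est d'ailleurs le bon. Trois points demandent cependant à être rectifiés. D'abord, dans le cas de Weierstra\ss, le morphisme graphe associé aux~$\tilde f_i$ n'envoie pas $\oD$ (ni~$X$) dans~$\oD^+$ : rien ne garantit $|\tilde f_i|\le r_i$ hors de~$Y$, de sorte qu'il n'existe pas d'immersion fermée de~$X$ dans~$\oD^+$ d'image $V(\cI')$ ; seule l'identification finale que vous énoncez ensuite — la projection induit un isomorphisme entre le fermé défini par~$\cI'$ et~$Y$ — est correcte, et c'est elle qu'il faut démontrer directement (c'est ce que fait le texte). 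Ensuite, dans le cas rationnel, l'hypothèse n'est pas que~$g$ ne s'annule pas sur~$X$, mais que $f_1,\dotsc,f_p,g$ sont sans zéros communs sur~$X$ ; c'est cette hypothèse, combinée aux inégalités $|f_i|\le r_i|g|$ qui définissent~$Y$, qui entraîne que~$g$ ne s'annule pas sur~$Y$ et y est donc inversible — justification qu'il faut substituer à la vôtre. Enfin, le relèvement global des~$f_i$ en des $\tilde f_i\in\cO(\oD)$ n'est pas une conséquence formelle de la surconvergence ni de la proposition~\ref{prop:disqueglobal} : la surjectivité de $\cO(\oD)\to\cO_X(X)$ repose sur l'annulation $H^1(\oD,\cI)=0$ (corollaire~\ref{cor:thB}, c'est en substance le théorème~\ref{th:sectionsglobalesaffinoide}) ; on peut aussi s'en dispenser en observant que le faisceau d'idéaux $\pi^\ast\cI+(S_i-f_i)$ se définit à partir de relèvements locaux et ne dépend pas de leur choix, ce qui est la lecture correcte de l'écriture employée dans le texte. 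Une fois ces précisions apportées, votre argument coïncide avec celui du texte.
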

\begin{proof}
Par hypoth\`ese, il existe un polydisque ferm\'e~$E$ sur~$B$ et un faisceau d'id\'eaux coh\'erent~$\cI$ sur $E$ tels que~$X$ soit isomorphe au ferm\'e analytique de~$E$ d\'efini par~$\cI$. 

Soit~$Y$ un  domaine de Weierstra\ss{} de~$X$. Alors, il existe $p\in \N$, $f_{1},\dotsc,f_{p} \in \cO(X)$ et $r_{1},\dotsc,r_{p} \in \R_{\ge 0}$ tels que
\[ Y = \{x\in X : \forall i\in \cn{1}{p},\ |f_{i}(x)| \le r_{i}\}.\]
Posons $E' := \oD_{E}(r_{1},\dotsc,r_{p})$ avec coordonn\'ees $T_{1},\dotsc,T_{p}$. C'est encore un polydisque ferm\'e sur~$B$. Notons $\pi \colon E' \to E$ la projection. Le faisceau d'id\'eaux 
\[\cI' := \pi^\ast \cI + (T_{1}-f_{1},\dotsc,T_{p}-f_{p})\, \cO_{E'}\] 
est coh\'erent sur~$E'$ et la projection~$\pi$ induit un isomorphisme entre le ferm\'e analytique de~$E'$ d\'efini par~$\cI'$ et~$Y$ (dont l'inverse est le morphisme associ\'e \`a $(f_{1},\dotsc,f_{p})$ par l'application bijective de la proposition~\ref{morphsec}). On en d\'eduit que $Y$ est un espace $\cA$-affino\"ide surconvergent.

Le cas des domaines de Laurent se traite de fa\c con similaire. Si~$Y$ est un domaine de Laurent donn\'e comme dans la d\'efinition~\ref{def:domaines}, on reprend la preuve pr\'ec\'edente en consid\'erant, cette fois-ci, le disque $E' := \oD_{E}(r_{1},\dotsc,r_{p},s_{1},\dotsc,s_{q})$ avec coordonn\'ees $T_{1},\dotsc,T_{p},S_{1},\dotsc,S_{q}$ et le faisceaux d'id\'eaux 
\[\cI' := \pi^\ast \cI + (T_{1}-f_{1},\dotsc,T_{p}-f_{p},S_{1}g_{1}-1,\dotsc,S_{q}g_{q}-1)\, \cO_{E'}.\]
Remarquons que les fonctions $g_{1},\dotsc,q_{q}$ sont inversibles sur~$Y$. On peut donc consid\'erer le morphisme de source~$Y$ associ\'e \`a $(f_{1},\dotsc,f_{p},g_{1}^{-1},\dotsc,g_{q}^{-1})$. Son image s'identifie au ferm\'e analytique~$Z'$ de~$E'$ d\'efini par~$\cI'$ et il fournit un inverse au morphisme $Z' \to Y$ induit par la projection $\pi\colon E' \to E$.

La m\^eme strat\'egie s'applique encore pour les domaines rationnels. Si~$Y$ est un domaine de Laurent donn\'e comme dans la d\'efinition~\ref{def:domaines}, on reprend la preuve en consid\'erant, cette fois-ci, le disque $E' := \oD_{E}(r_{1},\dotsc,r_{p})$ avec coordonn\'ees $T_{1},\dotsc,T_{p},S$ et le faisceaux d'id\'eaux 
\[\cI' := \pi^\ast \cI + (gT_{1}-f_{1},\dotsc,gT_{p}-f_{p})\, \cO_{E'}.\]
Puisque les fonctions $f_{1},\dotsc,f_{p},g$ sont suppos\'ees sans z\'eros communs sur~$X$, la fonction~$g$ ne peut s'annuler sur~$Y$ et elle y est donc inversible. On peut donc consid\'erer le morphisme de source~$Y$ associ\'e \`a $(f_{1}\,g^{-1},\dotsc,f_{p}\,g^{-1})$. Son image s'identifie au ferm\'e analytique~$Z'$ de~$E'$ d\'efini par~$\cI'$ et il fournit un inverse au morphisme $Z' \to Y$ induit par la projection $\pi\colon E' \to E$.
\end{proof}

\begin{coro}\label{cor:BVaffinoide}\index{Voisinage}
Soit~$X$ un espace $\cA$-analytique. Tout point de~$X$ poss\`ede une base de voisinages form\'ee d'espaces $\cA$-affino\"ides surconvergents. 
\end{coro}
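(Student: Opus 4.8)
\textbf{Plan de preuve du corollaire~\ref{cor:BVaffinoide}.} Le plan est de ramener l'\'enonc\'e, qui est local, \`a la structure d'espace analytique et d'exhiber explicitement une base de voisinages affino\"ides surconvergents au voisinage d'un point donn\'e. Soit $X$ un espace $\cA$-analytique et $x$ un point de~$X$. Comme la propri\'et\'e d'\^etre une base de voisinages ne d\'epend que d'un voisinage de~$x$, on peut remplacer~$X$ par une carte, donc supposer que $X$ est un ferm\'e analytique d'un ouvert~$U$ d'un espace affine analytique~$\E{n}{\cA}$, d\'efini par un faisceau d'id\'eaux coh\'erent~$\cI$ sur~$U$. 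Notons $j \colon X \to U$ l'immersion ferm\'ee correspondante et $y := j(x)$.

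Il suffit alors de montrer que $y$ poss\`ede dans~$U$ une base de voisinages form\'ee de polydisques relatifs ferm\'es non d\'eg\'en\'er\'es sur~$B$, \'eventuellement translat\'es et recoup\'es par des conditions polynomiales, car les traces de ces voisinages sur~$X$ seront des ferm\'es analytiques de tels polydisques, donc des espaces $\cA$-affino\"ides surconvergents. Plus pr\'ecis\'ement, je proc\'ederais ainsi. D'apr\`es la remarque suivant la d\'efinition des parties rationnelles (\cf~le calcul classique \cite[proposition~7.2.3/7]{BGR} rappel\'e apr\`es l'exemple~\ref{ex:pointprorationnel}), le point~$y$ poss\`ede une base de voisinages compacts rationnels dans~$\E{n}{\cA}$, donc dans~$U$. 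Un voisinage rationnel est de la forme $V = \bigcap_{1\le i\le p}\{z\in \E{n}{\cA} : |P_{i}(z)| \le r_{i}\, |Q(z)|\}$ o\`u $P_{1},\dotsc,P_{p},Q \in \cA[T_{1},\dotsc,T_{n}]$ ne s'annulent pas simultan\'ement. Quitte \`a r\'etr\'ecir, on peut supposer que $Q(y)\ne 0$, donc que $Q$ est inversible au voisinage de~$y$ et ne s'annule pas sur un voisinage compact de~$y$ dans~$U$ ; en particulier $V$ est un domaine rationnel d'un tel voisinage compact, lui-m\^eme contenu dans un polydisque ferm\'e non d\'eg\'en\'er\'e sur~$B$ centr\'e en~0. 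On peut alors invoquer la proposition~\ref{prop:domaines}~: un domaine rationnel d'un espace $\cA$-affino\"ide surconvergent est $\cA$-affino\"ide surconvergent. Il reste \`a justifier que l'ensemble des voisinages ainsi obtenus est effectivement une base de voisinages de~$y$, ce qui d\'ecoule de la d\'efinition de la topologie de~$\E{n}{\cA}$ (topologie de la convergence simple, pour laquelle les ensembles $\{z : |P(z)| \bowtie c\}$ engendrent une base) et de la remarque d\'ej\`a cit\'ee sur les intersections finies de parties rationnelles.

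Le point d\'elicat n'est pas la topologie, qui est standard, mais de s'assurer que chaque \'etape pr\'eserve bien la structure d'espace $\cA$-analytique (et non seulement l'ensemble sous-jacent), c'est-\`a-dire que le faisceau structural surconvergent sur la trace $V\cap X$ co\"incide avec celui du ferm\'e analytique du polydisque construit. Ceci est garanti par le th\'eor\`eme~\ref{thm:rationnel} (les parties compactes pro-rationnelles, en particulier rationnelles, sont spectralement convexes, donc $\varphi_{V}$ induit un isomorphisme d'espaces annel\'es au-dessus de~$\mathring V$) et par la remarque~\ref{rem:fermesanalytiques} qui assure que la structure de ferm\'e analytique se restreint bien aux ouverts. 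La proposition~\ref{prop:domaines} ayant \'et\'e d\'emontr\'ee exactement pour ce faire, le corollaire s'en d\'eduit sans difficult\'e suppl\'ementaire.
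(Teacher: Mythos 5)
Votre preuve est correcte et suit pour l'essentiel la m\^eme route que celle du texte~: on se ram\`ene \`a un ferm\'e analytique d'un ouvert de~$\E{n}{\cA}$, on constate que les voisinages compacts \'el\'ementaires fournis par la topologie (le texte utilise les domaines de Laurent, vous les voisinages compacts rationnels vus comme domaines rationnels d'un grand polydisque ferm\'e) sont des affino\"ides surconvergents gr\^ace \`a la proposition~\ref{prop:domaines}, puis on conclut par la stabilit\'e des affino\"ides surconvergents par passage aux ferm\'es analytiques. Le d\'etour par le th\'eor\`eme~\ref{thm:rationnel} est superflu (la compatibilit\'e des faisceaux structuraux d\'ecoule directement de la d\'efinition du faisceau surconvergent d'un ferm\'e analytique), mais sans cons\'equence.
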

\begin{proof}
Soit~$x \in X$. Le r\'esultat \`a d\'emontrer \'etant local, on peut supposer que~$X$ est un ferm\'e analytique d'un ouvert~$U$ d'un espace affine analytique~$\E{n}{\cA}$. Puisque tout ferm\'e analytique d'un espace $\cA$-affino\"ide surconvergent est encore $\cA$-affino\"ide surconvergent, il suffit de montrer le r\'esultat pour~$U$, et donc pour~$\E{n}{\cA}$. Or, par d\'efinition de la topologie de~$\E{n}{\cA}$, les domaines de Laurent forment une base de voisinages de tout point. Le r\'esultat d\'ecoule donc de la proposition~\ref{prop:domaines}.
\end{proof}

Passons maintenant aux analogues des r\'esultats classiques sur les affino\"ides. Le premier que nous \'enon\c cons s'apparente \`a une partie du th\'eor\`eme d'acyclicit\'e de Tate, affirmant que les sections globales sur un affino\"ide surconvergent sont celles auxquelles on s'attend d'apr\`es sa d\'efinition.

\begin{theo}\label{th:sectionsglobalesaffinoide}\index{Theoreme@Th\'eor\`eme!d'acyclicit\'e de Tate}
Supposons que $\cM(\cA)$ est une base de Stein. Soit $\br \in \R_{\ge 0}^n$ et soient $f_{1},\dotsc,f_{m} \in \cO_{\E{n}{\cA}}(\overline{D}_{B}(\br))$. 
Alors, le ferm\'e analytique~$X$ de $\overline{D}_{B}(\br)$ d\'efini par $f_{1},\dotsc,f_{m}$ est un espace $\cA$-affino\"ide surconvergent et on a un isomorphisme canonique
\[ \cO_{\E{n}{\cA}}(\overline{D}_{B}(\br))/(f_{1},\dotsc,f_{m}) \simtoo \cO_{X}(X).\]
\end{theo}
\begin{proof}
Notons $\cI$ le faisceau d'id\'eaux coh\'erent de~$\oD_{B}(\br)$ engendr\'e par~$(f_{1},\dotsc,f_{m})$. Posons $I := \cI(\oD_{B}(\br))$. D'apr\`es le corollaire~\ref{cor:thB}, on a $H^1(\oD_{B}(\br),\cI) =0$. En utilisant la suite exacte longue associ\'ee \`a la suite exacte courte $0 \to \cI \to \cO \to \cO/\cI\to 0$, on en d\'eduit que 
\[\cO_{X}(X) = (\cO/\cI)(\oD_{B}(\br)) \simeq \cO(\oD_{B}(\br))/I.\]

Le morphisme de faisceau 
\[\fonction{\varphi}{\cO^m}{\cO}{(a_{1},\dotsc,a_{m})}{\sum_{i=1}^m a_{i} f_{i}}\]
a pour image~$\cI$. Son noyau~$\cK$ est un faisceau coh\'erent sur~$\oD_{B}(\br)$. La suite exacte courte $0 \to \cK \to \cO^m \to \cI \to 0$ fournit une suite exacte longue 
\[ \dotsc \too \cO(\oD_{B}(\br))^m \xrightarrow[]{\varphi(\oD_{B}(\br))} \cI(\oD_{B}(\br)) \too H^1(\oD_{B}(\br),\cK) \too \dotsc\]
D'apr\`es le corollaire~\ref{cor:thB}, on a $H^1(\oD_{B}(\br),\cK) =0$, d'o\`u l'on d\'eduit que $I = \cI(\oD_{B}(\br)) = (f_{1},\dotsc,f_{m}) \cO(\oD_{B}(\br))$. Cela conclut la preuve.
\end{proof}

Le r\'esultat qui suit contient la partie manquante du th\'eor\`eme d'acyclicit\'e de Tate, ainsi que le th\'eor\`eme de Kiehl.

\begin{theo}\label{th:affinoideAB}\index{Theoreme@Th\'eor\`eme!d'acyclicit\'e de Tate}\index{Theoreme@Th\'eor\`eme!de Kiehl}\index{Theoreme@Th\'eor\`eme!A}\index{Theoreme@Th\'eor\`eme!B}
Supposons que $\cM(\cA)$ est une base de Stein. Soient~$X$ un espace $\cA$-affino\"ide surconvergent et~$\cF$ un faisceau coh\'erent sur~$X$. Alors, 
\begin{enumerate}[i)]
\item pour tout entier $q\ge 1$, on a $H^q(X,\cF) = 0$~;
\item le $\cO(X)$-module $\cF(X)$ est de type fini et engendre~$\cF$.
\end{enumerate}
\end{theo}
\begin{proof}
Par hypoth\`ese, il existe un polydisque ferm\'e~$\oD$ sur~$B$ et un faisceau d'id\'eaux coh\'erent~$\cI$ sur $\oD$ tels que~$X$ soit isomorphe au ferm\'e analytique de~$\oD$ d\'efini par~$\cI$. Notons $j \colon X \to \oD$ l'inclusion.

Soit~$\cF$ un faisceau coh\'erent. Son pouss\'e en avant $\varphi_{\ast}\cF$ est un faisceau coh\'erent sur~$\oD$ avec des groupes de cohomologie associ\'es identiques. Le point~i) d\'ecoule donc du corollaire~\ref{cor:thB}. D'apr\`es le corollaire~\ref{cor:thA}, $\varphi_{*}(\cF)(\oD)$ engendre~$(\varphi_{*}\cF)_{y}$ en tout point~$y$ de~$\oD$ et on en d\'eduit que $\cF(X) = \varphi_{*}(\cF)(\oD)$ engendre $\cF_{x} = (\varphi_{*}\cF)_{x}$ en tout point~$x$ de~$X$.

Il reste \`a montrer que $\cF(X)$ est de type fini sur~$\cO(X)$. Soit $x \in X$. On vient de montrer que $\cF_{x}$ est engendr\'e par~$\cF(X)$. Par cons\'equent, il existe un morphisme $\cO^{q_{x}} \to \cF$ qui est surjectif en~$x$, et donc sur un voisinage~$U_{x}$ de~$x$. Par compacit\'e de~$X$, on peut le recouvrir par un nombre fini de~$U_{x}$ et donc construire un morphisme de faisceaux surjectif $\varphi \colon \cO^q \to \cF$. Puisque $H^1(X,\sKer(\varphi))=0$, le morphisme $\cO(X)^q \to \cF(X)$ induit entre les sections globales est encore surjectif.

\end{proof}


\begin{coro}\index{Faisceau!coh\'erent}
Supposons que $\cM(\cA)$ est une base de Stein. Soit~$X$ un espace $\cA$-affino\"ide surconvergent. Alors le foncteur sections globales $\cF \mapsto \cF(X)$ induit une \'equivalence entre la cat\'egorie des faisceaux coh\'erents sur~$X$ et celle des modules de type fini sur~$\cO(X)$.
\qed 
\end{coro}

\section{Quelques compl\'ements}\label{sec:complements}

Dans cette section, nous utilisons la th\'eorie des espaces de Stein, que nous venons de d\'evelopper, pour raffiner ou \'etendre quelques r\'esultats ant\'erieurs.

Commen\c cons par des r\'esultats sur les alg\`ebres de couronnes. Nous pouvons maintenant \'etendre la proposition~\ref{prop:bumCst} au-del\`a du cas ultram\'etrique. 

\begin{prop}\label{prop:bCstStein}\index{Couronne!algebre@alg\`ebre d'une}
Soit $x\in \E{n}{\cA}$. Soient $s,t \in \R_{> 0}$ tels que $s\le t$. Notons $\overline{C}(s,t)$ la couronne relative au-dessus de $\E{n}{\cA}$, avec coordonn\'ee~$T$, et $\overline{C}_{x}(s,t)$ sa fibre au-dessus de~$x$. Alors, le morphisme naturel
\[\colim_{V\ni x, u < s \le t< v} \cB(V)\la u\le |T|\le v\ra \too \cO(\overline{C}_{x}(s,t)),\]
o\`u~$V$ parcourt l'ensemble des voisinages compacts de~$x$ dans~$\E{n}{\cA}$, est un isomorphisme.
\end{prop}
\begin{proof}
L'injectivit\'e du morphisme est claire. Pour montrer la surjectivit\'e, consid\'erons le disque relatif $\oD(t,s^{-1})$ de dimension~2 au-dessus de $\E{n}{\cA}$, avec coordonn\'ees~$T,S$, et son ferm\'e analytique~$Z$ d\'efini par l'id\'eal engendr\'e par~$ST-1$. Par le m\^eme raisonnement que dans la preuve de la proposition~\ref{prop:domaines}, on montre que $Z$ est isomorphe \`a $\overline{C}(s,t)$. En se restreignant aux fibres au-dessus de~$x$, on en d\'eduit un isomorphisme $Z_{x} \simeq \overline{C}_{x}(s,t)$.

D'apr\`es le corollaire~\ref{cor:thB}, les groupes de cohomologie sup\'erieurs de tout faisceau coh\'erent sur la fibre~$\oD_{x}(t,s^{-1})$ de $\oD(t,s^{-1})$ au-dessus de~$x$ sont nuls. En raisonnant comme dans la preuve du th\'eor\`eme~\ref{th:sectionsglobalesaffinoide}, on en d\'eduit un isomorphisme canonique $\cO(\oD_{x}(t,s^{-1}))/(ST-1) \simeq \cO(Z_{x})$.

On obtient finalement un diagramme commutatif
\[\begin{tikzcd}
\Big(\colim\limits_{V\ni x, u < s \le t< v} \cB(V)\la |T|\le v, |S| \le u^{-1}\ra\Big)/(ST-1) \ar[d]\ar[r]& \cO(\oD_{x}(t,s^{-1}))/(ST-1) \ar[d, "\sim" {anchor=south, rotate=90}]\\
\colim\limits_{V\ni x, u < s \le t< v} \cB(V)\la u\le |T|\le v\ra \ar[r]&  \cO(\overline{C}_{x}(s,t)),
\end{tikzcd}\]
o\`u la fl\`eche verticale de gauche est donn\'ee par l'\'evaluation en $S = T^{-1}$. D'apr\`es la proposition~\ref{prop:disqueglobal}, dans une version en plusieurs variables qui se d\'emontre sans difficult\'es suppl\'ementaires\footnote{On d\'emontre d'abord le r\'esultat dans le cas des rayons nuls, cf~\cite[th\'eor\`eme~2.4.8]{A1Z}, puis le cas g\'en\'eral en utilisant le principe du prolongement anlaytique, comme dans la preuve de \cite[corollaire~2.7]{EtudeLocale}}, la fl\`eche horizontale sup\'erieure est un isomorphisme. Le r\'esultat s'en d\'eduit.
\end{proof}

En appliquant la proposition pr\'ec\'edente fibre \`a fibre et en recollant, on obtient le r\'esultat suivant, \cf~corollaire~\ref{cor:couronneglobale}.

\begin{coro}\label{cor:couronneglobaleStein}\index{Couronne!algebre@alg\`ebre d'une}
Soit~$V$ une partie compacte de~$\E{n}{\cA}$. Soient $s,t\in\R_{> 0}$ tels que $s\le t$. Alors le morphisme naturel
\[\colim_{W\supset V,u < s \le t < v} \cO(W)\la u\le |T|\le v\ra \too \cO(\overline{C}_{V}(s,t)),\]
o\`u~$W$ parcourt l'ensemble des voisinages compacts de~$V$ dans~$\E{n}{\cA}$, est un isomorphisme.
\end{coro}

Nous allons maintenant \'enoncer un r\'esultat technique qui permet de remplacer, dans certains \'enonc\'es, des anneaux de la forme $\cB(V)$ par des anneaux de la forme $\cO(U)$, plus naturels. Au pr\'elable, introduisons une d\'efinition.

\begin{defi}\index{Fonction!B-definie@$\cB$-d\'efinie}
Soit $U$ un ouvert de~$\E{n}{\cA}$ et $V$ une partie compacte de~$U$. On dit qu'un \'el\'ement~$f$ de~$\cO(U)$ est \emph{$\cB$-d\'efini} sur~$V$ s'il existe un \'el\'ement~$F$ de~$\cB(V)$ tel que $f_{|V} = F_{|V}$, au sens o\`u
\[\forall y \in V, \ f(y) = F(y).\]
\end{defi}

\begin{rema}
Nous avons d\'ej\`a introduit la propri\'et\'e d'\^etre $\cB$-d\'efini pour les \'el\'ements d'un anneau local~$\cO_{x}$ \`a la d\'efinition~\ref{def:B-defini}. Si $U$ est un ouvert de~$\E{n}{\cA}$, $x$ un point de~$U$ et $V$ un voisinage compact de~$x$ dans~$U$, alors tout \'el\'ement~$f$ de~$\cO(U)$ qui est $\cB$-d\'efini sur~$V$ est \'egalement $\cB$-d\'efini en tant qu'\'el\'ement de~$\cO_{x}$.
\end{rema}

\begin{defi}\label{def:Badapte}\index{Point!$\cB$-adapt\'e|textbf}\index{Espace affine analytique!$\cB$-adapt\'ee|textbf}\index{Fonction!B-definie@$\cB$-d\'efinie}
On dit qu'un point $x$ de~$\E{n}{\cA}$ est \emph{$\cB$-adapt\'e} si, pour tout voisinage~$U$ de~$x$ dans~$\E{n}{\cA}$, il existe un voisinage compact~$V$ de~$x$ dans~$U$ tel que tout \'el\'ement de~$\cO(U)$ soit $\cB$-d\'efini sur~$V$.



On dit que $\E{n}{\cA}$ est $\cB$-adapt\'e si tous ses points sont $\cB$-adapt\'es.
%
%
\end{defi}

\begin{exem}\label{ex:Badapte}\index{Corps!valu\'e}\index{Anneau!des entiers relatifs $\Z$}\index{Anneau!des entiers d'un corps de nombres}\index{Corps!hybride}\index{Anneau!de valuation discr\`ete}\index{Anneau!de Dedekind trivialement valu\'e}
Lorsque l'anneau de Banach~$\cA$ est l'un de nos exemples habituels (corps valu\'e, $\Z$ ou anneau d'entiers de corps de nombres, corps hybride, anneau de valuation discr\`ete, anneau de Dedekind trivialement valu\'e, \cf~exemples~\ref{ex:corpsvalue} \`a~\ref{ex:Dedekind}), le spectre $\cM(\cA)$ est $\cB$-adapt\'e.
\end{exem}

\begin{prop}\label{prop:AnBadapte}
Supposons que $\cM(\cA)$ est $\cB$-adapt\'e. Alors, pour tout $n\in \N$, $\E{n}{\cA}$ est $\cB$-adapt\'e.
\end{prop}
\begin{proof}
D\'emontrons le r\'esultat par r\'ecurrence sur~$n$. Le cas $n=0$ est vrai par hypoth\`ese.

Supposons que le r\'esultat est vrai pour~$\E{n}{\cA}$ et d\'emontrons-le pour~$\E{n+1}{\cA}$. Consid\'erons $\pi \colon \E{n+1}{\cA} \to \E{n}{\cA}$ la projection sur la derni\`ere coordonn\'ee, not\'ee~$S$.

Soit $x\in \E{n+1}{\cA}$ et soit $U$ un voisinage de~$x$ dans~$\E{n+1}{\cA}$. D'apr\`es le corollaire~\ref{cor:BVfineNPST}, 
il existe un voisinage compact~$W_{0}$ de~$\pi(x)$ dans~$\E{n}{\cA}$, une base de voisinages compacts~$\cW_{\pi(x)}$ de~$\pi(x)$ dans~$W_{0}$, un polyn\^ome $P \in \cB(W_{0})[S]$ unitaire non constant et des nombres r\'eels $r,s \in \R_{\ge 0}$ tels que, pour tout $W \in \cW_{\pi(x)}$, $\overline{C}_{W}(P,r,s)$ soit un voisinage de~$x$ dans~$U$ et la couronne $\overline{C}_{W}(r,s)$ (avec coordonn\'ee~$T$) satisfasse la propri\'et\'e~$(\cB N_{P(S)-T})$.

Soit $W_{1}\in \cW_{\pi(x)}$. Par hypoth\`ese, il existe un voisinage compact~$W_{2}$ de~$\pi(x)$ dans~$W_{1}$ tel que tout \'el\'ement de~$\cO(W_{1})$ soit $\cB$-d\'efini sur~$W_{2}$. Quitte \`a restreindre~$W_{2}$, on peut supposer qu'il appartient \`a~$\cW_{\pi(x)}$.


Soit $a \in \cO(\overline{C}_{W_{1}}(r,s))$. D'apr\`es le corollaire~\ref{cor:couronneglobaleStein}, c'est l'image d'une s\'erie $a'$ de $\cO(W_{1})\la r\le |T|\le s\ra$ par l'application canonique. Par d\'efinition de~$W_{2}$, il existe une s\'erie  $A'$ de $\cB(W_{2})\la r\le |T|\le s\ra$ dont les coefficients co\"incident avec ceux de~$a'$ sur~$W_{2}$. Par cons\'equent, l'image~$A$ de~$A'$ dans $\cB(\overline{C}_{W_{2}}(r,s))$ co\"incide avec~$a$ sur $\overline{C}_{W_{2}}(r,s)$.

Soit $f\in \cO(U)$. Notons encore~$f$ son image dans $\cO(\overline{C}_{W_{1}}(P,r,s))$. 
D'apr\`es le th\'eor\`eme~\ref{thm:isolemniscate}, on a un isomorphisme
\[\varphi \colon \cO(\overline{C}_{W_{1}}(P,r,s)) \simtoo \cO(\overline{C}_{W_{1}}(r,s))[S]/(P(S)-T).\]
Repr\'esentons $\varphi(f)$ par un \'el\'ement~$g$ de $\cO(\overline{C}_{W_{1}}(r,s))[S]$. Le r\'esultat du paragraphe pr\'ec\'edent assure qu'il existe un \'el\'ement~$G$ de $\cB(\overline{C}_{W_{2}}(r,s))[S]$ dont la restriction \`a~$\oC_{W_{2}}(r,s)$ co\"incide avec~$g$. Notons $\bar G$ son image dans $\cB(\overline{C}_{W_{2}}(r,s))[S]/(P(S)-T)$. Puisque la couronne $\overline{C}_{W_{2}}(r,s)$ satisfait la propri\'et\'e~$(\cB N_{P(S)-T})$, on a un isomorphisme
\[\psi \colon \cB(\overline{C}_{W_{2}}(P,r,s)) \simtoo \cB(\overline{C}_{W_{2}}(r,s))[S]/(P(S)-T).\]
Par construction, $\psi^{-1}(\bar G)$ co\"incide avec~$f$ sur $\overline{C}_{W_{2}}(P,r,s)$. Le r\'esultat s'ensuit.


\end{proof}

Comme exemple d'application, \'enon\c cons une version plus g\'en\'erale du r\'esultat de fermeture des id\'eaux du faisceau structural, \cf~corollaire~\ref{limite}.

\begin{coro}\label{cor:limiteOU}\index{Ideal@Id\'eal!ferme@ferm\'e}
Supposons que $\cM(\cA)$ est $\cB$-adapt\'e. Soient~$x$ un point de~$\E{n}{\cA}$, $p\ge 1$ un entier et $M$~un sous-module de~$\cO^p_{x}$. Soient $U$ un voisinage de~$x$ dans~$\E{n}{\cA}$ et $(g_n)_{n\in\N}$ une suite d'\'el\'ements de~$\cO(U)^p$ qui converge vers un \'el\'ement~$g$ de~$\cO(U)^p$. Si, pour tout $n\in \N$, l'image de~$g_{n}$ dans~$\cO_{x}^p$ appartient \`a~$M$, alors l'image de~$g$ dans~$\cO_{x}^p$ appartient \`a~$M$. 
\qed
\end{coro}

\section{Th\'eor\`eme B sur certains ouverts}\label{sec:Bouvert}


Pour d\'emontrer des r\'esultats d'annulation cohomologique sur des ouverts \`a partir de r\'esultats sur des ferm\'es, nous nous inspirons de la strat\'egie utilis\'ee dans le cas complexe, et en particulier de la notion d'exhaustion de Stein, \cf~\cite[chapter~IV]{Gr-Re}. Signalons que le second auteur a d\'ej\`a mis en {\oe}uvre une strat\'egie similaire dans le cas de la droite affine dans~\cite[\S 6.6]{A1Z}.

Les changements \`a apporter dans notre cadre sont mineurs. Pour faciliter la lecture, nous allons cependant proc\'eder \`a quelques rappels et indiquer les grandes lignes du raisonnement.

\begin{defi}\index{Exhaustion|textbf}
Soit~$T$ un espace topologique. Une \emph{exhaustion de~$T$} est une suite de parties compactes $(K_{m})_{m\ge 0}$ de~$T$ telle que, pour tout $m\ge0$, 
$K_{m}$ soit incluse dans l'int\'erieur de~$K_{m+1}$ et 
\[T =  \bigcup_{m\ge 0} K_{m} .\]
\end{defi}

\begin{theo}[\protect{\cite[IV, \S 1, theorem~4]{Gr-Re}}]\index{Theoreme@Th\'eor\`eme!B}
Soient~$T$ un espace topologique et $(K_{m})_{m\ge 0}$ une exhaustion de~$T$. Soient~$\cF$ un faisceau de groupes ab\'eliens sur~$T$ et $q\ge 2$ un entier. Supposons que, pour tout $m\ge 0$, on a 
\[ H^{q-1}(K_{m},\cF) = H^{q}(K_{m},\cF) = 0.\]
Alors, on a 
\[ H^q(T,\cF) = 0.\]
\qed
\end{theo}

Ce r\'esultat ne pr\'esente pas de difficult\'e particuli\`ere. On voit cependant que l'annulation du~$H^1$ reste hors d'atteinte. Pour l'obtenir, on utilise une notion plus fine qui fait intervenir des conditions topologiques sur les espaces de sections.

\begin{defi}[\protect{\cite[IV, \S 1, definition~6]{Gr-Re}}]\index{Exhaustion!de Stein|textbf}
Soient~$T$ un espace topologique et~$\cF$ un faisceau de groupes ab\'eliens sur~$T$. Une \emph{exhaustion de Stein de~$T$ relativement \`a~$\cF$} est une exhaustion $(K_{m})_{m\ge 0}$ de~$T$ telle que 
\[ \textrm{pour tous } m\ge 0, q\ge 1, \ H^q(K_{m},\cF)=0\] 
et, pour tout $m\ge0$, il il existe une semi-norme~$\nm_{m}$ sur~$\cF(K_{m})$ v\'erifiant les propri\'et\'es suivantes~:
\begin{enumerate}[i)]
\item l'image du morphisme de restriction $\cF(T) \to \cF(K_{m})$ est dense pour~$\nm_{m}$~;
\item le morphisme de restriction $(\cF(K_{m+1}),\nm_{m+1}) \to (\cF(K_{m}),\nm_{m})$ est born\'e~;
\item le morphisme de restriction $(\cF(K_{m+1}),\nm_{m+1}) \to (\cF(K_{m}),\nm_{m})$ envoie toute suite de Cauchy sur une suite convergente~;
\item tout \'el\'ement~$s$ de~$\cF(K_{m+1})$ tel que $\|s\|_{m+1}=0$ est nul sur~$K_{m}$.
\end{enumerate}
\end{defi}

\begin{theo}[\protect{\cite[IV, \S 1, theorem~7]{Gr-Re}}]\label{thm:exhaustionStein}\index{Theoreme@Th\'eor\`eme!B}
Soient~$T$ un espace topologique et~$\cF$ un faisceau de groupes ab\'eliens sur~$T$. Soit $(K_{m})_{m\ge 0}$ une exhaustion de Stein de~$T$ relativement \`a~$\cF$. Alors, on a
\[ \textrm{pour tout } q\ge 1,\ H^q(T,\cF) = 0.\]
\qed
\end{theo}

Nous allons d\'emontrer l'annulation des groupes de cohomologie sup\'erieurs de certains ouverts de~$\E{n}{\cA}$, et notamment des disques ouverts relatifs, en exhibant des exhaustions de Stein. Nous suivrons de pr\`es \cite[\S 6.6]{A1Z}. La succession de lemmes est identique et nous nous contenterons de d\'emontrer ceux qui demandent des modifications. Nous utiliserons de fa\c con cruciale le r\'esultat de fermeture des modules locaux (\cf~corollaire~\ref{cor:limiteOU}). 

\begin{defi}\label{def:exhaustionuniverselle}\index{Exhaustion!de Stein!universelle|textbf}
Soit $U$ une partie de~$\E{n}{\cA}$. Une \emph{exhaustion de Stein universelle} de $U$ est une exhaustion $(K_{m})_{m\ge 0}$ de~$U$ telle que, pour tout $m\ge0$, les propri\'et\'es suivantes soient satisfaites~:
\begin{enumerate}[i)]
\item tout faisceau coh\'erent~$\cF$ sur~$U$ est globalement engendr\'e sur~$K_{m}$~;
\item pour tout faisceau coh\'erent~$\cF$ sur $U$ et tout entier $q\ge 1$, on a $H^q(K_{m},\cF) = 0$~;
\item l'image du morphisme de restriction $\cO(K_{m+1}) \to \cO(K_{m})$ est dense pour~$\nm_{K_{m}}$.
\end{enumerate}
\end{defi}

\begin{exem}\label{ex:exhaustiondisque}
Soit $V$ une base de Stein de~$B$. Soient $t_{1},\dotsc,t_{n} \in \R_{>0} \cup \{+\infty\}$. Pour $i\in \cn{1}{n}$, fixons une suite strictement croissante $(t_{i,m})_{m\ge 0}$ d'\'el\'ements de~$\R_{>0}$ qui tend vers~$t_{i}$. Alors $(\oD_{V}(t_{1,m},\dotsc,t_{n,m}))_{m\ge 0}$ est une exhaustion de Stein universelle de $D_{V}(t_{1},\dotsc,t_{n})$. Le point~i) d\'ecoule du corollaire~\ref{cor:thA}, le point~ii) du corollaire~\ref{cor:thB} et le point~iii) de la proposition~\ref{prop:disqueglobal}.
\end{exem}


Soit $U$ une partie de~$\E{n}{\cA}$ et supposons qu'elle poss\`ede une \emph{exhaustion de Stein universelle} $(K_{m})_{m\ge 0}$. Soit~$\cF$ un faisceau coh\'erent sur~$U$.

Soit $m\in \N$. Le faisceau~$\cF$ est globalement engendr\'e sur~$K_{m}$, donc il existe $l_{m} \in \N$ et un morphisme surjectif $\alpha_{m} \colon \cO_{K_{m}}^{l_{m}} \to \cF_{|K_{m}}$. Les propri\'et\'es d'annulation cohomologique entra\^inent que le morphisme induit
\[ \eps_{m} \colon \cO(K_{m})^{l_{m}} \too \cF(K_{m})\]
est surjectif.

Munissons $\cO(K_{m})^{l_{m}}$ de la norme d\'efinie par le maximum des normes des coordonn\'ees. On la note encore~$\nm_{K_{m}}$. On d\'efinit alors une semi-norme~$\nm_{m}$ sur~$\cF(K_{m})$ par 
\[\renewcommand{\arraystretch}{1.2}\fonction{\nm_{m}}{\cF(K_{m})}{\R_{\ge0}}{s}{\inf(\{\|t\|_{K_{m}} : t\in \eps_{m}^{-1}(s)\})}.\]

Consid\'erons les morphismes de restriction 
\[r_{m} \colon (\cO(K_{m+1})^{l_{m+1}},\nm_{K_{m+1}}) \too (\cO(K_{m})^{l_{m+1}},\nm_{K_{m}})\] 
et 
\[\rho_{m} \colon (\cF(K_{m+1}),\nm_{m+1}) \too (\cF(K_{m}),\nm_{m}).\] 
Le morphisme~$r_{m}$ est born\'e.

Les propri\'et\'es d'annulation cohomologique entra\^inent que le morphisme $\alpha_{m+1} \colon \cO_{K_{m+1}}^{l_{m+1}} \to \cF_{|K_{m+1}}$ induit un morphisme surjectif
\[ \eps'_{m} \colon \cO(K_{m})^{l_{m+1}} \too \cF(K_{m}).\]
On d\'efinit alors une nouvelle semi-norme~$\nm'_{m}$ sur~$\cF(K_{m})$ par 
\[\renewcommand{\arraystretch}{1.2}\fonction{\nm'_{m}}{\cF(K_{m})}{\R_{\ge0}}{s}{\inf(\{\|t\|_{K_{m}} : t\in {\eps'_{m}}^{-1}(s)\})}.\]

Consid\'erons maintenant le morphisme
\[ \sigma_{m} \colon (\cF(K_{m}),\nm'_{m}) \too (\cF(K_{m}),\nm_{m})\]
qui induit l'identit\'e sur~$\cF(K_{m})$. Il est born\'e.

\begin{lemm}[\protect{\cite[lemme~6.6.21]{A1Z}}]
Il existe un morphisme born\'e~$\eta_{m}$ qui fait commuter le diagramme
\[\begin{tikzcd}
(\cO(K_{m})^{l_{m+1}}, \nm_{K_{m}}) \ar[r, "\eps'_{m}"] \ar[d, "\eta_{m}"]& (\cF(K_{m}), \nm'_{m}) \ar[d, "\sigma_{m}"]\\
(\cO(K_{m})^{l_{m+1}}, \nm_{K_{m}})  \ar[r, "\eps_{m}"] & (\cF(K_{m}), \nm_{m})
\end{tikzcd}.\]
\qed
\end{lemm}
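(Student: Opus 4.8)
Le plan est d'exploiter la propri\'et\'e universelle de la division de Weierstra\ss{} avec contr\^ole des normes (th\'eor\`eme~\ref{weierstrassam}, ou plut\^ot ses cons\'equences comme le corollaire~\ref{cor:divisionGnormes}) pour relever l'identit\'e au niveau des pr\'esentations libres. Plus pr\'ecis\'ement, il s'agit de construire un morphisme de $\cO(\oD_{m})$-modules $\eta_{m} \colon \cO(\oD_{m})^{l_{m+1}} \to \cO(\oD_{m})^{l_{m+1}}$ tel que $\eps_{m} \circ \eta_{m} = \sigma_{m} \circ \eps'_{m}$ et que $\eta_{m}$ soit born\'e pour la norme~$\nm_{\oD_{m}}$. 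La commutativit\'e du carr\'e est une question alg\'ebrique de relevage : puisque $\eps_{m}$ est surjectif et que $\sigma_{m} \circ \eps'_{m}$ envoie $\cO(\oD_{m})^{l_{m+1}}$ dans $\cF(\oD_{m})$, on peut, pour chaque vecteur de base $e_{j}$ de $\cO(\oD_{m})^{l_{m+1}}$, choisir un ant\'ec\'edent de $\sigma_{m}(\eps'_{m}(e_{j}))$ par $\eps_{m}$ ; les images de ces ant\'ec\'edents d\'efinissent $\eta_{m}$. Le point d\'elicat est d'obtenir un tel relevage qui soit de plus \emph{born\'e}.

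Pour la bornitude, la strat\'egie naturelle est la suivante. D'abord on remarque que, $\cF$ \'etant coh\'erent sur $\oD_{m+1}$, le noyau du morphisme $\alpha_{m+1}$ est un faisceau coh\'erent, donc localement de type fini ; en appliquant le th\'eor\`eme de fermeture des modules locaux~\ref{fermeture} (ou son corollaire~\ref{limite}), on contr\^ole les normes des coefficients intervenant dans l'\'ecriture d'une relation. Ensuite, les deux pr\'esentations $\alpha_{m}$ et $\alpha_{m+1}$ \'etant toutes deux des morphismes libres surjectifs vers $\cF_{|\oD_{m}}$, il existe des matrices $A \in M_{l_{m},l_{m+1}}(\cO(\oD_{m}))$ et $A' \in M_{l_{m+1},l_{m}}(\cO(\oD_{m}))$ telles que $\alpha_{m} \circ A = \alpha_{m+1}$ et $\alpha_{m+1} \circ A' = \alpha_{m}$ sur $\oD_{m}$ (leur existence d\'ecoule du corollaire~\ref{cor:thA}/\ref{cor:thB} appliqu\'es au faisceau des morphismes ou directement par relevage des g\'en\'erateurs, en utilisant la fermeture des modules pour extraire des coefficients globaux). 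On peut alors poser $\eta_{m} := A' \circ A$ : c'est un endomorphisme de $\cO(\oD_{m})^{l_{m+1}}$ donn\'e par une matrice \`a coefficients dans $\cO(\oD_{m})$, donc automatiquement born\'e pour $\nm_{\oD_{m}}$ (la norme du produit matriciel est contr\^ol\'ee par le maximum des normes des coefficients, qui sont des \'el\'ements fixes de $\cO(\oD_{m})$). Il reste \`a v\'erifier l'identit\'e $\eps_{m} \circ \eta_{m} = \sigma_{m} \circ \eps'_{m}$, ce qui revient \`a $\alpha_{m} \circ A' \circ A = \alpha_{m+1}$ sur $\oD_{m}$, cons\'equence imm\'ediate des deux relations $\alpha_{m} \circ A = \alpha_{m+1}$ et $\alpha_{m+1} \circ A' = \alpha_{m}$ (la premi\`ere donne $\alpha_{m} \circ A' \circ A = \alpha_{m+1} \circ A' \circ A$, puis on conclut si l'on prend garde \`a l'ordre ; en r\'ealit\'e on pose directement $\eta_{m} := A$ vu comme endomorphisme apr\`es identification ad\'equate des rangs, et $\eps_{m} \circ A = \alpha_{m}(\oD_{m}) \circ A = \alpha_{m+1}(\oD_{m}) = \eps'_{m}$ composant avec $\sigma_{m}$, l'identit\'e sur $\cF(\oD_{m})$).

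Le principal obstacle sera la v\'erification soigneuse de la bornitude et le choix correct des rangs : comme $l_{m}$ et $l_{m+1}$ peuvent diff\'erer, il faut \^etre pr\'ecis sur la source et le but de $\eta_{m}$ (qui doit aller de $\cO(\oD_{m})^{l_{m+1}}$ dans lui-m\^eme, d'apr\`es l'\'enonc\'e), donc le relevage doit passer par $\cF(\oD_{m})$ via $\eps'_{m}$ puis redescendre via une section de $\eps_{m}$ compos\'ee avec $A$. La finitude des coefficients matriciels \emph{globaux} sur $\oD_{m}$ repose essentiellement sur les th\'eor\`emes~A et~B sur les disques ferm\'es (corollaires~\ref{cor:thA} et~\ref{cor:thB}) pour garantir que les relations locales se recollent en relations globales, et c'est l\`a que le travail technique du chapitre intervient ; une fois ces coefficients fix\'es, la bornitude est formelle.
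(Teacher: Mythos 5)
Votre construction finale est la bonne et c'est, pour l'essentiel, l'argument attendu de \cite[lemme~6.6.21]{A1Z}~: chaque g\'en\'erateur $\eps'_{m}(e_{i}) \in \cF(\oD_{m})$ se rel\`eve par la surjection $\eps_{m}$ (d\'ej\`a acquise dans le texte qui pr\'ec\`ede, via le corollaire~\ref{cor:thB} appliqu\'e au noyau de~$\alpha_{m}$), ce qui fournit une matrice~$A$ \`a coefficients dans $\cO(\oD_{m})$ telle que $\eps_{m}\circ A = \eps'_{m} = \sigma_{m}\circ\eps'_{m}$, et la multiplication par une matrice fixe de fonctions surconvergentes sur le compact~$\oD_{m}$ est automatiquement born\'ee pour $\nm_{\oD_{m}}$ puisque $\|af\|_{\oD_{m}}\le\|a\|_{\oD_{m}}\,\|f\|_{\oD_{m}}$. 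Trois points sont toutefois \`a \'elaguer ou \`a corriger. D'abord, la division de Weierstra\ss{} avec contr\^ole des normes et le th\'eor\`eme de fermeture~\ref{fermeture} ne jouent aucun r\^ole ici~: la seule entr\'ee non formelle est la surjectivit\'e de~$\eps_{m}$ sur les sections globales, et la bornitude n'est pas le \og point d\'elicat\fg{} que vous annoncez, elle est imm\'ediate d\`es que les $l_{m+1}$ rel\`evements sont choisis. Ensuite, le candidat interm\'ediaire $\eta_{m}:=A'\circ A$ ne convient pas~: $\eps_{m}$ est d\'efini sur $\cO(\oD_{m})^{l_{m}}$ alors que $A'\circ A$ aboutit dans $\cO(\oD_{m})^{l_{m+1}}$, de sorte que la compos\'ee $\eps_{m}\circ A'\circ A$ n'a pas de sens~; la matrice~$A'$ est superflue et il faut poser directement $\eta_{m}=A$, comme vous le faites finalement. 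Enfin, l'exposant $l_{m+1}$ au coin inf\'erieur gauche du diagramme de l'\'enonc\'e est une coquille (il devrait s'agir de~$l_{m}$ pour que $\eps_{m}$ et la semi-norme~$\nm_{m}$ aient un sens)~; il n'y a donc aucune \og identification des rangs\fg{} \`a effectuer ni de section de~$\eps_{m}$ \`a invoquer, $\eta_{m}$ allant simplement de $\cO(\oD_{m})^{l_{m+1}}$ dans $\cO(\oD_{m})^{l_{m}}$.
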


On a maintenant un diagramme commutatif
\[\begin{tikzcd}
(\cO(K_{m+1})^{l_{m+1}}, \nm_{K_{m}}) \ar[r, "\eps_{m+1}"] \ar[d, "r_{m}"]& (\cF(K_{m+1}), \nm_{m+1}) \ar[d, "\wc_{|K_{m}}"] \ar[dd, out = -30, in= 30, bend left= 70, "\rho_{m}"]\\
(\cO(K_{m})^{l_{m+1}}, \nm_{K_{m}}) \ar[r, "\eps'_{m}"] \ar[d, "\eta_{m}"]& (\cF(K_{m}), \nm'_{m}) \ar[d, "\sigma_{m}"]\\
(\cO(K_{m})^{l_{m+1}}, \nm_{K_{m}})  \ar[r, "\eps_{m}"] & (\cF(K_{m}), \nm_{m})
\end{tikzcd}\]

\begin{lemm}[\protect{\cite[lemme~6.6.22]{A1Z}}]
Le morphisme $\rho_{m}$ est born\'e.
\qed
\end{lemm}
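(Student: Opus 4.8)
La strat\'egie est d'exploiter le diagramme commutatif qui pr\'ec\`ede l'\'enonc\'e et d'y appliquer successivement les trois lemmes d\'ej\`a acquis : la born\'eitude de~$r_{m}$ (restriction entre disques emboît\'es, d\'ecoulant de la proposition~\ref{prop:restrictionserie}), la born\'eitude de~$\eta_{m}$ (lemme pr\'ec\'edent), et la born\'eitude de~$\sigma_{m}$ (qui induit l'identit\'e sur~$\cF(\oD_{m})$ mais diminue la norme, car $\nm_{m} \le \nm'_{m}$ par d\'efinition des deux semi-normes via~$\eps_{m}$ et~$\eps'_{m}$). Premi\`erement, je partirais d'un \'el\'ement~$s$ de~$\cF(\oD_{m+1})$ et d'un ant\'ec\'edent $t \in \eps_{m+1}^{-1}(s)$ dans $\cO(\oD_{m+1})^{l_{m+1}}$ presque optimal, c'est-\`a-dire v\'erifiant $\|t\|_{\oD_{m+1}} \le \nm_{m+1}(s) + \varepsilon$ pour un $\varepsilon>0$ arbitraire.

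Deuxi\`emement, je suivrais ce~$t$ le long du bord du diagramme. Son image $r_{m}(t)$ dans $\cO(\oD_{m})^{l_{m+1}}$ v\'erifie $\|r_{m}(t)\|_{\oD_{m}} \le \|r_{m}\|\, \|t\|_{\oD_{m+1}}$. Par commutativit\'e du carr\'e sup\'erieur, $\eps'_{m}(r_{m}(t)) = s_{|\oD_{m}}$, donc $\nm'_{m}(s_{|\oD_{m}}) \le \|r_{m}(t)\|_{\oD_{m}}$. On applique alors~$\eta_{m}$ : $\|\eta_{m}(r_{m}(t))\|_{\oD_{m}} \le \|\eta_{m}\|\, \|r_{m}(t)\|_{\oD_{m}}$, et la commutativit\'e du carr\'e inf\'erieur donne $\eps_{m}(\eta_{m}(r_{m}(t))) = \sigma_{m}(\eps'_{m}(r_{m}(t))) = s_{|\oD_{m}}$ (vu comme \'el\'ement de $(\cF(\oD_{m}),\nm_{m})$). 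Par d\'efinition de~$\nm_{m}$ comme infimum sur les ant\'ec\'edents par~$\eps_{m}$, on obtient
\[
\nm_{m}(\rho_{m}(s)) = \nm_{m}(s_{|\oD_{m}}) \le \|\eta_{m}(r_{m}(t))\|_{\oD_{m}} \le \|\eta_{m}\|\, \|r_{m}\|\, \|t\|_{\oD_{m+1}} \le \|\eta_{m}\|\, \|r_{m}\|\, (\nm_{m+1}(s) + \varepsilon).
\]
En faisant tendre~$\varepsilon$ vers~$0$, il vient $\nm_{m}(\rho_{m}(s)) \le \|\eta_{m}\|\, \|r_{m}\|\, \nm_{m+1}(s)$, ce qui montre que~$\rho_{m}$ est born\'e de norme au plus $\|\eta_{m}\|\, \|r_{m}\|$.

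Troisi\`emement, il reste le point technique : v\'erifier que $\sigma_{m}$ est effectivement born\'e, c'est-\`a-dire que $\nm_{m} \le C\,\nm'_{m}$ sur~$\cF(\oD_{m})$ pour une constante~$C$. Cela r\'esulte du lemme pr\'ec\'edent (existence de~$\eta_{m}$ born\'e faisant commuter le carr\'e $\eps_{m}\circ\eta_{m} = \sigma_{m}\circ\eps'_{m}$) : pour $s \in \cF(\oD_{m})$ et $u \in {\eps'_{m}}^{-1}(s)$ avec $\|u\|_{\oD_{m}}$ proche de $\nm'_{m}(s)$, l'\'el\'ement $\eta_{m}(u)$ est un ant\'ec\'edent de~$s$ par~$\eps_{m}$ de norme major\'ee par $\|\eta_{m}\|\,\|u\|_{\oD_{m}}$, d'o\`u $\nm_{m}(s) \le \|\eta_{m}\|\,\nm'_{m}(s)$. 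C'est d'ailleurs essentiellement le m\^eme calcul que ci-dessus, et l'\'etape o\`u il faut \^etre soigneux est la gestion des infima d\'efinissant les semi-normes r\'esiduelles : on travaille toujours avec des ant\'ec\'edents « presque optimaux » et on passe \`a la limite \`a la fin. Aucune difficult\'e conceptuelle n'est \`a pr\'evoir — le lemme est une cons\'equence purement formelle de la born\'eitude des fl\`eches du diagramme — mais il faut prendre garde \`a ne pas confondre $\nm_{m}$ et $\nm'_{m}$ sur $\cF(\oD_{m})$ et \`a bien identifier dans quel espace norm\'e chaque \'el\'ement est consid\'er\'e \`a chaque \'etape.
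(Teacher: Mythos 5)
Votre preuve est correcte et correspond exactement \`a l'argument attendu (le d\'etail est renvoy\'e \`a \cite[lemme~6.6.22]{A1Z})~: on suit un ant\'ec\'edent presque optimal le long du diagramme en utilisant la born\'eitude de~$r_{m}$ et de~$\eta_{m}$, puis on passe \`a la limite sur~$\varepsilon$. Seule petite r\'eserve~: l'in\'egalit\'e $\nm_{m}\le\nm'_{m}$ n'est pas vraie \og par d\'efinition \fg{} (les deux semi-normes sont des infima sur des ant\'ec\'edents par des surjections de sources diff\'erentes, $\cO(\oD_{m})^{l_{m}}$ et $\cO(\oD_{m})^{l_{m+1}}$), mais vous la red\'emontrez correctement \`a partir de~$\eta_{m}$ au troisi\`eme paragraphe, et elle n'est de toute fa\c{c}on pas n\'ecessaire \`a la cha\^ine d'in\'egalit\'es principale.
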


Reprenons maintenant \cite[lemme~6.6.23]{A1Z} en apportant les modifcations n\'ecessaires \`a sa preuve.

\begin{lemm}
Le morphisme~$\rho_{m}$ est d'image dense.
\end{lemm}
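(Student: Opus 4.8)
The plan is to show that the image of the restriction map $\rho_m\colon (\cF(\oD_{m+1}),\nm_{m+1}) \to (\cF(\oD_{m}),\nm_m)$ is dense, which is property~i) in the definition of an exhaustion of Stein; the other properties will follow from the boundedness statements already established and from the closedness of local modules. The standard strategy, following \cite[lemme~6.6.23]{A1Z}, is to approximate a global section over~$\oD_m$ by a section that is globally defined on $\oD$ (or at least on $\oD_{m+1}$). So let $s \in \cF(\oD_m)$ and $\eps > 0$ be given. Since $\eps_m\colon \cO(\oD_m)^{l_m} \to \cF(\oD_m)$ is surjective and continuous onto, one can lift $s$ to an element $t = (t_1,\dotsc,t_{l_m}) \in \cO(\oD_m)^{l_m}$ with $\|t\|_{\oD_m}$ close to $\nm_m(s)$. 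The point is then to approximate each coordinate~$t_j$, which is a function surconvergente on~$\oD_m$ (hence defined on a slightly larger polydisc), by a function defined on all of~$\oD$; concretely, since $\oD_m$ is a closed polydisc, a Laurent/Taylor expansion truncation gives a polynomial (or a convergent series on a larger disc) approximating~$t_j$ uniformly on~$\oD_m$, and such an approximant is defined on~$\oD$.

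First I would make this precise: using the proposition~\ref{prop:disqueglobal} on the description of functions near polydiscs, every $t_j \in \cO(\oD_m) = \colim_{W \supset B,\ v > (t_{i,m})} \cO(W)\la |T| \le v \ra$ is represented by a series converging on a strictly larger polydisc, and truncating this series yields a polynomial $P_j \in \cO(W)[T_1,\dotsc,T_n]$ (for some neighbourhood $W$ of $B$, shrinkable to fit inside any $\oD_{m'}$) with $\|P_j - t_j\|_{\oD_m}$ arbitrarily small. Set $P = (P_1,\dotsc,P_{l_m}) \in \cO(\oD)^{l_m}$. The element $\eps_D(P) := \alpha_D(P) \in \cF(\oD)$, where $\alpha_D$ denotes the morphism $\cO^{l_m} \to \cF$ extending $\alpha_m$ (which exists after possibly enlarging the generating family, using the global generation of~$\cF$ on~$\oD$ from corollaire~\ref{cor:thA}), restricts on~$\oD_m$ to $\eps_m(P)$. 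Then $\nm_m(s - \eps_m(P)_{|\oD_m}) \le \|t - P\|_{\oD_m}$ by definition of the quotient seminorm~$\nm_m$, which can be made $< \eps$. Since $\eps_D(P) \in \cF(\oD)$ restricts through~$\cF(\oD_{m+1})$, this shows the image of $\rho_m$ contains elements arbitrarily close to~$s$, i.e. $\rho_m$ has dense image.

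The main obstacle is ensuring that the approximating global section $\eps_D(P)$ genuinely lives in $\cF(\oD)$ and not merely in some $\cF(\oD_{m+1})$ with a bound that degrades as the approximation improves; this is why one must fix, once and for all, a surjection $\alpha_D\colon \cO_{\oD}^{l} \to \cF$ defined on all of~$\oD$ — available by corollaire~\ref{cor:thA} — and arrange the maps $\alpha_m$ to be compatible with it (replacing, if necessary, $l_m$ by $l$ and $\alpha_m$ by $\alpha_{D|\oD_m}$, which only improves the seminorms). A secondary subtlety, which is where the closedness theorem~\ref{fermeture} (via corollaire~\ref{limite}) enters in the companion lemmas, is that the quotient seminorms $\nm_m$ must interact well with restriction; here one checks that if $\eps_m(P) = \eps_m(P')$ then $P - P'$ maps into the local modules $\Ker(\alpha_m)_x$ at every point, so the infimum defining $\nm_m$ is controlled, and property~iv) of a Stein exhaustion (that $\|s\|_{m+1} = 0$ forces $s_{|\oD_m} = 0$) follows. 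Once density is in hand, combining it with the already-proved boundedness of $\rho_m$ and $r_m$ and the compactness-type property of restriction between nested polydiscs yields a full exhaustion of Stein, and theorem~\ref{thm:exhaustionStein} then gives $H^q(D_B(t_1,\dotsc,t_n),\cF) = 0$ for all $q \ge 1$.
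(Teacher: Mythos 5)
Your core mechanism --- lift $s$ to $\cO(\oD_m)^{\bullet}$, approximate the lift by the dense subring of polynomials (hence by elements defined on the larger disc), and control the quotient seminorm of the difference --- is the same as the paper's. But the way you handle the generating families contains a genuine gap. You anchor the whole argument on a surjection $\alpha_D \colon \cO_{D}^{l}\to\cF$ defined on all of $D=D_B(t_1,\dotsc,t_n)$, justified by corollaire~\ref{cor:thA}. That corollary only applies to \emph{closed} non-degenerate polydiscs over a Stein base; global generation of $\cF$ over the open polydisc $D$ is precisely what this section does \emph{not} establish (the introduction states explicitly that the open case is treated \og sans la propri\'et\'e de g\'en\'eration globale du faisceau\fg). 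So the object $\alpha_D$ you "fix once and for all" is not available, and the compatibility of the $\alpha_m$ that you derive from it cannot be arranged this way.

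Moreover, the lemma only asks for density of the image of $\rho_m\colon\cF(\oD_{m+1})\to\cF(\oD_m)$, so the approximant need only live in $\cF(\oD_{m+1})$, not in $\cF(D)$ (density of $\cF(D)\to\cF(\oD_m)$ is a separate, later lemma deduced from this one). The correct device --- and the reason the paper sets up $\eps'_m$, $\nm'_m$ and $\sigma_m$ before this lemma --- is to lift $s$ not via $\eps_m$ (whose generating family $\alpha_m$ lives only on $\oD_m$ and cannot be applied to an approximant in $\cO(\oD_{m+1})^{l_m}$) but via $\eps'_m$, i.e.\ via the family $\alpha_{m+1}$ restricted from $\oD_{m+1}$. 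One then approximates the lift $t\in\cO(\oD_m)^{l_{m+1}}$ by some $t'\in\cO(\oD_{m+1})^{l_{m+1}}$ using density of $\cA[T_1,\dotsc,T_n]$, pushes forward by $\eps_{m+1}$ to get an element of $\cF(\oD_{m+1})$ whose restriction is $\nm'_m$-close to $s$, and finally uses that $\sigma_m$ is surjective and bounded to convert $\nm'_m$-density into $\nm_m$-density. Your proposal bypasses this scaffolding, and without it (or without the unavailable global family on $D$) the step "$\eps_m(P)$ restricts from $\cF(\oD_{m+1})$" does not go through.
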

\begin{proof}
Puisque le morphisme~$\sigma_{m}$ est surjectif et born\'e, il suffit de montrer que l'image du morphisme de restriction $\cF(K_{m+1}) \to \cF(K_{m})$ est dense pour la norme~$\nm'_{m}$.

Soit $s \in \cF(K_{m})$. Soit $\delta>0$. Puisque~$\eps'_{m}$ est surjectif, il existe $t\in \cO(K_{m})^{k_{m+1}}$ tel que $\eps'_{m}(t)=s$. Par hypoth\`ese, $\cO(K_{m+1})$ est dense dans $\cO(K_{m})$, donc il existe $t' \in \cO(K_{m+1})$ tel que $\|r_{m}(t') -t\|_{K_{m}} \le \delta$. On a alors $\|\eps_{m+1}(t')_{|K_{m}} - s\|'_{m} \le \delta$. 
\end{proof}

Reprenons de m\^eme \cite[lemme~6.6.24]{A1Z}.

\begin{lemm}
Supposons que $\cM(\cA)$ est $\cB$-adapt\'e. Soit $s\in \cF(K_{m+1})$ tel que $\|s\|_{m+1} = 0$. Alors, $s$ est nulle sur l'int\'erieur de~$K_{m+1}$ dans~$U$. En particulier, $s_{|K_{m}} = 0$.
\end{lemm}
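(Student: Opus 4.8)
The plan is to exploit the hypothesis $\|s\|_{m+1}=0$ to produce an approximating sequence inside $\cO(\oD_{m+1})^{l_{m+1}}$, and then to feed it into the closedness statement for local modules, Corollary~\ref{limite}, which is the technical heart of Chapter~\ref{analyse}. First I would unwind the definition of the seminorm: since
$\|s\|_{m+1}=\inf\{\,\|t\|_{\oD_{m+1}} : t\in\eps_{m+1}^{-1}(s)\,\}=0$,
there is a sequence $(t^{(k)})_{k\in\N}$ in $\cO(\oD_{m+1})^{l_{m+1}}$ with $\eps_{m+1}(t^{(k)})=s$ for every $k$ and $\|t^{(k)}\|_{\oD_{m+1}}\le 2^{-k}$. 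Since $\eps_{m+1}$ is the morphism induced on global sections by the surjection of sheaves $\alpha_{m+1}\colon\cO_{\oD_{m+1}}^{l_{m+1}}\to\cF_{|\oD_{m+1}}$, passing to germs at a point $x$ gives $(\alpha_{m+1})_x\bigl((t^{(k)})_x\bigr)=s_x$. Hence, for $x$ in the interior $\mathring\oD_{m+1}$ (so that $\cO_{\oD_{m+1},x}=\cO_{\E{n}{\cA},x}$), the difference $(t^{(k)})_x-(t^{(1)})_x$ lies in the submodule
$M_x:=\sKer\bigl((\alpha_{m+1})_x\colon\cO_{\E{n}{\cA},x}^{l_{m+1}}\to\cF_x\bigr)$
of $\cO_{\E{n}{\cA},x}^{l_{m+1}}$.

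Next I would fix $x\in\mathring\oD_{m+1}$ and regard the $t^{(k)}$ as elements of the uniform Banach algebra $\cB(\oD_{m+1})^{l_{m+1}}$; the bound $\|t^{(k)}\|_{\oD_{m+1}}\le 2^{-k}$ then forces $t^{(k)}\to 0$, hence $t^{(k)}-t^{(1)}\to -t^{(1)}$ in $\cB(\oD_{m+1})^{l_{m+1}}$. Each term $t^{(k)}-t^{(1)}$ has germ at $x$ in $M_x$, so Corollary~\ref{limite}, applied with the compact $V=\oD_{m+1}$, with $p=l_{m+1}$ and with the submodule $M_x$, yields that the germ of $-t^{(1)}$ at $x$ lies in $M_x$ as well, i.e. $s_x=(\alpha_{m+1})_x\bigl((t^{(1)})_x\bigr)=0$. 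As $x$ ranges over $\mathring\oD_{m+1}$, this shows $s$ vanishes on $\mathring\oD_{m+1}$; and since the sequences $(t_{i,m})_m$ are strictly increasing one has $\oD_{m}\subset\mathring\oD_{m+1}$, whence $s_{|\oD_{m}}=0$ in particular.

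The one point that will require care — and the place where the hypothesis that $\cA$ is a geometric base ring is genuinely used, via the validity of Corollary~\ref{limite} — is the step in which the overconvergent sections $t^{(k)}\in\cO(\oD_{m+1})^{l_{m+1}}$ are treated as elements of $\cB(\oD_{m+1})^{l_{m+1}}$, so that Corollary~\ref{limite}, which is phrased for sequences in $\cB(V)^p$, can be invoked. This identification rests on the power-series description of sections over a relative polydisk (Proposition~\ref{prop:disqueglobal}) together with the density, for the uniform norm, of polynomial truncations among such sections. If one prefers to sidestep it, the argument of the previous paragraph can instead be carried out after restricting to a small spectrally convex compact neighbourhood of $x$ contained in $\mathring\oD_{m+1}$, on which all the relevant data are $\cB$-defined; Corollary~\ref{limite} applies verbatim there. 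Everything else is a formal manipulation of germs and uniform limits.
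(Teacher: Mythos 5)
Your argument is correct and is essentially the paper's proof: the paper fixes a single preimage $t\in\eps_{m+1}^{-1}(s)$ and approximates it uniformly on $\oD_{m+1}$ by elements of $\Ker(\eps_{m+1})$, then invokes the closedness result (th\'eor\`eme~\ref{fermeture}, i.e. corollaire~\ref{limite}) at each interior point, after using la proposition~\ref{prop:disqueglobal} to realise the sections as elements of $\cB(\oD_{m+1})$ — which is exactly your mechanism, merely rewritten as $t^{(k)}-t^{(1)}\to -t^{(1)}$ instead of $t_j\to t$. The caveat you flag about passing from overconvergent sections to $\cB(\oD_{m+1})$ is precisely the point the paper handles with prop.~\ref{prop:disqueglobal} and an induction on $n$.
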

\begin{proof}
Soit $t\in \eps_{m+1}^{-1}(s)$. Par hypoth\`ese, il existe une suite~$(t_{j})_{j\ge 0}$ d'\'el\'ements de~$\Ker(\eps_{m+1})$ telle que $\lim_{j \to \infty} \|t-t_{j}\|_{K_{m+1}}= 0$. En d'autres termes, $(t_{j})_{j\ge 0}$ converge uniform\'ement vers~$t$ sur~$K_{m+1}$.

Soit $x$ un point de l'int\'erieur de~$K_{m+1}$ dans $U$. D'apr\`es le corollaire~\ref{cor:limiteOU}, on a $t \in \sKer(\alpha_{m+1})_{x}$, donc l'image de $\eps_{m+1}(t)$ dans~$\cF_{x}$ est nulle. 


On a montr\'e que~$s$ est nulle sur l'int\'erieur de~$K_{m+1}$ dans~$U$, et, en particulier, sur~$K_{m}$.
\end{proof}

%
%

En utilisant les r\'esultats qui pr\'ec\'edent, les autres lemmes s'adaptent maintenant sans changements.

\begin{lemm}[\protect{\cite[lemme~6.6.25]{A1Z}}]
Supposons que $\cM(\cA)$ est $\cB$-adapt\'e. Soit $(s_{k})_{k\ge 0}$ une suite d'\'el\'ements de~$\cF(K_{m+1})$ qui est de Cauchy pour la semi-norme~$\nm_{m+1}$. Alors, il existe un \'el\'ement~$s$ de~$\cF(K_{m})$ tel que la suite $(\rho_{m}(s_{k}))_{k\ge 0}$ converge vers~$s$ pour la semi-norme~$\nm_{m}$.

De plus, si $s' \in \cF(K_{m})$ est une limite de la suite $(\rho_{m}(s_{k}))_{k\ge 0}$, elle co\"incide avec~$s$ sur l'int\'erieur de~$K_{m}$ dans~$U$ .
\qed
\end{lemm}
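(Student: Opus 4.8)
Il s'agit de l'analogue, dans notre cadre, de \cite[lemme~6.6.25]{A1Z}, et la preuve en reprend l'idée. On part d'une suite $(s_{k})_{k\ge 0}$ d'éléments de~$\cF(\oD_{m+1})$ de Cauchy pour la semi-norme~$\nm_{m+1}$. Pour chaque~$k$, on choisit un relevé $t_{k} \in \cO(\oD_{m+1})^{l_{m+1}}$ de~$s_{k}$ par~$\eps_{m+1}$. Le problème est que, la semi-norme~$\nm_{m+1}$ étant définie par une borne inférieure, on ne peut pas espérer que la suite $(t_{k})$ soit elle-même de Cauchy dans~$\cO(\oD_{m+1})^{l_{m+1}}$. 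On procède donc comme de coutume en extrayant une sous-suite et en corrigeant progressivement les relevés : quitte à remplacer $(s_{k})$ par une sous-suite, on peut supposer que $\|s_{k+1}-s_{k}\|_{m+1} \le 1/2^k$ pour tout~$k$, et par définition de~$\nm_{m+1}$ on peut alors choisir des relevés $u_{k}$ de $s_{k+1}-s_{k}$ dans $\cO(\oD_{m+1})^{l_{m+1}}$ avec $\|u_{k}\|_{\oD_{m+1}} \le 1/2^{k-1}$. La série $\sum_{k} u_{k}$ converge donc normalement dans l'espace de Banach $\cO(\oD_{m+1})^{l_{m+1}}$ ; en notant~$t$ sa somme et en posant $s := \eps_{m+1}(t) + s_{0}$ dans $\cF(\oD_{m+1})$, puis en restreignant à~$\oD_{m}$, on obtient un candidat limite. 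La continuité de~$\eps_{m+1}$ et la définition de~$\nm_{m}$ montrent alors que $\rho_{m}(s_{k}) = s_{k|\oD_{m}}$ converge vers $s_{|\oD_{m}}$ pour~$\nm_{m}$, ce qui donne la première assertion avec $s := s_{|\oD_{m}}$.

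Pour la seconde assertion, supposons que $s' \in \cF(\oD_{m})$ soit une autre limite de la suite $(\rho_{m}(s_{k}))_{k\ge 0}$ pour~$\nm_{m}$. Alors $s - s'$ est limite pour~$\nm_{m}$ de la suite constante nulle, donc $\|s-s'\|_{m} = 0$. On invoque alors le lemme précédent (l'analogue de \cite[lemme~6.6.24]{A1Z}) : tout élément de semi-norme nulle pour~$\nm_{m}$ est nul sur l'intérieur de~$\oD_{m}$. Plus précisément, en choisissant un relevé de~$s-s'$ par~$\eps_{m}$, on l'écrit comme limite uniforme sur~$\oD_{m}$ d'éléments du noyau de~$\eps_{m}$, lesquels appartiennent à~$\cB(\oD_{m})$ grâce à la proposition~\ref{prop:disqueglobal} (et une récurrence sur~$n$ montrant que $\cA[T_{1},\dotsc,T_{n}]$, et donc les fonctions globales sur un disque légèrement plus grand, fournissent des éléments de~$\cB(\oD_{m})$). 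Le théorème de fermeture des modules~\ref{fermeture}, appliqué en chaque point~$x$ de l'intérieur de~$\oD_{m}$ au sous-module $\sKer(\alpha_{m})_{x}$, assure que ce relevé appartient à ce noyau en~$x$, donc que l'image de~$s-s'$ dans~$\cF_{x}$ est nulle. Ceci vaut sur tout l'intérieur de~$\oD_{m}$, d'où $s = s'$ sur cet intérieur.

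L'obstacle principal est purement d'organisation : il faut s'assurer que le choix des relevés dans l'extraction de sous-suite est compatible avec la structure additive, c'est-à-dire qu'on relève bien les différences $s_{k+1}-s_{k}$ et non les~$s_{k}$ eux-mêmes, afin de pouvoir exploiter la complétude de~$\cO(\oD_{m+1})^{l_{m+1}}$ via une série normalement convergente. Une fois ce point réglé, tout repose sur la continuité de~$\eps_{m+1}$, déjà établie par le fait que $\eps_{m+1}$ est surjectif et borné, et sur le lemme de fermeture~\ref{fermeture}, qui est le véritable ingrédient de fond (il joue exactement le rôle de la platitude des anneaux locaux dans la théorie classique). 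Il n'y a pas ici de difficulté spécifique liée au caractère mixte de l'anneau de base~$\cA$, toutes les briques nécessaires ayant été préparées dans les chapitres précédents.
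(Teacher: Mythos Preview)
Your overall strategy (subsequence extraction, lifting the differences, summing a normally convergent series, then projecting) is the standard one and is indeed what \cite[lemme~6.6.25]{A1Z} does. The paper itself gives no proof here, referring back to~\cite{A1Z} after noting that the preceding lemmas suffice.

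There is, however, a genuine gap in your write-up. You assert that the series $\sum_{k} u_{k}$ ``converge donc normalement dans l'espace de Banach $\cO(\oD_{m+1})^{l_{m+1}}$'', and then set $s := \eps_{m+1}(t) + s_{0}$ as an element of~$\cF(\oD_{m+1})$. But $\cO(\oD_{m+1})$ is \emph{not} a Banach space for~$\nm_{\oD_{m+1}}$: it is the ring of \emph{overconvergent} sections (a filtered colimit over open neighbourhoods of~$\oD_{m+1}$), and there is no reason for the limit of your Cauchy sequence to extend beyond~$\oD_{m+1}$. If your argument worked as written, it would produce a limit already in~$\cF(\oD_{m+1})$, which is strictly stronger than the statement and false in general --- this is precisely why the lemma only claims $s \in \cF(\oD_{m})$.

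The fix is straightforward and is the whole point of the loss of one index. As observed in the proof of the preceding lemma, each $u_{k}$ lies in~$\cB(\oD_{m+1})^{l_{m+1}}$ (via the proposition~\ref{prop:disqueglobal} and the density of polynomials). Since $\cB(\oD_{m+1})$ \emph{is} Banach, the series converges there to some~$t$. Because $\oD_{m}$ lies in the interior of~$\oD_{m+1}$ and~$\oD_{m+1}$ is spectrally convex, $t$~restricts to an element of~$\cO(\oD_{m})^{l_{m+1}}$; one then applies~$\eps'_{m}$ (followed by the bounded map~$\sigma_{m}$) to obtain~$s \in \cF(\oD_{m})$, and the convergence of $(\rho_{m}(s_{k}))$ towards~$s$ for~$\nm_{m}$ follows from the boundedness of the maps in the commutative diagram. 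Your treatment of the uniqueness assertion is correct, modulo the harmless index shift when invoking the previous lemma.
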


\begin{lemm}[\protect{\cite[lemme~6.6.25]{A1Z}}]
Supposons que $\cM(\cA)$ est $\cB$-adapt\'e. L'image du morphisme de restriction $\cF(U) \to \cF(K_{m})$ est dense pour la semi-norme~$\nm_{m}$.
\qed
\end{lemm}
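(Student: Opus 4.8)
Le sch\'ema de preuve est celui, classique, des exhaustions de Stein, \cf~\cite[IV, \S 1]{GR} et~\cite[\S 6.6]{A1Z}~: on obtient l'ant\'ec\'edent cherch\'e en recollant des limites fabriqu\'ees par un proc\'ed\'e diagonal le long de l'exhaustion $(\oD_{m})_{m\ge 0}$, qui v\'erifie $\oD_{m}\subset\mathring{\oD}_{m+1}$ et $\bigcup_{m\ge 0}\oD_{m} = D$. Fixons~$m$, un \'el\'ement~$s$ de~$\cF(\oD_{m})$ et un r\'eel $\eps>0$~; on cherche $\sigma\in\cF(D)$ tel que $\|\sigma_{|\oD_{m}}-s\|_{m}\le \eps$.

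On construit par r\'ecurrence une suite $(s_{k})_{k\ge 1}$, o\`u $s_{k}\in\cF(\oD_{m+k})$. Comme l'image de~$\rho_{m}$ est dense (\cite[lemme~6.6.23]{A1Z}), on choisit $s_{1}$ tel que $\|\rho_{m}(s_{1})-s\|_{m}\le \eps/2$. Les \'el\'ements $s_{1},\dotsc,s_{k}$ \'etant construits, on utilise de nouveau la densit\'e de l'image de~$\rho_{m+k}$, jointe \`a la bornitude des morphismes de restriction $\cF(\oD_{m+k})\to\cF(\oD_{j})$ pour $j\in\cn{m}{m+k}$ (qui d\'ecoule de~\cite[lemme~6.6.22]{A1Z}), pour choisir $s_{k+1}\in\cF(\oD_{m+k+1})$ tel que
\[\forall j\in\cn{m}{m+k},\quad \big\|(s_{k+1})_{|\oD_{j}}-(s_{k})_{|\oD_{j}}\big\|_{j}\le \frac{\eps}{2^{k+1}}.\]
Il en r\'esulte, d'une part, que pour tout $K\ge 1$ on a $\|(s_{K})_{|\oD_{m}}-s\|_{m}\le \eps$ (par une somme t\'elescopique, en observant que $(s_{1})_{|\oD_{m}}=\rho_{m}(s_{1})$), et d'autre part, que pour tout $j\ge m$ la suite $\big((s_{k})_{|\oD_{j}}\big)_{k}$ est de Cauchy pour~$\nm_{j}$.

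Pour chaque $j\ge m$, le lemme pr\'ec\'edent, appliqu\'e en rang~$j$ \`a la suite de Cauchy $\big((s_{k})_{|\oD_{j+1}}\big)_{k}$ de~$\cF(\oD_{j+1})$, fournit un \'el\'ement $\sigma_{j}\in\cF(\oD_{j})$ qui est limite de $\big((s_{k})_{|\oD_{j}}\big)_{k}$ pour~$\nm_{j}$ et qui est uniquement d\'etermin\'e sur l'int\'erieur~$\mathring{\oD}_{j}$. La bornitude de~$\rho_{j}$ montre que $\rho_{j}(\sigma_{j+1})$ est aussi une telle limite, donc $\sigma_{j}$ et $\rho_{j}(\sigma_{j+1})$ co\"incident sur~$\mathring{\oD}_{j}$~; puisque $\oD_{j}\subset\mathring{\oD}_{j+1}$, les restrictions des~$\sigma_{j}$ aux ouverts~$\mathring{\oD}_{j}$, qui recouvrent~$D$, se recollent en un \'el\'ement $\sigma\in\cF(D)$. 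Enfin, $\sigma_{|\oD_{m}}=\rho_{m}(\sigma_{m+1})$ est encore limite de $\big((s_{k})_{|\oD_{m}}\big)_{k}$ pour~$\nm_{m}$, si bien que $\|\sigma_{|\oD_{m}}-s\|_{m}=\lim_{K}\|(s_{K})_{|\oD_{m}}-s\|_{m}\le \eps$, ce qui conclut.

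L'\'etape la plus d\'elicate est la compatibilit\'e des~$\sigma_{j}$ autorisant le recollement~: elle repose de fa\c{c}on essentielle sur la partie d'unicit\'e \og sur l'int\'erieur \fg{} du lemme pr\'ec\'edent (elle-m\^eme cons\'equence du lemme d'annulation~\cite[lemme~6.6.24]{A1Z}) et sur le fait que $\oD_{j}$ est contenu dans l'int\'erieur de~$\oD_{j+1}$, ce qui permet de transformer une co\"incidence sur l'ouvert~$\mathring{\oD}_{j+1}$ en une \'egalit\'e de germes le long du compact~$\oD_{j}$. Le reste n'est que v\'erifications de routine sur les semi-normes.
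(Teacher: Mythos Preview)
Your argument is correct and is precisely the classical Stein-exhaustion construction that the paper invokes by reference to~\cite[lemme~6.6.25]{A1Z} (the paper gives no independent proof here, noting that \og les autres lemmes s'adaptent maintenant sans changements \fg{} once the Cauchy and annulation lemmas are in place). The diagonal procedure, the use of boundedness to control all lower norms simultaneously, and the gluing via the uniqueness-on-interiors clause are exactly the ingredients of the original proof.
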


Nous disposons maintenant de toutes les propri\'et\'es constitutives d'une exhaustion de Stein. R\'esumons le r\'esultat obtenu.

\begin{prop}\index{Exhaustion!de Stein}
Supposons que $\cM(\cA)$ est $\cB$-adapt\'e. Soit $U$ une partie de~$\E{n}{\cA}$. Soit $(K_{m})_{m\ge 0}$ une exhaustion de Stein universelle de~$U$. Alors, $(K_{m})_{m\ge 0}$ est une exhaustion de Stein de~$U$ relativement \`a tout faisceau coh\'erent~$\cF$ sur~$U$
\qed
\end{prop}

En appliquant le th\'eor\`eme~\ref{thm:exhaustionStein}, on obtient un r\'esultat d'annulation cohomologique.

\begin{theo}\label{th:Bouvert}\index{Theoreme@Th\'eor\`eme!B}
Supposons que $\cM(\cA)$ est $\cB$-adapt\'e. Soit $U$ une partie de~$\E{n}{\cA}$. Supposons qu'elle poss\`ede une exhaustion de Stein universelle. 
Alors, pour tout faisceau coh\'erent~$\cF$ sur~$U$ et tout entier $q\ge 1$, on a 
\[H^q(U,\cF)=0.\]
\qed
\end{theo}

L'exemple~\ref{ex:exhaustiondisque} permet d'appliquer ce r\'esultat dans le cas des disques ouverts relatifs.

\begin{coro}\label{cor:Bouvertdisque}\index{Theoreme@Th\'eor\`eme!B}
Supposons que $\cM(\cA)$ est $\cB$-adapt\'e. Soit $V$ une base de Stein de~$B$. Soient $t_{1},\dotsc,t_{n} \in \R_{>0} \cup \{+\infty\}$. Alors, pour tout faisceau coh\'erent~$\cF$ sur $D_{V}(t_{1},\dotsc,t_{n})$ et tout entier $q\ge 1$, on a 
\[H^q(D_{V}(t_{1},\dotsc,t_{n}),\cF)=0.\]
\qed
\end{coro}

De la m\^eme fa\c con que les propri\'et\'es des affino\"ides surconvergents se d\'eduisent de celles des disques ferm\'es, du corollaire~\ref{cor:Bouvertdisque} d\'ecoulent des r\'esultats d'annulation cohomologique pour d'autres espaces. Nous introduisons un peu de terminologie de fa\c con \`a pouvoir les \'enoncer plus commod\'ement.

\begin{defi}\label{def:polydisqueouvertgeneralise}\index{Disque!ouvert g\'en\'eralis\'e}
Un espace de la forme $D_{B}(t_{1},\dotsc,t_{n})$ avec $n\in \N$ et $t_{1},\dotsc,t_{n} \in \R_{>0}\cup \{+\infty\}$ est appel\'e \emph{polydisque ouvert g\'en\'eralis\'e sur~$B$}.
\end{defi}

\begin{exem}
Pour tout $n\in \N$, l'espace affine analytique~$\E{n}{\cA}$ est un polydisque ouvert g\'en\'eralis\'e.
\end{exem}

\begin{defi}\label{def:affinoideouvert}\index{Espace affino\"ide!ouvert|textbf}
On dit qu'un espace $\cA$-analytique~$X$ est \emph{$\cA$-affino\"ide ouvert} s'il existe un polydisque ouvert g\'en\'eralis\'e~$D$ sur~$B$ et un faisceau d'id\'eaux coh\'erent~$\cI$ sur~$D$ tel que~$X$ soit isomorphe au ferm\'e analytique de~$D$ d\'efini par~$\cI$.
\end{defi}

\begin{exem}
Soit~$\cX$ un sch\'ema affine de pr\'esentation finie sur~$\cA$. Alors son analytifi\'e~$\cX^\an$ est un espace $\cA$-affino\"ide ouvert, par construction (\cf~th\'eor\`eme~\ref{thm:analytification} et sa preuve).
\end{exem}

\begin{defi}\label{def:domainesouverts}\index{Domaine!de Weierstra\ss!ouvert|textbf}\index{Domaine!de Laurent!ouvert|textbf}\index{Domaine!rationnel!ouvert|textbf}
Soient~$X$ un espace $\cA$-analytique et~$Y$ une partie de~$X$.

On dit que $Y$ est un \emph{domaine de Weierstra\ss{} ouvert} de~$X$ s'il existe $p\in \N$, $f_{1},\dotsc,f_{p} \in \cO(X)$ et $r_{1},\dotsc,r_{p} \in \R_{>0}$ tels que
\[ Y = \{x\in X : \forall i\in \cn{1}{p},\ |f_{i}(x)| < r_{i}\}.\]

On dit que $Y$ est un \emph{domaine de Laurent ouvert} de~$X$ s'il existe $p,q\in \N$, $f_{1},\dotsc,f_{p},g_{1},\dotsc,g_{q} \in \cO(X)$ et $r_{1},\dotsc,r_{p},s_{1},\dotsc,s_{q} \in \R_{>0}$ tels que
\[ Y = \{x\in X : \forall i\in \cn{1}{p},\ |f_{i}(x)| < r_{i},\ \forall j\in \cn{1}{q},\ |g_{j}(x)| > s_{j}\}.\]

On dit que $Y$ est un \emph{domaine rationnel ouvert} de~$X$ s'il existe $p\in \N$, $f_{1},\dotsc,f_{p},g \in \cO(X)$ sans z\'eros communs sur~$X$ et $r_{1},\dotsc,r_{p} \in \R_{>0}$ tels que
\[ Y = \{x\in X : \forall i\in \cn{1}{p},\ |f_{i}(x)| < r_{i} |g(x)|\}.\]
\end{defi}

\begin{exem}
Une polycouronne est un domaine rationnel ouvert d'un espace affine analytique.
\end{exem}

Les deux r\'esultats qui suivent se d\'emontrent respectivement comme la proposition~\ref{prop:domaines} et le corollaire~\ref{cor:BVaffinoide}.

\begin{prop}\label{prop:domainesouverts}
Soit~$X$ un espace $\cA$-affino\"ide ouvert. Les domaines de Weierstra\ss, les domaines de Laurent et les domaines rationnels de~$X$ sont des espaces $\cA$-affino\"ides ouverts.
\qed
\end{prop}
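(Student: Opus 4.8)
The plan is to follow the proof of Proposition~\ref{prop:domaines} almost verbatim, systematically replacing closed non-degenerate polydisks by open generalized polydisks (Definition~\ref{def:polydisqueouvertgeneralise}) and non-strict polynomial inequalities by strict ones. By hypothesis there is an open generalized polydisk $D = D_{B}(t_{1},\dotsc,t_{n})$ over~$B$, a coherent ideal sheaf~$\cI$ on~$D$, and an isomorphism identifying~$X$ with the closed analytic subspace of~$D$ it defines; fix one such. The structural fact to record first is that the class of open generalized polydisks over~$B$ is stable, up to isomorphism, under passing to relative open polydisks: indeed $D_{B}(t_{1},\dotsc,t_{n},r_{1},\dotsc,r_{p})$ is again an open generalized polydisk over~$B$, so, using the identification $\E{n}{\cA} \times_{\cA} \E{p}{\cA} \simeq \E{n+p}{\cA}$ of Proposition~\ref{prop:produitaffines}, any closed analytic subspace of $D \times_{\cA} D_{B}(r_{1},\dotsc,r_{p})$ is $\cA$-affino\"ide ouvert.

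For a Weierstrass open domain $Y = \{x\in X : |f_{i}(x)| < r_{i},\ 1\le i\le p\}$ with $f_{1},\dotsc,f_{p} \in \cO(X)$, set $D' := D_{D}(r_{1},\dotsc,r_{p})$ with coordinates $T_{1},\dotsc,T_{p}$ and projection $\pi\colon D' \to D$; by the remark above, $D'$ is, up to isomorphism, an open generalized polydisk over~$B$. The coherent ideal sheaf $\cI' := \pi^{*}\cI + (T_{1}-f_{1},\dotsc,T_{p}-f_{p})\,\cO_{D'}$, formed exactly as in the proof of Proposition~\ref{prop:domaines}, has zero locus~$Z'$ which is mapped isomorphically onto~$Y$ by~$\pi$, the inverse being the morphism attached to $(f_{1},\dotsc,f_{p}) \in \cO(Y)^{p}$ via Proposition~\ref{morphsec}; hence~$Y$ is $\cA$-affino\"ide ouvert. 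The rational case is handled in the same way: if $Y = \{x\in X : |f_{i}(x)| < r_{i}|g(x)|\}$ with $f_{1},\dotsc,f_{p},g \in \cO(X)$ without common zeros, then~$g$ cannot vanish anywhere on~$Y$ (a zero of~$g$ on~$Y$ would force every $f_{i}$ to vanish there, contradicting the absence of common zeros), so $f_{1}/g,\dotsc,f_{p}/g \in \cO(Y)$; one takes $D' := D_{D}(r_{1},\dotsc,r_{p})$, $\cI' := \pi^{*}\cI + (gT_{1}-f_{1},\dotsc,gT_{p}-f_{p})\,\cO_{D'}$, and the inverse isomorphism associated to $(f_{1}/g,\dotsc,f_{p}/g)$. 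The Laurent case combines both: the $g_{j}$ are invertible on~$Y$, one works over $D_{D}(r_{1},\dotsc,r_{p},s_{1}^{-1},\dotsc,s_{q}^{-1})$ with coordinates $T_{i},S_{j}$, and sets $\cI' := \pi^{*}\cI + (T_{1}-f_{1},\dotsc,T_{p}-f_{p},S_{1}g_{1}-1,\dotsc,S_{q}g_{q}-1)\,\cO_{D'}$, with inverse associated to $(f_{1},\dotsc,f_{p},g_{1}^{-1},\dotsc,g_{q}^{-1})$.

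The one point deserving genuine attention, rather than mechanical transcription from the closed case, is that the candidate inverse morphism produced by Proposition~\ref{morphsec} really factors through the \emph{open} subspace~$D'$ and not merely through an ambient closed polydisk or its analytic subspaces. This is precisely what Corollary~\ref{cor:factorisationouvert} provides: morphisms from a space into a set of the form $\bigcap_{i}\{r_{i}' < |P_{i}| < s_{i}'\}$ inside an affine space correspond to tuples of global functions satisfying the strict inequalities pointwise, and the strict inequalities $|f_{i}(x)| < r_{i}$ (resp.\ $|f_{i}(x)| < r_{i}|g(x)|$, $|g_{j}(x)| > s_{j}$) cutting out~$Y$ provide exactly such tuples. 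Coherence of each $\cI'$ is then automatic (Theorem~\ref{coherent} together with stability of coherence under pullback along a projection), and that~$\pi$ and the candidate inverse are mutually inverse reduces, via Proposition~\ref{morphsec}, to checking the obvious identities on the coordinate functions, just as for Proposition~\ref{prop:domaines}. The main obstacle, such as it is, is thus the bookkeeping around Corollary~\ref{cor:factorisationouvert}; everything else is a routine open-analogue of Proposition~\ref{prop:domaines}.
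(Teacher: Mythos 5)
Your proof is correct and is essentially the paper's own argument: the paper proves this proposition by declaring it identical, \emph{mutatis mutandis}, to Proposition~\ref{prop:domaines}, which is precisely the transcription you carry out (open generalized polydisks, strict inequalities, same ideal sheaves and inverse morphisms). Your extra care in invoking Corollaire~\ref{cor:factorisationouvert} to ensure the inverse morphism lands in the open domain, and your use of the radii $s_{j}^{-1}$ in the Laurent case, are welcome refinements of details the paper leaves implicit.
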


\begin{coro}\label{cor:BVaffinoideouvert}\index{Voisinage}
Soit~$X$ un espace $\cA$-analytique. Tout point de~$X$ poss\`ede une base de voisinages form\'ee d'espaces $\cA$-affino\"ides ouverts.
\qed
\end{coro}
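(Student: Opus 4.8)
Le plan est de reproduire, \emph{mutatis mutandis}, la preuve du corollaire~\ref{cor:BVaffinoide}, en y rempla\c{c}ant partout les polydisques ferm\'es non d\'eg\'en\'er\'es par des polydisques ouverts g\'en\'eralis\'es, les espaces $\cA$-affino\"ides surconvergents par des espaces $\cA$-affino\"ides ouverts et la proposition~\ref{prop:domaines} par la proposition~\ref{prop:domainesouverts}. Soit donc~$x$ un point de~$X$. Le r\'esultat \`a d\'emontrer \'etant local, on peut supposer que~$X$ est un ferm\'e analytique d'un ouvert~$U$ d'un espace affine analytique~$\E{n}{\cA}$, d\'efini par un faisceau d'id\'eaux coh\'erent~$\cI$.

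La premi\`ere \'etape consiste \`a v\'erifier que tout ferm\'e analytique d'un espace $\cA$-affino\"ide ouvert est encore $\cA$-affino\"ide ouvert. En effet, si~$Y$ est le ferm\'e analytique d'un polydisque ouvert g\'en\'eralis\'e~$D$ d\'efini par un faisceau d'id\'eaux coh\'erent~$\cJ$, si~$Z$ est un ferm\'e analytique de~$Y$ d\'efini par un faisceau d'id\'eaux coh\'erent~$\cK$ de~$\cO_{Y}$ et si $j \colon Y \to D$ d\'esigne l'immersion ferm\'ee associ\'ee, alors le faisceau $j_{\ast}(\cO_{Y}/\cK)$ est coh\'erent (lemme~\ref{lem:immersionfermeecoherence}), donc le noyau~$\cJ'$ du morphisme surjectif $\cO_{D} \to j_{\ast}(\cO_{Y}/\cK)$ est un faisceau d'id\'eaux coh\'erent de~$\cO_{D}$ contenant~$\cJ$, et~$Z$ s'identifie au ferm\'e analytique de~$D$ d\'efini par~$\cJ'$. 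Il en r\'esulte qu'il suffit d'\'etablir l'\'enonc\'e pour~$U$, et donc pour~$\E{n}{\cA}$~: si ce dernier poss\`ede, en~$x$, une base de voisinages form\'ee d'espaces $\cA$-affino\"ides ouverts, alors en intersectant de tels voisinages avec le ferm\'e analytique~$X$, on obtient une base de voisinages de~$x$ dans~$X$ form\'ee d'espaces $\cA$-affino\"ides ouverts.

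Il reste \`a traiter le cas de~$\E{n}{\cA}$. D'une part, $\E{n}{\cA}$ est un polydisque ouvert g\'en\'eralis\'e sur~$B$ (\'egal \`a $D_{B}(+\infty,\dotsc,+\infty)$), donc un espace $\cA$-affino\"ide ouvert. D'autre part, par d\'efinition de la topologie de~$\E{n}{\cA}$ — la plus grossi\`ere rendant continues les fonctions $|P|$ pour $P \in \cA[T_{1},\dotsc,T_{n}]$ —, les ensembles de la forme $\bigcap_{i=1}^{p} \{x : r_{i} < |P_{i}(x)| < s_{i}\}$, qui sont des domaines de Laurent ouverts, forment une base de voisinages de tout point. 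La proposition~\ref{prop:domainesouverts} assure que de tels domaines de Laurent ouverts sont des espaces $\cA$-affino\"ides ouverts, ce qui ach\`eve la preuve. L'unique point demandant un peu d'attention est le transfert de la propri\'et\'e d'\^etre $\cA$-affino\"ide ouvert \`a un ferm\'e analytique, \'evoqu\'e ci-dessus~: il repose sur la coh\'erence du faisceau structural (th\'eor\`eme~\ref{coherent}) et sur les propri\'et\'es \'el\'ementaires des immersions ferm\'ees \'etablies \`a la section~\ref{sec:immersion}~; tout le reste n'est qu'une transcription directe du cas surconvergent.
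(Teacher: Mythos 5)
Votre preuve est correcte et suit exactement la d\'emarche que le texte indique pour ce corollaire, \`a savoir la transposition \emph{mutatis mutandis} de la preuve du corollaire~\ref{cor:BVaffinoide} (localisation, r\'eduction \`a~$\E{n}{\cA}$ via la stabilit\'e par ferm\'e analytique, puis base de voisinages form\'ee de domaines de Laurent ouverts et proposition~\ref{prop:domainesouverts}). Le d\'etail suppl\'ementaire que vous explicitez — un ferm\'e analytique d'un espace $\cA$-affino\"ide ouvert est encore $\cA$-affino\"ide ouvert, via le lemme~\ref{lem:immersionfermeecoherence} et la coh\'erence du faisceau structural — est pr\'ecis\'ement ce que le texte utilise implicitement dans le cas surconvergent.
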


En suivant la m\^eme strat\'egie que dans la preuve du th\'eor\`eme~\ref{th:affinoideAB}, on d\'eduit le r\'esultat suivant du corollaire~\ref{cor:Bouvertdisque}. 

\begin{coro}\label{cor:thBaffinoideouvert}\index{Theoreme@Th\'eor\`eme!B}
Supposons que $\cM(\cA)$ est une base de Stein $\cB$-adapt\'ee. 
Soit~$X$ un espace $\cA$-affino\"ide ouvert. Alors, pour tout faisceau coh\'erent~$\cF$ sur~$X$ et tout entier $q\ge 1$, on a $H^q(X,\cF) = 0$.
\qed
\end{coro}

\begin{coro}\index{Theoreme@Th\'eor\`eme!B}
Supposons que $\cM(\cA)$ est une base de Stein $\cB$-adapt\'ee. Alors, tout espace $\cA$-analytique poss\`ede une base d'ouverts satisfaisant le th\'eor\`eme~B.
\qed
\end{coro}

\section[Noeth\'erianit\'e]{Noeth\'erianit\'e d'anneaux de s\'eries arithm\'etiques convergentes}\label{sec:noetherianite}
\index{Anneau!noetherien@noeth\'erien|(}
\index{Series arithmetiques convergentes@S\'eries arithm\'etiques convergentes|(}

Dans cette section finale, nous proposons une application de nos r\'esultats aux anneaux de s\'eries arithm\'etiques convergentes. Ceux-ci ont \'et\'e introduits par D.~Harbater dans le cadre de son \'etude du probl\`eme inverse de Galois (\cf~\cite{HarbaterConvergent,HarbaterGaloisCovers}). Un exemple typique est 
\[ \Z_{r^+}\llbracket T\rrbracket := \{f \in \Z\llbracket T\rrbracket : R_{\infty}(f) >r \},\]%
\nomenclature[Ck]{$\Z_{r^+}\llbracket T\rrbracket$}{sous-anneau de $\Z\llbracket T\rrbracket$ form\'e des s\'eries~$f$ telles que $R_{\infty}(f)>r$}
o\`u~$r$ est un nombre r\'eel positif et $R_{\infty}(f)$%
\nomenclature[Cja]{$R_{\infty}(f)$}{rayon de convergence complexe d'une s\'erie~$f$}
d\'esigne le rayon de convergence complexe de la s\'erie~$f$, c'est-\`a-dire son rayon de convergence en tant que fonction analytique complexe ($\C$ \'etant muni de la valeur absolue usuelle).

Nous n'\'etudierons pas ici les propri\'et\'es galoisiennes de ces anneaux. Nous renvoyons le lecteur int\'eress\'e par une approche g\'eom\'etrique de ces questions \`a l'article~\cite{Raccord}, dans lequel figure une preuve, bas\'ee sur la construction de rev\^etements de disques analytiques sur~$\Z$,  du fait que tout groupe fini peut se r\'ealiser comme groupe de Galois d'une extension r\'eguli\`ere de $\Z_{r^+}\llbracket T\rrbracket$, lorsque $r \in \intfo{0,1}$. 

En ce qui concerne les aspects alg\'ebriques des anneaux de s\'eries arithm\'etiques convergentes, on dispose du r\'esultat suivant.

\begin{theo}[\protect{\cite[theorem~1.8]{HarbaterConvergent}}]\label{th:Harbaternoetherien}
Pour tout $r\in \intfo{0,1}$, l'anneau $\Z_{r^+}\llbracket T\rrbracket$ est noeth\'erien, r\'egulier, factoriel et de dimension~2.
\qed
\end{theo}

Nous allons montrer que la propri\'et\'e de noeth\'erianit\'e s'\'etend aux anneaux de s\'eries arithm\'etiques convergentes  en plusieurs variables.

La preuve du th\'eor\`eme~\ref{th:Harbaternoetherien} que propose D.~Harbater est de nature tr\`es alg\'ebrique. La noeth\'erianit\'e, par exemple, est obtenue en exhibant, pour chaque id\'eal premier, un famille g\'en\'eratrice explicite. Une telle strat\'egie semble difficile \`a g\'en\'eraliser.

Notre approche est, au contraire, g\'eom\'etrique. Elle s'inspire de celle adopt\'ee par J.~Frisch dans le cadre d'anneaux de fonctions analytiques complexes.
\index{Disque!algebre@alg\`ebre d'un!noeth\'erianit\'e de l'|(}

\begin{theo}[\protect{\cite[th\'eor\`eme (I, 9)]{Frisch}}]\label{th:noetheriencomplexe}
Soit~$A$ une partie compacte semi-analytique et de Stein d'un espace analytique complexe~$X$. Alors, l'anneau $\cO(A)$ est noeth\'erien.

En particulier, pour tous $n\in \N$ et $\br \in \R_{\ge 0}^n$, l'anneau $\cO(\oD_{\C}(\br))$ est noeth\'erien.
\qed
\end{theo}

Signalons que Y.-T. Siu a, par la suite, obtenu une d\'emonstration simplifi\'ee d'un r\'esultat un peu plus g\'en\'eral dans~\cite{SiuNoetherianness}. Pour le cas particulier des polydiques, K.~Langmann a propos\'e une preuve plus courte dans~\cite{LangmannFrisch}. Nous nous sommes grandement inspir\'es de cette derni\`ere.

Il n'est gu\`ere difficile de passer du r\'esultat sur~$\C$ \`a un r\'esultat valable sur tout corps valu\'e archim\'edien complet.

\begin{coro}\label{th:noetherienarchimedien}
Soit $(k,\va)$ un corps valu\'e complet archim\'edien. Soient $n\in \N$ et $\br \in \R_{\ge0}^n$. Alors l'anneau $\cO(\oD_{k}(\br))$ est noeth\'erien.
\end{coro}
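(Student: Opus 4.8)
La stratégie naturelle est de ramener le cas d'un corps valué archimédien complet~$(k,\va)$ au cas du corps~$\C$ muni de la valeur absolue usuelle, pour lequel le théorème~\ref{th:noetheriencomplexe} de Frisch fournit directement la noethérianité de l'anneau $\cO(\oD_{\C}(\br))$. D'après le théorème~\ref{th:vaarchimedienne}, il existe un corps~$K$ égal à~$\R$ ou à~$\C$, un isomorphisme $j \colon k \simto K$ et un réel $\eps \in \intof{0,1}$ tel que $\va = \va_{\infty}^\eps \circ j$. Comme une puissance fixée de la valeur absolue ne modifie ni les fonctions analytiques ni leurs domaines de convergence, le disque $\oD_{k}(\br)$ s'identifie (comme espace localement annelé) à $\oD_{\R}(\br)$ ou à $\oD_{\C}(\br)$, selon que $K=\R$ ou $K=\C$. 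Le cas $K=\C$ est exactement le théorème~\ref{th:noetheriencomplexe}. Il reste donc essentiellement à traiter le cas $K=\R$.

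\textbf{Le cas réel.} Pour passer de~$\C$ à~$\R$, j'utiliserais le morphisme d'extension des scalaires. D'après le lemme~\ref{lem:AnC}, on a un isomorphisme $\E{n}{\R} \simeq \C^n/\Gal(\C/\R)$, de sorte que la projection $\pi \colon \oD_{\C}(\br) \to \oD_{\R}(\br)$ est le quotient par la conjugaison complexe; c'est un morphisme fini et ouvert. Il induit un morphisme injectif d'anneaux $\cO(\oD_{\R}(\br)) \to \cO(\oD_{\C}(\br))$, qui fait de $\cO(\oD_{\C}(\br))$ un $\cO(\oD_{\R}(\br))$-module de type fini (engendré par~$1$ et~$i$, par exemple, en identifiant localement les fonctions sur le quotient aux fonctions invariantes par conjugaison sur~$\C^n$). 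Un anneau~$R$ contenu dans un anneau noethérien~$S$ dont il fait un module de type fini, le morphisme $R\to S$ étant fini et injectif, est lui-même noethérien: en effet, $R$ est un sous-anneau de~$S$, donc intègre-compatible aux opérations, et tout idéal~$I$ de~$R$ engendre un idéal $IS$ de~$S$ qui est de type fini; un argument standard (utilisant que $S$ est un $R$-module noethérien, car de type fini sur~$R$ qui est... — attention, c'est circulaire). Il faut donc être plus soigneux ici: la bonne référence est le théorème d'Eakin--Nagata, qui affirme précisément que si $R\subset S$ est une extension d'anneaux telle que $S$ soit un $R$-module de type fini et que $S$ soit noethérien, alors $R$ est noethérien. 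C'est ce résultat que j'invoquerais.

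\textbf{Étape principale et difficulté.} L'obstacle principal n'est pas conceptuel mais réside dans la vérification soigneuse que $\cO(\oD_{\C}(\br))$ est bien un $\cO(\oD_{\R}(\br))$-module de type fini. Pour cela, il faut exploiter la description explicite du faisceau structural sur le quotient $\C^n/\Gal(\C/\R)$: une section surconvergente sur~$\oD_{\R}(\br)$ correspond à une section surconvergente sur~$\oD_{\C}(\br)$ invariante par conjugaison, et réciproquement toute section~$f$ sur~$\oD_{\C}(\br)$ s'écrit $f = \frac{f + \bar f}{2} + i\, \frac{f - \bar f}{2i}$ avec les deux facteurs invariants (après multiplication convenable par~$i$), donc $f$ appartient au $\cO(\oD_{\R}(\br))$-module engendré par~$1$ et~$i$. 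Une fois ce point établi, le théorème~\ref{th:noetheriencomplexe} et le théorème d'Eakin--Nagata concluent. Alternativement, si l'on souhaite éviter Eakin--Nagata, on peut remarquer que $\cO(\oD_{\R}(\br))$ est le sous-anneau des invariants de l'action de~$\Gal(\C/\R)$ sur le noethérien $\cO(\oD_{\C}(\br))$, et que l'anneau des invariants d'un groupe fini agissant sur un anneau noethérien, lorsque l'anneau est entier sur ses invariants (ce qui est automatique pour une action d'un groupe fini), est noethérien — c'est un résultat classique de théorie des invariants qui redonne aussi la conclusion. Je privilégierais la formulation via Eakin--Nagata pour sa concision.
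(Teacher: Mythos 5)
Votre démonstration est correcte, mais la seconde étape diffère de celle du texte. Pour le cas complexe, les deux arguments coïncident (élévation à la puissance $1/\eps$ puis théorème de Frisch) ; notez seulement que $\oD_{k}(\br)$ muni de $\va_{\infty}^\eps$ s'identifie à $\oD_{\C}(\br^{1/\eps})$ pour la valeur absolue usuelle, et non à $\oD_{\C}(\br)$ — imprécision sans conséquence puisque le théorème de Frisch vaut pour tout polyrayon. Pour le cas réel, vous établissez que $\cO(\oD_{\C}(\br)) = \cO(\oD_{\R}(\br)) \oplus i\,\cO(\oD_{\R}(\br))$ est un module de type fini sur $\cO(\oD_{\R}(\br))$ et invoquez le théorème d'Eakin--Nagata (ou la noethérianité des invariants d'un groupe fini). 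Le texte procède autrement, de façon plus élémentaire et autonome : étant donné une suite $(f_m)$ dans $\cO(\oD_{\R}(\br))$, il écrit $f_m = \sum_j h_j f_j$ avec des $h_j$ dans $\cO(\oD_{\C}(\br))$ (noethérianité en haut), puis moyenne les coefficients en posant $f_m = \frac12(f_m+\bar f_m) = \sum_j \frac{h_j+\bar h_j}{2} f_j$, ce qui redescend les générateurs dans $\cO(\oD_{\R}(\br))$ ; c'est exactement l'argument d'opérateur de Reynolds que vous mentionnez en alternative, appliqué directement aux générateurs d'idéaux sans passer par la finitude du module ni par Eakin--Nagata. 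Votre version est conceptuellement plus nette mais repose sur un théorème nettement moins élémentaire ; celle du texte n'utilise que l'inversibilité de $2$ et la conjugaison. La petite circularité que vous signalez vous-même est bien résolue par votre recours à Eakin--Nagata, et l'argument de génération par $1$ et $i$ est correct au vu de la description du faisceau structural sur $\E{n}{\R} \simeq \C^n/\Gal(\C/\R)$.
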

\begin{proof}
D'apr\`es le th\'eor\`eme~\ref{th:vaarchimedienne}, il existe $\eps \in \intof{0,1}$ tel que $(k,\va)$ soit isom\'etriquement isomorphe \`a $(\R,\va_{\infty}^\eps)$ ou $(\C,\va_{\infty}^\eps)$. Traitons s\'epar\'ement ces deux cas.

$\bullet$ Supposons que $(k,\va) = (\C,\va_{\infty}^\eps)$. L'\'el\'evation \`a la puissance~$1/\eps$ induit un isomorphisme entre $\cO(\oD_{k}(\br))$ et $\cO(\oD_{\C}(\br^{1/\eps}))$, o\`u~$\C$ est muni de la valeur absolue usuelle~$\va_{\infty}$ (\cf~\cite[proposition~1.3.10]{A1Z} pour un r\'esultat permettant de comparer chacun de ces anneaux \`a l'anneau d'un disque relatif sur l'espace hybride~$\cM(\C^\hyb) \setminus \{\va_{0}\}$). Le r\'esultat d\'ecoule alors du th\'eor\`eme~\ref{th:noetheriencomplexe}.

$\bullet$ Supposons que $(k,\va) = (\R,\va_{\infty}^\eps)$. Posons $(k',\va) = (\C,\va_{\infty}^\eps)$. Pour toute s\'erie $h = \sum_{\bi\ge 0} a_{\bi}\, \bT^\bi \in \C\llbracket \bT\rrbracket$, on pose $\bar h := \sum_{\bi\ge 0} \bar a_{\bi}\, \bT^\bi \in \C\llbracket \bT\rrbracket$. Remarquons que, pour tout \'el\'ement~$h$ de~$\cO(\oD_{k'}(\br))$, $\bar h$ appartient \`a~$\cO(\oD_{k'}(\br))$ et $h+\bar h$ \`a~$\cO(\oD_{k}(\br))$.

Soit $(f_{m})_{m\ge 0}$ une suite d'\'el\'ements de $\cO(\oD_{k}(\br))$. D'apr\`es le cas pr\'ec\'edent, l'anneau~$\cO(\oD_{k'}(\br))$ est noeth\'erien, donc il existe $M\ge 0$ tel que, pour tout $m> M$, on ait $f_{m} \in (f_{0},\dotsc,f_{M}) \, \cO(\oD_{k'}(\br))$. Soit $m> M$. Il existe $h_{0},\dotsc,h_{M} \in \cO(\oD_{k'}(\br))$ tels que $f_{m} = \sum_{j=0}^M h_{j}f_{j}$. On a 
\[ f_{m} = \bar f_{m} = \sum_{j=0}^M \bar h_{j}f_{j} = \sum_{j=0}^M \frac{h_{j} + \bar h_{j}}2\, f_{j},\]
donc $f_{m} \in (f_{0},\dotsc,f_{M}) \, \cO(\oD_{k}(\br))$. On en d\'eduit que $\cO(\oD_{k}(\br))$ est noeth\'erien.
\end{proof}

Un r\'esultat similaire vaut \'egalement dans le cas ultram\'etrique. Le cas du polydisque unit\'e a \'et\'e trait\'e par E.~Grosse--Kl\"onne dans~\cite[1.4]{GrosseKlonne}. Un r\'esultat g\'en\'eral proche de celui de J.~Frisch et de Y.-T.~Siu a \'et\'e obtenu par le second auteur dans~\cite[th\'eor\`eme~A.5]{PoineauConnexite}. Pour \'eviter d'introduire des notions techniques suppl\'ementaires, nous nous contenterons d'en \'enoncer la cons\'equence suivante.

\begin{theo}\label{th:noetherienultrametrique}
Soit $(k,\va)$ un corps valu\'e complet ultram\'etrique. Soient $n\in \N$ et $\br \in \R_{\ge0}^n$. Alors l'anneau $\cO(\oD_{k}(\br))$ est noeth\'erien.
\qed
\end{theo}

Soit~$K$ un corps de nombres et $A$~son anneau d'entiers. Notons~$\Sigma_{f}$ l'ensemble des places finies de~$K$ (vues comme des id\'eaux maximaux de~$A$) et $\Sigma_{\infty}$ l'ensemble des places infinies de~$K$ (vues comme des classes de conjugaison de plongements complexes). Posons $\Sigma :=\Sigma_{f} \cup \Sigma_{\infty}$. On utilisera les notations de l'exemple~\ref{ex:cdn} pour les points de~$\cM(A)$.

Soit~$\Sigma_{f,0}$ un sous-ensemble fini de~$\Sigma_{f}$. Posons $\Sigma_{0} := \Sigma_{f,0} \cup \Sigma_{\infty}$. Pour tout $\sigma \in \Sigma_{f,0}$, fixons $\eps_{\sigma} \in \intoo{0,+\infty}$ et, pour tout $\sigma \in \Sigma_{\infty}$, fixons $\eps_{\sigma} \in \intof{0,1}$. Posons 
\[ V := \bigcup_{\sigma \in \Sigma_{0}} \{a_{\sigma}^\eps : \eps \in [0,\eps_{\sigma}]\} \cup  \bigcup_{\sigma \in \Sigma \setminus \Sigma_{0}} \{a_{\sigma}^\eps : \eps \in [0,+\infty]\}.\] 
Notons~$A[1/\Sigma_{0}]$ l'ensemble des \'el\'ements de~$K$ qui sont entiers pour toutes les places de $\Sigma_{f} \setminus \Sigma_{f,0}$. D\'ecrivons maintenant les fonctions globales sur~$V$.

\begin{lemm}\label{lem:AOV}
L'application $A \to \cO(V)$ induit un isomorphisme $A[1/\Sigma_{0}] \simto \cO(V)$ et on a $\nm_{V} = \max_{\sigma \in \Sigma_{0}} (\va_{\sigma}^{\eps_{\sigma}})$.
\end{lemm}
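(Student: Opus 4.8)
The plan is to treat the statement in three stages: first identify the ring $\cK(V)=S_V^{-1}A$ of rational functions without poles on~$V$ (notation~\ref{def:fracrat}) with $A[1/\Sigma_0]$; then compute the uniform seminorm $\nm_V$ on it; and finally upgrade the description from $\cK(V)$ to the ring $\cO(V)$ of surconvergent sections. The first two stages are the routine ones, and the third is where the real work lies.

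For the first stage, recall from the Ostrowski-type description of $\cM(A)$ (Example~\ref{ex:cdn}, following~\cite[\S 3.1]{A1Z}) that a point $x\in\cM(A)$ has $\Ker(x)\neq 0$ precisely when $x=a_\tau^{+\infty}$ for some finite place~$\tau$, and then $\Ker(x)=\p_\tau$. Since $a_\tau^{+\infty}\in V$ exactly for $\tau\in\Sigma_f\setminus\Sigma_{f,0}$, and every other point of~$V$ has trivial kernel, the multiplicative set $S_V=\{a\in A : a(x)\neq 0\ \forall x\in V\}$ equals $\{a\in A : v_\tau(a)=0\ \forall\tau\in\Sigma_f\setminus\Sigma_{f,0}\}$. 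Hence $S_V^{-1}A=\{x\in K : v_\tau(x)\ge 0\ \forall\tau\notin\Sigma_{f,0}\}$; the nonobvious inclusion (that any such~$x$ admits a denominator in~$S_V$) follows from the finiteness of the class group of~$A$, by making principal a suitable power of the $\Sigma_{f,0}$-supported denominator ideal of~$x$. This is exactly $A[1/\Sigma_0]$. Moreover $V$ is connected (it is star-shaped around~$a_0$), hence spectrally convex by~\cite[proposition~3.1.16]{A1Z}, so $S_V^{-1}A$ is genuinely $\cK(V)$.

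For the second stage, $V$ is a finite union of branch segments through~$a_0$, and along each branch $\eps\mapsto|x(a_\sigma^\eps)|=|x|_\sigma^{\eps}$ is monotone, so the supremum of $|x|$ over a segment is attained at one of its two endpoints. Over the branch of a $\sigma\in\Sigma_0$ this gives $|x(a_\sigma^{\eps_\sigma})|$; over a complete branch $\tau\notin\Sigma_{f,0}$ the supremum is $\le 1$ because $v_\tau(x)\ge 0$; and the value at~$a_0$ is~$1$ for $x\neq 0$. One then checks that $\max_{\sigma\in\Sigma_0}|x(a_\sigma^{\eps_\sigma})|\ge 1$ always: if $x\in A[1/\Sigma_0]$ fails to be $\p_\sigma$-integral for some $\sigma\in\Sigma_{f,0}$ then $|x|_\sigma>1$, while any nonzero non-unit of~$A$ has $|x|_\sigma>1$ for some infinite place~$\sigma$ by the product formula; since $\Sigma_\infty\subseteq\Sigma_0$ and all $\eps_\sigma>0$, the bound follows. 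Thus the contributions of~$a_0$ and of the complete branches are dominated, and $\nm_V=\max_{\sigma\in\Sigma_0}\va_\sigma^{\eps_\sigma}$; in particular $\nm_V$ is a norm.

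For the third stage, $\cK(V)\subseteq\cO(V)$ is immediate: if $x=a/s$ with $s\in S_V$, then $|s|$ is continuous and nonvanishing on the compact~$V$, hence on a neighbourhood, so $x$ defines a section there; injectivity is clear since $x(a_0)\neq 0$ for $x\neq 0$. For the reverse inclusion, a surconvergent function~$f$ on~$V$ is realized in~$\cO(W)$ for a compact neighbourhood~$W$ of~$V$ that we may take of the same combinatorial type — the branches of~$\Sigma_{f,0}$ cut off at slightly higher levels, all other branches complete — so that $S_W=S_V$ and $\cK(W)=A[1/\Sigma_0]$. Using Lemma~\ref{lem:isoBVcompacts} and the density of $\cK(W)$ in $\cO(W)$ for $\nm_W$ (a consequence of the connectedness of~$W$ and the local description of the structure sheaf), $f$ lies in the completion, which by the second stage is the completion of $A[1/\Sigma_0]$ for $\max_{\sigma\in\Sigma_0}\va_\sigma^{\eps_\sigma}$, a finite product of completions of~$K$ indexed by~$\Sigma_0$. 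The point that then forces $f$ back into $A[1/\Sigma_0]$ is that $f$ is an actual function on all of~$V$: for each $\tau\notin\Sigma_{f,0}$ its germ at $a_\tau^{+\infty}$ lies in $\cO_{a_\tau^{+\infty}}$, whose elements are $\p_\tau$-integral, and assembling these integrality conditions over all such~$\tau$ with the fact that $f$ takes values in (completions of)~$K$ along the branches pins~$f$ down to an element of $A[1/\Sigma_0]$. Reconciling the "product of local fields'' picture coming from the completion with the integrality conditions coming from the unbounded branches is the main obstacle; the cleanest way to carry it out is to transcribe the corresponding computation of~\cite[\S 3.1]{A1Z} for $\cM(\Z)$, which is what the phrase \emph{mutatis mutandis} in Example~\ref{ex:cdn} already anticipates.
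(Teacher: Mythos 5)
Your stages 1 and 2 are correct, and it is worth noting that the paper itself does not argue this lemma at all: its proof is the single sentence that everything follows from the explicit computations of \cite[\S 3.1.2]{A1Z}, so any self-contained write-up has to do roughly what you attempt. The identification $S_V^{-1}A = A[1/\Sigma_0]$ via finiteness of the class group, and the computation $\nm_V = \max\big(1,\max_{\sigma\in\Sigma_0}\va_\sigma^{\eps_\sigma}\big) = \max_{\sigma\in\Sigma_0}\va_\sigma^{\eps_\sigma}$ via monotonicity along the branches and the product formula, are both fine.

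Stage 3, however, contains a wrong claim and leaves the real work undone. The completion of $A[1/\Sigma_0]$ for $\max_{\sigma\in\Sigma_0}\va_\sigma^{\eps_\sigma}$ is \emph{not} the product $\prod_{\sigma\in\Sigma_0}\hat{K}_\sigma$: the very product-formula argument you use in stage 2 shows that every nonzero element of $A[1/\Sigma_0]$ has norm $\ge 1$, so $A[1/\Sigma_0]$ is discrete for this norm and hence already complete (this is the classical fact that the $S$-integers sit discretely and cocompactly in $\prod_{v\in S}K_v$, just as $\Z$ does in $\R$ and $\Z[1/2]$ in $\R\times\Q_2$). There is therefore no ``product of local fields'' picture to reconcile with integrality at the tips $a_\tau^{+\infty}$ --- that reconciliation problem is an artefact of the error, and correcting it actually simplifies your plan: once one knows that $\cK(W)$ is dense in $\cO(W)$ for $\nm_W$, one gets $\overline{\cO(W)} = \cB(W) = \cK(W) = A[1/\Sigma_0]$ at once, and Lemma~\ref{lem:isoBVcompacts} finishes. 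But that density is precisely the nontrivial point, and it is not ``a consequence of the connectedness of~$W$ and the local description of the structure sheaf'': a section of $\cO(W)$ is only \emph{locally} a uniform limit of rational functions, and promoting this to a global statement requires the piece-by-piece analysis (a compact neighbourhood of~$a_0$, which contains almost all branches entirely and to which the same discreteness argument applies; segments of single branches, where the relevant completion is $\hat{K}_\sigma$; neighbourhoods of the tips $a_\tau^{+\infty}$, where it is the ring of integers of $\hat{K}_\tau$ and forces $\tau$-integrality) that constitutes the actual content of \cite[\S 3.1.2]{A1Z}. Since you explicitly defer exactly this step to ``transcribing'' that reference, the proposal as written does not prove the lemma; it reproduces the paper's citation with an erroneous detour.
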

\begin{proof}
C'est une cons\'equence des descriptions explicites d\'emontr\'ees dans \cite[\S 3.1.2]{A1Z}.
\end{proof}

Passons maintenant aux fonctions sur des polydisques relatifs.

\begin{nota}
Pour $n\in\N$ et $\br = (r_{1},\dotsc,r_{n}), \bs = (s_{1},\dotsc,s_{n}) \in \R_{\ge 0}^n$, on note $\br < \bs$ si, pour tout $i\in \cn{1}{n}$, $r_{i} < s_{i}$.%
\nomenclature[Itbb]{$\br < \bs$}{relation satisfaite si $r_{i} < s_{i}$ pour tout $i\in \cn{1}{n}$} 
\end{nota}

\begin{lemm}\label{lem:descriptionseriesarithmetiques}\index{Disque!fonction sur un}\index{Fonction!sur un disque}
Soient $n\in\N$ et $\br \in \R_{\ge 0}^n$. L'anneau $\cO(\oD_{V}(\br))$ est constitu\'e des s\'eries 
\[ f = \sum_{\bi \ge 0} f_{\bi}\, \bT^\bi  \in \cA\Big[\frac1{\Sigma_{0}}\Big]\llbracket \bT \rrbracket\]
v\'erifiant la condition suivante~:
\[\exists \bs > \br, \forall \sigma \in \Sigma_{0},\ \lim_{\bi \to \infty} |f_{\bi}|_{\sigma}^{\eps_{\sigma}}\, \bs^\bi =0.\]
\end{lemm}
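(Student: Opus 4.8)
The plan is to combine the developability statement of Proposition~\ref{prop:disqueglobal} with the explicit description of $\cO(V)$ from Lemma~\ref{lem:AOV}, and then to translate the resulting convergence condition using the product formula. First I would turn $\cO(\oD_{V}(\br))$ into a concrete colimit. Applying Proposition~\ref{prop:disqueglobal} in the last variable and inducting on~$n$ --- the sets $\oD_{W}(v_{1},\dotsc,v_{n-1})$, with $W$ a compact neighbourhood of~$V$ in $B=\cM(A)$ and $v_{i}>r_{i}$, forming a cofinal system of compact neighbourhoods of $\oD_{V}(r_{1},\dotsc,r_{n-1})$ in $\E{n-1}{A}$ --- yields
\[\cO(\oD_{V}(\br)) \;=\; \colim_{W \supset V,\ \bv > \br}\; \cO(W)\la |\bT| \le \bv\ra ,\]
the colimit being taken over compact neighbourhoods $W$ of~$V$ in~$B$ and over $\bv > \br$ componentwise.

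Next I would make the coefficient ring explicit. Since every neighbourhood of the central point~$a_{0}$ of~$\cM(A)$ contains all but finitely many branches entirely (\cf~exemple~\ref{ex:cdn}), the compact neighbourhoods~$W$ of~$V$ of the special shape ``all branches $\sigma \notin \Sigma_{0}$ complete, each branch $\sigma \in \Sigma_{0}$ truncated at~$\eps_{\sigma}+\delta$'' (with $\delta>0$) are cofinal; for such~$W$ the computation of Lemma~\ref{lem:AOV} applies mutatis mutandis, giving $\cK(W)=A[1/\Sigma_{0}]$ and $\nm_{W}=\max_{\sigma\in\Sigma_{0}}\va_{\sigma}^{\eps_{\sigma}+\delta}$. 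The key observation is that, by the product formula together with the $\sigma$-integrality of the elements of $A[1/\Sigma_{0}]$ at the finite places outside~$\Sigma_{0}$, one has $\max_{\sigma\in\Sigma_{0}}|x|_{\sigma}\ge 1$, hence also $\max_{\sigma\in\Sigma_{0}}|x|_{\sigma}^{\eps_{\sigma}+\delta}\ge 1$, for every nonzero $x\in A[1/\Sigma_{0}]$; therefore $A[1/\Sigma_{0}]$ is discrete, a fortiori complete, for~$\nm_{W}$, so that $\cO(W)=\cB(W)=A[1/\Sigma_{0}]$. Substituting and collapsing the resulting double colimit, I obtain that $\cO(\oD_{V}(\br))$ consists of the series $f=\sum_{\bi} f_{\bi}\,\bT^{\bi}$ with $f_{\bi}\in A[1/\Sigma_{0}]$ for which there exist $\delta>0$ and $\bv>\br$ with $\sum_{\bi}\max_{\sigma\in\Sigma_{0}}\big(|f_{\bi}|_{\sigma}^{\eps_{\sigma}+\delta}\big)\,\bv^{\bi}<\infty$.

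It then remains to show this condition is equivalent to the one in the statement. The implication ``statement~$\Rightarrow$~colimit'' is elementary: from $|f_{\bi}|_{\sigma}^{\eps_{\sigma}}\bs^{\bi}\to 0$ one gets $|f_{\bi}|_{\sigma}^{\eps_{\sigma}+\delta}\le C'_{\sigma}\,\bs^{-(1+\delta/\eps_{\sigma})\bi}$, and for $\delta$ small enough one can choose $\bv$ with $\br<\bv<\bs^{1+\delta/\eps_{\sigma}}$ simultaneously for all~$\sigma$, making the relevant multi-series geometric. For the converse the arithmetic enters: from $|f_{\bi}|_{\sigma}^{\eps_{\sigma}+\delta}\,\bv^{\bi}\le C$ for all~$\sigma$ one gets $|f_{\bi}|_{\sigma}\le (C\bv^{-\bi})^{1/(\eps_{\sigma}+\delta)}$, and taking the product over $\sigma\in\Sigma_{0}$ weighted by the local degrees, the product formula forces $C\bv^{-\bi}\ge 1$ whenever $f_{\bi}\ne 0$; since the exponent $\eps_{\sigma}/(\eps_{\sigma}+\delta)$ is then $<1$, this gives $|f_{\bi}|_{\sigma}^{\eps_{\sigma}}\le C\bv^{-\bi}$ on the support of~$f$, so that any $\bs$ with $\br<\bs<\bv$ works. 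I expect this last step --- using the product formula to confine the support of~$(f_{\bi})$ and thereby to transfer the convergence bound from exponent $\eps_{\sigma}+\delta$ down to exponent $\eps_{\sigma}$ --- to be the main point; the rest is bookkeeping with multi-indices, the cofinality statements, and the reduction to the one-variable Proposition~\ref{prop:disqueglobal}.
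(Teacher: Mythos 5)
Your proof is correct, and its skeleton --- reduce to the colimit of Proposition~\ref{prop:disqueglobal} over a cofinal family of compact neighbourhoods of~$V$, identify the coefficient rings via (the analogue of) Lemma~\ref{lem:AOV}, then match the resulting convergence condition with the stated one --- is exactly the paper's. Where you genuinely diverge is in the choice of cofinal family and, as a consequence, in the nature of the last step. The paper takes the neighbourhoods $V_\beta$ obtained by scaling each truncation exponent \emph{multiplicatively}, $\eps_\sigma\mapsto\beta\,\eps_\sigma$ with $\beta>1$; the condition $\lim_{\bi\to\infty}|f_{\bi}|_\sigma^{\beta\eps_\sigma}\bs^{\bi}=0$ is then literally the condition $\lim_{\bi\to\infty}|f_{\bi}|_\sigma^{\eps_\sigma}(\bs^{1/\beta})^{\bi}=0$ after raising to the power $1/\beta$, so the translation between the colimit description and the statement is a formal substitution of radii (plus the routine passage between summability and vanishing of the general term after a slight shrinking of~$\bs$) and requires no arithmetic input at all. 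You scale \emph{additively}, $\eps_\sigma\mapsto\eps_\sigma+\delta$, and the exponent shift is then no longer absorbed by a change of radius; to transfer the bound from exponent $\eps_\sigma+\delta$ down to $\eps_\sigma$ you invoke the product formula to force $C\,\bv^{-\bi}\ge 1$ on the support of~$f$. That argument is correct --- and your observation that $\max_{\sigma\in\Sigma_0}|x|_\sigma\ge 1$ on $A[1/\Sigma_0]\setminus\{0\}$, which makes $A[1/\Sigma_0]$ discrete hence complete for~$\nm_W$, is indeed the fact hiding behind Lemma~\ref{lem:AOV} --- but the step you single out as ``the main point'' is an artefact of the additive normalisation: with the multiplicative one it evaporates. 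Your route has the merit of making the arithmetic content visible and of justifying the identification $\cO(W)=A[1/\Sigma_0]$ rather than only citing it; the paper's choice buys a two-line proof.
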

\begin{proof}
Pour $\beta\in\R_{>1}$, posons
\[ V_{\beta} := \bigcup_{\sigma \in \Sigma_{0}} \{a_{\sigma}^\eps : \eps \in [0,\beta \, \eps_{\sigma}]\} \cup  \bigcup_{\sigma \in \Sigma \setminus \Sigma_{0}} \{a_{\sigma}^\eps : \eps \in [0,+\infty]\}.\] 
La famille $(V_{\beta})_{\beta >1}$ forme une base de voisinages compacts de~$V$ dans~$\cM(\cA)$. 

Le r\'esultat d\'ecoule alors de la proposition~\ref{prop:disqueglobal} et du lemme~\ref{lem:AOV} en remarquant que la condition $\lim_{\bi \to \infty} |f_{\bi}|_{\sigma}^{\beta \, \eps_{\sigma}}\, \bs^\bi =0$ peut se retraduire par $\lim_{\bi \to \infty} |f_{\bi}|_{\sigma}^{ \eps_{\sigma}}\, (\bs^{1/\beta})^\bi =0$.
\end{proof}

Ajoutons un r\'esultat technique utile sur les fonctions sur des disques relatifs contenus dans une branche.

\begin{lemm}\label{lem:restrictionbrancheiso}\index{Disque!fonctions sur un}
Soit $\sigma \in \Sigma$. Soient $u, v \in \intoo{0,+\infty}$ avec $u\le v$. Si $\sigma\in \Sigma_{\infty}$, supposons que $v\le 1$. Posons $W := \{a_{\sigma}^\eps : \eps \in [u,v]\}$. Soient~$n\in \N$ et $\br \in \intfo{0,1}^n$. Alors, le morphisme de restriction $\cO(\oD_{W}(\br)) \to \cO(\oD_{\cH(a_{\sigma}^v)}(\br))$ est un isomorphisme.

En particulier, l'anneau $\cO(\oD_{W}(\br))$ est noeth\'erien.
\end{lemm}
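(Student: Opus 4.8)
The plan is to identify both rings explicitly, as colimits of power‑series algebras over a common coefficient field, and then to compare the two convergence conditions.

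First I would establish that functions near $W$ are essentially constant. Since $u>0$, the compact $W$ sits inside the branch of $\cM(A)$ attached to $\sigma$ and avoids the centre $a_{0}$; hence on any compact neighbourhood $W'$ of $W$ contained in that branch every nonzero element of $A$ is invertible, so $\cK(W')=K$, and the uniform seminorm $\nm_{W'}$ restricted to $K$ equals $\max_{\eps\in I}(\va_{\sigma}^{\eps})$ (with $I$ the parametrising interval), a norm equivalent to the $\sigma$-adic absolute value $\va_{\sigma}$. Writing $k_{\sigma}$ for the completion of $K$ with respect to $\va_{\sigma}$, the completed residue field $\cH(a_{\sigma}^{\eps})$ is $(k_{\sigma},\va_{\sigma}^{\eps})$ for every $\eps$ in the open part of the branch, the underlying field being independent of $\eps$. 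A local uniform limit of elements of $K$ on a connected neighbourhood of $W$ is therefore locally, hence globally, a constant of $k_{\sigma}$, so the evaluation morphisms $\cO(W')\to\cH(a_{\sigma}^{v})$ and $\cO(W)\to\cH(a_{\sigma}^{v})$ are isomorphisms onto $k_{\sigma}$. This is the number‑field analogue of the computations of \cite[\S 3.1]{A1Z}, cf. the discussion around the example~\ref{ex:cdn}.

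Next I would apply the proposition~\ref{prop:disqueglobal}, variable by variable as in the proof of the lemme~\ref{lem:descriptionseriesarithmetiques}, to present $\cO(\oD_{W}(\br))$ as the colimit, over compact neighbourhoods $W'$ of $W$ in the branch and over $\bs>\br$, of the algebras of series $\sum_{\bi}c_{\bi}\bT^{\bi}$ with $c_{\bi}\in\cO(W')$ and $\sum_{\bi}\nm_{W'}(c_{\bi})\,\bs^{\bi}<\infty$; and likewise $\cO(\oD_{\cH(a_{\sigma}^{v})}(\br))$ as the colimit, over $\bs>\br$, of the series $\sum_{\bi}c_{\bi}\bT^{\bi}$ with $c_{\bi}\in\cH(a_{\sigma}^{v})$ and $\sum_{\bi}|c_{\bi}|_{\sigma}^{v}\,\bs^{\bi}<\infty$. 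Through the identification $\cO(W')=k_{\sigma}=\cH(a_{\sigma}^{v})$ of the previous step, the restriction morphism simply re‑reads the coefficients. Injectivity is then immediate: a power series over the field $\cH(a_{\sigma}^{v})$ which vanishes on a polydisc has all its coefficients zero.

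The heart of the argument — and the step I expect to be the main obstacle — is surjectivity, which is a convergence estimate and is exactly where the hypothesis $\br\in\intfo{0,1}^{n}$ enters. Given $\sum_{\bi}|c_{\bi}|_{\sigma}^{v}\,\bs^{\bi}<\infty$ with $\bs=(s_{1},\dotsc,s_{n})>\br$, I would pick $u'\in\intoo{0,u}$ and set $W':=\{a_{\sigma}^{\eps}:\eps\in[u',v]\}$, which strictly contains $W$ and lies in the branch. Monotonicity of $\eps\mapsto|c|_{\sigma}^{\eps}$ gives $\nm_{W'}(c_{\bi})\le\max(t_{\bi},t_{\bi}^{u'/v})$ with $t_{\bi}:=|c_{\bi}|_{\sigma}^{v}$, and since $t_{\bi}\bs^{\bi}$ is bounded one gets $\nm_{W'}(c_{\bi})\le C'\max(\bs^{-\bi},(\bs^{u'/v})^{-\bi})$, where $\bs^{u'/v}:=(s_{1}^{u'/v},\dotsc,s_{n}^{u'/v})$ and $C'$ is a constant. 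Because $r_{i}<1$ and $u'/v\in\intoo{0,1}$, one has $s_{i}^{u'/v}>r_{i}$ for every $i$ (trivially if $s_{i}\ge1$, and $s_{i}^{u'/v}>s_{i}>r_{i}$ if $s_{i}<1$), so one may choose $\bs''$ with $r_{i}<s_{i}''<\min(s_{i},s_{i}^{u'/v})$; then $\bs''>\br$ and $\sum_{\bi}\nm_{W'}(c_{\bi})(\bs'')^{\bi}$ is bounded by a sum of two convergent geometric series. Hence $\sum c_{\bi}\bT^{\bi}$ lies in $\cO(\oD_{W}(\br))$ and restricts to the given series, which finishes the proof that the restriction morphism is an isomorphism. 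The ``in particular'' then follows at once: $\cH(a_{\sigma}^{v})$ is a complete valued field, archimedean if $\sigma\in\Sigma_{\infty}$ and ultrametric if $\sigma\in\Sigma_{f}$, so $\cO(\oD_{\cH(a_{\sigma}^{v})}(\br))$ is noetherian by the corollaire~\ref{th:noetherienarchimedien}, resp. the théorème~\ref{th:noetherienultrametrique}, hence so is $\cO(\oD_{W}(\br))$.
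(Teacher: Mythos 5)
Your route is the same as the paper's: identify both rings, via Proposition~\ref{prop:disqueglobal}, with rings of power series with coefficients in the completion of $K$ at $\sigma$, and compare the two growth conditions; the hypothesis $\br\in\intfo{0,1}^n$ enters exactly where you use it, to guarantee $s_i^{u'/v}>r_i$. There is, however, a concrete gap in your surjectivity step. The compact $W':=\{a_\sigma^\eps:\eps\in[u',v]\}$ is \emph{not} a neighbourhood of $W$ in $\cM(\cA)$: every neighbourhood of the point $a_\sigma^v$ must contain points $a_\sigma^\eps$ with $\eps>v$, the only exception being an archimedean $\sigma$ with $v=1$ (for a finite place $v<+\infty$ is never the end of the branch). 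Your own presentation of $\cO(\oD_W(\br))$ --- and Proposition~\ref{prop:disqueglobal} --- is a colimit over compact \emph{neighbourhoods} of $W$ in $\cM(\cA)$, so "strictly contains $W$" is not enough: summability of $\sum_\bi \nm_{W'}(c_\bi)\,(\bs'')^{\bi}$ for $W'=[u',v]$ does not yet show that the series defines a section on a neighbourhood of $\oD_W(\br)$. You have controlled overconvergence of the coefficients for exponents below $v$, but not above $v$, which is precisely the direction needed to leave the fibre.

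The repair is the symmetric estimate at the upper end and costs one line. Take $v'>v$ (with $v'\le 1$ if $\sigma\in\Sigma_\infty$; if $v=1$ there, the set $[u',1]$ is already a neighbourhood and nothing is needed) and put $W':=\{a_\sigma^\eps:\eps\in[u',v']\}$, a genuine compact neighbourhood of $W$. Then $\nm_{W'}(c_\bi)=\max\bigl(t_\bi^{u'/v},t_\bi^{v'/v}\bigr)$ with $t_\bi:=|c_\bi|_\sigma^{v}$, and the new term satisfies $t_\bi^{v'/v}\le C^{v'/v}\,\bs^{-\bi\,v'/v}$. Since $s_i>r_i$ and $s_i^{v'/v}\to s_i$ as $v'\to v$, choosing $v'$ close enough to $v$ gives $s_i^{v'/v}>r_i$ for every $i$ (this part does not even use $\br\in\intfo{0,1}^n$), and one then picks $\bs''$ with $r_i<s_i''<\min\bigl(s_i,\,s_i^{u'/v},\,s_i^{v'/v}\bigr)$: the coefficients are summable for $\nm_{W'}$ at polyradius $\bs''>\br$, so the series does lie in $\cO(\oD_W(\br))$ and restricts to the given element. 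With this addition your argument is complete and coincides with the paper's proof, which works from the outset with the two-sided intervals $[u',v']$, $0<u'<u\le v<v'$, and compares the resulting condition with the one over $\cH(a_\sigma^v)$ by exactly your observation that $\br<1$ lets one take $\bs<1$, so that the lower exponent imposes no extra constraint.
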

\begin{proof}
Notons $P := \{(u',v') \in \R_{>0}^2 : 0< u'<u\le v<v'\}$. Pour $(u',v') \in P$, posons $W_{u',v'} := \{a_{\sigma}^\eps : \eps \in [u',v']\}$. On a $\cO(W_{u',v'}) = \hat{K}_{\sigma}$ et $\nm_{W_{u',v'}} = \max(\va_{\sigma}^u,\va_{\sigma}^v)$. 

La famille $(W_{u',v'})_{(u',v') \in P}$ forme une base de voisinages compacts de~$W$ dans~$\cM(\cA)$. On d\'eduit alors de la proposition~\ref{prop:disqueglobal} que l'anneau $\cO(\oD_{W}(\br))$ est constitu\'e des s\'eries $f = \sum_{\bi \ge 0} f_{\bi}\, \bT^\bi  \in \hat{K}_{\sigma}\llbracket \bT \rrbracket$ 
v\'erifiant la condition suivante~:
\[ \exists \bs > \br, \ \lim_{\bi \to \infty} \max(|f_{\bi}|_{\sigma}^{u}\, \bs^\bi , |f_{\bi}|_{\sigma}^{v}\, \bs^\bi )=0,\]
ou encore
\[ \exists \bs > \br, \ \lim_{\bi \to \infty} \max(|f_{\bi}|_{\sigma}\, \bs^{\bi /u} , |f_{\bi}|_{\sigma}\, \bs^{\bi/v})=0,\]
Or $\br \in \intfo{0,1}^n$ et, pour tout $\bs \in \intfo{0,1}^n$, on a $\bs^{1/u} \le  \bs^{1/v}$. La condition est donc encore \'equivalente \`a 
\[ \exists \bs > \br, \ \lim_{\bi \to \infty} |f_{\bi}|_{\sigma}\, \bs^{\bi/v}=0.\]
On en d\'eduit que $\cO(\oD_{W}(\br)) = \cO(\oD_{\cH(a_{\sigma}^v)}(\br))$.

La derni\`ere partie du r\'esultat d\'ecoule des th\'eor\`emes~\ref{th:noetherienarchimedien} et~\ref{th:noetherienultrametrique}. 
\end{proof}

Nous utiliserons \'egalement un r\'esultat de noeth\'erianit\'e pour les faisceaux qui s'\'enonce sous la forme suivante.

\begin{prop}\label{prop:faisceaunoetherien}\index{Faisceau!noetherien@noeth\'erien}
Soit $X$ un espace $\cA$-analytique. Soient~$\cF$ un faisceau coh\'erent sur~$X$ et~$(\cF_{m})_{m\ge 0}$ une suite croissante de sous-faisceaux coh\'erents de~$\cF$. Alors, tout point de~$X$ poss\`ede un voisinage sur lequel la suite $(\cF_{m})_{m\ge 0}$ stationne.
\qed
\end{prop}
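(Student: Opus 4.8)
The plan is to reduce the statement, by a sequence of standard d\'evissages, to the case of an ascending chain of coherent \emph{ideal} sheaves on an open subset of an affine space, and then to propagate the noetherianity already available over complete valued fields by restriction to fibres.

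First I would note that the assertion is local at a point $x$, so after shrinking we may assume $X$ is a ferm\'e analytique of an open $U\subset\E{n}{\cA}$; pushing forward by the closed immersion $j\colon X\to U$ (which preserves and reflects coherence, \cf~lemme~\ref{lem:immersionfermeecoherence}, and is a hom\'eomorphisme onto its image) replaces the data by an ascending chain of coherent subsheaves of $j_{\ast}\cF$ on $U$, so we may take $X=U$ open in $\E{n}{\cA}$. Next, using a local presentation $\cO_{U}^{r}\twoheadrightarrow\cF$ with coherent kernel (the structure sheaf is coh\'erent, \cf~th\'eor\`eme~\ref{coherent}) we reduce to $\cF=\cO_{U}^{r}$; and then an induction on $r$, via the projection $p\colon\cO_{U}^{r}\to\cO_{U}^{r-1}$ onto the first $r-1$ coordinates (whose kernel is $\cO_{U}$) together with the short exact sequences $0\to\cM_{m}\cap\cO_{U}\to\cM_{m}\to p(\cM_{m})\to 0$ and the five lemma, reduces us to an ascending chain of coherent ideal sheaves $\cI_{m}\subset\cO_{U}$. (This is the same bookkeeping as in the proof of th\'eor\`eme~\ref{fermeture}.) It then remains to find a neighbourhood of $x$ on which this chain stabilizes.

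For this I would set $b:=\pi(x)$. The local ring $\cO_{U,x}$ is noeth\'erien (th\'eor\`eme~\ref{rigide}), so $(\cI_{m,x})$ stabilizes at some index $M$. Consider the restrictions $\cI_{m}\cdot\cO_{X_{b}}$ to the fibre $X_{b}:=\pi^{-1}(b)$, an $\cH(b)$-analytic space with $\cH(b)$ a complete valued field; they form an ascending chain of coherent ideal sheaves, and the noetherianity of the algebras of sections of polydiscs over complete valued fields (corollaire~\ref{th:noetherienarchimedien} and th\'eor\`eme~\ref{th:noetherienultrametrique}, resting on th\'eor\`eme~\ref{th:noetheriencomplexe}), combined with the th\'eor\`emes~A et~B on affino\"ides (th\'eor\`eme~\ref{th:affinoideAB}), shows that this chain is locally stationary on $X_{b}$: there are a neighbourhood $W=\pi^{-1}(b)\cap U'$ of $x$ and an index $M_{1}\ge M$ with $\cI_{m}\cdot\cO_{X_{b}}=\cI_{M_{1}}\cdot\cO_{X_{b}}$ on $W$ for all $m\ge M_{1}$, while still $\cI_{m,x}=\cI_{M_{1},x}$ for $m\ge M_{1}$.

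The last and delicate step is the passage from the fibre back to a full neighbourhood of $x$, and here I would distinguish two cases according to th\'eor\`eme~\ref{rigide}. When $\cO_{B,b}$ is a corps fort (in particular the "central" points of $\cM(\cA)$), the morphism $\cO_{U,x'}\to\cO_{X_{b},x'}$ is a local homomorphism obtained by completed base-field extension, hence flat and therefore faithfully flat; the equality of the restricted ideals then forces $\cI_{m}=\cI_{M_{1}}$ on $U'$ at once. When $\cO_{B,b}$ is fortement de valuation discr\`ete with uniformisante $\varpi_{b}$, the fibre is the ferm\'e analytique $V(\varpi_{b}\cO_{U})$ near $x$ (\cf~lemme~\ref{restriction_fibre_avd}), so $\cI_{m}+\varpi_{b}\cO_{U}=\cI_{M_{1}}+\varpi_{b}\cO_{U}$ on $U'$, hence $\cI_{m}=\cI_{M_{1}}+(\cI_{m}\cap\varpi_{b}\cO_{U})$; replacing each $\cI_{m}$ by its $\varpi_{b}$-saturation $(\cI_{m}:\varpi_{b}^{\infty})$ — which is still coh\'erent, has the same stalks at $x$, and satisfies $\cI_{m}\cap\varpi_{b}\cO_{U}=\varpi_{b}\cI_{m}$ — one gets $\cI_{m}/\cI_{M_{1}}=\varpi_{b}(\cI_{m}/\cI_{M_{1}})$ on $U'$, and this uniform $\varpi_{b}$-divisibility, together with an induction on $\dim_{x}(X)$ applied to the thickenings $V(\varpi_{b}^{k}\cO_{U})$ and a Nakayama argument (\cf~corollaire~\ref{cor:nilpotent}) along $V(\varpi_{b})$, gives the desired uniform vanishing of $\cI_{m}/\cI_{M_{1}}$ near $x$. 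I expect this d\'evissage over a discrete valuation ring base — making the stabilizing neighbourhood independent of $m$ — to be the main obstacle; once it is in place, the general coherent case follows from the ideal case routinely, by globalizing with th\'eor\`emes~A et~B (corollaires~\ref{cor:thA} et~\ref{cor:thB}) on a small affino\"ide neighbourhood, where equality of coherent subsheaves is equivalent to equality of their modules of global sections.
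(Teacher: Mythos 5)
Le texte ne donne aucune d\'emonstration de cette proposition (elle est \'enonc\'ee avec un \og\qed\fg{} imm\'ediat, comme analogue d'un fait classique de g\'eom\'etrie analytique complexe)~; je ne peux donc que juger votre argument en lui-m\^eme, et il pr\'esente une lacune r\'eelle pr\'ecis\'ement \`a l'\'etape que vous signalez comme d\'elicate~: le passage de la fibre \`a un voisinage de~$x$ dans~$X$. Le d\'evissage au cas d'une cha\^ine croissante d'id\'eaux coh\'erents $\cI_m$ sur un ouvert $U\subset\E{n}{\cA}$ est correct, de m\^eme que la stabilisation des germes en~$x$ et celle de la cha\^ine restreinte sur un polydisque compact de la fibre~$X_b$ (l'argument \`a la Frisch sur le corps valu\'e complet $\cH(b)$ y fonctionne). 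Mais tout ce que vous en d\'eduisez ensuite --- fid\`ele platitude de $\cO_{U,x'}\to\cO_{X_b,x'}$ dans le cas d'un corps fort, ou divisibilit\'e par $\varpi_b$ et Nakayama dans le cas d'un anneau de valuation discr\`ete --- ne contr\^ole les germes $\cI_{m,x'}$ qu'aux points $x'$ situ\'es \emph{sur la fibre} $\pi^{-1}(b)$. Or un voisinage $U'$ de~$x$ dans~$U$ contient des points au-dessus de $b'\neq b$ (au-dessus de $\cM(\Z)$, un voisinage d'un point au-dessus de~$a_0$ rencontre presque toutes les branches), et en un tel point~$y$ la restriction \`a~$X_b$ ne dit rien~: dans le cas de valuation discr\`ete, $\varpi_b$ est m\^eme inversible dans $\cO_{U,y}$, de sorte que $Q_y=\varpi_bQ_y$ est automatique et Nakayama ne donne rien. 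Pour chaque $m$ fix\'e, le quotient coh\'erent $\cI_m/\cI_{M_1}$ s'annule bien sur un voisinage de $U'\cap X_b$, mais ce voisinage peut r\'etr\'ecir avec~$m$ --- et cette uniformit\'e en~$m$ est exactement tout le contenu de la proposition~; elle n'est fournie par aucun \'enonc\'e portant sur la seule fibre.

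L'argument qui comblerait r\'eellement ce trou est celui que vous rel\'eguez \`a votre derni\`ere phrase~: sur un voisinage affino\"ide compact~$K$ de~$x$ dans l'\emph{espace total}, les corollaires~\ref{cor:thA} et~\ref{cor:thB} ram\`enent la stabilisation des faisceaux sur~$K$ \`a celle de la cha\^ine de sous-modules $\cI_m(K)\subset\cO(K)^p$, c'est-\`a-dire \`a la noeth\'erianit\'e de $\cO(K)$ au-dessus de la base arithm\'etique. Or celle-ci ne figure pas parmi les r\'esultats disponibles \`a ce stade (c'est pour l'essentiel le th\'eor\`eme~\ref{th:noetherien}, d\'emontr\'e plus loin en utilisant cette proposition)~; il faudrait donc soit l'\'etablir ind\'ependamment pour de tels~$K$, soit mener une r\'ecurrence sur~$n$ \`a la mani\`ere du th\'eor\`eme~\ref{fermeture}. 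En l'\'etat, votre preuve n'\'etablit pas l'\'enonc\'e.
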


Dans le cadre surconvergent qui est le n\^otre, la difficult\'e pour appliquer ce th\'eor\`eme r\'eside dans le fait qu'il requi\`ere que tous les faisceaux soient d\'efinis sur un m\^eme ouvert.

\begin{theo}\label{th:noetherien}
Soient $n\in \N$ et $\br \in \intfo{0,1}^n$. Alors l'anneau $\cO(\oD_{V}(\br))$ est noeth\'erien. 
\end{theo}
\begin{proof}
Soit $(f_{m})_{m\ge 0}$ une suite d'\'el\'ements de $\cO(\oD_{V}(\br))$. Pour tout $m\ge 0$, notons~$\cI_{m}$ le faisceau d'id\'eaux engendr\'e par~$f_{1},\dotsc,f_{m})$. Insistons sur le fait qu'il est \textit{a priori} d\'efini sur un voisinage de $\oD_{V}(\br)$ qui d\'epend de~$m$.

Soit $\sigma \in \Sigma_{0}$. Posons $W_{\sigma} := \{a_{\sigma}^\eps : \eps \in [\eps_{\sigma}/3,\eps_{\sigma}]\}$. D'apr\`es le lemme~\ref{lem:restrictionbrancheiso}, l'anneau $\cO(\oD_{W_{\sigma}}(\br))$ est noeth\'erien.
Il existe donc~$m_{\sigma}$ tel que, pour tout $m\ge m_{\sigma}$, on ait
\[(f_{0},\dotsc,f_{m}) \, \cO(\oD_{W_{\sigma}}(\br)) = (f_{0},\dotsc,f_{m_{\sigma}}) \, \cO(\oD_{W_{\sigma}}(\br)).\] 
D'apr\`es le corollaire~\ref{cor:thA}, pour tout $m\ge m_{\sigma}$ et tout $x\in \oD_{W_{\sigma}}(\br)$, on a donc $(\cI_{m})_{x} = (\cI_{m_{\sigma}})_{x}$.

Pour tout $\alpha\in \intoo{0,1}$, posons 
\begin{align*}
V_{\alpha} &= \{b^\alpha :  b \in V\}\\
& = \bigcup_{\sigma\in\Sigma_{0}} \{a_{\sigma}^\eps : \sigma \in \intff{0, \alpha \, \eps_{\sigma}}\} \cup \bigcup_{\sigma\in\Sigma_{f} \setminus \Sigma_{f,0}} \{a_{\sigma}^\eps : \sigma \in \intff{0, +\infty}\}.
\end{align*}
Il suit du lemme~\ref{lem:descriptionseriesarithmetiques} que l'on a $\cO(\oD_{V_{\alpha}}(\br^{\alpha})) = \cO(\oD_{V}(\br))$. En particulier, toute fonction analytique d\'efinie au voisinage de~$\oD_{V}(\br)$ est encore d\'efinie sur au voisinage de $\oD_{V_{1/2}}(\br^{1/2})$. Notons~$U_{1/2}$ l'int\'erieur de ce dernier polydisque dans~$\E{n}{\cA}$. Puisque~$\br \in \intfo{0,1}^n$, il contient $\oD_{V_{1/3}}(\br)$. On d\'eduit alors de la proposition~\ref{prop:faisceaunoetherien} qu'il existe $m' \ge0$ tel que, pour tout $m\ge m'$ et tout $x\in \oD_{V_{1/3}}(\br)$, on ait $(\cI_{m})_{x} = (\cI_{m'})_{x}$.

Posons $M := \max\big(m', \max_{\sigma\in \Sigma_{0}}(m_{\sigma})\big)$. Soit $m\ge M$. Pour tout $x\in \oD_{V}(\br)$, on a alors $(\cI_{m})_{x} = (\cI_{M})_{x}$. En d'autres termes, le morphisme de faisceaux
\[\fonction{\varphi}{\cO^M}{\cI_{m}}{(g_{1},\dotsc,g_{M})}{\disp\sum_{i=1}^M g_{i} f_{i}}\]
est surjectif sur~$\oD_{V}(\br)$. Son noyau~$\cK$ est un faisceau coh\'erent. De la suite exacte courte $ 0 \to \cK \to \cO^{M} \to \cI_{m} \to 0$, on tire une suite exacte longue
\[ \dotsc \too \cO(\oD_{V}(\br))^M \xrightarrow[]{\varphi(\oD_{V}(\br))} \cI_{m}(\oD_{V}(\br)) \too H^1(\oD_{V}(\br),\cK) \too \dotsc\]
Or, d'apr\`es le corollaire~\ref{cor:thB}, on a $H^1(\oD_{V}(\br),\cK) =0$, d'o\`u il d\'ecoule que $f_{m} \in (f_{1},\dotsc,f_{M})\, \cO(\overline{D}_{V}(\br))$. On en d\'eduit que l'anneau $\cO(\oD_{V}(\br))$ est noeth\'erien.
\end{proof}
\index{Disque!algebre@alg\`ebre d'un!noeth\'erianit\'e de l'|)}

Traduisons ce r\'esultat en termes plus concrets. 

\begin{nota}%
\nomenclature[Cjb]{$R_{\sigma}(f)$}{rayon de convergence d'une s\'erie~$f$ en une place~$\sigma$ d'un corps de nombres}
Soit $n\in \N$. Soit $f \in \cA[1/\Sigma_{0}]\llbracket \bT\rrbracket$. Pour tout $\sigma\in \Sigma_{0}$ et tout $\br \in \R_{\ge 0}^n$, on \'ecrit $R_{\sigma}(f) > \br$ si l'image de~$f$ dans $\hat{K}_{\sigma}\llbracket \bT\rrbracket$ converge au voisinage du disque $D_{\hat{K}_{\sigma}}(\br)$, le corps~$\hat{K}_{\sigma}$ \'etant muni de la valeur absolue~$\va_{\sigma}$.
\end{nota}

\begin{coro}\label{cor:noetherienconcret}
Soient $n\in \N$ et $\br\in \intfo{0,1}^n$. Pour tout $\sigma\in \Sigma_{0}$, soit $\alpha_{\sigma} \in \R_{>0}$. Alors, le sous-anneau de $\cA[1/\Sigma_{0}]\llbracket \bT\rrbracket$ constitu\'e des s\'eries~$f$ telles que
\[ \forall \sigma\in \Sigma_{0},\ R_{\sigma}(f) > \br^{\alpha_{\sigma}}\] 
est noeth\'erien.
\end{coro}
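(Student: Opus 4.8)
The plan is to identify the ring in the statement with a ring of the form $\cO(\oD_V(\br'))$ for a well-chosen compact $V\subseteq \cM(A)$ and a well-chosen polyradius $\br'$, and then to quote Theorem~\ref{th:noetherien}. The one subtlety to be dealt with is that, at an archimedean place $\sigma$, the construction of~$V$ (as in the setup preceding Lemma~\ref{lem:AOV}) forces $\eps_\sigma\in\intof{0,1}$, so one cannot simply take $\eps_\sigma=1/\alpha_\sigma$ when $\alpha_\sigma<1$; this will be handled by rescaling the base polyradius by a single uniform exponent.

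Concretely, I would set $\beta:=\min_{\sigma\in\Sigma_\infty}\alpha_\sigma\in\R_{>0}$ (this minimum exists since $\Sigma_\infty$ is finite and nonempty), put $\br':=\br^\beta$, which still lies in $\intfo{0,1}^n$, and choose for every $\sigma\in\Sigma_0$ the parameter $\eps_\sigma:=\beta/\alpha_\sigma$. Then $\eps_\sigma\in\intoo{0,+\infty}$ for $\sigma\in\Sigma_{f,0}$, and $\eps_\sigma\in\intof{0,1}$ for $\sigma\in\Sigma_\infty$ (because $\beta\le\alpha_\sigma$ there), so these are admissible data for the construction of the compact $V\subseteq\cM(A)$ described in that setup; I would take $V$ to be that compact.

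The key step is to check that the subring of $\cA[1/\Sigma_0]\llbracket\bT\rrbracket$ in the statement equals $\cO(\oD_V(\br'))$. Using the description of the latter given by Lemma~\ref{lem:descriptionseriesarithmetiques} (with $\br$ there replaced by $\br'$), this reduces to the elementary equivalence, for $f=\sum_{\bi\ge0}f_\bi\,\bT^\bi$:
\[
\big(\forall\sigma\in\Sigma_0,\ R_\sigma(f)>\br^{\alpha_\sigma}\big)\iff\big(\exists\,\bs>\br',\ \forall\sigma\in\Sigma_0,\ \lim_{\bi\to\infty}|f_\bi|_\sigma^{\eps_\sigma}\,\bs^\bi=0\big).
\]
After rewriting $|f_\bi|_\sigma^{\eps_\sigma}\,\bs^\bi\to0$ as $|f_\bi|_\sigma\,(\bs^{\alpha_\sigma/\beta})^\bi\to0$, the implication $\Leftarrow$ is immediate since $\bs^{\alpha_\sigma/\beta}>(\br')^{\alpha_\sigma/\beta}=\br^{\alpha_\sigma}$. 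For $\Rightarrow$, unwinding $R_\sigma(f)>\br^{\alpha_\sigma}$ gives, for each $\sigma$, a polyradius $\bt_\sigma>\br^{\alpha_\sigma}$ with $\lim_\bi|f_\bi|_\sigma\,\bt_\sigma^\bi=0$; since $\Sigma_0$ is finite, the polyradius $\bs$ defined coordinatewise by $s_i:=\min_{\sigma\in\Sigma_0}t_{\sigma,i}^{\beta/\alpha_\sigma}$ satisfies $\bs>\br'$ and $\bs^{\alpha_\sigma/\beta}\le\bt_\sigma$ for every $\sigma$, whence the desired decay. This passage from per-place radii to a single common $\bs$, made possible precisely by the finiteness of $\Sigma_0$, is really the heart of the matching.

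With this identification in hand, Theorem~\ref{th:noetherien} applied to $\br'\in\intfo{0,1}^n$ gives that $\cO(\oD_V(\br'))$ is Noetherian, hence so is the ring in the statement. The only place requiring care — and thus the (rather mild) main obstacle — is the bookkeeping around the exponent $\beta$: it must be chosen so that the admissibility constraint $\eps_\sigma\le1$ at archimedean places is respected while the convergence radii still come out to be exactly $\br^{\alpha_\sigma}$. Everything else is a routine unwinding of definitions.
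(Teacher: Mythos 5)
Your proof is correct and takes essentially the same route as the paper: it identifies the ring with $\cO(\oD_{V}(\br'))$ for a suitable compact $V\subset\cM(A)$ and polyradius $\br'\in\intfo{0,1}^n$ via Lemma~\ref{lem:descriptionseriesarithmetiques}, then invokes Theorem~\ref{th:noetherien}. The only divergence is the normalization (the paper rescales the $\alpha_{\sigma}$ into $\intoo{0,1}$ while keeping $\br$, whereas you rescale $\br$ by $\beta=\min_{\sigma\in\Sigma_{\infty}}\alpha_{\sigma}$ and set $\eps_{\sigma}=\beta/\alpha_{\sigma}$), and your explicit check that $(\br')^{1/\eps_{\sigma}}=\br^{\alpha_{\sigma}}$, together with the passage from per-place radii to a common $\bs$, is if anything more carefully carried out than in the paper.
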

\begin{proof}
Quitte \`a multiplier les~$\alpha_{\sigma}$ par une constante~$\gamma >0$ et remplacer~$\br$ par~$\br^{1/\gamma}$, on peut supposer que, pour tout $\sigma \in \Sigma_{0}$, on a $\alpha_{\sigma} \in \intoo{0,1}$. Posons 
\[ V := \bigcup_{\sigma \in \Sigma_{0}} \{a_{\sigma}^\eps : \eps \in [0,\alpha_{\sigma}]\} \cup  \bigcup_{\sigma \in \Sigma \setminus \Sigma_{0}} \{a_{\sigma}^\eps : \eps \in [0,+\infty]\}.\] 
D'apr\`es le lemme~\ref{lem:descriptionseriesarithmetiques}, l'anneau d\'ecrit dans l'\'enonc\'e n'est autre que~$\cO(\oD_{V}(\br))$. Le th\'eor\`eme~\ref{th:noetherien} assure qu'il est noeth\'erien.
\end{proof}

\index{Anneau!noetherien@noeth\'erien|)}
\index{Series arithmetiques convergentes@S\'eries arithm\'etiques convergentes|)}



\nocite{}
\bibliographystyle{alpha}
\bibliography{biblio}

\printindex

\nomenclature[Aa]{$\cA$}{anneau de Banach\nomnorefpage}

\renewcommand{\nomgroup}[1]{%
\vspace{.3cm}%
\ifthenelse{\equal{#1}{G}}{\item[$\bullet$\ \textbf{Cas de $\cA = \Z$}]}{%
\ifthenelse{\equal{#1}{H}}{\item[$\bullet$\ \textbf{Cas de $\cA = A$, anneau des entiers d'un corps de nombres~$K$}]}{%
\ifthenelse{\equal{#1}{F}}{\item[\textbf{Spectre analytique d'un anneau de Banach}]}{%
\ifthenelse{\equal{#1}{I}}{\item[\textbf{Espace affine analytique sur un anneau de Banach}]}{%
\ifthenelse{\equal{#1}{K}}{\item[\textbf{Espace analytique sur un anneau de Banach}]}{%
\ifthenelse{\equal{#1}{B}}{\item[\textbf{Normes et anneaux de Banach}]}{%
\ifthenelse{\equal{#1}{L}}{\item[\textbf{Syst\`emes de Banach}]}{%
\ifthenelse{\equal{#1}{D}}{\item[\textbf{Espaces topologiques}]}{%
\ifthenelse{\equal{#1}{J}}{\item[\textbf{Droite affine analytique sur un anneau de Banach}]}{
\ifthenelse{\equal{#1}{E}}{\item[\textbf{Faisceaux coh\'erents}]}{%
\ifthenelse{\equal{#1}{R}}{\item[\textbf{Arbres binaires de compacts}]}{%
\ifthenelse{\equal{#1}{C}}{\item[\textbf{S\'eries}]}{%
}}}}}}}}}}}}
\vspace{.2cm}%
}
\printnomenclature[2.2cm]

\end{document}